\renewcommand\subsection{\@startsection{subsection}{2}%
\normalparindent{.5\linespacing\@plus.7\linespacing}{-.5em}
{\normalfont\bfseries}}
\renewcommand\subsubsection{\@startsection{subsubsection}{3}%
\normalparindent{.5\linespacing\@plus.7\linespacing}{-.5em}
{\normalfont\bfseries}}
\newcommand{\bullpar}[1]{\vspace{.5em}\noindent $\bullet$ \normalfont {\bfseries #1.}}
\newcommand{\bullsubpar}[1]{\vspace{.5em} $\bullet$ \normalfont {\bfseries #1.}}
\newcommand{\diampar}[1]{\vspace{.5em}\noindent $\diamond$ \normalfont {\itshape #1.}}
\def\@tocline#1#2#3#4#5#6#7{\relax
  \ifnum #1>\c@tocdepth 
  \else
    \par \addpenalty\@secpenalty\addvspace{#2}%
    \begingroup \hyphenpenalty\@M
    \@ifempty{#4}{%
      \@tempdima\csname r@tocindent\number#1\endcsname\relax
    }{%
      \@tempdima#4\relax
    }%
    \parindent\z@ \leftskip#3\relax \advance\leftskip\@tempdima\relax
    \rightskip\@pnumwidth plus4em \parfillskip-\@pnumwidth
    #5\leavevmode\hskip-\@tempdima
      \ifcase #1
       \or\or \hskip 1em \or \hskip 2em \else \hskip 3em \fi%
      #6\nobreak\relax
    \dotfill\hbox to\@pnumwidth{\@tocpagenum{#7}}\par
    \nobreak
    \endgroup
  \fi}
\newcounter{bigthm}
\newtheorem{bigtheorem}[bigthm]{Theorem}
\newtheorem{proposition}{Proposition}[section]
\newtheorem{theorem}[proposition]{Theorem}
\newtheorem{lemma}[proposition]{Lemma}
\newtheorem{corollary}[proposition]{Corollary}
\theoremstyle{definition}
\newtheorem{remark}[proposition]{Remark}
\numberwithin{equation}{section}
\newcommand{\B}{{\mathcal{B}}}
\newcommand{\C}{{\mathbb C}}
\newcommand{\E}{{\mathcal E}}
\newcommand{\scE}{{\textsc{E}}}
\newcommand{\F}{{\mathcal F}}
\newcommand{\cI}{{\mathcal I}}
\renewcommand{\L}{{\mathcal{L}}}
\newcommand{\rL}{{\mathrm{L}}}
\newcommand{\G}{{\mathcal{G}}}
\newcommand{\cJ}{{\mathcal{J}}}
\newcommand{\R}{{\mathbb R}}
\newcommand{\Z}{{\mathbb Z}}
\newcommand{\Q}{{\mathcal{Q}}}
\newcommand{\D}{{\Delta}}
\newcommand{\sr}{{\mathsf{r}}}
\newcommand{\s}{{\mathsf{s}}}
\newcommand{\su}{{\mathsf{u}}}
\renewcommand{\sl}{{\mathsf{l}}}
\renewcommand{\sp}{{\text{supp }}}
\newcommand{\T}{{\mathbb T}}
\newcommand{\cT}{{\mathcal T}}
\newcommand{\cS}{{\mathcal S}}
\newcommand{\cV}{{\mathcal{V}}}
\newcommand{\uv}{{0}}
\newcommand{\ov}{{2}}
\newcommand{\W}{{\mathcal{W}}}
\renewcommand{\o}{{\omega}}
\renewcommand{\d}{\mathrm{d}}
\newcommand{\ep}{\varepsilon}
\newcommand\e{{\rm e}}
\renewcommand{\v}{\boldsymbol{v}}
\renewcommand{\b}{\beta}
\newcommand{\g}{{\mathfrak{g}}}
\renewcommand{\P}{{\mathcal{P}}}
\newcommand{\p}{\partial}
\renewcommand{\l}{\left}
\renewcommand{\r}{\right}
\def\Re{{\rm Re}}
\def\Im{{\rm Im}}
\begin{document}

\title{Linear inviscid damping for stably stratified Boussinesq flows} 

\author[A. Enciso]{Alberto Enciso}
\address{Instituto de Ciencias Matem\'{a}ticas, Consejo Superior de Investigaciones Cient\'{i}ficas,\newline
28049 Madrid, Spain}
\email{a.enciso@icmat.es}

\author[M. Nualart]{Marc Nualart}
\address{Instituto de Ciencias Matem\'{a}ticas, Consejo Superior de Investigaciones Cient\'{i}ficas,\newline
28049 Madrid, Spain}
\email{marc.nualart@icmat.es}

\subjclass[2020]{35Q31, 76B70, 35P05, 76E05}

\keywords{Inviscid damping, limiting absorption principle, Boussinesq equations}

\begin{abstract}
We study the linear asymptotic stability of stably stratified monotone shear flows for the Boussinesq equations in the periodic channel. By means of the limiting absorption principle, we obtain a precise description of the inviscid damping experienced by the perturbed velocity field and density, with time-decay rates that depend on the local Richardson number $\cJ(y)$ and split into four stratification regimes (non-stratified, weak, mild, and strong) reflecting qualitative changes in the structure of the Green's function at the critical thresholds $\cJ(y)=0$ and $\cJ(y) = \frac14$. The velocity and density decay estimates are later used to prove quantitative sub-linear growth of the vorticity and gradient of density. As a byproduct of our analysis, we show that, under mild hypotheses on the underlying shear-type equilibrium, the spectrum of the linearised Boussinesq operator is purely continuous.
\end{abstract}

\maketitle

\setcounter{tocdepth}{2}
\tableofcontents

\section{Introduction}\label{inviscid}

Our objective in this paper is to prove linear inviscid damping for the two-dimensional Boussinesq equations on a periodic channel, linearised around a monotone, stably stratified shear flow $\bar{\v}=(v(y),0)$, $\bar{\rho}=\bar{\rho}(y)$. Intuitively, the monotonicity condition ($v'(y)>0$) serves to ensure sufficiently strong mixing, while the stable stratification condition ($\bar\rho'(y)\leq0$, so that  lighter fluid lies above heavier fluid) ensures that the potential energy resists vertical displacement.

We provide decay estimates for the perturbations of the velocity $\v=(v_x,v_y)$ and density $\bar\rho$, with the key property that the time-decay rates depend on the local Richardson number $\cJ(y)=-\mathfrak{g} \bar\rho'(y)/[v’(y)]^2$, where $\mathfrak{g}>0$ is the gravity constant. Specifically, in the non-stratified region ($\cJ(y)=0$), the system reduces to the 2D Euler equations, and we recover the classical decay rates $\|v_x\|_{L^2_x}+t\|v_y\|_{L^2_x}\lesssim t^{-1}$. In the weakly stratified regions ($0<\cJ(y)<\frac14$), the decay is slower: $\|v_x\|_{L^2_x}+t\|v_y\|_{L^2_x}\lesssim t^{-\frac12+[{\frac14-\cJ(y)}]^{1/2}}$, and logarithmic losses appear near the critical threshold $\cJ(y)\approx \frac14$. In strongly stratified regions ($\cJ(y)>\frac14$), one obtains the rate $\Vert v_x\Vert_{L^2_x}+t \Vert v_y\Vert_{L^2_x}\lesssim t^{-1/2}$. These four regimes (which recover the expected rates provided by a back-of-the-envelope heuristic calculation) thus provide a unified quantitative description of how stable stratification modulates inviscid damping at the linearized level. Perturbations of the density $\bar\rho$ also vanish in time, with the decay rates of $\Vert v_x \Vert_{L^2_x}$.

\subsection{The Boussinesq equations}

The Boussinesq equations model the evolution of an incompressible fluid with small density variations. On the periodic channel $\T \times[0,2]$, the equations read as
\begin{equation}\label{eq:EBintro}
\begin{aligned}
(\p_t+\tilde\v\cdot\nabla)\tilde\o  &= -\mathfrak{g}\p_x\tilde\rho, \\
(\p_t+\tilde\v\cdot\nabla)\tilde\rho&=0,
\end{aligned}
\end{equation}
where $\mathfrak{g}>0$ is the gravity constant.
The unknowns are the velocity field of the fluid, $\tilde\v :[0,\infty)\times \T\times[0,2] \rightarrow \R^2$ and its density $\tilde\rho : [0,\infty) \times \T\times [0,2]$, which appears in the equations as an active scalar that influences the velocity field through a buoyancy force. As the fluid is incompressible, we write the velocity field as $\tilde\v=\nabla^\perp\Delta^{-1}\tilde\omega$, where $\tilde\o=\nabla^\perp \cdot \tilde\v:[0,\infty) \times \T\times [0,2]$ is the vorticity. {Here and in what follows, $\Delta^{-1}$~denotes the inverse of the Laplacian on $\T\times[0,2]$ with Dirichlet boundary conditions and $\nabla^\perp=(-\partial_y,\partial_x)$.}

Let us now consider the equilibrium solution 
\begin{equation}\label{eq:Strat}
\bar{\v}=(v(y),0), \qquad \bar{\rho}=\bar{\rho}(y), 
\end{equation}
with $(x,y)\in[0,2]\times\T$, which describes a shear flow velocity field transporting a density stratified in the vertical direction. To understand the long-time dynamics of the equation near this equilibrium, we introduce the perturbed velocity 
$\tilde{\v}=\bar{\v}+\v$ and density profile $\tilde{\rho}=\bar{\rho}+\rho$, and define the corresponding vorticity perturbation $\o=\nabla^\perp\cdot \v$. 

The linearised Euler--Boussinesq system \eqref{eq:EBintro}  around the equilibrium \eqref{eq:Strat} can then be written as
\begin{equation}\label{eq:linEulerBouss}
\begin{cases}
\partial_t\omega + v(y)\partial_x\omega-v''(y)\partial_x\psi=-\mathfrak{g}\partial_x\rho \\
\partial_t\rho + v(y)\partial_x\rho =-\partial_y\bar{\rho}\p_x\psi,\\
\D\psi=\omega,
\end{cases}
\end{equation}
where $\psi= \Delta^{-1}\nabla^\perp \cdot\v $ is the stream-function of $\v$, which satisfies $\psi(x,0)=\psi(x,2)=0$. The linearised system can be further expressed in the more compact formulation
\begin{equation}\label{eq:compactLinBoussinesq}
\partial_t \begin{pmatrix}
\omega \\ \rho
\end{pmatrix} + \L \begin{pmatrix}
\omega \\ \rho
\end{pmatrix} = 0, \quad \begin{pmatrix}
\omega \\ \rho
\end{pmatrix} \Big|_{t=0} = \begin{pmatrix}
\omega ^0\\ \rho^0
\end{pmatrix}
\end{equation}
in terms of the matrix linear operator
\begin{equation}\label{eq:LinSSC}
\L=\begin{pmatrix}
\l(v(y) - v''(y)\D^{-1}\r)\p_x & \mathfrak{g}\p_x \\
\p_y\bar{\rho}\D^{-1}\p_x & v(y)\p_x
\end{pmatrix}.
\end{equation}

The asymptotic stability of the Boussinesq equations \eqref{eq:EBintro} near stably stratified monotonic shear flows began with the works of Howard \cite{Howard} and Miles \cite{Miles}, who addressed the spectral stability of the \emph{constantly stratified Couette flow}, which is given by
\begin{align}\label{eq:couette}
\overline{v}_{\mathrm C}= (y,0), \qquad \overline{\rho}_{\mathrm C} = 1 - \beta^2 y/\mathfrak g.
\end{align}
Their celebrated Miles--Howard stability criterion ensures the absence of unstable eigenvalues of the linearised operator $\L$ if $ \beta^2 > \frac14$ throughout the fluid domain, while such eigenvalues may appear~\cite{drazin1958stability} for $ \beta^2 < \frac14$. In the physical literature,  asymptotic stability was extensively studied in \cite{brown1980algebraic, case1960stability, chimonas1979algebraic, dikii1960stability, farrell1993transient, hoiland1953dynamic, kuo1963perturbations}, albeit with disparate conclusions. Later, Hartman \cite{Hartman} solved \eqref{eq:EBintro} in the periodic channel $\T\times \R$ nearby \eqref{eq:couette}, and conjectured that the vorticity should be unstable in~$L^2$, growing as $\sqrt{t}$ for $\beta^2>\frac14$.

Hartman's approach was mathematically justified in the recent paper \cite{YL18}, which proves linear inviscid damping of density and velocity fields close to \eqref{eq:couette}. There, the authors use Fourier analysis and hypergeometric functions to show that the time-decay rates obtained depend on the strength of the stratification $ \beta^2$: they are uniform for $ \beta^2>\frac14$,  experience a logarithmic loss for $\beta^2=\frac14$, and are an affine function of $\sqrt{\frac12- \b^2}$ for $ \beta^2\in (0,\frac14)$, degenerating as $ \beta^2\rightarrow 0$. 

Shortly afterwards, a variational–energetic method was introduced in \cite{BCZD22} to address the same linear problem in the Miles–Howard regime $\beta^{2} > \tfrac{1}{4}$ and to confirm Hartman’s conjectured growth of the vorticity. The method was later refined to treat the full nonlinear equations: in \cite{BBCZD21}, the authors proved the asymptotic stability of \eqref{eq:couette} over a finite but long time interval, during which the velocity field and density exhibit constant inviscid damping while the vorticity grows in accordance with Hartman's predictions. See also the recent review \cite{bianchini2022symmetrization}.

The techniques developed in all the above results rely on the unboundedness of the vertical variable $y \in \R$ and on the  Fourier-analytic tools available in that setting. As such, the case of the periodic channel \( \T \times [0,2] \) requires a completely different approach. In~\cite{CZN25chan}, the authors studied the linear asymptotic stability of the Couette flow~\eqref{eq:couette} with $\beta^{2} > 0$ by means of the \emph{limiting absorption principle}, which consists in capturing refined resolvent estimates near the spectrum of the linearized operator. 

In the periodic channel, the spectrum of the linearized operator \( \L \) exhibits several features absent in the periodic strip. For $\beta^{2} > \tfrac{1}{4}$, \cite{CZN25chan} proves the existence of two infinite sequences of neutral eigenvalues converging to the endpoints of the essential spectrum of \( \L \). For $\beta^{2} = \tfrac{1}{4}$, the spectrum of \( \L \) is purely continuous, with no discrete (stable, unstable, or neutral) eigenvalues. For $\beta^{2} < \tfrac{1}{4}$, there are no neutral eigenvalues, but the Miles--Howard criterion no longer applies, and unstable discrete eigenvalues may appear. Note that this spectral structure is not favorable for the nonlinear problem, since the presence of neutral or unstable eigenvalues would complicate the analysis. Indeed, linearly unstable eigenvalues may lead to nonlinear instabilities, while neutral eigenvalues, though they may not cause growth, might still prevent decay of the velocity field.

\subsection{Main results}
The goal of this article is to understand in depth the linear asymptotic stability of stratified monotone shear flows, beyond the Couette flow with a linear density stratification. 

The spectral properties of the linearised Boussinesq operator $\L$ at a general shear-type stationary solution need not, in general, be favourable for the stability of the shear flow. Indeed, if $\cJ(y)< \frac14$ in a non-empty subset of $[0,2]$,  the Miles--Howard stability criterion no longer applies, so it is not clear whether unstable eigenvalues exist or not. Likewise, if there are points where $\cJ(y) > \frac14$ the existence of neutral eigenvalues is not ruled out either.

Therefore, we will need several natural assumptions on the equilibrium solution~\eqref{eq:Strat} that we want to perturb. We will formulate these hypotheses in terms of the background shear flow~$v(y)$ and the functions
\begin{align*}
\mathrm{P}(y) := -\partial_y\overline{\rho}, \qquad \P(y) := \g \mathrm{P}(y),\qquad \cJ(y) := \frac{\P(y)}{v'(y)^2}.
\end{align*}
Physically, $\mathrm{P}$ describes the density stratification and $\cJ$ is the local Richardson number.

Specifically, we will need two sets of hypotheses. Firstly, we assume that 
\begin{itemize}
\item H$\P$: $\P(y) \in C^2([0,2])$ with $\P(y)=0$ for all $y\in[0,\vartheta_1]\cup [\vartheta_2,2]$ and  $\P(y)>0$  for all $y\in(\vartheta_1,\vartheta_2)$, for some $0 < \vartheta_1 < \vartheta_2 < 1$.
\item H$v$: $v(y)\in C^4([0,2])$ with $0<c_0\leq v'(y) \leq C_0$ for some $0<c_0<C_0$ and $\text{supp }v'' \subseteq (\vartheta_1, \vartheta_2)$.
\end{itemize}
Second, we also impose the following:
\begin{itemize}
\item[H1] There holds $c_0^{-1}\Vert v''' \Vert_{L^\infty(0,2)} + \frac12 c_0^{-2}\Vert \P'' \Vert_{L^\infty (0,2)}< 1$. 
\item[H2] There exists a unique $\tilde{y}\in [0,2]$ such that ${\cJ}(\tilde{y}) = \max_{y\in[0,2]}{\cJ}(y)$. Moreover,
\begin{itemize}
\item[H2.1] On $[0,\tilde{y}]$,  $v''(y) \leq 0$ and $\cJ'(y) \geq 0$.
\item[H2.2] On $[\tilde{y},2]$,  $v''(y) \geq 0$ and $\cJ'(y) \leq 0$.
\item[H2.3] The equation $\cJ(y) = \frac14$ has exactly two distinct solutions in $[0,2]$.
\end{itemize}
\item [H3] The linearised Euler operator $\L_E\omega:=v(y)\p_x\omega - v''(y)\D^{-1}\p_x\omega$ has no eigenvalue nor embedded eigenvalue.
\end{itemize}

\begin{remark}
{The condition $c_0^{-1}\Vert v''' \Vert_{L^\infty(0,2)}< 1$ suffices to show that $\L_E$ has no embedded eigenvalues, but it does not rule out the existence of discrete eigenvalues outside the continuous spectrum. Thus, \textbf{H3} is not superfluous next to \textbf{H1}}.
Assumption H2.3 ensures that there are two distinct regions where $\cJ\approx \frac14$ and exactly two roots of the equation $\cJ(y)=\frac14$. This is a simplification that we performed to keep the focus on the strong and weak regions. Indeed, if $\cJ(y) = \frac14$ on an interval we can use the ideas presented in \cite{CZN25chan} to show that Theorems~\ref{thm:mainID} and~\ref{thm:growth} below remain true there: the Green's function in such a region is still well-defined with analogous estimates to those presented in Section \ref{sec:GreensM} and the homogeneous solutions constructed for the spectral theory in Section \ref{subsec:mild} are defined so that they already contain the appropriate logarithmic corrections.
\end{remark}

The starting point of our analysis is the observation that, under these hypotheses, the linearised operator has purely continuous spectrum:

\begin{bigtheorem}\label{thm:spectrumL}
Assume that hypotheses H$\P$, H$v$ and H1--H3 hold. Then, the spectrum of $\L$ is  purely continuous; there are no eigenvalues or embedded eigenvalues. 
\end{bigtheorem}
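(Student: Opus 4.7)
The plan is to reduce the spectral problem to an ODE via Fourier decomposition in $x$. Since $\L$ commutes with $\p_x$, I would write $\omega = \sum_{k\neq 0}\omega_k(y)e^{ikx}$ and likewise for $\rho$, working on each non-zero Fourier mode separately. For a given $k\neq 0$, set $\lambda = ikc$. The second component of the eigenvalue system yields $\rho_k = \bar\rho'(y)\psi_k/(v(y)-c)$ whenever $c\notin[v(0),v(2)]$; substituting into the first component and using $\omega_k=(\p_y^2-k^2)\psi_k$ collapses the system to the classical Taylor--Goldstein equation
\begin{equation*}
\psi_k''(y) - k^2\psi_k(y) - \frac{v''(y)}{v(y)-c}\psi_k(y) + \frac{\P(y)}{(v(y)-c)^2}\psi_k(y) = 0, \qquad \psi_k(0)=\psi_k(2)=0.
\end{equation*}
The theorem then reduces to proving that, for every $k\neq 0$ and every $c\in\C$, this singular boundary value problem admits only the trivial $H^1_0$ solution.

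For \emph{discrete} eigenvalues ($c\notin[v(0),v(2)]$), I would combine Howard's semicircle theorem, which localises any unstable $c$ inside a half-disk of diameter $v(2)-v(0)$, with an energy identity obtained from the Miles--Howard substitution $\psi_k=(v-c)^{1/2}\phi$. The imaginary part of the resulting identity reads $\mathrm{Im}(c)\int_0^2 [|\phi'|^2 + k^2|\phi|^2 + ((v')^2/4-\P)|\phi|^2/|v-c|^2]\,dy = 0$. The standard Miles--Howard case $\cJ\leq\tfrac14$ is violated in the strongly stratified region, so one cannot conclude directly; I would instead exploit H2 to split the integral about $\tilde{y}$ and argue that the unfavourable contribution, localised where $\cJ>\tfrac14$, is balanced by the positive one via H1 (which controls the sizes of $v'''$ and $\P''$, hence the geometry of the critical layers) and the fact that Howard's theorem pushes $c$ close to the real axis. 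For real $c$ outside $[v(0),v(2)]$ the coefficients are smooth and the stratification potential is bounded; a shooting/Wronskian argument, together with a continuous deformation $\P\mapsto t\P$ with $t\in[0,1]$ starting at the Euler end where H3 applies, rules out any real eigenvalue off the continuous spectrum.

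The main obstacle is ruling out \emph{embedded} eigenvalues at real $c\in[v(0),v(2)]$, where the Taylor--Goldstein equation has a regular singular point at $y_c = v^{-1}(c)$ with Frobenius exponents $\mu_\pm = \tfrac12\pm\sqrt{\tfrac14-\cJ(y_c)}$, falling into the four qualitative regimes described in the introduction. If $y_c\notin[\vartheta_1,\vartheta_2]$, hypothesis H$\P$ makes $\P$ vanish in a neighbourhood of $y_c$ and any non-trivial $H^1_0$ solution would furnish an embedded eigenfunction of $\L_E$, directly contradicting H3. When $y_c\in(\vartheta_1,\vartheta_2)$, I would construct two linearly independent local Frobenius solutions at $y_c$ with the regime-appropriate indicial behaviour, continue them to the endpoints $y=0,2$, and analyse the Dirichlet connection problem. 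The weighted Wronskian $W(y)=(v-c)(\psi_1\psi_2'-\psi_1'\psi_2)$ has a definite monotonicity on each of $[0,\tilde{y}]$ and $[\tilde{y},2]$ by H2.1--H2.2; H2.3 guarantees only two transitions of $\cJ-\tfrac14$ across zero, making the global connection analysis finite; and H1 supplies the quantitative non-degeneracy preventing simultaneous vanishing of the connection coefficients at $y=0$ and $y=2$. The critical threshold $\cJ(y_c)=\tfrac14$ calls for logarithmic Frobenius solutions, handled by the same scheme along the lines of the mildly stratified analysis referenced in the remark after Theorem~\ref{thm:spectrumL}.
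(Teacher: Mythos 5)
Your outline correctly identifies the Fourier reduction to the Taylor--Goldstein boundary value problem and the three-way case split (complex $c$, real $c$ off the range, embedded $c$). Your embedded-eigenvalue discussion is on the right track: the paper also constructs Frobenius-type fundamental solutions with exponents $\mu_\pm = \tfrac12 \pm \sqrt{\tfrac14-\cJ(y_c)}$ (realised as $\phi_\sr,\phi_\s$ in Section~\ref{sec:homTG}, with a $\log$ correction at $\cJ=\tfrac14$), uses the monotonicity assumptions H2.1--H2.2 to pin down the sign and non-vanishing of these solutions across $[0,2]$ (Lemma~\ref{lemma:nonzerophisrphis}), and concludes that any would-be eigenfunction must carry a nonzero coefficient on the singular branch and hence fails to lie in $H^1$ near $y_c$. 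Your ``weighted Wronskian'' heuristic is not exactly what the paper does, but it expresses the right underlying mechanism.

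The genuine gap is your treatment of complex (in particular unstable and genuinely non-real neutral) eigenvalues. The Miles--Howard substitution $\psi=(v-c)^{1/2}\phi$ gives an identity in which the potential $((v')^2/4 - \P)/|v-c|^2$ changes sign wherever $\cJ>\tfrac14$, and in that situation the identity yields no contradiction for $\Im(c)\neq 0$ --- the paper explicitly acknowledges in the introduction that the Miles--Howard criterion ``no longer applies'' once $\cJ<\tfrac14$ on any non-empty set, and that unstable eigenvalues are not ruled out by such considerations. Your suggestion that H1 and the Howard semicircle theorem would ``balance'' the bad region is not substantiated: H1 bounds $\|v'''\|_\infty$ and $\|\P''\|_\infty$ (used in the paper only to control the quotients $v''/(v-\lambda)$ and $\P/(v-\lambda)^2$ in energy estimates for spectral parameters \emph{outside} the stratified range), and the semicircle theorem merely confines $c$ to a half-disk, it does not force $\Im(c)$ small. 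What the paper does instead for $\Re(\lambda)\in[v(\vartheta_1),v(\vartheta_2)]$, $\Im(\lambda)\neq 0$ is a topological deformation argument (Proposition~\ref{prop:noimagspectrum}): it introduces the interpolation $\tilde{\g}\in[0,\g]$ in front of $\P$, observes by Rouch\'e's theorem that the set of $\tilde{\g}$ admitting an eigenvalue in the strip is open, and shows it is also closed by a careful limiting analysis of the homogeneous Taylor--Goldstein solutions (including the fragile and mild regimes, Sections~\ref{sec:homTG},~\ref{sec:spectrum},~\ref{sec:fragile}); since $\tilde{\g}=0$ is eigenvalue-free by H3, the whole interval is. This is a qualitatively different --- and essential --- piece of machinery that your plan omits, and without it the complex-eigenvalue case does not close.

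A smaller slip: for real $c\notin[v(0),v(2)]$, the paper does not invoke the deformation at all; Propositions~\ref{prop:rangerealcomplementresolvent}--\ref{prop:nonstratifiedresolvent} handle the full strip $\Re(\lambda)\notin[v(\vartheta_1),v(\vartheta_2)]$ by a direct energy estimate using H1, so that part of your sketch is heavier than necessary.
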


   We emphasize that Theorem~\ref{thm:spectrumL} is of independent interest, apart from the stability estimates that we shall state next. Indeed, characterising the spectrum of the linearised Boussinesq equations near stably stratified monotone shear flows has attracted attention in the physics community for quite some time, and despite several partial results (see~\cites{Hirota,Miles,Howard} and~\cite[Section~3.2.2]{Yaglom}), no unifying description of the full spectrum had been available. Under assumptions~\textbf{H1}--\textbf{H3}, Theorem~\ref{thm:spectrumL} provides the first rigorous and complete characterization of the spectral properties of the linearized operator. On the other hand, while~\textbf{H1}--\textbf{H3} are sufficient to ensure the absence of discrete or embedded eigenvalues of \( \L \),  in Sections~\ref{sec:spectrum} and~\ref{sec:LAPstrat} we shall provide further insights on the main structural conditions required for Theorem~\ref{thm:mainID} to hold.

Once we have made sure that the spectrum of $\L$ might lead to the spectral dispersive effects that underlie inviscid damping, we turn to the analysis of the asymptotic stability of the linearised equations. For this, since it is already known from the analysis of the Couette flow~\cites{BCZD22, YL18, CZN25chan, CZN23strip} that the Richardson number controls the dynamics of the linearised equations, it is necessary to tag the positions where there is a qualitative change in the behavior of the local Richardson number.
Consequently, let us divide the interval $[0,2]$ according to the behavior of~$\cJ(y)$. For this, we fix some $0< \tilde\delta < \frac18$ and note that, as a consequence of hypothesis~H2, there exists $\varpi_1, \varpi_2\in (0,2)$ and $\varpi_{n,j}\in (0,2)$, for $n,j=1,2$ such that:
\begin{enumerate}
\item $\vartheta_1 < \varpi_{1,1} <  \varpi_{1} < \varpi_{1,2} < \varpi_{2,1} < \varpi_{2}  < \varpi_{2,2} < \vartheta_2$.
\item $\cJ(\varpi_1) = \cJ(\varpi_2) =\frac14$.
\item $\cJ(y) >\frac14$ for all $y\in (\varpi_{1}, \varpi_2)$.
\item $0 <\cJ(y) < \frac14 $ for all $y\in (\vartheta_1, \varpi_1) \cup (\varpi_2, \vartheta_2)$.
\item $|\cJ(y)-\frac14| \leq \tilde\delta$,  for all $y\in (\varpi_{1,1}, \varpi_{1,2}) \cup (\varpi_{2,1}, \varpi_{2,2})$.
\end{enumerate}
We will refer to the regions defined by these positions as {\em non-stratified} ($[0,\vartheta_1] \cup [\vartheta_2,2]$), {\em weakly stratified} ($(\vartheta_1, \varpi_{1,1}) \cup (\varpi_{2,2}, \vartheta_2)$), {\em mildly stratified} ($[\varpi_{1,1}, \varpi_{1,2}] \cup [\varpi_{2,1}, \varpi_{2,2}]$) and {\em strongly stratified} ($(\varpi_{1,2}, \varpi_{2,1})$).

Using this notation, we are ready to present our main result , which is the linear inviscid damping of the perturbed velocity and density, with time-decay rates that depend on the local Richardson number $\cJ(y)$. To this end, we define 
\begin{align*}
 \mu(y) := \Re \sqrt{\frac14 - \cJ(y)} .
\end{align*}

\begin{bigtheorem}\label{thm:mainID}
Assume that the initial data $(\o^0,\rho^0)$ is compactly supported inside $(\vartheta_1, \vartheta_2)$ and that hypotheses {H$\P$, H$v$} and H1--H3 hold. Writing $\rho^0(y) = \mathrm{P}(y)\varrho^0(y)$, we also assume that
\begin{equation}\label{eq:zeroxave}
\int_\T \o^0(x,y)\d x = \int_\T\varrho^0(x,y)\d x =0.
\end{equation}
Let $\v=(v^x,v^y)=\nabla^\perp\psi=(-\p_y\psi,\p_x\psi)$ be the corresponding velocity field. There exists a constant $C>0$ such that, for all $t\geq 1$, the solution decays as follows:
\begin{itemize}
\item {\em Non-stratified region:} for $y\in [0,\vartheta_1] \cup [\vartheta_2,2]$,
\begin{align}
\Vert v^x(t,x,y) \Vert_{L^2_x}    &\leq C t^{-1} \left(  \Vert \omega^0 \Vert_{H^{1/2}_x H^2_y} + \Vert \varrho^0 \Vert_{H^{1/2}_x H^3_y}\right), \label{eq:decayvxnonstrat} \\
\Vert v^y(t,x,y) \Vert_{L^2_x}  &\leq C t^{-2}\left(  \Vert \omega^0 \Vert_{H^{1/2}_x H^3_y} + \Vert \varrho^0 \Vert_{H^{1/2}_x H^4_y}\right),\label{eq:decayvynonstrat}\\
\Vert \rho(t,x,y) \Vert_{L^2_x} &= 0. \label{eq:decayrhononstrat} 
\end{align}

\item {\em Weakly stratified region:} for $y\in (\vartheta_1, \varpi_{1,1}) \cup (\varpi_{2,2}, \vartheta_2)$,
\begin{align}
\Vert v^x(t,x,y) \Vert_{L^2_x}  & \leq C t^{-\frac12+\mu(y)} \left(  \Vert \omega^0 \Vert_{H^{1/2}_x H^2_y} + \Vert \varrho^0 \Vert_{H^{1/2}_x H^3_y}\right), \label{eq:decayvxweak} \\
\Vert v^y(t,x,y) \Vert_{L^2_x}  &\leq C t^{-\frac32+\mu(y)}\left(  \Vert \omega^0 \Vert_{H^{1/2}_x H^3_y} + \Vert \varrho^0 \Vert_{H^{1/2}_x H^4_y}\right),\label{eq:decayvyweak}\\
\Vert \rho(t,x,y) \Vert_{L^2_x} &\leq C t^{-\frac12+\mu(y)} \left(  \Vert \omega^0 \Vert_{H^{1/2}_x H^2_y} + \Vert \varrho^0 \Vert_{H^{1/2}_x H^3_y}\right). \label{eq:decayrhoweak} 
\end{align}

\item {\em Mildly stratified region:} for $y\in [\varpi_{1,1}, \varpi_{1,2}] \cup [\varpi_{2,1}, \varpi_{2,2}]$,
\begin{align}
\Vert v^x(t,x,y) \Vert_{L^2_x}  & \leq C t^{-\frac12+\mu(y)}( 1 + \log t) \left(  \Vert \omega^0 \Vert_{H^{1/2}_x H^2_y} + \Vert \varrho^0 \Vert_{H^{1/2}_x H^3_y}\right), \label{eq:decayvxmild} \\
\Vert v^y(t,x,y) \Vert_{L^2_x}  &\leq C t^{-\frac32+\mu(y)}(1+\log t)  \left(  \Vert \omega^0 \Vert_{H^{1/2}_x H^3_y} + \Vert \varrho^0 \Vert_{H^{1/2}_x H^4_y}\right),\label{eq:decayvymild}\\
\Vert \rho(t,x,y) \Vert_{L^2_x} &\leq C t^{-\frac12+\mu(y)} ( 1 + \log t) \left(  \Vert \omega^0 \Vert_{H^{1/2}_x H^2_y} + \Vert \varrho^0 \Vert_{H^{1/2}_x H^3_y}\right). \label{eq:decayrhomild} 
\end{align}

\item {\em Strongly stratified region:} for $y\in (\varpi_{1,2}, \varpi_{2,1})$,
\begin{align}
\Vert v^x(t,x,y) \Vert_{L^2_x}  & \leq C t^{-\frac12}\left(  \Vert \omega^0 \Vert_{H^{1/2}_x H^2_y} + \Vert \varrho^0 \Vert_{H^{1/2}_x H^3_y}\right), \label{eq:decayvxstrong} \\
\Vert v^y(t,x,y) \Vert_{L^2_x}  &\leq C t^{-\frac32}\left(  \Vert \omega^0 \Vert_{H^{1/2}_x H^3_y} + \Vert \varrho^0 \Vert_{H^{1/2}_x H^4_y}\right),\label{eq:decayvystrong}\\
\Vert \rho(t,x,y) \Vert_{L^2_x} &\leq C t^{-\frac12} \left(  \Vert \omega^0 \Vert_{H^{1/2}_x H^2_y} + \Vert \varrho^0 \Vert_{H^{1/2}_x H^3_y}\right). \label{eq:decayrhostrong} 
\end{align}
\end{itemize}
\end{bigtheorem}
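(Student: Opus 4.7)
The plan is to combine the spectral representation of the semigroup $e^{-t\L}$ provided by Theorem \ref{thm:spectrumL} with a refined limiting absorption principle for the Taylor--Goldstein resolvent equation. Since $\L$ has purely continuous spectrum lying on the imaginary axis, Stone's formula yields
\begin{equation*}
\begin{pmatrix}\omega(t)\\ \rho(t)\end{pmatrix} = \frac{1}{2\pi i}\int_{\sigma(\L)} e^{-t\lambda}\bigl[R(\lambda+i0)-R(\lambda-i0)\bigr]\begin{pmatrix}\omega^0\\ \rho^0\end{pmatrix}d\lambda.
\end{equation*}
Expanding in Fourier series in $x$ and rescaling $\lambda=-ikc$ for each wavenumber $k\neq 0$, the density component can be eliminated algebraically from the resolvent system, reducing it to the inhomogeneous Taylor--Goldstein equation
\begin{equation*}
\psi_k''-k^2\psi_k-\frac{v''(y)}{v(y)-c}\psi_k+\frac{\P(y)}{(v(y)-c)^2}\psi_k=F_k(y;c)
\end{equation*}
with Dirichlet boundary conditions, where $F_k$ carries the initial data $(\omega^0_k,\rho^0_k)$. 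Time decay of $(\omega,\rho)$ will be read off from the behaviour of the Green's function $G_k(y,y';c)$ as $c$ approaches the interval $v([0,2])$ from above and below.

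The next step is to construct $G_k$ from two homogeneous solutions $\phi_\pm$ whose local structure near the critical layer $y_c=v^{-1}(\Re c)$ is dictated by the Frobenius indicial equation $\mu(\mu-1)+\cJ(y_c)=0$, with roots $\mu_\pm(y_c)=\tfrac12\pm\sqrt{\tfrac14-\cJ(y_c)}$. This yields three qualitatively different local models: real distinct exponents in the weakly stratified regime, a complex conjugate pair in the strongly stratified regime, and a degenerate pair with logarithmic corrections in the mildly stratified regime. I would obtain $\phi_\pm$ as perturbations of the frozen-coefficient profiles $(v(y)-c)^{\mu_\pm(y_c)}$ via a Volterra-type fixed-point iteration, match them to the Dirichlet boundary conditions at $y=0,2$, and invoke Theorem \ref{thm:spectrumL} together with hypothesis H3 to rule out the vanishing of their Wronskian as $\Im c\to 0^\pm$. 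The resulting resolvent jump $[R(\lambda+i0)-R(\lambda-i0)]$ then exhibits a singularity of order $|c-v(y)|^{\mu_-(y)-1}$ throughout the weak and strong regimes, with an additional logarithmic factor persisting across the mild region.

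The time-decay estimates follow by viewing the spectral integral as an oscillatory integral in $c$ with phase $e^{-ikct}$. Van der Corput and stationary-phase bounds against the singular amplitude $|c-v(y)|^{\mu_-(y)-1}$ yield the pointwise rate $t^{-1/2+\mu(y)}$ for $v^x=-\p_y\psi$, with the expected logarithmic losses in the mild region and the uniform rate $t^{-1/2}$ in the strong region, where $\mu_-(y)$ has a fixed imaginary part and the amplitude is effectively bounded. The additional $t^{-1}$ gain in the estimates for $v^y=\p_x\psi$, which reads $ik\psi_k$ in Fourier, comes from a single integration by parts in $c$, justified by the fact that $\psi_k$ is one order smoother than $\p_y\psi_k$ at the critical layer; the compact support of $(\omega^0,\rho^0)$ inside $(\vartheta_1,\vartheta_2)$ together with the zero-average hypothesis \eqref{eq:zeroxave} ensures that the boundary contributions vanish. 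Decay of $\rho$ follows from the algebraic identity $\rho_k=\mathrm{P}\psi_k/(v-c)-\rho^0_k/(ik(v-c))$ obtained by inverting the density equation, which transfers the $\psi_k$ estimates to $\rho$ with the same rates inside the stratified region, while \eqref{eq:decayrhononstrat} follows immediately in the non-stratified region, where $\mathrm{P}\equiv 0$ and the density equation reduces to pure transport of the identically zero initial datum $\rho^0=\mathrm{P}\varrho^0$.

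The principal technical obstacle is the uniform, quantitative treatment of the mild region where $\cJ(y)\approx\tfrac14$ and the two Frobenius exponents collide. The explicit hypergeometric representations available for the Couette flow with linear stratification in \cite{YL18,CZN25chan} are no longer available here, so I would instead treat the Taylor--Goldstein operator in the mild strip as a perturbation of its critical-coefficient counterpart with $\cJ\equiv\tfrac14$, using $\sqrt{\tfrac14-\cJ(y)}$ as the interpolation parameter, and carefully track how the logarithmic corrections in $\phi_\pm$ propagate through the Wronskian, the Green's function, and ultimately the inverse Laplace transform. A further delicate point is that the critical layer $y_c$ varies with the spectral parameter and may coincide with the observation point $y$, so the Green's function estimates must be genuinely pointwise in $y$, and the four regimes must match continuously across the transition points $\varpi_{1,1},\varpi_{1,2},\varpi_{2,1},\varpi_{2,2}$.
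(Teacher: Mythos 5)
Your overall strategy — Dunford/Stone spectral representation, Fourier decomposition in $x$, reduction to the Taylor--Goldstein equation, construction of the Green's function from Frobenius-type homogeneous solutions with indicial roots $\tfrac12\pm\sqrt{\tfrac14-\cJ(y_c)}$, and oscillatory-integral estimates in $c$ — is the same as the paper's. Two of the key technical ingredients are missing, however, and without them the argument does not close.

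First, the reduction to the inhomogeneous Taylor--Goldstein problem is not merely ``eliminating $\rho$ algebraically.'' After eliminating $\rho_k$ one obtains a right-hand side containing $-\g\rho_k^0(y)/(v(y)-c)^2$, a \emph{quadratic} singularity at the critical layer. The singular branch of the RTG Green's function behaves like $\xi^{1/2-\gamma_0}$ locally, so its action on $\xi^{-2}$ produces $\xi^{-3/2-\gamma_0}$, which is not integrable uniformly as $\Im c\to 0$: this source term is critically singular and cannot be treated perturbatively by the Green's function. The paper resolves this by the change of unknown $\varphi_{k,\ep}^\pm=\psi_{k,\ep}^\pm+\varrho_k^0$ (crucially using that $\rho^0=\P\varrho^0$ is supported in the stratified region), which replaces the $\g\rho_k^0/(v-c)^2$ forcing by $\Delta_k\varrho_k^0$, trading regularity of the initial datum for integrability of the source. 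Without this regularization the Green's-function construction you propose does not make sense at the critical threshold.

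Second, the claimed ``additional $t^{-1}$ gain by a single integration by parts in $c$'' silently requires a uniform-in-$\ep$ control on the spectral-parameter derivative $\partial_{y_0}\varphi_{k,\ep}^\pm$. Differentiating the resolvent equation in $y_0$ raises the order of the most singular coefficient from $(v-v(y_0))^{-2}$ to $(v-v(y_0))^{-3}$, and the Green's function cannot absorb this cubic singularity directly when acting on a function that is itself only $X_k$-regular. The paper handles this by commuting with the ``good derivative'' $\partial_y+\partial_{y_0}$, for which the commutator with the Taylor--Goldstein operator produces only better-vanishing coefficients, and then recovering $\partial_{y_0}$ from $\partial_y$. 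Your sketch never introduces this mechanism, and the van der Corput / stationary-phase reasoning you invoke needs precisely the pointwise bounds on $\partial_{y_0}\varphi$ and $\partial_{y_0}^2\varphi$ near the critical layer that this machinery provides (Propositions \ref{prop:Xkpynotvarphi}, \ref{prop:Xkpynotpynotvarphi} in the paper). The bulk of the paper's work lies in establishing exactly these bounds, in the fragile limit $\P(y_0)\to 0$ as well as in the $\gamma_0\to 0$ mild limit.

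Two further, smaller points. Your identity for $\rho_k$ has a spurious $(ik)^{-1}$ and a sign error relative to \eqref{eq:rhomep}; the correct relation is $\rho_{k,\ep}^\pm=\bigl(\rho^0_k+\P(y)\psi_{k,\ep}^\pm\bigr)/(v(y)-v(y_0)\pm i\ep)$. And invoking Theorem \ref{thm:spectrumL} to rule out vanishing of the Wronskian is not immediately circular, but it conceals most of the work: the non-vanishing \emph{limiting} Wronskian (including the fragile endpoints $y_0\to\vartheta_n$ where $\P(y_0)\to 0$ and the singular Frobenius solution degenerates) has to be proven directly from the positivity and monotonicity hypotheses H1--H2, as in Sections~\ref{sec:spectrum}--\ref{sec:fragile}.
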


\begin{remark}
It is natural to wonder where the obtained time-decay rates come from, and whether they are optimal. In Subsection~\ref{subsec:heuristics} below, we have included a relatively short heuristic argument that, in a non-rigorous way, suggests that these should be the optimal exponents. It is therefore reasonable to expect that the decay estimates of Theorem~\ref{thm:mainID} (and the derivative bounds of Theorem~\ref{thm:growth} below) are likely sharp. Perhaps this could be easily seen in the case of  linearised equations on the periodic strip which admit explicit solutions~\cite{CZN23strip}; in fact, the quantity $\Vert \omega \Vert_{L^2} + \Vert \nabla\rho \Vert_{L^2}$ is known to grow like $t^\frac12$ for the full non-linear problem~\cite{BBCZD21}.
\end{remark}

Several comments are now in order. First, note that the zero average condition \eqref{eq:zeroxave} is standard in all linear inviscid damping results, since the averages of the initial data are otherwise preserved by the linear evolution of the equations. The inviscid damping decay rates \eqref{eq:decayvxnonstrat}--\eqref{eq:decayrhostrong} are pointwise in $y\in[0,2]$ and describe the asymptotic stability of the stably stratified shear flow $(\overline\v, \overline\rho)$. The velocity field and density at $y\in [0,2]$ decay in time with rates that crucially depend on the local Richardson number $\cJ(y)$ through $\mu(y)$. In the strongly stratified regime $\mu(y) = 0$ and the decay rates are faster than in the weakly stratified regime, where $\mu(y) >0$. The transition region between the strongly stratified region and weakly stratified region experiences a logarithmic loss. Because of the compact support assumptions, in the non-stratified region the equations locally reduce to the Euler equations of motion, and the usual inviscid-damping decay rates for the velocity field are recovered. Likewise, the compact support of the density perturbation is preserved: due to the support assumptions on $\P(y)$ and $\rho^0(y)$, no density is transported into the non-stratified regime.

Second, note that, as is standard in inviscid damping estimates, to quantify the decay of the solutions we need bounds on fairly high Sobolev norms of the initial datum. This is because a key underlying mechanism behind the decay estimates is the trade off of regularity of the initial data for decay in time, so that sufficiently smooth initial data is needed to reach the desired decay rates. For instance, the inviscid damping for $\Vert v^y(t,x,y) \Vert_{L^2_x}$ needs $\omega^0\in H^{1/2}_x H^3_y$ and $\varrho^0\in H^{1/2}_x H^4_y$. These effects, which will also be apparent in other ensuing theorems, will be discussed in Section \ref{sec:growth}.

Third, let us say a few words about the support condition, which is crucially used in the proofs. Motivated by the non-linear inviscid damping results for the Euler equations in the periodic channel, our goal is to ensure that the linear evolution of the vorticity and density remains compactly supported away from the physical boundaries.  Unlike the linearised Euler equations, we observe from \eqref{eq:EBintro} that having $v''(y)$ and $\omega^0$ compactly supported is not sufficient to ensure that $\omega$ remains compactly supported inside the periodic channel. Indeed, the vorticity perturbation is now transported by $v(y)\partial_x$ but also forced by $-\g \partial_x\rho$. In turn, the density $\rho$ is transported by $v(y)\partial_x$ and crucially forced by $-\partial_y \overline{\rho}\partial_x\psi$, which is a non-local operator and in general it will not be zero near the boundary. Therefore, for $\rho$ not to be forced near the boundary we need $-\partial_y \overline{\rho}$ to be compactly supported in the channel. This is the main reason behind the choice of our density profile as in H$\P$. Then, $\rho(t)$ is compactly supported in the channel if $\rho^0$ itself is so. Consequently,  $\omega(t)$ is also compactly supported once $\omega^0$ is so as well.

Once $\P(y)$ is supported in $[\vartheta_1, \vartheta_2]$, it is natural to assume that the perturbed density is in fact compactly supported inside $(\vartheta_1,\vartheta_2)$, since otherwise it would be the main leading term responsible for the buoyancy forces outside $[\vartheta_1,\vartheta_2]$ and the dynamics would be much more complicated. This rather heuristic justification becomes transparent in Section \ref{sec:sketch}, where $\rho^0$  is used to regularize the equation governing the evolution of the velocity field.

Since our main goal in this paper is to understand the effects of stratification, we likewise assume that $\text{supp } \omega^0 \subset (\vartheta_1,\vartheta_2)$, which is a simplification of the more essential condition $\text{supp } \omega^0 \subset (0,2)$. While the former ensures that the vorticity remains supported in the stratified region, which is enough for our purposes, the later allows vorticity in the non-stratified region, and away from the physical boundaries. As we shall see in Section \ref{sec:nonstrat}, the dynamics of the linearised equations outside $(\vartheta_1,\vartheta_2)$ are closely related to those of the linearised Euler equations nearby a monotone shear flow: the relevant inviscid damping \eqref{eq:decayvxnonstrat}-\eqref{eq:decayvynonstrat} and vorticity scattering can be obtained for initial vorticity supported outside the stratified region following the ideas of \cites{Jia20, JiaGev20}.

{As a minor remark, we mention that assumptions \textbf{H1} and \textbf{H2.3} cannot hold simultaneously if they are posed in the shorter yet more usual interval $[0,1]$, the main reason being that $\P$ does not grow sufficiently fast in $[0,1]$ so that $\cJ(y) < \frac14$ for all $y\in[0,1]$. Hence, there are no strongly nor mildly stratified regions, and the situation is less rich. Of course, this is not a limitation of the method of proof:  the analysis remains valid and the statements are essentially the same. In particular, Theorem~\ref{thm:spectrumL} remains true on $[0,1]$ and the estimates of Theorems~\ref{thm:mainID}-\ref{thm:growthlocalweak} now apply in the non-stratified region $[0,\vartheta_1]\cup[\vartheta_1,1]$ and the weakly stratified region $(\vartheta_1,\vartheta_2)$. The size of the channel and a weak Richardson number $\cJ < \frac14$ influence the stability properties of a family of background steady states \eqref{eq:Strat}, which has been recently addressed in \cite{bianchini2025ill}.}

{Therefore, for concreteness, we have opted to work in the broader channel $\T\times[0,2]$, which admits steady states \eqref{eq:Strat} for which \textbf{H1}--\textbf{H3} holds and the strongly stratified region is non-empty. Such an example would be the Couette flow $v_C(y) =y$ paired with a suitable compactly supported stratification $\P(y)\in C^2$ such that $\Vert \P''\Vert_{L^\infty(0,1)}\leq 2$, non-decreasing in $[0,1]$, symmetric with respect to $y=1$ and with $\P(1)>\frac14$. We conclude by noting that increasing the size of the channel from $[0,2]$ to $[0,\mathsf{H}]$ for some $\mathsf{H}>2$ is automatic: this enlarges the family of steady states \eqref{eq:Strat} for which \textbf{H1}--\textbf{H3} holds, the strongly stratified region ($\cJ>\frac14$) increases, and the conclusions of Theorems~\ref{thm:spectrumL}-\ref{thm:growthlocalweak} remain true.  }

\subsection{Refinements and further estimates} 

Here we present a few additional results that complement our main inviscid damping estimate, Theorem~\ref{thm:mainID}.

One should note that the decay rates \eqref{eq:decayvxnonstrat}--\eqref{eq:decayrhononstrat} are not the continuous limit as $y\rightarrow \vartheta_1$ or $y\rightarrow \vartheta_2$ of the weaker rates \eqref{eq:decayvxweak}-\eqref{eq:decayrhoweak}. Indeed, those rates degenerate into merely boundedness of $\v^x$ and $\rho$ and to $t^{-1}$ decay of $v^y$, thus representing a loss of a full $t^{-1}$ power. While this mismatch is difficult to reconcile, we are nevertheless able to locally sharpen~\eqref{eq:decayvxweak}–\eqref{eq:decayrhoweak}. This constitutes our second main result, which we state next. Estimates analogous to \eqref{eq:localdecayvxweak}-\eqref{eq:localdecayrhoweak} also hold for $y\in [0,\varpi_{1,1})$, with  $(y-\vartheta_1)_+$ replacing $(\vartheta_2-y)_+$. 

\begin{bigtheorem}\label{thm:IDlocalweak}
Under the assumptions of Theorem \ref{thm:mainID}, for all $t\geq1$ and $y\in  (\varpi_{2,2}, 2]$,  there holds
\begin{align}
\Vert v^x(t,x,y) \Vert_{L^2_x}  & \leq C \min \left( t^{-\frac12+\mu(y)}, t^{-1} + (\vartheta_2 - y)_+ \right) \left(  \Vert \omega^0 \Vert_{H^{3/2}_x H^3_y} + \Vert \varrho^0 \Vert_{H^{3/2}_x H^4_y}\right), \label{eq:localdecayvxweak} \\
\Vert v^y(t,x,y) \Vert_{L^2_x}  &\leq C \min \left( t^{-\frac32+\mu(y)}, t^{-2} + (\vartheta_2 - y)_+t^{-1} \right) \left(  \Vert \omega^0 \Vert_{H^{3/2}_x H^3_y} + \Vert \varrho^0 \Vert_{H^{3/2}_x H^4_y}\right),\label{eq:localdecayvyweak}\\
\Vert \rho(t,x,y) \Vert_{L^2_x} &\leq C  \min \left( t^{-\frac12+\mu(y)}, \mathrm{P}(y)(1+\log t) \right) \left(  \Vert \omega^0 \Vert_{H^{1/2}_x H^2_y} + \Vert \varrho^0 \Vert_{H^{1/2}_x H^3_y}\right). \label{eq:localdecayrhoweak} 
\end{align}
\end{bigtheorem}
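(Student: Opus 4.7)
The first member of each minimum coincides with Theorem~\ref{thm:mainID}, so only the second, boundary-sharpened estimate is new. The guiding idea is to compare the solution at $y\in(\varpi_{2,2},2]$ with its value at the boundary $y=\vartheta_2$ of the stratified region, where the non-stratified (Euler-like) rates of Theorem~\ref{thm:mainID} apply, and to estimate the discrepancy by a Taylor expansion in $y$.

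For the density, we use the factorization $\rho(t,x,y)=\mathrm{P}(y)\varrho(t,x,y)$, which is meaningful because $\mathrm{P}(y)$ vanishes on $[\vartheta_2,2]$ by hypothesis H$\P$. Inside $\mathrm{supp}\,\mathrm{P}$, the reduced variable $\varrho$ satisfies $(\partial_t+v(y)\partial_x)\varrho=v^y$, so Duhamel's formula yields
\[
\|\varrho(t,\cdot,y)\|_{L^2_x}\leq\|\varrho^0(\cdot,y)\|_{L^2_x}+\int_0^t\|v^y(s,\cdot,y)\|_{L^2_x}\,ds.
\]
Plugging in the weakly-stratified rate $\|v^y(s,\cdot,y)\|_{L^2_x}\lesssim(1+s)^{-3/2+\mu(y)}$ from Theorem~\ref{thm:mainID} and noting that $\mu(y)\in[0,\tfrac12]$, the time integral is bounded uniformly by $C(1+\log t)$, the logarithm appearing only in the limit $\mu(y)\to\tfrac12$. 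Multiplying by $\mathrm{P}(y)$ gives \eqref{eq:localdecayrhoweak}.

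For $v^x$ we write
\[
v^x(t,x,y)=v^x(t,x,\vartheta_2)+\int_{\vartheta_2}^y\partial_{y'}v^x(t,x,y')\,dy',
\]
and use $\partial_y v^x=-\partial_y^2\psi=-\omega+\partial_x v^y$ from the Poisson equation. The boundary term is bounded by $C t^{-1}$ via the non-stratified rate of Theorem~\ref{thm:mainID}, while the integrand is uniformly bounded in the weakly-stratified region by combining the sub-linear vorticity bound of Theorem~\ref{thm:growth} (announced in the introduction) with the decay of $\partial_x v^y$ inherited from Theorem~\ref{thm:mainID}. Integration then yields the $(\vartheta_2-y)_+$ contribution. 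For $v^y$, a first-order expansion is insufficient, so we Taylor expand to second order at $\vartheta_2$: the zeroth-order term is $O(t^{-2})$, and the first-order term uses $\partial_y v^y(t,\cdot,\vartheta_2)=-\partial_x v^x(t,\cdot,\vartheta_2)$, which by Theorem~\ref{thm:mainID} applied with one extra $x$-derivative on the initial data satisfies the non-stratified rate $O(t^{-1})$; this is what forces the stronger $H^{3/2}_x$ regularity required in the hypotheses. The integral remainder is quadratic in $(\vartheta_2-y)_+$ and is dominated by the first member of the minimum.

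The main obstacle lies in securing the uniform-in-$y$ control of $\|\omega(t,\cdot,y)\|_{L^2_x}$ throughout the weakly stratified region, including arbitrarily close to the boundary $y=\vartheta_2$ where $\cJ(y)\to 0$. This requires extending the resolvent estimates of Section~\ref{sec:LAPstrat} uniformly down to the degenerate threshold $\cJ=0$, where the homogeneous solutions of the Rayleigh--Taylor equation must match the Euler-type solutions without loss of regularity. This matching is delicate precisely because the rates $t^{-1/2+\mu(y)}$ of Theorem~\ref{thm:mainID} degenerate into boundedness there, and the entire point of Theorem~\ref{thm:IDlocalweak} is to restore the Euler-type decay uniformly as $y\to\vartheta_2$.
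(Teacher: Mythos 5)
Your Duhamel argument for the density is essentially correct and is a genuinely more elementary route than the paper's (which attacks the oscillatory-integral representation directly in Corollary~\ref{cor:improvedIDrhoweak}); the one issue is that it consumes the decay of $v^y$ from Theorem~\ref{thm:mainID}, which requires $\omega^0\in H^{1/2}_x H^3_y$ and $\varrho^0\in H^{1/2}_x H^4_y$, one $y$-derivative \emph{more} than the statement of \eqref{eq:localdecayrhoweak} asks for, so the proof is slightly lossy.

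The $v^x$ and $v^y$ arguments contain a real gap. You apply the fundamental theorem of calculus to $v^x_k(t,y)=v^x_k(t,\vartheta_2)+\int_{\vartheta_2}^y\partial_{y'}v^x_k(t,y')\,dy'$ and claim that $\partial_y v^x=-\omega+ik v^y$ is ``uniformly bounded'' on the domain of integration. It is not: by Theorem~\ref{thm:growth}, in the weakly stratified region $\|\omega(t,\cdot,y')\|_{L^2_x}$ grows like $t^{1/2+\mu(y')}$, and as $y'\to\vartheta_2$ one has $\mu(y')\to\tfrac12$, so $\omega$ grows up to linearly in time. Since $\cJ(y')\sim c(\vartheta_2-y')^2$ near $\vartheta_2$ (because $\mathrm P$ and $\mathrm P'$ both vanish there), one has $\mu(y')\approx\tfrac12-O((\vartheta_2-y')^2)$, and the integral of $t^{1/2+\mu(y')}$ over $y'\in(y,\vartheta_2)$ is of order $t/\sqrt{\log t}$ rather than $O((\vartheta_2-y)_+)$. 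The naive Taylor expansion therefore produces a term that grows unboundedly with $t$, and the statement cannot be recovered this way; there is no ``uniform control of $\|\omega\|_{L^2_x}$ down to $\cJ=0$'' to be had, since the vorticity genuinely grows.

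The idea the paper uses, and which you are missing, is to apply the fundamental theorem of calculus not to $\psi_k(t,y)$ but to the conjugated profile $e^{ikv(y)t}\psi_{k,\ep}(t,y)$. Its $y$-derivative is $e^{ikv(y)t}\bigl(ikv'(y)t\,\psi_{k,\ep}+\partial_y\psi_{k,\ep}\bigr)$, and the combination $ikv'(y)t\,\psi_{k,\ep}+\partial_y\psi_{k,\ep}$ is rewritten in terms of the good derivative $\varphi_{1,k,\ep}^\pm=(\partial_y+\partial_{y_0})\varphi_{k,\ep}^\pm$: a residual $ikv'(y_0)t$ is absorbed by integrating by parts in $y_0$ against the oscillatory factor $e^{-ikv(y_0)t}$, and what remains are the terms $\psi_{1,k,\ep}$ and $\psi_{2,k,\ep}$ defined in Section~\ref{sec:IDestimates}. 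These are controlled, via the $X_k$-estimates for $\varphi_{1,k,\ep}^\pm$ and $\partial_y\varphi_{k,\ep}^\pm$ from Propositions~\ref{prop:Xkvarphi} and \ref{prop:Xkvarphi1}, at the rate $t^{-1}$ times a factor that vanishes like $(\vartheta_2-y)_+$. The conjugation/good-derivative structure is what kills the secular term that dooms your direct Taylor expansion; without it the argument does not close.
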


Note that Theorem \ref{thm:IDlocalweak} shows that, for all $t\geq 1$, the decay estimates (though not the rates) \eqref{eq:localdecayvxweak}-\eqref{eq:localdecayrhoweak} are now continuous in $y\in [0,\varpi_{1,1})\cup (\varpi_{2,2},2]$. Namely, the velocity field can be decomposed into an Euler-type fast-decaying part, together with a slower contribution that vanishes in the non-stratified region. 

In addition to the bounds for the velocity and density presented so far, it is convenient to consider the evolution of the vorticity and of the density gradient.
This is because, in the Boussinesq equations \eqref{eq:EBintro}, the vorticity is not only transported by the velocity field, but also forced by $\partial_x\rho$. Thus, there is no reason for the $L^\infty$ norm of the vorticity to be preserved. In fact, for bounded in time forcing, the vorticity could grow up to linearly in time. The density perturbation is also transported by the velocity field $v(y)\partial_x$ and forced by $\mathrm{P}(y)v^y(t,x,y)$. Since $v(y)$ is smooth and strictly monotone, $\partial_y\rho$ should experience at most linear growth as the fluid is stretched by the mixing effects. 

Our next result asserts that such at most linear-in-time growth is damped by the stabilizing mechanism responsible for the decay of  $v(t,x,y)$ and $\rho(t,x,y)$, thereby  justifying the (upper bound in the) predictions made by Hartman~\cite{Hartman} in the periodic channel.

\begin{bigtheorem}\label{thm:growth}
Under the assumptions of Theorem \ref{thm:mainID}, there exists $C>0$ such that, for all $t\geq 1$, 
\begin{itemize}
\item {\em Non-stratified region:} for $y\in [0,\vartheta_1] \cup [\vartheta_2,2]$,
\begin{align}
\Vert \omega(t,x,y) \Vert_{L^2_x}  &=0, \label{eq:growthomeganonstrat} \\
\Vert \partial_y\rho(t,x,y) \Vert_{L^2_x} &= 0.\label{eq:growthpyrhononstrat} 
\end{align}

\item {\em Weakly stratified region:} for $y\in (\vartheta_1, \varpi_{1,1}) \cup (\varpi_{2,2}, \vartheta_2)$,
\begin{align}
\Vert \omega(t,x,y) \Vert_{L^2_x}  & \leq C t^{\frac12+\mu(y)} \left(  \Vert \omega^0 \Vert_{H^{3/2}_x H^3_y} + \Vert \varrho^0 \Vert_{H^{3/2}_x H^4_y}\right), \label{eq:growthomegaweak} \\
\Vert \partial_y\rho(t,x,y) \Vert_{L^2_x} &\leq C t^{\frac12+\mu(y)} \left(  \Vert \omega^0 \Vert_{H^{3/2}_x H^3_y} + \Vert \varrho^0 \Vert_{H^{3/2}_x H^4_y}\right). \label{eq:growthpyrhoweak} 
\end{align}

\item {\em Mildly stratified region:} for $y\in [\varpi_{1,1}, \varpi_{1,2}] \cup [\varpi_{2,1}, \varpi_{2,2}]$,
\begin{align}
\Vert \omega(t,x,y) \Vert_{L^2_x}  & \leq C t^{\frac12+\mu(y)}( 1 + \log t) \left(  \Vert \omega^0 \Vert_{H^{3/2}_x H^3_y} + \Vert \varrho^0 \Vert_{H^{3/2}_x H^4_y}\right), \label{eq:growthomegamild} \\
\Vert \partial_y\rho(t,x,y) \Vert_{L^2_x} &\leq C t^{\frac12+\mu(y)} ( 1 + \log t) \left(  \Vert \omega^0 \Vert_{H^{3/2}_x H^3_y} + \Vert \varrho^0 \Vert_{H^{3/2}_x H^4_y}\right). \label{eq:growthpyrhomild} 
\end{align}

\item {\em Strongly stratified region:} for $y\in (\varpi_{1,2}, \varpi_{2,1})$,
\begin{align}
\Vert \omega(t,x,y) \Vert_{L^2_x}  & \leq C t^{\frac12}\left(  \Vert \omega^0 \Vert_{H^{3/2}_x H^3_y} + \Vert \varrho^0 \Vert_{H^{3/2}_x H^4_y}\right), \label{eq:growthomegastrong} \\
\Vert \partial_y\rho(t,x,y) \Vert_{L^2_x} &\leq C t^{\frac12} \left(  \Vert \omega^0 \Vert_{H^{3/2}_x H^3_y} + \Vert \varrho^0 \Vert_{H^{3/2}_x H^4_y}\right). \label{eq:growthpyrhostrong} 
\end{align}
\end{itemize}
\end{bigtheorem}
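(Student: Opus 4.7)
The plan is to use Duhamel's formula against the pure transport semigroup $e^{-tv(y)\p_x}$, which acts as a translation in $x$ and is therefore an isometry on $L^2_x$, and then absorb the forcing terms using the already-established decay estimates of Theorem~\ref{thm:mainID}. In the non-stratified region, the bounds \eqref{eq:growthomeganonstrat}--\eqref{eq:growthpyrhononstrat} are in fact identities: by hypotheses H$\P$ and H$v$ we have $\mathrm{P}(y)=0$ and $v''(y)=0$ for $y\in[0,\vartheta_1]\cup[\vartheta_2,2]$, so both equations in \eqref{eq:linEulerBouss} reduce to pure transport along $v(y)\p_x$, and the compact-support assumption on $(\omega^0,\rho^0)$ propagates in time, yielding $\omega(t,\cdot,y)=\p_y\rho(t,\cdot,y)\equiv 0$ there.

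For $y$ inside $(\vartheta_1,\vartheta_2)$, integrating the vorticity equation along characteristics gives
\begin{equation*}
\omega(t,x,y)=\omega^0(x-tv(y),y)+\int_0^t\bigl[v''(y)v^y-\mathfrak{g}\p_x\rho\bigr](s,x-(t-s)v(y),y)\,\d s,
\end{equation*}
and hence $\|\omega(t,\cdot,y)\|_{L^2_x}\leq \|\omega^0(\cdot,y)\|_{L^2_x}+\int_0^t\bigl(|v''(y)|\,\|v^y(s)\|_{L^2_x}+\mathfrak{g}\,\|\p_x\rho(s)\|_{L^2_x}\bigr)\,\d s$. The key observation is that the system \eqref{eq:compactLinBoussinesq} commutes with $\p_x$; applying Theorem~\ref{thm:mainID} to the commuted pair $(\p_x\omega,\p_x\rho)$ (whose initial datum involves one more $\p_x$-derivative, which explains the $H^{3/2}_x$ norms in the present statement) yields $\|\p_x\rho(s)\|_{L^2_x}\lesssim s^{-\frac12+\mu(y)}$ and $\|v^y(s)\|_{L^2_x}\lesssim s^{-\frac32+\mu(y)}$ for $s\geq 1$. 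Since $\mu(y)<\tfrac12$, the integral of $s^{-3/2+\mu(y)}$ over $[1,t]$ stays bounded, so the $v^y$-contribution is $O(1)$, whereas $\int_1^t s^{-1/2+\mu(y)}\,\d s\sim t^{1/2+\mu(y)}$, which is precisely the announced growth. In the mildly stratified region the extra $(1+\log s)$ of \eqref{eq:decayrhomild} passes through the time integral and produces the $(1+\log t)$ factor of \eqref{eq:growthomegamild}, while in the strongly stratified region one simply replaces $\mu(y)$ by $0$.

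For $\p_y\rho$ we differentiate the density equation in $y$ to obtain
\begin{equation*}
(\p_t+v(y)\p_x)\p_y\rho=-v'(y)\p_x\rho+\mathrm{P}'(y)v^y+\mathrm{P}(y)\p_y v^y,
\end{equation*}
and then invoke incompressibility $\p_y v^y=-\p_x v^x$ together with the same $\p_x$-commutation trick to reduce the control of $\|\p_y v^y\|_{L^2_x}$ to the known decay of $\|v^x\|_{L^2_x}$ applied to a once-$\p_x$-differentiated datum, i.e.\ $\|\p_y v^y(s)\|_{L^2_x}\lesssim s^{-\frac12+\mu(y)}$. Duhamel then bounds $\|\p_y\rho(t)\|_{L^2_x}$ by $\|\p_y\rho^0\|_{L^2_x}$ plus the time integrals of $\|\p_x\rho\|_{L^2_x}$, $\|v^y\|_{L^2_x}$ and $\|\p_y v^y\|_{L^2_x}$, delivering exactly the same $t^{1/2+\mu(y)}$ rate (with the appropriate logarithmic or $\mu(y)=0$ adjustments) that is established above for the vorticity, and hence the bounds \eqref{eq:growthpyrhoweak}--\eqref{eq:growthpyrhostrong}.

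The main technical obstacle is the non-integrability at $s=0$ of the pointwise decay estimates, which are only stated in Theorem~\ref{thm:mainID} for $t\geq 1$. We circumvent this by splitting the time integrals at $s=1$ and handling the short-time interval $[0,1]$ by a direct energy estimate for the linear system with smooth coefficients, the cost of which is absorbed by the Sobolev norms already present in the statement. A secondary point, relevant only for the mild region, is that the $(1+\log s)$ loss in \eqref{eq:decayvxmild}--\eqref{eq:decayrhomild} must be tracked carefully so as not to produce a spurious double logarithm; a direct estimate of $\int_1^t s^{-1/2+\mu(y)}(1+\log s)\,\d s$ shows that a single $(1+\log t)$ factor suffices, matching \eqref{eq:growthomegamild}--\eqref{eq:growthpyrhomild}.
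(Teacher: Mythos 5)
Your proposal is correct, and it is the alternative route that the authors themselves flag in Remark~\ref{rmk:growthlineq}, which they acknowledge as a valid way to prove the present theorem but decline to pursue as the primary argument.

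What you do differently: you integrate the linearised equations \eqref{eq:linEulerBouss} along the characteristics of the free transport $e^{-tv(y)\partial_x}$, note that this is an $L^2_x$-isometry, and then feed back the already-established decay rates of Theorem~\ref{thm:mainID}, applied both to $(\omega,\rho)$ and (in order to control $\partial_x\rho$ and $\partial_x v^x$) to the $\partial_x$-commuted system, whence the extra power $H^{3/2}_x$. The key extra idea you supply that the remark omits is the use of incompressibility $\partial_y v^y=-\partial_x v^x$ to rewrite $\partial_y v^y$ in terms of a quantity controlled by the commuted version of \eqref{eq:decayvxweak}. Your Sobolev book-keeping (taking the max of the $x$- and $y$-regularities required for the two forcing terms) reproduces exactly the $H^{3/2}_x H^3_y$ and $H^{3/2}_x H^4_y$ norms of the statement, and your observation that $\int_1^t s^{-\frac12+\mu}(1+\log s)\,\d s\lesssim t^{\frac12+\mu}(1+\log t)$ correctly avoids a spurious double logarithm in the mild regime.

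What the paper does instead: in Section~\ref{sec:growth} the authors work directly on the oscillatory integral representations \eqref{eq:oscintomegak}--\eqref{eq:oscintpyrhok} for $\omega_k(t,y)$ and $\partial_y\rho_k(t,y)$, using the identity $\Delta_k\varphi_{k,\ep}^\pm=\partial_{y_0}^2\varphi_{k,\ep}^\pm+2\partial_y\varphi_{1,k,\ep}^\pm-\varphi_{2,k,\ep}^\pm-k^2\varphi_{k,\ep}^\pm$, keeping the borderline-singular piece $\partial_{y_0}^2\varphi_{k,\ep}^\pm$ and integrating the $\partial_{y_0}$ derivative backwards against the oscillatory factor on a $\delta$-ball around the critical layer, then optimising $\delta\sim(kt)^{-1}$. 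The reason the paper prefers this heavier route is that it simultaneously sets up the localised growth bounds of Theorem~\ref{thm:growthlocalweak} (Proposition~\ref{prop:improvedgrowthomegaweak}), which recover the limiting behaviour as $y\to\vartheta_2$ and which are not easily reached by your Duhamel argument. Your approach is more elementary and self-contained given Theorem~\ref{thm:mainID}, but it yields the global rates only; if you also wanted the local refinement, you would need the fundamental-theorem-of-calculus integration in $y$ that the paper carries out at the level of the spectral density, not the physical solution.
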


The compact support of the initial data, of \( v''(y) \), and of \( \P(y) \) ensures that the vorticity remains compactly supported within the stratified region. The growth bounds~\eqref{eq:growthomegastrong}--\eqref{eq:growthpyrhostrong} are sublinear in time due to the inviscid damping experienced by \( v(t,x,y) \) and \( \rho(t,x,y) \). While in the strongly stratified regime the growth rate is at most \( t^{\frac{1}{2}} \), we observe that in the weakly stratified region the growth rate becomes linear as \( y \to \vartheta_{1} \) or \( y \to \vartheta_{2} \), since the stabilizing effects of inviscid damping weaken and ultimately vanish there. However, we can refine these bounds in the spirit of Theorem~\ref{thm:IDlocalweak}, so that they converge to those of the non-stratified setting:

\begin{bigtheorem}\label{thm:growthlocalweak}

Under the assumptions of Theorem \ref{thm:mainID}, for $y\in  (\varpi_{2,2}, 2]$ and all $t\geq1$, there holds
\begin{align}\label{eq:localgrowthomegaweak}
\Vert \omega(t,x,y) \Vert_{L^2_x}  & \leq C \min \left( t^{\frac12+\mu(y)}, 1 + (\vartheta_2 - y)_+t \right) \left(  \Vert \omega^0 \Vert_{H^{5/2}_x H^3_y} + \Vert \varrho^0 \Vert_{H^{5/2}_x H^4_y}\right),  \\
\label{eq:localgrowthpyrhoweak} 
\Vert \partial_y\rho(t,x,y) \Vert_{L^2_x} &\leq C \left( \mathrm{P}'(y)(1+\log t) +  t\min \left( t^{-\frac12+\mu(y)}, \mathrm{P}(y)(1+\log t) \right) \right)  \\
&\qquad\qquad\qquad \times \left(  \Vert \omega^0 \Vert_{H^{3/2}_x H^3_y} + \Vert \varrho^0 \Vert_{H^{3/2}_x H^4_y}\right). \notag
\end{align}
\end{bigtheorem}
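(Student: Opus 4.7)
My plan is to mirror the proof of Theorem~\ref{thm:growth}, substituting the refined local decay of Theorem~\ref{thm:IDlocalweak} into Duhamel's formula and exploiting the vanishing of $v''(y)$ and $\mathrm{P}(y)$ at $y=\vartheta_2$. I start by rewriting the linearised system in advective form,
\begin{align*}
(\partial_t+v(y)\partial_x)\omega &= v''(y)\partial_x\psi - \mathfrak{g}\partial_x\rho,\\
(\partial_t+v(y)\partial_x)\partial_y\rho &= -v'(y)\partial_x\rho + \mathrm{P}'(y)\partial_x\psi + \mathrm{P}(y)\partial_x\partial_y\psi,
\end{align*}
the second equation obtained by differentiating $(\partial_t+v\partial_x)\rho=\mathrm{P}(y)\partial_x\psi$ in $y$. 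Using $\partial_x\psi=v^y$ and $\partial_y\psi=-v^x$, integrating along the straight characteristic $\tau\mapsto x-(t-\tau)v(y)$ and taking the translation-invariant $L^2_x$ norm yields the pointwise-in-$y$ Duhamel inequality
\begin{equation*}
\Vert\omega(t,\cdot,y)\Vert_{L^2_x}\leq \Vert\omega^0(\cdot,y)\Vert_{L^2_x} + \int_0^t\bigl(|v''(y)|\,\Vert v^y(s,\cdot,y)\Vert_{L^2_x}+\mathfrak{g}\,\Vert\partial_x\rho(s,\cdot,y)\Vert_{L^2_x}\bigr)\d s,
\end{equation*}
and an analogous one for $\partial_y\rho$ with three forcing contributions.

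The $t^{1/2+\mu(y)}$ branch of each minimum in \eqref{eq:localgrowthomegaweak}--\eqref{eq:localgrowthpyrhoweak} is inherited directly from Theorem~\ref{thm:growth}, so the bulk of the work is to produce the refined branches. The key algebraic input is that, since $v\in C^4$ and $\mathrm{P}\in C^2$ both have support contained in $[\vartheta_1,\vartheta_2]$, Taylor's theorem gives $|v''(y)|+\mathrm{P}(y)\lesssim(\vartheta_2-y)_+^2$ and $|\mathrm{P}'(y)|\lesssim (\vartheta_2-y)_+$ on $(\varpi_{2,2},2]$. Substituting the second (local) estimates from the minima of Theorem~\ref{thm:IDlocalweak}---namely $s^{-2}+(\vartheta_2-y)_+ s^{-1}$ for $v^y$, $\mathrm{P}(y)(1+\log s)$ for $\rho$, and $s^{-1}+(\vartheta_2-y)_+$ for $v^x$---and invoking $\int_0^t\min(a,b)\,\d s\leq \min(\int a,\int b)$ allows a term-by-term estimate. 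For $\omega$, the $v''$-forcing produces $|v''(y)|\cdot(1+(\vartheta_2-y)_+\log t)\lesssim 1+(\vartheta_2-y)_+ t$ via the quadratic vanishing, and the $\partial_x\rho$-forcing is absorbed by taking the minimum against the $t^{1/2+\mu(y)}$ branch combined with $\mathrm{P}(y)\lesssim (\vartheta_2-y)^2$. For $\partial_y\rho$, the term $-v'\partial_x\rho$ integrates to $t\min(t^{-1/2+\mu(y)},\mathrm{P}(y)(1+\log t))$; the term $\mathrm{P}'(y)\partial_x\psi$ yields $\mathrm{P}'(y)(1+\log t)$ using the integrability of the local decay of $v^y$; and $\mathrm{P}(y)\partial_x\partial_y\psi=-\mathrm{P}(y)\partial_x v^x$ is absorbed into the previous dominant terms via the bound \eqref{eq:localdecayvxweak} together with the extra $x$-regularity on the initial data.

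The principal obstacle will be removing the spurious logarithmic factors in the $\omega$ bound: direct integration of $\mathrm{P}(y)(1+\log s)$ against $\d s$ produces $\mathrm{P}(y)t(1+\log t)$, which exceeds the target $(\vartheta_2-y)_+ t$ by a factor of $\log t$. Eliminating this requires carefully interpolating against the $t^{1/2+\mu(y)}$ branch via the minimum and exploiting the quantitative degeneration $\mu(y)\to\frac12$ as $y\to\vartheta_2$, combined with $\mathrm{P}(y)\lesssim (\vartheta_2-y)^2$, in a way that cancels the logarithm. The higher Sobolev exponents $H^{5/2}_x$ and $H^{3/2}_x$ appearing in \eqref{eq:localgrowthomegaweak}--\eqref{eq:localgrowthpyrhoweak} provide the headroom needed to absorb the additional $x$-derivatives on each forcing term into the right-hand sides of Theorem~\ref{thm:IDlocalweak}.
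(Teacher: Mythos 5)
Your approach---rewriting the linearised equations in advective form and integrating along characteristics in $t$---is precisely the alternative the authors mention in Remark~\ref{rmk:growthlineq}, and they note there that this route only yields Theorem~\ref{thm:growth}, not the localised Theorem~\ref{thm:growthlocalweak}. The reason is a genuine obstruction, not merely a missing computation, and it sits exactly where you flag it: the $\partial_x\rho$-forcing in the $\omega$ equation.

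Setting $\sigma=(\vartheta_2-y)_+$, so that $\mathrm{P}(y)\sim\sigma^2$ and $\tfrac12-\mu(y)\sim\sigma^2$, the time-Duhamel bound for the density contribution is
\begin{equation*}
\int_0^t \mathfrak{g}\,\Vert\partial_x\rho(s,\cdot,y)\Vert_{L^2_x}\,\d s \;\lesssim\; \int_0^t \min\bigl(s^{-\frac12+\mu(y)},\,\sigma^2(1+\log s)\bigr)\,\d s \;\lesssim\; \min\bigl(t^{\frac12+\mu(y)},\,\sigma^2 t(1+\log t)\bigr),
\end{equation*}
and the target is $1+\sigma t$. In the regime $1/\sigma<\log t<\sigma^{-2}\log(1/\sigma)$ one checks that \emph{both} arguments of the minimum strictly exceed $\sigma t$: indeed $\sigma^2\log t>\sigma$ gives $\sigma^2 t\log t>\sigma t$, while $\sigma^2\log t<\log(1/\sigma)$ gives $t^{-\sigma^2}>\sigma$, i.e.\ $t^{\frac12+\mu(y)}=t\,t^{-\sigma^2}>\sigma t$. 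The ratio $\min(\,\cdot\,)/(\sigma t)$ can be of order $\sigma^{-1/2}\sqrt{\log(1/\sigma)}$ near $\log t\sim\sigma^{-3/2}\sqrt{\log(1/\sigma)}$, so it is unbounded as $\sigma\to 0$. No convexity interpolation between the two branches of the minimum, nor the Taylor vanishing $\mathrm{P}(y)\lesssim\sigma^2$, can close this gap, because the input $\min(s^{-1/2+\mu},\mathrm{P}(y)(1+\log s))$ from Theorem~\ref{thm:IDlocalweak} is simply not integrable to $O(1+\sigma t)$. The triangle-inequality step in Duhamel has already given up the oscillatory cancellation in $s$.

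The paper's actual proof (Proposition~\ref{prop:improvedgrowthomegaweak}) avoids this by never performing a time Duhamel at fixed $y$. Instead it works directly with the spectral representation \eqref{eq:oscintomegak}, computes $\partial_y\bigl(e^{ikv(y)t}\omega_{k,\ep}(t,y)\bigr)$ using the good derivative $\partial_y+\partial_{y_0}$ to avoid worsening the critical-layer singularity, and then applies the fundamental theorem of calculus \emph{in the spatial variable $y$}, integrating from $y$ to $\vartheta_2$ and using $\omega_k(t,\vartheta_2)=0$. This produces the factor $(\vartheta_2-y)$ as the length of a $z$-integral that sits \emph{outside} the oscillatory $y_0$-integral where the time decay is harvested, so the slow branch comes out as $(\vartheta_2-y)_+\,t$ with no logarithm. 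The analogous maneuver for $\partial_y\rho_k$ (Proposition~\ref{prop:growthpyrhoweak}) likewise integrates by parts in the spectral variable $y_0$ and uses Corollary~\ref{cor:improvedIDrhoweak} rather than an advective Duhamel. If you want to salvage your approach you would need a pointwise-in-$s$ bound on $\rho(s,\cdot,y)$ that is strictly sharper than the minimum in Theorem~\ref{thm:IDlocalweak} in the intermediate regime, and that bound is not available from the material you are invoking.
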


In particular, for any fixed time \( t \geq 1 \), the estimates~\eqref{eq:localgrowthomegaweak} and~\eqref{eq:localgrowthpyrhoweak} capture the limiting behaviour as \( y \to \vartheta_{2} \) expected from~\eqref{eq:growthomeganonstrat} and~\eqref{eq:growthpyrhononstrat}, while also retaining the sublinear growth described by~\eqref{eq:growthomegaweak} and~\eqref{eq:growthpyrhoweak} in the long-time limit.

\subsection{A heuristic derivation of the decay exponents}\label{subsec:heuristics}

As anticipated above, we now present a non-rigorous derivation of the inviscid damping decay rates \eqref{eq:decayvxnonstrat}-\eqref{eq:decayrhostrong}. Formally, the solution $(\omega(t), \rho(t))$ to the linearised Boussinesq equations \eqref{eq:compactLinBoussinesq} can be written as
\begin{align*}
\begin{pmatrix}
\omega \\ \rho
\end{pmatrix}(t) = e^{-t\L}\begin{pmatrix}
\omega^0 \\ \rho^0
\end{pmatrix} = \frac{1}{2\pi i}\int_{\partial\Omega}e^{-\lambda t}(\lambda-\L)^{-1}\begin{pmatrix}
\omega^0 \\ \rho^0
\end{pmatrix} \d \lambda
\end{align*}
where $\Omega$ denotes an open set in the complex plane $\C$ containing the spectrum of $\L$. Since $\L$ decouples in the $x$-variable, for 
\begin{align*}
\omega(t,x,y) = \sum_{k\in \Z}\omega_k(t,y)e^{ikx}, \quad \rho(t,x,y) = \sum_{k\in \Z}\rho(t,y)e^{ikx}
\end{align*}
the evolution for each $k$ mode is given by
\begin{align*}
\begin{pmatrix}
\omega_k \\ \rho_k
\end{pmatrix}(t,y) = \frac{1}{2\pi i}\int_{\partial\Omega} e^{-\lambda t} (\lambda - ik \L_k)^{-1} \begin{pmatrix}
\omega_k^0 \\ \rho_k^0 
\end{pmatrix} \d \lambda, \quad \L_k = \begin{pmatrix}
v(y) - v''(y)\Delta_k^{-1} & \g \\
-\mathrm{P}(y)\Delta_k^{-1} & v(y)
\end{pmatrix}
\end{align*}
with $\Delta_k = \partial_y^2 - k^2$. If the spectrum of $\L_k$ consists only of a purely continuous part that coincides with the range of $v(y)$, then for $\lambda = ik v(y_0)$ with $y_0\in [0,2]$ we may write
\begin{align*}
\begin{pmatrix}
\omega_k \\ \rho_k
\end{pmatrix}(t,y) = \frac{1}{2\pi i}\int_0^2 e^{- ikv(y_0) t} (v(y_0) -\L_k)^{-1} \begin{pmatrix}
\omega_k^0 \\ \rho_k^0 
\end{pmatrix} v'(y_0) \d y_0.
\end{align*}
Since we are interested in the long-time dynamics of the velocity field, for $\psi_k(t,y) = \Delta_k^{-1}\omega_k(t,y)$ we now observe that
\begin{align}\label{eq:heuristicDunford}
\begin{pmatrix}
\psi_k \\ \rho_k
\end{pmatrix}(t,y) = \frac{1}{2\pi i}\int_0^2 e^{- ikv(y_0) t}  \begin{pmatrix}
\varphi_k(y,y_0) \\ \rho_k(y,y_0) 
\end{pmatrix} v'(y_0) \d y_0,
\end{align}
where the pair $(\Delta_k\varphi_k, \rho_k)$ is the unique solution to the resolvent equations
\begin{align*}
(v(y_0) - \L_k) \begin{pmatrix}
\Delta_k\varphi_k \\ \rho_k
\end{pmatrix} = \begin{pmatrix}
\omega^0 \\ \rho^0
\end{pmatrix},
\end{align*}
which are equivalent to
\begin{align}
\Delta_k \varphi_k(y,y_0) - \frac{v''(y)}{v(y) - v(y_0)}\varphi_k(y,y_0) + \frac{\P(y)}{(v(y) - v(y_0))^2}\varphi_k(y,y_0) &= \frac{\omega_k^0(y)}{v(y) - v(y_0)} - \frac{\g \rho_k^0(y)}{(v(y) - v(y_0)^2}, \label{eq:heuristicTG}\\
\rho_k(y,y_0) &= \frac{\rho_k^0 + \mathrm{P}(y) \varphi_k(y,y_0)}{v(y) - v(y_0)} \label{eq:heuristicrho}
\end{align}
together with $\varphi_k(0,y_0) = \varphi_k(2,y_0) = 0$. Already from \eqref{eq:heuristicDunford} we observe that time decay for $(\psi_k, \rho_k)$ may be obtained integrating by parts, if the resolvent functions $(\varphi_k,\rho_k)$ are sufficiently smooth. However, we also already see that equations \eqref{eq:heuristicTG} and \eqref{eq:heuristicrho} governing $(\varphi_k,\rho_k)$ exhibit a potential singularity for $y$ near $y_0$ and thus the regularity of the resolvent functions is not a priori straightforward.

Nevertheless, for all $y_0\in (\vartheta_1, \vartheta_2)$ we recall that $\P(y_0)>0$ and, heuristically, homogeneous solution $\phi_k(y,y_0)$ to \eqref{eq:heuristicTG} should locally satisfy
\begin{align}\label{eq:heursiticRTG}
\partial_y^2\phi_k(y,y_0) - k^2\phi_k(y,y_0) + \frac{\cJ(y_0)}{(y-y_0)^2}\phi_k(y,y_0) = 0.
\end{align}
since $\cJ(y_0) = \frac{\P(y_0)}{(v'(y_0))^2}$. Inspired by the Euler index formula for ordinary differential equations, we ansatz $\phi_k(y,y_0) = (2k(y-y_0))^\alpha \mathcal{E}_k(y,y_0)$, for some $\alpha>0$ and some smooth function $\mathcal{E}_k$ with $\mathcal{E}_k(y_0,y_0)=1$. Then, plugging the ansatz into \eqref{eq:heursiticRTG}, multiplying by $(2k(y-y_0))^{2-\alpha}$ and evaluating at $y=y_0$ we obtain
\begin{align*}
4k^2\alpha(\alpha-1) - 4k^2 \cJ(y_0) = 0,
\end{align*}
whose two solutions are
\begin{align*}
\alpha_1(y_0) = \frac12 +\gamma(y_0), \quad \alpha_2(y_0) = \frac12-\gamma(y_0),  \quad\gamma(y_0) = \sqrt{\frac14 - \cJ(y_0)}.
\end{align*}
Consequently, the most singular homogeneous solution $\psi_{\s,k}$ should behave like 
$$\phi_{\s,k}(y,y_0) \approx (2k(y-y_0))^{\frac12-\gamma(y_0)},$$
so that the regularity of solutions non-trivially depend on $\cJ(y_0)$, the local Richardson number. It is more regular for $y_0\in (\varpi_{1,2}, \varpi_{2,1})$ the strongly stratified regime since there $\cJ(y_0)>\frac14$ and thus $\gamma(y_0) = i\nu(y_0)$, while in the weakly stratified regime $\mu(y_0)\in (0,\frac12)$ for $y_0\in (\vartheta_1, \varpi_{1,1}) \cup (\varpi_{2,2},\vartheta_2)$, with $\mu(y)\rightarrow \frac12$ for $y_0\rightarrow \vartheta_1$ or $y_0\rightarrow \vartheta_2$, which corresponds to worse regularity for $\phi_{\s,k}$. We further remark here that for $y_0\in (0,2)$ with $\cJ(y_0)=\frac14$ two roots $\alpha_1(y_0)= \alpha_2(y_0)=\frac12$, the associated homogeneous solutions are then not linearly independent and a logarithmic correction needs to be introduced.

Coming back to \eqref{eq:heuristicDunford} and further supposing that the resolvent solutions locally behave like the homogeneous solutions to \eqref{eq:heuristicTG}, we have that
\begin{align*}
\psi_k(t,y) &\approx \frac{1}{2\pi i}\int_0^2 e^{-ikv(y_0)t} (2k(y-y_0))^{\frac12-\mu(y_0)} v'(y_0) \d y_0 \\
&\approx \frac{1}{2\pi i}\int_0^2 e^{-ikv(y_0)t} (2k(y-y_0))^{\frac12-\mu(y)}v'(y_0) \d y_0
\end{align*}
because $\text{exp}\left( \left|\mu(y) - \mu(y_0) \right| \left| \log (|2k(y-y_0)|) \right| \right) \lesssim 1$ whenever $2k|y-y_0|\lesssim 1$. Disregarding any boundary contributions, integrating by parts once,
\begin{align*}
\psi_k(t,y) &\approx \frac{1}{2\pi i}\frac{k(1-2\mu(y))}{ikt}\int_0^2 e^{-ikv(y_0)t} (2k(y-y_0))^{-\frac12-\mu(y)}\d y_0 \\
&\approx \frac{1}{2\pi i}\frac{k(1-2\mu(y))}{ikt}\int_{y-\delta}^{y+\delta} e^{-ikv(y_0)t} (2k(y-y_0))^{-\frac12-\mu(y)}\d y_0 \\
&\quad + \frac{1}{2\pi i}\frac{k(1-2\mu(y))}{ikt}\int_{(0,2)\setminus (y-\delta, y+\delta)} e^{-ikv(y_0)t} (2k(y-y_0))^{-\frac12-\mu(y)}\d y_0
\end{align*}
for some $\delta>0$. Integrating back, we have
\begin{align*}
\left| \frac{1}{2\pi i}\frac{k(1-2\mu(y))}{ikt}\int_{y-\delta}^{y+\delta} e^{-ikv(y_0)t} (2k(y-y_0))^{-\frac12-\mu(y)}\d y_0 \right| \lesssim k^{-\frac12}(k\delta)^{\frac12-\mu(y)}
\end{align*}
while integrating by parts once more, discarding any possible boundary contributions, taking the modulus and then integrating back also gives
\begin{align*}
&\left| \frac{1}{2\pi i}\frac{k(1-2\mu(y))}{ikt}\int_{(0,2)\setminus (y-\delta, y+\delta)} e^{-ikv(y_0)t} (2k(y-y_0))^{-\frac12-\mu(y)}\d y_0 \right| \\
&\qquad = \left| \frac{1}{2\pi i}\frac{k^2\cJ(y)}{(ikt)^2}\int_{(0,2)\setminus (y-\delta, y+\delta)} e^{-ikv(y_0)t} (v'(y_0))^{-1} (2k(y-y_0))^{-\frac32-\mu(y)} \d y_0 \right|  \\
&\lesssim k^{-\frac12}t^{-2}(k\delta)^{-\frac12-\mu(y)}
\end{align*}
Optimizing in $\delta>0$, we see that $k\delta = \frac{1}{t}$ gives the best scaling and produces $|\psi_k(t,y)| \lesssim k^{-\frac12}t^{-\frac32+\mu(y)}$, which is the claimed decay rate. The estimates for $v^x_k(t,y) = \partial_y\psi_k(t,y)$ and $\rho_k(t,y)$ follow similarly. Let us finish by observing that for $y_0\in [0,\vartheta_1]\cup [\vartheta_2,2]$ we have $v''(y_0) = \P(y_0) = 0$, so that \eqref{eq:heuristicTG} becomes the Rayleigh equation for the Couette shear flow, whose solutions enjoy better regularity. This is the main reason why the inviscid damping decay rates \eqref{eq:decayvxnonstrat} - \eqref{eq:decayrhononstrat} for $y\in [0,\vartheta_1]\cup [\vartheta_2,2]$ are those obtained for the Euler equations near the Couette flow.

\subsection{Literature review}

In the absence of gravity and for constant density, the Boussinesq equations \eqref{eq:EBintro} reduce to the well-known two-dimensional Euler equations, for which shear flows $\overline{\v}=(v(y),0)$ are steady-state solutions. The asymptotic stability of such shear flows in ideal fluids has been investigated since the work of Lord Kelvin~\cite{kelvin1887stability}. Later, Orr~\cite{orr1907stability} discovered that the linear transport of vorticity induced by the background shear produces a mixing mechanism, leading to weak convergence of the perturbed vorticity toward its average and to decay in time of the associated velocity field.

For for the full non-linear Euler equations, these effects were first captured  near the Couette flow in the periodic strip $\T\times\R$ in the ground-breaking work of Bedrossian and Masmoudi~\cite{BM15}. Since then, there has been much work on inviscid damping for other background flows and geometries. A broad collection of linear results has been obtained for various configurations, see for instance~\cites{lu2024singularity, ionescu2022linear, ren2025domain, ren2023linear, ren2025linear, BCZV19, CZZ19, GNRS20, Jia20, JiaGev20, WZZ18, WZZ19, WZZKolmo20, Zillinger16, Zillinger17, ZillingerCirc17, Zillinger21}. However, extending these results to the non-linear regime is notoriously difficult. The non-linear analysis in~\cite{BM15} required Gevrey regularity of the initial data to control the so-called ``non-linear echoes'' (transient growth of Fourier modes through non-linear interactions) as well as a delicate choice of coordinates. Non-linear inviscid damping for the Euler equations near monotone shear flows and point vortices was later established in~\cite{IJ22, IJ20, IJnon20, MZ20}, which remain the only known results for the full non-linear problem and also rely on Gevrey regularity. In contrast, initial data lacking such regularity can exhibit transient growth~\cite{deng2023long} or even fail to decay altogether~\cite{lin2011inviscid, sinambela2025transition}.

The studies~\cite{IJ22, IJnon20, MZ20} consider the periodic channel, where even at the linear level, the boundary behaviour of the perturbed vorticity is subtle, see~\cite{Zillinger16, lu2024singularity}. In this setting, the non-local terms can generate vorticity near the boundary that does not scatter with the background flow. To overcome this difficulty, it is typically assumed that both the perturbation $\omega^0$ and the background curvature $v''$ are compactly supported inside the channel, ensuring that the vorticity evolution reduces to pure transport near the boundaries and that compact support is preserved in time. We refer the reader to the surveys~\cite{ionescu2022nonlinear, wei2022hydro} for an in-depth discussion of inviscid damping phenomena in the Euler and equations. 

The limiting absorption principle, central to the present work, has also been implemented in the periodic setting in~\cite{CZN23strip}, where explicit solutions to the linearised equations yield an alternative proof of damping for the velocity and density perturbations.

A particular sub-class of steady states of the Boussinesq system of the form~\eqref{eq:Strat} are the so-called \emph{rest states}, corresponding to $v(y)\equiv0$. For these equilibria, the stabilising effects of vorticity mixing are absent. Nevertheless, the stable stratification of the background density $\overline\rho$ introduces dispersive effects that can extend the lifespan of solutions; see~\cite{elgindi2015sharp, jurja2024long, jurja2025effect} and references therein. When viscosity and thermal diffusivity are added to~\eqref{eq:EBintro}, the picture changes drastically: the equations become globally well-posed, and the vorticity mixing gives rise to an additional mechanism known as \emph{enhanced dissipation}, which further stabilises the dynamics. We refer to the recent works~\cites{arbon2025enhanced, coti2024stability, knobel2025suppression, masmoudi2022stability, masmoudi2023asymptotic, niu2024improved, zhang2023stability, zhai2023stability, zillinger2021enhanced, masmoudi2025linear} for significant progress in this direction.

The two-dimensional Boussinesq equations are also closely related to the three-dimensional axisymmetric Euler equations away from the axis of symmetry. In that setting, finite-time singularities are known to occur, as demonstrated in the seminal work of Elgindi~\cite{Elgindi21} and further developed in~\cites{ElgindiJeong19, ElgindiJeong20, ChenHou23}. Conversely, inviscid damping plays a key role in establishing global well-posedness for the three-dimensional Euler equations and for the inhomogeneous two-dimensional Euler equations near certain stationary states, see~\cite{GPW} and~\cite{CWZZ, Zhao23}, respectively. In the context of the 2D Boussinesq equations near the stably stratified Couette flow, long but finite time existence has been obtained in~\cite{BBCZD21} using inviscid damping estimates. Whether this can be extended to global-in-time well-posedness, or whether instabilities eventually drive the system out of the perturbative regime, remains an intriguing open question.

In this direction, it is natural to ask whether the linear asymptotic stability of the stably stratified monotone shear flows~\eqref{eq:Strat} established in Theorem~\ref{thm:mainID} persists for the full non-linear Boussinesq system, at least over long but finite times. A first step in this direction is to establish Gevrey regularity of the linear solutions. A second step is to determine whether the compact support of both the initial data and the background flow, together with the velocity estimates from Theorem~\ref{thm:mainID}, suffices to guarantee that the vorticity remains compactly supported. These issues will be investigated in future work.

Finally, concerning the axisymmetric 3D Euler equations, it would be interesting to study the asymptotic stability of radial, pipe-like steady states where the velocity has axial and swirl components depending on the radial coordinate. These configurations constitute the closest analogue to the stably stratified shear flows~\eqref{eq:Strat} considered here. For such radial profiles, one can define an appropriate Richardson number, and a Miles–Howard-type stability criterion is available~\cite{howard1962hydrodynamic}. A complete description of the long-time linear dynamics near these steady states remains open. Motivated by the parallels between the two systems, we plan to explore whether the analytical framework and techniques developed in this paper can be adapted to the Euler setting in future work.

\subsection{Organization of the paper}

In Section~2 we outline the strategy of the proof: we decouple the dynamics in Fourier modes, reduce the linearised problem to a family of (reduced) Taylor–Goldstein boundary value problems, and state the limiting absorption principle that underlies the time-decay effects. Sections~\ref{sec:Greens}-\ref{sec:GreensM} construct and analyse the Green’s function of the Reduced Taylor--Goldstein operator in the strong/weak (\S\ref{sec:GreensWS}) and mild (\S\ref{sec:GreensM}) stratification regimes, isolating the regular and singular behaviours that determine the decay exponents. Section~\ref{sec:solopstrat} develops mapping estimates for the associated solution and error operators in the tailored function spaces that capture the local structure near the critical layer. Section~\ref{sec:homTG} builds homogeneous solutions adapted to these singular structures, and Section~\ref{sec:spectrum} settles the spectral picture of the linearised operator, proving the absence of discrete spectrum and establishing that the spectrum is purely continuous, thereby justifying the contour representations used later.

Sections~\ref{sec:fragile}–\ref{sec:nonstrat} treat the regime-by-regime analysis required to pass from local resolvent control to global dynamics: we handle the ``fragile'' limit near the edges of stratification (Section~\ref{sec:fragile}), prove the limiting absorption principle in the stratified region (Section~\ref{sec:LAPstrat}), and recover the Euler-type behaviour in the non-stratified region (Section~\ref{sec:nonstrat}). Building on these ingredients, Section~\ref{sec:SobolevReg} derives Sobolev regularity for the spectral density and its good derivatives, which feeds into the oscillatory-phase arguments in Section~\ref{sec:IDestimates} to obtain the inviscid damping estimates for velocity and density. Finally, Section~\ref{sec:growth} quantifies the sublinear growth of vorticity and density gradients. To conclude, the appendices contain a number of technical results that are used throughout the paper: properties of Whittaker functions, logarithmic approximations, formulas for Green's functions, and higher-order operator bounds.

\section{Main ideas and sketch of the proof}\label{sec:sketch}
In this section we rigorously present the arguments in the derivation of the decay rates, dividing the proof into several steps that are then completed in the corresponding sections of the manuscript. Throughout, we assume that hypotheses H$\P$, H$v$, and H1--H3 hold.

\subsection{Fourier decomposition and spectral representation}
The setting of the periodic channel $\T\times[0,2]$ considered in this article prevents the direct use of Fourier methods in the vertical direction $y$. However, we can still decouple \eqref{eq:linEulerBouss}
 in Fourier modes in $x\in\T$, writing
\begin{align*}
\o=\sum_{k\in\Z}\o_k(t,y)\e^{ikx}, \quad \rho=\sum_{k\in\Z}\rho_k(t,y)\e^{ikx}, \qquad \psi=\sum_{k\in\Z}\psi_k(t,y)\e^{ikx},
\end{align*} 
so that 
\begin{equation}\label{eq:linEBomegarho}
\begin{split}
(\partial_t+ikv(y))\omega_k-ikv''(y)\psi_k&=-ik\mathfrak{g}\rho_k, \\ 
(\partial_t+ikv(y))\rho_k&=ik\mathrm{P}(y)\psi_k,
\end{split}
\end{equation}
for each $k\in\Z$, with 
\begin{equation*}
\begin{cases}
\D_k\psi_k =\o_k, \\
\psi_k|_{y=0,2}=0,
\end{cases}
\qquad  \D_k:= \p_y^2-k^2.
\end{equation*}  
The modes corresponding to the $x$-average, that is $k=0$, are preserved by the time-evolution and thus we will not consider them further (cf. \eqref{eq:zeroxave}). 
Moreover, as $\omega$ and $\rho$ are real-valued functions, we necessarily have that $\overline{\omega_{-k}}=\omega_k$ and $\overline{\rho_{-k}}=\rho_k$. Hence, without loss of generality, we take $k\geq 1$ throughout the manuscript.

For our purposes, it is more convenient to write \eqref{eq:linEulerBouss} in the compact  stream-function formulation 
\begin{equation*}
\partial_t \begin{pmatrix} \psi_k \\ \rho_k\end{pmatrix}+ikL_k\begin{pmatrix}
\psi_k \\ \rho_k
\end{pmatrix}=0,
\end{equation*}
and directly obtain its solution as
\begin{equation*}
\begin{pmatrix} \psi_k \\ \rho_k\end{pmatrix}=\e^{-ikL_kt}\begin{pmatrix}
\psi_k^0 \\ \rho_k^0
\end{pmatrix}
\end{equation*}
where $L_k$ is the linear operator defined by
\begin{equation}\label{eq:linOP}
L_k =\begin{pmatrix}
\D_k^{-1}(v(y)\D_k-v''(y))  & \mathfrak{g}\D_k^{-1} \\
-\mathrm{P}(y) & v(y)
\end{pmatrix}.
\end{equation}
Using Dunford's formula \cites{Engel-Nagel, Taylor-11}, we have that
\begin{equation}\label{eq:Dunford}
\begin{pmatrix}
\psi_k(t,y) \\ \rho_k(t,y)
\end{pmatrix} = \frac{1}{2\pi i} \int_{\p\Omega}\e^{-ikct} (c-L_k)^{-1}\begin{pmatrix}
\psi_k^0(y) \\ \rho_k^0(y)
\end{pmatrix} \,\d c, 
\end{equation}
where here $\Omega$ is any domain containing the spectrum $\sigma(L_k)$. Under the assumptions H1-H3, we have from Theorem \ref{thm:spectrumL}, see Theorem \ref{thm:spectrumlinop} for a more precise statement, that $\sigma(L_k) = [v(0), v(2)]$, the range of the velocity field. Hence, we can reduce the contour of integration to  
\begin{equation}\label{eq:psirholim}
\begin{pmatrix}
\psi_k(t,y) \\ \rho_k(t,y)
\end{pmatrix} 
=\frac{1}{2\pi i }\lim_{\ep\rightarrow 0}\int_0^2 \e^{-ikv(y_0)t}\left((-v(y_0)-i\ep+L_k)^{-1}-(-v(y_0)+i\ep+L_k)^{-1}\right)
\begin{pmatrix}
\psi_k^0 \\ \rho_k^0 
\end{pmatrix}\, v'(y_0) \d y_0.
\end{equation}
For $\ep>0$, we denote the resolvent pair
\begin{equation}\label{eq:geneigen}
\begin{pmatrix}
\psi^{\pm}_{k,\ep}(y,y_0) \\ \rho^\pm_{k,\ep}(y,y_0)
\end{pmatrix}:=\l( -v(y_0)\pm i\ep+L_k\r)^{-1}\begin{pmatrix}
\psi_k^0(y) \\ \rho_k^0(y)
\end{pmatrix}
\end{equation}
and obtain the coupled system of equations
\begin{equation*}
\begin{aligned}
\o_k^0(y)&=(v(y)-v(y_0)\pm i\ep)\D_k\psi^\pm_{k,\ep}(y,y_0) - v''(y)\psi_{m,\ep}^\pm+\mathfrak{g} \rho^\pm_{k,\ep}(y,y_0), \\
\rho_k^0(y)&=(v(y)-v(y_0)\pm i\ep)\rho^\pm_{k,\ep}(y,y_0) -\mathrm{P}(y)\psi^\pm_{k,\ep}(y,y_0).
\end{aligned}
\end{equation*}
We first solve
\begin{equation}\label{eq:rhomep}
\rho^\pm_{k,\ep}(y,y_0)=\frac{\rho_k^0(y) + \mathrm{P}(y)\psi^\pm_{k,\ep}(y,y_0)}{v(y)-v(y_0)\pm i\ep}
\end{equation}
and from there we obtain the following inhomogeneous \emph{Taylor-Goldstein equation} for $\psi^\pm_{k,\ep}$,
\begin{equation}\tag{TG}\label{eq:TG}
\begin{split}
&\left(\Delta_k-\frac{v''(y)}{v(y)-v(y_0)\pm i\ep}+\frac{\cJ(y)}{(v(y)-v(y_0)\pm i\ep)^2}\right)\psi_{k,\ep}^\pm(y,y_0) \\
&\qquad\qquad\qquad=\frac{\omega_k^0(y)}{v(y)-v(y_0)\pm i\ep}-\frac{\mathfrak{g}\rho_k^0(y)}{(v(y)-v(y_0)\pm i\ep)^2},
\end{split}
\end{equation}
along with homogeneous Dirichlet boundary conditions $\psi_{k,\ep}^\pm(0,y_0) = \psi_{k,\ep}^\pm(2,y_0) = 0$, for all $y_0\in [0,2]$. Note how the local Richardson number enters the equation as the strength of the factor that becomes critically singular in the limit $\ep\to0$.

Motivated by the heuristic justifications in Section  \ref{subsec:heuristics}, understanding the precise regularity properties of $\psi_{k,\ep}^\pm$ and $\rho_{k,\ep}^\pm$ is key for the application of oscillatory-phase arguments to \eqref{eq:psirholim}. In this direction, we aim to study a simplified version of \eqref{eq:TG} by reducing the operator into a  fixed-coefficient operator. 

\subsection{The Reduced Taylor-Goldstein operator and its Green's function}
We define the Reduced Taylor-Goldstein operator
\begin{align}\label{eq:defRTGoperator}
\text{RTG}_{k,\ep}^\pm :=\p_y^2 - k^2 + \frac{\cJ(y_0)}{(y-y_0 \pm i\ep_0)^2},  \quad \ep_0:=\frac{\ep}{v'(y_0)}
\end{align}
and we further set the error operator
\begin{align}\label{eq:deferroroperator}
\scE_{k,\ep}^\pm &:=  -\frac{v''(y)}{v(y) - v(y_0) \pm i\ep} + \frac{\P(y) - \P(y_0)}{(v(y) - v(y_0) \pm i\ep)^2} +\left( \frac{\P(y_0)}{(v(y) - v(y_0) \pm i\ep)^2} - \frac{\cJ(y_0)}{(y-y_0\pm i\ep_0)^2}\right)
\end{align}
so that \eqref{eq:TG} can be cast as
\begin{align*}
\text{RTG}_{k,\ep}^\pm\psi_{k,\ep}^\pm(y,y_0) + \scE_{k,\ep}^\pm(y,y_0)\psi_{k,\ep}^\pm(y,y_0) = \frac{\omega_k^0(y)}{v(y)-v(y_0)\pm i\ep}-\frac{\mathfrak{g}\rho_k^0(y)}{(v(y)-v(y_0)\pm i\ep)^2}.
\end{align*}
Let $\G_{k,\ep}^\pm(y,y_0,z)$ denote the Green's function of the Reduced Taylor-Goldstein operator with Dirichlett boundary conditions at $y=0$ and $y=2$. It is such that
\begin{align*}
\left( \p_y^2 - k^2 + \frac{\cJ(y_0)}{(y-y_0 \pm i\ep_0)^2}\right) \G_{k,\ep}^\pm(y,y_0,z) = \delta(y-z), \quad \G_{k,\ep}^\pm(0,y_0,z) = \G_{k,\ep}^\pm(2,y_0,z) = 0,
\end{align*}
for all $y_0,z\in[0,2]$.

To properly determine the regularity properties of $\psi_{k,\ep}^\pm$, a precise understanding of the Green's function $\G_{k,\ep}^\pm$ is most useful. We start by noting that for $y_0\in [0,\vartheta_1]\cup [\vartheta_2,2]$ we have $\cJ(y_0) = v''(y_0) = 0$ and the Reduced Taylor-Goldstein operator becomes the usual Laplacian operator, so that $\G_{k,\ep}^\pm(y,y_0,z)$ denotes the Green's function of the Dirichlet Laplacian, which is studied in Section \ref{sec:nonstrat}.

For the most singular setting $y_0\in (\vartheta_1,\vartheta_2)$, we have $\cJ(y_0)>0$ and we construct the Green's function with the standard method, first finding the homogeneous solutions to \eqref{eq:defRTGoperator}. We introduce here the regularity index
\begin{align}
    \gamma(y_0) = \sqrt{\frac14 - \cJ(y_0)}
\end{align}
for which we note that $\mu(y_0) = \Re(\gamma(y_0))$ and we define $\nu(y_0) = \Im( \gamma(y_0))$. Fortunately, 
\begin{align}\label{eq:defMkappagamma}
M_{0, \gamma(y_0)}((2k(y-y_0\pm i\ep_0)), \quad M_{0, -\gamma(y_0)}((2k(y-y_0\pm i\ep))
\end{align}
are two linearly independent homogeneous solutions to \eqref{eq:defRTGoperator} for all $y_0\in (\vartheta_1, \vartheta_2)$ with $\cJ(y_0)\neq \frac14$. Here, $M_{0,\gamma}$ denotes the modified Whittaker function~\cite{Whittaker03}, and constitutes a central piece in our analysis. While we properly define and state its main properties in Appendix \ref{app:Whittaker}, we record here that 
\begin{align}\label{eq:asymptoticMkappagamma}
M_{0,\gamma}(\zeta)= \zeta^{\frac12+\gamma}\mathcal{E}_{0,\gamma}(\zeta),
\end{align} 
where $\mathcal{E}_{0,\gamma}(\zeta)$ is an analytic function, with $\mathcal{E}_{0,\gamma}(0)=1$ and $ \mathcal{E}_{0,\gamma} ' (0) = 0$. Since $\G_{k,\ep}^\pm$ is a suitable linear combination of the two homogeneous solutions \eqref{eq:defMkappagamma}, it follows that the regularity properties of $\G_{k,\ep}^\pm$ are dictated by those of $M_{0,\gamma(y_0)}$ and $M_{0,-\gamma(y_0)}$. 

We next observe that for $y_0\in (0,2)$ with $\cJ(y_0)=\frac14$, the two homogeneous solutions are no longer linearly independent, and a second pair of fundamental solutions must be considered.  Without going into further details, which are relegated to the forthcoming sections, we further define
\begin{align*}
\widetilde\varpi_{1,1} &= \frac{\varpi_{1}-\varpi_{1,1}}{2}, \quad  \widetilde\varpi_{1,2} = \frac{\varpi_{1,2}-\varpi_{1}}{2} \\
\widetilde\varpi_{2,1} &= \frac{\varpi_{2}-\varpi_{2,1}}{2}, \quad  \widetilde\varpi_{2,2} = \frac{\varpi_{2,2}-\varpi_{2}}{2}
\end{align*}
and we divide the interval $[0,2]$ into four (intersecting) distinct regimes:
\begin{itemize}
\item The non-stratified regime $I_E := [0,\vartheta_1] \cup [\vartheta_2,2]$, where $\cJ(y)=0$ for all $y\in I_E$.
\item The weakly stratified regime $I_W := (\vartheta_1, \widetilde{\varpi}_{1,1}) \cup (\widetilde\varpi_{2,2}, \vartheta_2)$. There holds $0<\cJ< \frac14$ in $I_W$.
\item The mildly stratified regime $I_M :=  ({\varpi}_{1,1}, \varpi_{1,2}) \cup ({\varpi}_{2,1}, \varpi_{2,2})\setminus \lbrace \varpi_1, \varpi_2 \rbrace$. There holds $\left| \cJ -\frac14 \right| \leq \tilde\delta$ in $I_M$.
\item The strongly stratified regime $I_S := (\widetilde\varpi_{1,2}, \widetilde\varpi_{2,1})$. There holds $\cJ > \frac14$ in $I_S$.
\end{itemize}
Given the relevance of $\gamma(y_0)$, to keep the notation as simple as possible, we set
\begin{align*}
\gamma_0 := \gamma(y_0), \quad \mu_0 :=\mu(y_0), \quad \nu_0 := \nu(y_0)
\end{align*}
throughout the manuscript.

\subsection{Functional Spaces}
To motivate the choice of our spaces, we first note that the meaningful scaling that balances out the two potential terms $-k^2$ and $\frac{\cJ(y_0)}{(y-y_0\pm i\ep)^2}$ in the Reduced Taylor-Goldstein operator \eqref{eq:defRTGoperator} is roughly given by $k^2|y-y_0|^2\approx \cJ(y_0)$. Hence, for $\beta^2 := \Vert \cJ \Vert_{L^\infty}>0$ we define the \textit{local}, resp. \textit{non-local} sets
\begin{align*}
I_n(y_0) := \left\lbrace y\in [0,2] \, : \, |y-y_0|\leq \frac{n\beta}{k} \right\rbrace, \quad I_n^c(y_0) := [0,2]\setminus I_n(y_0)
\end{align*}
for $n\geq 1$. 

For the sake of clarity, we next argue for $y_0\in I_E\cup I_S$.  Given the asymptotic expansion \eqref{eq:asymptoticMkappagamma} for the homogeneous solutions to \eqref{eq:defRTGoperator}, we set 
\begin{align*}
\eta = 2k(y-y_0\pm i\ep_0)
\end{align*}
and for functions $\varphi(y,y_0)$ we define
\begin{align*}
\Vert \varphi \Vert_{X_{k,\ep,y_0}^j}:=\inf_{\substack{\varphi_\sr, \varphi_\s \in H^1(I_3(y_0)) \\  \varphi = \eta^{\frac12+\gamma_0}\varphi_\sr + \eta^{\frac12-\gamma_0}\varphi_\s }} \sum_{n=0}^j k^{-n}\left( \Vert \eta^{-\frac{n}{2}}\partial_y^n\varphi_\sr\Vert_{L^{\infty}(I_3(y_0))} + \Vert \eta^{-\frac{n}{2}} \partial_y^n\varphi_\s\Vert_{L^{\infty}(I_3(y_0))} \right).
\end{align*}
and
\begin{align*}
\Vert \varphi(\cdot, y_0) \Vert_{X_{k,\ep,y_0}} := \Vert \varphi \Vert_{X_k^1}  + k^\frac12\Vert \varphi(\cdot, y_0) \Vert_{H^1_k(I_3^c(y_0))}. 
\end{align*}
Intuitively, the space $X_{k,\ep,y_0}^1$ measures the size of the coefficients $\varphi_\sr$ and $\varphi_\s$ accompanying the \textit{regular} $\eta^{\frac12+\gamma_0}$ and \textit{singular} $\eta^{\frac12-\gamma_0}$ roots, respectively, in the scaling region $I_3(y_0)$ where the potential singularity is strongest. 

While the space $X_{k,\ep,y_0}$ captures the regularity structures of the homogeneous solutions \eqref{eq:defMkappagamma}, the prescribed pointwise vanishing of $\partial_y\varphi_\sigma$ is sometimes too restrictive, so  we need to introduce a weaker space that nonetheless retains the most basic regularity properties. Indeed, we next define 
\begin{align*}
\Vert \varphi(\cdot, y_0) \Vert_{Z_{k,\ep,y_0}} := \Vert \varphi \Vert_{Z_{k,\ep,y_0}^1}  + k^\frac12\Vert \varphi(\cdot, y_0) \Vert_{H^1_k(I_3^c(y_0))} 
\end{align*}
where now
\begin{align*}
\Vert \varphi \Vert_{Z_{k,\ep,y_0}^1}:=\inf_{\substack{\varphi_\sr, \varphi_\s \in H^1(I_3(y_0)) \\  \varphi = \eta^{\frac12+\gamma_0}\varphi_\sr + \eta^{\frac12-\gamma_0}\varphi_\s }} &\Big( \Vert \varphi_\sr\Vert_{L^{\infty}(I_3(y_0))} + \Vert  \varphi_\s\Vert_{L^{\infty}(I_3(y_0))} \\
&\quad + k^{-\frac12}\Vert \partial_y\varphi_\sr\Vert_{L^{2}(I_3(y_0))} + k^{-\frac12}\Vert  \partial_y\varphi_\s\Vert_{L^{2}(I_3(y_0))} \Big).
\end{align*}
As usual, we set
\begin{align*}
X_{k,\ep,y_0} := \lbrace \varphi	\in L^2(0,2) : \Vert \varphi \Vert_{X_{k,\ep,y_0}} < +\infty \rbrace, \quad Z_{k,\ep,y_0} := \lbrace \varphi	\in L^2(0,2) : \Vert \varphi \Vert_{Z_{k,\ep,y_0}} < +\infty \rbrace
\end{align*}
and we readily note that $X_{k,\ep,y_0}\subset Z_{k,\ep,y_0}$ since $\Vert \varphi \Vert_{Z_{k,\ep,y_0}} \leq C\Vert \varphi \Vert_{X_{k,\ep,y_0}}$, for some universal constant $C>0$ independent of $k\geq 1$, $y_0\in[0,2]$ and $\ep\in (0,1)$. To simplify notation, we shall omit the dependence on $\ep$ and $y_0$ of the spaces, and just denote
\begin{align*}
X_k := X_{k,\ep,y_0}, \quad Z_k := Z_{k,\ep,y_0}.
\end{align*}
\subsubsection{The mild regime}
When $y_0\in I_M$ the mildly stratified region, $\cJ(y_0)$ is close to $\frac14$, the associated $\gamma_0$ is close to  $0$ and the Wronskian $\W\lbrace\eta^{\frac12+\gamma_0}, \eta^{\frac12-\gamma_0}\rbrace = -4k\gamma_0$ is also close to $0$, so that in the limit as $\cJ(y_0)\rightarrow\frac14$ the two homogeneous solutions  \eqref{eq:defMkappagamma} lose their linear independence. To overcome this difficulty, we instead consider decompositions of the form
\begin{equation}\label{eq:decomlogvarphi}
\varphi(y,y_0) = \varphi_\sr(y,y_0)\eta^{\frac12 +\gamma_0} + \varphi_\s(y,y_0)\eta^{\frac12-\gamma_0}\log(\eta)\Q_{\gamma_0}(\eta),
\end{equation}
where 
\begin{equation}\label{eq:defQgamma}
\Q_{\gamma_0}(\eta) := \int_0^1 e^{2\gamma_0 s \log(\eta)} \d s.
\end{equation}
and for which $\mathcal{W}\lbrace \gamma^{\frac12+\gamma_0}, \eta^{\frac12-\gamma_0}\log(\eta)\mathcal{Q}_{\gamma}(\eta) \rbrace = -2k$, which is always non-zero. Already simplifying the notation, we thus define
\begin{align*}
\Vert \varphi(\cdot, y_0) \Vert_{LZ_k} := \Vert \varphi \Vert_{LZ_k^1}  + k^\frac12\Vert \varphi(\cdot, y_0) \Vert_{H^1_k(I_3^c(y_0))} 
\end{align*}
with
\begin{align*}
\Vert \varphi \Vert_{LZ_k^j}:=\inf_{\substack{\varphi_\sr, \varphi_\s \in H^1(I_3(y_0)) \\  \varphi = \eta^{\frac12+\gamma_0}\varphi_\sr + \eta^{\frac12-\gamma_0}\log(\eta) \Q_{\gamma_0}(\eta)\varphi_\s }} & \Big( \Vert \varphi_\sr\Vert_{L^{\infty}(I_3(y_0))} + \Vert  \varphi_\s\Vert_{L^{\infty}(I_3(y_0))}  \\
&\quad + \left. k^{-\frac12}\Vert \partial_y\varphi_\sr\Vert_{L^{2}(I_3(y_0))} + k^{-\frac12}\Vert  \partial_y\varphi_\s\Vert_{L^{2}(I_3(y_0))} \right)
\end{align*}
and similarly
\begin{align*}
\Vert \varphi(\cdot, y_0) \Vert_{LX_k} := \Vert \varphi \Vert_{LX_k^1}  + k^\frac12\Vert \varphi(\cdot, y_0) \Vert_{H^1_k(I_3^c(y_0))} 
\end{align*}
with
\begin{align*}
\Vert \varphi \Vert_{LX_k^j}:= \inf_{\substack{\varphi_\sr, \varphi_\s \in H^1(I_3(y_0)) \\  \varphi = \eta^{\frac12+\gamma_0}\varphi_\sr + \eta^{\frac12-\gamma_0}\log(\eta) \Q_{\gamma_0}(\eta)\varphi_\s }}\sum_{n=0}^j k^{-n} \left( \Vert \eta^{-\frac{n}{2}}\partial_y^n\varphi_\sr\Vert_{L^{\infty}(I_3(y_0))} + \Vert \eta^{-\frac{n}{2}}  \partial_y^n\varphi_\s\Vert_{L^{\infty}(I_3(y_0))} \right)
\end{align*}
As before, there holds $\Vert \varphi \Vert_{LZ_k} \lesssim \Vert \varphi \Vert_{LX_k}$ and thus $LX_k \subset LZ_k$.

\subsection{Regularity of the Green's function, local to global bounds and regularization}
As we previously indicated, we construct the Green's function $\G_{k,\ep}^\pm$ through the homogeneous solutions to \eqref{eq:defRTGoperator}. Since these homogeneous solutions \eqref{eq:defMkappagamma} are fairly explicit, we can obtain explicit expressions for $\G_{k,\ep}
^\pm$, that are recorded in Section \ref{sec:Greens} below. Crucially, the Green's function inherits the regularity properties of these homogeneous solutions in the local region. In the weak and strong regimes, they are:

\begin{theorem}\label{thm:XkregGSW}
Let $k\geq 1$, $\ep\in (0,1)$ and $y_0\in I_S \cup I_W$. There exists $\ep_*>0$ such that the Green's function admits the decomposition
\begin{align*}
\G_{k,\ep}^\pm(y,y_0,z) = \left( \G_{k,\ep}^\pm\right)_\sr(y,y_0,z) \eta^{\frac12 + \gamma_0} + \left( \G_{k,\ep}^\pm\right)_\s(y,y_0,z) \eta^{\frac12 - \gamma_0}
\end{align*}
where $\eta=2k(y-y_0\pm i\ep_0)$, 
\begin{align*}
\sup_{y\in I_3(y_0)} \left\Vert \left( \G_{k,\ep}^\pm\right)_\sigma(y,y_0,\cdot) \right\Vert_{X_k^1} \lesssim k^{-1}, \quad \sup_{y\in I_3(y_0)} \left\Vert \left( \G_{k,\ep}^\pm\right)_\sigma(y,y_0,\cdot) \right\Vert_{H_k^1(I_3^c(y_0))} \lesssim k^{-\frac32},
\end{align*}
and
\begin{align*}
\sup_{y\in I_3(y_0)} \left\Vert \partial_y\left( \G_{k,\ep}^\pm\right)_\sigma(y,y_0,\cdot) \right\Vert_{X_k^1} + \sup_{y\in I_3(y_0)}  k^\frac12 \left\Vert \partial_y\left( \G_{k,\ep}^\pm \right)_\sigma(y,y_0,\cdot) \right \Vert_{L^2(I_3^c(y_0))}\lesssim |\eta|
\end{align*}
uniformly for all $y_0\in I_S\cup I_W$, all $0<\ep<\ep_*$ and $\sigma\in \lbrace \sr, \s \rbrace$. 
\end{theorem}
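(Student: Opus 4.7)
The plan is to construct the Green's function explicitly by variation of parameters from the two Whittaker-based fundamental solutions of the Reduced Taylor--Goldstein equation, and then to read off the claimed decomposition directly from the Whittaker factorization \eqref{eq:asymptoticMkappagamma}. Since $y_0\in I_W\cup I_S$, the index $\gamma_0=\gamma(y_0)$ satisfies $|\gamma_0|\geq c>0$ for a constant $c$ depending only on the distance of $\cJ(y_0)$ from $\{0,\tfrac14\}$. Hence $M_{0,\gamma_0}(\eta)$ and $M_{0,-\gamma_0}(\eta)$, with $\eta=2k(y-y_0\pm i\ep_0)$, are uniformly linearly independent and their Wronskian equals $-4k\gamma_0$. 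From these I would build two homogeneous solutions $\phi_L(\cdot,y_0),\phi_R(\cdot,y_0)$ adapted to the Dirichlet condition at $y=0$ and $y=2$ respectively, each a specific linear combination of $M_{0,\pm\gamma_0}(\eta)$ whose coefficients are Whittaker values at the endpoints. Then
\[
\G_{k,\ep}^\pm(y,y_0,z) \;=\; -\frac{\phi_L(y_<,y_0)\,\phi_R(y_>,y_0)}{\W[\phi_L,\phi_R](y_0)}, \qquad y_<:=\min(y,z),\;\; y_>:=\max(y,z),
\]
with $\W[\phi_L,\phi_R](y_0)$ explicitly computable and of order $k$ uniformly in the admissible range of $k,\ep,y_0$ (this is where \textbf{H1}--\textbf{H2} enter, to guarantee that the boundary values of $M_{0,\pm\gamma_0}$ remain under control).

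Using \eqref{eq:asymptoticMkappagamma}, each of $\phi_L,\phi_R$ admits the decomposition
\[
\phi_{L/R}(z,y_0) \;=\; A_{L/R}(y_0)\,\eta_z^{\frac12+\gamma_0}\mathcal{E}_{0,\gamma_0}(\eta_z) \;+\; B_{L/R}(y_0)\,\eta_z^{\frac12-\gamma_0}\mathcal{E}_{0,-\gamma_0}(\eta_z),
\]
with $A_{L/R},B_{L/R}$ of size $O(1)$ and $\mathcal{E}_{0,\pm\gamma_0}$ entire, $\mathcal{E}_{0,\pm\gamma_0}(0)=1$, $\mathcal{E}_{0,\pm\gamma_0}'(0)=0$. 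Inserting these expressions into the formula for $\G_{k,\ep}^\pm$ and grouping $z$-dependent powers, for each fixed $y\in I_3(y_0)$ I obtain the decomposition of the statement, with $(\G_{k,\ep}^\pm)_\sr$ and $(\G_{k,\ep}^\pm)_\s$ equal to $-\W[\phi_L,\phi_R]^{-1}$ times products of $y$-values of $\phi_{L/R}$ and of the analytic factors $\mathcal{E}_{0,\pm\gamma_0}(\eta_z)$.

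With the decomposition in hand, the local $X_k^1$-bound reduces to controlling $\mathcal{E}_{0,\pm\gamma_0}$ and $k^{-1}\eta^{-\frac12}\partial_z\mathcal{E}_{0,\pm\gamma_0}$ in $L^\infty(I_3(y_0))$. Since $|\eta_z|\lesssim \beta$ there, the first quantity is bounded by the Whittaker estimates of Appendix~\ref{app:Whittaker}; the second uses $\partial_z=2k\partial_\eta$ together with $\mathcal{E}_{0,\pm\gamma_0}'(0)=0$, which gives $\eta^{-\frac12}\partial_\eta\mathcal{E}_{0,\pm\gamma_0}=O(1)$, and the $k^{-1}$ from $\W^{-1}$ delivers the claimed bound. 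For the non-local bound on $I_3^c(y_0)$, I exploit the exponential asymptotics of $M_{0,\pm\gamma_0}$ for $|\eta|\gtrsim 1$: the specific combinations that define $\phi_{L/R}$ are precisely those that decay like $e^{-k|z-y_0|}$ away from $y_0$, so $\|\phi_{L/R}(\cdot,y_0)\|_{H^1_k(I_3^c(y_0))}\lesssim k^{-\frac12}$, and dividing by $|\W|\sim k$ yields the $k^{-\frac32}$ estimate. The derivative estimate in $y$ then follows from the same formulas: the $\partial_y$ only hits the $y$-factors $\phi_{L/R}(y,y_0)$, and the $2k\partial_\eta$ acting on $\eta_y^{\frac12\pm\gamma_0}$ accounts for the $|\eta|$ on the right-hand side through the identity $2k\partial_\eta\eta^{\frac12\pm\gamma_0}=(1\pm2\gamma_0)k\,\eta^{-\frac12\pm\gamma_0}$, which is precisely of size $|\eta|^{-1}$ times the un-differentiated factor (balanced by the extra $|\eta|$ on the right).

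The principal obstacle is the \emph{uniformity} of all implicit constants as $y_0$ varies inside $I_W\cup I_S$: two degenerations must be excluded simultaneously. As $y_0\to\vartheta_1,\vartheta_2$ the exponent $\gamma_0\to\tfrac12$ and the singular root $\eta^{\frac12-\gamma_0}$ becomes analytic, so the decomposition, though still valid, loses its quantitative separation; conversely, as $y_0$ approaches the edge of the mild regime, $\gamma_0\to 0$ and the Wronskian $-4k\gamma_0$ degenerates, undermining the very construction. Both are ruled out by the definition of $I_W\cup I_S$, but translating this qualitative fact into quantitative, uniform bounds requires carefully tracking the Whittaker constants in Appendix~\ref{app:Whittaker} and the branch choice of $\eta^{\frac12\pm\gamma_0}$ consistently with the $\pm i\ep_0$ shift; this is handled by working in the half-plane $\pm\Im\eta>0$ where the power functions are single-valued and analytic.
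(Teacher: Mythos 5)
Your high-level plan (build $\G_{k,\ep}^\pm$ from Whittaker-based homogeneous solutions adapted to the Dirichlet conditions, read off the decomposition from the factorization $M_{0,\pm\gamma_0}(\eta)=\eta^{\frac12\pm\gamma_0}\mathcal{E}_{0,\pm\gamma_0}(\eta)$, and control everything via properties of $\mathcal{E}_{0,\pm\gamma_0}$) matches the paper (Proposition~\ref{prop:sobolevregdecomG}, Proposition~\ref{prop:sobolevregpartialdecomG}, Corollary~\ref{cor:sobolevregpartialdecomG}). However, three of your intermediate claims are wrong and constitute genuine gaps.

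First, the Wronskian $\W_{k,\ep}^\pm(y_0)$ from \eqref{eq:def Wronskianss} is not of order $k$: it equals $-4k\gamma_0$ times \emph{boundary} values of Whittaker functions at $-y_0\pm i\ep_0$ and $2-y_0\pm i\ep_0$, and these are exponentially large in $k$ since $M_{0,\gamma}(\eta)\sim e^{\eta/2}$. The estimate one actually needs is on the ratio
$\left|\frac{M_\tau(-y_0\pm i\ep_0)}{\W_{k,\ep}^\pm(y_0)}M_\sigma(2-y_0\pm i\ep_0)\right|\lesssim k^{-1}$,
which is a non-trivial lower bound on $|\W_{k,\ep}^\pm|$ relative to the product of the endpoint Whittaker values and is the hard input of the proof (supplied, in the paper, by Lemma~\ref{lemma:Wronskianlowerboundweakstrat} for $y_0\in I_W$ and by an external result in the strong regime). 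This is not ``where \textbf{H1}--\textbf{H2} enter''; it is a Whittaker-specific non-vanishing estimate that must be established separately.

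Second, your claim that the combinations $\phi_L,\phi_R$ defining the Green's function ``decay like $e^{-k|z-y_0|}$ away from $y_0$'' is false: $\phi_{\sl,k,\ep}^\pm(z,y_0)$ is a combination of $M_{0,\pm\gamma_0}$ chosen to vanish at $z=0$, not the recessive solution $W_{0,\gamma_0}$, and it grows toward $y_0$ (just as $\sinh(kz)$ does). Consequently the intermediate bound $\|\phi_{L/R}\|_{H^1_k(I_3^c(y_0))}\lesssim k^{-1/2}$ cannot be correct; what one can show is $\|\phi_{\sl,k,\ep}^\pm\|_{H^1_k(I_3^c(y_0)\cap(0,y_0))}\lesssim k^{-1/2}\bigl(|M_\sr(-y_0\pm i\ep_0)|+|M_\s(-y_0\pm i\ep_0)|\bigr)$, and the exponentially large factor must then be cancelled by the Wronskian. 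The paper bypasses the delicate cancellation tracking by instead invoking the entanglement inequality (Lemma~\ref{lemma:entanglementRTG}): it first establishes local $L^2$ bounds on $I_2^c(y_0)\cap I_3(y_0)$ directly from the local expansion, then upgrades them to $H^1_k(I_3^c(y_0))$ via the equation. An approach based on explicit Whittaker asymptotics is possible in principle but requires tracking these exponential cancellations carefully; your proposal does not do so.

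Third, your derivative argument conflates $\partial_y\G_{k,\ep}^\pm$ with $\partial_y(\G_{k,\ep}^\pm)_\sigma$. The estimate in the statement concerns the $y$-derivative of the \emph{coefficients}, not of the full Green's function, so $\partial_y$ never acts on $\eta^{\frac12\pm\gamma_0}$: it acts on $\mathcal{E}_{0,\pm\gamma_0}(\eta)$ inside the coefficient (plus the $\phi$ factor for $z>y$), and the factor $|\eta|$ on the right-hand side comes from the vanishing derivative $\mathcal{E}_{0,\pm\gamma_0}'(0)=0$ (so $|\mathcal{E}_{0,\pm\gamma_0}'(\eta)|\lesssim|\eta|$), not from the identity $2k\partial_\eta\eta^{\frac12\pm\gamma_0}=(1\pm 2\gamma_0)k\,\eta^{-\frac12\pm\gamma_0}$.
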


Section \ref{sec:GreensWS} is devoted to the proof of the theorem for $y_0\in I_S\cup I_W$. An analogue of Theorem \ref{thm:XkregGSW} for $LX_k$ bounds on $\G_{k,\ep}^\pm(y,y_0,z)$ when $y_0\in I_M$ is obtained in Section \ref{sec:GreensM}.

\subsubsection{Local to global bounds}
The above theorem is most useful to describe in $I_3(y_0)$ the solution $\Phi_{k,\ep}^\pm$ to 
\begin{equation}\label{eq:RTGPhi}
\left(\D_k+\frac{\cJ(y_0)}{(y-y_0\pm i\ep_0)^2}\right){\Phi}_{k,\ep}^\pm(y,y_0) = F_{k,\ep}^\pm(y,y_0)
\end{equation}
with $\Phi_{k,\ep}^\pm(0,y_0) = \Phi_{k,\ep}^\pm(2,y_0) = 0$, since it is given by
\begin{align*}
{\Phi}_{k,\ep}^\pm = \int_0^2 \G_{k,\ep}^\pm(y,y_0,z) F_{k,\ep}^\pm(z,y_0) \d z
\end{align*}
However, for $y\in I_3^c(y_0)$, we cannot use Theorem \ref{thm:XkregGSW} and we need a different way to estimate $\Phi_{k,\ep}^\pm$. Nevertheless, we can upgrade local $L^2(I_2^c(y_0)\cap I_3(y_0))$ bounds on $\Phi_{k,\ep}^\pm$ to global $H_k^1(I_3^c(y_0))$ estimates on $\Phi_{k,\ep}^\pm$ thanks to the following inequality.

\begin{lemma}[Entanglement Inequality -- Lemma 7.1 in \cite{CZN25chan}]\label{lemma:entanglementRTG}
Let $\Phi_{k,\ep}^\pm$ be a solution to \eqref{eq:RTGPhi}, for some $F\in L^2(I_2^c(y_0))$. Then,
\begin{align*}
 \Vert \Phi_{k,\ep}^\pm \Vert_{H_k^1(I_3^c(y_0))} \lesssim \Vert \Phi_{k,\ep}^\pm \Vert_{L^2(I_2^c(y_0)\cap I_3(y_0))} + \frac{1}{k^2}\Vert F \Vert_{L^2(I_2^c(y_0))}
\end{align*}
uniformly for all $k\geq 1$, $y_0\in[0,2]$ and $\ep>0$. 
\end{lemma}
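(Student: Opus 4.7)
The strategy is a localized energy identity. The crucial point is that while the potential $V := \cJ(y_0)/(y-y_0\pm i\ep_0)^2$ in \eqref{eq:RTGPhi} is critically singular at $y=y_0$, outside $I_2(y_0)$ we have
\begin{align*}
\left|\frac{\cJ(y_0)}{(y-y_0\pm i\ep_0)^2}\right| \leq \frac{\|\cJ\|_{L^\infty}}{|y-y_0|^2} \leq \frac{\beta^2}{(2\beta/k)^2} = \frac{k^2}{4},
\end{align*}
so on $I_2^c(y_0)$ the potential is dominated by the $-k^2$ mass term of $\D_k$. The plan is to test $\Phi_{k,\ep}^\pm$ against a cutoff supported in $I_2^c(y_0)$ and equal to $1$ on $I_3^c(y_0)$, which converts this subcriticality into coercivity on $I_3^c(y_0)$.

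Concretely, I would fix $\chi\in C^\infty([0,2])$ with $\chi\equiv 0$ on $I_2(y_0)$, $\chi\equiv 1$ on $I_3^c(y_0)$, and $\|\chi'\|_{L^\infty}\lesssim k$; such $\chi$ exists because the transition shell $I_2^c(y_0)\cap I_3(y_0)$ has width $\beta/k$. Testing \eqref{eq:RTGPhi} against $\chi^2\overline{\Phi_{k,\ep}^\pm}$, integrating by parts in the $\partial_y^2$ term (the Dirichlet condition kills the boundary contributions at $y=0,2$), and taking real parts to discard $\Im V$, I obtain
\begin{align*}
\int_0^2 \chi^2 |\partial_y\Phi_{k,\ep}^\pm|^2\,\d y + k^2 \int_0^2 \chi^2 |\Phi_{k,\ep}^\pm|^2\,\d y - \int_0^2 \chi^2(\Re V)|\Phi_{k,\ep}^\pm|^2\,\d y = -2\Re\!\int_0^2 \chi\chi' \overline{\Phi_{k,\ep}^\pm}\partial_y\Phi_{k,\ep}^\pm\,\d y - \Re\!\int_0^2 \chi^2 \overline{\Phi_{k,\ep}^\pm} F\,\d y.
\end{align*}
The subcritical bound $|\Re V|\leq k^2/4$ on $\mathrm{supp}(\chi)$ absorbs the potential contribution into the coercive mass term with a slack factor of $3/4$, and Young's inequality applied to the two right-hand-side integrals — specifically $2|\int \chi\chi'\bar\Phi\partial_y\Phi|\leq \tfrac12\int\chi^2|\partial_y\Phi|^2 + 2\int(\chi')^2|\Phi|^2$ and $|\int \chi^2\bar\Phi F|\leq \tfrac{k^2}{8}\int\chi^2|\Phi|^2+\tfrac{2}{k^2}\int\chi^2|F|^2$ — absorbs the derivative and remaining mass contributions into the LHS. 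Using $\chi\equiv 1$ on $I_3^c(y_0)$ and $(\chi')^2\lesssim k^2\mathbf{1}_{I_2^c(y_0)\cap I_3(y_0)}$ then yields
\begin{align*}
\|\partial_y\Phi_{k,\ep}^\pm\|_{L^2(I_3^c(y_0))}^2 + k^2 \|\Phi_{k,\ep}^\pm\|_{L^2(I_3^c(y_0))}^2 \lesssim k^2 \|\Phi_{k,\ep}^\pm\|_{L^2(I_2^c(y_0)\cap I_3(y_0))}^2 + k^{-2}\|F\|_{L^2(I_2^c(y_0))}^2.
\end{align*}
Dividing by $k^2$ and taking square roots produces the claim in the scaled Sobolev norm $\|u\|_{H_k^1}^2 := \|u\|_{L^2}^2 + k^{-2}\|\partial_y u\|_{L^2}^2$.

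The main (mild) obstacle is the quantitative calibration: the radius $2\beta/k$ defining $I_2(y_0)$ must be large enough that $|\Re V|$ is strictly below $k^2$ on $\mathrm{supp}(\chi)$ (securing coercivity of the LHS), while the width $\beta/k$ of the transition shell must keep $\|\chi'\|_{L^\infty}\lesssim k$ so that the boundary-layer remainder on the right is precisely the $L^2$ mass on $I_2^c(y_0)\cap I_3(y_0)$ appearing in the statement. The factor $\beta=\|\cJ\|_{L^\infty}^{1/2}$ built into the definition of $I_n(y_0)$ is exactly what balances these thresholds, producing the slack factor $3/4$ in the coercivity. The complex-valued nature of $V$ poses no difficulty, as taking real parts reduces matters to $|\Re V|\leq |V|$ and no further control of $\Im V$ is required.
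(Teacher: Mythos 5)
Your proof is correct, and the argument is the standard one behind an ``entanglement'' inequality of this type: multiply the equation by $\chi^2\overline{\Phi_{k,\ep}^\pm}$ with a cutoff $\chi$ that vanishes on $I_2(y_0)$ and equals $1$ on $I_3^c(y_0)$, integrate by parts using the Dirichlet conditions, take real parts, and absorb the subcritical potential (since $|\cJ(y_0)/(y-y_0\pm i\ep_0)^2|\le k^2/4$ outside $I_2(y_0)$) into the $k^2$ mass term, with the $\chi'$ commutator term producing precisely the $L^2(I_2^c(y_0)\cap I_3(y_0))$ mass. Dividing through by $k^2$ recovers the $H_k^1$ normalization. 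This matches both the intermediate (unscaled) form in which the inequality is actually used later in the paper (e.g.\ in the proof of Proposition \ref{prop:sobolevregdecomG}) and the approach in the cited reference.

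A couple of very minor points that do not affect correctness but are worth being aware of: (i) the implicit constant $\|\chi'\|_{L^\infty}\lesssim k$ hides a dependence on $\beta=\|\cJ\|_{L^\infty}^{1/2}$ (the transition shell has width $\beta/k$, so really $\|\chi'\|_{L^\infty}\lesssim k/\beta$), which is harmless since uniformity is asked only in $k$, $y_0$, $\ep$; (ii) for the absorption you only need the one-sided bound $\Re V\le k^2/4$ (the sign $\cJ(y_0)\ge 0$ means $\Re V$ can only help you when negative), though $|V|\le k^2/4$ of course implies it; and (iii) you should observe that since $\chi$ is supported in $I_2^c(y_0)$, only $\|F\|_{L^2(I_2^c(y_0))}$ ever enters the right-hand side, so the hypothesis $F\in L^2(I_2^c(y_0))$ is exactly what is needed and no integrability of $F$ near $y_0$ is required.
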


Similarly, while we may obtain pointwise bounds on $\Phi_{k,\ep}^\pm$ for $y\in I_3(y_0)$ through Theorem \ref{thm:XkregGSW} we also have the useful Sobolev-type inequality.

\begin{lemma}\label{lemma:LinfH1bound}
Let $f\in H^1_k(I_3(y_0))$. Then, $\Vert f \Vert_{L^\infty(I_3(y_0))} \lesssim k^{\frac12}\Vert f \Vert_{H_k^1(I_3(y_0))}$.
\end{lemma}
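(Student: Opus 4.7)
The plan is to reduce this to the standard one-dimensional Sobolev embedding on a bounded interval, then track the scaling factor that arises because $I_3(y_0)$ has length $\sim 1/k$.

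First, I would recall that, by definition, $I_3(y_0) = \{y \in [0,2] : |y-y_0| \leq 3\beta/k\}$, so $|I_3(y_0)| \leq 6\beta/k$. Then, for any $f \in H^1(I)$ on an interval $I$ of length $L$, one has the elementary estimate
\begin{equation*}
\|f\|_{L^\infty(I)}^2 \;\lesssim\; L^{-1}\|f\|_{L^2(I)}^2 + L\,\|\partial_y f\|_{L^2(I)}^2,
\end{equation*}
which follows by writing $f(y)^2 = \frac{1}{L}\int_I f(z)^2\,dz + \frac{2}{L}\int_I\!\int_z^y f\,\partial_y f\, ds\, dz$, applying Cauchy--Schwarz, and using $2ab \leq L^{-1}a^2 + L\, b^2$.

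Substituting $L \sim 1/k$ on $I = I_3(y_0)$ gives
\begin{equation*}
\|f\|_{L^\infty(I_3(y_0))}^2 \;\lesssim\; k\|f\|_{L^2(I_3(y_0))}^2 + k^{-1}\|\partial_y f\|_{L^2(I_3(y_0))}^2 \;=\; k\bigl(\|f\|_{L^2}^2 + k^{-2}\|\partial_y f\|_{L^2}^2\bigr) \;=\; k\,\|f\|_{H^1_k(I_3(y_0))}^2,
\end{equation*}
from which the desired bound follows upon taking square roots.

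The argument is genuinely a one-line rescaling once the $k$-dependent geometry of $I_3(y_0)$ is accounted for, so I do not anticipate any real obstacle; the only point to be careful about is that the convention for the $H^1_k$ norm used throughout the paper matches the scaling (i.e.\ that derivatives carry a factor $k^{-1}$), which is consistent with the appearance of $k^{-1/2}\|\partial_y\varphi_\sigma\|_{L^2}$ in the definitions of $Z_k^1$ and $LZ_k^1$.
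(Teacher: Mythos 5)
Your proof is correct and follows essentially the same strategy as the paper's: both are the standard averaging proof of the one-dimensional Sobolev embedding, keeping track of the interval length $|I_3(y_0)|\sim k^{-1}$. The only cosmetic difference is that you work with $f^2$ and the identity $\partial_s(f^2)=2ff'$, whereas the paper writes $f(z)=f(y)+\int_y^z f'$ and applies Cauchy--Schwarz directly; both yield the same bound.
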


\begin{proof}
Let $z\in I_3(y_0)$. Then, $f(z) = f(y) + \int_y^z f'(s) \d s$, for all $y\in I_3(y_0)$. In particular, integrating over $y\in I_3(y_0)$, since $|I_3(y_0)|\approx k^{-1}$, we reach
\begin{align*}
|f(z)| k^{-1} &\lesssim \int_{I_3(y_0)} \left( |f(y)| + \left| \int_y^z f'(s) \d s \right| \right) \d y \\
&\lesssim k^{-\frac12}\Vert f\Vert_{L^2(I_3(y_0))} + \Vert f'\Vert_{L^2(I_3(y_0))}  \int_{I_3(y_0)} |z-y|^\frac12 \d y \\
&\lesssim k^{-\frac12} \Vert f \Vert_{H_k^1(I_3(y_0))}
\end{align*}
and the lemma follows.
\end{proof}

\subsubsection{Regularization of the source term}
With the Green's function $\G_{k,\ep}^\pm$ at hand we observe that 
\begin{align*}
 \int_0^2 \G_{k,\ep}^\pm(y,y_0,z) \left( \frac{\omega_k^0(z)}{v(z) - v(y_0) \pm i\ep} - \frac{\g \rho_k^0(y)}{(v(z) - v(y_0) \pm i\ep)^2} \right) \d z
\end{align*}
should be finite for $\psi_{k,\ep}^\pm$ to be well-defined. Thanks to Theorem \ref{thm:XkregGSW}, we know that $\G_{k,\ep}^\pm(y,y_0,z)$ has two components, locally behaving like $\xi^{\frac12+\gamma_0}$ and $\xi^{\frac12-\gamma_0}$, for $\xi = 2k(z-y_0\pm i\ep_0)$. Therefore, while one hope to be able to estimate the vorticity integral uniformly in $\ep>0$ because $\xi^{-\frac12\pm \gamma_0}\in L^1(I_3(y_0))$ uniformly for all $\ep>0$, one also notices that uniform boundedness of the density integral is much more subtle, since in general $\xi^{-\frac32\pm \gamma_0}\in L^1(I_3(y_0))$ for all $\ep>0$ but $\lim_{\ep\rightarrow 0}\xi^{-\frac32\pm \gamma_0}\not  \in L^1(I_3(y_0))$. 

In other words, the density term $\frac{\g \rho_k^0(y)}{(v(z) - v(y_0) \pm i\ep)^2}$ is a priori too singular to be handled by the Green's function. This is to be expected, since it is in fact critically singular with respect to the Reduced Taylor-Goldstein equation and thus cannot be treated in a perturbative fashion. Nevertheless, since $\rho_k^0(y) = \mathrm{P}(y)\varrho_k^0(y)$, we define the regularized generalized stream-function
\begin{align}\label{eq:defvarphi}
\varphi_{k,\ep}^\pm(y,y_0) :=  {\psi}_{k,\ep}^\pm(y,y_0) + \varrho_k^0(y) 
\end{align}
which is such that
\begin{equation}\label{eq:introTGvarphi}
\left(  \D_k-\frac{v''(y)}{v(y)-v(y_0)\pm i\ep}+\frac{\P(y)}{(v(y)-v(y_0)\pm i\ep)^2}\right) {\varphi}_{k,\ep}^\pm=\frac{w_k^0(y)}{v(y)-v(y_0)\pm i\ep} + q_k^0(y)
\end{equation}
with $\varphi_{k,\ep}^\pm(0,y_0) = \varphi_{k,\ep}^\pm(2,y_0) = 0$ where now
\begin{align*}
w_k^0(y) := \omega_k^0(y) - v''(y) \varrho_k^0(y), \quad q_k^0(y) := \Delta_k \varrho_k^0(y).
\end{align*}
Hence, we can now make sense of the action of $\G_{k,\ep}^\pm$ onto $q_k^0$ if $\varrho_k^0$ is sufficiently regular, a testament of the underlying motif in inviscid damping that decay costs regularity. That $\rho_k^0(y) = \mathrm{P}(y)\varrho_k^0(y)$ is to consider density perturbations to take place only in the stratified region. See \cite{Zhao23} for recent results on inviscid damping for the inhomogeneous Euler equations in the absence of buoyancy forces, namely $\g=0$. While we do not address it here, the study of density perturbations in the background homogeneous Euler region under the effect of gravity forces is a very interesting open problem.

\subsection{The limiting absorption principle}
With the help of the Green's function $\G_{k,\ep}^\pm$, the solution $\varphi_{k,\ep
}^\pm$ to \eqref{eq:introTGvarphi} is given by
\begin{equation}\label{eq:fixedpointvarphi}
\begin{split}
\varphi_{k,\ep}^\pm(y,y_0) + \int_0^2 \G_{k,\ep}^\pm(y,y_0,z) &\scE_{k,\ep}^\pm(z,y_0)\varphi_{k,\ep}^\pm(z,y_0) \d z  \\
&= \int_0^2 \G_{k,\ep}^\pm(y,y_0,z) \left( \frac{w_k^0(z)}{v(z) - v(y_0) \pm i\ep} + q_k^0(z) \right) \d z.
\end{split}
\end{equation}
To obtain estimates on the regularity structures of $\varphi_{k,\ep}^\pm$, we rely on the Limiting Absorption Principle, which consist on showing the existence of some $\kappa>0$ independent of $k\geq 1$, $y_0\in I_W\cup I_S$ and $\ep>0$ such that  
\begin{align}\label{eq:coercivitykappa}
\Vert \varphi_{k,\ep}^\pm \Vert_{Z_k} \leq \kappa \left \Vert \varphi_{k,\ep}^\pm(y,y_0) + \int_0^2 \G_{k,\ep}^\pm(y,y_0,z)  \scE_{k,\ep}^\pm(z,y_0)\varphi_{k,\ep}^\pm(z,y_0) \d z \right \Vert_{Z_k},
\end{align}
in order to use \eqref{eq:fixedpointvarphi} to deduce estimates for $\varphi_{k,\ep}^\pm$ in terms of the initial data $w_k^0$ and $q_k^0$. 

The existence of such $\kappa>0$ is shown in Proposition \ref{prop:LAPZk} in Section \ref{sec:LAPstrat} and involves a preliminary study on the mapping properties with respect to the $X_k$ and $Z_k$ spaces of the solution operators 
\begin{align}\label{eq:introdefsolopT}
T_{k,\ep}^\pm f(y) := \int_{0} ^{2} \G_{k,\ep}^\pm(y,y_0,z)\scE_{k,\ep}^\pm(y,y_0) f(z) \d z
\end{align}
and 
\begin{align}\label{eq:introdefsolopR}
(R_{m,k,\ep}^\pm f)(y,y_0) := \int_0^2\G_{k,\ep}^\pm(y,y_0,z)\frac{f(z)}{(v(z)-v(y_0)\pm i\ep)^m} \d z,
\end{align}
for $m\geq 0$, that are carried out in Section \ref{sec:solopstrat} and can be summarized to be
\begin{align}\label{eq:introXkestimatesTR}
\Vert T_{k,\ep}^\pm f \Vert_{X_k}\lesssim k^{-\frac12}\Vert f \Vert_{Z_k}, \quad \Vert R_{1,k,\ep}^\pm f \Vert_{X_k}\lesssim k^{-\frac12}\Vert f \Vert_{H_k^1}, \quad  \Vert R_{0,k,\ep}^\pm f \Vert_{X_k}\lesssim k^{-\frac32}\Vert f \Vert_{L^2}, \quad 
\end{align}
uniformly for all $0<\ep<\ep_*$ and all $y_0\in I_S\cup I_W$. 

The proof of Proposition \ref{prop:LAPZk} argues by contradiction, essentially showing that \eqref{eq:coercivitykappa} fails if we can find a sequence of parameters $k_j\geq 1$, $y_j\in I_S\cup I_W$ such that $y_j\rightarrow y_*\in \overline{I_S\cup I_W}$, $\ep_j\rightarrow 0^+$ and $f_j\in Z_{k_j}$, with $\Vert f_j\Vert_{Z_{k_j}}=1$ such that 
\begin{align}\label{eq:introfailureLAP}
\Vert f_j + T_{k_j,\ep_j}^\pm f_j(\cdot,y_j)\Vert_{Z_{k_j}} \rightarrow 0
\end{align}
as $j\rightarrow \infty$. The estimate $\Vert T_{k,\ep}^\pm f \Vert_{X_k}\lesssim k^{-\frac12}\Vert f \Vert_{Z_k}$ shows that $|k_j|\lesssim 1$ and hence $k_j\rightarrow k_*\in \mathbb{Z}\setminus \lbrace 0 \rbrace$ up to a subsequence, so that $k_j \equiv k_*$ for all $j$ sufficiently large. In what follows, we already consider $j$ large enough so that $k_j=k_*$. Moreover, for $g_j(y):=f_j(y) + T_{k_*,\ep_j}^\pm f_j(y,y_j)$ and $h_j(y) = f_j(y) - g_j(y)$ we have $\lim_{j\rightarrow \infty} \Vert h_j \Vert_{Z_{k_*}}=1$ and
\begin{align}\label{eq:introdefRj}
h_j(y) + T_{k_*,\ep_j}^\pm h_j(y,y_j) = -T_{k_*,\ep_j}^\pm g_j(y,y_j) 
\end{align}
with $h_j(0)=h_j(2)=0$. Equivalently, we obtain
\begin{equation}\label{eq:introTGhj}
\left( \D_{k_*} - \frac{v''(y)}{v(y) - v(y_j) \pm i\ep_j} + \frac{\P(y)}{(v(y) - v(y_j) \pm i\ep_j)^2}\right) h_j(y,y_j) = -\textsc{E}_{k_*,\ep_j}^\pm g_j(y,y_j).
\end{equation}
We then reach a contradiction with $\Vert h_j \Vert_{Z_{k_*}}\rightarrow 1$ if we can show that the solution $h_j$ to \eqref{eq:introTGhj} converges to zero since $\Vert g_j\Vert_{Z_{k_*}}\rightarrow 0$, namely there are no embedded eigenvalues. In order to show that $h_j$ does indeed vanish in $Z_{k_*}$, we need to solve the in-homogeneous Taylor-Goldstein equation \eqref{eq:introTGhj} for some $k_*>1$ fixed. To do so, we first study the \textit{homogeneous} Taylor-Goldstein equation in Section \ref{sec:homTG}, where we find a pair of homogeneous solutions of the form
\begin{equation}\label{eq:introdefphisigma}
\begin{split}
\phi_{\sr,k,\ep}^\pm(y,y_0) &:= (v(y) - v(y_0) \pm i\ep)^{\frac12+\gamma_0}\phi_{\sr,1,k,\ep}^\pm(y,y_0) , \\
\phi_{\s,k,\ep}^\pm(y,y_0) &:= (v(y) - v(y_0) \pm i\ep)^{\frac12-\gamma_0}\phi_{\s,1,k,\ep}^\pm(y,y_0)
\end{split}
\end{equation}
for some smooth functions $\phi_{\sr,1,k,\ep}^\pm$ and $\phi_{\s,1,k,\ep}^\pm$, which closely resemble \eqref{eq:defMkappagamma} that we later use to construct the in-homogeneous solution. These homogeneous solutions are also further used to study the spectral properties of the linearised operator $L_k$, see the next subsection for more details.

We observe here that since $y_*\in \overline{I_S\cup I_W}$ we may have $y_*=\vartheta_1$ or $y_*=\vartheta_2$, where there is change in the nature of the singularities of the Taylor-Goldstein operator, as $\P(y_j)\rightarrow0$ the quadratic singularity vanishes, but the regularity of $\phi_{\s,k,\ep}^\pm(y,y_0)$ may degenerate. Hence, in this \emph{fragile} regime in which $\P(y_j)\rightarrow 0$ a careful study of the in-homogeneous solution $h_j$ is carried out in Section \ref{sec:fragile}.

Likewise, we also note that $\P(y_j)\rightarrow \frac14$ is a critical limit, in that just like for \eqref{eq:defMkappagamma} the homogeneous solutions \eqref{eq:introdefphisigma} also lose their linear independence and we instead need to find homogeneous solutions answering to a local decomposition of the form \eqref{eq:decomlogvarphi}. The behaviour of the in-homogeneous solution $h_j$ to \eqref{eq:introTGhj} in this \emph{mild} regime where $\P(y_j)\rightarrow \frac14$  is considered in Section \ref{subsec:mild}.

We finally remark that one would actually like to have \eqref{eq:coercivitykappa} in the stronger $X_k$ space rather than $Z_k$. However, it is not straightforward to show that the in-homogeneous solution $h_j\in X_{k_*}$ and $\Vert h_j \Vert_{X_{k_*}}\rightarrow 0$ since, although similar to \eqref{eq:defMkappagamma}, the homogeneous solutions \eqref{eq:introdefphisigma} may not share the exact same properties near the critical layer $y\approx y_j$. Nevertheless, once $\varphi_{k,\ep}^\pm\in Z_k$ thanks to \eqref{eq:coercivitykappa} and \eqref{eq:introXkestimatesTR}, we can use \eqref{eq:fixedpointvarphi} and the regularizing properties of $T_{k,\ep}^\pm$ of \eqref{eq:introXkestimatesTR} to conclude that actually $\varphi_{k,\ep}^\pm\in X_k$.

\subsection{Spectral theory of the linearised operator}
A complete description of the spectrum of the linearised operator $L_k$ is needed to establish the integral reduction \eqref{eq:psirholim} and the validity of limiting absorption principle encoded in \eqref{eq:coercivitykappa}. A more precise version of Theorem \ref{thm:spectrumL} reads

\begin{theorem}\label{thm:spectrumlinop}
Suppose that the background shear flow $v(y)$ and stratified density $\overline{\rho}(y)$ satisfy {H$\P$, H$v$} and H1--H3. Then, $L_k$ does not have any eigenvalues (embedded or not) and $\sigma(L_k) = \sigma_{ess}(L_k)=[v(\vartheta_1), v(\vartheta_2)]$ .
\end{theorem}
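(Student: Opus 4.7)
The plan is to combine three ingredients: Weyl's theorem to identify the essential spectrum, an integrated Howard/Rayleigh-type identity under \textbf{H1} to rule out non-real eigenvalues, and the singular structure of the homogeneous Taylor--Goldstein solutions built in Section~\ref{sec:homTG}, together with \textbf{H3}, to exclude embedded real eigenvalues. For the essential spectrum, I would split $L_k = M_k + K_k$ with $M_k=\mathrm{diag}(v(y),v(y))$ the bare multiplication operator and
\begin{equation*}
K_k = \begin{pmatrix} -\D_k^{-1}v''(y) & \g\,\D_k^{-1} \\ -\mathrm{P}(y) & 0 \end{pmatrix}.
\end{equation*}
Since $v''$ and $\mathrm{P}$ are continuous with compact support in $[\vartheta_1,\vartheta_2]$ by \textbf{H}$v$ and \textbf{H}$\mathrm{P}$, and $\D_k^{-1}:L^2\to H^2$ is compact, $K_k$ is compact on $L^2(0,2)\times L^2(0,2)$. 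Weyl's theorem then yields $\sigma_{\mathrm{ess}}(L_k)=\sigma_{\mathrm{ess}}(M_k)=[v(0),v(2)]$.

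To rule out an eigenvalue $\lambda$ with $\Im\lambda\neq 0$ and eigenpair $(\psi,\rho)$, I would use the density equation to solve $\rho = \mathrm{P}(y)\psi/(v(y)-\lambda)$ (regular since $v-\lambda$ does not vanish) and substitute into the stream-function equation to obtain the smooth Taylor--Goldstein problem
\begin{equation*}
\D_k\psi - \frac{v''(y)}{v(y)-\lambda}\psi + \frac{\P(y)}{(v(y)-\lambda)^2}\psi = 0, \qquad \psi(0)=\psi(2)=0.
\end{equation*}
Testing against $\overline\psi/(v-\lambda)$, integrating by parts to shift $\p_y$ off $1/(v-\lambda)$, and taking the imaginary part produces an identity of the form $\Im\lambda\cdot Q[\psi]=0$, where $Q[\psi]$ is a quadratic form that is positive under \textbf{H1}: the smallness of $c_0^{-1}\Vert v'''\Vert_{L^\infty}$ and $c_0^{-2}\Vert \P''\Vert_{L^\infty}$ absorbs the negative contributions coming from the curvature and stratification terms, respectively. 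This forces $\psi\equiv 0$. For real $\lambda\notin[v(0),v(2)]$, the same equation is coercive on $H^1_0(0,2)$, so no eigenfunction exists there either.

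For embedded eigenvalues $\lambda=v(y_0)$, $y_0\in[0,2]$, I would argue by cases using the fundamental solutions $\phi_\sr,\phi_\s$ built in Section~\ref{sec:homTG}. If $y_0\in[0,\vartheta_1]\cup[\vartheta_2,2]$ the Taylor--Goldstein equation reduces to the Rayleigh equation for $\L_E$ and \textbf{H3} excludes such an eigenfunction directly. If $y_0\in I_S$, then $\gamma_0\in i\R$ and both $\phi_\sr,\phi_\s$ behave like $|y-y_0|^{1/2}$ times bounded log-oscillations; no non-trivial linear combination lies in $H^1$ across the critical layer, ruling out $H^1_0$-eigenfunctions. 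If $y_0\in I_W\cup I_M$, the regular branch $\phi_\sr\sim(y-y_0)^{1/2+\gamma_0}$ is $H^1$-admissible while $\phi_\s\sim(y-y_0)^{1/2-\gamma_0}$ (modulo a logarithmic correction in the mild regime) is not, so any $H^1_0$-eigenfunction must reduce to a multiple of $\phi_\sr$ on either side of $y_0$; a shooting argument anchored at the Dirichlet endpoints with \textbf{H3} pinning down the non-stratified tails, combined with the non-vanishing Wronskian of $(\phi_\sr,\phi_\s)$ near $y_0$, forces the matching coefficient to vanish. The edge cases $y_0\in\{\vartheta_1,\vartheta_2\}$ (the fragile regime, where $\P(y_0)=0$) are handled by a limiting version of the same shooting argument.

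The main obstacle will be the weak/mild embedded case, since there $\phi_\sr\in H^1$ is a genuine candidate eigenfunction that cannot be discarded by pure regularity. Dispensing with it requires a careful two-sided shooting in which each outer profile is pinned down by \textbf{H3} on the non-stratified tails, and the two inner profiles are then forced to mismatch at $y_0$ via the Wronskian of the fundamental pair. Once these three steps are in place, one concludes $\sigma(L_k)=\sigma_{\mathrm{ess}}(L_k)=[v(0),v(2)]$, as claimed.
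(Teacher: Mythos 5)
Your proposal has two genuine gaps, one in the essential-spectrum identification and one in the exclusion of non-real eigenvalues.

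\textbf{Essential spectrum.} Your operator $K_k$ is not compact: its $(2,1)$ entry $-\mathrm{P}(y)$ is a multiplication operator on $L^2$, and multiplication by a non-trivial bounded function is never compact (indeed it has purely essential spectrum equal to the essential range). Because of this, Weyl's theorem cannot be applied to the splitting $L_k=M_k+K_k$. The paper keeps $-\mathrm{P}(y)$ inside the principal part, decomposing $L_k=\widetilde L_k+\mathcal{K}_k$ with $\widetilde L_k$ the lower-triangular matrix containing $v(y)$ on the diagonal and $-\mathrm{P}(y)$ off-diagonal, and only the smoothing operators built from $\Delta_k^{-1}$ in $\mathcal{K}_k$. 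Moreover, the conclusion you reach, $\sigma_{\mathrm{ess}}(L_k)=[v(0),v(2)]$, contradicts the theorem: because the domain $\mathcal D(L_k)$ forces both $\Delta_k f$ and $g$ to be supported in $(\vartheta_1,\vartheta_2)$, the essential spectrum collapses to $[v(\vartheta_1),v(\vartheta_2)]$. In fact, the paper shows separately (Proposition~\ref{prop:nonstratifiedresolvent}) that any $\lambda$ with $\Re\lambda\notin[v(\vartheta_1),v(\vartheta_2)]$ lies in the resolvent of $L_k$, using precisely the support hypotheses in $\mathcal D(L_k)$; your argument never touches this reduction.

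\textbf{Non-real eigenvalues.} Your quadratic-form route does rule out $\lambda$ with $\Re\lambda\notin[v(\vartheta_1),v(\vartheta_2)]$ (Propositions~\ref{prop:rangerealcomplementresolvent}--\ref{prop:nonstratifiedresolvent} in the paper do exactly this), because there $v(y)-\lambda$ is bounded away from zero on the support of $v''$, $\P$, $h$, $g$ and the weights $|v-\lambda|^{-1}$, $|v-\lambda|^{-2}$ are absorbable via $\mathbf{H1}$. But for $\Re\lambda\in[v(\vartheta_1),v(\vartheta_2)]$ with $\Im\lambda\neq 0$ (the genuinely dangerous case, since $\mathcal J<\tfrac14$ in $I_W$ and Miles--Howard does not apply), those weights degenerate like $1/\Im\lambda$ as $\Im\lambda\to 0$, and no sign hypothesis of the form $\mathbf{H1}$ makes $Q[\psi]$ coercive. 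This cannot be patched by a Howard-type transformation either, precisely because $\mathcal J$ dips below $\tfrac14$. The paper instead mounts a homotopy argument (Proposition~\ref{prop:noimagspectrum}): it tracks candidate eigenvalues as the gravity constant $\tilde\g$ is deformed from $\g$ down to $0$, uses Rouch\'e's theorem to show the set of $\tilde\g$ admitting a zero of the Wronskian is open, closedness is shown by a careful case-by-case limiting analysis of the homogeneous solutions (this is where $\mathbf{H2}$--$\mathbf{H3}$ and the fragile-regime machinery of Section~\ref{sec:fragile} enter), and finally at $\tilde\g=0$ there are no eigenvalues by $\mathbf{H3}$. Your proposal mentions $\mathbf{H3}$ only in the embedded case and has no substitute for this continuation step; as it stands, the non-real case is not dispensed with.

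Your treatment of embedded eigenvalues is closer in spirit to the paper's Propositions~\ref{prop:noembedweak}--\ref{prop:noembedstrong}: in the weak/mild/strong regimes the paper checks that the one-sided boundary solution pinned at $y=2$ has a non-vanishing coefficient on the $H^1$-inadmissible branch (using Lemma~\ref{lemma:nonzerophisrphis}, which invokes $\mathbf{H2}$ rather than $\mathbf{H3}$), rather than the Wronskian-based ``shooting'' you describe, but the underlying regularity-obstruction idea is the same. The fragile edge cases $y_0\in\{\vartheta_1,\vartheta_2\}$ are indeed treated by a limiting argument as you anticipate (Lemma~\ref{lemma:boundarysuppnoteigen} and Section~\ref{sec:fragile}).
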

Its proof is presented in Section \ref{sec:spectrum} and consists in showing that $(L_k - \lambda)$ is continuously invertible for all $\lambda\not \in [v(\vartheta_1), v(\vartheta_2)]$. To further understand the invertibility properties of $(L_k - \lambda)$, we now need to investigate, among others, the eigenvalue problem
\begin{align}
\left( \Delta_k - \frac{v''(y)}{v(y) - \lambda} + \frac{\P(y)}{(v(y) - \lambda)^2} \right)\psi &= 0\label{eq:homTGeigenvalue} \\
\psi|_{y=0,2} &=0,
\end{align}
with $\lambda\in \C$. To that purpose, we make use of the main properties of the homogeneous solutions \eqref{eq:introdefphisigma}  to the Taylor-Goldstein equation introduced in Section \ref{sec:homTG} in order to construct eigenfunction candidates. For instance, 
\begin{align*}
\varphi_k(y) = \phi_{\s,k,\ep}^\pm(2,y_0)\phi_{\sr,k,\ep}^\pm(y,y_0) - \phi_{\s,k,\ep}^\pm(y,y_0)\phi_{\sr,k,\ep}^\pm(2,y_0) 
\end{align*}
is an eigenfunction of \eqref{eq:homTGeigenvalue} of eigenvalue $\lambda = v(y_0) \mp i\ep$ if $\varphi_k(0)=0$. In Section \ref{sec:spectrum} we show that $\varphi_k(0)\neq 0$ if H2 holds and $\ep>0$ is small enough. Standard energy estimates and H1 further show that $\lambda\in \C$ with $\Re(\lambda)\not \in [v(\vartheta_1),v(\vartheta_2)]$ is in the resolvent of $L_k$. The last case of $\lambda\in \C$ with $\Im(\lambda)\neq 0$ and $\Re(\lambda)\in [v(\vartheta_1),v(\vartheta_2)]$ is treated with a connectedness argument that involves a limiting procedure towards the linearised Euler equations inspired by \cite{sinambela2025transition} and a detailed understanding of the homogeneous solutions \eqref{eq:introdefphisigma} in the fragile, weak, mild and strong regimes, which is carried out in Sections \ref{sec:homTG}, \ref{sec:spectrum} and \ref{sec:fragile}.

\subsection{Sobolev regularity of the generalized stream-function}
Once the coercive estimate \eqref{eq:coercivitykappa} is established, we now obtain $Z_k$ regularity of $\varphi_{k,\ep}^\pm$ in terms of regularity of $w_k^0$ and $q_k^0$ thanks to the mapping properties \eqref{eq:introXkestimatesTR} of the solutions operators $R_{m,k,\ep}^\pm$ for $m=0,1$. 

In view of \eqref{eq:psirholim}, we are also interested in regularity of $\partial_{y_0}\varphi_{k,\ep}^\pm$ in order to use oscillatory-phase arguments to obtain decay in time. Now,
\begin{align*}
&\left( \Delta_k - \frac{v''(y)}{v(y) - v(y_0)\pm i\ep} + \frac{\P(y)}{(v(y) - v(y_0) \pm i\ep)^2} \right) \partial_{y_0}\varphi_{k,\ep}^\pm \\
&\qquad\qquad= v'(y_0) \left( \frac{v''(y)}{(v(y) - v(y_0)\pm i\ep)^2} -2 \frac{\P(y)}{(v(y) - v(y_0)\pm i\ep)^3}\right) \varphi_{k,\ep}^\pm(y,y_0) + \frac{v'(y_0)w_k^0(y)}{(v(y) - v(y_0) \pm i\ep)^2}
\end{align*}
and we readily observe that $\varphi_{k,\ep}^\pm\in X_k$ is not enough to ensure that
\begin{align*}
\int_0^2 \G_{k,\ep}^\pm(y,y_0,z)  \left( \frac{v''(z)}{(v(z) - v(y_0)\pm i\ep)^2} -2 \frac{\P(z)}{(v(z) - v(y_0)\pm i\ep)^3}\right) \varphi_{k,\ep}^\pm(z,y_0) \d z
\end{align*}
is uniformly bounded, since $\G_{k,\ep}^\pm\varphi_{k,\ep}^\pm$ is not enough to compensate the cubic singularity. However, for 
\begin{align*}
\varphi_{1,k,\ep}^\pm(y,y_0) := \left( \partial_y + \partial_{y_0}\right)\varphi_{k,\ep}^\pm(y,y_0)
\end{align*}
we readily have that $\partial_{y_0}\varphi_{k,\ep}^\pm(y,y_0) = \varphi_{1,k,\ep}^\pm(y,y_0) - \partial_y\varphi_{k,\ep}^\pm$. Since we have a proper understanding of $\varphi_{k,\ep}^\pm$ in $Z_k$, we deduce from there the main regularity properties of $\partial_y\varphi_{k,\ep}^\pm$. More importantly, $\varphi_{1,k,\ep}^\pm$ is now the solution to 
\begin{equation}\label{eq:introTGvarphi1}
\begin{split}
\text{TG}_{k,\ep}^\pm\varphi_{1,k,\ep}^\pm &= \frac{v'''(y)}{v(y) - v(y_0) \pm i\ep}\varphi_{k,\ep}^\pm - \frac{\P'(y)}{(v(y) - v(y_0)\pm i\ep)^2}\varphi_{k,\ep}^\pm \\
&\quad + 2\P(y)\frac{v'(y) - v'(y_0)}{(v(y) - v(y_0) \pm i\ep)^3}\varphi_{k,\ep}^\pm - v''(y) \frac{v'(y)-v'(y_0)}{(v(y) - v(y_0)\pm i\ep)^2} \varphi_{k,\ep}^\pm \\
&\quad + \frac{\partial_y\omega_k^0(y)}{v(y) - v(y_0)\pm i\ep} - \omega_k^0(y)\frac{v'(y) - v'(y_0)}{(v(y) - v(y_0) \pm i\ep)^2} + \partial_y \varrho_k^0(y),
\end{split}
\end{equation}
with now
\begin{align*}
\varphi_{1,k,\ep}^\pm(0,y_0) = \partial_y\varphi_{k,\ep}^\pm(0,y_0), \quad  \varphi_{1,k,\ep}^\pm(0,y_0) = \partial_y\varphi_{k,\ep}^\pm(2,y_0).
\end{align*}

The limiting absorption principle then shows that the $X_k$ bounds of $\varphi_{1,k,\ep}^\pm$ are bounded by the $X_k$ bounds of the Green's function acting on the source term of \eqref{eq:introTGvarphi1}. A power law counting argument shows that this action may now be bounded: the regularity structures of $\G_{k,\ep}^\pm$ and $\varphi_{k,\ep}^\pm$, in fact that of the coefficients $( \G_{k,\ep}^\pm )_\sigma$ and  $( \varphi_{k,\ep}^\pm )_\sigma$, may compensate the most singular factors, which are morally at most quadratic singularities. The estimates on the regularity properties  of the source term are carried out in Section \ref{sec:highorderweakstrong} and \ref{sec:highordermild}. They are then used in Section \ref{sec:SobolevReg} to obtain the regularity of $\varphi_{1,k,\ep}^\pm$ and thus also of $\partial_{y_0}\varphi_{k,\ep}^\pm$. The same idea proves also useful to describe $\partial_{y_0}^2\varphi_{k,\ep}^\pm$, where now one must study $\varphi_{2,k,\ep}^\pm = (\partial_y + \partial_{y_0})\varphi_{1,k,\ep}^\pm$. 

We remark here that $\partial_y+\partial_{y_0}$ is sometimes referred to in the literature as the "good derivative" in that, while not fully commuting with the differential operator (the Taylor-Goldstein here and the Rayleigh operator in the Euler equations) it does not increase the singularities of the equation and one can hope to close the relevant estimates. We refer the interested reader to \cites{BCZV19, WZZ18, WZZ19, WZZKolmo20, IJ22, IIJ22} for other instances where the good derivative has been used.

\subsection{Linear inviscid damping of the perturbed density and velocity field}
The regularity structures for $\varphi_{k,\ep}^\pm, \varphi_{1,k,\ep}^\pm$ and $\varphi_{2,k,\ep}^\pm$ encoded in their $X_k$ estimates are next used to show that the perturbed velocity field and density given by \eqref{eq:psirholim} experience time-decay by means of oscillatory-phase methods: time decay is obtained at the expenses of integrability of $\partial_{y_0}$ derivatives. For the velocity field $v_k=(-\partial_y\psi_k,ik\psi_k)$ it is immediate to see from the $X_k$ definition that the regularities of $\varphi_{k,\ep}^\pm, \varphi_{1,k,\ep}^\pm$ and $\varphi_{2,k,\ep}^\pm$ depend on the local Richardson number $\cJ(y_0)$ and that, more importantly, $\partial_{y_0}^2\varphi_{k,\ep}^\pm$ and $\partial_{y,y_0}^2\varphi_{k,\ep}^\pm$ are in general not uniformly integrable as $\ep\rightarrow 0$. 

This has two related consequences when estimating the velocity field $v_k(t,y)$. First, the regularity of $\partial_{y_0}\varphi_{k,\ep}^\pm(y,y_0)$ and $\partial_y\varphi_{k,\ep}^\pm(y,y_0)$ is dictated by $\mu(y_0)$ near the critical layer. Precisely since $y_0\approx y$, we can actually assume that the regularity of $\partial_{y_0}\varphi(y,y_0)$ and $\partial_y\varphi_{k,\ep}^\pm(y,y_0)$ is in fact determined by $\mu(y)$ and thus essentially uniform near the critical layer: it is uniform in the strongly stratified regime, it experiences a logarithmic loss in the mild regime, becomes weaker in the weak regime and it eventually degenerates as the spectral parameter $y_0$ approaches the stratification boundaries $\vartheta_1$ and $\vartheta_2$.

Secondly, we need to consider the integrability of $\partial_{y_0}\varphi_{k,\ep}^\pm$ and $\partial_y\varphi_{k,\ep}^\pm$ both near the critical layer and away from it. In the critical layer they are locally integrable, and thus small (depending on $\mu(y)$) on a small subset of the critical layer. On the other hand, we can integrate by parts once more in the complement of the small subset to obtain an extra $t^{-1}$ decay, at the expenses of getting closer and closer to the critical layer, where $\partial_{y_0}^2\varphi_{k,\ep}^\pm$ and $\partial_{y,y_0}^2\varphi_{k,\ep}^\pm$ become singular (also depending on $\mu(y)$).  Balancing out the size of the subset gives the claimed decay estimates in Theorem \ref{thm:mainID} and is carried out in Section \ref{sec:IDestimates}.

The proof of Theorem \ref{thm:IDlocalweak} is also done in Section \ref{sec:IDestimates} and hinges on two observations. The first one is that for $y_0$ in the non-stratified regime, that is $\cJ(y_0)=0$, the Taylor-Goldstein operator becomes the Rayleigh operator for the linearised Euler equations, which is one power less singular than the Taylor-Goldstein operator. In particular, the solution $\varphi_{k,\ep}^\pm(y,y_0)$ to \eqref{eq:introTGvarphi} is now more regular, and we can argue as in \cite{Jia20, JiaGev20} to show that for $y\in[0,\vartheta_1]\cup [\vartheta_2,2]$, the non-stratified region, the velocity field $v_k(t,y)$ experiences Euler-type inviscid damping decay rates, which are comparatively faster than the Boussinesq rates.  Secondly, for $y$ close to $\vartheta_2$, say,  we are able to use the fundamental theorem of calculus to relate $\psi_k(t,y)$ with $\psi_k(t,\vartheta_2)$ and a slower time-decaying contribution which vanishes as $y\rightarrow \vartheta_2$. 

\subsection{Sub-linear growth of vorticity and gradient of density}
Once the decay estimates of Theorem~\ref{thm:mainID} are available, we can use them to obtain Theorem \ref{thm:growth}. More precisely, from \eqref{eq:psirholim} we now have
\begin{align}\label{eq:oscintomegak}
\omega_k(t,y) = \frac{1}{2 \pi i}\lim_{\ep\rightarrow 0}\int_0^2 e^{-ikv(y_0)t}\left( \Delta_k\varphi_{k,\ep}^-(y,y_0) - \Delta_k \varphi_{k,\ep}^+(y,y_0) \right) v'(y_0) \d y_0
\end{align} 
and
\begin{align}\label{eq:oscintpyrhok}
\partial_y\rho_k(t,y) = \frac{1}{2 \pi i}\lim_{\ep\rightarrow 0}\int_0^2 e^{-ikv(y_0)t}\partial_y\left( \frac{\varphi_{k,\ep}^-(y,y_0)}{v(y) - v(y_0) - i\ep} - \frac{\varphi_{k,\ep}^+(y,y_0)}{v(y) - v(y_0) + i\ep} \right) v'(y_0) \d y_0.
\end{align} 
According to the regularity properties of $\varphi_{k,\ep}^\pm$ and \eqref{eq:introTGvarphi}, we observe that $\Delta_k\varphi_{k,\ep}^\pm$ and $\partial_y\left( \frac{\varphi_{k,\ep}^-(y,y_0)}{v(y) - v(y_0) - i\ep} \right)$ are no longer uniformly integrable as $\ep\rightarrow 0$. To overcome this singularity, we now integrate backwards, noting here that $\partial_y = \partial_y + \partial_{y_0} - \partial_{y_0}$ and $\partial_y+\partial_{y_0}$ is in general more regular. This backwards integration produces powers of $t$ when $\partial_{y_0}$ acts on the oscillatory factors and, after suitable optimization on the size of the small set, is the main reason behind the sub-linear growth of $\omega(t,x,y)$ and $\partial_y\rho(t,x,y)$. Just as in Theorem \ref{thm:mainID}, the growth estimates obtained this way degenerate as $y\rightarrow\vartheta_1$ or $y\rightarrow \vartheta_2$. Nonetheless, we can use again the fundamental theorem of calculus and the behaviour of $\omega(t,x,y)$ and $\partial_y\rho(t,x,y)$ in the non-stratified regions to locally improve the growth bounds, which culminate in Theorem \ref{thm:growthlocalweak} after suitably regularizing the governing equations for $\omega_k(t,y)$ and $\partial_y\rho_k(t,y)$. The proofs of Theorem \ref{thm:growth} and \ref{thm:growthlocalweak} are carried out in Section \ref{sec:growth}.

We remark here that Theorem \ref{thm:growth} can also be proved integrating the linearised equations \eqref{eq:linEBomegarho} and using the decay estimates for $\psi_k(t,y)$, $\partial_y\psi_k(t,y)$ and $\rho_k(t,y)$ obtained in Section \ref{sec:IDestimates}. While we opted for studying \eqref{eq:oscintomegak} and \eqref{eq:oscintpyrhok} above since they are  later needed to improve the growth bounds to Theorem \ref{thm:growthlocalweak}, we give further details on how to use \eqref{eq:linEBomegarho} to prove the growth bounds in Remark \ref{rmk:growthlineq} below.

\section{Green's function of the RTG operator}\label{sec:Greens}
In this section we give the explicit formulas for $\G_{k,\ep}^\pm(y,y_0,z)$ that will be used in the manuscript. Central to the study of the Green's function are the Whittaker functions \cite{Whittaker03}, a class of  hyper-geometric functions that satisfy equations of the form
\begin{align*}
\partial_z^2 M_{\kappa,\gamma} + \left( - \frac14 + \frac{\kappa}{\zeta} + \frac{\tfrac14 - \gamma^2}{\zeta^2} \right) M_{\kappa,\gamma} = 0, \quad \zeta\in \C
\end{align*}
for $\kappa,\gamma\in \C$. Their regularity properties, reported in Appendix \ref{app:Whittaker}, are fundamental to explain the long-time behaviour of solutions to the linearised system and the precise form of the time-decay rates presented in Theorem \ref{thm:mainID}. In this manuscript we use the Whittaker functions with $\kappa=0$ and $\gamma=\mu+i\nu\in \C$ with $\mu\nu=0$. For $\zeta\in C$, we define $M_\sr(\xi) := M_{0, \gamma}\left( 2k\xi \right)$ and $M_\s(\xi) := M_{0,-\gamma}\left( 2k\xi \right)$, which solve the (re-scaled) Whittaker equation
\begin{align}\label{eq:WhittakerMW}
\partial_z^2 M_{\sigma} + \left( - \frac14 +  \frac{\tfrac14 - \gamma^2}{4k^2\zeta^2} \right) M_{\sigma} = 0, \quad\sigma\in \lbrace \sr, \s \rbrace.
\end{align}
When $\gamma=0$ then $M_\sr$ and $M_\s$ are no longer linearly independent. To overcome this difficulty, we define $W_\sr(\zeta):= W_{0,\gamma}(\zeta)$ to be the unique solution to \eqref{eq:WhittakerMW} such that
\begin{align*}
W_{0,\gamma}(\zeta) = e^{-\frac12\zeta}\zeta^{\frac12+\gamma}U(\tfrac12+\gamma, 1+2\gamma,\zeta),
\end{align*}
where $U(a,b,\zeta)$ denotes the Kummer function uniquely determined by the property that $U(a,b,\zeta) \approx \zeta^{-a}$ for $\zeta\rightarrow\infty$. While we shall give more precise details on these hyper-geometric functions in Appendix \ref{app:Whittaker}, we now state our main formulae for the Green's function of \eqref{eq:defRTGoperator}.

\begin{proposition}\label{prop: def Green's Function}
Let  $\ep\in(0,1)$. The Green's function $\G_{m,\ep}^\pm$ of  $\textsc{TG}_{m,\ep}^\pm$
is given by
\begin{equation}\label{eq:Greens TG}
\G_{k,\ep}^\pm(y,y_0,z)=
\begin{cases}
\G_{\su,k,\ep}^\pm(y,y_0,z), \quad &0\leq z\leq y\leq 2,\\
\G_{\sl,k,\ep}^\pm(y,y_0,z), \quad &0\leq y\leq z\leq 2,
\end{cases}
\end{equation}
with
\begin{align}\label{ed:defGuGl}
\G_{\su,k,\ep}^\pm(y,y_0,z) &= \frac{1}{\W_{k,\ep}^\pm(y_0)}\phi_{\su,k,\ep}^\pm(y,y_0)\phi_{\sl,k,\ep}^\pm(z,y_0), \\ \G_{\sl,k,\ep}^\pm(y,y_0,z) &= \frac{1}{\W_{k,\ep}^\pm(y_0)}\phi_{\sl,k,\ep}^\pm(y,y_0)\phi_{\su,k,\ep}^\pm(z,y_0)
\end{align}
where  $\phi_{\su,k,\ep}^\pm(\cdot,y_0)$ and $\phi_{\sl,k,\ep}^\pm(\cdot,y_0)$ are two homogeneous solutions to 
\begin{equation}\label{eq:homTGoperator}
\left( \p_y^2 - k^2 + \frac{\cJ(y_0)}{(y-y_0 \pm i\ep_0)^2}\right) \phi_{k,\ep}^\pm(y,y_0) = 0 
\end{equation}
 such that  
$\phi_{\sl,k,\ep}^\pm(0,y_0)=0$ and $\phi_{\su,k,\ep}^\pm(2,y_0)=0$, respectively, for all $y_0\in [0,2]$. 
\begin{itemize}
\item For $y_0\in I_S\cup I_W$, they are given by
\begin{equation}\label{eq:homoupss}
\phi_{\su,k,\ep}^\pm(y,y_0):=M_\sr(2 - y_0\pm i\ep_0)M_\s(y- y_0\pm i\ep_0) - M_\s(2 - y_0\pm i\ep_0)M_\sr(y-y_0\pm i\ep_0)
\end{equation}
and
\begin{equation}\label{eq:homolowss}
\phi_{\sl,k,\ep}^\pm(y,y_0):=M_\sr(- y_0\pm i\ep_0)M_\s(y- y_0\pm i\ep_0) - M_\s( - y_0\pm i\ep_0)M_\sr(y-y_0\pm i\ep_0)
\end{equation}
with Wronskian 
\begin{equation}\label{eq:def Wronskianss}
\W_{k,\ep}^\pm(y_0):= -4k\gamma_0\Big( M_\sr( - y_0\pm i\ep_0)M_\s(2- y_0\pm i\ep_0) - M_\s( - y_0\pm i\ep_0)M_\sr(2-y_0\pm i\ep_0) \Big).
\end{equation} 
\item For $y_0\in I_M$ in the mildly stratified region, they are given by 
\begin{equation}\label{eq:homoupms}
\phi_{\su,k,\ep}^\pm(y,y_0):=M_\sr(2 - y_0\pm i\ep_0)W_\sr(y- y_0\pm i\ep_0) - W_\sr(2 - y_0\pm i\ep_0)M_\sr(y-y_0\pm i\ep_0)
\end{equation}
and
\begin{equation}\label{eq:homolowms}
\phi_{\sl,k,\ep}^\pm(y,y_0):=M_\sr(- y_0\pm i\ep_0)W_\sr(y- y_0\pm i\ep_0) - W_\sr( - y_0\pm i\ep_0)M_\sr(y-y_0\pm i\ep_0)
\end{equation}
with Wronskian 
\begin{equation}\label{eq:def Wronskianms}
\W_{k,\ep}^\pm(y_0):= -2k\frac{\Gamma(1+2\gamma)}{\Gamma \left( \frac12 + \gamma \right)} \Big( M_\sr( - y_0\pm i\ep_0)W_\sr(2-y_0\pm i\ep_0) - W_\sr( - y_0\pm i\ep_0)M_\sr(2-y_0\pm i\ep_0) \Big).
\end{equation} 
\end{itemize}
Furthermore, we have the relation $\G_{k,\ep}^+(y,y_0,z)=\overline{\G_{k,\ep}^-(y,y_0,z)}$, for all $y,y_0,z\in[0,2]$.
\end{proposition}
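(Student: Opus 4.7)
\medskip

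\noindent\textbf{Proof proposal.}
My plan is to verify the stated formulas directly by reducing the homogeneous equation to the Whittaker equation, checking the boundary conditions by inspection, and computing the Wronskian using the standard Whittaker identities collected in Appendix~\ref{app:Whittaker}. First I would observe that the change of variables $\eta=2k(y-y_0\pm i\ep_0)$ transforms $\mathrm{RTG}_{k,\ep}^\pm \phi=0$ into
\begin{equation*}
\partial_\eta^2 M +\left(-\tfrac14+\frac{\tfrac14-\gamma_0^2}{\eta^2}\right)M=0,\qquad \gamma_0^2=\tfrac14-\cJ(y_0),
\end{equation*}
which is the Whittaker equation with $\kappa=0$ and $\gamma=\gamma_0$. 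Thus $M_\sr(y-y_0\pm i\ep_0)=M_{0,\gamma_0}(\eta)$ and $M_\s(y-y_0\pm i\ep_0)=M_{0,-\gamma_0}(\eta)$ solve the homogeneous equation for every $y_0$, and they are linearly independent whenever $\gamma_0\neq 0$ (i.e.\ whenever $\cJ(y_0)\neq \tfrac14$), by the asymptotic $M_{0,\gamma}(\eta)=\eta^{1/2+\gamma}\mathcal{E}_{0,\gamma}(\eta)$ of \eqref{eq:asymptoticMkappagamma}. In the mild regime $y_0\in I_M$, where $\gamma_0$ is close to $0$, one instead uses the pair $(M_{0,\gamma_0},W_{0,\gamma_0})$, which remains linearly independent for all admissible $\gamma_0$ by the standard connection formulas for the Kummer function $U$.

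Next I would verify the boundary conditions by direct substitution. For the strong/weak regime, evaluating \eqref{eq:homoupss} at $y=2$ yields $M_\sr(2-y_0\pm i\ep_0)M_\s(2-y_0\pm i\ep_0)-M_\s(2-y_0\pm i\ep_0)M_\sr(2-y_0\pm i\ep_0)=0$, and similarly \eqref{eq:homolowss} vanishes at $y=0$; the exact same check applies to \eqref{eq:homoupms}--\eqref{eq:homolowms}. For the Wronskian, since the RTG operator has no first-order term, $\W[\phi_{\su},\phi_{\sl}]$ is a constant in $y$; writing $\phi_{\su}=a M_\s-bM_\sr$ and $\phi_{\sl}=cM_\s-dM_\sr$ with $a=M_\sr(2-y_0\pm i\ep_0)$, $b=M_\s(2-y_0\pm i\ep_0)$, $c=M_\sr(-y_0\pm i\ep_0)$, $d=M_\s(-y_0\pm i\ep_0)$, a short bilinear expansion gives
\begin{equation*}
\W[\phi_\sl,\phi_\su]=(ad-bc)\,\W[M_\sr,M_\s].
\end{equation*}
The base Wronskian is obtained from the Whittaker identity $\W_\eta[M_{0,\gamma},M_{0,-\gamma}]=-2\gamma$, scaled by the chain-rule factor $2k$, yielding $\W_y[M_\sr,M_\s]=-4k\gamma_0$; this reproduces \eqref{eq:def Wronskianss}. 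An identical computation in the mild regime uses $\W_\eta[M_{0,\gamma},W_{0,\gamma}]=-\Gamma(1+2\gamma)/\Gamma(\tfrac12+\gamma)$ and yields \eqref{eq:def Wronskianms}.

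With $\phi_\su$, $\phi_\sl$ and $\W_{k,\ep}^\pm(y_0)$ in hand, the formula \eqref{eq:Greens TG} then follows from the classical distributional Green's-function recipe: one defines $\G_{\su,k,\ep}^\pm$ and $\G_{\sl,k,\ep}^\pm$ as in \eqref{ed:defGuGl}, checks continuity at $y=z$ (immediate from the symmetric form of the ansatz) and the unit jump of $\partial_y\G$ at $y=z$ (which is exactly the computation that produces the Wronskian factor $1/\W_{k,\ep}^\pm(y_0)$), from which $\mathrm{RTG}_{k,\ep}^\pm \G_{k,\ep}^\pm(\cdot,y_0,z)=\delta(\cdot-z)$ in $\mathcal{D}'(0,2)$; the Dirichlet boundary conditions at $y=0,2$ are inherited from $\phi_\sl$ and $\phi_\su$. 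Finally, the conjugation identity $\G_{k,\ep}^+=\overline{\G_{k,\ep}^-}$ follows by taking complex conjugates in the equation: since all coefficients of $\mathrm{RTG}_{k,\ep}^+$ are real except for the replacement $+i\ep_0\leftrightarrow -i\ep_0$, one has $\overline{\mathrm{RTG}_{k,\ep}^+\phi}=\mathrm{RTG}_{k,\ep}^-\overline{\phi}$, and $\overline{\G_{k,\ep}^+}$ satisfies the same distributional problem as $\G_{k,\ep}^-$; invertibility of $\mathrm{RTG}_{k,\ep}^-$ (which holds as long as $\W_{k,\ep}^-(y_0)\neq 0$, established separately in Section~\ref{sec:spectrum}) forces equality.

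The main technical subtlety, rather than any one calculation, is keeping the branches of $\eta^{1/2\pm\gamma_0}$ consistent with the $\pm i\ep_0$ shift and with the purely-imaginary versus real nature of $\gamma_0$ in the different stratification regimes; everything else is essentially algebraic once the Whittaker identities of Appendix~\ref{app:Whittaker} are available.
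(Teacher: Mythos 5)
Your proposal follows essentially the same construction the paper uses (and calls "rather standard"): reduce the RTG equation to the Whittaker equation via $\eta=2k(y-y_0\pm i\ep_0)$, pick the pair of homogeneous solutions adapted to each boundary, compute the Wronskian from the Whittaker identities, and assemble the Green's function from the continuity and jump conditions at $y=z$. One small slip: the bilinear expansion should give $\W[\phi_\sl,\phi_\su]=(bc-ad)\,\W[M_\sr,M_\s]$ rather than $(ad-bc)\,\W[M_\sr,M_\s]$ with your labelling; with that sign fixed the formula matches \eqref{eq:def Wronskianss} exactly, so this is a bookkeeping error rather than a gap.
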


The proof of the proposition is rather standard: The Green's functions is constructed by first solving the boundary value problems on each side of the interval using the homogeneous solutions and then fixing the coefficients so that $\G_{k,\ep}^\pm(y,y_0,z)$ is continuous at $y=z$ and  $\partial_y\G_{k,\ep}^\pm(y,y_0,z)$ has a jump discontinuity at $y=z$. More details can be found in  \cite{CZN25chan}.

\section{Green's function for weak and strong stratifications}\label{sec:GreensWS}
In this section we present the main regularity structures of the Green's function for $y_0\in I_S\cup I_W$ and we prove Theorem \ref{thm:XkregGSW} through Propositions \ref{prop:sobolevregdecomG}, \ref{prop:sobolevregpartialdecomG} and Corollary \ref{cor:sobolevregpartialdecomG} below.

\begin{proposition}\label{prop:sobolevregdecomG}
Let $k\geq 1$. Then, there exists $\ep_*>0$ small enough such for all $y_0\in I_S\cup I_W$ the Green's function admits the decomposition
\begin{align*}
\G_{k,\ep}^\pm(y,y_0,z) = \left( \G_{k,\ep}^\pm\right)_\sr(y,y_0,z) \eta^{\frac12 + \gamma_0} + \left( \G_{k,\ep}^\pm\right)_\s(y,y_0,z) \eta^{\frac12 - \gamma_0}
\end{align*}
where $\eta=2k(y-y_0\pm i\ep_0)$ and
\begin{align*}
\sup_{y_0\in I_S\cup I_W, \, y\in I_3(y_0)} \left\Vert \left( \G_{k,\ep}^\pm\right)_\sigma(y,y_0,\cdot) \right\Vert_{X_k^1} \lesssim k^{-1}, \quad \sup_{y_0\in I_S\cup I_W, \, y\in I_3(y_0)} \left\Vert \left( \G_{k,\ep}^\pm\right)_\sigma(y,y_0,\cdot) \right\Vert_{H_k^1(I_3^c(y_0))} \lesssim k^{-\frac32},
\end{align*}
for $\sigma\in \lbrace \sr, \s \rbrace$ and all $0<\ep< \ep_*$.
\end{proposition}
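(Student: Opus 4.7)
The proof proceeds by direct computation from the explicit formulas in Proposition \ref{prop: def Green's Function}, starting from the factorization $M_{0,\pm\gamma_0}(2k\xi) = (2k\xi)^{1/2\pm\gamma_0}\mathcal{E}_{0,\pm\gamma_0}(2k\xi)$ recorded in Appendix \ref{app:Whittaker}, where $\mathcal{E}_{0,\pm\gamma_0}$ is entire with $\mathcal{E}_{0,\pm\gamma_0}(0)=1$ and $\mathcal{E}_{0,\pm\gamma_0}'(0)=0$. Substituting into \eqref{eq:homoupss}--\eqref{eq:homolowss} and then into \eqref{ed:defGuGl}, the Green's function splits into a finite sum of monomials of the form $\eta^{1/2\pm\gamma_0}\xi^{1/2\pm\gamma_0}$, with $\eta = 2k(y-y_0\pm i\ep_0)$ and $\xi = 2k(z-y_0\pm i\ep_0)$, each multiplied by analytic factors $\mathcal{E}_{0,\pm\gamma_0}$, by boundary numerical factors $M_\sigma(-y_0\pm i\ep_0)$ and $M_\sigma(2-y_0\pm i\ep_0)$, and by the inverse Wronskian $(\W_{k,\ep}^\pm)^{-1}$. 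Collecting the $\eta^{1/2+\gamma_0}$ and $\eta^{1/2-\gamma_0}$ contributions yields the claimed decomposition and explicit formulas for $(\G_{k,\ep}^\pm)_\sr$ and $(\G_{k,\ep}^\pm)_\s$.

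The main analytic obstacle is establishing a uniform lower bound of the form $|\W_{k,\ep}^\pm(y_0)| \gtrsim k\,\mathfrak{W}(y_0)$, where $\mathfrak{W}(y_0)$ denotes the canonical exponential factor (of size $e^{ky_0}$ or $e^{k(2-y_0)}$ depending on the regime) that also governs the size of the boundary Whittaker values $M_\sigma(-y_0\pm i\ep_0)$ and $M_\sigma(2-y_0\pm i\ep_0)$. Three facts make this accessible: (i) $I_S\cup I_W \subset (\vartheta_1,\vartheta_2) \subset (0,2)$, so $|y_0|$ and $|2-y_0|$ are uniformly bounded below; (ii) $|\gamma_0|$ is bounded below on $I_S\cup I_W$ by construction (these regions exclude a neighborhood of the critical crossings $\cJ(y_0)=\tfrac14$), which keeps the prefactor $-4k\gamma_0$ in \eqref{eq:def Wronskianss} of size at least $k$; and (iii) after isolating the common exponential factor, the bracket in \eqref{eq:def Wronskianss} reduces to a non-vanishing combination of $y_0^{\pm\gamma_0}(2-y_0)^{\mp\gamma_0}$-type monomials carrying opposite signs of $\gamma_0$. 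In the strong regime $\gamma_0 = i\nu_0$ is purely imaginary so the two monomials oscillate with distinct phases and cannot cancel; in the weak regime $\gamma_0 \in (0,\tfrac12)$ is real and bounded away from the degenerate value $0$, so the two monomials have manifestly different moduli. Uniformity in $\ep\in(0,\ep_*)$ follows from the analyticity of the Whittaker factorization in a complex strip.

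Once the Wronskian bound is in place, the two estimates reduce to bookkeeping. Viewed as a function of $z$, each coefficient $(\G_{k,\ep}^\pm)_\sigma(y,y_0,z)$ is, on each piece $z \lessgtr y$, a linear combination with $z$-independent coefficients of $\xi^{1/2+\gamma_0}\mathcal{E}_{0,\gamma_0}(\xi)$ and $\xi^{1/2-\gamma_0}\mathcal{E}_{0,-\gamma_0}(\xi)$, which is precisely the form of the competitors in the infimum defining $\Vert \cdot \Vert_{X_k^1}$. On $I_3(y_0)$ we have $|\xi|\lesssim 1$, so $\mathcal{E}_{0,\pm\gamma_0}(\xi)$ is uniformly bounded in $L^\infty$, while the property $\mathcal{E}_{0,\pm\gamma_0}'(0)=0$ absorbs the $|\eta|^{-1/2}$ weight on the derivative, yielding the claimed $k^{-1}$ scaling after the inverse Wronskian cancels the boundary exponential prefactors. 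For the non-local $H_k^1(I_3^c(y_0))$ estimate, the Dirichlet conditions $\phi_{\sl,k,\ep}^\pm(0,y_0)=0$ and $\phi_{\su,k,\ep}^\pm(2,y_0)=0$ cancel the exponentially growing branch of the Whittaker functions in $z$, so the large-argument asymptotics of Appendix \ref{app:Whittaker} give that $\phi_{\su/\sl,k,\ep}^\pm(z,y_0)$ on $I_3^c(y_0)$ is controlled by $\mathfrak{W}(y_0)$ times a bounded factor; integrating in $L^2$ over an interval of $O(1)$ length and dividing by $\W_{k,\ep}^\pm$ produces the extra $k^{-1/2}$ improvement to $k^{-3/2}$, with derivatives handled by the usual $\partial_z M \sim k$ counting.
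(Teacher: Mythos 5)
Your overall architecture — factor the Whittaker functions via $M_{0,\pm\gamma_0}(2k\xi)=(2k\xi)^{\frac12\pm\gamma_0}\mathcal{E}_{0,\pm\gamma_0}(2k\xi)$, substitute into the explicit Green's function, collect $\eta^{\frac12\pm\gamma_0}$, and then reduce to a Wronskian lower bound plus boundedness of the analytic factors — matches the paper's strategy. But your treatment of the Wronskian lower bound, which is the analytic heart of the proposition, has a genuine gap.

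Your step (iii) asserts that in the weak regime ($\gamma_0=\mu_0\in(0,\tfrac12)$ real) the two monomials in the bracket of \eqref{eq:def Wronskianss} "have manifestly different moduli." This is neither true as stated nor the mechanism that works. The boundary value $-y_0\pm i\ep_0$ sits in the left half-plane, so evaluating $M_{0,\pm\gamma_0}$ there requires the analytic continuation formula $M_{0,\gamma}(\zeta e^{\pm\pi i})=\pm i e^{\pm\gamma\pi i}M_{0,\gamma}(\zeta)$. When $\gamma_0=\mu_0$ is real, the two branches $M_\sr(-y_0\pm i\ep_0)$ and $M_\s(-y_0\pm i\ep_0)$ acquire opposite unimodular phases $e^{\pm i\mu_0\pi}$, and the non-vanishing of the bracket comes from $\sin(\mu_0\pi)$ being bounded away from zero — a statement about \emph{phases}, not moduli — together with a delicate case analysis on the size of $k$ (large-argument asymptotics when $2ky_0\gtrsim 1$, compactness/continuity when $2ky_0$ is bounded). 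This is precisely the content of Lemma~\ref{lemma:Wronskianlowerboundweakstrat} in the appendix and of the cited Proposition~4.4 in \cite{CZN25chan} for the strongly stratified case; the paper invokes them as the single key input $\bigl|M_\tau(-y_0\pm i\ep_0)M_\sigma(2-y_0\pm i\ep_0)/\W_{k,\ep}^\pm(y_0)\bigr|\lesssim 1/|k|$. Without proving this quantitative bound uniformly in $k$, $y_0$, and $\ep$, the subsequent bookkeeping does not yield the stated $k^{-1}$ and $k^{-3/2}$ scalings.

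Two secondary remarks. First, for the non-local $H_k^1(I_3^c(y_0))$ estimate the paper does not push the large-argument Whittaker asymptotics all the way; it notes that the homogeneous solutions $\phi_{\sl}$, $\phi_{\su}$ solve the RTG equation, invokes the entanglement inequality (Lemma~\ref{lemma:entanglementRTG}) to reduce the non-local $H_k^1$ norm to an $L^2$ bound on the annulus $I_2^c(y_0)\cap I_3(y_0)$, and then applies the local bounds there. Your route via "the Dirichlet condition cancels the exponentially growing branch" is plausible in spirit but would need to be made quantitative — the Dirichlet condition at $z=0$ gives $\phi_\sl(z)\sim\sinh(kz)$, which is small near the boundary but still exponentially large near the critical layer, and the cancellation against the Wronskian is exactly the content hidden inside the $\lesssim 1/|k|$ bound above. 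Second, your observation that $|\gamma_0|$ is bounded below on $I_S\cup I_W$ is correct, but note that in $I_W$ one must also track the opposite limit $\mu_0\to\tfrac12$ (i.e.\ $\cJ\to 0$, $y_0\to\vartheta_n$), which Lemma~\ref{lemma:Wronskianlowerboundweakstrat} covers but your argument does not mention.
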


\begin{proof}
We argue for $\left( \G_{\su,k,\ep}^\pm\right)_\sr$ since the proof for $\left( \G_{\su, k,\ep}^\pm\right)_\s$ and for $\G_{\sl,k,\ep}^\pm$ is identical. For $z< y$, an inspection of the Green's function and Lemma \ref{lemma:asymptoticexpansionM} show that
\begin{align*}
\left( \G_{\su,k,\ep}^\pm\right)_\sr(y,y_0,z) &= -\frac{1}{\W_{k,\ep}^\pm(y_0)}\phi_{\sl,k,\ep}^\pm(z,y_0)M_\s(1 - y_0 \pm i\ep_0) \mathcal{E}_{0,\gamma_0}(\eta)
\end{align*}
Note that for $z\in I_3(y_0)$ we also have
\begin{align*}
\phi_{\sl,k,\ep}^\pm(z,y_0) &= M_\sr( - y_0\pm i\ep_0)\mathcal{E}_{0,-\gamma_0}(\xi)  \xi^{\frac12-\gamma_0} \\
&\quad- M_\s( - y_0\pm i\ep_0)\mathcal{E}_{0,\gamma_0}(\xi) \xi^{\frac12+\gamma_0},
\end{align*}
with $\xi=2k(z-y_0\pm i\ep_0)$. With this we decompose 
\begin{align*}
\left( \G_{\su,k,\ep}^\pm\right)_\sr(y,y_0,z) = \left( \G_{\su, k,\ep}^\pm\right)_{\sr,\sr}(y,y_0,z)\xi^{\frac12+\gamma_0} + \left( \G_{\su, k,\ep}^\pm\right)_{\sr,\s}(y,y_0,z)\xi^{\frac12-\gamma_0},
\end{align*}
Owing to \cite[Proposition 4.4]{CZN25chan} when $y_0\in I_S$ and to Lemma \ref{lemma:Wronskianlowerboundweakstrat} when $y_0\in I_W$, there holds
\begin{align*}
\sup_{y_0\in I_S\cup I_W } \left| \frac{M_\tau( - y_0 \pm i \ep)}{\W_{k,\ep}^\pm (y_0)} M_\sigma(2 - y_0 \pm i \ep)\right| \lesssim \frac{1}{|k|}.
\end{align*}
We remark here that for $y_0\in I_S$, we have that $\nu_0$ is uniformly bounded from above and from below away from 0, and an inspection of the proof of Proposition 4.4 in \cite{CZN25chan} shows that the bounds are uniform in $I_S$.  Together with $\Vert \mathcal{E}_{0,\pm \gamma_0}\Vert_{L^\infty(B_{10}(0))}\lesssim 1$,  we see that
\begin{align*}
\sup_{y_0\in I_S\cup I_W} \sup_{y\in I_3(y_0)} \sup_{z\in I_3(y_0)} \left| \left( \G_{\su, k,\ep}^\pm\right)_{\sigma,\tau}(y,y_0,z) \right| \lesssim k^{-1},
\end{align*}
for all $\sigma,\tau \in \lbrace \sr, \s \rbrace$. To prove the estimates for $\partial_z\left( \G_{\su, k,\ep}^\pm\right)_\sr(y,y_0,z)$, we note that 
\begin{align*}
\partial_z\phi_{\sl,k,\ep}^\pm(z,y_0) = 2k \left( M_\sr( - y_0\pm i\ep_0)M_\s'(z- y_0\pm i\ep_0) - M_\s( - y_0\pm i\ep_0)M_\sr'(z-y_0\pm i\ep_0) \right)
\end{align*}
and since 
\begin{align*}
M'_\sr(z-y_0\pm i\ep_0) &= 2k \mathcal{E}_{1,\gamma_0}(\xi) \xi^{-\frac12+\gamma_0}, 
\\
 M'_\s(z-y_0\pm i\ep_0) &= 2k \mathcal{E}_{1,-\gamma_0}(\xi) \xi^{-\frac12-\gamma_0},
\end{align*}
with $\Vert \mathcal{E}_{1,\pm \gamma_0}\Vert_{L^\infty(B_{10}(0))}\lesssim 1$, we conclude that 
\begin{align*}
\sup_{y_0\in I_S}\sup_{y\in I_3(y_0)} \left\Vert \left( \G_{\su, k,\ep}^\pm\right)_{\sigma,\tau}(y,y_0,\cdot) \right\Vert_{W^{1,\infty}_k(I_3(y_0))}\lesssim 1.
\end{align*}
Next, we address the $H^1_k(I_3^c(y_0))$ bounds of $\left( \G_{k,\ep}^\pm\right)_{\sigma}(y,y_0,\cdot)$. To do so, for $z < y$ must obtain $H^1_k$ bounds on $\phi_{\sl,k,\ep}^\pm(z,y_0)$. 
Using the entanglement inequality, if $y_0 < y$, we have that 
\begin{align*}
\Vert \p_z\phi_{\sl,k,\ep}^\pm \Vert_{L^2_z(I_3^c(y_0)\cap (0, y_0))}^2 + k^2 \Vert \phi_{\sl,k,\ep}^\pm \Vert_{L^2_z(I_3^c(y_0)\cap (0, y_0))}^2 \lesssim k^2 \Vert \phi_{\sl,k,\ep}^\pm \Vert_{L^2_z(I_2^c(y_0)\cap I_3(y_0)\cap (0, y_0))}^2,
\end{align*}
namely
\begin{equation}\label{eq:H1kphil}
\Vert \phi_{\sl,k,\ep}^\pm \Vert_{H^1_k(I_3^c(y_0)\cap (0, y_0))} \lesssim \Vert \phi_{\sl,k,\ep}^\pm \Vert_{L^2_z(I_2^c(y_0)\cap I_3(y_0)\cap (0, y_0))} \lesssim \frac{|M_\sr(-y_0 \pm i\ep_0)| + |M_\s( - y_0\pm i\ep_0)|}{k^\frac12},
\end{equation}
where we have used the local bounds of $\phi_{l,k,\ep}^\pm(z,y_0)$ when $z\in I_2^c(y_0)\cap I_3(y_0)$. Note further than since $y\in I_3(y_0)$, then $(y_0,y)\cap I_3^c(y_0)=\emptyset$ and thus $\Vert \phi_{\sl,k,\ep}^\pm \Vert_{H^1_k(I_3^c(y_0)\cap (0, y))} = \Vert \phi_{\sl,k,\ep}^\pm \Vert_{H^1_k(I_3^c(y_0)\cap (0, y_0))}$. On the other hand, for $y\leq y_0$, we observe that $(0,y) \subseteq (0,y_0)$ and we use the $H^1_k(I_3^c(y_0)\cap (0, y_0))$ bounds of $\phi_{\sl,k,\ep}^\pm$ that we already have.

Summing up, for $y_0\in I_S \cup I_W$, $y\in I_3(y_0)$ and $z < y$, appealing once again to Proposition 4.2 in \cite{CZN25chan} we reach
\begin{align*}
 \left\Vert \left( \G_{\su,k,\ep}^\pm\right)_{\sr}(y,y_0,\cdot) \right\Vert_{H^1_k(I_3^c(y_0)\cap(0, y))} &\lesssim \frac{|M_\sr(-y_0 \pm i\ep_0)| + |M_\s( - y_0\pm i\ep_0)|}{k^\frac12\W_{k,\ep}^\pm(y_0)}| M_\sr(2 - y_0 \pm i\ep_0) | \\
 &\lesssim k^{-\frac32}
\end{align*}
For $y < z$, we consider $\left( \G_{\sl,k,\ep}^\pm\right)_{\sr}$, one now needs to study $\phi_{\su,k,\ep}^\pm(z,y_0)$. Nevertheless, the bounds and strategies to prove them remain the same as for the case $z<y$, we omit the details. With this, we obtain
\begin{align*}
\sup_{y_0\in I_S\cup I_W}\sup_{y\in I_3(y_0)} \left\Vert \left( \G_{k,\ep}^\pm\right)_{\sr}(y,y_0,\cdot) \right\Vert_{H^1_k(I_3^c(y_0))} \lesssim k^{-\frac32}
\end{align*}
and the proof is finished.
\end{proof}

We need more refined properties for the $\partial_y$ derivatives of the regular and singular factors of the decomposition.

\begin{proposition}\label{prop:sobolevregpartialdecomG}
Let $k\geq 1$, $y_0\in I_S\cup I_W$ and $\ep\in (0,\ep_*)$. For $z,y\in I_3(y_0)$, and the decomposition
\begin{align*}
\G_{k,\ep}^\pm(y,y_0,z) = \left( \G_{k,\ep}^\pm\right)_\sr(y,y_0,z) \xi^{\frac12 + \gamma_0} + \left( \G_{k,\ep}^\pm\right)_\s(y,y_0,z) \xi^{\frac12 - \gamma_0},
\end{align*}
where $\xi=2k(z-y_0\pm i\ep_0)$, there holds, 
\begin{align*}
\left\Vert \partial_z\left( \G_{k,\ep}^\pm\right)_\sigma(\cdot,y_0,z) \right\Vert_{X_k^1} + k^\frac12 \left\Vert \partial_z\left( \G_{k,\ep}^\pm \right)_\sigma(\cdot,y_0,z) \right \Vert_{L^2(I_3(y_0))}\lesssim |\xi|
\end{align*}
uniformly for all $y_0\in I_S\cup I_W$ and $z\in I_3(y_0)$, for $\sigma\in \lbrace \sr, \s \rbrace$. Moreover, for $f\in C(I_3(y_0))$, $y\in I_3(y_0)$ and $\eta=2k(y-y_0\pm i\ep_0)$, we have
\begin{align*}
\partial_y \int_{I_3(y_0)} \left( \partial_z \left( \G_{k,\ep}^\pm \right)_\sigma  \right)_\tau (y,y_0,z) f(z) \d z &= f(y) \left( \left( \partial_z \left( \G_{\su,k,\ep}^\pm \right)_\sigma \right)_\tau - \left( \partial_z \left( \G_{\sl,k,\ep}^\pm \right)_\sigma \right)_\tau \right)(y,y_0,y)  \\
&\quad + \int_{(0,y)\cap I_3(y_0)} \partial_y \left( \partial_z \left( \G_{\su,k,\ep}^\pm \right)_\sigma  \right)_\tau (y,y_0,z) f(z) \d z \\
&\quad + \int_{(y,2)\cap I_3(y_0)} \partial_y \left( \partial_z \left( \G_{\sl,k,\ep}^\pm \right)_\sigma  \right)_\tau (y,y_0,z) f(z) \d z
\end{align*}
with
\begin{align*}
\left\vert \partial_y \left( \partial_z \left( \G_{\su,k,\ep}^\pm \right)_\sigma  \right)_\tau(y,y_0,z) \right\vert + \left\vert \partial_y \left( \partial_z \left( \G_{\sl,k,\ep}^\pm \right)_\sigma  \right)_\tau(y,y_0,z) \right\vert \lesssim k |\eta||\xi|
\end{align*}
uniformly for all $y_0\in I_S\cup I_W$, for $\sigma,\,\tau\in \lbrace \sr, \s \rbrace$.
\end{proposition}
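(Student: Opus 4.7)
The argument parallels that of Proposition~\ref{prop:sobolevregdecomG}, with one new ingredient: the cancellation $\mathcal{E}_{0,\pm\gamma_0}'(0)=0$ from Lemma~\ref{lemma:asymptoticexpansionM}, which is precisely the mechanism that produces the $|\xi|$ factor on the right-hand side. I argue for the branch $z<y$ (so $\G=\G_{\su,k,\ep}^\pm$); the case $y<z$ is treated symmetrically with $\G_{\sl,k,\ep}^\pm$. Expanding $\phi_{\sl,k,\ep}^\pm(z,y_0)$ in $\xi$ via Lemma~\ref{lemma:asymptoticexpansionM} yields the explicit formulas
\begin{align*}
(\G_{\su,k,\ep}^\pm)_\sr(y,y_0,z) &= -\tfrac{1}{\W_{k,\ep}^\pm(y_0)}\,\phi_{\su,k,\ep}^\pm(y,y_0)\,M_\s(-y_0\pm i\ep_0)\,\mathcal{E}_{0,\gamma_0}(\xi),\\
(\G_{\su,k,\ep}^\pm)_\s(y,y_0,z) &= \phantom{-}\tfrac{1}{\W_{k,\ep}^\pm(y_0)}\,\phi_{\su,k,\ep}^\pm(y,y_0)\,M_\sr(-y_0\pm i\ep_0)\,\mathcal{E}_{0,-\gamma_0}(\xi),
\end{align*}
so $\partial_z$ hits only the $\mathcal{E}_{0,\pm\gamma_0}(\xi)$ factor, producing $2k\mathcal{E}_{0,\pm\gamma_0}'(\xi)$, bounded on $I_3(y_0)$ by $k|\xi|$ thanks to the vanishing at zero.

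To extract the $X_k^1$ norm in $y$, I then expand $\phi_{\su,k,\ep}^\pm(y,y_0)$ in $\eta$ exactly as in the proof of Proposition~\ref{prop:sobolevregdecomG}, obtaining an $\eta$-decomposition of $\partial_z(\G_{\su,k,\ep}^\pm)_\sigma$ whose coefficients take the schematic form
\begin{align*}
((\partial_z(\G_{\su,k,\ep}^\pm)_\sigma))_\tau(y,y_0,z)\;\propto\;\tfrac{M_\alpha(-y_0\pm i\ep_0)\,M_\beta(2-y_0\pm i\ep_0)}{\W_{k,\ep}^\pm(y_0)}\cdot 2k\,\mathcal{E}_{0,\pm\gamma_0}'(\xi)\,\mathcal{E}_{0,\pm\gamma_0}(\eta),
\end{align*}
for appropriate indices $\alpha,\beta\in\{\sr,\s\}$. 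The uniform Wronskian bound $|M_\alpha(-y_0\pm i\ep_0)M_\beta(2-y_0\pm i\ep_0)/\W_{k,\ep}^\pm|\lesssim k^{-1}$ (Proposition~4.4 of \cite{CZN25chan} on $I_S$ and Lemma~\ref{lemma:Wronskianlowerboundweakstrat} on $I_W$), combined with $|\mathcal{E}_{0,\pm\gamma_0}(\eta)|\lesssim 1$ and $|\mathcal{E}_{0,\pm\gamma_0}'(\xi)|\lesssim|\xi|$, yields $|((\partial_z(\G_\su)_\sigma))_\tau|\lesssim|\xi|$. An additional $\partial_y$ brings down $2k\mathcal{E}_{0,\pm\gamma_0}'(\eta)$, giving $|\partial_y((\partial_z(\G_\su)_\sigma))_\tau|\lesssim k|\xi||\eta|$; since $|\eta|\lesssim 1$ on $I_3(y_0)$, the weighted quantity $k^{-1}|\eta|^{-1/2}|\partial_y(\cdot)|\lesssim|\xi||\eta|^{1/2}\lesssim|\xi|$ handles the $n=1$ part of the $X_k^1$ norm. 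The $L^2$ estimate reduces to the bound $|\xi|^2\int_{I_3(y_0)}|\eta|^{1-2\mu_0}\,dy\lesssim k^{-1}|\xi|^2$ (finite since $1-2\mu_0>-1$), which after multiplication by $k^{1/2}$ gives the stated estimate.

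The Leibniz identity for $\partial_y$ of the integral is obtained by splitting $\int_{I_3(y_0)}=\int_{(0,y)\cap I_3(y_0)}+\int_{(y,2)\cap I_3(y_0)}$ (matching the $\su$ and $\sl$ branches) and applying the fundamental theorem of calculus: the moving endpoint at $z=y$ produces the jump $f(y)[((\partial_z(\G_\su)_\sigma))_\tau-((\partial_z(\G_\sl)_\sigma))_\tau](y,y_0,y)$, and the interior derivatives yield the two stated integrals. The pointwise bound $|\partial_y((\partial_z(\G_{\su,\sl})_\sigma))_\tau(y,y_0,z)|\lesssim k|\eta||\xi|$ for both branches is precisely the $n=1$ estimate just established.

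\textbf{Main obstacle.} The decisive quantitative input is the uniform Wronskian bound $|M_\alpha(-y_0\pm i\ep_0)M_\beta(2-y_0\pm i\ep_0)/\W_{k,\ep}^\pm(y_0)|\lesssim k^{-1}$. Its delicacy lies in the required uniformity across the transitions $y_0\to\vartheta_1,\vartheta_2$, where $\P(y_0)\to 0$ and the regularity of the Whittaker factors degenerates, and as $\gamma_0$ crosses between imaginary values (on $I_S$) and real values in $[0,\tfrac12)$ (on $I_W$); this is supplied by the cited results. Once this estimate is in hand, the rest of the argument is essentially careful bookkeeping of how the cancellation $\mathcal{E}_{0,\pm\gamma_0}'(0)=0$ produces the $|\xi|$ and $|\eta|$ factors.
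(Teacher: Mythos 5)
Your proof is correct and follows essentially the same route as the paper's: you expand $(\G^\pm_{\su,k,\ep})_\sigma$ and $(\G^\pm_{\sl,k,\ep})_\sigma$ explicitly, exploit the cancellation $\mathcal{E}'_{0,\pm\gamma_0}(0)=0$ to produce the $|\xi|$ (and later $|\eta|$) factors after $\partial_z$ and $\partial_y$, invoke the uniform Wronskian bounds, and derive the Leibniz formula by splitting the integral at the moving endpoint $z=y$. The only cosmetic difference is in the $L^2_y(I_3(y_0))$ estimate, which you obtain by directly integrating the pointwise bound over the window of length $\sim k^{-1}$ (and noting $1-2\mu_0>-1$), whereas the paper routes this through the entanglement inequality; your version is the more elementary and, given that the $X_k^0$ bound already dominates the coefficients pointwise on $I_3(y_0)$, arguably the more natural.
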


\begin{proof}
We argue for $\sigma=\sr$. Assume that $z < y$ so that $\G_{k,\ep}^\pm(y,y_0,z) = \G_{\su,k,\ep}^\pm(y,y_0,z)$. Since
\begin{align*}
\left( \G_{\su,k,\ep}^\pm\right)_\sr(y,y_0,z) &= -\frac{1}{\W_{k,\ep}^\pm(y_0)}\phi_{\su,k,\ep}^\pm(y,y_0)M_\s( - y_0 \pm i\ep_0) \mathcal{E}_{0,\gamma_0}(\xi)
\end{align*}
and $\partial_z\xi=2k$, there holds 
\begin{align*}
\partial_z \left( \G_{k,\ep}^\pm\right)_\sr(y,y_0,z) = -2k\frac{1}{\W_{k,\ep}^\pm(y_0)}\phi_{\su,k,\ep}^\pm(y,y_0)M_\s(2 - y_0 \pm i\ep_0) \mathcal{E}_{0,\gamma_0}'(\xi)
\end{align*}
The $X_k^0$ bound on $\partial_z \left( \G_{k,\ep}^\pm\right)_\sr(y,y_0,\cdot)$ follow easily as in the proof of Proposition \ref{prop:sobolevregdecomG} once we observe, see Lemma \ref{lemma:asymptoticexpansionM}, that $\left| \mathcal{E}_{0,\gamma_0}'(\xi) \right| \lesssim |\xi|$. 
Regarding the $X_k^1$ bound, note that
\begin{align*}
\left( \partial_z \left( \G_{k,\ep}^\pm \right)_\sigma \right)_\tau = \begin{cases}
\left( \partial_z \left( \G_{\su,k,\ep}^\pm \right)_\sigma \right)_\tau & \quad z<y, \\
\left( \partial_z \left( \G_{\sl,k,\ep}^\pm \right)_\sigma \right)_\tau & \quad y<z, 
\end{cases} 
\end{align*}
for all $\sigma,\,\tau\in \lbrace \sr, \s \rbrace$. Moreover, an inspection of Proposition \ref{prop: def Green's Function} shows that
\begin{align*}
\partial_y\left(\partial_z \left( \G_{\su,k,\ep}^\pm\right)_\sr\right)_{\s}(y,y_0,z) &= -4k^2\frac{1}{\W_{k,\ep}^\pm(y_0)}M_\s(2 - y_0 \pm i\ep_0) \mathcal{E}_{0,\gamma_0}'(\xi)M_\sr( - y_0\pm i\ep_0)\mathcal{E}_{0,-\gamma_0}'(\eta)
\end{align*}
since $\partial_y\eta = 2k$. Therefore, appealing to Proposition \ref{lemma:asymptoticexpansionM}, we obtain $|\E_{0,\gamma}'(\xi)|\lesssim |\xi|$ while $|\E_{0,\gamma}'(\eta)|\lesssim |\eta|$. With this and the lower bounds on the Wronskian the local estimate for $\partial_y\left(\partial_z \left( \G_{\su,k,\ep}^\pm\right)_\sr\right)_{\s}$ follows. The bounds for the other combinations of $\sigma,\tau\in \lbrace \sr, \s \rbrace$ are the same, we omit the details.

We next address the $L^2(I_3(y_0))$ estimate. We argue for $\sigma = \sr$ and for $z<y$, the other choices follow similarly. Recall form Lemma \ref{lemma:asymptoticexpansionM} that
\begin{align*}
\partial_z \left( \G_{k,\ep}^\pm\right)_\sr(y,y_0,z) = -2k\frac{1}{\W_{k,\ep}^\pm(y_0)}\phi_{\su,k,\ep}^\pm(y,y_0)M_\s(2 - y_0 \pm i\ep_0) \mathcal{E}_{0,\gamma_0}'(\xi).
\end{align*}
with $\left| \mathcal{E}_{0,\gamma_0}'(\xi) \right| \lesssim |\xi|$, for $|\xi|\leq 4\beta$.
The entanglement inequality in $(y_0,2)$ gives
\begin{align*}
\Vert \p_y\phi_{\su,k,\ep}^\pm \Vert_{L^2_z(I_3^c(y_0)\cap (y_0,2))}^2 + k^2 \Vert \phi_{\su,k,\ep}^\pm \Vert_{L^2_z(I_3^c(y_0)\cap (y_0,2))}^2 \lesssim k^2 \Vert \phi_{\su,k,\ep}^\pm \Vert_{L^2(I_2^c(y_0)\cap I_3(y_0)\cap (y_0,2))}^2,
\end{align*}
namely
\begin{align*}
\Vert \phi_{\su,k,\ep}^\pm \Vert_{H^1_k(I_3^c(y_0)\cap (y_0,2))} \lesssim \Vert \phi_{\su,k,\ep}^\pm \Vert_{L^2_z(I_2^c(y_0)\cap I_3(y_0)\cap (y_0,2))} \lesssim \frac{|M_\sr(-y_0 \pm i\ep_0)| + |M_\s( - y_0\pm i\ep_0)|}{k^\frac12}.
\end{align*}
On the other hand, since $z\in I_3(y_0)$ then $(z,y_0)\subset I_3(y_0)$ and thus $I_3^c(y_0)\cap (z,y_0) = \emptyset$. Appealing once again to the Wrosnkian bounds, confer Proposition 4.2 in \cite{CZN25chan}, we reach
the desired bounds and the proof is finished.
\end{proof}

Exploiting the symmetry of the Green's function, we obtain the following.
\begin{corollary}\label{cor:sobolevregpartialdecomG}
Let $k\geq 1$, $y_0\in I_S\cup I_W$ and $\ep\in (0,\ep_*)$. For $z,y\in I_3(y_0)$, and the decomposition
\begin{align*}
\G_{k,\ep}^\pm(y,y_0,z) = \left( \G_{k,\ep}^\pm\right)_\sr(y,y_0,z) \eta^{\frac12 + \gamma_0} + \left( \G_{k,\ep}^\pm\right)_\s(y,y_0,z) \eta^{\frac12 - \gamma_0},
\end{align*}
where $\eta=2k(z-y_0\pm i\ep_0)$, there holds, 
\begin{align*}
\left\Vert \partial_y\left( \G_{k,\ep}^\pm\right)_\sigma(y,y_0,\cdot) \right\Vert_{X_k^1} + k^\frac12 \left\Vert \partial_y\left( \G_{k,\ep}^\pm \right)_\sigma(y,y_0,\cdot) \right \Vert_{L^2(I_3(y_0))}\lesssim |\eta|
\end{align*}
for $\sigma\in \lbrace \sr, \s \rbrace$.
\end{corollary}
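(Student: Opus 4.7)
The plan is to derive the corollary from Proposition \ref{prop:sobolevregpartialdecomG} by exploiting the symmetry of the Green's function under the interchange of its two spatial arguments. From the explicit formulas in Proposition \ref{prop: def Green's Function}, one has
\begin{equation*}
\G_{\su,k,\ep}^\pm(y,y_0,z) = \frac{\phi_{\su,k,\ep}^\pm(y,y_0)\,\phi_{\sl,k,\ep}^\pm(z,y_0)}{\W_{k,\ep}^\pm(y_0)} = \G_{\sl,k,\ep}^\pm(z,y_0,y),
\end{equation*}
and hence $\G_{k,\ep}^\pm(y,y_0,z) = \G_{k,\ep}^\pm(z,y_0,y)$ for all $y,z\in[0,2]$. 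This is the classical symmetry of the Green's function of a Schrödinger-type operator of the form $\partial_y^2+V(y)$ with Dirichlet boundary conditions.

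The key step is to match this symmetry with the two different factorizations at our disposal. Applying the symmetry to the $\xi$-decomposition of $\G_{k,\ep}^\pm(z,y_0,y)$ furnished by Proposition \ref{prop:sobolevregpartialdecomG} yields a factorization of $\G_{k,\ep}^\pm(y,y_0,z)$ in powers of $2k(y-y_0\pm i\ep_0)$. By uniqueness of the analytic decomposition in powers of $\eta^{1/2\pm\gamma_0}$ (which holds since $\gamma_0\neq 0$ throughout $I_S\cup I_W$), the corollary's coefficients $(\G_{k,\ep}^\pm)_\sigma(y,y_0,z)$ must coincide with those of Proposition \ref{prop:sobolevregpartialdecomG} evaluated at the swapped spatial arguments.

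With this identification in place, the quantity $\partial_y(\G_{k,\ep}^\pm)_\sigma(y,y_0,z)$ of the corollary corresponds, after relabeling, to $\partial_z(\G_{k,\ep}^\pm)_\sigma(y,y_0,z)$ bounded in Proposition \ref{prop:sobolevregpartialdecomG}. Similarly, the $X_k^1$ and $L^2(I_3(y_0))$ norms in the last spatial argument of the corollary translate into the same norms in the first spatial argument of the proposition; the cutoff $I_3(y_0)$ is invariant under the swap since it depends only on $y_0$. Consequently, the bound $\lesssim|\xi|$ of Proposition \ref{prop:sobolevregpartialdecomG} becomes $\lesssim|\eta|$, which is precisely the corollary's claim. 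The only point meriting a brief check is the uniqueness of the regular/singular decomposition used in the identification step, which is the only mildly delicate ingredient and is immediate once $\gamma_0\neq 0$.

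Alternatively, one may bypass uniqueness and verify the bounds directly by repeating the argument of Proposition \ref{prop:sobolevregpartialdecomG} with the roles of $\phi_{\su,k,\ep}^\pm(y,y_0)$ and $\phi_{\sl,k,\ep}^\pm(z,y_0)$ interchanged: differentiate $\G_{\su,k,\ep}^\pm$ in $y$ using $\partial_y M_\sigma = 2k\,\mathcal{E}_{1,\sigma\gamma_0}(\eta)\eta^{-1/2+\sigma\gamma_0}$, split out the $\eta^{1/2\pm\gamma_0}$ factors, estimate the resulting $\mathcal{E}_{1,\pm\gamma_0}(\eta)$ via Lemma \ref{lemma:asymptoticexpansionM}, invoke the Wronskian lower bounds from \cite[Proposition 4.2]{CZN25chan} and Lemma \ref{lemma:Wronskianlowerboundweakstrat}, and finally control the non-local $H^1_k$ contribution of $\phi_{\sl,k,\ep}^\pm(z,y_0)$ by the entanglement inequality of Lemma \ref{lemma:entanglementRTG} exactly as in the proof of Proposition \ref{prop:sobolevregpartialdecomG}.
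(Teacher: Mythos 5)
Your proof is correct and is precisely the argument the paper intends: the corollary is preceded by the one-line remark ``Exploiting the symmetry of the Green's function,'' and the identity $\G_{k,\ep}^\pm(y,y_0,z)=\G_{k,\ep}^\pm(z,y_0,y)$ you invoke, which transports the $\xi$-decomposition of Proposition~\ref{prop:sobolevregpartialdecomG} evaluated at $(z,y_0,y)$ into the $\eta$-decomposition of $\G_{k,\ep}^\pm(y,y_0,z)$, is exactly what is meant and is immediate from the product formula $\G_{\su,k,\ep}^\pm(y,y_0,z)=\W_{k,\ep}^\pm(y_0)^{-1}\phi_{\su,k,\ep}^\pm(y,y_0)\phi_{\sl,k,\ep}^\pm(z,y_0)$. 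Two minor remarks for precision rather than correctness: the condition guaranteeing uniqueness of analytic decompositions in $\eta^{1/2\pm\gamma_0}$ is $\gamma_0\notin\tfrac12\Z$ rather than merely $\gamma_0\neq 0$ (this does hold throughout $I_S\cup I_W$, where $\gamma_0$ is either purely imaginary and nonzero or real in $(0,\tfrac12)$), and the formula $\eta=2k(z-y_0\pm i\ep_0)$ printed in the corollary's statement is a typo for $\eta=2k(y-y_0\pm i\ep_0)$, which you have implicitly and correctly read through.
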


\section{Green's function for mild stratifications}\label{sec:GreensM}
Next, we present the main regularity structures of the Green's function associated to the mild stratification regime, for which a logarithmic correction needs to be incorporated. We assume here that $\tilde\delta >0$ is small enough so that for all $y_0\in I_M$ we have $|\gamma(y_0)| \leq \gamma_*$, where $\gamma_*$ is given by Lemma \ref{lemma:Wronskianlowerboundmildstrat},  \ref{lemma:smallnulargearg} and \ref{lemma:smallnuboundedarg}.

\begin{proposition}\label{prop:logsobolevregdecomG}
Let $k\geq 1$. There exists $\ep_*>0$ small enough such that for all $y_0\in I_M$  the Green's function admits the decomposition
\begin{align*}
\G_{k,\ep}^\pm(y,y_0,z) &= \left( \G_{k,\ep}^\pm\right)_\sr(y,y_0,z) \eta^{\frac12 + \gamma_0} + \left( \G_{k,\ep}^\pm\right)_\s(y,y_0,z) \eta^{\frac12 -\gamma_0}\log(\eta) \mathcal{Q}_\gamma(\eta),
\end{align*}
where $\eta=2k(y-y_0\pm i\ep_0)$ and $\Q_{\gamma}(\eta) = \int_0^1 e^{2\gamma s \log(\eta)} \d s$. Furthermore,
\begin{align*}
\sup_{y_0\in I_M, \, y\in I_3(y_0)} \left\Vert \left( \G_{k,\ep}^\pm\right)_\sigma(y,y_0,\cdot) \right\Vert_{LX_k^1} \lesssim k^{-1}, \quad \sup_{y_0\in I_M, \, y\in I_3(y_0)} \left\Vert \left( \G_{k,\ep}^\pm\right)_\sigma(y,y_0,\cdot) \right\Vert_{H_k^1(I_3^c(y_0))} \lesssim k^{-\frac32}
\end{align*}
for $\sigma\in \lbrace \sr, \s \rbrace$ and all $0<\ep< \ep_*$.
\end{proposition}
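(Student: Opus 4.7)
The plan is to mirror the proof of Proposition~\ref{prop:sobolevregdecomG} essentially line by line, replacing the fundamental pair $(M_\sr,M_\s)$ by $(M_\sr,W_\sr)$ and using a different extraction of the singular profile. The regular branch is still $M_\sr(\eta)=\eta^{\frac12+\gamma_0}\mathcal{E}_{0,\gamma_0}(\eta)$, but since $\gamma_0$ is close to $0$ on $I_M$ one cannot work with the bare root $\eta^{\frac12-\gamma_0}$: it collapses against $\eta^{\frac12+\gamma_0}$ as $\gamma_0\to 0$ and the naive Wronskian $-4k\gamma_0$ degenerates. The remedy is the algebraic identity
\begin{equation*}
\eta^{\frac12-\gamma_0}\log(\eta)\,\mathcal{Q}_{\gamma_0}(\eta)=\frac{\eta^{\frac12+\gamma_0}-\eta^{\frac12-\gamma_0}}{2\gamma_0},
\end{equation*}
obtained by integrating $\partial_s\bigl(\eta^{\frac12-\gamma_0+2\gamma_0 s}\bigr)$ over $s\in[0,1]$ and dividing by $2\gamma_0$. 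This turns $\bigl(\eta^{\frac12+\gamma_0},\,\eta^{\frac12-\gamma_0}\log(\eta)\mathcal{Q}_{\gamma_0}(\eta)\bigr)$ into a genuinely independent pair whose Wronskian equals $-2k$ and both of whose members remain bounded on the scaling region $I_3(y_0)$.

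The next step is to decompose $W_\sr$ in the desired form. Inserting the classical connection formula
\begin{equation*}
W_\sr(\eta)=\frac{\Gamma(-2\gamma_0)}{\Gamma(\tfrac12-\gamma_0)}M_{0,\gamma_0}(\eta)+\frac{\Gamma(2\gamma_0)}{\Gamma(\tfrac12+\gamma_0)}M_{0,-\gamma_0}(\eta),
\end{equation*}
whose coefficients carry canceling simple poles at $\gamma_0=0$, and using the identity above to trade the resulting singular combination $\gamma_0^{-1}\bigl(\eta^{\frac12-\gamma_0}-\eta^{\frac12+\gamma_0}\bigr)$ for $\eta^{\frac12-\gamma_0}\log(\eta)\mathcal{Q}_{\gamma_0}(\eta)$, one obtains a representation
\begin{equation*}
W_\sr(\eta)=A(\gamma_0,\eta)\,\eta^{\frac12+\gamma_0}+B(\gamma_0,\eta)\,\eta^{\frac12-\gamma_0}\log(\eta)\,\mathcal{Q}_{\gamma_0}(\eta),
\end{equation*}
with $A$ and $B$ analytic in $\gamma_0$ for $|\gamma_0|\leq \gamma_*$ and smooth in $\eta$ on $|\eta|\leq 4\beta$, all derivatives bounded uniformly in $y_0\in I_M$. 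Substituting this together with the expansion of $M_\sr$ into the explicit formulas \eqref{eq:homoupms}--\eqref{eq:homolowms} and collecting terms of type $\eta^{\frac12+\gamma_0}$ and $\eta^{\frac12-\gamma_0}\log(\eta)\mathcal{Q}_{\gamma_0}(\eta)$ identifies $(\G_{k,\ep}^\pm)_\sr$ and $(\G_{k,\ep}^\pm)_\s$ as products of smooth local factors with the ``fixed boundary'' values $M_\sr(-y_0\pm i\ep_0),\,W_\sr(-y_0\pm i\ep_0),\,M_\sr(2-y_0\pm i\ep_0),\,W_\sr(2-y_0\pm i\ep_0)$, divided by $\W_{k,\ep}^\pm(y_0)$.

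From here the estimates close in exactly the same way as in Proposition~\ref{prop:sobolevregdecomG}. The local $LX_k^1$ bound follows from $L^\infty$ and first-derivative control of $A,B,\mathcal{E}_{0,\gamma_0}$, uniform boundedness of the four boundary values supplied by Lemmas~\ref{lemma:smallnulargearg} and \ref{lemma:smallnuboundedarg} according to whether $k|\!-\!y_0\pm i\ep_0|$ and $k|2-y_0\pm i\ep_0|$ are large or of order one, and the Wronskian lower bound from Lemma~\ref{lemma:Wronskianlowerboundmildstrat}, which altogether produces the $k^{-1}$ prefactor. The non-local $H^1_k(I_3^c(y_0))$ estimate is obtained from the entanglement inequality (Lemma~\ref{lemma:entanglementRTG}) applied to $\phi_{\su,k,\ep}^\pm$ and $\phi_{\sl,k,\ep}^\pm$, which reduces the bound to their $L^2$ norm on the matching annulus $I_2^c(y_0)\cap I_3(y_0)$, where $|\eta|\sim 1$ renders $\log(\eta)\mathcal{Q}_{\gamma_0}(\eta)$ harmless; dividing by the Wronskian yields the claimed $k^{-3/2}$ decay.

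The main obstacle is the algebraic surgery that produces the clean decomposition of $W_\sr$. Both the Whittaker connection coefficients $\Gamma(\pm 2\gamma_0)$ and the pair of roots $\eta^{\frac12\pm\gamma_0}$ become singular or linearly dependent in the limit $\gamma_0\to 0$, and only the specific rearrangement encoded by $\mathcal{Q}_{\gamma_0}$ produces a pair of local structures with bounded coefficients, valid uniformly as $\cJ(y_0)$ passes through $\tfrac14$. Once this decomposition is in place, the rest of the argument is a direct transcription of the weak/strong regime treated in Proposition~\ref{prop:sobolevregdecomG}.
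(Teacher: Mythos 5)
Your proposal is correct and follows essentially the same path as the paper: decompose each homogeneous solution in the explicit formulas~\eqref{eq:homoupms}--\eqref{eq:homolowms} into the local structures $\eta^{\frac12+\gamma_0}$ and $\eta^{\frac12-\gamma_0}\log(\eta)\mathcal{Q}_{\gamma_0}(\eta)$, control the coefficients via the Wronskian lower bound of Lemma~\ref{lemma:Wronskianlowerboundmildstrat} and the boundary-value bounds of Lemmas~\ref{lemma:smallnulargearg} and~\ref{lemma:smallnuboundedarg}, and obtain the non-local $H^1_k(I_3^c(y_0))$ estimate from the entanglement inequality. The only point where you take a slightly different route is in deriving the local decomposition of $W_\sr$: you regularize the $\Gamma$-function connection formula $W_\sr = \frac{\Gamma(-2\gamma_0)}{\Gamma(1/2-\gamma_0)}M_{0,\gamma_0} + \frac{\Gamma(2\gamma_0)}{\Gamma(1/2+\gamma_0)}M_{0,-\gamma_0}$ directly, whereas the paper invokes Lemma~\ref{lemma:asymptoticexpansionW}, which reaches the identical decomposition through the Bessel-function representation $W_{0,\gamma}=\sqrt{\zeta/\pi}\,K_\gamma(\zeta/2)$ and $K_\gamma=\tfrac{\pi}{2}(I_{-\gamma}-I_\gamma)/\sin(\gamma\pi)$; both exploit the same cancellation of the $\gamma_0^{-1}$ pole against the degeneration of $\eta^{\frac12+\gamma_0}-\eta^{\frac12-\gamma_0}$ via your identity $\eta^{\frac12-\gamma_0}\log(\eta)\mathcal{Q}_{\gamma_0}(\eta)=(\eta^{\frac12+\gamma_0}-\eta^{\frac12-\gamma_0})/(2\gamma_0)$, and the resulting coefficients are the same analytic functions of $(\gamma_0,\eta)$.
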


\begin{proof}
The arguments are similar to those for Proposition \ref{prop:sobolevregdecomG}. The local bounds are obtained decomposing each homogeneous solution that constitutes the Green's function into the regularity structures of $M_\sr$ and $W_\sr$, see Lemma \ref{lemma:asymptoticexpansionM} and Lemma \ref{lemma:asymptoticexpansionW}, respectively. For any of the four possible combinations of pre-factors, namely
\begin{align*}
M_\sr(-y_0\pm i\ep_0)M_\sr(2-y_0\pm i\ep_0), \quad M_\sr(-y_0\pm i\ep_0)W_\sr(2-y_0\pm i\ep_0), \quad \\
W_\sr(-y_0\pm i\ep_0) M_\sr(2-y_0\pm i\ep_0), \quad W_\sr(-y_0\pm i\ep_0)W_\sr(2-y_0\pm i\ep_0),
\end{align*}
its quotient with the Wronskian $\W_{k,\ep}^\pm(y_0)$ is uniformly bounded by $k^{-1}$, confer Lemma \ref{lemma:Wronskianlowerboundmildstrat},  \ref{lemma:smallnulargearg} and \ref{lemma:smallnuboundedarg}.

Next, we address the $H^1_k(I_3^c(y_0))$ bounds of $\left( \G_{k,\ep}^\pm\right)_{\sigma}(y,y_0,\cdot)$ as in Proposition \ref{prop:sobolevregdecomG} They follow once we now note that
\begin{align*}
\Vert \phi_{l,k,\ep}^\pm \Vert_{L^2_z(I_2^c(y_0)\cap I_3(y_0)\cap (0, y_0))}^2 \lesssim |M_\sr(y_0\mp i\ep_0)| \int_{I_2^c(y_0)\cap I_3(y_0)\cap(0,y_0)}\left( |\xi|^{1+2\mu} + |\xi|^{1-2\mu} |\log(\xi)|^2|\mathcal{Q}_\gamma(\xi)|^2 \right)\d z,
\end{align*}
where $\xi=2k(z-y_0\pm i\ep_0)$, with $\beta \leq |\xi|\leq 4\beta$. For $\gamma=\mu+i\nu$, with $\mu,\nu\geq 0$, and $\gamma\in B_\frac14(0)$, we further observe that for $|\xi|$ bounded we have
\begin{equation}\label{eq:boundQgamma}
| Q_\gamma(\xi)| \leq \int_0^2 e^{ 2u\left( \mu \log(|\xi|-\nu \text{Arg}(\xi) \right)}\d u \lesssim  \int_0^2 e^{ 2u\mu \log(|\xi|)}\d u \lesssim 1.
\end{equation}
Since $|\log(\xi)| \lesssim 1 + |\log(|\xi|)|\lesssim 1 + |\log(2k|z-y_0|)|$, we then have
\begin{align*}
\Vert \phi_{l,k,\ep}^\pm \Vert_{L^2_z(I_2^c(y_0)\cap I_3(y_0)\cap (0, y_0))}^2 &\lesssim \frac{|M_\sr(y_0\mp i\ep_0)|}{k}\int_{2\beta}^{3\beta} \left( x^{1+2\mu} + x^{1-2\mu}( 1 + |\log^2(x)|) \right) \d x \\
&\lesssim \frac{|M_\sr(y_0\mp i\ep_0)|}{k}
\end{align*}
and the conclusion follows as before.
\end{proof}

We next state analogous regularity results to those for the weak and strong stratification regime adapted to the regularity structures of the mild stratification. We omit their proofs, as they are identical to the ones for the weak and strong stratification setting.

\begin{proposition}\label{prop:logsobolevregpartialdecomG}
Let $k\geq 1$, $y_0\in I_M$ and $\ep\in (0,\ep_*)$. For $z,y\in I_3(y_0)$, and the decomposition
\begin{align*}
\G_{k,\ep}^\pm(y,y_0,z) = \left( \G_{k,\ep}^\pm\right)_\sr(y,y_0,z) \xi^{\frac12 + \gamma_0} + \left( \G_{k,\ep}^\pm\right)_\s(y,y_0,z) \xi^{\frac12 - \gamma_0} \log(\xi)\Q_{\gamma_0}(\xi)
\end{align*}
where $\xi=2k(z-y_0\pm i\ep_0)$, there holds, 
\begin{align*}
\left\Vert \partial_z\left( \G_{k,\ep}^\pm\right)_\sigma(\cdot,y_0,z) \right\Vert_{X_k^1} + k^\frac12 \left\Vert \partial_z\left( \G_{k,\ep}^\pm \right)_\sigma(\cdot,y_0,z) \right \Vert_{L^2(I_3(y_0))}\lesssim |\xi|
\end{align*}
for $\sigma\in \lbrace \sr, \s \rbrace$. Moreover, for $f\in C(I_3(y_0))$, $y\in I_3=I_3(y_0)$ and $\eta=2k(y-y_0\pm i\ep_0)$, we have
\begin{align*}
\partial_y \int_{I_3(y_0)} \left( \partial_z \left( \G_{k,\ep}^\pm \right)_\sigma  \right)_\tau (y,y_0,z) f(z) \d z &= f(y) \left( \left( \partial_z \left( \G_{\su,k,\ep}^\pm \right)_\sigma \right)_\tau - \left( \partial_z \left( \G_{\sl,k,\ep}^\pm \right)_\sigma \right)_\tau \right)(y,y_0,y)  \\
&\quad + \int_{(0,y)\cap I_3} \partial_y \left( \partial_z \left( \G_{\su,k,\ep}^\pm \right)_\sigma  \right)_\tau (y,y_0,z) f(z) \d z \\
&\quad + \int_{(y,2)\cap I_3} \partial_y \left( \partial_z \left( \G_{\sl,k,\ep}^\pm \right)_\sigma  \right)_\tau (y,y_0,z) f(z) \d z
\end{align*}
with
\begin{align*}
\left\vert \partial_y \left( \partial_z \left( \G_{\su,k,\ep}^\pm \right)_\sigma  \right)_\tau(y,y_0,z) \right\vert + \left\vert \partial_y \left( \partial_z \left( \G_{\sl,k,\ep}^\pm \right)_\sigma  \right)_\tau(y,y_0,z) \right\vert \lesssim k |\eta||\xi|
\end{align*}
uniformly for all $y_0\in I_M$, for $\sigma,\,\tau\in \lbrace \sr, \s \rbrace$.
\end{proposition}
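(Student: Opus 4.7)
The plan is to mirror the proofs of Proposition~\ref{prop:sobolevregpartialdecomG} and Corollary~\ref{cor:sobolevregpartialdecomG} from the weak/strong regime, replacing the second homogeneous solution $M_{0,-\gamma_0}$ by $W_\sr$, whose small-argument expansion (Lemma~\ref{lemma:asymptoticexpansionW}) supplies the factor $\log(\eta)\Q_{\gamma_0}(\eta)$ that occupies the $\eta^{\frac12-\gamma_0}$ slot of the $LX_k^1$ decomposition. Concretely, from the explicit formulas \eqref{eq:homoupms}--\eqref{eq:homolowms} the differentiation in $z$ hits only the $z$-dependent pieces $M_\sr(z-y_0\pm i\ep_0)$ and $W_\sr(z-y_0\pm i\ep_0)$, and the four pre-factor combinations at $-y_0\pm i\ep_0$ and at $2-y_0\pm i\ep_0$ --- namely $M_\sr M_\sr$, $M_\sr W_\sr$, $W_\sr M_\sr$, $W_\sr W_\sr$ --- produce, after division by the Wronskian $\W_{k,\ep}^\pm(y_0)$, the regular and singular coefficients $(\partial_z(\G_{k,\ep}^\pm)_\sigma)_\tau$.

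For the pointwise $X_k^1$ estimate on $\partial_z(\G_{k,\ep}^\pm)_\sigma$, I would combine the derivative asymptotics $|\mathcal{E}_{0,\pm\gamma_0}'(\xi)|\lesssim|\xi|$ (together with the analogous bound for the logarithmic piece of $W_\sr$) with the Wronskian lower bound from Lemma~\ref{lemma:Wronskianlowerboundmildstrat}, complemented by Lemmas~\ref{lemma:smallnulargearg} and \ref{lemma:smallnuboundedarg}, to obtain the $|\xi|$ gain. For the $L^2(I_3(y_0))$ contribution I would import verbatim the integrability argument already used in Proposition~\ref{prop:logsobolevregdecomG}: the uniform bound $|\Q_{\gamma_0}(\xi)|\lesssim 1$ (cf.~\eqref{eq:boundQgamma}) together with $|\log\xi|\lesssim 1+|\log(2k|z-y_0|)|$ makes $|\xi|^{1\pm 2\mu_0}(1+|\log|\xi||^2)$ integrable on $I_3(y_0)$ with the correct $k^{-1/2}$ scaling.

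For the jump formula and the pointwise control of the mixed second derivative, I would split the integral at $z=y$ into $(0,y)\cap I_3(y_0)$ and $(y,2)\cap I_3(y_0)$, differentiate under the integral, and collect the boundary term at $z=y$, which precisely encodes the jump $((\partial_z(\G_{\su,k,\ep}^\pm)_\sigma)_\tau-(\partial_z(\G_{\sl,k,\ep}^\pm)_\sigma)_\tau)(y,y_0,y)$ between the upper and lower branches of $\partial_z \G$. On each subinterval, $\partial_y$ acts only on the $y$-factor of the homogeneous solution (either $W_\sr(y-y_0\pm i\ep_0)$ or $M_\sr(y-y_0\pm i\ep_0)$); expanding via Lemmas~\ref{lemma:asymptoticexpansionM} and \ref{lemma:asymptoticexpansionW} contributes one factor of $k|\eta|$, which combined with the $|\xi|$ gain from $\partial_z$ and the Wronskian lower bound gives the claimed $k|\eta||\xi|$ estimate.

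The main obstacle is bookkeeping the logarithmic correction under differentiation: a derivative applied to $\eta^{\frac12-\gamma_0}\log(\eta)\Q_{\gamma_0}(\eta)$ (or its $\xi$-analogue) yields one log-free piece and one still carrying a $\log$, and each must be reassigned to the proper slot of the $LX_k^1$ decomposition without degrading the $|\eta|$ or $|\xi|$ power count. The clean device for this is the identity $\eta^{\frac12-\gamma}\log(\eta)\Q_\gamma(\eta)=[\eta^{\frac12+\gamma}-\eta^{\frac12-\gamma}]/(2\gamma)$ (which extends smoothly to $\gamma=0$ by L'H\^opital), converting the log-corrected singular term into a difference of pure powers whose derivatives fall unambiguously back into the regular and singular slots, with constants uniform in $y_0\in I_M$ up to and including the critical limit $\gamma_0\to 0$.
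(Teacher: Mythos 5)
Your proposal is correct and is essentially the paper's intended argument: the paper explicitly omits the proof here on the grounds that it is identical to Propositions~\ref{prop:sobolevregpartialdecomG} and Corollary~\ref{cor:sobolevregpartialdecomG}, and you mirror exactly that blueprint, replacing $M_{0,-\gamma_0}$ by $W_\sr$, using the Whittaker expansions from Lemmas~\ref{lemma:asymptoticexpansionM} and \ref{lemma:asymptoticexpansionW}, the Wronskian lower bounds from Lemmas~\ref{lemma:Wronskianlowerboundmildstrat}, \ref{lemma:smallnulargearg}, \ref{lemma:smallnuboundedarg}, and the vanishing $\E_{j,\gamma_0}'(0)=0$ to extract the factors $|\xi|$ and $k|\eta|$. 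The only genuinely additional ingredient you supply is the identity $\eta^{\frac12-\gamma_0}\log(\eta)\Q_{\gamma_0}(\eta)=\bigl(\eta^{\frac12+\gamma_0}-\eta^{\frac12-\gamma_0}\bigr)/(2\gamma_0)$; this is correct (cf.~Lemma~\ref{lemma:firstgammalog}) and is a clean way to keep derivatives of the log-corrected slot consistent with the $LX_k^1$ decomposition uniformly as $\gamma_0\to 0$, but it is not strictly necessary since one can also bound $\E_{2,\gamma_0}'(\xi)$ directly and reassign the log-free remainder via that same identity when needed.
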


\begin{corollary}\label{cor:logsobolevregpartialdecomG}
Let $k\geq 1$, $y_0\in I_M$ and $\ep\in (0,\ep_*)$. For $z,y\in I_3(y_0)$, and the decomposition
\begin{align*}
\G_{k,\ep}^\pm(y,y_0,z) = \left( \G_{k,\ep}^\pm\right)_\sr(y,y_0,z) \eta^{\frac12 + \gamma_0} + \left( \G_{k,\ep}^\pm\right)_\s(y,y_0,z) \eta^{\frac12 - \gamma_0}\log(\eta)\Q_{\gamma_0}(\eta) ,
\end{align*}
where $\eta=2k(z-y_0\pm i\ep_0)$, there holds, 
\begin{align*}
\left\Vert \partial_y\left( \G_{k,\ep}^\pm\right)_\sigma(y,y_0,\cdot) \right\Vert_{X_k^1} + k^\frac12 \left\Vert \partial_y\left( \G_{k,\ep}^\pm \right)_\sigma(y,y_0,\cdot) \right \Vert_{L^2(I_3(y_0))}\lesssim |\eta|
\end{align*}
for $\sigma\in \lbrace \sr, \s \rbrace$.
\end{corollary}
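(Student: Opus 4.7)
The plan is to deduce the corollary from Proposition~\ref{prop:logsobolevregpartialdecomG} by exploiting the $(y,z)$-symmetry $\G_{k,\ep}^\pm(y,y_0,z) = \G_{k,\ep}^\pm(z,y_0,y)$. This symmetry is immediate from the explicit formulas in Proposition~\ref{prop: def Green's Function}: for $z\le y$ one has $\G_{\su,k,\ep}^\pm(y,y_0,z) = \phi_{\su,k,\ep}^\pm(y,y_0)\,\phi_{\sl,k,\ep}^\pm(z,y_0)/\W_{k,\ep}^\pm(y_0)$, which coincides with $\G_{\sl,k,\ep}^\pm(z,y_0,y)$ in the complementary range $y\le z$, and gluing the two pieces together recovers the claim on the whole square $(y,z)\in[0,2]^2$.

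Denote temporarily by $A_\sigma(y,y_0,z)$ the coefficients of the $y$-decomposition from Proposition~\ref{prop:logsobolevregdecomG}, with $\eta = 2k(y-y_0\pm i\ep_0)$, and by $B_\sigma(y,y_0,z)$ those of the $z$-decomposition from Proposition~\ref{prop:logsobolevregpartialdecomG}, with $\xi = 2k(z-y_0\pm i\ep_0)$. Applying the symmetry to the $y$-decomposition and comparing with the $z$-decomposition, the uniqueness of the extraction (ensured by the linear independence of $\eta^{\frac12+\gamma_0}$ and $\eta^{\frac12-\gamma_0}\log(\eta)\Q_{\gamma_0}(\eta)$, whose very purpose is to remain linearly independent as $\gamma_0\to 0$) yields the pointwise identification
\[
A_\sigma(y,y_0,z) = B_\sigma(z,y_0,y), \qquad \sigma\in\{\sr,\s\}.
\]

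Differentiating in $y$ then gives $\partial_y A_\sigma(y,y_0,z) = (\partial_z B_\sigma)(z,y_0,y)$, so that the $X_k^1$- and $L^2(I_3(y_0))$-norms in the $z$-slot of the left-hand side are converted, under the swap $y\leftrightarrow z$, into the corresponding norms in the $y$-slot of $\partial_z B_\sigma(\cdot,y_0,y)$. Proposition~\ref{prop:logsobolevregpartialdecomG} then supplies the bound, with the factor $|\xi|$ on its right-hand side reappearing as $|\eta|$ (evaluated at the fixed $y$) in the corollary. No new estimate needs to be carried out; the only subtlety, and essentially the sole point to verify with care, is the unambiguity of the coefficient extraction, which is routine given the basis structure introduced in Section~\ref{sec:GreensM}.
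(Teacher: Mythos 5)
Your argument is correct and is exactly the paper's route: the analogous Corollary~\ref{cor:sobolevregpartialdecomG} is introduced with the phrase ``Exploiting the symmetry of the Green's function,'' and the mild-regime versions are declared to follow identically, so the swap $\G_{k,\ep}^\pm(y,y_0,z)=\G_{k,\ep}^\pm(z,y_0,y)$ applied to Proposition~\ref{prop:logsobolevregpartialdecomG} is precisely what the authors have in mind. One small caveat: the ``uniqueness of extraction'' you invoke is not literal (the $X_k$-type norms are infima over all admissible decompositions), but the identification $A_\sigma(y,y_0,z)=B_\sigma(z,y_0,y)$ holds because both decompositions are the canonical ones inherited from the product $\phi_{\su}\phi_{\sl}/\W$ and the Whittaker factorizations of Lemmas~\ref{lemma:asymptoticexpansionM}--\ref{lemma:asymptoticexpansionW}, so the argument is sound as stated.
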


\section{Solution operator estimates for the stratified regime}\label{sec:solopstrat}
Unless stated otherwise we assume from now on $\ep_*>0$ is given by Propositions \ref{prop:sobolevregdecomG} and \ref{prop:logsobolevregdecomG}. To pave the way for the limiting absorption principle, here we define the error operator
\begin{align}\label{eq:defsolopT}
T_{k,y_0,\ep}^\pm f(y) = \int_{0} ^{2} \G_{k,\ep}^\pm(y,y_0,z)\scE_{k,\ep}^\pm(y,y_0) f(z) \d z.
\end{align}
and the solution operators
\begin{align}\label{eq:defsolopR}
(R_{m,k,\ep}^\pm f)(y,y_0) = \int_0^2\G_{k,\ep}^\pm(y,y_0,z)\frac{f(z)}{(v(z)-v(y_0)\pm i\ep)^m} \d z,
\end{align}
for $m=0,1,2,3,4$. The above operators are such that
\begin{equation*}
\textsc{RTG}_{k,\ep}^\pm  (T_{k,y_0,\ep}^\pm f)(y) = \scE_{k,\ep}^\pm(y,y_0) f(y)
\end{equation*}
and
\begin{equation*}
\textsc{RTG}_{k,\ep}^\pm  (R_{m,k,\ep}^\pm f)(y,y_0) =  \frac{f(y)}{(v(y)-v(y_0)\pm i\ep)^m},
\end{equation*}
respectively, for $m=0,1,2,3,4$. The purpose of this section is to establish the necessary mapping properties of the operators $T_{k,\ep}^\pm$ and $R_{m,k,\ep}^\pm$ with respect to the spaces $X_k$ and $LX_k$ and uniform in $\ep\in (0,\ep_*)$ to apply the limiting absorption principle in Section \ref{sec:LAPstrat}.

\subsection{Operator estimates for strong and weak stratifications}
We first consider the space $X_k$ and $y_0\in I_S\cup I_W$. For $y_0\in I_S$ we recall that $\mu_0=0$ and $\nu_0>0$. Likewise,  for $y_0\in I_W$ we have that $\nu_0=0$ and $\mu_0\neq 0$ since $\mu_0 = \sqrt{\tfrac14 - \cJ(y_0)}$, with $\cJ(y_0)=\tfrac{\P(y_0)}{v'(y_0)^2}$. In particular, we record
\begin{align}\label{eq:upperboundPynot}
\P(y_0) = (1-2\mu_0)\frac{1+2\mu_0}{4}v'(y_0) ^2 \lesssim (1-2\mu_0)
\end{align}
which will prove useful in several estimates below.
\begin{lemma}\label{lemma:R0mapsL2toXk}
Let $k\geq 1$ and $f\in L^2(\uv, \ov)$. There holds
\begin{align*}
\Vert (R_{0,k,\ep}^\pm f)(\cdot,y_0) \Vert_{X_k} \lesssim k^{-\frac32}\Vert f \Vert_{L^2(0,2)},
\end{align*}
uniformly for all $0<\ep< \ep_*$ and all $y_0\in I_S\cup I_W$.
\end{lemma}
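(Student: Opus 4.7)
\medskip

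The plan is to estimate the two pieces of $\Vert \cdot \Vert_{X_k} = \Vert \cdot \Vert_{X_k^1} + k^{\frac12}\Vert \cdot \Vert_{H_k^1(I_3^c(y_0))}$ separately. For the local $X_k^1$-piece, I would use the decomposition of the Green's function supplied by Proposition~\ref{prop:sobolevregdecomG} to write, for $y\in I_3(y_0)$,
\[
(R_{0,k,\ep}^\pm f)(y,y_0) = \eta^{\frac12+\gamma_0}A_\sr(y,y_0) + \eta^{\frac12-\gamma_0}A_\s(y,y_0),\qquad A_\sigma(y,y_0) := \int_0^2 \left(\G_{k,\ep}^\pm\right)_\sigma(y,y_0,z)\, f(z)\,\d z.
\]
Since $\Vert\cdot\Vert_{X_k^1}$ is an infimum over such splittings, it suffices to bound this candidate decomposition. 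For the $n=0$ term, Cauchy--Schwarz in $z$ gives $\Vert A_\sigma\Vert_{L^\infty(I_3(y_0))}\leq \sup_{y\in I_3(y_0)}\Vert (\G_{k,\ep}^\pm)_\sigma(y,y_0,\cdot)\Vert_{L^2(0,2)}\Vert f\Vert_{L^2}$; the $X_k^1$-in-$z$ bound $\lesssim k^{-1}$ combined with $|I_3(y_0)|\approx k^{-1}$ controls the $L^2(I_3(y_0))$-mass by $k^{-3/2}$, while the $H_k^1(I_3^c(y_0))$-bound from Theorem~\ref{thm:XkregGSW} handles the complement, also by $k^{-3/2}$.

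For the $n=1$ term, I would differentiate under the integral and invoke Corollary~\ref{cor:sobolevregpartialdecomG} to obtain $\Vert \partial_y(\G_{k,\ep}^\pm)_\sigma(y,y_0,\cdot)\Vert_{L^2(0,2)}\lesssim |\eta|\cdot k^{-1/2}$. Cauchy--Schwarz then yields $|\partial_y A_\sigma(y,y_0)|\lesssim |\eta|\, k^{-1/2}\Vert f\Vert_{L^2}$, and after the $X_k^1$-weight $|\eta|^{-1/2}$ and the prefactor $k^{-1}$, the residue $|\eta|^{1/2}\lesssim 1$ on $I_3(y_0)$ is absorbed, closing the $n=1$ estimate at $k^{-3/2}\Vert f\Vert_{L^2}$. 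This is the delicate step: the singular weight $|\eta|^{-1/2}$ must be compensated exactly by the $|\eta|$-vanishing of $\partial_y(\G_{k,\ep}^\pm)_\sigma$ built into Corollary~\ref{cor:sobolevregpartialdecomG}, since a bound that scaled as a constant rather than as $|\eta|$ would produce an uncancellable $|\eta|^{-1/2}$ blow-up near the critical layer.

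For the non-local piece, I would exploit that $R_{0,k,\ep}^\pm f$ solves $\textsc{RTG}_{k,\ep}^\pm u = f$ on $(0,2)$ with homogeneous Dirichlet data, and apply Lemma~\ref{lemma:entanglementRTG} to get
\[
\Vert R_{0,k,\ep}^\pm f\Vert_{H_k^1(I_3^c(y_0))} \lesssim \Vert R_{0,k,\ep}^\pm f\Vert_{L^2(I_2^c(y_0)\cap I_3(y_0))} + k^{-2}\Vert f\Vert_{L^2}.
\]
The first summand is controlled by the pointwise bound $|R_{0,k,\ep}^\pm f(y,y_0)|\lesssim (|\eta|^{\frac12+\mu_0}+|\eta|^{\frac12-\mu_0})\max_\sigma|A_\sigma(y,y_0)|\lesssim k^{-3/2}\Vert f\Vert_{L^2}$ already established above (valid since $|\eta|\lesssim 1$ on $I_3(y_0)$, $\mu_0\leq \tfrac12$, and the degeneration as $y_0\to\vartheta_j$ is governed by~\eqref{eq:upperboundPynot}), combined with $|I_2^c(y_0)\cap I_3(y_0)|^{1/2}\lesssim k^{-1/2}$, which gives $\lesssim k^{-2}\Vert f\Vert_{L^2}$. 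Multiplying by $k^{1/2}$ yields the desired $k^{-3/2}\Vert f\Vert_{L^2}$ bound, completing the proof.
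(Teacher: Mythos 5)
Your proof is correct and follows essentially the same path as the paper's: the same decomposition of $R_{0,k,\ep}^\pm f$ into $\eta^{\frac12\pm\gamma_0}$-components via Proposition~\ref{prop:sobolevregdecomG}, Cauchy--Schwarz against the $z$-regularity of $(\G_{k,\ep}^\pm)_\sigma$ for the $n=0$ and $n=1$ local bounds (correctly emphasizing that the $|\eta|$-vanishing from Corollary~\ref{cor:sobolevregpartialdecomG} is what pays for the $|\eta|^{-1/2}$ weight), and the entanglement inequality of Lemma~\ref{lemma:entanglementRTG} for the non-local $H_k^1(I_3^c(y_0))$ piece.
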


\begin{proof}
Let $g_{k,\ep}^\pm(y,y_0) := (R_{0,k,\ep}^\pm f)(y,y_0)$, we note that
\begin{align*}
g_{k,\ep}^\pm(y,y_0) &=\eta^{\frac12+\gamma_0} \int_\uv ^ \ov \left( \G_{k,\ep}^\pm\right)_\sr(y,y_0,z) f(z) \d z \\
&\quad+\eta^{\frac12-\gamma_0} \int_\uv ^ \ov \left( \G_{k,\ep}^\pm\right)_\s (y,y_0,z) f(z) \d z,
\end{align*}
where as usual $\eta=2k(y-y_0\pm i\ep_0)$ and we define $\left( g_{k,\ep}^\pm\right)_{\sigma}(y,y_0)$ accordingly. Since $\left( \G_{k,\ep}^\pm\right)_{\sigma}(y,y_0,z)\in X_k$ as a function of $z$ uniformly in $y_0\in I_S \cup I_W$ and in $y\in I_3(y_0)$, confer Proposition \ref{prop:sobolevregdecomG}, we have that 
\begin{align*}
\left( g_{k,\ep}^\pm\right)_{\sigma}(y,y_0) &= \int_{I_3(y_0)} \left( \G_{k,\ep}^\pm\right)_{\sigma}(y,y_0,z) f(z) \d z+ \int_{I_3^c(y_0)} \left( \G_{k,\ep}^\pm\right)_{\sigma}(y,y_0,z) f(z) \d z
\end{align*}
where we estimate
\begin{align*}
\sup_{y_0\in I_S \cup I_W}\sup_{y\in I_3(y_0)}\left| \int_{I_3(y_0)} \left( \G_{k,\ep}^\pm\right)_{\sigma}(y,y_0,z)f(z) \d z \right| \lesssim k^{-\frac32}\Vert f \Vert_{L^2}
\end{align*}
and
\begin{align*}
\sup_{y_0\in I_S \cup I_W}\sup_{y\in I_3(y_0)}\left| \int_{I_3^c(y_0)} \left( \G_{k,\ep}^\pm\right)_{\sigma}(y,y_0,z) f(z) \d z \right| \lesssim k^{-\frac32}\Vert f \Vert_{L^2}.
\end{align*}
Hence, we conclude that
\begin{align*}
\left\Vert \left( g_{k,\ep}^\pm\right)_{\sigma}(\cdot,y_0) \right\Vert_{L^\infty(I_3(y_0))}\lesssim k^{-\frac32}\Vert f \Vert_{L^2}.
\end{align*}
Similarly, since
\begin{align*}
\partial_y\left( g_{k,\ep}^\pm\right)_{\sigma}(y,y_0) &= \int_{I_3(y_0)} \partial_y\left( \G_{k,\ep}^\pm\right)_{\sigma}(y,y_0,z)\chi_{I_3(y_0)}(z) f(z) \d z + \int_{I_3^c(y_0)} \partial_y\left( \G_{k,\ep}^\pm\right)_{\sigma}(y,y_0,z) f(z) \d z,
\end{align*}
an application of Corollary \ref{cor:sobolevregpartialdecomG} shows that
\begin{align*}
 \left| \int_{I_3(y_0)} \partial_y\left( \G_{k,\ep}^\pm\right)_{\sigma}(y,y_0,z)f(z) \right| \d z \lesssim k^{-\frac12}|\eta|\Vert f \Vert_{L^2}
\end{align*}
 and
 \begin{align*}
\left| \int_{I_3^c(y_0)} \partial_y\left( \G_{k,\ep}^\pm\right)_{\sigma}(y,y_0,z) f(z) \right| \d z \lesssim k^{-\frac12}|\eta|\Vert f \Vert_{L^2}
\end{align*}
Therefore, we see that $\Vert g_{k,\ep}^\pm(\cdot,y_0) \Vert_{X_k^1}\lesssim k^{-\frac32}\Vert f \Vert_{L^2}$ uniformly in $y_0\in I_S \cup I_W$. Next, since $g_{k,\ep}^\pm$ satisfies
\begin{align*}
\left( \D_k - k^2 + \frac{\cJ(y_0)}{(y-y_0\pm i\ep_0)^2} \right) g_{k,\ep}^\pm(y,y_0) = f(y),
\end{align*}
we use the entanglement inequality to obtain
\begin{align*}
\Vert g_{k,\ep}^\pm \Vert_{H^1_k(I_3^c(y_0))} \lesssim \frac{1}{k^2}\Vert f \Vert_{L^2} + \Vert g_{k,\ep}^\pm \Vert_{L^2(I_2^c(y_0)\cap I_3(y_0))} \lesssim \frac{1}{k^2}\Vert f \Vert_{L^2}
\end{align*}
because $\Vert g_{k,\ep}^\pm \Vert_{L^2(I_2^c(y_0)\cap I_3(y_0))} \lesssim k^{-\frac32}\Vert f \Vert_{L^2} \Vert \eta^{\frac12-\gamma_0}\Vert_{L^2(I_2^c(y_0)\cap I_3(y_0))} \lesssim k^{-2}\Vert f \Vert_{L^2}$. With this, the proof is finished.
\end{proof}

\begin{lemma}\label{lemma:R1mapsPLinfL2toXk}
Let $k\geq 1$ and $y_0\in I_S\cup I_{W}$. Let $f\in L^\infty (I_3)\cap L^2(I_3^c)$. There holds
\begin{align*}
\P(y_0)\Vert (R_{1,k,\ep}^\pm f)(\cdot,y_0) \Vert_{X_k} \lesssim k^{-1} \Vert f \Vert_{L^\infty(I_3)} +  k^{-\frac12} \Vert f \Vert_{L^2(I_3^c)},
\end{align*}
uniformly both in $\ep\in (0, \ep_*)$, and in $y_0\in I_S\cup I_{W}$.
\end{lemma}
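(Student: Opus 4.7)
Setting $g(y,y_0) := (R_{1,k,\ep}^\pm f)(y,y_0)$, I would apply Proposition~\ref{prop:sobolevregdecomG} to write $g = \eta^{\frac12+\gamma_0}(g)_\sr + \eta^{\frac12-\gamma_0}(g)_\s$ with $\eta=2k(y-y_0\pm i\ep_0)$ and
\begin{equation*}
(g)_\sigma(y,y_0) = \int_0^2 (\G_{k,\ep}^\pm)_\sigma(y,y_0,z)\,\frac{f(z)}{v(z)-v(y_0)\pm i\ep}\,\d z,
\end{equation*}
and then follow the template of Lemma~\ref{lemma:R0mapsL2toXk}: split each integral into local ($z\in I_3(y_0)$) and non-local ($z\in I_3^c(y_0)$) contributions, establish $L^\infty(I_3(y_0))$ bounds on $(g)_\sigma$ and $\eta^{-1/2}\partial_y(g)_\sigma$ using the $X_k^1$ information for $(\G_{k,\ep}^\pm)_\sigma(y,y_0,\cdot)$ from Proposition~\ref{prop:sobolevregdecomG} and its $\partial_y$ analogue in Corollary~\ref{cor:sobolevregpartialdecomG}, and close the $H^1_k(I_3^c(y_0))$ component of the $X_k$ norm via the entanglement inequality (Lemma~\ref{lemma:entanglementRTG}) applied to the reduced Taylor--Goldstein equation that $g$ solves with source $f(y)/(v(y)-v(y_0)\pm i\ep)$.

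The key new element compared to Lemma~\ref{lemma:R0mapsL2toXk} is the extra factor $(v(z)-v(y_0)\pm i\ep)^{-1}$. Since $v'\geq c_0$, one has $|v(z)-v(y_0)\pm i\ep|\gtrsim c_0|\xi|/(2k)$ where $\xi=2k(z-y_0\pm i\ep_0)$, so this factor is of size $k|\xi|^{-1}$. Combining this with the local decomposition $(\G_{k,\ep}^\pm)_\sigma(y,y_0,z) = \xi^{\frac12+\gamma_0}(\G_{k,\ep}^\pm)_{\sigma,\sr}+\xi^{\frac12-\gamma_0}(\G_{k,\ep}^\pm)_{\sigma,\s}$ provided by Proposition~\ref{prop:sobolevregdecomG} (with $L^\infty(I_3)$ norms of the coefficients bounded by $k^{-1}$), the local part of the integral reduces to the integrability of $|\xi|^{-1/2\pm\mu_0}$ on $I_3(y_0)$. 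The benign piece $|\xi|^{-1/2+\mu_0}$ integrates to $\lesssim k^{-1}$, while the singular piece $|\xi|^{-1/2-\mu_0}$ gives $\lesssim (k(1/2-\mu_0))^{-1}$, which diverges as $\mu_0\to \frac12$. This is precisely where the weight $\P(y_0)$ intervenes: by~\eqref{eq:upperboundPynot}, $\P(y_0)\lesssim \frac12-\mu_0$, hence $\P(y_0)/(1/2-\mu_0)\lesssim 1$, and the local contribution to $\P(y_0)\Vert g\Vert_{X_k^1}$ is uniformly bounded by $k^{-1}\Vert f\Vert_{L^\infty(I_3(y_0))}$. The weighted derivative $\eta^{-1/2}\partial_y(g)_\sigma$ is handled analogously, since Corollary~\ref{cor:sobolevregpartialdecomG} provides a decomposition of $\partial_y(\G_{k,\ep}^\pm)_\sigma$ with the same $|\xi|^{1/2\pm\mu_0}$ profile in $z$ and an overall factor of $|\eta|$.

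For the non-local part of $(g)_\sigma$, Cauchy--Schwarz combined with $\Vert(v-v(y_0)\pm i\ep)^{-1}\Vert_{L^2(I_3^c(y_0))}\lesssim k^{1/2}$ (from $|v-v(y_0)|\gtrsim c_0|z-y_0|\gtrsim c_0\beta/k$ on $I_3^c$) and the estimate $\Vert(\G_{k,\ep}^\pm)_\sigma(y,y_0,\cdot)\Vert_{L^\infty(I_3^c)}\lesssim \Vert(\G_{k,\ep}^\pm)_\sigma(y,y_0,\cdot)\Vert_{H^1_k(I_3^c)}\lesssim k^{-3/2}$ from Proposition~\ref{prop:sobolevregdecomG} yields $\lesssim k^{-1}\Vert f\Vert_{L^2(I_3^c(y_0))}$, and multiplication by $\P(y_0)\lesssim 1$ keeps this within the desired bound $k^{-1/2}\Vert f\Vert_{L^2(I_3^c(y_0))}$.

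Finally, to control $k^{1/2}\Vert g\Vert_{H^1_k(I_3^c(y_0))}$, the entanglement inequality gives
\begin{equation*}
\Vert g\Vert_{H^1_k(I_3^c(y_0))}\lesssim \Vert g\Vert_{L^2(I_2^c(y_0)\cap I_3(y_0))}+k^{-2}\Vert f/(v-v(y_0)\pm i\ep)\Vert_{L^2(I_2^c(y_0))}.
\end{equation*}
The source term splits naturally into $I_2^c\cap I_3$, where $|v-v(y_0)\pm i\ep|^{-1}\lesssim k$ and $|I_2^c\cap I_3|\sim k^{-1}$, and $I_3^c$, controlled via the already-noted $L^2$ bound on $(v-v(y_0)\pm i\ep)^{-1}$; the $\Vert g\Vert_{L^2(I_2^c\cap I_3)}$ piece is handled using the pointwise $(g)_\sigma$ estimates together with $|\eta|\sim 1$ and $|I_2^c\cap I_3|\sim k^{-1}$. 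The main technical obstacle throughout is precisely this uniformity as $\mu_0\to \frac12$: every local estimate involves the singular denominator $(1/2-\mu_0)^{-1}$ arising from the integration of $|\xi|^{-1/2-\mu_0}$, so none of the individual pieces is uniformly bounded on $I_W$ near its stratified boundary; only the combination $\P(y_0)\cdot\Vert g\Vert_{X_k}$ is, via the cancellation~\eqref{eq:upperboundPynot}, and this is the structural reason the lemma is stated with the weight $\P(y_0)$ on the left-hand side.
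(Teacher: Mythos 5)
Your proposal follows essentially the same route as the paper: local/non-local splitting of the convolution with $(\G_{k,\ep}^\pm)_\sigma$, the crucial cancellation $\P(y_0)/(\tfrac12-\mu_0)\lesssim 1$ from~\eqref{eq:upperboundPynot} to tame the non-integrable $|\xi|^{-1/2-\mu_0}$ profile, and the entanglement inequality to pass to $H^1_k(I_3^c(y_0))$. The one genuine flaw is in the non-local piece. You invoke
\[
\Vert(\G_{k,\ep}^\pm)_\sigma(y,y_0,\cdot)\Vert_{L^\infty(I_3^c)}\lesssim \Vert(\G_{k,\ep}^\pm)_\sigma(y,y_0,\cdot)\Vert_{H^1_k(I_3^c)}\lesssim k^{-3/2},
\]
but the embedding $\Vert\cdot\Vert_{L^\infty}\lesssim\Vert\cdot\Vert_{H^1_k}$ is false on a set of $O(1)$ measure: the $H^1_k$ norm carries a weight $k^{-1}$ on the derivative, so Agmon's inequality only yields $\Vert u\Vert_{L^\infty(I_3^c)}\lesssim\Vert u\Vert_{L^2}^{1/2}\Vert\partial_y u\Vert_{L^2}^{1/2}+\Vert u\Vert_{L^2}\lesssim k^{1/2}\Vert u\Vert_{H^1_k(I_3^c)}$, giving $\Vert(\G)_\sigma\Vert_{L^\infty(I_3^c)}\lesssim k^{-1}$, not $k^{-3/2}$. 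Your claimed non-local bound $k^{-1}\Vert f\Vert_{L^2(I_3^c)}$ is therefore over-optimistic by a factor $k^{1/2}$. The clean fix — which is exactly what the paper does — is to keep $(\G_{k,\ep}^\pm)_\sigma$ in $L^2$ (where $\Vert(\G)_\sigma\Vert_{L^2(I_3^c)}\lesssim k^{-3/2}$ is the correct extraction from Proposition~\ref{prop:sobolevregdecomG}) and move the $L^\infty$ bound onto the kernel: since $|v(z)-v(y_0)\pm i\ep|\gtrsim c_0|z-y_0|\gtrsim c_0\beta/k$ on $I_3^c$, Cauchy--Schwarz gives $k\cdot k^{-3/2}\Vert f\Vert_{L^2(I_3^c)}=k^{-1/2}\Vert f\Vert_{L^2(I_3^c)}$, which is precisely what the lemma claims. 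Because $\P(y_0)\lesssim 1$ and the stated RHS is already $k^{-1/2}\Vert f\Vert_{L^2(I_3^c)}$, this slip does not invalidate your conclusion, but it is a wrong inequality that should be corrected.

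One small presentational difference: for the $H^1_k(I_3^c)$ part you deploy the stated entanglement inequality and then bound the source term by Cauchy--Schwarz, whereas the paper goes back to the underlying energy pairing and splits $\int_{I_2^c}|f\,g/(v-v(y_0))|$ into $I_2^c\cap I_3$ and $I_3^c$ with a Peter--Paul absorption of the $\delta\,k^2\int_{I_3^c}|g|^2$ term. Your version also closes (using the local $X_k^0$ bound on $g$ over $I_2^c\cap I_3$ exactly as you describe), so this choice is a matter of taste; the paper's pairing form is slightly cleaner because it avoids inserting an intermediate $L^2$ bound on $f/(v-v(y_0))$ over $I_2^c\cap I_3$.
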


\begin{proof}
Let $g_{k,\ep}^\pm(y,y_0):= (R_{1,k,\ep}^\pm f)(y,y_0)$, we now have
\begin{align*}
\left( g_{k,\ep}^\pm \right)_{\sigma}(y,y_0) = \int_\uv ^ \ov \left( \G_{k,\ep}^\pm\right)_\sigma(y,y_0,z) \frac{f(z)}{v(z)-v(y_0)\pm i\ep} \d z. 
\end{align*}
Proposition \ref{prop:sobolevregdecomG} yields
\begin{align*}
\left| \int_{I_3(y_0)} \left( \G_{k,\ep}^\pm\right)_\sigma(y,y_0,z)  \frac{f(z)}{v(z)-v(y_0) \pm i\ep} \d z \right| \lesssim k^{-1}\frac{ \Vert f \Vert_{L^\infty(I_3)}}{1-2\mu_0},
\end{align*}
while there also holds
\begin{align*}
\left| \int_{I_3^c(y_0)} \left( \G_{k,\ep}^\pm\right)_\sigma(y,y_0,z) \frac{f(z)}{v(z)-v(y_0) \pm i\ep} \d z \right| &\lesssim k  \left\Vert \left( \G_{k,\ep}^\pm\right)_\sigma(y,y_0,z) \right\Vert_{L^2(I_3^c(y_0))} \Vert f \Vert_{L^2(I_3^c)} \\
&\lesssim k^{-\frac12}\Vert f \Vert_{L^2(I_3^c)}.
\end{align*}
On the other hand, from Corollary \ref{cor:sobolevregpartialdecomG}, we obtain
\begin{align*}
\left| \int_{I_3(y_0)} \partial_y\left( \G_{k,\ep}^\pm\right)_\sigma(y,y_0,z)) \frac{f(z)}{v(z)-v(y_0) \pm i\ep} \d z \right| \lesssim \frac{\Vert f \Vert_{L^\infty(I_3)}}{1-2\mu_0}|\eta|,
\end{align*}
and
\begin{align*}
\left| \int_{I_3^c(y_0)} \partial_y \left( \G_{k,\ep}^\pm\right)_\sigma(y,y_0,z) \frac{f(z)}{v(z)-v(y_0) \pm i\ep} \d z \right| &\lesssim k^\frac12\Vert f \Vert_{L^2(I_3^c)}|\eta|.
\end{align*}
As a result we get $\P(y_0)\left \Vert \left( g_{k,\ep}^\pm \right)_{\sigma}(\cdot,y_0)\right\Vert_{X_k^1}\lesssim k^{-1} \Vert f \Vert_{L^\infty(I_3)} + k^{-\frac12}\Vert f \Vert_{L^2(I_3^c)} $. Now we use the equation
\begin{align*}
\left( \D_k - k^2 + \frac{\cJ(y_0)}{(y-y_0\pm i\ep_0)^2} \right) g_{k,\ep}^\pm(y,y_0) = \frac{f(y)}{v(y)-v(y_0)\pm i\ep}
\end{align*}
and the entanglement inequality to obtain
\begin{align*}
k^2\Vert g_{k,\ep}^\pm \Vert_{H^1_k(I_3^c(y_0))}^2 &\lesssim \left| \int_{I_2^c(y_0)} \frac{f(y)g_{k,\ep}^\pm(y,y_0)}{v(y)-v(y_0)\pm i\ep} \d y \right| \\
&\lesssim \left| \int_{I_2^c(y_0)\cap I_3(y_0))} \frac{f(y)g_{k,\ep}^\pm(y,y_0)}{v(y)-v(y_0)\pm i\ep} \d y \right| + \left| \int_{I_3^c(y_0)} \frac{f(y)g_{k,\ep}^\pm(y,y_0)}{v(y)-v(y_0)\pm i\ep} \d y \right| \\
& \lesssim k^{-1}\Vert f \Vert_{L^\infty(I_3)}\Vert g_{k,\ep}^\pm(\cdot, y_0) \Vert_{X_k^0} + \delta \int_{I_3^c(y_0)}k^2 | g_{k,\ep}^\pm(y,y_0)|^2 \d y + C_\delta \Vert f \Vert_{L^2(I_3^c)}^2 
\end{align*}
for some $C_\delta>0$. Since $\P(y_0)\left \Vert \left( g_{k,\ep}^\pm \right)_{\sigma}(\cdot,y_0)\right\Vert_{X_k^1}\lesssim k^{-1} \Vert f \Vert_{L^\infty(I_3)} + k^{-\frac12}\Vert f \Vert_{L^2(I_3^c)} $, for $\delta>0$ small enough, there holds
\begin{align*}
\P(y_0)^2k^2\Vert g_{k,\ep}^\pm \Vert_{H^1_k(I_3^c(y_0))}^2 &\lesssim k^{-2}\Vert f \Vert_{L^\infty(I_3)}^2 + C_\delta\Vert f \Vert_{L^2(I_3^c)}^2 \lesssim k^{-2}\Vert f \Vert_{L^\infty(I_3)}^2 + \Vert f \Vert_{L^2(I_3^c)}^2
\end{align*}
from which we obtain the desired estimate. 
\end{proof}

In the sequel we shall also need a stronger version of Lemma \ref{lemma:R1mapsPLinfL2toXk}. We record it here.
\begin{lemma}\label{lemma:R1mapsXktoXk}
Let $k\geq 1$ and $y_0\in I_S\cup I_{W}$. Let $f\in Z_k$. There holds
\begin{align*}
\Vert (R_{1,k,\ep}^\pm f)(\cdot,y_0) \Vert_{X_k} \lesssim k^{-1}\Vert f \Vert_{Z_k}
\end{align*}
uniformly both in $\ep\in (0, \ep_*)$, and in $y_0\in I_S\cup I_{W}$.
\end{lemma}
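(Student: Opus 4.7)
The plan is to leverage the $Z_k$ structure of $f$ in order to cancel the weight $\P(y_0)$ that appeared on the left-hand side of Lemma~\ref{lemma:R1mapsPLinfL2toXk}. Set $g_{k,\ep}^\pm := R_{1,k,\ep}^\pm f$ and fix an almost-minimal decomposition $f(z) = \eta^{\frac12+\gamma_0}f_\sr(z) + \eta^{\frac12-\gamma_0}f_\s(z)$ on $I_3(y_0)$, with $\eta = 2k(z-y_0\pm i\ep_0)$, realising the $Z_k^1$ norm; the admissible decomposition also controls $k^{-1/2}\Vert \partial_z f_\sigma\Vert_{L^2(I_3)}$ and the nonlocal piece $k^{1/2}\Vert f\Vert_{H_k^1(I_3^c)}$. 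Writing $(g_{k,\ep}^\pm)_\sigma(y,y_0)$ as an integral of $(\G_{k,\ep}^\pm)_\sigma(y,y_0,z)(v(z)-v(y_0)\pm i\ep)^{-1}f(z)$, I split each coefficient into its parts over $I_3(y_0)$ and over $I_3^c(y_0)$. On $I_3^c(y_0)$ one has $|v(z)-v(y_0)\pm i\ep|\gtrsim k^{-1}$, so $\Vert f/(v-v(y_0)\pm i\ep)\Vert_{L^2(I_3^c)}\lesssim k^{1/2}\Vert f\Vert_{Z_k}$, and Cauchy--Schwarz against the $L^2$ bound $\Vert(\G_{k,\ep}^\pm)_\sigma(y,y_0,\cdot)\Vert_{L^2(I_3^c)}\lesssim k^{-3/2}$ from Proposition~\ref{prop:sobolevregdecomG} yields the desired $k^{-1}\Vert f\Vert_{Z_k}$ bound; the $\partial_y$ component follows in the same way via Corollary~\ref{cor:sobolevregpartialdecomG}.

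On $I_3(y_0)$ the expansion $v(z)-v(y_0)\pm i\ep = \tfrac{v'(y_0)}{2k}\eta\bigl(1+O(\eta/k)\bigr)$ lets me rewrite
\begin{equation*}
\frac{f(z)}{v(z)-v(y_0)\pm i\ep} = \frac{2k}{v'(y_0)}\bigl[\eta^{-\frac12+\gamma_0}f_\sr(z) + \eta^{-\frac12-\gamma_0}f_\s(z)\bigr] + f(z)\,h_{k,\ep}(z),
\end{equation*}
where $h_{k,\ep}$ is smooth and uniformly bounded on $I_3(y_0)$. The remainder $f\cdot h_{k,\ep}$ is controlled through Lemma~\ref{lemma:R0mapsL2toXk} since $\Vert f\Vert_{L^2(I_3)}\lesssim k^{-1/2}\Vert f\Vert_{Z_k^1}$, producing a contribution of order $k^{-2}\Vert f\Vert_{Z_k}$. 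For the leading piece, I use the $X_k^1$ decomposition of $(\G_{k,\ep}^\pm)_\sigma(y,y_0,\cdot)$ in the $z$ variable from Proposition~\ref{prop:sobolevregdecomG}; multiplying out produces four cross-terms of the form $\int_{I_3}(\G_\sigma)_{\sigma,\tau}f_{\tau'}\,\eta^{(\tau+\tau')\gamma_0}\,\d z$ with $\tau,\tau'\in\{+,-\}$. The three choices with $(\tau+\tau')\in\{0,2\}$ give bounded or positively-indexed integrands and yield $\lesssim k^{-1}\Vert f\Vert_{Z_k^1}$ directly from the $L^\infty$ controls $\Vert(\G_\sigma)_{\sigma,\tau}\Vert_{L^\infty(I_3)}\lesssim k^{-1}$ and $\Vert f_{\tau'}\Vert_{L^\infty(I_3)}\lesssim\Vert f\Vert_{Z_k^1}$.

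The remaining $(\tau,\tau')=(-,-)$ cross-term involves $\eta^{-2\gamma_0}$, for which a pointwise absolute estimate develops a spurious $(1-2\mu_0)^{-1}$ blow-up as $y_0$ approaches $\vartheta_1$ or $\vartheta_2$. The key step is to integrate by parts by means of the identity
\begin{equation*}
\eta^{-\frac12-\gamma_0} = \bigl(2k(\tfrac12-\gamma_0)\bigr)^{-1}\partial_z\eta^{\frac12-\gamma_0},
\end{equation*}
so that the singular weight is traded for a derivative on the product $(\G_\sigma)_{\sigma,\s}f_\s$. The boundary contributions at $\partial I_3(y_0)$ are harmless because $|\eta|\sim 1$ there, while the interior term is bounded using the Sobolev control $\Vert\partial_z f_\s\Vert_{L^2(I_3)}\lesssim k^{1/2}\Vert f\Vert_{Z_k^1}$ together with the $X_k^1$ bound on $\partial_z(\G_\sigma)_{\sigma,\s}$, both uniform in $\mu_0$. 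This trade-off cancels the $(1-2\mu_0)^{-1}$ factor and is the crux of the argument. Once the $X_k^1$ estimate on $I_3(y_0)$ is in place, the $H_k^1(I_3^c(y_0))$ piece of $\Vert g_{k,\ep}^\pm\Vert_{X_k}$ is recovered from the entanglement inequality (Lemma~\ref{lemma:entanglementRTG}) applied to the identity $\text{RTG}_{k,\ep}^\pm g_{k,\ep}^\pm = f/(v-v(y_0)\pm i\ep)$, testing against $\overline{g_{k,\ep}^\pm}$ and feeding in the already-established local bound on $I_2^c\cap I_3$ exactly as in the proof of Lemma~\ref{lemma:R1mapsPLinfL2toXk}. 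The main obstacle lies in this indicial-degeneracy cancellation at the endpoints of $I_W$; everywhere else the estimate reduces to a slight refinement of the arguments already used for Lemmas~\ref{lemma:R0mapsL2toXk}--\ref{lemma:R1mapsPLinfL2toXk}.
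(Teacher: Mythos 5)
There is a genuine gap in the key step of your argument, precisely at the place you identify as its crux. After expanding both the Green's function coefficient and $f$ in their $X_k^1$/$Z_k^1$ decompositions and extracting the factor $2k/(v'(y_0)\xi)$, the dangerous cross-term is
\begin{equation*}
\frac{2k}{v'(y_0)}\int_{I_3(y_0)}\left(\G_{k,\ep}^\pm\right)_{\sigma,\s}(y,y_0,z)\,f_\s(z,y_0)\,\xi^{-2\gamma_0}\,\d z ,
\end{equation*}
i.e.\ the weight is $\xi^{-2\gamma_0}$, not $\xi^{-\frac12-\gamma_0}$; so the identity you propose does not match the term you need to control. More importantly, even with the correct antiderivative $2k\,\xi^{-2\gamma_0}=\frac{1}{1-2\gamma_0}\partial_z\left(\xi^{1-2\gamma_0}\right)$ your integration by parts does \emph{not} cancel the $(1-2\mu_0)^{-1}$ blow-up: both the boundary term $\frac{\xi^{1-2\gamma_0}}{1-2\gamma_0}\big|_{\partial I_3(y_0)}$ (there $|\xi|\sim3\beta$, so the numerator tends to $1$ while the denominator vanishes as $\mu_0\to\frac12$) and the interior integral $\frac{1}{1-2\gamma_0}\int\xi^{1-2\gamma_0}\partial_z[(\G_\sigma)_{\sigma,\s}f_\s]$ retain an uncompensated factor $(1-2\gamma_0)^{-1}$. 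Neither the $L^2$ bound $\|\partial_z f_\s\|_{L^2(I_3)}\lesssim k^{1/2}\|f\|_{Z_k}$ nor the bound on $\partial_z(\G_\sigma)_{\sigma,\s}$ contains a compensating factor $(1-2\mu_0)$: a direct check of $\|\xi^{1-2\gamma_0}\|_{L^1(I_3)}$ or $\|\xi^{\frac12-\gamma_0}\|_{L^2(I_3)}$ shows them uniformly bounded, so the division by $(1-2\gamma_0)$ is simply not absorbed.

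The paper's proof of this lemma circumvents this by inserting a subtraction before integrating by parts:
\begin{equation*}
2k\,\xi^{-2\gamma_0}=\partial_z\!\left(\frac{\xi^{1-2\gamma_0}-1}{1-2\gamma_0}\right),
\end{equation*}
and then exploiting the \emph{uniform} logarithmic bound $\big|\zeta^{1-2\mu_0}-1\big|\lesssim(1-2\mu_0)\,|\log\zeta|$ for bounded $\zeta$ (Lemma~\ref{lemma:firstgammalog}). It is this cancellation of $(1-2\mu_0)$ in the subtracted antiderivative that turns the after-IBP factor into $|\log\xi|$, uniformly for $\mu_0\nearrow\frac12$; the resulting boundary and interior terms are then bounded by Propositions~\ref{prop:sobolevregdecomG}, \ref{prop:sobolevregpartialdecomG} and Corollary~\ref{cor:sobolevregpartialdecomG} exactly as you wish, and integrability of $\log$ on a bounded interval closes the estimate. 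That subtraction is the missing idea in your proposal. Your remaining steps — splitting into $g_1=R_{0,k,\ep}^\pm[\mathrm V_{1,\ep}^\pm f]$ plus the $1/(z-y_0\pm i\ep_0)$ piece, discarding the nonlocal part via Cauchy--Schwarz, treating the three non-resonant cross-terms pointwise, and invoking the entanglement inequality for the global $H_k^1(I_3^c)$ bound — all agree with the paper and are fine.
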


We remark here that $R_{1,k,\ep}^\pm$ regularizes from $Z_k$ to $X_k$. This fact will be important later to upgrade the regularity of $\varphi_{k,\ep}^\pm$.

\begin{proof}
Recall that for $g_{k,\ep}^\pm(y,y_0):= (R_{1,k,\ep}^\pm f)(y,y_0)$ we have
\begin{align*}
\left( g_{k,\ep}^\pm \right)_{\sigma}(y,y_0) = \int_\uv ^ \ov \left( \G_{k,\ep}^\pm\right)_\sigma(y,y_0,z) \frac{f(z)}{v(z)-v(y_0)\pm i\ep} \d z. 
\end{align*}
We note that
\begin{align*}
\frac{1}{v(y) - v(y_0) \pm i\ep} = \frac{(v'(y_0))^{-1}}{y-y_0\pm i\ep_0} + \frac{1}{v(y) - v(y_0) \pm i\ep} - \frac{(v'(y_0))^{-1}}{y-y_0\pm i\ep_0} 
\end{align*}
with
\begin{align*}
\frac{(v'(y_0))^{-1}}{y-y_0\pm i\ep_0}  &- \frac{1}{v(y) - v(y_0) \pm i\ep} \\
&= \frac{(y-y_0)^2}{(v(y)-v(y_0)\pm i\ep)(y-y_0\pm i\ep_0)}\int_0^1\int_0^1 v''(y_0 + u_1 u_2(y-y_0)) \d u_1 \d u_2 \\
&=\mathrm{V}_{1,\ep}^\pm(y,y_0)\in L^\infty(0,2)
\end{align*}
uniformly for all $y_0\in [0,2]$ and all $0<\ep$. Hence, we have
\begin{align*}
g_{k,\ep}^\pm(y,y_0) &= \int_0^2 \G_{k,\ep}^\pm(y,y_0,z) \frac{f(z)}{z-y_0\pm i\ep_0}\d z - R_{0,k,\ep}^\pm\left[ \mathrm{V}_{1,\ep}^\pm f \right] \\
&=g_{1,k,\ep}^\pm(y,y_0) - g_{2,k,\ep}^\pm(y,y_0).
\end{align*}
Since $\mathrm{V}_{1,\ep}^\pm(\cdot,y_0)\in L^\infty(I_3)$ and $f\in Z_k$, from Lemma \ref{lemma:R0mapsL2toXk} we have that 
\begin{align*}
\Vert g_{2,k,\ep}\Vert_{X_k}\lesssim k^{-\frac32}\Vert f \Vert_{L^2}\lesssim k^{-1}\Vert f \Vert_{Z_k}.
\end{align*}
We further split
\begin{align*}
g_{1,k,\ep}^\pm(y,y_0) &= \int_{I_3(y_0)} \G_{k,\ep}^\pm(y,y_0,z) \frac{f(z)}{z-y_0\pm i\ep_0}\d z + \int_{I_3^c(y_0)} \G_{k,\ep}^\pm(y,y_0,z) \frac{f(z)}{z-y_0\pm i\ep_0}\d z \\
&=: g_{1,\ell,k,\ep}^\pm(y,y_0) + g_{1,n\ell,k,\ep}^\pm(y,y_0)
\end{align*}
so that from Proposition \ref{prop:sobolevregdecomG} we readily have $\Vert g_{1,n\ell,k,\ep}^\pm \Vert_{X_k^1}\lesssim k^{-1}\Vert f \Vert_{Z_k}$. On the other hand, using the regularity structures of $f\in Z_k$ and $\G_{k,\ep}^\pm$, we have
\begin{align*}
f(z,y_0) = f_\sr(z,y_0)\xi^{\frac12+\gamma_0} + f_\s(z,y_0) \xi^{\frac12-\gamma_0}
\end{align*}
and likewise
\begin{align*}
\G_{k,\ep}^\pm(y,y_0,z) = \left( \G_{k,\ep}^\pm\right)_{\sr}(y,y_0,z)\xi^{\frac12+\gamma_0} + \left( \G_{k,\ep}^\pm\right)_{\s}(y,y_0,z) \xi^{\frac12-\gamma_0}
\end{align*}
where we recall that $\xi=2k(z-y_0\pm i\ep_0)$. Then,
\begin{align*}
g_{1,\ell,k,\ep}^\pm(y,y_0) &= 2k\int_{I_3(y_0)} \left( \G_{k,\ep}^\pm\right)_{\sr}(y,y_0,z)f_\sr(z,y_0)(2k(z-y_0\pm i\ep))^{2\gamma_0} \d z \\
&\quad +2k \int_{I_3(y_0)} \left( \left( \G_{k,\ep}^\pm\right)_{\sr}(y,y_0,z) f_\s(z,y_0) + \left( \G_{k,\ep}^\pm\right)_{\s}(y,y_0,z) f_\sr(z,y_0)  \right) \d z \\
&\quad+ 2k\int_{I_3(y_0)} \left( \G_{k,\ep}^\pm\right)_{\s}(y,y_0,z)\frac{f_\s(z,y_0)}{(2k(z-y_0\pm i\ep))^{2\gamma_0}} \d z.
\end{align*}
We shall consider $\gamma_0 = \mu_0 \neq 0$, since for $\gamma_0=i\nu_0\neq0$ we can already use Proposition \ref{prop:sobolevregdecomG} to get the desired $X_k^1$ bounds for $g_{1,\ell,k,\ep}^\pm$. Now, for $\gamma_0=\mu_0\in (\mu_*,\tfrac12)$ we shall only consider the integral involving the most singular factor $\xi^{-2\mu_0}$, namely
\begin{align*}
2k&\int_{I_3(y_0)} \left( \G_{k,\ep}^\pm\right)_{\s}(y,y_0,z)\frac{f_\s(z,y_0)}{(2k(z-y_0\pm i\ep))^{2\mu_0}} \d z \\
&= \int_{I_3(y_0)} \left( \G_{k,\ep}^\pm\right)_{\s}(y,y_0,z)f_\s(z,y_0) \partial_z \left( \frac{(2k(z-y_0\pm i\ep))^{1-2\mu_0} - 1}{1-2\mu_0} \right) \d z \\
&= \left. \left( \G_{k,\ep}^\pm\right)_{\s}(y,y_0,z)f_\s(z,y_0)  \frac{(2k(z-y_0\pm i\ep))^{1-2\mu_0} - 1}{1-2\mu_0} \right|_{\partial I_3} \\
&\quad -  \int_{I_3(y_0)} \partial_z \left( \left( \G_{k,\ep}^\pm\right)_{\s}(y,y_0,z)f_\s(z,y_0) \right)  \frac{(2k(z-y_0\pm i\ep))^{1-2\mu_0} - 1}{1-2\mu_0}  \d z.
\end{align*}
 Further observing that 
\begin{align*}
\left |\zeta^{1-2\mu_0} - 1 \right| = (1-2\mu_0)\left| \log(\zeta) \int_0^2 e^{u(1-2\mu_0)\log(\zeta)} \d u \right| \lesssim (1-2\mu_0)|\log(\zeta)|
\end{align*}
uniformly in $\mu_0\in (\mu_*,\tfrac12)$ for all $\zeta\in \C$ with $|\zeta|$ bounded, and the fact that $-\int_0^2\log(x) \d x$ is finite, we obtain from Proposition \ref{prop:sobolevregdecomG} and Corollary \ref{cor:sobolevregpartialdecomG} that the solid term is bounded and thus, 
\begin{align*}
\left \Vert 2k \int_{I_3(y_0)} \left( \G_{k,\ep}^\pm\right)_{\s}(\cdot,y_0,z) \frac{f_\s(z,y_0)}{(2k(z-y_0\pm i\ep))^{2\mu_0}} \d z \right\Vert_{X_k^0}   \lesssim  k^{-1} \Vert f \Vert_{Z_k^1}.
\end{align*}
Concerning the $X_k^1$ bound, note that
\begin{align*}
\int_{I_3(y_0)} &\partial_z \left( \left( \G_{k,\ep}^\pm\right)_{\s}(y,y_0,z)f_\s(z,y_0) \right)  \frac{(2k(z-y_0\pm i\ep))^{1-2\gamma_0} - 1}{1-2\gamma_0}  \d z \\
&=\int_{I_3(y_0)} \partial_z  \left( \G_{k,\ep}^\pm\right)_{\s}(y,y_0,z)f_\s(z,y_0)  \frac{(2k(z-y_0\pm i\ep))^{1-2\gamma_0} - 1}{1-2\gamma_0}  \d z \\ 
&\quad + \int_{I_3(y_0)}  \left( \G_{k,\ep}^\pm\right)_{\s}(y,y_0,z)\partial_z \left( f_\s(z,y_0) \right)  \frac{(2k(z-y_0\pm i\ep))^{1-2\gamma_0} - 1}{1-2\gamma_0}  \d z.
\end{align*}
In particular, Proposition \ref{prop:sobolevregpartialdecomG} applies and 
\begin{align*}
\partial_y \int_{I_3(y_0)} & \left(\partial_z  \left( \G_{k,\ep}^\pm\right)_{\s} \right)_\tau (y,y_0,z)f_\s(z,y_0)  \frac{(2k(z-y_0\pm i\ep))^{1-2\gamma_0} - 1}{1-2\gamma_0}  \d z \\
&= \left( \left(\partial_z \left( \G_{\su,k,\ep}^\pm\right)_\s \right)_\tau  - \left( \partial_z \left( \G_{\sl,k,\ep}^\pm\right)_\s \right)_\tau \right)(y,y_0,y) f_\s(y,y_0)\frac{\eta^{1-2\gamma_0}-1}{1-2\gamma_0} \\
&\quad +\int_{(0,y)\cap I_3} \partial_y \left(\partial_z \left( \G_{\su,k,\ep}^\pm\right)_\s\right)_\tau (y,y_0,z)  f_\s(z,y_0)  \frac{(2k(z-y_0\pm i\ep))^{1-2\gamma_0} - 1}{1-2\gamma_0} \d z \\
&\quad + \int_{(y,2)\cap I_3} \partial_y \left( \partial_z \left( \G_{\sl,k,\ep}^\pm\right)_\s \right)_\tau (y,y_0,z) f_\s(z,y_0)  \frac{(2k(z-y_0\pm i\ep))^{1-2\gamma_0} - 1}{1-2\gamma_0}  \d z. 
\end{align*}
Further recalling, see Proposition \ref{prop:sobolevregpartialdecomG},  that 
\begin{align*}
\left|  \left( \partial_z \left( \G_{\su,k,\ep}^\pm\right)_{\s} \right)_\tau (y,y_0,y) \right| + \left|  \left( \partial_z \left( \G_{\sl,k,\ep}^\pm\right)_{\s}\right)_\tau (y,y_0,y) \right| \lesssim |\eta|,
\end{align*}
for $\eta=2k(y-y_0\pm i\ep_0)$ with $|\eta|$ bounded, together with
\begin{align*}
\left| \eta^{1-2\mu_0} -1 \right| \lesssim (1-2\mu_0)|\log(\eta)|
\end{align*}
uniformly in $y_0\in I_S\cup I_W$, and Proposition \ref{prop:sobolevregdecomG}  we conclude that 
\begin{align*}
\left \Vert  \eta^{-\frac12} \partial_y \int_{I_3(y_0)} \left( \partial_z  \left( \G_{k,\ep}^\pm\right)_{\s} \right)_\tau (y,y_0,z)f_\s(z,y_0)  \frac{(2k(z-y_0\pm i\ep))^{1-2\gamma_0} - 1}{1-2\gamma_0}  \d z \right\Vert_{L^\infty(I_3(y_0))} \lesssim \Vert f \Vert_{Z_k^1}.
\end{align*}
uniformly for all $y_0 \in I_S\cup I_W$, for $\tau\in \lbrace \sr, \s \rbrace$. Thus, the $\left \Vert \eta^{-\frac12}\partial_y \left( g_{k,\ep}^\pm \right)_\sigma  \right\Vert_{L^\infty(I_3(y_0))}$ bounds follow from the above, Proposition \ref{prop:sobolevregdecomG}, Corollary \ref{cor:sobolevregpartialdecomG} and the observation that
\begin{align*}
\partial_y \left( g_{k,\ep}^\pm \right)_\tau &=\int_{I_3(y_0)} \partial_y\left( \G_{k,\ep}^\pm\right)_\sigma(y,y_0,z) \frac{f(z,y_0)}{z-y_0\pm i\ep} \d z \\
&= 2k\int_{I_3(y_0)} \partial_y\left( \G_{k,\ep}^\pm\right)_{\sigma,\sr}(y,y_0,z)f_\sr(z,y_0)(2k(z-y_0\pm i\ep))^{2\gamma_0} \d z \\
&\quad +2k \int_{I_3(y_0)} \left( \partial_y\left( \G_{k,\ep}^\pm\right)_{\sigma,\sr}(y,y_0,z) f_\s(z,y_0) + \partial_y\left( \G_{k,\ep}^\pm\right)_{\sigma,\s}(y,y_0,z) f_\sr(z,y_0)  \right) \d z \\
&\quad+ 2k\partial_y\int_{I_3(y_0)}  \left( \G_{k,\ep}^\pm\right)_{\sigma,\s}(y,y_0,z)\frac{f_\s(z,y_0)}{(2k(z-y_0\pm i\ep))^{2\gamma_0}} \d z.
\end{align*}
Once the local $X_k^1$ bounds on $g_{1,k,\ep}^\pm$ are obtained, the entanglement inequality provides
\begin{align*}
\Vert g_{1,k,\ep}^\pm \Vert_{H_k^1(I_3^c)} \lesssim k^{-\frac12}\Vert g_{1,k,\ep}^\pm \Vert_{X_k^1} + k^{-1}\Vert f \Vert_{L^2(I_2^c(y_0))} \lesssim k^{-\frac32}\Vert f \Vert_{Z_k}
\end{align*}
and the second part of the Lemma follows.
\end{proof}

Combining Lemma \ref{lemma:R0mapsL2toXk} and Lemma \ref{lemma:R1mapsXktoXk} we obtain the following result.

\begin{corollary}\label{cor:R1mapsCXktoXk}
Let $k\geq 1$ and $y_0\in I_S\cup I_{W}$. Let $f\in Z_k$ and $h\in C^1$. There holds
\begin{align*}
\Vert (R_{1,k,\ep}^\pm h f) \Vert_{X_k} \lesssim k^{-1}\Vert f \Vert_{Z_k}
\end{align*}
uniformly both in $\ep\in (0,\ep_*)$, and in $y_0\in I_S\cup I_{W}$.
\end{corollary}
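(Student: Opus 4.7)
The plan is to reduce the estimate to Lemmas \ref{lemma:R0mapsL2toXk} and \ref{lemma:R1mapsXktoXk} via the splitting
\[
h(z) \;=\; h(y_0) \;+\; \bigl(h(z) - h(y_0)\bigr),
\]
which, by linearity of the solution operators, yields the identity
\[
R_{1,k,\ep}^\pm(hf)(y,y_0) \;=\; h(y_0)\,R_{1,k,\ep}^\pm f(y,y_0) \;+\; R_{0,k,\ep}^\pm\!\bigl(\tilde h_{y_0,\ep}\, f\bigr)(y,y_0),
\]
where the auxiliary function $\tilde h_{y_0,\ep}$ is defined by
\[
\tilde h_{y_0,\ep}(z) \;:=\; \frac{h(z) - h(y_0)}{v(z) - v(y_0) \pm i\ep}.
\]

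For the first piece, since $h \in C^1$ gives $|h(y_0)| \leq \|h\|_{L^\infty}$, Lemma \ref{lemma:R1mapsXktoXk} applies directly and produces $\|h(y_0)\,R_{1,k,\ep}^\pm f\|_{X_k} \lesssim k^{-1}\|f\|_{Z_k}$, uniformly in $y_0 \in I_S \cup I_W$ and $\ep \in (0,\ep_*)$.

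For the second piece, the key observation is that the apparent singularity of $\tilde h_{y_0,\ep}$ at $z=y_0$ is in fact removable, uniformly in $\ep$. Indeed, the mean value bound $|h(z)-h(y_0)| \leq \|h'\|_{L^\infty}|z-y_0|$ together with hypothesis H$v$ (which gives $|v(z)-v(y_0)\pm i\ep|\geq |v(z)-v(y_0)|\geq c_0|z-y_0|$) yields
\[
\|\tilde h_{y_0,\ep}\|_{L^\infty(0,2)} \;\leq\; c_0^{-1}\|h'\|_{L^\infty}
\]
uniformly in $y_0$ and $\ep$. Consequently $\tilde h_{y_0,\ep} f \in L^2$ with norm controlled by $\|f\|_{L^2}$, and Lemma \ref{lemma:R0mapsL2toXk} delivers $\|R_{0,k,\ep}^\pm(\tilde h_{y_0,\ep} f)\|_{X_k} \lesssim k^{-3/2}\|f\|_{L^2}$. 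A short computation using the decomposition defining $Z_k$ then gives $\|f\|_{L^2(0,2)} \lesssim k^{-1/2}\|f\|_{Z_k}$: on $I_3(y_0)$ one uses the pointwise bound on the regular/singular coefficients together with $\|\eta^{1/2\pm \gamma_0}\|_{L^2(I_3(y_0))} \lesssim k^{-1/2}$ (valid because $|\eta|\lesssim 1$ there and $\mu_0 < 1$), while on $I_3^c(y_0)$ one has directly $\|f\|_{L^2(I_3^c)} \leq k^{-1/2}\|f\|_{Z_k}$ from the definition of $Z_k$. Hence the second contribution is in fact bounded by $k^{-2}\|f\|_{Z_k}$, which is stronger than required.

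Summing the two contributions yields the claimed $k^{-1}\|f\|_{Z_k}$ estimate. No genuine obstacle arises: the corollary is essentially a transfer of the two preceding lemmas by a commutator-type manipulation, and the only step where the structural assumptions enter is the uniform $L^\infty$ bound on $\tilde h_{y_0,\ep}$, which rests on the strict monotonicity $v' \geq c_0$ furnished by H$v$.
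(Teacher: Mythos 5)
Your proof is correct and is essentially identical to the paper's: both split $h(z)=h(y_0)+(h(z)-h(y_0))$, absorb the difference quotient into an $L^\infty$ weight, bound $h(y_0)R_{1,k,\ep}^\pm f$ via Lemma~\ref{lemma:R1mapsXktoXk}, and handle the remainder by feeding $\tilde h_{y_0,\ep}f\in L^2$ into Lemma~\ref{lemma:R0mapsL2toXk} together with $\|f\|_{L^2}\lesssim k^{-1/2}\|f\|_{Z_k}$. The only difference is that you spell out explicitly the $L^\infty$ bound on $\tilde h_{y_0,\ep}$ and the $L^2$-versus-$Z_k$ comparison, which the paper leaves implicit.
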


\begin{proof}
For $h_1(y,y_0) = h(y) - h(y_0)$ we write 
\begin{align*}
R_{1,k,\ep}^\pm hf(y,y_0) = h(y_0) R_{1,k,\ep}^\pm f(y,y_0) + \left( R_{0,k,\ep}^\pm \frac{h_1(\cdot,y_0)f(\cdot,y_0)}{v(\cdot) - v(y_0)\pm i\ep} \right)(y,y_0).
\end{align*}	 
Moreover, since 
\begin{align*}
\left \Vert \frac{h_1(\cdot,y_0)f(\cdot,y_0)}{v(\cdot) - v(y_0)\pm i\ep} \right\Vert_{L^2}\lesssim k^{-\frac12}\Vert f \Vert_{Z_k},
\end{align*}
the conclusion follows using Lemma \ref{lemma:R0mapsL2toXk} and Lemma \ref{lemma:R1mapsXktoXk}.
\end{proof}

The last lemma on $R_{1,k,\ep}^\pm$ concerns the case where the operator is applied to $H^1$ functions.

\begin{lemma}\label{lemma:R1mapsH1toXk}
Let $k\geq1$, $y_0\in I_S\cup I_W$ and $f\in H^1_k$. Then,
\begin{align*}
\Vert R_{1,k,\ep}^\pm f \Vert_{X_k}\lesssim k^{-\frac12}\Vert f \Vert_{H^1_k}
\end{align*}
uniformly for all $\ep\in (0,\ep_*)$ and all $y_0\in I_S\cup I_W$.
\end{lemma}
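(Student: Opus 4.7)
Set $g(y,y_0):=(R_{1,k,\ep}^\pm f)(y,y_0)$ and use the decomposition of Proposition \ref{prop:sobolevregdecomG} to write $g = \eta^{\frac12+\gamma_0}g_\sr + \eta^{\frac12-\gamma_0}g_\s$, where
\begin{equation*}
g_\sigma(y,y_0)=\int_0^2 (\G_{k,\ep}^\pm)_\sigma(y,y_0,z)\,\frac{f(z)}{v(z)-v(y_0)\pm i\ep}\,\d z.
\end{equation*}
The plan has two parts: obtain local $X_k^1$ estimates for $g_\sigma$ on $I_3(y_0)$, and then close the global $k^{\frac12}\|g\|_{H^1_k(I_3^c(y_0))}$ portion of the $X_k$ norm through the entanglement inequality (Lemma \ref{lemma:entanglementRTG}) applied to $\text{RTG}_{k,\ep}^\pm g = f/(v-v_0\pm i\ep)$. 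Throughout I will use the Sobolev embedding of Lemma \ref{lemma:LinfH1bound}, namely $\|f\|_{L^\infty(I_3(y_0))}\lesssim k^{\frac12}\|f\|_{H^1_k}$.

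The contribution to $g_\sigma$ coming from integration over $I_3^c(y_0)$ is soft: by Proposition \ref{prop:sobolevregdecomG} and Corollary \ref{cor:sobolevregpartialdecomG} one has $\|(\G_{k,\ep}^\pm)_\sigma(y,y_0,\cdot)\|_{L^2(I_3^c(y_0))}\lesssim k^{-\frac32}$ and $\|\partial_y(\G_{k,\ep}^\pm)_\sigma(y,y_0,\cdot)\|_{L^2(I_3^c(y_0))}\lesssim k^{-\frac12}|\eta|$, while $|v(z)-v(y_0)\pm i\ep|\gtrsim k^{-1}$ on $I_3^c(y_0)$, so Cauchy--Schwarz produces a bound of size $k^{-\frac12}(1+|\eta|)\|f\|_{L^2}$, admissible in $X_k^1$. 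For the near-field, split $f(z)=f(y_0) + (f(z)-f(y_0))$. The difference contribution is controlled using $|f(z)-f(y_0)|\leq |z-y_0|^{\frac12}\|f'\|_{L^2(I_3(y_0))}$ together with $|v(z)-v(y_0)\pm i\ep|\gtrsim|z-y_0|$ and $|(\G_{k,\ep}^\pm)_\sigma|\lesssim k^{-1}$: the integrand is dominated by $k^{-1}|z-y_0|^{-\frac12}\|f'\|_{L^2}$, integrable over $I_3(y_0)$, yielding a total of $\lesssim k^{-1}\cdot k^{-\frac12}\|f'\|_{L^2}\lesssim k^{-\frac12}\|f\|_{H^1_k}$; the analogous $\partial_y$ bound follows verbatim from $|\partial_y(\G_{k,\ep}^\pm)_\sigma|\lesssim|\eta|$, again supplied by Corollary \ref{cor:sobolevregpartialdecomG}.

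The delicate term is the constant piece $f(y_0)\int_{I_3(y_0)}(\G_{k,\ep}^\pm)_\sigma(y,y_0,z)/(v(z)-v(y_0)\pm i\ep)\,\d z$: since $|f(y_0)|\lesssim k^{\frac12}\|f\|_{H^1_k}$, this integral must be shown to be $O(k^{-1})$ uniformly in $\ep$. A naive $L^\infty$ estimate for $(\G_{k,\ep}^\pm)_\sigma$ fails because $|v-v_0|^{-1}$ is only logarithmically integrable on $I_3(y_0)$; instead one subtracts and adds $(\G_{k,\ep}^\pm)_\sigma(y,y_0,y_0)$ and treats the two resulting pieces separately. The constant-coefficient integral $\int_{I_3(y_0)}(v(z)-v(y_0)\pm i\ep)^{-1}\,\d z$ is uniformly bounded in $\ep$ because, after the change of variables $u=v(z)-v(y_0)$, it reduces to $\log(b\pm i\ep)-\log(-a\pm i\ep)$ modulo bounded terms, whose real parts cancel and whose imaginary parts converge to $\mp i\pi/v'(y_0)$. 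The residual piece is handled using the Hölder-type modulus $|(\G_{k,\ep}^\pm)_\sigma(y,y_0,z)-(\G_{k,\ep}^\pm)_\sigma(y,y_0,y_0)|\lesssim k^{-1}(k|z-y_0|)^{\frac12-\mu_0}$, which is read off from the inner decomposition $(\G_{k,\ep}^\pm)_\sigma = (\G_{k,\ep}^\pm)_{\sigma,\sr}\xi^{\frac12+\gamma_0}+(\G_{k,\ep}^\pm)_{\sigma,\s}\xi^{\frac12-\gamma_0}$ of Proposition \ref{prop:sobolevregdecomG}; pairing it against the kernel $|z-y_0|^{-1}$ and integrating yields $k^{-\frac12-\mu_0}\cdot k^{-\frac12+\mu_0}=k^{-1}$, as required. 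The same expansion applied to $\partial_y(\G_{k,\ep}^\pm)_\sigma$ via Corollary \ref{cor:sobolevregpartialdecomG} gives the matching estimate for $\partial_y g_\sigma$.

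This cancellation/regularity interplay, absent in the $Z_k$-based proof of Lemma \ref{lemma:R1mapsXktoXk}, is the main obstacle of the proof. Once the local $X_k^1$ bound $\|g_\sigma\|_{X_k^1}\lesssim k^{-\frac12}\|f\|_{H^1_k}$ is secured, the entanglement inequality (Lemma \ref{lemma:entanglementRTG}) closes the global portion: it gives $\|g\|_{H^1_k(I_3^c(y_0))}\lesssim \|g\|_{L^2(I_2^c(y_0)\cap I_3(y_0))}+k^{-2}\|f/(v-v_0\pm i\ep)\|_{L^2(I_2^c(y_0))}$, where the first summand is $\lesssim k^{-\frac12}\cdot|I_3(y_0)|^{\frac12}\|f\|_{H^1_k}\lesssim k^{-1}\|f\|_{H^1_k}$ by the local $L^\infty$ bound on $g$ just obtained, and the second is $\lesssim k^{-2}\cdot k\|f\|_{L^2}\lesssim k^{-1}\|f\|_{H^1_k}$ since $|v-v_0|\gtrsim k^{-1}$ on $I_2^c(y_0)$. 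Multiplication by $k^{\frac12}$ delivers the desired $k^{-\frac12}\|f\|_{H^1_k}$.
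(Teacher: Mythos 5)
Your proposal follows the right template (near/far split, and exploiting cancellation in the singular local piece), but the concrete mechanism you chose for the local integral does not give a bound uniform over $y_0\in I_W$, which the lemma requires.

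The weak point is the residual term after subtracting $(\G_{k,\ep}^\pm)_\sigma(y,y_0,y_0)$. The modulus of continuity you invoke is $|(\G_{k,\ep}^\pm)_\sigma(y,y_0,z)-(\G_{k,\ep}^\pm)_\sigma(y,y_0,y_0)|\lesssim k^{-1}(k|z-y_0|)^{\frac12-\mu_0}$, which is indeed what the inner decomposition of Proposition \ref{prop:sobolevregdecomG} gives. But when you pair it against the $|z-y_0|^{-1}$ kernel and integrate over $I_3(y_0)$, the resulting quantity is not $k^{-1}$ but
\begin{equation*}
k^{-1}\int_0^{3\beta}u^{-\frac12-\mu_0}\,\d u=\frac{(3\beta)^{\frac12-\mu_0}}{\frac12-\mu_0}\,k^{-1},
\end{equation*}
and the factor $(\tfrac12-\mu_0)^{-1}$ blows up as $y_0$ approaches $\vartheta_1$ or $\vartheta_2$ within $I_W$, since there $\cJ(y_0)\to 0$ and $\mu_0\to\tfrac12$. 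The Hölder exponent $\tfrac12-\mu_0$ of the singular coefficient degenerates exactly as fast as the factor $(\tfrac12-\mu_0)^{-1}$ grows, so the subtraction does not compensate the $|z-y_0|^{-1}$ singularity uniformly. On $I_S$ this issue is absent ($\mu_0=0$), but the lemma is required to hold uniformly on $I_S\cup I_W$.

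The paper's proof dodges this precisely by using the algebraic identity behind Lemma \ref{lemma:firstgammalog}: it writes $2k\xi^{-\frac12-\gamma_0}=\partial_z\bigl(\xi^{\frac12-\gamma_0}/(\tfrac12-\gamma_0)\bigr)$ and, crucially, subtracts the constant $1$ before integrating by parts, so that the small-denominator factor $(\tfrac12-\gamma_0)^{-1}$ never appears alone: what survives is
\begin{equation*}
\frac{\xi^{\frac12-\gamma_0}-1}{\tfrac12-\gamma_0}=\log(\xi)\,\mathcal{Q}_{\frac14-\frac{\gamma_0}{2}}(\xi),
\end{equation*}
which has $L^1$ and $L^2$ bounds on $I_3(y_0)$ that are uniform in $\gamma_0$. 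After the integration by parts, the derivative lands on $(\G_{k,\ep}^\pm)_\sigma f$, where one uses Propositions \ref{prop:sobolevregdecomG}, \ref{prop:sobolevregpartialdecomG} and Corollary \ref{cor:sobolevregpartialdecomG} together with Lemma \ref{lemma:LinfH1bound}, and the small denominator has been traded away for an integrable (and harmless) logarithm. If you want to stay with the Hölder/subtraction picture, the fix is to subtract not the point value $(\G_{k,\ep}^\pm)_\sigma(y,y_0,y_0)$ but the constant that restores the vanishing $(\xi^{\frac12-\gamma_0}-1)$ combination, i.e., to use the $(\xi^{\frac12-\gamma_0}-1)$ structure of the coefficients rather than the plain power law; as written, the step as stated fails near the fragile endpoints. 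The rest of your argument (non-local piece via Cauchy--Schwarz, $f(z)-f(y_0)$ via $\|f'\|_{L^2}$, the global $H^1_k(I_3^c)$ bound via the entanglement inequality) is fine and matches the paper.
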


\begin{proof}
As usual, let $g_{k,\ep}^\pm(y,y_0) = R_{1,k,\ep}^\pm f(y,y_0)$, we have
\begin{align*}
g_{k,\ep}^\pm(y,y_0) &= 2k\int_0^2 \G_{k,\ep}^\pm(y,y_0,z) \frac{f(z)}{\xi}\d z - R_{0,k,\ep}^\pm \left[ V_{1,\ep}^\pm(\cdot, y_0) f \right](y,y_0)\\
&=:g_{1,k,\ep}^\pm(y,y_0) - g_{2,k,\ep}^\pm(y,y_0).
\end{align*}
While Lemma \ref{lemma:R0mapsL2toXk} gives $\Vert g_{2,k,\ep}^\pm \Vert_{X_k}\lesssim k^{-\frac32}\Vert f \Vert_{L^2}$, we note that
\begin{align*}
g_{1,k,\ep}^\pm &= 2k \int_{I_3^c(y_0)} \G_{k,\ep}^\pm(y,y_0,z) \frac{f(z)}{\xi}\d z  + 2k \int_{I_3^c(y_0)} \G_{k,\ep}^\pm(y,y_0,z) \frac{f(z)}{\xi}\d z \\
&=g_{1,\ell,k,\ep}^\pm(y,y_0) + g_{1,n\ell,k,\ep}^\pm(y,y_0)
\end{align*}
so that Proposition \ref{prop:sobolevregdecomG} and Corollary \ref{cor:sobolevregpartialdecomG} readily gives $\Vert g_{1,n\ell,k,\ep}^\pm\Vert_{X_k}\lesssim k^{-\frac12}\Vert f \Vert_{L^2}$. As for the local contribution, we have 
\begin{align*}
g_{1,\ell,k,\ep}^\pm(y,y_0) &= \int_{I_3(y_0)} \left( \left(\G_{k,\ep}^\pm \right)_\sr (y,y_0,z) \partial_z \left( \frac{\xi^{\frac12+\gamma_0}}{\frac12+\gamma_0} \right) + \left(\G_{k,\ep}^\pm \right)_\s (y,y_0,z) \partial_z \left( \frac{\xi^{\frac12-\gamma_0}}{\frac12-\gamma_0}-1 \right) \right) f(z) \d z \\
&=\left( \left(\G_{k,\ep}^\pm \right)_\sr (y,y_0,z)  \frac{\xi^{\frac12+\gamma_0}}{\frac12+\gamma_0} + \left(\G_{k,\ep}^\pm \right)_\s (y,y_0,z)  \frac{\xi^{\frac12-\gamma_0}-1}{\frac12-\gamma_0}  \right) f(z) \Big|_{\partial I_3(y_0)} \\
&\quad - \frac{2}{1+2\gamma_0}\int_{I_3(y_0)} \partial_z \left ( \left(\G_{k,\ep}^\pm \right)_\sr (y,y_0,z) f(z) \right)\frac{\xi^{\frac12+\gamma_0}}{\frac12+\gamma_0} \d z \\
&\quad - \int_{I_3(y_0)} \partial_z \left ( \left(\G_{k,\ep}^\pm \right)_\s (y,y_0,z) f(z) \right)\log(\xi)\mathcal{Q}_{\frac14-\frac{\gamma_0}{2}}(\xi) \d z.
\end{align*}
Then,  $\Vert g_{1,\ell,k ,\ep}^\pm \Vert_{X_k^1}\lesssim k^{-\frac12}\Vert f\Vert_{H_k^1} $ follows from Lemma \ref{lemma:LinfH1bound}, Propositions \ref{prop:sobolevregdecomG},  \ref{prop:sobolevregpartialdecomG} and Corollary \ref{cor:sobolevregpartialdecomG} and the observations that $\log(\xi)$ is squared-integrable, bounded for $|\xi|$ bounded uniformly from below and above and $|Q_{\frac14-\frac{\gamma_0}{2}}(\xi)|$ is uniformly bounded for $|\xi|$ bounded. The $H_k^1(I_3^c(y_0))$ bounds of $g_{1,k,\ep}^\pm$ follow from the local $X_k^1$ bounds and the entanglement inequality, we omit the details.
\end{proof}

We now consider the most singular operator $R_{2,k,\ep}^\pm$ that we will need to use the limiting absorption principle. First, we record a basic identity satisfied for functions $f(\cdot,y_0)\in X_k$ that will prove useful:
\begin{equation}\label{eq:partialzGf}
\begin{split}
\partial_z \left( \G_{k,\ep}^\pm(y,y_0,\cdot) f(z,y_0) \right) &= \partial_z \left( \left( \G_{k,\ep}^\pm \right)_\sr(y,y_0,z) f_\sr(z, y_0) \right) \xi^{1+2\gamma_0} \\
&\quad + 2k(1+2\gamma_0)  \left( \G_{k,\ep}^\pm \right)_\sr(y,y_0,z) f_\sr(z, y_0) \xi^{2\gamma_0} \\
&\quad + \partial_z \left( \left( \G_{k,\ep}^\pm \right)_\sr(y,y_0,z) f_\s(z, y_0) + \left( \G_{k,\ep}^\pm \right)_\s(y,y_0,z) f_\sr(z, y_0) \right) \xi \\
&\quad + 2k \left( \left( \G_{k,\ep}^\pm \right)_\sr(y,y_0,z) f_\s(z, y_0) + \left( \G_{k,\ep}^\pm \right)_\s(y,y_0,z) f_\sr(z, y_0) \right)  \\
&\quad + \partial_z \left( \left( \G_{k,\ep}^\pm \right)_\s(y,y_0,z) f_\s(z, y_0) \right) \xi^{1-2\gamma_0} \\
&\quad + 2k(1-2\gamma_0)  \left( \G_{k,\ep}^\pm \right)_\s(y,y_0,z) f_\s(z, y_0) \xi^{-2\gamma_0},
\end{split}
\end{equation}
where we recall that $\xi=2k(z-y_0\pm i\ep_0)$. 

\begin{lemma}\label{lemma:R2mapsC1XktoXk}
Let $k\geq 1$, $y_0\in I_S\cup I_W$, $f(y,y_0)\in Z_k$ and $h(\cdot,y_0)\in C^2$ such that $h(y_0,y_0)=0$ and $\Vert h \Vert_{C^2}\lesssim 1$ uniformly in $y_0$. Then,
\begin{align}
\Vert (R_{2,k,\ep}^\pm hf)(\cdot,y_0) \Vert_{X_k} \lesssim  k^{-1}\Vert f \Vert_{Z_k}  
\end{align}
uniformly in $\ep \in (0,\ep_*)$ and $y_0\in I_S\cup I_W$.
\end{lemma}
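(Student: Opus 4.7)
\emph{Proof strategy.} The plan is to use the vanishing condition $h(y_0,y_0)=0$ together with $h\in C^2$ to trade one power of the singular denominator $(v(z)-v(y_0)\pm i\ep)^{-2}$ for one degree of regularity, thereby reducing the estimate for $R_{2,k,\ep}^\pm$ to one for $R_{1,k,\ep}^\pm$ plus a controllable $\ep$-small remainder. Concretely, I would define
$$H(z,y_0):=\frac{h(z,y_0)}{v(z)-v(y_0)},\qquad H(y_0,y_0):=\frac{\partial_yh(y_0,y_0)}{v'(y_0)},$$
and verify, via Taylor expansion using $h\in C^2$, $v\in C^4$ and $v'\geq c_0>0$, that $H(\cdot,y_0)\in C^1$ uniformly in $y_0$ with $\Vert H\Vert_{C^1}\lesssim 1$.

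Using the identity $v(z)-v(y_0)=\bigl(v(z)-v(y_0)\pm i\ep\bigr)\mp i\ep$, one then splits
$$\frac{h(z,y_0)}{(v(z)-v(y_0)\pm i\ep)^{2}}=\frac{H(z,y_0)}{v(z)-v(y_0)\pm i\ep}\mp\frac{i\ep\,H(z,y_0)}{(v(z)-v(y_0)\pm i\ep)^{2}},$$
which gives the operator identity
$$R_{2,k,\ep}^\pm(hf)=R_{1,k,\ep}^\pm(Hf)\mp i\ep\,R_{2,k,\ep}^\pm(Hf).$$
For the first summand, Corollary~\ref{cor:R1mapsCXktoXk} applies directly to $H\in C^1$ and $f\in Z_k$, yielding $\Vert R_{1,k,\ep}^\pm(Hf)\Vert_{X_k}\lesssim k^{-1}\Vert f\Vert_{Z_k}$. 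It remains to show that the corrective term $\ep R_{2,k,\ep}^\pm(Hf)$ enjoys the same bound uniformly in $\ep\in(0,\ep_*)$.

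For this, I would expand $1/(v(z)-v(y_0)\pm i\ep)=(v'(y_0))^{-1}/(z-y_0\pm i\ep_0)-V_{1,\ep}^\pm(z,y_0)$ as in Lemma~\ref{lemma:R1mapsXktoXk}, square it, and isolate the leading local term $\ep_0(v'(y_0))^{-1}\int\G_{k,\ep}^\pm Hf/(z-y_0\pm i\ep_0)^2\,\d z$; the remaining pieces carry the bounded factors $V_{1,\ep}^\pm$ (or $(V_{1,\ep}^\pm)^2$) together with a bounded factor $\ep/(z-y_0\pm i\ep_0)$ (using $|\ep_0|\leq|z-y_0\pm i\ep_0|$) and therefore reduce to $R_{0,k,\ep}^\pm$ acting on $L^2$ data, controlled by Lemma~\ref{lemma:R0mapsL2toXk}. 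For the leading term I would use the key identity
$$\frac{\mp i\ep_0}{(z-y_0\pm i\ep_0)^{2}}=\partial_z\!\left(\frac{\pm i\ep_0}{z-y_0\pm i\ep_0}\right),$$
whose right-hand side is pointwise bounded by $1$. Splitting $\int_0^2=\int_0^y\G_{\su,k,\ep}^\pm+\int_y^2\G_{\sl,k,\ep}^\pm$ and integrating by parts in $z$, the boundary contributions vanish ($\G_{k,\ep}^\pm=0$ at $z=0,2$ and the continuity of $\G_{k,\ep}^\pm$ at $z=y$ cancels the jump terms), leaving an integral of $\partial_z(\G_{k,\ep}^\pm Hf)$ against the bounded function $\pm i\ep_0/(z-y_0\pm i\ep_0)$. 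The resulting terms are then handled exactly as in the proof of Lemma~\ref{lemma:R1mapsXktoXk}, using the decomposition $\G_{k,\ep}^\pm=(\G_{k,\ep}^\pm)_\sr\xi^{\frac12+\gamma_0}+(\G_{k,\ep}^\pm)_\s\xi^{\frac12-\gamma_0}$ from Proposition~\ref{prop:sobolevregdecomG}, the derivative bounds from Proposition~\ref{prop:sobolevregpartialdecomG} and Corollary~\ref{cor:sobolevregpartialdecomG}, and the $Z_k$-decomposition of $f$ to recognize and integrate out the resulting $|\xi|^{-2\mu_0}$-type local singularities (which are integrable for $\mu_0$ bounded away from $\tfrac12$ on $I_S\cup I_W$). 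Finally, the $H^1_k(I_3^c(y_0))$ part of the $X_k$-norm is recovered from the local $X_k^1$ bounds via the entanglement inequality (Lemma~\ref{lemma:entanglementRTG}) applied to the equation $\textsc{RTG}_{k,\ep}^\pm R_{2,k,\ep}^\pm(hf)=hf/(v(\cdot)-v(y_0)\pm i\ep)^{2}$.

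The main obstacle is the $X_k^1$ seminorm estimate of $\ep R_{2,k,\ep}^\pm(Hf)$: since $H$ does not vanish at $y_0$, we cannot iterate the reduction, and the $\ep$-prefactor must be genuinely realized as a bounded $z$-derivative (via the identity above) before the regular/singular decomposition of $\G_{k,\ep}^\pm$ can absorb its interaction with the local Whittaker structure. Once that integration by parts is in place, all ensuing pointwise and local $L^\infty$ bounds follow the same pattern as Lemma~\ref{lemma:R1mapsXktoXk} and the combined estimate of the two terms yields the claimed $\Vert R_{2,k,\ep}^\pm(hf)\Vert_{X_k}\lesssim k^{-1}\Vert f\Vert_{Z_k}$ uniformly in $\ep\in(0,\ep_*)$ and $y_0\in I_S\cup I_W$.
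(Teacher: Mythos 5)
Your proof is correct, and it takes a genuinely different route from the paper's. The paper integrates by parts directly on $(v(z)-v(y_0)\pm i\ep)^{-2}$, producing a main term $\int \frac{h}{v'}\,\frac{\partial_z(\G f)}{v-v(y_0)\pm i\ep}$ whose most singular piece $(\G)_\s f_\s\,\xi^{-1-2\gamma_0}$ then needs the pointwise vanishing of $h$ and a further integration by parts to become integrable. Your reduction instead factors $h=H\,(v-v(y_0))$ with $H\in C^1$ and uses $v-v(y_0)=(v-v(y_0)\pm i\ep)\mp i\ep$ to split $R_{2,k,\ep}^\pm(hf)=R_{1,k,\ep}^\pm(Hf)\mp i\ep R_{2,k,\ep}^\pm(Hf)$; the first summand is exactly Corollary~\ref{cor:R1mapsCXktoXk}, and for the second the $\ep$-prefactor, realized via $\ep_0/q^2=-\partial_z(\ep_0/q)$ with $|\ep_0/q|\le1$, lets you absorb the quadratic singularity in a single integration by parts (boundary terms vanish since $\G$ vanishes at $z=0,2$ and is continuous at $z=y$), after which $\partial_z(\G H f)$ has only integrable $\xi^{-2\gamma_0}$-type local singularities and no iterated reduction is needed. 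What this buys is a slightly cleaner reduction to a single IBP step, at the cost of tracking the auxiliary $V_{1,\ep}^\pm$ remainders and verifying $H\in C^1$ uniformly; what the paper's route buys is a more uniform template that it reuses verbatim for the higher-order operators $R_{3,k,\ep}^\pm$ and $R_{4,k,\ep}^\pm$, where vanishing of $h$ to higher order must be exploited explicitly and an $\ep$-extraction trick would no longer suffice. One small remark: the parenthetical about "the continuity of $\G$ at $z=y$ cancels the jump terms" is slightly misleading — since $\G$ itself (not $\partial_z\G$) is the factor integrated by parts, and it is continuous across $z=y$, no jump contribution arises in the first place; you might also flag explicitly that Corollary~\ref{cor:R1mapsCXktoXk} is stated for $h=h(y)$ but that its proof applies verbatim to $h(\cdot,y_0)$ with the $y_0$-uniform $C^1$ bound.
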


\begin{proof}
Let $g_{k,\ep}^\pm(y,y_0):= R_{2,k,\ep}^\pm[h f](y,y_0)$. As usual, we write
\begin{align*}
g_{k,\ep}^\pm(y,y_0) &= \int_{I_3(y_0)} \G_{k,\ep}^\pm (y,y_0,z) \frac{h(y,y_0)}{(v(z) - v(y_0) \pm i\ep)^2} f(z,y_0) \d z \\
&\quad + \int_{I_3^c(y_0)} \G_{k,\ep}^\pm (y,y_0,z) \frac{h(y,y_0)}{(v(z) - v(y_0) \pm i\ep)^2} f(z,y_0) \d z \\
&=:g_{\ell,k,\ep}^\pm(y,y_0) + g_{n\ell,k,\ep}^\pm(y,y_0),
\end{align*}
where $\ell$ and $n\ell$ denote the local and non-local integral contributions, respectively. Since $h(y,y_0)\in C^1$ uniformly in $y_0$ with $h(y_0,y_0) = 0$ we rapidly conclude that 
\begin{align*}
\left\Vert g_{n\ell,k,\ep}^\pm  \right\Vert_{X_k}\lesssim k^{-\frac12}\Vert f \Vert_{L^2(I_3(y_0))} \lesssim   k^{-1}\Vert f \Vert_{Z_k}
\end{align*}
thanks to Proposition \ref{prop:sobolevregdecomG} and Corollary \ref{cor:sobolevregpartialdecomG}. On the other hand, integrating by parts,
\begin{align*}
g_{\ell,k,\ep}^\pm(y,y_0) &=  -\frac{h(z,y_0)}{v'(z)} \frac{\G_{k,\ep}^\pm(y,y_0,z) f(z,y_0)}{v(z) - v(y_0) \pm i\ep_0} \Big|_{\partial I_3(y_0)}  +\int_{I_3(y_0)} \partial_z \left( \frac{h(z,y_0)}{v'(z)} \right) \frac{\G_{k,\ep}^\pm(y,y_0,z) f(z,y_0)}{v(z) - v(y_0) \pm i\ep_0} \d z \\
&\quad+ \int_{I_3(y_0)} \frac{h(z,y_0)}{v'(z)}\frac{\partial_z \left( \G_{k,\ep}^\pm(y,y_0,z) f(z, y_0) \right)}{v(z) - v(y_0) \pm i\ep} \d z \\
&=g_{1,k,\ep}^\pm(y,y_0) + g_{2,k,\ep}^\pm (y,y_0) + g_{3,k,\ep}^\pm (y,y_0).
\end{align*}
Since $h\in C^2$ uniformly in $y_0$, we can use Lemma \ref{cor:R1mapsCXktoXk} to obtain $\Vert g_{2,k,\ep}^\pm \Vert_{X_k}\lesssim k^{-1}\Vert f \Vert_{Z_k}$. Similarly, since $h(y_0,y_0)=0$ and $h$ is uniformly smooth in $y_0$ we likewise have, together with Proposition \ref{prop:sobolevregdecomG} and Corollary \ref{cor:sobolevregpartialdecomG}, that $\Vert g_{1,k,\ep}^\pm \Vert_{X_k}\lesssim k^{-1}\Vert f \Vert_{Z_k}$. On the other hand, we have
\begin{align*}
g_{3,k,\ep}^\pm(y,y_0) &= 2k\int_{I_3(y_0)} \frac{h(z,y_0)}{v'(z)}\frac{\partial_z \left( \G_{k,\ep}^\pm(y,y_0,z) f(z, y_0) \right)}{2k(z-y_0\pm i\ep_0)} \d z \\
&\quad -\int_{I_3(y_0)} \frac{h(z,y_0)}{v'(z)} \mathrm{V}_{1,\ep}^\pm(z,y_0)\partial_z \left( \G_{k,\ep}^\pm(y,y_0,z) f(z, y_0) \right) \d z \\
&=: g_{4,k,\ep}^\pm(y,y_0) + g_{5,k,\ep}^\pm(y,y_0).
\end{align*}
We shall argue for the $X_k$ bounds of $g_{4,k,\ep}^\pm$, since the proof for the $g_{5,k,\ep}^\pm$ follows the same lines. From \eqref{eq:partialzGf} we get
\begin{align*}
g_{4,k,\ep}^\pm(y,y_0) &=  2k\int_{I_3(y_0)} \frac{h(z,y_0)}{v'(z)} \partial_z \left( \left(\G_{k,\ep}^\pm\right)_\s (y,y_0,z) f_\s(z, y_0) \right) \xi^{-2\gamma_0} \d z \\
&\quad + 4k^2 (1-2\gamma_0)\int_{I_3(y_0)} \frac{h(z,y_0)}{v'(z)}  \left(\G_{k,\ep}^\pm\right)_\s(y,y_0,z) f_\s(z, y_0) \xi^{-1-2\gamma_0} \d z \\
&\quad + g_{6,k,\ep}^\pm(y,y_0)
\end{align*}
From Proposition \ref{prop:sobolevregdecomG}, Corollary \ref{cor:sobolevregpartialdecomG} and the vanishing properties of $h$ we have $\Vert g_{6,k,\ep}^\pm\Vert_{X_k^1}\lesssim k^{-1}\Vert f \Vert_{Z_k}$. Moreover, since 
\begin{align*}
\left| h(z,y_0) \xi^{-2\gamma_0}\right| \lesssim \frac{1}{k}|2k|z-y_0||^{1-2\mu_0} \lesssim \frac{1}{k} + \frac{1}{k} (1-2\mu_0) |\log(2k|z-y_0|) | \in L^1(I_3(y_0))
\end{align*}	
appealing to Proposition \ref{prop:sobolevregdecomG}, Proposition \ref{prop:sobolevregpartialdecomG} and Corollary \ref{cor:sobolevregpartialdecomG} we again conclude that
\begin{align*}
\left \Vert 2k\int_{I_3(y_0)} \frac{h(z,y_0)}{v'(z)} \partial_z \left( \left(\G_{k,\ep}^\pm\right)_\s (y,y_0,z) f_\s(z, y_0) \right) \xi^{-2\gamma_0} \d z  \right\Vert_{X_k^1}\lesssim k^{-1}\Vert f \Vert_{X_k}.
\end{align*}
Finally,
\begin{align*}
4k^2 (1-2\gamma_0)&\int_{I_3(y_0)} \frac{h(z,y_0)}{v'(z)}  \left(\G_{k,\ep}^\pm\right)_\s(y,y_0,z) f_\s(z, y_0) \xi^{-1-2\gamma_0} \d z \\
&= - 2k\frac{1-2\gamma_0}{2\gamma_0} \frac{h(z,y_0)}{v'(z)}  \left(\G_{k,\ep}^\pm\right)_\s(y,y_0,z) f_\s(z, y_0) \xi^{-2\gamma_0} \Big|_{\partial I_3(y_0)} \\
&\quad + \frac{1}{2\gamma_0}\int_{I_3(y_0)} \partial_z \left( \frac{h(z,y_0)}{v'(z)} \right) \left(\G_{k,\ep}^\pm\right)_\s(y,y_0,z) f_\s(z, y_0) \partial_z \left(\xi^{1-2\gamma_0} -1\right) \d z \\
&\quad + 2k\frac{1-2\gamma_0}{2\gamma_0}\int_{I_3(y_0)} \frac{h(z,y_0)}{v'(z)} \partial_z \left( \left(\G_{k,\ep}^\pm\right)_\s(y,y_0,z) f_\s(z, y_0) \right) \xi^{-2\gamma_0} \d z.
\end{align*}
The observation that $\left| h(z,y_0) \xi^{-2\gamma_0}\right|\lesssim k^{-1}\left( 1 + |\log |\xi| | \right)$, which is integrable whenever $|\xi|$ is bounded and it is bounded whenever $|\xi|$ is uniformly bounded from above and below, and the routine application of Propositions \ref{prop:sobolevregdecomG},  \ref{prop:sobolevregpartialdecomG} and Corollary \ref{cor:sobolevregpartialdecomG}  yield the desired $X_k^1$ bounds. In turn, the $H^1_k(I_3)$ estimates follow form the local $X_k^1$ bounds and the entanglement inequality, we omit the details.
\end{proof}

\subsection{Operator estimates for mild stratifications}
In this subsection we obtain the main estimates on the operators $R_{m,k,\ep}^\pm$ for $m=0,1,2$ when the spectral value ranges in the mildly stratified region. We begin with the analogue of Lemma \ref{lemma:R0mapsL2toXk}, whose proof is identical.

\begin{lemma}\label{lemma:R0mapsL2toLXk}
Let $k\geq 1$ and $f\in L^2(\uv, \ov)$. There holds
\begin{align*}
\Vert (R_{0,k,\ep}^\pm f)(\cdot,y_0) \Vert_{LX_k} \lesssim k^{-\frac32}\Vert f \Vert_{L^2(0,2)},
\end{align*}
uniformly in $0<\ep<\ep_*$ and in $y_0\in I_M$.
\end{lemma}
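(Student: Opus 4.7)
The plan is to mirror the proof of Lemma~\ref{lemma:R0mapsL2toXk}, replacing the weak/strong decomposition of the Green's function with the mild-regime decomposition provided by Proposition~\ref{prop:logsobolevregdecomG}, and replacing Corollary~\ref{cor:sobolevregpartialdecomG} by Corollary~\ref{cor:logsobolevregpartialdecomG}. Concretely, for $g_{k,\ep}^\pm(y,y_0) := (R_{0,k,\ep}^\pm f)(y,y_0)$ and $\eta = 2k(y - y_0 \pm i\ep_0)$, Proposition~\ref{prop:logsobolevregdecomG} gives the decomposition
\begin{align*}
g_{k,\ep}^\pm(y,y_0) &= \eta^{\frac12+\gamma_0}\!\!\int_0^2 \!\!\left(\G_{k,\ep}^\pm\right)_\sr(y,y_0,z) f(z)\,\d z + \eta^{\frac12-\gamma_0}\log(\eta)\mathcal{Q}_{\gamma_0}(\eta)\!\!\int_0^2 \!\!\left(\G_{k,\ep}^\pm\right)_\s(y,y_0,z) f(z)\,\d z,
\end{align*}
which is precisely a decomposition of the type required by the $LX_k$ norm. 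I would call $\left(g_{k,\ep}^\pm\right)_\sigma$ the corresponding coefficients and estimate them separately on $I_3(y_0)$ and $I_3^c(y_0)$.

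For the $L^\infty(I_3(y_0))$ bounds, I would split each integral into local and non-local parts and apply Proposition~\ref{prop:logsobolevregdecomG} directly: for the local part, Cauchy--Schwarz combined with $\|\left(\G_{k,\ep}^\pm\right)_\sigma(y,y_0,\cdot)\|_{LX_k^1} \lesssim k^{-1}$ (which in particular controls the $L^2(I_3(y_0))$ norm after accounting for $|I_3(y_0)|\approx k^{-1}$) yields $k^{-3/2}\|f\|_{L^2}$; for the non-local part, Cauchy--Schwarz against the $L^2(I_3^c(y_0))$ bound $\lesssim k^{-3/2}$ gives the same rate. The $L^\infty$ bound on $\eta^{-\frac12}\partial_y\left(g_{k,\ep}^\pm\right)_\sigma$ is handled identically by invoking Corollary~\ref{cor:logsobolevregpartialdecomG} in place of Corollary~\ref{cor:sobolevregpartialdecomG}.

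For the $H_k^1(I_3^c(y_0))$ half of the $LX_k$ norm, the argument does not involve the local regularity structures at all: since
\begin{align*}
\textsc{RTG}_{k,\ep}^\pm g_{k,\ep}^\pm(y,y_0) = f(y),
\end{align*}
the entanglement inequality from Lemma~\ref{lemma:entanglementRTG} gives
\begin{align*}
\Vert g_{k,\ep}^\pm \Vert_{H^1_k(I_3^c(y_0))} \lesssim \frac{1}{k^2}\Vert f \Vert_{L^2} + \Vert g_{k,\ep}^\pm \Vert_{L^2(I_2^c(y_0)\cap I_3(y_0))},
\end{align*}
and the second term is controlled by $k^{-2}\|f\|_{L^2}$ using the already-obtained local $L^\infty$ bound together with the observation that $\|\eta^{\frac12\pm\gamma_0}\log(\eta)\mathcal{Q}_{\gamma_0}(\eta)\|_{L^2(I_2^c(y_0)\cap I_3(y_0))}\lesssim k^{-\frac12}$ uniformly in $y_0\in I_M$, thanks to $|\mathcal{Q}_{\gamma_0}(\eta)|\lesssim 1$ from \eqref{eq:boundQgamma} and the integrability of $|\log(\eta)|^2$ on a bounded annulus.

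The only point that differs from the weak/strong setting is the appearance of the logarithmic correction $\log(\eta)\mathcal{Q}_{\gamma_0}(\eta)$; this is precisely where I expect to pay attention, but since the $LX_k$ norm is designed to absorb this factor and $\mathcal{Q}_{\gamma_0}$ is uniformly bounded on the relevant range (see~\eqref{eq:boundQgamma}), no essential new work is needed and the conclusion $\Vert g_{k,\ep}^\pm \Vert_{LX_k}\lesssim k^{-3/2}\|f\|_{L^2}$ follows.
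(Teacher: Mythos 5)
Your proposal is correct and matches the paper's intended argument: the text explicitly introduces this lemma as "the analogue of Lemma~\ref{lemma:R0mapsL2toXk}, whose proof is identical," and you have simply spelled out that transposition, substituting Proposition~\ref{prop:logsobolevregdecomG} for Proposition~\ref{prop:sobolevregdecomG} and Corollary~\ref{cor:logsobolevregpartialdecomG} for Corollary~\ref{cor:sobolevregpartialdecomG}, with the uniform bound $|\mathcal{Q}_{\gamma_0}|\lesssim1$ absorbing the only new feature (the logarithmic factor). No genuinely different route, no gap.
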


Since for $y_0\in I_M$ we have that $\mu_0\leq \tfrac14$, we now obtain a simplified and stronger version of Lemma \ref{lemma:R1mapsPLinfL2toXk} that also covers Lemma \ref{lemma:R1mapsXktoXk}, Lemma \ref{lemma:R1mapsH1toXk} in view of Lemma \ref{lemma:LinfH1bound} and Lemma \ref{lemma:R2mapsC1XktoXk}.

\begin{lemma}\label{lemma:R1mapsLinfL2toLXk}
Let $k\geq 1$ and $y_0\in I_M$. Let $f\in L^\infty (I_3)\cap L^2(I_3^c)$. There holds
\begin{align*}
\Vert (R_{1,k,\ep}^\pm f)(\cdot,y_0) \Vert_{LX_k} \lesssim k^{-1} \Vert f \Vert_{L^\infty(I_3)} +  k^{-\frac12} \Vert f \Vert_{L^2(I_3^c)},
\end{align*}
uniformly both in $0<\ep<\ep_*$, and in $y_0\in I_M$. In particular, if $f=f(\cdot,y_0) \in LZ_k$ then 
\begin{align*}
\Vert (R_{1,k,\ep}^\pm f)(\cdot,y_0) \Vert_{LX_k} \lesssim k^{-1} \Vert f \Vert_{LZ_k},
\end{align*}
while if $f\in H_k^1$ then there holds
\begin{align*}
\Vert (R_{1,k,\ep}^\pm f)(\cdot,y_0) \Vert_{LX_k} \lesssim k^{-\frac12} \Vert f \Vert_{H_k^1},
\end{align*}
uniformly both in $0<\ep<\ep_*$, and in $y_0\in I_M$.
\end{lemma}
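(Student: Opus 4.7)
The plan is to mirror the weak/strong argument of Lemmas \ref{lemma:R1mapsPLinfL2toXk}, \ref{lemma:R1mapsXktoXk} and \ref{lemma:R1mapsH1toXk}, replacing the root decomposition $\eta^{\frac12\pm\gamma_0}$ of the Green's function by the logarithmic decomposition from Proposition \ref{prop:logsobolevregdecomG}, namely $\eta^{\frac12+\gamma_0}$ and $\eta^{\frac12-\gamma_0}\log(\eta)\Q_{\gamma_0}(\eta)$. Set $g_{k,\ep}^{\pm}:=R_{1,k,\ep}^{\pm}f$ and decompose it along the two $LX_k$ components $(g_{k,\ep}^{\pm})_\sr$ and $(g_{k,\ep}^{\pm})_\s$ inherited from Proposition \ref{prop:logsobolevregdecomG}. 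I would then split every spatial integral into the local piece on $I_3(y_0)$ and its complement. The point that makes the mild regime \emph{simpler} than the weak one is that $\gamma_0$ is uniformly close to $0$, so the potentially divergent prefactor $(1-2\mu_0)^{-1}$ that forced the $\mathrm{P}(y_0)$ weight in Lemma \ref{lemma:R1mapsPLinfL2toXk} never appears here; the residual singularity at $\mu_0=0$ is the one that the $\log\Q_{\gamma_0}$ factor is precisely designed to absorb.

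For the first, coarsest estimate, note that $|\xi|^{-1}$ times either of the two factors $\xi^{\frac12+\gamma_0}$ or $\xi^{\frac12-\gamma_0}\log(\xi)\Q_{\gamma_0}(\xi)$ is integrable on $I_3(y_0)$ with $L^1$ norm $\lesssim k^{-1}$: this follows because $|\mathcal{Q}_{\gamma_0}(\xi)|\lesssim 1$ uniformly (as used inside the proof of Proposition \ref{prop:logsobolevregdecomG}, see \eqref{eq:boundQgamma}) and $|\log(\xi)|\cdot|\xi|^{-\frac12-\mu_0}\in L^1(I_3(y_0))$ uniformly in $y_0\in I_M$. Combining this with the local $L^\infty$ bounds on $(\mathcal{G}_{k,\ep}^{\pm})_\sigma$ and the derivative bounds from Corollary \ref{cor:logsobolevregpartialdecomG} yields the $LX_k^1$-part of $\|g_{k,\ep}^{\pm}\|_{LX_k}$ bounded by $k^{-1}\|f\|_{L^\infty(I_3)}$. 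The non-local piece is handled verbatim as in Lemma \ref{lemma:R1mapsPLinfL2toXk}: Cauchy--Schwarz with the $H_k^1(I_3^c)$ Green's function bound of Proposition \ref{prop:logsobolevregdecomG} gives the $k^{-\frac12}\|f\|_{L^2(I_3^c)}$ contribution, and the global $H_k^1(I_3^c)$ control of $g_{k,\ep}^{\pm}$ follows by writing the equation $\mathrm{RTG}_{k,\ep}^{\pm}g_{k,\ep}^{\pm}=f/(v(\cdot)-v(y_0)\pm i\ep)$, applying the entanglement inequality (Lemma \ref{lemma:entanglementRTG}), and absorbing the resulting local term with a small-$\delta$ Young's inequality, exactly as in Lemma \ref{lemma:R1mapsPLinfL2toXk}, noticing once again that the $\mathrm{P}(y_0)^2$ weight is not needed because $\gamma_0$ stays away from the critical threshold $1/2$.

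For the $LZ_k$ refinement I would run the argument of Lemma \ref{lemma:R1mapsXktoXk}. Rewrite $(v(z)-v(y_0)\pm i\ep)^{-1}=(v'(y_0))^{-1}(z-y_0\pm i\ep_0)^{-1}+\mathrm{V}_{1,\ep}^{\pm}(z,y_0)$, so that $g_{k,\ep}^{\pm}$ decomposes into $g_{1,k,\ep}^{\pm}-g_{2,k,\ep}^{\pm}$, where $g_{2,k,\ep}^{\pm}=R_{0,k,\ep}^{\pm}[\mathrm{V}_{1,\ep}^{\pm}f]$ is controlled via Lemma \ref{lemma:R0mapsL2toLXk}. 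For the local piece of $g_{1,k,\ep}^{\pm}$, substitute the $LZ_k$-decomposition of $f$ and the log-decomposition of the Green's function from Proposition \ref{prop:logsobolevregdecomG}, and integrate by parts against the primitive of $\xi^{-1}\cdot(\log\text{-singular factor})$. The key algebraic identity is that any of the products $\xi^{\frac12\pm\gamma_0}\cdot\xi^{\frac12\mp\gamma_0}\log(\xi)\Q_{\gamma_0}(\xi)\cdot\xi^{-1}$ has an explicit primitive with bounded boundary contribution on $\partial I_3(y_0)$; this is where the logarithmic/$\mathcal{Q}_{\gamma_0}$ structure pays off, replacing the awkward $(\zeta^{1-2\mu_0}-1)/(1-2\mu_0)$ trick used in Lemma \ref{lemma:R1mapsXktoXk}. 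For the $H_k^1$ refinement, follow Lemma \ref{lemma:R1mapsH1toXk}: integrate by parts in $z$ so that a $\partial_z f$ term appears and use Lemma \ref{lemma:LinfH1bound} together with Propositions \ref{prop:logsobolevregdecomG}, \ref{prop:logsobolevregpartialdecomG} and Corollary \ref{cor:logsobolevregpartialdecomG}.

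The main obstacle, and the only place where a genuine check is needed beyond syntactic transcription, is verifying that the integration by parts in the $LZ_k$ step behaves as expected: one must show that $\log(\xi)\Q_{\gamma_0}(\xi)$ remains uniformly in $L^2(I_3(y_0))$ after differentiation in $z$ produces an extra $\xi^{-1}$, and that the boundary terms at $\partial I_3(y_0)$ (where $|\xi|\approx 3\beta$) are bounded uniformly in $\gamma_0$ near $0$. Both points rely on the uniform bound $|\Q_{\gamma_0}(\xi)|\lesssim 1$ and on the square-integrability of $\log(\xi)$ on $I_3(y_0)$, which are already recorded in the proof of Proposition \ref{prop:logsobolevregdecomG}; once these are in hand, the rest of the proof is a mechanical transcription of the weak/strong arguments into the logarithmically corrected function spaces.
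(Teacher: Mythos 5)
Your treatment of the basic estimate matches the paper: both split $R_{1,k,\ep}^\pm f$ into a local piece (handled with the log-decomposition of Proposition~\ref{prop:logsobolevregdecomG} and the uniform integrability of $\xi^{-\frac12\pm\gamma_0}$ and $\xi^{-\frac12-\gamma_0}\log(\xi)\Q_{\gamma_0}(\xi)$ for $|\gamma_0|\le\tfrac14$), an error piece $R_{0,k,\ep}^\pm[\mathrm{V}_{1,\ep}^\pm f]$ bounded by Lemma~\ref{lemma:R0mapsL2toLXk}, and a non-local piece controlled by Cauchy--Schwarz and the entanglement inequality.

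Where you over-engineer is the two refinements. You propose to re-run the integration-by-parts machinery of Lemmas~\ref{lemma:R1mapsXktoXk} and~\ref{lemma:R1mapsH1toXk}, but the paper's ``In particular'' is meant literally: both refinements are immediate corollaries of the first bound, with no integration by parts at all. Because $\mu_0\le\tfrac14$ on $I_M$, the factors $\eta^{\frac12+\gamma_0}$ and $\eta^{\frac12-\gamma_0}\log(\eta)\Q_{\gamma_0}(\eta)$ appearing in the $LZ_k$-decomposition of $f$ are uniformly bounded on $I_3(y_0)$, so $\Vert f\Vert_{L^\infty(I_3)}\lesssim\Vert f\Vert_{LZ_k^1}$; combined with $k^{\frac12}\Vert f\Vert_{L^2(I_3^c)}\lesssim\Vert f\Vert_{LZ_k}$ (built into the definition), the $LZ_k$ bound drops out. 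Similarly, $\Vert f\Vert_{L^\infty(I_3)}\lesssim k^{\frac12}\Vert f\Vert_{H_k^1}$ from Lemma~\ref{lemma:LinfH1bound} and $\Vert f\Vert_{L^2(I_3^c)}\le\Vert f\Vert_{H_k^1}$ give the $H_k^1$ case. The whole reason Lemmas~\ref{lemma:R1mapsXktoXk} and~\ref{lemma:R1mapsH1toXk} required integration by parts (producing the $(\zeta^{1-2\mu_0}-1)/(1-2\mu_0)$ factor) was that $\mu_0$ could be arbitrarily close to $\tfrac12$ in $I_W$, making $\xi^{-2\mu_0}$ nearly non-integrable and the naive $L^\infty$ bound lossy; in $I_M$ that obstruction disappears, and with it the need for the heavier argument. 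Your proof would go through, but the observation you credit to $\log\Q_{\gamma_0}$ ``absorbing the residual singularity'' is really what makes the trivial corollary work, and you should use it as such rather than re-deriving the result by integration by parts.
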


\begin{proof}
For $g_{k,\ep}^\pm(y,y_0) = R_{1,k,\ep}^\pm(y,y_0)$, we have
\begin{align*}
g_{k,\ep}^\pm(y,y_0) &= \frac{2k}{v'(y_0)}\int_{I_3(y_0)} \G_{k,\ep}^\pm(y,y_0,z)\frac{f(z)}{\xi}\d z - \int_0^2\G_{k,\ep}^\pm(y,y_0,z)f(z) \mathrm{V}_{1,\ep}^\pm(z,y_0) \d z \\
&\quad + \frac{2k}{v'(y_0)}\int_{I_3^c(y_0)} \G_{k,\ep}^\pm(y,y_0,z)\frac{f(z)}{\xi}\d z \\
&= g_{1,k,\ep}^\pm(y,y_0) + g_{2,k,\ep}^\pm(y,y_0) + g_{3,k,\ep}^\pm.
\end{align*}
We can use Lemma \ref{lemma:R0mapsL2toLXk} to bound $\Vert g_{2,k,\ep}^\pm \Vert_{LX_k}\lesssim k^{-\frac32}\Vert f \Vert_{L^2}$. Similarly, Proposition \ref{prop:logsobolevregpartialdecomG}, Corollary \ref{cor:logsobolevregpartialdecomG} and the entanglement inequality show $\Vert g_{3,k,\ep}^\pm \Vert_{LX_k}\lesssim k^{-1}\Vert f \Vert_{L^\infty(I_3(y_0))} + k^{-\frac12}\Vert f \Vert_{L^2(I_3^c(y_0))}$. For $g_{1,k,\ep}^\pm$, we use the regularity structure of $\G_{k,\ep}^\pm$ to write
\begin{align*}
g_{1,k,\ep}^\pm(y,y_0) &=  \frac{2k}{v'(y_0)}\int_{I_3(y_0)} \left( \G_{k,\ep}^\pm\right)_\sr(y,y_0,z)f(z) \xi^{-\frac12+\gamma_0} \d z \\
&\quad + \frac{2k}{v'(y_0)}\int_{I_3(y_0)} \left( \G_{k,\ep}^\pm\right)_\s(y,y_0,z)f(z) \xi^{-\frac12+\gamma_0} \log(\xi) \Q_{\gamma_0}(\xi)\d z.
\end{align*}
The lemma then follows once we use Proposition \ref{prop:logsobolevregpartialdecomG}, Corollary \ref{cor:logsobolevregpartialdecomG} together with the observations that $|\Q_{\gamma_0}(\xi)|$ is bounded and $x^{-\frac12+\gamma_0}\log(x)$ is integrable, with uniform bounds in $|\gamma_0|\leq \tfrac14$.
\end{proof}

\subsection{Estimates for the error operator}
We are now able to bound the main error term in the reduction of the Taylor-Goldstein operator.

\begin{proposition}\label{prop:TmapsXktoXk}
Let $k\geq 1$, $y_0\in I_S\cup I_W$ and $f\in Z_k$. There holds
\begin{align*}
\Vert (T_{k,\ep}^\pm f)(\cdot,y_0) \Vert_{X_k} \lesssim k^{-1}\Vert f \Vert_{Z_k},
\end{align*}
uniformly in $0<\ep<\ep_*$ and $y_0\in I_S\cup I_W$. Similarly, for $y_0\in I_M$ and $f \in LZ_k$, we have
\begin{align*}
\Vert (T_{k,\ep}^\pm f)(\cdot,y_0) \Vert_{LX_k} \lesssim k^{-1}\Vert f \Vert_{LZ_k},
\end{align*}
uniformly in $0<\ep<\ep_*$ and $y_0\in I_M$.
\end{proposition}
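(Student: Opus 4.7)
The plan is to split $T_{k,\ep}^\pm f = T_1 f + T_2 f + T_3 f$ according to the three summands in the definition of $\scE_{k,\ep}^\pm$, and to reduce each piece to the operator estimates for $R_{0,k,\ep}^\pm$, $R_{1,k,\ep}^\pm$ and $R_{2,k,\ep}^\pm$ already proved in this section. The contribution $T_1 f = R_{1,k,\ep}^\pm[-v''(\cdot) f]$ is immediately controlled by Corollary \ref{cor:R1mapsCXktoXk} (respectively Lemma \ref{lemma:R1mapsLinfL2toLXk} in the mild regime), since $v''\in C^2$ has uniform bounds by hypothesis $Hv$. For $T_2 f = R_{2,k,\ep}^\pm[(\P(\cdot)-\P(y_0)) f]$, the coefficient $h_2(z,y_0) := \P(z) - \P(y_0)$ belongs to $C^2$ with uniform bounds (by $H\P$) and satisfies $h_2(y_0, y_0) = 0$, so Lemma \ref{lemma:R2mapsC1XktoXk} yields the desired estimate in the weak/strong regime. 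In the mild regime, we instead write $(\P(z) - \P(y_0))/A^2 = \tilde h_2(z,y_0,\ep)/A$ with $\tilde h_2 := (\P(z) - \P(y_0))/A$ uniformly bounded, thanks to $H\P$ and the monotonicity of $v$, and apply Lemma \ref{lemma:R1mapsLinfL2toLXk}.

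The essential difficulty lies with $\scE_3 := \P(y_0)[A^{-2} - B^{-2}]$, where $A := v(z)-v(y_0)\pm i\ep$ and $B := v'(y_0)(z-y_0)\pm i\ep$: the coefficient $\P(y_0)$ does not vanish at $z = y_0$, so Lemma \ref{lemma:R2mapsC1XktoXk} cannot be invoked directly. The strategy is to extract the cancellation between $A^{-2}$ and $B^{-2}$ through the algebraic identity
\begin{equation*}
\frac{1}{A^2} - \frac{1}{B^2} = -\frac{v''(z)}{v'(z)^2}\cdot\frac{1}{A} - \partial_z\!\left(\frac{1}{v'(z)\,A} - \frac{1}{v'(y_0)\,B}\right),
\end{equation*}
which follows by combining $\partial_z(1/A) = -v'(z)/A^2$, $\partial_z(1/B) = -v'(y_0)/B^2$ and a product-rule manipulation. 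The first term on the right contributes $R_{1,k,\ep}^\pm$ applied to $-\P(y_0)v''(z)/v'(z)^2 \cdot f$, which is controlled by Corollary \ref{cor:R1mapsCXktoXk} since the coefficient is $C^1$ with uniform bounds. For the second term, after pairing with $\G_{k,\ep}^\pm(y,y_0,z) f(z)$ and integrating by parts in $z$, the Dirichlet boundary conditions $\G_{k,\ep}^\pm(y,y_0,0) = \G_{k,\ep}^\pm(y,y_0,2) = 0$ eliminate the boundary contributions, and the bulk integrand features the coefficient
\begin{equation*}
\Theta(z,y_0) := \frac{1}{v'(z)\,A} - \frac{1}{v'(y_0)\,B} = \frac{1}{v'(z)}\!\left(\frac{1}{A} - \frac{1}{B}\right) + \frac{v'(y_0) - v'(z)}{v'(z)\,v'(y_0)\,B}.
\end{equation*}
Since $B - A = O((z-y_0)^2)$ by Taylor expansion, the first summand is $O(1)$; the second is likewise $O(1)$ thanks to $v'(y_0) - v'(z) = O(z-y_0)$ and $|(z-y_0)/B| \lesssim c_0^{-1}$. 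Hence $\Theta$ is uniformly bounded in $\ep \in (0, \ep_*)$. The resulting integral $\int \partial_z[\G_{k,\ep}^\pm f] \, \Theta \, dz$ is then estimated by expanding $\partial_z[\G_{k,\ep}^\pm f]$ using the regular/singular decomposition of $\G_{k,\ep}^\pm$ from Propositions \ref{prop:sobolevregdecomG} and \ref{prop:sobolevregpartialdecomG} together with that of $f \in Z_k$, which reduces the integral to a combination of $R_0$-, $R_1$- and $R_2$-type operators acting on $f$, all covered by Lemmas \ref{lemma:R0mapsL2toXk}, \ref{lemma:R1mapsXktoXk} and \ref{lemma:R2mapsC1XktoXk}.

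The main obstacle is checking uniformity in $\ep \in (0, \ep_*)$ of the coefficient bounds needed for these rewriting steps, namely that the expansion of $\partial_z[\G_{k,\ep}^\pm f] \cdot \Theta$ produces only terms whose coefficients satisfy the $C^1$- or $C^2$-hypotheses (with uniform bounds) of the mapping lemmas. This is ensured by the uniform estimates $|(z-y_0)/A|, |(z-y_0)/B| \lesssim c_0^{-1}$ coming from $Hv$, together with the Taylor expansion of $v(z) - v(y_0)$ around $y_0$ whose higher-order coefficients are controlled via $Hv$ as well. The mild regime $y_0 \in I_M$ is treated in complete parallel: the same decomposition of $\scE_{k,\ep}^\pm$ applies, and the required $LX_k/LZ_k$ mapping estimates are provided by Lemmas \ref{lemma:R0mapsL2toLXk} and \ref{lemma:R1mapsLinfL2toLXk} together with the logarithmic analogues of the Green's function estimates in Propositions \ref{prop:logsobolevregdecomG} and \ref{prop:logsobolevregpartialdecomG}.
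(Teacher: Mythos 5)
Your splitting of $T_{k,\ep}^\pm$ according to $-\scE_1+\scE_2+\scE_3$ and your treatment of $T_1$, $T_2$ (in both regimes) coincide with the paper's. For $T_3$ you take a genuinely different route. The paper factors
\begin{align*}
\scE_3 \;=\; \pm\,\cJ(y_0)\,\frac{\mathrm{V}^\pm_{0,\ep}(z,y_0)}{v(z)-v(y_0)\pm i\ep},
\end{align*}
with $\mathrm{V}^\pm_{0,\ep}$ a bounded kernel, and applies the $\P(y_0)$-compensated Lemma~\ref{lemma:R1mapsPLinfL2toXk}. That weighted lemma is called upon precisely because $\mathrm{V}^\pm_{0,\ep}f$ need not lie in $Z_k$: $\partial_z\mathrm{V}^\pm_{0,\ep}$ blows up like $(z-y_0)^{-1}$, so multiplication destroys the local regularity $Z_k$ encodes, and the loss is absorbed by $\cJ(y_0)\asymp\P(y_0)$. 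You instead identify an explicit antiderivative $\Theta$ of $A^{-2}-B^{-2}$ modulo a harmless $R_1$ piece with $C^1$ coefficient $v''/v'^2$, then integrate by parts so that the derivative is transferred onto $\G f$ rather than onto $f$ or the kernel. Both approaches harvest the same cancellation $B-A=O((z-y_0)^2)$, hence $A^{-2}-B^{-2}=O((z-y_0)^{-1})$; yours trades the weighted $R_1$ lemma for an IBP and a bounded weight $\Theta$, and it in fact does not even need the $\P(y_0)$ compensation, because differentiating the singular factor $\xi^{\frac12-\gamma_0}$ produces the factor $(1-2\gamma_0)$ that cancels the blowup of $\int_{I_3}|\xi|^{-2\mu_0}\,\mathrm{d}z$ as $\mu_0\to\frac12$. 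One imprecision, though: after the IBP the integral $\int_0^2\partial_z[\G_{k,\ep}^\pm f]\,\Theta\,\mathrm{d}z$ is \emph{not} literally a combination of the operators $R_0$, $R_1$, $R_2$ applied to $f$; the integrand contains $\G\,\partial_z f$ and $\partial_z f_\sigma$ is controlled only in $L^2(I_3(y_0))$ when $f\in Z_k$, so you must split along the decomposition of $\G f$ as in \eqref{eq:partialzGf} and estimate piece by piece with Cauchy--Schwarz and the $(1-2\gamma_0)$ gain, i.e.\ redo the kind of analysis that lives inside the proofs of Lemmas~\ref{lemma:R1mapsXktoXk} and~\ref{lemma:R2mapsXktoXk} rather than cite their statements. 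Once that is written out the argument closes; I only flag that the lemmas as stated do not directly apply.
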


\begin{proof}
We recall the error operator $\scE_{k,\ep}^\pm$ and we consider
\begin{align*}
\scE_{1,k,\ep}^\pm f(y,y_0) &:= \frac{v''(y)}{v(y)-v(y_0)\pm i\ep}f(y,y_0) , \\
\scE_{2,k,\ep}^\pm f(y,y_0)&:= \frac{\mathrm{P}(y) - \mathrm{P}(y_0)}{(v(y) - v(y_0) \pm i\ep)^2}f(y,y_0) \\
\scE_{3,k,\ep}^\pm f(y,y_0) &:= \left( \frac{\mathrm{P}(y_0)}{(v(y) - v(y_0) \pm i\ep)^2} - \frac{\cJ(y_0)}{(y-y_0\pm i\ep_0)^2}\right)f(y,y_0). 
\end{align*}
for which there holds
\begin{align*}
\scE_{k,\ep}^\pm = -\scE_{1,k,\ep}^\pm + \scE_{2,k,\ep}^\pm + \scE_{3,k,\ep}^\pm.
\end{align*}
For $g_{k,\ep}^\pm(y,y_0)= T_{k,\ep}^\pm(y,y_0)f(y,y_0)$ and $g_{j,k,\ep}^\pm(y,y_0)=R_{0,k,\ep}^\pm\sc E_{j,k,\ep}f(y,y_0)$, we clearly have $g_{k,\ep}^\pm= -g_{1,k,\ep}^\pm + g_{2,k,\ep}^\pm + g_{3,k,\ep}^\pm$. We further note that
\begin{align*}
g_{1,k,\ep}^\pm(y,y_0) = \left( R_{1,k,\ep}v''(\cdot)f(\cdot,y_0)\right)(y),
\end{align*}
while
\begin{align*}
g_{2,k,\ep}^\pm(y,y_0) = \left(R_{2,k,\ep}  h(y,y_0) f(\cdot, y_0) \right) (y), \quad h(y,y_0):= \mathrm{P}(y) - \mathrm{P}(y_0)
\end{align*}
and 
\begin{align*}
g_{3,k,\ep}^\pm(y,y_0) =\cJ(y_0) \left( R_{1,k,\ep} \mathrm{V}_{0,\ep}(\cdot,y_0)f(\cdot, y_0) \right)(y),
\end{align*}
where
\begin{align*}
\mathrm{V}_{0,\ep}^\pm(y,y_0) = \frac{(v(y) - v(y_0) - v'(y_0)(y-y_0))(v(y)-v(y_0) + v'(y_0)(y-y_0) \pm 2 i\ep)}{(v(y) - v(y_0) \pm i\ep)(y-y_0 \pm i\ep_0)^2},
\end{align*}
with $\mathrm{V}_{0,\ep}^\pm(\cdot,y_0)\in L^\infty_y$ uniformly in $y_0$ and $\ep>0$. Since 
\begin{align*}
k^{-1}\Vert f \Vert_{L^\infty(I_3(y_0))} + k^{-\frac12}\Vert f \Vert_{L^2(I_3^c(y_0))} \lesssim k^{-1}\Vert f \Vert_{Z_k},
\end{align*}
the $X_k$ estimates follows once we apply Corollary \ref{cor:R1mapsCXktoXk} to $g_{1,k,\ep}^\pm$, Lemma \ref{lemma:R2mapsC1XktoXk} to $g_{2,k,\ep}^\pm$ and Lemma \ref{lemma:R1mapsPLinfL2toXk} to $g_{3,k,\ep}^\pm$. Instead, for the $LX_k$ estimates we now write
\begin{align*}
g_{2,k,\ep}^\pm(y,y_0) = \left( R_{1,k,\ep}^\pm h(\cdot,y_0) f(\cdot, y_0) \right)(y), \quad h(y,y_0) = \frac{\P(y) - \P(y_0)}{v(y) - v(y_0)\pm i\ep}\in L^\infty_{y,y_0}.  
\end{align*}
and we use Lemma \ref{lemma:R1mapsLinfL2toLXk} for all $g_{j,k\ep}^\pm$, with $j=1,2,3$.
\end{proof}

\section{The homogeneous Taylor-Goldstein equation}\label{sec:homTG}

In this section we study solutions to the homogeneous Taylor-Goldstein equation
\begin{equation}\tag{hTG}\label{eq:homTG}
\l( \p_y^2 - k^2 -\frac{v''(y)}{v(y)-v(y_0)\pm i\ep}+\frac{\P(y)}{(v(y)-v(y_0)\pm i\ep)^2}\r) \varphi=0.
\end{equation}
Our arguments are by now standard and follow closely those presented in \cite{WZZ18} for the 2D Euler equations. Inspired by the Euler index formula, we define
\begin{equation}\label{eq:defphisigma}
\begin{split}
\phi_{\sr,k,\ep}^\pm(y,y_0) &:= (v(y) - v(y_0) \pm i\ep)^{\frac12+\gamma_0}\phi_{\sr,1,k,\ep}^\pm(y,y_0) , \\
\phi_{\s,k,\ep}^\pm(y,y_0) &:= (v(y) - v(y_0) \pm i\ep)^{\frac12-\gamma_0}\phi_{\s,1,k,\ep}^\pm(y,y_0)
\end{split}
\end{equation}
where we recall that $\gamma_0 = \sqrt{\frac14 - \cJ(y_0)}$. Then, $\phi_{\sr,k,\ep}^\pm$ and $\phi_{\s,k,\ep}^\pm$ are solutions to \eqref{eq:homTG} if $ \phi_{\sr,1,k,\ep}^\pm$ and $ \phi_{\s,1,k,\ep}^\pm$ are solutions to 
\begin{equation}\label{eq:homTGphir1}
\partial_y \left( (v(y) - v(y_0) \pm i\ep)^{1+2\gamma_0}\partial_y \phi_{\sr,1,k,\ep}^\pm \right) - k^2 \mathrm{F}_{\sr,k,\ep}^\pm(y,y_0)(v(y) - v(y_0) \pm i\ep_0)^{2\gamma_0}\phi_{\sr,1,k,\ep}^\pm = 0
\end{equation}
and
\begin{equation}\label{eq:homTGphis1}
\partial_y \left( (v(y) - v(y_0) \pm i\ep)^{1-2\gamma_0}\partial_y \phi_{\s,1,k,\ep}^\pm \right) - k^2 \mathrm{F}_{\s,k,\ep}^\pm(y,y_0)(v(y) - v(y_0) \pm i\ep_0)^{-2\gamma_0}\phi_{\s,1,k,\ep}^\pm = 0,
\end{equation}
respectively. Here, we have defined
\begin{equation}\label{eq:defFsr}
\mathrm{F}_{\sr,k,\ep}^\pm(y,y_0) := v(y) - v(y_0) \pm i\ep - \frac{(v'(y))^2}{k^2}\frac{\cJ(y) - \cJ(y_0)}{v(y) - v(y_0) \pm i\ep} +  \frac{1-2\gamma_0}{2k^2}v''(y)
\end{equation}
and
\begin{equation}\label{eq:defFs}
\mathrm{F}_{\s,k,\ep}^\pm(y,y_0) := v(y) - v(y_0) \pm i\ep - \frac{(v'(y))^2}{k^2}\frac{\cJ(y) - \cJ(y_0)}{v(y) - v(y_0) \pm i\ep} + \frac{1+2\gamma_0}{2k^2}v''(y).
\end{equation}
In order to show the existence of solutions to \eqref{eq:homTGphir1} and \eqref{eq:homTGphis1}, we define the operators
\begin{equation}\label{eq:defcTr1}
\cT_{\sr,1,k,\ep}^\pm \phi (y,y_0) = \frac{1}{(v(y) - v(y_0)\pm i\ep)^{1+2\gamma_0}} \int_{y_0}^y \mathrm{F}_{\sr,k,\ep}^\pm(z,y_0) (v(z) - v(y_0) \pm i\ep)^{2\gamma_0}\phi(z,y_0) \d z
\end{equation}
and similarly
\begin{equation}\label{eq:defcTs1}
\cT_{\s,1,k,\ep}^\pm \phi (y,y_0) = \frac{1}{(v(y) - v(y_0)\pm i\ep)^{1-2\gamma_0}} \int_{y_0}^y \mathrm{F}_{\s,k,\ep}^\pm(z,y_0) (v(z) - v(y_0) \pm i\ep)^{-2\gamma_0}\phi(z,y_0) \d z
\end{equation}
Moreover, for
\begin{align*}
\cT_{0}\phi(y,y_0) := \int_{y_0}^y \phi(z,y_0) \d z
\end{align*}
we further define
\begin{align*}
\cT_{\sr,k,\ep}^\pm := \cT_0 \circ \cT_{\sr,1,k,\ep}^\pm, \quad \cT_{\s,k,\ep}^\pm := \cT_0 \circ \cT_{\s,1,k,\ep}^\pm.
\end{align*}
Let $y,y_0\in[0,2]$  and $0<\ep<\ep_*$. For $A\geq 1$ and a function $f=f_\ep(y,y_0)$ we define the space
\begin{align*}
\Vert f \Vert_{Y} := \sup_{y\in[0,2]}\sup_{0<\ep<\ep_0} \left| \frac{f_\ep(y,y_0)}{\cosh(A(y-y_0))} \right|.
\end{align*}
The next Lemma shows the mapping properties of $\cT_{\sigma,k,\ep}^\pm$ with respect to the space $Y$.

\begin{lemma}\label{lemma:cTmapsYtoY}
Let $k\geq 1$ and $f\in Y$. Then,
\begin{align*}
 \sup_{y_0\in [\vartheta_1,\vartheta_2]}\Vert \cT_0 f \Vert_Y \leq \frac{\Vert f \Vert_Y}{A}, \quad \sup_{y_0\in [\vartheta_1,\vartheta_2]}\Vert \cT_{\sr,1,k,\ep}^\pm f \Vert_Y \lesssim \Vert f \Vert_Y,
\end{align*}
and hence $ \sup_{y_0\in [\vartheta_1,\vartheta_2]}\Vert \cT_{\sr,k,\ep}^\pm f \Vert_Y \lesssim \frac{\Vert f \Vert_Y}{A}$. Moreover, for $y_0\in(\vartheta_1,\vartheta_2)$ we have
\begin{align*}
\Vert \cT_{\s,1,k,\ep}^\pm f \Vert_Y \lesssim \frac{\Vert f \Vert_Y}{1-2\gamma_0}
\end{align*}
and thus $\Vert \cT_{\s,k,\ep}^\pm f \Vert_Y \lesssim \frac{\Vert f \Vert_Y}{A(1-2\gamma_0)}$.
\end{lemma}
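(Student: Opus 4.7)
The plan is a direct computation using the definition of the norm on $Y$ together with the monotonicity of $\cosh$. For $\cT_0$, since $|f(z,y_0)| \leq \Vert f\Vert_Y\cosh(A(z-y_0))$, one has $|\cT_0 f(y,y_0)| \leq \Vert f\Vert_Y\frac{\sinh(A|y-y_0|)}{A}$; dividing by $\cosh(A(y-y_0))$ and using $\sinh\leq \cosh$ yields the first bound with constant $1/A$. The composition identity $\cT_{\sigma,k,\ep}^\pm = \cT_0 \circ \cT_{\sigma,1,k,\ep}^\pm$ then reduces the remaining two claims to the $\cT_{\sigma,1,k,\ep}^\pm$ estimates, combined with this first step.

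For $\cT_{\sigma,1,k,\ep}^\pm$, the strategy is to bound the singular factors explicitly. Hypothesis H$v$ gives $c_0|y-y_0|\leq |v(y)-v(y_0)|\leq C_0|y-y_0|$, and writing $\gamma_0=\mu_0+i\nu_0$ with $\mu_0\nu_0=0$, $\mu_0\in[0,\tfrac12]$, and $\nu_0$ uniformly bounded (since $\cJ\in L^\infty$), the complex powers satisfy $|w_\ep^{\alpha\pm 2\gamma_0}|\asymp |w_\ep|^{\alpha\pm 2\mu_0}$ uniformly, as the imaginary part of the exponent contributes only a bounded factor $e^{\mp 2\nu_0\arg w_\ep}$ with $|\arg w_\ep|\leq \pi$. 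Here $w_\ep := v(y)-v(y_0)\pm i\ep$. Combining these bounds with the pointwise estimate $|\mathrm{F}_{\sigma,k,\ep}^\pm(z,y_0)|\lesssim |z-y_0|+k^{-2}$, which follows from $\cJ,v''\in C^0$ and the definitions \eqref{eq:defFsr}--\eqref{eq:defFs}, together with the monotonicity $\cosh(A(z-y_0))\leq \cosh(A(y-y_0))$ for $z$ between $y_0$ and $y$, one obtains
\begin{align*}
|\cT_{\sr,1,k,\ep}^\pm f(y,y_0)| &\lesssim \frac{\Vert f\Vert_Y\cosh(A(y-y_0))}{|y-y_0|^{1+2\mu_0}} \int_{y_0}^y |z-y_0|^{2\mu_0}(|z-y_0|+k^{-2}) \d z, \\
|\cT_{\s,1,k,\ep}^\pm f(y,y_0)| &\lesssim \frac{\Vert f\Vert_Y\cosh(A(y-y_0))}{|y-y_0|^{1-2\mu_0}} \int_{y_0}^y |z-y_0|^{-2\mu_0}(|z-y_0|+k^{-2}) \d z.
\end{align*}

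Evaluating the $\sr$-integral yields a bound of order $|y-y_0|+(1+2\mu_0)^{-1}k^{-2}$, uniformly bounded on $[0,2]$, and hence $\Vert \cT_{\sr,1,k,\ep}^\pm f\Vert_Y\lesssim \Vert f\Vert_Y$ uniformly in $y_0\in[\vartheta_1,\vartheta_2]$. The $\s$-integral equals $\tfrac{|y-y_0|^{2-2\mu_0}}{2-2\mu_0} + \tfrac{|y-y_0|^{1-2\mu_0}}{(1-2\mu_0)k^2}$; after dividing by $|y-y_0|^{1-2\mu_0}$, the worst contribution is a constant multiple of $(1-2\mu_0)^{-1}$, which coincides with $(1-2\gamma_0)^{-1}$ up to a bounded complex factor when $\gamma_0=i\nu_0$. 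Convergence of the $\s$-integral requires $\mu_0<\tfrac12$, i.e., $\cJ(y_0)>0$, consistent with the restriction $y_0\in(\vartheta_1,\vartheta_2)$ in the statement. Composition with the $\cT_0$ estimate from the first step then produces the factor $1/A$ in $\cT_{\sr,k,\ep}^\pm$ and $\cT_{\s,k,\ep}^\pm$.

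The main technical points to watch are: (i) the uniformity of the bounds across the transition between the weakly stratified regime ($\gamma_0=\mu_0$) and the strongly stratified one ($\gamma_0=i\nu_0$), which rests on the uniform boundedness of $\nu_0$ and of $\arg w_\ep$; and (ii) the sharp degeneracy of the $\s$-bound as $y_0\to \vartheta_1$ or $y_0\to\vartheta_2$, encoded precisely in the prefactor $(1-2\gamma_0)^{-1}$, which forces the restriction to the open interval $(\vartheta_1,\vartheta_2)$ and mirrors the loss of regularity of the singular homogeneous solution $\phi_{\s,k,\ep}^\pm$ in the fragile regime highlighted in Section~\ref{sec:fragile}.
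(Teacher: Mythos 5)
Your route is genuinely different from the paper's. For $\cT_{\s,1,k,\ep}^\pm$ the paper integrates by parts against the $\cosh$ weight and extracts the $(1-2\gamma_0)^{-1}$ factor from a boundary term, using only the crude bound $\Vert F_{\s,k,\ep}\Vert_{L^\infty}\lesssim 1$; you instead evaluate the power integral directly, compensating with the sharper pointwise estimate $|F_{\sigma,k,\ep}|\lesssim |z-y_0|+k^{-2}$ (you should also keep the $\ep$ contribution coming from the leading term $v(z)-v(y_0)\pm i\ep$ of $F_\sigma$, but since $\ep<\ep_0\lesssim 1$ this does not change the outcome). For the $\s$-branch your substitutions are sound: $|w_\ep(z)|\geq c_0|z-y_0|$ gives $|w_\ep(z)|^{-2\mu_0}\leq (c_0|z-y_0|)^{-2\mu_0}$, and $|w_\ep(y)|^{1-2\mu_0}\geq (c_0|y-y_0|)^{1-2\mu_0}$, so both replacements go the right way and the power integral converges precisely when $\mu_0<\tfrac12$, recovering the $(1-2\mu_0)^{-1}$ degeneracy.

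There is, however, a genuine gap in the displayed $\sr$-bound. You claim
$\frac{\displaystyle |w_\ep(z)|^{2\mu_0}}{\displaystyle |w_\ep(y)|^{1+2\mu_0}}\lesssim\frac{\displaystyle |z-y_0|^{2\mu_0}}{\displaystyle |y-y_0|^{1+2\mu_0}}$,
but replacing $|w_\ep(z)|^{2\mu_0}$ by $|z-y_0|^{2\mu_0}$ in the \emph{numerator} is a lower bound, not an upper bound: for fixed $\ep>0$, $|y-y_0|$ and $|z-y_0|\to 0$, the left side tends to $\ep^{2\mu_0}/|w_\ep(y)|^{1+2\mu_0}>0$ while the right side vanishes, so no implicit constant can rescue the inequality. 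This is exactly where the paper exploits the monotonicity of $v$ instead: $|w_\ep(z)|\leq |w_\ep(y)|$ for $z$ between $y_0$ and $y$, hence
$\frac{\displaystyle |w_\ep(z)|^{2\mu_0}}{\displaystyle |w_\ep(y)|^{1+2\mu_0}}\leq\frac{1}{\displaystyle |w_\ep(y)|}\lesssim\frac{1}{\displaystyle |y-y_0|}$,
and then your refined $|F_{\sr}|\lesssim |z-y_0|+\ep+k^{-2}$ yields $\Vert\cT_{\sr,1,k,\ep}^\pm f\Vert_Y\lesssim\Vert f\Vert_Y$ as before (in fact the paper's proof shows you do not even need the refined $F$ bound in this branch). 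Note that no analogous monotonicity trick is available for the $\s$-branch, since the power is negative; that is precisely why the singular integral and the $(1-2\gamma_0)^{-1}$ loss are unavoidable there.
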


\begin{proof}
The estimate for $T_0$ is straightforward, see \cite{WZZ18}. We prove the estimate of $\cT_{\sigma,1,k,\ep}^\pm$ for $y_0\in I_W$, since for $y_0\in I_S$ we have $\gamma_0\in i\R$ and the proof is similar and easier. Firstly, for $\sigma = \sr$,
\begin{align*}
\left| \frac{\cT_{\sr,1,k,\ep}(y,y_0) }{\cosh(A(y-y_0))}\right| &\lesssim \frac{\Vert f \Vert_Y}{\cosh(A(y-y_0))}\frac{1}{|v(y) - v(y_0) \pm i\ep|}\int_{y_0}^y \cosh(A(z-y_0)) \d z \\
&\leq \Vert f \Vert_{Y}\frac{\tanh (A(y-y_0))}{A(y-y_0)} \\
&\lesssim \Vert f \Vert_Y
\end{align*}
because $\Vert F_{\sr,k,\ep}(\cdot,y_0) \Vert_{L^\infty}\lesssim 1$ uniformly in $y_0$ and $\ep>0$ and $|v(z)-v(y_0) \pm i\ep|\leq |v(y) - v(y_0)\pm i\ep|$ for all $z\in [y_0,y]$ since $v(\cdot)$ is monotone increasing. Secondly, for $\sigma = \s$ we argue similarly, now integrating by parts once to obtain
\begin{align*}
\left| \frac{\cT_{\s,1,k,\ep}(y,y_0) }{\cosh(A(y-y_0))}\right| &\lesssim \frac{\Vert f \Vert_Y}{\cosh(A(y-y_0))}\frac{1}{|v(y)-v(y_0)\pm i\ep|^{1-2\gamma_0}}\int_{y_0}^y\cosh(A(z-y_0)) (z-y_0)^{-2\gamma_0} \d z \\
&\lesssim \frac{\Vert f \Vert_Y}{1-2\gamma_0} + \frac{\Vert f \Vert_Y}{\cosh(A(y-y_0))}\frac{1}{1-2\gamma_0} \int_{y_0}^y A\sinh(A(z-y_0)) \frac{(z-y_0)^{1-2\gamma_0}}{|v(y) - v(y_0) \pm i\ep|}\d z \\
&\lesssim \frac{\Vert f \Vert_Y}{1-2\gamma_0}.
\end{align*}
With this, the lemma is proved.
\end{proof}

\begin{proposition}\label{prop:existencesolphisigma}
Let $y_0\in [\vartheta_1,\vartheta_2]$ and $\ep>0$. Then there exist a unique solution $\phi_{\sr,1,k,\ep}^\pm\in Y$ to \eqref{eq:homTGphir1}  such that
\begin{align*}
\phi_{\sr,1,k,\ep}^\pm(y_0,y_0) = 1, \quad \partial_y\phi_{\sr,1,k,\ep}^\pm(y_0,y_0) = 0
\end{align*}
with $\Vert \phi_{\sr,1,k,\ep}^\pm \Vert_Y \leq C_{\sr}$, for some constant $C_{\sr}>0$ independent of $y_0$ and $\ep>0$.  Moreover, for $y_0\in (\vartheta_1,\vartheta_2)$ there also exist a unique solution $\phi_{\s,1,k,\ep}^\pm\in Y$ to \eqref{eq:homTGphis1} with 
\begin{align*}
\phi_{\s,1,k,\ep}^\pm(y_0,y_0) = 1, \quad \partial_y\phi_{\s,1,k,\ep}^\pm(y_0,y_0) = 0
\end{align*}
and $\Vert \phi_{\sr,1,k,\ep}^\pm \Vert_Y \leq \frac{C_{\s}}{1-2\mu_0}$ for some constant $C_\s>0$ independent of $y_0$ and $\ep>0$. 
\end{proposition}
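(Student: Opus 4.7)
The plan is to recast each second-order ODE as a Volterra integral equation and apply Banach's fixed point theorem on the weighted Banach space $(Y,\Vert\cdot\Vert_Y)$. Imposing the prescribed initial data $\phi(y_0,y_0)=1$ and $\partial_y\phi(y_0,y_0)=0$ and integrating \eqref{eq:homTGphir1} twice from $y_0$ to $y$ produces
$$\phi_{\sr,1,k,\ep}^\pm = 1 + k^2\,\cT_{\sr,k,\ep}^\pm\phi_{\sr,1,k,\ep}^\pm,$$
while an identical two-step integration of \eqref{eq:homTGphis1}---which is legitimate since $(v(z)-v(y_0)\pm i\ep)^{-2\gamma_0}$ is integrable at $z=y_0$ whenever $\mu_0<\tfrac12$, i.e.\ for every $y_0\in(\vartheta_1,\vartheta_2)$---yields
$$\phi_{\s,1,k,\ep}^\pm = 1 + k^2\,\cT_{\s,k,\ep}^\pm\phi_{\s,1,k,\ep}^\pm.$$

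Next I would solve these Volterra equations via Banach's theorem using the bounds of Lemma \ref{lemma:cTmapsYtoY}, which give $\Vert k^2\cT_{\sr,k,\ep}^\pm f\Vert_Y\lesssim (k^2/A)\Vert f\Vert_Y$ and $\Vert k^2\cT_{\s,k,\ep}^\pm f\Vert_Y\lesssim k^2/[A(1-2\mu_0)]\cdot\Vert f\Vert_Y$. Choosing the weight parameter $A$ sufficiently large---proportional to $k^2$ in the regular case and to $k^2/(1-2\mu_0)$ in the singular case---forces $\Vert k^2\cT_{\sigma,k,\ep}^\pm\Vert_{Y\to Y}\leq 1/2$, so Banach's theorem produces unique solutions $\phi_{\sigma,1,k,\ep}^\pm\in Y$, and the Neumann series together with $\Vert 1\Vert_Y\leq 1$ yields the claimed bounds with constants $C_\sr,C_\s>0$ independent of $k$, $\ep$, and $y_0$. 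The initial condition $\phi_{\sigma,1,k,\ep}^\pm(y_0,y_0)=1$ is automatic on evaluating the Volterra equation at $y=y_0$, and $\partial_y\phi_{\sigma,1,k,\ep}^\pm(y_0,y_0)=0$ follows from $\partial_y\phi_{\sigma,1,k,\ep}^\pm = k^2\cT_{\sigma,1,k,\ep}^\pm\phi_{\sigma,1,k,\ep}^\pm$ together with a Taylor expansion near $y=y_0$: since $\ep>0$ regularises the denominator $(v(y)-v(y_0)\pm i\ep)^{1\mp 2\gamma_0}$ away from zero, the quotient defining $\cT_{\sigma,1,k,\ep}^\pm\phi$ vanishes at least linearly in $(y-y_0)$.

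The main obstacle is that the operator norm of $\cT_{\s,1,k,\ep}^\pm$ carries an extra $(1-2\gamma_0)^{-1}$ factor (absent from $\cT_{\sr,1,k,\ep}^\pm$), reflecting the coalescence of the two fundamental roots $(v-v_0\pm i\ep)^{\frac12\pm\gamma_0}$ as $\gamma_0\to\tfrac12$. This forces the balance $A\sim k^2/(1-2\mu_0)$ in the contraction estimate and so appears in the final bound $\Vert\phi_{\s,1,k,\ep}^\pm\Vert_Y\leq C_\s/(1-2\mu_0)$, restricting the $\s$-branch to the open interval $(\vartheta_1,\vartheta_2)$ whereas the $\sr$-branch extends continuously to the closed interval $[\vartheta_1,\vartheta_2]$ (including the endpoints where $\cJ$ vanishes). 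For $y_0\in I_S$ the exponent $\gamma_0=i\nu_0$ is purely imaginary, all complex powers $(\cdot)^{\pm\gamma_0}$ have modulus one, and the same arguments apply verbatim with $\mu_0=0$, so no degeneration occurs in either branch.
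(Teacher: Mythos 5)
Your proposal follows exactly the paper's route: recast \eqref{eq:homTGphir1} and \eqref{eq:homTGphis1} as the Volterra fixed-point equations $\phi_{\sigma,1,k,\ep}^\pm = 1 + k^2\cT_{\sigma,k,\ep}^\pm\phi_{\sigma,1,k,\ep}^\pm$, invoke Lemma~\ref{lemma:cTmapsYtoY}, invert $I - k^2\cT_{\sigma,k,\ep}^\pm$ by Neumann series for $A$ large, and read off the stated bounds. That part is correct.

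There is, however, a genuine slip in the singular branch. You propose taking $A \sim k^2/(1-2\mu_0)$ to force the contraction constant below $1/2$. But $A$ enters the definition of the norm $\Vert f\Vert_Y = \sup_{y,\ep}\lvert f(y,y_0)\rvert/\cosh(A(y-y_0))$, so raising $A$ changes the space in which the estimate is stated. With $A \sim k^2/(1-2\mu_0)$ the Neumann series gives $\Vert\phi_\s\Vert_{Y_A}\leq 2$, which when unwound pointwise is $\lvert\phi_{\s,1,k,\ep}^\pm(y,y_0)\rvert \leq 2\cosh\bigl(Ck^2\,\lvert y-y_0\rvert/(1-2\mu_0)\bigr)$; this degenerates \emph{exponentially} in $1/(1-2\mu_0)$, which is strictly weaker than the multiplicative bound $C_\s/(1-2\mu_0)$ in the fixed space $Y_A$ that the proposition asserts and that the Corollary after it relies on. The intended argument keeps $A$ fixed (of order $k^2$, so that $k^2\cT_{\sr}$ is a contraction uniformly on $[\vartheta_1,\vartheta_2]$); the degenerate factor $1/(1-2\mu_0)$ then arises from the operator-norm bound $\Vert k^2\cT_{\s,k,\ep}^\pm\Vert_{Y\to Y}\lesssim (1-2\mu_0)^{-1}$ fed into $\Vert\phi_\s\Vert_Y \leq (1-\Vert k^2\cT_{\s,k,\ep}^\pm\Vert)^{-1}$. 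Note also that the proposition only claims independence of $y_0$ and $\ep$; your assertion of $k$-independence for the constants overshoots what is stated (and is in tension with the $\lesssim_k$ notation in the ensuing Corollary).

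A small additional remark: with the fixed-$A$ reading, the naive Neumann bound in the $\s$ branch only yields a finite constant when $1-2\mu_0$ is bounded away from zero; the paper's one-line "for $A$ large enough" glosses over this. Your proposal inherits that ambiguity but does not resolve it, and the scaling $A \sim k^2/(1-2\mu_0)$ actually makes the resulting quantitative estimate worse, not better.
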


\begin{proof}
Integrating \eqref{eq:homTGphir1} and \eqref{eq:homTGphis1}, we see that
\begin{align}\label{eq:fixedpointphisigma}
\phi_{\sr,1,k,\ep}^\pm (y,y_0) = 1 +k^2 \cT_{\sr,k,\ep}^\pm \phi_{\sr,1,k,\ep}^\pm (y,y_0), \quad \phi_{\sr,1,k,\ep}^\pm (y,y_0) = 1 + k^2 \cT_{\sr,k,\ep}^\pm \phi_{\sr,1,k,\ep}^\pm (y,y_0).
\end{align}
Now, from Lemma \ref{lemma:cTmapsYtoY} we know that $I-k^2\cT_{\sigma,k,\ep}$ is an invertible operator in $Y$, for $A$ large enough. Hence, we define
\begin{align}\label{eq:geometricphisigma}
\phi_{\sigma,1,k,\ep}^\pm := (I- k^2 \cT_{\sigma,k,\ep}^\pm)^{-1} 1\in Y,
\end{align}
with the bounds $\Vert \phi_{\sr,1,k,\ep}^\pm (y,y_0) \Vert_Y \leq C_\sr$ uniformly for all $y_0\in[\vartheta_1,\vartheta_2]$ and $\Vert \phi_{\s,1,k,\ep}^\pm (y,y_0) \Vert_Y \leq \frac{C_\s}{1-2\gamma_0}$ for all $y_0\in(\vartheta_1,\vartheta_2)$, for some $C_\sr, C_\s>0$.
\end{proof}

Once we have established the existence of $\phi_{\sigma,1,k,\ep}^\pm$ and its uniform bounds in $Y$, we next state two fundamental properties that will be used throughout the manuscript.

\begin{corollary}
Let $k\geq 1$ and $\ep>0$. Then,
\begin{align*}
\sup_{y_0\in [\vartheta_1,\vartheta_2]} \Vert  \partial_y \phi_{\sigma,1,k,\ep}^\pm(\cdot,y_0) \Vert_{L^\infty_y(0,2)} \lesssim_k 1,
\end{align*}
In particular, $\left| \phi_{\sigma,1,k,\ep}^\pm(\cdot,y_0) - 1 \right| \lesssim_k {|y-y_0|}$ for all $y_0\in[\vartheta_1, \vartheta_2]$. Similarly, for $y_0\in (\vartheta_1,\vartheta_2)$ there holds
\begin{align*}
 \Vert  \partial_y \phi_{\sigma,1,k,\ep}^\pm(\cdot,y_0) \Vert_{L^\infty_y(0,2)} \lesssim_k \frac{1}{1-2\mu_0},
\end{align*}
and thus $\left| \phi_{\s,1,k,\ep}^\pm(\cdot,y_0) - 1 \right| \lesssim_k \frac{|y-y_0|}{1-2\mu_0}$ as well.
\end{corollary}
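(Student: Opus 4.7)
The plan is to derive a fixed-point identity for $\partial_y \phi_{\sigma,1,k,\ep}^\pm$ by integrating the first-order form of the ODEs \eqref{eq:homTGphir1}--\eqref{eq:homTGphis1}, and then apply the mapping estimates of Lemma \ref{lemma:cTmapsYtoY} together with the uniform $Y$-norm bounds of Proposition \ref{prop:existencesolphisigma}. First I rewrite \eqref{eq:homTGphir1} in divergence form and integrate from $y_0$ to $y$; since $\ep > 0$ the weight $(v(z)-v(y_0)\pm i\ep)^{1+2\gamma_0}$ does not vanish along the path of integration, and the boundary term at $z=y_0$ is killed by the condition $\partial_y\phi_{\sr,1,k,\ep}^\pm(y_0,y_0)=0$ from Proposition \ref{prop:existencesolphisigma}. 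Dividing through by $(v(y)-v(y_0)\pm i\ep)^{1+2\gamma_0}$ and recognising the integral as the operator $\cT_{\sr,1,k,\ep}^\pm$ defined in \eqref{eq:defcTr1} yields the pointwise identity
\begin{equation*}
\partial_y\phi_{\sr,1,k,\ep}^\pm(y,y_0) = k^2\,\cT_{\sr,1,k,\ep}^\pm\phi_{\sr,1,k,\ep}^\pm(y,y_0).
\end{equation*}
An identical computation for \eqref{eq:homTGphis1} (using \eqref{eq:defcTs1}) gives $\partial_y\phi_{\s,1,k,\ep}^\pm = k^2\,\cT_{\s,1,k,\ep}^\pm \phi_{\s,1,k,\ep}^\pm$.

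Next I combine the operator bounds of Lemma \ref{lemma:cTmapsYtoY} with the existence bounds of Proposition \ref{prop:existencesolphisigma}. In the regular case $\sigma=\sr$, $y_0\in[\vartheta_1,\vartheta_2]$, Lemma \ref{lemma:cTmapsYtoY} gives $\Vert \cT_{\sr,1,k,\ep}^\pm f\Vert_Y \lesssim \Vert f\Vert_Y$, while Proposition \ref{prop:existencesolphisigma} provides $\Vert \phi_{\sr,1,k,\ep}^\pm\Vert_Y \leq C_\sr$; hence $\Vert \partial_y\phi_{\sr,1,k,\ep}^\pm\Vert_Y \lesssim k^2$. In the singular case $\sigma=\s$, $y_0\in(\vartheta_1,\vartheta_2)$, the sharper bound $\Vert \cT_{\s,1,k,\ep}^\pm f\Vert_Y \lesssim \Vert f\Vert_Y/(1-2\gamma_0)$ combined with $\Vert \phi_{\s,1,k,\ep}^\pm\Vert_Y \lesssim (1-2\mu_0)^{-1}$ yields the $Y$-norm bound of $\partial_y\phi_{\s,1,k,\ep}^\pm$ with the advertised $(1-2\mu_0)^{-1}$ factor (the $k$-dependent constants are absorbed into $\lesssim_k$).

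The conversion from the $Y$-norm to the $L^\infty_y(0,2)$-norm is immediate: since $y,y_0\in[0,2]$ we have $\cosh(A(y-y_0))\leq \cosh(2A)$, where $A$ is the (finite, $k$-dependent) constant fixed in Proposition \ref{prop:existencesolphisigma}. Thus $\Vert f\Vert_{L^\infty_y(0,2)} \leq \cosh(2A)\,\Vert f\Vert_Y$, and the factor $\cosh(2A)$ is absorbed in the implicit constant $\lesssim_k$. Finally, the pointwise estimates on $\phi_{\sigma,1,k,\ep}^\pm-1$ follow by the fundamental theorem of calculus: since $\phi_{\sigma,1,k,\ep}^\pm(y_0,y_0)=1$,
\begin{equation*}
|\phi_{\sigma,1,k,\ep}^\pm(y,y_0)-1| \leq \Vert \partial_y\phi_{\sigma,1,k,\ep}^\pm(\cdot,y_0)\Vert_{L^\infty_y(0,2)}\,|y-y_0|,
\end{equation*}
producing the bounds $\lesssim_k |y-y_0|$ and $\lesssim_k |y-y_0|/(1-2\mu_0)$, respectively.

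The only mildly delicate point is ensuring that the boundary contribution at $z=y_0$ in the integrated ODE genuinely vanishes, which rests on the prescribed initial conditions of Proposition \ref{prop:existencesolphisigma} and on the nondegeneracy of the weight $(v(z)-v(y_0)\pm i\ep)^{1\pm 2\gamma_0}$ for $\ep>0$; apart from this, the argument is a direct application of the estimates already established.
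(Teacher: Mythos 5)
Your proposal follows the paper's own proof essentially step for step: the key identity $\partial_y\phi_{\sigma,1,k,\ep}^\pm = k^2\cT_{\sigma,1,k,\ep}^\pm\phi_{\sigma,1,k,\ep}^\pm$, then Lemma \ref{lemma:cTmapsYtoY} and Proposition \ref{prop:existencesolphisigma}, then the fundamental theorem of calculus. Your derivation of the identity by integrating the first-order ODE is correct, though a touch longer than simply differentiating the fixed-point relation $\phi_{\sigma,1,k,\ep}^\pm = 1 + k^2\cT_{\sigma,k,\ep}^\pm\phi_{\sigma,1,k,\ep}^\pm$ (since $\cT_{\sigma,k,\ep}^\pm = \cT_0\circ\cT_{\sigma,1,k,\ep}^\pm$ and $\partial_y\cT_0 f = f$); the conversion from the $Y$-norm to an $L^\infty_y$-norm is correctly handled.

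One point to watch: when you write that combining $\Vert\cT_{\s,1,k,\ep}^\pm f\Vert_Y\lesssim (1-2\gamma_0)^{-1}\Vert f\Vert_Y$ with $\Vert\phi_{\s,1,k,\ep}^\pm\Vert_Y\lesssim (1-2\mu_0)^{-1}$ "yields the advertised $(1-2\mu_0)^{-1}$ factor", the naive composition actually produces $(1-2\gamma_0)^{-1}(1-2\mu_0)^{-1}$, which for $y_0$ in the weakly stratified region (where $\gamma_0=\mu_0$ is real) is $(1-2\mu_0)^{-2}$. You should either identify the additional cancellation that reduces this by one power, or acknowledge that the argument as stated only yields $(1-2\mu_0)^{-2}$. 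The paper's own one-line proof has the same opacity on this point, so you are in good company, but it is worth not asserting the sharper exponent without tracking where it comes from.
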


\begin{proof}
We note that $\partial_y \phi_{\sigma,1,k,\ep}^\pm = k^2\cT_{\sigma,1,k,\ep}^\pm \phi_{\sigma,1,k,\ep}^\pm$ so that the result follows from Lemma \ref{lemma:cTmapsYtoY} and Proposition \ref{prop:existencesolphisigma}. 
\end{proof}

\subsection{Continuity of the homogeneous solutions}
Let $\g>0$, we define
\begin{align*}
\mathcal{Y}_I := [0,2]\times (\vartheta_1,\vartheta_2)\times [0,1]\times (0,\g], \quad \mathcal{Y}_{\overline{I}} := [0,2]\times [\vartheta_1,\vartheta_2] \times [0,1]\times [0,\g],
\end{align*}
we address the continuity of the solutions $\phi_{\s,1,k,\ep}^\pm(y,y_0)$ and $\phi_{\sr,1,k,\ep}^\pm(y,y_0)$ in the spaces $\mathcal{Y}_I$ and $\mathcal{Y}_{\overline{I}}$, respectively, where we further record the dependence on the physical gravity $\g >0$.
\begin{proposition}\label{prop:continuityphi}
Let $k\geq 1$. Then, 
\begin{align*}
\phi_{\sr,1,k}^\pm(y,y_0,\ep,\tilde\g) := \phi_{\sr,1,k,\ep,\tilde\g}^\pm(y,y_0)\in C(\mathcal{Y}_{\overline{I}} )
\end{align*}
and
\begin{align*}
\phi_{\s,1,k}^\pm(y,y_0,\ep,\tilde\g) := \phi_{\s,1,k,\ep,\tilde\g}^\pm(y,y_0)\in C(\mathcal{Y}_{{I}}  )
\end{align*}
\end{proposition}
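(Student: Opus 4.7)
The plan is to derive the continuity from the Neumann series representation \eqref{eq:geometricphisigma}, together with the uniform bounds in $Y$ established in Proposition \ref{prop:existencesolphisigma}. Writing
\begin{equation*}
\phi_{\sigma,1,k,\ep,\tilde\g}^\pm(y,y_0) = \sum_{n=0}^\infty k^{2n}\left( \cT_{\sigma,k,\ep,\tilde\g}^\pm \right)^n \mathbf{1}(y,y_0),
\end{equation*}
Lemma \ref{lemma:cTmapsYtoY} furnishes a geometric factor of $A^{-1}$ per iteration for $\sigma=\sr$ (uniformly on $\mathcal{Y}_{\overline{I}}$) and of $(A(1-2\mu_0))^{-1}$ for $\sigma=\s$ (uniformly on compact subsets of $\mathcal{Y}_I$, on which $\mu_0$ is bounded away from $\tfrac12$). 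Choosing $A$ large enough so that the series converges absolutely and uniformly, continuity of the sum reduces to continuity of each finite partial sum, and hence to proving that if $\phi\in C(\mathcal{Y})$ with $\Vert \phi\Vert_Y\leq M$, then $\cT_{\sigma,k,\ep,\tilde\g}^\pm\phi\in C(\mathcal{Y})$.

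For $\phi_\sr$ on $\mathcal{Y}_{\overline{I}}$, the integrand of $\cT_{\sr,1,k,\ep,\tilde\g}^\pm$ is
\begin{equation*}
\mathrm{F}_{\sr,k,\ep,\tilde\g}^\pm(z,y_0)\, (v(z)-v(y_0)\pm i\ep)^{2\gamma_0(y_0,\tilde\g)}\,\phi(z,y_0),
\end{equation*}
divided by $(v(y)-v(y_0)\pm i\ep)^{1+2\gamma_0}$. Since H$\P$ and H$v$ give $\gamma_0\in C(\overline{[\vartheta_1,\vartheta_2]}\times[0,\g])$ with $\Re(2\gamma_0)\geq 0$, the power factor is jointly continuous in $(z,y_0,\ep,\tilde\g)$ and uniformly bounded on compact subsets. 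Together with the explicit form of $\mathrm{F}_{\sr,k,\ep,\tilde\g}^\pm$ and the monotonicity of $v$, dominated convergence applied to the integral (with $|\phi(z,y_0)|\lesssim \cosh(A(z-y_0))$ as dominating factor) yields continuity of $\cT_{\sr,1,k,\ep,\tilde\g}^\pm\phi$. Composition with $\cT_0$ then transfers continuity to $\cT_{\sr,k,\ep,\tilde\g}^\pm\phi$, and the $Y$ bound is preserved as in Lemma \ref{lemma:cTmapsYtoY}.

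For $\phi_\s$ on $\mathcal{Y}_I$, the argument is analogous but must accommodate the singular weight $(v(z)-v(y_0)\pm i\ep)^{-2\gamma_0}$. In the weak regime $\gamma_0=\mu_0\in[0,\tfrac12)$ and in the strong regime $\gamma_0=i\nu_0$ is purely imaginary, so in both cases the modulus of this weight is bounded by $|v(z)-v(y_0)|^{-2\mu_0}$, which is locally integrable in $z$ uniformly over $\ep\in[0,1]$ and uniformly over $y_0$ in compact subsets of $(\vartheta_1,\vartheta_2)$. Moreover, since $v$ is strictly monotone, the sign of $v(z)-v(y_0)$ is constant on each side of $z=y_0$; the prescription $\pm i\ep$ selects an unambiguous branch of the power, so as $\ep\to 0^+$ the weight converges pointwise (for $z\neq y_0$) to $|v(z)-v(y_0)|^{-2\gamma_0}\e^{\mp i\pi\gamma_0\chi_{\{z<y_0\}}}$, and dominated convergence again delivers continuity of $\cT_{\s,1,k,\ep,\tilde\g}^\pm\phi$ on $\mathcal{Y}_I$.

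The main obstacle is the uniform control at $\ep=0^+$ of the singular integral for $\phi_\s$, in particular showing that the branch-selection rule gives a genuinely continuous (not merely bounded) map in the parameter $\ep$ down to $\ep=0$. This is handled by splitting the $z$-integral into $[y_0,y_0+\delta]\cup[y_0+\delta,y]$ (and analogously on the other side): on the $\delta$-neighborhood of $y_0$, the $L^1$ bound from $(1-2\mu_0)^{-1}\delta^{1-2\mu_0}$ makes the contribution arbitrarily small for small $\delta$ uniformly in $\ep$, while on the complement the integrand is continuous up to $\ep=0$ by a direct inspection. This is precisely why the statement excludes $y_0\in\{\vartheta_1,\vartheta_2\}$ for $\phi_{\s,1,k}^\pm$: at those endpoints $\mu_0\to\tfrac12$ and the factor $(1-2\mu_0)^{-1}$ in Lemma \ref{lemma:cTmapsYtoY} blows up, so the series representation no longer converges uniformly and continuity can fail.
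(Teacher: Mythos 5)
Your high-level plan matches the paper exactly: both proofs use the Neumann series $\phi_{\sigma,1,k,\ep,\tilde\g}^\pm=\sum_{n\geq0}(k^2\cT_{\sigma,k,\ep,\tilde\g}^\pm)^n\mathbf{1}$ with its uniform convergence from Lemma \ref{lemma:cTmapsYtoY}, and reduce the claim to showing that $\cT_{\sigma,k,\ep,\tilde\g}^\pm$ maps the relevant space of continuous functions to itself. The divergence is in how that reduction is executed, and there is a genuine gap in your version.

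You try to prove continuity of the \emph{inner} operator $\cT_{\sr,1,k,\ep,\tilde\g}^\pm\phi$ by dominated convergence applied to $\int_{y_0}^y F_\sr(z,y_0)(v(z)-v(y_0)\pm i\ep)^{2\gamma_0}\phi(z,y_0)\,\d z$, and then transfer continuity through $\cT_0$. Two problems. First, the domain of integration $[y_0,y]$ moves with the parameters, so a direct invocation of dominated convergence is not legitimate without either a change of variables that fixes the domain or an indicator-function argument; you gesture at neither. Second, and more seriously, $\cT_{\sr,1}\phi$ carries the prefactor $(v(y)-v(y_0)\pm i\ep)^{-1-2\gamma_0}$, which blows up as $(y,\ep)\to(y_0,0)$. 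Dominated convergence applied to the numerator says nothing about the quotient. The cancellation that makes $\cT_{\sr,1}\phi$ bounded and continuous at the diagonal (a limit like $F_\sr(y_0,y_0)\phi(y_0,y_0)/[(1+2\gamma_0)v'(y_0)]$) requires tracking the matching powers of $(y-y_0)$ in numerator and denominator, which your argument does not do. The same gap recurs for $\phi_\s$: your $\delta$-splitting controls the $\ep\to0^+$ limit of the singular integrand, but again operates on the numerator only and never addresses the dividing factor $(v(y)-v(y_0)\pm i\ep)^{-1+2\gamma_0}$ at $y=y_0$.

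The paper sidesteps both issues at once by applying the substitution $w=y_0+t(y-y_0)$, $z=y_0+st(y-y_0)$ to the \emph{composed} operator $\cT_{\sr}=\cT_0\circ\cT_{\sr,1}$, which turns it into an integral over the fixed square $(s,t)\in[0,1]^2$ with kernels $\mathcal{K}_{0,\sr}$, $\mathcal{K}_{1,\sr}$ that absorb the singular prefactor into the ratio $\bigl(\tfrac{v(y_0+st(y-y_0))-v(y_0)\pm i\ep}{v(y_0+t(y-y_0))-v(y_0)\pm i\ep}\bigr)^{1+2\gamma_0}$ and are then uniformly bounded on $\mathcal{Y}_{\overline{I}}$. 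After that, dominated convergence is immediate. For $\cT_\s$ the same substitution produces the factor $(t|y-y_0|)^{1-2\gamma_0}$, which is bounded on $\mathcal{Y}_I$ and integrable in $t$, again making dominated convergence clean. You could salvage your plan by observing that $\cT_{\sr,1}\phi$ is merely bounded (via Lemma \ref{lemma:cTmapsYtoY}) and continuous away from the measure-zero diagonal $\{y=y_0,\,\ep=0\}$, which is all you need for $\cT_0(\cT_{\sr,1}\phi)$ to be continuous; but as written, your assertion of continuity of $\cT_{\sr,1}\phi$ itself is unproven. Your closing remark explaining why the endpoints $y_0\in\{\vartheta_1,\vartheta_2\}$ are excluded for $\phi_\s$ is correct and a nice observation.
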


\begin{proof}
We note that
\begin{align*}
\phi_{\sigma,1,k,\ep,\tilde\g}^\pm := (I- k^2 \cT_{\sigma,k,\ep,\tilde\g}^\pm)^{-1} 1 = \sum_{n\geq 0}  \left(k^2 \cT_{\sigma,k,\ep,\tilde\g}^\pm\right)^n 1,
\end{align*}
where convergence is uniform. Hence, the lemma follows if we show that
\begin{align*}
\cT_{\sr,k}^\pm :=\cT_{\sr,k,\ep,\tilde\g}^\pm : C(\mathcal{Y}_{\overline{I}} ) \rightarrow C(\mathcal{Y}_{\overline{I}} ) \quad \text{ and }\quad  \cT_{\s,k}^\pm :=\cT_{\s,k,\ep,\tilde\g}^\pm : C(\mathcal{Y}_{{I}} ) \rightarrow C(\mathcal{Y}_{{I}} ).
\end{align*}
To that purpose, we first argue for $\cT_{\sr,k}^\pm$, with $y_0\in [\vartheta_1,\vartheta_2]$, $\ep\in[0,1]$ and $\tilde\g\geq 0$ and we observe that
\begin{equation*}
\left( \cT_{\sr, k,\ep,\tilde\g}^\pm f\right) (y,y_0) = \int_0^1\int_0^1 \left( \mathcal{K}_{0,\sr,\ep,\tilde\g}^\pm(s,t,y,y_0) + \mathcal{K}_{1,\sr,k,\ep,\tilde\g}(s,t,y,y_0) \right)f(y_0 + st(y-y_0),y_0) \,\d s\, \d t,
\end{equation*}
where
\begin{equation*}
\mathcal{K}_{0,\sr,\ep,\tilde\g}^\pm(s,t,y,y_0)  := t(y-y_0)^2\left( \frac{v(y_0 + st(y-y_0)) - v(y_0) \pm i\ep}{v(y_0 + t(y-y_0)) - v(y_0) \pm i\ep}\right)^{1+2\gamma_0(\tilde\g)}
\end{equation*}
and
\begin{align*}
\mathcal{K}_{1,\sr,k,\ep,\tilde\g}(s,t,y,y_0) &:= \mathcal{K}_{2,\sr,k,\ep,\tilde\g}^\pm(s,y,y,y_0) \left( \frac{v(y_0 + st(y-y_0)) - v(y_0) \pm i\ep}{v(y_0 + t(y-y_0)) - v(y_0) \pm i\ep}\right)^{2\gamma_0(\tilde\g)}  
\end{align*}
with
\begin{align*}
\mathcal{K}_{2,\sr,k,\ep,\tilde\g}(s,t,y,y_0) &:= \frac{1}{k^2} \frac{t(y-y_0)^2 v''(y_0+st(y-y_0))}{v(y_0+t(y-y_0)) - v(y_0)\pm i\ep}\left( \frac12 - \gamma_0(\tilde\g)\right)  \\
&\quad -\frac{1}{k^2} \frac{t(y-y_0)^2}{v(y_0 + t(y-y_0)) - v(y_0) \pm i\ep} \frac{  \tilde\g\widetilde\cJ(y_0+st(y-y_0)) - \tilde\g\widetilde\cJ(y_0)}{v(y_0 + st(y-y_0)) - v(y_0) \pm i\ep}. 
\end{align*}
In particular, we easily see that 
\begin{align*}
\left| \mathcal{K}_{n,\sr,k,\ep,\tilde\g}(s,t,y,y_0) \right| \lesssim 1,
\end{align*}
for $n=0,1,2$, uniformly for all $(y,y_0,\ep,\tilde\g)\in \mathcal{Y}_{\overline{I}}$ and $s,t\in (0,1)$. Hence, by the Dominated Convergence Theorem we see that $\cT_{\sr,k}^\pm$ maps $C(\mathcal{Y}_{\overline{I}} ) $ to itself.

Regarding $\cT_{\s,k}^\pm$, we define $\mathcal{K}_{n,\s,k,\ep,\g}$ accordingly. We note that $\left| \mathcal{K}_{2,\sr,k,\ep,\tilde\g}(s,t,y,y_0) \right| \lesssim t\tilde\g|y-y_0|$, so that
\begin{align*}
t|y-y_0|\int_0^1 \left| \frac{v(y_0 + st(y-y_0)) - v(y_0) \pm i\ep}{v(y_0 + t(y-y_0)) - v(y_0) \pm i\ep}\right|^{-2\gamma_0(\tilde\g)}   \d s \lesssim (t|y-y_0|)^{1-2\gamma_0(\tilde\g)} \lesssim 1\in L^1_t(0,1)
\end{align*}
and thus we can use the Dominated Convergence Theorem for the term involving $\mathcal{K}_{1,\s,k,\ep,\tilde\g}(s,t,y,y_0)$. On the other hand, we still have $\left| \mathcal{K}_{0,\s,k,\ep,\tilde\g}(s,t,y,y_0) \right| \lesssim 1$ and thus we can use the Dominated Convergence Theorem for this contribution as well. With this, the proof is concluded.
\end{proof}

\subsection{Homogeneous solutions in the mild regime}\label{subsec:mild}
The two homogeneous solutions $\phi_{\sr,k,\ep}^\pm$ and $\phi_{\s,k,\ep}^\pm$ have Wronskian $\mathcal{W}\lbrace \phi_{\sr,k,\ep}^\pm(\cdot,y_0),\, \phi_{\s,k,\ep}^\pm(\cdot,y_0) \rbrace = 2\gamma_0v'(y_0)$ and thus they lose its linear independence as $\gamma_0\rightarrow 0$. Hence, they do not constitute a suitable pair of fundamental solutions with which we can construct solutions to the in-homogeneous equation. Instead, for $y_0\neq \varpi_1, \varpi_2$,  we define
\begin{equation}\label{eq:defphiL}
\phi_{\mathrm{L},k,\ep}^\pm(y,y_0) := \frac{\phi_{\sr,k,\ep}^\pm(y,y_0) - \phi_{\s,k,\ep}^\pm(y,y_0)}{2\gamma_0}
\end{equation}
for which a simple computation shows that now $\mathcal{W}\lbrace \phi_{\sr,k,\ep}^\pm(\cdot,y_0), \phi_{\rL,k,\ep}^\pm(\cdot,y_0) \rbrace = v'(y_0)$, for all $y_0\in I_M$. Moreover,
\begin{align*}
\phi_{\rL,k,\ep}^\pm(y,y_0) &= (v(y) - v(y_0) + i\ep)^{\frac12-\gamma_0} \frac{(v(y) - v(y_0) + i\ep)^{2\gamma_0} -1}{2\gamma_0}\phi_{\s,1,k,\ep}^\pm(y,y_0) \\
&\quad+ (v(y) - v(y_0) + i\ep)^{\frac12+\gamma_0}\frac{\phi_{\sr,1,k,\ep}^\pm(y,c) - \phi_{\s,1,k,\ep}^\pm(y,y_0)}{2\gamma_0} ,
\end{align*}
and for 
\begin{align*}
\phi_{\rL,1,k,\ep}^\pm(y,y_0) := \frac{\phi_{\sr,1,k,\ep}^\pm(y,c) - \phi_{\s,1,k,\ep}^\pm(y,y_0)}{2\gamma_0}, 
\end{align*}
we have that
\begin{equation}\label{eq:fixedpointeqphirL1kep}
\phi_{\rL,1,k,\ep}^\pm(y,y_0) = k^2 \cT_{\s,k,\ep}^\pm \phi_{\rL,1,k,\ep}^\pm(y,y_0) +k^2 \left(\frac{\cT_{\sr,k,\ep}^\pm-T_{\s,k,\ep}^\pm}{2\gamma_0} \right)\phi_{\sr,1,k,\ep}(y,y_0)
\end{equation}
and thus we can write
\begin{equation}\label{eq:phirLformula}
\phi_{\rL,1,k,\ep}^\pm(y,y_0) = k^2\left( I - k^2 \cT_{\s,k,\ep}^\pm\right)^{-1}\left( \frac{\cT_{\sr,k,\ep}^\pm-T_{\s,k,\ep}^\pm}{2\gamma_0}\right)\phi_{\sr,1,k,\ep}^\pm(y,y_0)
\end{equation}
The following results shows the existence of $\phi_{\rL,1,k,\ep}^\pm$.

\begin{proposition}\label{prop:existencephirL1kep}
Let $y_0\in I_M$ and $\ep>0$. Then there exists a solution $\phi_{\rL,1,k,\ep}^\pm\in Y$ to  \eqref{eq:fixedpointeqphirL1kep} with $
\phi_{\rL,1,k,\ep}^\pm(y_0,y_0) = 1$ and  $\partial_y\phi_{\rL,1,k,\ep}^\pm(y_0,y_0) = 0$ such that $\Vert \phi_{\rL,1,k,\ep}^\pm \Vert_Y \leq C$, for some constant $C>0$ independent of $y_0$ and $\ep>0$. 
\end{proposition}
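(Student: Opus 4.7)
The plan is to construct $\phi_{\rL,1,k,\ep}^\pm$ directly via the closed-form expression \eqref{eq:phirLformula}, reducing the proposition to two uniform operator bounds on the space $Y$: invertibility of $I - k^2\cT_{\s,k,\ep}^\pm$ and boundedness of the difference quotient $(\cT_{\sr,k,\ep}^\pm - \cT_{\s,k,\ep}^\pm)/(2\gamma_0)$. The main obstacle, and the only truly new ingredient compared with Proposition~\ref{prop:existencesolphisigma}, is to show that as $\gamma_0 \to 0$ in the mild regime $I_M$ the difference of the two operators acquires a compensating factor of $\gamma_0$, so that the quotient stays bounded uniformly.

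First, I would observe that in $I_M$ we have $|1-2\gamma_0|\geq c > 0$ uniformly (since $|\gamma_0| \leq \gamma_*$ small). Lemma~\ref{lemma:cTmapsYtoY} then gives $\Vert \cT_{\s,k,\ep}^\pm \Vert_{Y\to Y} \leq C/A$ uniformly in $y_0 \in I_M$ and $\ep \in (0,1)$, so choosing $A$ large enough makes $I - k^2\cT_{\s,k,\ep}^\pm$ invertible on $Y$ via a Neumann series, with norm bounded independently of the parameters.

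Next, I would quantify the difference $\cT_{\sr,1,k,\ep}^\pm - \cT_{\s,1,k,\ep}^\pm$. Inspecting \eqref{eq:defcTr1}--\eqref{eq:defcTs1}, the two operators differ both through the exponent $\pm 2\gamma_0$ appearing in the weight $(v(z)-v(y_0)\pm i\ep)^{\pm 2\gamma_0}$ and in the pre-factor $(v(y)-v(y_0)\pm i\ep)^{-1\mp 2\gamma_0}$, and through $\mathrm{F}_{\sr,k,\ep}^\pm - \mathrm{F}_{\s,k,\ep}^\pm = -2\gamma_0 v''(y)/k^2$, see \eqref{eq:defFsr}--\eqref{eq:defFs}. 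The second discrepancy already carries an explicit $\gamma_0$; for the first, I would use the elementary identity
\begin{align*}
\frac{a^{2\gamma_0} - a^{-2\gamma_0}}{2\gamma_0} = 2\log(a) \int_{-1}^{1} a^{2\gamma_0 s}\, ds,
\end{align*}
valid for $a \in \C \setminus (-\infty,0]$, which is uniformly bounded by $C|\log a|$ for $|a|$ bounded and $|\gamma_0| \leq \tilde\delta$. Applying this to $a = v(z) - v(y_0) \pm i\ep$ and $a = v(y) - v(y_0) \pm i\ep$ produces, after elementary algebra, an expression of the form $\log(\cdot)\,Q_{\gamma_0}(\cdot)$ times a smooth factor, with $Q_{\gamma_0}$ uniformly bounded as in \eqref{eq:boundQgamma}. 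The resulting kernel for $(\cT_{\sr,1,k,\ep}^\pm - \cT_{\s,1,k,\ep}^\pm)/(2\gamma_0)$ is thus integrable against $\cosh(A(z - y_0))$ with at most a logarithmic singularity, and the $Y \to Y$ estimate follows by repeating the computation in Lemma~\ref{lemma:cTmapsYtoY} (the extra $|\log(v(z) - v(y_0) \pm i\ep)|$ is harmless inside the integral). Composing with $\cT_0$ gives $(\cT_{\sr,k,\ep}^\pm - \cT_{\s,k,\ep}^\pm)/(2\gamma_0) : Y \to Y$ with norm $O(1/A)$, uniformly in $y_0 \in I_M$, $\ep \in (0,1)$, and $\gamma_0$ small.

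With these two ingredients, setting
\begin{align*}
\phi_{\rL,1,k,\ep}^\pm := k^2(I - k^2 \cT_{\s,k,\ep}^\pm)^{-1}\l(\frac{\cT_{\sr,k,\ep}^\pm - \cT_{\s,k,\ep}^\pm}{2\gamma_0}\r)\phi_{\sr,1,k,\ep}^\pm
\end{align*}
produces an element of $Y$ satisfying the fixed-point equation \eqref{eq:fixedpointeqphirL1kep}, with bound $\Vert \phi_{\rL,1,k,\ep}^\pm \Vert_Y \leq C$ upon invoking $\Vert \phi_{\sr,1,k,\ep}^\pm \Vert_Y \leq C_\sr$ from Proposition~\ref{prop:existencesolphisigma}. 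The normalizations at $y = y_0$ then read off from the fixed-point equation: since $\cT_0$ applied to any function of $Y$ vanishes at $y = y_0$, differentiating the integrals in \eqref{eq:defcTr1}--\eqref{eq:defcTs1} once in $y$ and evaluating at $y = y_0$ yields the claimed pointwise conditions, in exact analogy with the proof of Proposition~\ref{prop:existencesolphisigma} after accounting for the $(2\gamma_0)^{-1}$ renormalization that defines $\phi_{\rL,1,k,\ep}^\pm$.
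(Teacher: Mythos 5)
Your proposal follows the paper's argument exactly: the paper likewise reduces the statement to showing that $(I - k^2\cT_{\s,k,\ep}^\pm)^{-1}$ and the difference quotient $(\cT_{\sr,k,\ep}^\pm - \cT_{\s,k,\ep}^\pm)/(2\gamma_0)$ are both $Y$-bounded uniformly, and extracts the compensating factor of $\gamma_0$ via the same kind of logarithmic integral representation that you use. Two small caveats: first, the paper's computation in Lemma~\ref{lemma:cTmapsYtoY}-style form only gives $O(A^{-1/2})$ for the composed difference operator (the extra $|\log(A|y-y_0|)|$ degrades the $A^{-1}$ gain to $A^{-1/2}$ in the region $|A(y-y_0)|\gtrsim 1$), so your $O(1/A)$ claim is a touch optimistic, although either bound is enough to run the Neumann series. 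Second, and more substantively, your own observation that $\cT_0$ annihilates everything at $y=y_0$, together with the fact that \emph{both} terms on the right of \eqref{eq:fixedpointeqphirL1kep} are of the form $\cT_0\circ(\cdot)$, gives $\phi_{\rL,1,k,\ep}^\pm(y_0,y_0)=0$, not $1$ as you assert at the end; the same value $0$ is forced directly by the definition $\phi_{\rL,1,k,\ep}^\pm = (\phi_{\sr,1,k,\ep}^\pm - \phi_{\s,1,k,\ep}^\pm)/(2\gamma_0)$ since both numerator terms equal $1$ at $y=y_0$. The normalization ``$=1$'' appears to be a typo inherited from the paper's statement; it should read ``$=0$'', and your final sentence should not claim to verify the printed condition since your own reasoning contradicts it.
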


\begin{proof}
The proposition follows from Proposition \ref{prop:existencesolphisigma} once we show that
\begin{align*}
\left \Vert \left(\frac{\cT_{\sr,k,\ep}^\pm-T_{\s,k,\ep}^\pm}{2\gamma_0} \right)\phi_{\sr,1,k,\ep}^\pm \right\Vert_Y \lesssim A^{-\frac12}\Vert \phi_{\sr,1,k,\ep}^\pm \Vert_Y
\end{align*}
for some implicit constant independent of $y_0\in I_M$ and $\ep>0$. To that end, recalling \eqref{eq:defcTr1} and \eqref{eq:defcTs1}, together with \eqref{eq:defFsr} and \eqref{eq:defFs}, we have
\begin{equation}\label{eq:cTrminuscTs}
\begin{split}
&\left(\frac{\cT_{\sr,1,k,\ep}^\pm-T_{\s,1,k,\ep}^\pm}{2\gamma_0} \right)\phi_{\sr,1,k,\ep}^\pm(y,y_0) \\
&=  \frac{1}{v(y) - v(y_0) \pm i\ep}\int_{y_0}^y \mathrm{F}_{\sr,k,\ep}^\pm(y,y_0)  \left(\frac{\left( \frac{v(z) - v(y_0) \pm i\ep}{v(y) - v(y_0) \pm i\ep} \right)^{2\gamma_0} -\left( \frac{v(z) - v(y_0) \pm i\ep}{v(y) - v(y_0) \pm i\ep} \right)^{-2\gamma_0}}{2\gamma_0} \right) \phi_{\sr,1,k,\ep}^\pm(z,y_0) \d z \\
&\quad -\frac{1}{k^2(v(y) - v(y_0) \pm i\ep)}\int_{y_0}^y v''(z) \left( \frac{v(z) - v(y_0) \pm i\ep}{v(y) - v(y_0) \pm i\ep} \right)^{-2\gamma_0}\phi_{\sr,1,k,\ep}^\pm(z,y_0) \d z.
\end{split}
\end{equation}
Moreover, 
\begin{align*}
\left| \frac{\left( \frac{v(z) - v(y_0) \pm i\ep}{v(y) - v(y_0) \pm i\ep} \right)^{2\gamma_0} -\left( \frac{v(z) - v(y_0) \pm i\ep}{v(y) - v(y_0) \pm i\ep} \right)^{-2\gamma_0}}{2\gamma_0} \right| &\lesssim |z-y_0|^{-2\gamma_0}|v(y) - v(y_0) \pm i\ep|^{2\gamma_0} | \log (A |z-y_0|) | \\
&\quad +|z-y_0|^{-2\gamma_0}|v(y) - v(y_0) \pm i\ep|^{2\gamma_0} | \log (A | y-y_0|) |.
\end{align*}
Since $|1-2\gamma_0|$ is uniformly bounded away from zero for $y_0\in I_M$, there holds
\begin{align*}
\left| \int_{y_0}^z \cosh(A(s-y_0)) |s-y_0|^{-2\gamma_0} | \log |s-y_0| | \d s \right| &\lesssim  \cosh(A(z-y_0))|z-y_0|^{1-2\gamma_0}|\log(A|z-y_0|)| \\
&\quad+  \left| \int_{y_0}^z \cosh(A(s-y_0))|s-y_0|^{-2\gamma_0} \d s \right| \\
&\quad + \left| A\int_{y_0}^z \sinh(A(s-y_0)) \d s \right| \\
&\lesssim \cosh(A(z-y_0)) |z-y_0|^{1-2\gamma_0}\left( 1 + |\log (A|z-y_0|) \right)
\end{align*}
and
\begin{align*}
\left| \int_{y_0}^z \cosh(A(s-y_0)) |s-y_0|^{-2\gamma_0} \d s \right| &\lesssim \cosh(A(z-y_0))|z-y_0|^{1-2\gamma_0} \\
&\quad +A  \int_{y_0}^z \sinh(A(s-y_0)) |s-y_0|^{1-2\gamma_0} \d s \\
&\lesssim \cosh(A(z-y_0))|z-y_0|^{1-2\gamma_0}, 
\end{align*}
so that now
\begin{align*}
&\left| \frac{\left(\frac{\cT_{\sr,k,\ep}^\pm-T_{\s,k,\ep}^\pm}{2\gamma_0} \right)\phi_{\sr,1,k,\ep}^\pm(y,y_0)}{\cosh(A(y-y_0))} \right| \\
&\qquad\lesssim \frac{\Vert \phi_{\sr,1,k,\ep}^\pm \Vert_Y}{\cosh(A(y-y_0))}\int_{y_0}^y\frac{\cosh(A(z-y_0))|z-y_0|^{1-2\gamma_0}}{|v(z) - v(y_0) \pm i\ep|^{1-2\gamma_0}}\left(1 + |\log (A|z-y_0|) \right) \d z \\
&\qquad\lesssim \Vert \phi_{\sr,1,k,\ep}^\pm \Vert_Y\frac{\tanh(A(y-y_0))}{A} \left( 1 + |\log (A|z-y_0|)| \right) \\
&\qquad + \frac{\Vert \phi_{\sr,1,k,\ep}^\pm \Vert_Y}{\cosh(A(z-y_0))}\int_{y_0}^y \frac{\tanh(A(z-y_0))}{A(z-y_0)}\cosh(A(z-y_0)) \d z \\
&\qquad\lesssim \Vert \phi_{\sr,1,k,\ep}^\pm \Vert_Y\frac{\tanh(A(y-y_0))}{A} \left( 1 + |\log (A|z-y_0|)| \right),
\end{align*}
for all $y\in[0,2]$ and all $y_0\in I_M$ and $\ep>0$. Further observing that $|\tanh(\zeta)\log|\zeta| |\lesssim 1$ for $|\zeta|\leq 10$,  $|\zeta^{-\frac12} |\log|\zeta||\lesssim 1$ for $|\zeta|>9$ and $\tanh(\zeta)$ is uniformly bounded, we conclude that
\begin{align*}
\left \Vert \left(\frac{\cT_{\sr,k,\ep}^\pm-T_{\s,k,\ep}^\pm}{2\gamma_0} \right)\phi_{\sr,1,k,\ep}^\pm(y,y_0) \right \Vert_Y \lesssim \frac{1}{A^\frac12}\Vert \phi_{\sr,1,k,\ep}^\pm \Vert_Y
\end{align*}
and hence we can proceed as in Proposition \ref{prop:existencesolphisigma} to show the existence and the uniform bounds in $Y$ of the solution $\phi_{\rL,1,k,\ep}^\pm$ to \eqref{eq:fixedpointeqphirL1kep}. With this, the proof is finished.
\end{proof}

We next study the homogeneous solutions in the critical point $y_0=\varpi_1$ or $y_0=\varpi_2$, for which $\cJ(y_0)=\frac14$ and $\gamma_0=0$. While $\phi_{\sr,k,\ep}^\pm(y,y_0)$ is still well defined, $\phi_{\rL,k,\ep}^\pm(y,y_0)$ as given in \eqref{eq:defphiL} is not so. However, for all $y,y_0\in [0,2]$, we now set
\begin{equation}\label{eq:defphiLcrit}
\begin{split}
\phi_{\rL,k,\ep}^\pm(y,y_0) &:= (v(y) - v(y_0) \pm i\ep)^\frac12 \log( v(y) - v(y_0) \pm i\ep) \phi_{0,1,k,\ep}^\pm(y,y_0) \\
&\quad+ (v(y) - v(y_0) \pm i\ep)^\frac12\phi_{\rL,1,k,\ep}^\pm(y,y_0),
\end{split}
\end{equation}
where
\begin{equation}\label{eq:fixedpointeqphiLcrit}
\phi_{\rL,1,k,\ep}^\pm(y,y_0) = k^2 \cT_{0,k,\ep}^\pm\phi_{\rL,1,k,\ep}^\pm(y,y_0) + k^2\cT_{\rL,k,\ep}^\pm\phi_{0,1,k,\ep}^\pm(y,y_0)
\end{equation}
with 
\begin{align*}
\cT_{\rL,k,\ep}^\pm = \cT_0 \circ \cT_{\rL,1,k,\ep}^\pm, \quad \cT_{0,k,\ep}^\pm = \cT_0 \circ \cT_{0,k,1,\ep}^\pm,
\end{align*}
where
\begin{equation}\label{eq:defTrL1kep}
\begin{split}
\cT_{\rL,1,k,\ep}^\pm f(y,y_0) &= \frac{2}{v(y) - v(y_0) \pm i\ep}\int_{y_0}^{y}\mathrm{F}_{0,k,\ep}^\pm(z,y_0) \log \left( \frac{v(z) - v(y_0) \pm i\ep}{v(y) - v(y_0) \pm i\ep} \right) f(z, y_0) \d z \\
&\quad -\frac{1}{k^2}\frac{1}{v(y) - v(y_0) \pm i\ep}\int_{y_0}^y v''(z) f(z,y_0) \d z.
\end{split}
\end{equation}
and
\begin{equation}\label{eq:defT01kep}
\begin{split}
\cT_{0,1,k,\ep}^\pm f(y,y_0) &= \frac{1}{v(y) - v(y_0) \pm i\ep}\int_{y_0}^{y}\mathrm{F}_{0,k,\ep}^\pm(z,y_0)f(z, y_0) \d z 
\end{split}
\end{equation}
where we now denote
\begin{align}
\mathrm{F}_{0,k,\ep}^\pm(y,y_0) :=(v(y) - v(y_0) \pm i\ep) - \frac{(v'(y))^2}{k^2}\frac{\cJ(y)-\frac14}{(v(y) - v(y_0)\pm i\ep)^2} 
\end{align}
and $\phi_{0,1,k,\ep}^\pm(y,y_0)$ is the unique solution to $\phi_{0,1,k,\ep}^\pm(y,y_0) = 1 + k^2 \cT_{0,k,\ep}^\pm\phi_{0,k,\ep}^\pm(y,y_0)$ arguing as in Proposition~\ref{prop:existencesolphisigma} The existence, regularity and continuity in $\ep\in[0,1]$ of $\phi_{\rL,1,k,\ep}^\pm(y,y_0)$ follows from the Neumann series' representation of Proposition \ref{prop:existencesolphisigma} once it is shown that
\begin{align*}
\Vert \cT_{\rL,k,\ep}^\pm f(y,y_0) \Vert_Y \lesssim A^{-\frac12}\Vert f \Vert_Y,
\end{align*}
which is obtained arguing as in the proof of Proposition \ref{prop:existencephirL1kep}. Hence, we have

\begin{proposition}\label{prop:existencephiLcrit}
Let $k\geq 1$ and $\ep>0$. There exists a unique solution $\phi_{\rL,1,k,\ep}^\pm(y,y_0)\in Y$ to \eqref{eq:fixedpointeqphiLcrit} with
\begin{align*}
\phi_{\rL,1,k,\ep}^\pm(y_0,y_0) = 1, \quad 
\partial_y \phi_{\rL,1,k,\ep}^\pm(y_0,y_0) = 0
\end{align*}
and such that $\Vert \phi_{\rL,1,k,\ep}^\pm(y,y_0) \Vert_Y \leq C$, for some constant $C>0$ independent and $\ep>0$. Furthermore, $\phi_{\rL,1,k,\ep}(y,y_0)$ is continuous for all $(y,y_0,\ep)\in [0,2]\times[\vartheta_1,\vartheta_2]\times[0,1]$.
\end{proposition}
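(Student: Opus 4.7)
The plan is to follow the same scheme used for Proposition \ref{prop:existencephirL1kep}, treating \eqref{eq:fixedpointeqphiLcrit} as a forced linear fixed-point equation in $Y$ and obtaining the solution by inverting $I-k^{2}\cT_{0,k,\ep}^{\pm}$ via a Neumann series, with the source $k^{2}\cT_{\rL,k,\ep}^{\pm}\phi_{0,1,k,\ep}^{\pm}$ treated perturbatively. Concretely, I would first build $\phi_{0,1,k,\ep}^{\pm}$ by the same Neumann argument as for $\phi_{\sr,1,k,\ep}^{\pm}$: since $\mathrm{F}_{0,k,\ep}^{\pm}(\cdot,y_{0})\in L^\infty$ uniformly in $y_{0}\in I_M$ and $\ep\in[0,1]$ (the factor $(\cJ(y)-\tfrac14)/(v(y)-v(y_{0})\pm i\ep)^{2}$ is bounded because $\cJ(\varpi_{j})=\tfrac14$ and $\cJ''$ is bounded, so the numerator vanishes to second order at $y_{0}$ canceling the quadratic singularity), the operator $\cT_{0,1,k,\ep}^{\pm}$ maps $Y$ to $Y$ with norm $\lesssim 1$ by the same computation as the one in Lemma \ref{lemma:cTmapsYtoY} for $\cT_{\sr,1,k,\ep}^{\pm}$. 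Composing with $\cT_{0}$ gives $\|\cT_{0,k,\ep}^{\pm}f\|_{Y}\lesssim A^{-1}\|f\|_{Y}$, so for $A$ large enough $I-k^{2}\cT_{0,k,\ep}^{\pm}$ is invertible on $Y$ and $\phi_{0,1,k,\ep}^{\pm}$ is defined as in \eqref{eq:geometricphisigma} with uniform $Y$-bound.

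The main work is then to establish the announced estimate $\|\cT_{\rL,k,\ep}^{\pm}f\|_{Y}\lesssim A^{-1/2}\|f\|_{Y}$, which when combined with the previous step yields
\begin{equation*}
\phi_{\rL,1,k,\ep}^{\pm}\;=\;\bigl(I-k^{2}\cT_{0,k,\ep}^{\pm}\bigr)^{-1}\bigl(k^{2}\cT_{\rL,k,\ep}^{\pm}\phi_{0,1,k,\ep}^{\pm}\bigr)
\end{equation*}
in $Y$, with $\|\phi_{\rL,1,k,\ep}^{\pm}\|_{Y}\lesssim 1$. For the estimate, I would argue as in the weak-regime proof just after \eqref{eq:cTrminuscTs}: write the logarithm in \eqref{eq:defTrL1kep} as $\log\frac{v(z)-v(y_{0})\pm i\ep}{v(y)-v(y_{0})\pm i\ep}$ and control it pointwise by $|\log|z-y_{0}||+|\log|y-y_{0}||$ (plus bounded pieces). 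The key integral bound
\begin{equation*}
\left|\int_{y_{0}}^{z}\cosh(A(s-y_{0}))\,|\log|s-y_{0}||\,\d s\right|\lesssim\cosh(A(z-y_{0}))\,|z-y_{0}|\bigl(1+|\log(A|z-y_{0}|)|\bigr)
\end{equation*}
is exactly the $\gamma_{0}\to 0$ limit of the analogous estimate used in Proposition \ref{prop:existencephirL1kep}, and it is obtained by splitting $\{|s-y_{0}|\lesssim A^{-1}\}$ from its complement and integrating by parts. Dividing by $|v(y)-v(y_{0})\pm i\ep|\gtrsim |y-y_{0}|$ (using H$v$) and using monotonicity of $v$ to bound $|v(z)-v(y_{0})|\le|v(y)-v(y_{0})|$ in the range of integration, one gets $\|\cT_{\rL,1,k,\ep}^{\pm}f\|_{Y}\lesssim \|f\|_{Y}(1+A^{-1})$; composing with $\cT_{0}$ and using $|\tanh(\zeta)\log|\zeta||\lesssim 1$ on $|\zeta|\le 10$ together with $|\zeta^{-1/2}\log|\zeta||\lesssim 1$ for large $|\zeta|$, exactly as at the end of Proposition \ref{prop:existencephirL1kep}, produces the $A^{-1/2}$ gain. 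The extra $-\frac{1}{k^{2}}\frac{1}{v(y)-v(y_{0})\pm i\ep}\int_{y_{0}}^{y}v''(z)f(z,y_{0})\,\d z$ in \eqref{eq:defTrL1kep} is easier and is handled by the same estimate as the corresponding term in \eqref{eq:cTrminuscTs}.

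The normalisation $\phi_{\rL,1,k,\ep}^{\pm}(y_{0},y_{0})=1$ and $\partial_{y}\phi_{\rL,1,k,\ep}^{\pm}(y_{0},y_{0})=0$ is read off the explicit Neumann representation by expanding near $y=y_{0}$ using L'H\^opital in the leading factor $(v(y)-v(y_{0})\pm i\ep)^{-1}$ of $\cT_{0,1,k,\ep}^{\pm}$ and $\cT_{\rL,1,k,\ep}^{\pm}$, exactly as in the verification of the initial data in Proposition \ref{prop:existencesolphisigma}. For continuity of $\phi_{\rL,1,k,\ep}^{\pm}(y,y_{0})$ on $[0,2]\times[\vartheta_{1},\vartheta_{2}]\times[0,1]$, I would argue as in Proposition \ref{prop:continuityphi}: write each term of the Neumann series as an iterated integral after the change of variables $z=y_{0}+st(y-y_{0})$, observe that the resulting kernels, including the logarithmic factor $\log\bigl(\tfrac{v(y_{0}+st(y-y_{0}))-v(y_{0})\pm i\ep}{v(y_{0}+t(y-y_{0}))-v(y_{0})\pm i\ep}\bigr)$, are dominated by a fixed integrable function (since $|\log(st)|$ is integrable on $[0,1]^{2}$), and apply the dominated convergence theorem; uniform convergence of the Neumann series in $A$ then transfers continuity to the limit. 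The hardest step is the logarithmic control in the $A^{-1/2}$ estimate: the factor $|\log|z-y_{0}||$ is borderline non-integrable against $\cosh$, and the required decay in $A$ comes from a careful cancellation between the outer $1/(v(y)-v(y_{0})\pm i\ep)$ and the gain $\tanh(A(y-y_{0}))/A$ rather than from crude absolute-value bounds.
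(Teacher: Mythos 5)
Your plan follows the paper's scheme: Neumann-series existence driven by the $A^{-1/2}$ smallness of $\cT_{\rL,k,\ep}^\pm$ (obtained by the same logarithmic integration-by-parts as for Proposition~\ref{prop:existencephirL1kep}), and continuity by the dominated-convergence argument of Proposition~\ref{prop:continuityphi}. That is the right outline and it is what the paper does.

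However, the step in which you assert that $\mathrm F_{0,k,\ep}^\pm(\cdot,y_0)\in L^\infty$ uniformly in $y_0$ and $\ep$ contains a factual error. You claim that $\cJ(y)-\tfrac14$ vanishes to second order at $y_0=\varpi_j$ ``because $\cJ(\varpi_j)=\tfrac14$ and $\cJ''$ is bounded,'' but boundedness of $\cJ''$ does not force $\cJ'(\varpi_j)=0$. Hypotheses \textbf{H2.1}--\textbf{H2.2} only give $\cJ'\ge 0$ on $[0,\tilde y]$ and $\cJ'\le 0$ on $[\tilde y,2]$ with $\varpi_1,\varpi_2\ne\tilde y$, so generically $\cJ'(\varpi_j)\ne 0$ and $\cJ(y)-\tfrac14$ vanishes only to \emph{first} order at $y_0=\varpi_j$. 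With only first-order vanishing, the quotient $(\cJ(y)-\tfrac14)/(v(y)-v(y_0)\pm i\ep)^2$ blows up like $\ep_0^{-1}$ at scale $|y-y_0|\sim\ep_0$, so it is \emph{not} uniformly bounded and your argument for $\cT_{0,1,k,\ep}^\pm:Y\to Y$ fails as written. What actually makes this work is that $\mathrm F_{0,k,\ep}^\pm$ must be read as the $\gamma_0\to0$ limit of $\mathrm F_{\sr,k,\ep}^\pm$ from \eqref{eq:defFsr}, i.e.\ with the denominator $v(y)-v(y_0)\pm i\ep$ to the \emph{first} power and the additional $\tfrac1{2k^2}v''(y)$ term; this is what is required for $(v(y)-v(y_0)\pm i\ep)^{1/2}\phi_{0,1,k,\ep}^\pm$ to satisfy the Taylor--Goldstein equation at $\gamma_0=0$, and it \emph{is} uniformly bounded by the first-order cancellation of the single pole. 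Once that substitution is made, the rest of your Neumann argument and the continuity discussion go through.
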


\begin{proof}
    We shall only provide some details about the continuity statement. From \eqref{eq:fixedpointeqphirL1kep}, we observe that we shall only show that $\cT_{\rL,k,\ep}^\pm$ maps $C([0,2]\times[\vartheta_1,\vartheta_2]\times[0,1])$ to itself, since $\cT_{\sr,k,\ep}^\pm$ already does so thanks to Proposition~\ref{prop:continuityphi} and $\phi_{\sr,k,\ep}^\pm(y,y_0)\in C([0,2]\times[\vartheta_1,\vartheta_2]\times[0,1])$. Arguing as in the proof of Proposition~\ref{prop:continuityphi} we see with the dominated convergence theorem that $\cT_{\rL,k,\ep}^\pm$ maps $C([0,2]\times[\vartheta_1,\vartheta_2]\times[0,1])$ to itself as well.
\end{proof}

By the definition of $\phi_{\rL,k,\ep}^\pm$, the usual computation shows that
\begin{align*}
\W \lbrace \phi_{\sr,k,\ep}^\pm(\cdot, y_0), \, \phi_{\rL,k,\ep}^\pm(\cdot,y_0) \rbrace = v'(y_0)\neq 0
\end{align*}
for all $\ep>0$. Moreover, 
\begin{align*}
\left( \partial_y - k^2 -\frac{v''(y)}{v(y) - v(y_0) \pm i\ep} + \frac{\P(y)}{(v(y) - v(y_0) \pm i\ep)^2} \right) \phi_{\rL,k,\ep}^\pm(y,y_0) = 0.
\end{align*}
Indeed, there holds
\begin{align*}
\textsc{TG}_{k,\ep}^\pm &\left( (v(y) - v(y_0) \pm i\ep)^\frac12 \log(v(y) - v(y_0) \pm i\ep) \phi_{\sr,1,k,\ep}^\pm(y,y_0) \right) \\
&= \frac{v''(y)}{(v(y) - v(y_0) \pm i\ep)^\frac12}\phi_{\sr,1,k,\ep}^\pm(y,y_0) + 2\frac{v'(y)}{(v(y) - v(y_0) \pm i\ep)^\frac12} k^2\cT_{\sr,1,k,\ep}^\pm \phi_{\sr,1,k,\ep}^\pm(y,y_0) \\
&= - \textsc{TG}_{k,\ep}^\pm \left( (v(y) - v(y_0) \pm i\ep)^\frac12  \phi_{\rL,1,k,\ep}^\pm(y,y_0) \right).
\end{align*}

Hence, $\phi_{\sr,k,\ep}^\pm(y,y_0)$ and $\phi_{\rL,k,\ep}^\pm(y,y_0)$ constitute a linearly independent pair of homogeneous solutions to the Taylor-Goldstein equation for all $y_0\in(\vartheta_1,\vartheta_2)$ for which $\cJ(y_0)=\frac14$.

\section{Spectrum of the Linearised Operator}\label{sec:spectrum}
In this section we characterize the spectrum of the linearised operator $L_k$ defined in \eqref{eq:linOP}. We consider the operator $L_k$ in the domain of definition
\begin{align*}
\mathcal{D}(L_k) = \left \lbrace \begin{pmatrix}
f \\ g
\end{pmatrix} \, : \, f\in H^2(0,2)\cap H^1_0(0,2), \quad g\in L^2(0,2), \quad \text{supp } g, \,\text{supp } \Delta_k f \subseteq ( \vartheta_1, \vartheta_2) \right\rbrace.
\end{align*}
Since $\text{supp }v'', \, \text{supp }\P \subseteq (\vartheta_1, \vartheta_2)$, it is immediate to see that
\begin{align*}
L_k: \mathcal{D}(L_k)\rightarrow \mathcal{D}(L_k)
\end{align*}
continuously. As usual, we say that $\lambda\in \sigma(L_k)\subset \C$ if $(L_k-\lambda)$ is not continuously invertible. The main goal of this section is to prove Theorem \ref{thm:spectrumlinop}, namely that $\sigma(L_k) = [v(\vartheta_1),v(\vartheta_2)]$, through a combination of several results. First, we write
\begin{align*}
L_k = \widetilde L_k + \mathcal{K}_k, \quad \widetilde L_k :=\begin{pmatrix}
v(y) & 0 \\
-\mathrm{P}(y) & v(y)
\end{pmatrix}, \quad \mathcal{K}_k := \begin{pmatrix}
\Delta_k^{-1}\left( [v,\Delta_k] - v''(y) \right) & \g\Delta_k^{-1} \\
0 & 0 
\end{pmatrix}
\end{align*}
{and we note that $\mathcal{K}_k$ is a compact perturbation of $\widetilde L_k$. Moreover, we easily see that $[v(\vartheta_1),v(\vartheta_2)]\subseteq\sigma_{ess}(\widetilde L_k)$ since $(\widetilde L_k-\lambda)$ is not surjective for all $\lambda\in [v(\vartheta_1),v(\vartheta_2)]$. We then conclude that $[v(\vartheta_1),v(\vartheta_2)]\subseteq \sigma(L_k)$. The rest of the section is devoted to showing that $L_k-\lambda$ is invertible for all $\lambda\not \in [v(\vartheta_1),v(\vartheta_2)]$ and that no $\lambda \in [v(\vartheta_1),v(\vartheta_2)]$ is an embedded eigenvalue of $L_k$}.

\subsection{The non-stratified region}
Firstly, we investigate those $\lambda\in \C$ such that $\Re(\lambda) \not \in (\vartheta_1, \vartheta_2)=\text{supp }\P$.

\begin{proposition}\label{prop:rangerealcomplementresolvent}
Let $v(y)$ and $\P(y)$ satisfy {H$\P$, H$v$} and $H1$. Let $\lambda\in \C$ such that $\Re(\lambda)\not \in [v(0),v(2)]$. Then, $\lambda\not\in \sigma(L_k)$.
\end{proposition}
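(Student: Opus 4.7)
The plan is to reduce the resolvent problem to a Taylor--Goldstein boundary value problem and then apply a Fredholm-plus-injectivity argument.

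First, the hypothesis $\Re\lambda \notin [v(0),v(2)]$ ensures that $v-\lambda$ is nowhere zero on $[0,2]$ and bounded uniformly away from zero. Given $(f,g)\in \mathcal{D}(L_k)$, the second component of $(L_k-\lambda)(\psi,\rho)=(f,g)$ yields $\rho = (g+\mathrm{P}\psi)/(v-\lambda)$. Substituting into the first component and applying $\Delta_k$ reduces the problem to the inhomogeneous Taylor--Goldstein equation
\begin{equation*}
\mathcal{L}_\lambda \psi \,:=\, \Delta_k\psi - \frac{v''}{v-\lambda}\psi + \frac{\P}{(v-\lambda)^2}\psi \,=\, \frac{\Delta_k f}{v-\lambda} - \frac{\g g}{(v-\lambda)^2},
\end{equation*}
with Dirichlet conditions at $y=0,2$. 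Since $\mathrm{supp}\,v''\cup \mathrm{supp}\,\P \subset (\vartheta_1,\vartheta_2)$ and $\Delta_k f, g$ are supported there, the reduction is compatible with the domain. The coefficients of $\mathcal{L}_\lambda$ are smooth and bounded, so $\mathcal{L}_\lambda : H^2\cap H^1_0 \to L^2$ is a zero-order perturbation of $\Delta_k$, hence Fredholm of index zero. Solvability thus reduces to injectivity of $\mathcal{L}_\lambda$.

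Second, I would study the kernel via the Sturm--Liouville substitution $\phi = \psi/(v-\lambda) \in H^1_0(0,2)$, well-defined by the gap. A direct computation shows that $\mathcal{L}_\lambda \psi = 0$ is equivalent to the symmetric form
\begin{equation*}
\bigl((v-\lambda)^2\phi'\bigr)' + \bigl(\P - k^2(v-\lambda)^2\bigr)\phi = 0,
\end{equation*}
and testing against $\bar\phi$ and integrating by parts (using $\phi(0)=\phi(2)=0$) yields the energy identity
\begin{equation*}
\int_0^2 (v-\lambda)^2\bigl(|\phi'|^2 + k^2|\phi|^2\bigr) dy \,=\, \int_0^2 \P|\phi|^2\, dy.
\end{equation*}

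Third, I conclude $\phi\equiv 0$ in two sub-cases. When $\Im\lambda \neq 0$, taking the imaginary part gives $-2\Im\lambda \int (v-\Re\lambda)(|\phi'|^2+k^2|\phi|^2)dy=0$, and since $v-\Re\lambda$ has constant sign on $[0,2]$ we immediately obtain $\phi\equiv 0$. When $\lambda=\alpha\in\mathbb{R}\setminus[v(0),v(2)]$, the identity is real: on $\mathrm{supp}\,\P\subset[\vartheta_1,\vartheta_2]$ the weight $(v-\alpha)^2$ is bounded below by the geometric gap $\min(v(\vartheta_1)-v(0),v(2)-v(\vartheta_2))^2>0$, and hypothesis H1 together with $\P(\vartheta_{1,2})=\P'(\vartheta_{1,2})=0$ (inherited from the $C^2$-matching in H$\P$) provides the coercivity needed to conclude $\phi\equiv 0$ on $[\vartheta_1,\vartheta_2]$. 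On the complement, the vanishing of $v''$ and $\P$ reduces the equation to $\Delta_k\psi = 0$, and the Dirichlet boundary conditions plus the matching $\psi\equiv 0$ on $[\vartheta_1,\vartheta_2]$ force $\psi \equiv 0$ globally.

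The main obstacle will be the real case when $\alpha$ is close to (but outside) $v(0)$ or $v(2)$: the direct coercivity estimate based only on $\inf(v-\alpha)^2$ degenerates as this distance shrinks, and the proof must leverage hypothesis H1 carefully, likely through additional integrations by parts of $\int \P|\phi|^2 dy$ that trade $\|\P\|_{L^\infty}$ for $\|\P''\|_{L^\infty}$, the latter being controlled by H1.
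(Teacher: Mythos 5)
Your route is genuinely different from the paper's and has a structural advantage, but your treatment of the real case is left as a sketch and your proposed repair (extra integrations by parts) is not the right fix. Let me compare the two approaches and then close your gap.

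The paper works directly on $\psi$: it multiplies the Taylor--Goldstein equation by $\overline{\psi}$, integrates by parts, and treats both the $v''$-term and the $\P$-term as perturbations of $\Delta_k$, absorbing them using the pointwise bounds
\begin{equation*}
\frac{|v''(y)|}{v(y)-v(0)} \leq \frac{\|v'''\|_{L^\infty}}{c_0},
\qquad
\frac{\P(y)}{(v(y)-v(0))^2} \leq \frac{\|\P''\|_{L^\infty}}{2c_0^2},
\end{equation*}
which follow from $v''(\vartheta_1)=\P(\vartheta_1)=\P'(\vartheta_1)=0$, $v'\geq c_0$, and Taylor expansion. Hypothesis \textbf{H1} is precisely the statement that the sum of these two constants is $<1$, giving the strict contraction; the same energy estimate then bounds the inhomogeneous terms and yields the resolvent estimate in one go. Your scheme instead establishes the Fredholm property of $\mathcal{L}_\lambda$ and reduces to injectivity, recovering the bounded inverse by the closed graph theorem. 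Your Sturm--Liouville substitution $\phi=\psi/(v-\lambda)$ then does something the paper's method does not: it \emph{eliminates} the $v''$-term at the source, and for $\Im\lambda\neq 0$ the imaginary part of your energy identity closes the argument with no use of \textbf{H1} at all. This means your route, when completed, proves the claim under only $\|\P''\|_{L^\infty}/(2c_0^2)<1$, which is strictly weaker than \textbf{H1}.

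Two corrections for the real case $\lambda=\alpha\notin[v(0),v(2)]$ (say $\alpha<v(0)$). First, the gap closes with a pointwise bound, not an integration by parts. Since $\P\in C^2$ with $\P(\vartheta_1)=\P'(\vartheta_1)=0$, one has $\P(y)\leq \tfrac12\|\P''\|_{L^\infty}(y-\vartheta_1)^2$; and $v(y)-\alpha\geq v(y)-v(0)\geq c_0 y\geq c_0(y-\vartheta_1)$ on $\operatorname{supp}\P\subset(\vartheta_1,\vartheta_2)$, whence
\begin{equation*}
\frac{\P(y)}{(v(y)-\alpha)^2}\leq \frac{\|\P''\|_{L^\infty}}{2c_0^2}
\qquad\text{for all }y\in[0,2].
\end{equation*}
Inserting this into your identity gives, for $k\geq 1$,
\begin{equation*}
\int_0^2 \P|\phi|^2\,dy
\leq \frac{\|\P''\|_{L^\infty}}{2c_0^2}\int_0^2 (v-\alpha)^2|\phi|^2\,dy
\leq \frac{\|\P''\|_{L^\infty}}{2c_0^2}\int_0^2 (v-\alpha)^2\bigl(|\phi'|^2 + k^2|\phi|^2\bigr)\,dy,
\end{equation*}
and since \textbf{H1} forces $\|\P''\|_{L^\infty}/(2c_0^2)<1$, the energy identity forces $\int_0^2(v-\alpha)^2(|\phi'|^2+k^2|\phi|^2)\,dy=0$. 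Second, because $(v-\alpha)^2>0$ on all of $[0,2]$, this already gives $\phi\equiv 0$ globally; the separate matching argument on $[0,\vartheta_1]\cup[\vartheta_2,2]$ at the end of your proposal is superfluous.
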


\begin{proof}
To show that $\lambda$ is in the resolvent of $L_k$, we shall see that for any pair $\begin{pmatrix}
f \\ g
\end{pmatrix}\in \mathcal{D}(L_k)$ we can find a pair $\begin{pmatrix}
\psi \\ \rho
\end{pmatrix}\in D(L_k)$ such that
\begin{equation}\label{eq:resolventeq}
\left(L_k-\lambda\right)\begin{pmatrix}
\psi \\ \rho
\end{pmatrix} = \begin{pmatrix}
f \\ g
\end{pmatrix}
\end{equation}
with
\begin{equation}\label{eq:resolventbound}
\Vert \psi \Vert_{H^2(0,2)\cap H^1_0(0,2)} + \Vert \rho \Vert_{L^2(0,2)} \lesssim \Vert f \Vert_{H^2(0,2)} + \Vert g \Vert_{L^2(0,2)}.
\end{equation}
By definition of $L_k$, we find
\begin{equation}\label{eq:resolventrho}
\rho(y) = \frac{g(y) + \P(y)\psi(y)}{v(y) - \lambda},
\end{equation}
where $\psi(y)$ is a solution to
\begin{equation}\label{eq:resolventTG}
\partial_y^2 \psi(y) - k^2\psi(y) - \frac{v''(y)}{v(y) - \lambda}\psi(y) + \frac{\P(y)}{(v(y) - \lambda)^2}\psi(y) = \frac{h(y)}{v(y) - \lambda} - \frac{\g g(y)}{(v(y) - \lambda)^2}
\end{equation}
with boundary conditions $\psi(0) = \psi(2)=0$ and where we denote $h(y) = \Delta_k f(y)$. Firstly, since $\Re(\lambda)\not \in [v(0),v(2)]$ and $\text{supp }v'', \, \text{supp }\P, \, \text{supp } h, \, \text{supp }g \subseteq (\vartheta_1, \vartheta_2)$, we see that $\psi\in H^2(0,2)\cap H^1_0(0,2)$ is a classical solution to the equation, with $\omega = \Delta_k \psi$ having its support contained in $(\vartheta_1, \vartheta_2)$. Hence, $\rho\in L^2(0,2)$ and $\text{supp } \rho \in (\vartheta_1, \vartheta_2)$ as well. Now, in order to show the continuity of $(L_k - \lambda)^{-1}:\mathcal{D}(L_k) \rightarrow\mathcal{D}(L_k) $, we
 multiply by $\overline{\psi}$ and integrate by parts to see that
\begin{equation}\label{eq:energyideigen}
\int_0^2  |\p_y \psi|^2 + k^2|\psi|^2 + \frac{v''(y)}{v(y) - \lambda}|\psi|^2 - \int_0^2 P(y) \frac{|\psi|^2}{(v(y)-\lambda)^2} =  - \int_0^2 \frac{h(y) \overline{\psi(y)}}{v(y) - \lambda} \d y +\g \int_0^2 \frac{g(y) \overline{\psi(y)}}{(v(y) - \lambda)^2} \d y
\end{equation}
Now, since $\Re(\lambda)\not \in [v(0),v(2)]$, say $\Re(\lambda) < v(0)$ so that there holds
\begin{equation*}
\left| \int_0^2 \frac{v''(y)}{|v(y) - \lambda|}|\psi(y)|^2 \,\d y \right| \leq \int_0^2 \frac{|v''(y)|}{v(y) - v(0)}|\psi(y)|^2 \,\d y \leq \frac{\Vert v''' \Vert_{L^\infty}}{c_0}\Vert \psi \Vert_{L^2}^2 
\end{equation*}
with also
\begin{equation*}
\left|\int_0^2 \P(y) \frac{|\psi(y)|^2}{(v(y) - \lambda)^2} \, \d y\right| \leq  \frac{\Vert \P'' \Vert_{L^\infty}}{2c_0^2}\Vert \psi \Vert_{L^2}^2 
\end{equation*}
We define $\epsilon := \frac{\Vert v''' \Vert_{L^\infty}}{c_0} +  \frac{\Vert \P'' \Vert_{L^\infty}}{2c_0^2} < 1$. On the other hand, since $\text{supp }h,\, \text{supp }g \in (\vartheta_1, \vartheta_2)$, for $\vartheta=\min(\vartheta_1,2-\vartheta_2)$, we have
\begin{align*}
\left| \int_0^2 \frac{h(y) \overline{\psi}(y)}{v(y) - \lambda}\d y \right| \leq \int_0^2 \frac{|h(y)|\psi(y)|}{v(y) - v(0)} \d y \leq \frac{1}{c_0\vartheta} \int_{\vartheta_1}^{\vartheta_2}|h(y)| |\psi(y)| \leq C_{\vartheta,\epsilon}\Vert h \Vert_{L^2(0,2)}^2 + \frac{1-\epsilon}{4}\Vert \psi \Vert_{L^2(0,2)}^2
\end{align*}
and similarly
\begin{align*}
\left| \g\int_0^2 \frac{g(y) \overline{\psi}(y)}{(v(y) - \lambda)^2}\d y \right| \leq C_{\vartheta,\epsilon}\Vert g \Vert_{L^2(0,2)}^2 + \frac{1-\epsilon}{4}\Vert \psi \Vert_{L^2(0,2)}^2
\end{align*}
Thus, we conclude that
\begin{align*}
\frac{1-\epsilon}{2}\left( \int_0^2 \left( |\partial_y \psi(y) |^2 + k^2 |\psi(y)|^2 \right)\d y \right) \leq C_{\vartheta,\epsilon}\left( \Vert h \Vert_{L^2(0,2)}^2 + \Vert g \Vert_{L^2(0,2)}^2 \right).
\end{align*}
For $\Re(\lambda) > v(2)$ we argue similarly, and the lemma follows.
\end{proof}

The next results shows that under H1 the spectrum does not include the range of the shear flow restricted to the non-stratified region.

\begin{proposition}\label{prop:nonstratifiedresolvent}
Assume that $v(y)$ and $\P(y)$ satisfy {H$\P$, H$v$} and H1. Let $\lambda\in \C$ such that $\Re(\lambda) \not \in [v(\vartheta_1), v(\vartheta_2)]$. Then, $\lambda\not \in \sigma(L_k)$.
\end{proposition}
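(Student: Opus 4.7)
The plan is to extend the energy argument of Proposition~\ref{prop:rangerealcomplementresolvent} to the wider spectral strip allowed here, exploiting that the supports of $v''$ and $\P$ lie strictly inside $(\vartheta_1,\vartheta_2)$. By Proposition~\ref{prop:rangerealcomplementresolvent} we may assume $\Re(\lambda)\in [v(0),v(\vartheta_1))\cup(v(\vartheta_2),v(2)]$; by symmetry I consider $\Re(\lambda)\in [v(0),v(\vartheta_1))$, so that by monotonicity of $v$ there is a unique $y_\lambda\in [0,\vartheta_1)$ with $v(y_\lambda)=\Re(\lambda)$.

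First, as in the preceding proposition, I would reduce to solving \eqref{eq:resolventTG} for $\psi\in H^2\cap H^1_0$ and then recovering $\rho$ via \eqref{eq:resolventrho}. The key observation is that on the common support of $v''$, $\P$, $h=\Delta_k f$ and $g$, namely $(\vartheta_1,\vartheta_2)$, we have $y-y_\lambda>0$ and
\begin{align*}
|v(y)-\lambda|\;\geq\; v(y)-\Re(\lambda)\;=\;v(y)-v(y_\lambda)\;\geq\; c_0\,(y-y_\lambda).
\end{align*}
Moreover, $y_\lambda\leq \vartheta_1$ lies outside $\mathrm{supp}\,v''$ and $\mathrm{supp}\,\P$, so $v''(y_\lambda)=\P(y_\lambda)=\P'(y_\lambda)=0$, and Taylor expansion yields $|v''(y)|\leq \Vert v'''\Vert_{L^\infty}(y-y_\lambda)$ and $|\P(y)|\leq \tfrac12\Vert \P''\Vert_{L^\infty}(y-y_\lambda)^2$. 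Combining these, on the relevant supports
\begin{align*}
\left|\frac{v''(y)}{v(y)-\lambda}\right|\leq \frac{\Vert v'''\Vert_{L^\infty}}{c_0},\qquad
\left|\frac{\P(y)}{(v(y)-\lambda)^2}\right|\leq \frac{\Vert \P''\Vert_{L^\infty}}{2c_0^2},
\end{align*}
so the sum of the two potential contributions in the energy identity \eqref{eq:energyideigen} is bounded by a factor $\varepsilon:=c_0^{-1}\Vert v'''\Vert_{L^\infty}+\tfrac12 c_0^{-2}\Vert \P''\Vert_{L^\infty}<1$ by hypothesis H1.

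Next I would estimate the right-hand side of \eqref{eq:energyideigen}. Since $h$ and $g$ are supported in $(\vartheta_1,\vartheta_2)$, on that set $|v(y)-\lambda|\geq c_0(\vartheta_1-y_\lambda)$, a strictly positive constant depending on the distance of $\Re(\lambda)$ to the stratified interval. A Cauchy--Schwarz plus Young's inequality with a small parameter then yields
\begin{align*}
\tfrac{1-\varepsilon}{2}\bigl(\Vert\partial_y\psi\Vert_{L^2}^2+k^2\Vert\psi\Vert_{L^2}^2\bigr)\leq C_\lambda\bigl(\Vert h\Vert_{L^2}^2+\Vert g\Vert_{L^2}^2\bigr),
\end{align*}
where $C_\lambda$ depends on $\vartheta_1-y_\lambda$ but not on $k$. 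This $H^1_k$ coercivity rules out nontrivial homogeneous solutions, so by the Fredholm alternative for the second-order ODE \eqref{eq:resolventTG} a unique $\psi\in H^2\cap H^1_0$ exists, with the bound \eqref{eq:resolventbound}. The support conditions propagate automatically: from \eqref{eq:resolventTG}, $\Delta_k\psi$ equals a sum of terms all supported in $(\vartheta_1,\vartheta_2)$, hence $(\psi,\rho)\in\mathcal D(L_k)$.

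The only real obstacle is the absorption step in the energy estimate, and hypothesis H1 is precisely calibrated to make it work: it is the vanishing of $v''$, $\P$ and $\P'$ at $y_\lambda$, guaranteed by H$v$ and H$\P$ because $y_\lambda\notin(\vartheta_1,\vartheta_2)$, that cancels the singularities $(v-\lambda)^{-1}$ and $(v-\lambda)^{-2}$ and keeps the coefficients bounded. For $\Re(\lambda)\in(v(\vartheta_2),v(2)]$ the same argument runs with $y_\lambda\in(\vartheta_2,2]$ and the roles of the boundaries reversed; for $\Im(\lambda)\neq 0$ there is additional room from $|v(y)-\lambda|\geq|\Im\lambda|$, but it is not needed.
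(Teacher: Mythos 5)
Your proof follows essentially the same strategy as the paper: reduce to $\Re(\lambda)\in[v(0),v(\vartheta_1))\cup(v(\vartheta_2),v(2)]$, introduce $y_\lambda$ with $v(y_\lambda)=\Re(\lambda)$, use the Taylor expansions of $v''$ and $\P$ at $y_\lambda$ (where they and $\P'$ vanish) together with $v'\geq c_0$ to bound the two potential terms in the energy identity by the $H1$ constant $\varepsilon<1$, bound the source terms using the positive distance $\vartheta_1-y_\lambda$, and absorb via Young's inequality. The only difference is that you make the Taylor-expansion step explicit; the idea and estimates are the same.
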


\begin{proof}
We argue as in the proof of Proposition \ref{prop:rangerealcomplementresolvent}, so that for any pair $\begin{pmatrix}
f \\ g
\end{pmatrix}\in \mathcal{D}(L_k)$ we can find a pair $\begin{pmatrix}
\psi \\ \rho
\end{pmatrix}\in D(L_k)$ such that \eqref{eq:resolventeq} holds. As before, $\psi(y)$ is given by \eqref{eq:resolventTG} and $\rho(y)$ is given by \eqref{eq:resolventrho}. Thanks to the support assumptions on $v''$, $\P$, $h$ and $g$, we see that $\psi$ is a classical solution to \eqref{eq:resolventTG} and that $\begin{pmatrix}
\psi \\ \rho
\end{pmatrix}\in \mathcal{D}(L_k)$. To prove \eqref{eq:resolventbound}, we now note that for $\Re(\lambda)\in [v(0), v(\vartheta_1))$, there is a unique $y_\lambda\in [0, \vartheta_1)$ such that $v(y_\lambda) = \Re(\lambda)$. Moreover,
\begin{equation*}
\left|\int_0^2 \P(y) \frac{|\psi(y)|^2}{(v(y) - \lambda)^2} \, \d y\right| \leq  \int_{\vartheta_1}^{\vartheta_2} \P(y) \frac{|\psi(y)|^2}{(v(y) - v(y_\lambda))^2}\, \d y \leq \frac{\Vert \P'' \Vert_{L^\infty}}{2 c_0^2} \Vert \psi \Vert_{L^2}^2
\end{equation*}
and
\begin{align*}
\left| \int_0^2 \frac{v''(y)}{v(y) - \lambda}|\psi(y)|^2 \d y \right| &\leq \int_{\vartheta_1}^{\vartheta_2} \frac{|v''(y)|}{v(y) - v(y_\lambda)}|\psi(y)|^2 \d y \leq \frac{\Vert v''' \Vert_{L^\infty(0,2)}}{c_0} \Vert \psi \Vert_{L^2(0,2)}^2
\end{align*}
for which we note that
\begin{align*}
\epsilon_1 := \frac{\Vert \P'' \Vert_{L^\infty}}{2 c_0^2} + \frac{\Vert v''' \Vert_{L^\infty(0,2)}}{c_0}  < 1.
\end{align*}
On the other hand, since $\text{supp }h,\, \text{supp }g \subseteq [\vartheta_1, \vartheta_2]$ and $y_\lambda < \vartheta_1$, we see that
\begin{align*}
\left| \int_0^2 \frac{h(y) \overline{\psi}(y)}{v(y) - \lambda} \d y \right| +  \left| \int_0^2 \frac{g(y) \overline{\psi}(y)}{(v(y) - \lambda)^2} \d y \right| &\leq \left( \frac{\Vert h \Vert_{L^2(0,2)}}{c_0(\vartheta_1-y_\lambda)} + \frac{\Vert g \Vert_{L^2(0,2)}}{c_0^2(\vartheta_1- y_\lambda)^2} \right) \Vert \psi \Vert_{L^2(0,2)} \\
&\leq \frac{1-\epsilon_1}{2}\Vert \psi \Vert_{L^2(0,2)} + C_{\epsilon_1, \vartheta_1, \lambda}\left( \Vert h \Vert_{L^2(0,2)} + \Vert g \Vert_{L^2(0,2)} \right)
\end{align*}
from which the resolvent estimate \eqref{eq:resolventbound} follows. The proof for the case $\Re(\lambda) \in (v(\vartheta_2), v(2)]$ follows the same lines. With this, the proposition is established.
\end{proof}

We can further exploit {H$\P$, H$v$} and H1 to show that the limiting values $\lambda\in\C$ with $\Re(\lambda) = v(\vartheta_1)$ or $\Re(\lambda) = v(\vartheta_2)$ are not discrete eigenvalues of $L_k$.

\begin{lemma}\label{lemma:boundarysuppnoteigen}
Let $v(y)$ and $\P(y)$ satisfy {H$\P$, H$v$} and H1. Let $\lambda\in \C$ with $\Re(\lambda) = v(\vartheta_1)$ or $\Re(\lambda) = v(\vartheta_2)$. Then,  $\lambda\not \in \sigma_{disc}(L_k)$.
\end{lemma}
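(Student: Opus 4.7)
The plan is to rule out, for any $\lambda\in\C$ with $\Re(\lambda)=v(\vartheta_1)$ (the case $\Re(\lambda)=v(\vartheta_2)$ being symmetric), the existence of a non-trivial eigenpair $(\psi,\rho)\in\mathcal{D}(L_k)$ in the kernel of $L_k-\lambda$. Since every discrete eigenvalue produces such an eigenpair, this will suffice. The argument is a short energy estimate on the associated homogeneous Taylor--Goldstein equation, closing exactly as in Propositions~\ref{prop:rangerealcomplementresolvent}--\ref{prop:nonstratifiedresolvent}, but now relying on a Hardy-type cancellation coming from the vanishings of $v''$ and $\P$ at $\vartheta_1$ rather than on a strict separation between $\Re(\lambda)$ and $\mathrm{supp}\,\P$.

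Write $\lambda=v(\vartheta_1)+i\tau$ with $\tau\in\R$. The second row of $(L_k-\lambda)(\psi,\rho)^{\top}=0$ gives $\rho=\mathrm{P}(y)\psi/(v(y)-\lambda)$, and substituting into the first row shows that $\psi\in H^2(0,2)\cap H^1_0(0,2)$ solves
\begin{equation*}
\partial_y^2\psi-k^2\psi-\frac{v''(y)}{v(y)-\lambda}\psi+\frac{\P(y)}{(v(y)-\lambda)^2}\psi=0.
\end{equation*}
Multiplying by $\overline{\psi}$, integrating over $(0,2)$, integrating by parts using the Dirichlet boundary conditions, and using that $\mathrm{supp}\,v'',\mathrm{supp}\,\P\subseteq(\vartheta_1,\vartheta_2)$ yields
\begin{equation*}
\|\partial_y\psi\|_{L^2}^2+k^2\|\psi\|_{L^2}^2\leq \int_{\vartheta_1}^{\vartheta_2}\frac{|v''(y)|}{|v(y)-\lambda|}|\psi|^2\,dy+\int_{\vartheta_1}^{\vartheta_2}\frac{\P(y)}{|v(y)-\lambda|^2}|\psi|^2\,dy.
\end{equation*}

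The key step will be to absorb the potentially singular coefficients through Taylor expansions at $\vartheta_1$. Since H$v$ forces $v''(\vartheta_1)=0$, a first-order Taylor estimate gives $|v''(y)|\leq\|v'''\|_{L^\infty}(y-\vartheta_1)$ on $(\vartheta_1,\vartheta_2)$; likewise, H$\P$ forces $\P(\vartheta_1)=\P'(\vartheta_1)=0$, so $\P(y)\leq\tfrac12\|\P''\|_{L^\infty}(y-\vartheta_1)^2$. Combined with the uniform lower bound $|v(y)-\lambda|\geq v(y)-v(\vartheta_1)\geq c_0(y-\vartheta_1)$, valid for every $\tau\in\R$, both coefficients are pointwise bounded by $c_0^{-1}\|v'''\|_{L^\infty}$ and $\tfrac12 c_0^{-2}\|\P''\|_{L^\infty}$, respectively. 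Setting $\epsilon:=c_0^{-1}\|v'''\|_{L^\infty}+\tfrac12 c_0^{-2}\|\P''\|_{L^\infty}<1$ by hypothesis H1, the estimate becomes
\begin{equation*}
\|\partial_y\psi\|_{L^2}^2+(k^2-\epsilon)\|\psi\|_{L^2}^2\leq 0,
\end{equation*}
which forces $\psi\equiv 0$ and hence $\rho\equiv 0$, since $k\geq1>\epsilon$.

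The one delicate point worth highlighting is that the Hardy-type bound must be uniform in $\tau\in\R$, so the argument simultaneously handles both the off-axis case $\tau\neq 0$ (where $\lambda$ sits just outside the essential spectrum) and the edge case $\tau=0$ (where the Taylor--Goldstein coefficients are genuinely singular at $y=\vartheta_1$). The vanishings of $v''$ and $\P$ at $\vartheta_1$ tame this boundary singularity at the level of the energy identity, with no need for a regularization or approximation argument; the case $\Re(\lambda)=v(\vartheta_2)$ is identical after Taylor expanding around $\vartheta_2$ and using $|v(y)-\lambda|\geq v(\vartheta_2)-v(y)\geq c_0(\vartheta_2-y)$ on $(\vartheta_1,\vartheta_2)$.
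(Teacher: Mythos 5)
Your argument is correct and is essentially the same as the paper's: reduce to the scalar homogeneous Taylor--Goldstein equation, multiply by $\overline\psi$ and integrate, and exploit the Taylor vanishings $v''(\vartheta_1)=0$, $\P(\vartheta_1)=\P'(\vartheta_1)=0$ together with $|v(y)-\lambda|\geq v(y)-v(\vartheta_1)\geq c_0(y-\vartheta_1)$ to bound the potential terms by $\epsilon\|\psi\|_{L^2}^2$ with $\epsilon<1$ via H1. The paper states the inequality more tersely (without writing out the Taylor expansions), but the underlying mechanism is identical, including the uniformity in $\Im\lambda$ that you correctly flag as the point requiring care.
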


\begin{proof}
We shall see that there is no non-zero solution $\begin{pmatrix}
\psi \\ \rho
\end{pmatrix}\in \mathcal{D}(L_k)$ to 
\begin{equation}\label{eq:eigenvalueTG}
\partial_y^2 \psi(y) - k^2\psi(y) - \frac{v''(y)}{v(y) - \lambda}\psi(y) + \frac{\P(y)}{(v(y) - \lambda)^2}\psi(y) = 0, \quad \rho(y) = \frac{\P(y)\psi(y)}{v(y) - \lambda}
\end{equation}
with $\psi(0)=\psi(2)=0$ and $\Re(\lambda) = v(\vartheta_1)$ or $\Re(\lambda) = v(\vartheta_2)$. Due to the support assumptions on $v''$ and $\P$, we readily see that $\begin{pmatrix}
\psi \\ \rho
\end{pmatrix} \in \mathcal{D}(L_k)$ for any such solution $\begin{pmatrix}
\psi \\ \rho
\end{pmatrix}$ to \eqref{eq:eigenvalueTG}. Say now that $\Re(\lambda) = v(\vartheta_1)$, we easily see that
\begin{align*}
\left| \int_0^2  \frac{v''(y)|\psi(y)|^2}{v(y) - v(\vartheta_1)}\d y \right| + \left| \int_0^2  \frac{\P(y)|\psi(y)|^2}{(v(y) - v(\vartheta_1))^2}\d y \right| &\leq \left( \frac{\Vert v'''\Vert_{L^\infty(0,2)}}{c_0} + \frac{\Vert \P'' \Vert_{L^\infty(0,2)}}{2c_0^2} \right) \Vert \psi \Vert_{L^2(0,2)}
\\&< \Vert \psi \Vert_{L^2(0,2)}
\end{align*}
and thus the usual energy estimate directly yields that $\psi = 0$ and thus $\rho = 0$. 
\end{proof}

\subsection{Fine properties of the homogeneous solutions}
{In this subsection we derive several key properties satisfied by the homogeneous solutions of the Taylor-Goldstein equation under {H$\P$, H$v$} and the spectral assumptions H1--H3, that will be used to rule out the existence of embedded eigenvalues in Propositions~\ref{prop:noembedweak}-\ref{prop:noembedstrong} and the existence eigenvalues with non-zero imaginary part in Proposition ~\ref{prop:noimagspectrum}. They will also be essential for the Limiting Absorption Principles of Propositions~\ref{prop:LAPZk} and \ref{prop:LAPLZk} below}.

Assumptions {H$\P$, H$v$} and H1 provide a lower bound control on the homogeneous solution $\phi_{\sr,1,k,\ep}^\pm(y,y_0)$ introduced in \eqref{eq:defphisigma} for $\vartheta_1 < y_0 < \vartheta_1 + \delta$ and $\vartheta_2 - \delta < y_0 < \vartheta_2$, for some $\delta>0$ small.

\begin{lemma}\label{lemma:lowerboundphisigma}
Let $k\geq 1$. There exists $\delta>0$ such that 
\begin{align*}
\left| \phi_{\sr,1,k,\ep}^\pm(y,y_0) \right| \geq \frac12, 
\end{align*} 
for all $y\in[0,2]$, $y_0\in [\vartheta_1, \vartheta_1+\delta) \cup (\vartheta_2 - \delta, \vartheta_2]$ and $0 \leq \ep \leq \delta$.
\end{lemma}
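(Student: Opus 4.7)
The plan is a continuity-plus-compactness argument that reduces the pointwise lower bound to an analysis of the limit configuration $y_0=\vartheta_1$, $\ep=0$. By Proposition~\ref{prop:continuityphi}, the map $(y,y_0,\ep)\mapsto \phi_{\sr,1,k,\ep}^\pm(y,y_0)$ is continuous on the compact set $[0,2]\times[\vartheta_1,\vartheta_2]\times[0,1]$, so once I exhibit a strict positive infimum for $\phi_{\sr,1,k,0}^\pm(\cdot,\vartheta_1)$ on $[0,2]$, uniform continuity will transport it to a neighbourhood of $(\vartheta_1,0)$; the case $y_0\to\vartheta_2$ will follow symmetrically.

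At the limit point $y_0=\vartheta_1$, hypothesis H$\P$ forces $\cJ(\vartheta_1)=0$ so $\gamma(\vartheta_1)=1/2$, and H$v$ ensures $v''\equiv 0$ on $[0,\vartheta_1]$. On this interval the Taylor--Goldstein equation collapses to $\partial_y^2\phi-k^2\phi=0$, and since the function $\phi_{\sr,k,0}^\pm(y,\vartheta_1)=(v(y)-v(\vartheta_1))\,\phi_{\sr,1,k,0}^\pm(y,\vartheta_1)$ satisfies the initial data $\phi(\vartheta_1)=0$ and $\partial_y\phi(\vartheta_1)=v'(\vartheta_1)$ coming from the normalisation in Proposition~\ref{prop:existencesolphisigma}, I can solve this linear ODE in closed form. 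Using moreover that $v(y)-v(\vartheta_1)=v'(\vartheta_1)(y-\vartheta_1)$ on $[0,\vartheta_1]$ (a consequence of $v''\equiv 0$ there), this yields
\begin{equation*}
\phi_{\sr,1,k,0}^\pm(y,\vartheta_1) \,=\, \frac{\sinh(k(y-\vartheta_1))}{k(y-\vartheta_1)} \,\geq\, 1, \qquad y\in[0,\vartheta_1].
\end{equation*}

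On $[\vartheta_1,2]$, I rule out zeros of $\phi_{\sr,1,k,0}^\pm(\cdot,\vartheta_1)$ by an energy estimate that invokes H1. Suppose for contradiction that $\phi_{\sr,1,k,0}^\pm(y_*,\vartheta_1)=0$ for some $y_*\in(\vartheta_1,2]$; then $\phi_{\sr,k,0}^\pm$ vanishes at both $\vartheta_1$ and $y_*$, and multiplying \eqref{eq:homTG} by $\overline{\phi_{\sr,k,0}^\pm}$ and integrating over $(\vartheta_1,y_*)$ annihilates the boundary term, producing
\begin{equation*}
\int_{\vartheta_1}^{y_*}\bigl(|\partial_y\phi|^2+k^2|\phi|^2\bigr)\,\d y \,=\, \int_{\vartheta_1}^{y_*}\left(-\frac{v''(y)}{v(y)-v(\vartheta_1)} + \frac{\P(y)}{(v(y)-v(\vartheta_1))^2}\right)|\phi|^2\,\d y.
\end{equation*}
The support conditions in H$v$ and H$\P$ give $v''(\vartheta_1)=0$ and $\P(\vartheta_1)=\P'(\vartheta_1)=0$, and standard Taylor estimates combined with $v'\geq c_0$ then yield the pointwise bounds $|v''(y)/(v(y)-v(\vartheta_1))|\leq c_0^{-1}\|v'''\|_{L^\infty}$ and $|\P(y)/(v(y)-v(\vartheta_1))^2|\leq (2c_0^2)^{-1}\|\P''\|_{L^\infty}$ uniformly on $[\vartheta_1,2]$. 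By H1 the right-hand side is strictly less than $\|\phi\|_{L^2}^2$, while $k\geq 1$ makes the left-hand side at least $\|\phi\|_{L^2}^2$, a contradiction. Combined with $\phi_{\sr,1,k,0}^\pm(\vartheta_1,\vartheta_1)=1$ and continuity, this forces $\phi_{\sr,1,k,0}^\pm(\cdot,\vartheta_1)>0$ on $[0,2]$ and hence bounded below by some $m_k>0$.

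Uniform continuity from Proposition~\ref{prop:continuityphi} then allows one to choose $\delta>0$ so small that the difference $|\phi_{\sr,1,k,\ep}^\pm(y,y_0)-\phi_{\sr,1,k,0}^\pm(y,\vartheta_1)|$ is arbitrarily small, uniformly in $y\in[0,2]$, for $y_0\in[\vartheta_1,\vartheta_1+\delta)$ and $\ep\in[0,\delta]$; this delivers the desired pointwise lower bound in the stated parameter range. The main obstacle I anticipate is the passage from qualitative non-vanishing to the numerical threshold $1/2$: the energy method naturally produces only a positive $m_k$ which need not exceed $1/2$, so securing the precise constant $1/2$ will require balancing the smallness of $\delta$ against the explicit value $\phi_{\sr,1,k,0}^\pm(\vartheta_1,\vartheta_1)=1$, exploiting that shrinking $y_0\to\vartheta_1$ keeps $\phi_{\sr,1,k,\ep}^\pm(y,y_0)$ close to $1$ on the portion of the channel where the explicit formula dominates, and controlling the stratified portion by a sufficiently fine choice of $\delta$ depending on $k$.
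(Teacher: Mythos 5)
Your approach is genuinely different from the paper's and has an appealing structure---solving the Taylor--Goldstein equation explicitly on the flat region $[0,\vartheta_1]$ to get $\phi_{\sr,1,k,0}^\pm(y,\vartheta_1)=\sinh(k(y-\vartheta_1))/(k(y-\vartheta_1))\geq 1$ there, and then using an H1-driven energy estimate to rule out zeros on $(\vartheta_1,2]$. Both pieces are correct as written: the closed form follows because $v''\equiv\P\equiv 0$ on $[0,\vartheta_1]$ forces $v$ linear there, and the energy contradiction argument is sound because $v''(\vartheta_1)=\P(\vartheta_1)=\P'(\vartheta_1)=0$ make the potential terms bounded with the constants in H1. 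However, as you yourself flag, this route only yields some $k$-dependent infimum $m_k>0$ on the stratified interval; it does not produce the quantitative bound $\geq 1$ at the limit configuration that the paper establishes, and the continuity step then cannot deliver the stated threshold $1/2$. The patching idea you sketch at the end does not close this: $\delta$ controls $|y_0-\vartheta_1|$, not $|y-\vartheta_1|$, so the explicit formula offers no leverage over $y$ in the interior of $(\vartheta_1,\vartheta_2)$, where you only know $\phi_{\sr,1,k,0}^\pm\geq m_k$ with $m_k$ uncontrolled.

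The missing ingredient is the positivity of the Volterra kernel of $\cT_{\sr,k,0}^\pm$ at $(\ep,y_0)=(0,\vartheta_1)$. Because $\gamma(\vartheta_1)=\tfrac12$, one has
\begin{equation*}
k^2\,\cT_{\sr,k,0}^\pm f(y,\vartheta_1)=\int_{\vartheta_1}^{y}\frac{1}{(v(z)-v(\vartheta_1))^{2}}\int_{\vartheta_1}^{z}\bigl(k^{2}(v(s)-v(\vartheta_1))^{2}-\P(s)\bigr)f(s,\vartheta_1)\,\d s\,\d z,
\end{equation*}
and the integrand $k^2(v(s)-v(\vartheta_1))^2-\P(s)$ is nonnegative by exactly the Taylor estimates you already invoke in the energy step (namely $v\geq c_0$ monotone, $\P(\vartheta_1)=\P'(\vartheta_1)=0$, and H1). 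Hence $\cT_{\sr,k,0}^\pm$ preserves sign, and the Neumann series representation $\phi_{\sr,1,k,0}^\pm=\sum_{n\ge0}(k^2\cT_{\sr,k,0}^\pm)^n\mathbf 1$ then gives $\phi_{\sr,1,k,0}^\pm(y,\vartheta_1)\geq 1$ on all of $[0,2]$ in one stroke. Equivalently, having already shown $\phi_{\sr,1,k,0}^\pm\geq 0$ by your energy argument, the fixed-point identity $\phi_{\sr,1,k,0}^\pm=1+k^2\cT_{\sr,k,0}^\pm\phi_{\sr,1,k,0}^\pm$ together with kernel positivity gives $\phi_{\sr,1,k,0}^\pm\geq 1$. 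This is the paper's mechanism, and it is the precise step you need to insert before invoking uniform continuity to reach $\geq 1/2$ for $y_0\in[\vartheta_1,\vartheta_1+\delta)$ and $\ep\in[0,\delta]$; the case $y_0\to\vartheta_2$ is symmetric.
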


\begin{proof}
Thanks to the continuity of $\phi_{\sr,1,k,\ep}^\pm(y,y_0)$ with respect to $y\in[0,2]$, $y_0\in [\vartheta_1, \vartheta_2]$ and $\ep\in [0,2]$ from Proposition \ref{prop:continuityphi}, there exists some $\delta>0$ such that
\begin{align*}
\Vert \phi_{\sr,1,k,\ep}^\pm(\cdot, y_0) - \phi_{\sr,1,k,0}(\cdot, \vartheta_i)\Vert_{L^\infty(0,2)} \leq \frac12 
\end{align*}
for all $\ep\in (0,\delta)$ and all $y_0\in [\vartheta_1, \vartheta_1 + \delta)$ or $y_0\in (\vartheta_2-\delta, \vartheta_2]$. Hence, the lemma follows once we show that
\begin{align*}
\left| \phi_{\sr,1,k,0}(y, \vartheta_i) \right| \geq 1,
\end{align*}
for all $y\in(0,2)$. We now argue for $\vartheta_1$ and we recall that
\begin{align*}
\phi_{\sr,1,k,0}^\pm(y,\vartheta_1) = \sum_{n\geq 0} \left( k^2 \cT_{\sr,k,0}^\pm \right)^n(1)(y,\vartheta_1).
\end{align*}
In particular, if $\cT_{\sr,k,0}^\pm f(y,\vartheta_1)\geq 0$ whenever $f(y,\vartheta_1)\geq 0$ for all $y\in [0,2]$, we then deduce that $\phi_{\sr,k,0}^\pm(y,\vartheta_1)\geq 1$, for all $y\in[0,2]$. Thus, since
\begin{align*}
\cT_{\sr,k,0}^\pm f(y,\vartheta_1) = \int_{\vartheta_1}^y \frac{1}{(v(z) - v(\vartheta_1))^2}\int_{\vartheta_1}^z \left( k^2(v(s) - v(\vartheta_1))^2 - \P(s) \right) f(s,\vartheta_1) \d s \d z
\end{align*} 
and 
\begin{align*}
k^2(v(s) - v(\vartheta_1))^2 - \P(s) \geq c_0^2 (s-\vartheta_1)^2 - \frac{\Vert \P'' \Vert_{L^\infty(0,2)}}{2}(s-\vartheta_1)^2 \geq 0
\end{align*}
because of $\P(\vartheta) = \P'(\vartheta) = 0$ and H1, we see that $\inf_{y\in[0,2]}\cT_{\sr,k,0}^\pm f(y,\vartheta_1)\geq 0$ whenever $\inf_{y\in[0,2]} f(y,\vartheta_1)\geq 0$ and the proof is concluded.
\end{proof}

\begin{lemma}\label{lemma:nonzerophisrphis}
Let $v(y)$ and $\P(y)$ satisfy H2. Then, there exists $\delta>0$ such that
\begin{enumerate}
\item If $y_0\in \left( \vartheta_1, \varpi_1 \right]$, then $\phi_{\sr,1,k,\ep}^\pm(y,y_0)\phi_{\s,1,k,\ep}^\pm(y,y_0)\neq 0$, for all $0 \leq \ep < \delta$ and all $y\in [0,y_0]$.
\item If $y_0 \in \left[ \varpi_2, \vartheta_2 \right)$, then $\phi_{\sr,1,k,\ep}(y,y_0)\phi_{\s,1,k,\ep}^\pm(y,y_0)\neq 0$, for all $0 \leq \ep < \delta$ and all $y\in[y_0,2]$.
\end{enumerate}
\end{lemma}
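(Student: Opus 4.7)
The plan is three-fold: reduce to a compact window of $y_0$ via Lemma~\ref{lemma:lowerboundphisigma}, establish a pointwise lower bound at $\ep=0$ from the sign-preserving structure of the fixed-point operator, and extend the bound to small $\ep$ by continuity. Only statement~(1) needs to be argued in detail; (2) follows symmetrically after swapping H2.1 with H2.2 and $[0,y_0]$ with $[y_0,2]$.

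First, Lemma~\ref{lemma:lowerboundphisigma} furnishes some $\delta_0>0$ with $|\phi_{\sr,1,k,\ep}^\pm|\geq 1/2$ on $[0,2]\times[\vartheta_1,\vartheta_1+\delta_0)\times[0,\delta_0]$, so $\phi_{\sr,1}$ only needs to be controlled on the compact window $y_0\in[\vartheta_1+\delta_0,\varpi_1]$, $y\in[0,y_0]$. On this window $\cJ(y_0)\leq\cJ(\varpi_1)=\tfrac14$ by H2.1, hence $\gamma_0\in[0,\tfrac12]$. Work at $\ep=0$: the principal branch identity $(v(y)-v(y_0)\pm i0)^\alpha=(v(y_0)-v(y))^\alpha e^{\pm i\pi\alpha}$ for $y<y_0$ shows that the complex phases appearing in \eqref{eq:defcTr1}-\eqref{eq:defcTs1} combine to the real factor $-1$ in all four combinations of $\sigma\in\{\sr,\s\}$ and of the $\pm$ branch. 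Consequently each $\cT_{\sigma,k,0}^\pm$ acts real-linearly on functions supported in $[0,y_0]$.

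Next I verify that $F_{\sigma,k,0}(z,y_0)\leq 0$ on $[0,y_0]$. Since $y_0\leq\varpi_1<\tilde y$, H2.1 provides $v''(z)\leq 0$ and $\cJ(z)\leq\cJ(y_0)$ on $[0,y_0]$; combined with $v(z)-v(y_0)<0$ for $z<y_0$, the three summands in \eqref{eq:defFsr}-\eqref{eq:defFs} are individually nonpositive (the third requiring $(1\mp 2\gamma_0)\geq 0$, which holds since $\gamma_0\in[0,\tfrac12]$). Combining $F_{\sigma,k,0}\leq 0$ with the two sign flips from $\int_{y_0}^y=-\int_y^{y_0}$ in the composition $\cT_{\sigma,k,0}^\pm=\cT_0\circ\cT_{\sigma,1,k,0}^\pm$, the positivity of the outer factor $(v(y_0)-v(y))^{1\pm 2\gamma_0}$, and the $-1$ from the phases, $\cT_{\sigma,k,0}^\pm$ sends nonnegative functions on $[0,y_0]$ to nonnegative functions on $[0,y_0]$. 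Iterating the Neumann series $\phi_{\sigma,1,k,0}^\pm=\sum_{n\geq 0}(k^2\cT_{\sigma,k,0}^\pm)^n 1$ of Proposition~\ref{prop:existencesolphisigma} then yields $\phi_{\sigma,1,k,0}^\pm(y,y_0)\geq 1$ for every $y_0\in(\vartheta_1,\varpi_1]$ and $y\in[0,y_0]$.

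Finally I promote this bound to $\ep>0$. On the compact set $K_{\delta_0}=\{(y,y_0):y_0\in[\vartheta_1+\delta_0,\varpi_1],\,y\in[0,y_0]\}$, Proposition~\ref{prop:continuityphi} gives joint continuity of $\phi_{\sigma,1,k,\ep}^\pm$ in $(y,y_0,\ep)$, and uniform continuity produces $\delta_1>0$ with $|\phi_{\sigma,1,k,\ep}^\pm|\geq 1/2$ on $K_{\delta_0}\times[0,\delta_1)$; the residual range $y_0\in(\vartheta_1,\vartheta_1+\delta_0)$ is covered for $\phi_{\sr,1}$ by Lemma~\ref{lemma:lowerboundphisigma}. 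I anticipate two main technical points. The first is the branch-phase bookkeeping in the second paragraph: all four combinations of $\sigma$ and $\pm$ must conspire to the same real $-1$ so that the sign structure of $F_{\sigma,k,0}$ dictated by H2.1 transfers cleanly to the kernel of the Neumann iteration. The second is covering the small neighborhood $y_0\in(\vartheta_1,\vartheta_1+\delta_0)$ for the singular factor $\phi_{\s,1}$: the $Y$-norm bound of Proposition~\ref{prop:existencesolphisigma} deteriorates like $1/(1-2\mu_0)$ and Proposition~\ref{prop:continuityphi} excludes $y_0=\vartheta_1$ from its domain, so this step has to be closed either by quantifying the Lipschitz dependence of the Neumann series in $\ep$ uniformly in $y_0\in(\vartheta_1,\varpi_1]$ with the explicit $1/(1-2\mu_0)$ factor tracked through, or by analyzing the Rayleigh limit $y_0\to\vartheta_1^+$ with the aid of H3.
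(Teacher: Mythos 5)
Your proof takes essentially the same route as the paper's: reduce to $\ep=0$ via the continuity of Proposition~\ref{prop:continuityphi}, then feed the sign-favorable structure of $\mathrm{F}_{\sigma,k,0}$ implied by H2.1 through the Neumann series of Proposition~\ref{prop:existencesolphisigma} to deduce $\phi_{\sigma,1,k,0}^\pm\geq 1$. The only real difference is that you work out statement~(1) on $[0,y_0]$, where the principal-branch phases $e^{\pm i\pi\alpha}$ must be tracked, whereas the paper writes out statement~(2) on $[y_0,2]$ (no branch cut) and asserts the other case "follows the same ideas"; your bookkeeping is correct --- in each of the four $(\sigma,\pm)$ combinations the ratio of phases collapses to $-1$, which together with the two sign reversals from $\int_{y_0}^y=-\int_y^{y_0}$ and the sign $\mathrm{F}_{\sigma,k,0}\leq 0$ gives a positivity-preserving $\cT_{\sigma,k,0}^\pm$. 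The uniformity concern you raise for $y_0$ near $\vartheta_1$ is legitimate: the $Y$-control on $\phi_{\s,1}$ from Proposition~\ref{prop:existencesolphisigma} degenerates like $(1-2\mu_0)^{-1}$, and Proposition~\ref{prop:continuityphi} only gives continuity on the open set $(\vartheta_1,\vartheta_2)$, so a uniform $\delta$ is not immediate. However, the paper's own proof shares this exact gap (it invokes the same continuity argument at the other endpoint $\vartheta_2$), and in every downstream use of this lemma (Propositions~\ref{prop:nonzeroWweakstrat}, \ref{prop:nonzeroWstrongstrat}, \ref{prop:limitWronkianMild}, \ref{prop:limitWronskianMildCrit}) only the non-vanishing of $\phi_{\sr,1,k,0}^\pm\phi_{\s,1,k,0}^\pm$ at a fixed interior $y_0$ is needed, so the lack of uniformity is harmless.
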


\begin{proof}
We argue for $\phi_{\s,1,k,\ep}^\pm$ and for $y_0\in [\varpi_1, \vartheta_2)$, for which $\gamma_0= \mu_0 \in \left[ 0 , \frac12 \right)$, the other combinations follow the same ideas. Arguing as in the proof of Lemma \ref{lemma:lowerboundphisigma}, since $\gamma_0 < \frac12$ we appeal to Proposition \ref{prop:continuityphi} to deduce that
\begin{align*}
\Vert \phi_{\s,k,\ep}^\pm(\cdot,y_0) - \phi_{\s,k,0}^\pm(\cdot,y_0) \Vert_{L^\infty(0,2)} < \frac12
\end{align*}
for all $0 \leq \ep < \delta$, for some $\delta>0$. Hence, since
\begin{align*}
\phi_{\s,1,k,0}^\pm(y,y_0) = \sum_{n\geq 0} \left( k^2 \cT_{\s,k,0}^\pm \right)^n(1)(y,y_0).
\end{align*} 
we shall check that
\begin{equation*}
\cT_{\s,k,\ep}^\pm f(y,y_0) = \int_{y_0}^{y}\frac{k^2}{(v(z) - v(y_0) \pm i\ep)^{1-2\mu_0}}\int_{y_0}^z F_{\s,k,\ep}^\pm(s,y_0)(v(s) - v(y_0) \pm i\ep)^{-2\mu_0}f(s,y_0)\d s \d z \geq 0
\end{equation*}
for all $y\in [y_0,2]$ whenever $f(y,y_0)\geq 0$ for all $y\in [y_0,2]$, to conclude that $\phi_{\s,1,k,\ep}^\pm(y,y_0)\neq 0$ for all $y\in[y_0,2]$. Further recalling that
\begin{equation*}
F_{\s,k,0}^\pm(y,y_0) = (v(y)-v(y_0)) - \frac{v'(y)^2}{k^2} \frac{\cJ(y)-\cJ(y_0)}{v(y)-v(y_0)} + \frac{\frac12 + \mu_0}{k^2}v''(y),
\end{equation*}
since $v'(y) \geq c_0 > 0$, $\cJ'(y) \leq 0$ and $v''(y)\geq 0$ for all $y\in [y_0,2]\subseteq [\tilde y,2]$ we deduce that $F_{\s,k,0}^\pm(y,y_0)\geq 0$ for all $y\in [y_0,2]$ and thus $\phi_{\s,1,k,0}^\pm(y,y_0)\geq 1$ for all $y\in [y_0,2]$.
\end{proof}

The following result is a consequence of Lemma \ref{lemma:nonzerophisrphis}.

\begin{proposition}\label{prop:nonzeroWweakstrat}
Let $k\geq 1$. Let $y_j\in I_W$ and $\ep_j>0$ such that $\ep_j\rightarrow 0$ and $y_j\rightarrow y_0\in I_W$ such that $\cJ(y_0) \in (0, \frac14)$. Then,
\begin{align*}
\lim_{j\rightarrow \infty} \phi_{\s,k,\ep_j}^\pm(2,y_j)\phi_{\sr,k,\ep_j}^\pm(0,y_j) - \phi_{\s,k,\ep_j}^\pm(0,y_j)\phi_{\sr,k,\ep_j}^\pm(2,y_j) \neq 0.
\end{align*}
\end{proposition}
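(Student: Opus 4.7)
The plan is to argue by contradiction: suppose along a subsequence that
\begin{align*}
W_j := \phi_{\s,k,\ep_j}^\pm(2,y_j)\phi_{\sr,k,\ep_j}^\pm(0,y_j) - \phi_{\s,k,\ep_j}^\pm(0,y_j)\phi_{\sr,k,\ep_j}^\pm(2,y_j) \to 0,
\end{align*}
and derive a contradiction with the linear independence of $\phi_{\sr,k,0}^\pm,\phi_{\s,k,0}^\pm$ as a fundamental pair of solutions of \eqref{eq:homTG}. Since $\cJ(y_0) \in (0,1/4)$ one has $\gamma_0 = \mu_0 \in (0,1/2)$, and Proposition~\ref{prop:continuityphi} makes $\phi_{\sigma,1,k,\ep}^\pm(y,y_0)$ continuous on $[0,2]\times I_W\times[0,1]$, so one can pass to the limit in the factorization \eqref{eq:defphisigma}.

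The first key step is to compute $W_\infty := \lim_j W_j$ explicitly. Because $v(2)-v(y_0) > 0$ while $v(0)-v(y_0) < 0$, the branch power $(v(y)-v(y_0)\pm i\ep)^{\frac12\pm\mu_0}$ contributes the phase $e^{\pm i\pi(\frac12\pm\mu_0)}$ only at $y = 0$, while at $y = 2$ the limit is real and positive. A careful inspection of the integral operators \eqref{eq:defcTr1}--\eqref{eq:defcTs1} at $\ep=0$ shows that the branch-induced phases \emph{inside} the fixed-point integrals cancel exactly (the prefactor produces $e^{\mp i\pi(1+2\gamma_0)}$ while the integrand produces $e^{\pm 2i\pi\gamma_0}$, combining into $-1$, which is absorbed by the reversed orientation of $\int_{y_0}^y$ for $y<y_0$), so that
\begin{align*}
\alpha_\sigma := \phi_{\sigma,1,k,0}^\pm(0,y_0), \qquad \beta_\sigma := \phi_{\sigma,1,k,0}^\pm(2,y_0)
\end{align*}
are real and independent of the $\pm$ branch. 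Writing $a := v(2)-v(y_0)$, $b := v(y_0)-v(0)$, this yields for the $+$ branch
\begin{align*}
W_\infty = i\bigl[e^{i\pi\mu_0} a^{\frac12-\mu_0} b^{\frac12+\mu_0} \beta_\s \alpha_\sr - e^{-i\pi\mu_0} a^{\frac12+\mu_0} b^{\frac12-\mu_0} \alpha_\s \beta_\sr\bigr]
\end{align*}
(and the complex conjugate for the $-$ branch, which vanishes iff $W_\infty$ does). Setting $X := a^{\frac12-\mu_0} b^{\frac12+\mu_0} \beta_\s \alpha_\sr$ and $Y := a^{\frac12+\mu_0} b^{\frac12-\mu_0} \alpha_\s \beta_\sr$, both real, splitting into real and imaginary parts gives $W_\infty = -(X+Y)\sin(\pi\mu_0) + i(X-Y)\cos(\pi\mu_0)$. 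Since $\mu_0 \in (0,1/2)$ makes both trigonometric factors nonzero, $W_\infty = 0$ forces $X = Y = 0$.

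The final step invokes Lemma~\ref{lemma:nonzerophisrphis} at the endpoint opposite to the one being examined: for $y_0 \in (\vartheta_1,\widetilde\varpi_{1,1}) \subset (\vartheta_1,\varpi_1)$ it yields $\alpha_\sr, \alpha_\s \neq 0$, so $X = Y = 0$ forces $\beta_\sr = \beta_\s = 0$, i.e., both $\phi_{\sr,k,0}^\pm(\cdot,y_0)$ and $\phi_{\s,k,0}^\pm(\cdot,y_0)$ vanish at $y = 2$. Their constant Wronskian $-2\mu_0 v'(y_0) \neq 0$ (constancy comes from the absence of a first-derivative term in \eqref{eq:homTG}) makes them a basis of the solution space on $[y_0,2]$, so every solution of \eqref{eq:homTG} there would vanish at $y = 2$, contradicting solvability of the Cauchy problem at the regular point $y = 2$ with data $\psi(2) = 1$. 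The case $y_0 \in (\widetilde\varpi_{2,2},\vartheta_2) \subset (\varpi_2,\vartheta_2)$ is symmetric: the lemma provides $\beta_\sigma \neq 0$, forcing $\alpha_\sr = \alpha_\s = 0$ and the dual contradiction on $[0,y_0]$.

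The main obstacle is step one: verifying that the $\pm i\ep$ branch phases inside the fixed-point integrals cancel so that $\alpha_\sigma, \beta_\sigma$ are real. Without this reality, the real/imaginary splitting in $W_\infty$ collapses to a single scalar equation, which cannot separately force $X$ and $Y$ to vanish—and the mechanism for extracting \emph{both} $\phi_{\sr,k,0}^\pm(y_*,y_0) = 0$ and $\phi_{\s,k,0}^\pm(y_*,y_0) = 0$, which is what the linear-independence contradiction requires, would be lost.
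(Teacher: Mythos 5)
Your proof is correct and follows essentially the same route as the paper's: pass to the limit $\ep_j\to 0$, $y_j\to y_0$ via Proposition~\ref{prop:continuityphi}, split $W_\infty$ into real and imaginary parts (using that $\mu_0\pi\in(0,\pi/2)$) to force both $X$ and $Y$ --- the paper's $a_0,a_1$ up to a positive common prefactor --- to vanish, and then invoke Lemma~\ref{lemma:nonzerophisrphis} together with linear independence of the fundamental pair to reach a contradiction. One genuine improvement: you make explicit the crucial fact that $\phi_{\sigma,1,k,0}^\pm(\cdot,y_0)\in\R$ for $\gamma_0=\mu_0\in(0,\tfrac12)$, via the cancellation of branch phases inside $\cT_{\sigma,1,k,\ep}^\pm$, whereas the paper's proof of this proposition uses the reality of $a_0,a_1$ silently (the dedicated reality statement, Lemma~\ref{lemma:realphirphirL}, is proved only in the critical regime $\cJ(y_0)=\tfrac14$); your closing remark correctly identifies this as the hinge on which the real/imaginary splitting turns.
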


\begin{proof}
Recalling that
\begin{align*}
\phi_{\sr,k,\ep_j}^\pm(y,y_j) &= (v(y) - v(y_j)\pm i\ep_j)^{\frac12+\gamma_j}\phi_{\sr,1,k,\ep_j}^\pm(y,y_j), \\ 
\phi_{\s,k,\ep_j}^\pm(y,y_j) &= (v(y) - v(y_j)\pm i\ep_j)^{\frac12-\gamma_j}\phi_{\s,1,k,\ep_j}^\pm(y,y_j),
\end{align*}
since $\ep_j>0$ and $\phi_{\tau,1,k,\ep_j}^\pm(y,y_j)$ is continuous in $\ep_j$ and $y_j$ uniformly in $y\in [0,2]$, confer Proposition \ref{prop:continuityphi}, we deduce that
\begin{align*}
\lim_{j\rightarrow \infty}\phi_{\sr,k,\ep_j}^\pm(0,y_j) &= -ie^{-i\mu_0\pi}(v(y_0)-v(0))^{\frac12+\mu_0}\phi_{\sr,1,k,0}^\pm(0,y_0), \\ 
\lim_{j\rightarrow \infty}\phi_{\s,k,\ep_j}^\pm(0,y_j) &= -ie^{i\mu_0\pi}(v(y_0)-v(0))^{\frac12-\mu_0} \phi_{\s,1,k,0}^\pm(0,y_0),
\end{align*}
and 
\begin{align*}
\lim_{j\rightarrow \infty}\phi_{\sr,k,\ep_j}^\pm(2,y_j) = (v(2) - v(y_0))^{\frac12+\mu_0}\phi_{\sr,1,k,0}^\pm(2,y_0), \\ 
\lim_{j\rightarrow \infty}\phi_{\s,k,\ep_j}^\pm(2,y_j) = (v(2) - v(y_0))^{\frac12-\mu_0}{\phi_{\s,1,k,0}^\pm(2,y_0)},
\end{align*}
Hence,
\begin{align*}
\lim_{j\rightarrow \infty} &\phi_{\s,k,\ep_j}^\pm(2,y_j)\phi_{\sr,k,\ep_j}^\pm(0,y_j) - \phi_{\s,k,\ep_j}^\pm(0,y_j)\phi_{\sr,k,\ep_j}^\pm(2,y_j) \\
&= -i(v(y_0)-v(0))^{\frac12-\mu_0}(v(2)-v(y_0))^{\frac12-\mu_0} \left( e^{-i\mu_0\pi} (v(y_0)-v(0))^{2\mu_0}\phi_{\sr,1,k,0}^\pm(0,y_0){\phi_{\s,1,k,0}^\pm(2,y_0)}\right. \\
&\qquad\qquad\qquad\qquad\qquad\qquad \left.- e^{i\mu_0\pi}(v(2)-v(y_0))^{2\mu_0}\phi_{\sr,1,k,0}^\pm(2,y_0){\phi_{\s,1,k,0}^\pm(0,y_0)} \right)
\end{align*}
For
\begin{align*}
a_0 &= (v(y_0)-v(0))^{2\mu_0}\phi_{\sr,1,k,0}^\pm(0,y_0){\phi_{\s,1,k,0}^\pm(2,y_0)}, \\ 
a_1 &= (v(2)-v(y_0))^{2\mu_0}\phi_{\sr,1,k,0}^\pm(2,y_0){\phi_{\s,1,k,0}^\pm(0,y_0)}
\end{align*}
we observe that 
\begin{align*}
\lim_{j\rightarrow \infty} &\phi_{\s,k,\ep_j}^\pm(2,y_j)\phi_{\sr,k,\ep_j}^\pm(0,y_j) - \phi_{\s,k,\ep_j}^\pm(0,y_j)\phi_{\sr,k,\ep_j}^\pm(2,y_j) \\
&=i(v(y_0)-v(0))^{\frac12-\mu_0}(v(2)-v(y_0))^{\frac12-\mu_0} \left( (a_0-a_1)\cos(\mu_0\pi) - i (a_0+a_1)\sin(\mu_0\pi) \right).
\end{align*}
If the above limit is zero, since $\mu_0\in (0,\frac{\pi}{2})$ we have that $\cos(\mu_0\pi)\sin(\mu_0\pi)\neq 0$ so that, together with the monotonicity of $v$, we deduce that we must have $a_0=a_1=0$. However, assume now that there exists some $y_0\in \left(y_>, \vartheta_2\right)$ for which $\gamma_0\in \left(0,\frac12\right)$ and $a_0=0$. From Lemma \ref{lemma:nonzerophisrphis},  since $\phi_{\s,1,k,0}^\pm(2,y_0)\neq 0$, this forces $\phi_{\sr,1,k,0}^\pm(0,y_0) = 0$. As $\phi_{\sr,k,0}^\pm(\cdot,y_0)$ and $\phi_{\s,k,0}^\pm(\cdot,y_0)$ are linearly independent solutions (they have non-zero Wronskian), we see that $\phi_{\s,1,k,0}^\pm(0,y_0)\neq 0$. On the other hand, Lemma \ref{lemma:nonzerophisrphis} also gives $\phi_{\sr,1,k,\ep}^\pm(2,y_0)\neq 0$ and thus we conclude that $a_1\neq 0$, and thus the limit is non-zero. This same argument shows that $a_0\neq 0$ if $a_1 = 0$ and it extends to the case where $y_0\in (\vartheta_1, \varpi_1)$.
\end{proof}

We next state a result analogous to Proposition \ref{prop:nonzeroWweakstrat}, which now addresses the case of strong stratification.

\begin{proposition}\label{prop:nonzeroWstrongstrat}
Let $k\geq 1$. Let $y_j\in I_W$ and $\ep_j>0$ such that $\ep_j\rightarrow 0$ and $y_j\rightarrow y_0\in I_S$ such that $\cJ(y_0)> \frac14$. Then,
\begin{align*}
\lim_{j\rightarrow \infty} \phi_{\s,k,\ep_j}^\pm(2,y_j)\phi_{\sr,k,\ep_j}^\pm(0,y_j) - \phi_{\s,k,\ep_j}^\pm(0,y_j)\phi_{\sr,k,\ep_j}^\pm(2,y_j) \neq 0.
\end{align*}
\end{proposition}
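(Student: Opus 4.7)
The proposal is to mirror the proof of Proposition~\ref{prop:nonzeroWweakstrat} but to handle the fact that in the strong regime $\gamma_0 = i\nu_0$ is purely imaginary, so the trigonometric factors $\cos(\mu_0\pi), \sin(\mu_0\pi)$ are replaced by the real hyperbolic weights $e^{\pm\pi\nu_0}$, and the argument for extracting a contradiction has to be modified accordingly.

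First I would apply Proposition~\ref{prop:continuityphi} to pass to the limit of $\phi_{\sigma,1,k,\ep_j}^\pm(y,y_j)$ as $\ep_j\to 0^+$ and $y_j\to y_0 \in I_S$. The limit at $y = 2$ is immediate since $v(2)>v(y_0)$:
\begin{align*}
\lim_{j\to\infty}\phi_{\sigma,k,\ep_j}^\pm(2,y_j) = A_2^{\frac12+\sigma\gamma_0}\,\phi_{\sigma,1,k,0}^\pm(2,y_0),\qquad A_2 := v(2)-v(y_0),
\end{align*}
where $\sigma = +1$ for $\sr$ and $\sigma = -1$ for $\s$. At $y = 0$ the branch of $(v(0)-v(y_j)\pm i\ep_j)^{\frac12+\sigma\gamma_0}$ must be tracked as $\ep_j\to 0^+$; since $\gamma_0 = i\nu_0$ is purely imaginary this produces
\begin{align*}
\lim_{j\to\infty}\phi_{\sigma,k,\ep_j}^\pm(0,y_j) = \pm i\,e^{\mp\pi\sigma\nu_0}\,A_1^{\frac12+\sigma\gamma_0}\,\phi_{\sigma,1,k,0}^\pm(0,y_0),\qquad A_1 := v(y_0)-v(0).
\end{align*}

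Assembling these four limits as in the proof of Proposition~\ref{prop:nonzeroWweakstrat} yields
\begin{align*}
\lim_{j\to\infty}\bigl[\phi_{\s,k,\ep_j}^\pm(2,y_j)\phi_{\sr,k,\ep_j}^\pm(0,y_j)-\phi_{\s,k,\ep_j}^\pm(0,y_j)\phi_{\sr,k,\ep_j}^\pm(2,y_j)\bigr] = \pm i\,(A_1A_2)^{\frac12-i\nu_0}\bigl[e^{\pm\pi\nu_0}\,a_0 - e^{\mp\pi\nu_0}\,a_1\bigr],
\end{align*}
where $a_0 := A_1^{2i\nu_0}\phi_{\sr,1,k,0}^\pm(0,y_0)\phi_{\s,1,k,0}^\pm(2,y_0)$ and $a_1 := A_2^{2i\nu_0}\phi_{\sr,1,k,0}^\pm(2,y_0)\phi_{\s,1,k,0}^\pm(0,y_0)$.

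Arguing by contradiction, assume this limit vanishes. Then $e^{\pm\pi\nu_0}a_0 = e^{\mp\pi\nu_0}a_1$, and in particular (taking moduli) $a_0 = 0$ if and only if $a_1 = 0$. One sub-case is both vanishing: then one of $\phi_{\sr,1,k,0}^\pm$ or $\phi_{\s,1,k,0}^\pm$ must vanish at each of $y=0$ and $y=2$. The two possibilities in which $\phi_{\sr,1,k,0}^\pm$ and $\phi_{\s,1,k,0}^\pm$ both vanish at the same endpoint are excluded by the non-vanishing of the Wronskian $\W\{\phi_{\sr,k,0}^\pm,\phi_{\s,k,0}^\pm\}(y) = 2\gamma_0 v'(y_0) = 2i\nu_0 v'(y_0)\neq 0$ at $y=0,2$, exactly as in the weak case. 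The remaining possibilities (vanishings at opposite endpoints) would produce an embedded eigenfunction $\phi_{\sr,k,0}^\pm$ or $\phi_{\s,k,0}^\pm$ of the Taylor--Goldstein operator at $\lambda = v(y_0)$, which is to be excluded via the connectedness/deformation argument alluded to in the overview of Section~\ref{sec:spectrum}: namely, deform the gravity parameter $\tilde\g \to 0$ towards the Euler limit, use the continuity of the family $\phi_{\sigma,1,k,\ep,\tilde\g}^\pm$ granted by Proposition~\ref{prop:continuityphi}, and invoke hypothesis H3 to rule out embedded eigenvalues of the limiting Rayleigh operator.

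The main obstacle compared with the weak case is that here the modulus identity $|a_0| = e^{\mp 2\pi\nu_0}|a_1|$ does not by itself force $a_0 = a_1 = 0$, because the exponential weights are unequal but also non-opposite. Hence in the generic subcase $a_0, a_1 \neq 0$ one must extract further structural information from the full complex identity $a_0 = e^{\mp 2\pi\nu_0}a_1$, combining it with the reality/conjugation symmetry $\phi_{\s,1,k,0}^\pm(y,y_0) = \overline{\phi_{\sr,1,k,0}^\pm(y,y_0)}$ on $y>y_0$ (inherited from $\gamma_0 \to -\gamma_0 = \overline{\gamma_0}$ swapping $\phi_{\sr}\leftrightarrow \phi_{\s}$) and with the analogous $\pm$-twisted conjugation on $y<y_0$, in order to reduce the generic case to the embedded-eigenvalue case already treated.
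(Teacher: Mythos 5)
Your computation of the limits and the reduction to $e^{\pm\pi\nu_0}a_0 - e^{\mp\pi\nu_0}a_1$ is correct, and you rightly identify the central difficulty: taking moduli only gives $|a_0| = e^{\mp 2\pi\nu_0}|a_1|$, which, unlike the weak regime, does not by itself force $a_0=a_1=0$. However, your proposed resolution is incomplete and, more seriously, in part circular.

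The missing insight is that the conjugation symmetry is not an auxiliary ingredient to be ``combined'' with other structural information and fed into a deformation argument; it is the whole argument. Since $\gamma_0=i\nu_0$ is purely imaginary, $-\gamma_0=\overline{\gamma_0}$, and from \eqref{eq:defphisigma}, \eqref{eq:fixedpointphisigma}, \eqref{eq:geometricphisigma} one reads off directly that
$\overline{\phi_{\s,1,k,\ep_j}^{\mp}(y,y_j)}=\phi_{\sr,1,k,\ep_j}^{\pm}(y,y_j)$ for all $y\in[0,2]$, not merely for $y>y_0$; the ``$\pm$-twisted conjugation on $y<y_0$'' you allude to collapses to the same identity once one notes that $\phi_{\sr,1,k,0}^{+}=\phi_{\sr,1,k,0}^{-}$ by continuity in $\ep$. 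Substituting $\phi_{\s,1,k,0}^\pm(y,y_0)=\overline{\phi_{\sr,1,k,0}^\pm(y,y_0)}$ into your $a_0,a_1$ turns the limit into
$\pm i\,(A_1A_2)^{1/2}\bigl(e^{\nu_0\pi}\zeta - e^{-\nu_0\pi}\overline{\zeta}\bigr)$
with $\zeta:=A_1^{i\nu_0}A_2^{-i\nu_0}\phi_{\sr,1,k,0}^\pm(0,y_0)\overline{\phi_{\sr,1,k,0}^\pm(2,y_0)}$. Now taking moduli gives $e^{\nu_0\pi}|\zeta|=e^{-\nu_0\pi}|\zeta|$, which forces $\zeta=0$ because $\nu_0>0$, so one of $\phi_{\sr,1,k,0}^\pm(0,y_0)$ or $\phi_{\sr,1,k,0}^\pm(2,y_0)$ vanishes. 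Then the Wronskian $\W\{\phi_{\sr,k,0}^\pm,\phi_{\s,k,0}^\pm\}=2i\nu_0 v'(y_0)\neq 0$, combined once more with the conjugation (which makes $\phi_{\s,k,0}^\pm$ the complex conjugate of $\phi_{\sr,k,0}^\pm$ and hence vanishing at the same point), yields a contradiction. No ``generic subcase'' with $a_0,a_1\neq 0$ survives and no embedded-eigenvalue analysis is needed.

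Your fallback of invoking the ``connectedness/deformation argument alluded to in the overview of Section~\ref{sec:spectrum}'' to exclude the embedded-eigenvalue case is also a logical problem: that argument is Proposition~\ref{prop:noimagspectrum}, whose proof explicitly relies on Proposition~\ref{prop:nonzeroWstrongstrat}. Appealing to it here would therefore be circular.
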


\begin{proof}
We note that as $\cJ(y_0)>\frac14$, there holds $\gamma_0 = i \nu_0$ with $\nu_0>0$ so that $\gamma_j = i\nu_j$ with $\nu_j>0$, for $j$ large enough. Thanks to the continuity of $\phi_{\tau,1,k,\ep_j}^\pm(y,y_j)$ and the relation $\overline{\phi_{\s,1,k,\ep_j}^\mp(y,y_j)} = \phi_{\sr,1,k,\ep_j}^\pm(y,y_j)$, which is deduced from \eqref{eq:defphisigma}, \eqref{eq:fixedpointphisigma} and \eqref{eq:geometricphisigma} we obtain 
\begin{align*}
\lim_{\ep\rightarrow 0}\phi_{\sr,1,k,\ep_j}^\pm(0,y_j) &= -ie^{\nu_0\pi}(v(y_0)-v(0))^{\frac12+i\nu_0}\phi_{\sr,1,k,0}^\pm(0,y_0), \\ 
\lim_{\ep\rightarrow 0}\phi_{\s,1,k,\ep_j}^\pm(0,y_0) &= -ie^{-\nu_0\pi}(v(y_0)-v(0))^{\frac12-i\nu_0}\overline{\phi_{\sr,1,k,0}^\mp(0,y_0)},
\end{align*}
and 
\begin{align*}
\lim_{\ep\rightarrow 0}\phi_{\sr,1,k,\ep_j}^\pm(2,y_j) &= (v(2) - v(y_0))^{\frac12+i\nu_0}\phi_{\sr,1,k,0}^\pm(2,y_0), \\ 
\lim_{\ep\rightarrow 0}\phi_{\s,1,k,\ep_j}^\pm(2,y_0) &= (v(2) - v(y_0))^{\frac12-i\nu_0}\overline{\phi_{\sr,1,k,0}^\mp(2,y_0)}.
\end{align*}
Moreover, due to the continuity of $\phi_{\sr,1,k,\ep_j}^\pm(y,y_j)$ with respect to $\ep_j>0$, there holds
\begin{align*}
{\phi_{\sr,1,k,0}^\mp(0,y_0)} = {\phi_{\sr,1,k,0}^\pm(0,y_0)}, \quad {\phi_{\sr,1,k,0}^\mp(2,y_0)} = {\phi_{\sr,1,k,0}^\pm(2,y_0)}.
\end{align*}
Then, we reach
\begin{align*}
\lim_{j\rightarrow \infty} \phi_{\s,k,\ep_j}^\pm(2,y_j)\phi_{\sr,k,\ep_j}^\pm(0,y_j) &- \phi_{\s,k,\ep_j}^\pm(0,y_j)\phi_{\sr,k,\ep_j}^\pm(2,y_j) \\
&=i(v(y_0)-v(0))^\frac12(v(2)-v(y_0))^\frac12 \left( e^{\nu_0\pi}\zeta - e^{-\nu_0\pi}\overline{\zeta}\right)
\end{align*}
where
\begin{align*}
\zeta := (v(y_0)-v(0))^{i\nu_0}(v(2)-v(y_0))^{-i\nu_0}\phi_{\sr,1,k,0}^\pm(0,y_0)\overline{\phi_{\sr,1,k,0}^\pm(2,y_0)}.
\end{align*}
Since $\nu_0>0$, we see that the limit vanishes if and only if $\zeta=0$. Hence, if the limit vanishes then either $\phi_{\sr,1,k,0}^\pm(0,y_0)=0$ or $\phi_{\sr,1,k,0}^\pm(2,y_0)=0$. If $\phi_{\sr,1,k,0}^\pm(2,y_0) = 0$, from the Wronskian invariance we have
\begin{equation*}
-2\gamma_0v'(y_0) = -\phi_{\s,k,0}^\pm(2,y_0)\partial_y\phi_{\sr,k,\ep}^\pm(2,y_0)
\end{equation*}
and thus $\phi_{\s,1,k,0}^\pm(2,y_0)\neq 0$. On the other hand,
\begin{equation*}
\phi_{\s,1,k,0}^\pm(y,y_0) = \lim_{j\rightarrow \infty} \phi_{\s,1,k,\ep_j}^\pm(y,y_j) = \lim_{j\rightarrow \infty} \overline{\phi_{\sr,1,k,\ep_j}^\mp(y,y_j)} = \overline{\phi_{\sr,1,k,0}^\pm(y,y_0)}=0,
\end{equation*}
thus reaching a contradiction. A similar contradiction is obtained supposing that $\phi_{\sr,1,k,0}^\pm(0,y_0) = 0$. As a result, $\phi_{\sr,1,k,0}^\pm(0,y_0)\phi_{\sr,1,k,0}^\pm(2,y_0)\neq 0$ and the corollary follows.
\end{proof}

To obtain the analogue of Propositions ~\ref{prop:nonzeroWweakstrat} and ~\ref{prop:nonzeroWstrongstrat} for the mild stratification regime, we first show an intermediate result.

\begin{lemma}\label{lemma:realphirphirL}
Let $k\geq 1$, $y_j,\,y_0\in I_M$, with $\cJ(y_0)=\frac14$, $y_j\rightarrow y_0$ and $\ep_j\rightarrow 0^+$ as $j\rightarrow \infty$. Then,
\begin{align*}
\phi_{\sr,1,k,0}^\pm(y,y_0):= \lim_{j\rightarrow \infty}\phi_{\sr,1,k,\ep_j}^\pm(y,y_j) \in \R, \quad \phi_{\rL,1,k,0}^\pm(y,y_0):= \lim_{j\rightarrow \infty}\phi_{\rL,1,k,\ep_j}^\pm(y,y_j) \in \R, 
\end{align*}
for all $y\in [0,2]$.
\end{lemma}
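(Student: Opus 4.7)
My plan is to identify both limits as the values of the Neumann--series solutions at $\ep=0$ evaluated with $\gamma_0=0$, and then to observe that the operators that generate those series become real at this configuration.

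First I would invoke the joint continuity of $\phi_{\sr,1,k,\ep}^\pm(y,y_0)$ on $\mathcal{Y}_{\overline I}$ provided by Proposition~\ref{prop:continuityphi}, and the analogous continuity of $\phi_{\rL,1,k,\ep}^\pm$ through the critical transition established in Proposition~\ref{prop:existencephiLcrit}, to conclude that the two limits exist and agree with the fixed-point solutions at $\ep=0$, $y_0$ with $\cJ(y_0) = \tfrac14$. In other words, $\phi_{\sr,1,k,0}^\pm = (I - k^2 \cT_{\sr,k,0}^\pm)^{-1}(1)$, while $\phi_{\rL,1,k,0}^\pm$ is obtained from \eqref{eq:fixedpointeqphiLcrit} at $\ep=0$ together with the analogous Neumann representation of $\phi_{0,1,k,0}^\pm$.

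Next I would check that, at $\ep=0$ and $\gamma_0=0$, the operators $\cT_{\sr,k,0}^\pm$, $\cT_{0,k,0}^\pm$ and $\cT_{\rL,k,0}^\pm$ all act as real operators (with the choice of sign $\pm$ becoming immaterial). The regularization $\pm i\ep$ disappears, the powers $(v(z)-v(y_0)\pm i\ep)^{\pm 2\gamma_0}$ collapse to $1$, and the remaining kernels are real. The apparent singularity in $\mathrm{F}_{\sr,k,0}^\pm$ and $\mathrm{F}_{0,k,0}^\pm$ arising from the quotient $(\cJ(y)-\tfrac14)/(v(y)-v(y_0))^m$ is removable thanks to H2: since $y_0$ is the unique maximizer of $\cJ$, we have $\cJ(y)-\tfrac14 = O((y-y_0)^2)$ near $y_0$, so the quotient is a real bounded function. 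The additional logarithmic factor $\log\bigl((v(z)-v(y_0))/(v(y)-v(y_0))\bigr)$ appearing in $\cT_{\rL,1,k,0}^\pm$ is also real, as the monotonicity of $v$ guarantees that its argument is positive whenever $z$ lies between $y_0$ and $y$.

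Finally, the Neumann series
\[
\phi_{\sr,1,k,0}^\pm \;=\; \sum_{n\geq 0} (k^2 \cT_{\sr,k,0}^\pm)^n(1), \qquad \phi_{0,1,k,0}^\pm \;=\; \sum_{n\geq 0} (k^2 \cT_{0,k,0}^\pm)^n(1)
\]
converge absolutely in $Y$ by Lemma~\ref{lemma:cTmapsYtoY}, and since each term iterates a real operator on the real constant $1$, both sums yield real-valued functions. The realness of $\phi_{\rL,1,k,0}^\pm = (I - k^2 \cT_{0,k,0}^\pm)^{-1}\bigl(k^2 \cT_{\rL,k,0}^\pm \phi_{0,1,k,0}^\pm\bigr)$ then follows immediately. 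The most delicate point of the argument is the matching between the mild-regime definition of $\phi_{\rL,1,k,\ep}^\pm$ through $(\phi_{\sr,k,\ep}^\pm - \phi_{\s,k,\ep}^\pm)/(2\gamma_0)$ for $y_j \neq \varpi_1,\varpi_2$ and the critical-case definition via the logarithmic decomposition \eqref{eq:defphiLcrit} at $y_0$ with $\gamma_0 = 0$; but this is precisely what the continuity assertion in Proposition~\ref{prop:existencephiLcrit} is designed to bridge, so the argument closes.
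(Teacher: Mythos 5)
Your argument for the $\phi_{\sr,1,k,0}^\pm$ part is essentially the paper's: write it as a Neumann series, observe that at $\ep=0$, $\gamma_0=0$ the operator $\cT_{\sr,k,0}^\pm$ becomes real, and pass to the limit via dominated convergence. That part is fine.

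For $\phi_{\rL,1,k,0}^\pm$ there is a genuine gap. You invoke Proposition~\ref{prop:existencephiLcrit} as a ``bridge'' between the mild-regime function defined through $(\phi_{\sr,k,\ep}^\pm-\phi_{\s,k,\ep}^\pm)/(2\gamma_0)$ via \eqref{eq:fixedpointeqphirL1kep} and the critical-case function defined via \eqref{eq:fixedpointeqphiLcrit}, and then conclude that the limit coincides with the Neumann-series solution of the critical fixed-point equation so that its realness is immediate. But Proposition~\ref{prop:existencephiLcrit} establishes only that the \emph{critical-case} function $\phi_{\rL,1,k,\ep}^\pm(y,y_0)$ (the one solving \eqref{eq:fixedpointeqphiLcrit}) is jointly continuous; it says nothing about whether the mild-regime sequence $\phi_{\rL,1,k,\ep_j}^\pm(y,y_j)$ converges to it. In fact these are two different decompositions of $\phi_{\rL,k,\ep}^\pm$ with different normalizations at the base point: from \eqref{eq:fixedpointeqphirL1kep}/\eqref{eq:phirLformula} one reads off $\phi_{\rL,1,k,\ep_j}^\pm(y_j,y_j)=0$, whereas the critical-case function of Proposition~\ref{prop:existencephiLcrit} satisfies $\phi_{\rL,1,k,\ep}^\pm(y_0,y_0)=1$. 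So the identification you rely on fails, and the argument does not close.

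The paper avoids this by never touching the critical-case Neumann series for this lemma. It works directly with the mild-regime representation \eqref{eq:phirLformula}, isolates the singular factor through the expansion
\begin{align*}
\frac{\zeta^{2\gamma_j}-\zeta^{-2\gamma_j}}{2\gamma_j} \;=\; 2\log(\zeta) \;+\; O\!\left(\gamma_j^2\,|\zeta|^{-1/2}\,|\log\zeta|^3\right),
\end{align*}
and then shows $\bigl(\cT_{\sr,k,\ep_j}^\pm-\cT_{\s,k,\ep_j}^\pm\bigr)/(2\gamma_j)\,\phi_{\sr,1,k,\ep_j}^\pm$ converges by dominated convergence to an explicitly real limit. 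That hands-on expansion is what actually replaces the nonexistent continuity-through-the-critical-transition you invoke; to repair your proof you would need to carry it out rather than citing Proposition~\ref{prop:existencephiLcrit}.
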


\begin{proof}
Since $\phi_{\sigma,1,k,\ep}^\pm := (I- k^2 \cT_{\sigma,k,\ep}^\pm)^{-1} 1$, we see that $\lim_{j\rightarrow\infty}\phi_{\sigma,1,k,\ep_j}^\pm(y,y_j)\in \R$ provided that $\lim_{j\rightarrow\infty}\cT_{\sigma,k,\ep_j}^\pm (1)(y,y_j)\in \R$. Indeed, recall that
\begin{align*}
\cT_{\sigma,k,\ep_j}^\pm(1)(y,y_j) = \int_{y_j}^y\frac{1}{(v(z) - v(y_j) \pm i \ep_j)^{1 + \mathbf{s}(\sigma)2\gamma_j}}\int_{y_j}^z \mathrm{F}_{\sigma,k,\ep_j}^\pm(s,y_j) (v(s) - v(y_j) \pm i\ep_j)^{\mathbf{s}(\sigma)2\gamma_j} \d s \d z,
\end{align*}
where $\mathbf{s}(\sigma)=1$ for $\sigma = \sr$ and $\mathbf{s}(\sigma) = -1$ for $\sigma = \s$. Since $\gamma_j\rightarrow 0$ as $j\rightarrow \infty$ and $\mathrm{F}_{\sr,k,0}^\pm(y,y_0):=\lim_{j\rightarrow \infty} \mathrm{F}_{\sr,k,\ep_j}^\pm(y,y_j)\in \R$ for all $y\in[0,2]$, the Dominated Convergence Theorem shows that
\begin{align*}
\lim_{j\rightarrow \infty} \cT_{\sigma,k,\ep_j}^\pm(1)(y,y_j) =\int_{y_0}^y \frac{1}{v(z) - v(y_0)}\int_{y_0}^z \mathrm{F}_{\sigma,k,0}^\pm(y,y_0) \d s \d z := \cT_{\sigma,k,0}^\pm(1)(y,y_0)\in \R
\end{align*}
and thus $\phi_{\sigma,k,0}^\pm(y,y_0):=\lim_{j\rightarrow\infty}\phi_{\sigma,1,k,\ep_j}^\pm(y,y_j)\in \R$, for $\sigma\in \lbrace \sr, \s \rbrace$. In particular, this same argument shows that 
\begin{align*}
\lim_{j\rightarrow \infty}\cT_{\s,k,\ep_j}^\pm \phi_{k,\ep_j}(y,y_j) \in \R
\end{align*}
if $ \phi_{k,\ep_j}(y,y_j)\in L^\infty$ uniformly in $y_j\in I_M$ and $\ep_j>0$ and $\lim_{j\rightarrow\infty}  \phi_{k,\ep_j}(y,y_j) \in \R$, for all $y\in[0,2]$. Hence, for the second part of the lemma, thanks to \eqref{eq:phirLformula} we just need to see that 
\begin{equation}\label{eq:limTrLphir}
\lim_{j\rightarrow \infty} \left(\frac{\cT_{\sr,k,\ep_j}^\pm-T_{\s,k,\ep_j}^\pm}{2\gamma_j} \right)\phi_{\sr,1,k,\ep}^\pm(y,y_j)\in \R
\end{equation}
for all $y\in [0,2]$. To that end, we note that
\begin{align*}
&\left| \frac{\left( \frac{v(z) - v(y_j) \pm i\ep_j}{v(y) - v(y_j) \pm i\ep_j} \right)^{2\gamma_j} -\left( \frac{v(z) - v(y_j) \pm i\ep_j}{v(y) - v(y_j) \pm i\ep_j} \right)^{-2\gamma_j}}{2\gamma_j} - 2\log \left( \frac{v(z) - v(y_j) \pm i\ep_j}{v(y) - v(y_j) \pm i\ep_j} \right) \right| \\
&\qquad\qquad\lesssim \gamma_j^2 \left|  \frac{v(z) - v(y_j) \pm i\ep_j}{v(y) - v(y_j) \pm i\ep_j} \right|^{-\frac12}  \left| \log^3 \left|  \frac{v(z) - v(y_j) \pm i\ep_j}{v(y) - v(y_j) \pm i\ep_j} \right| \right| 
\end{align*}
for all $y,z\in[0,2]$, all $y_j\in I_M$ and all $\ep_j>0$. In particular, we now have
\begin{align*}
&\left(\frac{\cT_{\sr,k,\ep_j}^\pm-T_{\s,k,\ep_j}^\pm}{2\gamma_j} \right)\phi_{\sr,1,k,\ep}^\pm(y,y_j) \\
&\qquad= 2\int_{y_j}^y \frac{1}{v(z) - v(y_j) \pm i\ep_j}\int_{y_j}^z \mathrm{F}_{\sr,k,\ep_j}^\pm(s,y_j)\phi_{\sr,1,k,\ep_j}^\pm(s,y_j) \log \left( \frac{v(s) - v(y_j) \pm i\ep_j}{v(z) - v(y_j) \pm i\ep_j} \right) \d s \d z \\
&\qquad -\int_{y_j}^y \frac{1}{k^2(v(z) - v(y_j) \pm i\ep_j)}\int_{y_j}^s v''(s) \left( \frac{v(s) - v(y_j) \pm i\ep_j}{v(z) - v(y_j) \pm i\ep_j} \right)^{-2\gamma_j}\phi_{\sr,1,k,\ep_j}^\pm(s,y_j) \d s \d z \\
&\qquad + O(\gamma_j^2) \int_{y_j}^y \frac{1}{v(z) - v(y_j) \pm i\ep_j}\int_{y_j}^z \mathrm{F}_{\sr,k,\ep_j}^\pm(s,y_j)\phi_{\sr,1,k,\ep_j}^\pm(s,y_j) \log^3 \left( \frac{v(s) - v(y_j) \pm i\ep_j}{v(z) - v(y_j) \pm i\ep_j} \right) \d s \d z. 
\end{align*}
The third contribution clearly vanishes in the limit. For the second contribution we can use the Dominated Convergence Theorem directly and show it converges to
\begin{align*}
\int_{y_0}^y \frac{1}{k^2(v(z) - v(y_0))}\int_{y_0}^z v''(s) \phi_{\sr,1,k,0}^\pm(s,y_0) \d s \d z \in \R
\end{align*}
while for the first contribution we note that
\begin{align*}
&\left| \frac{1}{v(z) - v(y_j) \pm i\ep_j}\int_{y_j}^z \mathrm{F}_{\sr,k,\ep_j}^\pm(s,y_j)\phi_{\sr,1,k,\ep_j}^\pm(s,y_j) \log \left( \frac{v(s) - v(y_j) \pm i\ep_j}{v(z) - v(y_j) \pm i\ep_j} \right) \d s \right| \\
&\quad\lesssim \frac{1}{|v(z) - v(y_j) \pm i\ep_j|}\int_{y_j}^z \left| \frac{v(s) - v(y_j) \pm i\ep_j}{v(z) - v(y_j) \pm i\ep_j} \right|^{-\frac12} \d s \\
&\quad \lesssim 1,
\end{align*}
and thus we can use the Dominated Convergence Theorem as well to show that its limit is
\begin{align*}
2\int_{y_0}^y \frac{1}{v(z) - v(y_0) }\int_{y_0}^z \mathrm{F}_{\sr,k,0}^\pm(s,y_0)\phi_{\sr,1,k,0}^\pm(s,y_0) \log \left( \frac{v(s) - v(y_0)}{v(z) - v(y_0) } \right) \d s \d z \in \R.
\end{align*}
With this we conclude that \eqref{eq:limTrLphir} holds and the lemma follows.
\end{proof}

We are now in position to present the analogue of Propositions ~\ref{prop:nonzeroWweakstrat} and ~\ref{prop:nonzeroWstrongstrat} in the mildly stratified region.

\begin{proposition}\label{prop:limitWronkianMild}
Let $k\geq 1$, $y_j\rightarrow y_0$ with $\cJ(y_0) = \frac14$, $\cJ(y_j)\neq\frac14$ and  $\ep_j\rightarrow 0^+$ as $j\rightarrow \infty$. Then,
\begin{align*}
\lim_{j\rightarrow\infty} \phi_{\sr,k,\ep_j}^\pm(0,y_j)\phi_{\rL,k,\ep_j}^\pm(2,y_j) - \phi_{\sr,k,\ep_j}^\pm(2,y_j)\phi_{\rL,k,\ep_j}^\pm(0,y_j) \neq 0.
\end{align*}
\end{proposition}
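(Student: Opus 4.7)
The plan is to parallel the arguments used for Propositions~\ref{prop:nonzeroWweakstrat} and~\ref{prop:nonzeroWstrongstrat}, adapted to the mildly stratified regime where, in the limit $\gamma_j\to 0$, the difference-quotient definition~\eqref{eq:defphiL} of $\phi_{\rL,k,\ep_j}^\pm$ produces the logarithmic correction featured in~\eqref{eq:defphiLcrit}. First, I would unfold~\eqref{eq:defphiL} using~\eqref{eq:defphisigma} and the identity $(z^{2\gamma_j}-1)/(2\gamma_j)=\log(z)\,\Q_{\gamma_j}(z)$ to write
\begin{align*}
\phi_{\rL,k,\ep_j}^\pm(y,y_j) &= (v(y)-v(y_j)\pm i\ep_j)^{1/2-\gamma_j}\log(v(y)-v(y_j)\pm i\ep_j)\,\Q_{\gamma_j}(\cdot)\,\phi_{\sr,1,k,\ep_j}^\pm(y,y_j)\\
&\quad + (v(y)-v(y_j)\pm i\ep_j)^{1/2-\gamma_j}\,\phi_{\rL,1,k,\ep_j}^\pm(y,y_j),
\end{align*}
and then take $j\to\infty$. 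By Propositions~\ref{prop:continuityphi} and~\ref{prop:existencephirL1kep}, together with the limit identification carried out in the proof of Lemma~\ref{lemma:realphirphirL}, $\phi_{\rL,1,k,\ep_j}^\pm(y,y_j)$ converges to the critical-layer object $\phi_{\rL,1,k,0}^\pm(y,y_0)$ of Proposition~\ref{prop:existencephiLcrit}, the power tends to the principal branch of $(v(y)-v(y_0))^{1/2}$, and the logarithm picks up the branch jump at $y=0$.

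Next, I would evaluate the two products at $y=0$ and $y=2$ and subtract. Using along the principal branch that $(v(0)-v(y_j)\pm i\ep_j)^{1/2}\to \pm i\,(v(y_0)-v(0))^{1/2}$ and $\log(v(0)-v(y_j)\pm i\ep_j)\to \log(v(y_0)-v(0))\pm i\pi$, one arrives at
\begin{align*}
L &:= \lim_{j\to\infty}\big[\phi_{\sr,k,\ep_j}^\pm(0,y_j)\phi_{\rL,k,\ep_j}^\pm(2,y_j)-\phi_{\sr,k,\ep_j}^\pm(2,y_j)\phi_{\rL,k,\ep_j}^\pm(0,y_j)\big]\\
&= \pm i\,(v(y_0)-v(0))^{1/2}(v(2)-v(y_0))^{1/2}\,(A\mp i\pi B),
\end{align*}
with
\begin{align*}
A &= \phi_{\sr,1,k,0}^\pm(0,y_0)\phi_{\rL,1,k,0}^\pm(2,y_0) - \phi_{\sr,1,k,0}^\pm(2,y_0)\phi_{\rL,1,k,0}^\pm(0,y_0)\\
&\quad + \log\!\Big(\tfrac{v(2)-v(y_0)}{v(y_0)-v(0)}\Big)\phi_{\sr,1,k,0}^\pm(0,y_0)\phi_{\sr,1,k,0}^\pm(2,y_0),\\
B &= \phi_{\sr,1,k,0}^\pm(0,y_0)\phi_{\sr,1,k,0}^\pm(2,y_0).
\end{align*}
By Lemma~\ref{lemma:realphirphirL}, both $\phi_{\sr,1,k,0}^\pm$ and $\phi_{\rL,1,k,0}^\pm$ are real-valued, so $A,B\in\R$ and $L=0$ would force $A=B=0$ simultaneously.

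Finally, I would close the argument by contradiction. Hypothesis H2.3 gives $y_0\in\{\varpi_1,\varpi_2\}$, and Lemma~\ref{lemma:nonzerophisrphis} delivers the non-vanishing of the \emph{safe} endpoint: $\phi_{\sr,1,k,0}^\pm(0,\varpi_1)\neq 0$ if $y_0=\varpi_1$, and $\phi_{\sr,1,k,0}^\pm(2,\varpi_2)\neq 0$ if $y_0=\varpi_2$. Treating $y_0=\varpi_1$ (the other case being symmetric), $B=0$ forces $\phi_{\sr,1,k,0}^\pm(2,\varpi_1)=0$; the invariant Wronskian $\W\{\phi_{\sr,k,0}^\pm(\cdot,\varpi_1),\phi_{\rL,k,0}^\pm(\cdot,\varpi_1)\}=v'(\varpi_1)\neq 0$, evaluated at $y=2$, then forces $\phi_{\rL,k,0}^\pm(2,\varpi_1)\neq 0$, which via~\eqref{eq:defphiLcrit} together with $\phi_{\sr,1,k,0}^\pm(2,\varpi_1)=0$ translates into $\phi_{\rL,1,k,0}^\pm(2,\varpi_1)\neq 0$. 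Substituting back into $A$, every term except the first vanishes and leaves $A=\phi_{\sr,1,k,0}^\pm(0,\varpi_1)\phi_{\rL,1,k,0}^\pm(2,\varpi_1)\neq 0$, contradicting $A=0$. The main obstacle I anticipate lies in Step~1: identifying uniformly in $\gamma_j\to 0$ the limit of $\phi_{\rL,1,k,\ep_j}^\pm$ (governed by~\eqref{eq:fixedpointeqphirL1kep}) with $\phi_{\rL,1,k,0}^\pm$ (governed by~\eqref{eq:fixedpointeqphiLcrit}), which requires a careful expansion of $(\cT_{\sr,k,\ep_j}^\pm-\cT_{\s,k,\ep_j}^\pm)/(2\gamma_j)$ and control of the $\log\,\Q_{\gamma_j}$ factor across branch cuts; once this limit is secured, the remaining algebra is routine.
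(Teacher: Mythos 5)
Your proposal is correct and follows essentially the same route as the paper: the same limit identifications via Proposition~\ref{prop:continuityphi}, Lemma~\ref{lemma:realphirphirL}, and Propositions~\ref{prop:existencephirL1kep}--\ref{prop:existencephiLcrit}, the same separation of the limiting Wronskian into a real part proportional to $\pi\,\phi_{\sr,1,k,0}^\pm(0,y_0)\phi_{\sr,1,k,0}^\pm(2,y_0)$ and an imaginary part, and the same use of Lemma~\ref{lemma:nonzerophisrphis} together with the invariance of $\W\{\phi_{\sr,k,0}^\pm,\phi_{\rL,k,0}^\pm\}$ to rule out both vanishing simultaneously. The only minor imprecision is that your unfolded pre-limit expression for $\phi_{\rL,k,\ep_j}^\pm$ should carry $\phi_{\s,1,k,\ep_j}^\pm$ in the logarithmic factor and the exponent $\tfrac12+\gamma_j$ in the other term (cf.~\eqref{eq:defphiL}), but neither correction affects the $\gamma_j\to 0$ limit, so the argument stands.
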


\begin{proof}
From Proposition~\ref{prop:continuityphi} and Lemma~\ref{lemma:realphirphirL} we have that
\begin{align*}
\lim_{j\rightarrow \infty}\phi_{\sr,k,\ep_j}^\pm(0,y_j)  = \pm i(v(y_0) - v(0))^{\frac12}\phi_{\sr,1,k,0}(0,y_0),
\end{align*}
and
\begin{align*}
\lim_{j\rightarrow \infty}\phi_{\rL,k,\ep_j}^\pm(2,y_j) = (v(2) - v(y_0))^{\frac12} \left( \log(v(2) - v(y_0)) \phi_{\sr,1,k,0}^\pm(2,y_0) + \phi_{\rL,1,k,0}^\pm(2,y_0) \right)
\end{align*}
while
\begin{align*}
\lim_{j\rightarrow \infty} \phi_{\sr,k,\ep_j}^\pm(2,y_j) = (v(2) - v(y_0))^{\frac12}\phi_{\sr,1,k,0}^\pm(2,y_0) 
\end{align*}
and
\begin{align*}
\lim_{j\rightarrow \infty} \phi_{\rL,k,\ep_j}^\pm(0,y_j) = \pm i(v(y_0) - v(0))^\frac12 \left(   \left(\log(v(y_0) - v(0)) \pm i \pi \right) \phi_{\sr,1,k,0}^\pm(0,y_0) + \phi_{\rL,1,k,0}^\pm(0,y_0) \right)
\end{align*}
Therefore, we have that
\begin{align*}
\lim_{j\rightarrow\infty} &\phi_{\sr,k,\ep_j}^\pm(0,y_j)\phi_{\rL,k,\ep_j}^\pm(2,y_j) - \phi_{\sr,k,\ep_j}^\pm(2,y_j)\phi_{\rL,k,\ep_j}^\pm(0,y_j)  \\
&= \pi (v(2) - v(y_0))^\frac12 (v(y_0) - v(0))^\frac12 \phi_{\sr,1,k,0}^\pm(0,y_0) \phi_{\sr,1,k,0}^\pm(2,y_0) \\
&\quad \pm i(v(y_0) - v(0))^{\frac12}(v(2) - v(y_0))^{\frac12}\phi_{\sr,1,k,0}(0,y_0) \left( \log(v(2) - v(y_0)) \phi_{\sr,1,k,0}^\pm(2,y_0) + \phi_{\rL,1,k,0}^\pm(2,y_0) \right) \\
&\quad \mp i(v(y_0) - v(0))^{\frac12}(v(2) - v(y_0))^{\frac12}\phi_{\sr,1,k,0}(2,y_0) \left( \log(v(2) - v(y_0)) \phi_{\sr,1,k,0}^\pm(0,y_0) + \phi_{\rL,1,k,0}^\pm(0,y_0) \right)
\end{align*}
with further $\phi_{\sigma,1,k,0}^\pm(y,y_0)\in \R$ for all $y\in [0,2]$ and for $\sigma\in \lbrace \sr, \rL \rbrace$. Assume now that $y_0 = \varpi_2$, so that $\phi_{\sr,1,k,0}^\pm(2,y_0)\geq 1$ due to Lemma \ref{lemma:nonzerophisrphis}. Then, if $\phi_{\sr,1,k,0}^\pm(0,y_0)\neq 0$ we see that the limit has non-zero real part. Instead, if $\phi_{\sr,1,k,0}^\pm(0,y_0)= 0$, since $\W \lbrace\phi_{\sr,k,0}^\pm,\phi_{\rL,k,0}^\pm \rbrace = v'(y_0)\neq 0$, we deduce that $\phi_{\rL,1,k,0}^\pm(0,y_0)\neq  0$ and
\begin{align*}
\lim_{j\rightarrow\infty} &\phi_{\sr,k,\ep_j}^\pm(0,y_j)\phi_{\rL,k,\ep_j}^\pm(2,y_j) - \phi_{\sr,k,\ep_j}^\pm(2,y_j)\phi_{\rL,k,\ep_j}^\pm(0,y_j) \\
&= \mp i(v(y_0) - v(0))^{\frac12}(v(2) - v(y_0))^{\frac12}\phi_{\sr,1,k,0}(2,y_0) \phi_{\rL,1,k,0}^\pm(0,y_0) \neq 0.
\end{align*}
For $y_0=\varpi_1$ we argue similarly, now observing that $\phi_{\sr,1,k,0}^\pm(0,y_0)\neq 0$. We omit the details.
\end{proof}

We shall need later in Proposition ~\ref{prop:noimagspectrum} a precise description of the limiting homogeneous solutions for the critical interphase $y_0=\varpi_1$ and $y_0=\varpi_2$. Arguing as in Lemma \ref{lemma:realphirphirL} we now have

\begin{lemma}\label{lemma:realphiLcrit}
Let $k\geq 1$, $\g_j\rightarrow \tilde\g>0$, $y_j\rightarrow y_0=\tilde\varpi_1$ or $y_0=\tilde\varpi_2$, with $\g_j\mathrm{P}(y_j)=\frac14$ and $\tilde\g\mathrm{P}(\tilde\varpi_n)=\frac14$ and $\ep_j>0$ with $\ep_j\rightarrow 0$ as $j\rightarrow \infty$. Then,
\begin{align*}
\phi_{rL,1,k,0}^\pm(y,y_0):= \lim_{j\rightarrow\infty} \phi_{\rL,1,k,\ep_j}^\pm(y,y_j) \in \R,
\end{align*}
for all $y\in[0,2]$.
\end{lemma}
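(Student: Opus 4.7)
The plan is to mimic the strategy of Lemma~\ref{lemma:realphirphirL}, adapted to the critical representation \eqref{eq:defphiLcrit}--\eqref{eq:fixedpointeqphiLcrit}. I would first solve the fixed-point relation \eqref{eq:fixedpointeqphiLcrit} for $\phi_{\rL,1,k,\ep_j}^\pm$, obtaining
\[
\phi_{\rL,1,k,\ep_j}^\pm=(I-k^2\cT_{0,k,\ep_j}^\pm)^{-1}\bigl(k^2\cT_{\rL,k,\ep_j}^\pm\phi_{0,1,k,\ep_j}^\pm\bigr),
\]
and rely on the uniform $Y$-bounds on $\cT_{0,k,\ep_j}^\pm$ and $\cT_{\rL,k,\ep_j}^\pm$ from Proposition~\ref{prop:existencephiLcrit} (whose dependence on $\g_j$ enters only through bounded quantities, since $\g_j\to\tilde\g$) to expand the resolvent as a Neumann series converging uniformly in $j$. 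It will then suffice to verify: (i)~$\lim_{j\to\infty}\phi_{0,1,k,\ep_j}^\pm(y,y_j)\in\R$; (ii)~$\cT_{0,k,\ep_j}^\pm$ sends sequences with a real pointwise limit to sequences with a real pointwise limit; (iii)~$\cT_{\rL,k,\ep_j}^\pm\phi_{0,1,k,\ep_j}^\pm$ has a real pointwise limit.

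Items~(i) and~(ii) I would handle exactly as for $\phi_{\sr,1,k,0}^\pm$ in Lemma~\ref{lemma:realphirphirL}. The integrand of $\cT_{0,1,k,\ep_j}^\pm$ is
\[
\frac{1}{v(y)-v(y_j)\pm i\ep_j}\int_{y_j}^{y}\mathrm{F}_{0,k,\ep_j}^\pm(z,y_j)\,f(z,y_j)\,\d z,
\]
with pre-factor converging to the real number $(v(y)-v(y_0))^{-1}$ for $y\neq y_0$, while $\mathrm{F}_{0,k,\ep_j}^\pm\to\mathrm{F}_{0,k,0}^\pm\in\R$ pointwise. The uniform estimates underpinning the $Y$-boundedness of $\cT_{0,k,\ep_j}^\pm$ in Proposition~\ref{prop:existencephiLcrit} will supply the dominating function, so the Dominated Convergence Theorem will yield the existence and reality of the limit, and composing with the obviously real-preserving $\cT_0$ will not spoil this property.

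For~(iii), I note that $\cT_{\rL,1,k,\ep_j}^\pm$ in \eqref{eq:defTrL1kep} differs from $\cT_{0,1,k,\ep_j}^\pm$ only by the logarithmic factor $\log\!\bigl((v(z)-v(y_j)\pm i\ep_j)/(v(y)-v(y_j)\pm i\ep_j)\bigr)$ and a harmless $v''$-term. For $z$ strictly between $y_j$ and $y$, this ratio converges to a positive real number as $\ep_j\to 0$, so its logarithm converges to a real value. A dominating function will be produced from
\[
\left|\log\!\frac{v(z)-v(y_j)\pm i\ep_j}{v(y)-v(y_j)\pm i\ep_j}\right|\lesssim 1+\left|\log\frac{|v(z)-v(y_j)|}{|v(y)-v(y_j)|}\right|,
\]
followed by the change of variable $u=(v(z)-v(y_j))/(v(y)-v(y_j))\in(0,1)$, which reduces the inner $z$-integral to $\int_0^1(1+|\log u|)\,\d u<\infty$, uniformly in $\ep_j$ and $j$. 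Combined with~(i), this will trigger a final DCT and produce the desired real pointwise limit.

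The hardest part will be the bookkeeping of the singular behaviour of the kernels at $z=y_j$ and at $y=y_j$, in particular keeping the dominating functions uniform as $j\to\infty$; however, the essential uniform integrability estimates are already embedded in the proof of Proposition~\ref{prop:existencephiLcrit}, so the argument reduces to a careful, though not genuinely novel, sequence of applications of dominated convergence.
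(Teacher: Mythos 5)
Your proposal takes essentially the same route as the paper's proof: both start from the fixed-point formula $\phi_{\rL,1,k,\ep_j}^\pm=(I-k^2\cT_{0,k,\ep_j}^\pm)^{-1}(k^2\cT_{\rL,k,\ep_j}^\pm\phi_{0,1,k,\ep_j}^\pm)$, then argue that $\phi_{0,1,k,\ep_j}^\pm$ has a real limit and that the limiting operators $\cT_{0,k,0}$ and $\cT_{\rL,k,0}$ are real-valued operators acting on real functions, invoking Lemma~\ref{lemma:realphirphirL} and dominated convergence. The paper records this in a few lines; your detailed treatment of the dominating function for the logarithmic kernel in $\cT_{\rL,1,k,\ep_j}^\pm$ simply makes explicit the estimates the paper leaves implicit via reference to Proposition~\ref{prop:existencephiLcrit}.
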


{
\begin{proof} Since $\g_j\mathrm{P}(y_j)=\frac14$ and $\tilde\g\mathrm{P}(\tilde\varpi_n)=\frac14$ for all $j\geq 1$, we shall use \eqref{eq:defphiLcrit}. From \eqref{eq:fixedpointeqphiLcrit} we have that
\begin{align*}
    \phi_{\rL,1,k,\ep_j}^\pm(y,y_0) =  \left( 1-k^2 \cT_{0,k,\ep_j}^\pm \right)^{-1} \left( k^2 \cT_{\rL,k,\ep_j}^\pm\phi_{0,1,k,\ep_j}^\pm \right)(y,y_0).
\end{align*}
As $\ep_j\rightarrow 0$, we see that $\cT_{\rL,k,0}:=\lim_{j\rightarrow \infty}\cT_{\rL,k,\ep}^\pm$ given by \eqref{eq:defTrL1kep} is a real valued operator acting on real functions. Since $\phi_{0,1,k,0}(y,y_0)\in \R$ and $\cT_{0,k,0}$ is a real valued operator acting on real functions due to Lemma \ref{lemma:realphirphirL}, we deduce that $\phi_{rL,1,k,0}(y,y_0)\in \R$ as well.
\end{proof}
}
With the above lemma at hand, we next obtain 

\begin{proposition}\label{prop:limitWronskianMildCrit}
Let $k\geq 1$,  $\g_j\rightarrow \tilde\g>0$, $y_j\rightarrow y_0=\tilde\varpi_1$ or $y_0=\tilde\varpi_2$, with $\g_j\mathrm{P}(y_j)=\frac14$ and $\tilde\g\mathrm{P}(\tilde\varpi_n)=\frac14$ and $\ep_j>0$ with $\ep_j\rightarrow 0$ as $j\rightarrow \infty$. Then,
\begin{align*}
\lim_{j\rightarrow\infty} \phi_{\sr,k,\ep_j}^\pm(0,y_j)\phi_{\rL,k,\ep_j}^\pm(2,y_j) - \phi_{\sr,k,\ep_j}^\pm(2,y_j)\phi_{\rL,k,\ep_j}^\pm(0,y_j) \neq 0.
\end{align*}
\end{proposition}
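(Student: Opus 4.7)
The plan is to closely mirror the strategy of Proposition~\ref{prop:limitWronkianMild}, with the main new ingredient being that along the sequence under consideration we have $\gamma_{y_j}=0$ \emph{exactly} (because $\g_j\mathrm{P}(y_j)=\tfrac14$ for each $j$), so the homogeneous solution $\phi_{\rL,k,\ep_j}^\pm$ must be taken in the logarithmic form \eqref{eq:defphiLcrit} from the outset, rather than obtained as a rescaled limit of $\phi_{\s,k,\ep}^\pm$. First I will compute the limit of each of the four factors $\phi_{\sr,k,\ep_j}^\pm(0,y_j)$, $\phi_{\sr,k,\ep_j}^\pm(2,y_j)$, $\phi_{\rL,k,\ep_j}^\pm(0,y_j)$ and $\phi_{\rL,k,\ep_j}^\pm(2,y_j)$ separately. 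The first two follow from Proposition~\ref{prop:continuityphi} and the branch of the square root on $\{\text{Im}\geq 0\}$ or $\{\text{Im}\leq 0\}$: writing $\phi_{0,1,k,0}^\pm(y,y_0):=\lim_j \phi_{\sr,1,k,\ep_j}^\pm(y,y_j)=\lim_j\phi_{0,1,k,\ep_j}^\pm(y,y_j)$, one gets
\begin{align*}
\lim_{j\to\infty}\phi_{\sr,k,\ep_j}^\pm(0,y_j) &= \pm i(v(y_0)-v(0))^{1/2}\phi_{0,1,k,0}^\pm(0,y_0),\\
\lim_{j\to\infty}\phi_{\sr,k,\ep_j}^\pm(2,y_j) &= (v(2)-v(y_0))^{1/2}\phi_{0,1,k,0}^\pm(2,y_0),
\end{align*}
with both limits real (up to the overall $\pm i$), since $\phi_{0,1,k,0}^\pm$ is real-valued by the argument of Lemma~\ref{lemma:realphirphirL}.

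Next I will use \eqref{eq:defphiLcrit} together with the branch identities $\log(v(0)-v(y_j)\pm i\ep_j)\to\log(v(y_0)-v(0))\pm i\pi$ and $(v(0)-v(y_j)\pm i\ep_j)^{1/2}\to\pm i(v(y_0)-v(0))^{1/2}$ to get
\begin{align*}
\lim_{j\to\infty}\phi_{\rL,k,\ep_j}^\pm(2,y_j) &= (v(2)-v(y_0))^{1/2}\bigl[\log(v(2)-v(y_0))\phi_{0,1,k,0}^\pm(2,y_0)+\phi_{\rL,1,k,0}^\pm(2,y_0)\bigr],\\
\lim_{j\to\infty}\phi_{\rL,k,\ep_j}^\pm(0,y_j) &= \pm i(v(y_0)-v(0))^{1/2}\bigl[\log(v(y_0)-v(0))\phi_{0,1,k,0}^\pm(0,y_0)+\phi_{\rL,1,k,0}^\pm(0,y_0)\bigr]\\
&\quad -\pi(v(y_0)-v(0))^{1/2}\phi_{0,1,k,0}^\pm(0,y_0),
\end{align*}
where the real-valuedness of $\phi_{\rL,1,k,0}^\pm$ is exactly the content of Lemma~\ref{lemma:realphiLcrit}. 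Substituting these four limits into the Wronskian-like combination and separating real and imaginary parts, the logarithmic terms cancel against each other and one obtains
\begin{align*}
\lim_{j\to\infty}\bigl[\phi_{\sr}^\pm(0)\phi_{\rL}^\pm(2)-\phi_{\sr}^\pm(2)\phi_{\rL}^\pm(0)\bigr]
&= \pi(v(y_0)-v(0))^{1/2}(v(2)-v(y_0))^{1/2}\phi_{0,1,k,0}^\pm(0,y_0)\phi_{0,1,k,0}^\pm(2,y_0)\\
&\quad \pm i(v(y_0)-v(0))^{1/2}(v(2)-v(y_0))^{1/2}\Lambda(y_0),
\end{align*}
with
\begin{align*}
\Lambda(y_0) := \phi_{0,1,k,0}^\pm(0,y_0)\phi_{\rL,1,k,0}^\pm(2,y_0)-\phi_{0,1,k,0}^\pm(2,y_0)\phi_{\rL,1,k,0}^\pm(0,y_0).
\end{align*}

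To conclude non-vanishing, I will proceed by cases. If both $\phi_{0,1,k,0}^\pm(0,y_0)$ and $\phi_{0,1,k,0}^\pm(2,y_0)$ are non-zero then the real part is already non-zero and we are done. Otherwise, assume for instance $\phi_{0,1,k,0}^\pm(0,y_0)=0$; by Lemma~\ref{lemma:nonzerophisrphis}, applied on the appropriate side of the critical layer, $\phi_{0,1,k,0}^\pm(2,y_0)\neq0$, so the imaginary part reduces to $-\phi_{0,1,k,0}^\pm(2,y_0)\phi_{\rL,1,k,0}^\pm(0,y_0)$. Using the Wronskian identity $\W\{\phi_{\sr,k,0}^\pm,\phi_{\rL,k,0}^\pm\}=v'(y_0)\neq0$ evaluated at $y=0$, together with $\phi_{\sr,k,0}^\pm(0,y_0)=0$ (which follows from $\phi_{0,1,k,0}^\pm(0,y_0)=0$), forces $\phi_{\rL,k,0}^\pm(0,y_0)\neq0$, and hence $\phi_{\rL,1,k,0}^\pm(0,y_0)\neq0$. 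The symmetric case $\phi_{0,1,k,0}^\pm(2,y_0)=0$ is handled analogously. The only point that requires genuine care is the careful branch bookkeeping when taking the $\ep_j\to0^+$ limit in \eqref{eq:defphiLcrit}, since this is where the crucial $\pm i\pi$ term (and hence the decisive $\pi$-factor in the real part) is generated; once this is done cleanly, the remainder is a routine case distinction combined with Lemmas~\ref{lemma:nonzerophisrphis}--\ref{lemma:realphiLcrit} and the Wronskian identity.
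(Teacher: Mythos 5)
Your approach is essentially the same as the paper's: compute the limits of the four boundary evaluations using the branch identities, write out the Wronskian combination, and close with a case distinction based on whether $\phi_{0,1,k,0}^\pm(0,y_0)\phi_{0,1,k,0}^\pm(2,y_0)$ vanishes, invoking Lemma~\ref{lemma:nonzerophisrphis} and the Wronskian nondegeneracy. The one inaccuracy is your claim that ``the logarithmic terms cancel against each other.'' They do not; carrying out the algebra carefully, the limiting expression also contains the term
\begin{align*}
\pm\, i\, C_v \log\!\left(\frac{v(2)-v(y_0)}{v(y_0)-v(0)}\right)\phi_{0,1,k,0}^\pm(0,y_0)\,\phi_{0,1,k,0}^\pm(2,y_0),
\end{align*}
where $C_v=(v(2)-v(y_0))^{1/2}(v(y_0)-v(0))^{1/2}$, in addition to the $\pi$-term and the $\Lambda$-term you wrote. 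This slip is harmless for the argument: the extra term is purely imaginary, so it cannot cancel the nonzero real part $\pi C_v\,\phi_{0,1,k,0}^\pm(0,y_0)\phi_{0,1,k,0}^\pm(2,y_0)$ in the first case; and it is proportional to $\phi_{0,1,k,0}^\pm(0,y_0)\phi_{0,1,k,0}^\pm(2,y_0)$, so it vanishes identically in the second case where you pass to the Wronskian argument. But you should correct the statement, since as written it claims a cancellation that does not occur. It would also be cleaner to make the $y_0=\tilde\varpi_1$ versus $y_0=\tilde\varpi_2$ distinction explicit when invoking Lemma~\ref{lemma:nonzerophisrphis}, as the lemma gives nonvanishing on opposite sides in the two cases.
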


\begin{proof}
Let $C_v:=(v(2) - v(y_0))^\frac12 (v(y_0) - v(0))^\frac12>0$. Due to Proposition~\ref{prop:existencephiLcrit}, a routine computation shows that for $\phi_{\rL,k,\ep_j}^\pm(\cdot,y_j)$ given by \eqref{eq:defphiLcrit}, we have
\begin{align*}
\lim_{j\rightarrow\infty} &\phi_{\sr,k,\ep_j}^\pm(0,y_j)\phi_{\rL,k,\ep_j}^\pm(2,y_j) - \phi_{\sr,k,\ep_j}^\pm(2,y_j)\phi_{\rL,k,\ep_j}^\pm(0,y_j) \\
&= C_v\pi \phi_{\sr,1,k,0}(0,y_0) \phi_{\sr,1,k,0}(2,y_0) \\
&\quad \pm C_v i  \phi_{\sr,1,k,0}(0,y_0) \phi_{\sr,1,k,0}(2,y_0) \log \left( \frac{v(2) - v(y_0)}{v(y_0) - v(0)} \right) \\
&\quad \pm C_v i \left( \phi_{\sr,1,k,0}(0,y_0) \phi_{\rL,1,k,0}(2,y_0)  - \phi_{\sr,1,k,0}(2,y_0) \phi_{\rL,1,k,0}(0,y_0)  \right) .
\end{align*}
Hence, if $\phi_{\sr,1,k,0}(0,y_0) \phi_{\sr,1,k,\ep}(2,y_0) \neq 0$ then the limit has non-zero real part, and it is thus non-zero. Assume next that $\phi_{\sr,1,k,0}(0,y_0) \phi_{\sr,1,k,0}(2,y_0) =0$ and assume further that $y_0 = \widetilde\varpi_2$, so that $\phi_{\sr,1,k,0}(2,y_0) \neq 0$ due to Lemma \ref{lemma:nonzerophisrphis} and thus $\phi_{\sr,1,k,0}(0,y_0) =0$. Since $\W\lbrace \phi_{\sr,k,0}^\pm, \phi_{\rL,k,0}^\pm \rbrace = v'(y_0)\neq 0$, we have $\phi_{\rL,1,k,0}(0,y_0)\neq 0$ and thus
\begin{align*}
\lim_{j\rightarrow\infty} &\phi_{\sr,k,\ep_j}^\pm(0,y_j)\phi_{\rL,k,\ep_j}^\pm(2,y_j) - \phi_{\sr,k,\ep_j}^\pm(2,y_j)\phi_{\rL,k,\ep_j}^\pm(0,y_j) = \mp C_v i \phi_{\sr,1,k,0}(2,y_0) \phi_{\rL,1,k,0}(0,y_0) \neq 0.
\end{align*}
For $y_0=\widetilde\varpi_1$ we now have $\phi_{\sr,1,k,0}(0,y_0)\neq 0$ and we argue similarly, we omit the details.
\end{proof}

\subsection{Absence of embedded eigenvalues}
In this section we show that $\lambda= v(y_0)$ is not an embedded eigenvalue of $L_k$, for all $y_0\in (\vartheta_1,\vartheta_2)$. In the following three propositions we assume as usual that {H$\P$-H$v$} and H1--H3 hold. 

\begin{proposition}\label{prop:noembedweak}
Let $y_0\in (\vartheta_1, \varpi_1) \cup (\varpi_2,\vartheta_2)$. Then $\lambda= v(y_0)$ is not an embedded eigenvalue of $L_k$.
\end{proposition}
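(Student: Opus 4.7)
The approach is to rule out non-trivial eigenfunctions via a Frobenius/boundary-matching argument. Suppose $(\psi,\rho)\in\mathcal{D}(L_k)\setminus\{0\}$ is an eigenvector of $L_k$ with eigenvalue $v(y_0)$. Solving the second component of the eigenvalue equation for $\rho=\P\psi/(v-v(y_0))$ and substituting into the first reduces the problem to showing that the homogeneous Taylor--Goldstein equation
\[
\Delta_k\psi-\frac{v''(y)}{v(y)-v(y_0)}\psi+\frac{\P(y)}{(v(y)-v(y_0))^2}\psi=0
\]
on $(0,2)\setminus\{y_0\}$, with $\psi(0)=\psi(2)=0$, has no non-trivial solution $\psi\in H^2\cap H^1_0$. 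Since $0<\cJ(y_0)<\tfrac14$ in the weakly stratified region, the Frobenius exponents at $y_0$ are $\tfrac12\pm\mu_0$ with $\mu_0\in(0,\tfrac12)$, so near $y_0$ one has $\psi=c_\sr\varphi_\sr+c_\s\varphi_\s$ with leading behaviour $|y-y_0|^{\frac12\pm\mu_0}$.

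The first step is to use $\psi\in H^1$ to kill the singular component: $|\varphi_\s'|^2\sim|y-y_0|^{-1-2\mu_0}$ is not locally integrable for $\mu_0>0$, so $c_\s=0$ on both sides of $y_0$. By Propositions~\ref{prop:existencesolphisigma} and~\ref{prop:continuityphi}, the real regular solution on $(0,y_0)$ is a scalar multiple of
\[
\varphi_\sr(y)=|v(y)-v(y_0)|^{\frac12+\mu_0}\,\phi_{\sr,1,k,0}(y,y_0),\qquad \phi_{\sr,1,k,0}:=\lim_{\ep\to 0^+}\phi_{\sr,1,k,\ep}^{\pm},
\]
with the branches of the fractional power on $(0,y_0)$ dictated by the construction in Section~\ref{sec:homTG}. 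Focusing first on $y_0\in(\vartheta_1,\varpi_1)$, the boundary condition $\psi(0)=0$ becomes $c_L(v(y_0)-v(0))^{\frac12+\mu_0}\phi_{\sr,1,k,0}(0,y_0)=0$; Lemma~\ref{lemma:nonzerophisrphis}(1) guarantees $\phi_{\sr,1,k,0}(0,y_0)\neq 0$, so $c_L=0$ and $\psi\equiv 0$ on $(0,y_0)$.

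To extend the vanishing across the critical layer, I would use $H^2\hookrightarrow C^1$ in one dimension: since $\psi'(y_0^-)=0$, continuity forces $\psi'(y_0^+)=0$. But on $(y_0,2)$ we again have $\psi=c_R\varphi_\sr$ with $\varphi_\sr'(y)\sim c_R(\tfrac12+\mu_0)v'(y_0)(y-y_0)^{-\frac12+\mu_0}\to\infty$ as $y\to y_0^+$ (because $\mu_0<\tfrac12$), hence $c_R=0$ and $\psi\equiv 0$ globally. Then $\rho\equiv 0$, contradicting the non-triviality of the eigenvector. For $y_0\in(\varpi_2,\vartheta_2)$ the argument is symmetric: Lemma~\ref{lemma:nonzerophisrphis}(2) gives $\phi_{\sr,1,k,0}(2,y_0)\neq 0$, so $\psi\equiv 0$ on $(y_0,2)$, and the $C^1$ continuity argument extends this to $(0,y_0)$.

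The main delicate point, which I would want to check carefully, is the identification of the real regular Frobenius solution on each side of $y_0$ with the explicit function $\varphi_\sr$ built from the $\ep\to 0^+$ limit of the homogeneous solutions in Section~\ref{sec:homTG}; on the left of $y_0$ one must track the branches of $(v(y)-v(y_0)\pm i\ep)^{\frac12+\mu_0}$ correctly. Once this identification is made, Lemma~\ref{lemma:nonzerophisrphis} is exactly the non-vanishing statement needed to obtain the contradiction, which is presumably why the lemma is positioned immediately before the proposition. An alternative, ``brute force'' route would bypass the lemma entirely by observing that $|\psi''|^2\sim|y-y_0|^{-3\pm 2\mu_0}$ is never locally in $L^1$ for $\mu_0\in(0,\tfrac12)$, forcing $c_\sr=c_\s=0$ directly from $\psi\in H^2$; I would use this as a sanity check but expect the paper to prefer the first argument, which generalises naturally to the analysis of embedded eigenvalues in the strong and mild regimes.
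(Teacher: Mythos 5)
Your main argument follows the same strategy as the paper's: the $H^1$-regularity of $\psi$ near $y_0$ kills the singular Frobenius component, and the Dirichlet condition combined with $\phi_{\sr,1,k,0}(0,y_0)\neq 0$ (resp.\ $\phi_{\sr,1,k,0}(2,y_0)\neq 0$) from Lemma~\ref{lemma:nonzerophisrphis} then kills the regular one. The paper packages these two steps the other way around, writing down the unique (up to scale) solution on $(y_0,2)$ satisfying $\psi(2)=0$ and observing that its singular Frobenius coefficient is proportional to $\phi_{\sr,k,0}(2,y_0)\neq 0$, hence $\psi'\notin L^2(y_0,2)$; but the logical content is identical. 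Your explicit crossing of the critical layer via $H^2\hookrightarrow C^1$ is a useful addition: the paper's proof, as written, argues only on $(y_0,2)$ under the tacit assumption that the coefficient $C$ is non-zero, and does not explicitly rule out an eigenfunction that vanishes identically on $(y_0,2)$ but not on $(0,y_0)$, a case on which Lemma~\ref{lemma:nonzerophisrphis}(2) is silent. Your $C^1$-continuity step handles that case cleanly.

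Your ``brute force'' sanity check is in fact correct and would give a cleaner proof, valid uniformly in all three stratified regimes. Whenever $\cJ(y_0)>0$ the Frobenius indices $\tfrac12\pm\gamma_0$ at $y_0$ both have real part strictly less than one, so \emph{every} non-trivial local solution has $\psi'$ unbounded at $y_0$ (the singular parts of the two derivatives carry distinct exponents and cannot cancel). Since $H^2\hookrightarrow C^{1}$ in one dimension this already forces $\psi\equiv 0$ in a neighbourhood of $y_0$, and hence globally because the equation is regular away from $y_0$ — with no appeal to the boundary conditions or to Lemma~\ref{lemma:nonzerophisrphis} at all. You were too quick to discard it: it is the simplest argument available. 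That the paper nevertheless goes through the boundary-matching route is likely because Lemma~\ref{lemma:nonzerophisrphis} and the non-vanishing of $\phi_{\sr,k,0}$ at the endpoints are needed regardless for the Wronskian control and the limiting absorption principle, so the authors re-used the machinery here.
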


\begin{proof}
    Assume without loss of generality that $y_0\in (\varpi_2,\vartheta_2)$ and that there exists a non-zero solution $\psi\in H^2\cap H^1_0$ to \eqref{eq:eigenvalueTG}, with $\lambda = v(y_0)$ so that $\gamma_0 = \mu_0\in (0,\frac14)$. For $y>y_0$ we observe that
    \begin{align*}
        \psi(y) = C \left( \phi_{\s,k,0}(2,y_0)\phi_{\sr,k,0}(y,y_0) - \phi_{\sr,k,0}(2,y_0)\phi_{\s,k,0}(y,y_0) \right)
    \end{align*}
for some $C>0$ and where we recall that
\begin{align*}
    \phi_{\sr,k,0}(y,y_0) &= (v(y) - v(y_0))^{\frac12+\mu_0}\phi_{\sr,1,k,0}(y,y_0), \\
    \phi_{\s,k,0}(y,y_0) &= (v(y) - v(y_0))^{\frac12-\mu_0}\phi_{\s,1,k,0}(y,y_0), 
\end{align*}
with $\phi_{\sigma,1,k,0}(y_0,y_0)=1$ and $\left| \partial_y \phi_{\sigma,1,k,0}(y,y_0) \right| \lesssim 1$, confer Propositions \ref{prop:existencesolphisigma} and \ref{prop:continuityphi}. We shall now see that $\psi(y)\not \in H^1(y_0,2)$. To that purpose, we note that 
\begin{align*}
    \psi'(y) &= Cv'(y_0)  \phi_{\s,k,0}(2,y_0) \left(\frac12+\mu_0 \right)(v(y) - v(y_0))^{-\frac12+\mu_0}\phi_{\sr,1,k,0}(y,y_0) \\
    &\quad - Cv'(y_0)\phi_{\sr,k,0}(2,y_0)\left(\frac12-\mu_0 \right)(v(y) - v(y_0))^{-\frac12-\mu_0} \phi_{\s,1,k,0}(y,y_0) \\
    &\quad + C  \phi_{\s,k,0}(2,y_0) (v(y) - v(y_0))^{\frac12+\mu_0}\partial_y\phi_{\sr,1,k,0}(y,y_0) \\
    &\quad - C\phi_{\sr,k,0}(2,y_0)(v(y) - v(y_0))^{\frac12-\mu_0} \partial_y\phi_{\s,1,k,0}(y,y_0)
\end{align*}
It is immediate to see that the last two contributions are in $H^1(y_0,2)$ and so we focus on the first two. Further recalling that $\phi_{\sigma,1,k,0}(y_0,y_0)=1$ and $\left| \partial_y \phi_{\sigma,1,k,0}(y,y_0) \right| \lesssim 1$, we shall see that 
\begin{align*}
    (v(y) - v(y_0))^{-\frac12-\mu_0} \left( \phi_{\s,k,0}(2,y_0) \left(\frac12+\mu_0 \right)(v(y) - v(y_0))^{2\mu_0} - \phi_{\sr,k,0}(2,y_0) \left(\frac12-\mu_0 \right)\right)\not \in L^2(y_0,2).
\end{align*}
Indeed, since $\mu_0\neq\frac12$ and also $\phi_{\sr,k,0}(2,y_0)\neq 0$, confer Lemma \ref{lemma:nonzerophisrphis}, we have that
\begin{align}
    \left| \phi_{\s,k,0}(2,y_0) \left(\frac12+\mu_0 \right)(v(y) - v(y_0))^{2\mu_0} \right| \leq \frac12 \left| \phi_{\sr,k,0}(2,y_0) \left(\frac12-\mu_0 \right) \right|
\end{align}
for $y>y_0$ sufficiently close to $y_0$. Moreover, since $(v(y) - v(y_0))^{-\frac12-\mu_0}\not \in L^2(y_0,2)$ we conclude that $\psi'(y)\not \in L^2(y_0,2)$.
\end{proof}

We next argue for the mild regime.

\begin{proposition}\label{prop:noembedmild}
Let $y_0=\varpi_1$ or $y_0=\varpi_2$. Then, $\lambda= v(y_0)$ is not an embedded eigenvalue of $L_k$.
\end{proposition}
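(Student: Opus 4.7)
The plan is to mimic the strategy of Proposition~\ref{prop:noembedweak}, but working with the pair $\{\phi_{\sr,k,0},\phi_{\rL,k,0}\}$ from \eqref{eq:defphisigma} and \eqref{eq:defphiLcrit} since at a critical point $\gamma_0=0$ and the solutions $\phi_{\sr,k,0}$ and $\phi_{\s,k,0}$ coincide. These two functions have Wronskian $\mathcal{W}\{\phi_{\sr,k,0},\phi_{\rL,k,0}\}\equiv v'(y_0)\neq 0$, hence are linearly independent on each of the half-intervals $(0,y_0)$ and $(y_0,2)$. I argue by contradiction: assume a non-zero $\psi\in H^2(0,2)\cap H^1_0(0,2)$ solves \eqref{eq:eigenvalueTG} with $\lambda=v(y_0)$. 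Away from $y_0$, $\psi$ must be a multiple of the unique (up to scaling) homogeneous solution vanishing at the corresponding endpoint; on $(y_0,2)$,
\[
\psi(y)\;=\;C_{+}\bigl[\phi_{\rL,k,0}(2,y_0)\,\phi_{\sr,k,0}(y,y_0)\;-\;\phi_{\sr,k,0}(2,y_0)\,\phi_{\rL,k,0}(y,y_0)\bigr],
\]
and analogously on $(0,y_0)$ with constant $C_{-}$. The target is to show $C_{\pm}=0$, which follows if we can prove that neither non-trivial combination lies in $H^1$ near $y_0$.

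Next, I would expand $\psi$ near $y=y_0^{+}$ using the explicit forms in \eqref{eq:defphisigma} and \eqref{eq:defphiLcrit}, namely
\[
\phi_{\sr,k,0}(y,y_0)=(v(y)-v(y_0))^{1/2}\,\phi_{\sr,1,k,0}(y,y_0),
\]
\[
\phi_{\rL,k,0}(y,y_0)=(v(y)-v(y_0))^{1/2}\bigl[\log(v(y)-v(y_0))\,\phi_{0,1,k,0}(y,y_0)+\phi_{\rL,1,k,0}(y,y_0)\bigr],
\]
with $\phi_{\sr,1,k,0}(y_0,y_0)=\phi_{0,1,k,0}(y_0,y_0)=1$ and all the $\phi_{\cdot,1,k,0}$ of bounded $y$-derivative (Propositions~\ref{prop:existencesolphisigma} and~\ref{prop:existencephiLcrit}). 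Differentiating termwise, the leading-order singularities in $\psi'(y)$ as $y\to y_0^{+}$ are
\[
\psi'(y)\;\sim\;\tfrac{C_{+}v'(y_0)}{2}(v(y)-v(y_0))^{-1/2}\Bigl[\phi_{\rL,k,0}(2,y_0)-\phi_{\sr,k,0}(2,y_0)\bigl(\log(v(y)-v(y_0))+2\bigr)\Bigr]+O(1),
\]
all other contributions being bounded or $O((y-y_0)^{1/2}\log(y-y_0))$.

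The decisive step is then to observe that $\psi'\in L^2(y_0,2)$ forces the coefficients of both $(y-y_0)^{-1/2}\log(y-y_0)$ and $(y-y_0)^{-1/2}$ to vanish, i.e. $C_{+}\phi_{\sr,k,0}(2,y_0)=0$ and $C_{+}\phi_{\rL,k,0}(2,y_0)=0$. If $C_{+}\neq 0$ this requires both $\phi_{\sr,k,0}(2,y_0)=0$ and $\phi_{\rL,k,0}(2,y_0)=0$, which is impossible since it would give $\mathcal{W}\{\phi_{\sr,k,0},\phi_{\rL,k,0}\}(2)=0$, contradicting $\mathcal{W}\equiv v'(y_0)>0$. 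Hence $C_{+}=0$; the same reasoning applied on $(0,y_0)$ yields $C_{-}=0$, so $\psi\equiv 0$, the desired contradiction.

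The argument is fundamentally a regularity obstruction, and the main delicate point is bookkeeping the expansion of $\psi'$: the logarithmic factor in $\phi_{\rL,k,0}$ actually strengthens (rather than weakens) the non-integrability relative to the weak case of Proposition~\ref{prop:noembedweak}, since now the critical power $-1/2$ appears together with a $\log(y-y_0)$ enhancement. I do not foresee a serious obstacle beyond carefully tracking the singular coefficients and verifying that the $O(1)$ and smoother remainders really are $L^2$-admissible; the non-vanishing of the Wronskian supplies the required rigidity.
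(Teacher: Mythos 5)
Your proposal is correct, and it diverges from the paper's proof precisely at the closing step. The paper writes $\psi'$ in the same basis and identifies the coefficient of the most singular term $(v(y)-v(y_0))^{-1/2}\log(v(y)-v(y_0))$ as a nonzero constant times $\phi_{\sr,1,k,0}(2,y_0)$, then invokes Lemma~\ref{lemma:nonzerophisrphis} (which rests on hypothesis~H2) to conclude this factor is nonzero, so $\psi'\notin L^2(y_0,2)$ and the argument stops there. You instead extract two simultaneous vanishing conditions — from the $(v(y)-v(y_0))^{-1/2}\log(v(y)-v(y_0))$ coefficient and the $(v(y)-v(y_0))^{-1/2}$ coefficient — and observe that, if $C_+\neq 0$, these force $\phi_{\sr,k,0}(2,y_0)=\phi_{\rL,k,0}(2,y_0)=0$, which contradicts $\mathcal{W}\{\phi_{\sr,k,0},\phi_{\rL,k,0}\}\equiv v'(y_0)\neq 0$ (equivalently, it forces $\psi\equiv 0$ directly from the representation). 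This route sidesteps Lemma~\ref{lemma:nonzerophisrphis} and the sign information coming from~H2, trading it for the purely analytic rigidity of the constant Wronskian, so it is somewhat more elementary and portable. One small bookkeeping slip: in expanding $\phi_{\rL,k,0}'$ you should also include the $\tfrac{v'}{2}(v(y)-v(y_0))^{-1/2}\phi_{\rL,1,k,0}(y,y_0)$ term (recall $\phi_{\rL,1,k,0}(y_0,y_0)=1$), which changes the constant in your bracket from $2$ to $3$; this is harmless, since the argument only needs that once $\phi_{\sr,k,0}(2,y_0)=0$ the residual $(v(y)-v(y_0))^{-1/2}$ coefficient is a nonzero multiple of $\phi_{\rL,k,0}(2,y_0)$.
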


\begin{proof}
Assume $y_0=\varpi_2$, with $\gamma_0=0$. Appealing to the homogeneous solutions constructed in \ref{prop:existencesolphisigma} and \ref{prop:existencephiLcrit}, we now have    
\begin{align*}
    \psi(y) = C \left( \phi_{\sr,k,0}(2,y_0)\phi_{\rL,k,0}(y,y_0) - \phi_{\rL,k,0}(2,y_0)\phi_{\sr,k,0}(y,y_0) \right),
\end{align*}
where we recall 
\begin{align*}
\phi_{\rL,k,0}(y,y_0) &= (v(y) - v(y_0))^\frac12 \log( v(y) - v(y_0)) \phi_{\sr,1,k,0}(y,y_0) \\
&\quad+ (v(y) - v(y_0))^\frac12\phi_{\rL,1,k,0}(y,y_0),
\end{align*}
with further $\phi_{\rL,1,k,0}(y_0,y_0) = 1$ and $\left|\partial_y \phi_{\rL,1,k,0}(y,y_0) \right| \lesssim 1$. Therefore, 
\begin{align*}
    \psi'(y) &= \frac{C}{2}v'(y)(v(y) - v(y_0))^{-\frac12} (v(2) - v(y_0))^\frac12 \phi_{\sr,1,k,0}(2,y_0) \log \left( \frac{v(y)-v(y_0)}{v(2) - v(y_0)} \right) \\
    &\quad + Cv'(y) (v(y) - v(y_0))^{-\frac12} (v(2) - v(y_0))^\frac12\phi_{\sr,1,k,0}(2,y_0) \\
    &\quad + \frac{C}{2}v'(y) (v(y) - v(y_0))^{-\frac12} (v(2) - v(y_0))^\frac12 \left( \phi_{\rL,1,k,0}(2,y_0) - \phi_{\sr,1,k,0}(2,y_0) \right)  + \widetilde\psi(y)
\end{align*}
with $\widetilde\psi\in L^2(0,2)$. For $y>y_0$ close enough, we see from the above that the most singular factor is 
\begin{align*}
    v'(y)(v(y) - v(y_0))^{-\frac12} (v(2) - v(y_0))^\frac12 \phi_{\sr,1,k,0}(2,y_0) \log (v(y)-v(y_0)) \not \in L^2(y_0,2)
\end{align*}
since $v'(y)\geq c_0>0$ and $\phi_{\sr,1,k,0}(2,y_0)\neq 0$ due to Lemma \ref{lemma:nonzerophisrphis}. With this, we see that $\psi'(y)\not \in L^2(y_0,2)$.
\end{proof}

We last show there are no embedded eigenvalues in the strongly stratified region.

\begin{proposition}\label{prop:noembedstrong}
Let $y_0\in (\varpi_1,\varpi_2)$. Then $\lambda= v(y_0)$ is not an embedded eigenvalue of $L_k$.
\end{proposition}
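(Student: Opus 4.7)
I argue by contradiction, supposing a non-zero $\psi\in H^2\cap H^1_0(0,2)$ solves~\eqref{eq:eigenvalueTG} with $\lambda=v(y_0)$. In the strong regime $\cJ(y_0)>\tfrac14$ we have $\gamma_0=i\nu_0$ with $\nu_0>0$, so the homogeneous pair $\phi_{\sr,k,0}(\cdot,y_0),\phi_{\s,k,0}(\cdot,y_0)$ produced by Proposition~\ref{prop:existencesolphisigma} has Wronskian $2i\nu_0 v'(y_0)\neq 0$, hence forms a fundamental system on each side of $y_0$. Writing $\psi=\alpha\phi_{\sr,k,0}(\cdot,y_0)+\beta\phi_{\s,k,0}(\cdot,y_0)$ on $(y_0,2)$ and similarly on $(0,y_0)$, since $\psi\not\equiv 0$ at least one of the two representations has non-trivial coefficients; by symmetry I may assume $(\alpha,\beta)\neq(0,0)$ on $(y_0,2)$. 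The goal is then to reach a contradiction by showing $\psi'\notin L^2(y_0,y_0+\delta)$ for some small $\delta>0$.

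Using the factorisation~\eqref{eq:defphisigma} with $\phi_{\sigma,1,k,0}(y_0,y_0)=1$ and bounded $\partial_y\phi_{\sigma,1,k,0}(\cdot,y_0)$, a direct differentiation will give
\begin{equation*}
\psi'(y)=v'(y)\,s(y)^{-1/2}\bigl[A\,s(y)^{i\nu_0}+B\,s(y)^{-i\nu_0}\bigr]+R(y),\qquad s(y):=v(y)-v(y_0),
\end{equation*}
where $A:=\alpha(\tfrac12+i\nu_0)$, $B:=\beta(\tfrac12-i\nu_0)$, and $R\in L^2_{\mathrm{loc}}(y_0,2)$ collects the manifestly square-integrable remainders (terms with prefactor $s^{1/2\pm i\nu_0}$ and the $O(y-y_0)$ corrections coming from $\phi_{\sigma,1,k,0}(y,y_0)-1$). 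Since $\tfrac12\pm i\nu_0\neq 0$ and $(\alpha,\beta)\neq(0,0)$, it follows that $(A,B)\neq(0,0)$ and hence $|A|^2+|B|^2>0$.

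Expanding $|A\,s^{i\nu_0}+B\,s^{-i\nu_0}|^2=|A|^2+|B|^2+2\Re(A\bar B\,s^{2i\nu_0})$, the singular contribution to $\int_{y_0+\eta}^{y_0+\delta}|\psi'|^2\,dy$ separates into a real diagonal piece $(|A|^2+|B|^2)\int_{y_0+\eta}^{y_0+\delta}s(y)^{-1}v'(y)^2\,dy$, which diverges logarithmically as $\eta\to 0^+$, and an oscillatory cross-term whose modulus I will bound uniformly in $\eta$ by integration by parts in the variable $s=v(y)-v(y_0)$: the primitive $s^{2i\nu_0}/(2i\nu_0)$ has modulus $1/(2\nu_0)$ and the remaining boundary and interior terms are controlled by $v'$ and its derivative. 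Since the divergent diagonal dominates, $\psi'\notin L^2(y_0,y_0+\delta)$, contradicting $\psi\in H^1_0$; the symmetric case $(\alpha,\beta)\neq(0,0)$ on $(0,y_0)$ is handled identically.

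The principal obstacle is that in the strong regime the two singular branches $s^{1/2\pm i\nu_0}$ have \emph{equal real parts} $\tfrac12$, so each individual $(v(y)-v(y_0))^{-1/2}$ factor in $\psi'$ is borderline square-integrable, and a priori the unit-modulus oscillations $s^{\pm i\nu_0}$ could conspire to cancel. The computation above rules this out by observing that the constant part $|A|^2+|B|^2$ of the squared modulus is strictly positive while the oscillatory cross-term remains bounded uniformly in the lower cut-off. This is the essential qualitative difference with the weakly stratified case treated in Proposition~\ref{prop:noembedweak}, where the two exponents are real and the leading singularity is already non-square-integrable on its own.
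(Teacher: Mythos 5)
Your argument is correct and follows the same overall strategy as the paper: represent the putative eigenfunction $\psi$ on $(y_0,2)$ in terms of the fundamental pair $\phi_{\sr,k,0},\phi_{\s,k,0}$ (whose Wronskian $2i\nu_0 v'(y_0)\neq 0$ makes this legitimate), differentiate, identify the borderline singular factor $s^{-1/2}(As^{i\nu_0}+Bs^{-i\nu_0})$ with $s=v(y)-v(y_0)$, and show it fails to be square-integrable. Where you differ from the paper is the final divergence argument. The paper exploits the conjugation identity $\overline{\phi_{\sr,k,0}(2,y_0)}=\phi_{\s,k,0}(2,y_0)$ (together with its non-vanishing, which comes from the Wronskian and Lemma~\ref{lemma:nonzerophisrphis}) to reduce the singular part of $\psi'$ to a multiple of $s^{-1/2}\Im\bigl(E\,s^{i\nu_0}\bigr)=r\,s^{-1/2}\sin(\nu_0\log s + \theta)$, and then shows $\int_0^2 x^{-1}\sin^2(\nu_0\log x+\theta)\,dx=+\infty$ by summing over the infinitely many dyadic-in-$\log$ subintervals on which $\sin^2\geq\tfrac12$. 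You instead expand $|As^{i\nu_0}+Bs^{-i\nu_0}|^2=|A|^2+|B|^2+2\Re(A\bar B s^{2i\nu_0})$, observe that the non-negative constant $|A|^2+|B|^2>0$ forces logarithmic divergence of the diagonal piece, and control the oscillatory cross-term uniformly in the lower cut-off by a single integration by parts in the variable $s$. This avoids both the conjugation identity and the interval-covering argument, and it applies to any non-trivial pair $(\alpha,\beta)$, not just the one prescribed by $\psi(2)=0$; for the paper's specific combination the two computations coincide since $|A|=|B|$ and $|A|^2+|B|^2+2\Re(A\bar B s^{2i\nu_0})=4|A|^2\sin^2(\nu_0\log s+\theta)$. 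Both routes are valid; yours is slightly more robust and self-contained at this step.
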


\begin{proof}
For $y_0\in (\varpi_1,\varpi_2)$ we now have $\gamma_0 = i\nu_0$ with $\nu_0>0$. Arguing as in the proof of Proposition~\ref{prop:noembedweak}, we shall now see that
\begin{align*}
 \phi_{\s,k,0}(2,y_0) &\left( \frac12+i\nu_0 \right)(v(y) - v(y_0))^{-\frac12+i\nu_0} \\
 &\quad - \phi_{\sr,k,0}(2,y_0) \left( \frac12-i\nu_0 \right)(v(y) - v(y_0))^{-\frac12-i\nu_0} \not \in L^2(y_0,2).
\end{align*}
Since $\overline{\phi_{\sr,k,0}(2,y_0)} = \phi_{\s,k,0}(2,y_0)\neq 0$, we must show that
\begin{align*}
    (v(y) - v(y_0))^{-\frac12}\Im \left( \phi_{\s,k,0}(2,y_0) \left( \frac12+i\nu_0 \right)(v(y) - v(y_0))^{i\nu_0} \right)\not \in L^2(y_0,2).
\end{align*}
To do so, we note that $\phi_{\s,k,0}(2,y_0) \left( \tfrac12+i\nu_0 \right)= re^{i\alpha}$, for some $r>0$ and $\alpha\in [0,2\pi)$. Hence, 
\begin{align*}
  \left| \Im \left( \phi_{\s,k,0}(2,y_0) \left( \frac12+i\nu_0 \right)(v(y) - v(y_0))^{i\nu_0} \right) \right| = r \left| \sin( \alpha  + \nu_0\log(v(y) - v(y_0))) \right|
\end{align*}
Hence, with the change of variables $x=v(y) - v(y_0)$, noting that $v'(y)\geq c_0>0$ we shall show that
\begin{align*}
\int_0^2 x^{-1}\sin^2(\alpha+\nu_0\log(x)) \d x = +\infty.   
\end{align*}
We next note that 
\begin{align*}
    \left| \sin( \alpha  + \nu_0\log(x)) \right|^2 \geq \frac12
\end{align*}
for all $x>0$ such that $\alpha + \nu_0\log(v(y) - v(y_0)) \in ( \frac{\pi}{4} - 2\pi k, \frac{3\pi}{4} - 2\pi k)$, namely for \newline
all $x\in (e^{\nu_0^{-1}(\frac{\pi}{4} - 2\pi k - \alpha)}, e^{\nu_0^{-1}(\frac{3\pi}{4} - 2\pi k - \alpha)} )$, for all $k\geq 1$. In particular,
\begin{align*}
    \int_0^2 x^{-1}\sin^2(\alpha+\nu_0\log(x)) \d x \geq \frac12\sum_{k\geq 1}^\infty \int_{e^{\nu_0^{-1}(\frac{\pi}{4} - 2\pi k - \alpha)}}^ {e^{\nu_0^{-1}(\frac{3\pi}{4} - 2\pi k - \alpha)}}x^{-1}\d x &= \frac12 \sum_{k\geq 1}^\infty \log(x) \Big|_{x=e^{\nu_0^{-1}(\frac{\pi}{4} - 2\pi k - \alpha)}}^{x=e^{\nu_0^{-1}(\frac{3\pi}{4} - 2\pi k - \alpha)}} \\
    &= \frac14 \sum_{k\geq 1}^{\infty} \frac{\pi}{\nu_0} = +\infty.
\end{align*}
With this, we conclude that $\psi'(y)\not \in L^2(y_0,2)$.
\end{proof}

\subsection{Absence of imaginary spectrum}

To conclude the proof of Theorem \ref{thm:spectrumlinop}, it only remains to see there are no eigenvalues with non-zero imaginary part.
\begin{proposition}\label{prop:noimagspectrum}
Let $v(y)$ and $\P(y)$ be such that {H$\P$, H$v$} and H1--H3 hold. Then, $\lambda\not \in \sigma(L_k)$, for all $\lambda\in \C$ with $\Im(\lambda)\neq 0$.
\end{proposition}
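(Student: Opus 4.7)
The plan is a continuity (homotopy in the gravity constant) argument that reduces the existence of an eigenvalue with non-zero imaginary part to the non-vanishing of an Evans-type determinant, which we then control using the fine properties of the homogeneous solutions collected in Propositions \ref{prop:nonzeroWweakstrat}--\ref{prop:limitWronskianMildCrit} together with the absence of embedded eigenvalues for the Euler operator (hypothesis \textbf{H3}).

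First, observe that by Propositions \ref{prop:rangerealcomplementresolvent}--\ref{prop:nonstratifiedresolvent} any hypothetical eigenvalue $\lambda_*\in\C$ with $\Im\lambda_*\neq 0$ must satisfy $\Re\lambda_*\in[v(\vartheta_1),v(\vartheta_2)]$, so we may write $\lambda_*=v(y_*)\pm i\ep_*$ for some $y_*\in[\vartheta_1,\vartheta_2]$ and $\ep_*>0$. Since $\Im\lambda_*\neq 0$ there is no critical layer, and any such eigenfunction $\psi\in H^2\cap H^1_0$ must be a linear combination of the homogeneous solutions $\phi_{\sr,k,\ep}^\pm(\cdot,y_0)$, $\phi_{\s,k,\ep}^\pm(\cdot,y_0)$ constructed in Section~\ref{sec:homTG} (resp.\ $\phi_{\sr,k,\ep}^\pm,\phi_{\rL,k,\ep}^\pm$ in the mild critical case of \eqref{eq:defphiLcrit}). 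Imposing the Dirichlet conditions at $y=0$ and $y=2$ then shows that $\lambda_*$ is an eigenvalue if and only if the Evans function
\begin{equation*}
E^\pm_{k,\ep}(y_0;\tilde\g):=\phi_{\sr,k,\ep,\tilde\g}^\pm(0,y_0)\phi_{\s,k,\ep,\tilde\g}^\pm(2,y_0)-\phi_{\sr,k,\ep,\tilde\g}^\pm(2,y_0)\phi_{\s,k,\ep,\tilde\g}^\pm(0,y_0)
\end{equation*}
vanishes at $(y_*,\ep_*,\g)$, where we have restored the explicit dependence on the gravity constant $\tilde\g$. An analogous determinant, with $\phi_\s$ replaced by $\phi_{\rL}$, encodes the condition in the mild critical case.

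Next, consider the family $\{L_k^{(\tilde\g)}\}_{\tilde\g\in[0,\g]}$ obtained by replacing $\g$ by $\tilde\g$ in $L_k$. At $\tilde\g=0$ the Boussinesq operator reduces to the linearised Euler operator $\L_E$, for which \textbf{H3} rules out eigenvalues (embedded or otherwise) with $\Re\lambda\in[v(\vartheta_1),v(\vartheta_2)]$; in particular $E^\pm_{k,\ep}(y_0;0)\neq 0$ for every admissible $(y_0,\ep)$. By Propositions~\ref{prop:continuityphi}, \ref{prop:existencephirL1kep} and \ref{prop:existencephiLcrit}, the map
\begin{equation*}
(\tilde\g,\ep,y_0)\;\longmapsto\; E^\pm_{k,\ep}(y_0;\tilde\g)
\end{equation*}
is continuous on $[0,\g]\times(0,\infty)\times[\vartheta_1,\vartheta_2]$, including across the mild critical points $y_0=\varpi_1,\varpi_2$ once the pair is matched to the appropriate logarithmic fundamental system. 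Suppose now, for contradiction, $E^\pm_{k,\ep_*}(y_*;\g)=0$. The zero set
\begin{equation*}
\mathcal B:=\{(\tilde\g,y_0,\ep)\in[0,\g]\times[\vartheta_1,\vartheta_2]\times(0,\infty):E^\pm_{k,\ep}(y_0;\tilde\g)=0\}
\end{equation*}
is closed and non-empty, and by \textbf{H3} its slice at $\tilde\g=0$ is empty. Choose a connected component $\mathcal C\subset\mathcal B$ containing $(\g,y_*,\ep_*)$ and let
\begin{equation*}
\tilde\g_c:=\inf\{\tilde\g:(\tilde\g,y_0,\ep)\in\mathcal C\}\in(0,\g].
\end{equation*}
By closedness and compactness in $y_0$, together with the energy identity \eqref{eq:energyideigen} and \textbf{H1} (which yields an a priori upper bound on $|\ep|$ uniform in $\tilde\g\in[0,\g]$), there exists a sequence $(\tilde\g_j,y_j,\ep_j)\in\mathcal C$ with $\tilde\g_j\to\tilde\g_c$, $y_j\to y_\infty\in[\vartheta_1,\vartheta_2]$ and $\ep_j\to\ep_\infty\in[0,\infty)$.

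The contradiction then comes from the analysis of the two possible limit scenarios. If $\ep_\infty>0$, the limit $E^\pm_{k,\ep_\infty}(y_\infty;\tilde\g_c)=0$ is a genuine eigenvalue relation for $L_k^{(\tilde\g_c)}$ with $\Im\lambda\neq 0$; by minimality of $\tilde\g_c$ we may repeat the argument after a small perturbation and contradict $\tilde\g_c=\inf\mathcal C$, or directly push $\tilde\g_c$ down to $0$ along $\mathcal C$, contradicting \textbf{H3}. If $\ep_\infty=0$ and $y_\infty\in(\vartheta_1,\vartheta_2)$, then passing to the limit in $E^\pm_{k,\ep_j}(y_j;\tilde\g_c)=0$ contradicts one of Propositions~\ref{prop:nonzeroWweakstrat} (weak regime), \ref{prop:nonzeroWstrongstrat} (strong regime), \ref{prop:limitWronkianMild} (non-critical mild regime) or \ref{prop:limitWronskianMildCrit} (critical mild values $\varpi_1,\varpi_2$), each of which asserts that the corresponding boundary determinant is non-zero in the $\ep\downarrow 0$ limit. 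The remaining possibility is $y_\infty\in\{\vartheta_1,\vartheta_2\}$ and $\ep_\infty=0$, i.e.\ the \emph{fragile} limit where $\P(y_\infty)=0$ and the quadratic singularity in the Taylor--Goldstein operator degenerates; this case is treated by the refined analysis of Section~\ref{sec:fragile}, which shows that the limit equation collapses to the Rayleigh problem for $\L_E$, whose solutions are controlled by \textbf{H3}.

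The main obstacle is precisely this last case: making the limit $y_\infty\to\vartheta_{1,2}$ rigorous requires a careful bookkeeping of the homogeneous solutions as one branch becomes regular while the other degenerates, which is the content of Section~\ref{sec:fragile}; once that limit is available, and combined with the non-vanishing determinants of Propositions~\ref{prop:nonzeroWweakstrat}--\ref{prop:limitWronskianMildCrit} on the interior, the connectedness argument closes and the proposition follows.
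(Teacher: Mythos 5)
Your proposal follows essentially the same route as the paper: a homotopy in the gravity parameter $\tilde\g\in[0,\g]$, with the eigenvalue condition encoded in an Evans-type determinant, $\tilde\g=0$ controlled by \textbf{H3}, and the boundary limits $\ep\downarrow 0$ controlled by the non-vanishing Wronskian statements (Propositions~\ref{prop:nonzeroWweakstrat}, \ref{prop:nonzeroWstrongstrat}, \ref{prop:limitWronkianMild}, \ref{prop:limitWronskianMildCrit}) together with the fragile analysis of Section~\ref{sec:fragile} and Lemma~\ref{lemma:boundarysuppnoteigen}. The paper phrases this as showing the set $\mathfrak{G}=\{\tilde\g:\exists\,\lambda\in\mathcal S\setminus\mathcal R \text{ with } \mathcal W(\lambda,\tilde\g)=0\}$ is both open and closed in $[0,\g]$; your ``infimum along a connected component of the zero set'' formulation is an equivalent way to run the same topological argument.

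The one genuine gap is the step where $\ep_\infty>0$: you write ``by minimality of $\tilde\g_c$ we may repeat the argument after a small perturbation \dots\ or directly push $\tilde\g_c$ down to $0$ along $\mathcal C$,'' but this is precisely the \emph{openness} statement, and it requires an actual argument rather than an appeal to minimality. If $E^\pm_{k,\ep_\infty}(y_\infty;\tilde\g_c)=0$ with $\ep_\infty>0$, you need to show that zeros of the determinant persist for $\tilde\g$ slightly \emph{below} $\tilde\g_c$; otherwise the infimum could be attained and the connected component could simply stop there. The paper supplies this via Rouch\'e's theorem: $\mathcal W(\lambda,\tilde\g)=\varphi(0,\lambda,\tilde\g)$ is analytic in $\lambda$ on $\C\setminus\mathcal R$, a zero at $\lambda_0$ with $\Im\lambda_0\neq 0$ is isolated, and the uniform continuity of $\mathcal W$ in $\tilde\g$ lets one compare winding numbers on a small circle around $\lambda_0$, forcing a nearby zero for all $\tilde\g$ sufficiently close. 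Without this step (or some substitute such as the implicit function theorem applied to a non-degenerate zero, which would require further justification), the ``push $\tilde\g_c$ down'' move does not close and the contradiction does not materialize. The remaining case distinctions in your sketch (interior $y_\infty$, mild critical $\varpi_{1,2}$, and the fragile endpoints) match the paper's treatment.
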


\begin{proof}
Let $\tilde{\g} \in [0,\g]$. Let $\lambda=\lambda_r +i\lambda_i$, with $\lambda_i\neq 0$ and $\lambda_r\in [v(0),v(2)]$. {Since $\lambda_i\neq 0$, by classical ODE theory we note that $\lambda\in \sigma (L_k)$ if and only if $\lambda$ is a discrete eigenvalue of $L_k$}. Hence, we shall see that there is no non-zero solution $\psi$ to 
\begin{equation}
\left( \p_y^2 - k^2 -\frac{v''(y)}{v(y) - \lambda}+ \frac{\tilde{\g}\mathrm{P}(y)}{(v(y)-\lambda)^2} \right)\psi = 0, \quad \psi(0)=\psi(2)=0
\end{equation}
for all $\tilde{\g}\in [0, \g]$. We begin by remarking that for $\tilde{\g}=0$, the Taylor-Goldstein equation reduces to the Rayleigh equation
\begin{equation}
\left( \p_y^2 - k^2 -\frac{v''(y)}{v(y) - \lambda} \right)\psi = 0, \quad \psi(0)=\psi(2)=0,
\end{equation}
which has no non-zero solution, thanks to the assumption H3 on the absence of eigenvalues of the linearised Euler operator. Let $\varphi(y,\lambda,\g)$ be the unique solution to 
\begin{equation}\label{eq:TGlambdaeigenvalue}
\left( \p_y^2 - k^2 -\frac{v''(y)}{v(y) - \lambda}+\frac{ \tilde{\g}\mathrm{P}(y)}{(v(y)-\lambda)^2} \right)\psi = 0, \quad \psi(2)=0,\quad \psi'(2)=1,
\end{equation}
so that $\varphi(y,\lambda,\g)$ is an eigenfunction associated to the eigenvalue $\lambda$ provided that $\varphi(0,\lambda,\g)=0$. Hence, we define the Wronskian $\W(\lambda,\g) = \varphi(0,\lambda,\g)$, which is continuous in $\lambda\in \C\setminus [v(0),v(2)])$. Set also
\begin{equation}
\mathfrak{G} = \lbrace \tilde{\g}\in [0, \g ]: \, \exists \lambda \in \mathcal{S}\setminus \mathcal{R}: \mathcal{W}(\tilde{\g},\lambda) = 0 \rbrace ,
\end{equation}
where $\mathcal{R} = [v(0),v(2)]$ is the range of the shear flow $v(y)$ and 
\begin{align*}
\mathcal{S}=\left \lbrace \lambda=\lambda_r + i\lambda \in \C: \lambda_r \in [v(0), v(2)], \quad \lambda_i\in [0,2] \right\rbrace 
\end{align*} 
For all $\lambda\in \mathcal{S}\setminus\mathcal{R}$, we have $\Im(\lambda) \neq 0$ and thus the solution $\varphi(y,\lambda,\g)$ is classical and $C^2$ smooth. We shall see that $\mathfrak{G}$ is both open and closed in the subset topology of $[0,\g]$ with respect to $\R$, a connected space. Hence, it is either $[0, \g]$ or the empty set. Since $0\not \in \mathfrak{G}$ because of H3, we conclude that $\mathfrak{G}$ is empty.

Firstly, to see that $\mathfrak{G}$ is open, assume otherwise and let $\tilde{\g}\in \mathfrak{G}$ such that for all $\delta>0$ there exists some $\mathfrak{h}\in B_\delta(\tilde{\g})$ with $\mathfrak{h}\not\in \mathfrak{G}$. Let $\lambda_0\in \mathcal{S}\setminus \mathcal{R}$ be such that $\mathcal{W}(\lambda_0,\tilde\g) = 0$. Since $\lambda_0$ is an isolated zero of  $\W(\lambda,\tilde\g)$, which is analytic in $\lambda\in \mathcal{S}\setminus\mathcal{R}$,  there exists some $\delta_1>0$ sufficiently small such that for $U_{\delta_1}(\lambda_0)=\lbrace \lambda\in \mathcal{S}\setminus\mathcal{R}: \lambda\in B_{\delta_1}(\lambda_0) \rbrace$, there holds $\mathcal{W}(\lambda,\tilde\g)\neq 0$, for all $\lambda\in \p U_{\delta_1}(\lambda_0)$. Set $\varepsilon=\frac12 \min_{\lambda\in\p U_{\delta_1}(\lambda_0)}|\mathcal{W}(\lambda,\g)|>0$. Since $\mathcal{W}(\lambda,\tilde\g)$ is continuous in both $\lambda\in \C\setminus\mathcal{R}$ and $\tilde\g\geq 0$, there exists some $\delta_2>0$ such that 
\begin{equation}
|\mathcal{W}(\lambda,\mathfrak{h}) - \mathcal{W}(\lambda,\tilde{\g})| < \varepsilon < |\mathcal{W}(\lambda,\tilde\g)|,
\end{equation}
for all $|\tilde{\g} - \mathfrak{h}| < \delta_2$. Now, by Rouche's Theorem, the number of zeroes of $\mathcal{W}(\lambda,\tilde\g)$ in $U_{\delta_1}(\lambda_0)$ (which is one, $\lambda_0$) coincides with the number of zeroes of $\mathcal{W}(\lambda,\mathfrak{h})$ in $U_{\delta_1}(\lambda_0)$, which is 0 by assumption! Thus we reach a contradiction and conclude that $\mathfrak{G}$ is open.

On the other hand, in order to show that $\mathfrak{G}$ is closed, let $(\tilde{\g}_j)_{j\geq1}\subset \mathfrak{G}$ and assume that $\tilde{\g_j}\rightarrow\tilde{\g}\in \R$, we shall see that $\tilde{\g}\in \mathfrak{G}$. Note that for each $\tilde{\g}_j$ there exists some $\lambda_j\in \mathcal{S}\setminus\mathcal{R}$ such that 
\begin{equation}\label{eq:vanishingvarphilambdaj}
\varphi(0,\lambda_j, \tilde\g_j) = \mathcal{W}(\lambda_j,\tilde{\g}_j) = 0
\end{equation}
for all $j\geq 1$. Moreover, up to a subsequence, $\lambda_j\rightarrow \lambda\in \mathcal{S}$. If $\lambda\not\in \mathcal{R}$, since $\mathcal{W}$ is continuous at $(\lambda,\tilde{\g})\in\mathcal{S}\setminus\mathcal{R}\times [0,\g]$ and in a small neighbourhood of it, there holds $\mathcal{W}(\lambda,\tilde{\g}) = 0$, and thus $\tilde{\g}\in \mathfrak{G}$, it is closed. We shall now argue that we cannot have $\lambda\in \mathcal{R}$.

\bullpar{ Case $\lambda\in \mathcal{R}$ and $\tilde{\g}>0$} We shall see this scenario cannot take place. If $\lambda_j\rightarrow\lambda\in \mathcal{R}\subset \R$, we have that $\Im(\lambda_j)\rightarrow 0$. Moreover, Proposition \ref{prop:nonstratifiedresolvent} shows that $\lambda\in [v(\vartheta_1), v(\vartheta_2)]$. We next distinguish $\lambda$ according to its stratified strength and its approximating sequence. In what follows, let $y_j\in [\vartheta_1,\vartheta_2]$ be such that $v(y_j) = \Re(\lambda_j)$ and $\ep_j= \Im(\lambda_j)>0$, so that $v(y_0) = \lambda$ and $\ep_j\rightarrow 0$. Due to H2, for all $\tilde\g_j>0$, the set $\lbrace  y\in [0,2]: \tilde{\g}_j \tilde\cJ(y) = \frac14 \rbrace$ has at most two connected components. Each connected component is either a point or a closed interval. We shall assume here that they are points, and we comment afterwards the main modifications for the interval case. Hence, let $\widetilde\varpi_j$ denote the unique solution if $\max_{y\in [0,2]} \tilde\g_j\tilde\cJ = \frac14$ and $\widetilde \varpi_{j,1} < \widetilde \varpi_{j,2}$ denote the two distinct solutions if $\max_{y\in [0,2]} \tilde\g_j\tilde\cJ > \frac14$. Likewise, we define $\widetilde \varpi$ and $\widetilde \varpi_1  <  \widetilde\varpi_2$ the solutions associated to the equation $\tilde\g \widetilde\cJ(y) = \frac14$.

\bullsubpar{Case $y_0 = \vartheta_n$ and $y_j=\vartheta_n$, for $n=1,2$} We have that $\mathcal{W}(\lambda_j,\tilde\g_j) = \varphi(0,\lambda_j,\tilde\g_j) = 0$. Since $\Re(\lambda_j) = v(\vartheta_n)$ for all $j\geq 1$ and $\tilde{\g}_j\leq \g$, we see with Lemma \ref{lemma:boundarysuppnoteigen} that $\varphi(0,\lambda_j,\tilde\g_j)=0$ forces $\varphi(y,\lambda_j ,\tilde{\g}_j)$ to be identically zero, thus $\lambda_j$ is not an eigenvalue, a contradiction with $\tilde\g_j\in \mathfrak{G}$.

\bullsubpar{Case $y_0 = \vartheta_n$ and $\P(y_j)>0$} Then, for $j$ large enough we have from Lemma \ref{lemma:lowerboundphisigma} that \newline $|\phi_{\sr,1,k,\ep_j}^+(y,y_j)|\geq \frac12$ for all $y\in [0,2]$ and thus
\begin{align*}
\varphi(y,\lambda_j,\tilde{\g}_j) = \phi_{\sr,k,\ep_j,\tilde{\g}_j}^+(y,y_j)\int_0^y \frac{1}{(\phi_{\sr,k,\ep_j,\tilde{\g}_j}^+(z,y_j))^2} \d z - \phi_{\sr,k,\ep_j,\tilde{\g}_j}^+(y,y_j) \int_0^2 \frac{1}{(\phi_{\sr,k,\ep_j,\tilde{\g}_j}^+(z,y_j))^2}\d z 
\end{align*}
is the unique well-defined solution to 
\begin{align*}
\left(\partial_y - k^2 - \frac{v''(y)}{v(y) - v(y_j)\pm i\ep_j} + \frac{\tilde{\g}_j\mathrm{P}(y)}{(v(y) - v(y_j)\pm i\ep_j)^2}\right) \phi_{\sr,k,\ep_j,\tilde{g}_j}^+(y,y_0) =0.
\end{align*}
 with $\varphi(2,\lambda_j,\tilde{\g}_j) = 0$ and $\partial_y\varphi(2,\lambda_j,\tilde{\g}_j)=1$. Thanks to Proposition \ref{prop:continuityphi} we next note that  
\begin{align*}
\phi_{\sr,1,k,0,\tilde{\g}}^\pm(y,y_0)=\lim_{j\rightarrow\infty} \phi_{\sr,1,k,\ep_j,\tilde{\g}_j}^+(y,y_j)
\end{align*}
satisfies
\begin{align*}
\phi_{\sr,1,k,0,\tilde{\g}}^+(y,y_0)= 1 + k^2\int_{y_0}^y \frac{1}{(v(z) - v(y_0))^2}\int_{y_0}^z \mathrm{F}_{\sr,k,0,\tilde{\g}}^+(s,y_0) (v(s) - v(y_0)) \phi_{\sr,1,k,0,\tilde{\g}}^+(s,y_0) \d s \d z
\end{align*}
so that $\phi_{\sr,k,0,\tilde{\g}}(y,y_0) := (v(y) - v(y_0)) \phi_{\sr,1,k,0,\tilde{\g}}^+(y,y_0)$ satisfies
\begin{align*}
\left(\partial_y - k^2 - \frac{v''(y)}{v(y) - v(y_0)} + \frac{\tilde{\g}\mathrm{P}(y)}{(v(y) - v(y_0))^2}\right) \phi_{\sr,k,0,\tilde{\g}}^+(y,y_0) =0.
\end{align*}
Moreover, we have from Proposition \ref{prop:limitWronkianFragile} and Lemma \ref{lemma:vanishingsecondinhomsol} that for $\varphi(y,\lambda,\tilde{\g}) = \lim_{j\rightarrow \infty} \varphi(y, \lambda_j, \tilde{\g}_j)$ we have
\begin{align*}
\varphi(y,\lambda,\tilde{\g}) =\phi_{\sr,k,0,\tilde{\g}}^+(y,y_0)\int_0^y \frac{1}{(\phi_{\sr,k,0,\tilde{\g}}^+(z,y_0))^2} \d z - \phi_{\sr,k,0,\tilde{\g}}^+(y,y_0) \int_0^2 \frac{1}{(\phi_{\sr,k,0,\tilde{\g}}^+(z,y_0))^2}\d z 
\end{align*}
is well-defined and, moreover, it satisfies
\begin{align*}
\left(\partial_y - k^2 - \frac{v''(y)}{v(y) - v(y_0)} + \frac{\tilde{\g}\mathrm{P}(y)}{(v(y) - v(y_0))^2}\right) \varphi(y,\lambda,\tilde{\g}) =0
\end{align*}
with $\varphi(2,\lambda,\tilde{\g})=0$. In particular, we also have $\varphi(0,\lambda,\tilde{\g})=0$. However, Proposition \ref{prop:limitWronkianFragile} and $\phi_{\sr,k,0,\tilde{\g}}^+(y,y_0)\neq 0$ for all $y\in[0,2]\setminus \lbrace y_0 \rbrace$ due to Lemma \ref{lemma:lowerboundphisigma} show that $\varphi(0,\lambda,\tilde\g)\neq 0$, reaching a contradiction. 

\bullsubpar{Case $y_0 \in (\vartheta_1, \vartheta_2)$ and $\tilde{\g}\widetilde\cJ(y_0)\neq \frac14$} We now have that $y_j=v^{-1}(\Re(\lambda_j))$ is such that $\tilde{\g}_j\widetilde\cJ(y_j)\neq \frac14$ and thus $\gamma_j\not\rightarrow0$, for $j\geq 1$ large enough and thus
\begin{equation}\label{eq:varphieigenstrongweak}
\varphi(y,\lambda_j,\tilde{\g}_j) = \frac{\phi_{\s,k,\ep_j,\tilde{\g}_j}^+(2,y_j)\phi_{\sr,k,\ep_j,\tilde{\g}_j}^+(y,y_j) - \phi_{\sr,k,\ep_j,\tilde{\g}_j}^+(2,y_j)\phi_{\s,k,\ep_j,\tilde{\g}_j}^+(y,y_j)}{2\gamma_j v'(y_j)}
\end{equation}  
solves \eqref{eq:TGlambdaeigenvalue} with $\varphi(2,\lambda_j,\tilde{\g}_j) = 0$ and $\partial_y\varphi(2,\lambda_j,\tilde{\g}_j)=1$. If $\widetilde\varpi_1 < \widetilde\varpi_2$, 
Proposition \ref{prop:nonzeroWstrongstrat} for $y_0\in (\widetilde\varpi_1, \widetilde\varpi_2)$  and Proposition \ref{prop:nonzeroWweakstrat} for $y_0\in (\vartheta_1,\widetilde\varpi_1)\cup (\widetilde\varpi_2, \vartheta_2)$, show that $|\varphi(0,\lambda_j,\tilde{\g}_j)| \geq c_0 > 0$, for some $c_0>0$, for all $j$ large enough, thus contradicting \eqref{eq:vanishingvarphilambdaj}. If $\widetilde\varpi$ is the unique root to $\tilde\g\widetilde\cJ(y) = \frac14$, then $\widetilde\varpi = \tilde y$ and we use Proposition \ref{prop:nonzeroWweakstrat}. If there is no such root, $\tilde\g\widetilde\cJ(\tilde(y)) < \frac14$ and we also use Proposition \ref{prop:nonzeroWweakstrat}. In all cases we conclude that $|\varphi(0,\lambda_j,\tilde{\g}_j)| \geq c_0 > 0$ for all $j$ large enough, reaching a contradiction with \eqref{eq:vanishingvarphilambdaj}.

\bullsubpar{Case $y_0 =  \widetilde\varpi_1$ and $y_j\neq \widetilde\varpi_{j,1}$} In this setting, $\gamma_j\neq 0$ but $\gamma_j\rightarrow 0$ so that we cannot use \eqref{eq:varphieigenstrongweak}. Instead, we write
\begin{equation}
\varphi(y,\lambda_j,\tilde{\g}_j) = \frac{\phi_{\sr,k,\ep_j,\tilde\g_j}^+(2,y_j) \phi_{\rL,k,\ep_j,\tilde\g_j}^+(y,y_j) - \phi_{\sr,k,\ep_j,\tilde\g_j}^+(y,y_j) \phi_{\rL,k,\ep_j,\tilde\g_j}^+(2,y_j)}{v'(y_j)}
\end{equation}
where $\phi_{\rL,k,\ep_j,\tilde\g_j}^\pm(y,y_j)$ is given by \eqref{eq:defphiL}, which again solves \eqref{eq:TGlambdaeigenvalue} with $\varphi(2,\lambda_j,\tilde{\g}_j) = 0$ and $\partial_y\varphi(2,\lambda_j,\tilde{\g}_j)=1$. Now, Proposition \ref{prop:limitWronkianMild} shows that $\lim_{j\rightarrow \infty}\varphi(0,\lambda_j, \tilde{\g}_j) \neq 0$, contradicting again \eqref{eq:vanishingvarphilambdaj}.

\bullsubpar{Case $y_0 = \widetilde\varpi_1$ and $y_j=\widetilde\varpi_{j,1}$} We now have $\tilde\g_j\widetilde\cJ(y_j)=\frac14$ for all $j \geq 1$ so that $\gamma_j = 0$ already. Hence, we cannot use $\phi_{\rL,k,\ep_j}^\pm(y,y_j)$ as given in \eqref{eq:defphiL} but instead we take
\begin{equation*}
\varphi(y,\lambda_j,\tilde{\g}_j) = \frac{\phi_{\sr,k,\ep_j,\tilde\g_j}^+(2,y_j) \phi_{\rL,k,\ep_j,\tilde\g_j}^+(y,y_j) - \phi_{\sr,k,\ep_j,\tilde\g_j}^+(y,y_j) \phi_{\rL,k,\ep_j,\tilde\g_j}^+(2,y_j)}{v'(y_j)}
\end{equation*}
where now $\phi_{\rL,k,\ep_j\tilde\g_j}^\pm(y,y_j)$ is given by \eqref{eq:defphiLcrit}. Thanks to Proposition \ref{prop:limitWronskianMildCrit} we obtain $\lim_{j\rightarrow \infty}\varphi(0,\lambda_j, \tilde{\g}_j) \neq 0$, contradicting again \eqref{eq:vanishingvarphilambdaj}. This case also covers the setting where $\lbrace y\in [0,2] : \widetilde \g \widetilde\cJ(y) = \frac14 \rbrace$ contains an interval.

\bullpar{Case $\lambda\in \mathcal{R}$, $\tilde{\g}=0$ and $\tilde\g_j>0$} Since $\tilde\g_j\rightarrow 0$, we have that $\tilde\g_j\widetilde\cJ(y) < \frac14$ for all $y\in[0,2]$ and $\gamma_j\in (\frac14,\frac12)$, for $j\geq 1$ large enough. We distinguish according to the limiting value $\lambda\in \mathcal{R}$. As before, let $y_0,\, y_j \in[0,2]$ be such that $v(y_0) = \lambda$ and $v(y_j) = \Re(\lambda_j)$, respectively.

\bullsubpar{Case $y_0=\vartheta_n$ and $\mathrm{P}(y_j)=0$} As before, this case cannot happen due to Lemma \ref{lemma:boundarysuppnoteigen}.

\bullsubpar{Case $y_0 = \vartheta_n$ and $\mathrm{P}(y_j)>0$} We argue as before, now noting that 
\begin{align}
\phi_{\sr,1,k,0,0}(y,y_0) = \lim_{j\rightarrow \infty}\phi_{\sr,1,k,\ep_j,\tilde\g_j}^+(y,y_j)
\end{align}
satisfies
\begin{align*}
\phi_{\sr,1,k,0,0}(y,y_0)= 1 + k^2\int_{y_0}^y \frac{1}{(v(z) - v(y_0))^2}\int_{y_0}^z (v(s) - v(y_0))^2 \phi_{\sr,1,k,0,0}(s,y_0) \d s \d z
\end{align*}
so that $\phi_{\sr,k,0,0}(y,y_0) := (v(y) - v(y_0)) \phi_{\sr,1,k,0,0}(y,y_0)$ satisfies
\begin{align*}
\left(\partial_y - k^2 - \frac{v''(y)}{v(y) - v(y_0)} \right) \phi_{\sr,k,0,0}(y,y_0) =0.
\end{align*}
Moreover, we have from Proposition \ref{prop:limitWronkianFragile} and Lemma \ref{lemma:vanishingsecondinhomsol} that for $\varphi(y,\lambda,0) = \lim_{j\rightarrow \infty} \varphi(y, \lambda_j, \tilde{\g}_j)$ we have
\begin{align*}
\varphi(y,\lambda,0) =\phi_{\sr,k,0,0}(y,y_0)\int_0^y \frac{1}{(\phi_{\sr,k,0,0}(z,y_0))^2} \d z - \phi_{\sr,k,0,0}(y,y_0) \int_0^2 \frac{1}{(\phi_{\sr,k,0,0}(z,y_0))^2}\d z 
\end{align*}
is well-defined and, moreover, satisfies
\begin{align*}
\left(\partial_y - k^2 - \frac{v''(y)}{v(y) - v(y_0)} \right) \varphi(y,\lambda,0) =0
\end{align*}
with $\varphi(2,\lambda,0)=0$ and $\partial_y\varphi(2,\lambda,0)=1$.  In particular, we also have $\varphi(0,\lambda,0)=0$. However, Proposition \ref{prop:limitWronkianFragile} and $\phi_{\sr,k,0,\tilde{\g}}(y,y_0)\neq 0$ for all $y\in[0,2]\setminus \lbrace y_0 \rbrace$ due to Lemma \ref{lemma:lowerboundphisigma} show again that $\varphi(0,\lambda,0)\neq 0$, thus obtaining a contradiction.

\bullsubpar{Case $y_0\in (\vartheta_1, \vartheta_2)$} We proceed as in the previous case, where now $\varphi(y,\lambda,0)$ satisfies 
\begin{align*}
\left(\partial_y - k^2 - \frac{v''(y)}{v(y) - v(y_0)} \right) \varphi(y,\lambda,0) =0
\end{align*}
together with $\varphi(2,\lambda,0)=0$, $\varphi(0,\lambda,0)=0$ and $\partial_y\varphi(2,\lambda,0)=1$. Hence, $\lambda \in (v(\vartheta_1), v(\vartheta_2))$ and $\varphi(y,\lambda,\g)\neq 0$ constitute an eigenvalue and corresponding non-zero eigenfunction of the linearised Euler equations, a contradiction with the spectral assumption.

\bullpar{Case $\lambda\in \mathcal{R}$, $\tilde{\g}=\tilde\g_j=0$} This setting correspond to having $\lambda_j= v(y_j)$ an embedding eigenvalue of the linearised Euler operator, which is in contradiction with H3.
\end{proof}

\begin{remark}
As one can see from the stability conditions and the proof of Theorem \ref{thm:spectrumlinop}, increasing the size of the channel from $[0,2]$ to $[0,\mathsf H]$ for some $\mathsf H>2$ relaxes the conditions on the background steady state and allows us to find more examples  where the region $J(y) \geq \frac14$ fills most of the channel. In a sense, the absence of physical boundaries may be understood as a stabilising mechanism. For instance, for the stably stratified Couette flow the linearised operator is easily understood in the periodic strip in \cite{CZN23strip}, while in the periodic channel it presents sequences of neutral eigenvalues and may exhibit unstable eigenvalues, see \cite{CZN25chan}.
\end{remark}

\section{The fragile regime}\label{sec:fragile}
In this section we study the behaviour of the homogeneous and in-homogeneous solutions of the Taylor-Goldstein equation for $k\geq 1$ fixed when the spectral parameter $y_0\in I_W$ is close to the non-stratified region. As $y_0\rightarrow \vartheta_n$, for $n=1,2$, the \emph{most singular} homogeneous solution $\phi_{\s,k,\ep}^\pm(y,y_0)$ degenerates, in that its bounds are not uniform in $y_0$ and blow up as $y_0\rightarrow \vartheta_n$. Consequently, we shall use a different homogeneous solution. Once it is established, we use it to construct the solution $h_{k,\ep}^\pm(y,y_0)$ to 
\begin{equation}\label{eq:inhomTgeqhr}
\textsc{TG}_{k,\ep}^\pm h_{k,\ep}^\pm(y,y_0) = \mathrm{g}_{k,\ep}^\pm(y,y_0),
\end{equation}
with $h_{k,\ep}^\pm(0,y_0)=h_{k,\ep}^\pm(2,y_0)=0$ and for some source term $\mathrm{g}_{k,\ep}^\pm(y,y_0)$. The next result is immediate from Proposition 6.5 in \cite{WZZ18}.

\begin{proposition}\label{prop:defhsolTGr}
Let $k\geq 1$ fixed, $\ep>0$ and $y_0\in I_S\cup I_W$. The solution $h_{k,\ep}^\pm(y,y_0)$ to \eqref{eq:inhomTgeqhr} is given by
\begin{equation}\label{eq:defhsolTGr}
\begin{split}
h_{k,\ep}^\pm(y,y_0) &= \phi_{\sr,k,\ep}^\pm(y,y_0) \int_0^y \frac{1}{(\phi_{\sr,k,\ep}^\pm(y,y_0))^2} \int_{y_0}^z \phi_{\sr,k,\ep}^\pm(s,y_0) \mathrm{g}_{k,\ep}^\pm(s,y_0) \d s \d z \\
&\quad+ \mathrm{W}_{k,\ep}^\pm(y_0) \phi_{\sr,k,\ep}^\pm(y,y_0)\int_0^y \frac{1}{(\phi_{\sr,k,\ep}^\pm(z,y_0))^2} \d z
\end{split}
\end{equation}
where
\begin{align*}
\mathrm{W}_{k,\ep}^\pm(y_0) := -\frac{\int_0^2 \frac{1}{(\phi_{\sr,k,\ep}^\pm(y,y_0))^2} \int_{y_0}^y \phi_{\sr,k,\ep}^\pm(z,y_0) \mathrm{g}_{k,\ep}^\pm(z,y_0) \d z \d y }{\int_0^2 \frac{1}{(\phi_{\sr,k,\ep}^\pm(y,y_0))^2} \d y}
\end{align*}
and $\phi_{\sr,k,\ep}^\pm(y,y_0)$ is the homogeneous solution to the Taylor-Goldstein operator given by \eqref{eq:defphisigma} and Proposition \ref{prop:existencesolphisigma}.
\end{proposition}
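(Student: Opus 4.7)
The plan is to treat this as a direct variation-of-parameters construction built on the homogeneous solution $\phi_{\sr,k,\ep}^\pm(\cdot,y_0)$ supplied by Proposition \ref{prop:existencesolphisigma}. Since $\ep>0$, the prefactor $(v(y)-v(y_0)\pm i\ep)^{1/2+\gamma_0}$ in \eqref{eq:defphisigma} has no zeros on $[0,2]$, and in the fragile regime where this proposition will be applied the amplitude $\phi_{\sr,1,k,\ep}^\pm(y,y_0)$ is bounded away from zero by Lemma \ref{lemma:lowerboundphisigma}. Consequently $\phi_{\sr,k,\ep}^\pm(y,y_0)\neq 0$ throughout $[0,2]$, and every inverse power appearing in \eqref{eq:defhsolTGr} is well-defined.

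Next I would introduce the second homogeneous solution produced by reduction of order,
\[
\phi_2(y,y_0):=\phi_{\sr,k,\ep}^\pm(y,y_0)\int_0^y\frac{dz}{(\phi_{\sr,k,\ep}^\pm(z,y_0))^2},
\]
which satisfies $\textsc{TG}_{k,\ep}^\pm\phi_2=0$, vanishes at $y=0$, and has constant Wronskian equal to $1$ with $\phi_{\sr,k,\ep}^\pm$. With this fundamental pair, the candidate \eqref{eq:defhsolTGr} is exactly what the variation-of-parameters formula returns, normalised so as to vanish at $y=0$ automatically. To verify that it solves \eqref{eq:inhomTgeqhr} I would differentiate directly: writing $h_{k,\ep}^\pm=\phi_{\sr,k,\ep}^\pm\cdot F$ with $F$ equal to the bracketed expression in \eqref{eq:defhsolTGr}, the cross terms in the expansion of $h''_{k,\ep}$ cancel thanks to the structure of $F$, leaving $h''_{k,\ep}=(\phi_{\sr,k,\ep}^\pm)''F+\mathrm{g}_{k,\ep}^\pm$, and using that $\phi_{\sr,k,\ep}^\pm$ lies in the kernel of $\textsc{TG}_{k,\ep}^\pm$ produces $\textsc{TG}_{k,\ep}^\pm h_{k,\ep}^\pm=\mathrm{g}_{k,\ep}^\pm$.

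Finally, the boundary conditions must be checked: $h_{k,\ep}^\pm(0,y_0)=0$ is automatic from the structure of the formula, while $h_{k,\ep}^\pm(2,y_0)=0$ fixes the constant $\mathrm{W}_{k,\ep}^\pm(y_0)$. Since $\phi_{\sr,k,\ep}^\pm(2,y_0)\neq 0$ from the discussion above, this condition collapses to a single linear equation whose unique solution is the stated expression, provided the normalising integral $\int_0^2(\phi_{\sr,k,\ep}^\pm(z,y_0))^{-2}\,dz$ does not vanish. This last point is the only genuinely delicate ingredient of the proof: if the integral vanished, then $\phi_2(2,y_0)=0$ and $\phi_2$ would be a non-trivial homogeneous solution of $\textsc{TG}_{k,\ep}^\pm$ with Dirichlet data at both endpoints, hence an eigenfunction of $L_k$ at the complex value $v(y_0)\mp i\ep$ with $\ep>0$; such eigenvalues are ruled out by Proposition \ref{prop:noimagspectrum}. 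With this observation in hand the proof follows the template of Proposition~6.5 in \cite{WZZ18}, which I would simply cite.
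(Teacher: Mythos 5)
The paper's own proof is a single citation to Proposition~6.5 of \cite{WZZ18}, and your expansion of the variation-of-parameters construction (reduction of order via $\phi_2$, Wronskian equal to $1$, direct differentiation of $\phi_{\sr}F$ to check $\textsc{TG}_{k,\ep}^\pm h=\mathrm{g}$, and determination of $\mathrm{W}$ by imposing $h(2,y_0)=0$) is faithful to the same template, so in spirit this is the same argument. Your additional observation that the normalising integral cannot vanish because otherwise $\phi_2$ would furnish a Dirichlet eigenfunction at a resolvent value with nonzero imaginary part, contradicting Proposition~\ref{prop:noimagspectrum}, is a nice touch not spelled out in the paper, and since the proof of Proposition~\ref{prop:noimagspectrum} relies only on Section~8 material that precedes (and does not invoke) the present proposition, there is no circularity.

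The one place you overreach is the non-vanishing of $\phi_{\sr,k,\ep}^\pm$ on all of $[0,2]$. Lemma~\ref{lemma:lowerboundphisigma} gives $|\phi_{\sr,1,k,\ep}^\pm|\geq\tfrac12$ only for $y_0\in[\vartheta_1,\vartheta_1+\delta)\cup(\vartheta_2-\delta,\vartheta_2]$, i.e., the fragile neighbourhood of the stratification boundary. You acknowledge this (``in the fragile regime where this proposition will be applied''), but the proposition is asserted for \emph{every} $y_0\in I_S\cup I_W$, and the paper itself remarks right after the statement that $h_{k,\ep}^\pm$ is well-defined for all $\ep>0$ and $y_0\in I_S$. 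For generic $y_0\in I_S$ the amplitude $\phi_{\sr,1,k,\ep}^\pm$ solves a second-order ODE with complex coefficients and nothing in the paper precludes interior zeros; the formula \eqref{eq:defhsolTGr} must then be read with the integrands analytically continued and the path of integration deformed to avoid any simple zeros of $\phi_{\sr,k,\ep}^\pm$, which is precisely the regularisation handled in Proposition~6.5 of \cite{WZZ18} and which makes \eqref{eq:defhsolTGr} contour-independent. Since you ultimately cite \cite{WZZ18}, your proof covers this too, but the sentence ``Consequently $\phi_{\sr,k,\ep}^\pm(y,y_0)\neq 0$ throughout $[0,2]$'' should be restricted to the fragile neighbourhood or replaced by the contour-deformation remark; as written it claims more than Lemma~\ref{lemma:lowerboundphisigma} can deliver. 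A related care is that if $\phi_{\sr,k,\ep}^\pm(2,y_0)$ vanished, the normalisation step (divide by $\phi_{\sr,k,\ep}^\pm(2,y_0)$) would not go through directly; the cleanest way to justify both non-degeneracies simultaneously is again the eigenvalue argument you sketch, noting that $\phi_2(0,y_0)=0$ and $\phi_2(2,y_0)=\phi_{\sr,k,\ep}^\pm(2,y_0)\int_0^2\phi_{\sr,k,\ep}^\pm(z,y_0)^{-2}\,dz$, so that $\phi_2(2,y_0)=0$ in either failure mode and Proposition~\ref{prop:noimagspectrum} excludes both at once.
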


While $h_{k,\ep}^\pm(y,y_0)$ is well-defined for all $\ep>0$ and all $y_0\in I_S$, here we investigate the solution in the fragile regime, namely when $\ep\rightarrow 0$ and $y_j\rightarrow y_0$ with $\P(y_0)=0$. The main result of the section is the following.

\begin{theorem}\label{thm:vanishinghjfragile}
Fix $k\geq 1$. Let $y_0\in (0,2)$ such that $\P(y_0)=0$. Let $y_j\neq y_0$, with $\P(y_j)>0$, $y_j\rightarrow y_0$ and $\ep_j\rightarrow 0^+$ as $j\rightarrow \infty$. Let $h_{k,\ep_j}^\pm(y,y_j)$ given by \eqref{eq:defhsolTGr} be the solution to \eqref{eq:inhomTgeqhr}, with  $\mathrm{g}_j(y,y_j):=-\textsc{E}_{k_j,\ep_j}^\pm(y,y_j) g_j(y)$ and $g_j\in Z_k$ with $\Vert g_j \Vert_{Z_k}\rightarrow 0$ as $j\rightarrow \infty$. Then,
\begin{align*}
\lim_{j\rightarrow \infty} \Vert h_{k,\ep_j}^\pm(y,y_j) \Vert_{Z_k} = 0.
\end{align*}
\end{theorem}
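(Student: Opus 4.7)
The plan is to exploit the explicit representation \eqref{eq:defhsolTGr} from Proposition \ref{prop:defhsolTGr}, which uses only the \emph{regular} homogeneous solution $\phi_{\sr,k,\ep_j}^\pm$. This is essential in the fragile regime because, under the hypotheses of the theorem, we have $y_0\in\{\vartheta_1,\vartheta_2\}$ (the only interior points where $\P$ vanishes), so $\cJ(y_j)\to 0$ and $\gamma_j\to \tfrac12$; the \emph{singular} homogeneous solution $\phi_{\s,k,\ep_j}^\pm$ would carry the blow-up factor $(1-2\gamma_j)^{-1}$ from Proposition \ref{prop:existencesolphisigma}, while $\phi_{\sr,1,k,\ep_j}^\pm$ remains uniformly bounded \emph{and} uniformly bounded below by Lemma \ref{lemma:lowerboundphisigma} (whose hypothesis $y_j\in[\vartheta_1,\vartheta_1+\delta)\cup(\vartheta_2-\delta,\vartheta_2]$ is satisfied for $j$ large). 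This gives the two-sided pointwise estimate
\[
|v(y)-v(y_j)\pm i\ep_j|^{\frac12+\gamma_j}\;\lesssim\;\bigl|\phi_{\sr,k,\ep_j}^\pm(y,y_j)\bigr|\;\lesssim\;|v(y)-v(y_j)\pm i\ep_j|^{\frac12+\gamma_j}
\]
uniformly in $j$, which is the backbone of all subsequent bounds.

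First I would control the source $\mathrm{g}_j=-\scE_{k,\ep_j}^\pm g_j$. Using the three-term decomposition \eqref{eq:deferroroperator} together with the structure of $g_j\in Z_k$ as $g_j=\eta^{\frac12+\gamma_j}g_{j,\sr}+\eta^{\frac12-\gamma_j}g_{j,\s}$, a direct power count shows that each contribution to $\phi_{\sr,k,\ep_j}^\pm\,\mathrm{g}_j$ is pointwise bounded, uniformly in $j$, by $\|g_j\|_{Z_k}$ times an integrable function of $y$: the $v''/(v-v_j)$ and $(\P(y)-\P(y_j))/(v-v_j)^2$ pieces each gain two powers of $(y-y_j)$ from the source plus one from $\phi_{\sr,k,\ep_j}^\pm$ against the two powers lost in $\scE$, and the third piece of $\scE$ is a priori bounded. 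Consequently the inner integral $\int_{y_j}^z \phi_{\sr,k,\ep_j}^\pm \mathrm{g}_j\,\d s=O(\|g_j\|_{Z_k}\,|z-y_j|)$, which is precisely what is needed for the outer integral $\int_0^y (\phi_{\sr,k,\ep_j}^\pm)^{-2}(\cdots)\d z$ to converge, because $(\phi_{\sr,k,\ep_j}^\pm)^{-2}\sim |v-v_j|^{-1-2\gamma_j}$ combined with the gained factor $|z-y_j|$ yields the integrable power $|z-y_j|^{-2\gamma_j}$ uniformly in $j$.

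Next I would handle the constant $\mathrm{W}_{k,\ep_j}^\pm(y_j)$. The denominator $\int_0^2 (\phi_{\sr,k,\ep_j}^\pm)^{-2}\,\d y$ is bounded below and above uniformly in $j$ (the lower bound from the upper pointwise bound on $\phi_{\sr,k,\ep_j}^\pm$, the upper bound from Lemma \ref{lemma:lowerboundphisigma} plus the finite integral $\int |v-v_j|^{-1-2\gamma_j}$ as $\gamma_j$ stays bounded away from $\tfrac12$ until the passage to the limit — which we handle by Proposition \ref{prop:continuityphi} and dominated convergence). The numerator has the same $O(\|g_j\|_{Z_k})$ structure as above, so $\mathrm{W}_{k,\ep_j}^\pm(y_j)=O(\|g_j\|_{Z_k})$. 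Combining the two terms of \eqref{eq:defhsolTGr} gives the $L^\infty$ bound $\|h_{k,\ep_j}^\pm(\cdot,y_j)\|_{L^\infty}\lesssim \|g_j\|_{Z_k}$; differentiating once and using the analogous estimates on $\partial_y\phi_{\sr,k,\ep_j}^\pm$ (through $\partial_y\phi_{\sr,1,k,\ep_j}^\pm = k^2\cT_{\sr,1,k,\ep_j}^\pm\phi_{\sr,1,k,\ep_j}^\pm$ from Proposition \ref{prop:existencesolphisigma}) yields the corresponding $H^1$ bound. The regular/singular splitting required by the $Z_k$-norm is read off directly from the factor $(v(y)-v(y_j)\pm i\ep_j)^{\frac12+\gamma_j}$ in $\phi_{\sr,k,\ep_j}^\pm$ which is inherited by $h_j$ through the formula.

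The main obstacle is tracking uniformity as $\gamma_j\to\tfrac12$: one must verify that neither $\mathrm{W}_{k,\ep_j}^\pm(y_j)$ nor the double integral develops a spurious blow-up from the critical integrability threshold of $(y-y_j)^{-2\gamma_j}$. This requires a careful splitting $I_3(y_j)\cup I_3^c(y_j)$ in the outer integrals, together with the continuity of $\phi_{\sr,1,k}^\pm(y,y_0,\ep,\tilde{\mathfrak g})$ on the closed parameter set $\mathcal{Y}_{\overline{I}}$ provided by Proposition \ref{prop:continuityphi}, which allows a passage to the limit toward the Rayleigh-type profile where the degeneracy formally occurs. Once uniform $Z_k$-bounds of the form $\|h_{k,\ep_j}^\pm(\cdot,y_j)\|_{Z_k}\lesssim \|g_j\|_{Z_k}$ are in place, the conclusion $\|h_{k,\ep_j}^\pm(\cdot,y_j)\|_{Z_k}\to 0$ follows from $\|g_j\|_{Z_k}\to 0$.
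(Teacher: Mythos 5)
Your high-level structure matches the paper's (isolate the regular homogeneous solution, bound each of the two terms in \eqref{eq:defhsolTGr}, treat the source via \eqref{eq:rmrjbound}, control $\mathrm{W}_{k,\ep_j}^\pm$ through its numerator and denominator, invoke Lemma \ref{lemma:lowerboundphisigma}/\ref{lemma:fragilephir1knonzero} and Proposition \ref{prop:continuityphi}). But the actual estimates you describe have two concrete gaps, and both concern exactly the uniformity you flag at the end.

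First, your bound on the denominator of $\mathrm{W}_{k,\ep_j}^\pm(y_j)$ is wrong. You argue that $\int_0^2(\phi_{\sr,k,\ep_j}^\pm)^{-2}\,\d y$ is bounded above using Lemma \ref{lemma:lowerboundphisigma} ``plus the finite integral $\int|v-v_j|^{-1-2\gamma_j}$''. That integral is not finite: for $\gamma_j\in(0,\tfrac12)$ the exponent $-1-2\gamma_j<-1$, so $\int_0^2 |v(y)-v(y_j)\pm i\ep_j|^{-1-2\gamma_j}\,\d y$ blows up like $\ep_j^{-2\gamma_j}\to\infty$. The integral $\int_0^2 (\phi_{\sr,k,\ep_j}^\pm)^{-2}\,\d y$ converges to a finite (nonzero) limit only because of phase cancellation across $y=y_j$; that is precisely the content of Proposition \ref{prop:limitWronkianFragile} and Lemma \ref{lemma:limitvelquotient}, whose proofs proceed by writing the leading piece as an exact antiderivative and then repeatedly integrating by parts. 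No absolute-value bound on the integrand can reproduce this.

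Second, in the first term of $h_j$ your power count $|\phi_{\sr,j}|^{-2}\cdot|z-y_j|\sim|z-y_j|^{-2\gamma_j}$ is correct pointwise, but the claim that it is ``integrable ... uniformly in $j$'' is false: $\int_0^\delta x^{-2\gamma_j}\,\d x = \tfrac{\delta^{1-2\gamma_j}}{1-2\gamma_j}\to\infty$ as $\gamma_j\to\tfrac12$. So the naive estimate gives $\|h_{1,j}\|_{Z_k}\lesssim(1-2\gamma_j)^{-1}\|g_j\|_{Z_k}$, which does not go to zero unless you know how fast $\|g_j\|_{Z_k}$ decays relative to $1-2\gamma_j$ (which you do not). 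Your proposed remedy --- split into $I_3\cup I_3^c$ and use continuity of $\phi_{\sr,1,k,\ep}^\pm$ plus dominated convergence --- does not cure this, since the lost factor $(1-2\gamma_j)^{-1}$ comes from the local integral, not from the far field. The paper's proof of Lemma \ref{lemma:vanishingfirstinhomsol} and Lemma \ref{lemma:vanishingsecondinhomsol} resolves it by a cascade of integrations by parts (the chain $h_{3,j},\dots,h_{14,j}$): each step trades a dangerous factor such as $\xi^{-2\gamma_j}$ for a compensated one such as $(\xi^{1-2\gamma_j}-1)/(1-2\gamma_j)$ (which is $O(|\log\xi|)$ uniformly by Lemma \ref{lemma:firstgammalog}) or $\xi^{2\gamma_j-1}\log\xi$ (uniformly in $L^2$ as $\gamma_j\to\tfrac12$). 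Until these cancellations are made explicit, the argument does not close.
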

The theorem follows from Proposition \ref{prop:limitWronkianFragile},  Lemma  \ref{lemma:vanishingfirstinhomsol} and Lemma \ref{lemma:vanishingsecondinhomsol} below.

\subsection{Homogeneous solutions in the fragile regime}
We first investigate finer properties of the homogeneous solution $\phi_{\sr,k,\ep}^\pm(y,y_0)$, when $y_0=\vartheta_n$, $n=1,2$, or close to it. Firstly, due to Lemma \ref{lemma:boundarysuppnoteigen} we observe that $\lambda=v(\vartheta_1)$ and $\lambda=v(\vartheta_2)$ are not embedded eigenvalues of $L_k$. In particular, for $y_0=\vartheta_n$ with $n=1,2$ and
\begin{align*}
    \Psi(y) := -\phi_{\sr,k,0}(2,y_0)\phi_{\sr,k,0}(y,y_0)\int_y^2 \frac{\d z}{(\phi_{\sr,k,0}(z,y_0))^2}
\end{align*}
we note that $\Psi(y)$ is a non-zero solution to \eqref{eq:eigenvalueTG} due to Lemma \ref{lemma:lowerboundphisigma}, with further $\Psi(2)=0$, $\Psi'(2)=1$, and 
\begin{align*}
    \Psi(y) &= - \frac{\phi_{\sr,1,k,0}(2,y_0)\phi_{\sr,1,k,0}(y,y_0)}{v'(y_0)}(v(2) - v(y)) \\
    &\quad+ \frac{\phi_{\sr,k,0}(2,y_0)\phi_{\sr,k,0}(y,y_0)}{v'(y_0)}\int_y^2 \frac{v'(z) - v'(y_0)}{(v(z) - v(y_0)^2} \d z \\
    &\quad - \phi_{\sr,k,0}(2,y_0)\phi_{\sr,k,0}(y,y_0)\int_{y}^2 \frac{1}{(v(z) - v(y_0))^2}\left( \frac{1}{(\phi_{\sr,1,k,0}(z,y_0))^2} -1 \right) \d z.
\end{align*}
Now, since  $\lambda=v(\vartheta_1)$ and $\lambda=v(\vartheta_2)$ are not embedded eigenvalues of $L_k$ we have that $\Psi(0)\neq 0$ and thus
\begin{equation}\label{eq:nonzeroWborder}
\begin{split}
    0 &\neq - \frac{\phi_{\sr,1,k,0}(2,y_0)\phi_{\sr,1,k,0}(0,y_0)}{v'(y_0)}(v(2) - v(0)) \\
    &\quad+ \frac{\phi_{\sr,k,0}(2,y_0)\phi_{\sr,k,0}(0,y_0)}{v'(y_0)}\int_0^2 \frac{v'(z) - v'(y_0)}{(v(z) - v(y_0)^2} \d z \\
    &\quad - \phi_{\sr,k,0}(2,y_0)\phi_{\sr,k,0}(0,y_0)\int_{0}^2 \frac{1}{(v(z) - v(y_0))^2}\left( \frac{1}{(\phi_{\sr,1,k,0}(z,y_0))^2} -1 \right) \d z.
    \end{split}
\end{equation}
 
Due to Proposition~\ref{prop:continuityphi}, we can extend by continuity the non-vanishing of $\phi_{\sr,k,0}(y,\vartheta_n)$.

\begin{lemma}\label{lemma:fragilephir1knonzero}
Fix $k\geq 1$. Let $y_0\in (0,2)$ such that $\P(y_0)=0$. Let $y_j\neq y_0$, with $\P(y_j)>0$, $y_j\rightarrow y_0$ and $\ep_j\rightarrow 0^+$ as $j\rightarrow \infty$. Then,
\begin{align*}
|\phi_{\sr,1,k,\ep_j}^\pm(y,y_j)|\geq \frac12,
\end{align*}
for all $y\in [0,2]$, for $j\geq 0$ sufficiently large.
\end{lemma}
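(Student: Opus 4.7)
The plan is to deduce the estimate directly from Lemma~\ref{lemma:lowerboundphisigma}, after first pinning down where the limit point $y_0$ must lie. By hypothesis~H$\P$, the stratification profile $\P$ vanishes on $[0,\vartheta_1]\cup[\vartheta_2,2]$ and is strictly positive on $(\vartheta_1,\vartheta_2)$. The assumption $\P(y_j)>0$ therefore forces $y_j\in(\vartheta_1,\vartheta_2)$ for every $j$, while $\P(y_0)=0$ together with $y_0\in(0,2)$ confines $y_0$ to $(0,\vartheta_1]\cup[\vartheta_2,2)$. Since $y_j\to y_0$ with $y_j\in(\vartheta_1,\vartheta_2)$, the limit must lie in the closure of that interval, and combining both constraints gives $y_0\in\{\vartheta_1,\vartheta_2\}$.

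Let $\delta>0$ be the constant furnished by Lemma~\ref{lemma:lowerboundphisigma}. Since $y_j\to y_0\in\{\vartheta_1,\vartheta_2\}$ from within $(\vartheta_1,\vartheta_2)$ and $\ep_j\to 0^+$, for $j$ sufficiently large we have either $y_j\in[\vartheta_1,\vartheta_1+\delta)$ or $y_j\in(\vartheta_2-\delta,\vartheta_2]$, together with $\ep_j\leq\delta$. These are exactly the hypotheses of Lemma~\ref{lemma:lowerboundphisigma}, which then yields
\[
\bigl|\phi_{\sr,1,k,\ep_j}^\pm(y,y_j)\bigr|\geq \tfrac{1}{2}
\]
for all $y\in[0,2]$ and all such $j$, as required.

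There is no genuine obstacle here: the substantive content (uniform lower control of $\phi_{\sr,1,k}^\pm$ on a one-sided neighbourhood of each $\vartheta_n$, including $\ep=0$) was already established in Lemma~\ref{lemma:lowerboundphisigma} via the positivity structure of the fixed-point operator $\cT_{\sr,k,0}^\pm$ (where condition \textbf{H1} ensures that the relevant kernel is nonnegative) together with the continuity statement of Proposition~\ref{prop:continuityphi}. The present lemma is merely the observation that any sequence $(y_j,\ep_j)$ produced by the fragile scenario eventually falls inside the domain where the earlier uniform estimate applies.
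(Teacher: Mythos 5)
Your argument is correct and coincides with the paper's, which likewise reads this lemma off as an immediate consequence of Lemma~\ref{lemma:lowerboundphisigma} together with Proposition~\ref{prop:continuityphi}, after noting that the hypotheses on $\P$ force $y_0\in\{\vartheta_1,\vartheta_2\}$. Your preliminary step pinning down $y_0$ via H$\P$ and convergence of $y_j$ from inside $(\vartheta_1,\vartheta_2)$ is exactly what makes Lemma~\ref{lemma:lowerboundphisigma} applicable for $j$ large, and the rest is a direct citation.
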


With this uniform lower bound at hand, we now obtain the next result.

\begin{proposition}\label{prop:limitWronkianFragile}
Let $y_0\in (0,2)$ such that $\P(y_0)=0$, Let $y_j\neq y_0$, with $\P(y_j)>0$, $y_j\rightarrow y_0$ and $\ep_j\rightarrow 0^+$ as $j\rightarrow \infty$. Then,
\begin{align*}
\lim_{j\rightarrow \infty} \int_0^2 \frac{\d y}{(\phi_{\sr,k,\ep_j}^\pm(y,y_j))^2} &= -\frac{1}{v'(y_0)}  \frac{v(2) - v(0)}{(v(2) - v(y_0))(v(y_0) - v(0))}  \\
&\quad -\frac{1}{v'(y_0)}\left( \text{P.V.} \int_0^2 \frac{v'(y) - v'(y_0)}{(v(y) - v(y_0))^2} \d y \pm i \pi \frac{v''(y_0)}{(v'(y_0))^2} \right) \\ 
&\quad + \int_0^2 \frac{1}{(v(y) - v(y_0))^2} \left( \frac{1}{(\phi_{\sr,1,k,0}(y,y_0))^2} -1 \right) \d y.
\end{align*}
In particular, the limit is non-zero.
\end{proposition}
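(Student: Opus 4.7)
Since $\P(y_0)=0$ implies $\cJ(y_0)=0$, we have $\gamma_j=\sqrt{\tfrac14-\cJ(y_j)}\to\tfrac12$. I would begin by factoring $\phi_{\sr,k,\ep_j}^\pm(y,y_j)^{-2} = (v(y)-v(y_j)\pm i\ep_j)^{-1-2\gamma_j}\,\phi_{\sr,1,k,\ep_j}^\pm(y,y_j)^{-2}$ and decomposing the second factor as $1+r_j(y)$ with $r_j(y):=\phi_{\sr,1,k,\ep_j}^\pm(y,y_j)^{-2}-1$. By Proposition~\ref{prop:existencesolphisigma}, the normalization $\phi_{\sr,1,k,\ep_j}^\pm(y_j,y_j)=1$, $\partial_y\phi_{\sr,1,k,\ep_j}^\pm(y_j,y_j)=0$ together with uniform $C^2$ control in $j$ yield $r_j(y)=O((y-y_j)^2)$ uniformly. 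Combined with the monotonicity bound $|v(y)-v(y_j)\pm i\ep_j|\gtrsim|y-y_j|$, the integrand coming from $r_j$ is dominated by $C|y-y_j|^{1-2\gamma_j}\in L^\infty$ uniformly in $j$, so Proposition~\ref{prop:continuityphi} and dominated convergence deliver the third summand of the claimed limit.

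It remains to analyse $I_2(j):=\int_0^2(v(y)-v(y_j)\pm i\ep_j)^{-1-2\gamma_j}\,dy$, which I would split by writing $1 = v'(y)/v'(y_j) + (1-v'(y)/v'(y_j))$. The first summand is an exact $y$-derivative, evaluating to $-(2\gamma_j v'(y_j))^{-1}\bigl[(v(y)-v(y_j)\pm i\ep_j)^{-2\gamma_j}\bigr]_0^2$; on the principal branch at $y=0$ we have $\arg(v(0)-v(y_j)\pm i\ep_j)\to\pm\pi$, so the angular factor $e^{\mp 2i\gamma_j\pi}\to -1$ and both $\pm$ choices give the real limit $(v(0)-v(y_0))^{-1}$, recovering the first summand of the claim after simplification. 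The second summand is $J^\pm(j):= -v'(y_j)^{-1}\int_0^2(v'(y)-v'(y_j))(v(y)-v(y_j)\pm i\ep_j)^{-1-2\gamma_j}\,dy$, which I would integrate by parts pairing the antiderivative $(v-v_j\pm i\ep_j)^{-2\gamma_j}/(-2\gamma_j)$ of $v'(y)(v-v_j\pm i\ep_j)^{-1-2\gamma_j}$ against $u(y):=(v'(y)-v'(y_j))/v'(y)$, exploiting the fortunate identity $\partial_y u = v'(y_j)v''(y)/v'(y)^2$. This produces a boundary contribution plus the cleaner integral $(v'(y_j)/2\gamma_j)\int_0^2 v''(y)v'(y)^{-2}(v-v_j\pm i\ep_j)^{-2\gamma_j}\,dy$.

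The main technical step is the joint limit $(\gamma_j,\ep_j)\to(\tfrac12,0^+)$ of this last integral, which I would handle by a Sokhotski--Plemelj argument: after changing variables $u=v(y)-v(y_j)$ and splitting the smooth coefficient as $F(0)+(F(u)-F(0))$, the explicit evaluation $\lim\int_\alpha^\beta(u\pm i\ep)^{-2\gamma}\,du = \log(\beta/|\alpha|)\mp i\pi$ for $\alpha<0<\beta$ extracts the imaginary contribution involving $v''(y_0)/v'(y_0)^2$, while the remainder converges by dominated convergence to a principal-value integral. A further integration-by-parts identity rewrites $v'(y_0)\,\mathrm{P.V.}\!\int_0^2 v''(y)v'(y)^{-2}(v(y)-v(y_0))^{-1}\,dy$ plus the boundary contribution as $\mathrm{P.V.}\!\int_0^2(v'(y)-v'(y_0))(v(y)-v(y_0))^{-2}\,dy$, producing the second summand of the claim. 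For the non-vanishing assertion, the hypotheses force $y_0\in\{\vartheta_1,\vartheta_2\}$, and there $v''(y_0)=0$ since $v''\in C^2$ with $\mathrm{supp}\,v''\subseteq(\vartheta_1,\vartheta_2)$, so the imaginary contribution vanishes and $L\in\mathbb{R}$; however, using $\phi_{\sr,k,0}(y,y_0)=(v(y)-v(y_0))\phi_{\sr,1,k,0}(y,y_0)$ a direct verification shows that $-\phi_{\sr,k,0}(2,y_0)\phi_{\sr,k,0}(0,y_0)\cdot L$ coincides with the right-hand side of \eqref{eq:nonzeroWborder}, which is nonzero since $\lambda=v(\vartheta_n)$ is not an embedded eigenvalue of $L_k$ (Lemma~\ref{lemma:boundarysuppnoteigen}), and Lemma~\ref{lemma:lowerboundphisigma} ensures the prefactor is nonzero; hence $L\neq 0$.
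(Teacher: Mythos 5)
Your treatment of $I_2(j):=\int_0^2(v(y)-v(y_j)\pm i\epsilon_j)^{-1-2\gamma_j}\,dy$ is sound and is essentially a more explicit version of the paper's Lemma~\ref{lemma:limitvelquotient} (the paper instead reduces the exponent $1+2\gamma_j$ to $2$ and cites an external lemma, whereas you carry out the Sokhotski--Plemelj step directly; both are viable). Your non-vanishing argument at the end is also the paper's: $y_0\in\{\vartheta_1,\vartheta_2\}$, equation \eqref{eq:nonzeroWborder}, Lemma~\ref{lemma:boundarysuppnoteigen} and Lemma~\ref{lemma:lowerboundphisigma}.

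The genuine gap is in the very first step, where you claim $r_j(y)=\phi_{\sr,1,k,\epsilon_j}^\pm(y,y_j)^{-2}-1=O\bigl((y-y_j)^2\bigr)$ \emph{uniformly} in $j$ and invoke this to run a dominated-convergence argument. The normalizations $\phi_{\sr,1,k,\epsilon_j}^\pm(y_j,y_j)=1$, $\partial_y\phi_{\sr,1,k,\epsilon_j}^\pm(y_j,y_j)=0$ alone do not give a quadratic bound; you additionally need a \emph{uniform} $C^2$ (or at least uniformly Lipschitz first-derivative) control, and that is precisely what fails in the fragile regime. Proposition~\ref{prop:existencesolphisigma} and its Corollary give only $\|\phi_{\sr,1,k,\epsilon}-1\|_{L^\infty}\lesssim|y-y_0|$ uniformly, i.e.\ \emph{linear} control; no uniform second-derivative bound appears anywhere in the paper. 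Indeed, evaluating the reduced ODE at $y=y_j$ gives $\partial_y^2\phi_{\sr,1,k,\epsilon_j}^\pm(y_j,y_j)=k^2 F_{\sr,k,\epsilon_j}^\pm(y_j,y_j)/(\pm i\epsilon_j)$, which is not uniformly bounded without an unasserted relation between the rates $1-2\gamma_j\to0$, $v''(y_j)\to0$, and $\epsilon_j\to0$. Moreover, for the limit function $\phi_{\sr,1,k,0}(\cdot,y_0)$ the first derivative at $y_0^+$ is $k^2 F_{\sr,k,0}(y_0,y_0)/(v'(y_0)(1+2\gamma_0))\neq 0$ in general, so the $j=\infty$ profile has a corner and the condition $\partial_y\phi=0$ does not survive the limit. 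Consequently, with only $|r_j|\lesssim|y-y_j|$, the $r_j$-integrand is controlled by $C|y-y_j|^{-2\gamma_j}$, which degenerates to the non-integrable $|y-y_j|^{-1}$ as $\gamma_j\to\tfrac12$, and dominated convergence does not apply. The paper's Lemmas~\ref{lemma:firstconvergencelemma} and \ref{lemma:secondconvergencelemma} are precisely designed to circumvent this: they integrate by parts repeatedly, trading the borderline singularity $\xi^{-2\gamma_j}$ for logarithms and exploiting the quantitative expansion $\bigl|(1-2\gamma_j)^{-1}\bigl((v(y)-v(y_j)\pm i\epsilon_j)^{1-2\gamma_j}-1\bigr)-\log(\cdot)\bigr|\lesssim(1-2\gamma_j)\log^2(\cdot)$. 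To repair your proof along the route you chose, you would either need to justify the quadratic bound (which requires information not available in the cited lemmas) or replace the dominated-convergence step by the integration-by-parts machinery.
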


\begin{proof}
Firstly, that the limit is non-zero follows from \eqref{eq:nonzeroWborder}, Lemma~\ref{lemma:lowerboundphisigma} and $\vartheta_n \in (0,2)$, for $n=0,1$. We shall now show the convergence to the claimed limit. Since $y_j\rightarrow y_0$ and $\gamma(y_0)=\frac12$, we have that $\gamma_j:=\gamma(y_j) \rightarrow \frac12$. We first write
\begin{align*}
\int_0^2 \frac{\d y}{(\phi_{\sr,k,\ep_j}^\pm(y,y_j))^2} &= \int_0^2 \frac{\d y}{(v(y) - v(y_j) \pm i\ep_j)^{1+2\gamma_j}} \\
&\quad + \int_0^2 \frac{1}{(v(y) - v(y_j) \pm i\ep_j)^{1+2\gamma_j}} \left( \frac{1}{(\phi_{\sr,1,k,\ep_j}(y,y_j))^2} -1 \right) \d y.
\end{align*}
The first contribution is studied separately in Lemma~\ref{lemma:limitvelquotient} below. For the second contribution, to ease notation we denote $\phi_{j}(y):=\phi_{\sr,1,k,\ep_j}(y,y_j)$ and $\phi_{0}(y):=\phi_{\sr,1,k,0}(y,y_0)$. We further integrate by parts to get
\begin{align*}
\int_0^2 \frac{1}{(v(y) - v(y_j) \pm i\ep_j)^{1+2\gamma_j}} &\left( \frac{1}{(\phi_{j}(y,y_j))^2} -1 \right) \d y \\
&= -\frac{1}{2\gamma_j v'(y)}( v(y) - v(y_j) \pm i\ep_j)^{-2\gamma_j} \left( \frac{1}{\phi_j^2(y) } -1 \right) \Big|_{y=0}^{y=2} \\
&+\frac{1}{2\gamma_j}\int_0^2 \partial_y\left( \frac{1}{v'(y)} \right) (v(y) - v(y_j) \pm i\ep_j)^{-2\gamma_j} \left( \frac{1}{\phi_j^2(y) } -1 \right)  \d y \\
&\quad -\frac{1}{\gamma_j}\int_0^2 \frac{1}{v'(y)}\frac{\phi_j'(y)}{\phi_j^3(y)} (v(y) - v(y_0) \pm i\ep_j)^{-2\gamma_j} \d y.
\end{align*}
The convergence for the solid boundary term is obvious since the involved functions are continuous in $y_j$ uniformly a neighbourhood of $y=0$ and $y=2$. The convergence of the two integral contributions are studied in Lemma~\ref{lemma:firstconvergencelemma} and Lemma~\ref{lemma:secondconvergencelemma} below, where integration by parts arguments are carried out in order to smooth the singularities so that the Dominated Convergence Theorem can be applied. Once the passage to the limit is achieved, the Proposition follows by undoing the integration by parts.
\end{proof}

\begin{lemma}\label{lemma:limitvelquotient}
Let $k\geq 1$. Let $y_j\rightarrow y_0$, with $\P(y_0) =0$ and $\ep_j \rightarrow 0$ as $j\rightarrow \infty$. There holds
\begin{align*}
\lim_{j\rightarrow \infty}\int_0^2 \frac{\d y}{(v(y) - v(y_j) \pm i\ep_j)^{1+2\gamma_j}} &= -\frac{1}{v'(y_0)}  \frac{v(2) - v(0)}{(v(2) - v(y_0))(v(y_0) - v(0))}  \\
&\quad -\frac{1}{v'(y_0)}\left( \text{P.V.} \int_0^2 \frac{v'(y) - v'(y_0)}{(v(y) - v(y_0))^2} \d y \pm i \pi \frac{v''(y_0)}{(v'(y_0))^2} \right)
\end{align*}
\end{lemma}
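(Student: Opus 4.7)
The plan is to perform a single integration by parts followed by a Plemelj-type limit. I would write
$$(v(y)-v(y_j)\pm i\ep_j)^{-1-2\gamma_j} = -\frac{1}{2\gamma_j v'(y)}\partial_y\bigl[(v(y)-v(y_j)\pm i\ep_j)^{-2\gamma_j}\bigr],$$
and integrate by parts against $1/v'(y)$. This reduces the diverging exponent $1+2\gamma_j\to 2$ to the Plemelj-critical exponent $2\gamma_j\to 1$. The resulting boundary terms converge easily: at $y=2$ directly, since $v(2)-v(y_0)>0$; and at $y=0$ after tracking the branch, since $v(0)-v(y_j)\pm i\ep_j$ approaches $v(0)-v(y_0)<0$ through the upper/lower half-plane, yielding $(v(0)-v(y_j)\pm i\ep_j)^{-2\gamma_j}\to |v(0)-v(y_0)|^{-2\gamma_j}e^{\mp 2\pi i\gamma_j}\to 1/(v(0)-v(y_0))$ as $(\ep_j,\gamma_j)\to (0^+,1/2)$. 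The combined boundary contribution will equal $1/[v'(0)(v(0)-v(y_0))]-1/[v'(2)(v(2)-v(y_0))]$, which is already real.

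The residual integral
$$-\frac{1}{2\gamma_j}\int_0^2 \frac{v''(y)}{(v'(y))^2}(v(y)-v(y_j)\pm i\ep_j)^{-2\gamma_j}\,\d y$$
requires the Plemelj formula in the joint limit. The plan is to split $[0,2]$ around $y_0$: on the outer region $\{|y-y_0|>\delta\}$, the integrand converges pointwise and uniformly boundedly to $(v(y)-v(y_0))^{-1}$, so dominated convergence delivers the principal-value piece. On the inner region $|y-y_j|\le\delta$, I would Taylor expand $v(y)-v(y_j)=v'(y_0)(y-y_j)+O((y-y_0)^2)$, pull out the factor $[v'(y_0)]^{-2\gamma_j}\to 1/v'(y_0)$, and evaluate the model integral $\int_{-\delta}^{\delta}(s\pm i\tilde\ep_j)^{-2\gamma_j}\,\d s=[(s\pm i\tilde\ep_j)^{1-2\gamma_j}/(1-2\gamma_j)]_{-\delta}^{\delta}$ explicitly; the asymptotics $1-e^{\pm i\pi(1-2\gamma_j)}\sim \mp i\pi(1-2\gamma_j)$ produce the Plemelj expression
$$\mathrm{P.V.}\int_0^2 \frac{v''(y)\,\d y}{(v'(y))^2(v(y)-v(y_0))}\mp i\pi\,\frac{v''(y_0)}{(v'(y_0))^3}.$$

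To match the target, I would integrate by parts once more on the P.V.\ integral using $v''(y)/(v'(y))^2=-\partial_y(1/v'(y))$ on each excised interval $(0,y_0-\delta)$ and $(y_0+\delta,2)$. The divergent boundary contributions at $y_0\pm\delta$, of order $\pm 1/(v'(y_0)^2\delta)+O(1)$, will cancel exactly against the corresponding singular parts of an auxiliary integral $\int 1/(v(y)-v(y_0))^2\,\d y$ that appears in the IBP. The finite remainders will replace the $1/v'(0),\,1/v'(2)$ factors from the original boundary contributions by a single $1/v'(y_0)$ prefactor, collapsing into the rational term $-(v(2)-v(0))/[v'(y_0)(v(2)-v(y_0))(v(y_0)-v(0))]$, and will produce the target P.V.\ integrand $(v'(y)-v'(y_0))/(v(y)-v(y_0))^2$. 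The hardest part will be the middle-piece Plemelj analysis: one must control the expansion of $(s\pm i\tilde\ep_j)^{1-2\gamma_j}$ at $s=\pm\delta$ uniformly as the small parameters $1-2\gamma_j$ and $\tilde\ep_j=\ep_j/v'(y_j)$ tend to zero independently, which is handled by a joint Taylor expansion in both variables together with the identity $\lim_{\delta_\gamma\to 0}\delta^{\delta_\gamma}=1$ to freeze the cutoff $\delta$.
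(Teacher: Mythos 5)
Your proposal is methodologically sound and arrives at the correct answer, but it follows a genuinely different route from the paper. The paper begins by inserting $1=\tfrac{v'(y)}{v'(y_j)}-\tfrac{v'(y)-v'(y_j)}{v'(y_j)}$ into the integrand: the first piece is a perfect derivative that directly yields the rational boundary term, and the second piece already has the form $(v'(y)-v'(y_j))\,(v(y)-v(y_j)\pm i\ep_j)^{-1-2\gamma_j}$, so its numerator vanishes to first order at the critical layer. The paper then lowers the exponent from $1+2\gamma_j$ to $2$ by a separate estimate (exploiting $|\zeta^{1-2\gamma_j}-1|\lesssim(1-2\gamma_j)|\log\zeta|$ together with the second-order vanishing of the auxiliary remainder $\mathrm{v}(y,y_j)$), and hands off the resulting $\ep_j\to 0$ Plemelj computation to Lemma~6.3 of \cite{WZZ18}. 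Your integration by parts against $1/v'(y)$ is a different, equally valid reduction of the critical exponent; you pick up the same endpoint boundary terms, but the residual integrand $v''(y)/[(v'(y))^2(v(y)-v(y_0))]$ is \emph{not} in the lemma's target form, which is why you need a further integration by parts (with the delicate cancellation between the $\delta^{-1}$-divergent boundary contributions at $y_0\pm\delta$ and the $\delta^{-1}$-divergent part of $\int_{|y-y_0|>\delta}(v-v(y_0))^{-2}\,\d y$) that the paper's ``pre-optimized'' split avoids. In exchange, your direct model computation $\int_{-\delta}^{\delta}(s\pm i\tilde\ep_j)^{-2\gamma_j}\d s$ handles the \emph{joint} limit $(1-2\gamma_j,\tilde\ep_j)\to(0^+,0^+)$ in one shot, whereas the paper separates the two limits and defers to an external reference.

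Two small remarks. First, tracking signs: your displayed ``Plemelj expression'' $\mathrm{P.V.}\int\tfrac{v''\,\d y}{(v')^2(v-v(y_0))}\mp i\pi\tfrac{v''(y_0)}{(v'(y_0))^3}$ has dropped the overall $-1/(2\gamma_j)\to-1$ prefactor in front of the residual integral, which flips both the sign of the P.V.\ piece and of the $i\pi$ piece. This is, however, inconsequential for the present lemma, because $\P(y_0)=0$ forces $y_0\in[0,\vartheta_1]\cup[\vartheta_2,2]$, on which $v''(y_0)=0$ by hypothesis \textbf{H}$v$; so the $\pm i\pi\,v''(y_0)/(v'(y_0))^2$ term in the statement is in fact zero and the limit is real. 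Second, your final collapse of the $1/v'(0)$ and $1/v'(2)$ boundary contributions into a single $1/v'(y_0)$ prefactor does indeed work: the secondary IBP reproduces those two endpoint terms with opposite sign, so they cancel against the first-step boundary terms and only the $1/v'(y_0)$-weighted terms (coming from the $1/v'(y_0)$ split of $\int\d y/(v-v(y_0))^2$) survive, which recombine into the rational term and the target P.V.\ integrand $(v'(y)-v'(y_0))/(v(y)-v(y_0))^2$. So the sketch is correct, just a notch more circuitous than the paper's; if written out carefully, it would produce a valid alternative proof that is self-contained and avoids the citation of \cite{WZZ18}.
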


\begin{proof}
We decompose
\begin{align*}
\int_0^2 \frac{\d y}{(v(y) - v(y_j) \pm i\ep_j)^{1+2\gamma_j}} &= \frac{1}{v'(y_j)}\int_0^2 \frac{v'(y)}{(v(y) - v(y_j) \pm i\ep_j)^{1+2\gamma_j}} \d y \\
&\quad- \frac{1}{v'(y_j)}\int_0^2 \frac{v'(y) - v'(y_j)}{(v(y) - v(y_j) \pm i\ep_j)^{1+2\gamma_j}} \d y.
\end{align*}
On one hand, we have
\begin{align*}
\frac{1}{v'(y_j)}\int_0^2 \frac{v'(y)}{(v(y) - v(y_j) \pm i\ep_j)^{1+2\gamma_j}} \d y = -\frac{1}{v'(y_j)}\frac{1}{2\gamma_j} (v(y) - v(y_j) \pm i\ep_j)^{-2\gamma_j}\Big|_{y=0}^{y=2}
\end{align*}
and we observe 
\begin{align*}
(v(y) -v(y_j) \pm i\ep_j)^{-2\gamma_j} = \frac{1}{v(y) - v(y_j) \pm i\ep_j}\left( 1 + (v(y) -v(y_j) \pm i\ep_j)^{1-2\gamma_j} -1 \right)
\end{align*}
so that for $y\neq y_0$ there holds 
\begin{align*}
\lim_{j\rightarrow \infty} (v(y) -v(y_j) \pm i\ep_j)^{-2\gamma_j} = \frac{1}{v(y) - v(y_0)}
\end{align*}
because $\left|  (v(y) -v(y_j) \pm i\ep_j)^{1-2\gamma_j} -1  \right| \lesssim (1-2\gamma_j) \left| \log( v(y) - v(y_j) \pm i\ep_j) \right|$. On the other hand, we claim that
\begin{align*}
\lim_{j\rightarrow \infty}\int_0^2 \left( v'(y) - v'(y_j) \right) \left( \frac{1}{(v(y) - v(y_j) \pm i\ep_j)^{1+2\gamma_j}} - \frac{1}{(v(y) - v(y_j) \pm i\ep_j)^2} \right) \d y =0.
\end{align*}
Indeed, let $\mathrm{v}(y,y_j):= v'(y) - v'(y_j) - \frac{v''(y_j)}{(v'(y_j))^2}v'(y)(v(y) - v(y_j))$. We note that $\mathrm{v}(y_j,y_j) = \partial_y \mathrm{v}(y_j,y_j) = 0$ and that $| \mathrm{v}(y,y_j) | \lesssim |y-y_j|^2$. Then,
\begin{align*}
\int_0^2 \mathrm{v}(y,y_j)& \left( \frac{1}{(v(y) - v(y_j) \pm i\ep_j)^{1+2\gamma_j}} - \frac{1}{(v(y) - v(y_j) \pm i\ep_j)^2} \right) \d y \\
&= \int_0^2 \frac{\mathrm{v}(y,y_j)}{(v(y) - v(y_j) \pm i\ep_j)^2} \left( (v(y) - v(y_j) \pm i\ep_j)^{1-2\gamma_j} -1 \right) \d y
\end{align*}
Since $\left|  (v(y) -v(y_j) \pm i\ep_j)^{1-2\gamma_j} -1  \right| \lesssim (1-2\gamma_j) \left| \log( v(y) - v(y_j) \pm i\ep_j) \right|$ and $\left| \log( v(y) - v(y_j) \pm i\ep_j) \right|$ is integrable uniformly in $y_j$ and $\ep_j>0$ we conclude that 
\begin{align*}
\lim_{j\rightarrow \infty} \int_0^2 \frac{\mathrm{v}(y,y_j)}{(v(y) - v(y_j) \pm i\ep_j)^2} \left( (v(y) - v(y_j) \pm i\ep_j)^{1-2\gamma_j} -1 \right) \d y = 0.
\end{align*}
Similarly, integrating by parts twice we now see that
\begin{align*}
\int_0^2 &\frac{v'(y)(v(y) - v(y_j))}{(v(y) - v(y_j)\pm i\ep_j)^{1+2\gamma_j}} \d y - \int_0^2 \frac{v'(y)(v(y) - v(y_j)}{(v(y) - v(y_j)\pm i\ep_j)^2} \d y \\
&=  (v(y) - v(y_j)) \left( \frac{1}{v(y)-v(y_j) \pm i\ep_j} - \frac{1}{2\gamma_j}\frac{1}{(v(y) - v(y_j) \pm i\ep_j)^{1+2\gamma_j}} \right) \Big|_{y=0}^{y=2} \\
&\quad + \frac{1}{2\gamma_j}\frac{(v(y) - v(y_j)\pm i\ep_j)^{1-2\gamma_j}-1}{1-2\gamma_j} - \log(v(y) - v(y_j) \pm i\ep_j) \Big|_{y=0}^{y=2} \\
&\quad - \int_0^2 \left( \frac{1}{2\gamma_j}\frac{(v(y) - v(y_j)\pm i\ep_j)^{1-2\gamma_j}-1}{1-2\gamma_j} - \log(v(y) - v(y_j) \pm i\ep_j) \right) \d y
\end{align*}
Further observing that for $\zeta\in C$ there holds
\begin{align*}
\frac{\zeta^{1-2\gamma_j}-1}{1-2\gamma_j} = \log(\zeta) + (1-2\gamma_j)\log^2(\zeta)\int_0^2r\int_0^2 e^{(1-2\gamma_j)r s \log(\xi)} \d s \d r, 
\end{align*}
since $y_j$ is bounded away from $0$ and $2$ uniformly and $v$ is monotone, we conclude that 
\begin{align*}
\lim_{j\rightarrow \infty} \int_0^2 &\frac{v'(y)(v(y) - v(y_j)}{(v(y) - v(y_j)\pm i\ep_j)^{1+2\gamma_j}} \d y - \int_0^2 \frac{v'(y)(v(y) - v(y_j)}{(v(y) - v(y_j)\pm i\ep_j)^2} \d y = 0
\end{align*}
and the claim follows. Therefore,
\begin{align*}
\lim_{j\rightarrow \infty} \frac{1}{v'(y_j)}\int_0^2 \frac{v'(y) - v'(y_j)}{(v(y) - v(y_j) \pm i\ep_j)^{1+2\gamma_j}} \d y &= \lim_{j\rightarrow \infty}\frac{1}{v'(y_j)}\int_0^2 \frac{v'(y) - v'(y_j)}{(v(y) - v(y_j) \pm i\ep_j)^{2}} \d y.
\end{align*}
With this and Lemma 6.3 of \cite{WZZ18} the proof is concluded.
\end{proof}

\begin{lemma}\label{lemma:firstconvergencelemma}
In the setting of Proposition \ref{prop:limitWronkianFragile}, there holds
\begin{align*}
\lim_{j\rightarrow \infty} \frac{1}{2\gamma_j}\int_0^2 \partial_y\left( \frac{1}{v'(y)} \right) \frac{  \frac{1}{\phi_j^2(y) } -1}{(v(y) - v(y_j) \pm i\ep_j)^{2\gamma_j}} \d y = \int_0^2 \partial_y\left( \frac{1}{v'(y)} \right) \frac{  \frac{1}{\phi_0^2(y) } -1}{v(y) - v(y_0)} \d y
\end{align*}
\end{lemma}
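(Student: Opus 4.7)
My plan is to establish the result via the Dominated Convergence Theorem, which requires pointwise convergence of the integrand together with a uniform integrable dominating function.

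\textbf{Step 1 (Pointwise convergence a.e.).} For fixed $y\in [0,2]\setminus\lbrace y_0\rbrace$, I use Proposition \ref{prop:continuityphi} to pass $\phi_{\sr,1,k,\ep_j}^\pm(y,y_j)\to \phi_{\sr,1,k,0}(y,y_0)$, combined with Lemma \ref{lemma:fragilephir1knonzero} which ensures $|\phi_j(y)|\geq \frac12$ uniformly in $j$, so that $\frac{1}{\phi_j^2(y)}-1 \to \frac{1}{\phi_0^2(y)}-1$. Since $\cJ(y_j)\to \cJ(y_0)=0$, we have $\gamma_j\to \frac12$, and writing
\begin{align*}
\frac{1}{2\gamma_j(v(y)-v(y_j)\pm i\ep_j)^{2\gamma_j}} = \frac{1}{2\gamma_j(v(y)-v(y_j)\pm i\ep_j)}\cdot (v(y)-v(y_j)\pm i\ep_j)^{1-2\gamma_j},
\end{align*}
together with the bound $\left|(v(y)-v(y_j)\pm i\ep_j)^{1-2\gamma_j} - 1\right|\lesssim (1-2\gamma_j)\left|\log(v(y)-v(y_j)\pm i\ep_j)\right|$, yields that this factor converges to $\tfrac{1}{v(y)-v(y_0)}$ pointwise for $y\neq y_0$.

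\textbf{Step 2 (Uniform integrable dominating function).} The crux is the pointwise bound $|\phi_j(y)-1|\lesssim_k |y-y_j|$ uniformly in $j$. Starting from the fixed point identity \eqref{eq:fixedpointphisigma}, I write
\begin{align*}
\phi_j(y) - 1 = k^2 \int_{y_j}^y \cT_{\sr,1,k,\ep_j}^\pm \phi_j(z,y_j)\, \d z.
\end{align*}
The mapping property of $\cT_{\sr,1,k,\ep_j}^\pm$ in Lemma \ref{lemma:cTmapsYtoY}, together with the uniform bound $\|\phi_j\|_Y \leq C_\sr$ from Proposition \ref{prop:existencesolphisigma}, yields $|\cT_{\sr,1,k,\ep_j}^\pm \phi_j(z,y_j)|\lesssim_k \cosh(A(z-y_j))\lesssim_k 1$ for $z\in[0,2]$, hence $|\phi_j(y)-1|\lesssim_k |y-y_j|$. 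Combined with $|\phi_j(y)|\geq \frac12$ from Lemma \ref{lemma:fragilephir1knonzero}, this gives $\left|\tfrac{1}{\phi_j^2(y)}-1\right|\lesssim_k |y-y_j|$ uniformly. Since $v$ is monotone with $v'\geq c_0$, we also have $|v(y)-v(y_j)\pm i\ep_j|^{-2\gamma_j}\leq (c_0|y-y_j|)^{-2\gamma_j}$, and since $\gamma_j\in (0,\tfrac12]$ and $\tfrac{1}{2\gamma_j}$ is bounded uniformly for $j$ large, we obtain
\begin{align*}
\left|\frac{1}{2\gamma_j}\partial_y\!\left(\frac{1}{v'(y)}\right)\frac{\tfrac{1}{\phi_j^2(y)}-1}{(v(y)-v(y_j)\pm i\ep_j)^{2\gamma_j}}\right|\lesssim_k |y-y_j|^{1-2\gamma_j}\leq 2,
\end{align*}
which is a uniform integrable dominating function on $[0,2]$.

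\textbf{Step 3 (Conclude).} Steps 1 and 2 allow us to apply the Dominated Convergence Theorem, which yields the claimed limit.

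The main obstacle is the uniform gain $|\phi_j(y)-1|\lesssim_k |y-y_j|$ from Step 2: a naive uniform bound $|\phi_j(y)|\lesssim 1$ would not suffice, because $|v(y)-v(y_j)\pm i\ep_j|^{-2\gamma_j}$ approaches the non-integrable kernel $|v(y)-v(y_0)|^{-1}$ as $\gamma_j\to \tfrac12$. The vanishing of $\phi_j(y)-1$ at $y=y_j$, extracted from the fixed point structure and the $Y$-norm estimates of Section \ref{sec:homTG}, is exactly what tames this approaching singularity and makes the DCT applicable.
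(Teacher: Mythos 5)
Your proof is correct, and you take a genuinely different and more direct route than the paper. The paper's proof of this lemma integrates by parts repeatedly, trading the near-singular kernel $(v(y)-v(y_j)\pm i\ep_j)^{-2\gamma_j}$ for the logarithmic quantity $\frac{\zeta^{1-2\gamma_j}-1}{1-2\gamma_j}=\log(\zeta)+(1-2\gamma_j)\log^{2}(\zeta)\,\widetilde{\Q}_{\frac12-\gamma_j}(\zeta)$ (cf.\ \eqref{eq:gammajexpansion}) and relying on further integrations by parts and the identity \eqref{eq:pyphij} before Dominated Convergence can be applied. You instead apply Dominated Convergence once, up front; the step that makes this possible is exactly the one you single out, namely the uniform pointwise vanishing $|\phi_j(y)-1|\lesssim_k|y-y_j|$, which the paper records (with a constant independent of $y_0\in[\vartheta_1,\vartheta_2]$ and $\ep>0$) in the unnamed corollary that follows Proposition~\ref{prop:existencesolphisigma}. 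Combined with the lower bound $|\phi_j|\geq\tfrac12$ from Lemma~\ref{lemma:fragilephir1knonzero}, this yields $\bigl|\tfrac{1}{\phi_j^2}-1\bigr|\lesssim_k|y-y_j|$ and the uniform constant dominator that closes the argument. What your route buys is brevity and transparency for the present lemma; what the paper's route buys is a modular integration-by-parts protocol that is reused essentially unchanged for the companion Lemma~\ref{lemma:secondconvergencelemma}, where the analogue of your dominating function would require an additional first-order vanishing estimate for $\phi_j'$ at $y=y_j$ that is not recorded in the section.
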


\begin{proof}
We integrate by parts again:
\begin{align*}
\int_0^2 \partial_y\left( \frac{1}{v'(y)} \right) &(v(y) - v(y_j) \pm i\ep_j)^{-2\gamma_j} \left( \frac{1}{\phi_j^2(y) } -1 \right)  \d y \\
&= \partial_y \left( \frac{1}{v'(y)} \right) \frac{(v(y) - v(y_j) \pm i\ep_j)^{1-2\gamma_j}-1}{1-2\gamma_j} \left( \frac{1}{\phi_j^2(y) } -1 \right)  \Big|_{y=0}^{y=2} \\
&\quad - \int_0^2 \partial_y \left(  \partial_y \left( \frac{1}{v'(y)} \right) \left( \frac{1}{\phi_j^2(y) } -1 \right) \right) \frac{(v(y) - v(y_j) \pm i\ep_j)^{1-2\gamma_j}-1}{1-2\gamma_j} \d y
\end{align*}
As before, the solid boundary term converges, while now for the integral term we recall that
\begin{equation}\label{eq:gammajexpansion}
 \frac{(v(y) - v(y_j) \pm i\ep_j)^{1-2\gamma_j}-1}{1-2\gamma_j} = \log( v(y) - v(y_j) \pm i\ep_j) + (1-2\gamma_j)\log^2 ( v(y) - v(y_j) \pm i\ep_j) \widetilde\Q_{\frac12 -\gamma_j}(\xi),
\end{equation}
for $\xi= v(y) - v(y_j) \pm i\ep_j$ and with $\left| \widetilde\Q_{\frac12-\gamma}(\xi) \right| \lesssim 1$. Then, since $\left| 1 - \phi_j^2(y) \right| \lesssim |y-y_j|\lesssim |v(y) - v(y_j)|$, the dominated convergence theorem shows that
\begin{align*}
\lim_{j\rightarrow \infty} \int_0^2 \partial_y^2 \left( \frac{1}{v'(y)} \right) &\left( \frac{1}{\phi_j^2(y) } -1 \right)  \frac{(v(y) - v(y_j) \pm i\ep_j)^{1-2\gamma_j}-1}{1-2\gamma_j} \d y \\
&= \int_0^2 \partial_y^2 \left( \frac{1}{v'(y)} \right) \left( \frac{1}{\phi_0^2(y) } -1 \right)  \log(v(y) - v(y_0) \pm i\ep_j) \d y.
\end{align*}
On the other hand,
\begin{align*}
\int_0^2    \partial_y \left( \frac{1}{v'(y)} \right) &\partial_y \left( \frac{1}{\phi_j^2(y) } -1  \right) \frac{(v(y) - v(y_j) \pm i\ep_j)^{1-2\gamma_j}-1}{1-2\gamma_j} \d y \\
&= -2\int_0^2 \left( \frac{1}{v'(y)} \right)' \frac{\phi_j'(y)}{\phi_j^3(y)} \log(v(y) - v(y_j) \pm i\ep_j) \d y \\
&\quad -2(1-2\gamma_j)\int_0^2 \left( \frac{1}{v'(y)} \right)' \frac{\phi_j'(y)}{\phi_j^3(y)} \log^2(v(y) - v(y_j) \pm i\ep_j)  \widetilde\Q_{\frac12-\gamma_j}(\xi) \d y
\end{align*}
and we readily observe that the second term vanishes. For the first one, since
\begin{equation}\label{eq:pyphij}
\phi_j'(y)=\cT_{\sr,1,j}\phi_j(y) =\frac{1}{(v(y) - v(y_j) \pm i\ep_j)^{1+2\gamma_j}}\int_{y_0}^y \mathrm{F}_{\sr,j}^\pm (z,y_0) (v(y) - v(y_j) \pm i\ep_j)^{2\gamma_j} \phi_j(z) \d z
\end{equation}
we integrate by parts again to obtain 
\begin{align*}
&\int_0^2 \left( \frac{1}{v'(y)} \right)'  \frac{\phi_j'(y)}{\phi_j^3(y)} \log(v(y) - v(y_j) \pm i\ep_j) \d y \\
&= -\frac{1}{2\gamma_j v'(y)}\left( \frac{1}{v'(y)} \right)' \frac{\log(v(y) - v(y_j) \pm i\ep_j)}{\phi_j^3(y)} \frac{\int_{y_0}^y \mathrm{F}_{\sr,j}(z,y_0) (v(y) - v(y_j) \pm i\ep_j)^{2\gamma_j}\phi_j(z) \d z}{(v(y) - v(y_j) \pm i\ep_j)^{2\gamma_j}}  \Big|_{y=0}^{y=2}  \\ 
&\quad + \frac{1}{2\gamma_j}\int_0^2 \left( \frac{1}{v'(y)} \right)' \frac{\cT_{\sr,1,j}\phi_j(y)}{\phi_j^{3}(y)} \d y  \\
&\quad + \frac{1}{2\gamma_j}\int_0^2 \frac{1}{v'(y)}\left( \frac{1}{v'(y)} \right)' \frac{\log (v(y) - v(y_j) \pm i\ep_j)}{\phi_j^2(y)}\mathrm{F}_{\sr,j}(y,y_0) \d y \\
&\quad + \frac{1}{2\gamma_j} \int_0^2 \partial_y \left( \frac{1}{v'(y) \phi_j^3(y)}\left( \frac{1}{v'(y)} \right)' \right) (v(y) - v(y_j)\pm i\ep_j)\log(v(y) - v(y_j) \pm i\ep_j)\cT_{\sr,1,j}\phi_j(y) \d y 
\end{align*}
Since $\Vert \cT_{\sr,1,j}\phi_j\Vert_Y \lesssim 1$ uniformly in $j\geq 1$, we readily obtain convergence of the first and third integral using the Dominated Convergence Theorem. As for the second integral, we now exploit the structure of $\mathrm{F}_{\sr,j}(y,y_0)$, 
\begin{align*}
\int_0^2 &\frac{1}{v'(y)}\left( \frac{1}{v'(y)} \right)' \frac{\log (v(y) - v(y_j) \pm i\ep_j)}{\phi_j^2(y)}\mathrm{F}_{\sr,j}(y,y_0)  \d y \\
&=\frac12\int_0^2 \partial_y \left(\frac{1}{v'(y)}\right)^2 \frac{(v(y) - v(y_j) \pm i\ep_j)\log(v(y) - v(y_j) \pm i\ep_j)}{\phi_j^2(y)} \d y \\
&\quad +\frac{1}{2k^2} \int_0^2 \partial_y \left(\frac{1}{v'(y)}\right)^2 \frac{\P(y) - \P(y_j) - \cJ(y_j)(v'(y)^2 -v'(y_j)^2)}{\phi_j^2(y)}\frac{\log(v(y) - v(y_j) \pm i\ep_j)}{v(y) - v(y_j) \pm i\ep_j)} \d y \\
&\quad +\frac{1-2\gamma_j}{4k^2}\int_0^2 \partial_y \left( \frac{1}{v'(y)} \right)^2 \frac{\log(v(y) - v(y_j) \pm i\ep_j)}{\phi_j^2(y)}v''(y) \d y.
\end{align*}
Clearly, the third term vanishes and we can use the Dominated Convergence Theorem on the first integral. For the second integral we shall integrate by parts once again and reach 
\begin{align*}
\int_0^2 &\partial_y \left(\frac{1}{v'(y)}\right)^2 \frac{\P(y) - \P(y_j) - \cJ(y_j)(v'(y)^2 -v'(y_j)^2)}{\phi_j^2(y)}\frac{\log(v(y) - v(y_j) \pm i\ep_j)}{v(y) - v(y_j) \pm i\ep_j} \d y \\
&=\frac{1}{6}\partial_y \left(\frac{1}{v'(y)} \right)^3  \frac{\P(y) - \P(y_j) - \cJ(y_j)(v'(y)^2 -v'(y_j)^2)}{\phi_j^2(y)}\log^2(v(y) - v(y_j) \pm i\ep_j) \Big|_{y=0}^{y=2} \\
&\quad - \frac{1}{6}\int_0^2 \partial_y \left( \frac{1}{\phi_j^2(y)}  \partial_y \left(\frac{1}{v'(y)} \right)^3 \right)  \left( \P(y) - \P(y_j) - \cJ(y_j)(v'(y)^2 -v'(y_j)^2) \right) \log^2(v(y) - v(y_j) \pm i\ep_j) \d y \\
&\quad - \frac{1}{6} \int_0^2 \partial_y \left(\frac{1}{v'(y)} \right)^3\frac{\P'(y) - 2\cJ(y_j)v'(y) v''(y)}{\phi_j^2(y)} \log^2(v(y) - v(y_j) \pm i\ep_j) \d y.
\end{align*}
Further observing that $\P'$, $\cJ$ are $C^1$ functions with $\P'(y_0)=\cJ(y_0)=0$, we can take the limits in all the above integrals using the Dominated Convergence Theorem. Integrating by parts backwards we reach the claim.
\end{proof}

\begin{lemma}\label{lemma:secondconvergencelemma}
In the setting of Proposition \ref{prop:limitWronkianFragile}, there holds
\begin{align*}
\lim_{j\rightarrow\infty}\frac{1}{\gamma_j}\int_0^2 \frac{1}{v'(y)}\frac{\phi_j'(y)}{\phi_j^3(y)} (v(y) - v(y_j) \pm i\ep_j)^{-2\gamma_j} \d y = 2\int_0^2	\frac{1}{v'(y)}\frac{\phi_0'(y)}{\phi_0^3(y)}\frac{\d y}{v(y) - v(y_0)}.
\end{align*}
\end{lemma}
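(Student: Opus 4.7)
The proof should follow the integration-by-parts and dominated-convergence scheme used in Lemmas~\ref{lemma:limitvelquotient}--\ref{lemma:firstconvergencelemma}. The underlying difficulty is that $\gamma_j\to 1/2$, so that $(v(y)-v(y_j)\pm i\ep_j)^{-2\gamma_j}$ degenerates to the borderline non-integrable kernel $1/(v(y)-v(y_0))$; to close the estimates one must exploit two cancellations, namely $\phi_j'(y_j)=0$ for $\ep_j>0$ (from the defining integral representation \eqref{eq:pyphij}) and, in the limit, $F_{\sr,0}(y_0,y_0)=0$, the latter following from $\cJ(y_0)=\cJ'(y_0)=0$ under H$\mathrm{P}$ since $\mathrm{P}$ vanishes to order two at $\vartheta_1,\vartheta_2$, together with $1-2\gamma_0=0$.

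First I would introduce the primitive $\Theta_j(y):=\big[(v(y)-v(y_j)\pm i\ep_j)^{1-2\gamma_j}-1\big]/(1-2\gamma_j)$, which satisfies $\Theta_j'(y)=v'(y)(v(y)-v(y_j)\pm i\ep_j)^{-2\gamma_j}$ and, by the expansion \eqref{eq:gammajexpansion}, is uniformly dominated by $1+|\log|v(y)-v(y_j)\pm i\ep_j||$ and converges pointwise to $\log(v(y)-v(y_0))$ for $y\neq y_0$. Integrating by parts yields
\begin{align*}
\frac{1}{\gamma_j}\int_0^2 \frac{\phi_j'(y)\,(v(y)-v(y_j)\pm i\ep_j)^{-2\gamma_j}}{v'(y)\phi_j^3(y)}\,\d y
&=\frac{1}{\gamma_j}\left[\frac{\phi_j'(y)\,\Theta_j(y)}{(v'(y))^2\phi_j^3(y)}\right]_{y=0}^{y=2} \\
&\quad-\frac{1}{\gamma_j}\int_0^2 \Theta_j(y)\,\partial_y\!\left(\frac{\phi_j'(y)}{(v'(y))^2\phi_j^3(y)}\right)\d y.
\end{align*}
The boundary term converges by Proposition~\ref{prop:continuityphi} together with Lemma~\ref{lemma:fragilephir1knonzero}, using that $y_0\in\{\vartheta_1,\vartheta_2\}\subset(0,2)$, so that $\log(v(0)-v(y_0))$ and $\log(v(2)-v(y_0))$ are finite.

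For the interior integral I would expand $\partial_y(\phi_j'/[(v')^2\phi_j^3])$ and use the ODE \eqref{eq:homTGphir1} to substitute $\phi_j''=[k^2F_{\sr,j}(y,y_j)\phi_j-(1+2\gamma_j)v'\phi_j']/(v(y)-v(y_j)\pm i\ep_j)$. All contributions except the single summand carrying the factor $F_{\sr,j}(y,y_j)/(v(y)-v(y_j)\pm i\ep_j)$ are uniformly $L^p$-dominated (using $\|\phi_j\|_Y,\|\phi_j'\|_Y\lesssim 1$ and the log-type bound on $\Theta_j$) and pass to the limit by dominated convergence. For the remaining summand, I would perform one further integration by parts using the higher-order primitive produced by the expansion \eqref{eq:gammajexpansion}---the same device applied at the end of Lemma~\ref{lemma:firstconvergencelemma}---which trades one $\log$ factor for $\log^2$ and gains one extra power of $(v-v_j\pm i\ep_j)$; the resulting integrand is then dominated by a log-squared function of bounded total mass, and dominated convergence applies. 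Taking $j\to\infty$, noting $1/\gamma_j\to 2$, and reversing the chain of integration by parts now with $\log(v(y)-v(y_0))$ in place of $\Theta_j$ and $v'(y)/(v(y)-v(y_0))$ in place of $\Theta_j'(y)$ produces the claimed identity.

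The hard part is controlling the $\log\cdot(v-v_j)^{-1}$ piece that appears after substituting the ODE: it closes only because, in the limit, $F_{\sr,0}(y,y_0)/(v(y)-v(y_0))=1-(v'(y))^2k^{-2}\cJ(y)/(v(y)-v(y_0))^2$ is bounded (as $\cJ$ vanishes quadratically at $y_0$ under H$\mathrm{P}$), and the extra integration by parts described above gains the needed additional factor of $(y-y_j)$ to secure a uniform $L^1$ dominating function.
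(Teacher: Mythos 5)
There is a genuine gap at the step ``perform one further integration by parts\ldots and dominated convergence applies.'' Your first integration by parts replaces $v'(y)\xi_j^{-2\gamma_j}$ (where $\xi_j = v(y)-v(y_j)\pm i\ep_j$) by the primitive $\Theta_j=\bigl[\xi_j^{1-2\gamma_j}-1\bigr]/(1-2\gamma_j)$, which is of logarithmic size as $\gamma_j\to\tfrac12$. After substituting the ODE, the worst residual term is
$\Theta_j\,\bigl(\cJ(y)-\cJ(y_j)\bigr)/\xi_j^2$, which near $y=y_j$ is of order $\cJ'(y_j)\,|\log\xi_j|/|\xi_j|$; its integral over $[0,2]$ is of order $|\cJ'(y_j)|\,\log^2(1/\ep_j)$. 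Since $|\cJ'(y_j)|\sim|y_j-y_0|$ while $\ep_j\to 0^+$ and $y_j\to y_0$ are \emph{independent} sequences, there is no uniform $L^1$ dominating function, and the Dominated Convergence Theorem cannot be applied as you describe. The cancellation you invoke, namely that $F_{\sr,0}(y,y_0)/(v(y)-v(y_0))$ is bounded because $\cJ$ vanishes quadratically at $y_0$, is a statement about the \emph{limit} $j\to\infty$; it does not produce the uniform-in-$j$ integrable majorant that DCT requires. The same problem afflicts the $(1-2\gamma_j)v''(y)/\xi_j\cdot\Theta_j$ piece.

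The route the paper takes sidesteps this precisely by \emph{not} integrating by parts against $\Theta_j$: it first splits off $\phi_j^{-3}-1$ and then substitutes the integral representation $\phi_j'(y)=\xi_j^{-1-2\gamma_j}\int_{y_j}^{y}F_{\sr,j}\,\xi_j^{2\gamma_j}\phi_j\,\d z$, so that the subsequent integration by parts pairs the pure-power primitive $\xi_j^{-4\gamma_j}$ against a factor that vanishes like $\xi_j^{1+2\gamma_j}$ at $y_j$. Schematically, the extra $\xi_j^{2\gamma_j}$ carried inside the integral softens the $F_{\sr}/\xi_j$ singularity by roughly one power; the residual term is $(\cJ(y)-\cJ(y_j))\,\xi_j^{-1-2\gamma_j}$ \emph{without} the logarithmic $\Theta_j$ factor, and the paper then needs \emph{three} further careful integrations by parts (on $\cJ-\cJ_j$, then on $\cJ'$, then separately on the $v''$ piece), each exploiting the quadratic vanishing of $\cJ$ and $v''$ at $\vartheta_n$ through the expansion \eqref{eq:gammajexpansion}. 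Your single further integration by parts does not recover that power gain: integrating the $\xi_j^{-2}$ factor returns $(\cJ-\cJ_j)\xi_j^{-1-2\gamma_j}$ plus a new $\Theta_j\,\cJ'(y)/\xi_j$ term, both still requiring more work; integrating instead against the $\log^2$-type primitive of $\Theta_j/\xi_j$ (the natural use of \eqref{eq:gammajexpansion}) makes the singularity strictly worse. The conceptual skeleton (integration by parts plus DCT, using the quadratic vanishing at $\vartheta_n$) is right, but the specific decomposition you chose does not close with the number of steps you propose.
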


\begin{proof}
Firstly, we split
\begin{align*}
\int_0^2 \frac{1}{v'(y)}\frac{\phi_j'(y)}{\phi_j^3(y)} (v(y) - v(y_j) \pm i\ep_j)^{-2\gamma_j} \d y &= \int_0^2 \frac{\phi_j'(y)}{v'(y)}\left( \frac{1}{\phi_j^3(y)} - 1 \right) (v(y) - v(y_j) \pm i\ep_j)^{-2\gamma_j} \d y \\
&\quad + \int_0^2 \frac{\phi_j'(y)}{v'(y)}(v(y) - v(y_j) \pm i\ep_j)^{-2\gamma_j} \d y .
\end{align*}
For the first integral, we use the Dominated Convergence Theorem, since $\Vert \phi_j' \Vert_{L^\infty}\lesssim 1$ uniformly in $j\geq 1$, $|\phi_j|$ is bounded from below and from above uniformly in $j\geq 1$, with $|\phi_j(y) -1 |\lesssim |y-y_j|$ and 
\begin{align*}
|y-y_j|^{1-2\gamma_j}\lesssim 2^{1-2\gamma_j}\lesssim 1
\end{align*}
since $\zeta^{1-2\gamma_j}$ is increasing in $\zeta$. Hence, we obtain
\begin{align*}
\lim_{j\rightarrow \infty}\int_0^2 \frac{\phi_j'(y)}{v'(y)}\left( \frac{1}{\phi_j^3(y)} - 1 \right) (v(y) - v(y_j) \pm i\ep_j)^{-2\gamma_j} \d y = \int_0^2 \frac{\phi_0'(y)}{v'(y)}\frac{ \frac{1}{\phi_0^3(y)} - 1}{v(y) - v(y_0)} \d y.
\end{align*}
For the second integral, using \eqref{eq:pyphij} and integrating by parts, we have
\begin{align*}
\int_0^2 \frac{\phi_j'(y)}{v'(y)} &(v(y) - v(y_j) \pm i\ep_j)^{-2\gamma_j} \d y \\
&= -\frac{1}{4\gamma_j} \left(\frac{1}{v'(y)}\right)^2 \frac{\int_{y_j}^y \mathrm{F}_{\sr,j}(z,y_j)(v(z)-v(y_j)\pm i\ep_j)^{2\gamma_j}\phi_j(z) \d z}{(v(y) - v(y_j) \pm i\ep_j)^{4\gamma_j}}\Big|_{y=0}^{y=2} \\
&\quad + \frac{1}{4\gamma_j}\int_0^2 \partial_y\left(\frac{1}{v'(y)}\right)^2 \frac{\int_{y_j}^y \mathrm{F}_{\sr,j}(z,y_j)(v(z)-v(y_j)\pm i\ep_j)^{2\gamma_j}\phi_j(z) \d z}{(v(y) - v(y_j) \pm i\ep_j)^{4\gamma_j}} \d y \\
&\quad + \frac{1}{4\gamma_j}\int_0^2 \left( \frac{1}{v'(y)} \right)^2 \mathrm{F}_{\sr, j}(y,y_j)\frac{\phi_j(y)}{(v(y) - v(y_j) \pm i\ep_j)^{2\gamma_j}} \d y.
\end{align*}
The convergence of the boundary term is immediate, while that of the first integral follows from the Dominated Convergence Theorem. Thus, we focus on the second integral, which we can further write as
\begin{align*}
\int_0^2 \left( \frac{1}{v'(y)} \right)^2 \frac{\mathrm{F}_{\sr, j}(y,y_j)\phi_j(y)}{(v(y) - v(y_j) \pm i\ep_j)^{2\gamma_j}} \d y &= \int_0^2 \left( \frac{1}{v'(y)}\right)^2 (v(y) - v(y_j) \pm i\ep_j)^{1-2\gamma_j}\phi_j(y) \d y \\
&\quad - \frac{1}{k^2}\int_0^2 \frac{\cJ(y) - \cJ(y_j)}{(v(y) - v(y_j) \pm i\ep_j)^{1+2\gamma_j}}\phi_j(y) \d y \\
&\quad + \frac{1-2\gamma_j}{k^2}\int_0^2 \frac{v''(y)}{(v'(y))^2}\frac{\phi_j(y)}{(v(y) - v(y_j) \pm i\ep_j)^{2\gamma_j} } \d y.
\end{align*}
The first contribution converges thanks to the Dominated Convergence Theorem, while the third contribution vanishes in the limit. Indeed, 
\begin{align*}
\frac{1-2\gamma_j}{k^2}\int_0^2 \frac{v''(y)}{(v'(y))^2}\frac{\phi_j(y)}{(v(y) - v(y_j) \pm i\ep_j)^{2\gamma_j} } \d y &= \frac{v''(y)\phi_j(y)}{k^2(v'(y))^3}(v(y) - v(y_j) \pm i\ep_j)^{1-2\gamma_j}\Big|_{y=0}^{y=2} \\
&\quad - \int_0^2 \left( \frac{v''(y)\phi_j(y)}{k^2(v'(y))^3} \right)' (v(y) - v(y_j) \pm i\ep_j)^{1-2\gamma_j} \d y
\end{align*}
and we note that $(v(y) - v(y_j) \pm i\ep_j)^{1-2\gamma_j}\rightarrow 1$ weakly in $L^1_y$ and pointwise for all $y\neq y_0$. Concerning the second contribution, we integrate by parts to find
\begin{align*}
\int_0^2 \frac{\cJ(y) - \cJ(y_j)}{(v(y) - v(y_j) \pm i\ep_j)^{1+2\gamma_j}}\phi_j(y) \d y &= -\frac{1}{2\gamma_j}\frac{\cJ(y) - \cJ(y_j)}{(v(y) - v(y_j) \pm i\ep_j^{2\gamma_j}}\frac{\phi_j(y)}{v'(y)}\Big|_{y=0}^{y=2} \\
&\quad + \frac{1}{2\gamma_j} \int_0^2 \left( \frac{\phi_j(y)}{v'(y)}\right)'\frac{\cJ(y) - \cJ(y_j)}{(v(y) - v(y_j) \pm i\ep_j)^{2\gamma_j}} \d y \\
&\quad + \frac{1}{2\gamma_j} \int_0^2  \frac{\phi_j(y)}{v'(y)} \frac{\cJ'(y)}{(v(y) - v(y_j) \pm i\ep_j)^{2\gamma_j}} \d y.
\end{align*}
The convergence of the first two terms being obvious (using the Dominated Convergence Theorem for the integral), we focus on the third term, for which we integrate by parts again. Appealing to \eqref{eq:gammajexpansion}, we see that
\begin{align*}
\int_0^2  \frac{\phi_j(y)}{v'(y)} &\frac{\cJ'(y)}{(v(y) - v(y_j) \pm i\ep_j)^{2\gamma_j}} \d y \\
&= \frac{\phi_j(y) \cJ'(y)}{(v'(y))^2}\frac{(v(y) - v(y_j) \pm i\ep_j)^{1-2\gamma_j} - 1}{1-2\gamma_j} \Big|_{y=0}^{y=2} \\
&\quad - \int_0^2 \partial_y\left( \frac{\phi_j(y) \cJ'(y)}{(v'(y))^2} \right) \log( v(y) - v(y_j) \pm i\ep_j) \d y \\
&\quad - (1-2\gamma_j)\int_0^2 \partial_y\left( \frac{\phi_j(y) \cJ'(y)}{(v'(y))^2} \right) \mathcal{Q}_{2,\gamma_j}(\xi_j)\log^2( v(y) - v(y_j) \pm i\ep_j) \d y, 
\end{align*}
where we denote $\xi_j=v(y) - v(y_j) \pm i\ep_j$. The convergence of the first term is immediate, while the third term vanishes in the limit. To prove the convergence of the second term we argue as in the proof of Lemma \ref{lemma:firstconvergencelemma}, we omit the routine details.
\end{proof}

\subsection{In-homogeneous solutions in the fragile regime}
Next, we address the convergence of $h_{k,\ep_j}^\pm(y,y_j)$. To ease notation, we denote 
\begin{align*}
\phi_{\sr,j}(y) := (v(y) - v(y_j) \pm i\ep_j)^{\frac12+\gamma_j}\phi_j(y)
\end{align*} 
where we recall that $\phi_j(y)=\phi_{\sr,1,k,\ep_j}^\pm(y,y_j)$.

\begin{lemma}\label{lemma:vanishingfirstinhomsol}
Let $k\geq 1$, $y_j\rightarrow y_0$ with $\P(y_0)=0$ and $\ep_j\rightarrow 0^+$ for $j\rightarrow \infty$. Let $\mathrm{g}_j:=-\textsc{E}_{k_j,\ep_j}^\pm g_j$ and $g_j\in Z_k$ with $\Vert g_j \Vert_{Z_k}\rightarrow 0$ as $j\rightarrow \infty$. Then,
\begin{align*}
\lim_{j\rightarrow \infty}  \left \Vert \phi_{\sr,j}(y) \int_0^y \frac{1}{(\phi_{\sr,j}(z))^2} \int_{y_j}^z \phi_{\sr,j}(s) \mathrm{g}_j(s) \d s \d z \right\Vert_{Z_k} = 0,
\end{align*}
In particular, 
\begin{align*}
\lim_{j\rightarrow \infty} \int_0^2 \frac{1}{(\phi_{\sr,j}(z))^2} \int_{y_j}^z \phi_{\sr,j}(s) \mathrm{g}_j(s) \d s \d z = 0
\end{align*}
since $\phi_{\sr,j}(2)$ is uniformly bounded away from 0.
\end{lemma}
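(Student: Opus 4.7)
The plan is to reformulate $H_j$ via Fubini into variation--of--parameters form,
\[
H_j(y) = -\phi_{\sr,j}(y)\int_0^y \psi_j(s)\mathrm{g}_j(s)\,ds + \psi_j(y)\int_{y_j}^y \phi_{\sr,j}(s)\mathrm{g}_j(s)\,ds,
\]
where $\psi_j(y) := \phi_{\sr,j}(y)\int_0^y dz/\phi_{\sr,j}(z)^2$ is the normalised second homogeneous solution, satisfying $\mathcal{W}[\phi_{\sr,j},\psi_j]=1$ and $\psi_j(0)=0$. This form isolates the Wronskian-like integral $\int_0^y dz/\phi_{\sr,j}(z)^2$ analysed in Proposition~\ref{prop:limitWronkianFragile} and makes $H_j$ transparently a particular solution of $\textsc{TG}^\pm_{k,\ep_j} H_j = \mathrm{g}_j$ with $H_j(0)=0$.

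Next I would control each ingredient. Lemma~\ref{lemma:fragilephir1knonzero} together with Proposition~\ref{prop:continuityphi} gives $\|\phi_{\sr,j}\|_{L^\infty(0,2)}\lesssim 1$, while Proposition~\ref{prop:limitWronkianFragile} (and a straightforward extension of its estimates from $y=2$ to general $y\in[0,2]$) controls the near-singular integral defining $\psi_j$. To treat $\mathrm{g}_j = -\scE_{k,\ep_j}^\pm g_j$, I would decompose $\scE^\pm$ into its three natural pieces, weighted respectively by $v''$, $\P(\cdot)-\P(y_j)$, and the $\P(y_j),\cJ(y_j)$-balanced combination, and plug in the $Z_k$-decomposition $g_j=g_{j,\sr}\xi^{1/2+\gamma_j}+g_{j,\s}\xi^{1/2-\gamma_j}$ near $y_j$. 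In the fragile regime $y_j\to y_0\in\{\vartheta_1,\vartheta_2\}$ with $\P(y_0)=0$, Hypothesis \textbf{H}$\P$ gives $\P(y_j),\,\P'(y_j),\,\cJ(y_j)\to 0$, so that every piece of $\phi_{\sr,j}\mathrm{g}_j$ (respectively $\psi_j\mathrm{g}_j$) carries a vanishing prefactor. The explicit factorisation $\phi_{\sr,j}(s)=(v(s)-v(y_j)\pm i\ep_j)^{1/2+\gamma_j}\phi_j(s)$ with $|\phi_j|\geq 1/2$ then reduces each contribution to a power-counting estimate in $|s-y_j|$, which shows that the integrands are locally integrable uniformly in $j$ and that the two source integrals vanish uniformly on $[0,2]$.

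Combining these bounds yields $\|H_j\|_{L^\infty(0,2)}\to 0$, and the sharper $Z_k$-decomposition is obtained by reading off the regular part from $\phi_{\sr,j}=(v(\cdot)-v(y_j)\pm i\ep_j)^{1/2+\gamma_j}\phi_j$ and the singular part from the analogous decomposition of $\psi_j$. The in-particular statement follows by evaluating at $y=2$ and dividing by $|\phi_{\sr,j}(2)|\geq\tfrac12|v(2)-v(y_j)|^{1/2+\gamma_j}$, uniformly bounded below. The main obstacle is the simultaneous degeneration, as $j\to\infty$, of the weight $|v(z)-v(y_j)\pm i\ep_j|^{-1-2\gamma_j}$ (approaching the non-integrable $|v-v_j|^{-2}$ since $\gamma_j\to 1/2$) and of the source $\mathrm{g}_j$: the compensating rates $\P'(y_j)=O(|y_j-y_0|)$ and $1-2\gamma_j=O(|y_j-y_0|^2)$ do not close the naive estimate, so the structural cancellations must be tracked through the fine bounds on $\phi_{\sr,1,k,\ep}^\pm$ in Section~\ref{sec:homTG} and the principal-value / $\pm i\pi$ identity of Proposition~\ref{prop:limitWronkianFragile}, which together ensure that the Wronskian integral $\int_0^y dz/\phi_{\sr,j}(z)^2$ remains non-degenerate and bounded in the fragile limit.
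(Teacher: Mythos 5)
Your variation-of-parameters reformulation of $H_j$ in terms of $\phi_{\sr,j}$ and $\psi_j(y)=\phi_{\sr,j}(y)\int_0^y\frac{dz}{\phi_{\sr,j}(z)^2}$ with unit Wronskian is correct, and it is a legitimate alternative to the paper's proof, which instead decomposes the original nested integral directly into pieces $h_{1,j},\dots,h_{14,j}$ by isolating the smooth corrections $\phi_{\sr,1,j}-1$ and $\phi_{\sr,1,j}^{-2}-1$ and integrating by parts on the residual pure-power terms. You also correctly identify the central difficulty: the source weight $|v(z)-v(y_j)\pm i\ep_j|^{-1-2\gamma_j}$ is at the threshold of non-integrability while the compensating vanishing rates are $v''(y_j),\,\P'(y_j)=O(|y_j-y_0|)$ against $1-2\gamma_j=O(|y_j-y_0|^2)$, so naive power counting does not close.

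However, the mechanism you invoke to resolve this is the wrong one. The principal-value / $\pm i\pi$ computation of Proposition~\ref{prop:limitWronkianFragile} ensures the \emph{total} Wronskian integral $\int_0^2\frac{dz}{\phi_{\sr,j}(z)^2}$ tends to a finite nonzero limit; it is used in Lemma~\ref{lemma:vanishingsecondinhomsol} for the second constituent of~\eqref{eq:defhsolTGr}, and it does \emph{not} save the term $\int_0^y\psi_j(s)\mathrm{g}_j(s)\,ds$ in your formula. There, with $\psi_j(s)\sim\alpha_j(v(s)-v(y_j)\pm i\ep_j)^{1/2-\gamma_j}$ near the critical layer and $|\mathrm{g}_j(s)|\lesssim|v(s)-v(y_j)\pm i\ep_j|^{-1/2-\gamma_j}\|g_j\|_{Z_k}$, any crude $L^\infty\times L^1$ estimate of the singular--singular pairing produces $\int|v(s)-v(y_j)\pm i\ep_j|^{-2\gamma_j}\,ds\sim\frac{1}{1-2\gamma_j}$, and the available prefactor $O(|y_j-y_0|)$ does not overcome the $O(|y_j-y_0|^{-2})$ blow-up; the boundedness of the Wronskian integral is irrelevant to this term. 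What closes the estimate in the paper is a different chain: one substitutes the $Z_k$ decomposition $g_j=g_{j,\sr}\xi^{1/2+\gamma_j}+g_{j,\s}\xi^{1/2-\gamma_j}$, integrates by parts once more against the borderline kernel $\xi^{-2\gamma_j}$, and, crucially, chooses the \emph{regularized} primitive $\frac{\xi^{1-2\gamma_j}-1}{1-2\gamma_j}=\log(\xi)\,\Q_{\frac12-\gamma_j}(\xi)$ from Lemma~\ref{lemma:firstgammalog}. This is bounded (even square-integrable) uniformly as $\gamma_j\to\tfrac12$, and the resulting boundary and bulk contributions are controlled by the $L^\infty$ and $L^2$ norms of $g_{j,\sigma}$ and $\partial_z g_{j,\sigma}$, i.e.\ by $\|g_j\|_{Z_k}$. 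Without this step, and without the $W^{1,2}$ control on the decomposition coefficients that the $Z_k$ norm supplies, your plan has a genuine gap at precisely the place you flagged. A secondary caveat: the ``straightforward extension'' of Proposition~\ref{prop:limitWronkianFragile} from $y=2$ to general $y$ that you invoke to bound $\psi_j$ is not merely a restatement; the boundary term at $z=y$ carries the factor $(v(y)-v(y_j)\pm i\ep_j)^{-2\gamma_j}$ which, against $\phi_{\sr,j}(y)$, produces the genuinely singular $(v(y)-v(y_j)\pm i\ep_j)^{1/2-\gamma_j}$ part of $\psi_j$ -- one must track that structure explicitly rather than simply appeal to $\psi_j$ being bounded, both to close the integration-by-parts argument above and to verify the full $Z_k$ (as opposed to merely $L^\infty$) conclusion.
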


\begin{proof}
We decompose
\begin{align*}
\phi_{\sr,j}(y)\int_0^y \frac{1}{(\phi_{\sr,j}(z))^2} &\int_{y_j}^z \phi_{\sr,j}(s) \mathrm{g}_j(s) \d s \d z \\
&= \phi_{\sr,j}(y)\int_0^y \frac{1}{(\phi_{\sr,j}(z))^2}\int_{y_j}^z (v(s) - v(y_j) \pm i\ep_j)^{\frac12+\gamma_j}\left( \phi_{\sr,1,j}(s) -1 \right)  \mathrm{g}_j(s) \d s \d z \\
&\quad + \phi_{\sr,j}(y)\int_0^y \frac{\int_{y_j}^z (v(s) - v(y_j) \pm i\ep_j)^{\frac12+\gamma_j} \mathrm{g}_j(s) \d s }{(v(z) - v(y_j)\pm i\ep_j)^{1+2\gamma_j} } \left( \frac{1}{\phi_{\sr,1,j}^2(z)} - 1 \right) \d z \\
&\quad + \phi_{\sr,j}(y)\int_0^y \frac{\int_{y_j}^z (v(s) - v(y_j) \pm i\ep_j)^{\frac12+\gamma_j} \mathrm{g}_j(s) \d s }{(v(z) - v(y_j)\pm i\ep_j)^{1+2\gamma_j} }  \d z \\
&= \sum_{n=1}^3h_{n,j}(y) 
\end{align*}
We next recall that $\mathrm{g}_j:=-\textsc{E}_{k_j,\ep_j}^\pm g_j$ and $g_j\in Z_k$ with $\Vert g_j \Vert_{Z_k}\rightarrow 0$ as $j\rightarrow \infty$. Moreover, we have from \eqref{eq:deferroroperator} that
\begin{align*}
\left| \textsc{E}_{k_j,\ep_j}^\pm(z) \right| \lesssim \frac{1}{|v(z) -v(y_j) \pm i\ep_j|}
\end{align*}
and with \eqref{eq:defbB} and  \eqref{eq:boundbB} we conclude that
\begin{equation}\label{eq:rmrjbound}
\left| \mathrm{g}_j(y) \right| \lesssim  |v(y)-v(y_j)\pm i\ep_{j}|^{-\frac12-\gamma_j} \Vert g_j \Vert_{Z_k}.
\end{equation}
for all $y\in (0,2)$, since now $k\geq 1$ is fixed. Hence, thanks to Lemma \ref{lemma:fragilephir1knonzero}, we deduce
\begin{align*}
\left \Vert \frac{1}{(\phi_{\sr,j}(z))^2}\int_{y_j}^z (v(s) - v(y_j) \pm i\ep_j)^{\frac12+\gamma_j}\left( \phi_{\sr,1,j}(s) -1 \right)  \mathrm{g}_j(s) \d s \right \Vert_{L^2_z} \lesssim \Vert g_j \Vert_{Z_k}
\end{align*}
and thus $ \left \Vert h_{1,j}(y)  \right \Vert_{Z_k} \lesssim \Vert g_j \Vert_{Z_k}$. Similarly, 
\begin{align*}
\left \Vert \frac{\int_{y_j}^z (v(s) - v(y_j) \pm i\ep_j)^{\frac12+\gamma_j} \mathrm{g}_j(s) \d s }{(v(z) - v(y_j)\pm i\ep_j)^{1+2\gamma_j} } \left( \frac{1}{\phi_{\sr,1,j}^2(z)} - 1 \right) \right \Vert_{L^2_z} \lesssim \Vert g_j \Vert_{Z_k}
\end{align*}
so that $\left \Vert h_{2,j} \right\Vert_{Z_k} \lesssim \Vert g_j \Vert_{Z_k}$. For the last term, we integrate by parts and obtain
\begin{align*}
h_{3,j}(y) &= -\frac{\phi_{\sr,j}(y)}{2\gamma_j v'(z)} \frac{\int_{y_j}^z (v(s) - v(y_j) \pm i\ep_j)^{\frac12+\gamma_j} \mathrm{g}_j(s) \d s }{(v(z) - v(y_j)\pm i\ep_j)^{2\gamma_j} } \Big|_{z=0}^{z=y} \\
&\quad + \frac{\phi_{\sr,j}(y)}{2\gamma_j}\int_0^y \left( \frac{1}{v'(z)} \right)' \frac{\int_{y_j}^z (v(s) - v(y_j) \pm i\ep_j)^{\frac12+\gamma_j} \mathrm{g}_j(s) \d s }{(v(z) - v(y_j)\pm i\ep_j)^{2\gamma_j} }  \d z \\
&\quad + \frac{\phi_{\sr,j}(y)}{2\gamma_j} \int_0^y (v(z) - v(y_j) \pm i\ep_j)^{\frac12-\gamma_j} \frac{\mathrm{g}_j(z)}{v'(z)} \d z \\
&= h_{4,j}(y) + h_{5,j}(y) + h_{6,j}(y) + h_{7,j}(y)
\end{align*}
We further note that
\begin{align*}
h_{4,j}(y) = \left( \frac{\phi_j(y)}{2\gamma_j v'(y)}\int_{y_j}^y (v(z) - v(y_j) \pm i\ep_j)^{\frac12+\gamma_j} \mathrm{g}_j(z) \d z \right) (v(y) - v(y) \pm i\ep_j)^{\frac12-\gamma_j}
\end{align*}
and since
\begin{align*}
\left \Vert \frac{\phi_j(y)}{2\gamma_j v'(y)}\int_{y_j}^y (v(z) - v(y_j) \pm i\ep_j)^{\frac12+\gamma_j} \mathrm{g}_j(z) \d z \right \Vert_{L^\infty(I_3(y_j)) \cap \dot{H}^1(I_3(y_j))} \lesssim \Vert g_j \Vert_{Z_k}
\end{align*}
we deduce that $\left \Vert h_{4,j}(y) \right \Vert_{Z_k} \lesssim \Vert g_j \Vert_{Z_k}$. Likewise, 
\begin{align*}
\left \Vert -\frac{\phi_j(y)}{2\gamma_j v'(0)} \frac{\int_{y_j}^0 (v(s) - v(y_j) \pm i\ep_j)^{\frac12+\gamma_j} \mathrm{g}_j(s) \d s }{(v(0) - v(y_j)\pm i\ep_j)^{2\gamma_j} } \right \Vert_{L^\infty(I_3(y_j)) \cap \dot{H}^1(I_3(y_j))} \lesssim \Vert g_j \Vert_{Z_k}
\end{align*}
and thus $\Vert h_{5,j} \Vert_{Z_k}\lesssim \Vert g_j \Vert_{Z_k}$. Next, regarding $h_{6,j}$ we note that
\begin{align*}
\left \Vert \left( \frac{1}{v'(z)} \right)' \frac{\int_{y_j}^z (v(s) - v(y_j) \pm i\ep_j)^{\frac12+\gamma_j} \mathrm{g}_j(s) \d s }{(v(z) - v(y_j)\pm i\ep_j)^{2\gamma_j} }  \right\Vert_{L^2_z} \lesssim \Vert g_j \Vert_{Z_k} 
\end{align*}
from which we deduce that $\Vert h_{6,j} \Vert \lesssim \Vert g_j \Vert_{Z_k}$. Lastly, since
\begin{align*}
\mathrm{g}_j(y) &= -\frac{v''(y)}{v(y) - v(y_j) \pm i\ep_j}g_j(y) + \frac{\mathrm{P}(y) - \mathrm{P}(y_j)}{(v(y) - v(y_j) \pm i\ep_j)^2}g_j(y) \\
&\quad+\left( \frac{\mathrm{P}(y_j)}{v(y) - v(y_j) \pm i\ep_j)^2} - \frac{\cJ(y_j)}{(y-y_j\pm i\ep_{0,j})^2}\right)g_j(y)
\end{align*}
we further decompose $h_{7,j}(y) = \sum_{n=8}^{10}h_{n,j}(y)$. Firstly,
\begin{align*}
h_{8,j}(y)&=  -\phi_{\sr,j}(y) \int_0^y \frac{v''(z)}{v'(z)}\frac{g_j(z)}{(v(z) - v(y_j) \pm i\ep_j)^{\frac12+\gamma_j}} \d z \\
&= -\phi_{\sr,j}(y)\int_0^y \frac{v''(z)}{v'(z)}\left( \frac{z-y_j\pm i\ep_{0,j}}{v(z) - v(y_j) \pm i\ep_j} \right) ^{\frac12+\gamma_j} (g_j)_{\sr}(z) \d z \\
&\quad -\phi_{\sr,j}(y)\int_0^y \frac{v''(z)}{v'(z)}\left( \frac{z-y_j\pm i\ep_{0,j}}{v(z) - v(y_j) \pm i\ep_j} \right) ^{\frac12-\gamma_j}  \frac{(g_j)_\s(z)}{(v(z) - v(y_j) \pm i\ep_j)^{2\gamma_j}} \d z \\
&= h_{11,j}(y) + h_{12,j}(y).
\end{align*}
Using \eqref{eq:boundbB} we see that $\Vert h_{11,j} \Vert_{Z_k} \lesssim \Vert g_j \Vert_{Z_k}$. On the other hand, for $h_{12,j}$ we integrate by parts once,
\begin{align*}
h_{12,j}(y) &=  \phi_{\sr,j}(y)\frac{v''(z)}{(v'(z))^2}  \left( B_{\ep_j}^\pm(z,y_j) \right) ^{\frac12-\gamma_j} (g_j)_\sr(z) \frac{\xi^{1-2\gamma_j} - 1 }{1-2\gamma_j} \Big|_{z=0}^{z=y} \\
&\quad + \phi_{\sr,j}(y)\int_0^y \partial_ z \left( \frac{v''(z)}{(v'(z))^2}  \left( B_{\ep_j}^\pm(z,y_j) \right) ^{\frac12-\gamma_j} (r_j)_\sr(z)  \right) \mathcal{Q}_{\frac12-\gamma_j}(\xi)\log(\xi) \d z \\
&= h_{13,j}(y) + h_{14,j}(y),
\end{align*}
where $\xi:=v(z) - v(y_j) \pm i\ep_j$. We now write
\begin{align*}
h_{13,j}(y) &= \left(h_{13,j}\right)_\s(y)(v(y) - v(y_j) \pm i\ep_j)^{\frac12-\gamma_j} , \\
\left(h_{13,j}\right)_\s(y) &:= \frac{v''(y)}{(v'(y))^2}\left( B_{\ep_j}^\pm(y,y_j) \right)^{\frac12-\gamma_j} \phi_j(y) (g_j)_\sr(y) (v(y) - v(y_j) \pm i\ep_j)^{2\gamma_j} \frac{\eta^{1-2\gamma_j}-1}{1-2\gamma_j}
\end{align*}
with $\eta = v(y) - v(y_j) \pm i\ep_j$ and
\begin{align*}
\Vert \left(h_{13,j}\right)_\s \Vert_{L^\infty(I_3(y_j))} + \Vert \partial_y \left(h_{13,j}\right)_\s(y) \Vert_{L^2(I_3(y_j))} \lesssim \Vert g_j \Vert_{Z_k}
\end{align*}
because $\Vert \zeta^{2\gamma_j-1}\log(\zeta)\Vert_{L^2}\lesssim 1$ uniformly for $\gamma_j\rightarrow \frac12$. Hence, $\Vert h_{13,j} \Vert_{Z_k} \lesssim \Vert g_j \Vert_{Z_k}$. Arguing as before, 
\begin{align*}
\left \Vert \partial_ z \left( \frac{v''(z)}{(v'(z))^2}  \left( B_{\ep_j}^\pm(z,y_j) \right) ^{\frac12-\gamma_j} (r_j)_\sr(z)  \right) \mathcal{Q}_{\frac12-\gamma_j}(\xi)\log(\xi) \right \Vert_{L^2_z(0,2)}\lesssim \Vert g_j \Vert_{Z_k}
\end{align*}
and thus $\Vert h_{14,j} \Vert_{Z_k} \lesssim \Vert g_j \Vert_{Z_k}$. This completes the $Z_k$ estimate of $h_{8,j}$. For $h_{9,j}(y)$, we indicate how to obtain analogous bounds. We write
\begin{align*}
\phi_{\sr,j}(y)\int_0^y &\frac{\P(z) - \P(y_j)}{(v(z) - v(y_j) \pm i\ep_j)^{\frac32+\gamma_j}}\frac{g_j(z)}{v'(z)} \d z \\
&= \phi_{\sr,j}(y)\int_0^y \frac{\P(z) - \P(y_j)}{v(z) - v(y_j) \pm i\ep_j}\left( \frac{z-y_j \pm i\ep_{0,j}}{v(z) - v(y_j) \pm i\ep_j} \right)^{\frac12+\gamma_j}\frac{(g_j)_\sr(z)}{v'(z)} \d z \\
&\quad + \phi_{\sr,j}(y)\int_0^y \left( \frac{z-y_j \pm i\ep_{0,j}}{v(z) - v(y_j) \pm i\ep_j} \right)^{\frac12-\gamma_j}\frac{(g_j)_\sr(z)}{v'(z)}\frac{\P(z) - \P(y_j)}{(v(z) - v(y_j) \pm i\ep_j)^{1+2\gamma_j}} \d z 
\end{align*} 
and as before we easily observe that the first integral is bounded in $Z_k$ by $\Vert g_j \Vert_{Z_k}$. For the second integral, we integrate by parts once more
\begin{align*}
\phi_{\sr,j}(y) &\int_0^y \left( \frac{z-y_j \pm i\ep_{0,j}}{v(z) - v(y_j) \pm i\ep_j} \right)^{\frac12-\gamma_j}\frac{(g_j)_\s(z)}{v'(z)}\frac{\P(z) - \P(y_j)}{(v(z) - v(y_j) \pm i\ep_j)^{1+2\gamma_j}} \d z \\
&= -\frac{\phi_{\sr,j}(y)}{2\gamma_j} \left( \frac{z-y_j \pm i\ep_{0,j}}{v(z) - v(y_j) \pm i\ep_j} \right)^{\frac12-\gamma_j}\frac{(g_j)_\s(z)}{(v'(z))^2}\frac{\P(z) - \P(y_j)}{(v(z) - v(y_j) \pm i\ep_j)^{2\gamma_j}} \Big|_{z=0}^{z=y} \\
&\quad + \frac{\phi_{\sr,j}(y)}{2\gamma_j}\int_0^y \frac{\P'(z)}{(v'(z))^2}\left( \frac{z-y_j \pm i\ep_{0,j}}{v(z) - v(y_j) \pm i\ep_j} \right)^{\frac12-\gamma_j} \frac{(g_j)_\s(z)}{(v(z) - v(y_j) \pm i\ep_j)^{2\gamma_j}} \d z \\
&\quad +\frac{\phi_{\sr,j}(y)}{2\gamma_j} \int_0^y \partial_z \left( \left( \frac{z-y_j \pm i\ep_{0,j}}{v(z) - v(y_j) \pm i\ep_j} \right)^{\frac12-\gamma_j}\frac{(g_j)_\sr(z)}{(v'(z))^2} \right) \frac{\P(z) - \P(y_j)}{(v(z) - v(y_j) \pm i\ep_j)^{2\gamma_j}} \d z. 
\end{align*}
We argue as in $h_{4,j}$ for the solid boundary term. For the first integral term we argue as for $h_{12,j}$ while for the second integral term we argue as for $h_{6,j}$, we omit the routine details. Finally, for $h_{10,j}$ we first write
\begin{align*}
\frac{\P(y_j)}{v(z) - v(y_j) \pm i\ep_j)^2} &- \frac{\cJ(y_j)}{(z-y_j\pm i\ep_{0,j})^2} \\
&= \frac{v(z) - v(y_j) - v'(y_j)(z-y_j)}{v(z) - v(y_j) \pm i\ep_j} \frac{1}{(z-y_j \pm i\ep_{0,j})^2} \\
&+ \frac{v(z) - v(y_j) - v'(y_j)(z-y_j)}{z-y_j \pm i\ep_{0,j}} \frac{v'(y_j)}{(v(z) - v(y_j) \pm i\ep_j)^2}
\end{align*}
Hence,
\begin{align*}
h_{10,j}(y) &= \phi_{\sr,j}(y)\int_0^y \frac{v(z) - v(y_j) - v'(y_j)(z-y_j)}{(v(z) - v(y_j) \pm i\ep_j)^{\frac12+\gamma_j}} \frac{1}{(z-y_j \pm i\ep_{0,j})^{\frac32-\gamma_j}}\frac{(g_j)_\sr(z)}{v'(z)} \d z \\
&\quad + \phi_{\sr,j}(y)\int_0^y \frac{v(z) - v(y_j) - v'(y_j)(z-y_j)}{(z-y_j \pm i\ep_{0,j})^{\frac12-\gamma_j}} \frac{v'(y_j)}{(v(z) - v(y_j) \pm i\ep_j)^{\frac32+\gamma_j}} \frac{(g_j)_\sr(z)}{v'(z)} \d z \\
&\quad +\phi_{\sr,j}(y) \int_0^y \frac{v(z) - v(y_j) - v'(y_j)(z-y_j)}{v(z) - v(y_j) \pm i\ep_j} \left( b_{\ep_j}^\pm(z,y_j)\right)^{\frac32+\gamma_j} \frac{(g_j)_\s(z)}{(v(z) - v(y_j) \pm i\ep_j)^{1+2\gamma_j}}\frac{ \d z}{v'(z)} \\
&\quad +\phi_{\sr,j}(y) \int_0^y \frac{v(z) - v(y_j) -v'(y_j)(z-y_j)}{z-y_j\pm i\ep_{0,j}} \left( B_{\ep_j}^\pm(z,y_j) \right)^{\frac12-\gamma_j} \frac{(g_j)_\s(z)}{(v(z) - v(y_j) \pm i\ep_j)^{1+2\gamma_j}}\frac{ \d z}{v'(z)},
\end{align*}
where $b_{\ep_j}^\pm$ and $B_{\ep_j}^\pm$ are defined in \eqref{eq:defbB}. Further observing that 
\begin{align*}
\left| \frac{v(z) - v(y_j) - v'(y_j)(z-y_j)}{v(z) - v(y_j) \pm i\ep_j} \right| + \left|  \frac{v(z) - v(y_j) - v'(y_j)(z-y_j)}{z-y_j \pm i\ep_{0,j}}  \right| \lesssim |z-y_j|
\end{align*}
and
\begin{align*}
\left| \partial_z \left( \frac{v(z) - v(y_j) - v'(y_j)(z-y_j)}{v(z) - v(y_j) \pm i\ep_j} \right) \right| + \left| \partial_z \left( \frac{v(z) - v(y_j) - v'(y_j)(z-y_j)}{z-y_j \pm i\ep_{0,j}} \right) \right| \lesssim 1
\end{align*}
uniformly in $z,y_j\in (0,2)$ and $\ep_j>0$, we see that the first two integrals can be bounded as for $h_{6,j}$, while for the last two integrals we argue as for $h_{9,j}$, we omit the details.
\end{proof}

Once the next lemma is established, the proof of Theorem \ref{thm:vanishinghjfragile} is complete.

\begin{lemma}\label{lemma:vanishingsecondinhomsol}
Let $k\geq1$, $y_j\rightarrow y_0\in (0,2)$ such that $\P(y_0)=0$ with $\P(y_j)>0$ and $\ep_j\rightarrow 0$ as $j\rightarrow \infty$. Then,
\begin{align*}
\left\Vert  \phi_{\sr,j}(y)\int_0^y \frac{1}{(\phi_{\sr,j}(z))^2} \d z \right\Vert_{Z_k} \lesssim 1.
\end{align*}
\end{lemma}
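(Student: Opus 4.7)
Let $\Phi_j(y):=\phi_{\sr,j}(y)\int_0^y(\phi_{\sr,j}(z))^{-2}\,\d z$. My plan is to extract an explicit $\eta^{1/2+\gamma_j}$/$\eta^{1/2-\gamma_j}$ decomposition of $\Phi_j$ on $I_3(y_j)$ with uniformly bounded coefficients, and then use the entanglement inequality to pass from this local control to the global $H^1_k(I_3^c(y_j))$ bound. Since $\P(y_0)=0$ forces $\gamma_j\to\tfrac12^-$ with $\gamma_j<\tfrac12$, and $|\phi_j|\ge\tfrac12$ by Lemma~\ref{lemma:fragilephir1knonzero}, I first split
\begin{equation*}
\int_0^y\frac{\d z}{(\phi_{\sr,j}(z))^{2}} = \int_0^y\frac{\d z}{(v(z)-v(y_j)\pm i\ep_j)^{1+2\gamma_j}} + \int_0^y\frac{\phi_j^{-2}(z)-1}{(v(z)-v(y_j)\pm i\ep_j)^{1+2\gamma_j}}\,\d z,
\end{equation*}
where the correction integral benefits from the vanishing $|\phi_j^{-2}-1|\lesssim|z-y_j|$ at $z=y_j$.

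For the main integral, write $1=v'(y_j)/v'(z)+(v'(z)-v'(y_j))/v'(z)$ and exploit the antiderivative identity $\partial_z(v(z)-v(y_j)\pm i\ep_j)^{-2\gamma_j}=-2\gamma_j v'(z)(v(z)-v(y_j)\pm i\ep_j)^{-1-2\gamma_j}$ to compute its leading contribution explicitly as
\begin{equation*}
-\frac{1}{2\gamma_j v'(y_j)}\bigl[(v(y)-v(y_j)\pm i\ep_j)^{-2\gamma_j}-(v(0)-v(y_j)\pm i\ep_j)^{-2\gamma_j}\bigr].
\end{equation*}
After multiplication by $\phi_{\sr,j}(y)=(v(y)-v(y_j)\pm i\ep_j)^{1/2+\gamma_j}\phi_j(y)$, this yields exactly one term proportional to $(v(y)-v(y_j)\pm i\ep_j)^{1/2-\gamma_j}\phi_j(y)$ (pure singular, to be placed in the $\eta^{1/2-\gamma_j}$ slot) and one proportional to $(v(y)-v(y_j)\pm i\ep_j)^{1/2+\gamma_j}\phi_j(y)$ (pure regular, to be placed in the $\eta^{1/2+\gamma_j}$ slot). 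The numerical prefactors $1/(2\gamma_j v'(y_j))$ and $(v(0)-v(y_j)\pm i\ep_j)^{-2\gamma_j}$ remain bounded uniformly in $j$, because $\gamma_j$ stays away from $0$, $v'(y_j)\ge c_0$, and $v(0)-v(y_j)$ stays away from $0$; combined with the identity $(v(y)-v(y_j)\pm i\ep_j)^{1/2\pm\gamma_j}=(v'(y_j)/(2k))^{1/2\pm\gamma_j}\eta^{1/2\pm\gamma_j}(1+O(|y-y_j|))^{1/2\pm\gamma_j}$ on $I_3(y_j)$ and the uniform bounds on $\phi_j$ and $\partial_y\phi_j$ from Proposition~\ref{prop:existencesolphisigma}, this delivers the $L^\infty$ and $k^{-1/2}L^2(I_3(y_j))$ bounds on the coefficients demanded by the $Z_k^1$ seminorm. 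The correction integral and the residue from $(v'(z)-v'(y_j))/v'(z)$ are both handled by one integration by parts in $z$, trading the kernel $(v(z)-v(y_j)\pm i\ep_j)^{-1-2\gamma_j}$ for $(v(z)-v(y_j)\pm i\ep_j)^{-2\gamma_j}$, which paired against the linearly vanishing factors at $z=y_j$ is uniformly $L^1_{\mathrm{loc}}$ in $j$; after multiplication by $\phi_{\sr,j}$ these contributions are absorbed into the regular $\eta^{1/2+\gamma_j}$ coefficient.

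For the non-local component of the $Z_k$ norm, observe that $\Phi_j$ solves the homogeneous Reduced Taylor--Goldstein equation by reduction of order, so Lemma~\ref{lemma:entanglementRTG} applied with zero source upgrades the local $L^2(I_2^c(y_j)\cap I_3(y_j))$ control coming from the previous step to $k^{1/2}\|\Phi_j\|_{H^1_k(I_3^c(y_j))}\lesssim 1$. The main difficulty is the borderline integrability as $\gamma_j\to\tfrac12^-$: naive bounds on the kernel $(v(z)-v(y_j))^{-1-2\gamma_j}$ diverge in this limit, and the crux of the argument is that the \emph{exact} formula for the leading integral is a linear combination of $(v(y)-v(y_j)\pm i\ep_j)^{\pm 2\gamma_j}$ terms which, once multiplied by $\phi_{\sr,j}$, reassemble into bounded coefficients of the $\eta^{1/2\pm\gamma_j}$ monomials; the remaining error pieces survive only by virtue of the linear vanishing of $v'(z)-v'(y_j)$ and $\phi_j^{-2}(z)-1$ at $z=y_j$.
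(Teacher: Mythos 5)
Your overall strategy (explicit local decomposition into $\eta^{1/2\pm\gamma_j}$ slots, plus the entanglement inequality for the non-local component) is a reasonable one, and your explicit computation of the leading integral coincides with the first boundary term in the paper's integration by parts. However, there are two genuine gaps.

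The main one is in your treatment of the ``correction integral'' and ``residue''. You claim that one integration by parts, trading the kernel $(v(z)-v(y_j)\pm i\ep_j)^{-1-2\gamma_j}$ for $(v(z)-v(y_j)\pm i\ep_j)^{-2\gamma_j}$, paired against the linearly vanishing factors at $z=y_j$, gives something uniformly $L^1_{\mathrm{loc}}$. But after that integration by parts the $\partial_z$ lands on the vanishing factor, producing $\partial_z(\phi_j^{-2}-1)=-2\phi_j^{-3}\phi_j'$, and the resulting integrand is $\phi_j'(z)(v(z)-v(y_j)\pm i\ep_j)^{-2\gamma_j}$. Although $\phi_j'(y_j)=0$ for $\ep_j>0$, the rate of vanishing of $\phi_j'$ is not uniform in $j$: from $\phi_j'=k^2\cT_{\sr,1,k,\ep_j}^\pm\phi_j$ one finds $\phi_j''(y_j)=k^2+\tfrac{(1-2\gamma_j)v''(y_j)}{2(\pm i\ep_j)}$, which need not be bounded as $\ep_j\to 0$ since no relation between $\ep_j$ and $1-2\gamma_j$ is assumed. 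Thus $\int|\phi_j'(z)|\,|v(z)-v(y_j)\pm i\ep_j|^{-2\gamma_j}\,\d z$ is a priori of size $\tfrac{1}{1-2\gamma_j}$, which blows up as $\gamma_j\to\tfrac12^-$. This is precisely why the paper's proof does \emph{two} things you do not: (i) it never reduces to the bare kernel $(v(z)-v(y_j)\pm i\ep_j)^{-2\gamma_j}$, but instead integrates by parts against the regularized antiderivative $\tfrac{(v(z)-v(y_j)\pm i\ep_j)^{1-2\gamma_j}-1}{1-2\gamma_j}$, which is log-bounded uniformly in $\gamma_j$; and (ii) it expands $\phi_j'$ via the fixed-point identity $\phi_j'=k^2\cT_{\sr,1,k,\ep_j}^\pm\phi_j$, which exposes the vanishing of $\int_{y_j}^z F_{\sr,k,\ep_j}(s,y_j)(\cdots)\,\d s$ at $z=y_j$, and then performs yet another integration by parts so that the resulting solid and integral terms can be matched against the $h_{4,j}$ and $h_{6,j}$ estimates from Lemma~\ref{lemma:vanishingfirstinhomsol}. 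Without an analogue of these two moves your error terms are not controlled.

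A secondary, smaller imprecision: $\Phi_j$ solves the homogeneous \emph{Taylor--Goldstein} equation, not the homogeneous reduced equation; so in Lemma~\ref{lemma:entanglementRTG} the source is $F=-\textsc{E}_{k,\ep_j}^\pm\Phi_j$, not zero, and the resulting $\|F\|_{L^2(I_2^c)}$ term has to be reabsorbed (using boundedness of $\textsc{E}_{k,\ep_j}^\pm$ away from the critical layer) before the non-local bound closes. This is routine, but ``with zero source'' as written is incorrect.
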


\begin{proof}
The arguments follow closely those of Proposition \ref{prop:limitWronkianFragile} and Lemma \ref{lemma:vanishingfirstinhomsol}. Integrating by parts,
\begin{align*}
\phi_{\sr,j}(y)\int_0^y \frac{1}{(\phi_{\sr,j}(z))^2} \d z &= -\frac{\phi_{\sr,j}(y)}{2\gamma_j v'(z)}(v(z) - v(y_j) \pm i\ep_j)^{-2\gamma_j}\phi_j^{-2}(z) \Big|_{z=0}^{z=y} \\
&\quad + \frac{\phi_{\sr,j}(y)}{2\gamma_j} \left( \frac{1}{v'(z)} \right)' \phi_{j}^{-2}(z) \frac{(v(z) - v(y_j) \pm i\ep_j)^{1-2\gamma_j}-1}{1-2\gamma_j} \Big|_{z=0}^{z=y} \\
&\quad - \frac{\phi_{\sr,j}(y)}{\gamma_j}\int_0 ^ y (v(z) - v(y_j) \pm i\ep_j)^{-2\gamma_j} \frac{\phi_j'(z) }{v'(z)\phi_j^3(z)} \d z \\
&\quad - \frac{\phi_{\sr,j}(y)}{2\gamma_j}\int_0^y \partial_z \left( \left( \frac{1}{v'(z)} \right)' \frac{1}{\phi_{j}^2(z)} \right) \frac{ (v(z) - v(y_j) \pm i\ep_j)^{1-2\gamma_j}-1}{1-2\gamma_j} \d z
\end{align*}
so that we argue as in $h_{4,j}$ for the first boundary term and as in $h_{13,j}$ for the second boundary term, while we address the second integral as in $h_{14,j}$. For the first integral, since $\phi_j'(y) = \cT_{\sr,1,k,\ep_j}\phi_j$ we have
\begin{align*}
\frac{\phi_{\sr,j}(y)}{\gamma_j}  &\int_0 ^ y (v(z) - v(y_j) \pm i\ep_j)^{-2\gamma_j} \frac{\phi_j'(z) }{v'(z)\phi_j^3(z)} \d z \\
&= -\frac{\phi_{\sr,j}(y)}{2\gamma_j^2} (v(z) - v(y_j) \pm i\ep_j)^{-2\gamma_j} \frac{\int_{y_j}^z F_{\sr,k,\ep_j}(s,y_j)(v(s) - v(y_j) \pm i\ep_j)^{2\gamma_j}\phi_j(s) \d s}{(v'(z))^2 \phi_j^3(z)} \Big|_{z=0}^{z=y} \\
&\quad + \frac{\phi_{\sr,j}(y)}{2\gamma_j^2} \int_0^y (v(z) - v(y_j) \pm i\ep_j)^{-2\gamma_j}  \partial_z \left( \frac{\int_{y_j}^z F_{\sr,k,\ep_j}(s,y_j)(v(s) - v(y_j) \pm i\ep_j)^{2\gamma_j}\phi_j(s) \d s}{(v'(z))^2 \phi_j^3(z)}\right) \d z
\end{align*}
And thus we argue as in $h_{4,j}$ for the solid term, and as in $h_{6,j}$ for the integral contribution.
\end{proof}

\section{The limiting absorption principle for the stratified regime}\label{sec:LAPstrat}
We are now in position to establish coercive bounds via the limiting absorption principle in the $Z_k$ space.
\begin{proposition}\label{prop:LAPZk}
There exists some $\kappa>0$ such that
\begin{align*}
\Vert f \Vert_{Z_k} \leq \kappa \Vert f + T_{k,\ep}^\pm f \Vert_{Z_k},
\end{align*}
uniformly for all $y_0\in I_S\cup I_W$ and $0<\ep<\ep_*$.
\end{proposition}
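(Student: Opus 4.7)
The plan is to argue by contradiction along the contradiction scheme already outlined in the overview of Section~\ref{sec:sketch} around \eqref{eq:introfailureLAP}. Suppose no such $\kappa$ exists; then there is a sequence $k_j\geq 1$, $y_j\in I_S\cup I_W$, $\ep_j\in(0,\ep_*)$ and $f_j\in Z_{k_j}$ with $\Vert f_j\Vert_{Z_{k_j}}=1$ such that $\Vert f_j+T_{k_j,\ep_j}^\pm f_j\Vert_{Z_{k_j}}\to 0$. Set $g_j:=f_j+T_{k_j,\ep_j}^\pm f_j$ and $h_j:=f_j-g_j=-T_{k_j,\ep_j}^\pm f_j$, so that $\Vert g_j\Vert_{Z_{k_j}}\to 0$ and $\Vert h_j\Vert_{Z_{k_j}}\to 1$. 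Since by Proposition~\ref{prop:TmapsXktoXk} we have $\Vert T_{k,\ep}^\pm f\Vert_{X_k}\lesssim k^{-1}\Vert f\Vert_{Z_k}$ and $X_k\hookrightarrow Z_k$, the sequence $k_j$ must be bounded, and after extraction $k_j\equiv k_*$ for all $j$ large and some fixed integer $k_*\geq 1$. Up to a further subsequence, $y_j\to y_*\in\overline{I_S\cup I_W}$ and $\ep_j\to \ep_\infty\in[0,\ep_*]$. If $\ep_\infty>0$ the resolvent $(v(y_0)-\i\ep\mp L_{k_*})^{-1}$ is a bounded operator in the relevant spaces with smooth coefficients, hence $(I+T_{k_*,\ep_\infty}^\pm)$ is invertible by Theorem~\ref{thm:spectrumlinop} together with the mapping estimates of Section~\ref{sec:solopstrat}; the conclusion $\Vert h_j\Vert_{Z_{k_*}}\to 0$ is then immediate and contradicts $\Vert h_j\Vert_{Z_{k_*}}\to 1$. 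We therefore assume $\ep_\infty=0$.

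The key step is to identify $h_j$ as the solution to the inhomogeneous Taylor--Goldstein equation
\begin{equation*}
\textsc{TG}_{k_*,\ep_j}^\pm h_j(y,y_j)=-\scE_{k_*,\ep_j}^\pm(y,y_j)\,g_j(y),\qquad h_j(0,y_j)=h_j(2,y_j)=0,
\end{equation*}
which is the same identity as \eqref{eq:introTGhj}. Since the right-hand side tends to zero in an appropriate sense as $j\to\infty$, we need to show that the solution operator is uniformly continuous (so that $\Vert h_j\Vert_{Z_{k_*}}\to 0$), at least along convergent subsequences. The representation of $h_j$ will depend on the location of $y_*$:
\begin{itemize}
\item If $y_*\in I_S\cup I_W$ with $\cJ(y_*)\neq 0,\tfrac14$, then for $j$ large both roots $\gamma_j$ stay uniformly away from $0$ and from the real axis (in the strong case) or uniformly inside $(0,\tfrac12)$ (in the weak case), so the pair $\{\phi_{\sr,k_*,\ep_j}^\pm,\phi_{\s,k_*,\ep_j}^\pm\}$ from Proposition~\ref{prop:existencesolphisigma} gives a uniformly non-degenerate basis of homogeneous solutions, with Wronskian bounded below by Propositions~\ref{prop:nonzeroWweakstrat} and \ref{prop:nonzeroWstrongstrat}.
\item If $y_*=\varpi_1$ or $\varpi_2$ (the mild limit within the weak/strong stratified closure), then $\gamma_j\to 0$ and we must replace $\phi_{\s,k_*,\ep_j}^\pm$ by $\phi_{\rL,k_*,\ep_j}^\pm$ from Proposition~\ref{prop:existencephirL1kep} (or from Proposition~\ref{prop:existencephiLcrit} in the critical subcase); in both cases the Wronskian stays non-degenerate thanks to Propositions~\ref{prop:limitWronkianMild} and~\ref{prop:limitWronskianMildCrit}.
\item If $y_*\in\{\vartheta_1,\vartheta_2\}$ (the fragile limit), then $\cJ(y_*)=0$ and the singular solution $\phi_{\s,k_*,\ep_j}^\pm$ degenerates; the representation of $h_j$ must be rewritten using only $\phi_{\sr,k_*,\ep_j}^\pm$ as in Proposition~\ref{prop:defhsolTGr}, and Theorem~\ref{thm:vanishinghjfragile} yields directly $\Vert h_j\Vert_{Z_{k_*}}\to 0$.
\end{itemize}

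In the first two cases we build $h_j$ through the standard variation-of-parameters formula based on the relevant pair of homogeneous solutions, and we pass to the limit $j\to\infty$ using the continuity statements of Propositions~\ref{prop:continuityphi} and~\ref{prop:existencephirL1kep}. Any nontrivial limit $h_\infty$ of $h_j$ (along a subsequence) would satisfy the homogeneous equation
\begin{equation*}
\bigl(\Delta_{k_*}-\tfrac{v''(y)}{v(y)-v(y_*)}+\tfrac{\P(y)}{(v(y)-v(y_*))^2}\bigr)h_\infty=0,\qquad h_\infty(0)=h_\infty(2)=0,
\end{equation*}
and have the right decay/integrability at $y=y_*$ built in by the $Z_{k_*}$-structure, i.e.\ it would be an embedded eigenfunction at the eigenvalue $v(y_*)$. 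But Propositions~\ref{prop:noembedweak}, \ref{prop:noembedmild} and~\ref{prop:noembedstrong} exclude such embedded eigenvalues for every $y_*\in(\vartheta_1,\vartheta_2)$, forcing $h_\infty\equiv 0$ and, combined with compactness of the solution operator in $Z_{k_*}$ provided by Proposition~\ref{prop:TmapsXktoXk} (which regularises $Z_{k_*}\to X_{k_*}$ with a gain of one derivative, hence via Rellich a compact embedding into $Z_{k_*}$), we get $\Vert h_j\Vert_{Z_{k_*}}\to 0$, contradicting $\Vert h_j\Vert_{Z_{k_*}}\to 1$.

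The main obstacle is to ensure that the passage to the limit $\ep_j\to 0$ is truly uniform in the three regimes listed above. In the interior strong/weak case this is essentially classical once one disposes of the homogeneous pair and their Wronskian bounds. In the mild regime the delicate point is that the logarithmic correction $\log(\eta)\Q_{\gamma_0}(\eta)$ appearing in the $LZ_k$-decomposition has to be matched consistently with the $Z_k$-decomposition of $f_j$; this is where the specific choice of $\phi_{\rL,k_*,\ep_j}^\pm$ in \eqref{eq:defphiL}--\eqref{eq:defphiLcrit} is used. The genuinely new and hardest step is the fragile limit $y_*=\vartheta_n$: here the structure of $Z_{k_*}$ itself degenerates since the exponent $\gamma_j\to\tfrac12$, so the $\phi_{\s,k_*,\ep_j}^\pm$-branch of the decomposition of $f_j$ becomes indistinguishable from the $\phi_{\sr,k_*,\ep_j}^\pm$-branch, and the whole $h_j$ has to be estimated using only $\phi_{\sr,k_*,\ep_j}^\pm$; this is exactly the content of Theorem~\ref{thm:vanishinghjfragile}, whose conclusion plugs into the present argument to close the contradiction.
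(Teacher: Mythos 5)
Your contradiction scheme, the extraction $k_j\equiv k_*$, and the reduction to the inhomogeneous Taylor--Goldstein problem for $h_j$ all match the paper, and you correctly identify the fragile endpoint case as the one where Theorem~\ref{thm:vanishinghjfragile} is indispensable. However, your endgame for the interior case departs from the paper's and has a gap. The paper writes $h_j$ by variation of parameters in terms of the pair $\phi_{\sr,j},\phi_{\s,j}$ (with the Wronskian kept uniformly away from zero by Propositions~\ref{prop:nonzeroWweakstrat} and \ref{prop:nonzeroWstrongstrat}) and then bounds each term explicitly by $\Vert g_j\Vert_{Z_{k_*}}$, so the conclusion $\Vert h_j\Vert_{Z_{k_*}}\lesssim\Vert g_j\Vert_{Z_{k_*}}\to 0$ is a direct quantitative estimate. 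You instead try to pass to a weak limit $h_\infty$, identify it as an embedded eigenfunction, and kill it by Propositions~\ref{prop:noembedweak}--\ref{prop:noembedstrong} together with a Rellich-type compactness claim. This does not go through as stated: those propositions apply to $H^2\cap H^1_0$ eigenfunctions, whereas your limit $h_\infty$ lives only in $Z_{k_*,0,y_*}$, whose singular branch $\eta^{\frac12-\gamma_0}$ has $\partial_y h_\infty\sim |y-y_*|^{-\frac12-\mu(y_*)}\notin L^2$ for $\mu(y_*)>0$, so you cannot invoke those propositions without an additional regularity upgrade. The "gain of one derivative via Proposition~\ref{prop:TmapsXktoXk}" is also imprecise here: the spaces $Z_{k_*,\ep_j,y_j}$ and $X_{k_*,\ep_j,y_j}$ move with $(y_j,\ep_j)$, so there is no compact self-embedding from $T_{k_*,\ep_j}^\pm$ in any standard sense, and Rellich does not apply directly.

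A smaller issue: your second bullet, the "mild" limit $y_*=\varpi_1,\varpi_2$, cannot occur in $\overline{I_S\cup I_W}$. By construction $I_W$ stops at $\widetilde\varpi_{1,1}$ and $I_S$ starts at $\widetilde\varpi_{1,2}$, both strictly bounded away from $\varpi_1$ (similarly for $\varpi_2$), so the critical points $\varpi_n$ are interior to $I_M$ and are treated only in Proposition~\ref{prop:LAPLZk}. Including that case here is harmless but superfluous. On the positive side, explicitly disposing of a subsequence with $\ep_j\to\ep_\infty>0$ is a tidy way to justify the paper's implicit assumption that the contradicting sequence can be taken with $\ep_j\to 0^+$; this is a genuine small improvement. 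To close the gap in the interior case, replace your limit-extraction step by the paper's direct variation-of-parameters estimate: expand $h_j$ against $\phi_{\sr,j},\phi_{\s,j}$, use \eqref{eq:nonzeroweakstrongWlim} for the denominator, and bound the coefficient integrals of $\scE^\pm_{k_*,\ep_j}g_j$ against $\phi_{\sigma,j}$ by repeated integration-by-parts in the $Z_{k_*}$ structure, exactly as done for the terms $h_{4,j},\dots,h_{14,j}$ in the fragile case.
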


\begin{proof}
Assume towards a contradiction that we can find a sequence of parameters $k_j\geq 1$, $y_j\in I_S\cup I_W$ such that $y_j\rightarrow y_*$, $\ep_j\rightarrow 0^+$ and $f_j\in Z_{k_j}$, with $\Vert f_j\Vert_{Z_{k_j}}=1$ such that 
\begin{align}\label{eq:failureLAP}
\Vert f_j + T_{k_j,\ep_j}^\pm f_j(\cdot,y_j)\Vert_{Z_{k_j}} \rightarrow 0
\end{align}
as $j\rightarrow \infty$. From Proposition \ref{prop:TmapsXktoXk}, $X_k\subset Z_k$ and \eqref{eq:failureLAP}, we deduce that $|k_j|\lesssim 1$ and hence $k_j\rightarrow k_*\in \mathbb{Z}\setminus \lbrace 0 \rbrace$ up to a subsequence. In particular, $k_j \equiv k_*$ for all $j$ sufficiently large. In what follows, we already consider $j$ large enough so that $k_j=k_*$. Now, let $g_j(y):=f_j(y) + T_{k_*,\ep_j}^\pm f_j(y,y_j)$ and define $h_j(y) = f_j(y) - g_j(y)$. It is such that $\lim_{j\rightarrow \infty} \Vert h_j \Vert_{Z_{k_*}}=1$ and
\begin{align}\label{eq:defRj}
h_j(y) + T_{k_*,\ep_j}^\pm h_j(y,y_j) = -T_{k_*,\ep_j}^\pm g_j(y,y_j) 
\end{align}
In particular, we now have that $h_j(0)=h_j(2)=0$. Next, applying $\text{RTG}_{k_*,\ep_j}^\pm$ to both sides we obtain
\begin{equation}\label{eq:TGhj}
\textsc{TG}_{k_*,\ep_j}^\pm h_j(y,y_j) = -\textsc{E}_{k_*,\ep_j}^\pm g_j(y,y_j).
\end{equation}
To solve the equation, we distinguish whether the limiting spectral parameter $y_0$ is such that $\P(y_0)=0$ or not.

\bullpar{Case $\P(y_0)\neq 0$} Then $\gamma_0\neq \frac12$ and since $h_j(0)=h_j(2)=0$, we have that
\begin{align*}
h_j(y)&= \left( \frac{\phi_{\s,j}(2)\int_0^2 \phi_{\sr,j}(z) \mathrm{g}_j(z) \d z - \phi_{\sr,j}(2)\int_0^2 \phi_{\s,j}(z) \mathrm{g}_j(z)\d z }{\phi_{\sr,j}(0)\phi_{\s,j}(2) - \phi_{\sr,j}(2)\phi_{\s,j}(0)} \right)  \frac{\phi_{\sr,j}(0)\phi_{\s,j}(y) - \phi_{\s,j}(0)\phi_{\sr,j}(y)}{2\gamma_j v'(y_j)} \\
&\quad + \frac{1}{2\gamma_jv'(y_j)} \left( \phi_{\s,j}(y) \int_0^y \phi_{\sr,j}(z)\mathrm{g}_j(z) \d z - \phi_{\sr,j}(y) \int_0^y \phi_{\s,j}(z)\mathrm{g}_j(z) \d z \right)
\end{align*}
where here $\phi_{\sigma,j}(y)=\phi_{\sigma,k_j,\ep_j}(y,y_j)$, for $\sigma = \s,\,\sr$, denote the pair of linearly independent homogeneous solutions of the $\textsc{TG}_{k_j,\ep_j}^\pm$ operator well defined in Proposition  \ref{prop:existencesolphisigma} because $\gamma_j\rightarrow \gamma_0\neq\frac12$ and $\mathrm{g}_j:=-\textsc{E}_{k_j,\ep_j}^\pm g_j$. Since $k_*$ is fixed, for $\ep<\ep_*$ we have from Proposition \ref{prop:nonzeroWstrongstrat} if $y_j\rightarrow y_0\in I_S$ and Proposition \ref{prop:nonzeroWweakstrat} if $y_j\rightarrow y_0\in I_W$ that 
\begin{equation}\label{eq:nonzeroweakstrongWlim}
\lim_{j\rightarrow \infty} \phi_{\sr,j}(0)\phi_{\s,j}(2) - \phi_{\sr,j}(2)\phi_{\s,j}(0) \neq 0.
\end{equation}
Next, for
\begin{equation}\label{eq:defbB}
b_{\ep_j}^\pm(y,y_j) := \frac{v(y) - v(y_j) \pm i\ep_j}{y-y_j \pm i\ep_{0,j}}, \quad B_{\ep_j}^\pm(y,y_j) := \frac{1}{b_{\ep_j}^\pm(y,y_j)}
\end{equation}
a direct computations shows that 
\begin{equation}\label{eq:boundbB}
\Vert b_{\ep}^\pm \Vert_{L^\infty_y} + \Vert B_{\ep}^\pm \Vert_{L^\infty_y} +  \Vert \partial_y b_{\ep}^\pm \Vert_{L^\infty_y} + \Vert  \partial_y B_{\ep}^\pm \Vert_{L^\infty_y} \lesssim 1,
\end{equation}
uniformly for all $y_j \in (0,2)$ and all $\ep_j>0$. Hence, 
since $|\textsc{E}_{k_*,\ep_j}^\pm(y,y_j)| \lesssim |v(y) - v(y_j) \pm i\ep_j|^{-1}$ and $k_*$ is fixed, we deduce that $\Vert \phi_{\sigma,j} \Vert_{Z_{k_*}} \lesssim 1$ for $\sigma \in \lbrace \sr, \s \rbrace$ and
\begin{align*}
\left\Vert  \phi_{\s,j}(y) \int_0^y \phi_{\sr,j}(z) \mathrm{g}_j(z) \d z \right\Vert_{Z_{k_*}} &+ \left\Vert  \phi_{\sr,j}(y)\int_0^y \phi_{\s,j}(z) \textsc{E}_{k_*,\ep_j}^\pm (y-y_j \pm i\ep_{0,j} )^{\frac12+\gamma_j} \left(g_j \right)_\sr (z) \d z \right\Vert_{Z_{k_*}}  \\
&\qquad\lesssim \Vert g_j \Vert_{Z_{k_*}}.
\end{align*}
We next claim that 
\begin{align*}
\left\Vert  \phi_{\sr,j}(y)\int_0^y \phi_{\s,j}(z) \textsc{E}_{k_*,\ep_j}^\pm (z-y_j \pm i\ep_{0,j} )^{\frac12-\gamma_j} \left(g_j \right)_\s (z) \d z \right\Vert_{Z_{k_*}}  \lesssim \Vert g_j \Vert_{Z_{k_*}},
\end{align*}
for which we note that
\begin{align*}
\phi_{\sr,j}(y) &\int_0^y \phi_{\s,j}(z) \textsc{E}_{1,k_*,\ep_j}^\pm(z,y_j) (z-y_j \pm i\ep_{0,j})^{\frac12-\gamma_j} \left( g_j \right)_\s(z) \d z \\
&= \phi_{\sr,j}(y) \int_0^y (v(z) - v(y_j) \pm i\ep_j)^{-2\gamma_j} v''(z) (B_{\ep_j}^\pm(z,y_j))^{\frac12-\gamma_j} \phi_{\s,j}(z)(g_j)_\s(z) \d z \\
&= -\frac{1}{1-2\gamma_j}\phi_{\sr,j}(y) (v(z) - v(y_j) \pm i\ep_j)^{1-2\gamma_j} \mathsf{g}_j^\pm(z,y_j) \Big|_{z=0}^{z=y} \\
&\quad + \frac{1}{1-2\gamma_j}\phi_{\sr,j}(y)\int_0^y (v(z) - v(y_j) \pm i\ep_j)^{1-2\gamma_j} \partial_z \mathsf{g}_j^\pm(z,y_j) \d z
\end{align*}
where $\mathsf{g}_j^\pm(z,y_j):= \frac{v''(z)}{v'(z)}(B_{\ep_j}^\pm(z,y_j))^{\frac12-\gamma_j} \phi_{\s,j}(z)(g_j)_\s(z)$ and $\Vert \mathsf{g}_j^\pm(\cdot,y_j)\Vert_{W^{1,\infty}_y(I_3(y_j))}\lesssim \Vert g_j \Vert_{Z_{k_*}}$, uniformly in $y_j\in I_S\cup I_W$ and $\ep_j>0$. In particular, we have
\begin{align*}
\frac{1}{|1-2\gamma_j|}\left \Vert \int_0^y (v(z) - v(y_j) \pm i\ep_j)^{1-2\gamma_j} \partial_z \mathsf{g}_j^\pm(z,y_j) \d z \right\Vert_{W^{1,\infty}_y(I_3(y_j))}\lesssim \Vert g_j \Vert_{Z_{k_*}}
\end{align*}
and
\begin{align*}
\frac{1}{1-2\gamma_j}&\phi_{\sr,j}(y) (v(y) - v(y_j) \pm i\ep_j)^{1-2\gamma_j} \mathsf{g}_j^\pm(y,y_j) \\
&= -\left( \frac{v(y) - v(y_j) \pm i\ep_j}{1-2\gamma_j}\phi_{\sr,1,j}(y)\mathsf{g}_j^\pm(y,y_j)  \right) (v(y) - v(y_j)\pm i\ep_j)^{\frac12-\gamma_j}
\end{align*}
with now $\left \Vert \frac{v(y) - v(y_j) \pm i\ep_j}{1-2\gamma_j}\phi_{\sr,1,j}(y)\mathsf{g}_j^\pm(y,y_j)  \right\Vert_{W^{1,\infty}_y(I_3(y_j))}\lesssim \Vert g_j \Vert_{Z_{k_*}}$, since $1-2\gamma_j$ is uniformly bounded away from 0 because $\gamma_0\neq \frac12$. The estimates for $\textsc{E}_{2,k_*,\ep_j}^\pm$ and $\textsc{E}_{3,k_*,\ep_j}^\pm$ are deduced similarly, integrating by parts to soften out the potential singularities in the spirit of Lemma \ref{lemma:R1mapsPLinfL2toXk} and Lemma \ref{lemma:R2mapsC1XktoXk}, we omit the routine details. Finally, the above arguments also provide
\begin{align*}
\left\Vert \phi_{\sigma,j}(y) \int_0^2 \phi_{\tau,j}(z) g_j(z) \d z \right\Vert_{Z_{k_*}}\lesssim \Vert g_j \Vert_{Z_{k_*}}
\end{align*}
for all $\sigma,\tau\in \lbrace \sr, \s \rbrace$. As a result, this shows that
\begin{align*}
\lim_{j\rightarrow \infty}\Vert h_j \Vert_{Z_{k_*}} \lesssim \lim_{j\rightarrow \infty}\Vert g_j \Vert_{Z_{k_*}} = 0,
\end{align*}
a contradiction with $\lim_{j\rightarrow \infty}\Vert h_j \Vert_{Z_{k_*}} = 1$.

\bullpar{Case $\P(y_0)=0$} Now $\gamma_0=\frac12$ and we can no longer use Proposition \ref{prop:existencesolphisigma} to find the solution to \eqref{eq:TGhj} by means of $\phi_{\sr,j}$ and $\phi_{\s,j}$, since $\phi_{\s,j}$ is a priori not well-defined. Instead, we appeal to Proposition \ref{prop:defhsolTGr} and Theorem \ref{thm:vanishinghjfragile} to conclude that $h_j(y)\rightarrow 0$ for all $y\in(0,2)$, thus reaching a contradiction with $\Vert h_j \Vert_{X_k}=1$.
\end{proof} 

\begin{proposition}\label{prop:LAPLZk}
There exists some $\kappa>0$ such that 
\begin{align*}
\Vert  f \Vert_{LZ_k} \leq \kappa \Vert f + T_{k,y_0,\ep}^\pm f  \Vert_{LZ_k},
\end{align*}
uniformly for all $y_0\in I_M$ and $0<\ep<\ep_*$.
\end{proposition}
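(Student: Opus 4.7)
The plan is to mirror the argument of Proposition~\ref{prop:LAPZk}, replacing the pair of homogeneous solutions $(\phi_{\sr,k,\ep}^\pm,\phi_{\s,k,\ep}^\pm)$ by the logarithmically-corrected pair $(\phi_{\sr,k,\ep}^\pm,\phi_{\rL,k,\ep}^\pm)$ constructed in Section~\ref{subsec:mild}, and by substituting the $LZ_k$/$LX_k$ analogues of the mapping and Wronskian estimates already established in the mild regime. Arguing by contradiction, suppose there exist sequences $k_j\geq 1$, $y_j\in I_M$, $\ep_j\in(0,\ep_*)$ and $f_j\in LZ_{k_j}$ with $\Vert f_j\Vert_{LZ_{k_j}}=1$ such that
\begin{align*}
\Vert f_j+T_{k_j,\ep_j}^\pm f_j(\cdot,y_j)\Vert_{LZ_{k_j}}\longrightarrow 0 .
\end{align*}
From the mild-regime estimate $\Vert T_{k,\ep}^\pm f\Vert_{LX_k}\lesssim k^{-1}\Vert f\Vert_{LZ_k}$ in Proposition~\ref{prop:TmapsXktoXk}, together with the embedding $LX_k\subset LZ_k$, one deduces $|k_j|\lesssim 1$ and hence $k_j\equiv k_*$ for $j$ large, and (up to a subsequence) $y_j\to y_*\in\overline{I_M}$.

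As in the proof of Proposition~\ref{prop:LAPZk}, set $g_j(y):=f_j(y)+T_{k_*,\ep_j}^\pm f_j(y,y_j)$ and $h_j(y):=f_j(y)-g_j(y)$, so that $\Vert g_j\Vert_{LZ_{k_*}}\to 0$, $\Vert h_j\Vert_{LZ_{k_*}}\to 1$, $h_j(0)=h_j(2)=0$, and
\begin{align*}
\textsc{TG}_{k_*,\ep_j}^\pm h_j(y,y_j)=-\textsc{E}_{k_*,\ep_j}^\pm g_j(y,y_j).
\end{align*}
Next, use the pair $(\phi_{\sr,k_*,\ep_j}^\pm,\phi_{\rL,k_*,\ep_j}^\pm)$ provided by Propositions~\ref{prop:existencesolphisigma},~\ref{prop:existencephirL1kep} when $\cJ(y_j)\neq\tfrac14$, and by Proposition~\ref{prop:existencephiLcrit} when $\cJ(y_j)=\tfrac14$, recalling that the Wronskian equals $v'(y_j)$ in every case, to write $h_j$ explicitly via the variation-of-constants formula
\begin{align*}
h_j(y)&=\frac{1}{v'(y_j)}\Bigl(\phi_{\rL,j}(y)\int_0^y\phi_{\sr,j}(z)\mathrm{g}_j(z)\,\d z-\phi_{\sr,j}(y)\int_0^y\phi_{\rL,j}(z)\mathrm{g}_j(z)\,\d z\Bigr)\\
&\quad+\frac{\phi_{\sr,j}(0)\phi_{\rL,j}(y)-\phi_{\rL,j}(0)\phi_{\sr,j}(y)}{\phi_{\sr,j}(0)\phi_{\rL,j}(2)-\phi_{\rL,j}(0)\phi_{\sr,j}(2)}\,\mathcal{I}_j,
\end{align*}
where $\mathrm{g}_j:=-\textsc{E}_{k_*,\ep_j}^\pm g_j$ and $\mathcal{I}_j$ denotes the integral expression enforcing $h_j(2)=0$.

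The key inputs for the contradiction are then the following. First, the denominator is bounded away from zero: when $\cJ(y_*)\neq\tfrac14$ this follows from Proposition~\ref{prop:limitWronkianMild}, and when $y_*\in\{\varpi_1,\varpi_2\}$ it follows from Proposition~\ref{prop:limitWronskianMildCrit}; the transitional points between the mild regime and the weak/strong regimes are covered by Propositions~\ref{prop:nonzeroWweakstrat} and~\ref{prop:nonzeroWstrongstrat}, since near those points one of the two pairs remains available. Second, one must show that the $LZ_{k_*}$ norm of each source integral is controlled by $\Vert g_j\Vert_{LZ_{k_*}}$. Here the $L^\infty$ bound on $\phi_{\sr,j}$ makes the integral against $\phi_{\sr,j}(z)\mathrm{g}_j(z)$ straightforward, while the integral against $\phi_{\rL,j}(z)\mathrm{g}_j(z)$ requires the same integration-by-parts trick used in Proposition~\ref{prop:LAPZk} to turn the factor $(v(z)-v(y_j)\pm i\ep_j)^{\frac12-\gamma_j}\log(v(z)-v(y_j)\pm i\ep_j)\mathcal{Q}_{\gamma_j}$ into a regular term plus a remainder integrable against $\textsc{E}_{k_*,\ep_j}^\pm g_j$. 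Combining both estimates with $\Vert g_j\Vert_{LZ_{k_*}}\to 0$ yields $\Vert h_j\Vert_{LZ_{k_*}}\to 0$, contradicting $\Vert h_j\Vert_{LZ_{k_*}}\to 1$.

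The main obstacle will be step three: carrying out the integration by parts cleanly in $LZ_{k_*}$ so that the logarithmic weight $\log(\eta)\,\mathcal{Q}_{\gamma_j}(\eta)$ encoded in the $LZ_k$ decomposition is reproduced at the output, and uniformly in $\gamma_j\to 0$. In particular, one needs the bound $|\mathcal{Q}_{\gamma_j}(\xi)|\lesssim 1$ from \eqref{eq:boundQgamma} together with the identity $(\xi^{1-2\gamma_j}-1)/(1-2\gamma_j)=\log(\xi)+(1-2\gamma_j)\log^2(\xi)\widetilde{\mathcal{Q}}_{\frac12-\gamma_j}(\xi)$ from \eqref{eq:gammajexpansion} to ensure that the boundary/interior contributions after each integration by parts admit precisely the $LZ_k$-regularity structure, and that no spurious blow-up occurs at $\gamma_j\to 0$. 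Once this bookkeeping is in place, the contradiction closes exactly as in Proposition~\ref{prop:LAPZk}.
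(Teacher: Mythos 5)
Your overall strategy matches the paper's: argue by contradiction, reduce to the inhomogeneous Taylor--Goldstein problem for $h_j$, express $h_j$ via a fundamental pair, establish nondegeneracy of the ``Wronskian-type'' denominator as $j\to\infty$, and close by integral estimates with careful logarithmic bookkeeping. The paper actually splits into two subcases, using $(\phi_{\sr,j},\phi_{\s,j})$ when $\cJ(y_*)\neq\tfrac14$ and $(\phi_{\sr,j},\phi_{\rL,j})$ only when $\cJ(y_*)=\tfrac14$; your proposal to use the single pair $(\phi_{\sr,j},\phi_{\rL,j})$ throughout is a perfectly legitimate reorganisation since the Wronskian is $v'(y_j)$ in both situations. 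So far so good.

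But the key-lemma citations for the Wronskian nondegeneracy are mixed up, and as written they do not support the stated conclusion. When $\cJ(y_*)\neq\tfrac14$ you cite Proposition~\ref{prop:limitWronkianMild}; that proposition, however, assumes precisely $\cJ(y_0)=\tfrac14$, so it does not apply. What you actually need in this case is the algebraic identity $\phi_{\rL,j}=(\phi_{\sr,j}-\phi_{\s,j})/(2\gamma_j)$, which gives
\begin{align*}
\phi_{\sr,j}(0)\phi_{\rL,j}(2)-\phi_{\sr,j}(2)\phi_{\rL,j}(0)
= -\frac{1}{2\gamma_j}\bigl(\phi_{\sr,j}(0)\phi_{\s,j}(2)-\phi_{\sr,j}(2)\phi_{\s,j}(0)\bigr),
\end{align*}
and since $\gamma_j\to\gamma_0\neq 0$, nondegeneracy follows from Proposition~\ref{prop:nonzeroWweakstrat} (when $\gamma_0\in\R$) or Proposition~\ref{prop:nonzeroWstrongstrat} (when $\gamma_0\in i\R$) — which is indeed what the paper invokes. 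Conversely, when $y_*\in\{\varpi_1,\varpi_2\}$ (i.e.\ $\cJ(y_*)=\tfrac14$) you cite Proposition~\ref{prop:limitWronskianMildCrit}, but that proposition assumes the sequence itself satisfies $\cJ(y_j)=\tfrac14$, which is impossible here since $y_j\in I_M$ and $I_M$ excludes $\varpi_1,\varpi_2$ by construction, so $\cJ(y_j)\neq\tfrac14$ along the whole sequence. The correct citation for this subcase is Proposition~\ref{prop:limitWronkianMild}; Proposition~\ref{prop:limitWronskianMildCrit} plays no role in this proof (it is only used in the $\tilde\g$-continuation argument of Proposition~\ref{prop:noimagspectrum}). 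Once the attributions are corrected, the proof closes as you describe.
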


\begin{proof}
Arguing as in Proposition \ref{prop:LAPZk}, we assume towards a contradiction that there exist $k\geq 1$, $y_j\in I_M$, $\ep_j>0$, $h_j\in LX_k$ and $r_j\in LX_k$ such that
\begin{align*}
\textsc{TG}_{k_*,\ep_j}^\pm h_j(y,y_j) = -\textsc{E}_{k_*,\ep_j}^\pm(y,y_j) g_j(y,y_j)
\end{align*}
with $h_j(0,y_j) = h_j(2,y_j)=0$ and
\begin{align*}
y_j\rightarrow y_0\in I_M, \quad \ep_j\rightarrow 0, \quad \Vert h_j \Vert_{LX_k}\rightarrow 1, \quad \Vert g_j \Vert_{LX_k}\rightarrow 0,
\end{align*}
as $j\rightarrow \infty$. Just as before, we distinguish two cases according to the limiting $y_0\in I_M$.

\bullpar{ Case $\cJ(y_0)\neq \frac14$} Then $\gamma_0\neq\frac12$ and also $\gamma_0\neq 0$, so that we can write 
\begin{align*}
h_j(y)&= \left( \frac{\phi_{\s,j}(2)\int_0^2 \phi_{\sr,j}(z) \mathrm{g}_j(z) \d z - \phi_{\sr,j}(2)\int_0^2 \phi_{\s,j}(z) \mathrm{g}_j(z)\d z }{\phi_{\sr,j}(0)\phi_{\s,j}(2) - \phi_{\sr,j}(2)\phi_{\s,j}(0)} \right)  \frac{\phi_{\sr,j}(0)\phi_{\s,j}(y) - \phi_{\s,j}(0)\phi_{\sr,j}(y)}{2\gamma_j v'(y_j)} \\
&\quad + \frac{1}{2\gamma_j v'(y_j)} \left( \phi_{\s,j}(y) \int_0^y \phi_{\sr,j}(z)\mathrm{g}_j(z) \d z - \phi_{\sr,j}(y) \int_0^y \phi_{\s,j}(z)\mathrm{g}_j(z) \d z \right)
\end{align*}
as before, where $\phi_{\sigma,j}(y)=\phi_{\sigma,k_j,\ep_j}(y,y_j)$, for $\sigma = \s,\,\sr$ are defined as in \eqref{eq:defphisigma}. In particular, since $\gamma_0\neq 0$ we see from Proposition \ref{prop:nonzeroWstrongstrat} for $\gamma_0\in i\R$ and Proposition \ref{prop:nonzeroWweakstrat} for $\gamma_0\in \R$ that 
\begin{align*}
\lim_{j\rightarrow\infty}\phi_{\sr,j}(0)\phi_{\s,j}(2) - \phi_{\sr,j}(2) \phi_{\s,j}(0) \neq 0.
\end{align*}
Furthermore, we also have
\begin{align*}
\left| \phi_{\s,j}(2)\int_0^2 \phi_{\sr,j}(z) \mathrm{g}_j(z) \d z - \phi_{\sr,j}(2)\int_0^2 \phi_{\s,j}(z) \mathrm{g}_j(z)\d z \right| \lesssim \Vert g_j \Vert_{LZ_k}
\end{align*}
since $\zeta^{-2\gamma_j}\log(\zeta) \in L^1_{loc}$ uniformly for all $\gamma_j\rightarrow \gamma_0 \neq \frac12$, while we now write 
\begin{align*}
\frac{\phi_{\sr,j}(0)\phi_{\s,j}(y) - \phi_{\s,j}(0)\phi_{\sr,j}(y)}{2\gamma_j} &= \frac{\phi_{\sr,j}(0) - \phi_{\s,j}(0)}{2\gamma_j}\phi_{\sr,j}(y) - \phi_{\sr,j}(0) \frac{\phi_{\sr,j}(y) - \phi_{\s,j}(y)}{2\gamma_j} 
\end{align*}
with further 
\begin{align*}
&\frac{\phi_{\sr,j}(y) - \phi_{\s,j}(y)}{2\gamma_j}   \\
&= (y-y_j \pm i\ep_{0,j})^{\frac12+\gamma_j}\frac{(b_{\ep_j}^\pm(y,y_j))^{\frac12+\gamma_j} \phi_{\sr,k,1,k,\ep_j}(y,y_j) - (b_{\ep_j}^\pm(y,y_j))^{\frac12-\gamma_j} \phi_{\s,k,1,k,\ep_j}(y,y_j)}{2\gamma_j} \\
&\quad+ (b_{\ep_j}^\pm(y,y_j))^{\frac12-\gamma_j} \phi_{\s,k,1,k,\ep_j}(y,y_j)(y-y_j \pm i\ep_{0,j})^{\frac12-\gamma_j}\log (y-y_j \pm i\ep_{0,j})\Q_{1,\gamma_j}( y-y_j \pm i\ep_{0,j}).
\end{align*} 
Hence, we conclude that
\begin{align*}
\left \Vert \frac{\phi_{\sr,j}(0)\phi_{\s,j}(y) - \phi_{\s,j}(0)\phi_{\sr,j}(y)}{2\gamma_j} \right\Vert_{LZ_k} \lesssim 1.
\end{align*}
for $\gamma_j\rightarrow \gamma_0\neq 0$. Moreover, since
\begin{align*}
\Vert \phi_{\sigma,j} \mathrm{g}_j \Vert_{L^2(0,2)} \lesssim \Vert g_j \Vert_{LZ_k}
\end{align*}
for $\sigma\in \lbrace \sr, \s \rbrace$, we also deduce that
\begin{align*}
\left \Vert \frac{1}{2\gamma_j} \left( \phi_{\s,j}(y) \int_0^y \phi_{\sr,j}(z)\mathrm{g}_j(z) \d z - \phi_{\sr,j}(y) \int_0^y \phi_{\s,j}(z)\mathrm{g}_j(z) \d z \right) \right \Vert_{LZ_k} \lesssim \Vert g_j \Vert_{LZ_k}
\end{align*}
and thus we obtain $\Vert h_j \Vert_{LZ_k}\lesssim \Vert g_j \Vert_{LZ_k}\rightarrow 0$ as $j\rightarrow \infty$, contradicting $\lim_{j\rightarrow\infty} \Vert h_j \Vert_{LZ_k} =1$.

\bullpar{Case $\cJ(y_0)=\frac14$} Now we have $\gamma_0=0$, for which we use the homogeneous solutions $\phi_{\sr,j}(y):=\phi_{\sr,k,\ep_j}^\pm(y,y_j)$ and $\phi_{\rL,j}(y):=\phi_{\rL,k,\ep_j}^\pm(y,y_j)$ given by \eqref{eq:defphiL} and whose Wronskian is $v'(y_j)$, to construct 
\begin{align*}
h_j(y)&= \left( \frac{\phi_{\rL,j}(2)\int_0^2 \phi_{\sr,j}(z) \mathrm{g}_j(z) \d z - \phi_{\sr,j}(2)\int_0^2 \phi_{\rL,j}(z) \mathrm{g}_j(z)\d z }{\phi_{\sr,j}(0)\phi_{\rL,j}(2) - \phi_{\sr,j}(2)\phi_{\rL,j}(0)} \right)  \frac{\phi_{\sr,j}(0)\phi_{\rL,j}(y) - \phi_{\rL,j}(0)\phi_{\sr,j}(y)}{v'(y_j)} \\
&\quad +  \frac{1}{v'(y_j)}\left( \phi_{\rL,j}(y) \int_0^y \phi_{\sr,j}(z)\mathrm{g}_j(z) \d z - \phi_{\sr,j}(y) \int_0^y \phi_{\rL,j}(z)\mathrm{g}_j(z) \d z \right).
\end{align*}
Now, Proposition \ref{prop:limitWronkianMild} shows that
\begin{align*}
\lim_{j\rightarrow\infty} \phi_{\sr,j}(0)\phi_{\rL,j}(2) - \phi_{\sr,j}(2)\phi_{\rL,j}(0) \neq 0.
\end{align*}
Further recalling that 
\begin{align*}
\phi_{\rL,j}(y) = \eta^{\frac12+\gamma_j}\phi_{\rL,1,j}(y) + \eta^{\frac12-\gamma_j}\log(\eta) \Q_{\gamma_j}(\eta) \phi_{\s,j,1}(y)
\end{align*}
with $\eta= v(y) - v(y_j) \pm i\ep_j$, we readily see that
\begin{align*}
\Vert \phi_{\sr,j}(0)\phi_{\rL,j}(y) - \phi_{\rL,j}(0)\phi_{\sr,j}(y) \Vert_{LZ_k}\lesssim 1
\end{align*}
and
\begin{align*}
\left| \phi_{\rL,j}(2)\int_0^2 \phi_{\sr,j}(z) \mathrm{g}_j(z) \d z - \phi_{\sr,j}(2)\int_0^2 \phi_{\rL,j}(z) \mathrm{g}_j(z)\d z \right| \lesssim \Vert g_j \Vert_{LZ_k}
\end{align*}
as $\zeta^{-2\gamma_j}\log^2(\zeta)\in L^1$ uniformly for $\gamma_j\rightarrow \gamma_0=0$. Hence, we also have that $\Vert \phi_{\sigma,j}\mathrm{g}_j \Vert_{L^2(0,2)} \lesssim \Vert g_j \Vert_{LZ_k}$ and thus 
\begin{align*}
\left\Vert \phi_{\rL,j}(y) \int_0^y \phi_{\sr,j}(z)\mathrm{g}_j(z) \d z - \phi_{\sr,j}(y) \int_0^y \phi_{\rL,j}(z)\mathrm{g}_j(z) \d z \right\Vert_{LZ_k} \lesssim \Vert g_j \Vert_{LZ_k},
\end{align*}
from which we deduce that $\Vert h_j \Vert_{LZ_k}\lesssim \Vert g_j \Vert_{LZ_k}$, reaching a contradiction again with $\Vert h_j \Vert_{LZ_k}\rightarrow 1$ as $j\rightarrow \infty$.
\end{proof}

\begin{remark}[Spectral conditions]
For the limiting absorption principle to hold, an inspection of the proof shows that Lemma~\ref{lemma:nonzerophisrphis}, Lemma~\ref{lemma:fragilephir1knonzero}, and Proposition~\ref{prop:limitWronkianFragile} are key to obtain Proposition~\ref{prop:nonzeroWweakstrat} and Theorem~\ref{thm:vanishinghjfragile}, respectively. Thus, the coercive estimate in Propositions \ref{prop:LAPZk} and \ref{prop:LAPLZk} hold for all background stably stratified monotone shear flows for which Lemma~\ref{lemma:nonzerophisrphis}, Lemma~\ref{lemma:fragilephir1knonzero}, and Proposition~\ref{prop:limitWronkianFragile} remain true, which essentially preclude the existence of generalized embedded eigenvalues.
\end{remark}

\section{The non-stratified regime}\label{sec:nonstrat}
This section aims at proving estimates analogue to those of Propositions \ref{prop:LAPZk} and \ref{prop:LAPLZk} for $y_0\in I_E$, the non-stratified regime. Now, for $y_0\in I_E$ the Taylor-Goldstein equation is not as singular and we can use the ideas of \cites{Jia20, JiaGev20}. For $k\geq 1$, let $G_k(y,z)$ denote the Green's function of the operator $\partial_y^2 - k^2$ with homogeneous boundary conditions at $y=0$ and $y=2$. It is given explicitly by
\begin{equation}\label{eq:defGk}
G_k(y,z) = -\frac{1}{k\sinh(k)}\begin{cases}
\sinh(k(2-z))\sinh(ky), & \text{for }y\leq z, \\
\sinh(kz)\sinh(k(2-y)), & \text{for }z\leq y
\end{cases}
\end{equation}
and it is such that
\begin{align}
\sup_{y\in[0,2]}\Vert \partial_y^\alpha G_k(y,z) \Vert_{L^2_z(0,2)} &\lesssim k^{-\frac32+\alpha} \label{eq:L2boundGk} \\
\sup_{y\in[0,2]} \Vert \partial_z^\alpha G_{k_0}(y,z) \log(v(z) - v(y_0) \pm i\ep) \Vert_{L^1_z(0,2)}&\lesssim k^{-2+\alpha}\log(k)\label{eq:L1boundGklog}
\end{align}
for $\alpha=0,1$, $y_0\in[0,2]$ and $0\leq \ep\leq 1$. For each $f \in L^2(0,2)$ we define
\begin{equation}\label{eq:defTE}
T_{E,k,\ep}^\pm f(y,y_0) := \int_0^2 G_k(y,z) \left( -\frac{v''(z)}{v(z) - v(y_0) \pm i\ep} + \frac{\P(z)}{(v(z) - v(y_0) \pm i\ep)^2} \right) f(z) \d z
\end{equation}
and
\begin{equation}\label{eq:defREm}
R_{E,m,k,\ep}^\pm f(y,y_0) := \int_0^2 G_k(y,z) \frac{f(z)}{(v(z) - v(y_0) \pm i\ep)^m} \d z
\end{equation}
for $m=0,1$. Therefore, we can rewrite \eqref{eq:introTGvarphi} as
\begin{equation}\label{eq:LAPTGvarphi}
\varphi_{k,\ep}^\pm(y,y_0) + T_{E,k,\ep}^\pm\varphi_{k,\ep}^\pm(y,y_0) = R_{E,1,k,\ep}^\pm\omega_k^0(y,y_0) + R_{E,0,k,\ep}^\pm \varrho_k^0(y,y_0)
\end{equation}
with $\varphi_{k,\ep}^\pm(0,y_0) = \varphi_{k,\ep}^\pm(2,y_0)=0$. 

\subsection{Operator estimates for the non-stratified regime}
To capture the precise regularity in the wave-number $k\geq 1$, we recall the space
\begin{equation*}
\Vert g \Vert_{H_k^1}:=\Vert g \Vert_{L^2(0,2)} + k^{-1}\Vert g' \Vert_{L^2(0,2)}.
\end{equation*}
The next result extends \cite[Lemma 3.1]{JiaGev20} to the operator $T_{E,k,\ep}^\pm$.

\begin{lemma}\label{lemma:TEmapsH1ktoH1k}
Let $k\geq 1$, $y_0\in I_E$  and $\ep\in (0,1)$. There holds
\begin{equation}\label{eq:H1boundTkyep}
\Vert T_{E,k,\ep}^\pm f \Vert_{H_k^1} \lesssim |k|^{-\frac13}\Vert f \Vert_{H_k^1}
\end{equation}
for all $f\in H_k^1$. Furthermore, we also have
\begin{equation}\label{eq:pyTEboundW11}
\left\Vert \partial_y T_{E,k,\ep}^\pm f(y,y_0)  - \frac{v''(y)}{v(y)}f(y)\log(v(y) - v(y_0) \pm i\ep) \right\Vert_{W_y^{1,1}(0,2)} \lesssim k^\frac43\Vert f \Vert_{H_k^1},
\end{equation}
for all $f\in H_k^1$
\end{lemma}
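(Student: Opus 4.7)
My plan for proving Lemma~\ref{lemma:TEmapsH1ktoH1k} is to adapt the arguments of \cite[Lemma 3.1]{JiaGev20}, which handle the purely Rayleigh-type operator, to the present $T_{E,k,\ep}^\pm$ carrying both the $v''$ and $\P$ weights. The key structural simplification afforded by the regime $y_0 \in I_E = [0,\vartheta_1] \cup [\vartheta_2, 2]$ is that, by H$v$ and H$\P$ combined with the $C^4$ (resp.\ $C^2$) regularity, $v''$ and $\P$ are supported in the strictly interior interval $(\vartheta_1, \vartheta_2)$ and vanish to order at least three (resp.\ two) at $\vartheta_1$ and $\vartheta_2$. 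Since $|v(z) - v(y_0) \pm i\ep| \gtrsim \mathrm{dist}(z, \{\vartheta_1, \vartheta_2\})$ for $y_0 \in I_E$, I would first show that the kernel
\begin{equation*}
K(z, y_0) := -\frac{v''(z)}{v(z) - v(y_0) \pm i\ep} + \frac{\P(z)}{(v(z) - v(y_0) \pm i\ep)^2}
\end{equation*}
is uniformly bounded in $L^\infty_z$ for all $y_0 \in I_E$ and $\ep \in (0,1)$.

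For the first estimate \eqref{eq:H1boundTkyep}, this uniform $L^\infty$ bound on $K$ combined with \eqref{eq:L2boundGk} immediately gives $\Vert T_{E,k,\ep}^\pm f \Vert_{L^\infty_y} \lesssim k^{-3/2} \Vert f \Vert_{L^2}$ and $\Vert \partial_y T_{E,k,\ep}^\pm f \Vert_{L^\infty_y} \lesssim k^{-1/2} \Vert f \Vert_{L^2}$, whence $\Vert T_{E,k,\ep}^\pm f \Vert_{H^1_k} \lesssim k^{-3/2} \Vert f \Vert_{L^2} \leq k^{-1/3} \Vert f \Vert_{H^1_k}$ follows directly. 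Should the sharper Jia tracking be needed uniformly as $y_0 \to \vartheta_1^-$ or $\vartheta_2^+$, I would instead integrate by parts in $z$ via $\partial_z \log(v(z) - v(y_0) \pm i\ep) = v'(z)/(v(z) - v(y_0) \pm i\ep)$ and invoke \eqref{eq:L1boundGklog}, recovering the $k^{-1/3}$ exponent through a standard near-field/far-field split.

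For the second estimate \eqref{eq:pyTEboundW11}, I would use the Green's function identity $(\partial_y^2 - k^2) G_k = \delta$ to write
\begin{equation*}
\partial_y^2 T_{E,k,\ep}^\pm f(y, y_0) = K(y, y_0) f(y) + k^2 T_{E,k,\ep}^\pm f(y, y_0).
\end{equation*}
The sole logarithmically singular contribution to $\partial_y T_{E,k,\ep}^\pm f$ comes from the $-v''(y) f(y)/(v(y) - v(y_0) \pm i\ep)$ piece of $K(y, y_0) f(y)$, whose $y$-primitive is, up to bounded terms, $-(v''(y)/v'(y)) f(y) \log(v(y) - v(y_0) \pm i\ep)$, since $\partial_y \log(v(y) - v(y_0) \pm i\ep) = v'(y)/(v(y) - v(y_0) \pm i\ep)$. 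Subtracting this logarithmic primitive (interpreting the $v(y)$ appearing in the statement as the intended $v'(y)$) eliminates the non-integrable log; the $\P$-term, though formally more singular, is reduced by one integration by parts in $z$ using the quadratic vanishing of $\P$ at $\vartheta_1, \vartheta_2$ to a log-type singularity absorbed by the same subtraction. The $k^{4/3}$ factor emerges from the $L^1$ log-bound \eqref{eq:L1boundGklog} applied in tandem with the $k \Vert f' \Vert_{L^2}$ loss from differentiating $f \in H^1_k$, together with a Jia-type interpolation. The main obstacle will be the bookkeeping: cataloguing and controlling the many terms produced by integration by parts in $z$ and differentiation in $y$, and ensuring that the single logarithmic subtraction in the statement captures precisely the non-$W^{1,1}$ part uniformly as $y_0 \to \vartheta_1, \vartheta_2$ and $\ep \to 0^+$.
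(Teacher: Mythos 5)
Your observation that the kernel $K(z,y_0)$ is uniformly bounded in $L^\infty_z$ for $y_0 \in I_E$ is correct: since $v''$ and $\P$ both vanish (together with their relevant derivatives) on $I_E$, and $|v(z)-v(y_0)\pm i\ep|\geq c_0|z-y_0|$, a Taylor expansion around $y_0$ gives $|v''(z)|\lesssim |z-y_0|$ and $\P(z)\lesssim (z-y_0)^2$. Exploiting this for both terms of $K$, combined with \eqref{eq:L2boundGk} and Cauchy--Schwarz, does prove \eqref{eq:H1boundTkyep} with the sharper exponent $k^{-3/2}$. This is a genuinely simpler route than the paper's, which delegates the $v''$ term to \cite[Lemma 3.1]{JiaGev20} (applied to $g=-v''f$) and handles only the $\P$ term by the very boundedness argument you use for both. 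The trade-off is purely cosmetic: the paper's route reuses an off-the-shelf lemma; yours is self-contained and stronger for $y_0\in I_E$.

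For \eqref{eq:pyTEboundW11} your use of $\partial_y^2 G_k(y,z)=k^2 G_k(y,z)+\delta(y-z)$, giving $\partial_y^2 T_{E,k,\ep}^\pm f = K(y,y_0)f(y)+k^2 T_{E,k,\ep}^\pm f$, is exactly the paper's mechanism. But your subsequent discussion contradicts your own first step: having established $K(\cdot,y_0)\in L^\infty$ uniformly for $y_0\in I_E$, the term $K(y,y_0)f(y)$ is already in $L^1_y$ with $\|Kf\|_{L^1}\lesssim\|f\|_{L^2}\leq\|f\|_{H^1_k}$, so there is \emph{no} logarithmic singularity in $\partial_y T_{E,k,\ep}^\pm f$ to cancel, and no integration by parts in $z$ is needed for the $\P$ piece. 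Your claim that the $\P$-term's putative log-singularity is ``absorbed by the same subtraction'' is not correct: the subtracted term carries $v''$, not $\P$, and cannot cancel anything produced by $\P$ --- but fortunately nothing is produced, so the conclusion survives. In fact for $y_0\in I_E$ both $\partial_y T_{E,k,\ep}^\pm f$ and the subtracted logarithmic term are \emph{separately} in $W^{1,1}$ with bounds $\lesssim k^{1/2}\Vert f\Vert_{H_k^1}$ and $\lesssim k\Vert f\Vert_{H_k^1}$ respectively: the first by the computation $\|\partial_y^2 T f\|_{L^1}\lesssim \|Kf\|_{L^1}+k^2\|Tf\|_{L^1}\lesssim (1+k^{1/2})\|f\|_{L^2}$, the second because $v''$ is supported strictly inside $(\vartheta_1,\vartheta_2)$ while the critical layer sits in $I_E$, so the logarithm is smooth and bounded on $\operatorname{supp} v''$. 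Streamlining your argument to use this directly would remove the internal inconsistency and make the ``Jia-type interpolation'' and near/far split you invoke unnecessary. Your flag that the denominator should read $v'(y)$ rather than $v(y)$ is a fair catch --- the primitive of $v''(y)f(y)/(v(y)-v(y_0)\pm i\ep)$ is $(v''(y)/v'(y))f(y)\log(v(y)-v(y_0)\pm i\ep)$ modulo integrable terms --- though as just noted the estimate is insensitive to this for $y_0\in I_E$.
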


\begin{proof}
For the first part of the lemma, thanks to  \cite[Lemma 3.1]{JiaGev20}, we shall only prove
\begin{align*}
\left \Vert \int_0^2 G_k(y,z)  \frac{\P(z)}{(v(z) - v(y_0) \pm i\ep)^2} f(z) \d z \right\Vert_{H_k^1}\lesssim |k|^{-\frac13}\Vert f \Vert_{H_k^1}.
\end{align*}
The estimate \eqref{eq:L2boundGk} together with $\P(y_0) = \P'(y_0)=0$ and $\P\in C^2$, and the usual Cauchy-Schwarz inequality gives the desired bound. The second statement follows similarly, appealing to  \cite[Lemma 3.1]{JiaGev20} we just need to show that 
\begin{align*}
\left \Vert \partial_y \int_0^2 G_k(y,z)  \frac{\P(z)}{(v(z) - v(y_0) \pm i\ep)^2} f(z) \d z \right\Vert_{W_y^{1,1}(0,2)}\lesssim |k|^{\frac12}\Vert f \Vert_{H_k^1}.
\end{align*}
Now, \eqref{eq:L2boundGk} directly gives 
\begin{align*}
\left \Vert \partial_y \int_0^2 G_k(y,z)  \frac{\P(z)}{(v(z) - v(y_0) \pm i\ep)^2} f(z) \d z \right\Vert_{L^1_y(0,2)}\lesssim |k|^{-\frac12}\Vert f \Vert_{H_k^1},
\end{align*}
while for the $\dot{W}^{1,1}$ estimate, we note that $\partial_y^2 G_k(y,z) = k^2g_k(y,z) + \delta(y-z)$. Hence,
\begin{align*}
\left \Vert \partial^2_y \int_0^2 G_k(y,z)  \frac{\P(z)}{(v(z) - v(y_0) \pm i\ep)^2} f(z) \d z \right\Vert_{L^1_y(0,2)} &\leq \left \Vert \frac{\P(y)}{(v(y) - v(y_0) \pm i\ep)^2}f(y) \right\Vert_{L^1_y(0,2)} \\
&\quad + k^2 \left \Vert \int_0^2  \frac{G_k(y,z) \P(z)f(z)}{(v(z) - v(y_0) \pm i\ep)^2}  \d z \right\Vert_{L^1_y(0,2)} \\
&\lesssim k^\frac12\Vert f \Vert_{H_k^1}
\end{align*}
and the lemma follows.
\end{proof}

The following $H_k^1$ bound on $R_{E,0,k,\ep}^\pm$ is immediate from \eqref{eq:L2boundGk} as well.
\begin{lemma}\label{lemma:RE0mapsL2toH1k}
Let $k\geq 1$, $y_0\in I_E$ and $\ep>0$. Then,
\begin{align*}
\Vert R_{E,0,k,\ep}^\pm f(y,y_0) \Vert_{H_k^1} \lesssim k^{-\frac32}\Vert f \Vert_{L^2(0,2)},
\end{align*}
for all $f\in L^2(0,2)$.
\end{lemma}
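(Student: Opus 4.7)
The statement is straightforward since the $m=0$ case of the operator $R_{E,0,k,\ep}^\pm$ is simply convolution against the Green's function $G_k(y,z)$: the weight $(v(z)-v(y_0)\pm i\ep)^{-m}$ disappears, and there is nothing singular to control. The plan is therefore to apply Cauchy--Schwarz in $z$ directly, using the $L^2_z$ bounds \eqref{eq:L2boundGk} on $G_k$ and $\partial_y G_k$, and then integrate the resulting pointwise inequality in $y$.

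Concretely, I would first estimate the $L^2_y$ norm. Writing
\begin{equation*}
    R_{E,0,k,\ep}^\pm f(y,y_0) = \int_0^2 G_k(y,z)\, f(z)\,\d z,
\end{equation*}
Cauchy--Schwarz in $z$ combined with \eqref{eq:L2boundGk} at $\alpha=0$ gives $|R_{E,0,k,\ep}^\pm f(y,y_0)|\lesssim k^{-3/2}\|f\|_{L^2}$ pointwise in $y$, and integrating in $y\in[0,2]$ yields $\|R_{E,0,k,\ep}^\pm f(\cdot,y_0)\|_{L^2}\lesssim k^{-3/2}\|f\|_{L^2}$.

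For the derivative piece, I would differentiate under the integral sign, which is legitimate since the singularity of $\partial_y G_k(y,z)$ at $y=z$ is only a jump discontinuity. Applying Cauchy--Schwarz in $z$ again and now using \eqref{eq:L2boundGk} at $\alpha=1$ produces $|\partial_y R_{E,0,k,\ep}^\pm f(y,y_0)|\lesssim k^{-1/2}\|f\|_{L^2}$. Multiplying by $k^{-1}$ and integrating in $y$ gives $k^{-1}\|\partial_y R_{E,0,k,\ep}^\pm f(\cdot,y_0)\|_{L^2}\lesssim k^{-3/2}\|f\|_{L^2}$. Summing the two contributions as per the definition of the $H_k^1$ norm closes the estimate. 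There is no real obstacle here; everything reduces to the fixed-frequency bounds already recorded in \eqref{eq:L2boundGk}.
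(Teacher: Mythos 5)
Your proposal is correct and matches the paper's (unspoken) approach: the text says the bound is "immediate from \eqref{eq:L2boundGk}," and what you have written out — Cauchy--Schwarz in $z$ against the pointwise-in-$y$ $L^2_z$ bounds on $G_k$ and $\partial_y G_k$, followed by integration over the fixed-length interval in $y$ — is exactly that implicit argument made explicit.
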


Similarly, we have the following.

\begin{lemma}[Lemma 3.1 in \cite{JiaGev20}]\label{lemma:RE1mapsH1ktoH1k}
Let $k\geq 1$, $y_0\in I_E$ and $\ep>0$. Then,
\begin{align*}
\Vert R_{E,1,k,\ep}^\pm f(y,y_0) \Vert_{H_k^1} \lesssim k^{-\frac13}\Vert f \Vert_{H_k^1}
\end{align*}
for all $f\in H_k^1$.
\end{lemma}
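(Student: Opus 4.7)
The plan is to reproduce the argument of Lemma~3.1 in~\cite{JiaGev20}, adapted to the present notation, and to keep only the ingredients genuinely needed in our setting. Since $y_0\in I_E$ we have $v''(y_0)=0=\mathrm{P}(y_0)$, so the Green's function $G_k(y,z)$ in \eqref{eq:defGk} together with the single Cauchy factor $(v(z)-v(y_0)\pm i\ep)^{-1}$ is all that must be controlled; there is no critical-layer contribution beyond the simple pole. The two basic facts that drive the estimate are the pointwise decay $|\partial_z^\alpha G_k(y,z)|\lesssim k^{-1+\alpha}e^{-ck|y-z|}$ for $\alpha=0,1$, together with the Sobolev-type embedding $\|f\|_{L^\infty}\lesssim k^{1/2}\|f\|_{H^1_k}$ that follows from Lemma~\ref{lemma:LinfH1bound}.

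First I would introduce a cutoff length $\delta>0$ to be optimized, and split
\begin{equation*}
R_{E,1,k,\ep}^\pm f(y,y_0) = \underbrace{\int_{|z-y_0|<\delta} G_k(y,z)\frac{f(z)}{v(z)-v(y_0)\pm i\ep}\,\d z}_{=:A_\delta(y,y_0)} + \underbrace{\int_{|z-y_0|\geq \delta} G_k(y,z)\frac{f(z)}{v(z)-v(y_0)\pm i\ep}\,\d z}_{=:B_\delta(y,y_0)}.
\end{equation*}
For $A_\delta$ I would use the crude bounds $|G_k(y,z)|\lesssim k^{-1}$ and $|\partial_y G_k(y,z)|\lesssim 1$ together with the logarithmic integrability of $(v(z)-v(y_0)\pm i\ep)^{-1}$ uniformly in $\ep$, and distribute the $L^\infty$ cost onto $\|f\|_{L^\infty}\lesssim k^{1/2}\|f\|_{H^1_k}$. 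This yields $\|A_\delta\|_{H^1_k}\lesssim k^{-1/2}|\log\delta|\,\|f\|_{H^1_k}$.

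For the non-local piece $B_\delta$, since $|v(z)-v(y_0)|\gtrsim \delta$ on its support, I would integrate by parts once in $z$, writing $(v(z)-v(y_0)\pm i\ep)^{-1}=v'(z)^{-1}\partial_z\log(v(z)-v(y_0)\pm i\ep)$ and moving the $z$-derivative onto $G_k f/v'$. The boundary contributions at $|z-y_0|=\delta$ come with a $|\log\delta|$ factor multiplied by $|G_k|\lesssim k^{-1}$ and $\|f\|_{L^\infty}\lesssim k^{1/2}\|f\|_{H^1_k}$, while the remaining integral has an integrable logarithm against $|\partial_z G_k|+|G_k|$ paired with $|f|+|f'|$, yielding (after Cauchy--Schwarz and the $L^2_z$ bound \eqref{eq:L2boundGk}) an estimate of the form $\|B_\delta\|_{H^1_k}\lesssim \delta\|f\|_{H^1_k}+k^{-1}|\log\delta|\,\|f\|_{H^1_k}$. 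Combining both contributions gives
\begin{equation*}
\|R_{E,1,k,\ep}^\pm f\|_{H^1_k}\lesssim \bigl(\delta+k^{-1/2}|\log\delta|\bigr)\|f\|_{H^1_k},
\end{equation*}
and choosing $\delta\sim k^{-1/3}$ balances the two terms modulo logarithms, producing $k^{-1/3}$.

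I expect the main obstacle to be tracking the boundary contributions produced by the integration by parts in a way that is uniform in $\ep\in(0,1)$ and $y_0\in I_E$: the boundary of the cut-off at $|z-y_0|=\delta$ interacts with the $\pm i\ep$ regularization and has to be handled by checking that the relevant logarithms behave like $|\log\delta|$ once one integrates the absolute value of $1/(v(z)-v(y_0)\pm i\ep)$ on a set of size $\delta$. Once this is verified, the optimization in $\delta$ is routine and produces the claimed $k^{-1/3}$ gain; the argument is genuinely the same as the one in~\cite{JiaGev20}, which we invoke directly rather than redo in full detail.
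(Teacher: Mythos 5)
Note first that the paper does not actually prove this lemma; the bracketed label signals that the estimate is imported verbatim as Lemma~3.1 of \cite{JiaGev20} and used as a black box, so there is no internal proof of the present paper to compare against. Reviewed on its own terms, your sketch contains a real gap in the estimate of the local piece $A_\delta$.

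You bound $A_\delta$ by taking absolute values, distributing $|G_k|\lesssim k^{-1}$ and $\|f\|_{L^\infty}\lesssim k^{1/2}\|f\|_{H^1_k}$, and relying on ``the logarithmic integrability of $(v(z)-v(y_0)\pm i\ep)^{-1}$ uniformly in $\ep$.'' That last claim is false once absolute values are taken:
\begin{equation*}
\int_{|z-y_0|<\delta}\frac{\d z}{\bigl|v(z)-v(y_0)\pm i\ep\bigr|}\asymp \log\!\Bigl(\frac{\delta}{\ep}\Bigr),
\end{equation*}
which blows up as $\ep\to 0^+$. What is uniformly bounded (in fact $O(1)$ without any $\log\delta$) is $\int_{|z-y_0|<\delta}(v(z)-v(y_0)\pm i\ep)^{-1}\,\d z$, but only because the odd real part of the kernel cancels against the symmetry of the interval; the reduction to $L^\infty\times L^1$ destroys that cancellation. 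The standard fix — which is precisely what \cite{JiaGev20} does — is to first write $G_k(y,z)f(z)=G_k(y,y_0)f(y_0)+\bigl[G_k(y,z)f(z)-G_k(y,y_0)f(y_0)\bigr]$: the second summand has a numerator vanishing linearly at $z=y_0$, so the division by $v(z)-v(y_0)\pm i\ep$ yields a bounded integrand uniformly in $\ep$ and the crude $L^\infty$ argument applies; the first summand is the genuinely singular term and must be handled by writing $(v(z)-v(y_0)\pm i\ep)^{-1}=v'(z)^{-1}\partial_z\log(v(z)-v(y_0)\pm i\ep)$, integrating by parts (the boundary terms vanish since $G_k(y,0)=G_k(y,2)=0$), and then invoking the $L^1$-bound \eqref{eq:L1boundGklog} together with $|f(y_0)|\lesssim k^{1/2}\|f\|_{H_k^1}$.

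A secondary sign that the $\delta$-optimisation is not the actual mechanism: the bound you arrive at, $\delta + k^{-1/2}|\log\delta|$, is not balanced at $\delta\sim k^{-1/3}$ — it is minimised near $\delta\sim k^{-1/2}$, which would give $k^{-1/2}\log k$, strictly smaller than $k^{-1/3}$. That is not a logical error (an upper bound is an upper bound), but it suggests the intermediate estimates do not line up with the genuine proof; fixing $A_\delta$ as above is what needs attention.
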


\subsection{The limiting absorption principle in the non-stratified regime}
Next, we show the main limiting absorption principle coercive bound:

\begin{proposition}\label{prop:LAPHk1}
There exists $\kappa>0$ such that 
\begin{equation}
\Vert f\Vert_{H^1_k} \leq \kappa \Vert f + (T_{E,k,\ep}^\pm f)(\cdot,y_0)\Vert_{H^1_k}
\end{equation}
for all $y_0\in I_E$, $k\geq 1$, $\ep>0$ and $f\in H_k^1(0,2)$.
\end{proposition}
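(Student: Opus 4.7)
The plan is to combine a direct high-frequency estimate with a bounded-frequency compactness argument that reduces matters to the spectral results of Section~\ref{sec:spectrum}, in the same spirit as Propositions~\ref{prop:LAPZk} and~\ref{prop:LAPLZk}.

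First, I would dispose of the large-$k$ regime. Lemma~\ref{lemma:TEmapsH1ktoH1k} yields $\Vert T_{E,k,\ep}^\pm f\Vert_{H_k^1}\le C_0 k^{-1/3}\Vert f\Vert_{H_k^1}$ uniformly in $\ep>0$ and $y_0\in I_E$. Choosing $k_0$ so that $C_0 k_0^{-1/3}\le 1/2$, the operator $I+T_{E,k,\ep}^\pm$ is invertible on $H_k^1$ with inverse norm at most $2$ for every $k\ge k_0$, so the coercive bound holds with $\kappa=2$. The problem thus reduces to the finite set of frequencies $k\in\{1,\dots,k_0-1\}$.

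For these bounded frequencies I would argue by contradiction. Suppose the claim fails: then there exist sequences $k_j\in\{1,\dots,k_0-1\}$, $y_j\in I_E$, $\ep_j>0$ and $f_j\in H_{k_j}^1$ with $\Vert f_j\Vert_{H_{k_j}^1}=1$ and $\Vert f_j+T_{E,k_j,\ep_j}^\pm f_j\Vert_{H_{k_j}^1}\to 0$. Passing to subsequences, $k_j\equiv k_\ast\in\N$ for all large $j$, $y_j\to y_\ast\in\overline{I_E}$, and $\ep_j\to\ep_\ast\in[0,\infty)$. Writing $g_j:=f_j+T_{E,k_\ast,\ep_j}^\pm f_j$, we have $\Vert g_j\Vert_{H_{k_\ast}^1}\to 0$ and $f_j=g_j-T_{E,k_\ast,\ep_j}^\pm f_j$. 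Since $\text{supp}\, v''\cup\text{supp}\,\mathrm{P}\subset(\vartheta_1,\vartheta_2)$ and $y_j\in I_E$, the $W^{1,1}$ control of \eqref{eq:pyTEboundW11} shows that $T_{E,k_\ast,\ep_j}^\pm f_j$ is uniformly bounded in a space compactly embedded in $H^1(0,2)$. Extracting once more, $f_j\to f_\ast$ in $H_{k_\ast}^1$ with $\Vert f_\ast\Vert_{H_{k_\ast}^1}=1$, and dominated convergence in the definition of $T_{E,k_\ast,\ep_j}^\pm$ yields $f_\ast+T_{E,k_\ast,\ep_\ast}^\pm f_\ast(\cdot,y_\ast)=0$ together with $f_\ast(0)=f_\ast(2)=0$.

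Applying $\Delta_{k_\ast}$ to this identity shows that $f_\ast$ solves the homogeneous Taylor--Goldstein equation at the spectral parameter $\lambda_\ast:=v(y_\ast)\mp i\ep_\ast$, so $\lambda_\ast$ would be an eigenvalue of $L_{k_\ast}$. A three-way case split produces the contradiction: if $\ep_\ast>0$ then $\Im\lambda_\ast\ne 0$, contradicting Proposition~\ref{prop:noimagspectrum}; if $\ep_\ast=0$ and $y_\ast\in I_E\setminus\{\vartheta_1,\vartheta_2\}$ then $\Re\lambda_\ast\notin[v(\vartheta_1),v(\vartheta_2)]$, contradicting Proposition~\ref{prop:nonstratifiedresolvent}; and if $\ep_\ast=0$ and $y_\ast\in\{\vartheta_1,\vartheta_2\}$ then $\lambda_\ast$ is an embedded eigenvalue at the edge of the essential spectrum, ruled out by Lemma~\ref{lemma:boundarysuppnoteigen}. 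The delicate step is the compactness claim when $y_\ast\in\{\vartheta_1,\vartheta_2\}$: the kernel acquires logarithmic and quadratic singularities at $z=y_\ast$, but the support assumptions on $v''$ and $\mathrm{P}$, together with $\mathrm{P}(\vartheta_n)=\mathrm{P}'(\vartheta_n)=0$, render these integrable, and the explicit logarithmic correction isolated in \eqref{eq:pyTEboundW11} retains enough regularity for Rellich compactness to go through.
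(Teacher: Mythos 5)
Your proposal follows the same contradiction–compactness strategy as the paper's proof and lands on the same spectral results (Proposition~\ref{prop:nonstratifiedresolvent}, Lemma~\ref{lemma:boundarysuppnoteigen}, and, in your $\ep_*>0$ branch, Proposition~\ref{prop:noimagspectrum}), so it is substantively correct and essentially equivalent. The minor structural differences are worth noting. You make the large-$k$ reduction explicit via a Neumann series, and you allow $\ep_j\to\ep_*\ge 0$ with a three-way case split; the paper quietly assumes $\ep_j\to 0$ from the outset and absorbs all $y_*\in I_E$ into a single line of reasoning by producing an $L^2$ eigenfunction pair $(\mathrm H,\Upsilon)$ and then invoking both spectral statements at once. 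Your version is in that sense slightly more careful about the case $\ep_*>0$. Where you are a bit terse compared to the paper is the passage to the limit: the paper writes the limiting equation in distributional form, keeps the Sokhotski--Plemelj delta term $iC_{y_0}v''(y_0)f(y_0)\,\delta(y-y_0)$ explicitly, and then uses an energy/imaginary-part argument to kill it, whereas you invoke ``dominated convergence'' directly. Your shortcut is in fact correct here, but only because $v''(\vartheta_n)=0$ and $\P(\vartheta_n)=\P'(\vartheta_n)=0$ make the Cauchy kernels uniformly dominated and the would-be delta coefficient vanish; for a generic kernel this step would fail, so it is worth stating that this is a special feature of H$\P$/H$v$ rather than a generic fact about $\frac{1}{x-a\pm i\ep}$. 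You also jump from ``$f_*$ solves the Taylor--Goldstein eigenvalue problem'' to ``$\lambda_*\in\sigma(L_{k_*})$'' without pausing to check that the associated pair $(\Delta_{k_*}f_*,\ \P\,(v-\lambda_*)^{-1}f_*)$ actually lies in $\mathcal D(L_{k_*})$; this again works because the support assumptions keep both components in $L^2$ and supported in $(\vartheta_1,\vartheta_2)$, but it is the step the paper spells out through the construction of $(\mathrm H, \Upsilon)$ in \eqref{eq:defHeigenfunction}--\eqref{eq:HUpsiloneigen}.
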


\begin{proof}
We argue as in the proof of \cite[Lemma 3.2]{JiaGev20}: by contradiction, assume there exist sequences $k_j\in \mathbb{Z} \setminus \lbrace 0 \rbrace$, $y_j\in I_E$, $\ep_j>0$ and functions $f_j\in H_{k_j}^1$ with $\Vert f_j \Vert_{H_{k_j}^1(0,2)}=1$, for all $j\geq 1$, such that 
\begin{equation*}
k_j \rightarrow k_0\in (\mathbb{Z}\setminus \lbrace 0 \rbrace) \cup \lbrace \pm \infty \rbrace, \quad y_j \rightarrow y_0\in I_E, \quad \ep_j\rightarrow 0
\end{equation*}
and
\begin{equation}\label{eq:limH1norm}
\Vert f_j + T_{E,k_j,\ep_j}^\pm f_j(\cdot,y_j)\Vert_{H_{k_j}^1} \rightarrow 0
\end{equation}
as $j\rightarrow \infty$. Thanks to \eqref{eq:H1boundTkyep} and \eqref{eq:limH1norm}, it is clear that $|k_j|\lesssim 1$, and thus $k_0\in \Z\setminus \lbrace 0 \rbrace$. Since $\Vert g_j \Vert_{H_{k_j}^1}=1$, the embedding $W^{1,1}(0,2)\rightarrow L^2(0,2)$ is compact and we have the estimate \eqref{eq:pyTEboundW11}, we deduce that $T_{E,k_0,\ep_j}f(\cdot,y_j)$ converges in $H^1_{k_0}$ up to subsequences. Hence, from  \eqref{eq:limH1norm} we see that $f_j\rightarrow f$ in $H^1_{k_0}$, with $\Vert f \Vert_{H^1_{k_0}} = 1$. Then, since 
\begin{align*}
T_{E,k_0,\ep_j}^\pm f_j(y,y_j) &= \int_0^2 \partial_z \left( G_{k_0}(y,z)\frac{v''(z)}{v'(z)}f_j(z) \right) \log(v(z) - v(y_j) \pm i\ep_j) \d z \\
&\quad + \int_0^2 G_{k_0}(y,z)\frac{\P(z)}{(v(z) - v(y_j) \pm i\ep_j)^2}f_j(z) \d z
\end{align*}
and  for $\alpha=0,1$ uniformly in $y_j\in I_E$ and $\ep_j>0$ we see that 
\begin{equation*}
f(y) - \lim_{j\rightarrow \infty}  \int_0^2 G_{k_0}(y,z) \frac{v''(z)}{v(z) - v(y_j) + i\ep_j}f(z) \d z +\int_0^2 G_{k_0}(y,z)\frac{P(z)}{(v(z) - v(y_0) )^2}  f(z) \d z = 0
\end{equation*}
almost everywhere in $[0,2]$. Then, applying $\partial_y^2 - k^2$ we reach
\begin{align*}
\partial_y^2 f(y) - k^2 f(y) - \lim_{j\rightarrow \infty} (v(y) - v(y_j))\frac{v''(y) f(y)}{(v(y) - v(y_j))^2 + \ep_j^2} &+i C_{y_0} v''(y_0) f(y_0) \delta(y-y_0) \\
&+ \frac{\P(y)}{(v(y) - v(y_0))^2}f(y) = 0
\end{align*}
in the sense of distributions for $y\in (0,2)$. Multiplying by $\overline{f(y)}$, integrating by parts over $(0,2)$ and taking the imaginary part we reach 
\begin{align*}
0 = v''(y_0)f(y_0)
\end{align*}
with also $v''f\in H_{k_0}^1$. Therefore, since $\frac{v''(y)f(y)}{v(y) - v(y_0)}\in L^2$ and $\frac{P(y)}{(v(y) - v(y_0))^2}\in L^\infty$, we have \begin{equation}\label{eq:defHeigenfunction}
\mathrm{H}(y) := \frac{v''(y)}{v(y) - v(y_0)}f(y) - \frac{\P(y)}{(v(y) - v(y_0))^2}f(y)\in L^2_y(0,2)
\end{equation}
and $\Delta_k f = \mathrm H$. Moreover, we also define
\begin{equation}\label{eq:defUpsilon}
\Upsilon(y) := \frac{\mathcal{P}(y)}{v(y) - v(y_0)}f(y) \in L^2_y(0,2)
\end{equation}
and we further note that
\begin{equation}\label{eq:HUpsiloneigen}
\begin{split}
(v(y) - v(y_0)) \mathrm H(y) - \frac{v''(y)}{v(y) - v(y_0)}\int_0^2 G_{k_0}(y,z)\mathrm H(z) \d z  + \Upsilon(y) &= 0, \\
(v(y) - v(y_0))\Upsilon(y) - \mathcal{P}(y)\int_0^2 G_{k_0}(y,z)\mathrm H(z) \d z  &=0.
\end{split}
\end{equation}
This shows that the pair $(\mathrm H,\Upsilon)$ is an $L^2$ eigenfunction of the linearised operator, with embedded eigenvalue $v(y_0)$, a contradiction with Proposition~\ref{prop:nonstratifiedresolvent} and Lemma~\ref{lemma:boundarysuppnoteigen}.
\end{proof}

As in the limiting absorption principle for $Z_k$ and $LZ_k$, the coercive estimate of Proposition~\ref{prop:LAPHk1} holds if the linearised operator $L_k$ does not have embedding eigenvalues in the non-stratified region.

\subsection{Sobolev regularity in the non-stratified regime}

With Proposition \ref{prop:LAPHk1} at hand, Lemma \ref{lemma:RE0mapsL2toH1k}, Lemma \ref{lemma:RE1mapsH1ktoH1k} and \eqref{eq:LAPTGvarphi} yield

\begin{proposition}\label{prop:Hk1varphi}
Let $\varphi_{k,\ep}^\pm$ solve \eqref{eq:LAPTGvarphi}. Then,
\begin{align*}
\Vert \varphi_{k,\ep}^\pm \Vert_{H_k^1}\lesssim k^{-\frac13}\Vert w_k^0\Vert_{H_k^1} + k^{-\frac32}\Vert q_k^0 \Vert_{L^2}
\end{align*}
for all $k\geq 1$, $y_0\in I_E$ and $\ep>0$. 
\end{proposition}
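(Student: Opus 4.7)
The proof of Proposition \ref{prop:Hk1varphi} is essentially a one-line consequence of the three results proved immediately above it, so the plan is short. The idea is to apply the coercive estimate of Proposition \ref{prop:LAPHk1} to bound $\Vert \varphi_{k,\ep}^\pm \Vert_{H_k^1}$ by $\Vert \varphi_{k,\ep}^\pm + T_{E,k,\ep}^\pm \varphi_{k,\ep}^\pm \Vert_{H_k^1}$, then use the integral identity \eqref{eq:LAPTGvarphi} to replace the right-hand side by the $H_k^1$ norm of $R_{E,1,k,\ep}^\pm w_k^0 + R_{E,0,k,\ep}^\pm q_k^0$ (written in terms of the regularised source $w_k^0, q_k^0$ coming from \eqref{eq:introTGvarphi}), and finally invoke the two solution-operator estimates established earlier in the section.

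More precisely, the first step gives
\begin{align*}
\Vert \varphi_{k,\ep}^\pm \Vert_{H_k^1} \leq \kappa \Vert \varphi_{k,\ep}^\pm + T_{E,k,\ep}^\pm \varphi_{k,\ep}^\pm \Vert_{H_k^1},
\end{align*}
with $\kappa>0$ independent of $k\geq 1$, $y_0\in I_E$ and $\ep>0$. Then \eqref{eq:LAPTGvarphi} identifies this with $\kappa \Vert R_{E,1,k,\ep}^\pm w_k^0 + R_{E,0,k,\ep}^\pm q_k^0 \Vert_{H_k^1}$, which by the triangle inequality splits into two pieces. For the first piece, Lemma \ref{lemma:RE1mapsH1ktoH1k} yields the bound $k^{-1/3}\Vert w_k^0 \Vert_{H_k^1}$; for the second, Lemma \ref{lemma:RE0mapsL2toH1k} gives $k^{-3/2}\Vert q_k^0 \Vert_{L^2}$. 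Combining these delivers the stated inequality.

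The only point worth checking carefully is that the right-hand side of \eqref{eq:LAPTGvarphi} is indeed given in terms of the regularised data $(w_k^0,q_k^0)$ rather than the original data $(\omega_k^0,\varrho_k^0)$, since only the former produce source terms compatible with the mapping properties of $R_{E,0,k,\ep}^\pm$ and $R_{E,1,k,\ep}^\pm$ into $H_k^1$; this is precisely the reason the regularisation \eqref{eq:defvarphi} was introduced. There is no genuine obstacle: the heavy lifting has already been done in Proposition \ref{prop:LAPHk1} (via the absence of embedded eigenvalues in the non-stratified region, guaranteed by Proposition \ref{prop:nonstratifiedresolvent} and Lemma \ref{lemma:boundarysuppnoteigen}) and in the two mapping lemmas. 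No further computations or compactness arguments are required.
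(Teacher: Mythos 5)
Your proof is correct and follows the same route as the paper: apply the coercive bound of Proposition \ref{prop:LAPHk1}, rewrite via the integral identity \eqref{eq:LAPTGvarphi}, and finish with Lemmas \ref{lemma:RE0mapsL2toH1k} and \ref{lemma:RE1mapsH1ktoH1k}. You were also right to flag that \eqref{eq:LAPTGvarphi} should carry the regularized data $(w_k^0,q_k^0)$ on its right-hand side (consistent with \eqref{eq:introTGvarphi} and with the statement of the proposition), not $(\omega_k^0,\varrho_k^0)$ as typeset in the paper; that appears to be a slip in the displayed equation, and your reading is the correct one.
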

Since the initial data $\omega_k^0$ and $\varrho_k^0$ is supported in the stratified regime, we next show that the spectral density function in the non-stratified regime vanishes as $\ep\rightarrow 0$.
\begin{lemma}\label{lemma:vanishingpsiIE}
Let $k\geq 1$. Then,
\begin{equation*}
\lim_{\ep \rightarrow 0}\psi_{k,\ep}^-(y,y_0) - \psi_{k,\ep}^+(y,y_0) = 0,
\end{equation*}
for all $y_0\in I_E$ and all $y\in [0,2]$.
\end{lemma}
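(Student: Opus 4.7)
The plan is to show that $\varphi_{k,\ep}^+$ and $\varphi_{k,\ep}^-$ converge as $\ep\to 0^+$ to a common $H_k^1$ limit; since $\psi_{k,\ep}^\pm = \varphi_{k,\ep}^\pm - \varrho_k^0$ by the regularization \eqref{eq:defvarphi}, the lemma will follow because $\psi_{k,\ep}^- - \psi_{k,\ep}^+ = \varphi_{k,\ep}^- - \varphi_{k,\ep}^+$. The key observation underlying the whole argument is that, for $y_0 \in I_E = [0,\vartheta_1]\cup[\vartheta_2,2]$, hypotheses H$v$ and H$\P$ together with the compact support inside $(\vartheta_1,\vartheta_2)$ of $\omega_k^0$ and $\varrho_k^0$ ensure the existence of a uniform $d_0>0$ such that $y_0$ lies at distance at least $d_0$ from $\mathrm{supp}\,\omega_k^0 \cup \mathrm{supp}\,\varrho_k^0 \cup \mathrm{supp}\, v'' \cup \mathrm{supp}\,\P$.

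First, I would take the difference of \eqref{eq:LAPTGvarphi} for the two signs and write, with $\delta\varphi_{k,\ep} := \varphi_{k,\ep}^- - \varphi_{k,\ep}^+$,
\begin{equation*}
\delta\varphi_{k,\ep} + T_{E,k,\ep}^-\delta\varphi_{k,\ep} = (T_{E,k,\ep}^+ - T_{E,k,\ep}^-)\varphi_{k,\ep}^+ + (R_{E,1,k,\ep}^- - R_{E,1,k,\ep}^+)\omega_k^0 + (R_{E,0,k,\ep}^- - R_{E,0,k,\ep}^+)\varrho_k^0.
\end{equation*}
By the key observation, every kernel factor of the form $(v(z)-v(y_0)\pm i\ep)^{-m}$ appearing in $T_{E,k,\ep}^\pm$ and $R_{E,m,k,\ep}^\pm$ is paired with a function supported in $\{z: |v(z)-v(y_0)|\geq c_0 d_0\}$, on which these denominators are uniformly bounded below by $c_0 d_0$. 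Dominated convergence then yields pointwise (and in fact $L^\infty_z$) convergence of the kernel differences to zero with quantitative rate $O(\ep)$, from which I would deduce that each of the three source terms on the right-hand side tends to zero in $H_k^1$ as $\ep\to 0$. The factor $\varphi_{k,\ep}^+$ in the first term is kept under control by the uniform bound $\|\varphi_{k,\ep}^+\|_{H_k^1}\lesssim \|w_k^0\|_{H_k^1}+\|q_k^0\|_{L^2}$ provided by Proposition~\ref{prop:Hk1varphi}.

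Finally, I would apply the coercive limiting absorption principle of Proposition~\ref{prop:LAPHk1} to the displayed identity, concluding that $\|\delta\varphi_{k,\ep}\|_{H_k^1}\to 0$. The one-dimensional Sobolev embedding $H^1(0,2)\hookrightarrow C([0,2])$ then upgrades this to pointwise vanishing of $\delta\varphi_{k,\ep}(y,y_0)$ for every $y\in[0,2]$, which gives the claim. I do not anticipate any serious obstacle: the only point requiring minor care is verifying the $H_k^1$-smallness of the kernel differences (and that the bounding constants do not blow up as $\ep\to 0$), but this is routine once the positive distance $d_0$ between $y_0$ and the combined supports has been identified.
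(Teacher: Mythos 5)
The structure of your argument (difference of the fixed-point identities, vanishing of the source, coercive LAP, Sobolev embedding) is a legitimate alternative to the paper's proof, which instead passes to a distributional limit and derives a contradiction with the absence of embedded eigenvalues via the pair $(\mathrm H,\Upsilon)$. But there is a genuine gap in the step you call routine.

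Your ``key observation'' asserts a uniform $d_0>0$ from $y_0\in I_E$ to $\mathrm{supp}\,\omega_k^0\cup\mathrm{supp}\,\varrho_k^0\cup\mathrm{supp}\,v''\cup\mathrm{supp}\,\P$. This is true for $\omega_k^0,\varrho_k^0$ (compactly supported \emph{inside} $(\vartheta_1,\vartheta_2)$) and for $v''$ (whose closed support sits inside the open interval), but it is \emph{false} for $\P$: by H$\P$, $\P>0$ on all of $(\vartheta_1,\vartheta_2)$, so $\mathrm{supp}\,\P=[\vartheta_1,\vartheta_2]$ and $\mathrm{dist}(y_0,\mathrm{supp}\,\P)=0$ whenever $y_0=\vartheta_1$ or $y_0=\vartheta_2$, both of which lie in $I_E$. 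At these points the kernel difference
\begin{equation*}
\frac{1}{(v(z)-v(y_0)-i\ep)^2}-\frac{1}{(v(z)-v(y_0)+i\ep)^2}
=\frac{4i\ep\,(v(z)-v(y_0))}{\bigl((v(z)-v(y_0))^2+\ep^2\bigr)^2}
\end{equation*}
is not uniformly small as $\ep\to 0$ on $\mathrm{supp}\,\P$; in fact it has size $\ep^{-2}$ near $|v(z)-v(y_0)|\sim\ep$. Your dominated-convergence invocation therefore fails without further input. What rescues the argument — and what the paper explicitly invokes (``$\P\equiv 0$ in $I_E$ with $\P\in C^2$'') — is the \emph{quadratic vanishing} of $\P$ at $\vartheta_1,\vartheta_2$: since $\P\in C^2$ and $\P\equiv 0$ on $I_E$, one has $\P(\vartheta_i)=\P'(\vartheta_i)=0$, whence $\P(z)\lesssim (v(z)-v(y_0))^2$ for all $z\in\mathrm{supp}\,\P$ and $y_0\in I_E$. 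Multiplying the kernel difference by $\P(z)$ then yields a quantity that is uniformly bounded and tends to zero pointwise for $z\neq y_0$, after which your $L^2_z$/dominated-convergence step and the $H^1_k$ bound via $G_k$ go through. As a side remark, $R_{E,0,k,\ep}^\pm$ carries $m=0$ and involves no $\ep$, so $(R_{E,0,k,\ep}^--R_{E,0,k,\ep}^+)\varrho_k^0$ is identically zero — no argument needed there.

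So: the LAP route is viable and even somewhat cleaner than the paper's (it gives full-sequence $H^1_k$ convergence directly, not just a subsequential distributional limit plus contradiction), but you must replace the false positive-distance claim for $\P$ with the $C^2$-vanishing argument; otherwise the estimate on the $\P$ part of $(T_{E,k,\ep}^+-T_{E,k,\ep}^-)\varphi_{k,\ep}^+$ does not hold.
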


\begin{proof}
Since $\psi_{k,\ep}^\pm(y,y_0) = \varphi_{k,\ep}^\pm(y,y_0) - \varrho_k^0(y)$, we have that
\begin{align*}
\psi_{k,\ep}(y,y_0) := \psi_{k,\ep}^-(y,y_0) - \psi_{k,\ep}^+(y,y_0) = \varphi_{k,\ep}^-(y,y_0) - \varphi_{k,\ep}^+(y,y_0)
\end{align*}
and it is straightforward to see that $\psi_{k,\ep}(y,y_0)$ satisfies
\begin{align*}
\text{TG}_{k,\ep}^-\psi_{k,\ep}(y,y_0) &= \frac{2i\ep}{(v(y) - v(y_0))^2 + \ep^2}w_k^0(y) + \frac{2i\ep v''(y)}{(v(y) - v(y_0))^2 + \ep^2}\psi_{k,\ep}^+(y,y_0) \\
&-\P(y)\frac{2i\ep(v(y) - v(y_0))}{((v(y)-v(y_0))^2 + \ep^2)^2}\psi_{k,\ep}^+(y,y_0).
\end{align*}
Observing that $(\sp \omega_k^0 \cup \sp v'' )\cap I_E = \emptyset$ and $\P\equiv 0$ in $I_E$ with $\P\in C^2$ we deduce that
\begin{align*}
\psi_{k,0}(y,y_0) := \lim_{\ep\rightarrow 0}\psi_{k,\ep}(y,y_0) \in L^2(0,2)
\end{align*}
satisfies
\begin{align*}
\D_k\psi_{k,0}(y,y_0) - \frac{v''(y)}{v(y) - v(y_0)}\psi_{k,0}(y,y_0) + \frac{\P(y)}{(v(y) - v(y_0))^2}\psi_{k,0}(y,y_0) = 0
\end{align*}
in the sense of distributions. As before, setting 
\begin{align*}
\mathrm H(y,y_0) &:= \frac{v''(y)}{v(y) - v(y_0)}\psi_{k,0}(y,y_0) - \frac{\P(y)}{(v(y) - v(y_0))^2}\psi_{k,0}(y,y_0), \\ 
\Upsilon(y,y_0) &:= \frac{\P(y)}{v(y) - v(y_0)}\psi_{k,0}(y,y_0),
\end{align*}
we see that $\mathrm H(\cdot,y_0),\Upsilon(\cdot,y_0) \in L^2_y(0,2)$ and they satisfy \eqref{eq:HUpsiloneigen}. Hence, $v(y_0)$ is an embedded eigenvalue, a contradiction with Proposition~\ref{prop:nonstratifiedresolvent} and Lemma~\ref{lemma:boundarysuppnoteigen}.
\end{proof}

\section{Sobolev regularity of the spectral density function}\label{sec:SobolevReg}
With the limiting absorption principle coercive estimates at hand we now show Sobolev regularity of the solutions ${\varphi}_{k,\ep}^\pm(y,y_0)$ to
\begin{equation}\label{eq:TGeqvarphiSobolevReg}
\l(\D_k-\frac{v''(y)}{v(y)-v(y_0)\pm i\ep}+\frac{\P(y)}{(v(y)-v(y_0)\pm i\ep)^2}\r){\varphi}_{k,\ep}^\pm(y,y_0)=\frac{w_k^0(y)}{v(y)-v(y_0)\pm i\ep} + q_k^0(y).
\end{equation}
To keep track of the regularity of the initial data, for $k\geq 1$ and $j\geq 0$ we define
\begin{align}\label{eq:defSnorm}
\cS_{k,j} = k^j \left( \Vert w_k^0 \Vert_{H_k^{j+1}} + k^{-1} \Vert  q_k^0 \Vert_{H_k^j} \right),
\end{align}
where we further define
\begin{align*}
\Vert f \Vert_{H_k^j} := \sum_{n=0}^j k^{-n} \Vert \partial_y^n f \Vert_{L^2}
\end{align*}
for all $f\in H^j$. In particular, we note that $k\cS_{k,j}\leq \cS_{k,j+1}$ .

\begin{proposition}\label{prop:Zkvarphi}
Let $y_0\in I_S\cup I_W$ and ${\varphi}_{k,\ep}^\pm(y,y_0)$ a solution to \eqref{eq:TGeqvarphiSobolevReg}. Then,
\begin{align*}
\sup_{y_0\in I_S\cup I_W}\Vert {\varphi}_{k,\ep}^\pm(\cdot,y_0) \Vert_{Z_k}\lesssim k^{-\frac12}\cS_{k,0}.
\end{align*}
uniformly in $0< \ep< \ep_*$. 
\end{proposition}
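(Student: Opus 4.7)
The proof is short and essentially assembles three ingredients already established in the paper. The plan is to rewrite the Taylor--Goldstein equation as the fixed-point identity
\begin{equation*}
\varphi_{k,\ep}^\pm(y,y_0) + T_{k,\ep}^\pm \varphi_{k,\ep}^\pm(y,y_0) = \int_0^2 \G_{k,\ep}^\pm(y,y_0,z)\!\left( \frac{w_k^0(z)}{v(z)-v(y_0)\pm i\ep} + q_k^0(z) \right) \d z,
\end{equation*}
which is precisely \eqref{eq:fixedpointvarphi} after regularization of the source by $\varrho_k^0$. Writing the right-hand side as $R_{1,k,\ep}^\pm w_k^0 + R_{0,k,\ep}^\pm q_k^0$, the limiting absorption principle in Proposition~\ref{prop:LAPZk} then gives
\begin{equation*}
\Vert \varphi_{k,\ep}^\pm(\cdot,y_0) \Vert_{Z_k} \leq \kappa \left \Vert R_{1,k,\ep}^\pm w_k^0 + R_{0,k,\ep}^\pm q_k^0 \right \Vert_{Z_k},
\end{equation*}
with $\kappa>0$ independent of $k\geq 1$, $y_0\in I_S\cup I_W$ and $\ep\in(0,\ep_*)$.

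The second step is to estimate the two solution operators in the stronger space $X_k$, using that $\Vert\cdot\Vert_{Z_k}\lesssim \Vert\cdot\Vert_{X_k}$. Lemma~\ref{lemma:R1mapsH1toXk} gives $\Vert R_{1,k,\ep}^\pm w_k^0 \Vert_{X_k} \lesssim k^{-\frac12}\Vert w_k^0 \Vert_{H_k^1}$, and Lemma~\ref{lemma:R0mapsL2toXk} gives $\Vert R_{0,k,\ep}^\pm q_k^0 \Vert_{X_k} \lesssim k^{-\frac32}\Vert q_k^0 \Vert_{L^2}$, both uniformly in $y_0\in I_S\cup I_W$ and in $\ep\in(0,\ep_*)$. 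Combining and recalling that
\begin{equation*}
\cS_{k,0} = \Vert w_k^0 \Vert_{H_k^1} + k^{-1}\Vert q_k^0 \Vert_{L^2},
\end{equation*}
we obtain
\begin{equation*}
\Vert \varphi_{k,\ep}^\pm(\cdot,y_0) \Vert_{Z_k} \lesssim k^{-\frac12}\Vert w_k^0 \Vert_{H_k^1} + k^{-\frac32}\Vert q_k^0 \Vert_{L^2} = k^{-\frac12}\cS_{k,0},
\end{equation*}
which is the claimed bound.

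There is no genuine obstacle in this step: all the heavy lifting has been done in Sections~\ref{sec:Greens}--\ref{sec:LAPstrat}, where the delicate issues (structure of the Green's function near the critical layer in the weak/strong regimes, mapping properties of $R_{m,k,\ep}^\pm$ and $T_{k,\ep}^\pm$ on $X_k$ and $Z_k$, and the coercive estimate via a compactness/contradiction argument) were settled. The only mildly subtle point worth flagging is that the LAP is available only in $Z_k$, not in the stronger $X_k$; however, since the source terms are estimated directly in $X_k\subset Z_k$, this mismatch causes no loss for $\varphi_{k,\ep}^\pm$ itself. In fact, one can iterate once by plugging the $Z_k$ bound back into \eqref{eq:fixedpointvarphi} and invoking the regularizing estimate $\Vert T_{k,\ep}^\pm f \Vert_{X_k}\lesssim k^{-1}\Vert f \Vert_{Z_k}$ from Proposition~\ref{prop:TmapsXktoXk} to upgrade $\varphi_{k,\ep}^\pm$ to $X_k$ at no additional cost, a remark that will be used later in the Sobolev regularity theory for $\partial_{y_0}\varphi_{k,\ep}^\pm$.
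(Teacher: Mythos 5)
Your proof is correct and follows the same route as the paper: reduce to the fixed-point identity \eqref{eq:fixedpointvarphi}, apply the LAP of Proposition~\ref{prop:LAPZk}, and estimate the source via Lemmas~\ref{lemma:R1mapsH1toXk} and~\ref{lemma:R0mapsL2toXk} together with the embedding $\Vert\cdot\Vert_{Z_k}\lesssim\Vert\cdot\Vert_{X_k}$. In fact you are more careful than the paper's own write-up, which in its display writes $\omega_k^0$ and $\varrho_k^0$ where the regularized data $w_k^0$ and $q_k^0$ from \eqref{eq:TGeqvarphiSobolevReg} are meant; your version with $w_k^0$, $q_k^0$, and the identity $\cS_{k,0}=\Vert w_k^0\Vert_{H_k^1}+k^{-1}\Vert q_k^0\Vert_{L^2}$ is the notationally consistent one.
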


\begin{proof}
Note that
\begin{equation}\label{eq:resolventvarphi}
\varphi_{k,\ep}^\pm(y,y_0) + T_{k,y_0,\ep}^\pm \varphi_{k,\ep}^\pm (y,y_0) = \left(R_{1,k,\ep}^\pm\omega_k^0\right)(y,y_0) + \left( R_{0,k,\ep}^\pm \varrho_k^0 \right) (y,y_0)
\end{equation}
where we recall that $T_{k,y_0,\ep}^\pm$, $R_{0,k,\ep}^\pm$ and $R_{1,k,\ep}^\pm$ are given by \eqref{eq:defsolopT} and \eqref{eq:defsolopR}, respectively. Now, thanks to Proposition \ref{prop:LAPZk},  Lemma \ref{lemma:R1mapsH1toXk} and Lemma \ref{lemma:R0mapsL2toXk} there holds
\begin{align*}
\Vert \varphi_{k,\ep}^\pm(\cdot,y_0) \Vert_{Z_k} &\lesssim \Vert \varphi_{k,\ep}^\pm(\cdot,y_0) + T_{k,y_0,\ep}^\pm \varphi_{k,\ep}^\pm (\cdot,y_0) \Vert_{Z_k}  \\
&\lesssim \Vert \varphi_{k,\ep}^\pm(\cdot,y_0) + T_{k,y_0,\ep}^\pm \varphi_{k,\ep}^\pm (\cdot,y_0) \Vert_{X_k}  \\
&\lesssim \Vert R_{1,k,\ep}^\pm\omega_k^0\Vert_{X_k} + \Vert R_{0,k,\ep}^\pm  \varrho_k^0 \Vert_{X_k} \\
&\lesssim k^{-\frac12}\Vert \omega_k^0 \Vert_{H^1_k} + k^{-\frac32}\Vert  \varrho_k^0\Vert_{L^2},
\end{align*}
uniformly for all $y_0\in I_S\cup I_W$, and all $0< \ep< \ep_*$. 
\end{proof}

With the uniform $Z_k$ bounds at hand for $\varphi_{k,\ep}^\pm$, we are able to use \eqref{eq:TGeqvarphiSobolevReg} to upgrade them to uniform $X_k$ bounds.

\begin{proposition}\label{prop:Xkvarphi}
Let $y_0\in I_S\cup I_W$ and ${\varphi}_{k,\ep}^\pm(y,y_0)$ a solution to \eqref{eq:TGeqvarphiSobolevReg}. Then,
\begin{align*}
\sup_{y_0\in I_S\cup I_W}\Vert {\varphi}_{k,\ep}^\pm(\cdot,y_0) \Vert_{X_k}\lesssim k^{-\frac12}\cS_{k,0}.
\end{align*}
uniformly in $0< \ep< \ep_*$. Moreover, we have
\begin{align}\label{eq:decomXkpyvarphi}
\partial_y\varphi_{k,\ep}^\pm(y,y_0) =  \varphi_{1,1,\sr,k,\ep}^\pm(y,y_0) \eta^{-\frac12+\gamma_0} + \varphi_{1,1,\s,k,\ep}^\pm(y,y_0) \eta^{-\frac12+\gamma_0} + \widetilde\varphi_{0,k,\ep}^\pm(y,y_0) 
\end{align}
where
\begin{align*}
\varphi_{1,1,\sr,k,\ep}^\pm (y,y_0) := k(1+2\mu_0)\left( \varphi_{k,\ep}^\pm \right)_\sr, \quad \varphi_{1,1,\s,k,\ep}^\pm (y,y_0) := k(1-2\mu_0)\left( \varphi_{k,\ep}^\pm \right)_\s
\end{align*}
and $\Vert \widetilde\varphi_{0,k,\ep}^\pm \Vert_{L^\infty(I_3(y_0))} \lesssim k^{\frac12}\cS_{k,0}$ uniformly for all $0< \ep< \ep_*$. Additionally, $\varphi_{k,\ep}^\pm$ enjoys the more precise bound
\begin{align*}
\Vert \partial_{y}\varphi_{k,\ep}^\pm \Vert_{H_k^1(I_3^c(y_0))} \lesssim  \cS_{k,0}, \quad \Vert \partial_y^n\varphi_{k,\ep}^\pm \Vert_{L^{\infty}(I_3^c(y_0))} \lesssim k^{n-\frac12}\cS_{k,0}, 
\end{align*}
for $n=0,1,2$, uniformly for all $y_0\in I_S\cup I_{W}$ and all $0 < \ep\leq \ep_*$.
\end{proposition}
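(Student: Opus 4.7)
The plan is to bootstrap the $Z_k$-estimate of Proposition~\ref{prop:Zkvarphi} to the stronger $X_k$-estimate by exploiting the regularising action of the error operator $T_{k,y_0,\ep}^\pm$. Rewriting the resolvent identity~\eqref{eq:resolventvarphi} as
\begin{equation*}
\varphi_{k,\ep}^\pm = -T_{k,y_0,\ep}^\pm \varphi_{k,\ep}^\pm + R_{1,k,\ep}^\pm \omega_k^0 + R_{0,k,\ep}^\pm \varrho_k^0,
\end{equation*}
I will estimate each summand directly in $X_k$. Proposition~\ref{prop:TmapsXktoXk} combined with Proposition~\ref{prop:Zkvarphi} yields $\Vert T_{k,y_0,\ep}^\pm\varphi_{k,\ep}^\pm\Vert_{X_k}\lesssim k^{-1}\Vert\varphi_{k,\ep}^\pm\Vert_{Z_k}\lesssim k^{-3/2}\cS_{k,0}$, which is strictly better than the target; Lemma~\ref{lemma:R1mapsH1toXk} together with the embedding $\Vert\omega_k^0\Vert_{H_k^1}\lesssim \cS_{k,0}$ controls the vorticity contribution by $k^{-1/2}\cS_{k,0}$, and Lemma~\ref{lemma:R0mapsL2toXk} combined with the elliptic bound $\Vert\varrho_k^0\Vert_{L^2}\lesssim k^{-2}\Vert q_k^0\Vert_{L^2}$ produces $\Vert R_{0,k,\ep}^\pm\varrho_k^0\Vert_{X_k}\lesssim k^{-7/2}\Vert q_k^0\Vert_{L^2}\lesssim k^{-3/2}\cS_{k,0}$. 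Summing these three contributions provides the first assertion.

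For the decomposition~\eqref{eq:decomXkpyvarphi}, I will select a near-minimising splitting $\varphi_{k,\ep}^\pm = (\varphi_{k,\ep}^\pm)_\sr\eta^{\frac12+\gamma_0} + (\varphi_{k,\ep}^\pm)_\s\eta^{\frac12-\gamma_0}$ that realises the $X_k^1$ norm and differentiate, using $\p_y\eta=2k$, to obtain the natural identity
\begin{equation*}
\p_y\varphi_{k,\ep}^\pm = k(1+2\gamma_0)(\varphi_{k,\ep}^\pm)_\sr\eta^{-\frac12+\gamma_0} + k(1-2\gamma_0)(\varphi_{k,\ep}^\pm)_\s\eta^{-\frac12-\gamma_0} + \widetilde\varphi_{0,k,\ep}^\pm,
\end{equation*}
where $\widetilde\varphi_{0,k,\ep}^\pm := \eta^{\frac12+\gamma_0}\p_y(\varphi_{k,\ep}^\pm)_\sr + \eta^{\frac12-\gamma_0}\p_y(\varphi_{k,\ep}^\pm)_\s$. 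When $y_0\in I_S$ and $\gamma_0=i\nu_0$, the purely imaginary contribution in the leading coefficients is reabsorbed into $\widetilde\varphi_{0,k,\ep}^\pm$, recovering the stated form with $\mu_0=\Re(\gamma_0)$. The $X_k^1$ component of the norm provides $\Vert\eta^{-\frac12}\p_y(\varphi_{k,\ep}^\pm)_\sigma\Vert_{L^\infty(I_3(y_0))}\lesssim k\Vert\varphi_{k,\ep}^\pm\Vert_{X_k}\lesssim k^{\frac12}\cS_{k,0}$, which together with the pointwise bound $|\eta^{\frac12\pm\gamma_0}\cdot\eta^{\frac12}|\lesssim |\eta|^{1\pm\mu_0}\lesssim 1$ on $I_3(y_0)$ gives $\Vert\widetilde\varphi_{0,k,\ep}^\pm\Vert_{L^\infty(I_3(y_0))}\lesssim k^{\frac12}\cS_{k,0}$.

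The refined bounds on $I_3^c(y_0)$ follow by combining the $H_k^1(I_3^c(y_0))$-piece of the $X_k$ norm with elliptic regularity for~\eqref{eq:TGeqvarphiSobolevReg}. The estimate $\Vert\varphi_{k,\ep}^\pm\Vert_{X_k}\lesssim k^{-\frac12}\cS_{k,0}$ directly yields $\Vert\varphi_{k,\ep}^\pm\Vert_{H_k^1(I_3^c(y_0))}\lesssim k^{-1}\cS_{k,0}$, in particular $\Vert\p_y\varphi_{k,\ep}^\pm\Vert_{L^2(I_3^c(y_0))}\lesssim \cS_{k,0}$. Solving~\eqref{eq:TGeqvarphiSobolevReg} for $\p_y^2\varphi_{k,\ep}^\pm$ and exploiting $|v(y)-v(y_0)\pm i\ep|^{-1}\lesssim k$ on $I_3^c(y_0)$ upgrades this to $\Vert\p_y^2\varphi_{k,\ep}^\pm\Vert_{L^2(I_3^c(y_0))}\lesssim k\cS_{k,0}$, which is the $H_k^1$ control on $\p_y\varphi_{k,\ep}^\pm$; differentiating the equation once more and iterating the same scheme delivers $\Vert\p_y^3\varphi_{k,\ep}^\pm\Vert_{L^2(I_3^c(y_0))}\lesssim k^2\cS_{k,0}$. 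The pointwise $L^\infty$ bounds are then obtained from the one-dimensional Sobolev inequality $\Vert f\Vert_{L^\infty}\lesssim k^{\frac12}\Vert f\Vert_{H_k^1}$ applied in turn to $\varphi_{k,\ep}^\pm$, $\p_y\varphi_{k,\ep}^\pm$ and $\p_y^2\varphi_{k,\ep}^\pm$. The main technical point in this last step is the careful bookkeeping of singular factors when differentiating~\eqref{eq:TGeqvarphiSobolevReg}: each extra derivative produces a new power of $(v(y)-v(y_0)\pm i\ep)^{-1}$, and these must be absorbed against the available distance $|y-y_0|\gtrsim k^{-1}$ on $I_3^c(y_0)$ without any loss in the final $k$-scaling.
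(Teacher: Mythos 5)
Your treatment of the $X_k$-estimate via the resolvent identity is essentially the paper's argument and is correct. The derivation of the decomposition~\eqref{eq:decomXkpyvarphi} with the natural coefficients $k(1+2\gamma_0)$ and $k(1-2\gamma_0)$ is also fine; note that the paper itself uses exactly $2k(\tfrac12\pm\gamma_0)$ when it invokes this decomposition in the proof of Proposition~\ref{prop:Xkpynotvarphi}, so the $\mu_0$ appearing in the statement of the present proposition should be read as $\gamma_0$. However, your attempt to reconcile the two by ``reabsorbing'' the imaginary part into $\widetilde\varphi_{0,k,\ep}^\pm$ does not work: when $\gamma_0=i\nu_0$ the discrepancy is $2ki\nu_0(\varphi_{k,\ep}^\pm)_\sr\eta^{-\frac12+i\nu_0}$, whose modulus is $\gtrsim k\,\|(\varphi_{k,\ep}^\pm)_\sr\|_{L^\infty}\,|\eta|^{-\frac12}$. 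This blows up as $\eta\to 0$ on $I_3(y_0)$ and cannot satisfy the required uniform bound $\Vert\widetilde\varphi_{0,k,\ep}^\pm\Vert_{L^\infty(I_3(y_0))}\lesssim k^{\frac12}\cS_{k,0}$. The singular coefficient must therefore stay with the $\gamma_0$-powers of $\eta$, not get folded into the remainder.

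In your final paragraph, the claim $\Vert\partial_y^3\varphi_{k,\ep}^\pm\Vert_{L^2(I_3^c(y_0))}\lesssim k^2\cS_{k,0}$ is not available. Differentiating~\eqref{eq:TGeqvarphiSobolevReg} in $y$ produces the source term $\partial_y q_k^0$, and $\cS_{k,0}=\Vert w_k^0\Vert_{H_k^1}+k^{-1}\Vert q_k^0\Vert_{L^2}$ carries no control on $\Vert\partial_y q_k^0\Vert_{L^2}$; this additional regularity would force you up to $\cS_{k,1}$. The step is in fact unnecessary: the proposition only needs $L^\infty$ control of $\partial_y^n\varphi_{k,\ep}^\pm$ for $n\leq 2$, and the $n=2$ case is obtained by reading off $\partial_y^2\varphi_{k,\ep}^\pm$ from the equation itself (without differentiating it), using the $n=0,1$ bounds on $\varphi_{k,\ep}^\pm$ already established. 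Your window-Sobolev route for $n=0,1$ is fine in spirit, provided you take one-sided windows of length $\sim k^{-1}$ contained in $I_3^c(y_0)$ for points near $\partial I_3(y_0)$; the paper avoids this care by instead writing $\varphi_{k,\ep}^\pm(y,y_0)=\varphi_{k,\ep}^\pm(y_3,y_0)+\int_{y_3}^y\partial_y\varphi_{k,\ep}^\pm(s,y_0)\,\d s$ from the boundary point $y_3\in\partial I_3(y_0)$ on the overlap region $I_6(y_0)\cap I_3^c(y_0)$, and using Lemma~\ref{lemma:LinfH1bound} for $y\in I_6^c(y_0)$.
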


\begin{proof}
From \eqref{eq:resolventvarphi} we have
\begin{align*}
\Vert \varphi_{k,\ep}^\pm \Vert_{X_k} \leq \Vert T_{k,\ep}^\pm \varphi_{k,\ep}^\pm (y,y_0) \Vert_{X_k} + \Vert R_{1,k,\ep}^\pm\omega_k^0\Vert_{X_k} + \Vert R_{0,k,\ep}^\pm  \varrho_k^0 \Vert_{X_k}
\end{align*}
and the estimate swiftly follows from Proposition \ref{prop:TmapsXktoXk}, Lemma \ref{lemma:R1mapsH1toXk}, Lemma \ref{lemma:R0mapsL2toXk} and the $Z_k$ bounds on $\varphi_{k,\ep}^\pm$ from Proposition \ref{prop:Zkvarphi}. The decomposition \eqref{eq:decomXkpyvarphi} and the bounds for its coefficients are immediate. Likewise, $H^1_k(I_3(y_0))$ estimate on $\partial_y\varphi_{k,\ep}^\pm$ is a consequence of \eqref{eq:TGeqvarphiSobolevReg} and the $X_k$ estimates on $\varphi_{k,\ep}^\pm$. We next address the pointwise estimates. 

\diampar{Proof for $n = 0$}
Let $y\in I_3^c(y_0)$ and assume first that $y\in I_6(y_0)\cap I_3^c(y_0)$, with $y > y_0$, say. For $y_3 = y_0 + \frac{3\beta}{k}$ we have 
\begin{align*}
\varphi_{k,\ep}^\pm (y,y_0) = \varphi_{k,\ep}^\pm(y_3,y_0) + \int_{y_3}^y \partial_y\varphi_{k,\ep}^\pm(s, y_0) \d s,
\end{align*} 
with $| \partial_y\varphi_{k,\ep}^\pm(y_3,y_0) | \lesssim k^{\frac12}\cS_{k,0}$ due to the local bounds and 
$$\left| \int_{y_3}^y \partial_y\varphi_{k,\ep}^\pm(s, y_0) \d s \right| \lesssim |y-y_3|^\frac12 \Vert \partial_y\varphi_{k,\ep}^\pm \Vert_{L^2(I_3^c(y_0))} \lesssim k^{-\frac12}\cS_{k,0}$$
as well. On the other hand, if $y\in I_6^c{(y_0)}$, then $I_3(y)\cap I_3(y_0)= \emptyset$, and thus we can use the Sobolev-type inequality of Lemma \ref{lemma:LinfH1bound} to show that
\begin{align*}
|\varphi_{k,\ep}^\pm(y,y_0)| \leq \Vert \varphi_{k,\ep}^\pm(z,y_0) \Vert_{L^\infty_z(I_3(y))} \lesssim k^\frac12 \Vert \varphi_{k,\ep}^\pm(z,y_0) \Vert_{H_k^1(I_3(y))} \lesssim k^\frac12\Vert \varphi_{k,\ep}^\pm(z,y_0) \Vert_{H_k^1(I_3^c(y_0))} \lesssim k^{-\frac12}\cS_{k,0}
\end{align*}
uniformly for all $y_0\in I_S\cup I_W$.

\diampar{Proof for $n=1$} We proceed as before, now writing
\begin{align*}
\partial_y\varphi_{k,\ep}^\pm (y,y_0) = \partial_y\varphi_{k,\ep}^\pm(y_3,y_0) + \int_{y_3}^y \partial_y^2\varphi_{k,\ep}^\pm(s, y_0) \d s,
\end{align*} 
for $y\in I_6(y_0) \cap I_3^c(y_0)$. We have $| \partial_y\varphi_{k,\ep}^\pm(y_3,y_0) | \lesssim k^{\frac12}\cS_{k,0}$ and further using the Taylor-Goldstein equation \eqref{eq:TGeqvarphiSobolevReg}, the fact that the integration takes place in an interval of size roughly $k^{-1}$ and the bound $\Vert \varphi_{k,\ep}^\pm\Vert_{L^2(I_3^c(y_0))} \leq k^{-\frac12}\Vert \varphi_{k,\ep}^\pm\Vert_{X_k}$ we obtain the desired estimate. Similarly, for $y\in I_6^c{(y_0)}$,we use Sobolev-type inequality of Lemma \ref{lemma:LinfH1bound} as before.

\diampar{Proof for $n=2$} Here we just use \eqref{eq:TGeqvarphiSobolevReg} and the previous $L^\infty(I_3^c(y_0))$ bounds for $\varphi_{k,\ep}^\pm$.
\end{proof}

On the other hand, for $y_0\in I_M$ we have

\begin{proposition}\label{prop:LZkvarphi}
Let $k\geq 1$, $y_0\in I_M$ and $\varphi_{k,\ep}^\pm(y,y_0)$ a solution to \eqref{eq:TGeqvarphiSobolevReg}. Then,
\begin{align*}
\sup_{y_0\in I_M}\Vert \varphi_{k,\ep}^\pm(\cdot,y_0) \Vert_{LZ_k} \lesssim k^{-\frac12}\cS_{k,0}, 
\end{align*}
uniformly for all $0 < \ep< \ep_*$.
\end{proposition}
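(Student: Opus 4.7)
The plan is to mirror the argument used for Proposition \ref{prop:Zkvarphi} in the weak/strong regime, but replacing the weak/strong toolkit ($Z_k$, $X_k$, Proposition \ref{prop:LAPZk}, Lemma \ref{lemma:R0mapsL2toXk}, Lemma \ref{lemma:R1mapsH1toXk}) with its logarithmic counterpart ($LZ_k$, $LX_k$, Proposition \ref{prop:LAPLZk}, Lemma \ref{lemma:R0mapsL2toLXk}, Lemma \ref{lemma:R1mapsLinfL2toLXk}) adapted to the mildly stratified regime. First I would rewrite \eqref{eq:TGeqvarphiSobolevReg} as the fixed-point identity
\begin{equation*}
\varphi_{k,\ep}^\pm(y,y_0) + T_{k,y_0,\ep}^\pm \varphi_{k,\ep}^\pm(y,y_0) = \bigl(R_{1,k,\ep}^\pm w_k^0\bigr)(y,y_0) + \bigl(R_{0,k,\ep}^\pm q_k^0\bigr)(y,y_0),
\end{equation*}
which is obtained exactly as in \eqref{eq:resolventvarphi} by applying the Green's function of the Reduced Taylor--Goldstein operator together with the definitions \eqref{eq:defsolopT}, \eqref{eq:defsolopR} of $T_{k,y_0,\ep}^\pm$ and $R_{m,k,\ep}^\pm$.

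The second step is to invoke the limiting absorption principle in the mild regime, Proposition \ref{prop:LAPLZk}, so that there exists a constant $\kappa>0$, independent of $k\geq 1$, $y_0 \in I_M$ and $\ep\in(0,\ep_*)$, for which
\begin{equation*}
\Vert \varphi_{k,\ep}^\pm \Vert_{LZ_k} \leq \kappa\, \bigl\Vert \varphi_{k,\ep}^\pm + T_{k,y_0,\ep}^\pm \varphi_{k,\ep}^\pm \bigr\Vert_{LZ_k}.
\end{equation*}
Using the continuous embedding $LX_k \subset LZ_k$ (which follows from the definition, exactly as in the $X_k \subset Z_k$ case), it then suffices to estimate the right-hand side of the fixed-point identity in the stronger $LX_k$ norm.

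For this, the second part of Lemma \ref{lemma:R1mapsLinfL2toLXk} gives $\Vert R_{1,k,\ep}^\pm w_k^0 \Vert_{LX_k} \lesssim k^{-\frac12} \Vert w_k^0 \Vert_{H_k^1}$, while Lemma \ref{lemma:R0mapsL2toLXk} gives $\Vert R_{0,k,\ep}^\pm q_k^0 \Vert_{LX_k} \lesssim k^{-\frac32}\Vert q_k^0 \Vert_{L^2}$, both uniformly in $y_0 \in I_M$ and $\ep\in(0,\ep_*)$. Combining these two bounds with the coercive estimate above and recalling the definition \eqref{eq:defSnorm} of $\mathcal{S}_{k,0}$, we arrive at
\begin{equation*}
\Vert \varphi_{k,\ep}^\pm(\cdot,y_0) \Vert_{LZ_k} \lesssim k^{-\frac12}\Vert w_k^0 \Vert_{H_k^1} + k^{-\frac32}\Vert q_k^0 \Vert_{L^2} \lesssim k^{-\frac12} \mathcal{S}_{k,0},
\end{equation*}
which is the claimed bound.

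Since every ingredient --- the fixed-point identity, the limiting absorption principle, the embedding $LX_k \subset LZ_k$, and the two solution-operator estimates --- has already been established in earlier sections, there is no real obstacle: this proposition is essentially the mild-regime transcription of Proposition \ref{prop:Zkvarphi}, and the main work (uniform control of the Wronskian near $\cJ(y_0)=\tfrac14$, which drives both the $LX_k$ mapping estimates and the coercivity) has been absorbed into Proposition \ref{prop:LAPLZk} and the operator lemmas of Sections \ref{sec:GreensM} and \ref{sec:solopstrat}.
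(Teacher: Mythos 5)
Your proposal is correct and follows exactly the paper's route: the paper's own proof is the one-line statement ``From \eqref{eq:resolventvarphi}, we use Proposition \ref{prop:LAPLZk} together with Lemma \ref{lemma:R0mapsL2toLXk} and Lemma \ref{lemma:R1mapsLinfL2toLXk},'' which is precisely the chain of reductions you spell out (fixed-point identity $\to$ $LZ_k$-coercivity $\to$ embedding $LX_k\subset LZ_k$ $\to$ the two $LX_k$ mapping estimates $\to$ definition of $\cS_{k,0}$). The only cosmetic slip is calling the $H_k^1$ estimate in Lemma \ref{lemma:R1mapsLinfL2toLXk} its ``second part'' when it is the third displayed bound.
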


\begin{proof}
From \eqref{eq:resolventvarphi}, we use Proposition \ref{prop:LAPLZk} together with  Lemma \ref{lemma:R0mapsL2toLXk} and Lemma \ref{lemma:R1mapsLinfL2toLXk}.
\end{proof}

As before, with the $LZ_k$ bounds for $\varphi_{k,\ep}^\pm$ at hand we can improve them to $LX_k$ estimates.

\begin{proposition}\label{prop:LXkvarphi}
Let $k\geq 1$, $y_0\in I_M$ and $\varphi_{k,\ep}^\pm(y,y_0)$ a solution to \eqref{eq:TGeqvarphiSobolevReg}. Then,
\begin{align*}
\sup_{y_0\in I_M}\Vert \varphi_{k,\ep}^\pm(\cdot,y_0) \Vert_{LX_k} \lesssim k^{-\frac12}\cS_{k,0}, 
\end{align*}
uniformly for all $0 < \ep< \ep_*$. Moreover, we have
\begin{equation}\label{eq:decomLXkpyvarphi}
\begin{split}
\partial_{y}\varphi_{k,\ep}^\pm &= \varphi_{\rL,1,1,\sr,k,\ep}^\pm(y,y_0) \eta^{-\frac12+\gamma_0} + \varphi_{\rL,1,1,\rL,k,\ep}^\pm(y,y_0) \eta^{-\frac12-\gamma_0}\log(\eta) \Q_{\gamma_0}(\eta) \\
&\quad +\varphi_{\rL,1,1,\s,k,\ep}^\pm(y,y_0) \eta^{-\frac12-\gamma_0} + \widetilde\varphi_{\rL,0,k,\ep}^\pm(y,y_0)
\end{split}
\end{equation}
where
\begin{align*}
\varphi_{\rL,1,1,\sr,k,\ep}^\pm(y,y_0) &:= k(1+2\gamma_0)\left( \varphi_{k,\ep}^\pm \right)_\sr(y,y_0), \\ 
\varphi_{\rL,1,1,\rL,k,\ep}^\pm(y,y_0) &:= k(1-2\gamma_0)\left( \varphi_{k,\ep}^\pm \right)_\s(y,y_0), \\ 
\varphi_{\rL,1,1,\s,k,\ep}^\pm(y,y_0) &:= 2k\left( \varphi_{k,\ep}^\pm \right)_\s(y,y_0), \\ 
\end{align*}
with $\Vert \widetilde\varphi_{\rL,0,k,\ep}^\pm \Vert_{L^\infty(I_3(y_0))} \lesssim k^{\frac12}\cS_{k,0}$
Moreover, $\varphi_{k,\ep}^\pm$ enjoys the more precise bound
\begin{align*}
\Vert \partial_{y}\varphi_{k,\ep}^\pm \Vert_{H_k^1(I_3^c(y_0))} \lesssim  \cS_{k,0}, \quad \Vert \partial_{y}^n\varphi_{k,\ep}^\pm \Vert_{L^\infty(I_3^c(y_0))} \lesssim k^{n-\frac12}\cS_{k,0}, 
\end{align*}
for $n=0,1,2$, uniformly for all $y_0\in I_M$ and all $0 < \ep\leq \ep_*$.
\end{proposition}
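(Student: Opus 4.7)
The plan is to mirror the proof of Proposition~\ref{prop:Xkvarphi}, using the logarithmic-adapted spaces $LZ_k$ and $LX_k$ in place of $Z_k$ and $X_k$, and invoking the $LZ_k$-coercivity from Proposition~\ref{prop:LAPLZk} together with the mild-regime mapping estimates from Proposition~\ref{prop:TmapsXktoXk}, Lemma~\ref{lemma:R0mapsL2toLXk} and Lemma~\ref{lemma:R1mapsLinfL2toLXk}.

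\textbf{Step 1: $LX_k$ estimate.} I would start from the resolvent identity \eqref{eq:resolventvarphi}, namely
\begin{equation*}
\varphi_{k,\ep}^\pm = -T_{k,y_0,\ep}^\pm\varphi_{k,\ep}^\pm + R_{1,k,\ep}^\pm \omega_k^0 + R_{0,k,\ep}^\pm \varrho_k^0.
\end{equation*}
Proposition~\ref{prop:LZkvarphi} already yields $\Vert \varphi_{k,\ep}^\pm \Vert_{LZ_k}\lesssim k^{-1/2}\cS_{k,0}$, so the mild-regime bound $\Vert T_{k,y_0,\ep}^\pm f\Vert_{LX_k}\lesssim k^{-1}\Vert f\Vert_{LZ_k}$ from Proposition~\ref{prop:TmapsXktoXk} gives
$\Vert T_{k,y_0,\ep}^\pm \varphi_{k,\ep}^\pm \Vert_{LX_k}\lesssim k^{-3/2}\cS_{k,0}.$
Applying Lemma~\ref{lemma:R1mapsLinfL2toLXk} to $R_{1,k,\ep}^\pm \omega_k^0$ and Lemma~\ref{lemma:R0mapsL2toLXk} to $R_{0,k,\ep}^\pm \varrho_k^0$ yields the missing $k^{-1/2}(\Vert \omega_k^0\Vert_{H^1_k}+k^{-1}\Vert \varrho_k^0\Vert_{L^2})\lesssim k^{-1/2}\cS_{k,0}$, and the triangle inequality closes the $LX_k$ bound.

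\textbf{Step 2: Derivative decomposition.} Fix a representative decomposition $\varphi_{k,\ep}^\pm = (\varphi_{k,\ep}^\pm)_\sr\,\eta^{1/2+\gamma_0} + (\varphi_{k,\ep}^\pm)_\s\,\eta^{1/2-\gamma_0}\log(\eta)\mathcal{Q}_{\gamma_0}(\eta)$ whose $LX_k$-seminorm almost realizes $\Vert \varphi_{k,\ep}^\pm\Vert_{LX_k}$. Differentiating in $y$, using $\partial_y\eta=2k$ and the identity $\eta^{1/2-\gamma_0}\log(\eta)\mathcal{Q}_{\gamma_0}(\eta) = (\eta^{1/2+\gamma_0}-\eta^{1/2-\gamma_0})/(2\gamma_0)$ (so that the $y$-derivative of the logarithmic factor produces only powers $\eta^{-1/2\pm \gamma_0}$), one isolates three singular contributions with coefficients exactly
$k(1+2\gamma_0)(\varphi_{k,\ep}^\pm)_\sr$, $k(1-2\gamma_0)(\varphi_{k,\ep}^\pm)_\s$ and $2k(\varphi_{k,\ep}^\pm)_\s$ (after re-expressing the two plain powers of $\eta^{\mp 1/2 \mp \gamma_0}$ back in the $\log\mathcal{Q}_{\gamma_0}$ basis), plus a remainder
$\widetilde\varphi_{\rL,0,k,\ep}^\pm = \eta^{1/2+\gamma_0}\partial_y(\varphi_{k,\ep}^\pm)_\sr + \eta^{1/2-\gamma_0}\log(\eta)\mathcal{Q}_{\gamma_0}(\eta)\partial_y(\varphi_{k,\ep}^\pm)_\s.$
The $L^\infty(I_3(y_0))$-bound $\Vert\widetilde\varphi_{\rL,0,k,\ep}^\pm\Vert_{L^\infty}\lesssim k^{1/2}\cS_{k,0}$ is then immediate from the $LX_k^1$ control on $k^{-1/2}\partial_y(\varphi_{k,\ep}^\pm)_\sigma$ built into $\Vert\varphi_{k,\ep}^\pm\Vert_{LX_k}$ together with $|\eta|^{1/2}|\log\eta\mathcal{Q}_{\gamma_0}(\eta)|\lesssim 1$ on $I_3(y_0)$, uniformly in $\gamma_0\in B_{\gamma_*}(0)$ (cf.~\eqref{eq:boundQgamma}).

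\textbf{Step 3: Non-local bounds.} For the $H^1_k(I_3^c(y_0))$ estimate on $\partial_y\varphi_{k,\ep}^\pm$ I would use the Taylor--Goldstein equation \eqref{eq:TGeqvarphiSobolevReg} to convert $\partial_y^2\varphi_{k,\ep}^\pm$ into lower-order terms: on $I_3^c(y_0)$, $\partial_y^2\varphi_{k,\ep}^\pm = k^2\varphi_{k,\ep}^\pm + (\text{bounded})\varphi_{k,\ep}^\pm + (v-v(y_0)\pm i\ep)^{-1}w_k^0 + q_k^0$, so the $L^2(I_3^c(y_0))$-norms of both $\partial_y^2\varphi_{k,\ep}^\pm$ and $k\partial_y\varphi_{k,\ep}^\pm$ are controlled by $\cS_{k,0}$, once the $LX_k\hookrightarrow L^2$ control of $\varphi_{k,\ep}^\pm$ is combined with the entanglement-type bound $k\Vert \varphi_{k,\ep}^\pm\Vert_{L^2(I_3^c)}\lesssim \cS_{k,0}$ baked into the $LX_k$-norm. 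For the pointwise estimates on $I_3^c(y_0)$, I would repeat verbatim the two-case argument of Proposition~\ref{prop:Xkvarphi}: for $y\in I_6(y_0)\cap I_3^c(y_0)$, use the fundamental theorem of calculus from the boundary point $y_3=y_0+3\beta/k$ of $I_3(y_0)$ (where the local $LX_k$-bound gives $|\partial_y^n\varphi_{k,\ep}^\pm(y_3,y_0)|\lesssim k^{n-1/2}\cS_{k,0}$); for $y\in I_6^c(y_0)$, use Lemma~\ref{lemma:LinfH1bound} on the disjoint interval $I_3(y)$ combined with the $H^1_k(I_3^c(y_0))$ bound just established. The case $n=2$ then follows by closing with \eqref{eq:TGeqvarphiSobolevReg}.

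\textbf{Main obstacle.} The only genuinely new difficulty compared to the weak/strong regime lies in the algebraic re-expansion of $\partial_y$ applied to the logarithmic singular piece: since $2\gamma_0$ is small but nonzero on $I_M\setminus\{\varpi_1,\varpi_2\}$, one must repeatedly exploit the identity $\log(\eta)\mathcal{Q}_{\gamma_0}(\eta)=(\eta^{2\gamma_0}-1)/(2\gamma_0)$ to avoid apparent $1/\gamma_0$ blow-ups, organising the three singular coefficients so that all $\gamma_0^{-1}$ cancellations are explicit. Once this algebraic bookkeeping is performed, the analytic estimates reduce to those of Proposition~\ref{prop:Xkvarphi} together with the uniform logarithmic bounds already used in Section~\ref{sec:GreensM}.
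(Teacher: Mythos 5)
Your proposal reproduces what the paper intends: the paper states Propositions~\ref{prop:LZkvarphi}--\ref{prop:LXkvarphi} without a separate proof, leaving it to the analogue of Proposition~\ref{prop:Xkvarphi} via the $LZ_k$/$LX_k$ machinery, and your three steps (Proposition~\ref{prop:LAPLZk} plus Lemmas~\ref{lemma:R0mapsL2toLXk}--\ref{lemma:R1mapsLinfL2toLXk} and Proposition~\ref{prop:TmapsXktoXk} to bootstrap from $LZ_k$ to $LX_k$; differentiating the near-optimal decomposition using $\log(\eta)\Q_{\gamma_0}(\eta)=(\eta^{2\gamma_0}-1)/(2\gamma_0)$; the two-case fundamental-theorem/Sobolev argument on $I_3^c(y_0)$) are exactly that. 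One small wrinkle in Step~2: if you insist on writing the plain-power piece as $\eta^{-1/2-\gamma_0}$ as in the statement, the algebra gives the log-coefficient $k(1+2\gamma_0)(\varphi_{k,\ep}^\pm)_\s$ rather than $k(1-2\gamma_0)(\varphi_{k,\ep}^\pm)_\s$ (or, equivalently, the plain-power piece comes out as $\eta^{-1/2+\gamma_0}$ with coefficient $k(1-2\gamma_0)$); this is a minor inconsistency already present in the displayed formula and is harmless since $|\gamma_0|\le\gamma_*$ keeps all these coefficients comparable, but your claim to derive the stated coefficients "exactly" would not survive the arithmetic as written.
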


\subsection{Sobolev regularity for the first derivative}
To obtain Sobolev regularity for $\partial_{y_0}\varphi_{k,\ep}^\pm$, we note that 
\begin{align*}
\partial_{y_0}\varphi_{k,\ep}^\pm =\left( \partial_{y}\varphi_{k,\ep}^\pm + \partial_{y_0}\varphi_{k,\ep}^\pm \right) - \partial_y\varphi_{k,\ep}^\pm.
\end{align*}
Let $\varphi_{1,k,\ep}^\pm(y,y_0) := \left( \partial_{y}\varphi_{k,\ep}^\pm + \partial_{y_0}\varphi_{k,\ep}^\pm \right)(y,y_0)$. There holds
\begin{equation}\label{eq:TGvarphi1}
\begin{split}
\text{TG}_{k,\ep}^\pm\varphi_{1,k,\ep}^\pm &= \frac{v'''(y)}{v(y) - v(y_0) \pm i\ep}\varphi_{k,\ep}^\pm - \frac{\P'(y)}{(v(y) - v(y_0)\pm i\ep)^2}\varphi_{k,\ep}^\pm \\
&\quad + 2\P(y)\frac{v'(y) - v'(y_0)}{(v(y) - v(y_0) \pm i\ep)^3}\varphi_{k,\ep}^\pm - v''(y) \frac{v'(y)-v'(y_0)}{(v(y) - v(y_0)\pm i\ep)^2} \varphi_{k,\ep}^\pm \\
&\quad + \frac{\partial_y w_k^0(y)}{v(y) - v(y_0)\pm i\ep} - w_k^0(y)\frac{v'(y) - v'(y_0)}{(v(y) - v(y_0) \pm i\ep)^2} + \partial_y q_k^0(y),
\end{split}
\end{equation}
with now
\begin{align*}
\varphi_{1,k,\ep}^\pm(0,y_0) = \partial_y\varphi_{k,\ep}^\pm(0,y_0), \quad  \varphi_{1,k,\ep}^\pm(0,y_0) = \partial_y\varphi_{k,\ep}^\pm(2,y_0).
\end{align*}
Therefore, we have
\begin{equation}\label{eq:LAPvarphi1}
\varphi_{1,k,\ep}^\pm(y,y_0) + T_{k,\ep}^\pm\varphi_{1,k,\ep}^\pm(y,y_0) = \left(R_{0,k,\ep}^\pm\text{TG}_{k,\ep}^\pm \varphi_{1,k,\ep}^\pm \right)(y,y_0) + \B_{0,k,\ep}^\pm(y,y_0).
\end{equation}
where we define
\begin{equation}\label{eq:defB0}
\B_{0,k,\ep}^\pm(y,y_0) := \partial_z\G_{k,\ep}^\pm(y,y_0,z)\varphi_{1,k,\ep}^\pm(z,y_0)\Big|_{z=0}^{z=2}.
\end{equation}

We next argue according to the stratification regime.

\subsubsection{Strong and weak stratifications}
In order to use Proposition \ref{prop:LAPZk} on \eqref{eq:LAPvarphi1} we first obtain $X_k$ estimates on $\B_{0,k,\ep}^\pm$. 
\begin{lemma}\label{lemma:XkB0}
Let $k\geq 1$. Then,  
\begin{align*}
\Vert \B_{0,k,\ep}^\pm \Vert_{X_k}\lesssim k^\frac12\cS_{k,0}
\end{align*}
uniformly in $0<\ep<\ep_*$ and $y_0\in I_S \cup I_W$.
\end{lemma}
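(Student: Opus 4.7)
My plan is to exploit the explicit form of the Green's function in Proposition~\ref{prop: def Green's Function} to reduce $\B_{0,k,\ep}^\pm$ to a short linear combination of homogeneous solutions of $\mathrm{RTG}_{k,\ep}^\pm$, then estimate each piece using the $X_k$ machinery already developed. Concretely, for $y_0\in I_S\cup I_W$ the points $z=0,2$ lie in $I_3^c(y_0)$, so using that $\G_{k,\ep}^\pm(y,y_0,0)=\frac{\phi_{\su,k,\ep}^\pm(y,y_0)\phi_{\sl,k,\ep}^\pm(0,y_0)}{\W_{k,\ep}^\pm(y_0)}$ (and similarly at $z=2$), we get
\begin{equation*}
\B_{0,k,\ep}^\pm(y,y_0)=\frac{\partial_z\phi_{\su,k,\ep}^\pm(2,y_0)}{\W_{k,\ep}^\pm(y_0)}\,\varphi_{1,k,\ep}^\pm(2,y_0)\,\phi_{\sl,k,\ep}^\pm(y,y_0)-\frac{\partial_z\phi_{\sl,k,\ep}^\pm(0,y_0)}{\W_{k,\ep}^\pm(y_0)}\,\varphi_{1,k,\ep}^\pm(0,y_0)\,\phi_{\su,k,\ep}^\pm(y,y_0).
\end{equation*}

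The key simplification is that $\partial_z\phi_{\sl,k,\ep}^\pm(0,y_0)$ and $\partial_z\phi_{\su,k,\ep}^\pm(2,y_0)$ are both exactly $-4k\gamma_0$. This follows from the defining expressions \eqref{eq:homoupss}--\eqref{eq:homolowss}, because differentiating $\phi_{\sl,k,\ep}^\pm$ in $z$ and evaluating at $z=0$ produces precisely the Wronskian $\mathcal{W}\{M_\sr,M_\s\}$ evaluated at $-y_0\pm i\ep_0$, which is the constant $-4k\gamma_0$ (indeed, $M_{0,\pm\gamma_0}$ satisfy a Whittaker equation with no first-order term, and a direct Frobenius computation near $\zeta=0$ gives $\mathcal{W}\{M_{0,\gamma},M_{0,-\gamma}\}=-2\gamma$, hence $\mathcal{W}\{M_\sr,M_\s\}\equiv -4k\gamma_0$). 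The analogous identity holds at $z=2$. Combined with the explicit form \eqref{eq:def Wronskianss} of $\W_{k,\ep}^\pm(y_0)$, the scalar prefactors reduce to
\begin{equation*}
\frac{4k|\gamma_0|}{|\W_{k,\ep}^\pm(y_0)|}=\frac{1}{\bigl|M_\sr(-y_0\pm i\ep_0)M_\s(2-y_0\pm i\ep_0)-M_\s(-y_0\pm i\ep_0)M_\sr(2-y_0\pm i\ep_0)\bigr|}.
\end{equation*}

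For the pointwise boundary values of $\varphi_{1,k,\ep}^\pm$, the boundary conditions following \eqref{eq:TGvarphi1} give $\varphi_{1,k,\ep}^\pm(0,y_0)=\partial_y\varphi_{k,\ep}^\pm(0,y_0)$ and likewise at $y=2$, so Proposition~\ref{prop:Xkvarphi} supplies $|\varphi_{1,k,\ep}^\pm(0,y_0)|+|\varphi_{1,k,\ep}^\pm(2,y_0)|\lesssim k^{1/2}\cS_{k,0}$. It then suffices to prove
\begin{equation*}
\frac{4k|\gamma_0|}{|\W_{k,\ep}^\pm(y_0)|}\bigl(\Vert\phi_{\su,k,\ep}^\pm(\cdot,y_0)\Vert_{X_k}+\Vert\phi_{\sl,k,\ep}^\pm(\cdot,y_0)\Vert_{X_k}\bigr)\lesssim 1,
\end{equation*}
uniformly in $y_0\in I_S\cup I_W$ and $0<\ep<\ep_*$. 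The local $X_k^1$ estimate is immediate from the Whittaker decomposition $M_{0,\pm\gamma_0}(\eta)=\eta^{1/2\pm\gamma_0}\E_{0,\pm\gamma_0}(\eta)$ together with the uniform boundedness of the $\E$'s on $B_{10}(0)$: one reads off that $\Vert\phi_{\su,k,\ep}^\pm\Vert_{X_k^1}$ is controlled by $|M_\sr(2-y_0\pm i\ep_0)|+|M_\s(2-y_0\pm i\ep_0)|$ (and symmetrically for $\phi_{\sl,k,\ep}^\pm$), after which the Wronskian lower bounds from Proposition~4.4 of \cite{CZN25chan} in the strong regime and Lemma~\ref{lemma:Wronskianlowerboundweakstrat} in the weak regime yield the target inequality $1/k$.

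The main obstacle is the global $H_k^1(I_3^c(y_0))$ contribution to the $X_k$ norm of $\phi_{\su,k,\ep}^\pm$ and $\phi_{\sl,k,\ep}^\pm$, since these homogeneous solutions grow exponentially in $|y-y_0|$ and one cannot rely on the entanglement inequality (the source term of the RTG equation vanishes, and the boundary vanishing only holds at one endpoint). I would handle this by applying the same strategy as in the proof of Proposition~\ref{prop:sobolevregdecomG}: run the entanglement inequality separately on each component $I_3^c(y_0)\cap(0,y_0)$ and $I_3^c(y_0)\cap(y_0,2)$, where either $\phi_{\sl}$ vanishes at the left endpoint or $\phi_\su$ vanishes at the right endpoint, obtaining bounds of the form $\Vert\phi_{\su,k,\ep}^\pm\Vert_{H_k^1(I_3^c(y_0))}\lesssim k^{-1/2}(|M_\sr(2-y_0\pm i\ep_0)|+|M_\s(2-y_0\pm i\ep_0)|)$. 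After dividing by $k^{1/2}|\W_{k,\ep}^\pm(y_0)|/(4k|\gamma_0|)$, the Wronskian estimates again produce the required $k^{1/2}\cdot k^{-3/2}=k^{-1}$ control, closing the bound $\Vert\B_{0,k,\ep}^\pm\Vert_{X_k}\lesssim k^{1/2}\cS_{k,0}$.
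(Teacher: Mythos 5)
Your proposal follows the same architecture as the paper's own proof: use the explicit Green's function formulas at $z=0,2$, extract the Wronskian constants $\partial_z\phi_{\sl,k,\ep}^\pm(0,y_0)=\partial_z\phi_{\su,k,\ep}^\pm(2,y_0)=-4k\gamma_0$, identify $\varphi_{1,k,\ep}^\pm$ at the endpoints with $\partial_y\varphi_{k,\ep}^\pm$ there (bounded by $k^{1/2}\cS_{k,0}$ via Proposition~\ref{prop:Xkvarphi}), and reduce to an $X_k$ estimate of the renormalized homogeneous solutions $\phi_{\su,k,\ep}^\pm/\cD$, $\phi_{\sl,k,\ep}^\pm/\cD$, with $\cD:=M_\sr(-y_0\pm i\ep_0)M_\s(2-y_0\pm i\ep_0)-M_\s(-y_0\pm i\ep_0)M_\sr(2-y_0\pm i\ep_0)$. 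The local $X_k^1$ piece is as you describe.

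The gap is in the global piece. The claimed bound
\begin{equation*}
\Vert\phi_{\su,k,\ep}^\pm\Vert_{H_k^1(I_3^c(y_0))}\lesssim k^{-1/2}\bigl(|M_\sr(2-y_0\pm i\ep_0)|+|M_\s(2-y_0\pm i\ep_0)|\bigr)
\end{equation*}
is false. The entanglement inequality for $\phi_{\su,k,\ep}^\pm$ only applies on $I_3^c(y_0)\cap(y_0,2)$, where $\phi_{\su,k,\ep}^\pm(2,y_0)=0$; on the other component $I_3^c(y_0)\cap(0,y_0)$ the function does not vanish at the left endpoint. In fact $\phi_{\su,k,\ep}^\pm(0,y_0)=-\cD(y_0)$, and $|\cD(y_0)|$ is, by the very Wronskian lower bound you invoke, of order $|M_\tau(-y_0\pm i\ep_0)|\,|M_\sigma(2-y_0\pm i\ep_0)|$, which for large $k$ exceeds your claimed bound by the exponentially large factor $|M_\tau(-y_0\pm i\ep_0)|$. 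The final inequality $\Vert\phi_{\su,k,\ep}^\pm/\cD\Vert_{X_k}\lesssim1$ does hold, but precisely because the offending factor sits in the denominator $\cD$. So the argument cannot be run by first bounding $\Vert\phi_{\su,k,\ep}^\pm\Vert_{H_k^1(I_3^c(y_0))}$ and then dividing; one has to estimate the ratio $\phi_{\su,k,\ep}^\pm(\cdot,y_0)/\cD(y_0)$ (equivalently, $\partial_z\G_{k,\ep}^\pm(\cdot,y_0,0)$) directly, tracking the Wronskian normalization on both components of $I_3^c(y_0)$ rather than only on the one where the entanglement inequality is available. Structurally your route matches the paper's, but this step, as written, would not close.
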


\begin{proof}
Firstly, since $\varphi_{k,\ep}^\pm(z,y_0)=0$ for $z=0,2$ for all $y_0\in (0,2)$, we note that $|\varphi_{1,k,\ep}^\pm(z,y_0)| = |\partial_y\varphi_{k,\ep}^\pm(z,y_0)|\lesssim k^\frac12\cS_{k,0}$ for $z=0,2$ due to Proposition \ref{prop:Xkvarphi}. We next argue for $\partial_z\G_{k,\ep}^\pm(y,y_0,z)$ for $z=0$. We have
\begin{align*}
\partial_z\G_{k,\ep}^\pm(y,y_0,0) = \frac{\phi_{\su,k,\ep}^\pm(y,y_0)}{\W_{k,\ep}^\pm(y_0)}\partial_y\phi_{\sl,k,\ep}^\pm(0,y_0).
\end{align*}
where there further holds, see \cite{NIST}, 
\begin{equation}\label{eq:pyphil}
\partial_y\phi_{\sl, k,\ep}^\pm(0,y_0) = -4k\gamma_0
\end{equation}
Then, 
\begin{align*}
\Vert \partial_z\G_{k,\ep}^\pm(y,y_0,0)  \Vert_{X_k} \lesssim 1
\end{align*}
follows from \eqref{eq:pyphil}, Proposition \ref{prop:sobolevregdecomG} and Corollary \ref{cor:sobolevregpartialdecomG}. The lemma is then proved. 
\end{proof}

We are now in position to prove the main regularity properties of $\varphi_{1,k,\ep}^\pm$ in $X_k$.

\begin{proposition}\label{prop:Xkvarphi1}
Let $k\geq 1$. There holds 
\begin{align*}
\Vert \varphi_{1,k,\ep}^\pm \Vert_{X_k} \lesssim k^{-\frac12}\cS_{k,1}
\end{align*}
uniformly for all $y_0\in I_S\cup I_{W_*}$ and all $0 < \ep\leq \ep_*$. Moreover, we have
\begin{align*}
\Vert \partial_y^n \varphi_{1,k,\ep}^\pm \Vert_{L^\infty(I_3(y_0))} \lesssim k^{-\frac12}\cS_{k,n+1}
\end{align*}
for $n=0,1$, uniformly for all $y_0\in I_S\cup I_{W}$ and all $0 < \ep\leq \ep_*$.
\end{proposition}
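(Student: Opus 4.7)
The plan is to activate the limiting absorption principle on the resolvent identity~\eqref{eq:LAPvarphi1}. Invoking Proposition~\ref{prop:LAPZk} together with the continuous embedding $X_k\hookrightarrow Z_k$ reduces the problem to
\[
\Vert \varphi_{1,k,\ep}^\pm \Vert_{Z_k} \lesssim \bigl\Vert R_{0,k,\ep}^\pm \textsc{TG}_{k,\ep}^\pm \varphi_{1,k,\ep}^\pm \bigr\Vert_{X_k} + \Vert \B_{0,k,\ep}^\pm \Vert_{X_k},
\]
where the boundary piece is already controlled by Lemma~\ref{lemma:XkB0}. Everything then comes down to estimating each of the seven source terms appearing on the right-hand side of~\eqref{eq:TGvarphi1} in $X_k$ after the action of $R_{0,k,\ep}^\pm$, using the $Z_k$ bound on $\varphi_{k,\ep}^\pm$ from Proposition~\ref{prop:Xkvarphi} as an a priori input.

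The ``regular'' sources $v'''(y)\varphi_{k,\ep}^\pm/(v-v(y_0)\pm i\ep)$, $\partial_y w_k^0/(v-v(y_0)\pm i\ep)$ and $\partial_y q_k^0$ are handled by Corollary~\ref{cor:R1mapsCXktoXk}, Lemma~\ref{lemma:R1mapsH1toXk} and Lemma~\ref{lemma:R0mapsL2toXk} respectively, all contributing at most $k^{-1/2}\cS_{k,1}$. The two quadratic-singular sources with numerators $v''(y)(v'(y)-v'(y_0))$ and $w_k^0(y)(v'(y)-v'(y_0))$ carry a $C^2$ coefficient that vanishes at $y=y_0$, so Lemma~\ref{lemma:R2mapsC1XktoXk} applies immediately. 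The same is true for the piece of $\P'(y)\varphi_{k,\ep}^\pm/(v-v(y_0))^2$ with numerator $\P'(y)-\P'(y_0)$ after the natural splitting.

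The main obstacle is the genuinely cubic source $2\P(y)(v'(y)-v'(y_0))\varphi_{k,\ep}^\pm/(v-v(y_0)\pm i\ep)^3$, together with the constant-coefficient residue $\P'(y_0)\varphi_{k,\ep}^\pm/(v-v(y_0))^2$ left over from the splitting above. Rewriting the cubic term as $R_{2,k,\ep}^\pm[h(\cdot,y_0)\varphi_{k,\ep}^\pm]$ with $h(y,y_0):=2\P(y)(v'(y)-v'(y_0))/(v-v(y_0)\pm i\ep)$, the value $h(y_0,y_0)=2\P(y_0)v''(y_0)/v'(y_0)$ does not vanish and therefore obstructs a direct use of Lemma~\ref{lemma:R2mapsC1XktoXk}. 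I plan to resolve this by the splitting $h=h(y_0,y_0)+(h-h(y_0,y_0))$: the vanishing remainder is $C^2$ and falls under Lemma~\ref{lemma:R2mapsC1XktoXk}, while the leftover constants multiplying $R_{2,k,\ep}^\pm\varphi_{k,\ep}^\pm$ are absorbed by substituting the Taylor--Goldstein equation~\eqref{eq:TGeqvarphiSobolevReg} itself, rewriting $\P(y)\varphi_{k,\ep}^\pm/(v-v(y_0))^2$ as $\Delta_k\varphi_{k,\ep}^\pm+v''(y)\varphi_{k,\ep}^\pm/(v-v(y_0))-w_k^0/(v-v(y_0))-q_k^0$ and recycling the previous bounds. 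This substitution is what costs one extra $k$-factor and exactly accounts for the upgrade from $\cS_{k,0}$ to $\cS_{k,1}$.

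Once the $Z_k$ bound $\Vert\varphi_{1,k,\ep}^\pm\Vert_{Z_k}\lesssim k^{-1/2}\cS_{k,1}$ is in place, I reinject it into~\eqref{eq:LAPvarphi1} and invoke the regularization $T_{k,\ep}^\pm:Z_k\to X_k$ of Proposition~\ref{prop:TmapsXktoXk} to upgrade to the claimed $X_k$ estimate. The pointwise bound for $n=0$ is then immediate from the $X_k$ definition. For $n=1$, I use~\eqref{eq:TGvarphi1} to trade $\partial_y^2\varphi_{1,k,\ep}^\pm$ against the source and $\partial_y\varphi_{1,k,\ep}^\pm$, combined with the non-local pointwise estimates of Proposition~\ref{prop:Xkvarphi} on $\partial I_3(y_0)$ propagated inside by the fundamental theorem of calculus; one further derivative of the data is absorbed in the process, which is the reason $\cS_{k,2}$ appears in the $n=1$ bound.
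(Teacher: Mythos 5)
Your overall architecture matches the paper's: apply the limiting absorption principle of Proposition~\ref{prop:LAPZk} to the identity~\eqref{eq:LAPvarphi1}, control the boundary term with Lemma~\ref{lemma:XkB0}, bound each of the source terms from~\eqref{eq:TGvarphi1} after the action of $R_{0,k,\ep}^\pm$, upgrade $Z_k\to X_k$ via Proposition~\ref{prop:TmapsXktoXk}, and conclude the pointwise bounds by trading $\partial_y^2$ against the source in the equation. Your treatment of the ``regular'' data terms and the quadratic sources with vanishing coefficients (Corollary~\ref{cor:R1mapsCXktoXk}, Lemmas~\ref{lemma:R1mapsH1toXk}, \ref{lemma:R0mapsL2toXk}, \ref{lemma:R2mapsC1XktoXk}, \ref{lemma:R2mapsC1H1toXk}) is identical to the paper.

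However, there is a genuine gap in how you dispose of the cubic and non-vanishing quadratic pieces. You factor the cubic source into $R_{2,k,\ep}^\pm$ with coefficient $h(y,y_0)=2\P(y)(v'(y)-v'(y_0))/(v-v(y_0)\pm i\ep)$, observe that $h(y_0,y_0)\neq0$, and then propose to dispose of the constant piece $h(y_0,y_0)\,R_{2,k,\ep}^\pm\varphi_{k,\ep}^\pm$ (and likewise $\P'(y_0)\,R_{2,k,\ep}^\pm\varphi_{k,\ep}^\pm$) by substituting the Taylor--Goldstein equation to rewrite $\P(y)\varphi_{k,\ep}^\pm/(v-v(y_0))^2$ in terms of $\Delta_k\varphi_{k,\ep}^\pm$ and lower-order terms, and you attribute the $\cS_{k,0}\to\cS_{k,1}$ upgrade to this substitution. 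This is both unnecessary and delicate: unnecessary because Lemma~\ref{lemma:R2mapsXktoXk} already says $R_{2,k,\ep}^\pm$ maps $X_k\to X_k$ boundedly without any vanishing of the coefficient (and Corollary~\ref{cor:R2mapsCXktoXk} packages exactly the splitting you attempt by hand), so the non-vanishing constant is handled \emph{at no cost}, contributing only $\lesssim k^{-1/2}\cS_{k,0}$; and delicate because $R_{0,k,\ep}^\pm$ is the inverse of $\textsc{RTG}_{k,\ep}^\pm$, not $\Delta_k$, so $R_{0,k,\ep}^\pm\Delta_k\varphi_{k,\ep}^\pm$ does not collapse and the substitution leaves you with a new, unresolved term. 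The paper instead keeps the cubic source as $R_{3,k,\ep}^\pm[\P\,h_1\varphi_{k,\ep}^\pm]$ with $h_1=v'-v'(y_0)$ and invokes its dedicated Lemma~\ref{lemma:R3mapsC1XktoXk}, which only requires the $C^2$ pre-factor to vanish linearly at $y_0$.

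Consequently your explanation of where the loss occurs is incorrect. The $\cS_{k,1}$ on the right-hand side does \emph{not} come from a revisit of the equation; it comes exclusively from the three initial-data sources $\partial_y w_k^0/(v-v(y_0)\pm i\ep)$, $h_1(\cdot,y_0)\,w_k^0/(v-v(y_0)\pm i\ep)^2$ and $\partial_y q_k^0$, which carry one more $y$-derivative of the data than~\eqref{eq:TGeqvarphiSobolevReg} and are estimated by $k^{-1/2}\Vert\partial_y w_k^0\Vert_{H_k^1}+k^{-1/2}\Vert w_k^0\Vert_{H_k^1}+k^{-3/2}\Vert\partial_y q_k^0\Vert_{L^2}\lesssim k^{-1/2}\cS_{k,1}$. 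All the $\varphi_{k,\ep}^\pm$-dependent sources are bounded at the weaker $\cS_{k,0}$ level. You should discard the substitution step and instead invoke Corollary~\ref{cor:R2mapsCXktoXk} and Lemma~\ref{lemma:R3mapsC1XktoXk}; the rest of the argument then goes through as you outline.
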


\begin{proof}
We use the limiting absorption principle Proposition \ref{prop:LAPZk} on \eqref{eq:LAPvarphi1} which provides
\begin{align*}
\Vert \varphi_{1,k,\ep}^\pm \Vert_{Z_k}\lesssim \Vert R_{0,k,\ep}^\pm\text{TG}_{k,\ep}^\pm\varphi_{1,k,\ep}^\pm \Vert_{X_k} + \Vert \B_{0,k,\ep}^\pm \Vert_{X_k}.
\end{align*}
From \eqref{eq:TGvarphi1} we see that
\begin{align*}
\Vert \text{TG}_{k,\ep}^\pm\varphi_{1,k,\ep}^\pm \Vert_{X_k}  &\leq \left\Vert R_{1,k,\ep}^\pm\left( v'''\varphi_{k,\ep}^\pm \right) \right\Vert_{X_k} + \left\Vert R_{2,k,\ep}^\pm \left(\P'\varphi_{k,\ep}^\pm\right) \right\Vert_{X_k} \\
&\quad + 2 \left\Vert R_{3,k,\ep}^\pm \left( \P h_1(\cdot,y_0) \varphi_{k,\ep}^\pm  \right)  \right\Vert_{X_k}  + \left\Vert R_{2,k,\ep}^\pm \left(v'' h_1(\cdot,y_0)\varphi_{k,\ep}^\pm \right)  \right\Vert_{X_k} \\
&\quad + \left\Vert R_{1,k,\ep}^\pm \partial_y w_k^0 \right\Vert_{X_k} + \left\Vert R_{2,k,\ep}^\pm \left(h_1(\cdot,y_0) w_k^0\right) \right \Vert_{X_K} + \left\Vert R_{0,k,\ep}^\pm \partial_y q_k^0 \right\Vert_{X_k}
\end{align*}
where we have defined $h_1(y,y_0):=v'(y) - v'(y_0)$. Thanks to Proposition \ref{prop:Xkvarphi}, we have $\Vert \varphi_{k,\ep}^\pm \Vert_{X_k}\lesssim k^{-\frac12}\cS_{k,0}$. Furthermore,  Corollary \ref{cor:R1mapsCXktoXk} gives
\begin{align*}
\left\Vert R_{1,k,\ep}^\pm\left( v'''\varphi_{k,\ep}^\pm \right) \right\Vert_{X_k} &\lesssim k^{-1}\Vert \varphi_{k,\ep}^\pm \Vert_{X_k} \lesssim k^{-\frac32}\cS_{k,0}.
\end{align*}
Similarly, Corollary \ref{cor:R2mapsCXktoXk} shows that
\begin{align*}
\left\Vert R_{2,k,\ep}^\pm \left(\P'\varphi_{k,\ep}^\pm\right) \right\Vert_{X_k} \lesssim \Vert \varphi_{k,\ep}^\pm \Vert_{X_k}\lesssim k^{-\frac12}\cS_{k,0},
\end{align*}
while Lemma \ref{lemma:R3mapsC1XktoXk} gives
\begin{align*}
\left\Vert R_{3,k,\ep}^\pm \left( \P h_1(\cdot,y_0) \varphi_{k,\ep}^\pm  \right)  \right\Vert_{X_k} \lesssim \Vert \varphi_{k,\ep}^\pm \Vert_{X_k}\lesssim k^{-\frac12}\cS_{k,0}
\end{align*}
and Lemma \ref{lemma:R2mapsC1XktoXk} provides
\begin{align*}
\left\Vert R_{2,k,\ep}^\pm \left(v'' h_1(\cdot,y_0)\varphi_{k,\ep}^\pm \right)  \right\Vert_{X_k}\lesssim k^{-1}\Vert \varphi_{k,\ep}^\pm \Vert_{X_k}\lesssim k^{-\frac32}\cS_{k,0}.
\end{align*}
Finally, Lemma \ref{lemma:R1mapsH1toXk}, Lemma \ref{lemma:R2mapsC1H1toXk} and Lemma \ref{lemma:R0mapsL2toXk} yield
\begin{align*}
\left\Vert R_{1,k,\ep}^\pm \partial_y w_k^0 \right\Vert_{X_k} &+ \left\Vert R_{2,k,\ep}^\pm \left(h_1(\cdot,y_0) w_k^0\right) \right \Vert_{X_K} + \left\Vert R_{0,k,\ep}^\pm \partial_y q_k^0 \right\Vert_{X_k} \\
&\lesssim k^{-\frac12}\Vert \partial_y w_k^0\Vert_{H_k^1} + k^{-\frac12}\Vert w_k^0\Vert_{H_k^1} + k^{-\frac32}\Vert \partial_y q_k^0\Vert_{L^2} \\
&\lesssim k^{-\frac12}\cS_{k,1}.
\end{align*}
With this and Lemma \ref{lemma:XkB0}, the $Z_k$ estimate is established. The $X_k$ estimate is then obtained as in Proposition \ref{prop:Xkvarphi}. The proof of the pointwise estimate is the same as in Proposition\ref{prop:Xkvarphi}.
\end{proof}

We are now in position to provide a useful description of $\partial_{y_0}\varphi_{k,\ep}^\pm$.

\begin{proposition}\label{prop:Xkpynotvarphi}
Let $k\geq 1$ and $y_0\in I_S\cup I_W$, we have
\begin{equation}\label{eq:decomXkpynotvarphi}
\begin{split}
\partial_{y_0}\varphi_{k,\ep}^\pm &= -\varphi_{1,1,\sr,k,\ep}^\pm(y,y_0) \eta^{-\frac12+\gamma_0} -\varphi_{1,1,\s,k,\ep}^\pm(y,y_0) \eta^{-\frac12-\gamma_0} + \widetilde\varphi_{1,k,\ep}^\pm(y,y_0)
\end{split}
\end{equation}
where $\varphi_{1,1,\sigma,k,\ep}^\pm(y,y_0)$ are given as in Proposition \ref{prop:Xkvarphi}, with further $\Vert \widetilde\varphi_{1,k,\ep}^\pm \Vert_{L^\infty(I_3(y_0))} \lesssim  k^{-\frac12}\cS_{k,1}$. Moreover, $\partial_{y_0}\varphi_{k,\ep}^\pm$ enjoys the more precise bound
\begin{align*}
\Vert \partial_{y_0}\varphi_{k,\ep}^\pm \Vert_{H_k^1(I_3^c(y_0))} \lesssim  k^{-1}\cS_{k,1}, \quad \Vert \partial_{y_0}\varphi_{k,\ep}^\pm \Vert_{L^\infty(I_3^c(y_0))} \lesssim k^{-\frac12}\cS_{k,1}, 
\end{align*}
uniformly for all $y_0\in I_S\cup I_{W}$ and all $0 < \ep\leq \ep_*$.
\end{proposition}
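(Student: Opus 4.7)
The plan is to reduce the statement to Propositions~\ref{prop:Xkvarphi} and~\ref{prop:Xkvarphi1} by exploiting the identity
\begin{align*}
\partial_{y_0}\varphi_{k,\ep}^\pm(y,y_0) = \varphi_{1,k,\ep}^\pm(y,y_0) - \partial_y\varphi_{k,\ep}^\pm(y,y_0),
\end{align*}
which just expresses the good derivative $\partial_y+\partial_{y_0}$ in the form used earlier. Substituting the decomposition \eqref{eq:decomXkpyvarphi} of $\partial_y\varphi_{k,\ep}^\pm$ from Proposition~\ref{prop:Xkvarphi} into this identity immediately yields the decomposition \eqref{eq:decomXkpynotvarphi} with
\begin{align*}
\widetilde\varphi_{1,k,\ep}^\pm(y,y_0) := \varphi_{1,k,\ep}^\pm(y,y_0) - \widetilde\varphi_{0,k,\ep}^\pm(y,y_0),
\end{align*}
and the same coefficients $\varphi_{1,1,\sigma,k,\ep}^\pm$ as before, up to the sign flip coming from moving $\partial_y\varphi_{k,\ep}^\pm$ to the other side.

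For the $L^\infty(I_3(y_0))$ bound on $\widetilde\varphi_{1,k,\ep}^\pm$, I will combine the pointwise bound $\|\varphi_{1,k,\ep}^\pm\|_{L^\infty(I_3(y_0))} \lesssim k^{-1/2}\cS_{k,1}$ from Proposition~\ref{prop:Xkvarphi1} with $\|\widetilde\varphi_{0,k,\ep}^\pm\|_{L^\infty(I_3(y_0))} \lesssim k^{1/2}\cS_{k,0}$ from Proposition~\ref{prop:Xkvarphi}, and observe that the inequality $k\,\cS_{k,0}\leq \cS_{k,1}$ built into the definition \eqref{eq:defSnorm} of $\cS_{k,j}$ gives $k^{1/2}\cS_{k,0}\leq k^{-1/2}\cS_{k,1}$, so both contributions are absorbed into the claimed bound.

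For the non-local estimates I proceed analogously. The $H_k^1(I_3^c(y_0))$ bound of Proposition~\ref{prop:Xkvarphi} gives $\|\partial_y\varphi_{k,\ep}^\pm\|_{H_k^1(I_3^c(y_0))} \lesssim \cS_{k,0}$, while the $X_k$ bound $\|\varphi_{1,k,\ep}^\pm\|_{X_k}\lesssim k^{-1/2}\cS_{k,1}$ from Proposition~\ref{prop:Xkvarphi1}, together with the very definition of the $X_k$ norm, yields $\|\varphi_{1,k,\ep}^\pm\|_{H_k^1(I_3^c(y_0))} \lesssim k^{-1}\cS_{k,1}$. Summing and again invoking $k\cS_{k,0}\leq \cS_{k,1}$ produces the required $k^{-1}\cS_{k,1}$ control on $\partial_{y_0}\varphi_{k,\ep}^\pm$. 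The pointwise $L^\infty(I_3^c(y_0))$ bound follows from the Sobolev-type inequality of Lemma~\ref{lemma:LinfH1bound}, applied exactly as in the proof of Proposition~\ref{prop:Xkvarphi}, to pass from $H_k^1$ to $L^\infty$ at the cost of a factor $k^{1/2}$.

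There is no real obstacle here: all the work was already done in the previous two propositions, and the only subtle point is the accounting with the scale of norms $\cS_{k,j}$, which is handled automatically by the monotonicity $k\cS_{k,j}\leq\cS_{k,j+1}$.
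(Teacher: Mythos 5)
Your proposal is correct and coincides with the paper's own argument: both start from the identity $\partial_{y_0}\varphi_{k,\ep}^\pm = \varphi_{1,k,\ep}^\pm - \partial_y\varphi_{k,\ep}^\pm$, substitute the decomposition \eqref{eq:decomXkpyvarphi} for $\partial_y\varphi_{k,\ep}^\pm$, set $\widetilde\varphi_{1,k,\ep}^\pm := \varphi_{1,k,\ep}^\pm - \widetilde\varphi_{0,k,\ep}^\pm$, and derive all bounds from Propositions~\ref{prop:Xkvarphi} and~\ref{prop:Xkvarphi1}. You have in fact spelled out the $\cS_{k,j}$ bookkeeping more explicitly than the paper, which simply asserts that the estimates "follow from" those two propositions.
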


\begin{proof}
There holds $\partial_{y_0}\varphi_{k,\ep}^\pm(y,y_0) = -\partial_y\varphi_{k,\ep}^\pm(y,y_0) + \varphi_{1,k,\ep}^\pm(y,y_0)$ and
\begin{align*}
\partial_y\varphi_{k,\ep}^\pm(y,y_0) &=  \partial_y \left(\varphi_{k,\ep}^\pm \right)_\sr(y,y_0)\eta^{\frac12+\gamma_0} + 2k\left( \frac12 +\gamma_0\right) \left(\varphi_{k,\ep}^\pm \right)_\sr(y,y_0) \eta^{-\frac12+\gamma_0} \\
&\quad+ \partial_y \left(\varphi_{k,\ep}^\pm \right)_\s(y,y_0)\eta^{\frac12-\gamma_0} + 2k\left( \frac12 -\gamma_0\right) \left(\varphi_{k,\ep}^\pm \right)_\s(y,y_0)  \eta^{-\frac12-\gamma_0}.
\end{align*}
Hence, we define
\begin{align*}
\widetilde\varphi_{1,k,\ep}^\pm := \varphi_{1,k,\ep}^\pm - \widetilde\varphi_{0,k,\ep}^\pm
\end{align*}
Thanks to Proposition \ref{prop:Xkvarphi} and Proposition \ref{prop:Xkvarphi1}, we obtain the $L^\infty(I_3(y_0))$ estimate for $\widetilde\varphi_{1,k,\ep}^\pm$. Next, the $H_{k}^1(I_3^c(y_0))$ and $L^\infty(I_3^c(y_0))$ bounds on $\partial_{y_0}\varphi_{k,\ep}^\pm$ also follow from Propositions~\ref{prop:Xkvarphi} and \ref{prop:Xkvarphi1}.
\end{proof}

\subsubsection{Mild stratifications}
In what follows we study the regularity of $\varphi_{1,k,\ep}^\pm$ when $y_0\in I_M$. Firstly, the same arguments of Lemma \ref{lemma:XkB0}, Proposition \ref{prop:logsobolevregdecomG} and Corollary \ref{cor:logsobolevregpartialdecomG} now provides
\begin{lemma}\label{lemma:LXkB0}
Let $k\geq 1$. Then,  
\begin{align*}
\Vert \B_{0,k,\ep}^\pm \Vert_{LX_k}\lesssim k^\frac12\cS_{k,0},
\end{align*}
uniformly in $0<\ep<\ep_*$ and $y_0\in I_M$.
\end{lemma}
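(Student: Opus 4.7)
The strategy mirrors that of Lemma~\ref{lemma:XkB0}, with the weak/strong Green's function structure replaced by its mild analogue. First, since $\varphi_{k,\ep}^\pm(z,y_0)=0$ at $z=0,2$, the defining formula \eqref{eq:defB0} together with $\varphi_{1,k,\ep}^\pm = (\partial_y+\partial_{y_0})\varphi_{k,\ep}^\pm$ gives $|\varphi_{1,k,\ep}^\pm(z,y_0)| = |\partial_y\varphi_{k,\ep}^\pm(z,y_0)|$ for $z\in\{0,2\}$, which is bounded by $k^{\frac12}\cS_{k,0}$ via the pointwise estimate in Proposition~\ref{prop:LXkvarphi}. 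Thus it suffices to prove $\Vert\partial_z\G_{k,\ep}^\pm(\cdot,y_0,0)\Vert_{LX_k} + \Vert\partial_z\G_{k,\ep}^\pm(\cdot,y_0,2)\Vert_{LX_k}\lesssim 1$ uniformly for $y_0\in I_M$ and $\ep\in(0,\ep_*)$.

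The plan is to explicitly compute $\partial_z\G_{k,\ep}^\pm(y,y_0,0) = \phi_{\su,k,\ep}^\pm(y,y_0)\,\partial_y\phi_{\sl,k,\ep}^\pm(0,y_0)/\W_{k,\ep}^\pm(y_0)$ (and analogously at $z=2$), using the mild-regime representations \eqref{eq:homoupms}--\eqref{eq:def Wronskianms}. The key input from the theory of Whittaker functions (Appendix~\ref{app:Whittaker}) is the constant Wronskian identity
\begin{equation*}
\mathcal{W}\{M_{0,\gamma_0},W_{0,\gamma_0}\} = -\frac{\Gamma(1+2\gamma_0)}{\Gamma(\tfrac12+\gamma_0)},
\end{equation*}
which after the change of variable $\zeta = 2k(y-y_0\pm i\ep_0)$ yields
\begin{equation*}
\partial_y\phi_{\sl,k,\ep}^\pm(0,y_0) = 2k\,\mathcal{W}\{M_\sr,W_\sr\} = -2k\frac{\Gamma(1+2\gamma_0)}{\Gamma(\tfrac12+\gamma_0)}.
\end{equation*}
This prefactor is \emph{exactly} the one appearing in $\W_{k,\ep}^\pm(y_0)$ from \eqref{eq:def Wronskianms}, so the two factors cancel and $\partial_z\G_{k,\ep}^\pm(y,y_0,0)$ reduces to $\phi_{\su,k,\ep}^\pm(y,y_0)$ divided by the bracketed combination $M_\sr(-y_0\pm i\ep_0)W_\sr(2-y_0\pm i\ep_0) - W_\sr(-y_0\pm i\ep_0)M_\sr(2-y_0\pm i\ep_0)$, which is bounded below by the non-degeneracy estimates of Lemmas~\ref{lemma:Wronskianlowerboundmildstrat}, \ref{lemma:smallnulargearg} and \ref{lemma:smallnuboundedarg}.

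With this cancellation in hand, the required $LX_k$ bound on $\partial_z\G_{k,\ep}^\pm(\cdot,y_0,0)$ reduces to a bound on $\phi_{\su,k,\ep}^\pm(y,y_0)$, with its appropriate regular/singular-plus-log decomposition, which is precisely the content of Proposition~\ref{prop:logsobolevregdecomG} and Corollary~\ref{cor:logsobolevregpartialdecomG}. The treatment of $\partial_z\G_{k,\ep}^\pm(\cdot,y_0,2)$ is analogous, now using $\phi_{\sl,k,\ep}^\pm$ and $\partial_y\phi_{\su,k,\ep}^\pm(2,y_0)$, which again equals the Whittaker Wronskian prefactor and cancels with $\W_{k,\ep}^\pm(y_0)$. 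Combining gives the claim.

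The main conceptual obstacle is that, unlike in Lemma~\ref{lemma:XkB0} where the cancellation of the Wronskian prefactor $-4k\gamma_0$ degenerates precisely when one would want linear independence, in the mild regime the prefactor $\Gamma(1+2\gamma_0)/\Gamma(\tfrac12+\gamma_0)$ remains uniformly bounded and bounded away from zero as $\gamma_0\to 0$ (approaching $1/\sqrt{\pi}$). This is precisely the reason for replacing $\phi_{\s,k,\ep}^\pm$ by $W_\sr$ in the mild setting, and it is what makes the cancellation robust through the critical threshold $\cJ(y_0)=\tfrac14$, so that the logarithmic structure encoded in the $LX_k$ norm absorbs the remaining $\log\eta$ terms.
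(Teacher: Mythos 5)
Your plan is correct and is precisely the adaptation that the paper intends when it states without detail that ``the same arguments of Lemma~\ref{lemma:XkB0}, Proposition~\ref{prop:logsobolevregdecomG} and Corollary~\ref{cor:logsobolevregpartialdecomG}'' apply: the boundary values of $\varphi_{1,k,\ep}^\pm$ are controlled by $k^{1/2}\cS_{k,0}$ through the pointwise estimates of Proposition~\ref{prop:LXkvarphi}, and the prefactor $\partial_y\phi_{\sl,k,\ep}^\pm(0,y_0) = -2k\,\Gamma(1+2\gamma_0)/\Gamma(\tfrac12+\gamma_0)$ (from the Whittaker Wronskian $\mathscr{W}\{M_{0,\gamma},W_{0,\gamma}\}$, NIST~13.14.26) cancels against the identical prefactor in $\W_{k,\ep}^\pm(y_0)$ from \eqref{eq:def Wronskianms}, reducing the bound to the $LX_k$ estimate of $\phi_{\su,k,\ep}^\pm$ which is Proposition~\ref{prop:logsobolevregdecomG}. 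Your closing observation that $\Gamma(1+2\gamma_0)/\Gamma(\tfrac12+\gamma_0)\to 1/\sqrt{\pi}$ is a non-degenerate replacement for the collapsing factor $-4k\gamma_0$ of the weak/strong regime is exactly the reason the mild-regime Green's function is built from $\{M_\sr,W_\sr\}$ rather than $\{M_\sr,M_\s\}$, and is worth keeping in the write-up.
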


We next record the $LX_k$ regularity of $\varphi_{1,k,\ep}^\pm$.

\begin{proposition}\label{prop:LXkvarphi1}
Let $k\geq 1$. There holds 
\begin{align*}
\Vert \varphi_{1,k,\ep}^\pm \Vert_{LX_k} \lesssim k^{-\frac12}\cS_{k,1}
\end{align*}
uniformly for all $y_0\in I_M$ and all $0 < \ep\leq \ep_*$. 
\end{proposition}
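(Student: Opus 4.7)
The plan is to mirror the proof of Proposition~\ref{prop:Xkvarphi1} with the $X_k$/$Z_k$ spaces replaced by their logarithmically corrected analogues $LX_k$/$LZ_k$ and the coercive estimate of Proposition~\ref{prop:LAPZk} replaced by its mild-regime counterpart Proposition~\ref{prop:LAPLZk}. Specifically, we apply Proposition~\ref{prop:LAPLZk} to the fixed-point identity \eqref{eq:LAPvarphi1} (which still holds for $y_0\in I_M$ because the derivation of \eqref{eq:TGvarphi1} and the boundary representation \eqref{eq:defB0} are purely algebraic), giving
\begin{align*}
\Vert \varphi_{1,k,\ep}^\pm \Vert_{LZ_k} \lesssim \left\Vert R_{0,k,\ep}^\pm\, \text{TG}_{k,\ep}^\pm \varphi_{1,k,\ep}^\pm \right\Vert_{LX_k} + \Vert \B_{0,k,\ep}^\pm \Vert_{LX_k}.
\end{align*}
The boundary term is dispatched by Lemma~\ref{lemma:LXkB0}.

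Next, I would decompose the source $\text{TG}_{k,\ep}^\pm \varphi_{1,k,\ep}^\pm$ into the seven terms displayed in \eqref{eq:TGvarphi1} and bound each in $LX_k$. Five of them are essentially those estimated in Proposition~\ref{prop:Xkvarphi1}, but now one can take advantage of the fact that on $I_M$ the weights $|\gamma_0|\leq \tilde\delta$ are small and uniformly bounded, so that $1-2\mu_0$ is bounded away from $0$. In this regime, Lemma~\ref{lemma:R1mapsLinfL2toLXk} already absorbs Lemma~\ref{lemma:R1mapsXktoXk}, Lemma~\ref{lemma:R1mapsH1toXk} and Lemma~\ref{lemma:R2mapsC1XktoXk}; integration by parts in the Cauchy kernel $(v(z)-v(y_0)\pm i\ep)^{-m}$, combined with the vanishing of $h_1(y,y_0)=v'(y)-v'(y_0)$ at $y=y_0$ and with $h_2(y,y_0):=\P(y)h_1(y,y_0)$ vanishing to order two, lets one systematically reduce the higher-order operators $R_{2,k,\ep}^\pm$ and $R_{3,k,\ep}^\pm$ to $R_{0,k,\ep}^\pm$ and $R_{1,k,\ep}^\pm$ to which Lemmas~\ref{lemma:R0mapsL2toLXk} and \ref{lemma:R1mapsLinfL2toLXk} apply. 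Inputting the $LX_k$ bound $\Vert \varphi_{k,\ep}^\pm\Vert_{LX_k}\lesssim k^{-1/2}\cS_{k,0}$ from Proposition~\ref{prop:LXkvarphi} and using $k\cS_{k,0}\leq \cS_{k,1}$ together with the $H_k^1$ and $L^2$ source estimates on $\partial_y w_k^0$ and $\partial_y q_k^0$ yields the target bound $\Vert \text{TG}_{k,\ep}^\pm \varphi_{1,k,\ep}^\pm\Vert_{\text{(image of }R_{0,k,\ep}^\pm\text{)}}$, and hence $\Vert \varphi_{1,k,\ep}^\pm\Vert_{LZ_k}\lesssim k^{-1/2}\cS_{k,1}$.

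Finally, to upgrade this $LZ_k$ estimate to the claimed $LX_k$ estimate, I would return to the identity \eqref{eq:LAPvarphi1} written as
\begin{align*}
\varphi_{1,k,\ep}^\pm = -T_{k,\ep}^\pm \varphi_{1,k,\ep}^\pm + R_{0,k,\ep}^\pm \text{TG}_{k,\ep}^\pm \varphi_{1,k,\ep}^\pm + \B_{0,k,\ep}^\pm,
\end{align*}
and invoke the $LZ_k\to LX_k$ regularization of the error operator $T_{k,\ep}^\pm$ in the mild regime (second statement of Proposition~\ref{prop:TmapsXktoXk}), combined with the already established $LX_k$ bounds on the two remaining right-hand-side terms. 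This is exactly the same boot-strap used to pass from Proposition~\ref{prop:LZkvarphi} to Proposition~\ref{prop:LXkvarphi}.

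The main technical obstacle is the treatment of the most singular contribution $R_{3,k,\ep}^\pm(\P h_1\varphi_{k,\ep}^\pm)$ in the presence of the logarithmic correction: after one integration by parts (exploiting the simple zero of $h_1$ at $y_0$), one is left with an $R_{2,k,\ep}^\pm$ applied to a function that carries an extra log-type weight coming from the decomposition \eqref{eq:decomLXkpyvarphi} of $\partial_y\varphi_{k,\ep}^\pm$. A second integration by parts, together with the observation that $\Q_{\gamma_0}(\eta)$ is uniformly bounded for $|\gamma_0|\leq \tilde\delta$ and that $|\log\eta|$ is $L^2_{\text{loc}}$, should then close the estimate; keeping every constant independent of $\gamma_0$ as $\gamma_0\to 0$ is the delicate step, and it is exactly the reason the $LX_k$ norm was designed with the logarithmic factor $\log(\eta)\Q_{\gamma_0}(\eta)$ in the first place.
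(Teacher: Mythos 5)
Your proposal is correct and follows essentially the same route as the paper: apply the coercive estimate of Proposition~\ref{prop:LAPLZk} to the identity \eqref{eq:LAPvarphi1}, absorb the boundary term via Lemma~\ref{lemma:LXkB0}, estimate the seven terms of \eqref{eq:TGvarphi1} using the mild-regime operator lemmas (in particular Lemma~\ref{lemma:R1mapsLinfL2toLXk}, Corollary~\ref{cor:R2mapsCLXktoLXk}, Lemma~\ref{lemma:R3mapsC1LXktoLXk} and Lemma~\ref{lemma:R0mapsL2toLXk}), input the bound $\Vert\varphi_{k,\ep}^\pm\Vert_{LX_k}\lesssim k^{-1/2}\cS_{k,0}$ from Proposition~\ref{prop:LXkvarphi}, and then upgrade $LZ_k$ to $LX_k$ via the regularization of $T_{k,\ep}^\pm$ as in Proposition~\ref{prop:LXkvarphi}. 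Your observation that the uniform boundedness of $1-2\mu_0$ on $I_M$ makes Lemma~\ref{lemma:R1mapsLinfL2toLXk} absorb several of the weak/strong-regime lemmas is exactly the simplification the paper highlights before that lemma, and your identification of the $R_{3,k,\ep}^\pm(\P h_1\varphi_{k,\ep}^\pm)$ term as the delicate one matches the content of Lemma~\ref{lemma:R3mapsC1LXktoLXk}.
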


\begin{proof}We show only the $LZ_k$ estimate, since once it is established it can be upgraded to $LX_k$ bounds as in Proposition \ref{prop:LXkvarphi}. We use the limiting absorption principle Proposition \ref{prop:LAPLZk} on \eqref{eq:LAPvarphi1} which gives
\begin{align*}
\Vert \varphi_{1,k,\ep}^\pm \Vert_{LX_k}\lesssim \Vert R_{0,k,\ep}^\pm\text{TG}_{k,\ep}^\pm\varphi_{1,k,\ep}^\pm \Vert_{LX_k} + \Vert \B_{0,k,\ep}^\pm \Vert_{LX_k}.
\end{align*}
From \eqref{eq:TGvarphi1} we see that
\begin{align*}
\Vert R_{0,k,\ep}^\pm\text{TG}_{k,\ep}^\pm\varphi_{1,k,\ep}^\pm \Vert_{LX_k}  &\leq \left\Vert R_{1,k,\ep}^\pm\left( v'''\varphi_{k,\ep}^\pm \right) \right\Vert_{X_k} + \left\Vert R_{2,k,\ep}^\pm \left(\P'\varphi_{k,\ep}^\pm\right) \right\Vert_{X_k} \\
&\quad + 2 \left\Vert R_{3,k,\ep}^\pm \left( \P h_1(\cdot,y_0) \varphi_{k,\ep}^\pm  \right)  \right\Vert_{X_k}  + \left\Vert R_{2,k,\ep}^\pm \left(v'' h_1(\cdot,y_0)\varphi_{k,\ep}^\pm \right)  \right\Vert_{X_k} \\
&\quad + \left\Vert R_{1,k,\ep}^\pm \partial_y w_k^0 \right\Vert_{X_k} + \left\Vert R_{2,k,\ep}^\pm \left(h_1(\cdot,y_0) w_k^0\right) \right \Vert_{X_K} + \left\Vert R_{0,k,\ep}^\pm \partial_y q_k^0 \right\Vert_{X_k}
\end{align*}
where we have defined $h_1(y,y_0):=v'(y) - v'(y_0)$. Thanks to Proposition \ref{prop:LXkvarphi}, we have $\Vert \varphi_{k,\ep}^\pm \Vert_{X_k}\lesssim k^{-\frac12}\cS_{k,0}$. Furthermore, Lemma \ref{lemma:R1mapsLinfL2toLXk} gives
\begin{align*}
\left\Vert R_{1,k,\ep}^\pm\left( v'''\varphi_{k,\ep}^\pm \right) \right\Vert_{LX_k} &\lesssim k^{-1}\Vert \varphi_{k,\ep}^\pm \Vert_{LX_k} \lesssim k^{-\frac32}\cS_{k,0}.
\end{align*}
Similarly, Corollary \ref{cor:R2mapsCLXktoLXk} shows that
\begin{align*}
\left\Vert R_{2,k,\ep}^\pm \left(\P'\varphi_{k,\ep}^\pm\right) \right\Vert_{LX_k} \lesssim \Vert \varphi_{k,\ep}^\pm \Vert_{LX_k}\lesssim k^{-\frac12}\cS_{k,0},
\end{align*}
while Lemma \ref{lemma:R3mapsC1LXktoLXk} gives
\begin{align*}
\left\Vert R_{3,k,\ep}^\pm \left( \P h_1(\cdot,y_0) \varphi_{k,\ep}^\pm  \right)  \right\Vert_{LX_k} \lesssim \Vert \varphi_{k,\ep}^\pm \Vert_{LX_k}\lesssim k^{-\frac12}\cS_{k,0}
\end{align*}
and Lemma \ref{lemma:R1mapsLinfL2toLXk} provides
\begin{align*}
\left\Vert R_{2,k,\ep}^\pm \left(v'' h_1(\cdot,y_0)\varphi_{k,\ep}^\pm \right)  \right\Vert_{X_k}\lesssim k^{-1}\Vert \varphi_{k,\ep}^\pm \Vert_{LX_k}\lesssim k^{-\frac32}\cS_{k,0}.
\end{align*}
Finally, Lemma \ref{lemma:R1mapsLinfL2toLXk}, Lemma \ref{lemma:R0mapsL2toLXk} and Lemma \ref{lemma:LinfH1bound} yield
\begin{align*}
\left\Vert R_{1,k,\ep}^\pm \partial_y w_k^0 \right\Vert_{LX_k} &+ \left\Vert R_{2,k,\ep}^\pm \left(h_1(\cdot,y_0) w_k^0\right) \right \Vert_{LX_K} + \left\Vert R_{0,k,\ep}^\pm \partial_y q_k^0 \right\Vert_{LX_k} \lesssim k^{-\frac12}\cS_{k,1}.
\end{align*}
With this and Lemma \ref{lemma:XkB0} the proof is finished.
\end{proof}

We next establish a working formula for $\partial_{y_0}\varphi_{k,\ep}^\pm$ for $y_0\in I_M$.

\begin{proposition}\label{prop:LXkpynotvarphi}
Let $k\geq 1$ and $y_0\in I_M$, we have
\begin{equation}\label{eq:decomLXkpynotvarphi}
\begin{split}
\partial_{y_0}\varphi_{k,\ep}^\pm &= -\varphi_{\rL,1,1,\sr,k,\ep}^\pm(y,y_0) \eta^{-\frac12+\gamma_0} - \varphi_{\rL,1,1,\rL,k,\ep}^\pm(y,y_0) \eta^{-\frac12-\gamma_0}\log(\eta) \Q_{1,\gamma_0}(\eta) \\
&\quad -\varphi_{\rL,1,1,\s,k,\ep}^\pm(y,y_0) \eta^{-\frac12-\gamma_0} + \widetilde\varphi_{\rL,1,k,\ep}^\pm(y,y_0)
\end{split}
\end{equation}
where $\varphi_{\rL,1,1,\sigma,k,\ep}^\pm(y,y_0)$ are given as in Proposition \ref{prop:LXkvarphi}, for $\sigma\in \lbrace \sr, \s, \rL \rbrace$, and $\Vert \widetilde\varphi_{\rL,1,k,\ep}^\pm \Vert_{L^\infty(I_3(y_0))} \lesssim  k^{-\frac12}\cS_{k,1}$. Moreover, $\partial_{y_0}\varphi_{k,\ep}^\pm$ enjoys the more precise bound
\begin{align*}
\Vert \partial_{y_0}\varphi_{k,\ep}^\pm \Vert_{H_k^1(I_3^c(y_0))} \lesssim  k^{-1}\cS_{k,1}, \quad \Vert \partial_{y_0}\varphi_{k,\ep}^\pm \Vert_{L^\infty(I_3^c(y_0))} \lesssim k^{-\frac12}\cS_{k,1}, 
\end{align*}
uniformly for all $y_0\in I_M$ and all $0 < \ep\leq \ep_*$.
\end{proposition}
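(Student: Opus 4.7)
The plan is to follow the same strategy used for Proposition \ref{prop:Xkpynotvarphi} in the strong/weak regime, but now using the logarithmic decomposition structures adapted to the mild regime. The starting point is the identity
\begin{equation*}
\partial_{y_0}\varphi_{k,\ep}^\pm(y,y_0) = -\partial_y\varphi_{k,\ep}^\pm(y,y_0) + \varphi_{1,k,\ep}^\pm(y,y_0),
\end{equation*}
which follows from the definition $\varphi_{1,k,\ep}^\pm := (\partial_y + \partial_{y_0})\varphi_{k,\ep}^\pm$. The local decomposition \eqref{eq:decomLXkpyvarphi} from Proposition \ref{prop:LXkvarphi} furnishes the explicit singular part of $\partial_y\varphi_{k,\ep}^\pm$, namely the three terms carrying the prefactors $\eta^{-\frac12+\gamma_0}$, $\eta^{-\frac12-\gamma_0}\log(\eta)\Q_{\gamma_0}(\eta)$ and $\eta^{-\frac12-\gamma_0}$, plus a bounded remainder $\widetilde\varphi_{\rL,0,k,\ep}^\pm$ controlled by $k^{\frac12}\cS_{k,0}$. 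Minus these three singular terms are precisely the ones appearing in the proposed formula \eqref{eq:decomLXkpynotvarphi}.

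The bounded remainder of the decomposition is then naturally defined as
\begin{equation*}
\widetilde\varphi_{\rL,1,k,\ep}^\pm(y,y_0) := \varphi_{1,k,\ep}^\pm(y,y_0) - \widetilde\varphi_{\rL,0,k,\ep}^\pm(y,y_0).
\end{equation*}
To bound it in $L^\infty(I_3(y_0))$, I would invoke Proposition \ref{prop:LXkvarphi1}, which gives $\Vert \varphi_{1,k,\ep}^\pm \Vert_{LX_k}\lesssim k^{-\frac12}\cS_{k,1}$, together with the Sobolev-type embedding $L^\infty(I_3(y_0))\hookrightarrow LX_k$ packaged in the very definition of $LX_k$; this yields $\Vert \varphi_{1,k,\ep}^\pm \Vert_{L^\infty(I_3(y_0))}\lesssim k^{-\frac12}\cS_{k,1}$. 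The pointwise bound $\Vert \widetilde\varphi_{\rL,0,k,\ep}^\pm\Vert_{L^\infty(I_3(y_0))}\lesssim k^{\frac12}\cS_{k,0}$ from Proposition \ref{prop:LXkvarphi} is strictly weaker than $k^{-\frac12}\cS_{k,1}$ once we use $k\cS_{k,0}\leq \cS_{k,1}$, so combining the two gives the desired $k^{-\frac12}\cS_{k,1}$ bound on $\widetilde\varphi_{\rL,1,k,\ep}^\pm$.

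For the refined non-local estimates in $I_3^c(y_0)$, the same identity $\partial_{y_0}\varphi_{k,\ep}^\pm = -\partial_y\varphi_{k,\ep}^\pm + \varphi_{1,k,\ep}^\pm$ is used. The bounds $\Vert \partial_y\varphi_{k,\ep}^\pm \Vert_{H_k^1(I_3^c(y_0))}\lesssim \cS_{k,0}$ and $\Vert \partial_y\varphi_{k,\ep}^\pm \Vert_{L^\infty(I_3^c(y_0))}\lesssim k^{\frac12}\cS_{k,0}$ come from Proposition \ref{prop:LXkvarphi}, while the bounds on $\varphi_{1,k,\ep}^\pm$ in the same norms follow from Proposition \ref{prop:LXkvarphi1} by repeating verbatim the non-local arguments of Proposition \ref{prop:Xkvarphi} (i.e.\ the fundamental theorem of calculus from $y_3=y_0+3\beta/k$ on $I_6(y_0)\cap I_3^c(y_0)$ and the Sobolev-type estimate of Lemma \ref{lemma:LinfH1bound} on $I_6^c(y_0)$, combined with the Taylor--Goldstein equation \eqref{eq:TGvarphi1} to control the second derivative). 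Using $k\cS_{k,0}\leq \cS_{k,1}$, these two contributions combine to give the stated $k^{-1}\cS_{k,1}$ and $k^{-\frac12}\cS_{k,1}$ estimates.

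The only genuine technical point is keeping track of the extra logarithmic factor in the singular decomposition and verifying that all three explicit singular structures on the right-hand side of \eqref{eq:decomLXkpyvarphi} transfer sign-by-sign into \eqref{eq:decomLXkpynotvarphi}; this is entirely bookkeeping since $\varphi_{1,k,\ep}^\pm$ itself inherits an analogous decomposition through its own $LX_k$ membership, and the structural pieces appear with the same coefficients $\varphi_{\rL,1,1,\sigma,k,\ep}^\pm$ defined in Proposition \ref{prop:LXkvarphi}. No essentially new difficulty arises beyond what was handled in the strong/weak counterpart.
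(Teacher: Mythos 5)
Your proof is correct and follows essentially the same route as the paper: you write $\partial_{y_0}\varphi_{k,\ep}^\pm = -\partial_y\varphi_{k,\ep}^\pm + \varphi_{1,k,\ep}^\pm$, read off the singular structures from the decomposition \eqref{eq:decomLXkpyvarphi} in Proposition~\ref{prop:LXkvarphi}, bound the residual $\widetilde\varphi_{\rL,1,k,\ep}^\pm = \varphi_{1,k,\ep}^\pm - \widetilde\varphi_{\rL,0,k,\ep}^\pm$ using Propositions~\ref{prop:LXkvarphi} and \ref{prop:LXkvarphi1} together with $k\cS_{k,0}\leq\cS_{k,1}$, and obtain the non-local bounds by repeating the $I_3^c$ arguments from the weak/strong case. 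One small slip: you wrote the embedding as $L^\infty(I_3(y_0))\hookrightarrow LX_k$, whereas the direction you actually use (and correctly apply) is $LX_k\hookrightarrow L^\infty(I_3(y_0))$, which holds since $|\eta|^{\frac12-\mu_0}(1+|\log\eta|)$ is bounded on $I_3(y_0)$ when $\mu_0\leq\tfrac14$.
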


\begin{proof}
The decomposition \eqref{eq:decomLXkpynotvarphi} follows from Proposition \ref{prop:LXkvarphi}. The pointwise estimates are then a consequence of Propositions \ref{prop:LXkvarphi} and \ref{prop:LXkvarphi1} while the proof for the $H_{k}^1(I_3^c(y_0))$ and $L^\infty(I_3^c(y_0))$ estimate is identical to that of Proposition \ref{prop:Xkvarphi1}. 
\end{proof}

\subsection{Sobolev regularity for the second derivative}
Finally, to obtain the main regularity structures of $\partial^2_{y_0}\varphi_{k,\ep}^\pm$, we recall $\varphi_{1,k,\ep}^\pm=(\partial_y + \partial_{y_0})\varphi_{k,\ep}^\pm$ and we record the identity
\begin{align*}
\partial_{y_0}^2\varphi_{k,\ep}^\pm = \partial_y^2\varphi_{k,\ep}^\pm + \left( \partial_y + \partial_{y_0}\right)\varphi_{1,k,\ep}^\pm -2\partial_y \varphi_{1,k,\ep}^\pm.
\end{align*}
Then, for $\varphi_{2,k,\ep}^\pm :=(\partial_y + \partial_{y_0})\varphi_{1,k,\ep}^\pm$ we have
\begin{align*}
\partial_{y_0}^2\varphi_{k,\ep}^\pm &= k^2\varphi_{k,\ep}^\pm + \frac{v''(y)}{v(y) - v(y_0) \pm i\ep}\varphi_{k,\ep}^\pm - \frac{\P(y)}{(v(y) - v(y_0) \pm i\ep)^2}\varphi_{k,\ep}^\pm + \frac{w_k^0}{v(y) - v(y_0) \pm i\ep} + q_k^0 \\ 
&\quad+ \varphi_{2,k,\ep}^\pm -2\partial_y \varphi_{1,k,\ep}^\pm
\end{align*}
with 
\begin{equation}\label{eq:TGvarphi2}
\begin{split}
\textsc{TG}_{k,\ep}^\pm\varphi_{2,k,\ep}^\pm &= 2\frac{v'''(y)}{v(y) - v(y_0) \pm i\ep}\varphi_{1,k,\ep}^\pm - 2v''(y) \frac{v'(y)-v'(y_0)}{(v(y) - v(y_0)\pm i\ep)^2} \varphi_{1,k,\ep}^\pm  \\
&\quad + 4\P(y)\frac{v'(y) - v'(y_0)}{(v(y) - v(y_0) \pm i\ep)^3}\varphi_{1,k,\ep}^\pm - 2\frac{\P'(y)}{(v(y) - v(y_0)\pm i\ep)^2}\varphi_{1,k,\ep}^\pm  \\
&\quad + \frac{v^{(4)}(y)}{v(y) - v(y_0)\pm i\ep}\varphi_{k,\ep}^\pm  -2 v'''(y) \frac{v'(y) - v'(y_0)}{(v(y) - v(y_0)\pm i\ep)^2}\varphi_{k,\ep}^\pm \\
&\quad + 2v''(y) \frac{(v'(y) - v'(y_0))^2}{(v(y) - v(y_0)\pm i\ep)^3}\varphi_{k,\ep}^\pm - v''(y) \frac{v''(y) - v''(y_0)}{(v(y) - v(y_0)\pm i\ep)^2}\varphi_{k,\ep}^\pm \\   
&\quad +4\P'(y)\frac{v'(y) - v'(y_0)}{(v(y) - v(y_0)\pm i\ep)^3}\varphi_{k,\ep}^\pm - \frac{\P''(y)}{(v(y) - v(y_0) \pm i\ep)^2}\varphi_{k,\ep}^\pm     \\
&\quad + 2\P(y) \frac{v''(y) - v''(y_0)}{(v(y) - v(y_0) \pm i\ep)^3}\varphi_{k,\ep}^\pm - 6 \P(y)\frac{(v'(y) - v'(y_0))^2}{(v(y) - v(y_0) \pm i\ep)^4}\varphi_{k,\ep}^\pm\\
&\quad + \frac{\partial_y^2 w_k^0(y)}{v(y) - v(y_0)\pm i\ep} - 2\partial_y w_k^0(y)\frac{v'(y) - v'(y_0)}{(v(y) - v(y_0) \pm i\ep)^2} \\
&\quad + 2 w_k^0(y)\frac{(v'(y) - v'(y_0))^2}{(v(y) - v(y_0)\pm i\ep)^3} - w_k^0(y) \frac{v''(y) - v''(y_0)}{(v(y) - v(y_0)\pm i\ep)^2} + \partial_y^2 q_k^0(y)\\
&=\sum_{j=1}^{17} \F_{j,k,\ep}^\pm(y,y_0)
\end{split}
\end{equation}
and 
\begin{equation}\label{eq:LAPvarphi2}
\varphi_{2,k,\ep}^\pm(y,y_0) + T_{k,\ep}^\pm \varphi_{2,k,\ep}^\pm(y,y_0) =\sum_{j=1}^{17} \left( R_{0,k,\ep}^\pm \F_{j,k,\ep}^\pm \right)(y,y_0) + \B_{2,k,\ep}^\pm(y,y_0),
\end{equation}
where we further have
\begin{equation}\label{eq:defB2}
\B_{2,k,\ep}^\pm(y,y_0) := \partial_z \G_{k,\ep}^\pm(y,y_0,z) \varphi_{2,k,\ep}^\pm(z,y_0)\Big|_{z=0}^{z=2}.
\end{equation}

We first obtain a more amenable form for $\varphi_{2,k,\ep}^\pm$ at $z=0,2$.

\begin{lemma}\label{lemma:tracevarphi2}
Let $k\geq 1$, $0<\ep<\ep_*$ and $y_0\in [0,2]$. There holds
\begin{align*}
\varphi_{2,k,\ep}^\pm(z,y_0) = 2\partial_y\varphi_{1,k,\ep}^\pm(z,y_0)
\end{align*}
for $z=0,2$.
\end{lemma}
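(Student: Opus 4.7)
The plan is to reduce the identity to a vanishing statement for $\partial_y^2 \varphi_{k,\ep}^\pm$ at the boundary, which will then follow from the Taylor--Goldstein equation together with the support assumptions on the background profile and the data. Writing $z_* \in \{0,2\}$, by the Dirichlet condition $\varphi_{k,\ep}^\pm(z_*,y_0) = 0$ we may differentiate in $y_0$ to conclude that $\partial_{y_0}^n\varphi_{k,\ep}^\pm(z_*,y_0) = 0$ for every $n\geq 0$. In particular, at $z=z_*$,
\begin{equation*}
\varphi_{1,k,\ep}^\pm(z_*,y_0) = (\partial_y + \partial_{y_0})\varphi_{k,\ep}^\pm(z_*,y_0) = \partial_y\varphi_{k,\ep}^\pm(z_*,y_0).
\end{equation*}

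Differentiating this identity in $y_0$ yields $\partial_{y_0}\varphi_{1,k,\ep}^\pm(z_*,y_0) = \partial_{y,y_0}^2\varphi_{k,\ep}^\pm(z_*,y_0)$, whereas a direct computation of $\partial_y$ of the definition of $\varphi_{1,k,\ep}^\pm$ gives
\begin{equation*}
\partial_y\varphi_{1,k,\ep}^\pm(z_*,y_0) = \partial_y^2\varphi_{k,\ep}^\pm(z_*,y_0) + \partial_{y,y_0}^2\varphi_{k,\ep}^\pm(z_*,y_0).
\end{equation*}
Since $\varphi_{2,k,\ep}^\pm = (\partial_y + \partial_{y_0})\varphi_{1,k,\ep}^\pm$, the desired identity $\varphi_{2,k,\ep}^\pm(z_*,y_0) = 2\partial_y\varphi_{1,k,\ep}^\pm(z_*,y_0)$ is therefore equivalent to
\begin{equation*}
\partial_y^2\varphi_{k,\ep}^\pm(z_*,y_0) = 0.
\end{equation*}

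To establish this, I evaluate the Taylor--Goldstein equation \eqref{eq:TGeqvarphiSobolevReg} pointwise at $y=z_*$. On the left-hand side the terms involving $\varphi_{k,\ep}^\pm(z_*,y_0)$ vanish by the Dirichlet condition, while the potential coefficients $v''(z_*)$ and $\P(z_*)$ are zero due to H$v$ and H$\P$ (which place $\sp v''$ and $\sp \P$ inside $(\vartheta_1,\vartheta_2)$). On the right-hand side, the source factors $w_k^0(z_*) = \omega_k^0(z_*) - v''(z_*)\varrho_k^0(z_*)$ and $q_k^0(z_*) = \Delta_k\varrho_k^0(z_*)$ both vanish: the hypothesis \eqref{eq:zeroxave} together with the compact support of $\omega^0$ and $\varrho^0$ inside $(\vartheta_1,\vartheta_2)$ (so that $\omega_k^0$, $\varrho_k^0$ and $\partial_y^2\varrho_k^0$ all vanish at $z_*$), combined with $v''(z_*)=0$, make every source term zero. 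Hence $\partial_y^2\varphi_{k,\ep}^\pm(z_*,y_0) = 0$, which is the content of the lemma.

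There is no serious obstacle here: the argument is purely a boundary bookkeeping computation, and the only nontrivial input is the fact that every coefficient of the Taylor--Goldstein equation and every source term is supported strictly inside $(\vartheta_1,\vartheta_2)$, which is precisely what the standing assumptions on $v$, $\P$ and the initial data guarantee.
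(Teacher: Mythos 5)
Your argument is correct and reduces to the same key observation as the paper's own proof: both rewrite the identity as $\partial_y^2\varphi_{k,\ep}^\pm(z_*,y_0)=0$ at $z_*\in\{0,2\}$, which follows by evaluating \eqref{eq:TGeqvarphiSobolevReg} at the endpoints and using the Dirichlet condition together with the compact support of $\omega^0$, $\varrho^0$, $v''$ and $\P$ in $(\vartheta_1,\vartheta_2)$. One small inaccuracy: the hypothesis \eqref{eq:zeroxave} plays no role here (it concerns the $x$-average, i.e.\ the $k=0$ mode, whereas this lemma works with $k\geq 1$); the vanishing of $w_k^0$ and $q_k^0$ at the endpoints is entirely a consequence of the support assumptions.
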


\begin{proof}
We observe that $\varphi_{2,k,\ep} = \partial_y^2\varphi_{k,\ep}^\pm + 2\partial_y\partial_{y_0}\varphi_{k,\ep}^\pm + \partial_{y_0}^2\varphi_{k,\ep}^\pm$. Moreover $\partial_y\partial_{y_0}\varphi_{k,\ep}^\pm = \partial_y\varphi_{1,k,\ep} -\partial_y^2\varphi_{k,\ep}^\pm$. Consequently,
\begin{align*}
\varphi_{2,k,\ep}^\pm = -\partial_y^2\varphi_{k,\ep}^\pm + 2\partial_y\varphi_{1,k,\ep}^\pm + \partial_{y_0}^2\varphi_{k,\ep}^\pm
\end{align*}
and the Lemma follows from \eqref{eq:TGeqvarphiSobolevReg}, the compact support of $\omega_k^0$ and $\varrho_k^0$, and the fact that $\varphi_{k,\ep}^\pm(z,y_0)=0$, for $z=0,2$, for all $y_0\in [0,2]$.
\end{proof}

\subsubsection{Strong and weak stratifications}

To use Proposition \ref{prop:LAPZk} on \eqref{eq:TGvarphi2}, we first obtain $X_k$ estimates on $\B_{2,k,\ep}^\pm$.

\begin{lemma}\label{lemma:XkB2}
Let $k\geq 1$. There holds
\begin{align*}
\Vert \B_{2,k,\ep}^\pm \Vert_{X_k}\lesssim k^\frac12\cS_{k,1}
\end{align*}
uniformly for all $y_0\in I_S\cup I_{W}$ and all $0 < \ep\leq \ep_*$.
\end{lemma}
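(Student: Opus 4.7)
The plan is to reduce the boundary expression to a product of two pieces we can estimate separately. Applying Lemma \ref{lemma:tracevarphi2}, the definition \eqref{eq:defB2} becomes
\begin{equation*}
\B_{2,k,\ep}^\pm(y,y_0) = 2\,\partial_z \G_{k,\ep}^\pm(y,y_0,z)\,\partial_y\varphi_{1,k,\ep}^\pm(z,y_0)\Big|_{z=0}^{z=2},
\end{equation*}
so $\B_{2,k,\ep}^\pm$ is a sum of two terms, each being a function of $y$ in $X_k$ multiplied by a scalar in $\C$. We therefore need $X_k$ control of $\partial_z \G_{k,\ep}^\pm(\cdot,y_0,z)$ at $z=0,2$ and pointwise control of $\partial_y\varphi_{1,k,\ep}^\pm(z,y_0)$ at $z=0,2$.

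The first factor is handled exactly as in the proof of Lemma~\ref{lemma:XkB0}: writing $\partial_z\G_{k,\ep}^\pm(y,y_0,0)=\phi_{\su,k,\ep}^\pm(y,y_0)\,\partial_y\phi_{\sl,k,\ep}^\pm(0,y_0)/\W_{k,\ep}^\pm(y_0)$ (and symmetrically at $z=2$), using $\partial_y\phi_{\sl,k,\ep}^\pm(0,y_0)=-4k\gamma_0$ (and its analogue at $z=2$), the Wronskian lower bounds, together with Proposition~\ref{prop:sobolevregdecomG} and Corollary~\ref{cor:sobolevregpartialdecomG} yields
\begin{equation*}
\bigl\Vert \partial_z\G_{k,\ep}^\pm(\cdot,y_0,0)\bigr\Vert_{X_k} + \bigl\Vert \partial_z\G_{k,\ep}^\pm(\cdot,y_0,2)\bigr\Vert_{X_k} \lesssim 1
\end{equation*}
uniformly in $y_0\in I_S\cup I_W$ and $\ep\in(0,\ep_*)$.

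The main obstacle is the second factor: we must establish
\begin{equation*}
\bigl|\partial_y\varphi_{1,k,\ep}^\pm(z,y_0)\bigr|\lesssim k^{1/2}\cS_{k,1}\qquad\text{at }z=0,2,
\end{equation*}
which goes beyond the $L^\infty(I_3(y_0))$ statement of Proposition~\ref{prop:Xkvarphi1} since, for $y_0\in I_S\cup I_W\subset(\vartheta_1,\vartheta_2)$ and $k$ large, the boundary points $z=0,2$ lie in $I_3^c(y_0)$ (the case of bounded $k$ is handled by compactness, since all operators involved are bounded in $k$). In $I_3^c(y_0)$ the potentials in \eqref{eq:TGvarphi1} are regular (with $|v(y)-v(y_0)\pm i\ep|\gtrsim k^{-1}$), so using the equation to express $\partial_y^2\varphi_{1,k,\ep}^\pm$ and inserting the $X_k$ bounds of Proposition~\ref{prop:Xkvarphi} and Proposition~\ref{prop:Xkvarphi1} for $\varphi_{k,\ep}^\pm$, $\varphi_{1,k,\ep}^\pm$ and the initial data contributions, one obtains $\Vert \partial_y^2\varphi_{1,k,\ep}^\pm\Vert_{L^2(I_3^c(y_0))}\lesssim k\,\cS_{k,1}$, which combined with $\Vert \partial_y\varphi_{1,k,\ep}^\pm\Vert_{L^2(I_3^c(y_0))}\lesssim \cS_{k,1}$ (again from Proposition~\ref{prop:Xkvarphi1}) and the Sobolev interpolation $\Vert f\Vert_{L^\infty(I)}\lesssim \Vert f\Vert_{L^2(I)}^{1/2}\Vert f'\Vert_{L^2(I)}^{1/2}+\Vert f\Vert_{L^2(I)}$ on a sub-interval of size $\sim 1/k$ abutting the boundary yields the desired pointwise bound.

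Combining the two estimates gives $\Vert \B_{2,k,\ep}^\pm\Vert_{X_k}\lesssim k^{1/2}\cS_{k,1}$, as claimed. The main work lies in the third step; the other ingredients are essentially repetitions of arguments already carried out for Lemma~\ref{lemma:XkB0} and Proposition~\ref{prop:Xkvarphi}.
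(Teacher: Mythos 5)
Your proof is correct and follows the same strategy the paper indicates (which is largely left to the reader): factor $\B_{2,k,\ep}^\pm$ via Lemma~\ref{lemma:tracevarphi2}, reuse the $X_k$ estimate on $\partial_z\G_{k,\ep}^\pm(\cdot,y_0,z)\big|_{z=0,2}$ from Lemma~\ref{lemma:XkB0}, and establish the pointwise bound $|\partial_y\varphi_{1,k,\ep}^\pm(z,y_0)|\lesssim k^{1/2}\cS_{k,1}$ at $z=0,2$ by combining the $X_k$ control of $\varphi_{1,k,\ep}^\pm$ from Proposition~\ref{prop:Xkvarphi1} with the equation \eqref{eq:TGvarphi1} and a one-dimensional Sobolev bound on a $k^{-1}$-scale interval, exactly as in the proof of the $L^\infty(I_3^c(y_0))$ estimates of Proposition~\ref{prop:Xkvarphi}; the paper simply writes ``we omit the details'' for that last step, and you supply them. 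A small cosmetic point: the interpolation $\Vert f\Vert_{L^\infty(I)}\lesssim \Vert f\Vert_{L^2(I)}^{1/2}\Vert f'\Vert_{L^2(I)}^{1/2}+\Vert f\Vert_{L^2(I)}$ should carry a factor $|I|^{-1/2}$ on the last term (or you can just cite Lemma~\ref{lemma:LinfH1bound}), but with $|I|\sim k^{-1}$ the final bound $k^{1/2}\cS_{k,1}$ is unaffected.
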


\begin{proof}
From Lemma \ref{lemma:tracevarphi2} we see that once $\left|\partial_y\varphi_{1,k,\ep}^\pm(z,y_0) \right|\lesssim k^\frac12\cS_{k,1}$ the conclusion follows as in the proof of Lemma \ref{lemma:XkB0}. Indeed, now $\varphi_{1,k,\ep}^\pm$ solves \eqref{eq:TGvarphi1} and $\Vert \varphi_{1,k,\ep}^\pm \Vert_{X_K}\lesssim k^{-\frac12}\cS_{k,1}$ so that one can argue as in the proof of Lemma \ref{lemma:XkB0} to show that $\left|\partial_y\varphi_{1,k,\ep}^\pm(z,y_0) \right|\lesssim k^\frac12\cS_{k,1}$, we omit the details.
\end{proof}

\begin{proposition}\label{prop:Xkvarphi2}
Let $k\geq 1$. There holds
\begin{align*}
 \Vert \varphi_{2,k,\ep}^\pm\Vert_{X_k}\lesssim k^{-\frac12}\cS_{k,2}
\end{align*}
uniformly for all $y_0\in I_S\cup I_{W}$ and all $0 < \ep\leq \ep_*$.
\end{proposition}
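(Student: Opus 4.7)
The plan is to treat \eqref{eq:LAPvarphi2} exactly as in the proofs of Propositions \ref{prop:Xkvarphi} and \ref{prop:Xkvarphi1}: first apply the limiting absorption principle of Proposition \ref{prop:LAPZk} to pass from the identity
\[
\varphi_{2,k,\ep}^\pm + T_{k,\ep}^\pm \varphi_{2,k,\ep}^\pm = \sum_{j=1}^{17} R_{0,k,\ep}^\pm \F_{j,k,\ep}^\pm + \B_{2,k,\ep}^\pm
\]
to the a priori estimate
\[
\Vert \varphi_{2,k,\ep}^\pm \Vert_{Z_k} \lesssim \sum_{j=1}^{17}\Vert R_{0,k,\ep}^\pm \F_{j,k,\ep}^\pm \Vert_{X_k} + \Vert \B_{2,k,\ep}^\pm \Vert_{X_k},
\]
and then upgrade from $Z_k$ to $X_k$ via Proposition \ref{prop:TmapsXktoXk} together with the $X_k$ control already obtained on the right-hand side, exactly as at the end of Proposition \ref{prop:Xkvarphi}. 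The boundary contribution $\Vert \B_{2,k,\ep}^\pm \Vert_{X_k}\lesssim k^{1/2}\cS_{k,1}\lesssim k^{-1/2}\cS_{k,2}$ is already supplied by Lemma \ref{lemma:XkB2}.

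The main step is therefore to show that each of the seventeen integrals $R_{0,k,\ep}^\pm \F_{j,k,\ep}^\pm$ is bounded in $X_k$ by $k^{-1/2}\cS_{k,2}$. I would group the terms $\F_{1,k,\ep}^\pm,\dots,\F_{4,k,\ep}^\pm$ containing $\varphi_{1,k,\ep}^\pm$ and estimate them just as the analogous $\F$-terms in Proposition \ref{prop:Xkvarphi1}, now using the $X_k$ bound $\Vert\varphi_{1,k,\ep}^\pm\Vert_{X_k}\lesssim k^{-1/2}\cS_{k,1}$ from Proposition \ref{prop:Xkvarphi1}: rewrite $R_{0,k,\ep}^\pm \F_{1,k,\ep}^\pm = R_{1,k,\ep}^\pm(v'''\varphi_{1,k,\ep}^\pm)$ and apply Corollary \ref{cor:R1mapsCXktoXk}; write $R_{0,k,\ep}^\pm \F_{2,k,\ep}^\pm = R_{2,k,\ep}^\pm(v'' h_1(\cdot,y_0)\varphi_{1,k,\ep}^\pm)$ with $h_1(y,y_0)=v'(y)-v'(y_0)$ vanishing at $y=y_0$ and apply Lemma \ref{lemma:R2mapsC1XktoXk}; write $R_{0,k,\ep}^\pm \F_{3,k,\ep}^\pm = R_{3,k,\ep}^\pm(\P h_1(\cdot,y_0)\varphi_{1,k,\ep}^\pm)$ and apply Lemma \ref{lemma:R3mapsC1XktoXk}; and rewrite $R_{0,k,\ep}^\pm \F_{4,k,\ep}^\pm = R_{2,k,\ep}^\pm(\P'\varphi_{1,k,\ep}^\pm)$, invoking Corollary \ref{cor:R2mapsCXktoXk}. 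Each of these contributes at most $k^{-1/2}\cS_{k,1}\leq k^{-1/2}\cS_{k,2}$.

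The delicate terms are the new quartic singularities $\F_{12,k,\ep}^\pm=-6\P(y)(v'(y)-v'(y_0))^2(v(y)-v(y_0)\pm i\ep)^{-4}\varphi_{k,\ep}^\pm$ and the cubic singularities $\F_{3,4,8,10,11,k,\ep}^\pm$ containing $\varphi_{k,\ep}^\pm$. For $\F_{12,k,\ep}^\pm$ I would write $R_{0,k,\ep}^\pm\F_{12,k,\ep}^\pm = -6 R_{4,k,\ep}^\pm(\P\, h_1(\cdot,y_0)^2\varphi_{k,\ep}^\pm)$, noting that $h_1(\cdot,y_0)^2$ vanishes to order two at $y_0$, and invoke the corresponding $R_{4,k,\ep}^\pm$ operator estimate from the appendix (the higher-order operator bounds); the remaining cubic terms $\F_{3,5,8,10,11,k,\ep}^\pm$ are handled by the same mechanism via $R_{3,k,\ep}^\pm$ with a coefficient that either vanishes at $y_0$ ($v'-v'(y_0)$, $v''-v''(y_0)$) or is smooth with $\P'$. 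The remaining quadratic terms $\F_{6,7,9,k,\ep}^\pm$ involving $v'''$, $v''(v''-v''(y_0))$, $\P''$ fall under Corollary \ref{cor:R2mapsCXktoXk}/Lemma \ref{lemma:R2mapsC1XktoXk}. In each case the input $\Vert\varphi_{k,\ep}^\pm\Vert_{X_k}\lesssim k^{-1/2}\cS_{k,0}\leq k^{-1/2}\cS_{k,2}$ from Proposition \ref{prop:Xkvarphi} suffices. The main obstacle is precisely this quartic term $\F_{12,k,\ep}^\pm$: the Green's function regularity supplied by Proposition \ref{prop:sobolevregdecomG} provides at best a compensation of order $\xi^{1\pm 2\gamma_0}$ in the local region, so one must carefully integrate by parts twice, using that $h_1^2$ produces a double zero, in order to distribute one derivative onto the regular/singular pieces of $\G_{k,\ep}^\pm$ (as in Lemma \ref{lemma:R2mapsC1XktoXk}) and the other onto $\varphi_{k,\ep}^\pm$ via its decomposition \eqref{eq:decomXkpyvarphi}.

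Finally, the source terms $\F_{15,k,\ep}^\pm,\F_{16,k,\ep}^\pm,\F_{17,k,\ep}^\pm$ depending only on the initial data are estimated by $R_{1,k,\ep}^\pm\partial_y^2 w_k^0$, $R_{2,k,\ep}^\pm(h_1\partial_y w_k^0)$, $R_{3,k,\ep}^\pm(h_1^2 w_k^0)$, $R_{2,k,\ep}^\pm((v''-v''(y_0)) w_k^0)$ and $R_{0,k,\ep}^\pm\partial_y^2 q_k^0$, producing contributions of order $k^{-1/2}\Vert w_k^0\Vert_{H_k^3} + k^{-3/2}\Vert \partial_y^2 q_k^0\Vert_{L^2}\lesssim k^{-1/2}\cS_{k,2}$ by the counterparts of Lemmas \ref{lemma:R1mapsH1toXk}, \ref{lemma:R2mapsC1H1toXk} and \ref{lemma:R0mapsL2toXk}. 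Summing the seventeen contributions and adding the boundary estimate from Lemma \ref{lemma:XkB2} yields the $Z_k$ bound, and the upgrade to $X_k$ is immediate by the same bootstrap argument used at the end of the proof of Proposition \ref{prop:Xkvarphi}.
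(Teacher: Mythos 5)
Your proposal is correct and follows essentially the same route as the paper: invoke Proposition~\ref{prop:LAPZk} on \eqref{eq:LAPvarphi2}, bound $\B_{2,k,\ep}^\pm$ via Lemma~\ref{lemma:XkB2}, rewrite each $R_{0,k,\ep}^\pm \F_{j,k,\ep}^\pm$ as $R_m$ of a coefficient (vanishing to the appropriate order at $y_0$) times $\varphi_{k,\ep}^\pm$, $\varphi_{1,k,\ep}^\pm$, $w_k^0$ or $q_k^0$ and apply Corollary~\ref{cor:R1mapsCXktoXk}, Lemma~\ref{lemma:R2mapsC1XktoXk}, Corollary~\ref{cor:R2mapsCXktoXk}, Lemmas~\ref{lemma:R3mapsC1XktoXk}, \ref{lemma:R3mapsC2XktoXk}, \ref{lemma:R4mapsC1XktoXk} and the $H^1_k$ variants accordingly, then upgrade from $Z_k$ to $X_k$ as in Proposition~\ref{prop:Xkvarphi}. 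A few of your indices are shuffled (e.g.\ the terms involving $v'''$, $v''(v''-v''(y_0))$, $\P''$ are $\F_{6},\F_{8},\F_{10}$, and the cubic double-vanishing term $\F_{7}$ needs Lemma~\ref{lemma:R3mapsC2XktoXk} rather than an $R_2$ bound), but the operator-to-term assignment you describe matches the paper's.
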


\begin{proof}
We shall use Proposition \ref{prop:LAPZk} on \eqref{eq:LAPvarphi2} to obtain $Z_k$ bounds and then upgrade to $X_k$ estimates. We recall from Propositions \ref{prop:Xkvarphi} and \ref{prop:Xkvarphi1} that 
\begin{align*}
\Vert \varphi_{k,\ep}^\pm\Vert_{X_k}\lesssim k^{-\frac12}\cS_{k,0}, \quad \Vert \varphi_{1,k,\ep}^\pm\Vert_{X_k}\lesssim k^{-\frac12}\cS_{k,1}. 
\end{align*}
Let $F_{j,k,\ep}^\pm(y,y_0) = \left(R_{0,k,\ep}^\pm\F_{j,k,\ep}^\pm \right)(y,y_0)$, for $j=1,...,17$. Corollary \ref{cor:R1mapsCXktoXk} shows
\begin{align*}
\Vert F_{1,k,\ep}^\pm \Vert_{X_k} + \Vert F_{5,k,\ep}^\pm \Vert_{X_k} \lesssim k^{-\frac32}\cS_{k,1}
\end{align*}
while Lemma \ref{lemma:R2mapsC1XktoXk} gives
\begin{align*}
\Vert F_{2,k,\ep}^\pm \Vert_{X_k} + \Vert F_{6,k,\ep}^\pm \Vert_{X_k} + \Vert F_{8,k,\ep}^\pm \Vert_{X_k} \lesssim k^{-\frac32}\cS_{k,1}.
\end{align*}
Furthermore, Lemma \ref{lemma:R3mapsC1XktoXk} yields
\begin{align*}
\Vert F_{3,k,\ep}^\pm \Vert_{X_k} + \Vert F_{9,k,\ep}^\pm \Vert_{X_k} + \Vert F_{11,k,\ep}^\pm \Vert_{X_k} \lesssim k^{-\frac12}\cS_{k,1}
\end{align*}
whereas Corollary \ref{cor:R2mapsCXktoXk} provides
\begin{align*}
\Vert F_{4,k,\ep}^\pm \Vert_{X_k} + \Vert F_{10,k,\ep}^\pm \Vert_{X_k} \lesssim k^{-\frac12}\cS_{k,1}.
\end{align*}
From Lemma \ref{lemma:R3mapsC2XktoXk} we obtain
\begin{align*}
\Vert F_{7,k,\ep}^\pm \Vert_{X_k} \lesssim k^{-\frac32}\cS_{k,0}
\end{align*}
and from Lemma \ref{lemma:R4mapsC1XktoXk} we deduce
\begin{align*}
\Vert F_{1,k,\ep}^\pm \Vert_{X_k} \lesssim k^{-\frac12}\cS_{k,0}.
\end{align*}
Finally, using Lemma \ref{lemma:R1mapsH1toXk} for $F_{13,k,\ep}^\pm$, Lemma \ref{lemma:R2mapsC1H1toXk} for $F_{14,k,\ep}^\pm$ and $F_{16,k,\ep}^\pm$, Lemma \ref{lemma:R3mapsC2H1toXk} for $F_{15,k,\ep}^\pm$ and Lemma \ref{lemma:R0mapsL2toXk} for $F_{17,k,\ep}^\pm$ we reach
\begin{align*}
\sum_{j=13}^{17}\Vert F_{j,k,\ep}^\pm \Vert_{X_k} \lesssim k^{-\frac12}\cS_{k,2}.
\end{align*}
With this and Lemma \ref{lemma:XkB2} the result follows.
\end{proof}

We are now ready to present a working formula for $\partial_{y_0}^2\varphi_{k,\ep}^\pm$.

\begin{proposition}\label{prop:Xkpynotpynotvarphi}
Let $k\geq 1$ and $y_0\in I_S\cup I_W$, we have
\begin{equation}\label{eq:decomXkpynotpynotvarphi}
\begin{split}
\partial_{y_0}^2\varphi_{k,\ep}^\pm &= \varphi_{2,1,\sr,k,\ep}^\pm(y,y_0) \eta^{-\frac12+\gamma_0} + \varphi_{2,1,\s,k,\ep}^\pm(y,y_0) \eta^{-\frac12-\gamma_0} \\
&\quad + \varphi_{2,2,\sr,k,\ep}^\pm(y,y_0) \eta^{-\frac32+\gamma_0} + \varphi_{2,2,\s,k,\ep}^\pm(y,y_0) \eta^{-\frac32-\gamma_0} \\ 
&\quad + \frac{w_k^0(y)}{v(y) - v(y_0) \pm i\ep} + q_k^0(y) + \widetilde\varphi_{2,k,\ep}^\pm(y,y_0) 
\end{split}
\end{equation}
where
\begin{align*}
\varphi_{2,1,\sr,k,\ep}^\pm(y,y_0) &:= \left( 2kv''(y)B_{\ep}^\pm(y,y_0)  \left( \varphi_{k,\ep}^\pm \right)_\sr - 2k(1+2\gamma_0) \left( \varphi_{1,k,\ep}^\pm \right)_\sr \right),  \\
\varphi_{2,1,\s,k,\ep}^\pm(y,y_0) &:=\left( 2kv''(y)B_{\ep}^\pm(y,y_0)  \left( \varphi_{k,\ep}^\pm \right)_\s - 2k(1-2\gamma_0) \left( \varphi_{1,k,\ep}^\pm \right)_\s \right), \\
\varphi_{2,2,\sigma,k,\ep}^\pm(y,y_0) &:= - 4k^2\P(y) (B_{\ep}^\pm(y,y_0))^2  \left( \varphi_{k,\ep}^\pm \right)_\sigma, 
\end{align*}
and $\Vert \widetilde{\varphi}_{2,k,\ep}^\pm \Vert_{L^\infty(I_3(y_0))} \lesssim k^{-\frac12}\cS_{k,2}$. Moreover, $\partial_{y_0}^2\varphi_{k,\ep}^\pm$ enjoy the more precise bound
\begin{align*}
\Vert \partial_{y_0}^2\varphi_{k,\ep}^\pm \Vert_{H_k^1(I_3^c)} \lesssim  k^{-1}\cS_{k,2}, \quad \Vert \partial_{y_0}^2\varphi_{k,\ep}^\pm \Vert_{L^\infty(I_3^c)} \lesssim  k^{-\frac12}\cS_{k,2},
\end{align*}
uniformly for all $y_0\in I_S\cup I_{W}$ and all $0 < \ep\leq \ep_*$.
\end{proposition}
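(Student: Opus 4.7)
The plan is to derive the decomposition by pairing the purely algebraic identity
\begin{equation*}
\partial_{y_0}^2\varphi_{k,\ep}^\pm \;=\; \partial_y^2\varphi_{k,\ep}^\pm \;+\; \varphi_{2,k,\ep}^\pm \;-\; 2\,\partial_y\varphi_{1,k,\ep}^\pm
\end{equation*}
with the Taylor--Goldstein equation \eqref{eq:TGeqvarphiSobolevReg}. Substituting
\begin{equation*}
\partial_y^2\varphi_{k,\ep}^\pm \;=\; k^2\varphi_{k,\ep}^\pm+\frac{v''(y)}{v(y)-v(y_0)\pm i\ep}\varphi_{k,\ep}^\pm-\frac{\P(y)}{(v(y)-v(y_0)\pm i\ep)^2}\varphi_{k,\ep}^\pm+\frac{w_k^0(y)}{v(y)-v(y_0)\pm i\ep}+q_k^0(y)
\end{equation*}
produces the explicit source terms $\tfrac{w_k^0}{v-v(y_0)\pm i\ep}$ and $q_k^0$ that appear in \eqref{eq:decomXkpynotpynotvarphi}, while the remaining singular contributions will be tracked using the $X_k$-decompositions already available.

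First I would identify the $\eta^{-3/2\pm\gamma_0}$ terms. Writing $(v(y)-v(y_0)\pm i\ep)^{-2}=(2k)^{-2}(B_{\ep}^\pm(y,y_0))^2\eta^{-2}$ and using the decomposition $\varphi_{k,\ep}^\pm=(\varphi_{k,\ep}^\pm)_\sr\eta^{1/2+\gamma_0}+(\varphi_{k,\ep}^\pm)_\s\eta^{1/2-\gamma_0}$ from Proposition~\ref{prop:Xkvarphi}, the term $-\P(v-v(y_0)\pm i\ep)^{-2}\varphi_{k,\ep}^\pm$ produces precisely the coefficients $\varphi_{2,2,\sigma,k,\ep}^\pm=-4k^2\P(y)(B_\ep^\pm)^2(\varphi_{k,\ep}^\pm)_\sigma$. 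Next, a similar expansion of $v''(v-v(y_0)\pm i\ep)^{-1}\varphi_{k,\ep}^\pm$ gives the first contribution to $\varphi_{2,1,\sigma,k,\ep}^\pm$, namely $2k v''(y)B_\ep^\pm(\varphi_{k,\ep}^\pm)_\sigma$. The second contribution to $\varphi_{2,1,\sigma,k,\ep}^\pm$ comes from $-2\partial_y\varphi_{1,k,\ep}^\pm$: applying \eqref{eq:decomXkpyvarphi} (now to $\varphi_{1,k,\ep}^\pm$ using Proposition~\ref{prop:Xkvarphi1}) extracts the $-2k(1\pm 2\gamma_0)(\varphi_{1,k,\ep}^\pm)_\sigma\eta^{-1/2\pm\gamma_0}$ pieces, leaving only regular $\eta^{1/2\pm\gamma_0}$ remainders from $\partial_y(\varphi_{1,k,\ep}^\pm)_\sigma$. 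After all these terms have been absorbed into the explicit part of \eqref{eq:decomXkpynotpynotvarphi}, the remainder is
\begin{equation*}
\widetilde\varphi_{2,k,\ep}^\pm \;=\; k^2\varphi_{k,\ep}^\pm + \varphi_{2,k,\ep}^\pm - 2\,\partial_y(\varphi_{1,k,\ep}^\pm)_\sr\,\eta^{1/2+\gamma_0}- 2\,\partial_y(\varphi_{1,k,\ep}^\pm)_\s\,\eta^{1/2-\gamma_0}.
\end{equation*}

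The local $L^\infty(I_3(y_0))$ estimate for $\widetilde\varphi_{2,k,\ep}^\pm$ then reduces to a straightforward power count. Using the elementary relation $\cS_{k,0}\leq k^{-2}\cS_{k,2}$ together with $|\eta|\lesssim 1$ on $I_3(y_0)$, Proposition~\ref{prop:Xkvarphi} gives $\|k^2\varphi_{k,\ep}^\pm\|_{L^\infty(I_3)}\lesssim k^{3/2}\cS_{k,0}\lesssim k^{-1/2}\cS_{k,2}$; the contribution of $\varphi_{2,k,\ep}^\pm$ is immediately controlled by Proposition~\ref{prop:Xkvarphi2}; and the $\partial_y(\varphi_{1,k,\ep}^\pm)_\sigma\eta^{1/2\pm\gamma_0}$ remainder is bounded by $k^{1/2}\cS_{k,1}\lesssim k^{-1/2}\cS_{k,2}$ using the $X_k^1$ factor of the $X_k$-norm of $\varphi_{1,k,\ep}^\pm$ from Proposition~\ref{prop:Xkvarphi1} and the uniform upper bound $|\eta|\lesssim 1$.

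For the non-local bounds on $I_3^c(y_0)$, the coefficients $v''/(v-v(y_0)\pm i\ep)$ and $\P/(v-v(y_0)\pm i\ep)^2$ are uniformly bounded since $|v(y)-v(y_0)|\gtrsim k^{-1}$ away from $I_3(y_0)$. Therefore, directly differentiating the identity for $\partial_{y_0}^2\varphi_{k,\ep}^\pm$ and applying the $H^1_k(I_3^c)$ and $L^\infty(I_3^c)$ bounds on $\varphi_{k,\ep}^\pm$, $\varphi_{1,k,\ep}^\pm$, and $\varphi_{2,k,\ep}^\pm$ from Propositions~\ref{prop:Xkvarphi}, \ref{prop:Xkvarphi1}, and \ref{prop:Xkvarphi2}, together with the support properties of $w_k^0$ and $q_k^0$ and the same telescoping argument $\cS_{k,j}\leq k^{-1}\cS_{k,j+1}$, yields the claimed bounds. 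The main obstacle I anticipate is not analytical but purely bookkeeping: one must ensure that every potentially cubic or quartic singular factor generated by differentiating twice in $y_0$ is either cancelled by the algebraic identity or paired with a regular factor from $(\varphi_{k,\ep}^\pm)_\sigma$ or $(\varphi_{1,k,\ep}^\pm)_\sigma$, so that no new singular structures beyond the four $\eta^{-1/2\pm\gamma_0}$, $\eta^{-3/2\pm\gamma_0}$ types survive.
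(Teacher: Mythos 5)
Your proposal is correct and follows exactly the route the paper takes: start from the algebraic identity $\partial_{y_0}^2\varphi_{k,\ep}^\pm = \partial_y^2\varphi_{k,\ep}^\pm + \varphi_{2,k,\ep}^\pm - 2\partial_y\varphi_{1,k,\ep}^\pm$, replace $\partial_y^2\varphi_{k,\ep}^\pm$ via \eqref{eq:TGeqvarphiSobolevReg}, extract the singular $\eta^{-1/2\pm\gamma_0}$ and $\eta^{-3/2\pm\gamma_0}$ factors via the $X_k$-decompositions of $\varphi_{k,\ep}^\pm$ and $\varphi_{1,k,\ep}^\pm$, and invoke Propositions~\ref{prop:Xkvarphi}, \ref{prop:Xkvarphi1}, \ref{prop:Xkvarphi2}. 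The paper's own proof is a one-sentence citation of precisely this identity and these three propositions; you simply spell out the bookkeeping that the paper leaves implicit.
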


\begin{proof}
The decomposition \eqref{eq:decomXkpynotpynotvarphi}, its local estimates, the ${H_k^1(I_3^c(y_0))}$ and $L^\infty(I_3^c(y_0))$ estimates are a consequence of the identity $\partial_{y_0}^2\varphi_{k,\ep}^\pm = \partial_y^2\varphi_{k,\ep}^\pm + \varphi_{2,k,\ep}^\pm -2\partial_y \varphi_{1,k,\ep}^\pm$ and Propositions \ref{prop:Xkvarphi}, \ref{prop:Xkvarphi1} and \ref{prop:Xkvarphi2}.
\end{proof}

\subsubsection{Mild stratifications}
Analogously to Lemma \ref{lemma:XkB2}, we now have

\begin{lemma}\label{lemma:LXkB2}
Let $k\geq 1$. There holds
\begin{align*}
\Vert \B_{2,k,\ep}^\pm \Vert_{LX_k}\lesssim k^\frac12\cS_{k,1}
\end{align*}
uniformly for all $y_0\in I_M$ and all $0 < \ep\leq \ep_*$.
\end{lemma}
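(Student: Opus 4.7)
The proof should mirror that of Lemma \ref{lemma:XkB2}, with the role of the spaces $X_k$ and $Z_k$ replaced by $LX_k$ and $LZ_k$, and with the Green's function decomposition and homogeneous solutions appropriate to the mild regime (i.e.\ those from Proposition \ref{prop:logsobolevregdecomG} and the pair $M_\sr, W_\sr$ in \eqref{eq:homoupms}--\eqref{eq:def Wronskianms}).

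The plan is as follows. First, I would invoke Lemma \ref{lemma:tracevarphi2} to rewrite
\[
\B_{2,k,\ep}^\pm(y,y_0) = 2\,\partial_z \G_{k,\ep}^\pm(y,y_0,z)\,\partial_y\varphi_{1,k,\ep}^\pm(z,y_0)\Big|_{z=0}^{z=2},
\]
so that the task reduces to (i) controlling $\partial_y\varphi_{1,k,\ep}^\pm(z,y_0)$ pointwise at $z=0,2$ by $k^{1/2}\cS_{k,1}$, and (ii) bounding $\partial_z\G_{k,\ep}^\pm(y,y_0,z)\big|_{z=0,2}$ in $LX_k$ uniformly by a constant.

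For the pointwise control at the boundary, I would argue as in Lemma \ref{lemma:XkB0} / \ref{lemma:XkB2}, but using Proposition \ref{prop:LXkvarphi1} (which gives $\Vert \varphi_{1,k,\ep}^\pm\Vert_{LX_k}\lesssim k^{-1/2}\cS_{k,1}$) together with the equation \eqref{eq:TGvarphi1} satisfied by $\varphi_{1,k,\ep}^\pm$ in the exterior region. Since $y_0 \in I_M$ stays bounded away from $0$ and $2$, the points $z=0,2$ lie in $I_3^c(y_0)$, and an application of the fundamental theorem of calculus from an anchor point $z_* = y_0 \pm 3\beta/k$ (where the local $LX_k^1$ bound already yields $|\partial_y\varphi_{1,k,\ep}^\pm(z_*,y_0)|\lesssim k^{1/2}\cS_{k,1}$) combined with the entanglement inequality applied to \eqref{eq:TGvarphi1} provides the desired boundary estimate. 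The only delicate point is that the source term of \eqref{eq:TGvarphi1} involves cubic singular kernels; but these are harmless in $I_3^c(y_0)$, where the denominators are bounded below by $k^{-1}$, and $\varphi_{k,\ep}^\pm$ is already controlled in $LX_k$ by Proposition \ref{prop:LXkvarphi}.

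For the $LX_k$ bound on $\partial_z\G_{k,\ep}^\pm(y,y_0,z)$ at $z=0$ and $z=2$, I would use the explicit formula from Proposition \ref{prop: def Green's Function}: for instance
\[
\partial_z\G_{k,\ep}^\pm(y,y_0,0) = \frac{\phi_{\su,k,\ep}^\pm(y,y_0)}{\W_{k,\ep}^\pm(y_0)}\,\partial_y\phi_{\sl,k,\ep}^\pm(0,y_0),
\]
where now $\phi_{\sl,k,\ep}^\pm$ is the mild-regime homogeneous solution \eqref{eq:homolowms}. Since $\partial_y M_\sr(\zeta)$ and $\partial_y W_\sr(\zeta)$ are bounded by Appendix \ref{app:Whittaker}, the quantity $\partial_y\phi_{\sl,k,\ep}^\pm(0,y_0)$ contributes at worst an $O(k)$ factor that compensates the $k^{-1}$ factor in the mild-regime Wronskian lower bound from Lemma \ref{lemma:Wronskianlowerboundmildstrat}. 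Combining this with the $LX_k^1$ and $H_k^1(I_3^c(y_0))$ bounds on $\phi_{\su,k,\ep}^\pm/\W_{k,\ep}^\pm(y_0)$ extracted from Proposition \ref{prop:logsobolevregdecomG} and Corollary \ref{cor:logsobolevregpartialdecomG} yields $\Vert \partial_z\G_{k,\ep}^\pm(\cdot,y_0,0)\Vert_{LX_k}\lesssim 1$, and analogously at $z=2$.

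Putting (i) and (ii) together produces the claimed bound $\Vert \B_{2,k,\ep}^\pm\Vert_{LX_k}\lesssim k^{1/2}\cS_{k,1}$. The main technical obstacle, as in the weak/strong case treated in Lemma \ref{lemma:XkB2}, will be step (i): the extraction of the boundary pointwise bound for $\partial_y\varphi_{1,k,\ep}^\pm$ requires carefully tracing through the cubic and logarithmic singularities appearing in \eqref{eq:TGvarphi1} — but in the non-local region these are controlled exactly as for $\varphi_{k,\ep}^\pm$ in Proposition \ref{prop:LXkvarphi}, so no new ideas are needed beyond what is already in the paper.
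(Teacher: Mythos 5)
Your plan is essentially the paper's own (implicit) argument: the paper omits this proof, asserting it is "analogous to Lemma~\ref{lemma:XkB2}," whose proof in turn defers to Lemma~\ref{lemma:XkB0}, and your reconstruction — applying Lemma~\ref{lemma:tracevarphi2} to reduce to a pointwise bound on $\partial_y\varphi_{1,k,\ep}^\pm$ at $z=0,2$ together with an $LX_k$ bound on $\partial_z\G_{k,\ep}^\pm(\cdot,y_0,z)\big|_{z=0,2}$, the latter from the Wronskian structure and Proposition~\ref{prop:logsobolevregdecomG}, the former from Proposition~\ref{prop:LXkvarphi1} and the equation \eqref{eq:TGvarphi1} via an FTC/entanglement argument in the exterior region — is exactly that route. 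One small simplification you could note: $\partial_y\phi_{\sl,k,\ep}^\pm(0,y_0)$ is precisely the constant Wronskian $-2k\,\Gamma(1+2\gamma_0)/\Gamma(\tfrac12+\gamma_0)$ of the pair $(M_\sr,W_\sr)$, which is identically the prefactor appearing in $\W_{k,\ep}^\pm(y_0)$ in \eqref{eq:def Wronskianms}, so the cancellation is exact rather than merely an $O(k)$-versus-$O(k^{-1})$ balance.
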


We next state the analogue of Proposition \ref{prop:Xkvarphi2} for the mild stratified region $I_M$. Its proof follows from Proposition \ref{prop:LAPLZk}, Lemmas \ref{lemma:R0mapsL2toLXk},  \ref{lemma:R1mapsLinfL2toLXk}, \ref{lemma:LXkB0}, \ref{lemma:LXkB2}, \ref{lemma:R2mapsLXktoLXk}, \ref{lemma:R3mapsC1LXktoLXk}, \ref{lemma:R4mapsC2LXktoC2LXk} and Corollary \ref{cor:R2mapsCLXktoLXk}. We omit the details.

\begin{proposition}\label{prop:LXkvarphi2}
Let $k\geq 1$ and $y_0\in I_S\cup I_W$. There holds
\begin{align*}
 \Vert \varphi_{2,k,\ep}^\pm\Vert_{X_k}\lesssim k^{-\frac12}\cS_{k,2},
\end{align*}
uniformly for all $y_0\in I_S\cup I_{W}$ and all $0 < \ep\leq \ep_*$.
\end{proposition}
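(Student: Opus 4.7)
The strategy follows the blueprint of Proposition~\ref{prop:Xkvarphi2}, but with the $LZ_k$/$LX_k$ scale and its associated mapping estimates tailored to the mild regime where $\gamma_0 \to 0$ forces the logarithmic correction. First, I would apply the limiting absorption principle of Proposition~\ref{prop:LAPLZk} to the fixed-point representation~\eqref{eq:LAPvarphi2} of $\varphi_{2,k,\ep}^\pm$, reducing the task to controlling $\sum_{j=1}^{17} \|R_{0,k,\ep}^\pm \mathcal{F}_{j,k,\ep}^\pm\|_{LX_k} + \|\mathcal{B}_{2,k,\ep}^\pm\|_{LX_k}$ in terms of $k^{-1/2}\mathcal{S}_{k,2}$. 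The boundary contribution is already handled by Lemma~\ref{lemma:LXkB2}, so the heart of the argument is to absorb each $\mathcal{F}_{j,k,\ep}^\pm$ into the appropriate solution-operator estimate.

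The seventeen terms split naturally into three groups. The terms $\mathcal{F}_1, \mathcal{F}_2, \mathcal{F}_3, \mathcal{F}_4$ are of the same structure as the source of~\eqref{eq:TGvarphi1} and act on $\varphi_{1,k,\ep}^\pm$, for which Proposition~\ref{prop:LXkvarphi1} gives $\|\varphi_{1,k,\ep}^\pm\|_{LX_k} \lesssim k^{-1/2}\mathcal{S}_{k,1}$; these are controlled by invoking Lemma~\ref{lemma:R1mapsLinfL2toLXk} for $\mathcal{F}_1$, Corollary~\ref{cor:R2mapsCLXktoLXk} for $\mathcal{F}_2$ and $\mathcal{F}_4$, and Lemma~\ref{lemma:R3mapsC1LXktoLXk} for $\mathcal{F}_3$, each producing a gain of at least one power of $k^{-1}$. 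The terms $\mathcal{F}_5,\dots,\mathcal{F}_{12}$ act on $\varphi_{k,\ep}^\pm$ itself, with singularities up to order four; using $\|\varphi_{k,\ep}^\pm\|_{LX_k}\lesssim k^{-1/2}\mathcal{S}_{k,0}$ from Proposition~\ref{prop:LXkvarphi}, they are controlled respectively by Lemma~\ref{lemma:R1mapsLinfL2toLXk}, Corollary~\ref{cor:R2mapsCLXktoLXk}, Lemma~\ref{lemma:R3mapsC1LXktoLXk}, Lemma~\ref{lemma:R2mapsLXktoLXk} (or its $C^2$-coefficient refinement) for the terms that require exploiting the double vanishing of $(v'(y)-v'(y_0))^2$ or $v''(y)-v''(y_0)$ at $y=y_0$, and finally Lemma~\ref{lemma:R4mapsC2LXktoC2LXk} for the most singular contribution $\mathcal{F}_{12}$. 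The source terms $\mathcal{F}_{13},\dots,\mathcal{F}_{17}$ involve the data $\omega_k^0, \varrho_k^0$ and their derivatives; these are handled by Lemmas~\ref{lemma:R1mapsLinfL2toLXk} and~\ref{lemma:R0mapsL2toLXk} together with the Sobolev embedding Lemma~\ref{lemma:LinfH1bound}, yielding bounds by $k^{-1/2}\mathcal{S}_{k,2}$.

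Summing the seventeen contributions produces the desired $LZ_k$ estimate $\|\varphi_{2,k,\ep}^\pm\|_{LZ_k} \lesssim k^{-1/2}\mathcal{S}_{k,2}$. As in Proposition~\ref{prop:LXkvarphi}, this can then be upgraded to the $LX_k$ estimate by feeding the $LZ_k$ bound back into the identity $\varphi_{2,k,\ep}^\pm = -T_{k,\ep}^\pm \varphi_{2,k,\ep}^\pm + \sum_j R_{0,k,\ep}^\pm\mathcal{F}_{j,k,\ep}^\pm + \mathcal{B}_{2,k,\ep}^\pm$ and using the regularizing mapping property $T_{k,\ep}^\pm : LZ_k \to LX_k$ of Proposition~\ref{prop:TmapsXktoXk} (mild case) together with the fact that each $R_{0,k,\ep}^\pm \mathcal{F}_{j,k,\ep}^\pm$ already lives in $LX_k$ with the right bound.

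The main obstacle will be the quartic-singularity term $\mathcal{F}_{12}=-6\P(y)(v'(y)-v'(y_0))^2\varphi_{k,\ep}^\pm/(v(y)-v(y_0)\pm i\ep)^4$. Unlike the strong/weak setting, where Lemma~\ref{lemma:R4mapsC1XktoXk} applies directly, in the mild regime one must simultaneously exploit the double vanishing at $y=y_0$ of $h(y,y_0)=(v'(y)-v'(y_0))^2$, the factor $\P$, the $\log$-adapted regularity structure of $\varphi_{k,\ep}^\pm$ through its $LX_k$ decomposition, and the logarithmic factor $\log(\eta)\mathcal{Q}_{\gamma_0}(\eta)$ appearing in the Green's function of Proposition~\ref{prop:logsobolevregdecomG}. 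The power-counting here is delicate because $\gamma_0\to 0$ prevents the $\gamma_0^{-1}$ cancellations available in the weak/strong case, and one must carefully integrate by parts using \eqref{eq:partialzGf} (adapted to the logarithmic basis) to convert the quartic singularity into at most a quadratic one modulo logarithmic factors that are absorbed by the $\mathcal{Q}_{\gamma_0}$ structure — this is precisely the content of the higher-order operator bounds Lemma~\ref{lemma:R4mapsC2LXktoC2LXk} that must be invoked to close the estimate.
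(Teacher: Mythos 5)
Your proposal is correct and takes the same route as the paper, which itself omits the details and simply lists the same toolkit (Proposition~\ref{prop:LAPLZk}, Lemmas~\ref{lemma:R0mapsL2toLXk}, \ref{lemma:R1mapsLinfL2toLXk}, \ref{lemma:LXkB2}, \ref{lemma:R2mapsLXktoLXk}, \ref{lemma:R3mapsC1LXktoLXk}, \ref{lemma:R4mapsC2LXktoC2LXk}, Corollary~\ref{cor:R2mapsCLXktoLXk}). You also correctly diagnosed that the stated hypothesis $y_0\in I_S\cup I_W$ and the $X_k$ norm are a copy-paste slip from Proposition~\ref{prop:Xkvarphi2} and that the result is really the $LX_k$ estimate for $y_0\in I_M$, consistent with the surrounding text and the lemma list.
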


We finish the section presenting a formula for $\partial_{y_0}^2\varphi_{k,\ep}^\pm$ when $y_0\in I_M$.

\begin{proposition}\label{prop:LXkpynotpynotvarphi}
Let $k\geq 1$ and $y_0\in I_M$, we have
\begin{equation}\label{eq:decomLXkpynotpynotvarphi}
\begin{split}
\partial_{y_0}^2\varphi_{k,\ep}^\pm &= \varphi_{\rL,2,1,\sr,k,\ep}^\pm(y,y_0) \eta^{-\frac12+\gamma_0} + \varphi_{\rL,2,1,\rL,k,\ep}^\pm(y,y_0) \eta^{-\frac12-\gamma_0}\log(\eta)\Q_{1,\gamma_0}(\eta) \\
&\quad+ \varphi_{\rL,2,1,\s,k,\ep}^\pm(y,y_0) \eta^{-\frac12-\gamma_0} \\
&\quad + \varphi_{\rL,2,2,\sr,k,\ep}^\pm(y,y_0) \eta^{-\frac32+\gamma_0} + \varphi_{\rL,2,2,\rL,k,\ep}^\pm(y,y_0) \eta^{-\frac32-\gamma_0}\log(\eta) \Q_{1,\gamma_0}(\eta) \\ 
&\quad + \frac{\omega_k^0(y)}{v(y) - v(y_0) \pm i\ep} + \varrho_k^0(y) + \widetilde\varphi_{\rL,2,k,\ep}^\pm(y,y_0) 
\end{split}
\end{equation}
where
\begin{align*}
\varphi_{\rL,2,1,\sr,k,\ep}^\pm(y,y_0) &:= \left( 2kv''(y)B_{\ep}^\pm(y,y_0)  \left( \varphi_{k,\ep}^\pm \right)_\sr - 2k(1+2\gamma_0) \left( \varphi_{1,k,\ep}^\pm \right)_\sr \right),  \\
\varphi_{\rL,2,1,\rL,k,\ep}^\pm(y,y_0) &:=\left( 2kv''(y)B_{\ep}^\pm(y,y_0)  \left( \varphi_{k,\ep}^\pm \right)_\s - 2k(1-2\gamma_0) \left( \varphi_{1,k,\ep}^\pm \right)_\s \right), \\
\varphi_{\rL,2,1,\s,k,\ep}^\pm(y,y_0) &:=-4k \left( \varphi_{1,k,\ep}^\pm \right)_\s , \\
\varphi_{\rL,2,2,\sigma,k,\ep}^\pm(y,y_0) &:= - 4k^2\P(y) (B_{\ep}^\pm(y,y_0))^2  \left( \varphi_{k,\ep}^\pm \right)_\sigma, 
\end{align*}
for $\sigma \in \lbrace \sr, \rL \rbrace$ and $\Vert \widetilde{\varphi}_{\rL,2,k,\ep}^\pm \Vert_{L^\infty(I_3(y_0))} \lesssim k^{-\frac12}\cS_{k,2}$. Moreover, $\partial_{y_0}^2\varphi_{k,\ep}^\pm$ enjoy the more precise bound
\begin{align*}
\Vert \partial_{y_0}^2\varphi_{k,\ep}^\pm \Vert_{H_k^1(I_3^c)} \lesssim  k^{-1}\cS_{k,2}, \quad \Vert \partial_{y_0}^2\varphi_{k,\ep}^\pm \Vert_{L^\infty(I_3^c)} \lesssim  k^{-\frac12}\cS_{k,2},
\end{align*}
uniformly for all $y_0\in I_M$ and all $0 < \ep\leq \ep_*$.
\end{proposition}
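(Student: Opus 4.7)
\textbf{Proof proposal for Proposition \ref{prop:LXkpynotpynotvarphi}.}

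The plan is to proceed exactly as in the proof of Proposition \ref{prop:Xkpynotpynotvarphi}, but with the $X_k$-decomposition replaced by the logarithmically corrected $LX_k$-decomposition adapted to the mildly stratified regime. First I would start from the algebraic identity
\begin{equation*}
\partial_{y_0}^2\varphi_{k,\ep}^\pm = \partial_y^2\varphi_{k,\ep}^\pm + \varphi_{2,k,\ep}^\pm - 2\,\partial_y\varphi_{1,k,\ep}^\pm,
\end{equation*}
which was derived in the subsection preceding \eqref{eq:TGvarphi2}, and use the Taylor--Goldstein equation \eqref{eq:TGeqvarphiSobolevReg} to eliminate $\partial_y^2\varphi_{k,\ep}^\pm$, producing
\begin{equation*}
\partial_y^2\varphi_{k,\ep}^\pm = k^2\varphi_{k,\ep}^\pm + \frac{v''(y)}{v(y)-v(y_0)\pm i\ep}\varphi_{k,\ep}^\pm - \frac{\P(y)}{(v(y)-v(y_0)\pm i\ep)^2}\varphi_{k,\ep}^\pm + \frac{w_k^0(y)}{v(y)-v(y_0)\pm i\ep} + q_k^0(y).
\end{equation*}
The two forcing terms on the right already appear in \eqref{eq:decomLXkpynotpynotvarphi}, and the $k^2\varphi_{k,\ep}^\pm$ contribution is uniformly controlled in $LX_k$ by Proposition \ref{prop:LXkvarphi}.

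Next I would insert the $LX_k$-decomposition of $\varphi_{k,\ep}^\pm$ from Proposition \ref{prop:LXkvarphi} into the two singular terms. Using $B_\ep^\pm(y,y_0)=(y-y_0\pm i\ep_0)/(v(y)-v(y_0)\pm i\ep)$ from \eqref{eq:defbB} together with the bounds \eqref{eq:boundbB}, the quadratic pole produces precisely the coefficients $\varphi_{\rL,2,2,\sr,k,\ep}^\pm$ and $\varphi_{\rL,2,2,\rL,k,\ep}^\pm$ accompanying $\eta^{-3/2+\gamma_0}$ and $\eta^{-3/2-\gamma_0}\log(\eta)\Q_{1,\gamma_0}(\eta)$, respectively, while the simple pole in $v''(y)/(v(y)-v(y_0)\pm i\ep)$ generates the $v''(y)B_\ep^\pm$ pieces of $\varphi_{\rL,2,1,\sr,k,\ep}^\pm$ and $\varphi_{\rL,2,1,\rL,k,\ep}^\pm$, modulo an error whose coefficients lie in $L^\infty(I_3(y_0))$ uniformly with constants controlled by $k^{-1/2}\cS_{k,0}$. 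Then I would expand $\partial_y\varphi_{1,k,\ep}^\pm$ via the $LX_k$-decomposition of $\varphi_{1,k,\ep}^\pm$ from Proposition \ref{prop:LXkvarphi1}; differentiating $\eta^{1/2+\gamma_0}$ and $\eta^{1/2-\gamma_0}\log(\eta)\Q_{\gamma_0}(\eta)$ in $y$ and using the identity $\Q_{1,\gamma_0}(\eta)=\Q_{\gamma_0}(\eta)+2\gamma_0\partial_{\gamma_0}\Q_{\gamma_0}(\eta)$-type relations satisfied by the $\Q$ kernels yields the remaining contributions to $\varphi_{\rL,2,1,\sr,k,\ep}^\pm$, $\varphi_{\rL,2,1,\rL,k,\ep}^\pm$ and $\varphi_{\rL,2,1,\s,k,\ep}^\pm$. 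Finally, Proposition \ref{prop:LXkvarphi2} guarantees that $\varphi_{2,k,\ep}^\pm\in LX_k$ with $\|\varphi_{2,k,\ep}^\pm\|_{LX_k}\lesssim k^{-1/2}\cS_{k,2}$, so its contribution is absorbed into $\widetilde\varphi_{\rL,2,k,\ep}^\pm$.

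Collecting all leftover pieces and bounding each of them in $L^\infty(I_3(y_0))$ using the local bounds of Propositions \ref{prop:LXkvarphi}, \ref{prop:LXkvarphi1} and \ref{prop:LXkvarphi2}, together with the fact that $\eta^{2\gamma_0}$, $\log(\eta)$ and $\Q_{\gamma_0}(\eta)$ are uniformly bounded for $y\in I_3(y_0)$ and $\gamma_0\in B_{1/4}(0)$ (cf.\ \eqref{eq:boundQgamma}), produces the remainder bound $\|\widetilde\varphi_{\rL,2,k,\ep}^\pm\|_{L^\infty(I_3(y_0))}\lesssim k^{-1/2}\cS_{k,2}$. For the $H_k^1(I_3^c(y_0))$ and $L^\infty(I_3^c(y_0))$ estimates, I would revert to the identity $\partial_{y_0}^2\varphi_{k,\ep}^\pm = \partial_y^2\varphi_{k,\ep}^\pm + \varphi_{2,k,\ep}^\pm - 2\partial_y\varphi_{1,k,\ep}^\pm$ and invoke the non-local estimates already proved in Propositions \ref{prop:LXkvarphi}, \ref{prop:LXkvarphi1} and \ref{prop:LXkvarphi2}, together with the Sobolev-type embedding Lemma \ref{lemma:LinfH1bound} on each dyadic shell of $I_3^c(y_0)$, exactly as in the argument used at the end of the proof of Proposition \ref{prop:Xkvarphi1}.

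The main obstacle I expect is bookkeeping: correctly matching the powers of $\eta$ and log corrections produced by differentiating the mixed factor $\eta^{1/2-\gamma_0}\log(\eta)\Q_{\gamma_0}(\eta)$ against those arising from the $\P/(v-v_0\pm i\ep)^2$ pole, and verifying that the auxiliary factors $\Q_{1,\gamma_0}(\eta)$ appearing in \eqref{eq:decomLXkpynotpynotvarphi} really are the ones generated by this differentiation, uniformly in $\gamma_0$ as $\gamma_0\to 0$. Once this algebraic identification is carried out, the remaining estimates are direct consequences of the operator bounds already established in Sections \ref{sec:solopstrat} and \ref{sec:SobolevReg}.
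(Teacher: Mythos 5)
Your proposal follows the same route as the paper: the algebraic identity $\partial_{y_0}^2\varphi_{k,\ep}^\pm = \partial_y^2\varphi_{k,\ep}^\pm + \varphi_{2,k,\ep}^\pm - 2\,\partial_y\varphi_{1,k,\ep}^\pm$, elimination of $\partial_y^2\varphi_{k,\ep}^\pm$ via the Taylor--Goldstein equation, insertion of the $LX_k$-decompositions from Propositions~\ref{prop:LXkvarphi}, \ref{prop:LXkvarphi1}, \ref{prop:LXkvarphi2}, and the same route for the non-local bounds; the paper states exactly this (in even fewer words) and omits the algebra. The only thing you needn't agonize over is the ``$\Q_{1,\gamma_0}$ vs.\ $\Q_{\gamma_0}$'' relation you speculate about: the paper never defines a separate $\Q_{1,\gamma_0}$ and defines only $\Q_{\gamma_0}$ in \eqref{eq:defQgamma} (together with $\Q_{2,\gamma}$, $\Q_{3,\gamma}$ in \eqref{eq:defQ2gamma}--\eqref{eq:defQ3gamma}), so $\Q_{1,\gamma_0}$ in the statement is just a notational variant of $\Q_{\gamma_0}$ and the log bookkeeping reduces to the single identity $\partial_\eta\left(\log\eta\,\Q_{\gamma_0}(\eta)\right)=\eta^{2\gamma_0-1}$, equivalently $\log\eta\,\Q_{\gamma_0}(\eta)=\frac{\eta^{2\gamma_0}-1}{2\gamma_0}$ from Lemma~\ref{lemma:firstgammalog}.
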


\begin{proof}
The decomposition \eqref{eq:decomXkpynotpynotvarphi} follows from $\partial_{y_0}^2\varphi_{k,\ep}^\pm = \partial_y^2\varphi_{k,\ep}^\pm + \varphi_{2,k,\ep}^\pm -2\partial_y \varphi_{1,k,\ep}^\pm$ and Propositions \ref{prop:Xkvarphi}, \ref{prop:Xkvarphi1} and \ref{prop:Xkvarphi2}. Likewise, the ${H_k^1(I_3^c)}$ estimate is proven arguing as in the proof of Proposition \ref{prop:Xkpynotvarphi}, we omit the details.
\end{proof}

\subsection{Sobolev regularity for mixed derivatives}
The purpose of this section is to obtain working formulas for $\partial_{y,y_0}^2\varphi_{k,\ep}^\pm$. Since $\partial_{y_0}\varphi_{k,\ep}^\pm = \varphi_{1,k,\ep}^\pm - \partial_y\varphi_{k,\ep}^\pm$, with $\varphi_{1,k,\ep}^\pm\in X_k$, we observe that
\begin{align*}
\partial_{y,y_0}^2\varphi_{k,\ep}^\pm(y,y_0) = \partial_y \varphi_{1,k,\ep}^\pm(y,y_0) - \partial_y^2\varphi_{k,\ep}^\pm(y,y_0).
\end{align*}
We again present the estimates according to the location of $y_0$.

\subsubsection{Strong and weak stratifications}
Here we consider the case where $y_0\in I_W\cup I_S$. Then, Propositions \ref{prop:Xkvarphi}, \ref{prop:Xkvarphi1} and \eqref{eq:TGeqvarphiSobolevReg} provide:

\begin{proposition}\label{prop:Xkpypynotvarphi}
Let $k\geq 1$ and $y_0\in I_S\cup I_W$. Then,
\begin{equation}\label{eq:decomXkpypynotvarphi}
\begin{split}
\partial_{y,y_0}^2\varphi_{k,\ep}^\pm &= \varphi_{3,1,\sr,k,\ep}^\pm(y,y_0) \eta^{-\frac12+\gamma_0} + \varphi_{3,1,\s,k,\ep}^\pm(y,y_0) \eta^{-\frac12-\gamma_0} \\
&\quad + \varphi_{3,2,\sr,k,\ep}^\pm(y,y_0) \eta^{-\frac32+\gamma_0} + \varphi_{3,2,\s,k,\ep}^\pm(y,y_0) \eta^{-\frac32-\gamma_0} \\ 
&\quad - \frac{ w_k^0(y)}{v(y) - v(y_0) \pm i\ep} - q_k^0(y) + \widetilde\varphi_{3,k,\ep}^\pm(y,y_0) 
\end{split}
\end{equation}
where
\begin{align*}
\varphi_{3,1,\sr,k,\ep}^\pm(y,y_0) &:= -\left( 2kv''(y)B_{\ep}^\pm(y,y_0)  \left( \varphi_{k,\ep}^\pm \right)_\sr - k(1+2\gamma_0) \left( \varphi_{1,k,\ep}^\pm \right)_\sr \right),  \\
\varphi_{3,1,\s,k,\ep}^\pm(y,y_0) &:=-\left( 2kv''(y)B_{\ep}^\pm(y,y_0)  \left( \varphi_{k,\ep}^\pm \right)_\s - k(1-2\gamma_0) \left( \varphi_{1,k,\ep}^\pm \right)_\s \right), \\
\varphi_{3,2,\sigma,k,\ep}^\pm(y,y_0) &:=  4k^2\P(y) (B_{\ep}^\pm(y,y_0))^2  \left( \varphi_{k,\ep}^\pm \right)_\sigma, 
\end{align*}
and $\Vert \widetilde{\varphi}_{3,k,\ep}^\pm \Vert_{L^\infty(I_3(y_0))} \lesssim k^{-\frac12}\cS_{k,2}$. Moreover, $\partial_{y,y_0}^2\varphi_{k,\ep}^\pm$ enjoy the more precise bound
\begin{align*}
\Vert \partial_{y,y_0}^2\varphi_{k,\ep}^\pm \Vert_{L^2(I_3^c)} \lesssim  k^{-1}\cS_{k,2}, \quad \Vert \partial_{y,y_0}^2\varphi_{k,\ep}^\pm \Vert_{L^\infty(I_3^c)} \lesssim k^{-\frac12}\cS_{k,2},
\end{align*}
uniformly for all $y_0\in I_S\cup I_{W}$ and all $0 < \ep\leq \ep_*$.
\end{proposition}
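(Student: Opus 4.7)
The starting identity is the same one used to derive the expression for $\partial_{y_0}^2\varphi_{k,\ep}^\pm$: since $\partial_{y_0}\varphi_{k,\ep}^\pm = \varphi_{1,k,\ep}^\pm - \partial_y\varphi_{k,\ep}^\pm$, a single $\partial_y$ gives
\begin{equation*}
\partial_{y,y_0}^2\varphi_{k,\ep}^\pm(y,y_0) = \partial_y\varphi_{1,k,\ep}^\pm(y,y_0) - \partial_y^2\varphi_{k,\ep}^\pm(y,y_0),
\end{equation*}
and then \eqref{eq:TGeqvarphiSobolevReg} is used to trade the second derivative for lower-order singular terms:
\begin{equation*}
\partial_y^2\varphi_{k,\ep}^\pm = k^2\varphi_{k,\ep}^\pm + \frac{v''(y)}{v(y)-v(y_0)\pm i\ep}\varphi_{k,\ep}^\pm - \frac{\P(y)}{(v(y)-v(y_0)\pm i\ep)^2}\varphi_{k,\ep}^\pm + \frac{w_k^0(y)}{v(y)-v(y_0)\pm i\ep} + q_k^0(y).
\end{equation*}
The forcing contribution $-w_k^0/(v-v(y_0)\pm i\ep) - q_k^0$ is precisely the explicit source term appearing in \eqref{eq:decomXkpypynotvarphi}, and the $k^2\varphi_{k,\ep}^\pm$ piece, together with the regular error $\widetilde\varphi_{0,k,\ep}^\pm$ from the decomposition of $\partial_y\varphi_{k,\ep}^\pm$ in Proposition~\ref{prop:Xkvarphi}, will be absorbed into the remainder $\widetilde\varphi_{3,k,\ep}^\pm$.

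The main content is therefore bookkeeping: match each singular factor with the coefficients predicted by the decompositions of $\varphi_{k,\ep}^\pm$ and $\varphi_{1,k,\ep}^\pm$ in Propositions~\ref{prop:Xkvarphi} and \ref{prop:Xkvarphi1}. From \eqref{eq:decomXkpyvarphi} applied to $\varphi_{1,k,\ep}^\pm$ one obtains $\partial_y\varphi_{1,k,\ep}^\pm = k(1+2\gamma_0)(\varphi_{1,k,\ep}^\pm)_\sr \eta^{-\frac12+\gamma_0} + k(1-2\gamma_0)(\varphi_{1,k,\ep}^\pm)_\s \eta^{-\frac12-\gamma_0}$ up to a contribution in $L^\infty(I_3(y_0))$ bounded by $k^{\frac12}\cS_{k,1}$. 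In parallel, expanding $(v(y)-v(y_0)\pm i\ep)^{-1}=B_\ep^\pm(y,y_0)(2k)^{-1}\eta^{-1}$ shows that $v''(y)(v-v(y_0)\pm i\ep)^{-1}\varphi_{k,\ep}^\pm$ produces the $\eta^{-\frac12\pm\gamma_0}$ coefficients $2kv''(y) B_\ep^\pm(y,y_0)(\varphi_{k,\ep}^\pm)_\sigma$ (up to a regular remainder absorbed into $\widetilde\varphi_{3,k,\ep}^\pm$), while $\P(y)(v-v(y_0)\pm i\ep)^{-2}\varphi_{k,\ep}^\pm$ yields the $\eta^{-\frac32\pm\gamma_0}$ coefficients $4k^2\P(y)(B_\ep^\pm(y,y_0))^2(\varphi_{k,\ep}^\pm)_\sigma$. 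Combining signs gives exactly the formulae for $\varphi_{3,1,\sigma,k,\ep}^\pm$ and $\varphi_{3,2,\sigma,k,\ep}^\pm$ stated in the proposition.

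For the remainder $\widetilde\varphi_{3,k,\ep}^\pm$, the $L^\infty(I_3(y_0))$ bound is a direct consequence of collecting the regular remainders from Propositions~\ref{prop:Xkvarphi} and \ref{prop:Xkvarphi1}, of bounding $k^2\varphi_{k,\ep}^\pm$ via $\|\varphi_{k,\ep}^\pm\|_{X_k}\lesssim k^{-\frac12}\cS_{k,0}$, and of checking that the subleading Taylor expansion of $B_\ep^\pm(y,y_0)$ and of $(\varphi_{k,\ep}^\pm)_\sigma$ around $y=y_0$ produces only locally bounded contributions, using $k\cS_{k,n}\leq \cS_{k,n+1}$ to close the estimate at level $\cS_{k,2}$.

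The non-local estimates are the mildest step: on $I_3^c(y_0)$ the weights $\eta^{\pm\gamma_0}$ are of size one, the forcing terms are controlled because $\omega_k^0$ and $\varrho_k^0$ are compactly supported inside $(\vartheta_1,\vartheta_2)$ with a fixed distance to $I_3^c(y_0)$ in the relevant regime, and the remaining pieces are bounded by the $H_k^1(I_3^c(y_0))$ and $L^\infty(I_3^c(y_0))$ estimates on $\partial_y\varphi_{k,\ep}^\pm$ and $\partial_y\varphi_{1,k,\ep}^\pm$ supplied by Propositions~\ref{prop:Xkvarphi} and \ref{prop:Xkvarphi1}. The only subtle point, and likely the main obstacle, is a careful accounting of which terms really contribute at the $\cS_{k,2}$ level versus which ones are a power of $k$ better: in particular, the $k^2\varphi_{k,\ep}^\pm$ term from the Taylor--Goldstein substitution must be checked to be absorbed into $\widetilde\varphi_{3,k,\ep}^\pm$ with the correct pointwise weight in $I_3(y_0)$, which uses that on $I_3(y_0)$ one has $k^2|\varphi_{k,\ep}^\pm|\lesssim k^2\cdot k^{-\frac12}\cS_{k,0}\cdot |\eta|^{\frac12-\mu_0}\lesssim k^{-\frac12}\cS_{k,2}$ because $|\eta|\lesssim 1$ and $k\cS_{k,0}\leq \cS_{k,1}\leq k^{-1}\cS_{k,2}$.
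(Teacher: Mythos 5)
The paper does not give an explicit proof of this proposition: the preceding text simply says that ``Propositions \ref{prop:Xkvarphi}, \ref{prop:Xkvarphi1} and \eqref{eq:TGeqvarphiSobolevReg} provide'' the stated decomposition, after recording the identity $\partial_{y,y_0}^2\varphi_{k,\ep}^\pm = \partial_y\varphi_{1,k,\ep}^\pm - \partial_y^2\varphi_{k,\ep}^\pm$. Your proposal is precisely the fleshed-out version of that sketch: you start from the same identity, trade $\partial_y^2\varphi_{k,\ep}^\pm$ for the lower-order terms via the Taylor--Goldstein equation, take one $\partial_y$ derivative of the $X_k$-decomposition of $\varphi_{1,k,\ep}^\pm$, and expand the singular weights in powers of $\eta$. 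The sign matching of the coefficients $\varphi_{3,1,\sigma,k,\ep}^\pm$ and $\varphi_{3,2,\sigma,k,\ep}^\pm$ is correct, and the $\cS_{k,2}$-bookkeeping for the remainder, using $k\cS_{k,j}\leq\cS_{k,j+1}$ together with $|\eta|\lesssim 1$ on $I_3(y_0)$, is the right argument.

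One small slip to fix: you wrote $(v(y)-v(y_0)\pm i\ep)^{-1}=B_\ep^\pm(y,y_0)(2k)^{-1}\eta^{-1}$, but since $\eta = 2k(y-y_0\pm i\ep_0)$ and $B_\ep^\pm = (y-y_0\pm i\ep_0)/(v(y)-v(y_0)\pm i\ep)$, the correct identity is $(v(y)-v(y_0)\pm i\ep)^{-1}= 2k\, B_\ep^\pm(y,y_0)\,\eta^{-1}$. The factor $(2k)^{-1}$ should be $2k$. You use the correct $2k$ in the very next line when extracting the coefficients $2k v''(y) B_\ep^\pm(\varphi_{k,\ep}^\pm)_\sigma$ and $4k^2\P(y)(B_\ep^\pm)^2(\varphi_{k,\ep}^\pm)_\sigma$, so this is only a typo and does not infect the rest of the argument.
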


\subsubsection*{Mild stratifications}
For $y_0\in I_M$ we now turn to Propositions \ref{prop:LXkvarphi} and \ref{prop:LXkvarphi1} to obtain the following:

\begin{proposition}\label{prop:LXkpypynotvarphi}
Let $k\geq 1$ and $y_0\in I_M$. Then, we have
\begin{equation}\label{eq:decomLXkpypynotvarphi}
\begin{split}
\partial_{y,y_0}^2\varphi_{k,\ep}^\pm &= \varphi_{\rL,3,1,\sr,k,\ep}^\pm(y,y_0) \eta^{-\frac12+\gamma_0} + \varphi_{\rL,3,1,\rL,k,\ep}^\pm(y,y_0) \eta^{-\frac12-\gamma_0}\log(\eta)\Q_{1,\gamma_0}(\eta) \\
&\quad+ \varphi_{\rL,3,1,\s,k,\ep}^\pm(y,y_0) \eta^{-\frac12-\gamma_0} \\
&\quad + \varphi_{\rL,3,2,\sr,k,\ep}^\pm(y,y_0) \eta^{-\frac32+\gamma_0} + \varphi_{\rL,2,3,\rL,k,\ep}^\pm(y,y_0) \eta^{-\frac32-\gamma_0}\log(\eta) \Q_{1,\gamma_0}(\eta) \\ 
&\quad + \frac{w_k^0(y)}{v(y) - v(y_0) \pm i\ep} + q_k^0(y) + \widetilde\varphi_{\rL,3,k,\ep}^\pm(y,y_0) 
\end{split}
\end{equation}
where
\begin{align*}
\varphi_{\rL,3,1,\sr,k,\ep}^\pm(y,y_0) &:= -\left( 2kv''(y)B_{\ep}^\pm(y,y_0)  \left( \varphi_{k,\ep}^\pm \right)_\sr - k(1+2\gamma_0) \left( \varphi_{1,k,\ep}^\pm \right)_\sr \right),  \\
\varphi_{\rL,3,1,\rL,k,\ep}^\pm(y,y_0) &:= -\left( 2kv''(y)B_{\ep}^\pm(y,y_0)  \left( \varphi_{k,\ep}^\pm \right)_\s - k(1-2\gamma_0) \left( \varphi_{1,k,\ep}^\pm \right)_\s \right), \\
\varphi_{\rL,3,1,\s,k,\ep}^\pm(y,y_0) &:= 2k \left( \varphi_{1,k,\ep}^\pm \right)_\s , \\
\varphi_{\rL,3,2,\sigma,k,\ep}^\pm(y,y_0) &:=  4k^2\P(y) (B_{\ep}^\pm(y,y_0))^2  \left( \varphi_{k,\ep}^\pm \right)_\sigma, 
\end{align*}
for $\sigma \in \lbrace \sr, \rL \rbrace$ and $\Vert \widetilde{\varphi}_{\rL, 3,k,\ep}^\pm \Vert_{L^\infty(I_3(y_0))} \lesssim k^{-\frac12}\cS_{k,2}$. Moreover, $\partial_{y,y_0}^2\varphi_{k,\ep}^\pm$ enjoy the more precise bound
\begin{align*}
\Vert \partial_{y,y_0}^2\varphi_{k,\ep}^\pm \Vert_{L^2(I_3^c)} \lesssim  k^{-1}\cS_{k,2}.
\end{align*}
uniformly for all $y_0\in I_M$ and all $0 < \ep\leq \ep_*$.
\end{proposition}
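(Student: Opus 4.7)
The plan is to start from the identity $\partial_{y_0}\varphi_{k,\ep}^\pm = \varphi_{1,k,\ep}^\pm - \partial_y\varphi_{k,\ep}^\pm$, so that
\[
\partial_{y,y_0}^2\varphi_{k,\ep}^\pm(y,y_0) = \partial_y\varphi_{1,k,\ep}^\pm(y,y_0) - \partial_y^2\varphi_{k,\ep}^\pm(y,y_0),
\]
and to then use the Taylor--Goldstein equation \eqref{eq:TGeqvarphiSobolevReg} to eliminate $\partial_y^2\varphi_{k,\ep}^\pm$, obtaining
\[
\partial_{y,y_0}^2\varphi_{k,\ep}^\pm = \partial_y\varphi_{1,k,\ep}^\pm - k^2\varphi_{k,\ep}^\pm - \frac{v''(y)}{v(y)-v(y_0)\pm i\ep}\varphi_{k,\ep}^\pm + \frac{\P(y)}{(v(y)-v(y_0)\pm i\ep)^2}\varphi_{k,\ep}^\pm - \frac{w_k^0(y)}{v(y)-v(y_0)\pm i\ep} - q_k^0(y).
\]
This is exactly the strategy used in Proposition~\ref{prop:Xkpypynotvarphi} for the weak/strong regime; the only change is that the $LX_k$ decompositions of Propositions~\ref{prop:LXkvarphi} and~\ref{prop:LXkvarphi1} carry the extra $\eta^{-\frac12-\gamma_0}\log(\eta)\Q_{\gamma_0}(\eta)$ piece that one must propagate.

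First, I would insert the $LX_k$ decomposition of $\varphi_{1,k,\ep}^\pm$ from Proposition~\ref{prop:LXkvarphi1}, and differentiate in $y$ exactly as in \eqref{eq:decomLXkpyvarphi} (applied to $\varphi_{1,k,\ep}^\pm$ instead of $\varphi_{k,\ep}^\pm$). This produces the three singular contributions
\[
k(1+2\gamma_0)(\varphi_{1,k,\ep}^\pm)_\sr \, \eta^{-\frac12+\gamma_0}, \quad k(1-2\gamma_0)(\varphi_{1,k,\ep}^\pm)_\s \, \eta^{-\frac12-\gamma_0}\log(\eta)\Q_{1,\gamma_0}(\eta), \quad 2k(\varphi_{1,k,\ep}^\pm)_\s \, \eta^{-\frac12-\gamma_0},
\]
plus a bounded remainder. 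Simultaneously, the factor $\frac{1}{v(y)-v(y_0)\pm i\ep}$ is rewritten as $\frac{2kB_\ep^\pm(y,y_0)}{\eta}$ through \eqref{eq:defbB}, so that the term $-\frac{v''(y)}{v(y)-v(y_0)\pm i\ep}\varphi_{k,\ep}^\pm$ combines with the $LX_k$ decomposition of $\varphi_{k,\ep}^\pm$ (Proposition~\ref{prop:LXkvarphi}) to yield additional $\eta^{-\frac12\pm\gamma_0}$ and $\eta^{-\frac12-\gamma_0}\log(\eta)\Q_{\gamma_0}(\eta)$ contributions with coefficients $-2kv''(y)B_\ep^\pm(y,y_0)(\varphi_{k,\ep}^\pm)_\sigma$. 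Adding these to the pieces coming from $\partial_y\varphi_{1,k,\ep}^\pm$ and matching signs produces the stated expressions for $\varphi_{\rL,3,1,\sr,k,\ep}^\pm,\; \varphi_{\rL,3,1,\rL,k,\ep}^\pm,\; \varphi_{\rL,3,1,\s,k,\ep}^\pm$.

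Next, I would treat the $\eta^{-\frac32}$-type terms, which come exclusively from $\frac{\P(y)}{(v(y)-v(y_0)\pm i\ep)^2}\varphi_{k,\ep}^\pm$. Writing $\frac{1}{(v(y)-v(y_0)\pm i\ep)^2} = \frac{(2k)^2 (B_\ep^\pm(y,y_0))^2}{\eta^2}$ and multiplying by the $LX_k$ decomposition of $\varphi_{k,\ep}^\pm$, I pick up precisely $4k^2\P(y)(B_\ep^\pm(y,y_0))^2(\varphi_{k,\ep}^\pm)_\sr \, \eta^{-\frac32+\gamma_0}$ and the corresponding log-corrected term $4k^2\P(y)(B_\ep^\pm(y,y_0))^2(\varphi_{k,\ep}^\pm)_\s \, \eta^{-\frac32-\gamma_0}\log(\eta)\Q_{1,\gamma_0}(\eta)$, matching $\varphi_{\rL,3,2,\sr,k,\ep}^\pm$ and $\varphi_{\rL,3,2,\rL,k,\ep}^\pm$. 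The source terms $-\frac{w_k^0(y)}{v(y)-v(y_0)\pm i\ep} - q_k^0(y)$ appear explicitly; these are kept as separate contributions to the identity. Everything else -- namely the $k^2\varphi_{k,\ep}^\pm$ term, the bounded remainder $\widetilde\varphi_{\rL,0,k,\ep}^\pm$ from differentiating $\varphi_{1,k,\ep}^\pm$, the analytic corrections arising when $B_\ep^\pm(y,y_0)$ is Taylor-expanded, and the regular $\eta^{\frac12\pm\gamma_0}$ pieces that multiply negative half-integer powers to produce smooth factors -- is absorbed into $\widetilde\varphi_{\rL,3,k,\ep}^\pm$. Its $L^\infty(I_3(y_0))$ bound by $k^{-\frac12}\cS_{k,2}$ follows from the $LX_k$ estimates of Propositions~\ref{prop:LXkvarphi} and~\ref{prop:LXkvarphi1}, together with $\P, v''\in C^2$ and the boundedness of $B_\ep^\pm$ in \eqref{eq:boundbB}.

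Finally, for the nonlocal $L^2(I_3^c(y_0))$ and $L^\infty(I_3^c(y_0))$ estimates, I would apply $\partial_y$ to the pointwise and Sobolev bounds on $\varphi_{1,k,\ep}^\pm$ in $I_3^c(y_0)$ supplied by Proposition~\ref{prop:LXkvarphi1}, and combine them with the corresponding bounds on $\partial_y^2\varphi_{k,\ep}^\pm$ obtained from \eqref{eq:TGeqvarphiSobolevReg} together with the pointwise control of $\varphi_{k,\ep}^\pm$ in $I_3^c(y_0)$ from Proposition~\ref{prop:LXkvarphi} (noting that $|v(y)-v(y_0)\pm i\ep|^{-1}\lesssim k$ outside $I_3(y_0)$). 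The main technical obstacle I anticipate is the careful bookkeeping required to match the coefficients of the $\eta^{-\frac12-\gamma_0}\log(\eta)\Q_{1,\gamma_0}(\eta)$ structure, since this singular profile arises simultaneously from differentiating $\varphi_{1,k,\ep}^\pm$ (via the logarithmic branch in the $LX_k$ decomposition of Proposition~\ref{prop:LXkvarphi1}) and from multiplying $\varphi_{k,\ep}^\pm$ by $\frac{v''}{v-v_0\pm i\ep}$; controlling the genuinely new coefficient $\varphi_{\rL,3,1,\s,k,\ep}^\pm=2k(\varphi_{1,k,\ep}^\pm)_\s$, which has no analog in the strong/weak regime, is what forces us to invoke the full $LX_k$ rather than $X_k$ decomposition.
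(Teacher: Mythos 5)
Your proposal is correct and follows exactly the route the paper takes (implicitly — the paper gives this proposition essentially without proof, deferring to the same identity $\partial_{y,y_0}^2\varphi_{k,\ep}^\pm=\partial_y\varphi_{1,k,\ep}^\pm-\partial_y^2\varphi_{k,\ep}^\pm$, the Taylor--Goldstein equation for $\partial_y^2\varphi_{k,\ep}^\pm$, and Propositions \ref{prop:LXkvarphi}, \ref{prop:LXkvarphi1}). One remark worth making: your derivation produces $-\frac{w_k^0(y)}{v(y)-v(y_0)\pm i\ep}-q_k^0(y)$ rather than the $+$ signs printed in \eqref{eq:decomLXkpypynotvarphi}; your signs are the correct ones and agree with the parallel weak/strong statement in Proposition \ref{prop:Xkpypynotvarphi}, so the positive signs in the stated proposition appear to be a typo rather than something you need to reconcile.
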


\section{Inviscid damping estimates}\label{sec:IDestimates}
In this section we present the proof of time-decay estimates on Theorem \ref{thm:mainID}. 
\subsection{Decay estimates for $\psi_k(t,y)$}
Recall that we can write
\begin{align*}
\psi_k(t,y) = \frac{1}{2\pi i }\lim_{\ep\rightarrow 0}\int_0^2 e^{-ikv(y_0) t} \left( \psi_{k,\ep}^-(y,y_0) - \psi_{k,\ep}^+(y,y_0) \right) v'(y_0) \d y_0.
\end{align*}
Thanks to Lemma \ref{lemma:vanishingpsiIE}, Proposition \ref{prop:Hk1varphi} and the Dominated Convergence Theorem we have
\begin{align}\label{eq:psioscint}
\psi_k(t,y) = \frac{1}{2\pi i }\lim_{\ep\rightarrow 0}\int_{\vartheta_1}^{\vartheta_2} e^{-ikv(y_0) t} \left( \psi_{k,\ep}^-(y,y_0) - \psi_{k,\ep}^+(y,y_0) \right) v'(y_0) \d y_0
\end{align}
and integrating by parts we reach
\begin{align*}
\psi_k(t,y) = \frac{1}{ikt}\frac{1}{2\pi i }\lim_{\ep\rightarrow 0}\int_{\vartheta_1}^{\vartheta_2} e^{-ikv(y_0) t} \left( \partial_{y_0}\varphi_{k,\ep}^-(y,y_0) - \partial_{y_0}\varphi_{k,\ep}^+(y,y_0) \right)  \d y_0
\end{align*}
where we have used Lemma \ref{lemma:vanishingpsiIE} to show that the boundary terms vanish in the limit as $\ep\rightarrow 0$ and $\partial_{y_0}\psi_{k,\ep}^\pm = \partial_{y_0}\varphi_{k,\ep}^\pm$. We first prove point-wise inviscid damping in the strongly stratified regime.

\begin{proposition}\label{prop:IDpsistrong}
Let $k\geq 1$. Then,
\begin{align*}
\Vert \psi_k(t,\cdot) \Vert_{L^\infty(I_S)}  \lesssim k^{-\frac52} t^{-\frac32}\cS_{k,2}
\end{align*}
for all $t\geq 1$.
\end{proposition}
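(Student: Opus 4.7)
The plan is to start from the spectral representation
\[
\psi_k(t,y) = \frac{1}{2\pi i}\lim_{\ep\to 0}\int_{\vartheta_1}^{\vartheta_2} e^{-ikv(y_0)t}\bigl(\varphi_{k,\ep}^-(y,y_0)-\varphi_{k,\ep}^+(y,y_0)\bigr)v'(y_0)\,\d y_0,
\]
justified by \eqref{eq:psioscint} and Lemma~\ref{lemma:vanishingpsiIE}, and to perform a local-versus-nonlocal splitting at scale $\delta>0$, namely $\int_{\vartheta_1}^{\vartheta_2}=\int_{|y_0-y|<\delta}+\int_{|y_0-y|>\delta}$, where $\delta$ will ultimately be chosen of order $(kt)^{-1}$. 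Throughout, we use crucially that in the strongly stratified regime $\gamma_0=i\nu_0$ is purely imaginary, so $|\eta^{1/2\pm\gamma_0}|=|\eta|^{1/2}$, $|\eta^{-1/2\pm\gamma_0}|=|\eta|^{-1/2}$, and $|\eta^{-3/2\pm\gamma_0}|=|\eta|^{-3/2}$; this absence of any loss in powers of $|\eta|$ is what produces the uniform $t^{-3/2}$ rate.

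For the \emph{local piece} I would estimate directly using the decomposition
\[
\varphi_{k,\ep}^\pm(y,y_0)=(\varphi_{k,\ep}^\pm)_\sr(y,y_0)\eta^{\frac12+\gamma_0}+(\varphi_{k,\ep}^\pm)_\s(y,y_0)\eta^{\frac12-\gamma_0},
\]
combined with the pointwise bound $|(\varphi_{k,\ep}^\pm)_\sigma|\lesssim \|\varphi_{k,\ep}^\pm\|_{X_k}\lesssim k^{-1/2}\mathcal S_{k,0}$ from Proposition~\ref{prop:Xkvarphi}. Since $|\eta|\sim k|y-y_0|$ on $I_3(y_0)$, this yields a local contribution of order $\int_{|y_0-y|<\delta} k^{-1/2}\mathcal S_{k,0}(k|y_0-y|)^{1/2}\,\d y_0\lesssim \mathcal S_{k,0}\delta^{3/2}$.

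For the \emph{nonlocal piece} I integrate by parts twice against $e^{-ikv(y_0)t}=-\frac{1}{ikt}\partial_{y_0}(e^{-ikv(y_0)t}/v'(y_0))\cdot v'(y_0)$, producing a prefactor $(kt)^{-2}$ together with two families of boundary terms at $y_0=y\pm\delta$ (the outer boundaries at $\vartheta_1,\vartheta_2$ vanish in the limit by the same reasoning as in Lemma~\ref{lemma:vanishingpsiIE}). The inner boundary contributions are controlled by the pointwise bounds on $\partial_{y_0}\varphi_{k,\ep}^\pm$ at $|y_0-y|=\delta$, which by Proposition~\ref{prop:Xkpynotvarphi} give $|\partial_{y_0}\varphi_{k,\ep}^\pm(y,y\pm\delta)|\lesssim \delta^{-1/2}\mathcal S_{k,0}$ and hence yield boundary contributions of order $(kt)^{-1}\delta^{-1/2}\mathcal S_{k,0}$ at worst (cancellations between the $\pm$ copies and smallness against the local piece can be verified). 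The twice-integrated interior integrand is $\partial_{y_0}^2\varphi_{k,\ep}^\pm$, whose decomposition \eqref{eq:decomXkpynotpynotvarphi} exhibits a most singular piece $\varphi_{2,2,\sigma,k,\ep}^\pm\eta^{-3/2\pm\gamma_0}$ with $|\varphi_{2,2,\sigma,k,\ep}^\pm|\lesssim k^{3/2}\mathcal S_{k,0}$, plus milder $|\eta|^{-1/2}$ pieces governed by $\mathcal S_{k,1},\mathcal S_{k,2}$, plus the source contributions $\frac{w_k^0(y)}{v(y)-v(y_0)\pm i\ep}+q_k^0(y)$. Using $\int_\delta^\infty |\eta|^{-3/2}\,\d y_0\sim k^{-3/2}\delta^{-1/2}$ this gives a nonlocal contribution of order $(kt)^{-2}\delta^{-1/2}\mathcal S_{k,2}$.

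Balancing the local bound $\mathcal S_{k,0}\delta^{3/2}$ against the nonlocal bound $(kt)^{-2}\delta^{-1/2}\mathcal S_{k,2}$ forces $\delta\sim(kt)^{-1}$ and produces the claimed rate $k^{-5/2}t^{-3/2}\mathcal S_{k,2}$. The main obstacle that I expect will be the rigorous passage to the limit $\ep\to 0$ of the source-type terms $\frac{w_k^0(y)}{v(y)-v(y_0)\pm i\ep}$ coming from the $\partial_{y_0}^2\varphi_{k,\ep}^\pm$ expansion: the two signs individually produce a logarithmic Cauchy principal value, and only after subtracting the $-$ from the $+$ does a bounded Dirac-like contribution survive — handling this cleanly while retaining the $(\vartheta_2-y)$-uniform estimate requires the careful use of Proposition~\ref{prop:Xkpynotpynotvarphi} together with the identities of the form $\frac{1}{v(y)-v(y_0)\pm i\ep}=\frac{(v'(y_0))^{-1}}{y-y_0\pm i\ep_0}+\mathrm V_{1,\ep}^\pm$ already exploited in Section~\ref{sec:solopstrat}. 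Secondary bookkeeping issues are the control of the boundary terms at the inner interface $y_0=y\pm\delta$ (to ensure they are dominated by the local bound at the optimized $\delta$) and making sure the lower-order $\mathcal S_{k,1}$-pieces of $\partial_{y_0}^2\varphi_{k,\ep}^\pm$ do not spoil the $k^{-5/2}$ scaling.
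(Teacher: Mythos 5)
Your strategy---split at scale~$\delta$, estimate the local piece directly from the $X_k$-decomposition of~$\varphi_{k,\ep}^\pm$, integrate by parts twice on the nonlocal piece---is essentially the paper's proof with a harmless reordering (the paper integrates by parts once to extract a factor $(kt)^{-1}$, then splits, then integrates by parts once more on the far piece). But the nonlocal bound you actually write down does not close.

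The problem is the step ``this gives a nonlocal contribution of order $(kt)^{-2}\delta^{-1/2}\cS_{k,2}$'' followed by ``balancing \ldots\ forces $\delta\sim(kt)^{-1}$''. You correctly extract $|\varphi_{2,2,\sigma,k,\ep}^\pm|\lesssim k^{3/2}\cS_{k,0}$ for the most singular $\eta^{-3/2\pm\gamma_0}$ piece, so that its contribution is $(kt)^{-2}\cS_{k,0}\delta^{-1/2}$ with a $\cS_{k,0}$-coefficient. Packaging this together with all the milder pieces as a single $(kt)^{-2}\cS_{k,2}\delta^{-1/2}$ is a valid inequality, but it throws away two powers of~$k$ (recall $k^2\cS_{k,0}\leq\cS_{k,2}$), and that loss is fatal after the optimisation: at $\delta=(kt)^{-1}$ your stated nonlocal bound evaluates to $(kt)^{-3/2}\cS_{k,2}=k^{-3/2}t^{-3/2}\cS_{k,2}$, a full power of~$k$ larger than the claimed $k^{-5/2}t^{-3/2}\cS_{k,2}$; and if you literally balance $\cS_{k,0}\delta^{3/2}$ against $(kt)^{-2}\cS_{k,2}\delta^{-1/2}$, you are forced to $\delta\sim(\cS_{k,2}/\cS_{k,0})^{1/2}(kt)^{-1}\gtrsim t^{-1}$, not $(kt)^{-1}$, and no choice of~$\delta$ then recovers the $k^{-5/2}$. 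The data norms must not be conflated before balancing: keep the $\delta^{-1/2}$ singularity with its $\cS_{k,0}$ coefficient (balancing it against $\cS_{k,0}\delta^{3/2}$ gives $\delta\sim(kt)^{-1}$ and a value $(kt)^{-3/2}\cS_{k,0}\leq k^{-7/2}t^{-3/2}\cS_{k,2}$), and report separately that the $\cS_{k,1}$- and $\cS_{k,2}$-governed pieces of $\partial_{y_0}^2\varphi_{k,\ep}^\pm$ carry an extra $k^{-1/2}$ or $k^{-3/2}$ from the integration ($\int_\delta^{3\beta/k}(kr)^{-1/2}\,\d r\lesssim k^{-1}$ and $\|\partial_{y_0}^2\varphi\|_{L^\infty(I_3^c)}\lesssim k^{-1/2}\cS_{k,2}$ over a bounded interval), giving $(kt)^{-2}k^{-1/2}\cS_{k,2}\leq k^{-5/2}t^{-3/2}\cS_{k,2}$ independently of $\delta$. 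This is exactly what the paper's bounds on $\cI_{\ep,2,2,1}$ and $\cI_{\ep,2,2,2}$ keep track of.

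A secondary bookkeeping error: after the first integration by parts the boundary term at $y_0=y\pm\delta$ involves $\varphi$ itself, with prefactor $(kt)^{-1}$ and magnitude $\lesssim k^{-1/2}\cS_{k,0}(k\delta)^{1/2}$; after the second, the boundary term involves $\partial_{y_0}\varphi$, with prefactor $(kt)^{-2}$. Your ``\,$(kt)^{-1}\delta^{-1/2}\cS_{k,0}$\,'' pairs the $(kt)^{-1}$ prefactor with the $\partial_{y_0}\varphi$-size $\delta^{-1/2}$ bound; at $\delta\sim(kt)^{-1}$ that would give $(kt)^{-1/2}\cS_{k,0}$, which only decays like $t^{-1/2}$. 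The correct prefactor for the $\partial_{y_0}\varphi$-boundary term is $(kt)^{-2}$, which restores the $t^{-3/2}$ decay.
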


\begin{proof}
Let $\mathsf{m}=\min_{j,n,=1,2}|\varpi_j-\varpi_{j,n}|$, $\delta_0= \min \left( \frac{3\beta}{k}, \frac12 \mathsf{m} \right)$ and let $\delta\in (0,\frac{\delta_0}{2})$. Let $B_\delta=B_\delta(y)$ denote tha ball of radius $\delta$ centred at $y$ and $B_{\delta}^c=(\vartheta_1,\vartheta_2)\setminus B_\delta$. We split
\begin{align*}
\psi_k(t,y) &= \frac{1}{ikt}\frac{1}{2\pi i }\lim_{\ep\rightarrow 0} \left( \int_{B_\delta} + \int_{B_\delta^c} \right) e^{-ikv(y_0) t} \left( \partial_{y_0}\psi_{k,\ep}^-(y,y_0) - \partial_{y_0}\psi_{k,\ep}^+(y,y_0) \right)  \d y_0 \\
&= \frac{1}{ikt}\frac{1}{2\pi i} \lim_{\ep\rightarrow 0} \left( \cI_{\ep,1} + \cI_{\ep,2} \right)
\end{align*}
We begin with $\cI_{\ep,1}$. Since $|y-y_0|\leq\frac{\delta}{2}< \frac{3\beta}{k}$, we have that $y\in I_3(y_0)$ and $y_0\in I_S$ so that $\gamma_0\in i \R$ and we can use Proposition \ref{prop:Xkpynotvarphi} to write
\begin{align*}
\left| \partial_{y_0}\varphi_{k,\ep}^\pm(y,y_0) \right| \lesssim k^{-\frac12}(k|y-y_0|)^{-\frac12}\cS_{k,1}
\end{align*}
for all $y\in I_S$ and all $y_0\in I_3(y)$. Hence, $|\cI_{\ep,1}|\lesssim k^{-\frac32}(k\delta)^\frac12 \cS_{k,1}$ for all $y\in I_S$. For $\cI_{\ep,2}$ we integrate by parts once more, 
\begin{align*}
\cI_{\ep,2} &= \frac{1}{ikt} \frac{e^{-ikv(y_0)t }}{v'(y_0)}\left( \partial_{y_0}\psi_{k,\ep}^-(y,y_0) - \partial_{y_0}\psi_{k,\ep}^+(y,y_0) \right)  \Big|_{y_0\in \partial B_\delta^c} \\
&\quad - \frac{1}{ikt}\int_{B_\delta^c}  \frac{e^{-ikv(y_0)t }}{v'(y_0)}\left( \partial^2_{y_0}\psi_{k,\ep}^-(y,y_0) - \partial^2_{y_0}\psi_{k,\ep}^+(y,y_0) \right)  \d y_0 \\
&\quad - \frac{1}{ikt}\int_{B_\delta^c}  {e^{-ikv(y_0)t }}\partial_{y_0}\left(\frac{1}{v'(y_0)}\right)\left( \partial_{y_0}\psi_{k,\ep}^-(y,y_0) - \partial_{y_0}\psi_{k,\ep}^+(y,y_0) \right)  \d y_0\\
&= \frac{1}{ikt}\left( - \cI_{\ep,2,1} + \cI_{\ep,2,2} + \cI_{\ep,2,3} \right).
\end{align*}
To estimate $\cI_{\ep,2,1}$ we use the local bounds and the global pointwise bounds of Proposition \ref{prop:Xkpynotvarphi} when $y_0\in \partial B_{\delta}\cap B_{\delta_0}$ and $y_0\in \partial B_\delta^c \cap B_{\delta_0}^c$, respectively, to obtain
\begin{align*}
\left| \cI_{\ep,2,1} \right| \lesssim k^{-\frac12}\left( 1 + (k\delta)^{-\frac12} \right) \cS_{k,1}.
\end{align*} 
For $\cI_{\ep,2,2}$ we further split
\begin{align*}
\cI_{\ep,2,2} &= \left( \int_{B_\delta^c\cap B_{\delta_0}}  + \int_{B_{\delta_0}^c} \right)  \frac{e^{-ikv(y_0)t }}{v'(y_0)}\left( \partial^2_{y_0}\psi_{k,\ep}^-(y,y_0) - \partial^2_{y_0}\psi_{k,\ep}^+(y,y_0) \right)  \d y_0 \\
&= \cI_{\ep,2,2,1} + \cI_{\ep,2,2,2}.
\end{align*}
Moreover, thanks to \eqref{eq:decomXkpynotpynotvarphi}, we note that
\begin{align*}
\lim_{\ep\rightarrow 0} \int_{B_\delta^c}  \frac{e^{-ikv(y_0)t }}{v'(y_0)}\left( \frac{w_k^0(y)}{v(y) - v(y_0) - i\ep} - \frac{w_k^0(y)}{v(y) - v(y_0) + i\ep} \right)  \d y_0 = 0
\end{align*}
due to the dominated convergence theorem because $|y-y_0|>\frac{\delta}{2}>0$. Next, for all $y_0\in B_\delta^c\cap B_{\delta_0}$ we have $y_0\in I_S$ and thus integrating the local decomposition and estimates of Proposition \ref{prop:Xkpynotpynotvarphi} yields
\begin{align*}
\left| \lim_{\ep\rightarrow 0} \cI_{\ep,2,2,1}  \right| \lesssim k^{-\frac32}(k\delta)^{-\frac12}\cS_{k,2}.
\end{align*} 
On the other hand, the global pointwise bounds of Propositions \ref{prop:Xkpynotpynotvarphi} and \ref{prop:LXkpynotpynotvarphi} when $\delta_0=\frac{3\beta}{k}$ and the local and global estimates of Propositions \ref{prop:Xkpynotpynotvarphi} and \ref{prop:LXkpynotpynotvarphi} when $\delta_0=\frac12\mathsf{m}$ give
\begin{align*}
\left| \lim_{\ep\rightarrow 0} \cI_{\ep,2,2,2}  \right| \lesssim k^{-\frac12}\cS_{k,2}.
\end{align*}
Hence, $ \left| \lim_{\ep\rightarrow 0} \cI_{\ep,2,2}  \right| \lesssim k^{-\frac12}(k\delta)^{-\frac12}\cS_{k,2}$.  Similarly, we also deduce that $\left| \lim_{\ep\rightarrow 0} \cI_{\ep,2,3}  \right| \lesssim k^{-\frac12}(k\delta)^{-\frac12}\cS_{k,2}$. Therefore,
\begin{align*}
\left| \psi_k(t,y) \right| \lesssim (kt)^{-1}k^{-\frac32}(k\delta)^{\frac12}\cS_{k,1} + (kt)^{-2}k^{-\frac12}(k\delta)^{-\frac12}\cS_{k,2}.
\end{align*}
Optimizing for $\delta$, we see that $\delta= \frac{1}{kt}\min \left( \beta, \frac{k\mathsf{m}}{4}\right) $ gives
\begin{align*}
\left| \psi_k(t,y) \right| \lesssim k^{-\frac52}t^{-\frac32}\cS_{k,2}
\end{align*}
and the proof is concluded.
\end{proof}

We next address the bounds for $\psi_k(t,y)$ when $y\in I_M$. Recall that $I_M = [\varpi_{1,1}, \varpi_{1,2}]\cup [\varpi_{2,1} , \varpi_{2,2}] \setminus \lbrace \varpi_1, \varpi_2 \rbrace$. We have

\begin{proposition}\label{prop:IDpsimild}
Let $k\geq 1$ and $t\geq 1$. Then, 
\begin{align*}
\left| \psi_k(t,y) \right| \lesssim k^{-\frac52}t^{-\frac32 + \mu(y)}(1+ \log(t))\cS_{k,2}
\end{align*}
for $y\in [\varpi_{1,1}, \varpi_{1,2}]\cup [\varpi_{2,1}, \varpi_{2,2}]$.
\end{proposition}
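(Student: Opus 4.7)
The plan is to follow the architecture of Proposition~\ref{prop:IDpsistrong} essentially verbatim, replacing the strong-regime pointwise bounds by their logarithmically-corrected $LX_k$ counterparts from Propositions~\ref{prop:LXkpynotvarphi} and \ref{prop:LXkpynotpynotvarphi}. Starting from the oscillatory representation
\begin{align*}
\psi_k(t,y) = \frac{1}{2\pi i \, ikt}\lim_{\ep\to 0}\int_{\vartheta_1}^{\vartheta_2} e^{-ikv(y_0) t}\bigl(\partial_{y_0}\varphi_{k,\ep}^-(y,y_0) - \partial_{y_0}\varphi_{k,\ep}^+(y,y_0)\bigr)\, \d y_0,
\end{align*}
I would set $\mathsf{m}=\min_{j,n=1,2}|\varpi_j-\varpi_{j,n}|$, $\delta_0=\min(3\beta/k,\mathsf{m}/2)$ and $\delta\in(0,\delta_0/2)$, and split the integral over $B_\delta(y)$ (call it $\cI_{\ep,1}$) and over $B_\delta^c\cap(\vartheta_1,\vartheta_2)$ (call it $\cI_{\ep,2}$), treating $\cI_{\ep,2}$ by an additional integration by parts exactly as in the strong case.

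For $\cI_{\ep,1}$, the choice of $\delta$ guarantees $y\in I_3(y_0)$ and $y_0\in I_M$, so Proposition~\ref{prop:LXkpynotvarphi} applies. From the decomposition~\eqref{eq:decomLXkpynotvarphi} together with the $LX_k$ bound $\Vert\varphi_{1,k,\ep}^\pm\Vert_{LX_k}\lesssim k^{-1/2}\cS_{k,1}$ from Proposition~\ref{prop:LXkvarphi1}, the dominant pointwise estimate reads
\begin{align*}
|\partial_{y_0}\varphi_{k,\ep}^\pm(y,y_0)|\lesssim k^{-1/2}(k|y-y_0|)^{-1/2-\mu(y_0)}(1+|\log(k|y-y_0|)|)\cS_{k,1}.
\end{align*}
Replacing $\mu(y_0)$ by $\mu(y)$ costs only a bounded constant thanks to $|\mu(y)-\mu(y_0)|\lesssim|y-y_0|$ and the elementary bound $\exp(|\mu(y)-\mu(y_0)||\log(k|y-y_0|)|)\lesssim 1$ whenever $k|y-y_0|\lesssim 1$ (see Subsection~\ref{subsec:heuristics}). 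A direct integration then yields $|\cI_{\ep,1}|\lesssim k^{-3/2}(k\delta)^{1/2-\mu(y)}(1+|\log(k\delta)|)\cS_{k,1}$.

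For $\cI_{\ep,2}$, integration by parts produces a boundary term at $y_0\in\partial B_\delta^c\cup\{\vartheta_1,\vartheta_2\}$ and two integral terms, one containing $\partial_{y_0}^2\varphi_{k,\ep}^\pm$ and one with $\partial_{y_0}(1/v'(y_0))\,\partial_{y_0}\varphi_{k,\ep}^\pm$. I would split each integral into the range $B_\delta^c\cap B_{\delta_0}(y)$ (still inside $I_M$ where~\eqref{eq:decomLXkpynotpynotvarphi} applies) and $B_{\delta_0}^c$ (where the global $L^\infty(I_3^c(y_0))$ bounds from Propositions~\ref{prop:Xkpynotpynotvarphi} and~\ref{prop:LXkpynotpynotvarphi} govern). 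The source-type term $w_k^0/(v(y)-v(y_0)\pm i\ep)$ appearing in~\eqref{eq:decomLXkpynotpynotvarphi} is handled by dominated convergence since $|y-y_0|\geq\delta/2>0$ on $B_\delta^c$. Tracking the $\mu(y)$-dependent singularity $(k|y-y_0|)^{-3/2-\mu(y)}(1+|\log(k|y-y_0|)|)$ through the dyadic integration from $k\delta$ to $3\beta$ produces $|\cI_{\ep,2}|\lesssim (kt)^{-1}k^{-1/2}(k\delta)^{-1/2-\mu(y)}(1+|\log(k\delta)|)\cS_{k,2}$ (the boundary term $\cI_{\ep,2,1}$ and the lower-order $\cI_{\ep,2,3}$ satisfy the same bound or better).

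Combining,
\begin{align*}
|\psi_k(t,y)|\lesssim (kt)^{-1}\bigl[k^{-3/2}(k\delta)^{1/2-\mu(y)}\cS_{k,1}+(kt)^{-1}k^{-1/2}(k\delta)^{-1/2-\mu(y)}\cS_{k,2}\bigr]\bigl(1+|\log(k\delta)|\bigr),
\end{align*}
and choosing $k\delta=\min(1,k\mathsf{m}/4)\,t^{-1}$ balances the two contributions and produces the claimed rate $k^{-5/2}t^{-3/2+\mu(y)}(1+\log t)\cS_{k,2}$ after using $k\cS_{k,1}\leq\cS_{k,2}$. The main technical nuisance—rather than a genuine obstacle—is checking that the logarithmic factor from the mild-regime decomposition consolidates to a single $(1+\log t)$ on the right-hand side and does not compound through the second integration by parts; this is ensured because the $\log$ gain sits only on the singular branch $\eta^{-1/2-\gamma_0}\log(\eta)\Q_{\gamma_0}(\eta)$ and integrating $|\eta|^{-1/2-\mu(y)}(1+|\log|\eta||)$ over dyadic shells still yields a factor only linear in $(1+|\log(k\delta)|)$.
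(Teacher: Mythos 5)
Your proposal follows the same architecture as the paper's proof: split at distance $\delta$ from $y$, integrate by parts once on the far region, invoke the $LX_k$ (log-corrected) pointwise bounds from Propositions~\ref{prop:LXkpynotvarphi} and~\ref{prop:LXkpynotpynotvarphi} near the critical layer and the uniform global bounds away from it, then optimize with $k\delta\sim t^{-1}$. One small imprecision: for $y$ near an endpoint of $[\varpi_{1,1},\varpi_{1,2}]$ the ball $B_\delta(y)$ can leave $I_M$ and enter $I_W$ or $I_S$, so the claim ``the choice of $\delta$ guarantees $y_0\in I_M$'' does not quite hold; the paper handles this by applying Proposition~\ref{prop:Xkpynotvarphi} on the piece of $B_\delta$ that falls in $I_S\cup I_W$ and Proposition~\ref{prop:LXkpynotvarphi} only on $B_\delta\cap I_M$ — but since the $I_S\cup I_W$ bounds are no worse, this does not affect the final estimate, and your conclusion is correct.
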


\begin{proof}
We shall prove the bounds for $y\in [\varpi_{2,1},\varpi_{2,2}]$. Define $\delta_0 =  \frac{3\beta}{k}$ and let $\delta\in (0, \frac{\delta_0}{2})$. We write
\begin{align*}
\int_{\vartheta_1}^{\vartheta_2} e^{-ikv(y_0)t} & \left( \partial_{y_0}\varphi_{k,\ep}^-(y,y_0) - \partial_{y_0}\varphi_{k,\ep}^+(y,y_0) \right) \d y_0 \\
&= \int_{B_\delta} e^{-ikv(y_0)t} \left( \partial_{y_0}\varphi_{k,\ep}^-(y,y_0) - \partial_{y_0}\varphi_{k,\ep}^+(y,y_0) \right) \d y_0\\
&\quad + \int_{B_\delta^c} e^{-ikv(y_0)t} \left( \partial_{y_0}\varphi_{k,\ep}^-(y,y_0) - \partial_{y_0}\varphi_{k,\ep}^+(y,y_0) \right) \d y_0\\
&= \cI_{\ep,1} + \cI_{\ep,2},
\end{align*}
where we recall $B_\delta=B_\delta(y)$ denotes the ball of radius $\delta>0$ centred at $y$ and $B_\delta^c = (\vartheta_1, \vartheta_2)\setminus B_\delta$.

\bullpar{Estimates for $\cI_{\ep,1}$} Since $\delta < \frac{3\beta}{k}$ we have that $y\in I_3(y_0)$, for all $y_0\in B_\delta$. Thus, Proposition \ref{prop:Xkpynotvarphi} for $y_0\in B_\delta\cap (I_S\cup I_W)$ and Proposition \ref{prop:LXkpynotvarphi} for $y_0\in B_\delta\cap I_M$ yield
\begin{align*}
|\cI_1| \lesssim k^{-\frac12}\cS_{k,1} \int_{B_\delta}(k|y-y_0|)^{-\frac12+\mu_0} +  (k|y-y_0|)^{-\frac12-\mu_0}  + (k|y-y_0|)^{-\frac12-\mu_0} |\log(k|y-y_0|)| \d y_0.
\end{align*}
Recall that $\mu_0>0$ for $y_0\in (\varpi_2,\varpi_{2,2}]$ while it is identically zero for $y_0\in [\varpi_{2,1},\varpi_2)$. Moreover, 
\begin{align*}
(k|y_0-y|)^{-\mu_0} &= (k|y_0-y|)^{-\mu(y)} e^{(\mu(y) - \mu(y_0)) \log(k|y_0-y|)} \\
&\lesssim (k|y_0-y|)^{-\mu(y)}
\end{align*}
because $e^{(\mu(y) - \mu(z)) \log(k|y_0-y|)}\lesssim e^{k^{-1} (k|y-y_0|)\log( k|y-y_0|)} \lesssim 1$ as $(k|y-y_0|)\log( k|y-y_0|) \lesssim 1$ since $k|y-y_0|\lesssim 1$. Hence,
\begin{align*}
\left| \cI_{\ep,1} \right| \lesssim k^{-\frac32}(k\delta)^{\frac12-\mu(y)}\left( 1 - \log (k\delta) \right) \cS_{k,1},
\end{align*}
where we have used that
\begin{align*}
\left| \int_0^x u^{-\frac12-\mu}\log(u) \d u \right| = -\int_0^x u^{-\frac12-\mu}\log(u) \d u &= - \int_0^x \partial_u \left( \frac{u^{\frac12-\mu}}{\frac12-\mu} \right) \log(u) \d u \\
&\lesssim x^{\frac12-\mu} \left( 1 + |\log(x) | \right),
\end{align*}
for all $x<1$ and all $\mu < \frac12$. This is indeed the case for all $y\in I_M$.

\bullpar{Estimates for $\cI_{\ep,2}$} We integrate by parts again,
\begin{align}
\cI_{\ep,2} &= -\frac{1}{ikt}\frac{e^{-ikv(y_0)t}}{v'(y_0)} \left( \partial_{y_0}\varphi_{k,\ep}^-(y,y_0) - \partial_{y_0}\varphi_{k,\ep}^+(y,y_0) \right) \Big|_{y_0\in \partial B^c_{\delta}}\label{eq:cItwoone} \\
&\quad + \frac{1}{ikt}\int_{B_\delta^c} \frac{e^{-ikv(y_0)t}}{v'(y_0)} \left( \partial_{y_0}^2\varphi_{k,\ep}^-(y,y_0) - \partial_{y_0}^2\varphi_{k,\ep}^+(y,y_0) \right) \d y_0
 \label{eq:cItwotwo}\\
&\quad + \frac{1}{ikt}\int_{B_\delta^c} e^{-ikv(y_0)t} \partial_{y_0}\left(\frac{1}{v'(y_0)} \right) \left( \partial_{y_0}\varphi_{k,\ep}^-(y,y_0) - \partial_{y_0}\varphi_{k,\ep}^+(y,y_0) \right) \d y_0 
 \label{eq:cItwothree} \\
&= \frac{1}{ikt} \left( - \cI_{\ep,2,1} + \cI_{\ep,2,2} + \cI_{\ep,2,3} \right).
\end{align}
We treat each term separately.

\diampar{Estimates for $\cI_{\ep,2,1}$} For $y_0\in B_{\delta_0}$ we use Proposition \ref{prop:Xkpynotvarphi} if $y_0\in (I_S\cup I_W)$ and Proposition \ref{prop:LXkpynotvarphi} if $y_0\in I_M$, while for $y_0\in B_{\delta_0}^c$ we use the pointwise global estimates of Propositions~\ref{prop:Xkpynotvarphi} and  \ref{prop:LXkpynotvarphi} to deduce that
\begin{align*}
|\cI_{\ep,2,1}| \lesssim k^{-\frac12}(k\delta)^{-\frac12-\mu(y)}(1 - \log(k\delta) )\cS_{k,1}.
\end{align*}

\diampar{Estimates for $\cI_{2,2}$} We subdivide again
\begin{align*}
\cI_{2,2} &= \left( \int_{B_\delta^c\cap I_3(y)} + \int_{B_\delta^c\cap I_3^c(y)} \right)\frac{e^{-ikv(y_0)t}}{v'(y_0)}  \left( \partial_{y_0}^2\varphi_{k,\ep}^-(y,y_0) - \partial_{y_0}^2\varphi_{k,\ep}^+(y,y_0) \right) \d y_0\\
&= \cI_{\ep,2,2,1} + \cI_{\ep,2,2,2}.
\end{align*}
Arguing as in $\cI_{2,2,2}$ above, from Propositions \ref{prop:Xkpynotpynotvarphi} and \ref{prop:LXkpynotpynotvarphi} we deduce that integrating the local decomposition and bounds give
\begin{align*}
|\lim_{\ep\rightarrow 0}\cI_{\ep, 2,2,1}| \lesssim k^{-\frac32}(k\delta)^{-\frac12-\mu(y)}(1 - \log(k\delta) ) \cS_{k,2}
\end{align*}
while the global pointwise estimates provide
\begin{align*}
|\lim_{\ep\rightarrow 0}\cI_{\ep, 2,2,2}| \lesssim k^{-\frac12}\cS_{k,2}.
\end{align*}
Hence, we have $|\lim_{\ep\rightarrow 0}\cI_{\ep, 2,2,2}| \lesssim k^{-\frac12}(k\delta)^{-\frac12-\mu(y)}(1-\log(k\delta))\cS_{k,2}$.

\diampar{Estimates for $\cI_{\ep,2,3}$} Proceeding as in $\cI_{\ep,2,2}$, we see from Propositions~\ref{prop:Xkpynotvarphi} and \ref{prop:LXkpynotvarphi} that
\begin{align*}
|\lim_{\ep\rightarrow 0}\cI_{\ep, 2,3}| \lesssim k^{-\frac12}(k\delta)^{-\frac12-\mu(y)}( 1 - \log(k\delta) )\cS_{k,2}
\end{align*}
as well.

\bullpar{End of proof}
Gathering all the estimates, we read
\begin{align*}
\left| \psi_k(t,y) \right| \lesssim k^{-\frac32}(kt)^{-1}(k\delta)^{\frac12-\mu(y)}(1-\log(k\delta))\cS_{k,2} + k^{-\frac12}(kt)^{-2}(k\delta)^{-\frac12-\mu(y)}( 1 - \log(k\delta) ) \cS_{k,2}
\end{align*}
which yields the desired estimate for $\delta= \frac{\beta}{kt}$.
\end{proof}

We next show the decay rates in the weak regime.

\begin{proposition}\label{prop:IDpsiweak}
Let $k\geq 1$ and $t\geq 1$. Then, 
\begin{align*}
\left| \psi_k(t,y) \right| \lesssim k^{-\frac52}t^{-\frac32 + \mu(y)}\cS_{k,2}
\end{align*}
for all $y\in  (\vartheta_1,\varpi_{1,1})\cup (\varpi_{2,2},\vartheta_2)$.
\end{proposition}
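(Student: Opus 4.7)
The plan is to mimic the scheme used in the proof of Proposition~\ref{prop:IDpsimild} but exploit the fact that, in the strict weak region $y\in(\vartheta_1,\varpi_{1,1})\cup(\varpi_{2,2},\vartheta_2)$, the regularity index $\mu(y)$ stays uniformly in $(0,\tfrac12)$, so the decomposition~\eqref{eq:decomXkpynotvarphi}--\eqref{eq:decomXkpynotpynotvarphi} (rather than the logarithmically corrected~\eqref{eq:decomLXkpynotvarphi}--\eqref{eq:decomLXkpynotpynotvarphi}) controls all the terms and no $\log t$ factor appears. Starting from the representation
\begin{align*}
\psi_k(t,y)=\frac{1}{2\pi i}\frac{1}{ikt}\lim_{\ep\to 0}\int_{\vartheta_1}^{\vartheta_2}\!e^{-ikv(y_0)t}\bigl(\partial_{y_0}\varphi_{k,\ep}^-(y,y_0)-\partial_{y_0}\varphi_{k,\ep}^+(y,y_0)\bigr)\,\d y_0,
\end{align*}
I would fix $\delta_0:=\min\!\left(\tfrac{3\beta}{k},\tfrac12\operatorname{dist}(y,\{\widetilde\varpi_{1,1},\widetilde\varpi_{2,2}\})\right)$, take $\delta\in(0,\tfrac{\delta_0}{2})$, and split the $y_0$-integral into the local piece $\cI_{\ep,1}$ on $B_\delta(y)$ and the non-local piece $\cI_{\ep,2}$ on $B_\delta^c:=(\vartheta_1,\vartheta_2)\setminus B_\delta(y)$.

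For $\cI_{\ep,1}$, since $\delta<3\beta/k$ we have $y\in I_3(y_0)$ for every $y_0\in B_\delta(y)$, and $y_0$ lies in $I_W$ (possibly also $I_M$ near $\widetilde\varpi_{n,n}$ if we are near those points, but then $\mu(y)$ is uniformly small); Proposition~\ref{prop:Xkpynotvarphi} (or \ref{prop:LXkpynotvarphi} where relevant) yields
\begin{align*}
|\partial_{y_0}\varphi_{k,\ep}^\pm(y,y_0)|\lesssim k^{-\frac12}\bigl((k|y-y_0|)^{-\frac12+\mu_0}+(k|y-y_0|)^{-\frac12-\mu_0}\bigr)\cS_{k,1}.
\end{align*}
Using the key approximation $e^{|\mu(y)-\mu(y_0)|\,|\log(k|y-y_0|)|}\lesssim 1$ on $B_\delta(y)$ (which holds because $\mu$ is Lipschitz and $k|y-y_0|\lesssim 1$), we replace $\mu_0$ by $\mu(y)$ and integrate to obtain $|\cI_{\ep,1}|\lesssim k^{-\frac32}(k\delta)^{\frac12-\mu(y)}\cS_{k,1}$, cleanly without logarithmic losses since $\mu(y)$ is bounded away from $\tfrac12$.

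For $\cI_{\ep,2}$, I would integrate by parts once more in $y_0$, producing a boundary contribution at $\partial B_\delta(y)$ and two bulk integrals carrying $\partial_{y_0}^2\varphi_{k,\ep}^\pm$ and $\partial_{y_0}\varphi_{k,\ep}^\pm$, respectively. As in the mild proof, I split each bulk integral into a piece over $B_\delta^c\cap I_3(y)$, where the local estimates of Propositions~\ref{prop:Xkpynotvarphi}--\ref{prop:Xkpynotpynotvarphi} (and \ref{prop:LXkpynotvarphi}--\ref{prop:LXkpynotpynotvarphi} on the mild portion) apply, and a piece over $I_3^c(y)$, handled by the uniform pointwise bounds from the same propositions; the explicit $\frac{w_k^0(y)}{v(y)-v(y_0)\pm i\ep}+q_k^0(y)$ contributions in~\eqref{eq:decomXkpynotpynotvarphi} cancel in the $\ep\to 0$ limit by dominated convergence since $|y-y_0|\geq\delta/2>0$. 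Combining these estimates gives $|\lim_{\ep\to 0}\cI_{\ep,2}|\lesssim (kt)^{-1}k^{-\frac12}(k\delta)^{-\frac12-\mu(y)}\cS_{k,2}$.

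Assembling the two contributions yields
\begin{align*}
|\psi_k(t,y)|\lesssim k^{-\frac32}(kt)^{-1}(k\delta)^{\frac12-\mu(y)}\cS_{k,2}+k^{-\frac12}(kt)^{-2}(k\delta)^{-\frac12-\mu(y)}\cS_{k,2},
\end{align*}
which is balanced by the choice $k\delta=\beta/(kt)$ (which is admissible as $\delta<\delta_0/2$ once $t$ is large, and for $t\asymp 1$ the bound is trivially true), giving the claimed $k^{-\frac52}t^{-\frac32+\mu(y)}\cS_{k,2}$. The main obstacle is the careful handling of the points where $B_\delta(y)$ straddles the $I_W/I_M$ interface; there the pointwise $LX_k$ bounds of Proposition~\ref{prop:LXkpynotvarphi} formally introduce $\log$ losses, but since $\mu(y)$ for $y\in(\vartheta_1,\varpi_{1,1})\cup(\varpi_{2,2},\vartheta_2)$ is bounded strictly away from $0$, the contribution from the mildly stratified sub-range is controlled by the $\mu(y)$-dependent power alone, so no logarithm persists in the final estimate.
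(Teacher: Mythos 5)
Your decomposition into $\cI_{\ep,1}$ on $B_\delta(y)$ and $\cI_{\ep,2}$ on $B_\delta^c$, the use of Propositions~\ref{prop:Xkpynotvarphi}--\ref{prop:Xkpynotpynotvarphi} (with $LX_k$ analogues near the mild interface), and the replacement $(k|y-y_0|)^{-\mu_0}\lesssim(k|y-y_0|)^{-\mu(y)}$ on the critical layer are exactly what the paper does; the scheme and the intermediate bounds are the right ones. However, your final optimization is wrong: you write ``$k\delta=\beta/(kt)$'', i.e.\ $\delta=\beta/(k^2t)$, whereas balancing $k^{-\frac32}(kt)^{-1}(k\delta)^{\frac12-\mu(y)}$ against $k^{-\frac12}(kt)^{-2}(k\delta)^{-\frac12-\mu(y)}$ gives $k\delta\sim 1/t$ (equivalently $\delta\sim\beta/(kt)$, which is what the paper's choice $\delta=\delta_0/(4t)$ amounts to for large $k$). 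With your $k\delta=\beta/(kt)$ the two terms come out to $k^{-3+\mu(y)}t^{-\frac32+\mu(y)}$ and $k^{-2+\mu(y)}t^{-\frac32+\mu(y)}$, which is dominated by $k^{-2+\mu(y)}t^{-\frac32+\mu(y)}$; since $\mu(y)<\tfrac12$ this is strictly larger than $k^{-\frac52}t^{-\frac32+\mu(y)}$, so it does not prove the claimed estimate. Correct the choice to $\delta\sim\beta/(kt)$ (so $k\delta\sim\beta/t$) and both terms collapse to $k^{-\frac52}t^{-\frac32+\mu(y)}\cS_{k,2}$ as required.

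One further small point: the paper's intermediate estimate for $\cI_{\ep,2,2,1}$ keeps an explicit $\log(\delta_0/\delta)$ coming from integrating $(k|y-y_0|)^{-\frac12-\mu_0}$ over $B_\delta^c\cap B_{\delta_0}$ when $\mu_0$ is close to $\tfrac12$, i.e.\ $\int_\delta^{\delta_0}(ks)^{-\frac12-\mu_0}\,\d s\lesssim k^{-1}\log(\delta_0/\delta)$. You drop this term. This is harmless because after plugging in $\delta\sim\delta_0/t$ one has $(kt)^{-2}\log(4t)\lesssim (kt)^{-2}t^{\frac12}\leq (kt)^{-2}t^{\frac12+\mu(y)}$, uniformly in $y$, so the log is absorbed; but since the justification you offered (``$\mu(y)$ is bounded strictly away from $0$'') is not the relevant mechanism here (the absorption works for all $\mu(y)\geq 0$ because $\log t\lesssim t^{1/2}$), it is worth stating the correct reason.
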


\begin{proof}
We show the estimates for $y\in (\varpi_{2,2}, \vartheta_2)$. Proceeding as before, let $\delta_0 = \min \left( \frac{3\beta}{k}, \frac{\varpi_{2,2} - \varpi_{2}}{2}\right) $, let $\delta\in ( 0, \frac{\delta_0}{2})$ and we consider 
\begin{align*}
\cI_{\ep,1} = \int_{B_\delta(y)}e^{-ikv(y_0) y} \left( \partial_{y_0}\varphi_{k,\ep}^-(y,y_0) - \partial_{y_0} \varphi_{k,\ep}^+(y,y_0) \right) \d y_0
\end{align*}
and
\begin{align*}
\cI_{\ep,2} = \int_{B_\delta^c(y)}e^{-ikv(y_0) y} \left( \partial_{y_0}\varphi_{k,\ep}^-(y,y_0) - \partial_{y_0} \varphi_{k,\ep}^+(y,y_0) \right) \d y_0
\end{align*}
as before, so that
\begin{align*}
\psi_k(t,y) = \frac{1}{ikt}\frac{1}{2\pi i} \lim_{\ep\rightarrow 0} \left( \cI_{\ep,1} + \cI_{\ep,2} \right).
\end{align*}

\bullpar{Estimates for $\cI_{\ep,1}$} As usual, since $\delta<\frac{3\beta}{k}$ we have $y\in I_3(y_0)$ and $y_0 \in I_W$ for all $y_0\in B_\delta$. Then, Proposition \ref{prop:Xkpynotvarphi} gives
\begin{align*}
|\cI_{\ep,1}| &\lesssim k^{-\frac12}\cS_{k,1}\int_{y-\delta}^{y+\delta} \left( (k|y-y_0|)^{-\frac12+\mu_0} + (1-2\mu_0)(k|y-y_0|)^{-\frac12-\mu_0}   \right) \d y_0 \\
&\lesssim k^{-\frac12}\cS_{k,1}\int_{y-\delta}^{y+\delta} \left( k^{-1} + (k|y-y_0|)^{-\frac12+\mu(y)} +(1-2\mu(y))(k|y-y_0|)^{\frac12-\mu(y)}  \right) \d y_0 \\
&\lesssim k^{-\frac32}(k\delta)^{\frac12-\mu(y)}\cS_{k,1},
\end{align*}
where we have used that $|\mu_0-\mu(y)|\lesssim |y-y_0|$ and that $(k|y-y_0|)^{-\mu_0} \lesssim (k|y-y_0|)^{-\mu(y)}$ because $k|y-y_0| \leq 1$.

\bullpar{Estimates for $\cI_{\ep,2}$} We further integrate by parts in $\cI_{\ep,2}$ and split into $\cI_{\ep,2,1}$, $\cI_{\ep,2,2}$ and $\cI_{\ep,2,3}$ as in \eqref{eq:cItwoone} - \eqref{eq:cItwothree}. 

\diampar{Estimates for $\cI_{\ep,2,1}$} For
\begin{align*}
\cI_{\ep,2,1} =\frac{e^{-ikv(y_0)t}}{v'(y_0)}\left( \partial_{y_0}\varphi_{k,\ep}^-(y,y_0) - \partial_{y_0}\varphi_{k,\ep}^+(y,y_0) \right) \Big|_{y_0\in \partial B_\delta^c} 
\end{align*}
we note that for $|y-y_0|=\delta$ we can use the local estimates of Proposition \ref{prop:Xkpynotvarphi}, while for $y_0\rightarrow \vartheta_2$ or $y_0\rightarrow\vartheta_1$ we can use either the local or global bounds of Proposition \ref{prop:Xkpynotvarphi} to deduce that
\begin{align*}
|\cI_{\ep,2,1}|\lesssim k^{-\frac12}(k\delta)^{-\frac12-\mu(y)}\cS_{k,1}
\end{align*}

\diampar{Estimates for $\cI_{\ep,2,2}$} Next,
\begin{align*}
\cI_{\ep,2,2} &= \int_{B_\delta^c(y)\cap B_{\delta_0}(y)} \frac{e^{-ikv(y_0)t}}{v'(y_0)}\left( \partial_{y_0}^2 \varphi_{k,\ep}^-(y,y_0) - \partial_{y_0}^2 \varphi_{k,\ep}^+(y,y_0) \right) \d y_0 \\
&\quad +\int_{B_{\delta_0}^c(y)} \frac{e^{-ikv(y_0)t}}{v'(y_0)}\left( \partial_{y_0}^2 \varphi_{k,\ep}^-(y,y_0) - \partial_{y_0}^2 \varphi_{k,\ep}^+(y,y_0) \right) \d y_0 \\
&= \cI_{\ep,2,2,1} + \cI_{\ep,2,2,2}
\end{align*}
where $B_{\delta_0}^c(y) = (\vartheta_1,\vartheta_2)\setminus B_{\delta_0}(y)$. Firstly, for $\cI_{\ep,2,2,1}$ we see from the local decomposition and estimates of Proposition \ref{prop:Xkpynotpynotvarphi} that
\begin{align*}
|\lim_{\ep\rightarrow 0}\cI_{\ep,2,2,1}|\lesssim   k^{-\frac32} \left( \log\left( \frac{\delta_0}{\delta} \right) +  (k\delta)^{-\frac12-\mu(y)} \right)\cS_{k,2}
\end{align*}
because 
\begin{align*}
\int_{B_\delta^c(y) \cap B_{\delta_0}(y)}(k|y-y_0|)^{-\frac12-\mu_0} \d y_0 \lesssim -k^{-1}\log\left( \frac{\delta_0}{\delta} \right)
\end{align*}
while for $\cI_{\ep,2,2,2}$ we use the global pointwise estimates of Proposition \ref{prop:Xkpynotpynotvarphi} and \ref{prop:LXkpynotpynotvarphi} to deduce that
\begin{align*}
|\lim_{\ep\rightarrow 0}\cI_{\ep,2,2,2}|\lesssim   k^{-\frac12} \cS_{k,2}.
\end{align*}
Consequently, $|\lim_{\ep\rightarrow 0}\cI_{\ep,2,2}|\lesssim   k^{-\frac12} \left( \log\left( \frac{\delta_0}{\delta} \right) +  (k\delta)^{-\frac12-\mu(y)} \right)\cS_{k,2}$.

\diampar{Estimates for $\cI_{\ep,2,3}$} Arguing as for $\cI_{\ep,2,2}$, Propositions \ref{prop:Xkpynotvarphi} and \ref{prop:LXkpynotvarphi} give
\begin{align*}
|\cI_{\ep,2,3}| \lesssim k^{-\frac12}(k\delta)^{\frac12-\mu(y)} \cS_{k,1}.
\end{align*}

\bullpar{End of proof}
Gathering the estimates we obtain
\begin{align*}
|\psi_k(t,y)| &\lesssim (kt)^{-1}  k^{-\frac32}(k\delta)^{\frac12-\mu(y)} \cS_{k,1} + k^{-\frac12}(kt)^{-2}\left( -\log \left(\frac{\delta_0}{\delta} \right) +  (k\delta)^{-\frac12-\mu(y)}  \right)\cS_{k,2}
\end{align*}
Setting $\delta=\frac{\delta_0}{4t}$, we obtain the desired result.
\end{proof}

In the non-stratified regime we obtain optimal Euler-type decay rates.

\begin{proposition}\label{prop:IDpsinonstrat}
Let $k\geq 1$ and $y\in [0,\vartheta_1]\cup [\vartheta_2,2]$. Then,
\begin{align*}
|\psi_k(t,y) | \lesssim k^{-\frac52}t^{-2}\cS_{k,2}
\end{align*} 
for all $t\geq 1$.
\end{proposition}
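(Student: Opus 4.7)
The plan is to reduce the estimate to the two boundary points $y\in\{\vartheta_1,\vartheta_2\}$, and then to carry out a stationary-phase argument at those points that exploits the simultaneous vanishing $v''(\vartheta_n)=\P(\vartheta_n)=w_k^0(\vartheta_n)=q_k^0(\vartheta_n)=0$ to gain one extra power of $t^{-1}$ over the weak-regime rate.

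For the reduction, fix $y\in[0,\vartheta_1]$; the other case is analogous. Since $v''(y)=\P(y)=0$ on $[0,\vartheta_1]$ and $\omega_k^0$, $\varrho_k^0$ are supported in $(\vartheta_1,\vartheta_2)$, the Taylor--Goldstein equation~\eqref{eq:TGeqvarphiSobolevReg} collapses to $\Delta_k\varphi_{k,\ep}^\pm(\cdot,y_0)=0$ on $[0,\vartheta_1]$. Together with $\varphi_{k,\ep}^\pm(0,y_0)=0$, this forces
\[
\varphi_{k,\ep}^\pm(y,y_0)=\varphi_{k,\ep}^\pm(\vartheta_1,y_0)\,\frac{\sinh(ky)}{\sinh(k\vartheta_1)},\qquad y\in[0,\vartheta_1].
\]
Using $\varrho_k^0(y)=0$ on $[0,\vartheta_1]$ to replace $\psi_{k,\ep}^\pm$ by $\varphi_{k,\ep}^\pm$ in \eqref{eq:psioscint}, and noting $|\sinh(ky)/\sinh(k\vartheta_1)|\leq 1$ there, yields $|\psi_k(t,y)|\leq|\psi_k(t,\vartheta_1)|$. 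An analogous bound handles $y\in[\vartheta_2,2]$, so it remains to prove $|\psi_k(t,\vartheta_n)|\lesssim k^{-5/2}t^{-2}\cS_{k,2}$ for $n=1,2$.

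For the estimate at $y=\vartheta_1$, I would start from \eqref{eq:psioscint}, use Lemma~\ref{lemma:vanishingpsiIE} and dominated convergence to restrict integration to $y_0\in(\vartheta_1,\vartheta_2)$, split $(\vartheta_1,\vartheta_2)=B_\delta(\vartheta_1)\cup B_\delta^c(\vartheta_1)$, and integrate by parts twice in $y_0$ on the non-local piece, following the bookkeeping of Proposition~\ref{prop:IDpsiweak}. Boundary terms at $y_0=\vartheta_2$ vanish in the limit $\ep\to 0$ by Lemma~\ref{lemma:vanishingpsiIE}, while those at $y_0=\vartheta_1+\delta$ are controlled directly via Proposition~\ref{prop:Xkpynotvarphi}. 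The key structural simplification at $y=\vartheta_1$ is that, in the decomposition~\eqref{eq:decomXkpynotpynotvarphi}, the $\eta^{-3/2\pm\gamma_0}$ terms drop out (their coefficients $\varphi_{2,2,\sigma,k,\ep}^\pm$ carry $\P(\vartheta_1)=0$), the $v''(\vartheta_1)B_\ep^\pm$ contribution to $\varphi_{2,1,\sigma,k,\ep}^\pm$ vanishes, and the source $w_k^0(\vartheta_1)/(v(\vartheta_1)-v(y_0)\pm i\ep)+q_k^0(\vartheta_1)$ is zero. Only the residual $(1\pm 2\gamma_0)(\varphi_{1,k,\ep}^\pm)_{\sr,\s}\,\eta^{-1/2\pm\gamma_0}$ terms survive. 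Since $\P\in C^2$ with $\P(\vartheta_1)=\P'(\vartheta_1)=0$, the factor $(1-2\mu_0)\lesssim \P(y_0)/v'(y_0)^2\lesssim (y_0-\vartheta_1)^2$ provides exactly the cancellation that keeps $\int_{\vartheta_1}^{\vartheta_1+\delta}(1-2\mu_0)\bigl(k(y_0-\vartheta_1)\bigr)^{-1/2-\mu_0}\,dy_0$ uniformly bounded as $\mu_0\to 1/2$, accounting for the extra $t^{-1}$ gain. Collecting contributions yields a bound of shape $|\psi_k(t,\vartheta_1)|\lesssim k^{-3/2}(kt)^{-1}(k\delta)\,\cS_{k,1}+k^{-1/2}(kt)^{-2}(k\delta)^{-1}\,\cS_{k,2}$, and optimizing $\delta\sim (kt)^{-1}$ delivers the claimed $k^{-5/2}t^{-2}\cS_{k,2}$ rate.

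The main technical obstacle I expect is the uniform-in-$\ep$ handling of the $(1-2\mu_0)$ cancellation inside the boundary contributions produced by the second integration by parts at $y_0=\vartheta_n$, where one must simultaneously take $\ep\to 0$ and $\mu_0\to 1/2$: showing that the $(1-2\gamma_0)$ prefactor of $(\varphi_{1,k,\ep}^\pm)_\s$ inherited from the decomposition precisely offsets the $\eta^{-1/2-\mu_0}$ singular weight is the one place where the argument genuinely departs from its weakly stratified analogue and requires carefully tracking the $Y$-space dependence of $\phi_{\s,k,\ep}^\pm$ from Proposition~\ref{prop:existencesolphisigma}.
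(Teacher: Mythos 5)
Your reduction via the Poisson kernel $\sinh(ky)/\sinh(k\vartheta_1)$ is correct and not in the paper (the paper works directly with arbitrary $y\in I_E$, since all the structural cancellations — $\P(y)=v''(y)=w_k^0(y)=q_k^0(y)=0$ — hold on the whole of $I_E$, not only at the boundary points $\vartheta_n$). It is a harmless simplification.

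The genuine gap is in your final bookkeeping, and it reflects a misarranged integration-by-parts scheme. You split $(\vartheta_1,\vartheta_2)$ into $B_\delta$ and $B_\delta^c$ and then integrate by parts \emph{once} on $B_\delta$ and \emph{twice} on $B_\delta^c$; this can only give $t^{-2}$ if $\delta$ shrinks with $t$, and then the non-local piece degrades. With your claimed shape
\[
|\psi_k(t,\vartheta_1)|\lesssim k^{-\frac32}(kt)^{-1}(k\delta)\,\cS_{k,1}+k^{-\frac12}(kt)^{-2}(k\delta)^{-1}\,\cS_{k,2},
\]
take $k\delta=(kt)^{-1}$: the second term becomes $k^{-\frac12}(kt)^{-2}\cdot(kt)\,\cS_{k,2}=k^{-\frac32}t^{-1}\cS_{k,2}$, which is only $t^{-1}$, while the genuine optimum $(k\delta)\sim\bigl(k(kt)^{-1}\bigr)^{1/2}$ yields only $t^{-3/2}$. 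In other words, your ansatz reproduces the $\mu(y)\to\frac12$ endpoint of the weak-regime rate $t^{-3/2+\mu(y)}$, and no choice of $\delta$ can improve on that. The paper avoids this by integrating by parts \emph{twice over the full interval} $(\vartheta_1,\vartheta_2)$ \emph{before} restricting attention to $B_{\delta_0}(y)$ and $B_{\delta_0}^c(y)$ (with $\delta_0=3\beta/k$ fixed, independent of $t$); the split is only used to decide whether to invoke the local decomposition or the global pointwise bounds of Propositions~\ref{prop:Xkpynotvarphi} and \ref{prop:Xkpynotpynotvarphi}, not to gain time decay. The point you correctly identified but did not then use quantitatively is that, after the second $y_0$-derivative, the $\eta^{-3/2\pm\gamma_0}$ terms drop (coefficient $\propto\P(y)=0$), the $v''(y)B_\ep^\pm$ part of $\varphi_{2,1,\sigma}$ drops, and the source term drops; what remains is $(1\pm2\gamma_0)(\varphi_{1,k,\ep}^\pm)_\sigma\,\eta^{-1/2\pm\gamma_0}$, and the weight $(1-2\mu_0)\lesssim\P(y_0)\lesssim(y_0-\vartheta_n)^2$ renders
\[
\int_{B_{\delta_0}(y)\cap(\vartheta_1,\vartheta_2)}(1-2\mu_0)\bigl(2k|y-y_0|\bigr)^{-\frac12-\mu_0}\,\d y_0
\]
\emph{bounded uniformly in $\delta_0$}, not merely $O((k\delta)^{-1})$. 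Consequently $|\cI_{\ep,2}|\lesssim k^{-3/2}\cS_{k,2}$ with no blow-up in $\delta_0$, and combined with $|\cI_{\ep,1}|\lesssim k^{-1/2}\cS_{k,1}$ (boundary terms at $\vartheta_1,\vartheta_2$) and $|\cI_{\ep,3}|\lesssim k^{-1/2}\cS_{k,2}$ (global pointwise estimates), the prefactor $(ikt)^{-2}$ from the double IBP immediately gives $k^{-5/2}t^{-2}\cS_{k,2}$. If you keep your reduction to $y=\vartheta_n$, you should reorganize the argument in this order: IBP twice first, then verify the integrand is uniformly integrable on $B_{\delta_0}(\vartheta_n)$ using the vanishing data you listed; do not attempt to borrow from $\delta$.
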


\begin{proof}
We assume $y\in [\vartheta_2, 1]$.  Let $\delta_0=\frac{3\beta}{k}$. Integrating by parts once more, 
\begin{align*}
\psi_k(t,y) &=  -\frac{1}{(ikt)^2}\frac{1}{2\pi i}\lim_{\ep\rightarrow 0} \frac{e^{-ikv(y_0)t}}{v'(y_0)} \left( \partial_{y_0}\varphi_{k,\ep}^-(y,y_0) - \partial_{y_0}\varphi_{k,\ep}^+(y,y_0) \right) \Big|_{y_0=\vartheta_1}^{y_0=\vartheta_2} \\
&\quad + \frac{1}{(ikt)^2}\frac{1}{2\pi i}\lim_{\ep\rightarrow 0} \int_{(\vartheta_1,\vartheta_2)\cap B_{\delta_0}(y)} {e^{-ikv(y_0)t}} \partial_{y_0} \left( \frac{1}{v'(y_0)} \left( \partial_{y_0}\varphi_{k,\ep}^-(y,y_0) - \partial_{y_0}\varphi_{k,\ep}^+(y,y_0) \right) \right) \d y_0 \\
&\quad + \frac{1}{(ikt)^2}\frac{1}{2\pi i}\lim_{\ep\rightarrow 0} \int_{(\vartheta_1,\vartheta_2)\cap B_{\delta_0}^c(y)} {e^{-ikv(y_0)t}} \partial_{y_0} \left( \frac{1}{v'(y_0)} \left( \partial_{y_0}\varphi_{k,\ep}^-(y,y_0) - \partial_{y_0}\varphi_{k,\ep}^+(y,y_0) \right) \right) \d y_0 \\
&= \frac{1}{(ikt)^2}\frac{1}{2\pi i}\lim_{\ep\rightarrow 0} \left( -\cI_{\ep,1} + \cI_{\ep,2} + \cI_{\ep,3} \right).
\end{align*}
We argue for each term. Using the global pointwise estimates of Proposition \ref{prop:Xkpynotvarphi} we deduce that
\begin{align*}
\left| \cI_{\ep,1} \right| \lesssim k^{-\frac12}\cS_{k,1}.
\end{align*} 
With the local decomposition and estimates of Propositions \ref{prop:Xkpynotvarphi} and \ref{prop:Xkpynotpynotvarphi}, together with the support assumptions on $\P(y)$ and $v''(y)$ and also on $\omega_k^0$ and $\varrho_k^0$, we see that
\begin{align*}
\left| \cI_{\ep,2} \right| \lesssim k^{-\frac32}\cS_{k,2}.
\end{align*}
Lastly, we use the global pointwise estimates of Propositions \ref{prop:Xkpynotvarphi} and \ref{prop:Xkpynotpynotvarphi} to obtain
\begin{align*}
\left| \cI_{\ep,2} \right| \lesssim k^{-\frac12}\cS_{k,2}.
\end{align*}
Gathering the estimates, the proof is concluded.

Combining it with the estimate for $\cI_1$ and choosing $\delta= \frac{\delta_0}{4t}$ we obtain the desired result.
\end{proof}

The time decay rates of Proposition \ref{prop:IDpsiweak} degenerate as $y\rightarrow\vartheta_1$ or $y\rightarrow\vartheta_2$, where the stratification of the background density vanishes and where $\psi_k(t,y)$ decays faster according to Proposition \ref{prop:IDpsinonstrat}. We next provide a more precise description for $\psi_k(t,y)$ for $y$ near $\vartheta_n$, which allows us to locally improve the inviscid damping decay rates. To that purpose, we observe that
\begin{align*}
\psi_k(t,y) &= \frac{1}{2\pi i}\lim_{\ep\rightarrow 0}\int_{\vartheta_1}^{\vartheta_2}e^{-ikv(y_0)t}\left( \varphi_{k,\ep}^-(y,y_0) - \varphi_{k,\ep}^+(y,y_0) \right) v'(y_0) \d y_0 \\
&=\frac{1}{ikt} \frac{1}{2\pi i}\lim_{\ep\rightarrow 0}\int_{\vartheta_1}^{\vartheta_2}e^{-ikv(y_0)t} \left( \partial_{y_0}\varphi_{k,\ep}^-(y,y_0) - \partial_{y_0}\varphi_{k,\ep}^+(y,y_0) \right) \d y_0 \\
&= \frac{1}{ikt}\frac{1}{2\pi i}\lim_{\ep\rightarrow 0}\int_{\vartheta_1}^{\vartheta_2}e^{-ikv(y_0)t} \left( \varphi_{1,k,\ep}^-(y,y_0) - \varphi_{1,k,\ep}^+(y,y_0) \right) \d y_0 \\
&\quad - \frac{1}{ikv'(y) t}\frac{1}{2\pi i} \lim_{\ep\rightarrow 0}\int_{\vartheta_1}^{\vartheta_2}e^{-ikv(y_0)t} \partial_y \left( \varphi_{k,\ep}^-(y,y_0) - \varphi_{k,\ep}^+(y,y_0) \right)v'(y_0) \d y_0 \\
&\quad -\frac{1}{ikv'(y) t}\frac{1}{2\pi i} \lim_{\ep\rightarrow 0}\int_{\vartheta_1}^{\vartheta_2}e^{-ikv(y_0)t} \partial_y \left( \varphi_{k,\ep}^-(y,y_0) - \varphi_{k,\ep}^+(y,y_0) \right)(v'(y)-v'(y_0)) \d y_0 
\end{align*}
Hence, for 
\begin{align*}
\psi_{k,\ep}(t,y) := \frac{1}{2\pi i }\int_{\vartheta_1}^{\vartheta_2}e^{-ikv(y_0)t}\left( \varphi_{k,\ep}^-(y,y_0) - \varphi_{k,\ep}^+(y,y_0) \right) v'(y_0) \d y_0 
\end{align*}
we have
\begin{align*}
ikv'(y)t \psi_{k,\ep}(t,y) + \partial_y \psi_{k,\ep}(t,y) &= v'(y)\int_{\vartheta_1}^{\vartheta_2}e^{-ikv(y_0)t} \left( \varphi_{1,k,\ep}^-(y,y_0) - \varphi_{1,k,\ep}^+(y,y_0) \right) \d y_0 \\
&\quad- \int_{\vartheta_1}^{\vartheta_2}e^{-ikv(y_0)t} \partial_y \left( \varphi_{k,\ep}^-(y,y_0) - \varphi_{k,\ep}^+(y,y_0) \right)(v'(y)-v'(y_0)) \d y_0.
\end{align*}
Integrating from $y$ to $\vartheta_2$, we reach
\begin{align*}
e^{ikv(y)t}\psi_{k,\ep}(t,y) &= e^{ikv(\vartheta_2)t}\psi_{k,\ep}(t,\vartheta_2) \\
&\quad + \frac{1}{2\pi i}\int_{y}^{\vartheta_2}\int_{\vartheta_1}^{\vartheta_2} e^{-ik(v(y_0)-v(z)) t}\partial_z \left( \varphi_{k,\ep}^-(z,y_0) - \varphi_{k,\ep}^+(z,y_0) \right) (v'(z) - v'(y_0) ) \d y_0\d z\\
&\quad - \frac{1}{2\pi i}\int_y^{\vartheta_2} v'(z) \int_{\vartheta_1}^{\vartheta_2}e^{-ik(v(y_0)-v(z))t} \left( \varphi_{1,k,\ep}^-(z,y_0) - \varphi_{1,k,\ep}^+(z,y_0) \right) \d y_0 \d z \\
&:=e^{ikv(\vartheta_2)t}\psi_{k,\ep}(t,\vartheta_2) + \psi_{1,k,\ep}(t,y) + \psi_{2,k,\ep}(t,y)
\end{align*}
for all $y\in (\varpi_{2,2}, \vartheta_2)$. We likewise reach similar expressions for $y\in (\vartheta_1,\varpi_{1,1})$, where now we integrate from $\vartheta_1$ to $y$. As a result, we now have

\begin{proposition}\label{prop:improvedIDpsiweak}
Let $k\geq 1$ and $t\geq 1$. Then, 
\begin{align*}
\left| \psi_k(t,y) \right| \lesssim k^{-\frac32}\min \left( t^{-\frac32+\mu(y)}, t^{-2} + (\vartheta_2-y)t^{-1} \right) \cS_{k,2},
\end{align*}
for all $y\in(\varpi_{2,2},\vartheta_2)$.
\end{proposition}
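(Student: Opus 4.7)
The bound by $t^{-\frac32+\mu(y)}$ is already contained in Proposition~\ref{prop:IDpsiweak}, so the task reduces to establishing the complementary estimate $|\psi_k(t,y)| \lesssim k^{-\frac32}(t^{-2}+(\vartheta_2-y)t^{-1})\cS_{k,2}$. The plan is to exploit the representation derived just above the proposition, which reads
\begin{equation*}
e^{ikv(y)t}\psi_{k,\ep}(t,y) \;=\; e^{ikv(\vartheta_2)t}\psi_{k,\ep}(t,\vartheta_2) \,+\, \psi_{1,k,\ep}(t,y) \,+\, \psi_{2,k,\ep}(t,y),
\end{equation*}
and to bound each piece separately before sending $\ep\to 0$ and taking moduli. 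For the boundary term I will simply invoke Proposition~\ref{prop:IDpsinonstrat} at $y=\vartheta_2$ (which lies in the non-stratified regime $[\vartheta_2,2]$), obtaining $|\psi_{k,\ep}(t,\vartheta_2)|\lesssim k^{-\frac52}t^{-2}\cS_{k,2}$ in the limit, which contributes the $t^{-2}$ term in the desired bound.

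The heart of the argument is to show that the inner oscillatory integrals appearing in $\psi_{1,k,\ep}(t,y)$ and $\psi_{2,k,\ep}(t,y)$ are each $O(k^{-\frac32}t^{-1}\cS_{k,2})$ uniformly in $z\in(y,\vartheta_2)$; once this holds, the outer $z$-integration gains the factor $(\vartheta_2-y)$ and produces the desired $(\vartheta_2-y)t^{-1}$ term. For $\psi_{2,k,\ep}$ the inner $y_0$-integral has phase $e^{-ikv(y_0)t}$ and amplitude $(\varphi_{1,k,\ep}^{-}-\varphi_{1,k,\ep}^{+})(z,y_0)$; since Propositions~\ref{prop:Xkvarphi1} and~\ref{prop:LXkvarphi1} give $\varphi_{1,k,\ep}^\pm$ with the same $X_k$/$LX_k$ structural bounds as $\varphi_{k,\ep}^\pm$ (up to replacing $\cS_{k,0}$ by $\cS_{k,1}$), one integration by parts in $y_0$ with the same local/non-local splitting used in Proposition~\ref{prop:IDpsiweak}---balancing a ball of radius $\delta\lesssim k^{-1}$ around $y_0=z$ against its complement and using $\partial_{y_0}\varphi_{1,k,\ep}^\pm = \varphi_{2,k,\ep}^\pm-\partial_y\varphi_{1,k,\ep}^\pm$ controlled by Propositions~\ref{prop:Xkvarphi2}/\ref{prop:LXkvarphi2}---yields the required $t^{-1}$ rate. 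For $\psi_{1,k,\ep}$ the amplitude is $\partial_z(\varphi_{k,\ep}^{-}-\varphi_{k,\ep}^{+})(z,y_0)(v'(z)-v'(y_0))$; here the factor $v'(z)-v'(y_0)$ vanishes linearly at $y_0=z$ and tames the critical-layer singularity of $\partial_z\varphi_{k,\ep}^\pm$, so that an analogous integration-by-parts argument gives $t^{-1}$ with constant $k^{-\frac32}\cS_{k,2}$ uniformly in $z$.

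The main obstacle I anticipate is the uniformity in $z\in(y,\vartheta_2)$ of these inner estimates, because as $z\to\vartheta_2$ the local Richardson number $\cJ(z)\to 0$ and the coefficients in the $X_k$ decomposition degenerate (the $Z_k$--versus--$X_k$ issue). To handle this I will repeat the local/non-local splitting around $y_0=z$ but optimise the cutoff radius $\delta$ independently of the weak-regime exponent $\mu(z)$: because $v'(z)-v'(y_0)$ provides one extra vanishing power inside the local region in the $\psi_{1,k,\ep}$ case, and because only $t^{-1}$ (rather than $t^{-\frac32+\mu(z)}$) decay is required here, the powers of $k\delta$ balance without picking up a $\mu(z)$-dependent exponent. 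Concretely, choosing $\delta\sim k^{-1}$ (so the local region is just $I_3(z)$) and integrating by parts once in the non-local region suffices: the pointwise bounds on $\partial_{y_0}\varphi_{1,k,\ep}^\pm$ and $\partial_{y_0}\partial_z\varphi_{k,\ep}^\pm$ away from the critical layer given in Propositions~\ref{prop:Xkvarphi2}, \ref{prop:Xkpypynotvarphi}, \ref{prop:LXkvarphi2} and~\ref{prop:LXkpypynotvarphi} are uniform in $z$ and give the advertised $k^{-\frac32}t^{-1}\cS_{k,2}$ bound on each inner integral. Combining the three pieces, taking $\ep\to 0$ via the dominated convergence theorem (justified by the uniform bounds) and finally comparing with Proposition~\ref{prop:IDpsiweak} via a $\min$ concludes the proof.
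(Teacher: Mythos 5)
Your high-level decomposition is the same as the paper's: isolate the boundary value $\psi_{k,\ep}(t,\vartheta_2)$ (covered by Proposition~\ref{prop:IDpsinonstrat}) and bound the two double integrals $\psi_{1,k,\ep}$ and $\psi_{2,k,\ep}$, extracting a factor $(\vartheta_2-y)$ from the outer $z$-integration. However, the core quantitative claim on which your plan rests is false, and this is a genuine gap.

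You assert that the inner $y_0$-oscillatory integrals are $O(k^{-\frac32}t^{-1}\cS_{k,2})$ \emph{uniformly} in $z\in(y,\vartheta_2)$, obtained by one integration by parts in $y_0$ with a cutoff radius $\delta\sim k^{-1}$. This cannot hold. After one integration by parts in $y_0$, the non-local contribution carries the factor $(kt)^{-1}$, but the integrand near the inner cutoff $\partial B_\delta(z)$ is of size $(k\delta)^{-\frac12-\mu(z)}$, and the local contribution on $B_\delta(z)$ carries no $t$-decay at all and is of size $(k\delta)^{\frac32-\mu(z)}$ (for $\psi_{1,k,\ep}$) or $(k\delta)^{\frac12-\mu(z)}$ (for $\psi_{2,k,\ep}$). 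With your choice $k\delta\sim 1$ the local piece is $O(k^{-\frac32})$ with \emph{no} decay in $t$. Optimizing $\delta$ instead (the paper takes $k\delta\sim t^{-1}$) gives the inner bound $O\bigl(k^{-\frac32}(t^{-1}+t^{-\frac12+\mu(z)})\bigr)$, which as $z\to\vartheta_2$ tends to $O(k^{-\frac32})$ because $\mu(z)\to\frac12$. So \emph{no choice of $\delta$} makes the inner $y_0$-integral decay like $t^{-1}$ uniformly in $z$; the decay is throttled by the local Richardson exponent $\mu(z)$, which degenerates precisely in the region $z$ is ranging over.

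What your plan is missing is the use of the \emph{second} oscillatory phase, $e^{ikv(z)t}$, in the outer $z$-integral. The paper secures the required factor $t^{-1}$ by first integrating by parts once in $z$ (the definition of $\cI_{\ep,1},\ldots,\cI_{\ep,4}$ in the proof of Lemma~\ref{lemma:firstintegralIDpsiweak}), which produces the prefactor $\tfrac{1}{ikt}$; after that it suffices that the inner $y_0$-integral be $O(k^{-\frac32}\cS_{k,2})$ uniformly in $z$, i.e.\ bounded but not decaying. The outer $z$-integration then contributes $(\vartheta_2-y)$ and the pre-extracted $\tfrac{1}{ikt}$ supplies the remaining $t^{-1}$. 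Without that prior integration by parts in $z$, the raw $z$-integral of your (correctly computed) $z$-uniform $O(k^{-\frac32})$ inner bound would only give $O(k^{-\frac32}(\vartheta_2-y)\cS_{k,2})$ with no time decay, which is not the statement. You should also check the boundary terms this $z$-integration by parts generates, and note that the $\delta$-optimization has to be done inside the inner $y_0$-integral as well, not fixed at $\delta\sim k^{-1}$.
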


The proposition is a consequence of Proposition \ref{prop:IDpsinonstrat} and Lemmas \ref{lemma:firstintegralIDpsiweak} and \ref{lemma:secondintegralIDpsiweak} below.

\begin{lemma}\label{lemma:firstintegralIDpsiweak}
Let $k\geq 1$ and $y\in (\varpi_{2,2}, \vartheta_2)$. Then,
\begin{align*}
\left| \psi_{1,k,\ep}(t,y) \right| \lesssim k^{-\frac52}\left( t^{-2} + (\vartheta_2-y)t^{-1} \right)\cS_{k,2} 
\end{align*}
uniformly for all $0<\ep<\ep_*$.
\end{lemma}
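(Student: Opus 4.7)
Write $\psi_{1,k,\ep}(t,y) = \frac{1}{2\pi i}\int_y^{\vartheta_2} e^{ikv(z)t}\, G_\ep(z)\,\d z$, where
\begin{align*}
G_\ep(z) := \int_{\vartheta_1}^{\vartheta_2} e^{-ikv(y_0)t}\, \partial_z\bigl(\varphi_{k,\ep}^-(z,y_0) - \varphi_{k,\ep}^+(z,y_0)\bigr)\bigl(v'(z)-v'(y_0)\bigr)\,\d y_0 .
\end{align*}
The strategy is to prove a uniform pointwise bound $\lim_{\ep\to 0^+}|G_\ep(z)| \lesssim k^{-\frac32}\, t^{-2}\,\cS_{k,2}$ for every $z\in(y,\vartheta_2)$ and then integrate over the outer variable. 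Since $|\vartheta_2-y|\leq 1$ and $t\geq 1$, the resulting factor $(\vartheta_2-y)\,t^{-2}$ is bounded both by $t^{-2}$ and by $(\vartheta_2-y)\,t^{-1}$, and hence by the right-hand side $t^{-2}+(\vartheta_2-y)\,t^{-1}$, yielding the lemma.

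The pointwise control of $G_\ep(z)$ will be obtained by integrating by parts twice in $y_0$ against the oscillatory factor $e^{-ikv(y_0)t}$. The boundary terms at $y_0=\vartheta_1,\vartheta_2$ vanish in the limit $\ep\to 0^+$ by Lemma~\ref{lemma:vanishingpsiIE}, which gives $\varphi_{k,\ep}^-(\cdot,y_0)-\varphi_{k,\ep}^+(\cdot,y_0)\to 0$ on the non-stratified region $I_E$ (and therefore at $y_0=\vartheta_{1,2}$); a parallel statement holds after one $y_0$-derivative. The interior term then comes with the prefactor $(ikt)^{-2}$ and a second $y_0$-derivative of $(v'(z)-v'(y_0))\,\partial_z(\varphi_{k,\ep}^--\varphi_{k,\ep}^+)/v'(y_0)$. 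The crucial structural point is that the factor $(v'(z)-v'(y_0))$ vanishes linearly at $y_0=z$, precisely where $\partial_z\varphi_{k,\ep}^\pm$ has its critical-layer singularity.

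With this cancellation in hand, I would split the $y_0$-integral into a local piece $|y_0-z|<\delta$ and a non-local piece, and then bring in the $X_k$/$LX_k$ decompositions and pointwise estimates from Propositions~\ref{prop:Xkvarphi}, \ref{prop:Xkpynotvarphi}, \ref{prop:Xkpynotpynotvarphi}, \ref{prop:Xkpypynotvarphi} in the strong and weak regions, and Propositions~\ref{prop:LXkvarphi}, \ref{prop:LXkpynotvarphi}, \ref{prop:LXkpynotpynotvarphi}, \ref{prop:LXkpypynotvarphi} in the mild region. Optimizing the splitting scale, as in the proofs of Propositions~\ref{prop:IDpsiweak} and \ref{prop:IDpsimild}, the natural choice will again be $\delta\sim (kt)^{-1}$, delivering the pointwise $t^{-2}$ decay. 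The factor $k^{-3/2}$ is the expected scaling of the $X_k$ and $LX_k$ norms of $\varphi_{k,\ep}^\pm,\, \varphi_{1,k,\ep}^\pm$ after two $y_0$-derivatives.

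The main obstacle will be the contribution where both $y_0$-derivatives fall on $\partial_z(\varphi_{k,\ep}^-\!-\varphi_{k,\ep}^+)$: this produces a singularity of order $|y_0-z|^{-\frac32\pm\gamma_0}$, only partially tamed by the linear weight $(v'(z)-v'(y_0))$. The surviving $|y_0-z|^{-\frac12\pm\gamma_0}$ is integrable in $y_0$, but constants must be tracked uniformly as $y_0$ crosses $\varpi_{2,1},\varpi_{2,2}$ into the mild regime (where a logarithmic correction appears, with $\Q_{\gamma_0}$ bounded and $\log$-factors absorbed as in Proposition~\ref{prop:IDpsimild}) and as $y_0\to\vartheta_{1,2}$ (where $\mu_0\to\tfrac12$). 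These uniform controls are precisely the content of Propositions~\ref{prop:Xkpypynotvarphi}--\ref{prop:LXkpypynotvarphi}, so the local integrals will reproduce the same powers already handled in the proof of Proposition~\ref{prop:IDpsiweak} and close the $t^{-2}$ estimate.
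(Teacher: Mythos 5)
Your plan has two genuine problems, and a structural one behind them. First, the pointwise target $\lim_{\ep\to 0}|G_\ep(z)|\lesssim k^{-3/2}\,t^{-2}\,\cS_{k,2}$ is wrong in both exponents. Tracking the $X_k/LX_k$ structure, $\partial_z\varphi_{k,\ep}^\pm\sim k^{-1/2}\eta^{-1/2\pm\gamma_0}\cS_{k,0}$ near the critical layer and the weight $v'(z)-v'(y_0)\sim k^{-1}\eta$, so the integrand is already $O(k^{-3/2})$ pointwise; after the local/non-local split and optimizing $k\delta\sim t^{-1}$, the best achievable bound is $|G_\ep(z)|\lesssim k^{-5/2}\,t^{-3/2+\mu(z)}\,\cS_{k,2}$ (cf.\ the analysis of $\widetilde\cI_{\ep,3}$ and $\widetilde\cI_{\ep,4}$ in the paper's proof), not $k^{-3/2}$. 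More seriously, the $t$-power: the decay $t^{-3/2+\mu(z)}$ degenerates to $t^{-1}$ as $z\to\vartheta_2$ (since $\mu(z)\to\tfrac12$ there), so a uniform pointwise $t^{-2}$ over $z\in(y,\vartheta_2)$ is out of reach; the $t^{-2}$ contribution in the lemma's statement is not a pointwise feature of $G_\ep$ but arises from the boundary terms of an integration by parts in the \emph{outer} $z$-variable, which you never perform. Finally, the two $y_0$-integrations by parts you describe would land a full $\partial_{y_0}^2$ on $\partial_z(\varphi_{k,\ep}^--\varphi_{k,\ep}^+)$, producing $\partial_z\partial_{y_0}^2\varphi_{k,\ep}^\pm$. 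This third-order mixed derivative is not controlled by Propositions~\ref{prop:Xkvarphi}--\ref{prop:LXkpypynotvarphi}; the paper's construction of the spectral density in Section~\ref{sec:SobolevReg} deliberately stops at second order, and the actual proof of this lemma therefore integrates by parts in $z$ first, uses the Taylor--Goldstein equation to rewrite $\partial_z^2\varphi_{k,\ep}^\pm$ in terms of $\varphi_{k,\ep}^\pm$, and then performs at most one $y_0$-integration by parts.

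There is, however, a workable variant of your idea. Giving up the $t^{-2}$ ambition and performing only \emph{one} $y_0$-integration by parts (which only needs $\partial_{y,y_0}^2\varphi_{k,\ep}^\pm$, available from Propositions~\ref{prop:Xkpypynotvarphi} and \ref{prop:LXkpypynotvarphi}), you can reach $|G_\ep(z)|\lesssim k^{-5/2}\,t^{-3/2+\mu(z)}\,\cS_{k,2}\leq k^{-5/2}\,t^{-1}\,\cS_{k,2}$ uniformly in $z\in(y,\vartheta_2)$. Integrating over $z$ then gives $|\psi_{1,k,\ep}(t,y)|\lesssim k^{-5/2}(\vartheta_2-y)\,t^{-1}\,\cS_{k,2}$, which is bounded by the lemma's right-hand side $k^{-5/2}(t^{-2}+(\vartheta_2-y)t^{-1})\cS_{k,2}$ in both regimes $(\vartheta_2-y)\lessgtr t^{-1}$. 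So your ``pointwise in $z$'' strategy does close the lemma once you correct the exponents and drop the second integration by parts; it is conceptually simpler than the paper's argument but yields a slightly different (in fact, in the regime $\vartheta_2-y\leq t^{-1}$, slightly sharper) bound, at the cost of losing the factorized $t^{-2}$ information the paper extracts from the boundary terms of the $z$-integration by parts.
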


\begin{proof}
We integrate by parts once in $z\in (y,2)$ to write
\begin{align*}
\psi_{1,k,\ep}(t,y) &= \frac{1}{ikt}\frac{1}{2\pi i}\int_{\vartheta_1}^{\vartheta_2} \frac{e^{-ik(v(y_0)-v(z)) t}}{v'(z)} \partial_y \left( \varphi_{k,\ep}^-(z,y_0) - \varphi_{k,\ep}^+(z,y_0) \right) (v'(z) - v'(y_0) ) \d y_0 \Big|_{z=y}^{z=\vartheta_2} \\
&\quad -\frac{1}{ikt}\frac{1}{2\pi i}\int_y^1 \int_{\vartheta_1}^{\vartheta_2} \frac{e^{-ik(v(y_0)-v(z))t}}{v'(z)} \left( \partial_z^2\varphi_{k,\ep}^-(z,y_0) - \partial_z^2\varphi_{k,\ep}^+(z,y_0) \right) (v'(z) - v'(y_0))\d y_0 \d z \\
&\quad - \frac{1}{ikt}\frac{1}{2\pi i}\int_y^1 \int_{\vartheta_1}^{\vartheta_2} e^{-ik(v(y_0)-v(z))t} \partial_z \left( \frac{v'(z) - v'(y_0)}{v'(z)} \right) \partial_z\left( \varphi_{k,\ep}^-(z,y_0) - \varphi_{k,\ep}^+(z,y_0) \right) \d y_0 \d z \\
&= \frac{1}{ikt}\frac{1}{2\pi i}\left(  \cI_{\ep,1} - \cI_{\ep,2} - \cI_{\ep,3} + \cI_{\ep,4} \right).
\end{align*}

\bullpar{Estimates for $\cI_{\ep,1}$ and $\cI_{\ep,2}$} Let $z=\vartheta_2$ for $j=1$ and $z=y$ for $j=2$, we have
\begin{align*}
\cI_{\ep,j} &= \int_{\vartheta_1}^{\vartheta_2} \frac{e^{-ik(v(y_0)-v(z)) t}}{v'(z)} \partial_y \left( \varphi_{k,\ep}^-(z,y_0) - \varphi_{k,\ep}^+(z,y_0) \right) (v'(z) - v'(y_0) ) \d y_0 \\
&= - \frac{1}{ikt} \frac{e^{-ik(v(y_0) - v(z))t}}{v'(z)v'(y_0)} \partial_y \left( \varphi_{k,\ep}^-(z,y_0) - \varphi_{k,\ep}^+(z,y_0) \right) (v'(z) - v'(y_0) ) \Big|_{y_0=\vartheta_1}^{y_0=\vartheta_2} \\
&\quad + \frac{1}{ikt}\int_{\vartheta_1}^{\vartheta_2}\frac{e^{-ik(v(y_0) - v(z))t}}{v'(z)} \partial_{y_0} \left( \frac{v'(z) - v'(y_0)}{v'(y_0)}\partial_y \left( \varphi_{k,\ep}^-(z,y_0) - \varphi_{k,\ep}^+(z,y_0) \right) \right) \d y_0 \\
&=\frac{1}{ikt}\left( -\cI_{\ep,j,1} + \cI_{\ep,j,2} + \cI_{\ep,j,3} \right).
\end{align*}
From Proposition \ref{prop:Xkvarphi} we see $\left| \partial_y \left( \varphi_{k,\ep}^-(z,y_0) - \varphi_{k,\ep}^+(z,y_0) \right) (v'(z) - v'(y_0) ) \right| \lesssim k^{-\frac32}\cS_{k,1}$, which yields
\begin{align*}
| \cI_{\ep,j,1} | + | \cI_{\ep,j,2} | \lesssim k^{-\frac32}\cS_{k,1}.
\end{align*}
For $\cI_{\ep,j,3}$, we next split
\begin{align*}
\cI_{\ep,j,3} &=\int_{I_3(y)}\frac{e^{-ik(v(y_0) - v(z))t}}{v'(z)} \partial_{y_0} \left( \frac{v'(z) - v'(y_0)}{v'(y_0)}\partial_y \left( \varphi_{k,\ep}^-(z,y_0) - \varphi_{k,\ep}^+(z,y_0) \right) \right) \d y_0 \\
&\quad + \int_{I_3^c(y)}\frac{e^{-ik(v(y_0) - v(z))t}}{v'(z)} \partial_{y_0} \left( \frac{v'(z) - v'(y_0)}{v'(y_0)}\partial_y \left( \varphi_{k,\ep}^-(z,y_0) - \varphi_{k,\ep}^+(z,y_0) \right) \right) \d y_0 \\
&= \cI_{\ep,j,3,1} + \cI_{\ep,j,3,2}.
\end{align*}
where now $I_3^c(y) = (\vartheta_1,\vartheta_2)\setminus I_3(y)$. Using the local decomposition and bounds of Propositions \ref{prop:Xkvarphi} and \ref{prop:Xkpypynotvarphi}, noting that $(k|z-y_0|)^{-\frac12-\mu_0)}\lesssim (k|z-y_0|)^{-\frac12-\mu(z)}$ because  $(k|z-y_0|)\lesssim 1$ for all  $y_0\in I_3(z)$,  we have
\begin{align*}
|\cI_{\ep,j,3,1}|&\lesssim k^{-\frac12}\cS_{k,2} \int_{I_3(z)} \left( 1 + \left( \P(z) + (1-2\mu(z)) \right)(k|z-y_0|)^{-\frac12-\mu(z)} \right) \d y_0 \\
&\lesssim k^{-\frac32}\cS_{k,2}
\end{align*}
since $\frac{\P(z)}{\frac12-\mu(z)} = \frac{\frac12+\mu(z)}{v'(z)}$ is uniformly bounded. Likewise the global pointwise bounds of Propositions \ref{prop:Xkvarphi} and \ref{prop:Xkpypynotvarphi} yield
\begin{align*}
|\cI_{\ep,j,3,2}| \lesssim k^{-\frac12}\cS_{k,2}
\end{align*}
and we conclude that $|\cI_{\ep,j}| \lesssim k^{-\frac12}(kt)^{-1}\cS_{k,2}$, for $j=1,2$.

\bullpar{Estimates for $\cI_{\ep,3}$} 
We first write
\begin{align*}
\cI_{\ep,3} = \int_y^{\vartheta_2} \frac{e^{ikv(z) t}}{v'(z)}  \widetilde\cI_{\ep,3}(z) \d z,
\end{align*}
where
\begin{align*}
\widetilde\cI_{\ep,3}(z) &:= \int_{\vartheta_1}^{\vartheta_2} e^{-ikv(y_0)t}\left( \partial_z^2\varphi_{k,\ep}^-(z,y_0) - \partial_z^2\varphi_{k,\ep}^+(z,y_0) \right) (v'(z) - v'(y_0))\d y_0 \\
&= \int_{\vartheta_1}^{\vartheta_2} e^{-ikv(y_0)t} V_{k,\ep}^-(z,y_0)\varphi_{k,\ep}^-(z,y_0)(v'(z) - v'(y_0)) \d y_0 \\
& \quad - \int_{\vartheta_1}^{\vartheta_2} e^{-ikv(y_0)t} V_{k,\ep}^+(z,y_0) \varphi_{k,\ep}^+(z,y_0)(v'(z) - v'(y_0)) \d y_0 \\
& \quad+ w_k^0(z)\int_{\vartheta_1}^{\vartheta_2}e^{-ikv(y_0)t}\frac{2i\ep(v'(z) - v'(y_0))}{(v(z) - v(y_0))^2 + \ep^2} \d y_0 \\
&=:\widetilde\cI_{\ep,3}^- - \widetilde\cI_{\ep,3}^+ + \widehat{\cI}_{\ep,3},
\end{align*}
where we defined
\begin{align*}
V_{k,\ep}^\pm(z,y_0) = k^2 + \frac{v''(z)}{v(z) - v(y_0) \pm i\ep} - \frac{\P(z)}{(v(z) - v(y_0) \pm i\ep)^2} .
\end{align*}
The dominated convergence theorem directly yields $\lim_{\ep\rightarrow 0} \widehat{\cI}_{\ep,3}=0$ so we focus on $\widetilde\cI_{\ep,3}^\pm$. Set $\delta_0 =\frac{3\beta}{k}$ and $\delta\in (0,\frac{\delta_0}{2})$. Let $B_r:=B_r(z)$ the ball of radius $r>0$ centred at $z$. We further set
\begin{align*}
\widetilde\cI_{\ep,3,1}^\pm = \int_{B_\delta(z)\cap(\vartheta_1,\vartheta_2)}e^{-ikv(y_0)t}V_{k,\ep}^\pm(z,y_0)\varphi_{k,\ep}^\pm(z,y_0) (v'(z) - v'(y_0)) \d y_0
\end{align*}
and
\begin{align*}
\widetilde\cI_{\ep,3,2}^\pm = \int_{B_\delta^c(z)\cap(\vartheta_1,\vartheta_2)}e^{-ikv(y_0)t}V_{k,\ep}^\pm(z,y_0)\varphi_{k,\ep}^\pm(z,y_0) (v'(z) - v'(y_0)) \d y_0
\end{align*}
so that $\widetilde\cI_{\ep,3}^\pm = \widetilde\cI_{\ep,3,1}^\pm + \widetilde\cI_{\ep,3,1}^\pm$. From Proposition \ref{prop:Xkvarphi} we have
\begin{align*}
|\widetilde\cI_{\ep,3,1}^\pm| & = \left|  \int_{z-\delta}^{z+\delta} e^{-ikv(y_0)t} V_{k,\ep}^\pm(z,y_0) \varphi_{k,\ep}^\pm(z,y_0)(v'(z) - v'(y_0)) \d y_0 \right| \\
&\quad\lesssim k^{-\frac12}\cS_{k,2}\int_{z-\delta}^{z+\delta} \left( 1  + \P(z) \left( (k|z-y_0|) ^{-\frac12+\mu_0} + (k|z-y_0|) ^{-\frac12-\mu_0} \right) \right) \d y_0 \\
&\quad\lesssim k^{-\frac32}\left( (k\delta) +  (k\delta)^{\frac12-\mu(z)} +  (k\delta)^{\frac12+\mu(z)} \right) \cS_{k,2}
\end{align*}
Here we have used that $|\mu(z) - \mu_0||\log(k|z-y_0|) \lesssim k^{-1}$ for $|z-y_0|\leq \delta \leq\frac{1}{2k}$ so that
\begin{align*}
\eta^{-\frac12+\mu_0} \lesssim \eta^{-\frac12+\mu(z)}, \quad \eta^{-\frac12-\mu_0} \lesssim \eta^{-\frac12-\mu(z)}, 
\end{align*}
for $\eta=k|z-y_0|$ and also that $\frac{\P(z)}{\frac12-\mu(z)}= \frac{\frac12+\mu(z)}{v'(z)}\lesssim 1$. On the other hand, for $\widetilde\cI_{3,2}$, we integrate by parts in $y_0\in (\vartheta_1,\vartheta_2)\cap B_\delta^c(z)$ thus obtaining
\begin{align*}
\widetilde\cI_{\ep,3,2}^\pm &= -\frac{1}{ikt}\frac{e^{-ikv(y_0)t}}{v'(y_0)} V_{k,\ep}^\pm(z,y_0) \varphi_{k,\ep}^\pm(z,y_0)(v'(z) - v'(y_0)) \Big|_{y_0\in \partial B_\delta^c\cap(\vartheta_1,\vartheta_2)} \\
&\quad + \frac{1}{ikt}\int_{B_{\delta}^c(z)\cap B_{\delta_0}(z)\cap(\vartheta_1,\vartheta_2)}e^{-ikv(y_0)t}\partial_{y_0} \left( \frac{v'(z) - v'(y_0)}{v'(y_0)}V_{k,\ep}^\pm(z,y_0) \varphi_{k,\ep}^\pm(z,y_0) \right)  \d y_0 \\
&\quad + \frac{1}{ikt}\int_{B_{\delta_0}^c(z)\cap(\vartheta_1,\vartheta_2)}e^{-ikv(y_0)t}\partial_{y_0} \left( \frac{v'(z) - v'(y_0)}{v'(y_0)}V_{k,\ep}^\pm(z,y_0) \varphi_{k,\ep}^\pm(z,y_0) \right)  \d y_0 \\
&=  \frac{1}{ikt} \left( -\widetilde\cI_{\ep,3,2,1}^\pm + \widetilde\cI_{\ep,3,2,2}^\pm + \widetilde\cI_{\ep,3,2,3}^\pm  \right)
\end{align*}
We argue for each term.

\diampar{Estimates for $\widetilde\cI_{\ep,3,2,1}^\pm$} If $y_0=\vartheta_1$ or $y_0=\vartheta_2$ and  $y_0\in B_\delta^c(z)$ we have $|v(z) - v(y_0)\pm i\ep|^{-1}\lesssim (k\delta)^{-1}$ and thus from Lemma \ref{lemma:vanishingpsiIE} and the dominated convergence theorem we have
\begin{align*}
\lim_{\ep\rightarrow 0} \int_y^{\vartheta_2} e^{ikv(z) t} \frac{\widetilde\cI_{\ep,3,2,1}^-(z) - \widetilde\cI_{\ep,3,2,1}^+(z)}{v'(z)} \d z =0.
\end{align*}
On the other hand, for $y_0\in \partial B_{\delta}(y)\subset  (\vartheta_1,\vartheta_2)$, we have $k|z-y_0|=\delta$ and we appeal to the local bounds of Proposition \ref{prop:Xkvarphi} to have
\begin{align*}
\left| V_{k,\ep}^\pm(z,y_0)\varphi_{k,\ep}^\pm(z,y_0) (v'(z) - v'(y_0)) \right| \lesssim k^{-\frac32}\left( 1 + \P(z) (k\delta)^{-\frac12-\mu(z)} \right) \cS_{k,1}
\end{align*}
which then yields $|\widetilde\cI_{\ep,3,2,1}^\pm | \lesssim k^{-\frac52}\left( 1 + \P(z) (k\delta)^{-\frac12-\mu(z)} \right) \cS_{k,2}$, uniformly for all $0< \ep< \ep_*$.

\diampar{Estimates for $\widetilde\cI_{\ep,3,2,2}$} For $y_0\in B_{\delta}^c(z)\cap B_{\delta_0}(z)\cap(\vartheta_1,\vartheta_2)$ we use Propositions \ref{prop:Xkvarphi} and \ref{prop:Xkpynotvarphi} to obtain
\begin{align*}
&\left| \partial_{y_0} \left( \frac{v'(z) - v'(y_0)}{v'(y_0)}V_{k,\ep}^\pm(z,y_0) \varphi_{k,\ep}^\pm(z,y_0) \right)  \right| \\
&\qquad\lesssim k^{-\frac12}\left( 1 + |v''(z)|(k|z-y_0|)^{-\frac12-\mu(z)} +\P(z)(k|z-y_0|)^{-\frac32-\mu(z)} \right) \cS_{k,2}
\end{align*}
and thus
\begin{align*}
|\widetilde\cI_{\ep,3,2,2}^\pm| &\lesssim k^{-\frac12}\cS_{k,2}\int_{B_\delta^c(z)\cap B_{\delta_0}(z)}\left( 1 + |v''(z)|(k|z-y_0|)^{-\frac12-\mu(z)} + \P(z)(k|z-y_0|)^{-\frac32-\mu(z)} \right) \d y_0 \\
&\lesssim k^{-\frac32}\left( 1 + (k\delta)^{\frac12-\mu(z)} + \P(z)(k\delta)^{-\frac12-\mu(z)} \right) \cS_{k,2}
\end{align*}
since $\frac{|v''(z)|}{\frac12-\mu(z)}\lesssim 1$ due to the compact support of $v''$.

\diampar{Estimates for $\widetilde\cI_{\ep,3,2,3}^\pm$} Now $y_0\in B_{\delta_0}^c(z)\cap (\vartheta_1,\vartheta_2)$ and we can use the global pointwise estimates of Propositions \ref{prop:Xkvarphi} and \ref{prop:LinfpynotcV} to obtain
\begin{align*}
\left| \partial_{y_0} \left( \frac{v'(z) - v'(y_0)}{v'(y_0)}V_{k,\ep}^\pm(z,y_0) \varphi_{k,\ep}^\pm(z,y_0) \right)  \right| \lesssim k^{-\frac12}\cS_{k,2}.
\end{align*}

\diampar{Estimates for $\cI_{\ep,3}$} Hence, we conclude that $|\widetilde\cI_{\ep,3,2}| \lesssim k^{-\frac12} \left( 1 + \P(z)(k\delta)^{-\frac12-\mu(z)} \right) \cS_{k,2}$. Together with the bound for $\widetilde\cI_{\ep,3,1}$, we obtain
\begin{align*}
|\widetilde\cI_{\ep,3}| &\lesssim k^{-\frac32}\left( (k\delta) + (k\delta)^{\frac12-\mu(z)} \right) \cS_{k,2} +k^{-\frac12}(kt)^{-1} \left( 1 + \P(z) (k\delta)^{-\frac12-\mu(z)} \right) \cS_{k,2}.
\end{align*}
Next, to obtain bounds on $\cI_{\ep,3}$. Optimizing in $k\delta$, we find that $k\delta = \frac{\beta}{t}$ we have $\delta\leq \frac{\delta_0}{2}$ for $t\geq 1$ and 
\begin{align*}
|\widetilde\cI_{\ep,3}| &\lesssim k^{-\frac32} \left( t^{-1} + t^{-\frac12+\mu(z)} \right) \cS_{k,2}
\end{align*}
so that
\begin{align*}
\left| \cI_{\ep,3} \right| \lesssim k^{-\frac32}(\vartheta_2-y) (1+t^{-1}) \cS_{k,2}
\end{align*}
In fact, we can recover $t^{-\frac12+\mu(y)}$ at the expenses of a logarithmic loss. Indeed,
\begin{align*}
t^{-\frac12+\mu(z)} \lesssim t^{-\frac12 + \mu(y)} + |z-y|\log(t)
\end{align*}
for all $t\geq 1$, thus concluding that
\begin{align*}
\left| \cI_{\ep,3} \right| \lesssim k^{-\frac32}(\vartheta_2-y) \left(  t^{-1} + t^{-\frac12+\mu(y)} + (\vartheta_2-y)\log(t) \right) \cS_{k,2}.
\end{align*}

\bullpar{Estimates for $\cI_{\ep,4}$}
As before we write
\begin{align*}
\cI_{\ep,4} = \int_{y}^{\vartheta_2}e^{ikv(z)t} \widetilde\cI_{\ep,4}(z) \d z
\end{align*}
where 
\begin{align*}
\widetilde\cI_{\ep,4} &= \int_{\vartheta_1}^{\vartheta_2} e^{-ikv(y_0)t} \partial_z \left( \frac{v'(z) - v'(y_0)}{v'(z)} \right) \partial_z\left( \varphi_{k,\ep}^-(z,y_0) - \varphi_{k,\ep}^+(z,y_0) \right) \d y_0  \\
&=\int_{B_\delta(z)\cap(\vartheta_1,\vartheta_2)} e^{-ikv(y_0)t} \partial_z \left( \frac{v'(z) - v'(y_0)}{v'(z)} \right) \partial_z\left( \varphi_{k,\ep}^-(z,y_0) - \varphi_{k,\ep}^+(z,y_0) \right) \d y_0 \\
&\quad + \int_{B_\delta^c(z)\cap(\vartheta_1,\vartheta_2)} e^{-ikv(y_0)t} \partial_z \left( \frac{v'(z) - v'(y_0)}{v'(z)} \right) \partial_z\left( \varphi_{k,\ep}^-(z,y_0) - \varphi_{k,\ep}^+(z,y_0) \right) \d y_0  \\
&=\widetilde\cI_{\ep,4,1} + \widetilde\cI_{\ep,4,2}.
\end{align*}
Next, Proposition \ref{prop:Xkvarphi} yields
\begin{align*}
|\widetilde\cI_{\ep,4,1}| &\lesssim k^{-\frac12}\cS_{k,1} \int_{z-\delta}^{z+\delta} \left( 1 +  (k|z-y_0|)^{-\frac12+\mu(z)} + (1-2\mu(z))(k|z-y_0|)^{-\frac12-\mu(z)} \right) \d z \\
&\lesssim k^{-\frac32} \left( (k\delta) + (k\delta)^{\frac12-\mu(z)} \right)\cS_{k,1}.
\end{align*}
For $\widetilde\cI_{\ep,4,2}$ we integrate by parts again, obtaining
\begin{align*}
\widetilde\cI_{\ep,4,2} &= -\frac{1}{ikt} \frac{e^{-ikv(y_0)t}}{v'(y_0)}\partial_z \left( \frac{v'(z) - v'(y_0)}{v'(z)} \right) \partial_z\left( \varphi_{k,\ep}^-(z,y_0) - \varphi_{k,\ep}^+(z,y_0) \right) \Big|_{y_0\in \partial B_\delta^c(z)\cap (\vartheta_1,\vartheta_2)} \\
&\quad +\frac{1}{ikt} \int_{B_\delta^c(z)\cap B_{\delta_0}(z)\cap(\vartheta_1,\vartheta_2)}{e^{-ikv(y_0)t}} \partial_{y_0} \left( \partial_z \left( \frac{v'(z) - v'(y_0)}{v'(z)} \right) \frac{\partial_z\left( \varphi_{k,\ep}^-(z,y_0) - \varphi_{k,\ep}^+(z,y_0) \right)}{v'(y_0)} \right) \d y_0 \\
&\quad +\frac{1}{ikt} \int_{B_{\delta_0}^c(z)\cap(\vartheta_1,\vartheta_2)}{e^{-ikv(y_0)t}} \partial_{y_0} \left( \partial_z \left( \frac{v'(z) - v'(y_0)}{v'(z)} \right) \frac{\partial_z\left( \varphi_{k,\ep}^-(z,y_0) - \varphi_{k,\ep}^+(z,y_0) \right)}{v'(y_0)} \right) \d y_0 \\
&= \frac{1}{ikt}\left( -\widetilde\cI_{\ep,4,2,1} + \widetilde\cI_{\ep,4,2,2} + \widetilde\cI_{\ep,4,2,3} \right)
\end{align*}
We see from the local and global pointwise estimates of Proposition \ref{prop:Xkvarphi} that
\begin{align*}
\left|\widetilde\cI_{\ep,4,2,1} \right| \lesssim k^{-\frac12} \left( 1 + (k\delta)^{-\frac12-\mu(z)} \right) \cS_{k,1}.
\end{align*}
Likewise, the local decomposition and bounds of Propositions \ref{prop:Xkvarphi} and \ref{prop:Xkpynotpynotvarphi} yield
\begin{align*}
\left|\widetilde\cI_{\ep,4,2,1} \right| \lesssim k^{-\frac12}\cS_{k,2}\int_{B_\delta^c(z)\cap B_{\delta_0}(z)} (k|z-y_0|)^{-\frac32-\mu(z)} \d y_0 \lesssim k^{-\frac32}\left( 1 + (k\delta)^{-\frac12-\mu(z)}\right)\cS_{k,2}.
\end{align*}
Finally, the global pointwise bounds of Propositions \ref{prop:Xkvarphi} and \ref{prop:Xkpynotpynotvarphi} give
\begin{align*}
\left|\widetilde\cI_{\ep,4,2,1} \right| \lesssim k^{-\frac12}\cS_{k,2}.
\end{align*}
Hence we conclude that $\left| \widetilde\cI_{\ep,4,2} \right| \lesssim k^{-\frac12}(kt)^{-1}\left( 1 + (k\delta)^{-\frac12-\mu(z)} \right) \cS_{k,2}$. Together with $\widetilde\cI_{\ep,4,1}$ we reach
\begin{align*}
\left| \widetilde\cI_{\ep,4} \right| &\lesssim k^{-\frac32} \left( (k\delta) + (k\delta)^{\frac12-\mu(z)}\right) \cS_{k,1} + k^{-\frac12}(kt)^{-1}\left( 1 + (k\delta)^{-\frac12-\mu(z)} \right) \cS_{k,2} \\
&\lesssim k^{-\frac32} \left( t^{-1} + t^{-\frac12+\mu(z)} \right) \cS_{k,2}
\end{align*}
and we obtain the same estimate for $\cI_{\ep,4}$ as those for $\cI_{\ep,3}$ setting $\delta = \frac{\delta_0}{4t}$.

\bullpar{End of proof} Gathering the bounds for $\cI_{\ep,j}$, for $j=1,2,3,4$, we conclude that 
\begin{align*}
\left| \psi_{1,k,\ep}(t,y) \right| \lesssim k^{-\frac52}\left( t^{-2} + (\vartheta_2-y)t^{-1} \right)\cS_{k,2} 
\end{align*}
thus finishing the proof.
\end{proof}

The ideas presented in the proof of the above Lemma also yield the next result.

\begin{lemma}\label{lemma:secondintegralIDpsiweak}
Let $k\geq 1$ and $y\in (\varpi_{2,2}, \vartheta_2)$. Then,
\begin{align*}
\left| \psi_{2,k,\ep}(t,y) \right| \lesssim k^{-\frac32}\left( t^{-2} + (\vartheta_2-y)t^{-1} \right)\cS_{k,2} 
\end{align*}
uniformly for all $0<\ep<\ep_*$.
\end{lemma}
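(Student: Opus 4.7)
\textbf{Proof plan for Lemma \ref{lemma:secondintegralIDpsiweak}.} The strategy closely mirrors that of Lemma \ref{lemma:firstintegralIDpsiweak}, with $\varphi_{k,\ep}^\pm$ replaced by $\varphi_{1,k,\ep}^\pm$ and $\varphi_{1,k,\ep}^\pm$ replaced by $\varphi_{2,k,\ep}^\pm$. First, I would rewrite
\begin{align*}
\psi_{2,k,\ep}(t,y) = -\int_y^{\vartheta_2} v'(z)\, e^{ikv(z)t}\, F_\ep(z)\, \d z, \quad F_\ep(z) := \frac{1}{2\pi i}\int_{\vartheta_1}^{\vartheta_2} e^{-ikv(y_0)t}\bigl(\varphi_{1,k,\ep}^-  -\varphi_{1,k,\ep}^+\bigr)(z,y_0)\, \d y_0,
\end{align*}
and integrate by parts in $z$ using $v'(z)e^{ikv(z)t} = (ikt)^{-1}\partial_z e^{ikv(z)t}$ to obtain
\begin{align*}
\psi_{2,k,\ep}(t,y) = \frac{1}{ikt}\Bigl( e^{ikv(y)t}F_\ep(y) - e^{ikv(\vartheta_2)t} F_\ep(\vartheta_2)\Bigr) + \frac{1}{ikt}\int_y^{\vartheta_2} e^{ikv(z)t}\, \partial_z F_\ep(z)\, \d z.
\end{align*}

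The heart of the argument is a uniform-in-$z$ bound on $F_\ep(z)$. For each fixed $z \in [y,\vartheta_2]$, the inner integral $F_\ep(z)$ is an oscillatory-phase integral in $y_0$ of exactly the same structure as the one handled in Propositions~\ref{prop:IDpsistrong}--\ref{prop:IDpsiweak}, except that $\varphi_{k,\ep}^\pm$ is replaced by $\varphi_{1,k,\ep}^\pm$. Accordingly, I would split the $y_0$-integration into a local part on $B_\delta(z)$ and a non-local complement, estimate the local part directly by the $X_k$/$LX_k$ bounds on $\varphi_{1,k,\ep}^\pm$ provided by Propositions~\ref{prop:Xkvarphi1} and~\ref{prop:LXkvarphi1}, and integrate by parts once in $y_0$ on the non-local part. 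The identity $\partial_{y_0}\varphi_{1,k,\ep}^\pm = \varphi_{2,k,\ep}^\pm - \partial_y\varphi_{1,k,\ep}^\pm$ reduces the resulting integral to the control of $\varphi_{2,k,\ep}^\pm$ (Propositions~\ref{prop:Xkvarphi2}, \ref{prop:LXkvarphi2}) and of $\partial_y\varphi_{1,k,\ep}^\pm$. Optimising $\delta$ in the standard way gives $|F_\ep(z)|\lesssim k^{-3/2}\bigl(t^{-1}+t^{-1/2+\mu(z)}\bigr)\cS_{k,2}$, uniformly in $z\in[y,\vartheta_2]$ and $\ep\in(0,\ep_*)$. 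Since $\mu(z)\le \mu(y)+C(z-y) \leq \tfrac12$ and $t^{-1/2+\mu(z)}\leq t^{-1/2+\mu(y)}$ for $z\in[y,\vartheta_2]$, the boundary contribution at $z=y$ is bounded by $(kt)^{-1}\cdot k^{-3/2}t^{-1}\cS_{k,2} \lesssim k^{-5/2}t^{-2}\cS_{k,2}$. The boundary contribution at $z=\vartheta_2$ vanishes in the limit, as $v''(\vartheta_2)=\P(\vartheta_2)=0$ and the initial data is supported strictly inside $(\vartheta_1,\vartheta_2)$, so that $\lim_{\ep\to0}(\varphi_{1,k,\ep}^--\varphi_{1,k,\ep}^+)(\vartheta_2,y_0)=0$ pointwise by the same analysis underlying Lemma~\ref{lemma:vanishingpsiIE}, and dominated convergence then closes the argument.

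For the remaining integral, I would bound $|\partial_z F_\ep(z)|$ by the same procedure, now using the $L^\infty(I_3^c(y_0))$ and $H^1_k(I_3^c(y_0))$ estimates of $\partial_y\varphi_{1,k,\ep}^\pm$ from Propositions~\ref{prop:Xkvarphi1} and~\ref{prop:LXkvarphi1}, together with the decomposition formulas for $\varphi_{2,k,\ep}^\pm$ on the local part, yielding a uniform estimate $\sup_{z\in[y,\vartheta_2]}|\partial_z F_\ep(z)|\lesssim k^{-1/2}\cS_{k,2}$. Integrating trivially over $z\in[y,\vartheta_2]$ contributes a factor $(\vartheta_2-y)$, and together with the $(kt)^{-1}$ prefactor this gives $k^{-3/2}(\vartheta_2-y)t^{-1}\cS_{k,2}$. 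Combining the three contributions produces the claimed bound.

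The main obstacle I anticipate is the uniform-in-$z$ control of $F_\ep(z)$ as $z\to\vartheta_2$, where $\mu(z)\to\tfrac12$ and both the local decomposition of $\varphi_{1,k,\ep}^\pm$ and the lower bound $(1-2\mu_0)^{-1}$ appearing in the solution operator estimates (see Proposition~\ref{prop:existencesolphisigma} and Lemma~\ref{lemma:R1mapsPLinfL2toXk}) degenerate. As in the proof of Lemma~\ref{lemma:firstintegralIDpsiweak}, this degeneration must be absorbed by the factor $\P(z)$ appearing in the coefficients of $\varphi_{j,j,\sigma,k,\ep}^\pm$ through the identity $\P(z)/(\tfrac12-\mu(z))=(\tfrac12+\mu(z))v'(z)^2$; handling the analogous terms for $\varphi_{2,k,\ep}^\pm$ in the integration-by-parts step is the delicate bookkeeping that requires care. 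The second obstacle is the vanishing of the $z=\vartheta_2$ boundary term, which requires a small supplementary verification analogous to Lemma~\ref{lemma:vanishingpsiIE} but for $\varphi_{1,k,\ep}^\pm$.
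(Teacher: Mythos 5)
Your overall architecture (integrate by parts in $z$ using $v'(z)e^{ikv(z)t}=(ikt)^{-1}\partial_ze^{ikv(z)t}$, bound the boundary term at $z=y$ by a uniform bound on $F_\ep$, and bound the remaining integral via a uniform-in-$z$ bound on $\partial_z F_\ep$) matches the paper's indicated strategy for this lemma, which simply applies "the ideas presented in the proof of the above Lemma." However, there is a concrete gap in the step that handles the $z=y$ boundary contribution.

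You claim $|F_\ep(z)|\lesssim k^{-3/2}\bigl(t^{-1}+t^{-1/2+\mu(z)}\bigr)\cS_{k,2}$, then invoke the inequality $t^{-1/2+\mu(z)}\le t^{-1/2+\mu(y)}$ for $z\in[y,\vartheta_2]$ in order to conclude $(kt)^{-1}|F_\ep(y)|\lesssim k^{-5/2}t^{-2}\cS_{k,2}$. This is broken on two counts. First, the monotonicity is reversed: on $(\varpi_{2,2},\vartheta_2)$ we have $\cJ'\le 0$, so $\mu(y_0)=\Re\sqrt{\tfrac14-\cJ(y_0)}$ is \emph{increasing} towards $\vartheta_2$, and therefore $t^{-1/2+\mu(z)}\ge t^{-1/2+\mu(y)}$ for $z\ge y$, not $\le$. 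Second — and more seriously — even at $z=y$ the asserted bound only gives $(kt)^{-1}|F_\ep(y)|\lesssim k^{-5/2}\bigl(t^{-2}+t^{-3/2+\mu(y)}\bigr)\cS_{k,2}$, and the term $t^{-3/2+\mu(y)}$ is \emph{not} dominated by $t^{-2}+(\vartheta_2-y)t^{-1}$ uniformly: since $\mu(y)=\tfrac12-O((\vartheta_2-y)^2)$ near $\vartheta_2$, taking $\vartheta_2-y\sim(\log t)^{-1/2}$ gives $t^{-3/2+\mu(y)}\sim t^{-1}$, which exceeds $t^{-2}+(\vartheta_2-y)t^{-1}$ for large $t$. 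So your argument, as stated, does not close.

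The fix is that you should be able to prove the \emph{sharper} bound $|F_\ep(z)|\lesssim k^{-3/2}t^{-1}\cS_{k,2}$, with no $t^{-1/2+\mu(z)}$ tail at all. The reason is that $\varphi_{1,k,\ep}^\pm$ lies in $X_k$, i.e.\ its local profile is $\eta^{1/2\pm\gamma_0}$ (bounded), in sharp contrast with the $\eta^{-1/2-\gamma_0}$ singularity of $\partial_{y_0}\varphi_{k,\ep}^\pm$ that forced the $\delta$-optimisation in Proposition~\ref{prop:IDpsiweak}. Consequently $\partial_{y_0}\varphi_{1,k,\ep}^\pm=\varphi_{2,k,\ep}^\pm-\partial_y\varphi_{1,k,\ep}^\pm$ is integrable over $y_0$ near the critical layer (the $\eta^{-1/2-\gamma_0}$ term of $\partial_y\varphi_{1,k,\ep}^\pm$ carries the compensating $(1-2\gamma_0)$ factor), so a single integration by parts in $y_0$ over the whole interval yields $(kt)^{-1}$ with no $(k\delta)^{-1/2-\mu}$ counterweight and hence no optimisation in $\delta$. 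This is exactly how the paper handles the analogous boundary terms $\cI_{\ep,1},\cI_{\ep,2}$ in the proof of Lemma~\ref{lemma:firstintegralIDpsiweak}, obtaining $|\cI_{\ep,j}|\lesssim k^{-3/2}t^{-1}\cS_{k,2}$, and the same reasoning gives $|F_\ep(z)|\lesssim k^{-3/2}t^{-1}\cS_{k,2}$. With that correction, your integration-by-parts skeleton delivers the lemma: $(kt)^{-1}|F_\ep(y)|\lesssim k^{-5/2}t^{-2}\cS_{k,2}$ and $(kt)^{-1}\int_y^{\vartheta_2}|\partial_zF_\ep|\lesssim k^{-3/2}(\vartheta_2-y)t^{-1}\cS_{k,2}$. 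The rest of your plan (the vanishing of the $z=\vartheta_2$ boundary and of the $y_0\in\{\vartheta_1,\vartheta_2\}$ boundary terms, via a Lemma~\ref{lemma:vanishingpsiIE}-type argument for $\varphi_{1,k,\ep}^\pm$; the $\P(z)/(\tfrac12-\mu(z))$ bookkeeping) is correctly identified.
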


We remark here that the loss of one full power of $k$ decay in the estimate is due to the bounds of $\varphi_{1,k,\ep}^\pm$, which are indeed one power of $k$ worse than those for $\varphi_{k,\ep}^\pm$ as can be seen from Propositions \ref{prop:Xkvarphi} and \ref{prop:Xkvarphi1}. This loss can be avoided if one is willing to obtain less vanishing in $(\vartheta_2-y)_+$. More precisely, using Minkowski inequality in the global estimates for $\varphi_{1,k,\ep}$ we are able to show that

\begin{corollary}\label{cor:secondintegralIDpsiweak}
Let $k\geq 1$ and $y\in (\varpi_{2,2}, \vartheta_2)$. Then,
\begin{align*}
\left| \psi_{2,k,\ep}(t,y) \right| \lesssim k^{-2}\left( t^{-2} + (\vartheta_2-y)^\frac12 t^{-1} \right)\cS_{k,2} 
\end{align*}
uniformly for all $0<\ep<\ep_*$.
\end{corollary}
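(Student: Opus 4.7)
The proof follows the same scheme as Lemma \ref{lemma:secondintegralIDpsiweak}: write
\[
\psi_{2,k,\ep}(t,y) = -\frac{1}{2\pi i}\int_y^{\vartheta_2} v'(z)\, e^{ikv(z)t}\, \widetilde{\mathcal I}_\ep(z)\, dz, \qquad \widetilde{\mathcal I}_\ep(z) := \int_{\vartheta_1}^{\vartheta_2} e^{-ikv(y_0)t}\bigl(\varphi_{1,k,\ep}^- - \varphi_{1,k,\ep}^+\bigr)(z,y_0)\, dy_0,
\]
and perform (up to two) integrations by parts in $y_0$ on $\widetilde{\mathcal I}_\ep(z)$, as in Lemma \ref{lemma:firstintegralIDpsiweak}, to extract factors $1/(kt)$. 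The $k^{-2}t^{-2}$ piece arises, as in Lemma \ref{lemma:secondintegralIDpsiweak}, from an additional integration by parts in $z$, which produces boundary terms at $z=y$ and $z=\vartheta_2$ independent of $\vartheta_2-y$.

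The gain $k^{-3/2}\mapsto k^{-2}$ at the cost of $(\vartheta_2-y)\mapsto(\vartheta_2-y)^{1/2}$ in the slow component comes from using Cauchy--Schwarz on the outer $z$-integral instead of the $L^\infty_z$ bound:
\[
|\psi_{2,k,\ep}(t,y)| \lesssim (\vartheta_2-y)^{1/2}\, \|\widetilde{\mathcal I}_\ep\|_{L^2_z(y,\vartheta_2)},
\]
and then Minkowski to move the $L^2_z$ norm inside the $y_0$-integral. The resulting quantities are $L^2_z$-norms of $\varphi_{1,k,\ep}^\pm$ and its $y$-derivatives. The key observation is that, by the very definition of $X_k$ and Proposition \ref{prop:Xkvarphi1},
\[
\|\varphi_{1,k,\ep}^\pm(\cdot,y_0)\|_{L^2(I_3^c(y_0))} \lesssim k^{-1}\cS_{k,1},
\]
which is a factor $k^{-1/2}$ better than the $L^\infty(I_3^c(y_0))$ bound used in Lemma \ref{lemma:secondintegralIDpsiweak}. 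This $k^{-1/2}$ improvement, combined with the $(\vartheta_2-y)^{1/2}$ loss from Cauchy--Schwarz, upgrades the previous $k^{-3/2}(\vartheta_2-y)t^{-1}$ estimate to $k^{-2}(\vartheta_2-y)^{1/2}t^{-1}$.

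The main obstacle is the local piece where $z \in I_3(y_0)$. After the integrations by parts in $y_0$, one encounters $\partial_{y_0}^2\varphi_{1,k,\ep}^\pm$, which must be rewritten via $\partial_{y_0} = (\partial_y + \partial_{y_0}) - \partial_y$ and the equation \eqref{eq:TGvarphi1} to handle the critically singular factor $\P(y)(v'(y)-v'(y_0))/(v(y)-v(y_0)\pm i\ep)^3$ acting on $\varphi_{k,\ep}^\pm$. As in Section \ref{sec:SobolevReg}, the $X_k$-decomposition of $\varphi_{k,\ep}^\pm$ compensates this cubic singularity so that the resulting $L^2_z$-norms are finite with size $k^{-1}\cS_{k,2}$. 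Choosing the local window of size $\delta \simeq (kt)^{-1}$, as in the proof of Proposition \ref{prop:IDpsiweak}, optimizes the split between local and global contributions, and the two regimes in the minimum appear naturally as the balance between the boundary-type $k^{-2}t^{-2}$ contribution (from the second integration by parts in $z$) and the $k^{-2}(\vartheta_2-y)^{1/2}t^{-1}$ contribution produced by the Cauchy--Schwarz step.
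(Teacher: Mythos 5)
Your proposal is correct and takes essentially the same route the paper hints at in the remark preceding the corollary: you trade the $L^\infty_z$ bound on the outer $z$-integral for Cauchy--Schwarz (hence $(\vartheta_2-y)^{1/2}$), commute the $L^2_z$ norm past the $y_0$-integral via Minkowski, and exploit that the $L^2$ bound on $\varphi_{1,k,\ep}^\pm$ coming from the $X_k$ norm ($\Vert\varphi_{1,k,\ep}^\pm\Vert_{H^1_k(I_3^c(y_0))}\lesssim k^{-1}\cS_{k,1}$, and the local $L^2$ bound which is also $\lesssim k^{-1}\cS_{k,1}$ by $|I_3(y_0)|\approx k^{-1}$) is a factor $k^{-1/2}$ stronger than the pointwise bound used in Lemma~\ref{lemma:secondintegralIDpsiweak}. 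One small imprecision: after a single $y_0$-integration by parts you encounter $\partial_{y_0}\varphi_{1,k,\ep}^\pm=\varphi_{2,k,\ep}^\pm-\partial_y\varphi_{1,k,\ep}^\pm$, which is controlled directly by Propositions~\ref{prop:Xkvarphi1} and \ref{prop:Xkvarphi2} rather than by going back through equation~\eqref{eq:TGvarphi1} yourself, but this does not affect the argument.
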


\subsection{Decay estimates for $\partial_y\psi_k(t,y)$}\label{sec:timedecaypypsi} In this section we comment on the proof for the inviscid damping for $\partial_y\psi_k(t,y)$. 

\begin{proposition}\label{prop:IDpypsinonstrat}
Let $k\geq 1$ and $y\in [0,\vartheta_1]\cup [\vartheta_2,2]$. Then,
\begin{align*}
\left| \partial_y\psi_k(t,y) \right| \lesssim k^{-\frac12} t^{-1}\cS_{k,1}
\end{align*} 
for all $t\geq 1$.
\end{proposition}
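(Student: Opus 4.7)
The plan is to follow the broad strategy of Proposition~\ref{prop:IDpsinonstrat} but perform a single integration by parts in $y_0$ (rather than two), which accounts both for the weaker $t^{-1}$ decay and for the use of $\cS_{k,1}$ rather than $\cS_{k,2}$. First I would write the spectral representation
\begin{align*}
\partial_y\psi_k(t,y) = \frac{1}{2\pi i}\lim_{\ep\to 0}\int_{\vartheta_1}^{\vartheta_2} e^{-ikv(y_0)t}\,\partial_y\bigl(\varphi_{k,\ep}^-(y,y_0)-\varphi_{k,\ep}^+(y,y_0)\bigr)\,v'(y_0)\,\d y_0,
\end{align*}
where the $y_0$-integral has been reduced to $(\vartheta_1,\vartheta_2)$ via Lemma~\ref{lemma:vanishingpsiIE} (applied to the $y$-derivative: in $I_E$ we have $\partial_y\psi_{k,\ep}^\pm=\partial_y\varphi_{k,\ep}^\pm$ since $\sp\varrho_k^0\subseteq(\vartheta_1,\vartheta_2)$). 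Integrating by parts in $y_0$ against $e^{-ikv(y_0)t}\,v'(y_0)=-(ikt)^{-1}\partial_{y_0}(e^{-ikv(y_0)t})$ produces the factor $(ikt)^{-1}$, a boundary term at $y_0=\vartheta_1,\vartheta_2$, and an interior integral whose integrand is $\partial_{y_0}\partial_y\bigl(\varphi_{k,\ep}^--\varphi_{k,\ep}^+\bigr)$.

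The boundary contributions vanish in the limit $\ep\to 0$: the jump $\partial_y\psi_{k,\ep}^-(y,\vartheta_i)-\partial_y\psi_{k,\ep}^+(y,\vartheta_i)$ solves a Taylor--Goldstein equation whose source is supported in $(\vartheta_1,\vartheta_2)$ and degenerates at the endpoints (using $\P(\vartheta_i)=v''(\vartheta_i)=0$ together with $\sp w_k^0,\sp q_k^0\subset(\vartheta_1,\vartheta_2)$), so the no-embedded-eigenvalue argument of Lemma~\ref{lemma:vanishingpsiIE} applies essentially verbatim. For the interior integral I would split the $y_0$-domain into a local window $I_{\delta_0}(y)\cap(\vartheta_1,\vartheta_2)$ with $\delta_0=3\beta/k$ and its complement; note that for $y\in I_E$ at bounded distance from $\vartheta_1,\vartheta_2$ the local window is empty, whereas for $y$ close to either endpoint the local window lies near $\vartheta_i$, where $\P,v''$ and the data all vanish to the orders dictated by H$\P$, H$v$.

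The key structural observation, which keeps the Sobolev count at $\cS_{k,1}$, is that for $y\in I_E$ one has $v''(y)=\P(y)=w_k^0(y)=q_k^0(y)=0$, so the decompositions of $\partial_{y,y_0}^2\varphi_{k,\ep}^\pm$ in \eqref{eq:decomXkpypynotvarphi}--\eqref{eq:decomLXkpypynotvarphi} collapse: the coefficients $\varphi_{3,1,\sigma,k,\ep}^\pm$ reduce to $k(1\pm 2\gamma_0)(\varphi_{1,k,\ep}^\pm)_\sigma$, the coefficients $\varphi_{3,2,\sigma,k,\ep}^\pm$ vanish, and the explicit Dirac-type pieces $w_k^0(y)/(v(y)-v(y_0)\pm i\ep)+q_k^0(y)$ disappear entirely. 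Thus the interior integrand is governed by $(\varphi_{1,k,\ep}^\pm)_\sigma$, which is $\cS_{k,1}$-level by Proposition~\ref{prop:Xkvarphi1}, together with the remainder $\widetilde\varphi_{3,k,\ep}^\pm$. In the non-local window the global $L^\infty(I_3^c(y_0))$ bounds of Propositions~\ref{prop:Xkpypynotvarphi} and~\ref{prop:LXkpypynotvarphi}, once this collapse is invoked, yield an integrand of size $k^{-1/2}\cS_{k,1}$, contributing $O(k^{1/2}\cS_{k,1})$ after integration over $y_0\in(\vartheta_1,\vartheta_2)$; in the local window the pointwise local estimates of Propositions~\ref{prop:Xkvarphi},~\ref{prop:Xkvarphi1} and~\ref{prop:LXkvarphi}, together with the vanishing of $\P,v''$ at $\vartheta_i$, give the same $O(k^{1/2}\cS_{k,1})$ bound after integrating the locally integrable singularities $\eta^{-1/2\pm\gamma_0}$. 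Combined with the $(ikt)^{-1}$ prefactor this produces the desired bound $|\partial_y\psi_k(t,y)|\lesssim k^{-1/2}t^{-1}\cS_{k,1}$. The main technical obstacle is the last one just mentioned: verifying that the remainder $\widetilde\varphi_{3,k,\ep}^\pm$, which is a~priori $\cS_{k,2}$-level, either vanishes at $y\in I_E$ or can be re-expressed using the elliptic reduction $\partial_y^2\varphi_{k,\ep}^\pm=k^2\varphi_{k,\ep}^\pm$ valid at $y\in I_E$ so as to reduce the Sobolev count by one unit and return to the $\cS_{k,1}$-level estimate.
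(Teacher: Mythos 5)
Your overall strategy---a single integration by parts in $y_0$ followed by bounding $\partial_{y,y_0}^2\varphi_{k,\ep}^\pm$---matches what the paper sketches in Section~\ref{sec:timedecaypypsi} (the paper does not give a full proof of this proposition, only the general blueprint and a pointer to Propositions~\ref{prop:Xkvarphi}, \ref{prop:LXkvarphi}, \ref{prop:Xkpypynotvarphi} and \ref{prop:LXkpypynotvarphi}). Your additional observation that the local decomposition \eqref{eq:decomXkpypynotvarphi}--\eqref{eq:decomLXkpypynotvarphi} collapses for $y\in I_E$ because $v''(y)=\P(y)=w_k^0(y)=q_k^0(y)=0$ is correct and is exactly what justifies the $\cS_{k,1}$ count, since a blind application of Proposition~\ref{prop:Xkpypynotvarphi} would only give $k^{-\frac12}\cS_{k,2}$.

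The one piece you flag as unresolved---the remainder $\widetilde\varphi_{3,k,\ep}^\pm$---is in fact closed by the very mechanism you name, but you must apply it one level up. Write $\partial_{y,y_0}^2\varphi_{k,\ep}^\pm=\partial_y\varphi_{1,k,\ep}^\pm-\partial_y^2\varphi_{k,\ep}^\pm$. For $y\in I_E$, the Taylor--Goldstein equation \eqref{eq:TGeqvarphiSobolevReg} reduces to $\partial_y^2\varphi_{k,\ep}^\pm=k^2\varphi_{k,\ep}^\pm$ (your observation), which is $\lesssim k^{3/2}\cS_{k,0}\leq k^{1/2}\cS_{k,1}$ in view of Proposition~\ref{prop:Xkvarphi}. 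But you must also control $\partial_y\varphi_{1,k,\ep}^\pm$ at $\cS_{k,1}$ level: this requires noticing that the right-hand side of \eqref{eq:TGvarphi1} likewise vanishes identically for $y\in I_E$, since every term there carries a factor of $v'''$, $\P'$, $\P$, $v''$, $w_k^0$, $\partial_y w_k^0$, or $\partial_y q_k^0$, all of which are supported in $(\vartheta_1,\vartheta_2)$. Hence $\partial_y^2\varphi_{1,k,\ep}^\pm=k^2\varphi_{1,k,\ep}^\pm$ on $I_E$ as well, and the Sobolev embedding of Lemma~\ref{lemma:LinfH1bound} applied to this elliptic identity, together with the $X_k$ bound $\Vert\varphi_{1,k,\ep}^\pm\Vert_{X_k}\lesssim k^{-\frac12}\cS_{k,1}$ from Proposition~\ref{prop:Xkvarphi1}, gives $\Vert\partial_y\varphi_{1,k,\ep}^\pm\Vert_{L^\infty(y\in I_E)}\lesssim k^{\frac12}\cS_{k,1}$. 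Combining, $\Vert\partial_{y,y_0}^2\varphi_{k,\ep}^\pm\Vert_{L^\infty(y\in I_E)}\lesssim k^{\frac12}\cS_{k,1}$, and the $(kt)^{-1}$ prefactor then yields the desired $k^{-\frac12}t^{-1}\cS_{k,1}$ bound. So the argument is completable along your lines; you should simply carry the elliptic reduction through both $\varphi_{k,\ep}^\pm$ and $\varphi_{1,k,\ep}^\pm$ rather than only the former, and handle the boundary terms in the $y_0$-integration by parts via Lemma~\ref{lemma:vanishingpsiIE} as you propose.

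One minor remark: the spectral representation should carry the factor $v'(y_0)$ in the integrand (as you have it), not disappear as in the paper's text following Proposition~\ref{prop:IDpypsistrong}; that is a typo in the paper, and after one integration by parts the $v'(y_0)$ cancels against $\partial_{y_0}v(y_0)$ anyway.
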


\begin{proposition}\label{prop:IDpypsiweak}
Let $k\geq 1$ and $y\in (\vartheta_1,\varpi_{1,2})\cup (\varpi_{2,2}\vartheta_2)$. Then,
\begin{align*}
\left| \partial_y\psi_k(t,y) \right| \lesssim k^{-\frac12}t^{-\frac12+\mu(y)}\cS_{k,1}
\end{align*} 
for all $t\geq 1$.
\end{proposition}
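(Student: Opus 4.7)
The plan is to mirror the oscillatory-phase analysis of Proposition~\ref{prop:IDpsiweak} but applied to $\partial_y\psi_k$, for which only \emph{one} integration by parts in $y_0$ is needed to reach the target exponent $-\tfrac12+\mu(y)$. Starting from \eqref{eq:psioscint}, differentiating in $y$ and noting that $\partial_y\psi_{k,\ep}^{-} - \partial_y\psi_{k,\ep}^{+} = \partial_y\varphi_{k,\ep}^{-} - \partial_y\varphi_{k,\ep}^{+}$ (since $\varrho_k^0$ is $\ep$-independent), one obtains
\begin{align*}
\partial_y\psi_k(t,y) = \frac{1}{2\pi i}\lim_{\ep\to 0}\int_{\vartheta_1}^{\vartheta_2} e^{-ikv(y_0)t}\,\partial_y\!\left(\varphi_{k,\ep}^-(y,y_0)-\varphi_{k,\ep}^+(y,y_0)\right) v'(y_0)\,\d y_0.
\end{align*}
Assume $y\in(\varpi_{2,2},\vartheta_2)$ (the case $y\in(\vartheta_1,\varpi_{1,1})$ is symmetric), set $\delta_0=\min(\tfrac{3\beta}{k},\tfrac12(\vartheta_2-y))$, choose $\delta\in(0,\delta_0/2)$, and split the $y_0$-integral into $B_\delta(y)$ and $B_\delta^c(y)$.

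For the local piece, invoke the decomposition \eqref{eq:decomXkpyvarphi} of Proposition~\ref{prop:Xkvarphi}: in $B_\delta(y)$ we have $y\in I_3(y_0)$ with $y_0\in I_W$, so $\partial_y\varphi_{k,\ep}^\pm$ is a sum of terms of size $k^{1/2}\cS_{k,0}\cdot\eta^{-1/2+\mu_0}$ and $(1-2\mu_0)k^{1/2}\cS_{k,0}\cdot\eta^{-1/2-\mu_0}$, plus a bounded remainder. Replacing $\mu_0$ by $\mu(y)$ up to uniform constants (since $|\mu_0-\mu(y)|\cdot|\log(k|y-y_0|)|\lesssim 1$ on $B_\delta$) and integrating gives a local contribution of size $k^{-1/2}(k\delta)^{1/2-\mu(y)}\cS_{k,0}$, where the factor $(1-2\mu_0)$ cancels the singular denominator produced by integrating $\eta^{-1/2-\mu_0}$.

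For the non-local piece, integrate by parts once in $y_0$. The endpoint contributions at $y_0=\vartheta_1,\vartheta_2$ vanish as $\ep\to 0$ via Lemma~\ref{lemma:vanishingpsiIE}, while the boundary at $|y-y_0|=\delta$ is controlled by the same local pointwise bound on $\partial_y\varphi_{k,\ep}^\pm$, yielding a contribution of order $k^{-1/2}(kt)^{-1}(k\delta)^{-1/2-\mu(y)}\cS_{k,1}$. The bulk integral involves $\partial_{y,y_0}^2\varphi_{k,\ep}^\pm$, which by Proposition~\ref{prop:Xkpypynotvarphi} decomposes into: the singular pieces $\eta^{-3/2\pm\mu_0}$ with coefficients of size $k^{3/2}\mathrm{P}(y)\cS_{k,0}$; the pieces $\eta^{-1/2\pm\mu_0}$ with coefficients of size $k^{1/2}\cS_{k,1}$; the $w_k^0/(v-v(y_0)\pm i\ep)$ and $q_k^0$ contributions (whose $(\pm)$ difference vanishes in the $\ep\to 0$ limit on $B_\delta^c$ by dominated convergence, since $y_0$ stays a positive distance from $y$); and a uniformly bounded remainder. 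Splitting $B_\delta^c(y)$ further into $B_\delta^c(y)\cap B_{\delta_0}(y)$ and $B_{\delta_0}^c(y)$, using the local decomposition on the first subregion and the global pointwise estimate $\|\partial_{y,y_0}^2\varphi_{k,\ep}^\pm\|_{L^\infty(I_3^c)}\lesssim k^{-1/2}\cS_{k,2}$ from Proposition~\ref{prop:Xkpypynotvarphi} on the second, the bulk piece also contributes $k^{-1/2}(kt)^{-1}(k\delta)^{-1/2-\mu(y)}\cS_{k,1}$ after the $(ikt)^{-1}$ factor, with the $\mathrm{P}(y)$-weighted $\eta^{-3/2-\mu(y)}$ singularity absorbed via $\mathrm{P}(y)\lesssim 1$ and the bound $\int_{B_\delta^c\cap B_{\delta_0}}\eta^{-3/2-\mu(y)}\d y_0\lesssim k^{-1}(k\delta)^{-1/2-\mu(y)}/(\tfrac12+\mu(y))$.

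Balancing the local estimate $k^{-1/2}(k\delta)^{1/2-\mu(y)}\cS_{k,1}$ against the non-local estimate $k^{-1/2}(kt)^{-1}(k\delta)^{-1/2-\mu(y)}\cS_{k,1}$ by choosing $k\delta=\beta/t$ yields the desired bound $|\partial_y\psi_k(t,y)|\lesssim k^{-1/2}t^{-1/2+\mu(y)}\cS_{k,1}$. The main technical obstacle is the $\eta^{-3/2-\mu(y)}$ singularity appearing in $\partial_{y,y_0}^2\varphi_{k,\ep}^\pm$, whose integration on the critical layer requires that its coefficient vanish at a controllable rate; this is achieved by combining the $\mathrm{P}(y)$ prefactor (or, equivalently in $I_W$, the bound \eqref{eq:upperboundPynot} which gives $\mathrm{P}(y)\lesssim 1-2\mu(y)$) with the $(1/2+\mu(y))^{-1}$ gain arising from anti-differentiation, together with the safeness of integrating the $w_k^0$ forcing over $B_\delta^c$ where it is non-singular as $\ep\to 0$.
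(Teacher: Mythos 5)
Your proposal matches the paper's own (sketched) argument in Section~11.2: differentiate \eqref{eq:psioscint} in $y$, split $y_0$ into $B_\delta(y)$ and $B_\delta^c(y)$, use the local $X_k$ decomposition of $\partial_y\varphi_{k,\ep}^\pm$ from Proposition~\ref{prop:Xkvarphi} on $B_\delta$, integrate by parts once on $B_\delta^c$ to bring in $\partial_{y,y_0}^2\varphi_{k,\ep}^\pm$ from Proposition~\ref{prop:Xkpypynotvarphi}, and optimize $k\delta\sim 1/t$. The handling of the $\eta^{-3/2-\mu_0}$ singularity via $\mathrm{P}(y)\lesssim 1$ and anti-differentiation, the replacement of $\mu_0$ by $\mu(y)$ on the critical layer, and the vanishing of the $w_k^0,q_k^0$ differences on $B_\delta^c$ as $\ep\to 0$ are all consistent with how the paper treats the analogous steps in Proposition~\ref{prop:IDpsiweak}.

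One point you should either justify or flag: when you invoke the far-field bound $\|\partial_{y,y_0}^2\varphi_{k,\ep}^\pm\|_{L^\infty(I_3^c)}\lesssim k^{-1/2}\cS_{k,2}$ from Proposition~\ref{prop:Xkpypynotvarphi} on $B_{\delta_0}^c$, you then write the resulting contribution as $\cS_{k,1}$. These are not interchangeable: since $k^{-1}\cS_{k,2}\geq\cS_{k,1}$ by construction of $\cS_{k,j}$, a $\cS_{k,2}$ input cannot be absorbed into a $\cS_{k,1}$ output, and the extra $t^{-1}$ factor doesn't help on $t\gtrsim 1$. To actually land on $\cS_{k,1}$ (as the statement requires and as Theorem~\ref{thm:mainID} consumes via $\sum_k k^{-1}\cS_{k,1}^2\sim\|\omega^0\|_{H^{1/2}_xH^2_y}^2+\cdots$), one needs to avoid Proposition~\ref{prop:Xkpypynotvarphi}'s quoted constant and instead use the identity $\partial_{y,y_0}^2\varphi_{k,\ep}^\pm=\partial_y\varphi_{1,k,\ep}^\pm-\partial_y^2\varphi_{k,\ep}^\pm$ together with the individual $\cS_{k,1}$-level far-field bounds coming from Propositions~\ref{prop:Xkvarphi} ($n=2$) and~\ref{prop:Xkvarphi1} (via the $H_k^1(I_3^c)$ part of the $X_k$ norm), which give $\|\partial_{y,y_0}^2\varphi_{k,\ep}^\pm\|_{L^\infty(I_3^c)}\lesssim k^{1/2}\cS_{k,1}$ — a sharper bound than the one Proposition~\ref{prop:Xkpypynotvarphi} records. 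The paper elides this same step with ``we omit the routine details,'' so I would not call this a structural gap, but as written your proposal quotes an estimate that is too lossy to deliver the claimed Sobolev index, and it deserves a sentence making the sharpening explicit.

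A minor, non-fatal difference: you set $\delta_0=\min(\tfrac{3\beta}{k},\tfrac12(\vartheta_2-y))$ whereas the analogous argument in Proposition~\ref{prop:IDpsiweak} takes a $y$-independent $\delta_0=\min(\tfrac{3\beta}{k},\tfrac{\varpi_{2,2}-\varpi_2}{2})$. Your choice shrinks as $y\to\vartheta_2$, and for $y$ close to $\varpi_{2,2}$ the ball $B_{\delta_0}(y)$ can reach into the mild region $I_M$, which requires the $LX_k$ decomposition of Proposition~\ref{prop:LXkvarphi} rather than the $X_k$ one you cite. Either accept the paper's fixed $\delta_0$ or split the near-critical-layer analysis across $I_W$ and $I_M$ as is done in Proposition~\ref{prop:IDpsiweak}.
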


\begin{proposition}\label{prop:IDpypsimild}
Let $k\geq 1$ and $y\in [\varpi_{1,1}, \varpi_{1,2}]\cup [\varpi_{2,1}, \varpi_{2,2}]$. Then,
\begin{align*}
\left| \partial_y\psi_k(t,y) \right| \lesssim k^{-\frac12} t^{-\frac12+\mu(y)} \left( 1 + \log(t) \right) \cS_{k,1}
\end{align*} 
for all $t\geq 1$.
\end{proposition}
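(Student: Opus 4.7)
The argument parallels the proof of Proposition~\ref{prop:IDpsimild}, but integrates by parts only \emph{once} in $y_0$ (instead of twice) and invokes the $LX_k$ regularity of $\partial_y\varphi_{k,\ep}^\pm$ from Proposition~\ref{prop:LXkvarphi}, at a cost of $\cS_{k,1}$ rather than $\cS_{k,2}$. Applying $\partial_y$ to \eqref{eq:psioscint} (and noting that $\partial_y(\psi_{k,\ep}^--\psi_{k,\ep}^+)\to 0$ for $y_0\in I_E$ as a direct consequence of Lemma~\ref{lemma:vanishingpsiIE}), one writes
\[
\partial_y\psi_k(t,y)=\frac{1}{2\pi i}\lim_{\ep\to 0}\int_{\vartheta_1}^{\vartheta_2}e^{-ikv(y_0)t}\bigl(\partial_y\varphi_{k,\ep}^-(y,y_0)-\partial_y\varphi_{k,\ep}^+(y,y_0)\bigr)v'(y_0)\,\d y_0.
\]
With $\delta_0:=3\beta/k$ and $\delta\in(0,\delta_0/2)$ to be optimized, split the $y_0$-integration into $B_\delta(y)$ and $B_\delta^c:=(\vartheta_1,\vartheta_2)\setminus B_\delta(y)$, producing $\cI_{\ep,1}+\cI_{\ep,2}$.

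On $\cI_{\ep,1}$, since $B_\delta(y)\subset I_3(y)$, the decomposition \eqref{eq:decomLXkpyvarphi} provides singular profiles $\eta^{-\tfrac12\pm\gamma_0}$ and $\eta^{-\tfrac12-\gamma_0}\log(\eta)\mathcal{Q}_{\gamma_0}(\eta)$ with coefficients of order $k^{\tfrac12}\cS_{k,0}$; replacing $\mu_0$ by $\mu(y)$ via $|\mu_0-\mu(y)||\log(k|y-y_0|)|\lesssim 1$ and integrating yields $|\cI_{\ep,1}|\lesssim k^{-\tfrac12}(k\delta)^{\tfrac12-\mu(y)}(1+|\log(k\delta)|)\cS_{k,0}$. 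On $\cI_{\ep,2}$, integrate by parts once using $e^{-ikv(y_0)t}=(-iktv'(y_0))^{-1}\partial_{y_0}e^{-ikv(y_0)t}$. The endpoint boundary terms at $y_0\to\vartheta_1^+,\vartheta_2^-$ vanish as $\ep\to 0$ by Lemma~\ref{lemma:vanishingpsiIE}, while those at $y_0=y\pm\delta$ are controlled by the local profile of $\partial_y\varphi_{k,\ep}^\pm$, contributing $\lesssim (kt)^{-1}k^{\tfrac12}(k\delta)^{-\tfrac12-\mu(y)}(1+|\log(k\delta)|)\cS_{k,0}$. The interior term $(ikt)^{-1}\int_{B_\delta^c}e^{-ikv(y_0)t}\partial_{y,y_0}^2\varphi_{k,\ep}^\pm\,\d y_0$ is subdivided into a near field $B_\delta^c\cap B_{\delta_0}(y)$ and a far field $B_{\delta_0}^c(y)$: on the former, Proposition~\ref{prop:LXkpypynotvarphi} supplies the local decomposition with $\eta^{-\tfrac32\pm\gamma_0}(1+|\log|)$ profiles and coefficients $\lesssim k^{\tfrac12}\cS_{k,1}$ (using $k\cS_{k,0}\leq\cS_{k,1}$ to absorb the $\varphi_{\rL,3,2,\sigma,k,\ep}^\pm$ terms of size $k^{\tfrac32}\cS_{k,0}$, and Proposition~\ref{prop:LXkvarphi1} for $\varphi_{\rL,3,1,\sigma,k,\ep}^\pm$), yielding $\lesssim k^{-\tfrac12}(kt)^{-1}(k\delta)^{-\tfrac12-\mu(y)}(1+|\log(k\delta)|)\cS_{k,1}$. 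For the far field, rather than invoke the $\cS_{k,2}$-bound of Proposition~\ref{prop:LXkpypynotvarphi}, we rewrite $\partial_{y,y_0}^2\varphi_{k,\ep}^\pm=\partial_y\varphi_{1,k,\ep}^\pm-\partial_y^2\varphi_{k,\ep}^\pm$, express $\partial_y^2\varphi_{k,\ep}^\pm$ through the Taylor--Goldstein equation \eqref{eq:TGeqvarphiSobolevReg} in terms of $\varphi_{k,\ep}^\pm$, $w_k^0$, $q_k^0$ (all bounded at the level $\cS_{k,0}\leq k^{-1}\cS_{k,1}$), and control $\partial_y\varphi_{1,k,\ep}^\pm$ on $I_3^c$ via the $H_k^1$-piece of the $LX_k$-norm supplied by Proposition~\ref{prop:LXkvarphi1}; Cauchy--Schwarz over the fixed-length far field then gives a contribution $\lesssim k^{-\tfrac12}(kt)^{-1}\cS_{k,1}$.

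Summing all contributions and using $\cS_{k,0}\leq\cS_{k,1}$,
\[
|\partial_y\psi_k(t,y)|\lesssim k^{-\tfrac12}\cS_{k,1}\Bigl[(k\delta)^{\tfrac12-\mu(y)}+(kt)^{-1}(k\delta)^{-\tfrac12-\mu(y)}\Bigr]\bigl(1+|\log(k\delta)|\bigr).
\]
The choice $k\delta=\beta/t$ (which satisfies $\delta<\delta_0/2$ whenever $t\geq 1$) balances the two brackets and produces the claimed bound $|\partial_y\psi_k(t,y)|\lesssim k^{-\tfrac12}t^{-\tfrac12+\mu(y)}(1+\log t)\cS_{k,1}$.

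The main obstacle is preserving the $\cS_{k,1}$-dependence in the far-field portion of the interior integral: a direct application of the $L^2(I_3^c)$-bound on $\partial_{y,y_0}^2\varphi_{k,\ep}^\pm$ from Proposition~\ref{prop:LXkpypynotvarphi} costs $\cS_{k,2}$ (because that bound is proved through $\varphi_{2,k,\ep}^\pm$), which would weaken the claim. Trading $\partial_y^2\varphi_{k,\ep}^\pm$ via the Taylor--Goldstein equation and exploiting the good-derivative regularity of $\varphi_{1,k,\ep}^\pm$ is the mechanism that recovers one level in $\cS$. The remaining subtleties---extending the boundary-vanishing statement of Lemma~\ref{lemma:vanishingpsiIE} to $\partial_y$-derivatives, and handling the logarithmic factor $\log(\eta)\mathcal{Q}_{\gamma_0}(\eta)$ uniformly as $\gamma_0\to 0$ near the critical layer $\cJ(y_0)=\tfrac14$---are handled exactly as in the analysis of $\psi_k$ in the mild regime.
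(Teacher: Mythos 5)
Your proposal has the same skeleton as the paper's (brief) argument for Propositions~\ref{prop:IDpypsinonstrat}--\ref{prop:IDpypsistrong}: represent $\partial_y\psi_k$ via the spectral density, split the $y_0$-integral into $B_\delta(y)$ and $B_\delta^c$, use the local $LX_k$ structure of $\partial_y\varphi_{k,\ep}^\pm$ from Proposition~\ref{prop:LXkvarphi} on the near field, integrate by parts once on the far field and control $\partial_{y,y_0}^2\varphi_{k,\ep}^\pm$ via Proposition~\ref{prop:LXkpypynotvarphi}, then optimize $k\delta\sim t^{-1}$. You have also correctly flagged the real subtlety that the paper glosses over in its one-paragraph sketch: naively feeding the stated global bounds from Proposition~\ref{prop:LXkpypynotvarphi} into the interior term would leave an $\cS_{k,2}$-dependence in the answer, which is strictly weaker than the claimed $\cS_{k,1}$.

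However, the repair you propose for the far-field has a genuine gap. You invoke ``the $H_k^1$-piece of the $LX_k$-norm'' of $\varphi_{1,k,\ep}^\pm$ from Proposition~\ref{prop:LXkvarphi1} and then apply ``Cauchy--Schwarz over the fixed-length far field.'' But the $H_k^1(I_3^c(y_0))$ component of the $LX_k$-norm is an $L^2$ bound in the $y$-variable over $\{y:|y-y_0|>3\beta/k\}$, for each fixed $y_0$. In the far-field integral, $y$ is \emph{fixed} and the integration is over $y_0\in B_{\delta_0}^c(y)$; Cauchy--Schwarz there requires an $L^2_{y_0}$ or $L^\infty_{y_0}$ bound over the far field, which does not follow by simply swapping the roles of the variables. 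The correct fix is to derive a far-field \emph{pointwise} bound $\sup_{y_0\in B_{\delta_0}^c(y)}|\partial_y\varphi_{1,k,\ep}^\pm(y,y_0)|\lesssim k^{1/2}\cS_{k,1}$ by the same mechanism used in the proof of Proposition~\ref{prop:Xkvarphi} for $\varphi_{k,\ep}^\pm$: fundamental theorem of calculus from the edge of $I_3(y_0)$, the Taylor--Goldstein equation satisfied by $\varphi_{1,k,\ep}^\pm$ (whose coefficients are $O(k^2)$ in the far field), and the Sobolev-type embedding of Lemma~\ref{lemma:LinfH1bound}. A second loose end you do not address is the near-field remainder $\widetilde\varphi_{\rL,3,k,\ep}^\pm$ in Proposition~\ref{prop:LXkpypynotvarphi}, which is stated at the $\cS_{k,2}$ level; to keep the final bound at $\cS_{k,1}$ one must check, as you do for the far field, that the contributions of $\partial_y^2\varphi_{k,\ep}^\pm$ to that remainder are actually at the $\cS_{k,1}$ level after accounting for $k\cS_{k,0}\leq\cS_{k,1}$. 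Finally, your use of Lemma~\ref{lemma:vanishingpsiIE} to make the endpoint boundary terms at $y_0\to\vartheta_1,\vartheta_2$ vanish is not what the paper does in the analogous steps (cf.\ the treatment of $\cI_{\ep,2,1}$ in Propositions~\ref{prop:IDpsiweak}--\ref{prop:IDpsimild}): there the far-field pointwise bound on $\partial_y\varphi_{k,\ep}^\pm$, of size $k^{1/2}\cS_{k,0}\leq k^{-1/2}\cS_{k,1}$, already gives a contribution $\lesssim k^{-3/2}t^{-1}\cS_{k,1}$ which is acceptable, so vanishing is not needed (and the lemma only addresses the difference $\psi_{k,\ep}^--\psi_{k,\ep}^+$, not its $\partial_y$-derivative).
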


\begin{proposition}\label{prop:IDpypsistrong}
Let $k\geq 1$ and $y\in (\varpi_{1,2}, \varpi_{2,1})$. Then,
\begin{align*}
\left| \partial_y\psi_k(t,y) \right| \lesssim k^{-\frac12} t^{-\frac12}\cS_{k,1}
\end{align*} 
for all $t\geq 1$.
\end{proposition}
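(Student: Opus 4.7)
The plan is to follow the template of Proposition~\ref{prop:IDpsistrong} but to integrate by parts only once in $y_0$ rather than twice, since the target decay rate for $\partial_y\psi_k$ is $t^{-1/2}$. Starting from~\eqref{eq:psioscint}, using Lemma~\ref{lemma:vanishingpsiIE} and that $\partial_y\varrho_k^0$ is independent of $y_0$, I would write
\begin{align*}
\partial_y\psi_k(t,y)=\frac{1}{2\pi i}\lim_{\ep\to 0}\int_{\vartheta_1}^{\vartheta_2}e^{-ikv(y_0)t}\partial_y\bigl(\varphi_{k,\ep}^-(y,y_0)-\varphi_{k,\ep}^+(y,y_0)\bigr)v'(y_0)\,\d y_0.
\end{align*}
Fix $\delta\in(0,3\beta/k)$, so that $B_\delta(y)\subset I_3(y)\cap I_S$ since $y\in I_S$, and split the integral into a local $B_\delta(y)$ and non-local $B_\delta^c(y)\cap(\vartheta_1,\vartheta_2)$ piece. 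Since $\gamma_0\in i\R$ throughout $I_S$, one has $|\eta^{-1/2\pm\gamma_0}|=|\eta|^{-1/2}$ uniformly.

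For the local part, the decomposition~\eqref{eq:decomXkpyvarphi} from Proposition~\ref{prop:Xkvarphi} gives $|\partial_y\varphi_{k,\ep}^\pm(y,y_0)|\lesssim k^{1/2}(|\eta|^{-1/2}+1)\cS_{k,0}$, so its contribution is $\lesssim k^{-3/2}(k\delta)^{1/2}\cS_{k,1}$ (using $\cS_{k,0}\le k^{-1}\cS_{k,1}$). For the non-local part, I would integrate by parts once in $y_0$. The boundary terms at $y_0=\vartheta_1,\vartheta_2$ vanish in the limit $\ep\to 0$ by an argument analogous to Lemma~\ref{lemma:vanishingpsiIE} applied to $\partial_y\psi_{k,\ep}$; those at $y_0=y\pm\delta$ are controlled by the local pointwise bounds of Proposition~\ref{prop:Xkvarphi}, giving $\lesssim k^{-3/2}(kt)^{-1}(k\delta)^{-1/2}\cS_{k,1}$. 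The remaining interior integral requires pointwise bounds on $\partial_{y,y_0}^2\varphi_{k,\ep}^\pm$ from Proposition~\ref{prop:Xkpypynotvarphi}: on $B_\delta^c(y)\cap I_3(y)$ the most singular coefficient $\varphi_{3,2,\sigma}$ is bounded by $k^{3/2}\cS_{k,0}$, and integrating $|\eta|^{-3/2}$ on $[k\delta,6\beta]$ yields an overall contribution $\lesssim k^{-3/2}(kt)^{-1}(k\delta)^{-1/2}\cS_{k,1}$; on $B_\delta^c(y)\cap I_3^c(y)$ the global $L^\infty$ bound $\|\partial_{y,y_0}^2\varphi_{k,\ep}^\pm\|_{L^\infty(I_3^c)}\lesssim k^{-1/2}\cS_{k,2}$ produces $\lesssim k^{-3/2}t^{-1}\cS_{k,2}\lesssim k^{-1/2}t^{-1/2}\cS_{k,1}$ (using $\cS_{k,2}\ge k\cS_{k,1}$ and $t\ge 1$).

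Collecting all contributions,
\begin{align*}
|\partial_y\psi_k(t,y)|\lesssim k^{-3/2}\bigl[(k\delta)^{1/2}+t^{-1}(k\delta)^{-1/2}\bigr]\cS_{k,1}+k^{-1/2}t^{-1/2}\cS_{k,1},
\end{align*}
and optimizing via $k\delta=t^{-1}$ (valid for $t\ge 1/(3\beta)$; otherwise one invokes the trivial $H_k^1$ bound from Proposition~\ref{prop:Zkvarphi}) yields the claimed $|\partial_y\psi_k(t,y)|\lesssim k^{-1/2}t^{-1/2}\cS_{k,1}$. The main technical obstacle I anticipate is tracking the $(-)-(+)$ cancellations for the non-regular remainder $w_k^0(y)/(v(y)-v(y_0)\pm i\ep)+q_k^0(y)$ appearing in~\eqref{eq:decomXkpynotpynotvarphi}: one must observe that this contributes zero on $B_\delta^c(y)$ in the limit, since the associated Sokhotski--Plemelj $\delta$-mass is concentrated at $y_0=y\notin B_\delta^c(y)$ and $q_k^0(y)$ cancels between the $\pm$ sides as it is independent of $\ep$. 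A secondary verification is that the $\vartheta_j$-boundary terms after the integration by parts genuinely vanish in the limit, which follows from a slight refinement of Lemma~\ref{lemma:vanishingpsiIE} combined with the compact support of $\omega_k^0$ and $\varrho_k^0$ inside $(\vartheta_1,\vartheta_2)$.
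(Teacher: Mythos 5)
Your overall architecture (split into $B_\delta$ and $B_\delta^c$, integrate by parts once in $y_0$ on $B_\delta^c$, optimize $k\delta\sim t^{-1}$) is the same as the paper's sketch in Section~\ref{sec:timedecaypypsi} and is the right route. However, there is a genuine gap in the last inequality of your estimate on $B_\delta^c(y)\cap I_3^c(y)$. You write that $k^{-3/2}t^{-1}\cS_{k,2}\lesssim k^{-1/2}t^{-1/2}\cS_{k,1}$ "using $\cS_{k,2}\ge k\cS_{k,1}$," but this inequality points the \emph{wrong} way: $\cS_{k,2}\ge k\cS_{k,1}$ provides a \emph{lower} bound on $\cS_{k,2}$ and cannot be used to replace $\cS_{k,2}$ by $k\cS_{k,1}$ in an upper estimate. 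Indeed, $\cS_{k,2}$ involves one extra $y$-derivative of the data relative to $\cS_{k,1}$ and is not controlled by it. As written, your bound on the far field contribution stays at order $k^{-3/2}t^{-1}\cS_{k,2}$, which is not what the proposition claims.

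The way to avoid $\cS_{k,2}$ here is to not reach for the global $L^\infty(I_3^c)$ estimate of Proposition~\ref{prop:Xkpypynotvarphi}, but instead to use the sharper $H_k^1(I_3^c)$ bound already proved in Proposition~\ref{prop:Xkpynotvarphi}, namely $\Vert\partial_{y_0}\varphi_{k,\ep}^\pm\Vert_{H_k^1(I_3^c(y_0))}\lesssim k^{-1}\cS_{k,1}$, which by definition of the $H_k^1$ norm gives $\Vert\partial_y\partial_{y_0}\varphi_{k,\ep}^\pm\Vert_{L^2(I_3^c(y_0))}\lesssim\cS_{k,1}$. On $B_\delta^c\cap I_3^c(y)$ you then estimate the integral by Cauchy--Schwarz against the constant $1$, obtaining a contribution $\lesssim (kt)^{-1}\cS_{k,1}\lesssim k^{-1/2}t^{-1/2}\cS_{k,1}$, which closes. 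A second point you should address is the remainder $\widetilde\varphi_{3,k,\ep}^\pm$ on $B_\delta^c\cap I_3(y)$: you only track $\varphi_{3,2,\sigma}$, but the decomposition \eqref{eq:decomXkpypynotvarphi} also has $\Vert\widetilde\varphi_{3,k,\ep}^\pm\Vert_{L^\infty(I_3(y_0))}\lesssim k^{-1/2}\cS_{k,2}$, with the same problematic norm; again one needs to land this term on a $\cS_{k,1}$-level estimate rather than the crude $L^\infty(I_3)$ bound. Until those two places are repaired, the proof does not deliver the stated $\cS_{k,1}$ dependence.
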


To prove the above decay rates, we see from Lemma \ref{lemma:vanishingpsiIE} that
\begin{align*}
\partial_y\psi_k(t,y) = \frac{1}{2\pi i} \lim_{\ep\rightarrow 0} \int_{\vartheta_1}^{\vartheta_2} e^{-ikv(y_0)t}\left( \partial_y\varphi_{k,\ep}^-(y,y_0) - \partial_y\varphi_{k,\ep}^+(y,y_0) \right)  \d y_0.
\end{align*}
and we proceed as usual, considering the integral over $B_\delta$ and over $B_\delta^c$. On $\B_\delta$ we can use the local estimates of Propositions \ref{prop:Xkvarphi} and \ref{prop:LXkvarphi} to gain smallness from $(k\delta)^{\frac12-\mu(y)}$  for $y\in I_S\cup I_W$ and $(k\delta)^{\frac12-\mu(y)}\left( 1 - \log(k\delta)\right)$ for $y\in I_M$. On $B_{\delta}^c$, we integrate by parts using the oscillatory factor and we bound the resulting contributions with the local and global estimates of Propositions \ref{prop:Xkpypynotvarphi} and \ref{prop:LXkpypynotvarphi}, roughly collecting $(kt)^{-1}(k\delta)^{-\frac12-\mu(y)}$ and $(kt)^{-1}(k\delta)^{-\frac12-\mu(y)}\left( 1 + \log(k\delta)\right)$ for $y\in I_S\cup I_W$ and $y\in I_M$, respectively. Optimizing for $\delta\sim \frac{1}{kt}$ gives the desired result. While we omit the routine details to the basic decay rates, we do show how to obtain the improved localised bounds for $\partial_y\psi_k(t,y)$.

\begin{corollary}\label{cor:improvedIDpypsiweak}
Let $k\geq 1$ and $t\geq 1$. Then,
\begin{align*}
\left| \partial_y \psi_k(t,y) \right| \lesssim k^{-\frac12} \min \left( t^{-\frac12+\mu(y)}, t^{-1} + (\vartheta_2-y) \right) \cS_{k,2},
\end{align*}
for all $y\in(\varpi_{2,2},\vartheta_2)$.
\end{corollary}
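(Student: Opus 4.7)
The proof mirrors the strategy behind Proposition~\ref{prop:improvedIDpsiweak}, but applied at the level of the first $y$-derivative of $\psi_k$. Recall that the proof of that proposition rested on the first-order ODE identity, valid for $y\in(\varpi_{2,2},\vartheta_2)$,
\begin{align*}
\partial_y\psi_{k,\ep}(t,y)+ikv'(y)t\,\psi_{k,\ep}(t,y)=F_{1,k,\ep}(t,y)+F_{2,k,\ep}(t,y),
\end{align*}
where, up to the factor $1/(2\pi i)$,
\begin{align*}
F_{1,k,\ep}(t,y)&=v'(y)\int_{\vartheta_1}^{\vartheta_2}e^{-ikv(y_0)t}\bigl(\varphi_{1,k,\ep}^-(y,y_0)-\varphi_{1,k,\ep}^+(y,y_0)\bigr)\,\d y_0,\\
F_{2,k,\ep}(t,y)&=-\int_{\vartheta_1}^{\vartheta_2}e^{-ikv(y_0)t}\bigl(v'(y)-v'(y_0)\bigr)\,\partial_y\bigl(\varphi_{k,\ep}^-(y,y_0)-\varphi_{k,\ep}^+(y,y_0)\bigr)\,\d y_0.
\end{align*}
Rather than applying the integrating factor $e^{ikv(y)t}$ and integrating over $z\in(y,\vartheta_2)$ as was done for $\psi_k$, I will solve the ODE algebraically for $\partial_y\psi_{k,\ep}$ and pass to the limit $\ep\to 0^+$, obtaining the pointwise identity
\begin{align*}
\partial_y\psi_k(t,y)=-ikv'(y)t\,\psi_k(t,y)+F_{1,k}(t,y)+F_{2,k}(t,y).
\end{align*}

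Since Proposition~\ref{prop:IDpypsiweak} already supplies one half of the claimed $\min$, it suffices to establish the complementary bound $|\partial_y\psi_k(t,y)|\lesssim k^{-1/2}(t^{-1}+(\vartheta_2-y))\cS_{k,2}$. The transport term is controlled immediately by Proposition~\ref{prop:improvedIDpsiweak}, which gives $kt\,|\psi_k(t,y)|\lesssim k^{-1/2}(t^{-1}+(\vartheta_2-y))\cS_{k,2}$, exactly of the desired shape. For the source terms $F_{1,k}$ and $F_{2,k}$, I repeat the stationary-phase analysis already carried out on the integrands $\widetilde\cI_{\ep,3}$ and $\widetilde\cI_{\ep,4}$ in the proof of Lemma~\ref{lemma:firstintegralIDpsiweak}: $F_{2,k,\ep}$ differs from $\widetilde\cI_{\ep,4}$ only by the replacement $\partial_y^2\rightsquigarrow\partial_y$, while $F_{1,k,\ep}$ is the analogous integral built from $\varphi_{1,k,\ep}^\pm$, whose $X_k$-regularity is controlled by Proposition~\ref{prop:Xkvarphi1}. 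Splitting $(\vartheta_1,\vartheta_2)=B_\delta(y)\cup B_\delta^c(y)$, exploiting the vanishing factor $(v'(y)-v'(y_0))\lesssim|y-y_0|$ in $F_{2,k,\ep}$ to tame the critical-layer singularity, integrating by parts in $y_0$ on the non-local portion using Propositions~\ref{prop:Xkpynotvarphi} and \ref{prop:Xkpypynotvarphi}, and optimising $\delta\sim 1/(kt)$, one obtains the intermediate bound
\begin{align*}
|F_{j,k}(t,y)|\lesssim k^{-1/2}\bigl(t^{-1}+t^{-1/2+\mu(y)}\bigr)\cS_{k,2},\qquad j=1,2.
\end{align*}

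The main obstacle is that this source bound features $t^{-1/2+\mu(y)}$, which does not decay as $y\to\vartheta_2$, unlike the transport-term bound; a naive summation therefore will not on its own yield a clean $(\vartheta_2-y)$-improved estimate. The difficulty is overcome by the $\min$ structure of the Corollary: whenever $t^{-1/2+\mu(y)}\leq t^{-1}+(\vartheta_2-y)$, the bound from Proposition~\ref{prop:IDpypsiweak} is already of the required form; in the opposite regime, the offending source contribution $k^{-1/2}t^{-1/2+\mu(y)}\cS_{k,2}$ is itself comparable to the transport bound $k^{-1/2}(t^{-1}+(\vartheta_2-y))\cS_{k,2}$ (up to a universal constant) and can be absorbed into it. Combining the two estimates then yields
\begin{align*}
|\partial_y\psi_k(t,y)|\lesssim k^{-1/2}\min\bigl(t^{-1/2+\mu(y)},\,t^{-1}+(\vartheta_2-y)\bigr)\cS_{k,2},
\end{align*}
as claimed.
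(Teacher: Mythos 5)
Your starting point — the identity
\begin{align*}
\partial_y\psi_{k,\ep}(t,y)=-ikv'(y)t\,\psi_{k,\ep}(t,y)+F_{1,k,\ep}(t,y)+F_{2,k,\ep}(t,y),
\end{align*}
with $F_1$ the $\varphi_{1,k,\ep}^\pm$-integral and $F_2$ the $(v'(y)-v'(y_0))\,\partial_y\varphi_{k,\ep}^\pm$-integral — is exactly the one the paper uses, and the control of the transport term via Proposition~\ref{prop:improvedIDpsiweak} is also correct. The gap is in the treatment of the source terms.

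You assert $|F_{j,k}(t,y)|\lesssim k^{-1/2}\bigl(t^{-1}+t^{-1/2+\mu(y)}\bigr)\cS_{k,2}$ and then try to ``absorb'' the slowly decaying piece $t^{-1/2+\mu(y)}$ into the transport bound. This absorption step cannot work: in the regime where $\min(t^{-1/2+\mu(y)},\,t^{-1}+(\vartheta_2-y))=t^{-1}+(\vartheta_2-y)$, one has by definition $t^{-1/2+\mu(y)}\geq t^{-1}+(\vartheta_2-y)$, so the quantity you wish to absorb is at least as large as the target bound, not comparable to it from above. The sum transport$+$source is then dominated by the source estimate, and you recover only $|\partial_y\psi_k|\lesssim k^{-1/2}t^{-1/2+\mu(y)}\cS_{k,2}$, which is Proposition~\ref{prop:IDpypsiweak} again and gives no $(\vartheta_2-y)$-improvement.

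The actual resolution (which the paper uses) is that your intermediate source estimate is unnecessarily pessimistic. Both source integrals decay like a clean $t^{-1}$ uniformly in $y$: one has $|F_1|\lesssim (kt)^{-1}k^{-1/2}\cS_{k,2}$ and $|F_2|\lesssim (kt)^{-1}k^{-3/2}\cS_{k,2}$. The mechanism is that the worst critical-layer singularities in the integrands carry suppression factors that cancel the apparent $\mu(y)$-dependence: the $X_k$ decomposition of $\partial_y\varphi_{k,\ep}^\pm$ produces a $(1-2\gamma_0)$ in front of the most singular component $\eta^{-1/2-\gamma_0}$, and in $\partial_{y,y_0}^2\varphi_{k,\ep}^\pm$ the $\eta^{-3/2\pm\gamma_0}$ terms carry a $\P(y)$ factor with $\P(y_0)/(\tfrac12-\mu_0)=(\tfrac12+\mu_0)/v'(y_0)$ uniformly bounded (see~\eqref{eq:upperboundPynot} and the estimate for $\cI_{\ep,j,3,1}$ in the proof of Lemma~\ref{lemma:firstintegralIDpsiweak}). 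Together with the vanishing factor $(v'(y)-v'(y_0))\sim|\eta|/k$ in $F_2$, this renders the local singularity integrable over the \emph{entire} interval $I_3(y)$ with a constant independent of $\mu(y)$; a single integration by parts in $y_0$ then yields the full $(kt)^{-1}$ factor with no $\delta$-optimization and no $\mu(y)$-dependent loss. The same applies to $F_1$ via Propositions~\ref{prop:Xkvarphi1} and~\ref{prop:Xkvarphi2}. Your identification $F_{2,k,\ep}\sim\widetilde\cI_{\ep,4}$ is the source of the over-estimate: $\widetilde\cI_{\ep,4}$ has a \emph{non-vanishing} prefactor $\partial_z\bigl((v'(z)-v'(y_0))/v'(z)\bigr)$, unlike $F_2$. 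Once the uniform $t^{-1}$ source bound is established, the conclusion is immediate, since $t^{-1}\leq\min\bigl(t^{-1/2+\mu(y)},\,t^{-1}+(\vartheta_2-y)\bigr)$ for all $t\geq 1$ and $y\in(\varpi_{2,2},\vartheta_2)$, so both source contributions are automatically dominated by the right-hand side.
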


\begin{proof}
We have
\begin{align*}
\partial_y \psi_{k,\ep}(t,y) &= -ikv'(y)t \psi_{k,\ep}(t,y) + \frac{v'(y)}{2\pi i}\int_{\vartheta_1}^{\vartheta_2} e^{-ikv(y_0)t}\left( \varphi_{1,k,\ep}^-(y,y_0) - \varphi_{1,k,\ep}^+(y,y_0) \right) \d y_0 \\
&\quad -\frac{1}{2\pi i} \int_{\vartheta_1}^{\vartheta_2} e^{-ikv(y_0)t}\left( \partial_y\varphi_{k,\ep}^-(y,y_0) - \partial_y\varphi_{k,\ep}^+(y,y_0) \right)(v'(y) - v'(y_0)) \d y_0 \\
\end{align*}
Using Propositions \ref{prop:Xkvarphi1} and \ref{prop:Xkvarphi2} and the ideas presented in the proof of Lemma \ref{lemma:firstintegralIDpsiweak}, it is easy to see that 
\begin{align*}
\left| \int_{\vartheta_1}^{\vartheta_2} e^{-ikv(y_0)t}\left( \varphi_{1,k,\ep}^-(y,y_0) - \varphi_{1,k,\ep}^+(y,y_0) \right) \d y_0 \right| \lesssim (kt)^{-1}k^{-\frac12}\cS_{k,2},
\end{align*}
while integration by parts once and using Propositions \ref{prop:Xkvarphi} and \ref{prop:Xkpypynotvarphi} as in the proof of the estimates of $\widetilde\cI_{\ep,4}$ in Lemma \ref{lemma:firstintegralIDpsiweak} yield 
\begin{align*}
\left| \int_{\vartheta_1}^{\vartheta_2} e^{-ikv(y_0)t}\left( \partial_y\varphi_{k,\ep}^-(y,y_0) - \partial_y\varphi_{k,\ep}^+(y,y_0) \right)(v'(y) - v'(y_0)) \d y_0 \right| \lesssim (kt)^{-1}k^{-\frac32}\cS_{k,2}
\end{align*}
Therefore,
\begin{align*}
\left| \partial_y\psi_{k,\ep}(t,y) \right| &\lesssim (kt) k^{-\frac32}\min \left( t^{-\frac32+\mu(y)}, t^{-2} + (\vartheta_2-y)t^{-1} \right)\cS_{k,2} + (kt)^{-1} k^{-\frac12}\cS_{k,2} \\
&\lesssim k^{-\frac12}\min \left( t^{-\frac12+\mu(y)}, t^{-1} + (\vartheta_2-y) \right) \cS_{k,2}
\end{align*}
and the proof is concluded.
\end{proof}

\subsection{Time-decay estimates for $\rho_k(t,y)$}
Lastly, we address the inviscid damping experienced by the density perturbation. We recall that
\begin{align}\label{eq:rhooscint}
\rho_k(t,y) = \frac{1}{2\pi i}\lim_{\ep\rightarrow 0}\int_0^2 e^{-ikv(y_0)t} \left( \rho_{k,\ep}^-(y,y_0) - \rho_{k,\ep}^+(y,y_0) \right) v'(y_0) \d y_0,
\end{align}
where we have
\begin{align*}
\rho_{k,\ep}^\pm(y,y_0) = \mathrm{P}(y)\frac{\varphi_{k,\ep}^\pm(y,y_0)}{v(y) - v(y_0) \pm i\ep}.
\end{align*}
From here, we readily see that $\rho_k(t,y)$ is compactly supported in $(\vartheta_1,\vartheta_2)$, namely
\begin{align*}
\rho_k(t,y) = 0, \quad \text{ for all }y\in [0,2]\setminus (\vartheta_1,\vartheta_2).
\end{align*}
Moreover, for all $y\in (\vartheta_1,\vartheta_2)$, we further observe that Lemma \ref{lemma:vanishingpsiIE} and the dominated convergence theorem give
\begin{align*}
\rho_k(t,y) &= \frac{1}{2\pi i}\lim_{\ep\rightarrow 0} \mathrm{P}(y)\int_{\vartheta_1}^{\vartheta_2} e^{-ikv(y_0)t} \left( \frac{\varphi_{k,\ep}^-(y,y_0)}{v(y) - v(y_0) - i\ep} - \frac{\varphi_{k,\ep}^+(y,y_0)}{v(y) - v(y_0) + i\ep} \right) v'(y_0) \d y_0.
\end{align*}
To ease notation, we set
\begin{align}\label{eq:defrhokep}
\rho_{k,\ep}(t,y) = \mathrm{P}(y)\int_{\vartheta_1}^{\vartheta_2} e^{-ikv(y_0)t} \left( \frac{\varphi_{k,\ep}^-(y,y_0)}{v(y) - v(y_0) - i\ep} - \frac{\varphi_{k,\ep}^+(y,y_0)}{v(y) - v(y_0) + i\ep} \right) v'(y_0) \d y_0,
\end{align}
so that now $\rho_k(t,y) = \frac{1}{2\pi i}\lim_{\ep\rightarrow 0} \rho_{k,\ep}(t,y)$. The inviscid damping in the weakly stratified regime reads

\begin{proposition}\label{prop:IDrhoweak}
Let $k\geq 1$ and $t\geq 1$. Then,
\begin{align*}
\left| \rho_k(t,y) \right| \lesssim k^{-\frac12}t^{-\frac12+\mu(y)}\cS_{k,1}
\end{align*}
for all $y\in (\vartheta_1,\varpi_{1,1})\cup (\varpi_{2,2}, \vartheta_2)$.
\end{proposition}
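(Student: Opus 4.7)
The plan is to mirror the structure of the proofs of Propositions~\ref{prop:IDpsiweak} and \ref{prop:IDpypsiweak}, splitting the spectral integral in \eqref{eq:defrhokep} into a local contribution on $B_\delta(y)=(y-\delta,y+\delta)$ and a non-local contribution on $B_\delta^c(y)=(\vartheta_1,\vartheta_2)\setminus B_\delta(y)$, where $\delta\in (0,\tfrac{3\beta}{2k})$ is to be optimised. On $B_\delta(y)$ we have $y\in I_3(y_0)$ and $y_0\in I_W$, so $\gamma_0=\mu_0\in (0,\tfrac12)$ and we may use directly the $X_k$ decomposition of Proposition~\ref{prop:Xkvarphi} to write
\begin{equation*}
\frac{\varphi_{k,\ep}^\pm(y,y_0)}{v(y)-v(y_0)\pm i\ep}
=\frac{2k}{v'(y_0)}\Bigl((\varphi_{k,\ep}^\pm)_\sr(y,y_0)\eta^{-\frac12+\mu_0}+(\varphi_{k,\ep}^\pm)_\s(y,y_0)\eta^{-\frac12-\mu_0}\Bigr)+O(k^{1/2}\cS_{k,0}),
\end{equation*}
and, using $|\mu_0-\mu(y)|\,|\log(k|y-y_0|)|\lesssim 1$ as in Proposition~\ref{prop:IDpsiweak}, integrating over $B_\delta(y)$ gives a local contribution bounded by $C\, k^{-1/2}(k\delta)^{\frac12-\mu(y)}\cS_{k,0}$.

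On $B_\delta^c(y)$ I integrate by parts once in $y_0$ using the oscillatory factor $e^{-ikv(y_0)t}$, obtaining a sum of a boundary term evaluated at $y_0\in \partial B_\delta^c\cap(\vartheta_1,\vartheta_2)$ and an interior integral of
\begin{equation*}
\frac{1}{ikt}\,\partial_{y_0}\!\left(\frac{v'(y_0)\varphi_{k,\ep}^\pm(y,y_0)}{v(y)-v(y_0)\pm i\ep}\right)
=\frac{1}{ikt}\left(\frac{v''(y_0)\varphi_{k,\ep}^\pm+v'(y_0)\partial_{y_0}\varphi_{k,\ep}^\pm}{v(y)-v(y_0)\pm i\ep}+\frac{(v'(y_0))^2\,\varphi_{k,\ep}^\pm}{(v(y)-v(y_0)\pm i\ep)^2}\right).
\end{equation*}
The boundary contribution at $y_0=y\pm\delta$ is controlled by $X_k$ estimates, yielding $(kt)^{-1}k^{1/2}(k\delta)^{-\frac12-\mu(y)}\cS_{k,0}$; the contributions at $y_0=\vartheta_1,\vartheta_2$ are harmless because there $|v(y)-v(y_0)|$ is bounded below and one uses the global pointwise bounds of Proposition~\ref{prop:Xkvarphi}. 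The first interior term is bounded using $|\partial_{y_0}\varphi_{k,\ep}^\pm|$ from \eqref{eq:decomXkpynotvarphi} (Proposition~\ref{prop:Xkpynotvarphi}), splitting into $B_\delta^c(y)\cap I_3(y)$ and $I_3^c(y)$; in the former, the decomposition reduces the estimate to integrals of $(k|y-y_0|)^{-\frac32\pm\mu(y)}$, giving $(k\delta)^{-\frac12-\mu(y)}$ after integration, and in the latter the $L^\infty(I_3^c)$ bounds suffice.

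The main obstacle is the second interior term, which carries the cubic-looking singularity $(v(y)-v(y_0)\pm i\ep)^{-2}\varphi_{k,\ep}^\pm$. Although the direct pointwise bound $\varphi_{k,\ep}^\pm=O(|\eta|^{1/2-\mu_0})$ makes the integrand of order $(k|y-y_0|)^{-3/2-\mu_0}$, this is still integrable on $B_\delta^c(y)\cap I_3(y)$ because $\mu_0<\tfrac12$, and its integral produces exactly $(k\delta)^{-\frac12-\mu(y)}$. Converting $\mu_0$-exponents to $\mu(y)$-exponents requires the same $|\mu_0-\mu(y)|\,|\log(k|y-y_0|)|\lesssim 1$ trick used in Proposition~\ref{prop:IDpsiweak}; on $I_3^c(y)$ one replaces $X_k$ bounds by the $L^\infty(I_3^c)$ bounds from Proposition~\ref{prop:Xkvarphi}. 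Gathering all contributions and using $|\mathrm{P}(y)|\lesssim 1$, one obtains
\begin{equation*}
|\rho_{k,\ep}(t,y)|\lesssim k^{-1/2}(k\delta)^{\frac12-\mu(y)}\cS_{k,0}+t^{-1}k^{-1/2}(k\delta)^{-\frac12-\mu(y)}\cS_{k,1},
\end{equation*}
and the choice $k\delta=1/t$ (consistent with $\delta<3\beta/(2k)$ for $t\geq 1$) balances the two terms and yields the advertised bound $|\rho_k(t,y)|\lesssim k^{-1/2}t^{-\frac12+\mu(y)}\cS_{k,1}$, after passing to the limit $\ep\to 0^+$ using the Dominated Convergence Theorem justified by the uniform-in-$\ep$ estimates of Proposition~\ref{prop:Xkvarphi}.
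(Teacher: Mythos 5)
Your proof follows essentially the same route as the paper: split the spectral integral \eqref{eq:defrhokep} into a local piece on $B_\delta(y)$ (estimated via the $X_k$ decomposition of Proposition~\ref{prop:Xkvarphi}) and a complementary piece on $B_\delta^c(y)$, integrate by parts once in $y_0$ on the latter, bound the resulting terms with Propositions~\ref{prop:Xkvarphi} and \ref{prop:Xkpynotvarphi}, and optimise with $k\delta\sim t^{-1}$. That is exactly the paper's strategy. Two remarks, the second of which points to a genuine gap.

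There is a small algebraic slip: once you absorb $v'(y_0)$ into the oscillatory phase via $e^{-ikv(y_0)t}v'(y_0)=\tfrac{i}{kt}\,\partial_{y_0}e^{-ikv(y_0)t}$, the quantity that acquires a $\partial_{y_0}$ after integration by parts is $\tfrac{\varphi_{k,\ep}^\pm}{v(y)-v(y_0)\pm i\ep}$ \emph{without} the extra $v'(y_0)$ you kept inside. Consequently the interior integrand should read
\begin{equation*}
\frac{1}{ikt}\,\partial_{y_0}\!\left(\frac{\varphi_{k,\ep}^\pm}{v(y)-v(y_0)\pm i\ep}\right)
=\frac{1}{ikt}\left(\frac{\partial_{y_0}\varphi_{k,\ep}^\pm}{v(y)-v(y_0)\pm i\ep}
+\frac{v'(y_0)\,\varphi_{k,\ep}^\pm}{(v(y)-v(y_0)\pm i\ep)^2}\right),
\end{equation*}
with no $v''(y_0)\varphi^\pm$ term; this does not affect any estimate, since your extra factors are smooth and bounded.

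More substantively, you dispose of the $\mathrm P(y)$ prefactor merely with $|\mathrm P(y)|\lesssim 1$. This is too weak for a bound that is claimed uniformly in $y\in(\vartheta_1,\varpi_{1,1})\cup(\varpi_{2,2},\vartheta_2)$: the local integral carries a factor
\begin{equation*}
\int_{B_\delta(y)}(k|y-y_0|)^{-\frac12-\mu_0}\,\d y_0\sim\frac{1}{\tfrac12-\mu_0}\,k^{-1}(k\delta)^{\frac12-\mu_0},
\end{equation*}
and the denominator $\tfrac12-\mu_0$ degenerates precisely as $y\to\vartheta_1$ or $y\to\vartheta_2$. What saves the estimate is the cancellation coming from $\mathrm P(y)$ vanishing there: from \eqref{eq:upperboundPynot} one has $\tfrac{\P(y)}{\tfrac12-\mu(y)}=\tfrac{1+2\mu(y)}{2}\,v'(y)^2\lesssim 1$, i.e.\ the explicitly recorded identity used in the paper. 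Without invoking this, your bound would be $\tfrac{1}{\tfrac12-\mu(y)}k^{-\frac12}(k\delta)^{\frac12-\mu(y)}\cS_{k,0}$ with a constant that blows up at the edges of the stratified region, and the statement as claimed would fail to be uniform. This is a gap in the write-up, not in the strategy, but it must be filled.

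Finally, a harmless divergence from the paper: at the endpoints $y_0=\vartheta_1,\vartheta_2$ of $B_\delta^c(y)$ you bound the boundary term directly by the global pointwise estimates and absorb it into the $\tfrac1{kt}$ prefactor; the paper invokes Lemma~\ref{lemma:vanishingpsiIE} to make the limit $\varphi_{k,\ep}^--\varphi_{k,\ep}^+$ vanish identically there. Your route closes anyway, since the $t^{-1}$ it produces is stronger than $t^{-\frac12+\mu(y)}$.
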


\begin{proof}
As usual, let $\delta_0=\frac{3\beta}{k}$ and $\delta_0\in (0,\frac{\delta_0}{2})$. We have
\begin{align*}
\rho_{k,\ep}(t,y) &= \mathrm{P}(y)\int_{B_\delta(y)}e^{-ikv(y_0)t}\left( \frac{\varphi_{k,\ep}^-(y,y_0)}{v(y) - v(y_0) - i\ep} - \frac{\varphi_{k,\ep}^+(y,y_0)}{v(y) - v(y_0) + i\ep} \right) v'(y_0) \d y_0 \\
&\quad -\frac{\mathrm{P}(y)}{ikt} {e^{-ikv(y_0)t}}\left( \frac{\varphi_{k,\ep}^-(y,y_0)}{v(y) - v(y_0) - i\ep} - \frac{\varphi_{k,\ep}^+(y,y_0)}{v(y) - v(y_0) + i\ep} \right) \Big|_{y_0\in \partial B_\delta(z) \cap (\vartheta_1,\vartheta_2)} \\
&\quad + \frac{\mathrm{P}(y)}{ikt}\int_{B_\delta^c(y)\cap B_{\delta_0}(y) \cap (\vartheta_1,\vartheta_2)}{e^{-ikv(y_0)t}}  \partial_{y_0} \left(  \frac{\varphi_{k,\ep}^-(y,y_0)}{v(y) - v(y_0) - i\ep} - \frac{\varphi_{k,\ep}^+(y,y_0)}{v(y) - v(y_0) + i\ep}\right) \d y_0 \\
&\quad + \frac{\mathrm{P}(y)}{ikt}\int_{B_{\delta_0}^c(y) \cap (\vartheta_1,\vartheta_2)}{e^{-ikv(y_0)t}}  \partial_{y_0} \left(  \frac{\varphi_{k,\ep}^-(y,y_0)}{v(y) - v(y_0) - i\ep} - \frac{\varphi_{k,\ep}^+(y,y_0)}{v(y) - v(y_0) + i\ep} \right) \d y_0  \\
&= \cI_{\ep,1} + \frac{1}{ikt}\left( -\cI_{\ep,2} + \cI_{\ep,3} + \cI_{\ep,4} \right).
\end{align*}
Appealing to the local decomposition and estimates of Proposition \ref{prop:Xkvarphi} we get
\begin{align*}
\left| \cI_{\ep,1} \right| \lesssim k^{-\frac12} (k\delta)^{\frac12-\mu(y)}\cS_{k,0}
\end{align*}
as we recall that $\frac{\mathrm{P}(y)}{\frac12-\mu(y)}\lesssim 1$. Likewise, Proposition \ref{prop:Xkvarphi} also gives
\begin{align*}
\left| \lim_{\ep\rightarrow 0}\cI_{\ep,2} \right| \lesssim k^{\frac12} (k\delta)^{-\frac12-\mu(y)}\cS_{k,0}
\end{align*}
because the contributions of $y_0=\vartheta_1$ and $y_0=\vartheta_2$ vanish in the limit thanks to Lemma \ref{lemma:vanishingpsiIE}. Next, the local decomposition and estimates of Proposition \ref{prop:Xkpynotvarphi} yield
\begin{align*}
\left| \cI_{\ep,3} \right| \lesssim k^{-\frac12} (k\delta)^{-\frac12-\mu(y)}\cS_{k,1}
\end{align*}
while the global pointwise bounds of the same Proposition \ref{prop:Xkpynotvarphi} provide
\begin{align*}
\left| \cI_{\ep,4} \right| \lesssim k^{\frac12} \cS_{k,1}
\end{align*}
gathering all the above estimates and optimizing for $\delta= \frac{\beta}{kt}$ we get the desired result.
\end{proof}

As before, we can in fact localize the behaviour of $\rho_{k}(t,y)$, now at the cost of a logarithmic loss.

\begin{corollary}\label{cor:improvedIDrhoweak}
Let $k\geq 1$ and $t\geq 1$. Then,
\begin{align*}
\left| \rho_k(t,y) \right| \lesssim k^{-\frac12}\min \left( t^{-\frac12+\mu(y)}, \mathrm{P}(y) (1+\log(t)) \right)\cS_{k,1}
\end{align*}
for all $y\in (\vartheta_1,\varpi_{1,1})\cup (\varpi_{2,2}, \vartheta_2)$.
\end{corollary}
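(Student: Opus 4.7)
The plan is to exploit the linearised transport equation for the density rather than work directly with the oscillatory representation \eqref{eq:defrhokep}. From the second equation of \eqref{eq:linEBomegarho}, $(\partial_t + ikv(y))\rho_k = ik\mathrm{P}(y)\psi_k$, and Duhamel's formula gives
$$\rho_k(t,y) = e^{-ikv(y)t}\rho_k^0(y) + ik\mathrm{P}(y)\int_0^t e^{-ikv(y)(t-s)}\psi_k(s,y)\,\d s.$$
Since $\rho_k^0 = \mathrm{P}(y)\varrho_k^0$, the factor $\mathrm{P}(y)$ is already explicit in both terms, so what needs to be bounded is the bracketed quantity by $k^{-1/2}(1+\log t)\mathcal{S}_{k,1}$, with the $\log t$ factor arising from a time-integral tail rather than from an oscillatory-phase argument. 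This is reminiscent of Remark~\ref{rmk:growthlineq} below.

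I would then split the Duhamel integral at $s=1$. For $s\in[1,t]$ Proposition~\ref{prop:IDpsiweak} gives $|\psi_k(s,y)| \lesssim k^{-5/2}s^{-3/2+\mu(y)}\mathcal{S}_{k,2}$, and a direct integration yields
$$\int_1^t s^{-3/2+\mu(y)}\,\d s = \frac{1-t^{-1/2+\mu(y)}}{1/2-\mu(y)}.$$
The key elementary inequality $1-e^{-x}\leq x$ applied with $x=(1/2-\mu(y))\log t$ produces $(1-t^{-1/2+\mu(y)})/(1/2-\mu(y))\leq \log t$ for $t\geq 1$. Using $\mathcal{S}_{k,2}\leq k\mathcal{S}_{k,1}$, the tail contributes $\mathrm{P}(y)\cdot k^{-1/2}\mathcal{S}_{k,1}\log t$. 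For $s\in[0,1]$ a uniform-in-$s$ global bound $|\psi_k(s,y)|\lesssim k^{-1/2}\mathcal{S}_{k,0}$ yields the $\mathrm{P}(y)k^{-1/2}\mathcal{S}_{k,1}$ baseline. The initial-data term is controlled using $|\varrho_k^0(y)|=|(\Delta_k^{-1}q_k^0)(y)|\lesssim k^{-3/2}\|q_k^0\|_{L^2}\lesssim k^{-1/2}\mathcal{S}_{k,0}$ via the $L^2$ bound on the Dirichlet Green's function in \eqref{eq:L2boundGk}. Combining with Proposition~\ref{prop:IDrhoweak} produces the minimum in the statement.

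The main obstacle is the uniform-in-$s$ control of $\psi_k(s,y)$ for $s\in[0,1]$, which is not directly provided by Proposition~\ref{prop:IDpsiweak} (stated only for $s\geq 1$). I would obtain it by revisiting the contour representation \eqref{eq:psirholim} and simply bounding the resolvent integrand term by term without invoking oscillatory cancellation: the local bounds of Propositions~\ref{prop:Xkvarphi} and~\ref{prop:LXkvarphi} on $I_3(y)\cap(\vartheta_1,\vartheta_2)$, combined with the global pointwise bound $|\varphi_{k,\ep}^\pm|\lesssim k^{-1/2}\mathcal{S}_{k,0}$ on $I_3^c(y)\cap(\vartheta_1,\vartheta_2)$, are integrable in $y_0$ uniformly in $\ep$ and $s$. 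Together with Lemma~\ref{lemma:vanishingpsiIE}, which ensures the difference $\psi_{k,\ep}^--\psi_{k,\ep}^+$ vanishes in $I_E$ as $\ep\to 0$, this yields $|\psi_k(s,y)|\lesssim k^{-1/2}\mathcal{S}_{k,0}$ for all $s\geq 0$, completing the proof.
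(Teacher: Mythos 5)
Your Duhamel route is a genuinely different strategy from the paper's, and it is appealing in that it makes the $\mathrm{P}(y)$ factor explicit from the first line. The paper instead stays with the spectral representation \eqref{eq:defrhokep} and integrates by parts once in $y_0$, using $\tfrac{v'(y_0)}{v(y)-v(y_0)\pm i\ep}=-\partial_{y_0}\log\bigl(k(v(y)-v(y_0)\pm i\ep)\bigr)$; the logarithmic kernel is what produces the $\mathrm{P}(y)(1+\log t)$ term, and crucially, only one ``good derivative'' of $\varphi_{k,\ep}^\pm$ is spent, so the bound stays at $\cS_{k,1}$.

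That last point is where your argument breaks. You feed Proposition~\ref{prop:IDpsiweak} into the Duhamel tail, which gives $|\psi_k(s,y)|\lesssim k^{-5/2}s^{-3/2+\mu(y)}\cS_{k,2}$, and then you invoke ``$\cS_{k,2}\le k\cS_{k,1}$'' to drop down to $\cS_{k,1}$. This inequality is backwards: the paper records (right after \eqref{eq:defSnorm}) that $k\cS_{k,j}\le\cS_{k,j+1}$, i.e.\ $\cS_{k,2}\ge k\cS_{k,1}$, which is also immediate from $\|\cdot\|_{H_k^{j+1}}\ge\|\cdot\|_{H_k^j}$. So your tail bound $\mathrm{P}(y)k^{-3/2}\cS_{k,2}\log t$ is in general \emph{larger} than $\mathrm{P}(y)k^{-1/2}\cS_{k,1}\log t$, not smaller, and the corollary as stated does not follow. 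The underlying issue is a regularity mismatch: routing through $\psi_k$ via Proposition~\ref{prop:IDpsiweak} costs two good derivatives (that estimate is obtained after two integrations by parts in $y_0$), while Corollary~\ref{cor:improvedIDrhoweak} is tuned to cost only one.

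The gap is likely repairable but requires an ingredient you haven't supplied: a $\psi_k$-decay rate that spends only one good derivative, e.g.\ $|\psi_k(s,y)|\lesssim k^{-5/2}s^{-1}\cS_{k,1}$, which would follow from a single integration by parts in \eqref{eq:psioscint} and the local bounds of Proposition~\ref{prop:Xkpynotvarphi} (with some care near $\vartheta_n$ where $\mu(y)\to\tfrac12$ and the singular kernel $|\eta|^{-1/2-\mu}$ is only barely integrable). Plugging this coarser rate into your tail integral gives $\int_1^t s^{-1}\,\d s=\log t$ and the contribution $k\mathrm{P}(y)\cdot k^{-5/2}\cS_{k,1}\log t=\mathrm{P}(y)k^{-3/2}\cS_{k,1}\log t\lesssim\mathrm{P}(y)k^{-1/2}\cS_{k,1}\log t$, which would close the argument. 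Your treatment of the $s\in[0,1]$ piece and of the initial-data term is fine (both land at $\cS_{k,0}$ and are absorbed by $k\cS_{k,0}\le\cS_{k,1}$), and your elementary inequality $\frac{1-t^{-1/2+\mu}}{1/2-\mu}\le\log t$ is correct, but it is not even needed once you replace Proposition~\ref{prop:IDpsiweak} by the one-derivative rate.
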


\begin{proof}
We proceed as in the proof of Proposition \ref{prop:IDrhoweak}, but now we treat
\begin{align*}
\cI_{\ep,1}&= \mathrm{P}(y)\int_{B_\delta(y) \cap (\vartheta_1,\vartheta_2)}e^{-ikv(y_0)t}\left( \frac{\varphi_{k,\ep}^-(y,y_0)}{v(y) - v(y_0) - i\ep} - \frac{\varphi_{k,\ep}^+(y,y_0)}{v(y) - v(y_0) + i\ep} \right) v'(y_0) \d y_0\\
&=:\cI_{\ep,1}^- - \cI_{\ep,1}^+
\end{align*}
and
\begin{align*}
\cI_{\ep,2} &= \mathrm{P}(y)\int_{B_\delta^c(y)\cap B_{\delta_0}(y) \cap (\vartheta_1,\vartheta_2)}e^{-ikv(y_0)t}\left( \frac{\varphi_{k,\ep}^-(y,y_0)}{v(y) - v(y_0) - i\ep} - \frac{\varphi_{k,\ep}^+(y,y_0)}{v(y) - v(y_0) + i\ep} \right) v'(y_0) \d y_0 \\
&\quad +\mathrm{P}(y)\int_{B_{\delta_0}^c(y) \cap (\vartheta_1,\vartheta_2)}e^{-ikv(y_0)t}\left( \frac{\varphi_{k,\ep}^-(y,y_0)}{v(y) - v(y_0) - i\ep} - \frac{\varphi_{k,\ep}^+(y,y_0)}{v(y) - v(y_0) + i\ep} \right) v'(y_0) \d y_0 \\
&= \mathrm{P}(y) \left( \cI_{\ep,2,1} + \cI_{\ep,2,2} \right)
\end{align*}
differently. Since $\frac{v'(y_0)}{v(y) - v(y_0) \pm i\ep} = -\partial_{y_0} \log (k(v(y) - v(y_0) \pm i\ep))$, we now integrate by parts to obtain
\begin{align*}
\cI_{\ep,1}^\pm &= -\mathrm{P}(y)e^{-ikv(y_0)t} {\varphi_{k,\ep}^\pm(y,y_0)} \log (k({v(y) - v(y_0) \pm i\ep}))  \Big|_{y_0\in B_\delta(y)\cap (\vartheta_1,\vartheta_2)} \\
&\quad -ikt \mathrm{P}(y)\int_{B_\delta(y)\cap (\vartheta_1,\vartheta_2)}  e^{-ikv(y_0)t}{\varphi_{k,\ep}^\pm(y,y_0)} \log (k({v(y) - v(y_0) \pm i\ep})) v'(y_0) \d y_0 \\
&\quad +\mathrm{P}(y) \int_{B_\delta(y)\cap (\vartheta_1,\vartheta_2)}  e^{-ikv(y_0)t}\partial_{y_0}{\varphi_{k,\ep}^\pm(y,y_0)} \log (k({v(y) - v(y_0) \pm i\ep}))   \d y_0 \\
&= \mathrm{P}(y) \left( -\cI_{\ep,1,1}^\pm - ikt\cI_{\ep,1,2}^\pm + \cI_{\ep,1,3}^\pm \right).
\end{align*}
The local decomposition of Proposition \ref{prop:Xkvarphi} gives
\begin{align*}
\left| \lim_{\ep\rightarrow 0} \cI_{\ep,1,1}^\pm \right| \lesssim k^{-\frac12} (k\delta)^{\frac12-\mu(y)} \log(k\delta)\cS_{k,0}.
\end{align*}
Likewise, we also have
\begin{align*}
\left| \lim_{\ep\rightarrow 0} \cI_{\ep,1,2}^\pm \right| \lesssim k^{-\frac32} (k\delta)^{\frac32-\mu(y)} \left( 1 + \log(k\delta) \right)\cS_{k,0}
\end{align*}
while we lastly use Proposition \ref{prop:Xkpynotvarphi} to deduce that
\begin{align*}
\left| \lim_{\ep\rightarrow 0} \cI_{\ep,1,3}^\pm \right| \lesssim k^{-\frac32} (k\delta)^{\frac12-\mu(y)} \left( 1 + \log(k\delta) \right)\cS_{k,1}.
\end{align*}
We remark here that $\left( \partial_{y_0}\varphi_{k,\ep}^\pm \right)_{\s} \lesssim (1-2\mu(y))$. We next address $\cI_{\ep,2,1}$, where we use the local decomposition of Proposition \ref{prop:Xkvarphi} to write
\begin{align*}
\left| \lim_{\ep\rightarrow} \cI_{\ep,2,1} \right| \lesssim k^{\frac12}\cS_{k,0}\int_{y+\delta}^{y+\delta_0} (k|y-y_0|)^{-1} \d y_0 \lesssim k^{-\frac12}\log\left(\frac{\delta_0}{\delta} \right)\cS_{k,0}.
\end{align*}
Finally, the global estimates of Proposition \ref{prop:Xkvarphi} provide
\begin{align*}
\left| \lim_{\ep\rightarrow} \cI_{\ep,2,2} \right| \lesssim k^\frac12\cS_{k,0}.
\end{align*}
Combining together the estimates and setting $\delta=\frac{\delta_0}{4t}$ we obtain the desired result.
\end{proof}

Using the ideas of Propositions \ref{prop:IDpsistrong}, \ref{prop:IDpsimild} and proceeding as in Proposition~\ref{prop:IDrhoweak} we also have

 \begin{proposition}\label{prop:IDrhomild}
Let $k\geq 1$ and $t\geq 1$. Then,
\begin{align*}
\left| \rho_k(t,y) \right| \lesssim k^{-\frac12}t^{-\frac12+\mu(y)}(1+\log(t))\cS_{k,1}
\end{align*}
for all $y\in [\varpi_{1,1},\varpi_{1,2}]\cup [\varpi_{2,1}, \varpi_{2,2}]$.
\end{proposition}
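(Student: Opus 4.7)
The plan is to adapt the argument of Proposition \ref{prop:IDrhoweak} to the mildly stratified regime, substituting the $X_k$ decompositions and estimates of Propositions \ref{prop:Xkvarphi} and \ref{prop:Xkpynotvarphi} by the logarithmically corrected $LX_k$ versions provided by Propositions \ref{prop:LXkvarphi} and \ref{prop:LXkpynotvarphi}. The starting point is the representation \eqref{eq:defrhokep}, from which we set $\delta_0 = 3\beta/k$ and, for $\delta\in(0,\delta_0/2)$, split
\begin{align*}
\rho_{k,\ep}(t,y) = \cI_{\ep,1} + \frac{1}{ikt}\bigl(-\cI_{\ep,2} + \cI_{\ep,3} + \cI_{\ep,4}\bigr),
\end{align*}
exactly as in the proof of Proposition \ref{prop:IDrhoweak}: $\cI_{\ep,1}$ is the local integral on $B_\delta(y)$, $\cI_{\ep,2}$ the boundary term produced by integration by parts in $y_0$, $\cI_{\ep,3}$ the integral on $B_\delta^c(y)\cap B_{\delta_0}(y)\cap(\vartheta_1,\vartheta_2)$ after integration by parts, and $\cI_{\ep,4}$ the integral on $B_{\delta_0}^c(y)\cap(\vartheta_1,\vartheta_2)$. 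Contributions from $y_0=\vartheta_1,\vartheta_2$ in $\cI_{\ep,2}$ vanish as $\ep\to0$ thanks to Lemma \ref{lemma:vanishingpsiIE}.

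For $\cI_{\ep,1}$, since $\delta<3\beta/k$, $y\in I_3(y_0)$ for all $y_0\in B_\delta(y)$, and $y_0$ lies in the mild regime, so Proposition \ref{prop:LXkvarphi} gives the decomposition $\varphi_{k,\ep}^\pm = \varphi_\sr\eta^{1/2+\gamma_0} + \varphi_\s\,\eta^{1/2-\gamma_0}\log(\eta)\Q_{\gamma_0}(\eta)$. Dividing by $v(y)-v(y_0)\pm i\ep$ and using $\mathrm{P}(y)/(\tfrac12-\mu(y))\lesssim1$ together with the standard comparisons $(k|y-y_0|)^{\pm\mu_0}\lesssim(k|y-y_0|)^{\pm\mu(y)}$ on $B_\delta(y)$, one integrates $\int_0^\delta u^{-\frac12\pm\mu(y)}(1+|\log u|)\,\d u$ to obtain
\begin{align*}
|\cI_{\ep,1}|\lesssim k^{-\frac12}(k\delta)^{\frac12-\mu(y)}\bigl(1-\log(k\delta)\bigr)\cS_{k,0}.
\end{align*}
For $\cI_{\ep,2}$ at $y_0\in\partial B_\delta(y)$, the same local decomposition gives $|\cI_{\ep,2}|\lesssim k^{1/2}(k\delta)^{-1/2-\mu(y)}(1-\log(k\delta))\cS_{k,0}$. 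For $\cI_{\ep,3}$, the derivative $\partial_{y_0}\bigl(\varphi_{k,\ep}^\pm/(v(y)-v(y_0)\pm i\ep)\bigr)$ combines the $LX_k$ control of $\partial_{y_0}\varphi_{k,\ep}^\pm$ from Proposition \ref{prop:LXkpynotvarphi} with a quadratic factor $(v(y)-v(y_0)\pm i\ep)^{-2}$; integrating the three profiles $\eta^{-3/2+\gamma_0}$, $\eta^{-3/2-\gamma_0}$ and $\eta^{-3/2-\gamma_0}\log(\eta)\Q_{\gamma_0}(\eta)$ over $B_\delta^c(y)\cap B_{\delta_0}(y)$, together with $\mathrm{P}(y)$ absorbing the borderline factor $1/(\tfrac12-\mu(y))$, yields $|\cI_{\ep,3}|\lesssim k^{-1/2}(k\delta)^{-1/2-\mu(y)}(1-\log(k\delta))\cS_{k,1}$. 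Finally, $\cI_{\ep,4}$ is controlled by the global pointwise bounds of Propositions \ref{prop:Xkvarphi}, \ref{prop:LXkvarphi}, \ref{prop:Xkpynotvarphi} and \ref{prop:LXkpynotvarphi}, giving $|\cI_{\ep,4}|\lesssim k^{1/2}\cS_{k,1}$.

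Gathering these estimates,
\begin{align*}
|\rho_{k,\ep}(t,y)|\lesssim k^{-\frac12}(k\delta)^{\frac12-\mu(y)}\bigl(1-\log(k\delta)\bigr)\cS_{k,0} + (kt)^{-1}k^{\frac12}(k\delta)^{-\frac12-\mu(y)}\bigl(1-\log(k\delta)\bigr)\cS_{k,1},
\end{align*}
and the choice $\delta=\beta/(kt)$, legitimate for $t\ge1$, yields the claimed bound $|\rho_k(t,y)|\lesssim k^{-1/2}t^{-1/2+\mu(y)}(1+\log t)\cS_{k,1}$. The main obstacle will be the careful bookkeeping of the logarithmic factor $\log(\eta)\Q_{\gamma_0}(\eta)$ arising from the mild-regime decomposition of $\varphi_{k,\ep}^\pm$ and $\partial_{y_0}\varphi_{k,\ep}^\pm$: one must verify that upon multiplication by the density kernel $\mathrm{P}(y)/(v(y)-v(y_0)\pm i\ep)^m$ ($m=1,2$) the resulting integrands remain integrable uniformly in $\gamma_0\to0$, and that the $\mathrm{P}(y)$ prefactor cancels the borderline singular factor $(\tfrac12-\mu(y))^{-1}$ appearing in the coefficient of the $\s$-component, exactly as it does in the weak case. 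Once this cancellation is verified, the rest is a routine adaptation of the proofs of Propositions \ref{prop:IDrhoweak} and \ref{prop:IDpsimild}.
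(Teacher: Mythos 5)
Your proposal is correct and follows precisely the route the paper indicates: the paper itself states only that Proposition~\ref{prop:IDrhomild} follows by ``using the ideas of Propositions~\ref{prop:IDpsistrong},~\ref{prop:IDpsimild} and proceeding as in Proposition~\ref{prop:IDrhoweak},'' which is exactly the substitution you carry out --- replace the $X_k$ decomposition of $\varphi_{k,\ep}^\pm$ and $\partial_{y_0}\varphi_{k,\ep}^\pm$ by the $LX_k$ decomposition with the extra $\log(\eta)\Q_{\gamma_0}(\eta)$ factor, track it through the same four integrals $\cI_{\ep,1},\dots,\cI_{\ep,4}$, and optimise at $\delta\sim 1/(kt)$. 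One minor remark: the cancellation $\mathrm{P}(y)/(\tfrac12-\mu(y))\lesssim1$ that you flag as the ``main obstacle'' is the genuinely delicate point in the \emph{weak} regime (where $\mu(y)\to\tfrac12$); in the mild regime $\mu(y)$ is small, so $\tfrac12-\mu(y)$ is uniformly bounded from below and the factor is harmless --- the real new bookkeeping is just the logarithm, which you also identify and handle correctly.
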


\begin{proposition}\label{prop:IDrhostrong}
Let $k\geq 1$ and $t\geq 1$. Then,
\begin{align*}
\left| \rho_k(t,y) \right| \lesssim k^{-\frac12}t^{-\frac12}\cS_{k,1}
\end{align*}
for all $y\in (\varpi_{1,2},\varpi_{2,1})$.
\end{proposition}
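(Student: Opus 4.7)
The plan is to mimic almost verbatim the proof of Proposition \ref{prop:IDrhoweak}, replacing the weakly stratified regularity estimates with their strong counterparts. The central observation is that for $y \in (\varpi_{1,2}, \varpi_{2,1})$ we have $\cJ(y) > \tfrac14$, so $\gamma_0 = i\nu_0$ with $\nu_0 > 0$ bounded, hence $\mu(y)=0$ and the factors $|\eta^{\pm \gamma_0}| = e^{\mp \nu_0 \arg(\eta)}$ are uniformly bounded whenever $|\eta|$ is so. This is exactly the feature that will eliminate both the $t^{\mu(y)}$ degeneration seen in Proposition \ref{prop:IDrhoweak} and the logarithmic loss seen in Proposition \ref{prop:IDrhomild}, producing the clean $t^{-1/2}$ rate.

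Concretely, for $\delta_0 = \tfrac{3\beta}{k}$ and $\delta \in (0,\tfrac{\delta_0}{2})$, I would split the representation
\begin{align*}
\rho_{k,\ep}(t,y) = \cI_{\ep,1} + \frac{1}{ikt}\bigl( -\cI_{\ep,2} + \cI_{\ep,3} + \cI_{\ep,4} \bigr),
\end{align*}
exactly as in the proof of Proposition \ref{prop:IDrhoweak}: $\cI_{\ep,1}$ is the local integral over $B_\delta(y)$, $\cI_{\ep,2}$ collects the boundary contributions from integration by parts in $y_0$ (the contributions at $y_0 = \vartheta_1, \vartheta_2$ vanish in the limit thanks to Lemma \ref{lemma:vanishingpsiIE}), while $\cI_{\ep,3}$ and $\cI_{\ep,4}$ are the integrals over $B_\delta^c(y) \cap B_{\delta_0}(y)$ and $B_{\delta_0}^c(y)$ respectively. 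For $\cI_{\ep,1}$ I use the local decomposition of Proposition \ref{prop:Xkvarphi} and the boundedness of $|\eta^{\pm i\nu_0}|$ to obtain $|\cI_{\ep,1}| \lesssim k^{-1/2}(k\delta)^{1/2}\cS_{k,0}$; the same local bound applied at $|y-y_0| = \delta$ yields $|\lim_{\ep\to 0} \cI_{\ep,2}| \lesssim k^{1/2}(k\delta)^{-1/2}\cS_{k,0}$. For $\cI_{\ep,3}$ the local decomposition of Proposition \ref{prop:Xkpynotvarphi} for $\partial_{y_0}\varphi_{k,\ep}^\pm$ (again with bounded factors $|\eta^{\pm i\nu_0}|$) gives $|\cI_{\ep,3}| \lesssim k^{-1/2}(k\delta)^{-1/2}\cS_{k,1}$, and for $\cI_{\ep,4}$ the global pointwise bounds of the same proposition yield $|\cI_{\ep,4}| \lesssim k^{1/2}\cS_{k,1}$.

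Collecting these estimates and using $|\mathrm{P}(y)| \lesssim 1$ in the strong region, I obtain
\begin{align*}
|\rho_k(t,y)| \lesssim k^{-1/2}(k\delta)^{1/2}\cS_{k,1} + (kt)^{-1} k^{1/2}(k\delta)^{-1/2}\cS_{k,1},
\end{align*}
and the choice $k\delta = \beta/t$, which is admissible for $t \geq 1$, balances the two terms and delivers the claimed $t^{-1/2}$ rate. I do not expect any serious obstacle: the entire argument is a routine transcription of the weak-regime proof, with the simplification that $\mu(y) \equiv 0$ throughout $(\varpi_{1,2},\varpi_{2,1})$ and that all scaling factors $(k\delta)^{\pm \mu_0}$ are replaced by uniformly bounded quantities. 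The only small point worth double-checking is the uniformity of the implicit constants in $\nu_0$, which however is guaranteed by the fact that $\cJ$ is continuous and $\overline{(\varpi_{1,2},\varpi_{2,1})}$ is a compact set on which $\cJ > \tfrac14$ is bounded both above and below.
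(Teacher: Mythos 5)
Your proposal is correct and is essentially the proof the paper has in mind: the authors explicitly state that Propositions~\ref{prop:IDrhomild} and~\ref{prop:IDrhostrong} follow by ``proceeding as in Proposition~\ref{prop:IDrhoweak}'' with the ideas of Proposition~\ref{prop:IDpsistrong}. Your transcription with the same four-term decomposition, the same use of Propositions~\ref{prop:Xkvarphi} and~\ref{prop:Xkpynotvarphi}, the observation that $\gamma_0=i\nu_0$ makes the local factors $|\eta^{\pm\gamma_0}|$ uniformly bounded (so $\mu(y)\equiv 0$), and the choice $k\delta=\beta/t$ is exactly the argument; the uniformity in $\nu_0$ that you flag is already built into the statements of Propositions~\ref{prop:Xkvarphi}--\ref{prop:Xkpynotpynotvarphi}, which are uniform over $y_0\in I_S\cup I_W$.
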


\subsection{Proof of Theorem \ref{thm:mainID}}

The proof of Theorem \ref{thm:mainID} follows from Parseval identity,
\begin{align*}
\Vert v^x(t,x,y) \Vert_{L^2_x} &= \left( \sum_{k\in \Z} |v^x_k(t,y)|^2 \right)^\frac12 = \left( 2\sum_{k\geq 1} |\partial_y \psi_k(t,y)|^2 \right)^\frac12,  \\
\Vert v^y(t,x,y) \Vert_{L^2_x} &= \left( \sum_{k\in \Z} |v^y_k(t,y)|^2 \right)^\frac12 = \left( 2\sum_{k\geq 1} k^2|\psi_k(t,y)|^2 \right)^\frac12, \\
\Vert \rho(t,x,y) \Vert_{L^2_x} &= \left( \sum_{k\in \Z} |\rho_k(t,y)|^2 \right)^\frac12 = \left( 2\sum_{k\geq 1} |\rho_k(t,y)|^2 \right)^\frac12,
\end{align*}
the decay estimates from Propositions \ref{prop:IDpsistrong} - \ref{prop:IDrhoweak} 
and the observations that
\begin{align*}
\left( \sum_{k>0} k^{-3}\cS_{k,2}^2 \right) & \lesssim \Vert \omega^0 \Vert_{H^{1/2}_x H^3_y} + \Vert \varrho ^0 \Vert_{H^{1/2}_x H^4_y}, 
\end{align*}
together with
\begin{align*}
\left( \sum_{k>0} k^{-1}\cS_{k,2}^2 \right) &\lesssim\Vert \omega^0 \Vert_{H^{3/2}_x H^3_y} + \Vert \varrho^0 \Vert_{H^{3/2}_x H^4_y}, 
\end{align*}
and
\begin{align*}
\left( \sum_{k>0} k^{-1}\cS_{k,1}^2 \right) &\lesssim \Vert \omega^0 \Vert_{H^{1/2}_x H^2_y } + \Vert \varrho^0 \Vert_{H^{1/2}_x H^3_y}.
\end{align*}

\section{Growth of vorticity and density gradients}\label{sec:growth}
In this last section we address Theorem \ref{thm:growth}. 

\subsection{Growth of vorticity}
Since $\omega_k(t,y) = \Delta_k\psi_k(t,y)$, we have from \eqref{eq:psioscint} that
\begin{align}\label{eq:omegaoscint}
\omega_k(t,y) = \frac{1}{2\pi i}\lim_{\ep\rightarrow 0} \int_{\vartheta_1}^{\vartheta_2} e^{-ikv(y_0)t}\left( \Delta_k\varphi_{k,\ep}^-(y,y_0) - \Delta_k\varphi_{k,\ep}^+(y,y_0) \right) v'(y_0) \d y_0
\end{align}

\begin{proposition}\label{prop:growthomegaweak}
Let $k\geq 1$ and $y\in (\vartheta_1,\varpi_{1,1})\cup (\varpi_{2,2},\vartheta_2)$. Then,
\begin{align*}
|\omega_k(t,y)| \lesssim k^{-\frac12}t^{\frac12+\mu(y)}\cS_{k,2},
\end{align*}
for all $t\geq 1$.
\end{proposition}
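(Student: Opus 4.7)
The approach is to start from the oscillatory representation~\eqref{eq:omegaoscint} and substitute the Taylor--Goldstein equation~\eqref{eq:TGeqvarphiSobolevReg} to decompose
\begin{align*}
\Delta_k\varphi_{k,\ep}^\pm(y,y_0) &= \frac{v''(y)\varphi_{k,\ep}^\pm}{v(y)-v(y_0)\pm i\ep} - \frac{\P(y)\varphi_{k,\ep}^\pm}{(v(y)-v(y_0)\pm i\ep)^2} \\
&\quad + \frac{w_k^0(y)}{v(y)-v(y_0)\pm i\ep} + q_k^0(y).
\end{align*}
The $q_k^0$ contribution cancels in the $\pm$ difference; by the Plemelj formula the $w_k^0$ source yields a bounded-in-time contribution $w_k^0(y)e^{-ikv(y)t}$, controlled by $|w_k^0(y)|\lesssim k^{1/2}\Vert w_k^0\Vert_{H_k^1} \leq k^{-1/2}\cS_{k,2}$ via Lemma~\ref{lemma:LinfH1bound}; and the $v''$ term is handled by the inviscid damping methodology of Proposition~\ref{prop:IDpsiweak} applied to $v''(y)\varphi_{k,\ep}^\pm$, since it in fact decays like $t^{-3/2+\mu(y)}$ and is dominated by the growing term.

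The decisive contribution is the double-singular term $\P(y)\varphi_{k,\ep}^\pm/(v(y)-v(y_0)\pm i\ep)^2$, which is not uniformly integrable as $\ep\to 0$. The plan is to exploit the identity $v'(y_0)/(v(y)-v(y_0)\pm i\ep)^2 = \partial_{y_0}[1/(v(y)-v(y_0)\pm i\ep)]$ and integrate by parts in $y_0$ over $(\vartheta_1,\vartheta_2)$. Boundary terms at $y_0=\vartheta_1,\vartheta_2$ are finite since $|v(y)-v(\vartheta_i)|$ is bounded below for $y\in(\vartheta_1,\varpi_{1,1})\cup(\varpi_{2,2},\vartheta_2)$, and their $\ep$-behaviour is tamed using the fragile-regime analysis of Section~\ref{sec:fragile}. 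The bulk term splits into two pieces: one where $\partial_{y_0}$ falls on the phase $e^{-ikv(y_0)t}$, producing an $ikt$ factor multiplying the density resolvent $\rho_{k,\ep}(t,y)$, and one where $\partial_{y_0}$ falls on $\varphi_{k,\ep}^\pm$, which is then rewritten via the good-derivative identity $\partial_{y_0}\varphi_{k,\ep}^\pm = \varphi_{1,k,\ep}^\pm - \partial_y\varphi_{k,\ep}^\pm$.

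The first piece, combined with the density inviscid damping bound $|\rho_k(t,y)|\lesssim k^{-1/2}t^{-1/2+\mu(y)}\cS_{k,1}$ from Proposition~\ref{prop:IDrhoweak}, yields a contribution of order $kt\cdot k^{-1/2}t^{-1/2+\mu(y)}\cS_{k,1}\leq k^{-1/2}t^{1/2+\mu(y)}\cS_{k,2}$ using $k\cS_{k,1}\leq \cS_{k,2}$; this is precisely the claimed growth rate. The second piece is controlled by applying the inviscid damping methodology of Propositions~\ref{prop:IDpsiweak}--\ref{prop:IDpypsiweak} to the auxiliary quantities $\varphi_{1,k,\ep}^\pm$ and $\partial_y\psi_k$, producing subleading contributions. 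The main obstacle will be precisely the cubic-type singularity of the double-singular term near $y_0=y$: the backward integration by parts that converts this regularity cost into a factor of $kt$ via the oscillatory phase is the essential mechanism responsible for the sublinear growth, and one must carefully verify that all ensuing boundary and lower-order terms are absorbed by the $X_k$ estimates of Section~\ref{sec:SobolevReg} and by the inviscid damping bounds of Section~\ref{sec:IDestimates}.
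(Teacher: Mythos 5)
Your overall strategy---substituting the Taylor--Goldstein equation for $\Delta_k\varphi_{k,\ep}^\pm$, integrating by parts in $y_0$ to pick up a factor of $ikt$ in front of the density resolvent, and then invoking the inviscid damping of $\rho_k$---is essentially the alternative route the paper itself flags in Remark~\ref{rmk:growthlineq}, where the author notes that Theorem~\ref{thm:growth} can be proved by integrating the linearised equations \eqref{eq:linEBomegarho} and feeding in the decay rates for $\psi_k$, $\partial_y\psi_k$ and $\rho_k$. The paper's main proof is genuinely different: it uses the algebraic identity $\Delta_k\varphi_{k,\ep}^\pm=\partial_{y_0}^2\varphi_{k,\ep}^\pm+2\partial_y\varphi_{1,k,\ep}^\pm-\varphi_{2,k,\ep}^\pm-k^2\varphi_{k,\ep}^\pm$ and the $\delta$-splitting around the critical layer directly on $\partial_{y_0}^2\varphi_{k,\ep}^\pm$, which is self-contained and does not require recycling the density inviscid damping. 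Both routes are legitimate in principle, and the $ikt\,\rho_{k,\ep}$ mechanism you identify is indeed the correct source of the $t^{\frac12+\mu(y)}$ growth.

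However, your execution of the final step contains a genuine gap. After integrating by parts the $\P$-weighted term, the remaining bulk integral is $\int e^{-ikv(y_0)t}\,\P(y)\,\frac{\partial_{y_0}\varphi_{k,\ep}^\pm(y,y_0)}{v(y)-v(y_0)\pm i\ep}\,\d y_0$, and you propose to control it by writing $\partial_{y_0}\varphi_{k,\ep}^\pm=\varphi_{1,k,\ep}^\pm-\partial_y\varphi_{k,\ep}^\pm$ and treating the two halves separately. The $\varphi_{1,k,\ep}^\pm$ half is fine, since $\varphi_{1,k,\ep}^\pm\in X_k$ gives a local profile $\sim\eta^{\frac12-\gamma_0}$, so the integrand $\sim\eta^{-\frac12-\gamma_0}$ is integrable. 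But the $\partial_y\varphi_{k,\ep}^\pm$ half is not: by Proposition~\ref{prop:Xkvarphi}, $\partial_y\varphi_{k,\ep}^\pm$ carries a local piece $\sim k(\tfrac12-\gamma_0)\bigl(\varphi_{k,\ep}^\pm\bigr)_\s\,\eta^{-\frac12-\gamma_0}$, so that $\frac{\P\,\partial_y\varphi_{k,\ep}^\pm}{v-v(y_0)\pm i\ep}\sim\eta^{-\frac32-\gamma_0}$ near the critical layer, which is \emph{not} uniformly integrable as $\ep\to0$. Moreover, this quantity is not $\partial_y\psi_k(t,y)$: the latter is $\int e^{-ikv(y_0)t}\partial_y\varphi_{k,\ep}^\pm\,v'(y_0)\,\d y_0$ with no extra factor of $(v(y)-v(y_0)\pm i\ep)^{-1}$, so Proposition~\ref{prop:IDpypsiweak} does not apply. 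The correct manoeuvre, which the paper carries out in the proof of Proposition~\ref{prop:growthpyrhoweak}, is to write $\partial_y=(\partial_y+\partial_{y_0})-\partial_{y_0}$ acting on the \emph{product} $\frac{\varphi_{k,\ep}^\pm}{v(y)-v(y_0)\pm i\ep}$ rather than on $\varphi_{k,\ep}^\pm$ alone: then $(\partial_y+\partial_{y_0})\tfrac{\varphi_{k,\ep}^\pm}{v(y)-v(y_0)\pm i\ep}=\tfrac{\varphi_{1,k,\ep}^\pm}{v(y)-v(y_0)\pm i\ep}-\tfrac{(v'(y)-v'(y_0))\,\varphi_{k,\ep}^\pm}{(v(y)-v(y_0)\pm i\ep)^2}$, and the factor $v'(y)-v'(y_0)\sim(y-y_0)$ restores integrability. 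Alternatively, one integrates by parts a second time in $y_0$ on the non-integrable piece, which converts the extra regularity cost into an additional $ikt$ factor in front of a now integrable term; either fix closes the argument, but neither appears in your write-up. As a minor point, the decay claimed for the $v''$ contribution is $t^{-\frac12+\mu(y)}$ (it is structurally the same as $\rho_k$, cf.\ Proposition~\ref{prop:IDrhoweak}) rather than $t^{-\frac32+\mu(y)}$; this is harmless since it is subdominant either way.
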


\begin{proof}
We observe that 
\begin{align*}
\Delta_k\varphi_{k,\ep}^\pm(y,y_0) &= \partial_y^2\varphi_{k,\ep}^\pm - k^2\varphi_{k,\ep}^\pm \\
&= \partial_{y_0}^2\varphi_{k,\ep}^\pm + 2\partial_y\varphi_{1,k,\ep}^\pm(y,y_0) - \varphi_{2,k,\ep}^\pm - k^2\varphi_{k,\ep}^\pm.
\end{align*}
Moreover, it is an exercise to see that, thanks to the local and global pointwise bounds of Propositions \ref{prop:Xkvarphi}, \ref{prop:Xkvarphi1} and \ref{prop:Xkvarphi2}, there holds
\begin{align*}
\left| \frac{1}{2\pi i}\lim_{\ep\rightarrow 0} \int_{\vartheta_1}^{\vartheta_2} e^{-ikv(y_0)t}\left( 2\partial_y\varphi_{1,k,\ep}^\pm(y,y_0) - \varphi_{2,k,\ep}^\pm - k^2\varphi_{k,\ep}^\pm \right) v'(y_0) \d y_0 \right| \lesssim k^{-\frac12}\cS_{k,2}
\end{align*}
Hence, in view of \eqref{eq:omegaoscint}, we shall consider 
\begin{align*}
\omega_{1,k}(t,y) = \frac{1}{2\pi i}\lim_{\ep\rightarrow 0} \int_{\vartheta_1}^{\vartheta_2} e^{-ikv(y_0)t}\left( \partial_{y_0}^2\varphi_{k,\ep}^-(y,y_0) - \partial_{y_0}^2\varphi_{k,\ep}^+(y,y_0) \right) v'(y_0) \d y_0.
\end{align*}
Assume $y\in (\varpi_{2,2},\vartheta_2)$, let $\delta_0=\min\left( \frac{3\beta}{k}, \frac{\varpi_{2,2}-\varpi_2}{2}\right) $ and $\delta\in (0,\frac{\delta_0}{2})$. We shall assume that $\delta_0=\frac{3\beta}{k}$ since otherwise the proof follows with obvious modifications. Given the singular behaviour of $\partial_{y_0}^2\varphi_{k,\ep}^\pm$ described in Proposition \ref{prop:Xkpynotpynotvarphi} near the critical layer, set
\begin{align*}
\cI_{\ep,1} = \int_{B_\delta(y)\cap(\vartheta_1,\vartheta_2)} e^{-ikv(y_0)t}\left( \partial_{y_0}^2\varphi_{k,\ep}^-(y,y_0) - \partial_{y_0}^2\varphi_{k,\ep}^+(y,y_0) \right) v'(y_0) \d y_0
\end{align*}
and
\begin{align*}
\cI_{\ep,2} = \int_{B_\delta^c(y)\cap(\vartheta_1,\vartheta_2)} e^{-ikv(y_0)t}\left( \partial_{y_0}^2\varphi_{k,\ep}^-(y,y_0) - \partial_{y_0}^2\varphi_{k,\ep}^+(y,y_0) \right) v'(y_0) \d y_0.
\end{align*}
To estimate these two contributions, we shall use the ideas presented in the proof of Proposition \ref{prop:IDpsiweak}. 

\bullpar{Estimates for $\cI_{\ep,1}$} For $\cI_{\ep,1}$ we now integrate by parts the singular $\partial_{y_0}^2\varphi_{k,\ep}^\pm$ term, yielding
\begin{align*}
\cI_{\ep,1} &= e^{-ikv(y_0)t} \left( \partial_{y_0}\varphi_{k,\ep}^-(y,y_0) - \partial_{y_0}\varphi_{k,\ep}^+(y,y_0) \right) v'(y_0) \Big|_{y_0\in \partial B_\delta(y)\cap (\vartheta_1,\vartheta_2)} \\
& \quad + ikt\int_{B_\delta(y)\cap(\vartheta_1, \vartheta_2)} e^{-ikv(y_0)t} \left( \partial_{y_0}\varphi_{k,\ep}^-(y,y_0) - \partial_{y_0}\varphi_{k,\ep}^+(y,y_0) \right) (v'(y_0))^2 \d y_0 \\
&\quad - \int_{B_\delta(y)\cap(\vartheta_1, \vartheta_2)} e^{-ikv(y_0)t} \left( \partial_{y_0}\varphi_{k,\ep}^-(y,y_0) - \partial_{y_0}\varphi_{k,\ep}^+(y,y_0) \right) v''(y_0) \d y_0 \\
&= \cI_{\ep,1,1} + ikt \cI_{\ep,1,2} - \cI_{\ep,1,3}.
\end{align*}
It follows from the local estimates of Proposition \ref{prop:Xkpynotvarphi} that
\begin{align*}
\left| \cI_{\ep,1,2} \right| + \left| \cI_{\ep,1,3} \right| \lesssim k^{-\frac32}(k\delta)^{\frac12-\mu(y)}\cS_{k,1}.
\end{align*}
Proposition \ref{prop:Xkpynotvarphi} also shows that for $|y_0-y|=\delta$ with $y_0>\vartheta_2$ there holds
\begin{align*}
\left| \cI_{\ep,1,1} \right| \lesssim k^{-\frac12}(k\delta)^{-\frac12-\mu(y)} \cS_{k,1}
\end{align*}
while if $y_0\rightarrow \vartheta_2\in B_\delta(y)$ then
\begin{align*}
\left| \cI_{\ep,1,1}\right| \lesssim k^{-\frac12} \left( (k|y-\vartheta_2|)^{-\frac12+\mu(y)} + (\tfrac12 -\mu(y)) (k|y-\vartheta_2|)^{-\frac12-\mu(y)} \right) \cS_{k,1} \lesssim k^{-\frac12}\cS_{k,1}
\end{align*}
because
\begin{align*}
(k|y-\vartheta_2|)^{-\frac12+\mu(y)} = e^{(\mu(y) - \mu(\vartheta_2))\log(k|y-\vartheta_2|)} \lesssim e^{ck^{-1}} \lesssim 1
\end{align*}
for some $c>0$ bounded and
\begin{align*}
\left| (\tfrac12 -\mu(y)) (k|y-\vartheta_2|)^{-\frac12-\mu(y)} \right| = \left| (\mu(\vartheta_2) -\mu(y)) (k|y-\vartheta_2|)^{-\frac12-\mu(y)} \right| \lesssim k^{-1} (k|y-\vartheta_2|)^{\frac12-\mu(y)}\lesssim 1.
\end{align*}
Hence, we conclude that
\begin{align*}
\left| \cI_{\ep,1} \right| \lesssim k^{-\frac12}(k\delta)^{\frac12-\mu(y)} \left( t + (k\delta)^{-1} \right) \cS_{k,1}.
\end{align*}

\bullpar{Estimates for $\cI_{\ep, 2}$} We further define
\begin{align*}
\cI_{\ep,2,1} &= \int_{B_\delta^c(y)\cap B_{\delta_0}(y)\cap(\vartheta_1,\vartheta_2)} e^{-ikv(y_0)t}\left( \partial_{y_0}^2\varphi_{k,\ep}^-(y,y_0) - \partial_{y_0}^2\varphi_{k,\ep}^+(y,y_0) \right) v'(y_0) \d y_0 \\
\cI_{\ep,2,2} &= \int_{ B_{\delta_0}^c(y)\cap(\vartheta_1,\vartheta_2)} e^{-ikv(y_0)t}\left( \partial_{y_0}^2\varphi_{k,\ep}^-(y,y_0) - \partial_{y_0}^2\varphi_{k,\ep}^+(y,y_0) \right) v'(y_0) \d y_0.
\end{align*}
so that $\cI_{\ep,2} = \cI_{\ep,2,1} + \cI_{\ep,2,2}$. The global pointwise bounds of Proposition \ref{prop:Xkpynotpynotvarphi} readily give 
\begin{align*}
\left| \cI_{\ep,2,2} \right| \lesssim k^{-\frac12}\cS_{k,2}.
\end{align*}
Similarly, we use the local estimates on $\partial_{y_0}^2\varphi_{k,\ep}^\pm$ from Proposition \ref{prop:Xkpynotpynotvarphi} to integrate the singularities and find that
\begin{align*}
\left| \cI_{\ep,2,1} \right| \lesssim k^{-\frac32}(k\delta)^{-\frac12-\mu(y)}\cS_{k,2}.
\end{align*}
Therefore, we obtain $\left| \cI_{\ep,2} \right| \lesssim k^{-\frac12}(k\delta)^{-\frac12-\mu(y)} \cS_{k,2}$

\bullpar{End of proof} Gathering the estimate of $\cI_{\ep,1}$ and $\cI_{\ep,2}$, we deduce that
\begin{align*}
\left| \omega_{1,k}(t,y) \right| \lesssim k^{-\frac12}(k\delta)^{\frac12-\mu(y)}( t + (k\delta)^{-1} )\cS_{k,2}
\end{align*}
Optimizing in $\delta>0$, we find that $\delta= \frac{\beta}{kt}$ gives the desired result.
\end{proof}

As before, we can further localise the estimates near the fragile regime, losing some time-decay. 

\begin{proposition}\label{prop:improvedgrowthomegaweak}
Let $k\geq 1$ and $y\in (\varpi_{2,2},2]$. Then
\begin{align*}
\left| \omega_k(t,y) \right| \lesssim k^{\frac12}\min( 1+(\vartheta_2-y)_+ t, t^{\frac12+\mu(y)} ) \cS_{k,2}
\end{align*}
for all $t\geq 1$.
\end{proposition}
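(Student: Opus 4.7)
The plan is to establish the two branches of the minimum separately. The $t^{1/2+\mu(y)}$ branch is immediate from Proposition~\ref{prop:growthomegaweak}, since the sharper prefactor $k^{-1/2}$ there is no worse than the claimed $k^{1/2}$. The new content is the bound $k^{1/2}(1+(\vartheta_2-y)_+t)\cS_{k,2}$, which is trivial for $y\in[\vartheta_2,2]$ by the compact-support propagation \eqref{eq:growthomeganonstrat}, so the substantive case is $y\in(\varpi_{2,2},\vartheta_2)$. Following the strategy of Remark~\ref{rmk:growthlineq}, I would multiply the vorticity equation in \eqref{eq:linEBomegarho} by the integrating factor $e^{ikv(y)t}$ to obtain
\[
\omega_k(t,y)=e^{-ikv(y)t}\omega_k^0(y)+ik\!\int_0^t e^{-ikv(y)(t-s)}\bigl(v''(y)\psi_k(s,y)-\mathfrak{g}\rho_k(s,y)\bigr)\,\d s,
\]
so that $|\omega_k(t,y)|\leq|\omega_k^0(y)|+k\|v''\|_\infty\!\int_0^t|\psi_k|\,\d s+k\mathfrak{g}\!\int_0^t|\rho_k|\,\d s$. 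The initial-data term is controlled by Lemma~\ref{lemma:LinfH1bound} by $|\omega_k^0(y)|\lesssim k^{1/2}\cS_{k,0}\lesssim k^{-3/2}\cS_{k,2}$.

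For the velocity contribution I would split the time integral as $[0,1]\cup[1,t]$: on $[0,1]$ use a uniform bound $\|\psi_k(s,\cdot)\|_{L^\infty}\lesssim k^{-5/2}\cS_{k,2}$ obtained from the $X_k$ estimates on $\varphi_{k,\ep}^\pm$ in Proposition~\ref{prop:Xkvarphi} together with Lemma~\ref{lemma:LinfH1bound}, while on $[1,t]$ apply the improved damping estimate of Proposition~\ref{prop:improvedIDpsiweak}. Integration produces a bound of order $k^{-3/2}\bigl(1+(\vartheta_2-y)_+\log t\bigr)\cS_{k,2}$, which is absorbed by $k^{1/2}(1+(\vartheta_2-y)_+t)\cS_{k,2}$ since $\log t\leq t$ for $t\geq 1$.

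The delicate piece is the density integral. Because $\mathrm{P}\in C^2$ vanishes quadratically at $\vartheta_2$, one has $\mathrm{P}(y)\lesssim(\vartheta_2-y)^2$ uniformly in $y\in(\varpi_{2,2},\vartheta_2)$, and Corollary~\ref{cor:improvedIDrhoweak} gives
\[
|\rho_k(s,y)|\lesssim k^{-1/2}\min\!\bigl(s^{-1/2+\mu(y)},\,(\vartheta_2-y)^2(1+\log s)\bigr)\cS_{k,1}.
\]
Splitting the $s$-integral at the crossover threshold $s_*$ where the two arguments of the $\min$ balance and using the interpolation $k\cS_{k,1}\leq\cS_{k,2}$, one arrives at
\[
k\mathfrak{g}\!\int_0^t|\rho_k(s,y)|\,\d s\lesssim k^{-1/2}\min\!\bigl((\vartheta_2-y)^2 t(1+\log t),\,t^{1/2+\mu(y)}\bigr)\cS_{k,2}.
\]

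The main obstacle is to convert this estimate, which carries an unwanted factor $(1+\log t)$, into the clean linear-in-$(\vartheta_2-y)_+t$ rate in the claim. The idea is a case analysis: in the sub-regime $(\vartheta_2-y)_+t\lesssim 1$ one has $(\vartheta_2-y)_+(1+\log t)\lesssim 1$ (using $\log t\leq t$), hence $(\vartheta_2-y)^2 t(1+\log t)\lesssim(\vartheta_2-y)_+t$; in the complementary sub-regime the $t^{1/2+\mu(y)}$ component of the min in the statement is already smaller than $1+(\vartheta_2-y)_+t$ and is supplied by Proposition~\ref{prop:growthomegaweak}. Executing this dichotomy uniformly in $k\geq 1$ and $y\in(\varpi_{2,2},\vartheta_2)$, and removing the residual logarithm in the intermediate range via the regularization of the governing equation for $\omega_k$ alluded to at the end of the sketch in Section~\ref{sec:sketch}, is where the argument requires the most care.
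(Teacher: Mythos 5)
Your proposal follows the time-integral route of Remark~\ref{rmk:growthlineu} --- wait, the paper labels it Remark~\ref{rmk:growthlineq} --- whereas the paper proves Proposition~\ref{prop:improvedgrowthomegaweak} by a spectral argument: one differentiates $e^{ikv(y)t}\omega_{k,\ep}(t,y)$ in $y$ via the Dunford representation, integrates by parts in $y_0$, invokes the vanishing of $\omega_k(t,\vartheta_2)$ from Proposition~\ref{prop:growthomeganonstrat}, and integrates from $y$ to $\vartheta_2$, producing three double integrals $\omega_{1,k,\ep},\omega_{2,k,\ep},\omega_{3,k,\ep}$ which are then estimated using Propositions~\ref{prop:Xkvarphi}, \ref{prop:Xkvarphi1}, \ref{prop:Xkvarphi2} and \ref{prop:growthomegaweak}. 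The paper explicitly notes that the time-integral method (which suffices for Theorem~\ref{thm:growth}) was \emph{not} used for the localized Theorem~\ref{thm:growthlocalweak}, and the difficulty you encounter with the density integral is exactly why.

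The gap is in your dichotomy for the density contribution. You claim that ``in the complementary sub-regime $(\vartheta_2-y)_+t\gtrsim 1$ the $t^{1/2+\mu(y)}$ component of the min in the statement is already smaller than $1+(\vartheta_2-y)_+t$''. This is false: since $\mu(y)>0$ on $(\varpi_{2,2},\vartheta_2)$, one has $t^{1/2+\mu(y)}>t^{1/2}$, so taking $t=(\vartheta_2-y)^{-2}$ gives $(\vartheta_2-y)_+t=(\vartheta_2-y)^{-1}\gg1$ while $t^{1/2+\mu(y)}>(\vartheta_2-y)^{-1}\approx(\vartheta_2-y)_+t$; the $\min$ in the statement then selects the first branch $1+(\vartheta_2-y)_+t$, and falling back to Proposition~\ref{prop:growthomegaweak} only yields the larger quantity $t^{1/2+\mu(y)}$, which does not establish the claim. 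In that regime your density estimate is $\lesssim k^{-1/2}(\vartheta_2-y)^2\,t(1+\log t)\,\cS_{k,2}$, and the requirement $(\vartheta_2-y)(1+\log t)\lesssim 1$ needed to absorb the logarithm into $(\vartheta_2-y)_+t$ fails for $t$ of order $e^{c/(\vartheta_2-y)}$ or larger. The $k$-margin between your prefactor $k^{-1/2}$ and the statement's $k^{1/2}$ gives no uniform help since $k=1$ is allowed. Your closing sentence, deferring the removal of the residual logarithm to ``the regularization of the governing equation for $\omega_k$ alluded to at the end of the sketch in Section~\ref{sec:sketch}'', in effect points at the paper's spectral representation, which is a different proof, not a repair of yours; so the key step remains unproved. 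The velocity contribution and the $t^{1/2+\mu(y)}$ branch of the claim are handled correctly; it is the $1+(\vartheta_2-y)_+t$ branch, and specifically its density piece, that is not established.
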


\begin{proof}
Similar to the arguments that lead to Proposition \ref{prop:improvedIDpsiweak}, we now compute
\begin{align*}
\partial_y \left( e^{ikv(y)t}\omega_{k,\ep}(t,y) \right) &= \left( ikv'(y) t \omega_{k,\ep}(t,y) + \partial_y \omega_{k,\ep}(t,y) \right) e^{ikv(y) t} \\
&=ikv'(y)t e^{ikv(y) t}\int_{\vartheta_1}^{\vartheta_2}e^{-ikv(y_0)t}\left( \Delta_k\varphi_{k,\ep}^-(y,y_0) - \Delta_k\varphi_{k,\ep}^+(y,y_0) \right) v'(y_0) \d y_0 \\
&\quad+ e^{ikv(y)t}\int_{\vartheta_1}^{\vartheta_2}e^{-ikv(y_0)t}\left( \Delta_k\varphi_{1,k,\ep}^-(y,y_0) - \Delta_k\varphi_{1,k,\ep}^+(y,y_0) \right) v'(y_0) \d y_0 \\
&\quad - e^{ikv(y)t}\int_{\vartheta_1}^{\vartheta_2}e^{-ikv(y_0)t} \partial_{y_0}\left( \Delta_k\varphi_{k,\ep}^-(y,y_0) - \Delta_k\varphi_{k,\ep}^+(y,y_0) \right) v'(y_0) \d y_0.
\end{align*}
Integrating by parts, we obtain
\begin{align*}
- \int_{\vartheta_1}^{\vartheta_2}e^{-ikv(y_0)t} &\partial_{y_0}\left( \Delta_k\varphi_{k,\ep}^-(y,y_0) - \Delta_k\varphi_{k,\ep}^+(y,y_0) \right) v'(y_0) \d y_0 \\
&= - e^{-ikv(y_0)t} \left( \Delta_k\varphi_{k,\ep}^-(y,y_0) - \Delta_k\varphi_{k,\ep}^+(y,y_0) \right) v'(y_0) \Big|_{y_0=\vartheta_1}^{y_0=\vartheta_2} \\
&\quad -ikt \int_{\vartheta_1}^{\vartheta_2}e^{-ikv(y_0)t} \left( \Delta_k\varphi_{k,\ep}^-(y,y_0) - \Delta_k\varphi_{k,\ep}^+(y,y_0) \right) (v'(y_0))^2 \d y_0 \\
&\quad + \int_{\vartheta_1}^{\vartheta_2}e^{-ikv(y_0)t} \left( \Delta_k\varphi_{k,\ep}^-(y,y_0) - \Delta_k\varphi_{k,\ep}^+(y,y_0) \right) v''(y_0) \d y_0.
\end{align*}
Moreover, due to the support assumptions on the initial data, $v''(y)$ and $\P(y)$ we further have from Lemma \ref{lemma:vanishingpsiIE} that
\begin{align*}
\lim_{\ep\rightarrow 0}\int_{y}^{\vartheta_2}e^{-ik(v(y_0)-v(y))t} \left( \Delta_k\varphi_{k,\ep}^-(y,y_0) - \Delta_k\varphi_{k,\ep}^+(y,y_0) \right) v'(y_0) \Big|_{y_0=\vartheta_1}^{y_0=\vartheta_2} \d y = 0.
\end{align*}
Hence, since $\omega_k(t,\vartheta_2)=0$, see Proposition \ref{prop:growthomeganonstrat} below, we can write
\begin{align*}
\lim_{\ep\rightarrow 0}\omega_{k}(t,y) = -\lim_{\ep\rightarrow 0} \left( ikt \omega_{1,k,\ep}(t,y) + \omega_{2,k,\ep}(t,y) + \omega_{3,k,\ep}(t,y) \right)
\end{align*}
where
\begin{align*}
\omega_{1,k,\ep}(t,y) &:=\int_y^{\vartheta_2} \int_{\vartheta_1}^{\vartheta_2}e^{-ik(v(y_0) - v(z)) t} \left( \Delta_k\varphi_{k,\ep}^-(z,y_0) - \Delta_k\varphi_{k,\ep}^+(z,y_0) \right) (v'(z) - v'(y_0))v'(y_0) \d y_0 \d z, \\
\omega_{2,k,\ep}(t,y) &:= \int_{y}^{\vartheta_2} \int_{\vartheta_1}^{\vartheta_2}e^{-ik(v(y_0)-v(z))t}\left( \Delta_k\varphi_{1,k,\ep}^-(z,y_0) - \Delta_k\varphi_{1,k,\ep}^+(z,y_0) \right) v'(y_0) \d y_0 \d z \\
\omega_{3,k,\ep}(t,y) &:= \int_y^{\vartheta_2} \int_{\vartheta_1}^{\vartheta_2}e^{-ik(v(y_0)-v(z))t} \left( \Delta_k\varphi_{k,\ep}^-(z,y_0) - \Delta_k\varphi_{k,\ep}^+(z,y_0) \right) v''(y_0) \d y_0 \d z.
\end{align*}
Appealing to Proposition \ref{prop:growthomegaweak}, it is not difficult to see that 
\begin{align*}
\left| \omega_{3,k,\ep}(t,y) \right| \lesssim k^{-\frac12}\cS_{k,2}\int_y^{\vartheta_2} t^{\frac12+\mu(z)} \d z \lesssim k^{-\frac12}(\vartheta_2-y)t \cS_{k,2},
\end{align*}
while we can argue similarly, with the help of  \eqref{eq:TGeqvarphiSobolevReg}, Proposition \ref{prop:Xkvarphi} and $\P(y)\lesssim (1-2\mu(y))$ to deduce that
\begin{align*}
\left| \omega_{1,k,\ep}(t,y) \right| \lesssim \int_y^{\vartheta_2}k^{-\frac12} \cS_{k,2} \d z \lesssim k^{-\frac12}(\vartheta_2-y)\cS_{k,2}.
\end{align*}
Now, for $\omega_{2,k,\ep}(t,y)$, let $\delta_0=\frac{3\beta}{k}$, we write
\begin{align*}
\omega_{2,k,\ep}(t,y) = \int_y^{\vartheta_2} \left( \cI_{\ep,1}(t,z) + \cI_{\ep,2}(t,z) \right) \d z
\end{align*}
with
\begin{align*}
\cI_{\ep,1}(t,z) &:= \int_{B_{\delta_0}(z)\cap (\vartheta_1,\vartheta_2)} e^{-ik(v(y_0)-v(z))t}\left( \Delta_k\varphi_{1,k,\ep}^-(z,y_0) - \Delta_k\varphi_{1,k,\ep}^+(z,y_0) \right) v'(y_0) \d y_0 \\
\cI_{\ep,2}(t,z) &:= \int_{B_{\delta_0}^c(z)\cap (\vartheta_1,\vartheta_2)} e^{-ik(v(y_0)-v(z))t}\left( \Delta_k\varphi_{1,k,\ep}^-(z,y_0) - \Delta_k\varphi_{1,k,\ep}^+(z,y_0) \right) v'(y_0) \d y_0 \\
\end{align*}
We can easily use the global bounds of Proposition \ref{prop:Xkvarphi1} to have
\begin{align*}
\left| \int_y^{\vartheta_2}\cI_{\ep,2}(t,z) \d z  \right| \lesssim k^\frac12(\vartheta_2-y)\cS_{k,2}.
\end{align*}
Instead, for $\cI_{\ep,1}$ we note that
\begin{align*}
\Delta_k\varphi_{1,k,\ep}^\pm(z,y_0) = \partial_y\varphi_{2,k,\ep}^\pm(z,y_0) - \partial_{y_0}\partial_y\varphi_{1,k,\ep}^\pm(z,y_0) - k^2\varphi_{1,k,\ep}^\pm(z,y_0)
\end{align*}
so that
\begin{align*}
\cI_{\ep,1} &= \int_{B_{\delta_0}(z)\cap (\vartheta_1,\vartheta_2)} e^{-ik(v(y_0)-v(z))t}\left( \partial_y\varphi_{2,k,\ep}^-(z,y_0) - \partial_y\varphi_{2,k,\ep}^+(z,y_0) \right) v'(y_0) \d y_0 \\ 
&\quad - \int_{B_{\delta_0}(z)\cap (\vartheta_1,\vartheta_2)} e^{-ik(v(y_0)-v(z))t}\left( \partial_{y_0}\partial_y\varphi_{1,k,\ep}^-(z,y_0) - \partial_{y_0}\partial_y\varphi_{1,k,\ep}^+(z,y_0) \right) v'(y_0) \d y_0 \\
&\quad - k^2\int_{B_{\delta_0}(z)\cap (\vartheta_1,\vartheta_2)} e^{-ik(v(y_0)-v(z))t}\left( \varphi_{1,k,\ep}^-(z,y_0) - \varphi_{1,k,\ep}^+(z,y_0) \right) v'(y_0) \d y_0 \\
&= \cI_{\ep,1,1} - \cI_{\ep,1,2} - \cI_{\ep,1,3}
\end{align*}
Using the local estimates of Propositions \ref{prop:Xkvarphi1} and \ref{prop:Xkvarphi2} we obtain
\begin{align*}
\left| \cI_{\ep,1,1} \right| + \left| \cI_{\ep,1,3} \right| \lesssim k^{-\frac12}\cS_{k,2}.
\end{align*}
For $\cI_{\ep,1,2}$ we note that $|\partial_{y_0}\partial_y\varphi_{1,k,\ep}^\pm |$ is too singular to be integrated directly. Hence, we integrate the $\partial_{y_0}$ derivative, 
\begin{align*}
\cI_{\ep,1,2} &=  e^{-ik(v(y_0)-v(z))t}\left( \partial_y\varphi_{1,k,\ep}^-(z,y_0) - \partial_y\varphi_{1,k,\ep}^+(z,y_0) \right) v'(y_0) \Big|_{y_0\in \partial B_{\delta_0}(z)\cap (\vartheta_1,\vartheta_2)} \\
&\quad + ikt \int_{B_{\delta_0}(z)\cap (\vartheta_1,\vartheta_2)} e^{-ik(v(y_0)-v(z))t}\left( \partial_y\varphi_{1,k,\ep}^-(z,y_0) - \partial_y\varphi_{1,k,\ep}^+(z,y_0) \right) (v'(y_0))^2 \d y_0 \\
&\quad - \int_{B_{\delta_0}(z)\cap (\vartheta_1,\vartheta_2)} e^{-ik(v(y_0)-v(z))t}\left( \partial_y\varphi_{1,k,\ep}^-(z,y_0) - \partial_y\varphi_{1,k,\ep}^+(z,y_0) \right) v''(y_0) \d y_0 \\
&= \cI_{\ep,1,2,1} + ikt \cI_{\ep,1,2,2} - \cI_{\ep,1,2,3}
\end{align*}
According to the local estimates of Proposition \ref{prop:Xkvarphi1} we have
\begin{align*}
\left| \cI_{\ep,1,2,1} \right|  \lesssim k^\frac12\cS_{k,1}, \quad \sum_{j=2}^3\left| \cI_{\ep,1,2,j} \right| \lesssim k^{-\frac12}\cS_{k,1}
\end{align*}
from which we conclude that $\left| \cI_{\ep,1,2} \right| \lesssim k^{-\frac12}t \cS_{k,2}$, for all $t\geq 1$. Therefore, $\left| \cI_{\ep,1} \right| \lesssim k^{-\frac12} t \cS_{k,2}$ and thus
\begin{align*}
\left| \int_y^{\vartheta_2} \cI_{\ep,1}(t,z) \d z \right| \lesssim k^{-\frac12}(\vartheta_2-y) t \cS_{k,2}.
\end{align*}
With this, the proposition follows.
\end{proof}

Following the ideas of Proposition \ref{prop:growthomegaweak}, it is an exercise to prove

\begin{proposition}\label{prop:growthomegamild}
Let $k\geq 1$ and $y\in [\varpi_{1,1},\varpi_{1,2}]\cup [\varpi_{2,1},\varpi_{2,2}]$. Then,
\begin{align*}
|\omega_k(t,y)| \lesssim k^{-\frac12}t^{\frac12+\mu(y)}(1+\log(t))\cS_{k,2},
\end{align*}
for all $t\geq 1$.
\end{proposition}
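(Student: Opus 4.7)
The plan is to follow the same blueprint as the proof of Proposition \ref{prop:growthomegaweak}, replacing each invocation of the strong/weak regularity structures by their mild-regime counterparts in Propositions \ref{prop:LXkvarphi}, \ref{prop:LXkvarphi1}, \ref{prop:LXkvarphi2}, \ref{prop:LXkpynotvarphi}, \ref{prop:LXkpynotpynotvarphi}, and tracking the additional logarithmic factor that appears because of the $\eta^{-\tfrac12-\gamma_0}\log(\eta)\Q_{\gamma_0}(\eta)$ piece in the decompositions. Starting from \eqref{eq:omegaoscint} I would use the identity
\begin{equation*}
\Delta_k\varphi_{k,\ep}^\pm(y,y_0)=\partial_{y_0}^2\varphi_{k,\ep}^\pm+2\partial_y\varphi_{1,k,\ep}^\pm-\varphi_{2,k,\ep}^\pm-k^2\varphi_{k,\ep}^\pm,
\end{equation*}
and observe, using the uniform local and global pointwise estimates of Propositions \ref{prop:LXkvarphi}, \ref{prop:LXkvarphi1}, \ref{prop:LXkvarphi2}, that the contribution of the three non-singular summands to \eqref{eq:omegaoscint} is bounded by $k^{-1/2}\cS_{k,2}$, which is harmless. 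Therefore the estimate reduces to controlling
\begin{equation*}
\omega_{1,k}(t,y)=\frac{1}{2\pi i}\lim_{\ep\to0}\int_{\vartheta_1}^{\vartheta_2}e^{-ikv(y_0)t}\bigl(\partial_{y_0}^2\varphi_{k,\ep}^-(y,y_0)-\partial_{y_0}^2\varphi_{k,\ep}^+(y,y_0)\bigr)v'(y_0)\,\d y_0.
\end{equation*}

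Next, with $\delta_0=3\beta/k$ and $\delta\in(0,\delta_0/2)$ to be optimized, split the integral into $\cI_{\ep,1}$ over $B_\delta(y)\cap(\vartheta_1,\vartheta_2)$ and $\cI_{\ep,2}$ over $B_\delta^c(y)\cap(\vartheta_1,\vartheta_2)$. For $\cI_{\ep,1}$, integrate by parts once using the oscillatory factor $e^{-ikv(y_0)t}$, exactly as in the weak case: the boundary term produces $e^{-ikv(y_0)t}(\partial_{y_0}\varphi_{k,\ep}^--\partial_{y_0}\varphi_{k,\ep}^+)v'(y_0)$ evaluated on $\partial B_\delta(y)$ (contributions from $y_0\to\vartheta_1,\vartheta_2$ vanish as $\ep\to0$ by Lemma \ref{lemma:vanishingpsiIE}), while the two integral remainders involve $\partial_{y_0}\varphi_{k,\ep}^\pm$ weighted by smooth factors of $v'$ and $v''$. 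Plugging in the $LX_k$ decomposition of $\partial_{y_0}\varphi_{k,\ep}^\pm$ from Proposition \ref{prop:LXkpynotvarphi} and integrating the $\eta^{-\frac12\pm\gamma_0}$ and $\eta^{-\frac12-\gamma_0}\log(\eta)\Q_{\gamma_0}(\eta)$ pieces against $\cosh$-bounded exponentials yields, using $\frac{\P(y)}{\frac12-\mu(y)}\lesssim 1$ and the local replacement $(k|y-y_0|)^{-\mu_0}\lesssim(k|y-y_0|)^{-\mu(y)}$,
\begin{equation*}
|\cI_{\ep,1}|\lesssim k^{-1/2}(k\delta)^{\frac12-\mu(y)}\bigl(1+|\log(k\delta)|\bigr)\bigl(t+(k\delta)^{-1}\bigr)\cS_{k,2}.
\end{equation*}
For $\cI_{\ep,2}$, split into $B_\delta^c\cap B_{\delta_0}$ and $B_{\delta_0}^c$; on the former integrate the local decomposition and bounds of Proposition \ref{prop:LXkpynotpynotvarphi}, which produce a factor $(k\delta)^{-\frac12-\mu(y)}(1+|\log(k\delta)|)$, and on the latter apply the global pointwise estimate $\lesssim k^{-1/2}\cS_{k,2}$ from the same proposition, giving
\begin{equation*}
|\cI_{\ep,2}|\lesssim k^{-1/2}(k\delta)^{-\frac12-\mu(y)}\bigl(1+|\log(k\delta)|\bigr)\cS_{k,2}.
\end{equation*}

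Combining both bounds yields
\begin{equation*}
|\omega_{1,k}(t,y)|\lesssim k^{-1/2}(k\delta)^{\frac12-\mu(y)}\bigl(1+|\log(k\delta)|\bigr)\bigl(t+(k\delta)^{-1}\bigr)\cS_{k,2}.
\end{equation*}
The natural optimization $\delta=\beta/(kt)$ (so $(k\delta)=\beta/t$ and $t\gtrsim(k\delta)^{-1}$) produces the factor $t^{\frac12+\mu(y)}$, while the logarithm becomes $1+\log t$. This gives the claimed bound $|\omega_k(t,y)|\lesssim k^{-1/2}t^{\frac12+\mu(y)}(1+\log t)\cS_{k,2}$. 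The main technical point to be careful about is the uniform handling of the $\log(\eta)\Q_{\gamma_0}(\eta)$ factor in the local decompositions near the transition $\cJ(y_0)\to\tfrac14$: one must verify that $\Q_{\gamma_0}$ stays uniformly bounded (as in \eqref{eq:boundQgamma}) and that $x^{-\frac12-\mu(y)}\log x$ remains integrable against the $\cosh$-weight with constants independent of the location of $y_0$ inside $I_M$. Once these are in hand the proof is a routine adaptation of the weak-regime argument.
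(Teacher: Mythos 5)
Your proposal is correct and tracks precisely the route the paper indicates: the paper itself defers this proposition to ``following the ideas of Proposition~\ref{prop:growthomegaweak}, it is an exercise,'' and you have carried out that exercise faithfully. The substitution of the $LX_k$ decompositions (Propositions~\ref{prop:LXkvarphi}, \ref{prop:LXkvarphi1}, \ref{prop:LXkvarphi2}, \ref{prop:LXkpynotvarphi}, \ref{prop:LXkpynotpynotvarphi}) for the $X_k$ ones, the appearance of the $\log(\eta)\Q_{\gamma_0}(\eta)$ weight, the replacement $(k|y-y_0|)^{-\mu_0}\lesssim(k|y-y_0|)^{-\mu(y)}$ on $B_\delta$, and the optimization $k\delta=\beta/t$ producing $t^{\frac12+\mu(y)}(1+\log t)$ are all exactly as the paper's weak-regime argument adapted to the mild regime should go.
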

We also have
\begin{proposition}\label{prop:growthomegastrong}
Let $k\geq 1$ and $y\in (\varpi_{1,2}, \varpi_{2,1})$. Then,
\begin{align*}
|\omega_k(t,y)| \lesssim k^{-\frac12}t^{\frac12}\cS_{k,2},
\end{align*}
for all $t\geq 1$.
\end{proposition}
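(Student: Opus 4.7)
The plan is to adapt the scheme of Proposition~\ref{prop:growthomegaweak} to the strongly stratified regime, using that on $I_S$ one has $\gamma_0 = i\nu_0$ purely imaginary, so that $\mu(y)=0$ everywhere on $(\varpi_{1,2},\varpi_{2,1})$ and the pointwise bounds of Propositions~\ref{prop:Xkpynotvarphi}, \ref{prop:Xkpynotpynotvarphi} involve only the universal $|\eta|^{-1/2}$ singularity. Starting from
\[
\Delta_k\varphi_{k,\ep}^\pm(y,y_0) \;=\; \partial_{y_0}^2\varphi_{k,\ep}^\pm + 2\partial_y\varphi_{1,k,\ep}^\pm - \varphi_{2,k,\ep}^\pm - k^2\varphi_{k,\ep}^\pm,
\]
the local and global pointwise bounds in Propositions~\ref{prop:Xkvarphi}, \ref{prop:Xkvarphi1}, \ref{prop:Xkvarphi2} (with $\mu\equiv 0$) show that the three tail terms contribute at most $k^{-1/2}\cS_{k,2}$ to $\omega_k(t,y)$. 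So it suffices to control
\[
\omega_{1,k}(t,y) := \frac{1}{2\pi i}\lim_{\ep\to 0}\int_{\vartheta_1}^{\vartheta_2} e^{-ikv(y_0)t}\bigl( \partial_{y_0}^2\varphi_{k,\ep}^-(y,y_0) - \partial_{y_0}^2\varphi_{k,\ep}^+(y,y_0) \bigr) v'(y_0)\,\d y_0.
\]

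The next step is to choose $\delta_0 = \min(3\beta/k,\tfrac12\mathrm{dist}(y,\{\varpi_{1,2},\varpi_{2,1}\}))$ and $\delta\in(0,\delta_0/2)$, and split $\omega_{1,k}=\cI_{\ep,1}+\cI_{\ep,2}$ according to $B_\delta(y)$ and $(\vartheta_1,\vartheta_2)\setminus B_\delta(y)$. On the local piece $\cI_{\ep,1}$, I integrate by parts in $y_0$, dispatching one derivative onto the oscillatory factor $e^{-ikv(y_0)t}$. This produces a boundary contribution on $\partial B_\delta(y)$ together with a bulk integral multiplied by $ikt$, both involving only the more regular $\partial_{y_0}\varphi_{k,\ep}^\pm$. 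Using Proposition~\ref{prop:Xkpynotvarphi} with $\gamma_0=i\nu_0$ (so $|\eta^{-1/2\pm\gamma_0}|=|\eta|^{-1/2}$) one obtains
\[
|\cI_{\ep,1}| \;\lesssim\; k^{-1/2}(k\delta)^{-1/2}\cS_{k,1} + t\cdot k^{-1/2}(k\delta)^{1/2}\cS_{k,1}.
\]
On the non-local piece $\cI_{\ep,2}$, the local decomposition of Proposition~\ref{prop:Xkpynotpynotvarphi} integrates the $|\eta|^{-3/2}$-type singularity on $B_\delta^c(y)\cap B_{\delta_0}(y)$ to give $k^{-3/2}(k\delta)^{-1/2}\cS_{k,2}$, while the global pointwise bound on $B_{\delta_0}^c(y)$ contributes $k^{-1/2}\cS_{k,2}$; in total $|\cI_{\ep,2}|\lesssim k^{-1/2}(k\delta)^{-1/2}\cS_{k,2}$.

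Combining these estimates yields
\[
|\omega_{1,k}(t,y)| \;\lesssim\; k^{-1/2}(k\delta)^{1/2}\bigl(t + (k\delta)^{-1}\bigr)\cS_{k,2},
\]
and optimizing in $\delta$ via the choice $k\delta = 1/t$ (which lies in $(0,\delta_0/2)$ for $t\geq 1$ and $k$ large; small $k$ is handled trivially by absorbing into constants) gives $|\omega_{1,k}(t,y)|\lesssim k^{-1/2}t^{1/2}\cS_{k,2}$, as claimed. The main technical check is verifying that the boundary contribution from $\partial B_\delta(y)$ indeed only feels the $|\eta|^{-1/2}$ singularity uniformly in $y\in(\varpi_{1,2},\varpi_{2,1})$; this is the point where $\mu(y)=0$ eliminates the logarithmic loss present in the mild case and the degeneration present in the weak case, and is why the exponent $t^{1/2}$ is clean.
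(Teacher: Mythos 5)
Your proposal is correct and follows essentially the same scheme as the paper's intended argument: it specializes the proof of Proposition~\ref{prop:growthomegaweak} to the strongly stratified case, using the identity $\Delta_k\varphi_{k,\ep}^\pm = \partial_{y_0}^2\varphi_{k,\ep}^\pm + 2\partial_y\varphi_{1,k,\ep}^\pm - \varphi_{2,k,\ep}^\pm - k^2\varphi_{k,\ep}^\pm$, the same local/nonlocal split, integration by parts on $B_\delta(y)$, and the optimization $k\delta\sim t^{-1}$, with $\mu(y)\equiv 0$ cleaning up the exponent. The only cosmetic difference is that you chose $\delta_0$ depending on $\mathrm{dist}(y,\{\varpi_{1,2},\varpi_{2,1}\})$, whereas the paper uses the fixed margin $\tfrac12\mathsf{m}$ from Proposition~\ref{prop:IDpsistrong} so that $B_{\delta_0}(y)\subset I_S$ uniformly and the optimal $\delta$ is picked via $\delta=\tfrac{1}{kt}\min(\beta,k\mathsf{m}/4)$; either works, but the paper's version avoids the degeneration discussion as $y\to\varpi_{1,2}^+$ or $y\to\varpi_{2,1}^-$.
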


Moreover, due to the compact support of the initial data and $v''(y)$ and $\P(y)$, we have from \eqref{eq:omegaoscint} and \eqref{eq:TGeqvarphiSobolevReg} the following.
\begin{proposition}\label{prop:growthomeganonstrat}
Let $k\geq 1$. Then, $\omega_k(t,y) =0$, for all $y\in [0,\vartheta_1] \cup [\vartheta_2,2]$ and $t\geq 1$.
\end{proposition}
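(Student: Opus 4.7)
\smallskip

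The plan is to argue directly from the spectral representation \eqref{eq:omegaoscint} combined with the reduced Taylor--Goldstein equation \eqref{eq:TGeqvarphiSobolevReg}. The key observation is that, for $y$ in the non-stratified region, every term appearing on the right-hand side of \eqref{eq:TGeqvarphiSobolevReg} vanishes identically, so the generalised vorticity $\Delta_k\varphi_{k,\ep}^\pm(y,y_0)$ is zero there for every admissible spectral parameter $y_0\in(\vartheta_1,\vartheta_2)$ and every $\ep>0$.

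More concretely, I would start from
\begin{align*}
\omega_k(t,y)=\frac{1}{2\pi i}\lim_{\ep\to 0}\int_{\vartheta_1}^{\vartheta_2} e^{-ikv(y_0)t}\bigl(\Delta_k\varphi_{k,\ep}^-(y,y_0)-\Delta_k\varphi_{k,\ep}^+(y,y_0)\bigr)v'(y_0)\,\d y_0,
\end{align*}
which has already incorporated the vanishing of the spectral density outside the stratified region via Lemma~\ref{lemma:vanishingpsiIE}. For $y\in[0,\vartheta_1]\cup[\vartheta_2,2]$, I would then invoke \eqref{eq:TGeqvarphiSobolevReg} to write
\begin{align*}
\Delta_k\varphi_{k,\ep}^\pm(y,y_0)=\frac{v''(y)}{v(y)-v(y_0)\pm i\ep}\varphi_{k,\ep}^\pm-\frac{\P(y)}{(v(y)-v(y_0)\pm i\ep)^2}\varphi_{k,\ep}^\pm+\frac{w_k^0(y)}{v(y)-v(y_0)\pm i\ep}+q_k^0(y),
\end{align*}
and check the vanishing of each factor: $v''(y)=0$ by hypothesis H$v$, $\P(y)=0$ by hypothesis H$\P$, the regularised source $w_k^0(y)=\omega_k^0(y)-v''(y)\varrho_k^0(y)$ is zero because both $\omega_k^0$ and $v''$ are compactly supported inside $(\vartheta_1,\vartheta_2)$, and finally $q_k^0(y)=\Delta_k\varrho_k^0(y)=0$ since $\Delta_k=\partial_y^2-k^2$ is a local differential operator and $\varrho_k^0$ is supported in $(\vartheta_1,\vartheta_2)$.

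Consequently $\Delta_k\varphi_{k,\ep}^\pm(y,y_0)=0$ pointwise for every $y\in[0,\vartheta_1]\cup[\vartheta_2,2]$, $y_0\in(\vartheta_1,\vartheta_2)$ and $\ep\in(0,\ep_*)$, and the integrand in the displayed oscillatory integral is identically zero; passing to the limit yields $\omega_k(t,y)=0$ for all $t\geq 1$. There is no real analytical obstacle: the entire statement is a compatibility check between the compact support assumptions on $\omega^0,\,\rho^0,\,v''$ and $\P$ and the locality of $\Delta_k$, and the only minor point worth spelling out is the last one, namely that the regularisation $\varphi=\psi+\varrho^0$ indeed preserves the zero-support of the source outside $[\vartheta_1,\vartheta_2]$.
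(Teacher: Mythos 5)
Your argument is correct and coincides with the paper's own (one-line) justification: Proposition~\ref{prop:growthomeganonstrat} is derived in the paper directly from \eqref{eq:omegaoscint} and \eqref{eq:TGeqvarphiSobolevReg} using the compact supports of the initial data, $v''$, and $\P$. You have merely spelled out that computation, including the worthwhile remark that the regularisation $\varphi_{k,\ep}^\pm=\psi_{k,\ep}^\pm+\varrho_k^0$ keeps $q_k^0=\Delta_k\varrho_k^0$ supported in $(\vartheta_1,\vartheta_2)$ because $\Delta_k$ is a local operator and $\varrho_k^0$ vanishes on a neighbourhood of the non-stratified region.
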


\subsection{Growth of density gradients}
We now show how to obtain the growth bounds on the density gradient. We shall prove them for $\partial_y\rho_k(t,y)$. We now have

\begin{proposition}\label{prop:growthpyrhoweak}
Let $k\geq 1$ and $y\in (\vartheta_0,\varpi_{1,1})\cup (\varpi_{2,2},\vartheta_2)$. Then,
\begin{align*}
|\partial_y\rho_k(t,y)| \lesssim k^{-\frac12}\left( \mathrm{P}'(y)(1+\log(t)) + t\min(t^{-\frac12+\mu(y)}, \mathrm{P}(y)(1+\log(t)) \right)\cS_{k,2}
\end{align*}
for all $t\geq 1$.
\end{proposition}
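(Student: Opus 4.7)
The plan is to split
\begin{equation*}
\partial_y\rho_k(t,y) = \mathrm{P}'(y)\tilde\rho_k(t,y) + \mathrm{P}(y)\partial_y\tilde\rho_k(t,y),
\end{equation*}
with $\tilde\rho_k(t,y) := \frac{1}{2\pi i}\lim_{\ep\to 0}\int_{\vartheta_1}^{\vartheta_2} e^{-ikv(y_0)t}\mathcal{A}_\ep(y,y_0) v'(y_0)\,\d y_0$ and $\mathcal{A}_\ep(y,y_0) := \frac{\varphi_{k,\ep}^-(y,y_0)}{v(y)-v(y_0)-i\ep} - \frac{\varphi_{k,\ep}^+(y,y_0)}{v(y)-v(y_0)+i\ep}$, so that $\rho_k = \mathrm{P}(y)\tilde\rho_k$. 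For the first summand I would reuse the proof of Corollary~\ref{cor:improvedIDrhoweak}, which in fact establishes the $y$-uniform bound $|\tilde\rho_k(t,y)|\lesssim k^{-1/2}(1+\log t)\cS_{k,1}$ \emph{before} the $\mathrm{P}(y)$ prefactor is reinstated; this immediately delivers the $k^{-1/2}\mathrm{P}'(y)(1+\log t)\cS_{k,2}$ contribution to the estimate.

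For the main summand $\mathrm{P}(y)\partial_y\tilde\rho_k$, I exploit the good-derivative decomposition $\partial_y\mathcal{A}_\ep = (\partial_y+\partial_{y_0})\mathcal{A}_\ep - \partial_{y_0}\mathcal{A}_\ep$. On the $\partial_{y_0}$-piece, an integration by parts in $y_0$ transfers the derivative onto the oscillatory factor $e^{-ikv(y_0)t}v'(y_0)$; the boundary contributions at $y_0=\vartheta_1,\vartheta_2$ vanish in the limit $\ep\to 0$ by Lemma~\ref{lemma:vanishingpsiIE} (since $v(y)\neq v(\vartheta_n)$ for $y\in(\varpi_{2,2},\vartheta_2)$), and what remains is the sum of $-ikt$ times a $\tilde\rho_k$-type integral (with $(v'(y_0))^2$ in place of $v'(y_0)$) and a lower-order term carrying $v''(y_0)$. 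Applying Corollary~\ref{cor:improvedIDrhoweak} to this $\tilde\rho_k$-type integral and using $k\cS_{k,1}\leq\cS_{k,2}$ yields the dominant contribution
\begin{equation*}
\mathrm{P}(y)\, kt\cdot k^{-1/2}\min\bigl(t^{-1/2+\mu(y)},\mathrm{P}(y)(1+\log t)\bigr)\cS_{k,1} \lesssim k^{-1/2} t\min\bigl(t^{-1/2+\mu(y)},\mathrm{P}(y)(1+\log t)\bigr)\cS_{k,2}.
\end{equation*}

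For the good-derivative piece I would use the explicit identity
\begin{equation*}
(\partial_y+\partial_{y_0})\frac{\varphi_{k,\ep}^\pm}{v(y)-v(y_0)\pm i\ep} = \frac{\varphi_{1,k,\ep}^\pm}{v(y)-v(y_0)\pm i\ep} - \frac{(v'(y)-v'(y_0))\varphi_{k,\ep}^\pm}{(v(y)-v(y_0)\pm i\ep)^2}
\end{equation*}
to recognise one summand of $\tilde\rho_k$-type with $\varphi_{1,k,\ep}^\pm$ in place of $\varphi_{k,\ep}^\pm$ (where Propositions~\ref{prop:Xkvarphi1} and \ref{prop:Xkpynotvarphi} supply the requisite regularity estimates and produce the $\cS_{k,2}$-norm on the right-hand side), and one summand in which the apparent critical double pole is tamed by the cancellation $v'(y)-v'(y_0) = v''(y_0)(y-y_0) + O((y-y_0)^2)$, reducing to a $\tilde\rho_k$-type integrand with a bounded $v''(y_0)/v'(y_0)$ multiplier. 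Both are then controlled, through the $\delta$-splitting and integration-by-parts scheme of Corollary~\ref{cor:improvedIDrhoweak}, by $k^{-1/2}\min\bigl(t^{-1/2+\mu(y)},\mathrm{P}(y)(1+\log t)\bigr)\cS_{k,2}$, which is dominated by the leading $t\cdot\min(\cdots)$ contribution already obtained.

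The main difficulty lies in the careful treatment of the critical double pole $(v(y)-v(y_0)\pm i\ep)^{-2}$: a direct application of the oscillatory-phase arguments of Corollary~\ref{cor:improvedIDrhoweak} to this kernel fails to produce an $\ep$-uniform limit, so one must first exploit the cancellation $v'(y)-v'(y_0)=O(y-y_0)$ to demote it back to a simple pole before any further integration by parts or splitting. A secondary bookkeeping subtlety is to preserve the exact $\mathrm{P}(y)$ prefactor throughout the intermediate estimates, since it is precisely this factor that converts a crude $(1+\log t)$-type bound into the sharper $\min\bigl(t^{-1/2+\mu(y)},\mathrm{P}(y)(1+\log t)\bigr)$ quantity appearing in the statement.
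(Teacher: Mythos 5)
Your proposal is correct and follows essentially the same route as the paper: the same splitting $\partial_y\rho_k = \mathrm{P}'\tilde\rho_k + \mathrm{P}\,\partial_y\tilde\rho_k$ (written in the paper as $\frac{\mathrm{P}'(y)}{\mathrm{P}(y)}\rho_{k,\ep} + \rho_{1,k,\ep} - \rho_{2,k,\ep}$), the same good-derivative decomposition $\partial_y = (\partial_y + \partial_{y_0}) - \partial_{y_0}$, the same integration by parts in $y_0$ with boundary terms killed by Lemma~\ref{lemma:vanishingpsiIE}, the same algebraic identity for $(\partial_y+\partial_{y_0})\bigl[\varphi_{k,\ep}^\pm/(v(y)-v(y_0)\pm i\ep)\bigr]$ with the cancellation $v'(y)-v'(y_0)=O(y-y_0)$ taming the double pole, and the same invocation of Corollary~\ref{cor:improvedIDrhoweak} for the final bounds with the $ikt$ factor from the phase accounting for the growth.
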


\begin{proof}
In view of \eqref{eq:rhooscint} and \eqref{eq:defrhokep} we have
\begin{align*}
\partial_y\rho_{k,\ep}(t,y) &= \frac{\mathrm{P'}(y)}{\mathrm{P}(y)}\rho_{k,\ep}(t,y) \\
&\quad + \mathrm{P}(y)\int_{\vartheta_1}^{\vartheta_2} e^{-ikv(y_0)t} \left( \partial_y + \partial_{y_0} \right) \left( \frac{\varphi_{k,\ep}^-(y,y_0)}{v(y) - v(y_0) - i\ep} - \frac{\varphi_{k,\ep}^+(y,y_0)}{v(y) - v(y_0) + i\ep} \right) v'(y_0) \d y_0 \\
&\quad - \mathrm{P}(y)\int_{\vartheta_1}^{\vartheta_2} e^{-ikv(y_0)t}  \partial_{y_0} \left( \frac{\varphi_{k,\ep}^-(y,y_0)}{v(y) - v(y_0) - i\ep} - \frac{\varphi_{k,\ep}^+(y,y_0)}{v(y) - v(y_0) + i\ep} \right) v'(y_0) \d y_0 \\
&= \frac{\mathrm{P'}(y)}{\mathrm{P}(y)}\rho_{k,\ep}(t,y) + \rho_{1,k,\ep}(t,y) - \rho_{2,k,\ep}(t,y).
\end{align*}
It is immediate from Corollary \ref{cor:improvedIDrhoweak} that
\begin{align*}
\left|  \frac{\mathrm{P'}(y)}{\mathrm{P}(y)}\rho_{k,\ep}(t,y) \right| &\lesssim  \frac{\mathrm{P'}(y)}{\mathrm{P}(y)}k^{-\frac12}\min( t^{-\frac12+\mu(y)} , \mathrm{P}(y)(1+\log(t)) )\cS_{k,1} \\
&\lesssim \mathrm{P}'(y)k^{-\frac12}(1+\log(t)) \cS_{k,1}.
\end{align*}
We treat $\rho_{1,k,\ep}(t,y)$ and $\rho_{2,k,\ep}(t,y)$ separately.

\bullpar{Estimates for $\rho_{1,k,\ep}(t,y)$} We crucially observe that
\begin{align*}
\left( \partial_y + \partial_{y_0} \right) &\left( \frac{\varphi_{k,\ep}^-(y,y_0)}{v(y) - v(y_0) - i\ep} - \frac{\varphi_{k,\ep}^+(y,y_0)}{v(y) - v(y_0) + i\ep} \right) \\
&= \frac{\varphi_{1,k,\ep}^-(y,y_0)}{v(y) - v(y_0) - i\ep} - \frac{\varphi_{1,k,\ep}^+(y,y_0)}{v(y) - v(y_0) + i\ep}  \\
&\quad - \left( \frac{\varphi_{k,\ep}^-(y,y_0)}{(v(y) - v(y_0) - i\ep)^2} - \frac{\varphi_{k,\ep}^+(y,y_0)}{(v(y) - v(y_0) + i\ep)^2} \right)(v'(y) - v'(y_0)).
\end{align*}
Hence, we can argue as in the proof of Corollary \ref{cor:improvedIDrhoweak} to deduce that
\begin{align*}
\left| \rho_{1,k,\ep}(t,y) \right| \lesssim k^{-\frac12}\min (t^{-\frac12+\mu(y)}, \mathrm{P}(y)(1+\log(t)))\cS_{k,2}
\end{align*}
for all $t\geq 1$. We remark here that in the integration by parts argument of Corollary \ref{cor:improvedIDrhoweak} performed to keep $\mathrm{P}(y)$, we shall now obtain a term of the form
\begin{align*}
v''(y_0)\frac{\varphi_{k,\ep}^\pm(y,y_0)}{v(y) - v(y_0) \pm i\ep}\log(v(y) - v(y_0) \pm i\ep) = \frac{v''(y_0)}{2v'(y_0)}\varphi_{k,\ep}^\pm(y,y_0) \partial_{y_0} \log^2 (v(y) - v(y_0) \pm i\ep)
\end{align*}
which can be further integrated by parts to obtain the desired regularity, which in turn is translated to the appropriate time decay, we omit the details.

\bullpar{Estimates for $\rho_{2,k,\ep}(t,y)$} We now integrate by parts the $\partial_{y_0}$ derivative to obtain
\begin{align*}
\rho_{2,k,\ep}(t,y) &=  \mathrm{P}(y)e^{-ikv(y_0)t} \left( \frac{\varphi_{k,\ep}^-(y,y_0)}{v(y) - v(y_0) - i\ep} - \frac{\varphi_{k,\ep}^+(y,y_0)}{v(y) - v(y_0) + i\ep} \right) v'(y_0) \Big|_{y_0=\vartheta_1}^{y_0=\vartheta_2} \\
&\quad + ikt \mathrm{P}(y)\int_{\vartheta_1}^{\vartheta_2} e^{-ikv(y_0)t} \left( \frac{v'(y_0)\varphi_{k,\ep}^-(y,y_0)}{v(y) - v(y_0) - i\ep} - \frac{v'(y_0)\varphi_{k,\ep}^+(y,y_0)}{v(y) - v(y_0) + i\ep} \right) v'(y_0) \d y_0 \\
&\quad - \mathrm{P}(y)\int_{\vartheta_1}^{\vartheta_2} e^{-ikv(y_0)t} \left( \frac{\varphi_{k,\ep}^-(y,y_0)}{v(y) - v(y_0) - i\ep} - \frac{\varphi_{k,\ep}^+(y,y_0)}{v(y) - v(y_0) + i\ep} \right) v''(y_0) \d y_0 \\
&= \rho_{3,k,\ep}(t,y) + ikt \rho_{4,k,\ep}(t,y) - \rho_{5,k,\ep}(t,y).
\end{align*}
For $y\in (\vartheta_1,\vartheta_2)$ it is immediate from Lemma \ref{lemma:vanishingpsiIE} that
\begin{align*}
\lim_{\ep\rightarrow 0} \rho_{3,k,\ep}(t,y) = 0.
\end{align*}
Moreover, we can directly use the ideas of Corollary \ref{cor:improvedIDrhoweak} to deduce that
\begin{align*}
\left| \rho_{4,k,\ep}(t,y) \right| + \left| \rho_{5,k,\ep}(t,y) \right| \lesssim k^{-\frac12}\min ( t^{-\frac12+\mu(y)} , \mathrm{P}(y)(1+\log(t)) ) \cS_{k,1}
\end{align*}
for all $t\geq 1$.

\bullpar{End of proof} Combining the estimates for $\rho_{k,\ep}(t,y)$, with those for $\rho_{1,k,\ep}(t,y)$ and $\rho_{2,k,\ep}(t,y)$ we obtain the claimed result.
\end{proof}

For the other regimes we do not need to consider the localised bounds, so that the ideas of Proposition \ref{prop:growthpyrhoweak} provide the next two results.

\begin{proposition}\label{prop:growthpyrhomild}
Let $k\geq 1$ and $y\in [\varpi_{1,1} , \varpi_{1,2}] \cup  [\varpi_{2,1}, \varpi_{2,2}]$. Then,
\begin{align*}
|\partial_y\rho_k(t,y)| \lesssim k^{-\frac12} t^{\frac12+\mu(y)}(1+\log(t))\cS_{k,2}
\end{align*}
for all $t\geq 1$.
\end{proposition}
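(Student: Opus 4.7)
The plan is to adapt the proof strategy of Proposition~\ref{prop:growthpyrhoweak} to the mild regime, replacing the $X_k$-based estimates of Propositions~\ref{prop:Xkvarphi}--\ref{prop:Xkpynotpynotvarphi} by their $LX_k$ analogues (Propositions~\ref{prop:LXkvarphi}, \ref{prop:LXkvarphi1}, \ref{prop:LXkpynotvarphi}, \ref{prop:LXkpynotpynotvarphi}), and using the mild-regime inviscid damping decay of Proposition~\ref{prop:IDrhomild} in place of Corollary~\ref{cor:improvedIDrhoweak}. In the mild regime one has $\cJ(y)\approx\tfrac14$, so $\mathrm{P}(y)$ is uniformly bounded from above and below; consequently the quotient $\mathrm{P}'(y)/\mathrm{P}(y)$ is uniformly bounded, which eliminates the need for the localized refinement used in the weak regime (the localization that was previously needed near $\vartheta_1,\vartheta_2$ to cancel a blow-up of $\mathrm{P}'/\mathrm{P}$).

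First, I would write
\begin{align*}
\partial_y\rho_{k,\ep}(t,y)
&=\frac{\mathrm{P}'(y)}{\mathrm{P}(y)}\rho_{k,\ep}(t,y)
+\rho_{1,k,\ep}(t,y)-\rho_{2,k,\ep}(t,y),
\end{align*}
where $\rho_{1,k,\ep}$ contains the good-derivative $(\partial_y+\partial_{y_0})$ acting on $\varphi_{k,\ep}^\pm/(v(y)-v(y_0)\pm i\ep)$ and $\rho_{2,k,\ep}$ contains the pure $\partial_{y_0}$ derivative, exactly as in Proposition~\ref{prop:growthpyrhoweak}. The first term is already bounded by $k^{-1/2}t^{-1/2+\mu(y)}(1+\log t)\cS_{k,1}$ via Proposition~\ref{prop:IDrhomild}, which is stronger than the claim.

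For $\rho_{1,k,\ep}(t,y)$, the good-derivative identity
\[
(\partial_y+\partial_{y_0})\frac{\varphi_{k,\ep}^\pm}{v(y)-v(y_0)\pm i\ep}
=\frac{\varphi_{1,k,\ep}^\pm}{v(y)-v(y_0)\pm i\ep}
-\frac{(v'(y)-v'(y_0))\varphi_{k,\ep}^\pm}{(v(y)-v(y_0)\pm i\ep)^2}
\]
reduces the singularity by one order: the second piece is no longer critically singular because $v'(y)-v'(y_0)$ supplies one power of $|y-y_0|$. I would then repeat the splitting into $B_\delta(y)\cap (\vartheta_1,\vartheta_2)$ and its complement, applying the local $LX_k$ decomposition of $\varphi_{k,\ep}^\pm$ (Proposition~\ref{prop:LXkvarphi}) and of $\varphi_{1,k,\ep}^\pm$ (Proposition~\ref{prop:LXkvarphi1}) on $B_\delta$, and a single integration by parts against the oscillatory phase $e^{-ikv(y_0)t}$ on $B_\delta^c$, controlled by Proposition~\ref{prop:LXkpynotvarphi}. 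The logarithmic corrections $\log(\eta)\mathcal{Q}_{\gamma_0}(\eta)$ in the mild decomposition integrate to the predicted $(1+\log t)$ factor (as in Proposition~\ref{prop:IDpsimild}), and optimizing $\delta\sim (kt)^{-1}$ yields
\[
|\rho_{1,k,\ep}(t,y)|\lesssim k^{-\tfrac12}t^{-\tfrac12+\mu(y)}(1+\log t)\cS_{k,2}.
\]

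For $\rho_{2,k,\ep}(t,y)$, a single integration by parts in $y_0$ kills the boundary contributions (by Lemma~\ref{lemma:vanishingpsiIE}, since $\text{supp }\omega_k^0,\text{supp }\varrho_k^0\subset(\vartheta_1,\vartheta_2)$) and produces
\[
\rho_{2,k,\ep}=ikt\,\rho_{4,k,\ep}-\rho_{5,k,\ep},
\]
with $\rho_{4,k,\ep},\rho_{5,k,\ep}$ of the same form as $\rho_{k,\ep}$ but with $v'(y_0)$ and $v''(y_0)$ weights respectively. Applying Proposition~\ref{prop:IDrhomild} (whose proof requires only minor adjustments to accommodate the weights) gives
\[
|\rho_{4,k,\ep}(t,y)|+|\rho_{5,k,\ep}(t,y)|\lesssim k^{-\tfrac12}t^{-\tfrac12+\mu(y)}(1+\log t)\cS_{k,1},
\]
so $|\rho_{2,k,\ep}|\lesssim k^{-1/2}t^{1/2+\mu(y)}(1+\log t)\cS_{k,2}$, which is the dominant term and matches the claim. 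The main technical obstacle is bookkeeping the logarithmic loss coming from the $\log(\eta)\mathcal{Q}_{\gamma_0}(\eta)$-factor in the mild-regime decomposition of $\varphi_{k,\ep}^\pm$ through the $\partial_{y_0}$ integration by parts, especially at the transition endpoints $y_0\approx\varpi_1,\varpi_2$; this is handled using the uniform bounds $|\mathcal{Q}_{\gamma_0}(\eta)|\lesssim 1$ and $|\eta^{1-2\gamma_0}-1|/|1-2\gamma_0|\lesssim|\log\eta|$ for bounded $|\eta|$, exactly as in the proof of Proposition~\ref{prop:LAPLZk}.
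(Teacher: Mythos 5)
Your proposal is correct and matches the approach the paper indicates (adapt Proposition~\ref{prop:growthpyrhoweak}, dropping the localization that was needed near $\vartheta_1,\vartheta_2$ in the weak regime). The key observation that $\mathrm{P}'(y)/\mathrm{P}(y)$ is uniformly bounded on $I_M$, and the use of the $LX_k$ analogues (Propositions~\ref{prop:LXkvarphi}, \ref{prop:LXkvarphi1}, \ref{prop:LXkpynotvarphi}) with Proposition~\ref{prop:IDrhomild} in place of Corollary~\ref{cor:improvedIDrhoweak}, are exactly what closes the argument; the bookkeeping $k^{1/2}\cS_{k,1}\le k^{-1/2}\cS_{k,2}$ turns the $ikt\,\rho_{4,k,\ep}$ contribution into the claimed growth rate. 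One small slip in your closing remark: in the mild regime the relevant logarithmic identity is $\eta^{2\gamma_0}-1 = 2\gamma_0\log(\eta)\Q_{\gamma_0}(\eta)$ (Lemma~\ref{lemma:firstgammalog}) rather than the bound $|\eta^{1-2\gamma_0}-1|\lesssim (1-2\gamma_0)|\log\eta|$, which is the one used for $y_0\in I_W$; both are true but the former is the one that produces the $\log(\eta)\Q_{\gamma_0}(\eta)$ factor in the $LX_k$ decomposition.
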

For the strong regime, we have
\begin{proposition}\label{prop:growthpyrhostrong}
Let $k\geq 1$ and $y\in (\varpi_{1,2}, \varpi_{2,1})$. Then,
\begin{align*}
|\partial_y\rho_k(t,y)| \lesssim k^{-\frac12} t^{\frac12}\cS_{k,2}
\end{align*}
for all $t\geq 1$.
\end{proposition}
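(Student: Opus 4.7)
The plan is to imitate the proof of Proposition \ref{prop:growthpyrhoweak} step by step, but to exploit the two simplifications specific to the strongly stratified regime: (i) $\mathrm{P}(y)$ is bounded uniformly away from zero on $(\varpi_{1,2},\varpi_{2,1})$, so that $\mathrm{P}'(y)/\mathrm{P}(y) \lesssim 1$ and there is no vanishing pre-factor to keep track of, and (ii) $\gamma_0 = i\nu_0$ is purely imaginary with $\nu_0$ bounded from below, so all ``$\mu(y)$'' losses degenerate to the uniform rate $t^{-1/2}$.

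Starting from \eqref{eq:defrhokep}, I would use the good-derivative identity $\partial_y = (\partial_y + \partial_{y_0}) - \partial_{y_0}$ to decompose
\begin{align*}
\partial_y \rho_{k,\ep}(t,y) = \frac{\mathrm{P}'(y)}{\mathrm{P}(y)}\rho_{k,\ep}(t,y) + \rho_{1,k,\ep}(t,y) - \rho_{2,k,\ep}(t,y),
\end{align*}
exactly as in Proposition \ref{prop:growthpyrhoweak}, where $\rho_{1,k,\ep}$ collects the good derivative applied to the bracket and $\rho_{2,k,\ep}$ collects the $\partial_{y_0}$ piece. For the first term, Proposition \ref{prop:IDrhostrong} gives $|\rho_k(t,y)| \lesssim k^{-1/2} t^{-1/2}\cS_{k,1}$, so its contribution is $\lesssim k^{-1/2}t^{-1/2}\cS_{k,1} \lesssim k^{-3/2}t^{-1/2}\cS_{k,2}$, which is absorbed in the target.

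For $\rho_{1,k,\ep}$ the identity
\begin{align*}
(\partial_y + \partial_{y_0})\!\left( \tfrac{\varphi_{k,\ep}^\pm}{v(y)-v(y_0)\pm i\ep}\right) = \tfrac{\varphi_{1,k,\ep}^\pm}{v(y)-v(y_0)\pm i\ep} - \tfrac{(v'(y)-v'(y_0))\varphi_{k,\ep}^\pm}{(v(y)-v(y_0)\pm i\ep)^2}
\end{align*}
replaces $\varphi_{k,\ep}^\pm$ by $\varphi_{1,k,\ep}^\pm$ (whose $X_k$ regularity is Proposition \ref{prop:Xkvarphi1}) or introduces the smooth factor $v'(y)-v'(y_0)$ that compensates one order of the singularity. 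Running the oscillatory-phase argument used for Proposition \ref{prop:IDrhostrong}, using Propositions \ref{prop:Xkvarphi}, \ref{prop:Xkvarphi1} and \ref{prop:Xkpynotvarphi} in place of their weak analogues, gives $|\rho_{1,k,\ep}(t,y)| \lesssim k^{-1/2}t^{-1/2}\cS_{k,2}$.

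For the dominant term $\rho_{2,k,\ep}$ I would integrate by parts in $y_0$ to get $\rho_{3,k,\ep} + ikt\,\rho_{4,k,\ep} - \rho_{5,k,\ep}$, where $\rho_{3,k,\ep}$ is a boundary term at $y_0=\vartheta_1,\vartheta_2$ that vanishes in the limit $\ep\to 0$ by Lemma \ref{lemma:vanishingpsiIE}, while $\rho_{4,k,\ep}$ and $\rho_{5,k,\ep}$ are structurally identical to $\rho_k(t,y)$ up to the smooth multipliers $v'(y_0)^2$ and $v''(y_0)$. Applying Proposition \ref{prop:IDrhostrong} (whose proof only uses $X_k$ bounds of $\varphi_{k,\ep}^\pm$ and is insensitive to the smoothness multiplier), one obtains $|\rho_{4,k,\ep}(t,y)| + |\rho_{5,k,\ep}(t,y)| \lesssim k^{-1/2}t^{-1/2}\cS_{k,1}$. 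The prefactor $ikt$ then delivers
\begin{align*}
|ikt\,\rho_{4,k,\ep}(t,y)| \lesssim kt\cdot k^{-1/2}t^{-1/2}\cS_{k,1} = k^{1/2}t^{1/2}\cS_{k,1} \lesssim k^{-1/2}t^{1/2}\cS_{k,2},
\end{align*}
where the last step uses the inequality $k\,\cS_{k,1} \le \cS_{k,2}$ from the definition \eqref{eq:defSnorm}. Summing the three pieces yields the desired bound.

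The main technical point is not really an obstacle, since all the spectral machinery has been set up in the preceding sections: one only needs to verify that the oscillatory-phase estimates for $\rho_{4,k,\ep}$ and $\rho_{5,k,\ep}$ proceed identically to those in Proposition \ref{prop:IDrhostrong}, which is immediate because on $(\varpi_{1,2},\varpi_{2,1})$ the Richardson number stays uniformly above $\tfrac14$, the regularity structures in $X_k$ are uniform, and no logarithmic or $\mu(y)$-type loss arises. The bookkeeping step that converts $k^{1/2}t^{1/2}\cS_{k,1}$ into the correct Parseval-compatible form $k^{-1/2}t^{1/2}\cS_{k,2}$ is the only delicate accounting needed.
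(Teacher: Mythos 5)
Your proposal is correct and follows exactly the route the paper intends: the paper dispatches Proposition \ref{prop:growthpyrhostrong} with the remark "the ideas of Proposition \ref{prop:growthpyrhoweak} provide the next two results," and your write-up fills in precisely that argument, using the same decomposition $\partial_y\rho_{k,\ep} = (\mathrm{P}'/\mathrm{P})\rho_{k,\ep} + \rho_{1,k,\ep} - \rho_{2,k,\ep}$, the same integration by parts in $y_0$ producing the $ikt$ prefactor, and the same bookkeeping $k\,\cS_{k,1}\leq\cS_{k,2}$. The simplifications you identify — $\mathrm{P}$ bounded away from zero and $\mu(y)=0$ in the strong regime, so no localization or logarithmic loss is needed — are exactly what makes the weak-regime argument specialize cleanly here.
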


\subsection{Proof of Theorem \ref{thm:growth}}
Just as for Theorem \ref{thm:mainID}, the growth estimates from Theorem \ref{thm:growth} and Theorem \ref{thm:growthlocalweak} are a consequence of Propositions \ref{prop:growthomegaweak} - \ref{prop:growthpyrhostrong}, Parseval identity, and the observations that
\begin{align*}
\left( \sum_{k>0} k^{-1}\cS_{k,2}^2 \right) &\lesssim\Vert \omega^0 \Vert_{H^{3/2}_x H^3_y} + \Vert \widetilde q^0 \Vert_{H^{3/2}_x H^4_y}, 
\end{align*}
and
\begin{align*}
\left( \sum_{k>0} k\cS_{k,2}^2 \right) &\lesssim\Vert \omega^0 \Vert_{H^{5/2}_x H^3_y} + \Vert \widetilde q^0 \Vert_{H^{5/2}_x H^4_y}, 
\end{align*}
we omit the details.

\begin{remark}\label{rmk:growthlineq}
As we commented in Section \ref{sec:sketch}, we can use \eqref{eq:linEBomegarho} and Propositions \ref{prop:IDpsistrong}-\ref{prop:IDpsinonstrat} and Propositions \ref{prop:IDrhoweak}-\ref{prop:IDrhostrong} to prove the growth bounds of Theorem \ref{thm:growth}. To do that, we note that 
\begin{align*}
e^{ikv(y)t}\rho_k(t,y) = \rho_k^0(y) + ik\mathrm{P}(y)\int_{0}^t\psi_k(s,y) \d s
\end{align*}
for all $t\geq 0$. Hence,
\begin{align*}
e^{ikv(y)t}\partial_y\rho_k(t,y) &= -ikv(y)t\rho_k(t,y) + \partial_y\rho_k^0(y) \\
&\quad + ik\mathrm{P}(y)\int_0^te^{ikv(y)s} \partial_y\psi_k(s,y) \d s + ik\mathrm{P}'(y)\int_0^t e^{ikv(y)s} \psi_k(s,y) \d s
\end{align*}
from which the growth rate for $\partial_y\rho_k(t,y)$ follows from the decay estimates on $\rho_k(t,y)$, $\psi_k(t,y)$ and $\partial_y\psi_k(t,y)$. On the other hand, from the vorticity equation we note that
\begin{align*}
e^{ikv(y)t}\omega_k(t,y) = \omega_k^0(y) +ikv''(y) \int_{0}^t e^{ikv(y)s}\psi_k(s,y) \d s - ik\g \int_0^t e^{ikv(y)s}\rho_k(s,y) \d s
\end{align*}
and thus the growth bounds for $\omega_k(t,y)$ are obtained once we integrate the decay estimates for $\psi_k(t,y)$ and $\rho_k(t,y)$.
\end{remark}

\section*{Acknowledgements}

We would like to thank Augusto Del Zotto for insightful discussions on the problem. This work has received funding from the European Research Council (ERC) under the European Union's Horizon 2020 research and innovation programme through the grant agreement~862342. It is also partially supported by the MCIN/AEI grants CEX2023-001347-S, RED2022-134301-T and PID2022-136795NB-I00.

\appendix

\section{Logarithmic approximations}
In this section we state and prove two simple identities involving limiting regimes that give rise to logarithmic corrections.

\begin{lemma}\label{lemma:firstgammalog}
Let $\gamma\in \C$, $\Re(\gamma)\geq 0$ and $|\gamma|\leq 1$. Let $\zeta\in \C$. Then,
\begin{align*}
\frac{\zeta^{2\gamma} - 1}{2\gamma} = \log(\zeta) \Q_{\gamma}(\zeta), \quad \Q_{\gamma}(\zeta) := \int_0^1 e^{2\gamma s \log(\zeta)} \d s
\end{align*}
and we further have that $\sup_{|\gamma|\leq 1} \Vert \Q_{\gamma} \Vert_{L^\infty(B_R(0))}\lesssim_R 1$.
\end{lemma}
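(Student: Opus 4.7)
The identity is essentially the fundamental theorem of calculus applied to the map $s \mapsto e^{2\gamma s \log \zeta}$. The plan is to write
\[
\zeta^{2\gamma} - 1 = e^{2\gamma \log \zeta} - 1 = \int_0^1 \frac{\d}{\d s}\bigl(e^{2\gamma s \log \zeta}\bigr) \d s = 2\gamma \log(\zeta) \int_0^1 e^{2\gamma s \log \zeta} \d s,
\]
which, after dividing by $2\gamma$, yields the claimed representation $\frac{\zeta^{2\gamma}-1}{2\gamma} = \log(\zeta)\Q_\gamma(\zeta)$ whenever $\gamma \neq 0$; the case $\gamma = 0$ follows by passing to the limit, since both sides equal $\log \zeta$.

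For the uniform bound on $\Q_\gamma$ over $B_R(0)$, the plan is to estimate the integrand pointwise in $s \in (0,1)$. Writing $\gamma = \mu + i\nu$ with $\mu \geq 0$ and $|\gamma|\leq 1$, and using $\log \zeta = \log|\zeta| + i\arg\zeta$ with the principal branch $\arg \zeta \in (-\pi,\pi]$, one has
\[
|e^{2\gamma s \log \zeta}| = e^{2s(\mu \log|\zeta| - \nu \arg \zeta)}.
\]
The key observation is that $\mu \log|\zeta| \leq 0$ whenever $|\zeta|\leq 1$ because $\mu\geq 0$, so on the unit disk this term is nonpositive and the integrand is bounded by $e^{2\pi|\nu|} \leq e^{2\pi}$. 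For $1 < |\zeta|\leq R$ one simply bounds $\mu\log|\zeta|\leq \log R$, giving $|e^{2\gamma s \log \zeta}| \leq R \cdot e^{2\pi}$. Integrating over $s \in (0,1)$ then yields the uniform bound with an implicit constant depending only on $R$.

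The argument is entirely routine and there is no real obstacle; the only minor subtlety is the choice of branch of $\log$ and $\zeta^{2\gamma}$, which must be made consistently so that the identity $\zeta^{2\gamma} = e^{2\gamma \log \zeta}$ holds (the principal branch on $\C\setminus(-\infty,0]$ is the natural choice, and the identity then extends to the points on the branch cut by the ensuing contexts in the paper where $\zeta$ always has nonzero imaginary part due to the $\pm i\ep_0$ regularisation).
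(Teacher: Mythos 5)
Your proof takes exactly the same route as the paper's: apply the fundamental theorem of calculus to $s\mapsto e^{2\gamma s\log\zeta}$ to get the identity, then bound $|\Q_\gamma(\zeta)|$ by estimating $\Re(2\gamma s\log\zeta)\lesssim 1+\log R$ on $B_R(0)$. The extra remarks you include (the $\gamma=0$ limit and the branch-cut convention) are correct and harmless elaborations rather than a different argument.
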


\begin{proof}
The fundamental theorem of calculus shows that
\begin{align*}
\frac{\zeta^{2\gamma} - 1}{2\gamma} = \frac{1}{2\gamma}\int_0^1 \frac{\d}{\d s}e^{2\gamma s \log(\zeta)} \d s= \log(\zeta) \int_0^1 e^{2\gamma s \log(\zeta)} \d s.
\end{align*}
Since $\log(\zeta) = \log(|\zeta|) + i \text{Arg}(\zeta)$, the  bound follows from the fact that $\Re(2\gamma s \log(\zeta))\lesssim 1 + \log R$. 
\end{proof}

\begin{lemma}\label{lemma:secondgammalog}
Let $\gamma\in \C$, $\Re(\gamma)\geq 0$ and $|\gamma|\leq \tfrac{1}{16}$. Let $\zeta\in \C$. Then,
\begin{align*}
\frac{\zeta^{2\gamma} - \zeta^{-2\gamma} }{2\gamma} = 2 \log(\zeta) + \mathrm{Q}_{\gamma}(\zeta)
\end{align*}
and we further have $\sup_{|\gamma|\leq 1} \Vert \gamma^{-2} \zeta^\frac12 \mathrm{Q}_\gamma (\zeta) \Vert_{L^\infty(B_R(0))}\lesssim 1$.
\end{lemma}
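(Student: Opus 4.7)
The plan is to identify $\mathrm{Q}_{\gamma}$ explicitly as an integral remainder in a Taylor expansion of $\sinh$, and then bound the resulting expression by elementary estimates on $\cosh$.

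First I would rewrite the left-hand side using $\zeta^{\pm 2\gamma}=e^{\pm 2\gamma\log\zeta}$, which gives
\[
\frac{\zeta^{2\gamma}-\zeta^{-2\gamma}}{2\gamma}=\frac{\sinh(2\gamma\log\zeta)}{\gamma}.
\]
Taylor's theorem with integral remainder applied to $\sinh$ around $0$, using $\sinh(0)=\sinh''(0)=0$ and $\sinh'(0)=1$, yields
\[
\sinh(x)-x=\frac{x^{3}}{2}\int_{0}^{1}(1-s)^{2}\cosh(sx)\,\d s.
\]
Substituting $x=2\gamma\log\zeta$ and dividing by $\gamma$ produces the identity in the statement with
\[
\mathrm{Q}_{\gamma}(\zeta)=4\gamma^{2}\log^{3}\zeta\int_{0}^{1}(1-s)^{2}\cosh(2\gamma s\log\zeta)\,\d s,
\]
which in particular makes the claimed factorisation $\mathrm{Q}_\gamma=O(\gamma^2)$ manifest.

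To prove the supremum estimate, I would bound $|\cosh(2\gamma s\log\zeta)|\le e^{2|\gamma||\log\zeta|}$. Since $\log\zeta=\log|\zeta|+i\mathrm{Arg}(\zeta)$ with $|\mathrm{Arg}(\zeta)|\le\pi$, we have $|\log\zeta|\le|\log|\zeta||+\pi$, and thus for $|\gamma|\le\tfrac{1}{16}$
\[
e^{2|\gamma||\log\zeta|}\le e^{\pi/8}\,|\zeta|^{-1/8}\lesssim|\zeta|^{-1/8}.
\]
Multiplying by $|\zeta|^{1/2}$ and $|\log^{3}\zeta|\lesssim(|\log|\zeta||+1)^{3}$ yields
\[
|\zeta|^{1/2}|\gamma^{-2}\mathrm{Q}_{\gamma}(\zeta)|\lesssim|\zeta|^{3/8}(1+|\log|\zeta||)^{3},
\]
which is bounded uniformly for $\zeta\in B_{R}(0)$ since $|\zeta|^{3/8}|\log|\zeta||^{3}$ vanishes as $|\zeta|\to 0$ and remains bounded on the rest of $B_{R}(0)$.

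The entire argument is mechanical once the integral remainder representation is in place, so there is no real obstacle; the only small subtlety is keeping careful track of how $|\log\zeta|$ combines with the exponent $1/2$ in the weight and the bound $|\gamma|\le \tfrac{1}{16}$, which ensures that the gain $|\zeta|^{1/2}$ from the weight beats the loss $|\zeta|^{-2|\gamma|}$ from the $\cosh$ factor with margin $|\zeta|^{3/8}$ to spare.
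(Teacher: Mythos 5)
Your proof is correct, but it reaches the identity by a genuinely different route from the paper. You recognise $\frac{\zeta^{2\gamma}-\zeta^{-2\gamma}}{2\gamma}=\gamma^{-1}\sinh(2\gamma\log\zeta)$ and invoke the third-order Taylor remainder of $\sinh$ at $0$, so the $\gamma^{2}$ factor in $\mathrm{Q}_{\gamma}$ falls out automatically from the absence of the quadratic term in the odd function $\sinh$. The paper instead writes the divided difference as $2\log(\zeta)\int_{0}^{1}e^{2\gamma(2u-1)\log\zeta}\,\d u$, subtracts $2\log\zeta$, and then iterates the fundamental theorem of calculus twice; there the $\gamma^{2}$ factor appears only after exploiting the cancellation $\int_{0}^{1}(2u-1)\,\d u=0$. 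Your route is more compact and more transparent about the source of the $\gamma^{2}$; the paper's route keeps the representation in the same form as the adjacent Lemma~\ref{lemma:firstgammalog} (the $\mathcal{Q}_{\gamma}$ family), which is why they chose it. Both then conclude the $L^{\infty}$ bound by playing the weight $|\zeta|^{1/2}$ against the at most $|\zeta|^{-2|\gamma|}$ growth of the exponential factor (you get margin $|\zeta|^{3/8}$, they get $|\zeta|^{1/4}$), and both absorb the $|\log\zeta|^{3}$ loss as $|\zeta|\to 0$. One small imprecision in your write-up: the inequality $e^{2|\gamma||\log\zeta|}\le e^{\pi/8}|\zeta|^{-1/8}$ as literally stated is only valid for $|\zeta|\le 1$; for $1\le|\zeta|\le R$ the correct bound is $e^{\pi/8}R^{1/8}$, a constant. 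This does not affect the conclusion since you already handle that region separately by remarking everything is bounded there, but it is worth stating the case split explicitly rather than leaving it implicit.
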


\begin{proof}
We note that 
\begin{align*}
\mathrm{Q}_{\gamma}(\zeta) &= \frac{\zeta^{2\gamma} - \zeta^{-2\gamma} }{2\gamma} - 2 \log(\zeta) \\
&= 2\log(\zeta) \int_0^1  e^{2\gamma(2u-1)\log(\zeta)}\d u - 2\log(\zeta) \\
&= 2\log(\zeta)\int_0^1 \left( e^{2\gamma(2u-1)\log(\zeta)} -1\right) \d u \\
&= 2\log(\zeta)\int_0^1 2\gamma(2u-1)\log(\zeta) \int_0^1 e^{2\gamma(2u-1) r \log(\zeta)} \d r \d u \\
&= 8\gamma^2 \log^3(\zeta) \int_0^1(2u-1)^2\int_0^1 r \int_0^1 e^{2\gamma(2u-1)rs\log(\zeta)}\d s \d r \d u
\end{align*}
because $\int_0^1(2u-1)\d u =0$. Hence, we see that 
\begin{align*}
\left| \frac{\zeta^\frac12}{4\gamma^2} \mathrm{Q}_{\gamma}(\zeta) \right| \lesssim |\zeta|^\frac14|\log(\zeta)|^3\lesssim 1
\end{align*}
since $\Re(\tfrac14-2\gamma r s)>\tfrac18$, for all $r,s\in(0,1)$ and all $|\gamma|\leq \tfrac{1}{16}$.  In particular, $\left| \frac{\mathrm{Q}_{\gamma}(\zeta)}{4\gamma^2}  \right|$ is uniformly bounded whenever $|\zeta|$ is uniformly bounded from below. 
\end{proof}

\section{Properties of the Whittaker functions}\label{app:Whittaker}
Here we state and prove some properties of the Whittaker functions that are used throughout the paper, we refer to \cite{NIST} for a complete description of the Whittaker functions. For $\gamma, \zeta\in \C$, the Whittaker function $M_{0,\gamma}(\zeta)$ is given by
\begin{equation*}
M_{0,\gamma}(\zeta)=e^{-\frac12\zeta}\zeta^{\frac12 + \gamma}M\l( \tfrac12 + \gamma, 1+2\gamma, \zeta\r), \quad M(a,b,\zeta) = \sum_{s=0}^\infty \frac{(a)_s}{(b)_s s!}\zeta^s,
\end{equation*}
where $(a)_s=a(a+1)(a+2)\dots (a+s-1)$ denotes the Pochhammer symbol. For $\gamma\neq 0$, we also introduce the Whittaker function 
\begin{equation*}
W_{0,\gamma}(\zeta) = \sqrt{\frac{\zeta}{\pi}}K_{\gamma}(\zeta/2)
\end{equation*}
where $K_\gamma$ denotes the second modified Bessel function. It is such that
\begin{align*}
K_\gamma(\zeta) = \frac{\pi}{2}\frac{I_{-\gamma}(\zeta) - I_{\gamma}(\zeta)}{\sin(\gamma\pi)}
\end{align*}
where $I_\gamma$ stands for the first modified Bessel function and it is given by the series representation
\begin{align*}
I_\gamma(\zeta) = \left( \frac{\zeta}{2}\right)^\gamma \sum_{n=0}^\infty \frac{(\zeta/2)^n}{\Gamma (\gamma + n + 1)}.
\end{align*}
For $\gamma=0$, we instead define
\begin{equation*}
W_{0,0}(\zeta) :=e^{-\frac12\zeta}\sqrt{\frac{\zeta}{\pi}}\sum_{s=0}^\infty \frac{\l( \tfrac12 \r)_s}{(s!)^2}\zeta^s \l( 2\frac{\Gamma'(1+s)}{\Gamma(1+s)} - \frac{\Gamma'(\tfrac12+s)}{\Gamma(\tfrac12+s)} - \log(\zeta)\r),
\end{equation*}
where $\Gamma(x)$ denotes the Gamma function. The Whittaker functions $M_{0,\gamma}$ and $W_{0,\gamma}$ are solutions to the Taylor-Goldstein equation
\begin{align*}
\partial_\zeta^2\phi(\zeta) + \left( - \frac{1}{4} + \frac{\frac14 - \gamma^2}{\zeta^2} \right)\phi(\zeta)=0.
\end{align*}
For $\gamma\in \C$ we next set $\gamma=\mu+i\nu$. We begin by recording some basic properties regarding complex conjugation for $M_{0,\gamma}(\zeta)$, which can be deduce from the series definition of $M_{0,\gamma}$.
\begin{lemma}
We have the following
\begin{itemize}
\item For $\b^2>1/4$, then $M_{0,i\nu}(\zeta)=\overline{M_{0,-i\nu}\l(\overline{\zeta}\r)}$.
\item For $\b^2\leq 1/4$, then $M_{0,\mu}(\zeta) = \overline{M_{0,\mu}\l(\overline{\zeta}\r)}$. Additionally, for $x\in\R$ then $M_{0,\mu}(x),W_{0,0}(x)\in \R$. 
\end{itemize}
\end{lemma}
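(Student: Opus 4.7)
\medskip

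The plan is to read the identities directly off the series representations of the Whittaker functions stated just before the lemma, using only the elementary rule $\overline{\zeta^{\alpha}}=\overline{\zeta}^{\,\overline{\alpha}}$ (on a fixed branch) and the fact that complex conjugation commutes with absolutely convergent sums on their disc of convergence.

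First, I would unpack
\[
M_{0,\gamma}(\zeta)=e^{-\zeta/2}\,\zeta^{\tfrac12+\gamma}\sum_{s=0}^\infty \frac{(\tfrac12+\gamma)_s}{(1+2\gamma)_s\,s!}\,\zeta^{s},
\]
and observe that since the Pochhammer symbol $(a)_s$ is a polynomial in $a$ with integer coefficients, the map $\gamma\mapsto M_{0,\gamma}(\zeta)$ is analytic and the coefficients in the series satisfy
\[
\overline{\frac{(\tfrac12+\gamma)_s}{(1+2\gamma)_s\,s!}}=\frac{(\tfrac12+\overline{\gamma})_s}{(1+2\overline{\gamma})_s\,s!}.
\]
Combined with $\overline{e^{-\overline{\zeta}/2}}=e^{-\zeta/2}$ and $\overline{\overline{\zeta}^{\,1/2+\gamma}}=\zeta^{\,1/2+\overline{\gamma}}$, this yields the master identity $\overline{M_{0,\gamma}(\overline{\zeta})}=M_{0,\overline{\gamma}}(\zeta)$. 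Specializing $\gamma=i\nu$ (so $\overline{\gamma}=-i\nu$) gives the first bullet, while specializing $\gamma=\mu\in\mathbb R$ (so $\overline{\gamma}=\gamma$) gives the first assertion of the second bullet.

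For the last real-valuedness claim at $x>0$, I would note that $\overline{M_{0,\mu}(x)}=\overline{M_{0,\mu}(\overline{x})}=M_{0,\mu}(x)$ from the identity just derived, hence $M_{0,\mu}(x)\in\mathbb R$. For $W_{0,0}(x)$, I would repeat the same argument on its series:
\[
W_{0,0}(\zeta)=e^{-\zeta/2}\sqrt{\tfrac{\zeta}{\pi}}\sum_{s=0}^\infty \frac{(\tfrac12)_s}{(s!)^2}\zeta^{s}\!\left(2\tfrac{\Gamma'(1+s)}{\Gamma(1+s)}-\tfrac{\Gamma'(\tfrac12+s)}{\Gamma(\tfrac12+s)}-\log\zeta\right);
\]
all the explicit Pochhammer symbols and Gamma-derivative ratios are real, and on $x>0$ both $\sqrt{x/\pi}$ and $\log x$ are real, so each term of the series is real.

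The only genuinely delicate point will be the choice of branch: because the factor $\zeta^{1/2+\gamma}$ (and $\sqrt{\zeta}$ and $\log\zeta$ for $W_{0,0}$) is multi-valued, the equality $\overline{\overline{\zeta}^{\,\alpha}}=\zeta^{\,\overline{\alpha}}$ requires the principal branch to be used symmetrically on both sides, so I would be explicit that all functions are defined via the principal branch of the logarithm on $\mathbb C\setminus(-\infty,0]$. Apart from that branch-cut bookkeeping, the argument is purely algebraic and no analytic input beyond termwise conjugation of convergent series is needed.
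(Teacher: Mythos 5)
Your proof is correct and follows exactly the route the paper indicates: the paper omits a proof and merely says the lemma ``can be deduced from the series definition of $M_{0,\gamma}$,'' and you supply precisely that deduction by termwise conjugation of the series, with the added (and appropriate) care about using the principal branch for the factor $\zeta^{1/2+\gamma}$. One small remark: the paper states the real-valuedness for $x\in\R$, but as you observe it can only hold on the branch domain $x>0$; your restriction is the correct reading.
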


We next state an analytic continuation property, which is key in studying the Wronskian of the Green's function and is directly determined by the analytic continuation of the non-entire terms of $M_{0,\gamma}(\zeta)$ and $W_{0,\gamma}$. 

\begin{lemma}[\cite{NIST}]\label{lemma:analyticcontinuation}
Let $\b^2>0$. Then
\begin{equation*}
M_{0,\gamma}(\zeta \e^{\pm \pi i}) = \pm i \e^{\pm \gamma \pi i}M_{0,\gamma}(\zeta), \quad W_{0,\gamma}(\zeta e^{\pm i\pi})=\frac{\Gamma(\tfrac12 + \gamma)}{\Gamma(1+2\gamma)} M_{0,\gamma}(\zeta) \pm i e^{\mp \gamma\pi i}W_{0,\gamma}(\zeta)
\end{equation*}
for all $\zeta\in\C$.
\end{lemma}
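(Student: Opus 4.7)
The plan is to derive both identities directly from the series definitions of $M_{0,\gamma}$ and $W_{0,\gamma}$ stated at the beginning of the appendix, rather than appealing to general Whittaker monodromy formulas. The argument splits naturally into two parts: first establishing the monodromy of $M_{0,\gamma}$, and then reducing the $W_{0,\gamma}$ identity to it via a standard connection formula.

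For the first identity, I would start from $M_{0,\gamma}(\zeta)=e^{-\zeta/2}\zeta^{1/2+\gamma}M(\tfrac12+\gamma,1+2\gamma,\zeta)$ and observe that the confluent hypergeometric $M(a,b,\zeta)$ is an entire function of $\zeta$, so the only multivalued factor is $\zeta^{1/2+\gamma}$. Under $\zeta\mapsto\zeta e^{\pm i\pi}$ this factor contributes $e^{\pm i\pi(1/2+\gamma)}=\pm i\,e^{\pm i\pi\gamma}$, and the exponential becomes $e^{\zeta/2}$. The crucial observation is Kummer's transformation $M(a,b,\zeta)=e^{\zeta}M(b-a,b,-\zeta)$: here $a=\tfrac12+\gamma$ and $b-a=\tfrac12+\gamma=a$, so the transformation reduces to the self-symmetric identity $M(\tfrac12+\gamma,1+2\gamma,-\zeta)=e^{-\zeta}M(\tfrac12+\gamma,1+2\gamma,\zeta)$. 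Combining the three effects produces exactly the factor $\pm i\,e^{\pm i\pi\gamma}$ in front of $M_{0,\gamma}(\zeta)$.

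For the second identity, I would first recall the standard connection formula
\begin{equation*}
W_{0,\gamma}(\zeta)=\frac{\Gamma(-2\gamma)}{\Gamma(\tfrac12-\gamma)}M_{0,\gamma}(\zeta)+\frac{\Gamma(2\gamma)}{\Gamma(\tfrac12+\gamma)}M_{0,-\gamma}(\zeta),
\end{equation*}
valid whenever $2\gamma\notin\mathbb{Z}$ (the case $\gamma=0$ is recovered by continuity, consistent with the logarithmic definition of $W_{0,0}$ in the appendix). Applying the already established identity to $M_{0,\pm\gamma}(\zeta e^{\pm i\pi})$ term by term and collecting the coefficients of $M_{0,-\gamma}(\zeta)$ and $M_{0,\gamma}(\zeta)$, the coefficient of $M_{0,-\gamma}$ matches $\pm i e^{\mp i\pi\gamma}W_{0,\gamma}(\zeta)$ automatically, while matching the coefficient of $M_{0,\gamma}$ reduces to verifying
\begin{equation*}
-2\sin(\pi\gamma)\frac{\Gamma(-2\gamma)}{\Gamma(\tfrac12-\gamma)}=\frac{\Gamma(\tfrac12+\gamma)}{\Gamma(1+2\gamma)}.
\end{equation*}
This is an elementary consequence of the reflection formula $\Gamma(z)\Gamma(1-z)=\pi/\sin(\pi z)$ applied to $z=-2\gamma$ and $z=\tfrac12-\gamma$, together with the duplication of $\sin(2\pi\gamma)=2\sin(\pi\gamma)\cos(\pi\gamma)$.

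The main (and only) subtlety will be the case $2\gamma\in\mathbb{Z}$, in particular $\gamma=0$, where the connection formula degenerates and one must invoke either L'Hôpital's rule in $\gamma$ or the explicit series for $W_{0,0}$ given in the appendix to extend the identity by continuity. Since the rest of the paper uses these formulas mostly in the regimes $\gamma\in i\mathbb{R}$ (strong stratification) and $\gamma\in(0,\tfrac12)$ (weak stratification), with $\gamma=0$ treated separately in the mild regime via the logarithmic homogeneous solutions, the continuity extension at $\gamma=0$ is consistent with (and in fact motivates) the definition of $W_{0,0}$ adopted earlier.
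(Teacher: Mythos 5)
The paper does not actually prove this lemma; it is stated with a direct citation to the NIST Digital Library of Mathematical Functions (DLMF 13.14.10 and 13.14.13, with $\kappa = 0$), so there is no ``paper proof'' to compare against. Your proposal is therefore a genuine derivation, and it is correct.

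Both steps check out. For the $M_{0,\gamma}$ identity, the multivalued factor $\zeta^{1/2+\gamma}$ produces $e^{\pm i\pi(1/2+\gamma)} = \pm i\,e^{\pm i\pi\gamma}$, the prefactor $e^{-\zeta/2}$ becomes $e^{\zeta/2}$, and the $\kappa = 0$ specialisation $b - a = a$ makes Kummer's transformation give exactly the compensating factor $e^{-\zeta}$; the three effects combine to the stated monodromy. For the $W_{0,\gamma}$ identity, applying the $M$-monodromy termwise to the connection formula $W_{0,\gamma} = \tfrac{\Gamma(-2\gamma)}{\Gamma(1/2-\gamma)}M_{0,\gamma} + \tfrac{\Gamma(2\gamma)}{\Gamma(1/2+\gamma)}M_{0,-\gamma}$ makes the $M_{0,-\gamma}$ coefficients match automatically, and the $M_{0,\gamma}$ coefficient reduces to the identity $-2\sin(\pi\gamma)\,\tfrac{\Gamma(-2\gamma)}{\Gamma(1/2-\gamma)} = \tfrac{\Gamma(1/2+\gamma)}{\Gamma(1+2\gamma)}$. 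Note that for both sign choices the bracket $\pm i\,(e^{\pm i\pi\gamma} - e^{\mp i\pi\gamma})$ equals $-2\sin(\pi\gamma)$, which is why the $M_{0,\gamma}$ coefficient in the final formula carries no $\pm$; this is worth saying explicitly, since a careless reader might expect a sign. The reflection formula applied at $z = -2\gamma$ and $z = \tfrac12 - \gamma$, together with $\sin(2\pi\gamma) = 2\sin(\pi\gamma)\cos(\pi\gamma)$, indeed establishes the gamma identity. Your treatment of the degenerate case $2\gamma \in \mathbb{Z}$ (in particular $\gamma = 0$) by analytic continuation in $\gamma$ is the right move and is consistent with the logarithmic definition of $W_{0,0}$ used in the appendix. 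The only small thing I'd flag is that ``the coefficient of $M_{0,-\gamma}$ matches automatically'' deserves a one-line check, since it depends on seeing that $\pm i\,e^{\mp i\pi\gamma}$ is exactly what the $M_{0,-\gamma}$-monodromy produces; but the claim is true.
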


The next result gives a precise description of the asymptotic expansion of $M_\pm(\zeta)$ and its derivatives, for $\zeta$ in a bounded domain. 
\begin{lemma}\label{lemma:asymptoticexpansionM}
Let $\gamma=\mu + i\nu$ with $0\leq \mu < \frac12$ and $0\leq \nu$. Let $\zeta\in\C$. Let $B_R\subset\C$ denote the closed unit ball of radius $R>0$ centered in the origin. Then,
\begin{equation*}
\begin{aligned}
M_{0,\pm\gamma}(\zeta)&=\zeta^{\frac12 \pm\gamma}\mathcal{E}_{0,\pm\gamma}(\zeta),
\end{aligned}
\end{equation*}
where $\mathcal{E}_{0,\pm\gamma}\in C^\infty(B_R)$, with $\E_{0,\pm\gamma}(0) =1$,  $\E_{0,\pm\gamma}'(0)=0$ and $\Vert \mathcal{E}_{0,\pm\gamma}\Vert_{C^2(B_R)} \lesssim_{\gamma,R} 1$.
\end{lemma}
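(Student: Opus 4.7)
The plan is to extract $\mathcal{E}_{0,\pm\gamma}(\zeta)$ directly from the defining series of $M_{0,\pm\gamma}$ and verify the three claims by term-by-term analysis. From the series representation, one factors out $\zeta^{\frac12\pm\gamma}$ and is left with
\begin{equation*}
\mathcal{E}_{0,\pm\gamma}(\zeta) = e^{-\zeta/2} M\!\left(\tfrac12\pm\gamma,\,1\pm 2\gamma,\,\zeta\right) = e^{-\zeta/2} \sum_{s=0}^\infty \frac{(\tfrac12\pm\gamma)_s}{(1\pm 2\gamma)_s\, s!}\,\zeta^s.
\end{equation*}
The first observation is that under the hypothesis $0\le \mu<\tfrac12$, $0\le \nu$, the parameter $1\pm 2\gamma=1\pm 2\mu\pm 2i\nu$ has real part in $(0,2)$, so it is never a non-positive integer; hence the Pochhammer symbols $(1\pm 2\gamma)_s$ never vanish and the Kummer hypergeometric series defines an entire function of $\zeta$. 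Together with the entire factor $e^{-\zeta/2}$, this shows $\mathcal{E}_{0,\pm\gamma}\in C^\infty(B_R)$ for every $R>0$.

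The two pointwise identities at the origin are then immediate. Setting $\zeta=0$ in the series gives $\mathcal{E}_{0,\pm\gamma}(0) = 1\cdot 1 = 1$. Differentiating once and evaluating at $\zeta=0$ yields
\begin{equation*}
\mathcal{E}_{0,\pm\gamma}'(0) = -\tfrac12 + \frac{\tfrac12\pm\gamma}{1\pm 2\gamma} = -\tfrac12 + \tfrac12 = 0,
\end{equation*}
where I used $\tfrac12\pm\gamma = \tfrac12(1\pm 2\gamma)$.

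For the $C^2$ bound, I would estimate coefficients of the two series entering the product $e^{-\zeta/2}M(\tfrac12\pm\gamma,1\pm 2\gamma,\zeta)$. Since $\Re(1\pm 2\gamma)\ge 1-2\mu>0$ is bounded below uniformly in the stated range of $\gamma$, one has $|(1\pm 2\gamma)_s|\gtrsim_{\gamma} s!\cdot c^s$ (indeed the modulus grows at least like $\Gamma(s)$), so the ratio $\left|\frac{(\tfrac12\pm\gamma)_s}{(1\pm 2\gamma)_s\,s!}\right|$ decays faster than $C^s/s!$. Consequently the Kummer series and its first two termwise derivatives converge absolutely and uniformly on $B_R$, with sums bounded by a constant depending only on $R$ and $\gamma$. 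Multiplying by the bounded factor $e^{-\zeta/2}$ preserves this bound, yielding $\|\mathcal{E}_{0,\pm\gamma}\|_{C^2(B_R)}\lesssim_{\gamma,R}1$.

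There is no real obstacle here; the only point to watch is that $\pm\gamma$ must avoid the poles $\{-\tfrac12,-1,-\tfrac32,\ldots\}$ of the Pochhammer denominator, and the assumption $0\le \mu<\tfrac12$, $\nu\ge 0$ precisely ensures this for both signs. The statement is thus obtained by direct inspection of the hypergeometric series without requiring any further machinery.
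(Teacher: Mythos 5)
Your proof is correct, but it takes a genuinely different route from the paper's. You work directly from the Kummer--series definition $M_{0,\pm\gamma}(\zeta)=e^{-\zeta/2}\zeta^{\frac12\pm\gamma}M(\tfrac12\pm\gamma,1\pm2\gamma,\zeta)$ and set $\mathcal{E}_{0,\pm\gamma}(\zeta)=e^{-\zeta/2}M(\tfrac12\pm\gamma,1\pm2\gamma,\zeta)$; you then obtain $\mathcal{E}_{0,\pm\gamma}'(0)=0$ by a product-rule cancellation (valid because $\tfrac12\pm\gamma=\tfrac12(1\pm2\gamma)$), and analyticity from the fact that $1\pm2\gamma$ avoids the non-positive integers under the stated restriction on $\mu$. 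The paper instead passes to the modified Bessel representation $M_{0,\pm\gamma}(\zeta)=2^{\frac12\pm2\gamma}\Gamma(1\pm\gamma)\sqrt{\zeta/2}\,I_{\pm\gamma}(\zeta/2)$, which yields $\mathcal{E}_{0,\pm\gamma}(\zeta)=\sum_{j\ge0}\frac{\Gamma(1\pm\gamma)}{j!\,\Gamma(j+1\pm\gamma)}(\zeta/4)^{2j}$ — a power series in $\zeta^{2}$, so $\mathcal{E}_{0,\pm\gamma}'(0)=0$ is transparent with no cancellation to verify. The trade-off is that your argument stays closer to the definition the paper gives a few lines earlier and avoids invoking the Bessel representation, whereas the paper's even-series form makes the first-derivative vanishing and the boundedness of coefficients structurally obvious. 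One minor imprecision in your write-up: the claimed lower bound $|(1\pm2\gamma)_s|\gtrsim s!\,c^s$ is not quite the sharp asymptotic (Stirling gives $|(1\pm2\gamma)_s|\sim C_\gamma\, s!\, s^{\Re(1\pm2\gamma)-1}$, which for $1-2\gamma$ with $\mu$ close to $\tfrac12$ is slightly below $s!$), but the conclusion you draw — that the coefficient ratio decays at least factorially, giving an entire function with uniform $C^2$ bounds on $B_R$ — is correct.
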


\begin{proof}
From \cite{NIST} we know that
\begin{align*}
M_{0,\pm \gamma}(\zeta) = 2^{\frac12\pm 2\gamma}\Gamma(1\pm \gamma) \sqrt{\tfrac{\zeta}{2}} I_{\pm \gamma} (\tfrac{\zeta}{2}) &= \zeta^{\frac12\pm \gamma} \sum_{j\geq 0} \frac{\Gamma(1\pm \gamma)}{j! \Gamma(j+1 \pm \gamma)}\left( \tfrac{\zeta}{4}\right)^{2j} \\
&= \zeta^{\frac12\pm \gamma} \E_{0,\pm \gamma}(\zeta)
\end{align*}
where $\E_{0,\pm\gamma}$ is an entire function in $\C$ with $\E_{0,\pm\gamma}(0) = 1$ and $\E_{0,\pm\gamma}'(0)=0$. In particular,  $\Vert \mathcal{E}_{0,\pm\gamma}\Vert_{C^2(B_R)} \lesssim_{R} C_\gamma$, with $C_\gamma>0$ uniformly bounded for $|\gamma|$ bounded and $|\mu|\leq\frac12$. 
\end{proof}

\begin{lemma}\label{lemma:asymptoticexpansionW}
Let $\zeta\in\C$. Let $B_R\subset\C$ denote the closed unit ball of radius $R>0$ centered in the origin. Then,
\begin{align*}
W_{0,\gamma}(\zeta) &= \zeta^{\frac12+\gamma} \E_{1,\gamma}(\zeta) - \zeta^{\frac12-\gamma}\log (\zeta)\mathcal{Q}_\gamma(\zeta) \E_{2,\gamma}(\zeta),
\end{align*}
where we recall
\begin{align*}
\mathcal{Q}_\gamma(\zeta) = \int_0^1 e^{2\gamma s \log(\zeta)} \d s
\end{align*}
and $\E_{j,\gamma}(\zeta)$ are entire functions in $\C$, $\E_{j,\gamma}'(0)=0$ and $\Vert \E_{j,\gamma}\Vert_{C^2(B_R)}\lesssim 1$, for $j=1,2$ uniformly for $|\gamma|\leq\tfrac14$. 
\end{lemma}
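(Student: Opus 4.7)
The plan is to derive the stated expansion directly from the series definition of $W_{0,\gamma}$ and to show algebraically that the apparent $1/\sin(\gamma\pi)$ pole at $\gamma=0$ cancels. Starting from $W_{0,\gamma}(\zeta)=\sqrt{\zeta/\pi}\,K_\gamma(\zeta/2)$ with $K_\gamma=\frac{\pi}{2\sin(\gamma\pi)}(I_{-\gamma}-I_\gamma)$ and $I_{\pm\gamma}(\zeta/2)=(\zeta/4)^{\pm\gamma}g_{\pm\gamma}(\zeta)$, where
\begin{equation*}
g_\gamma(\zeta):=\sum_{n\ge 0}\frac{(\zeta/4)^{2n}}{n!\,\Gamma(\gamma+n+1)}
\end{equation*}
is jointly entire in $(\gamma,\zeta)$ and contains only even powers of $\zeta$, one obtains the raw identity
\begin{equation*}
W_{0,\gamma}(\zeta)=\frac{\sqrt{\pi\zeta}}{2\sin(\gamma\pi)}\bigl[(\zeta/4)^{-\gamma}g_{-\gamma}(\zeta)-(\zeta/4)^{\gamma}g_\gamma(\zeta)\bigr].
\end{equation*}

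The first step is to split the bracket as
\begin{equation*}
(\zeta/4)^{-\gamma}g_{-\gamma}-(\zeta/4)^{\gamma}g_\gamma=(\zeta/4)^{-\gamma}[g_{-\gamma}-g_\gamma]+\bigl[(\zeta/4)^{-\gamma}-(\zeta/4)^{\gamma}\bigr]g_\gamma,
\end{equation*}
and to observe that each piece carries a factor of $2\gamma$ that cancels $\sin(\gamma\pi)=\gamma\pi\mathcal{S}(\gamma)$, where $\mathcal{S}(\gamma):=\sin(\gamma\pi)/(\gamma\pi)$ is entire and nonvanishing on $|\gamma|\le\tfrac14$. Indeed, $g_{-\gamma}-g_\gamma=-2\gamma\,B(\gamma,\zeta)$ for a jointly entire function $B$ that is even in $\zeta$, while Lemma~\ref{lemma:firstgammalog} gives $(\zeta/4)^{-\gamma}-(\zeta/4)^\gamma=-2\gamma(\zeta/4)^{-\gamma}\log(\zeta/4)\Q_\gamma(\zeta/4)$. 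Collecting these factors yields the preliminary form
\begin{equation*}
W_{0,\gamma}(\zeta)=-\frac{4^\gamma\,\zeta^{1/2-\gamma}}{\sqrt{\pi}\,\mathcal{S}(\gamma)}\bigl[B(\gamma,\zeta)+\log(\zeta/4)\Q_\gamma(\zeta/4)\,g_\gamma(\zeta)\bigr],
\end{equation*}
which is manifestly regular in $\gamma$ on $|\gamma|\le\tfrac14$, but does not yet match the prescribed decomposition.

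The second step is a purely algebraic reshaping based on two identities. First, Lemma~\ref{lemma:firstgammalog} yields
\begin{equation*}
\log(\zeta/4)\Q_\gamma(\zeta/4)=4^{-2\gamma}\log(\zeta)\Q_\gamma(\zeta)+C(\gamma),\qquad C(\gamma):=\frac{4^{-2\gamma}-1}{2\gamma},
\end{equation*}
with $C$ entire. Second, the change of variables $s\mapsto 1-s$ in the integral defining $\Q_\gamma$ gives the symmetry $\zeta^{1/2+\gamma}\Q_{-\gamma}(\zeta)=\zeta^{1/2-\gamma}\Q_\gamma(\zeta)$, which combined with Lemma~\ref{lemma:firstgammalog} produces $\zeta^{1/2-\gamma}=\zeta^{1/2+\gamma}-2\gamma\,\zeta^{1/2-\gamma}\log(\zeta)\Q_\gamma(\zeta)$. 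Substituting both identities into the preliminary form and grouping by $\zeta^{1/2\pm\gamma}$ prefactor delivers the claimed decomposition with
\begin{equation*}
\mathcal{E}_{1,\gamma}(\zeta)=-\tfrac{4^\gamma}{\sqrt{\pi}\mathcal{S}(\gamma)}\bigl[B(\gamma,\zeta)+C(\gamma)g_\gamma(\zeta)\bigr],\qquad \mathcal{E}_{2,\gamma}(\zeta)=\tfrac{4^{-\gamma}}{\sqrt{\pi}\mathcal{S}(\gamma)}g_\gamma(\zeta)+2\gamma\,\mathcal{E}_{1,\gamma}(\zeta).
\end{equation*}
These are entire in $\zeta$; since $g_\gamma$ and $B(\gamma,\cdot)$ contain only even powers of $\zeta$ one has $\mathcal{E}'_{j,\gamma}(0)=0$; and the uniform $C^2(B_R)$ bound for $|\gamma|\le\tfrac14$ follows from continuity of all ingredients on the compact set $\{|\gamma|\le\tfrac14\}\times B_R$, on which $\mathcal{S}$ does not vanish.

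The main hurdle is bookkeeping rather than analysis: the entire argument rests on extracting two factors of $2\gamma$ in parallel—one from the symmetric difference $g_\gamma-g_{-\gamma}$ and one from $(\zeta/4)^{-\gamma}-(\zeta/4)^\gamma$—to compensate the simple zero of $\sin(\gamma\pi)$ at the origin, together with the trick of trading the $\zeta^{1/2-\gamma}$ prefactor for $\zeta^{1/2+\gamma}$ modulo a controlled $\log(\zeta)\Q_\gamma(\zeta)$ correction so as to match the prescribed form. Once these identities are in hand, term collection is routine.
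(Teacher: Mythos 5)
Your proof is correct, but it takes a genuinely different route from the paper's argument. Where the paper starts from Poisson's integral representation $I_\gamma(\zeta/2)=\frac{(\zeta/4)^\gamma}{\sqrt\pi\,\Gamma(\gamma+\tfrac12)}\int_0^\pi e^{\frac{\zeta}{2}\cos\theta}(\sin\theta)^{2\gamma}\,\d\theta$ and applies a three-factor telescoping to $f_1f_2f_3(\gamma)-f_1f_2f_3(-\gamma)$ so that each factor-of-$\gamma$ difference contributes to its designated $\E_{j,\gamma}$ with the correct $\zeta$-power prefactor already attached, you instead work from the power series $I_\gamma(\zeta/2)=(\zeta/4)^\gamma g_\gamma(\zeta)$ and a two-term telescope. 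Your splitting $(\zeta/4)^{-\gamma}(g_{-\gamma}-g_\gamma)+[(\zeta/4)^{-\gamma}-(\zeta/4)^\gamma]g_\gamma$ leaves both pieces carrying the same $\zeta^{1/2-\gamma}$ prefactor, which is why you need the extra post-processing step trading $\zeta^{1/2-\gamma}=\zeta^{1/2+\gamma}-2\gamma\,\zeta^{1/2-\gamma}\log(\zeta)\Q_\gamma(\zeta)$ (a direct consequence of Lemma~\ref{lemma:firstgammalog}; the $\Q_{-\gamma}$--$\Q_\gamma$ symmetry you mention is true but not actually needed for this step), together with the dilation relation $\log(\zeta/4)\Q_\gamma(\zeta/4)=4^{-2\gamma}\log(\zeta)\Q_\gamma(\zeta)+C(\gamma)$. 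The paper's telescoping avoids that reshaping and gives the $\E_{1,\gamma}$, $\E_{2,\gamma}$ directly, but it must then establish bounds from the integral representation (e.g.\ $\partial_\zeta f_2(\gamma,0)=0$ and factor-of-$\gamma$ extraction from $f_1(\gamma)-f_1(-\gamma)$ and $f_2(\gamma,\zeta)-f_2(-\gamma,\zeta)$), whereas your series-based objects $g_\gamma$ and $B(\gamma,\zeta)$ are visibly entire and even in $\zeta$, making the $\E_{j,\gamma}'(0)=0$ claim and the uniform $C^2$ bound immediate. Had you used the opposite splitting $[(\zeta/4)^{-\gamma}-(\zeta/4)^\gamma]g_{-\gamma}+(\zeta/4)^\gamma(g_{-\gamma}-g_\gamma)$, the two pieces would have come out with prefactors $\zeta^{1/2-\gamma}$ and $\zeta^{1/2+\gamma}$ respectively, removing most of your bookkeeping. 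As a side note, the displayed final identity in the paper's proof contains a typo ($\zeta^{-\frac12-\gamma}$ and the sign); the lemma statement and your derivation agree on $\zeta^{\frac12-\gamma}$ with a minus.
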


\begin{proof}
We begin by noting that 
\begin{align*}
W_{0,\gamma}(\zeta) &= \sqrt{\frac{\zeta}{\pi}}K_{\gamma}(\tfrac{\zeta}{2}) \\
&= -\frac12 \sqrt{\pi\zeta}\frac{I_\gamma(\tfrac{\zeta}{2})- I_{-\gamma}(\tfrac{\zeta}{2})}{\sin(\gamma\pi)} \\
&= -\frac12 \frac{\zeta^\frac12}{\sin(\gamma\pi)} \left( \frac{\zeta^\gamma}{4^\gamma \Gamma\left( \frac12 + \gamma \right)} \int_0^\pi e^{\frac{\zeta}{2} \cos\theta } (\sin \theta)^{2\gamma} \d \theta -  \frac{\zeta^{-\gamma}}{4^{-\gamma} \Gamma\left( \frac12 - \gamma \right)} \int_0^\pi e^{\frac{\zeta}{2} \cos\theta } (\sin \theta)^{-2\gamma} \d \theta \right).
\end{align*}
For 
\begin{align*}
f_1(\gamma)&=\frac{1}{4^{\gamma}\Gamma(\tfrac12+\gamma)}, \quad f_2(\gamma,\zeta) =\int_0^\pi e^{\frac{\zeta}{2} \cos\theta } (\sin \theta)^{2\gamma} \d \theta,  \quad f_3(\gamma) = \zeta^\gamma,
\end{align*}
together with
\begin{align*}
f_1(\gamma)f_2(\gamma)f_3(\gamma) -  f_1(-\gamma) f_2(-\gamma)f_3(-\gamma) &= \left( f_1(\gamma) - f_1(-\gamma) \right) f_2(\gamma) f_3(\gamma) \\
&\quad + f_1(-\gamma) \left( f_2(\gamma) - f_2(-\gamma) \right) f_3(\gamma) \\
&\quad + f_1(-\gamma)f_2(-\gamma) \left( f_3(\gamma) - f_3(-\gamma) \right).
\end{align*}
we now have
\begin{align*}
W_{0,\gamma}(\zeta) = \zeta^{\frac12+\gamma} \E_{1,\gamma}(\zeta) + \zeta^{-\frac12-\gamma}\log(\zeta)\Q_\gamma(\zeta) \E_{2,\gamma}(\zeta)
\end{align*}
where we define
\begin{align*}
\E_{1,\gamma}(\zeta) &:= -\frac12 \left( f_2(\gamma,\zeta) \frac{f_1(\gamma) - f_1(-\gamma)}{\sin(\gamma\pi)} + f_1(-\gamma) \frac{f_2(\gamma,\zeta) - f_2(-\gamma,\zeta)}{\sin(\gamma\pi)} \right), \\
\E_{2,\gamma}(\zeta) &:= -\frac{\gamma}{\sin(\gamma\pi)} f_1(-\gamma)f_2(-\gamma,\zeta).
\end{align*}
Firstly, we note that $\partial_\zeta f_2(\gamma,0)=0$ for all $0<|\gamma|\leq\tfrac14$. Hence, $\E_{j,\gamma}'(0) = 0$, for all $0<|\gamma|\leq\tfrac14$. Secondly, both $f_1(\gamma)$ and $\partial_\zeta^n f_2(\gamma,\zeta)$ are uniformly bounded for $|\gamma|\leq \tfrac14$ and $|\zeta|\leq R$, for $n=0,1,2$. With this, we see that $\Vert \E_{2,\gamma}\Vert_{C^2(B_R)}\lesssim 1$ uniformly in $|\gamma|\leq\tfrac14$.

On the other hand,  there holds $f_1(\gamma) - f_1(-\gamma) = \gamma f_{1,\infty}(\gamma)$, with $\Vert f_{1,\infty}\Vert_{L^\infty(B_\frac14(0))}\lesssim 1$. Similarly,
\begin{align*}
f_2(\gamma,\zeta) - f_2(-\gamma,\zeta) = 4\gamma\int_0^\pi e^{\frac{\zeta}{2}\cos\theta} (\sin\theta)^{-2\gamma}  \log(\sin\theta) Q_{2\gamma}(\sin\theta) \d \theta
\end{align*}
Since $|\sin(\theta)|\leq 1$ and $x^{-\frac12}\log x$ is integrable, we see that $|\Q_{2\gamma}(\sin\theta)|\lesssim1$ and
\begin{align*}
\Vert f_2(\gamma,\cdot) - f_2(-\gamma,\cdot)\Vert_{C^2(B_R)}\lesssim \gamma.
\end{align*}
for all $|\gamma|\leq \tfrac14$. With this, we conclude that $\Vert \E_{1,\gamma}\Vert_{C^2(B_R)}\lesssim 1$ uniformly in $|\gamma|\leq\tfrac14$ and the proof is complete.
\end{proof}

In the future, for $f$ smooth and compactly supported, we shall need
\begin{align*}
\int \frac{f(z)}{z}\log(z) \mathcal{Q}_\gamma(z) \d z &= \int \log(z) f(z) \int_0^1 \partial_z \left( \frac{e^{2\gamma u \log(z)} - 1}{2\gamma u} \right) \d u \d z \\
&= - \int f(z) \int_0^1 \frac12\partial_z \left(  \frac{e^{2\gamma u \log(z)} -1 }{2\gamma u} \right)^2 \d u \d z \\
&\quad - \int f'(z) \log^2(z) \int_0^1 \int_0^1 e^{2\gamma u r \log(z)} \d r \d u \d z \\
&= \frac12\int f'(z) \log^2(z)\int_0^1 \left( \int_0^1 e^{2\gamma u r \log(z)} \d r \right)^2 \d u \d z
\\
&\quad - \int f'(z) \log^2(z) \int_0^1 \int_0^1 e^{2\gamma u r \log(z)} \d r \d u \d z
\end{align*}
which is uniformly bounded in $\gamma$ for $|\gamma|\leq \frac14$, say, by $\Vert f \Vert_{W^{1,\infty}}$.

\section{The Green's function for the mild stratified regime}
Here we prove the main estimates for the Green's function $\G_{k,\ep}^\pm(y,y_0,z)$ for $y_0\in I_M$.
Firstly, we note that for $\gamma\in \C$, 
\begin{align*}
\Gamma_\gamma := \frac{\Gamma(1+2\gamma)}{\Gamma(1/2+\gamma)} = \frac{1}{\sqrt\pi} + O(|\gamma|)
\end{align*}
as $\gamma\rightarrow 0$. We recall that $\gamma_0=\gamma(y_0)$ and 
\begin{equation*}
\W_{k,\ep}^\pm(y_0):= -2k\Gamma_{\gamma_0} \Big( M_{\sr}( - y_0\pm i\ep_0)W_{\sr}(2- y_0\pm i\ep_0) - W_{\sr}( - y_0\pm i\ep_0)M_{\sr}(2-y_0\pm i\ep_0) \Big).
\end{equation*} 
Using the analytic continuation properties of $M_{0,\gamma}$ and $W_{0,\gamma}$, we get
\begin{align*}
\W_{k,\ep}^+(y_0) &= 2k M_{\sr}(2-y_0+i\ep_0) M_{\sr}(y_0-i\ep_0)\\
&\quad-2ki \Gamma_{\gamma_0}\Big( e^{{\gamma_0}\pi i} M_{\sr}( y_0- i\ep_0)W_{\sr}(2-y_0+ i\ep_0) - e^{-{\gamma_0}\pi i} M_{\sr}(  2 -y_0 - i\ep_0)W_{\sr}(y_0- i\ep_0) \Big) 
\end{align*}

\begin{lemma}\label{lemma:Wronskianlowerboundmildstrat}
Let $y_0\in I_S$. There exists $C>0$, $\ep_*>0$ and $\gamma_*>0$ such that 
\begin{align*}
|\W_{k,\ep}^+(y_0)| \geq Ck  |M_{\sr}(2-y_0+i\ep_0)||M_{\sr}(y_0-i\ep_0)|
\end{align*}
for all $0<\ep<\ep_*$ and all $0<|{\gamma_0}|<\gamma_*$.
\end{lemma}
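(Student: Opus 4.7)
The plan is to exploit the fact that when $y_0 \in I_M\subset(\vartheta_1,\vartheta_2)\subset(0,1)$, the arguments $y_0-i\ep_0$, $2-y_0\pm i\ep_0$ all have positive real parts bounded away from zero, so that for $k$ large the Whittaker functions $M_\sr$ and $W_\sr$ behave asymptotically like pure exponentials of opposite signs. This will render the two ``cross terms'' involving $W_\sr$ in
\[
\W_{k,\ep}^+(y_0) = 2k M_{\sr}(2-y_0+i\ep_0) M_{\sr}(y_0-i\ep_0) -2ki \Gamma_{\gamma_0}\Big( e^{{\gamma_0}\pi i} M_{\sr}(y_0- i\ep_0)W_{\sr}(2-y_0+ i\ep_0) - e^{-{\gamma_0}\pi i} M_{\sr}(2-y_0 - i\ep_0)W_{\sr}(y_0- i\ep_0) \Big)
\]
exponentially small compared to the product $|M_\sr(2-y_0+i\ep_0)||M_\sr(y_0-i\ep_0)|$, and the lower bound will then follow from the reverse triangle inequality.

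Concretely, I would first invoke the standard large-argument asymptotics
\[
M_{0,\gamma}(\zeta)=\Gamma_\gamma\bigl(e^{\zeta/2}+e^{-\zeta/2\pm i\pi(\tfrac12+\gamma)}\bigr)(1+O(|\zeta|^{-1})),\qquad W_{0,\gamma}(\zeta)=e^{-\zeta/2}\zeta^{\gamma}(1+O(|\zeta|^{-1})),
\]
valid for $|\arg\zeta|<\pi$ and $|\zeta|\to\infty$, uniformly in $|\gamma|\leq\gamma_*$. Applied with $\zeta = 2k(y_0-i\ep_0)$ and $\zeta = 2k(2-y_0\pm i\ep_0)$, these yield, for all sufficiently large $k$ (depending only on $\vartheta_1,\vartheta_2,\gamma_*$) and all sufficiently small $\ep,\gamma_0$,
\[
|M_\sr(y_0-i\ep_0)|\asymp|\Gamma_{\gamma_0}|e^{ky_0},\qquad |M_\sr(2-y_0\pm i\ep_0)|\asymp|\Gamma_{\gamma_0}|e^{k(2-y_0)},
\]
\[
|W_\sr(y_0-i\ep_0)|\lesssim k^{\mu_0}e^{-ky_0},\qquad |W_\sr(2-y_0+i\ep_0)|\lesssim k^{\mu_0}e^{-k(2-y_0)}.
\]
Dividing the $M\cdot W$ terms by the leading $M\cdot M$ term gives ratios bounded by $|\Gamma_{\gamma_0}|^{-1}k^{\mu_0}e^{-2k\min(y_0,2-y_0)}$, which is exponentially small in $k$ because $\min(y_0,2-y_0)\geq\vartheta_1>0$. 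Hence for all $k\geq k_0$ the stated lower bound holds with any $C<1$.

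For the remaining bounded range $1\leq k\leq k_0$, I would argue by a compactness-continuity argument on the compact set $[1,k_0]\times[\vartheta_1,\vartheta_2]\times[0,\ep_*]\times\overline{B_{\gamma_*}(0)}$. The ratio $\W_{k,\ep}^+(y_0)\big/\bigl(2k M_\sr(2-y_0+i\ep_0) M_\sr(y_0-i\ep_0)\bigr)$ is continuous (using Lemma~\ref{lemma:asymptoticexpansionW} to control the logarithmic factors of $W_\sr$ at bounded arguments away from $0$), and would only fail to be bounded below if its numerator vanished somewhere. But $\W_{k,\ep}^+$ is (up to the Wronskian normalisation) the boundary evaluation of the homogeneous RTG boundary value problem at $\lambda=v(y_0)\mp i\ep$, so its vanishing would force $v(y_0)\mp i\ep$ to be an eigenvalue of $L_k$, contradicting Theorem~\ref{thm:spectrumlinop}. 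Combining both regimes yields the uniform constant $C$.

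The main technical obstacle is the uniformity as $\gamma_0\to 0$: there the two fundamental solutions $M_{0,\gamma_0}$ and $M_{0,-\gamma_0}$ lose linear independence, and the $W_\sr$ asymptotics carry a $\log\zeta$ factor that must be shown to remain harmless at bounded arguments. This is handled by the decomposition in Lemma~\ref{lemma:asymptoticexpansionW}, which gives a uniform bound on $W_{0,\gamma_0}$ away from $\zeta=0$ even as $\gamma_0\to 0$, and by using $\Gamma_{\gamma_0}=\frac{1}{\sqrt\pi}+O(|\gamma_0|)$ to keep $\Gamma_{\gamma_0}$ bounded and bounded below throughout the regime $|\gamma_0|<\gamma_*$.
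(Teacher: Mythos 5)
Your large-$k$ step is in the right spirit and matches the paper's ``Case~2'' (large $k$, via Lemma~\ref{lemma:smallnulargearg}, which gives exactly the bound $|W_{0,\gamma}(\zeta)/M_{0,\gamma}(\zeta)| \lesssim e^{-\Re(\zeta)}$ for $\Re(\zeta)$ large, uniformly in $|\gamma|\leq\gamma_*$). Your stated asymptotics have minor inaccuracies (for instance $W_{0,\gamma}(\zeta)\sim e^{-\zeta/2}\zeta^{0}$, not $\zeta^{\gamma}$), but the exponential separation is the only thing that matters, so this part would go through.

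The bounded-$k$ step has a genuine gap. You argue that if $\W_{k,\ep}^{+}(y_0)$ vanished somewhere on the compact parameter set, then $v(y_0)\mp i\ep$ would be an eigenvalue of $L_k$, contradicting Theorem~\ref{thm:spectrumlinop}. But $\W_{k,\ep}^{\pm}$ is the Wronskian of $\phi_{\sl,k,\ep}^{\pm}$ and $\phi_{\su,k,\ep}^{\pm}$, which are homogeneous solutions of the \emph{reduced} operator $\textsc{RTG}_{k,\ep}^{\pm}=\partial_y^2-k^2+\cJ(y_0)/(y-y_0\pm i\ep_0)^2$ (see Proposition~\ref{prop: def Green's Function} and~\eqref{eq:defRTGoperator}). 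Its vanishing says only that the constant-coefficient model problem
\begin{equation*}
\Big(\partial_y^2-k^2+\tfrac{\cJ(y_0)}{(y-y_0\pm i\ep_0)^2}\Big)\phi=0,\qquad \phi(0)=\phi(2)=0,
\end{equation*}
has a nontrivial solution. This is a different equation from the full Taylor--Goldstein eigenvalue equation~\eqref{eq:eigenvalueTG}: the actual shear $v(y)$, its curvature $v''(y)$, and the variable stratification $\P(y)$ have all been replaced by their frozen-coefficient analogues. Theorem~\ref{thm:spectrumlinop} concerns the spectrum of $L_k$ built from the true coefficients; it says nothing about the reduced Dirichlet problem, so it cannot be invoked to rule out zeros of $\W_{k,\ep}^{\pm}$. (Indeed, only for the stably stratified Couette flow $v(y)=y$, $\P\equiv$\,const would the two coincide, and the paper is explicitly treating non-Couette backgrounds.)

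The paper therefore handles bounded $k$ directly from the explicit Whittaker structure (its ``Case~1''): for $\gamma_0=i\nu_0$ it uses that $\Im\big(W_\sr/M_\sr\big)$ is small at real arguments and the continuity in $(\gamma,\ep)$ afforded by Lemmas~\ref{lemma:smallgammaasymptotic} and~\ref{lemma:smallnuboundedarg}; for $\gamma_0=\mu_0>0$ it exploits that the dangerous part of the cross terms carries a factor $\sin(\mu_0\pi)$, which is small near $\gamma_0=0$. Replacing your second step with this explicit argument (or with an independent proof that the reduced Dirichlet Wronskian is nonvanishing on the compact set) is required to close the proof.
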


\begin{proof}
It is useful to write 
\begin{align*}
\W_{k,\ep}^+(y_0) &= 2k M_{\sr}(2-y_0+i\ep_0)M_{\sr}(y_0-i\ep_0) \\
&\qquad\times\left( 1 + i\Gamma_{\gamma_0} \left( e^{-{\gamma_0}\pi i}\frac{W_{\sr}(y_0-i\ep_0)}{M_{\sr}(y_0-i\ep_0)} - e^{{\gamma_0} \pi i}\frac{W_{\sr}(2-y_0+i\ep_0)}{M_{\sr}(2-y_0+i\ep_0)} \right) \right).
\end{align*}
We treat two cases according to the size of the wave-number $k$. Let $N_0>0$ be given by Lemma \ref{lemma:smallnulargearg}. 

\bullpar{Case 1} Assume $k< \vartheta^{-1}\frac{N}{2}$. Since $y_0$ and $2-y_0$ are bounded away from zero for all $y_0\in I_M$, we observe that for ${\gamma_0} = i\nu_0\in i \R_+$ there holds
\begin{align*}
&\left| 1 + i\Gamma_{\nu_0} \left( e^{{\nu_0}\pi}\frac{W_{\sr}(y_0-i\ep_0)}{M_{\sr}(y_0-i\ep_0)} - e^{-{\nu_0} \pi}\frac{W_{\sr}(2-y_0+i\ep_0)}{M_{\sr}(2-y_0+i\ep_0)} \right) \right| \\
&\qquad\geq 1 - |\Gamma_{\nu_0}|e^{{\nu_0}\pi} \left| \Im \left(\frac{W_{\sr}(y_0-i\ep_0)}{M_{\sr}(y_0-i\ep_0)} \right) \right| - |\Gamma_{\nu_0}|e^{-{\nu_0}\pi } \left| \Im \left( \frac{W_{\sr}(2-y_0+i\ep_0)}{M_{\sr}(2-y_0+i\ep_0)} \right) \right|
\end{align*}
The desired bound follows from Lemma \ref{lemma:smallnuboundedarg}. Likewise, for $\gamma_0 = \mu_0>0$ we now observe that
\begin{align*}
& \left| 1 +i\Gamma_{\sr} \left( e^{-{\mu_0} \pi i} \frac{W_{\sr}(y_0)}{M_{\sr}(y_0)} - e^{{\mu_0}\pi i} \frac{W_{\sr}(2-y_0)}{M_{\sr}(2-y_0)} \right) \right| \\
&\qquad \geq 1 - \Gamma_0\sin({\mu_0}\pi) \left( \frac{W_{\sr}(y_0)}{M_{\sr}(y_0)} + \frac{W_{\sr}(2-y_0)}{M_{\sr}(2-y_0)} \right)  + O({\mu_0}).
\end{align*}
We recall that $M_{\sr}(\cdot)=M_{0,\mu_0}(2k\cdot)$ is real-valued for real arguments, continuous, uniformly bounded above and also uniformly bounded away from 0 for ${\mu_0}\in \left(0,\frac12 \right)$, and that $W_{\sr}(\cdot)=W_{0,\mu_0}(\cdot)$ is also real-valued for real arguments, continuous and bounded uniformly for ${\mu_0}\in \left(0,\frac12 \right)$ and for its argument in compact sets away from 0, see \ref{lemma:smallgammaasymptotic} and Lemma \ref{lemma:smallnuboundedarg}. Hence, for $\ep>0$ small enough and ${\mu_0}>0$ small enough, the desired bound follows.

\bullpar{Case 2} Assume $k\geq \vartheta^{-1}\frac{N}{2}$. Since both $y_0\geq \vartheta$ and $2-y_0\geq \vartheta$, we now have $2ky_0\geq N$ and also $2k(2-y_0)\geq N$. Hence, for $\gamma_0 = \mu_0 + i \nu_0$, since 
\begin{align*}
&\left| 1 + i\Gamma_{\gamma_0} \left( e^{-{\gamma_0}\pi i}\frac{W_{\sr}(y_0-i\ep_0)}{M_{\sr}(y_0-i\ep_0)} - e^{{\gamma_0} \pi i}\frac{W_{\sr}(2-y_0+i\ep_0)}{M_{\sr}(2-y_0+i\ep_0)} \right) \right| \\
&\qquad\geq 1 - |\Gamma_{\gamma_0}|e^{\nu_0\pi} \left|   \frac{W_{\sr}(y_0-i\ep_0)}{M_{\sr}(y_0-i\ep_0)}  \right| - |\Gamma_{\gamma_0}|e^{-\nu_0\pi } \left| \frac{W_{\sr}(2-y_0+i\ep_0)}{M_{\sr}(2-y_0+i\ep_0)} \right|,
\end{align*}
The result follows once we use Lemma \ref{lemma:smallnulargearg}.
\end{proof}

\begin{lemma}\label{lemma:smallgammaasymptotic}
Let $\zeta\in \C$ and $\gamma\in \C$. For any compact set $K\subset\C$, there holds
\begin{align*}
M(1/2+\gamma,1+2\gamma,\zeta) &=  M(1/2,1,\zeta) + O(\gamma).
\end{align*}
uniformly for all $\zeta\in K$. Moreover, if $K\subset \lbrace \Re(\zeta)>0 \rbrace$ and it is uniformly bounded away from $0$, then
\begin{align*}
U(1/2+\gamma,1+2\gamma,\zeta) = U(1/2,1,\zeta) + O(\gamma)
\end{align*}
uniformly for all $\zeta\in K$. 
\end{lemma}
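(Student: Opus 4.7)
Both identities are smoothness-in-parameter statements for confluent hypergeometric functions along the curve $(a,b) = (1/2+\gamma, 1+2\gamma)$ as $\gamma \to 0$. The plan is to express each function via an integral representation that makes the $\gamma$-dependence of the integrand explicit, and then control the remainder by dominated convergence. This avoids any direct manipulation of the Pochhammer tail bounds in the series definition.

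For the first identity I would use Euler's integral representation
\begin{equation*}
M(1/2+\gamma, 1+2\gamma, \zeta) = \frac{\Gamma(1+2\gamma)}{\Gamma(1/2+\gamma)^2}\int_0^1 e^{\zeta t}\bigl(t(1-t)\bigr)^{-1/2+\gamma}\,dt,
\end{equation*}
which is valid whenever $\Re(1/2+\gamma)>0$, in particular for $|\gamma|$ small. The prefactor is analytic in $\gamma$ at $0$, and the integrand differs from its $\gamma=0$ value by
\begin{equation*}
\bigl(t(1-t)\bigr)^{-1/2}\bigl(e^{\gamma\log(t(1-t))}-1\bigr) = O\bigl(|\gamma|\,(t(1-t))^{-1/2+|\gamma|}|\log(t(1-t))|\bigr).
\end{equation*}
Since $t^{-1/2}|\log t|$ is integrable on $[0,1]$ and $e^{\zeta t}$ is uniformly bounded on $[0,1]\times K$, differentiation under the integral sign is justified, yielding the $O(\gamma)$ bound uniformly in $\zeta\in K$.

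For the second identity I would instead use the integral representation
\begin{equation*}
U(1/2+\gamma, 1+2\gamma, \zeta) = \frac{1}{\Gamma(1/2+\gamma)}\int_0^\infty e^{-\zeta t}\bigl(t(1+t)\bigr)^{-1/2+\gamma}\,dt,
\end{equation*}
valid for $\Re(1/2+\gamma)>0$ and $\Re(\zeta)>0$. Exactly the same Taylor expansion in $\gamma$ inside the integrand gives a remainder bounded by
\begin{equation*}
|\gamma|\,e^{-\Re(\zeta)t}(t(1+t))^{-1/2+|\gamma|}|\log(t(1+t))|.
\end{equation*}
Here the compactness of $K$ and the hypothesis $K\subset\{\Re(\zeta)>0\}$ with $K$ bounded away from $0$ yield a uniform lower bound $\Re(\zeta)\geq c>0$ on $K$, so the exponential decay $e^{-ct}$ absorbs the polynomial growth $(1+t)^{|\gamma|}$ at infinity; near $t=0$ the weight $t^{-1/2}|\log t|$ remains integrable. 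Dominated convergence then delivers the claim uniformly on $K$.

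The main obstacle is precisely the role of the $\Re(\zeta)>0$ hypothesis in the $U$-estimate: without uniform exponential decay at infinity, the integrand bound above is not integrable, reflecting the genuine logarithmic degeneracy of $U(a,b,\zeta)$ at $b=1$ (which manifests in the explicit series for $W_{0,0}$ recalled in Appendix~\ref{app:Whittaker}). The hypothesis in the statement is exactly what is needed to stay away from this degeneracy; the first identity has no such restriction because $M$ is an entire function of $\zeta$ and depends holomorphically on $(a,b)$ away from the pole set $b\in\{0,-1,-2,\ldots\}$.
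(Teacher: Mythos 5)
Your argument is correct and follows essentially the same strategy as the paper's own proof: both use the Euler integral representation for $M$ and the Kummer integral representation for $U$, and both extract the $O(\gamma)$ remainder by Taylor-expanding the $\gamma$-dependence of the integrand and integrating the resulting $t^{-1/2}|\log t|$-type bound (with the exponential decay furnished by $\Re(\zeta)\geq c>0$ taking care of the tail in the $U$ case). As a minor note, you correctly write the factor $(t(1+t))$ in the $U$ integrand; the paper's displayed formula has a typographical slip writing $(s(1-s))$ there, though the surrounding argument is clearly using the same representation you are.
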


\begin{proof}
From \cite{NIST}, we note that 
\begin{align*}
M(1/2+\gamma,1+2\gamma,\zeta) &= \Gamma (1+2\gamma)\mathbf{M}(1/2+\gamma,1+2\gamma,\zeta) \\
&= \frac{\Gamma(1+2\gamma)}{\Gamma(1/2+\gamma)^2}\int_0^1 e^{\zeta s} \left( s(1-s) \right)^{-\frac12 + i \gamma} \d s \\
&= \left(\frac{1}{\pi} + O(|\gamma|) \right)\int_0^1 e^{\zeta s} \left( s(1-s) \right)^{-\frac12}\d s \\
&\quad +  \gamma\left(\frac{1}{\pi} + O(|\gamma|) \right)\int_0^1 e^{\zeta s} \left( s(1-s) \right)^{-\frac12}\left( \log(s(1-s)) \right)\int_0^1 (s(1-s))^{\gamma u} \d u \d s
\end{align*}
so that for $\zeta$ bounded, we reach
\begin{align*}
M(1/2+\gamma,1+2\gamma,\zeta) &= \frac{1}{\pi}\int_0^1 e^{\zeta s} \left( s(1-s) \right)^{-\frac12}\d s + O(|\gamma|) = M(1/2,1,\zeta) + O(|\gamma|).
\end{align*}
Similarly, for $\Re(\zeta)>0$, 
\begin{align*}
U(1/2+\gamma,1+2\gamma,\zeta) &= \frac{1}{\Gamma(1/2+\gamma)}\int_0^\infty e^{-\zeta s} (s(1-s))^{-\frac12+\gamma}\d s \\
&=\left( \frac{1}{\sqrt\pi} + O(|\gamma|) \right)\int_0^\infty e^{-\zeta s} (s(1-s))^{-\frac12}\d s \\
&\quad +\gamma \left( \frac{1}{\sqrt\pi} + O(|\gamma|) \right) \int_0^\infty e^{-\zeta s} (s(1-s))^{-\frac12} \log (s(1-s))\int_0^1 (s(1-s))^{\gamma u}\d u \d s.
\end{align*}
and the lemma follows for $|\zeta|$ uniformly bounded from below.
\end{proof}

\begin{lemma}\label{lemma:smallnulargearg}
Let $\gamma\in \C$. There exists $N_0>0$ and $\gamma_*>0$ such that 
\begin{align*}
\left| \frac{W_{0,\gamma}(\zeta)}{M_{0,\gamma}(\zeta)} \right| \lesssim e^{-\Re(\zeta)},
\end{align*}
for all $\Re(\zeta)>N$ and all $|\gamma|<\gamma_*$.
\end{lemma}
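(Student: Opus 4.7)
\textbf{Proposal for the proof of Lemma \ref{lemma:smallnulargearg}.} The plan is to exploit the explicit representation of $M_{0,\gamma}$ and $W_{0,\gamma}$ in terms of modified Bessel functions and then invoke the standard large-argument asymptotics, making sure that the remainder estimates are uniform in $\gamma$ for $|\gamma|\leq\gamma_*$. Recall the identities
\begin{equation*}
W_{0,\gamma}(\zeta)=\sqrt{\tfrac{\zeta}{\pi}}\,K_{\gamma}(\tfrac{\zeta}{2}),\qquad
M_{0,\gamma}(\zeta)=2^{\frac12+2\gamma}\Gamma(1+\gamma)\sqrt{\tfrac{\zeta}{2}}\,I_{\gamma}(\tfrac{\zeta}{2}),
\end{equation*}
used in the proof of Lemma~\ref{lemma:asymptoticexpansionW}. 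Thus it suffices to track the ratio $K_{\gamma}(\zeta/2)/I_{\gamma}(\zeta/2)$ together with the $\gamma$-dependent prefactor $2^{-2\gamma}\sqrt{\pi}/\Gamma(1+\gamma)$, the latter being continuous (hence bounded) in a neighborhood of $\gamma=0$.

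First, I would use the integral representation
\begin{equation*}
K_{\gamma}(z)=\int_{0}^{\infty}e^{-z\cosh t}\cosh(\gamma t)\,\mathrm dt,\qquad \Re(z)>0,
\end{equation*}
and apply the Laplace-type estimate around $t=0$, where $z\cosh t\approx z+\tfrac{z}{2}t^{2}$. After the substitution $u=t\sqrt{z/2}$ and using $|\cosh(\gamma t)|\le\cosh(|\gamma|t)\le e^{|\gamma|t}$, one obtains the uniform bound
\begin{equation*}
\bigl|K_{\gamma}(z)\bigr|\le C\,|z|^{-1/2}e^{-\Re(z)}\qquad\text{for all }\Re(z)>1,\ |\gamma|<\gamma_{*},
\end{equation*}
for some absolute $C>0$. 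Second, for $I_\gamma$ I would derive a matching \emph{lower} bound. The quickest route is to use $I_{\gamma}(z)=\frac{(z/2)^{\gamma}}{\Gamma(1+\gamma)}\sum_{n\ge 0}\frac{1}{n!(1+\gamma)_n}(z/2)^{2n}$ together with the classical asymptotic
\begin{equation*}
I_{\gamma}(z)=\frac{e^{z}}{\sqrt{2\pi z}}\Bigl(1+R_{\gamma}(z)\Bigr),\qquad |R_{\gamma}(z)|\le\frac{C(\gamma_{*})}{|z|},\quad |\arg z|<\tfrac{\pi}{2}-\delta,
\end{equation*}
which follows from the standard Poincaré expansion of $I_{\gamma}$, with remainder constants depending continuously on $\gamma$. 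In particular, choosing $N_{0}$ large enough makes $|R_{\gamma}(z)|\le \tfrac12$ uniformly in $|\gamma|<\gamma_{*}$ whenever $\Re(z)>N_{0}/4$, and hence
\begin{equation*}
\bigl|I_{\gamma}(z)\bigr|\ge \tfrac12\,|z|^{-1/2}e^{\Re(z)}/\sqrt{2\pi}.
\end{equation*}

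Combining the two estimates and substituting $z=\zeta/2$ yields
\begin{equation*}
\left|\frac{W_{0,\gamma}(\zeta)}{M_{0,\gamma}(\zeta)}\right|=\frac{\sqrt{\pi}}{2^{2\gamma}\Gamma(1+\gamma)}\left|\frac{K_{\gamma}(\zeta/2)}{I_{\gamma}(\zeta/2)}\right|\le C\,e^{-\Re(\zeta)}
\end{equation*}
for all $\Re(\zeta)>N_{0}$ and $|\gamma|<\gamma_{*}$. The main obstacle, which is essentially bookkeeping, is verifying that the $O(1/|z|)$ remainder in the asymptotic expansion of $I_\gamma$ is indeed uniform in $\gamma$ near $0$; this is true because the coefficients of the Poincaré series are rational functions of $(\tfrac14-\gamma^{2})$ with no poles at $\gamma=0$, so the standard error estimate via integration by parts (or via the steepest descent representation $I_{\gamma}(z)=\pi^{-1}\int_{0}^{\pi}e^{z\cos\theta}\cos(\gamma\theta)\,\mathrm d\theta-\pi^{-1}\sin(\gamma\pi)\int_{0}^{\infty}e^{-z\cosh t-\gamma t}\mathrm dt$) depends continuously on $\gamma$ and remains bounded on the compact set $\{|\gamma|\le\gamma_{*}\}$. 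Everything else is elementary.
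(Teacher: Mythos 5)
Your proof is correct and it is a genuinely different route from the paper's. The paper works with the Kummer $U$-function: it uses the connection formula $M(a,2a,\zeta)=i\frac{\Gamma(2a)}{\Gamma(a)}\bigl(e^{\gamma\pi i}U(a,2a,\zeta)-e^{\zeta}e^{-\gamma\pi i}U(a,2a,e^{-i\pi}\zeta)\bigr)$, writes the ratio $W_{0,\gamma}/M_{0,\gamma}$ algebraically in terms of $U(a,2a,\zeta)$ and $U(a,2a,e^{-i\pi}\zeta)$, and then quotes the single large-argument asymptotic $U(a,2a,\zeta)=\zeta^{-a}\bigl(1+O(|\zeta|^{-1})\bigr)$; the $e^{\zeta}$ factor in the denominator produces the exponential decay immediately. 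You instead go via the modified Bessel pair $K_\gamma,I_\gamma$, and need an upper bound on $|K_\gamma|$ and a matching lower bound on $|I_\gamma|$. Both arguments are one-paragraph affairs; the paper's is somewhat slicker in that the cancellation is packaged inside the connection formula, while yours makes the mechanism transparent ($K_\gamma$ decays, $I_\gamma$ grows). A subtle point in your version: the Poincar\'e asymptotic you quote for $I_\gamma$ carries the sector restriction $|\arg z|<\tfrac{\pi}{2}-\delta$, whereas the hypothesis of the lemma is only $\Re(\zeta)>N$, which permits $\arg(\zeta/2)\to\pm\tfrac{\pi}{2}$. You should observe that for $\Re(z)>N$ the anti-Stokes $e^{-z}$ contribution in the $I_\gamma$ expansion is still $O(e^{-2N})$ relative to the dominant $e^{z}$ term, so the lower bound survives; naively invoking the interior-sector error estimate leaves this edge case unaddressed. (The paper's route sidesteps this because the $U$-asymptotic is valid in the larger sector $|\arg\zeta|<\tfrac{3\pi}{2}$, so both $U(a,2a,\zeta)$ and $U(a,2a,e^{-i\pi}\zeta)$ fall inside it whenever $\Re(\zeta)>0$.) Finally, a small bookkeeping slip: the prefactor should be $\frac{1}{\sqrt{\pi}\,2^{2\gamma}\Gamma(1+\gamma)}$, not $\frac{\sqrt{\pi}}{2^{2\gamma}\Gamma(1+\gamma)}$; this is of course inconsequential for the estimate.
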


\begin{proof}
Let $a=\frac12+\gamma$. We recall from \cite{NIST} that for $a\in \C$ and $\Re(\zeta)>0$,
\begin{align*}
M(a,2a,\zeta) = i\frac{\Gamma(2a)}{\Gamma(a)} \left( e^{\gamma\pi i}U(a,2a,\zeta) - e^\zeta e^{-\gamma\pi i}U(a,2a,e^{-i\pi}\zeta) \right). 
\end{align*}
Therefore,
\begin{align*}
\frac{W_{0,\gamma}(\zeta)}{M_{0,\gamma}(\zeta)} = i\frac{\Gamma(a)}{\Gamma(2a)}\frac{1}{e^{-\gamma\pi i} e^\zeta \frac{U(a,2a,e^{-i\pi}\zeta)}{U(a,2a,\zeta)} - e^{\gamma\pi i}}.
\end{align*}
Moreover, one can see from 13.7.5 in \cite{NIST} that
\begin{align*}
U(a,2a,\zeta) = \zeta^{-a}\left( 1 + O\left( |\zeta|^{-1}\right) \right)
\end{align*}
for all $|\zeta|>1$ uniformly in $a$, for bounded $|a|$. The lemma follows taking $\Re(\zeta)>0$ large enough.
\end{proof}

\begin{lemma}\label{lemma:smallnuboundedarg}
Let $\vartheta = \min(\vartheta_1, 2-\vartheta_2)$ and $y_0\in [\vartheta_1, \vartheta_2]$  such that $2k y_0\leq N_0$. Then, there exists $\ep_*>0$ and $\gamma_*>0$ such that 
\begin{align}
\left| \frac{W_{0,\gamma}(y_0-i\ep)}{M_{0,\gamma}(y_0-i\ep)} - \frac{W_{0,0}(y_0)}{M_{0,}(y_0)} \right| \leq \epsilon,
\end{align}
for all $\ep\leq \ep_*$ and all $0<|\gamma|\leq \gamma_*$. In particular, 
\begin{align*}
\left| \Im \left( \frac{W_{0,\gamma}(y_0-i\ep)}{M_{0,\gamma}(y_0-i\ep)} \right) \right| \leq \epsilon.
\end{align*}
\end{lemma}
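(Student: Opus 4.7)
The lemma is a joint-continuity statement for $(\gamma,\zeta)\mapsto W_{0,\gamma}(\zeta)/M_{0,\gamma}(\zeta)$ at $(0,y_0)$, made uniform across the admissible $y_0$. My approach is to first use the factorizations of Lemmas \ref{lemma:asymptoticexpansionM} and \ref{lemma:asymptoticexpansionW} to strip the singular prefactors and obtain
\[
\frac{W_{0,\gamma}(\zeta)}{M_{0,\gamma}(\zeta)}=\frac{\mathcal{E}_{1,\gamma}(\zeta)}{\mathcal{E}_{0,\gamma}(\zeta)}-\zeta^{-2\gamma}\log(\zeta)\,\mathcal{Q}_\gamma(\zeta)\,\frac{\mathcal{E}_{2,\gamma}(\zeta)}{\mathcal{E}_{0,\gamma}(\zeta)},
\]
which at $\gamma=0$ (using $\mathcal{Q}_0\equiv 1$ and $\zeta^0\equiv 1$) reduces exactly to the corresponding expression for $W_{0,0}(\zeta)/M_{0,0}(\zeta)$. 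Thus it suffices to establish that every factor on the right-hand side depends jointly continuously on $(\gamma,\zeta)$ near $(0,y_0)$, and then quantify this uniformly in $y_0$.

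The uniformity is controlled by the hypothesis $2ky_0\leq N_0$: combined with $k\geq 1$ and $y_0\geq \vartheta_1>0$, it forces the argument $\zeta$ (whether one reads it as $y_0-i\ep$ or as the rescaled $2k(y_0-i\ep_0)$, depending on the convention used for $M_{\sr},W_{\sr}$) to stay in a fixed compact set $K\subset\mathbb{C}\setminus\{0\}$ for every admissible $y_0$ and every $\ep\in[0,\ep_*]$. On this $K$, $\log\zeta$ and $\mathcal{Q}_\gamma(\zeta)$ are uniformly bounded, and the estimates of Lemmas \ref{lemma:firstgammalog}-\ref{lemma:secondgammalog} together with the explicit integral/series representations appearing in the proofs of Lemmas \ref{lemma:asymptoticexpansionM}-\ref{lemma:asymptoticexpansionW} yield $|\zeta^{-2\gamma}-1|+|\mathcal{Q}_\gamma(\zeta)-1|=O(|\gamma|)$ and $\mathcal{E}_{j,\gamma}(\zeta)\to\mathcal{E}_{j,0}(\zeta)$ uniformly on $K$ as $|\gamma|\to 0$, for $j=0,1,2$.

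The one point needing attention is that the denominator $\mathcal{E}_{0,\gamma}(\zeta)$ must be bounded away from zero on $K$, uniformly in small $\gamma$. At $\gamma=0$ the series $\mathcal{E}_{0,0}(\zeta)=\sum_{j\geq 0}(\zeta/4)^{2j}/(j!)^2$ is strictly positive on the real positive slice of $K$ and continuous, hence bounded below in modulus on $K$ after possibly shrinking $\ep_*$; joint continuity of $(\gamma,\zeta)\mapsto\mathcal{E}_{0,\gamma}(\zeta)$ then lets one shrink $\gamma_*$ so that $|\mathcal{E}_{0,\gamma}(\zeta)|\geq c>0$ on $K$ for $|\gamma|\leq\gamma_*$. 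Combining everything, the displayed identity converges uniformly on $K$ to its $(\gamma,\ep)=(0,0)$ value as $|\gamma|+\ep\to 0$, which is precisely the main estimate. The ``in particular'' consequence is immediate because $W_{0,0}(y_0)/M_{0,0}(y_0)\in\mathbb{R}$ for $y_0>0$, so the imaginary part of the quotient coincides with the imaginary part of the difference and is bounded by $\epsilon$. There is no deep obstacle; the only nontrivial bookkeeping is the compactness argument that upgrades joint continuity into a bound uniform in $y_0$.
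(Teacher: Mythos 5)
Your proof is correct, but it takes a genuinely longer path than the paper's. The paper's argument hinges on the direct identity
\begin{align*}
\frac{W_{0,\gamma}(\zeta)}{M_{0,\gamma}(\zeta)}=\frac{U\big(\tfrac12+\gamma,\,1+2\gamma,\,\zeta\big)}{M\big(\tfrac12+\gamma,\,1+2\gamma,\,\zeta\big)},
\end{align*}
which holds because both $W_{0,\gamma}$ and $M_{0,\gamma}$ carry the \emph{same} prefactor $e^{-\zeta/2}\zeta^{1/2+\gamma}$ in their definitions through Kummer functions; the singular factor cancels exactly and there is no residual $\zeta^{-2\gamma}$, $\log\zeta$, or $\mathcal{Q}_\gamma$ term to manage. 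The required joint continuity at $\gamma=0$ is then precisely the content of Lemma~\ref{lemma:smallgammaasymptotic}, which gives $M(\tfrac12+\gamma,1+2\gamma,\cdot)=M(\tfrac12,1,\cdot)+O(\gamma)$ and $U(\tfrac12+\gamma,1+2\gamma,\cdot)=U(\tfrac12,1,\cdot)+O(\gamma)$ uniformly on the relevant compact, together with the non-vanishing of $M(\tfrac12,1,\cdot)$ there. Your route instead uses the factorization of Lemma~\ref{lemma:asymptoticexpansionW}, which splits $W_{0,\gamma}$ into a regular piece and a logarithmic piece with an incompatible power $\zeta^{1/2-\gamma}$; after dividing you must reassemble the pieces and separately argue that $\zeta^{-2\gamma}\to 1$, $\mathcal{Q}_\gamma\to 1$, and $\mathcal{E}_{j,\gamma}\to\mathcal{E}_{j,0}$ uniformly. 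The first two are fine (Lemma~\ref{lemma:firstgammalog}), but the third is not stated in Lemmas~\ref{lemma:asymptoticexpansionM}--\ref{lemma:asymptoticexpansionW}, which only assert uniform $C^2$ bounds, not convergence as $\gamma\to 0$. You correctly flag that the integral representations in the proofs supply the missing convergence, and they do, but at that point you are essentially re-deriving Lemma~\ref{lemma:smallgammaasymptotic} in less convenient coordinates. What your route buys is that it keeps the ratio in the same ``regular + logarithmic'' format used for the Green's function elsewhere in the paper; what the paper's route buys is that the singular prefactors cancel identically before any $\gamma$-asymptotics are needed. Both are valid; the paper's is cleaner and invokes a single lemma rather than several partial estimates plus material from inside proofs.
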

\begin{proof}
We observe that
\begin{align*}
\frac{W_{0,\gamma}(y_0-i\ep)}{M_{0,\gamma}(y_0-i\ep)} = \frac{U(1/2+\gamma,1+2\gamma,2k(y_0-i\ep))}{M(1/2+\gamma,1+2\gamma,2k(y_0-i\ep))}.
\end{align*}
Since $M(1/2,1,\cdot)$ is real-valued, uniformly continuous, and non-zero, and $U(1/2,1,\cdot)$ is real-valued, uniformly continuous and bounded away from 0,  then the Lemma follows from the asymptotic expansions in Lemma \ref{lemma:smallgammaasymptotic}.
\end{proof}

\section{The Green's function for the fragile stratification}
Here we show the main modifications required to obtain bounds on the Wronskian of the Green's function associated with the fragile stratification regime. We begin with the following.

\begin{lemma}\label{lemma:growthboundMfragile}
Let $\gamma =\mu \in \left( \frac14, \frac12 \right)$. Then, 
\begin{equation*}
\lim_{\Re(\zeta)\rightarrow +\infty}\frac{M_{0,-\mu}(\zeta)}{M_{0,\mu}(\zeta)} = 2^{-4\mu}\frac{\Gamma(1-\mu)}{\Gamma(1+\mu)} 
\end{equation*} 
\end{lemma}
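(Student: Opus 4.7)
The plan is to reduce the ratio to an elementary expression in Gamma functions via the standard asymptotics of the confluent hypergeometric (Kummer) function, and then simplify using the Legendre duplication formula.

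First I would recall the defining identity
\[
M_{0,\gamma}(\zeta) = e^{-\zeta/2}\,\zeta^{1/2+\gamma}\,M\!\left(\tfrac12+\gamma,\,1+2\gamma,\,\zeta\right),
\]
so that the $\zeta^{1/2\pm\mu}$ prefactors are explicit and the analysis reduces to the two Kummer functions $M(\tfrac12\pm\mu,1\pm 2\mu,\zeta)$. For $\mu\in(\tfrac14,\tfrac12)$, the parameters $\tfrac12\pm\mu$ and $1\pm 2\mu$ are all non-integer and positive, so the classical large-argument asymptotics (see \cite{NIST}) apply:
\[
M(a,b,\zeta)=\frac{\Gamma(b)}{\Gamma(a)}\,e^{\zeta}\,\zeta^{a-b}\bigl(1+O(|\zeta|^{-1})\bigr),
\qquad \Re(\zeta)\to+\infty.
\]
Applying this with $(a,b)=(\tfrac12\pm\mu,1\pm 2\mu)$ gives $a-b=-\tfrac12\mp\mu$, so after combining with the prefactor $e^{-\zeta/2}\zeta^{1/2\pm\mu}$ the exponential and power-of-$\zeta$ factors cancel completely, yielding
\[
M_{0,\pm\mu}(\zeta)=e^{\zeta/2}\,\frac{\Gamma(1\pm 2\mu)}{\Gamma(\tfrac12\pm\mu)}\bigl(1+O(|\zeta|^{-1})\bigr),
\qquad \Re(\zeta)\to+\infty.
\]

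Taking the quotient, the $e^{\zeta/2}$ factors cancel and, passing to the limit,
\[
\lim_{\Re(\zeta)\to+\infty}\frac{M_{0,-\mu}(\zeta)}{M_{0,\mu}(\zeta)}
=\frac{\Gamma(1-2\mu)\,\Gamma(\tfrac12+\mu)}{\Gamma(1+2\mu)\,\Gamma(\tfrac12-\mu)}.
\]
To recognize this as $2^{-4\mu}\Gamma(1-\mu)/\Gamma(1+\mu)$, I would invoke the Legendre duplication formula $\Gamma(2z)=\pi^{-1/2}2^{2z-1}\Gamma(z)\Gamma(z+\tfrac12)$ applied with $z=\tfrac12-\mu$ (giving $\Gamma(1-2\mu)=\pi^{-1/2}2^{-2\mu}\Gamma(\tfrac12-\mu)\Gamma(1-\mu)$) and with $z=\tfrac12+\mu$ (giving $\Gamma(1+2\mu)=\pi^{-1/2}2^{2\mu}\Gamma(\tfrac12+\mu)\Gamma(1+\mu)$). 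Dividing, the $\Gamma(\tfrac12\pm\mu)$ factors cancel against those already present and the powers of $2$ combine to $2^{-4\mu}$, giving the stated identity.

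The only real subtlety is ensuring that the asymptotic expansion of $M(a,b,\zeta)$ is uniformly valid along the relevant approach to infinity (\emph{a priori} one has an exponentially smaller companion term involving $e^{i\pi a}\zeta^{-a}$ whose coefficient could in principle be dangerous); however, since we only require convergence as $\Re(\zeta)\to+\infty$ inside a sector where $|\arg\zeta|<\pi/2$ (this is the regime in which the Green's function for the fragile stratification is evaluated), the subdominant term is $O(e^{-\Re(\zeta)})$ relative to the leading one and drops out in the limit. This is the only step that requires care; the rest is bookkeeping with the duplication formula.
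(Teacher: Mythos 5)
Your proposal is correct and follows essentially the same route as the paper: the paper also derives the large-$\Re(\zeta)$ asymptotics of $M_{0,\pm\mu}$ by splitting $M$ into the two $U$-function branches (which is exactly the proof of the Kummer asymptotics you invoke), obtains the same $\Gamma(b_\pm)/\Gamma(a_\pm)$ ratio, and closes with the same Legendre duplication identity. Your treatment is merely a touch more terse about the subdominant $e^{-\zeta}U(a,b,e^{i\pi}\zeta)$ term, which the paper controls explicitly with quantitative error bounds on $U$, but the justification you give — that it is $O(e^{-\Re\zeta})$ relative to the dominant one in the relevant half-plane — is precisely what those bounds encode.
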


\begin{proof}
Let $\zeta\in\C$, for $a_\pm = \frac12\pm \mu$ and $b_\pm =2a_\pm$ we recall that
\begin{equation*}
\begin{aligned}
M_{0,\pm\mu}(\zeta)&=\e^{-\frac12\zeta}\zeta^{a_\pm}\frac{\Gamma(b_\pm)}{\Gamma(a_\pm)}\l( \e^{-i\pi a_\pm}U(a_\pm,b_\pm,\zeta) + \e^{i\pi a_\pm}\e^\zeta U(a_\pm,b_\pm,\e^{i\pi} \zeta)\r).
\end{aligned}
\end{equation*}
Moreover, we have that $U(a_\pm,b_\pm,\zeta) = \zeta^{-a_\pm}+ \mathcal{E}_{\pm}(\zeta)$, where further
\begin{equation*}
|\zeta^{a_\pm}\E_\pm(\zeta)|\leq \frac{2\b^2}{|\zeta|}\e^{\frac{2\b^2}{|\zeta|}}.
\end{equation*}
with $\beta^2 = \sqrt{\frac14 - \mu^2}$. Therefore, 
\begin{equation*}
M_{0,\pm\mu}(\zeta)=\frac{\Gamma(b_\pm)}{\Gamma(a_\pm)}\e^{\frac12\zeta}\l(  \l[1 + (\e^{i\pi}\zeta)^{a_\pm}\E_\pm(\e^{i\pi}\zeta)\r] + \e^{-\zeta}\e^{-i\pi a_\pm}\l[1+ \zeta^{a_\pm}\E_\pm(\zeta)\r]\r)
\end{equation*}
Now, since $b_\pm=2a_\pm$, we have from \cite{NIST} that
\begin{equation*}
\frac{\Gamma(b_\pm)}{\Gamma(a_\pm)}=\pi^{-\frac12}2^{2a_\pm-1}\Gamma\l( a_\pm +\frac12\r),
\end{equation*}
and thus
\begin{equation*}
\frac{\displaystyle {\Gamma(b_-)}/{\Gamma(a_-)}}{\displaystyle {\Gamma(b_+)}/{\Gamma(a_+)}}=2^{-4\mu}\frac{\Gamma(1-\mu)}{\Gamma(1+\mu)}.
\end{equation*}
Hence,
\begin{equation*}
\frac{M_{0,-\mu}(\zeta)}{M_{0,\mu}(\zeta)}=2^{-4\mu}\frac{\Gamma(1-\mu)}{\Gamma(1+\mu)}\frac{ 1 + (\e^{i\pi}\zeta)^{a_-}\E_-(\e^{i\pi}\zeta) + \e^{-\zeta}\e^{-i\pi a_-}\l[1+ \zeta^{a_-}\E_-(\zeta)\r]}{  1 + (\e^{i\pi}\zeta)^{a_+}\E_+(\e^{i\pi}\zeta) + \e^{-\zeta}\e^{-i\pi a_+}\l[1+ \zeta^{a_+}\E_+(\zeta)\r]}.
\end{equation*}
Since $|\zeta^{a_\pm}\E_\pm(\zeta)|\lesssim \frac{1}{|\zeta|}$, the lemma follows.
\end{proof}

\begin{lemma}\label{lemma:fragileMlimit}
Let $\gamma =\mu \in \left( \frac14, \frac12 \right)$. Let $\zeta = x+iy$, with $\zeta\in B_R$ and $x\geq c_0>0$, for some $c_0>0$. Then,
\begin{align*}
\left| M_{0,-\mu}(x+iy) - M_{0,-\mu}(x) \right| \lesssim \frac{|y|}{x}
\end{align*}
\end{lemma}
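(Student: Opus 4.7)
The strategy is to apply the fundamental theorem of calculus along the vertical segment from $x$ to $x+iy$, combined with the factorization of the Whittaker function provided by Lemma~\ref{lemma:asymptoticexpansionM}. Throughout the segment the real part remains constantly equal to $x \geq c_0 > 0$, so the modulus of the argument is bounded below, which is what allows us to differentiate the non-entire factor $\zeta^{1/2-\mu}$ without issue.

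First, I would write
\begin{equation*}
M_{0,-\mu}(x+iy) - M_{0,-\mu}(x) = iy \int_0^1 M_{0,-\mu}'(x+isy)\,ds,
\end{equation*}
noting that the path $\zeta(s) = x+isy$ for $s\in[0,1]$ stays inside $B_R$ (so $|\zeta(s)| \leq R$) and satisfies $|\zeta(s)| \geq x \geq c_0$. Next, invoking Lemma~\ref{lemma:asymptoticexpansionM} in the form $M_{0,-\mu}(\zeta) = \zeta^{1/2-\mu}\mathcal{E}_{0,-\mu}(\zeta)$ with $\|\mathcal{E}_{0,-\mu}\|_{C^2(B_R)} \lesssim 1$, I differentiate:
\begin{equation*}
M_{0,-\mu}'(\zeta) = \bigl(\tfrac12-\mu\bigr)\zeta^{-1/2-\mu}\mathcal{E}_{0,-\mu}(\zeta) + \zeta^{1/2-\mu}\mathcal{E}_{0,-\mu}'(\zeta).
\end{equation*}
Using $|\zeta(s)| \geq x$ and the $C^2$ bound on $\mathcal{E}_{0,-\mu}$, this gives $|M_{0,-\mu}'(\zeta(s))| \lesssim x^{-1/2-\mu} + x^{1/2-\mu}$.

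The only cosmetic step is to convert this into the claimed $|y|/x$ decay. Since $\mu \in (1/4,1/2)$ we have $1/2-\mu \in (0,1/4)$; together with $c_0 \leq x \leq R$ this gives $x^{1/2-\mu} \leq R^{1/2-\mu} \lesssim 1$ and, multiplying by $x^{-1}$, $x^{-1/2-\mu} = x^{1/2-\mu}\cdot x^{-1} \lesssim x^{-1}$. Similarly $x^{1/2-\mu} \lesssim x^{-1}$ after absorbing $x \leq R$. Combining with the integral representation yields
\begin{equation*}
|M_{0,-\mu}(x+iy) - M_{0,-\mu}(x)| \lesssim |y| \cdot \sup_{s\in[0,1]} |M_{0,-\mu}'(x+isy)| \lesssim \frac{|y|}{x},
\end{equation*}
with implicit constant depending on $R$, $c_0$, and $\mu$. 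No step poses a real obstacle; the only subtlety is ensuring the integration path stays in $B_R$ away from the origin, which is automatic from the hypotheses.
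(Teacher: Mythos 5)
Your proof is correct and uses essentially the same ingredients as the paper's argument: the factorization $M_{0,-\mu}(\zeta) = \zeta^{1/2-\mu}\mathcal{E}_{0,-\mu}(\zeta)$ from Lemma~\ref{lemma:asymptoticexpansionM} combined with the fundamental theorem of calculus along the vertical segment $\zeta(s)=x+isy$, which stays off the branch cut because $x\geq c_0>0$ (the paper just packages the FTC step through the logarithmic identity of Lemma~\ref{lemma:firstgammalog} applied to the power factor alone). The one small slip is in invoking $|\zeta(s)|\geq x$ to bound $|\zeta(s)|^{1/2-\mu}$ by $x^{1/2-\mu}$: since $\tfrac12-\mu>0$ that inequality actually goes the other way, but the conclusion is unaffected because $|\zeta(s)|^{1/2-\mu}\leq R^{1/2-\mu}\lesssim x^{-1}$ once $x\in[c_0,R]$, which is precisely the absorption you perform in the final step anyway.
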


\begin{proof}
Recall that $M_{0,-\mu}(\zeta)=\zeta^{\frac12-\mu}\mathcal{E}_{2,-\mu}(\zeta)$ with
\begin{align*}
(x+iy)^{\frac12-\mu} - x^{\frac12-\mu} &= (x+iy)^{\frac12-\mu} - 1 - \left( x^{\frac12-\mu} - 1 \right)  \\
&= \left(\tfrac12-\mu \right) \left( \log(x+iy)\int_0^1 e^{(\frac12-\mu)u \log(x+iy)} \d u  - \log(x)\int_0^1 e^{(\frac12-\mu)u \log(x)} \d u \right) \\
&= \left(\tfrac12 - \mu\right) \int_0^1 \frac{y}{x+iys} \left( \int_0^1 \left( 1 + u\left(\tfrac12-\mu \right)\log(x+iys) \right) e^{u \left(\frac12-\mu\right) \log(x+iys)} \d u \right) \d s .
\end{align*}
Since $|\zeta|\leq R$ and $x\geq c_0>0$ we conclude that
\begin{align*}
\left| (x+iy)^{\frac12-\mu} - x^{\frac12-\mu} \right| \lesssim \frac{|y|}{x}.
\end{align*}
and as $\mathcal{E}_{2,-\mu}(\zeta)$ is entire in $\zeta$ uniformly for $ \mu \in \left( \frac14, \frac12 \right)$, the lemma follows.
\end{proof}

We next obtain suitable lower bounds for the Wronskian of the Green's function, which we recall takes the form
\begin{equation*}
\W_{k,\ep}^\pm(y_0):= -4k \mu_0\Big( M_\sr( - y_0\pm i\ep_0)M_\s(2 - y_0\pm i\ep_0) - M_\s( - y_0\pm i\ep_0)M_\sr(2-y_0\pm i\ep_0) \Big).
\end{equation*} 
The analytic continuation properties of $M_\sr$ and $M_\s$ then give
\begin{align*}
\W_{k,\ep}^+(y_0)&=-4ki\mu_0\left( \e^{i\mu_0\pi}M_\sr(y_0-i\ep_0)M_\s(2-y_0+i\ep_0)-\e^{-i\mu_0\pi}M_\s(y_0-i\ep_0)M_\sr(2-y_0+i\ep_0)\right).
\end{align*}
We are now in position to prove the main lower bounds for $\W_{k,\ep}^+(y_0)$.

\begin{lemma}\label{lemma:Wronskianlowerboundweakstrat}
Let $\gamma_0=\mu_0\in \left( \frac14, \frac12 \right)$. There exists $N>0$ such that
\begin{align*}
\left| \W_{k,\ep}^+(y_0) \right| \geq 4k\mu_0\sin(\mu_0\pi)|M_\s(y_0-i\ep_0)||M_\sr(2-y_0+i\ep_0)|,
\end{align*}
for all $k \geq \vartheta^{-1}\frac{N}{2}$. Likewise, for all $k < \vartheta^{-1}\frac{N}{2}$ there exists $\varepsilon_*>0$, independent of $\mu$ such that 
\begin{align*}
\left| \W_{k,\ep}^+(y_0)\right| \geq 2 k \mu_0\sin(\mu_0\pi) \left( M_\sr(y_0) M_\s(2-y_0) - M_\s(y_0) M_\sr(2-y_0) \right) > 0,
\end{align*}
for all $0< \ep < \ep_*$.
\end{lemma}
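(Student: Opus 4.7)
\medskip

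\noindent\textbf{Proof plan.} The starting point is the factorisation
\begin{equation*}
\W_{k,\ep}^+(y_0) = -4ki\mu_0\, B_{k,\ep}(y_0)\Bigl[e^{i\mu_0\pi}\lambda_{k,\ep}(y_0) - e^{-i\mu_0\pi}\Bigr],
\end{equation*}
where $B_{k,\ep}(y_0) := M_\s(y_0-i\ep_0) M_\sr(2-y_0+i\ep_0)$ and $\lambda_{k,\ep}(y_0) := M_\sr(y_0-i\ep_0) M_\s(2-y_0+i\ep_0)/B_{k,\ep}(y_0)$, together with the product representation
\begin{equation*}
\lambda_{k,\ep}(y_0) = \frac{M_{0,\mu_0}(2k(y_0-i\ep_0))}{M_{0,-\mu_0}(2k(y_0-i\ep_0))}\cdot\frac{M_{0,-\mu_0}(2k(2-y_0+i\ep_0))}{M_{0,\mu_0}(2k(2-y_0+i\ep_0))}.
\end{equation*}

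For the first inequality (large $k$), the plan is to prove that $\lambda_{k,\ep}(y_0)\to 1$ as $k\to\infty$, uniformly in $\mu_0\in(1/4,1/2)$ and for $\ep_0$ small. Each of the two ratios above tends to a constant by Lemma~\ref{lemma:growthboundMfragile}: the first approaches $1/c_{\mu_0}$ while the second approaches $c_{\mu_0}$, with $c_{\mu_0}:=2^{-4\mu_0}\Gamma(1-\mu_0)/\Gamma(1+\mu_0)$, so the product tends to $1$. Concretely, the Kummer-function error bound $|\zeta^{a_\pm}\mathcal{E}_\pm(\zeta)|\le(2\beta^2/|\zeta|)e^{2\beta^2/|\zeta|}$ used in the proof of Lemma~\ref{lemma:growthboundMfragile} is uniform in $\mu_0\in[1/4,1/2]$ since $\beta^2=1/4-\mu_0^2\in(0,3/16)$, so the convergence rate is $O(1/|\zeta|)$ uniformly. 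Choosing $N>0$ so that $|\lambda_{k,\ep}(y_0)-1|\le 1/2$ whenever $2ky_0,2k(2-y_0)\ge N$ (which, since $\min(y_0,2-y_0)\ge\vartheta$, is implied by $k\ge\vartheta^{-1}N/2$), a triangle inequality gives
\begin{equation*}
\bigl|e^{i\mu_0\pi}\lambda_{k,\ep}-e^{-i\mu_0\pi}\bigr|\ge 2\sin(\mu_0\pi)-\tfrac{1}{2}\ge\sin(\mu_0\pi),
\end{equation*}
where the last step uses $\sin(\mu_0\pi)\ge\sin(\pi/4)>1/2$ for $\mu_0\in(1/4,1/2)$. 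This yields the claimed bound.

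For the second inequality (small $k$), one first analyses the limit $\ep_0=0$, where all relevant evaluations of $M_\sr$ and $M_\s$ occur at positive reals and are therefore real and positive by the series representations in Lemma~\ref{lemma:asymptoticexpansionM}. Writing $\alpha_0:=M_\sr(y_0)M_\s(2-y_0)$ and $\beta_0:=M_\s(y_0)M_\sr(2-y_0)$, a direct expansion gives
\begin{equation*}
\bigl|e^{i\mu_0\pi}\alpha_0-e^{-i\mu_0\pi}\beta_0\bigr|^2=(\alpha_0-\beta_0)^2\cos^2(\mu_0\pi)+(\alpha_0+\beta_0)^2\sin^2(\mu_0\pi)\ge(\alpha_0-\beta_0)^2\sin^2(\mu_0\pi) + 4\alpha_0\beta_0\sin^2(\mu_0\pi),
\end{equation*}
hence $\bigl|e^{i\mu_0\pi}\alpha_0-e^{-i\mu_0\pi}\beta_0\bigr|\ge\sin(\mu_0\pi)|\alpha_0-\beta_0|$, and positivity of the difference follows from $\alpha_0\ne\beta_0$ (equivalently, the monotonicity of $\eta\mapsto M_\s(\eta)/M_\sr(\eta)$ on the compact range $[\vartheta,2-\vartheta]$, implied by its small- and large-argument asymptotics). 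A compactness/continuity argument then transfers the bound to small $\ep>0$: since $k$ lies in the compact interval $[1,\vartheta^{-1}N/2)$ and $\mu_0\in[1/4,1/2]$, $y_0\in[\vartheta_1,\vartheta_2]$ range over compact sets, Lemma~\ref{lemma:fragileMlimit} (and the analogous continuity for $M_\sr$) yields uniform continuity as $\ep_0\to 0^+$, so a threshold $\ep_*>0$ independent of $\mu_0$ exists for which the $\ep=0$ bound persists (with the factor $4$ replaced by $2$).

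The main obstacle is the uniform-in-$\mu_0$ quantitative convergence $\lambda_{k,\ep}\to 1$ in Step~1. One must keep careful track that the error terms in the Kummer asymptotics remain controlled uniformly as $\mu_0\to 1/2^-$, where the singular root structure degenerates. The bound $|\zeta^{a_\pm}\mathcal{E}_\pm(\zeta)|\le(2\beta^2/|\zeta|)e^{2\beta^2/|\zeta|}$ provides exactly this control since $\beta^2$ stays bounded; combined with uniform non-degeneracy of the pre-factors $\Gamma(1+\mu_0)/\Gamma(1-\mu_0)$ on $[1/4,1/2]$, the argument closes.
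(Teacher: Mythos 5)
Your factorisation of $\W_{k,\ep}^+$ and the treatment of the large-$k$ regime via Lemma~\ref{lemma:growthboundMfragile} are exactly the paper's arguments (written with the reciprocal ratio, but equivalent), and the triangle inequality $2\sin(\mu_0\pi)-\tfrac12\ge\sin(\mu_0\pi)$ is valid since $\sin(\mu_0\pi)>1/\sqrt2$ on $(\tfrac14,\tfrac12)$. The large-$k$ case is fine.

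The small-$k$ case is where you part ways with the paper, and there is a genuine gap. With $\alpha_0=M_\sr(y_0)M_\s(2-y_0)$ and $\beta_0=M_\s(y_0)M_\sr(2-y_0)$, the identity
$\bigl|e^{i\mu_0\pi}\alpha_0-e^{-i\mu_0\pi}\beta_0\bigr|^2=(\alpha_0-\beta_0)^2\cos^2(\mu_0\pi)+(\alpha_0+\beta_0)^2\sin^2(\mu_0\pi)$
gives the clean lower bound $\ge(\alpha_0+\beta_0)\sin(\mu_0\pi)$, and since both $\alpha_0$ and $\beta_0$ are strictly positive (positive series at positive real argument), this is \emph{uniformly} bounded away from zero on the compact parameter range — which is precisely what the paper extracts and all that the downstream Wronskian estimate in Proposition~\ref{prop:sobolevregdecomG} needs. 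You instead throw away the cosine term, hit the weaker bound $\ge\sin(\mu_0\pi)|\alpha_0-\beta_0|$, and then try to establish $\alpha_0-\beta_0>0$. That cannot work: at $y_0=1$ one has $\alpha_0=\beta_0$ identically, so $|\alpha_0-\beta_0|$ vanishes and your lower bound degenerates. The monotonicity you invoke for $\eta\mapsto M_\s(\eta)/M_\sr(\eta)$ is also not implied by matching its behaviour at $0^+$ (blow-up like $\eta^{-2\mu_0}$) and at $+\infty$ (a finite constant) — those endpoint asymptotics are consistent with non-monotone interior behaviour — and even if the ratio were monotone decreasing, you would get $\alpha_0>\beta_0$ only for $y_0>1$ and the opposite for $y_0<1$. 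In short, retain the $(\alpha_0+\beta_0)\sin(\mu_0\pi)$ bound from the identity rather than discarding it: it is strictly positive uniformly, it is what the paper's own proof produces, and the continuity-in-$\ep$ step via Lemma~\ref{lemma:fragileMlimit} then converts it into the final claim with the loss of a factor of $2$ to absorb the $O(\ep)$ correction.
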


\begin{proof}
To prove the first statement, recall that $y_0\in (\vartheta_1,\vartheta_2)$ and $\vartheta=\min(\vartheta_1,2-\vartheta_2)$ so that $2k y_0, \, 2k(2-y_0) \geq 2k\vartheta \geq N$, for all $k \geq \vartheta^{-1}\frac{N}{2}$. Moreover, note that 
\begin{align*}
\W_{k,\ep}^+(y_0)&=-4ki\mu_0 M_\sr(y_0-i\ep_0)M_\s(2-y_0+i\ep_0) \left( e^{i\mu\pi}-e^{-i\mu\pi}\frac{M_\s(y_0-i\ep_0)M_\sr(2-y_0+i\ep_0)}{M_\sr(y_0-i\ep_0)M_\s(2-y_0+i\ep_0)} \right)
\end{align*}
with further 
\begin{align*}
\frac{M_\s(y_0-i\ep_0)M_\sr(2-y_0+i\ep_0)}{M_\sr(y_0-i\ep_0)M_\s(2-y_0+i\ep_0)} = 1 + O(N^{-1})
\end{align*}
due to Lemma \ref{lemma:growthboundMfragile}. The first statement follows taking $N$ large enough. For the second statement, since $y_0,\, 2-y_0\geq \vartheta>0$ and $2k|y_0-i\ep_0|\leq \vartheta^{-1}(N+1)$ and $2k|1-y_0+i\ep_0|\leq \vartheta^{-1}(N+1)$ for $\ep_0$ small enough, we have from Lemma \ref{lemma:fragileMlimit} that
\begin{align*}
\W_{k,\ep}^+(y_0)&=-4ki\mu_0\left( \e^{i\mu_0\pi}M_\sr(y_0)M_\s(2-y_0)-\e^{-i\mu_0\pi}M_\s(y_0)M_\sr(2-y_0) + O(\ep)  \right) 
\end{align*}
so that, in particular,
\begin{align*}
\left| \W_{k,\ep}^+(y_0) \right| &\geq 4k\mu_0\sin(\mu_0\pi)\left( M_\sr(y_0)M_\s(2-y_0)+M_\s(y_0)M_\sr(2-y_0) \right) + kO(\ep)  
\end{align*}
The Lemma follows for $\ep$ sufficiently small, since $M_\sr(y_0)M_\s(2-y_0)+M_\s(y_0)M_\sr(2-y_0)$ is strictly positive bounded away from zero, uniformly for $\mu \in \left( \frac14,\frac12 \right)$ and for $\vartheta_1 \leq y_0 \leq \vartheta_2$. 
\end{proof}

\section{High-order operator estimates for weak and strong stratifications}\label{sec:highorderweakstrong}
In this section we state and prove further several mapping properties of the maps $R_{j,k,\ep}^\pm$, for $j=0,1,2,3,4$ that are needed to obtain Sobolev regularity on the spectral density function. To avoid repetition, we shall only prove the local $X_k^1$ bounds of the operator norms, since the global $H_k^1(I_3(y_0))$ estimates follow form the usual entanglement inequality. Moreover, to avoid repetition, we shall restrict ourselves to the $X^0_k$ bounds, since the strategy to prove the full $X_k^1$ bounds is the same. 

\subsection{Second order operator estimates}
We begin the section showing the mapping properties of $R_{2,k,\ep}^\pm$.
\begin{lemma}\label{lemma:R2mapsXktoXk}
Let $k\geq 1$, $y_0\in I_S\cup I_W$ and $f(y,y_0)\in X_k$. There holds
\begin{align*}
\Vert (R_{2,k,\ep}^\pm f)(\cdot,y_0) \Vert_{X_k} \lesssim  \Vert f \Vert_{X_k} 
\end{align*}
uniformly for all $0<\ep < \ep_*$, and all $y_0\in I_S\cup I_W$.
\end{lemma}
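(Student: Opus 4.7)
The plan is to adapt the strategy used in Lemma \ref{lemma:R2mapsC1XktoXk}, replacing the role played there by the vanishing of the $C^1$ multiplier $h$ at $y_0$ with the intrinsic vanishing of $f\in X_k$ at the critical layer. Indeed, the decomposition $f = f_\sr\xi^{\frac12+\gamma_0} + f_\s\xi^{\frac12-\gamma_0}$ that defines $X_k$ already forces $f$ to decay at the rate $|\xi|^{\frac12-|\mu_0|}$ as $z\to y_0$, which is precisely enough to compensate the quadratic singularity $(v(z)-v(y_0)\pm i\ep)^{-2}$ after one integration by parts.

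First, I split $R_{2,k,\ep}^\pm f = (R_2 f)_\ell + (R_2 f)_{n\ell}$ according to whether $z\in I_3(y_0)$ or $z\in I_3^c(y_0)$. On $I_3^c(y_0)$ the denominator is bounded below, $|v(z)-v(y_0)\pm i\ep|^{-2}\lesssim k^2$, so combining Cauchy--Schwarz with the $L^2(I_3^c)$ bounds on $\G$ from Proposition \ref{prop:sobolevregdecomG} and the embedding $\Vert f\Vert_{L^2(I_3^c)}\lesssim k^{-1/2}\Vert f\Vert_{X_k}$ built into the definition of $X_k$ yields the required $X_k$ estimate on $(R_2 f)_{n\ell}$. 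For the local piece, I integrate by parts once in $z$ using $(v(z)-v(y_0)\pm i\ep)^{-2} = -v'(z)^{-1}\partial_z[(v(z)-v(y_0)\pm i\ep)^{-1}]$. The endpoint boundary terms at $z=0,2$ vanish because $\G(y,y_0,0) = \G(y,y_0,2) = 0$ (Dirichlet conditions), and the putative boundary contribution at $z=y$ cancels since $\G$ is continuous across $z=y$. This reduces $(R_2 f)_\ell$ to three $R_{1,k,\ep}^\pm$-type integrals, according to whether $\partial_z$ hits $\G$, $f$, or $v'(z)^{-1}$. The $v'$-derivative piece is an immediate instance of $R_{1,k,\ep}^\pm$ with the $C^1$ multiplier $v''/v'^2$, and is controlled via Corollary \ref{cor:R1mapsCXktoXk} by $\lesssim k^{-1}\Vert f\Vert_{X_k}$.

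The hard part, and the main obstacle, is to control the two remaining integrals, whose integrands contain the singular factors $\partial_z\G$ and $\partial_z f$. Using Propositions \ref{prop:sobolevregdecomG} and \ref{prop:sobolevregpartialdecomG} to expand $\G$ and $\partial_z\G$ in their $X_k$ decompositions, and the analogous expansion of $f$ and $\partial_z f$, I plan to split each integrand into contributions where $\partial_z$ lands on a bounded coefficient and contributions where $\partial_z$ acts on the singular monomial $\xi^{\frac12\pm\gamma_0}$, producing $2k(\tfrac12\pm\gamma_0)\xi^{-\frac12\pm\gamma_0}$. The first class of contributions reduces directly to $R_{1,k,\ep}^\pm$ with either bounded or $C^1$ multipliers, handled by Lemma \ref{lemma:R1mapsXktoXk} and Corollary \ref{cor:R1mapsCXktoXk}. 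The singular contributions are dealt with by a second integration by parts, trading the monomial $\xi^{-\frac12\pm\gamma_0}$ against its antiderivative $\xi^{\frac12\pm\gamma_0}/[2k(\tfrac12\pm\gamma_0)]$; this produces boundary contributions at $\partial I_3(y_0)$ (which are uniformly bounded because $|\xi|$ is of constant order $\beta$ there) and integrals in which $\partial_z$ acts only on the smooth coefficients $(\G)_\sigma$ and $f_\sigma$. The delicate point is that the factors $(\tfrac12\pm\gamma_0)^{-1}$ arising from the antiderivative are cancelled exactly by the matching factors produced in the original differentiation, so the final bound stays uniform for $y_0\in I_S\cup I_W$, including in the degenerate limit $\gamma_0\to\tfrac12$ approaching $\vartheta_1$ or $\vartheta_2$. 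Summing these estimates yields $\Vert R_{2,k,\ep}^\pm f\Vert_{X_k}\lesssim \Vert f\Vert_{X_k}$ as claimed.
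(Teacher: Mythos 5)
Your outline follows the paper's proof very closely: split $R_{2,k,\ep}^\pm f$ into the local piece over $I_3(y_0)$ and the nonlocal piece over $I_3^c(y_0)$, bound the nonlocal piece by Cauchy--Schwarz and the $L^2(I_3^c(y_0))$ bounds on $\G$, integrate the local piece by parts once against $(v(z)-v(y_0)\pm i\ep)^{-2}=-v'(z)^{-1}\partial_z[(v(z)-v(y_0)\pm i\ep)^{-1}]$, and then push the residual single singularity onto the $X_k$ decomposition of $\partial_z(\G f)$, integrating by parts a second time on the most singular monomial. This is the same structure the paper adopts.

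However, there is a genuine gap in your treatment of the first integration by parts. Since you integrate by parts \emph{over $I_3(y_0)$}, not over all of $[0,2]$, the boundary terms live at $\partial I_3(y_0)$ (i.e.\ at $z=y_0\pm 3\beta/k$, possibly clipped at $0$ or $2$), where the Green's function does not vanish. Your appeal to the Dirichlet conditions $\G(y,y_0,0)=\G(y,y_0,2)=0$ is irrelevant here; those only tell you what happens if $\partial I_3(y_0)$ happens to hit $0$ or $2$, not at the generic interior endpoints. (Your remark that nothing happens at $z=y$ because $\G$ is continuous there is correct but beside the point: there is no $\delta$-contribution after a single integration by parts, so there is no ``putative'' term to cancel in the first place.) The paper keeps these boundary terms as a separate contribution $g_{3,k,\ep}^\pm$ and controls them directly with Proposition~\ref{prop:sobolevregdecomG} and Corollary~\ref{cor:sobolevregpartialdecomG} together with $f\in X_k^1$; you need to do the same. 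This is not a fatal obstruction, since the bound is straightforward (the monomial $\xi^{\frac12-\gamma_0}(v(z)-v(y_0)\pm i\ep)^{-1}$ is bounded at $|\xi|\approx 3\beta$), but as written your argument silently drops these terms.

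A secondary imprecision: the factors arising from the second integration by parts are $(2\gamma_0)^{-1}$ from the antiderivative of $\xi^{-1-2\gamma_0}$, not $(\tfrac12\pm\gamma_0)^{-1}$. They are paired with a $(1-2\gamma_0)$ coming from differentiating the product $\xi^{1-2\gamma_0}$ in $\partial_z(\G f)$, so the combined factor $(1-2\gamma_0)/(2\gamma_0)$ is bounded on $I_S\cup I_W$ (it tends to zero as $\mu_0\to\tfrac12$, and $\gamma_0$ is bounded away from $0$ there), rather than ``cancelled exactly.'' You should also be careful to account for the remainder $V_{1,\ep}^\pm$ in the replacement $(v(z)-v(y_0)\pm i\ep)^{-1}\mapsto \xi^{-1}\cdot 2k/v'(y_0)$, which produces an additional (milder) contribution that the paper tracks as $g_{7,k,\ep}^\pm$.
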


\begin{proof}
As usual we define $g_{k,\ep}^\pm(y,y_0):=(R_{2,k,\ep}^\pm f)(y,y_0)$, where now
\begin{align*}
g_{k,\ep}^\pm (y,y_0) &= \int_{I_3(y_0)}  \G_{k,\ep}^\pm (y,y_0,z) \frac{f(z,y_0)}{(v(z)-v(y_0)\pm i\ep)^2} \d z + \int_{I_3^c(y_0)}  \G_{k,\ep}^\pm (y,y_0,z) \frac{f(z,y_0)}{(v(z)-v(y_0)\pm i\ep)^2} \d z \\
&= g_{1,k,\ep}^\pm(y,y_0) + g_{2,k,\ep}^\pm(y,y_0).
\end{align*}
Firstly, Proposition \ref{prop:sobolevregdecomG} and Corollary \ref{cor:sobolevregpartialdecomG} show first $\Vert g_{2,k,\ep}^\pm\Vert_{X_k^1}\lesssim k^\frac12\Vert f \Vert_{L^2(I_3^c(y_0))} \lesssim \Vert f \Vert_{X_k}$ and then $\Vert g_{2,k,\ep}^\pm \Vert_{X_k}\lesssim \Vert f \Vert_{X_k}$ by means of the entanglement inequality. Secondly, we integrate by parts to obtain
\begin{align*}
g_{1,k,\ep}^\pm(y,y_0) &= - \frac{\G_{k,\ep}^\pm(y,y_0,z)}{v'(z)}\frac{f(z,y_0)}{v(z) - v(y_0) \pm i\ep}\Big|_{\partial I_3(y_0)} + \int_{I_3(y_0)} \frac{1}{v'(z)} \frac{ \partial_z \left( \G_{k,\ep}^\pm(y,y_0,z)f(z,y_0)	\right)  }{v(z) - v(y_0) \pm i\ep} \d z\\
&\quad +\int_{I_3(y_0)} \G_{k,\ep}^\pm(y,y_0,z) \frac{f(z,y_0)}{v(z) - v(y_0) \pm i\ep}\left( \frac{1}{v'(z)} \right)' \d z \\
&= g_{3,k,\ep}^\pm(y,y_0) + g_{4,k,\ep}^\pm(y,y_0) + g_{5,k,\ep}^\pm(y,y_0) \d z.
\end{align*}
Since $\left( \tfrac{1}{v'(z)} \right)' \in C^1$ we appeal to Corollary \ref{cor:R1mapsCXktoXk} to get $\Vert g_{5,k,\ep}^\pm \Vert_{X_k^1} \lesssim k^{-1}\Vert f \Vert_{X_k}$, while Proposition \ref{prop:sobolevregdecomG}, Corollary \ref{cor:sobolevregpartialdecomG}  and $f\in X_k$ shows that $\Vert g_{3,k,\ep}^\pm\Vert_{X_k^1}\lesssim \Vert f \Vert_{X_k}$. As for $g_{4,k,\ep}^\pm$, we recall
\begin{align*}
\frac{1}{v(y) - v(y_0) \pm i\ep} = \frac{(v'(y_0))^{-1}}{y-y_0\pm i\ep_0} - \mathrm{V}_{1,\ep}^\pm(y,y_0),
\end{align*}
with $\mathrm{V}_{1,\ep}^\pm(y,y_0)\in L_y^\infty(0,2)$ uniformly for all $y_0\in [0,2]$ and all $\ep>0$. Hence,
\begin{align*}
g_{4,k,\ep}^\pm(y,y_0) &= \frac{2k}{v'(y_0)}\int_{I_3(y_0)} \frac{1}{v'(z)}\frac{\partial_z \left(\G_{k,\ep}^\pm(y,y_0,z)f(z,y_0)\right)}{\xi} \d z \\
&\quad- \frac{1}{v'(y_0)}\int_{I_3(y_0)} \frac{1}{v'(z)}\partial_z \left(\G_{k,\ep}^\pm(y,y_0,z)f(z,y_0)\right) V_{1,\ep}^\pm(z,y_0) \d z\\
&= g_{6,k,\ep}^\pm(y,y_0) + g_{7,k,\ep}^\pm(y,y_0)
\end{align*}
We first address $g_{6,k,\ep}^\pm$. We have
\begin{align*}
g_{6,k,\ep}^\pm &= \frac{4k^2}{v'(y_0)} \int_{I_3(y_0)} \frac{1+2\gamma_0}{v'(z)}  \left(\G_{k,\ep}^\pm \right)_\sr (y,y_0,z) f_\sr(z,y_0) \xi^{-1+2\gamma_0} \d z \\
&\quad +  \frac{2k}{v'(y_0)} \int_{I_3(y_0)} \frac{1}{v'(z)}  \partial_z \left( \left(\G_{k,\ep}^\pm \right)_\sr (y,y_0,z) f_\sr(z,y_0) \right) \xi^{2\gamma_0} \d z \\
&\quad+ \frac{4k^2}{v'(y_0)} \int_{I_3(y_0)} \frac{1}{v'(z)} \left( \left(\G_{k,\ep}^\pm \right)_\sr (y,y_0,z) f_\s(z,y_0)  +  \left(\G_{k,\ep}^\pm \right)_\s (y,y_0,z) f_\sr(z,y_0) \right)  \xi^{-1} \d z \\
&\quad + \frac{2k}{v'(y_0)} \int_{I_3(y_0)} \frac{1}{v'(z)} \partial_z \left( \left(\G_{k,\ep}^\pm \right)_\sr (y,y_0,z) f_\s(z,y_0)  +  \left(\G_{k,\ep}^\pm \right)_\s (y,y_0,z) f_\sr(z,y_0) \right)  \d z \\
&\quad + \frac{4k^2}{v'(y_0)} \int_{I_3(y_0)} \frac{1-2\gamma_0}{v'(z)}  \left(\G_{k,\ep}^\pm \right)_\s (y,y_0,z) f_\s(z,y_0) \xi^{-1-2\gamma_0} \d z \\
&\quad +  \frac{2k}{v'(y_0)} \int_{I_3(y_0)} \frac{1}{v'(z)}  \partial_z \left( \left(\G_{k,\ep}^\pm \right)_\s (y,y_0,z) f_\s(z,y_0) \right) \xi^{-2\gamma_0} \d z 
\end{align*}
and we argue for the last two integral, since they are the most singular. Firstly, integrating by parts once more we have 
\begin{align*}
\frac{4k^2}{v'(y_0)}& \int_{I_3(y_0)} \frac{1-2\gamma_0}{v'(z)}  \left(\G_{k,\ep}^\pm \right)_\s (y,y_0,z) f_\s(z,y_0) \xi^{-1-2\gamma_0} \d z \\
&= -\frac{2k}{v'(y_0)}\frac{1-2\gamma_0}{2\gamma_0 v'(z)}\left(\G_{k,\ep}^\pm \right)_\s (y,y_0,z) f_\s(z,y_0) \xi^{-2\gamma_0} \Big|_{\partial I_3(y_0)} \\
&\quad + \frac{2k}{v'(y_0)}\frac{1-2\gamma_0}{2\gamma_0}\int_{I_3(y_0)} \partial_z \left( \frac{\left(\G_{k,\ep}^\pm \right)_\s (y,y_0,z) f_\s(z,y_0)}{v'(z)} \right) \xi^{-2\gamma_0} \d z
\end{align*}
Using Proposition \ref{prop:sobolevregdecomG} and Corollary \ref{cor:sobolevregpartialdecomG}, the boundary term is clearly bounded by $k^{-1}\Vert f \Vert_{X_k^1}$. As for the integral, we observe that 
\begin{align*}
\left|\frac{2k}{v'(y_0)}\frac{1-2\gamma_0}{2\gamma_0}\int_{I_3(y_0)} \partial_z \left( \frac{\left(\G_{k,\ep}^\pm \right)_\s (y,y_0,z) f_\s(z,y_0)}{v'(z)} \right) \xi^{-2\gamma_0} \d z \right| &\lesssim 2k\Vert f \Vert_{X_k^1}(1-2\mu_0)\int_{I_3(y_0)}|\xi|^{-2\mu_0} \d z\\
&\lesssim \Vert f \Vert_{X_k^1}
\end{align*}
from which the $X^0_k$ bounds follows swiftly. The complete $X_k^1$ bound is deduced similarly. Finally, note that
\begin{align*}
\frac{2k}{v'(y_0)}& \int_{I_3(y_0)} \frac{1}{v'(z)}  \partial_z \left( \left(\G_{k,\ep}^\pm \right)_\s (y,y_0,z) f_\s(z,y_0) \right) \xi^{-2\gamma_0} \d z  \\
&= \frac{2k}{v'(y_0)} \int_{I_3(y_0)} \frac{1}{v'(z)}  \partial_z \left( \left(\G_{k,\ep}^\pm \right)_\s (y,y_0,z)\right) f_\s(z,y_0) \xi^{-2\gamma_0} \d z \\
&\quad +\frac{2k}{v'(y_0)} \int_{I_3(y_0)} \frac{1}{v'(z)}   \left(\G_{k,\ep}^\pm \right)_\s (y,y_0,z) \partial_z \left( f_\s(z,y_0) \right) \xi^{-2\gamma_0} \d z
\end{align*}
and since 
\begin{align*}
\left\Vert \partial_z \left(\G_{k,\ep}^\pm \right)_\s (\cdot,y_0,z) \right\Vert_{X_k^1} \lesssim |\xi|, \quad \left| \partial_z \left( f_\s(z,y_0) \right) \right| \lesssim k |\xi|^\frac12 \Vert f \Vert_{X_k^1}
\end{align*}
we deduce from Proposition \ref{prop:sobolevregdecomG}, Proposition \ref{prop:sobolevregpartialdecomG} and Corollary \ref{cor:sobolevregpartialdecomG} that we can bound
\begin{align*}
\left| \frac{2k}{v'(y_0)} \int_{I_3(y_0)} \frac{1}{v'(z)}  \partial_z \left( \left(\G_{k,\ep}^\pm \right)_\s (y,y_0,z) f_\s(z,y_0) \right) \xi^{-2\gamma_0} \d z \right| &\lesssim 2k \Vert f \Vert_{X_k^1} \int_{I_3(y_0)} |\xi|^{\frac12-2\mu_0}\d z \\
&\lesssim \Vert f \Vert_{X_k}
\end{align*}
and similarly for the $\partial_y$ derivative, so that we deduce
\begin{align*}
\left\Vert \frac{2k}{v'(y_0)} \int_{I_3(y_0)} \frac{1}{v'(z)}  \partial_z \left( \left(\G_{k,\ep}^\pm \right)_\s (y,y_0,z) f_\s(z,y_0) \right) \xi^{-2\gamma_0} \d z \right\Vert_{X_k^1}\lesssim \Vert f \Vert_{X_K^1}.
\end{align*}
The other four integrals are bounded following the same ideas, namely integrating by parts to reduce the singularity in $\xi$ and using Propositions \ref{prop:sobolevregdecomG} and \ref{prop:sobolevregpartialdecomG}, we omit the routine details. Likewise, note that for $g_{7,k,\ep}^\pm$ we shall only focus on two contributions. The first one is
\begin{align*}
\frac{1}{v'(y_0)}\int_{I_3(y_0)} \frac{1}{v'(z)} \partial_z \left( \left( \G_{k,\ep}^\pm \right)_\s (y,y_0,z) f_\s(z,y_0) \right) \xi^{1-2\gamma_0}V_{1,\ep}^\pm(z,y_0) \d z
\end{align*}
which is bounded in $X_k^1$ by $k^{-1}\Vert f\Vert_{X_k}$ because of Propositions \ref{prop:sobolevregdecomG} and \ref{prop:sobolevregpartialdecomG} and $\left| \xi^{1-2\gamma_0} \right| \lesssim 1 + |\log(\xi) |$
is integrable for $|\xi|$ bounded. The second one is
\begin{align*}
\frac{2k}{v'(y_0)}\int_{I_3(y_0)}\frac{1-2\gamma_0}{v'(z)}  \left(\G_{k,\ep}^\pm \right)_\s (y,y_0,z) f_\s(z,y_0) \xi^{-2\gamma_0}\d z,
\end{align*}
which is again bounded by Propositions \ref{prop:sobolevregdecomG} and \ref{prop:sobolevregpartialdecomG} once we realise
\begin{align*}
2k(1-2\gamma_0)\xi^{-2\gamma_0} = \partial_z \left( \xi^{1-2\gamma_0} \right)
\end{align*}
and we integrate by parts, we omit the details. Hence,
\begin{align*}
\Vert g_{7,k,\ep}^\pm\Vert_{X_K^1}\lesssim \Vert f \Vert_{X_k}.
\end{align*} 
Concerning the $H_k^1(I_3(y_0))$ estimate,  we have that $g_{k,\ep}^\pm(y,y_0)$ satisfies
\begin{align*}
\left( \D_k - k^2 + \frac{\cJ(y_0)}{(y-y_0\pm i\ep)^2} \right) g_{k,\ep}^\pm(y,y_0) = \frac{f(y)}{(v(y)-v(y_0)\pm i\ep)^2}
\end{align*}
and thus the entanglement inequality provides
\begin{align*}
\Vert g_{k,\ep}^\pm \Vert_{H_k^1(I_3^c(y_0))} &\lesssim \frac{1}{k^2}\left \Vert \frac{f(z,y_0)}{(v(z)-v(y_0)\pm i\ep)^2}\right\Vert_{L^2(I_2^c(y_0)\cap I_3(y_0))} + \frac{1}{k^2}\left \Vert \frac{f(z,y_0)}{(v(z)-v(y_0)\pm i\ep)^2}\right\Vert_{L^2(I_3^c(y_0))} \\
&\quad+  \Vert g_{k,\ep}^\pm \Vert_{L^2(I_2^c(y_0)\cap I_3(y_0))} \\
&\lesssim  k^{-\frac12}\Vert f \Vert_{X_k^1} +  \Vert f \Vert_{L^2(I_3^c(y_0))} 
\end{align*}
Here we have used that $\int_a^b |x^{-\frac32\pm \gamma_0}|^2 \d x$ is uniformly bounded in $\gamma_0$ for $0<\mu_0 \leq \frac12$ for all $0 < a < b < +\infty$. With this we finish the proof.
\end{proof}

We can combine Lemma \ref{lemma:R2mapsC1XktoXk} and Lemma \ref{lemma:R2mapsXktoXk} to obtain the following

\begin{corollary}\label{cor:R2mapsCXktoXk}
Let $k\geq 1$ and $y_0\in I_S\cup I_W$. Let $f\in X_k$ and $h(z,y_0)\in C^2$ uniformly in $y_0$. Then,
\begin{align*}
\Vert R_{2,k,\ep}^\pm hf \Vert_{X_k}\lesssim \Vert f \Vert_{X_k},
\end{align*}
uniformly for all $0<\ep<\ep_*$ and all $y_0\in I_S\cup I_W$. 
\end{corollary}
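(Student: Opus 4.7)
The strategy is a standard linearization at the critical layer $z=y_0$ that splits $h$ into a constant (in $z$) piece handled by Lemma \ref{lemma:R2mapsXktoXk} and a vanishing-at-$y_0$ piece handled by Lemma \ref{lemma:R2mapsC1XktoXk}. Concretely, I would write
\begin{align*}
h(z,y_0) = h(y_0,y_0) + h_1(z,y_0), \qquad h_1(z,y_0) := h(z,y_0) - h(y_0,y_0),
\end{align*}
which gives the linearity-based splitting
\begin{align*}
(R_{2,k,\ep}^\pm hf)(y,y_0) = h(y_0,y_0)\,(R_{2,k,\ep}^\pm f)(y,y_0) + (R_{2,k,\ep}^\pm h_1 f)(y,y_0).
\end{align*}
Since $h\in C^2$ uniformly in $y_0$, the scalar $h(y_0,y_0)$ is bounded uniformly in $y_0\in I_S\cup I_W$, so the first term is controlled by $\|R_{2,k,\ep}^\pm f\|_{X_k}\lesssim \|f\|_{X_k}$, which is exactly Lemma \ref{lemma:R2mapsXktoXk}.

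For the second term, the function $h_1(\cdot,y_0)$ satisfies $h_1(y_0,y_0)=0$ and $\|h_1\|_{C^2}\lesssim \|h\|_{C^2}\lesssim 1$ uniformly in $y_0$, so Lemma \ref{lemma:R2mapsC1XktoXk} applies and yields
\begin{align*}
\|(R_{2,k,\ep}^\pm h_1 f)(\cdot,y_0)\|_{X_k} \lesssim k^{-1}\|f\|_{Z_k}.
\end{align*}
Since $\|f\|_{Z_k}\lesssim \|f\|_{X_k}$ uniformly in $k,y_0,\ep$ (this is the continuous inclusion $X_k\subset Z_k$ already recorded after the definitions), we obtain in particular $\|(R_{2,k,\ep}^\pm h_1 f)(\cdot,y_0)\|_{X_k}\lesssim \|f\|_{X_k}$. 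Adding the two estimates gives the claimed bound.

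There is no real obstacle here: everything reduces to the vanishing-at-the-critical-layer structure that allows the quadratic singularity $(v(z)-v(y_0)\pm i\ep)^{-2}$ to be integrated using the Green's-function decomposition, and both ingredients have already been established. The only point to record is that the constants produced depend only on $\|h\|_{C^2}$, so they are uniform in $y_0\in I_S\cup I_W$ and in $0<\ep<\ep_*$, as required.
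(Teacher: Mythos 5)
Your proof is correct and is essentially identical to the paper's argument: both split $h(z,y_0)=h(y_0,y_0)+h_1(z,y_0)$ at the critical layer, apply Lemma~\ref{lemma:R2mapsXktoXk} to the constant piece and Lemma~\ref{lemma:R2mapsC1XktoXk} to the vanishing piece $h_1$, and absorb the $\|f\|_{Z_k}$ using the continuous inclusion $X_k\subset Z_k$. Nothing to add.
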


\begin{proof}
It follows from Lemma \ref{lemma:R2mapsC1XktoXk} and Lemma \ref{lemma:R2mapsXktoXk} once we write
\begin{align*}
g_{k,\ep}^\pm = R_{2,k,\ep}^\pm h f(y,y_0) = h(y_0) R_{2,k,\ep}^\pm f(y,y_0) + R_{2,k,\ep}^\pm h_1f(y,y_0),
\end{align*}
where we have defined $h_1(y,y_0) = h(y,y_0) - h(y_0,y_0)$, with $h_1(y_0,y_0)=0$ and $h_1(\cdot,y_0)\in C^2$ uniformly in $y_0\in [0,2]$.
\end{proof}

We can dispense with the assumption that $f\in X_k$ by instead assuming further regularity on $f$.

\begin{lemma}\label{lemma:R2mapsC1H1toXk}
Let $k\geq 1$ and $y_0\in I_S\cup I_W$. Let $f(y)\in H^1_k$ and $h(y,y_0)\in C^2_y$ uniformly in $y_0$ with $h(y_0,y_0) = 0$. Then,
\begin{align*}
\Vert R_{2,k,\ep}^\pm hf \Vert_{X_k}\lesssim k^{-\frac12} \Vert f \Vert_{H^1_k},
\end{align*}
uniformly for all $0<\ep<\ep_*$ and all $y_0\in I_S\cup I_W$. 
\end{lemma}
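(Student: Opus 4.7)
The strategy is to reduce the claim to the already-established bound for $R_{1,k,\ep}^\pm$ acting on $H_k^1$ functions (Lemma~\ref{lemma:R1mapsH1toXk}), at the cost of an error term whose smallness is driven by the $\ep$ regularisation. Since $h\in C^2_y$ uniformly in $y_0$ and $h(y_0,y_0)=0$, we can factor
\begin{align*}
h(z,y_0) \,=\, (v(z)-v(y_0))\,h_2(z,y_0), \qquad h_2(z,y_0) \,=\, \int_0^1 \frac{\partial_y h(y_0+u(z-y_0),y_0)}{v'(y_0+u(z-y_0))}\,\d u,
\end{align*}
with $h_2 \in C^1_z$ uniformly in $y_0$. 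Using the algebraic identity $v(z)-v(y_0) = (v(z)-v(y_0)\pm i\ep) \mp i\ep$ inside the integral defining $R_{2,k,\ep}^\pm$, we obtain the splitting
\begin{align*}
R_{2,k,\ep}^\pm(hf)(y,y_0) \,=\, R_{1,k,\ep}^\pm(h_2 f)(y,y_0) \,\mp\, i\ep\, R_{2,k,\ep}^\pm(h_2 f)(y,y_0),
\end{align*}
which I will now estimate term by term in the $X_k$ norm.

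For the first piece, since $h_2(\cdot,y_0)\in C^1$ with uniform bounds in $y_0$ and $f\in H_k^1$, the product $h_2 f$ belongs to $H_k^1$ with $\|h_2 f\|_{H_k^1} \lesssim \|f\|_{H_k^1}$. A direct application of Lemma~\ref{lemma:R1mapsH1toXk} then yields
\begin{align*}
\|R_{1,k,\ep}^\pm(h_2 f)\|_{X_k} \,\lesssim\, k^{-1/2}\|f\|_{H_k^1},
\end{align*}
which is exactly the target bound.

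The main task is therefore to show that the error term $\ep\, R_{2,k,\ep}^\pm(h_2 f)$ also satisfies the $k^{-1/2}\|f\|_{H_k^1}$ bound uniformly in $0<\ep<\ep_*$, despite the fact that $h_2$ does not vanish at $y_0$. I would split the integral into the local region $I_3(y_0)$ and its complement. On $I_3^c(y_0)$ the denominator $(v(z)-v(y_0)\pm i\ep)^{-2}$ is uniformly bounded by $k^2$, and a Cauchy--Schwarz estimate using the global bounds on the Green's function from Proposition~\ref{prop:sobolevregdecomG} and Corollary~\ref{cor:sobolevregpartialdecomG} gives a contribution of order $\ep\cdot k^{-1/2}\|f\|_{L^2}$, which is acceptable. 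On the local region the gain comes from the uniform integral estimate
\begin{align*}
\ep \int_{I_3(y_0)} \frac{\d z}{|v(z)-v(y_0)\pm i\ep|^{2}} \,\lesssim\, \int_{\R}\frac{\ep\,\d u}{u^2+\ep^2}\,\lesssim\, 1
\end{align*}
(by the change of variable $u=v(z)-v(y_0)$), combined with the $L^\infty(I_3(y_0))$ bound $\|\G_{k,\ep}^\pm(y,y_0,\cdot)\|_{L^\infty} \lesssim k^{-1}$ from Proposition~\ref{prop:sobolevregdecomG} and the Sobolev embedding $\|f\|_{L^\infty(I_3(y_0))} \lesssim k^{1/2}\|f\|_{H_k^1(I_3(y_0))}$ from Lemma~\ref{lemma:LinfH1bound}. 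These combine to give $k^{-1}\cdot 1 \cdot k^{1/2}\|f\|_{H_k^1} = k^{-1/2}\|f\|_{H_k^1}$ for the $X_k^0$ component.

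The principal technical obstacle will be controlling the $\partial_y$ derivative of the $\ep$ error term in the $X_k^1$ norm, since differentiating in $y$ produces the more singular factor $\partial_y\G_{k,\ep}^\pm$ which has the decomposition-with-singular-coefficients structure from Corollary~\ref{cor:sobolevregpartialdecomG}. To handle this, I would mirror the integration-by-parts strategy used in Lemma~\ref{lemma:R2mapsC1XktoXk}: integrate by parts in $z$ in the local region, using $\partial_z(v(z)-v(y_0)\pm i\ep)^{-1} = -v'(z)(v(z)-v(y_0)\pm i\ep)^{-2}$ to trade one singular power against a derivative landing on $h_2 f$ or on the Green's function. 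Here $\partial_z f$ is controlled in $L^2$ by $k\|f\|_{H_k^1}$, and the same uniform $\ep$-integral estimate as above absorbs the remaining singularity. Finally, the $H_k^1(I_3^c(y_0))$ part of the $X_k$ norm is obtained, as in the preceding lemmas, through the entanglement inequality (Lemma~\ref{lemma:entanglementRTG}) applied to the equation $\textsc{RTG}_{k,\ep}^\pm(R_{2,k,\ep}^\pm(hf)) = hf/(v-v_0\pm i\ep)^2$, using the factor $h$ to soften the double singularity down to something integrable against $f\in H_k^1$.
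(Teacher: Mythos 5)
Your plan takes a genuinely different route from the paper. The paper estimates $R_{2,k,\ep}^\pm(hf)$ directly by integrating by parts once in $z$ on the local region, transferring the vanishing of $h$ into a drop of one order in the singularity; no $\ep$-dependent remainder is produced. Your reduction $R_{2,k,\ep}^\pm(hf) = R_{1,k,\ep}^\pm(h_2 f) \mp i\ep\,R_{2,k,\ep}^\pm(h_2 f)$ via $h=(v-v_0)h_2$ and $v-v_0 = (v-v_0\pm i\ep)\mp i\ep$ is a clean algebraic shortcut, and the first piece indeed follows at once from Lemma~\ref{lemma:R1mapsH1toXk}. However, the treatment of the $\ep$-error term has genuine problems.

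First, the stated non-local bound is off by a full power of $k$. Cauchy--Schwarz applied as you describe gives
\begin{align*}
\ep\,\|(\G_{k,\ep}^\pm)_\sigma(y,y_0,\cdot)\|_{L^2(I_3^c)}\left\|\frac{f}{(v-v_0\pm i\ep)^2}\right\|_{L^2(I_3^c)} \lesssim \ep\cdot k^{-3/2}\cdot k^2\,\|f\|_{L^2} = \ep\, k^{1/2}\,\|f\|_{L^2},
\end{align*}
not $\ep\,k^{-1/2}\|f\|_{L^2}$. For $\ep\gtrsim k^{-1}$ (which is allowed, since $\ep_*$ is fixed and $k$ is unbounded) this is not $\lesssim k^{-1/2}\|f\|_{H^1_k}$. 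The result is rescuable, but only by routing through $\|f\|_{L^\infty}\lesssim k^{1/2}\|f\|_{H^1_k}$, the $L^\infty(I_3^c)$ bound on $(\G_{k,\ep}^\pm)_\sigma$ (of order $k^{-1}$ via Sobolev from $\|\cdot\|_{H^1_k(I_3^c)}\lesssim k^{-3/2}$), and the sharper $L^1$-type estimate $\int_{I_3^c}|v-v_0\pm i\ep|^{-2}\,\d z\lesssim\min(k,\ep^{-1})$ which pairs with the $\ep$-prefactor to give a uniformly bounded product; your argument as written does not do this.

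Second, the local $X_k^0$ step conflates a pointwise bound on the output with a bound on its $X_k$ coefficients. The space $X_k^0$ is measured on the coefficients $g_\sr,g_\s$ in $g = g_\sr\eta^{1/2+\gamma_0}+g_\s\eta^{1/2-\gamma_0}$, and these coefficients are obtained by integrating the decomposed $(\G_{k,\ep}^\pm)_\sigma$ against the source -- so they carry the additional factor $|\xi|^{1/2\pm\gamma_0}$ from Proposition~\ref{prop:sobolevregdecomG}. Your estimate $k^{-1}\cdot 1\cdot k^{1/2}\|f\|_{H^1_k}$ bounds $\|g\|_{L^\infty}$, which is only the weaker consequence. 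Tracking the $|\xi|^{1/2-\mu_0}$ factor leads to the integral $\ep\int_{I_3}|\xi|^{1/2-\mu_0}|v-v_0\pm i\ep|^{-2}\,\d z$, which does turn out to be uniformly bounded in both regimes $\ep\lesssim k^{-1}$ and $\ep\gtrsim k^{-1}$, but establishing this is exactly the non-trivial uniformity question that your sketch glosses over. Finally, as a minor slip, the closed form you give for $h_2$ does not satisfy $(v(z)-v(y_0))h_2(z,y_0)=h(z,y_0)$; the correct choice is simply $h_2=h/(v-v_0)$, which is $C^1$ with uniform bounds since $h(y_0,y_0)=0$ and $h\in C^2$. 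In sum: the reduction is elegant and the strategy is probably viable, but the error-term estimate is not established as written, and filling it in requires essentially the same amount of work as the paper's direct integration by parts.
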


\begin{proof}
As usual, let $g_{k,\ep}^\pm=R_{2,k,\ep}^\pm f(y,y_0)$, let 
\begin{align*}
g_{1,k,\ep}^\pm(y,y_0) := \int_{I_3(y_0)} \G_{k,\ep}^\pm(y,y_0,z) \frac{h(z,y_0)}{(v(z) - v(y_0) \pm i\ep)^2}f(z) \d z
\end{align*}
and $g_{2,k,\ep}^\pm := g_{k,\ep}^\pm - g_{1,k,\ep}^\pm$. Appealing to Proposition \ref{prop:sobolevregdecomG} and Corollary \ref{cor:sobolevregpartialdecomG}, we have $\Vert g_{2,k,\ep}^\pm \Vert \lesssim k^{-\frac12}\Vert f \Vert_{L^2}$. Integrating by parts,
\begin{align*}
g_{1,k,\ep}^\pm(y,y_0) &= - \frac{\G_{k,\ep}^\pm(y,y_0,z)}{v'(z)}\frac{h(z,y_0)}{v(z) - v(y_0) \pm i\ep}f(z) \Big|_{z\in \partial I_3(y_0)} \\
&\quad + \int_{I_3(y_0)} \frac{\G_{k,\ep}^\pm(y,y_0,z)}{v(z) - v(y_0) \pm i\ep}\left( \partial_z f(z)\frac{h(z,y_0)}{v'(z)}  + f(z)\partial_z  \left(\frac{h(z,y_0)}{v'(z)} \right) \right)  \d z \\
&\quad +\int_{I_3(y_0)}\frac{h(z,y_0)}{v'(z)}\partial_z  \G_{k,\ep}^\pm(y,y_0,z) \frac{f(z)}{v(z) - v(y_0) \pm i\ep} \d z \\
&=g_{3,k,\ep}^\pm(y,y_0) + g_{4,k,\ep}^\pm(y,y_0) + g_{5,k,\ep}^\pm(y,y_0).
\end{align*}  
As before, Lemma \ref{lemma:LinfH1bound}, Proposition \ref{prop:sobolevregdecomG}, Corollary \ref{cor:sobolevregpartialdecomG} and the vanishing of $h(y_0,y_0)$ provide 
\begin{align*}
\Vert g_{3,k,\ep}^\pm \Vert_{X_k^1} + \Vert g_{4,k,\ep}^\pm \Vert_{X_k^1} \lesssim k^{-\frac12}\Vert f \Vert_{H_k^1}.
\end{align*}
We note here that for $g_{4,k,\ep}^\pm$ we can argue as in Lemma \ref{lemma:R0mapsL2toXk} and Lemma \ref{lemma:R1mapsH1toXk}. Regarding $g_{5,k,\ep}^\pm$, we shall only study
\begin{align*}
g_{6,k,\ep}^\pm(y,y_0) &:=2k\int_{I_3(y_0)} \frac{h(z,y_0)}{v'(z)}\left( \G_{k,\ep}^\pm \right)_\s(y,y_0,z) \frac{\xi^{-\frac12-\gamma_0}}{v(z) - v(y_0) \pm i\ep} f(z) \d z \\
&= (2k)^2\int_{I_3(y_0)} \frac{h(z,y_0)}{v'(y_0)v'(z)}\left( \G_{k,\ep}^\pm \right)_\s(y,y_0,z) f(z)\xi^{-\frac32-\gamma_0} \d z \\
&\quad -2k \int_{I_3(y_0)} \frac{h(z,y_0)}{v'(z)}\left( \G_{k,\ep}^\pm \right)_\s(y,y_0,z) f(z)\xi^{-\frac12-\gamma_0}\mathrm{V}_{1,\ep}^\pm(z,y_0) \d z \\
&= g_{7,k,\ep}^\pm(y,y_0) - g_{8,k,\ep}^\pm.
\end{align*}
Since $V_{1,\ep}^\pm\in L^\infty_z$ uniformly in $y_0$ and $\ep>0$, we readily have $\Vert g_{8,k,\ep}^\pm \Vert_{X_k^1}\lesssim k^{-\frac12}\Vert f \Vert_{L^2}$ thanks to the vanishing properties of $h(\cdot, y_0)$. On the other hand, integrating by parts in $g_{7,k,\ep}^\pm$ we reach
\begin{align*}
g_{7,k,\ep}^\pm(y,y_0) &= -\frac{4k}{1+2\gamma_0} \frac{h(z,y_0)}{v'(y_0)v'(z)}\left( \G_{k,\ep}^\pm \right)_\s(y,y_0,z) f(z)\xi^{-\frac12-\gamma_0} \Big|_{z\in \partial I_3(y_0)} \\
&\quad +\frac{4k}{1+2\gamma_0}\int_{I_3(y_0)} \frac{h(z,y_0)}{v'(y_0)v'(z)}f(z) \partial_z \left( \G_{k,\ep}^\pm \right)_\s(y,y_0,z) \xi^{-\frac12-\gamma_0} \d z \\
&\quad + \frac{4k}{1+2\gamma_0}\int_{I_3(y_0)} \left( \G_{k,\ep}^\pm \right)_\s(y,y_0,z) \partial_z \left( f(z)  \frac{h(z,y_0)}{v'(y_0)v'(z)} \right)\xi^{-\frac12-\gamma_0} \d z.
\end{align*}
The solid term is bounded as usual. For the first integral, from Corollary \ref{cor:sobolevregpartialdecomG} we note that 
\begin{align*}
\left|4k \frac{h(z,y_0)}{v'(y_0)v'(z)}f(z) \partial_z \left( \G_{k,\ep}^\pm \right)_\s(y,y_0,z) \xi^{-\frac12-\gamma_0} \right| \lesssim |f(z)||\xi|^{\frac32-\gamma_0}
\end{align*}
which is clearly integrable. For the second integral,
\begin{align*}
\frac{4k}{1+2\gamma_0}&\int_{I_3(y_0)} \left( \G_{k,\ep}^\pm \right)_\s(y,y_0,z) \partial_z \left( f(z)  \frac{h(z,y_0)}{v'(y_0)v'(z)} \right)\xi^{-\frac12-\gamma_0} \d z \\
&= \frac{4k}{1+2\gamma_0}\int_{I_3(y_0)} \left( \G_{k,\ep}^\pm \right)_\s(y,y_0,z)   \frac{h(z,y_0)}{v'(y_0)v'(z)} \partial_z f(z)\xi^{-\frac12-\gamma_0} \d z \\
&\quad + \frac{4k}{1+2\gamma_0}\int_{I_3(y_0)} \left( \G_{k,\ep}^\pm \right)_\s(y,y_0,z) f(z)   \partial_z \left( \frac{h(z,y_0)}{v'(y_0)v'(z)} \right)\xi^{-\frac12-\gamma_0} \d z
\end{align*}
with 
\begin{align*}
\left| 4k h(z,y_0) \xi^{-\frac12-\gamma_0} \right| \lesssim |\xi|^{\frac12-\mu_0}\lesssim 1 + |\log|\xi| |\in L^2(I_3(y_0))
\end{align*}
while we further integrate by parts to get
\begin{align*}
\frac{4k}{1+2\gamma_0}&\int_{I_3(y_0)} \left( \G_{k,\ep}^\pm \right)_\s(y,y_0,z) f(z)   \partial_z \left( \frac{h(z,y_0)}{v'(y_0)v'(z)} \right)\xi^{-\frac12-\gamma_0} \d z \\
&= \frac{2}{1+2\gamma_0}\left( \G_{k,\ep}^\pm \right)_\s(y,y_0,z) f(z)   \partial_z \left( \frac{h(z,y_0)}{v'(y_0)v'(z)} \right) \frac{\xi^{\frac12-\gamma_0}-1}{1-2\gamma_0}\Big|_{z\in \partial I_3(y_0)} \\
&\quad - \frac{2}{1+2\gamma_0}\int_{I_3(y_0)} \partial_z \left( \left( \G_{k,\ep}^\pm \right)_\s(y,y_0,z) f(z)   \partial_z \left( \frac{h(z,y_0)}{v'(y_0)v'(z)} \right) \right) \frac{\xi^{\frac12-\gamma_0}-1}{1-2\gamma_0}\d z \\
\end{align*}
Since
\begin{align*}
\left| \frac{\xi^{1-2\gamma_0}-1}{(1-2\gamma_0)} \right| \lesssim |\log|\xi||\in L^2(I_3(y_0)),
\end{align*}
confer Lemma \ref{lemma:firstgammalog}, the solid term and the integral can then be bounded in $X_k^1$ using Proposition \ref{prop:sobolevregdecomG}, Corollary \ref{cor:sobolevregpartialdecomG} and $f\in H^1_k$, we omit the details.
\end{proof}

\subsection{Third order operator estimates}
We define
\begin{align*}
R_{3,k,\ep}^\pm f(y,y_0) = \int_0^2 \G_{k,\ep}^\pm(y,y_0,z) \frac{f(z)}{(v(z) - v(y_0) \pm i\ep)^3} \d z.
\end{align*}
The first result is the following.
\begin{lemma}\label{lemma:R3mapsC1XktoXk}
Let $k\geq 1$ and $y_0\in I_S\cup I_W$. Let $f(y,y_0)\in X_k$ and $h(y,y_0)\in C^2_y$ uniformly in $y_0$ with $h(y_0,y_0) = 0$. Then,
\begin{align*}
\Vert R_{3,k,\ep}^\pm hf \Vert_{X_k}\lesssim  \Vert f \Vert_{X_k},
\end{align*}
uniformly for all $0<\ep<\ep_*$ and all $y_0\in I_S\cup I_W$. 
\end{lemma}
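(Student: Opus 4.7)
The plan is to imitate the proof of Lemma~\ref{lemma:R2mapsC1XktoXk} by performing one additional integration by parts that transforms the cubic singularity $(v(z)-v(y_0)\pm i\ep)^{-3}$ into a quadratic one, with the vanishing $h(y_0,y_0)=0$ absorbing the extra loss. Split $g_{k,\ep}^\pm(y,y_0):=R_{3,k,\ep}^\pm(hf)(y,y_0)$ into a non-local piece $g_{n\ell,k,\ep}^\pm$ (integral over $I_3^c(y_0)$) and a local piece $g_{\ell,k,\ep}^\pm$ (integral over $I_3(y_0)$). On $I_3^c(y_0)$ the denominator is bounded below by $\sim k^{-1}$, so Proposition~\ref{prop:sobolevregdecomG} together with Corollary~\ref{cor:sobolevregpartialdecomG} and Cauchy--Schwarz give $\Vert g_{n\ell,k,\ep}^\pm\Vert_{X_k^1}\lesssim \Vert f\Vert_{X_k}$ directly; the entanglement inequality (Lemma~\ref{lemma:entanglementRTG}) then upgrades this to an $X_k$ bound.

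For the local piece I would use the identity
\begin{align*}
\frac{1}{(v(z)-v(y_0)\pm i\ep)^3}=-\frac{1}{2v'(z)}\partial_z\frac{1}{(v(z)-v(y_0)\pm i\ep)^2}
\end{align*}
and integrate by parts once in $z$. The boundary term at $\partial I_3(y_0)$ is controlled because $|h(z,y_0)|\lesssim |z-y_0|\lesssim k^{-1}$ there, which compensates the $(v(z)-v(y_0)\pm i\ep)^{-2}\lesssim k^2$ blow-up and leaves $X_k^1$ bounds via Proposition~\ref{prop:sobolevregdecomG} and Corollary~\ref{cor:sobolevregpartialdecomG}. The interior integral, after expanding $\partial_z\bigl(\G_{k,\ep}^\pm h f/v'\bigr)$, splits into four contributions that can be reinterpreted as $R_{2,k,\ep}^\pm$-type operators with adjusted profiles: the terms where $\partial_z$ hits $h$ or $1/v'$ are handled respectively by Corollary~\ref{cor:R2mapsCXktoXk} (since $\partial_z h/v'\in C^1$) and by Lemma~\ref{lemma:R2mapsC1XktoXk} (since $h\,\partial_z(1/v')\in C^2$ still vanishes at $y_0$); the term where $\partial_z$ hits $\G_{k,\ep}^\pm$ uses Proposition~\ref{prop:sobolevregpartialdecomG} combined with $|h|\lesssim |z-y_0|$ to gain one extra power of $|\xi|=2k|z-y_0\pm i\ep_0|$.

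The main obstacle is the remaining term
\begin{align*}
\int_{I_3(y_0)}\G_{k,\ep}^\pm(y,y_0,z)\,\frac{h(z,y_0)}{v'(z)}\,\frac{\partial_z f(z,y_0)}{(v(z)-v(y_0)\pm i\ep)^2}\d z,
\end{align*}
because $\partial_z f$ inherits from $f\in X_k$ critically singular factors $\xi^{-\frac12\pm\gamma_0}$ of size $k|\xi|^{-\frac12\pm\gamma_0}\Vert f\Vert_{X_k}$. I would decompose both $\G_{k,\ep}^\pm$ and $f$ along their regularity structures (Proposition~\ref{prop:sobolevregdecomG}) so that the $(v-v_0\pm i\ep)^{-2}$ factor combines with the $\xi^{-\frac12\pm\gamma_0}$ of $\partial_z f$ and with the $\xi^{\frac12\pm\gamma_0}$ of $\G_{k,\ep}^\pm$ to produce pairings of the form $\xi^{-2\gamma_0}$ and $\xi^{-1-2\gamma_0}$. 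The first is log-integrable by Lemma~\ref{lemma:firstgammalog}; the second, which is critical, is tamed by a further integration by parts that exploits again the vanishing of $h/v'$ at $y_0$ together with the factor $(1-2\gamma_0)$ and the inequality $|\xi^{1-2\gamma_0}-1|\lesssim (1-2\gamma_0)|\log\xi|$, exactly as in the treatment of the $\int(\G_{k,\ep}^\pm)_\s\,f_\s\,\xi^{-1-2\gamma_0}$ term in Lemma~\ref{lemma:R2mapsC1XktoXk}. The resulting solid and integral terms are controlled in $X_k^1$ by Propositions~\ref{prop:sobolevregdecomG}, \ref{prop:sobolevregpartialdecomG} and Corollary~\ref{cor:sobolevregpartialdecomG}, and the entanglement inequality then closes the $X_k$ estimate.
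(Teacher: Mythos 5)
Your plan matches the paper's proof in all essential respects: one integration by parts converts the cubic singularity into an $R_{2}$-type kernel, the vanishing $h(y_0,y_0)=0$ is used to absorb the extra power, the resulting integrand is decomposed along the $X_k$ regularity structures of $\G_{k,\ep}^\pm$ and $f$ via \eqref{eq:partialzGf}, and the remaining critical $\xi^{-1-2\gamma_0}$ piece is tamed by further integrations by parts that leverage the $(1-2\gamma_0)$ factor and $|\xi^{1-2\gamma_0}-1|\lesssim(1-2\gamma_0)|\log\xi|$, exactly as in the paper (which iterates this step through its terms $g_{7}$ through $g_{14}$ — your reference to the analogous treatment in Lemma~\ref{lemma:R2mapsC1XktoXk} correctly points at the same mechanism). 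The argument is correct and essentially the same as the paper's.
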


\begin{proof}
We shall only focus on the most singular contribution of $g_{k,\ep}^\pm(y,y_0) = R_{3,k,\ep}^\pm(y,y_0)hf(y,y_0)$, namely
\begin{align*}
g_{1,k,\ep}^\pm &:= \int_{I_3(y_0)} \G_{k,\ep}^\pm(y,y_0,z) \frac{h(z,y_0)f(z,y_0)}{(v(z) - v(y_0) \pm i\ep)^3} \d z \\
&= -\frac{1}{2}  \frac{\G_{k,\ep}^\pm(y,y_0,z)}{v'(z)} \frac{h(z,y_0)f(z,y_0)}{(v(z) - v(y_0) \pm i\ep)^2} \Big|_{z\in \partial I_3(y_0)} \\
&\quad +\frac12 \int_{I_3(y_0)} \G_{k,\ep}^\pm(y,y_0,z) \frac{f(z,y_0)}{(v(z) - v(y_0) \pm i\ep)^2}\partial_z \left( \frac{h(z,y_0)}{v'(z)} \right) \d z \\
&\quad +\frac12 \int_{I_3(y_0)} \frac{h(z,y_0)}{v'(z)} \frac{\partial_z \left( \G_{k,\ep}^\pm(y,y_0,z) f(z,y_0) \right)}{(v(z) - v(y_0) \pm i\ep)^2} \d z \\
&= g_{2,k,\ep}^\pm(y,y_0) + g_{3,k,\ep}^\pm(y,y_0) + g_{4,k,\ep}^\pm(y,y_0).
\end{align*}
As usual, Proposition \ref{prop:sobolevregdecomG} and Corollary \ref{cor:sobolevregpartialdecomG} show $\Vert g_{2,k,\ep}^\pm \Vert_{X_k^1} \lesssim k^{-1}\Vert f \Vert_{X_k^1}$. Regarding $g_{3,k,\ep}^\pm(y,y_0)$, we appeal to Corollary \ref{cor:R2mapsCXktoXk} to obtain $\Vert g_{3,k,\ep}^\pm\Vert_{X_k^1}\lesssim \Vert f \Vert _{X_k}$. For $g_{4,k,\ep}^\pm$ both $\G_{k,\ep}^\pm(y,y_0,\cdot)\in X_k^1$ and $f(\cdot,y_0)\in X_k^1$, we use \eqref{eq:partialzGf} and we argue for the most singular contributions
\begin{align*}
g_{5,k,\ep}^\pm(y,y_0) &= \frac12 \int_{I_3(y_0)} \frac{h(z,y_0)}{v'(z)} \frac{\partial_z \left( \left(\G_{k,\ep}^\pm\right)_\s(y,y_0,z) f_\s(z,y_0) \right)}{(v(z) - v(y_0) \pm i\ep)^2}\xi^{1-2\gamma_0} \d z \\
&\quad + k(1-2\gamma_0) \int_{I_3(y_0)} \frac{h(z,y_0)}{v'(z)} \frac{ \left( \G_{k,\ep}^\pm\right)_\s (y,y_0,z) f_\s(z,y_0) }{(v(z) - v(y_0) \pm i\ep)^2}\xi^{-2\gamma_0} \d z  \\
&= g_{6,k,\ep}^\pm(y,y_0) + g_{7,k,\ep}^\pm(y,y_0).
\end{align*}
Given the vanishing of $h(y_0,y_0)$, Proposition \ref{prop:sobolevregdecomG}, the bounds $\left\Vert \partial_z \left(\G_{k,\ep}^\pm\right)_\s (\cdot,y_0,z) \right\Vert_{X_k^1} \lesssim |\xi|^\frac12$ from  Corollary \ref{cor:sobolevregpartialdecomG} and $\left| \partial_z f_\s(z,y_0) \right| \lesssim |\xi|^\frac12 k\Vert f \Vert_{X_k^1}$, we have 
\begin{align*}
\Vert g_{6,k,\ep}^\pm \Vert_{X_k^1}\lesssim k\Vert f\Vert_{X_k^1}\left(\int_{I_3(y_0)} |\xi|^{\frac12-2\mu_0} \d z \right) \lesssim \Vert f\Vert_{X_k^1}
\end{align*}
since $x^{\frac12-2\mu_0}=x^{-\frac12}x^{1-2\mu_0}= x^{-\frac12}\left( 1 +(1-2\mu_0) \log(x) \mathcal{Q}_{\mu_0}(x)\right)$ is locally integrable. We now address $g_{7,k,\ep}^\pm$. Integrating by parts once more, 
\begin{align*}
g_{7,k,\ep}^\pm(y,y_0) &= -k(1-2\gamma_0) \frac{h(z,y_0)}{(v'(z))^2} \frac{ \left( \G_{k,\ep}^\pm\right)_\s (y,y_0,z) f_\s(z,y_0) }{v(z) - v(y_0) \pm i\ep}\xi^{-2\gamma_0}\Big|_{z\in\partial I_3(y_0)} \\
&\quad + k(1-2\gamma_0) \int_{I_3(y_0)}\frac{h(z,y_0)}{(v'(z))^2} \frac{ \partial_z \left( \left( \G_{k,\ep}^\pm \right)_\s(y,y_0,z) f_\s(z,y_0) \right)}{v(z) - v(y_0) \pm i\ep}\xi^{-2\gamma_0} \d z \\
&\quad + k(1-2\gamma_0) \int_{I_3(y_0)} \partial_z \left( \frac{h(z,y_0)}{(v'(z))^2} \right) \frac{  \left( \G_{k,\ep}^\pm \right)_\s(y,y_0,z) f_\s(z,y_0) }{v(z) - v(y_0) \pm i\ep}\xi^{-2\gamma_0} \d z \\ 
&\quad -4k^2\gamma_0(1-2\gamma_0)\int_{I_3(y_0)} \frac{h(z,y_0)}{(v'(z))^2} \frac{  \left( \G_{k,\ep}^\pm \right)_\s(y,y_0,z) f_\s(z,y_0) }{v(z) - v(y_0) \pm i\ep}\xi^{-1-2\gamma_0} \d z \\ 
&= \sum_{j=8}^{11} g_{j,k\ep}^\pm(y,y_0)
\end{align*}
We can estimate the boundary term $g_{8,k,\ep}^\pm$ in $X_k^1$ as usual. The term $g_{9,k,\ep}^\pm$ is bounded following the same strategy as for $g_{6,k,\ep}^\pm$, taking advantage of the vanishing of $h(z,y_0)$, of $\left\Vert \partial_z \left(\G_{k,\ep}^\pm\right)_\s (\cdot,y_0,z) \right\Vert_{X_k^1} \lesssim |\xi|^\frac12$ from  Corollary \ref{cor:sobolevregpartialdecomG} and $\left| \partial_z f_\s(z,y_0) \right| \lesssim |\xi|^\frac12 k\Vert f \Vert_{X_k^1}$. For $g_{10,k,\ep}^\pm$, we write
\begin{align*}
g_{10,k,\ep}^\pm(y,y_0) &= \frac{2k^2(1-2\gamma_0)}{v'(y_0)} \int_{I_3(y_0)} \partial_z \left( \frac{h(z,y_0)}{(v'(z))^2} \right)  \left( \G_{k,\ep}^\pm \right)_\s(y,y_0,z) f_\s(z,y_0) \xi^{-1-2\gamma_0} \d z \\ 
&\quad - \frac{k(1-2\gamma_0)}{v'(y_0)}\int_{I_3(y_0)} \partial_z \left( \frac{h(z,y_0)}{(v'(z))^2} \right)  \left( \G_{k,\ep}^\pm \right)_\s(y,y_0,z) f_\s(z,y_0) \mathrm{V}_{1,\ep}^\pm(z,y_0)\xi^{-2\gamma_0} \d z \\ 
&= g_{12,k,\ep}^\pm(y,y_0) + g_{13,k,\ep}^\pm(y,y_0).
\end{align*}
Since $(1-2\mu_0)\int_0^{3\beta^2}x^{-2\mu_0}\d x \lesssim 1$, we readily have $\Vert g_{13,k,\ep}^\pm\Vert_{X_k^1}\lesssim \Vert f \Vert_{X_k^1}$. For $g_{12,k,\ep}^\pm$, we integrate once more to get
\begin{align*}
g_{12,k,\ep}^\pm &= -\frac{k(1-2\gamma_0)}{v'(y_0)2\gamma_0} \partial_z \left( \frac{h(z,y_0)}{(v'(z))^2} \right)  \left( \G_{k,\ep}^\pm \right)_\s(y,y_0,z) f_\s(z,y_0) \xi^{-2\gamma_0} \Big|_{z\in \partial I_3(y_0)} \\
&\quad + \frac{k(1-2\gamma_0)}{2\gamma_0 v'(y_0)}\int_{I_3(y_0)} \partial_z \left( \frac{h(z,y_0)}{(v'(z))^2} \right) \partial_z \left( \left( \G_{k,\ep}^\pm \right)_\s(y,y_0,z) f_\s(z,y_0) \right) \xi^{-2\gamma_0} \d z \\
&\quad + \frac{k(1-2\gamma_0)}{2\gamma_0 v'(y_0)}\int_{I_3(y_0)} \partial_z^2 \left( \frac{h(z,y_0)}{(v'(z))^2} \right)  \left( \G_{k,\ep}^\pm \right)_\s(y,y_0,z) f_\s(z,y_0)  \xi^{-2\gamma_0} \d z 
\end{align*}
where now each of them is bounded in $X_k^1$; we estimate the first integral as in $g_{6,k,\ep}^\pm$ and the second integral as in $g_{13,k,\ep}^\pm$. The solid term is estimated as usual.

Finally, for $g_{11,k,\ep}^\pm$ we proceed as for $g_{10,k,\ep}^\pm$ and we focus on 
\begin{align*}
g_{14,k,\ep}^\pm(y,y_0) &= -8k^3\gamma_0(1-2\gamma_0)\int_{I_3(y_0)} \frac{h(z,y_0)}{(v'(z))^2}  \left(\G_{k,\ep}^\pm \right)_\s(y,y_0,z) f_\s(z,y_0) \xi^{-2-2\gamma_0} \d z \\ 
&= \frac{4k^2\gamma_0(1-2\gamma_0)}{1+2\gamma_0}\frac{h(z,y_0)}{(v'(z))^2}  \left(\G_{k,\ep}^\pm \right)_\s(y,y_0,z) f_\s(z,y_0) \xi^{-1-2\gamma_0}\Big|_{z\in \partial I_3(y_0)} \\
&\quad - \frac{4k^2\gamma_0(1-2\gamma_0)}{1+2\gamma_0}\int_{I_3(y_0)} \frac{h(z,y_0)}{(v'(z))^2}  \partial_z \left( \left(\G_{k,\ep}^\pm \right)_\s(y,y_0,z) f_\s(z,y_0) \right) \xi^{-1-2\gamma_0} \d z \\
&\quad + k\gamma_0\frac{1-2\gamma_0}{1+2\gamma_0} \partial_z \left(\frac{h(z,y_0)}{(v'(z))^2} \right)  \left(\G_{k,\ep}^\pm \right)_\s(y,y_0,z) f_\s(z,y_0) \xi^{-2\gamma_0}\Big|_{z\in \partial I_3(y_0)} \\
&\quad - k\gamma_0\frac{1-2\gamma_0}{1+2\gamma_0}\int_{I_3(y_0)} \partial_z \left(  \left(\G_{k,\ep}^\pm \right)_\s(y,y_0,z) f_\s(z,y_0) \partial_z \left(\frac{h(z,y_0)}{(v'(z))^2} \right)  \right) \xi^{-2\gamma_0}
\end{align*}
where we have integrated by parts twice. The vanishing and regularity of $h(\cdot,y_0)$, Proposition \ref{prop:sobolevregdecomG} and Corollary \ref{cor:sobolevregpartialdecomG} provide the $X_k^1$ estimates for the boundary terms, of order 1 in $k$. The first integral is estimated with the same reasoning, combining the ideas of $g_{6,k,\ep}^\pm$ and $g_{13,k,\ep}^\pm$, while for the second integral we have that 
\begin{align*}
k(1-2\mu_0)&\left\Vert \int_{I_3(y_0)}\partial_z \left(  \left(\G_{k,\ep}^\pm \right)_\s(y,y_0,z) f_\s(z,y_0) \partial_z \left(\frac{h(z,y_0)}{(v'(z))^2} \right)  \right) \xi^{-2\gamma_0}\d z \right\Vert_{X_k^1}\\
&\lesssim \Vert f \Vert_{X_k^1}(1-2\mu_0)\int_0^{3\beta} x^{-2\mu_0}\d x \lesssim \Vert f \Vert_{X_k},
\end{align*}
 we proceed as in $g_{13,k,\ep}^\pm$. With this, the proof is finished.
\end{proof}

As a corollary of Lemma \ref{lemma:R3mapsC1XktoXk} noting that 
\begin{align*}
k^2\Vert h(\cdot,y_0) \Vert_{L^\infty(I_3(y_0))} + k\Vert \partial_y h(\cdot, y_0) \Vert_{L^\infty(I_3(y_0))} \lesssim 1.
\end{align*}
for all $h(y,y_0)\in C^3_y$ uniformly in $y_0$ with $h(y_0,y_0) = \partial_y h(y,y_0)=0$, we have the following result.

\begin{lemma}\label{lemma:R3mapsC2XktoXk}
Let $k\geq 1$ and $y_0\in I_S\cup I_W$. Let $f(y,y_0)\in X_k$ and $h(y,y_0)\in C^3_y$ uniformly in $y_0$ with $h(y_0,y_0) = \partial_y h(y,y_0)=0$. Then,
\begin{align*}
\Vert R_{3,k,\ep}^\pm hf \Vert_{X_k}\lesssim k^{-1} \Vert f \Vert_{X_k},
\end{align*}
uniformly for all $0<\ep<\ep_*$ and all $y_0\in I_S\cup I_W$. 
\end{lemma}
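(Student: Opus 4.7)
The plan is to establish this bound as a direct refinement of Lemma \ref{lemma:R3mapsC1XktoXk}, exploiting the extra vanishing condition $\partial_y h(y_0,y_0)=0$. The key starting observation is that, by Taylor's theorem applied to $h(\cdot,y_0)\in C^3$ with $h(y_0,y_0)=\partial_y h(y_0,y_0)=0$, one has the representation
\begin{equation*}
h(y,y_0) = (y-y_0)^2\, H(y,y_0), \qquad H(y,y_0) := \int_0^1 (1-s)\,\partial_y^2 h(y_0+s(y-y_0),y_0)\,\d s,
\end{equation*}
with $H(\cdot,y_0)\in C^1$ uniformly in $y_0$. In particular, on the local scaling region $I_3(y_0)$, where $|z-y_0|\lesssim k^{-1}$, this yields the sharpened pointwise bounds
$k^2\Vert h(\cdot,y_0)\Vert_{L^\infty(I_3(y_0))} + k\Vert \partial_z h(\cdot,y_0)\Vert_{L^\infty(I_3(y_0))}\lesssim 1$, uniformly for $y_0\in I_S\cup I_W$, as advertised in the hint preceding the statement.

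With this in hand, I would then rerun verbatim the integration-by-parts cascade of Lemma \ref{lemma:R3mapsC1XktoXk}, which splits $g_{k,\ep}^\pm = R_{3,k,\ep}^\pm h f$ into a bounded non-local piece plus the local contributions $g_{j,k,\ep}^\pm$, $j=2,\dots,14$. In each of these terms the previous argument absorbed exactly one pointwise factor involving either $h(\cdot,y_0)$ itself, or one of its first derivatives (arising through $\partial_z(h/v'(z))$ or, after a second integration by parts, $\partial_z^2(h/v'(z))$). Substituting the sharpened bounds above at each such step inserts precisely one extra $k^{-1}$ factor, the governing gain being the minimum between $k^{-2}$ (from $h$) and $k^{-1}$ (from $\partial_z h$). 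As this improvement is uniform across the entire cascade, the local $X_k^1$ estimate of $g_{k,\ep}^\pm$ picks up exactly the claimed $k^{-1}$.

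The only point that requires some care is that a second integration by parts produces $\partial_z^2 h$, which under the current hypotheses is merely bounded rather than small in $k$; one must therefore check that whenever $\partial_z^2 h$ appears, it is multiplied by a factor that supplies its own compensating $k^{-1}$. By direct inspection of the chain leading to the analogues of $g_{10,k,\ep}^\pm$ and $g_{12,k,\ep}^\pm$, the factors $\xi^{-2\gamma_0}$ or $\xi^{-1-2\gamma_0}$ that accompany $\partial_z^2(h/v'(z))$ are precisely those which, after the integration-by-parts cancellation used in the previous lemma, carry the needed negative power of $k$ from the remaining prefactor, so the counting closes. Finally, the global $H_k^1(I_3^c(y_0))$ bound follows from the entanglement inequality of Lemma \ref{lemma:entanglementRTG} applied to $g_{k,\ep}^\pm$, whose right-hand side $h(z,y_0)f(z,y_0)/(v(z)-v(y_0)\pm i\ep)^3$ inherits the same $k^{-1}$ improvement on $I_3(y_0)\cap I_2^c(y_0)$ from the sharpened $L^\infty$ bound on $h$, while on $I_3^c(y_0)$ the denominator $|v(z)-v(y_0)\pm i\ep|$ is bounded below by a multiple of $k^{-1}$ and the Taylor factorisation $h=(y-y_0)^2 H$ yields the same gain directly. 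The main obstacle I anticipate is purely bookkeeping: verifying the $k$-counting in each of the roughly fifteen terms of the cascade, which is mechanical but tedious.
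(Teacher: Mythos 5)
Your proposal is correct and follows essentially the same approach as the paper. The paper proves this lemma by a one-line remark preceding the statement, observing precisely that the double vanishing of $h$ at $y_0$ gives $k^2\Vert h(\cdot,y_0)\Vert_{L^\infty(I_3(y_0))}+k\Vert\partial_y h(\cdot,y_0)\Vert_{L^\infty(I_3(y_0))}\lesssim 1$ and then invoking the cascade of Lemma~\ref{lemma:R3mapsC1XktoXk}; your Taylor factorization $h=(y-y_0)^2 H$ with $H\in C^1$ is exactly the mechanism behind that remark, and your careful tracking of the $\partial_z^2 h$ terms (which are merely bounded, not small) fills in the one point the paper's terse corollary elides.
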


We finish the subsection studying how $R_{3,k,\ep}^\pm$ acts on $H_k^1$ functions weighted with a smooth quadratically vanishing pre-factor.

\begin{lemma}\label{lemma:R3mapsC2H1toXk}
Let $k\geq 1$ and $y_0\in I_S\cup I_W$. Let $f(y,y_0)\in H_k^1$ and $h(y,y_0)\in C^3_y$ uniformly in $y_0$ with $h(y_0,y_0) = \partial_y h(y_0,y_0)=0$. Then,
\begin{align*}
\Vert R_{3,k,\ep}^\pm hf \Vert_{X_k}\lesssim  \Vert f \Vert_{H_k^1},
\end{align*}
uniformly for all $0<\ep<\ep_*$ and all $y_0\in I_S\cup I_W$. 
\end{lemma}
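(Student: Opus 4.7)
The strategy closely mirrors the approaches of Lemma \ref{lemma:R2mapsC1H1toXk} and Lemma \ref{lemma:R3mapsC2XktoXk}: combine the double vanishing of $h$ at $y_0$ with integration by parts against the singular weight $(v(z)-v(y_0)\pm i\ep)^{-3}$ in order to reduce the order of singularity until one of the operator bounds from Section~\ref{sec:solopstrat} applies. First, I will write $g_{k,\ep}^\pm := R_{3,k,\ep}^\pm(hf)(y,y_0)$ and split the $z$-integration into $I_3(y_0)$ and $I_3^c(y_0)$. On the non-local piece, the double vanishing of $h$ and the fact that $|v(z)-v(y_0)\pm i\ep|\gtrsim k^{-1}$ on $I_3^c(y_0)$ yield the pointwise bound $|h(z,y_0)/(v(z)-v(y_0)\pm i\ep)^3|\lesssim k$, so that the $L^2_z$-estimates for $\G_{k,\ep}^\pm$ from Proposition \ref{prop:sobolevregdecomG} combined with Cauchy--Schwarz deliver the required control in terms of $\Vert f \Vert_{L^2}\leq \Vert f \Vert_{H^1_k}$.

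For the local part, the first step is to integrate by parts using $\partial_z[(v(z)-v(y_0)\pm i\ep)^{-2}] = -2 v'(z)(v(z)-v(y_0)\pm i\ep)^{-3}$. This produces a boundary contribution at $z\in\partial I_3(y_0)$, controlled in $X_k$ thanks to Proposition \ref{prop:sobolevregdecomG}, the estimate $|h(z,y_0)|\lesssim k^{-2}$ on $\partial I_3(y_0)$, and $\Vert f\Vert_{L^\infty(I_3(y_0))}\lesssim k^{1/2}\Vert f\Vert_{H^1_k}$ from Lemma \ref{lemma:LinfH1bound}, together with four volume integrals corresponding to the $z$-derivative falling on $f$, on $h$, on the $1/v'$-factor, and on $\G_{k,\ep}^\pm$. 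The two integrals in which the derivative hits $h$ or the $v'$-factor produce pre-factors that are $C^2$ in $z$ uniformly in $y_0$ and vanish at least once at $y_0$ (the latter inheriting the double vanishing of $h$), so they reduce directly to Lemma \ref{lemma:R2mapsC1H1toXk} applied to $f\in H^1_k$. The integral in which the derivative hits $f$ still carries the full quadratic vanishing of $h$; after peeling off one factor of $(v(z)-v(y_0)\pm i\ep)$ by a $B^\pm_\ep$-type rewriting and one further integration by parts, this contribution recasts as an $R_{1,k,\ep}^\pm$-type operator acting on $\partial_z f\in L^2$ with a $C^1$ pre-factor that vanishes once at $y_0$, which is amenable to Lemma \ref{lemma:R1mapsH1toXk}.

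The main obstacle is the fourth integral, where the derivative has fallen on the Green's function and no operator bound from Section~\ref{sec:solopstrat} applies directly. Here I will use the refined decomposition from Proposition \ref{prop:sobolevregpartialdecomG} and Corollary \ref{cor:sobolevregpartialdecomG}, according to which $\partial_z\G_{k,\ep}^\pm$ admits the same singular decomposition as $\G_{k,\ep}^\pm$ but with coefficients that carry an extra factor of $|2k(z-y_0\pm i\ep_0)|$. Combining this gain with the quadratic vanishing of $h$ leaves the most singular term with a kernel behaving like $|z-y_0|^{-2\mu_0}$ in $I_3(y_0)$, which is integrable uniformly in $\mu_0<1/2$; one further integration by parts against $f$, as in the treatment of the most singular piece in Lemma \ref{lemma:R2mapsC1H1toXk} (the handling of $g_{7,k,\ep}^\pm$ there), produces the local $X_k^1$ bound in terms of $\Vert f\Vert_{L^\infty(I_3(y_0))}+k^{-1/2}\Vert \partial_z f\Vert_{L^2(I_3(y_0))}\lesssim k^{1/2}\Vert f\Vert_{H^1_k}$. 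Finally, the $H_k^1(I_3^c(y_0))$ portion of the $X_k$-norm is recovered by the entanglement inequality of Lemma \ref{lemma:entanglementRTG} applied to $\mathrm{RTG}_{k,\ep}^\pm g_{k,\ep}^\pm = hf/(v-v_0\pm i\ep)^3$, using the local $L^2(I_2^c(y_0)\cap I_3(y_0))$ estimate from the preceding analysis together with the crude bound $\Vert hf/(v-v_0\pm i\ep)^3\Vert_{L^2(I_2^c(y_0))}\lesssim k^{3/2}\Vert f\Vert_{L^2}$ that follows from the double vanishing of $h$.
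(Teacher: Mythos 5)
Your proposal follows essentially the same route as the paper: one integration by parts against the cubic singularity, then invoke the quadratic vanishing of $h$, Lemma~\ref{lemma:LinfH1bound} (to trade $\|f\|_{L^\infty(I_3(y_0))}$ for $k^{1/2}\|f\|_{H^1_k}$), and the Green's function bounds from Propositions~\ref{prop:sobolevregdecomG}, \ref{prop:sobolevregpartialdecomG} and Corollary~\ref{cor:sobolevregpartialdecomG}. Your handling of the boundary term, the non-local piece, and the $\partial_z(h/v')$ integral (reduction to Lemma~\ref{lemma:R2mapsC1H1toXk}) all match what the paper does.

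Where you diverge is in the treatment of the two remaining local integrals. The paper controls both the $\G\,\partial_z f$ term and the $\partial_z\G\, f$ term directly: once $h$ contributes a factor $|\xi|^2 k^{-2}$, the integrand of the $\partial_z f$ term is $\lesssim k^{-1}|\xi|^{1/2-\mu_0}|\partial_z f|$, and that of the $\partial_z\G$ term is $\lesssim (1-2\mu_0)|\xi|^{-1/2-\mu_0}|f|$, both of which are handled by Cauchy--Schwarz or the uniform integrability of $(1-2\mu_0)|\xi|^{-1/2-\mu_0}$ together with $\|f\|_{L^\infty}\lesssim k^{1/2}\|f\|_{H^1_k}$. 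No further integration by parts is needed. Your extra $B^\pm_\ep$-rewriting and second integration by parts are harmless detours, but the resulting citation of Lemma~\ref{lemma:R1mapsH1toXk} for an $L^2$-input $\partial_z f$ does not apply as stated (that lemma requires an $H^1_k$ argument); what you in fact produce is an $R_{0,k,\ep}^\pm$-type expression, to which Lemma~\ref{lemma:R0mapsL2toXk} would apply — or, more simply, you could follow the paper's direct estimate. Similarly, your claim that $\partial_z\G_{k,\ep}^\pm$ has "the same singular decomposition with an extra factor of $|\xi|$" only captures the piece where $\partial_z$ lands on the analytic coefficient $(\G_{k,\ep}^\pm)_\sigma$; the more singular contribution is the piece $2k(\tfrac12-\gamma_0)(\G_{k,\ep}^\pm)_\s\xi^{-1/2-\gamma_0}$ where $\partial_z$ lands on the explicit power, and it is the compensating factor $(1-2\mu_0)$ together with the quadratic vanishing of $h$ that makes that piece integrable, as the paper's displayed estimate shows. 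None of this is a genuine gap, but the paper's closing argument is shorter and sharper than what you wrote.
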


\begin{proof}
We integrate by parts to have
\begin{align*}
g_{1,k,\ep}^\pm &:= \int_{I_3(y_0)}\G_{k,\ep}^\pm(y,y_0,z)\frac{h(z,y_0)f(z,y_0)}{(v(z) - v(y_0) \pm i\ep)^3} \d z \\
&= -\frac12 \G_{k,\ep}^\pm(y,y_0,z)\frac{h(z,y_0)f(z,y_0)}{(v(z) - v(y_0) \pm i\ep)^2}\Big|_{z\in \partial I_3(y_0)} \\
&\quad + \frac12 \int_{I_3(y_0)} \partial_z \left( \frac{h(z,y_0)}{v'(z)}\right)\G_{k,\ep}^\pm (y,y_0,z)\frac{f(z,y_0)}{(v(z) - v(y_0) \pm i\ep)^2}\d z \\
&\quad + \frac12 \int_{I_3(y_0)} \frac{h(z,y_0)}{v'(z)} \G_{k,\ep}^\pm(y,y_0,z)\frac{\partial_z f(z,y_0)}{(v(z) - v(y_0) \pm i\ep)^2} \d z\\
&\quad +\frac12 \int_{I_3(y_0)}\frac{h(z,y_0)}{v'(z)} \frac{\partial_z \G_{k,\ep}^\pm(y,y_0,z)}{(v(z) - v(y_0) \pm i\ep)^2}f(z,y_0) \d z.
\end{align*}
The quadratic vanishing of $h(\cdot, y_0)$ and Lemma \ref{lemma:LinfH1bound} ensures
\begin{align*}
\left \Vert \G_{k,\ep}^\pm(y,y_0,z)\frac{h(z,y_0)f(z,y_0)}{(v(z) - v(y_0) \pm i\ep)^2}\Big|_{z\in \partial I_3(y_0)} \right\Vert_{X_k^1}\lesssim k^{-\frac12}\Vert f \Vert_{H_k^1},
\end{align*}
while we can argue as in Lemma \ref{lemma:R2mapsC1H1toXk} to bound
\begin{align*}
\left \Vert  \int_{I_3(y_0)} \partial_z \left( \frac{h(z,y_0)}{v'(z)}\right)\G_{k,\ep}^\pm (y,y_0,z)\frac{f(z,y_0)}{(v(z) - v(y_0) \pm i\ep)^2}\d z \right\Vert_{X_k^1} \lesssim k^{-\frac12}\Vert f \Vert_{H_k^1}.
\end{align*}
Similarly, we can use the smooth quadratic vanishing of $h(\cdot,y_0)$ to conclude 
\begin{align*}
\left \Vert  \int_{I_3(y_0)} \frac{h(z,y_0)}{v'(z)} \G_{k,\ep}^\pm(y,y_0,z)\frac{\partial_z f(z,y_0)}{(v(z) - v(y_0) \pm i\ep)^2} \d z \right\Vert_{X_k^1} \lesssim k^{-\frac12}\Vert f \Vert_{H_k^1}.
\end{align*}
Regarding $\partial_z\G_{k,\ep}^\pm(z,y_0)$, we shall consider its most singular term, and we observe that
\begin{align*}
&\left \Vert \int_{I_3(y_0)} \frac{h(z,y_0)}{v'(z)} \frac{\partial_z \left(\G_{k,\ep}^\pm\right)_\s(y,y_0,z) \xi^{\frac12-\gamma_0} + 2k(\tfrac12-\gamma_0)\left( \G_{k,\ep}^\pm\right)_\s(y,y_0,z) \xi^{-\frac12-\gamma_0}}{(v(z) - v(y_0) \pm i\ep)^2}f(z,y_0) \d z \right\Vert_{X_k^1} \\
&\qquad\qquad \lesssim k^{-\frac12}\Vert f\Vert_{H_k^1}
\end{align*}
because of the smooth quadratic vanishing of $h(\cdot, y_0)$, the embedding $\Vert f \Vert_{L^\infty}\lesssim k^\frac12\Vert f \Vert_{H_k^1}$ coming from Lemma \ref{lemma:LinfH1bound},  Propositions \ref{prop:sobolevregdecomG}, \ref{prop:sobolevregpartialdecomG} and Corollary \ref{cor:sobolevregpartialdecomG}.  This completes the proof.
\end{proof}

\subsection{Fourth order operator estimates}
We next define the most singular operator
\begin{align*}
R_{4,k,\ep}^\pm f(y,y_0) = \int_0^2 \G_{k,\ep}^\pm(y,y_0,z) \frac{f(z)}{(v(z) - v(y_0) \pm i\ep)^4} \d z.
\end{align*}
The next result shows that $R_{4,k,\ep}^\pm$ maps $X_k$ functions weighted with smooth quadratically vanishing pre-factors into $X_k$.
\begin{lemma}\label{lemma:R4mapsC1XktoXk}
Let $k\geq 1$ and $y_0\in I_S\cup I_W$. Let $f(y,y_0)\in X_k$ and $h(y,y_0)\in C^3_y$ uniformly in $y_0$ with $h(y_0,y_0) = \partial_y h(y_0,y_0) = 0$. Then,
\begin{align*}
\Vert R_{4,k,\ep}^\pm hf \Vert_{X_k}\lesssim  \Vert f \Vert_{X_k},
\end{align*}
uniformly for all $0<\ep<\ep_*$ and all $y_0\in I_S\cup I_W$. 
\end{lemma}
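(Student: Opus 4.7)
The strategy will mirror that of Lemma \ref{lemma:R3mapsC1XktoXk}, using the extra derivative vanishing $\partial_y h(y_0,y_0)=0$ (i.e.\ quadratic vanishing of $h$ at the critical layer) to absorb the extra order of singularity produced by $\xi_v^{-4}$, where $\xi_v = v(z)-v(y_0)\pm i\ep$. After a single integration by parts, the cubic singularity of $R_3$ will reappear, at which point Lemma \ref{lemma:R3mapsC1XktoXk} and the same bookkeeping used there will close the estimate.

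First I would split $g_{k,\ep}^\pm := R_{4,k,\ep}^\pm hf$ into its local and nonlocal pieces on $I_3(y_0)$ and $I_3^c(y_0)$. The nonlocal piece is routine: on $I_3^c(y_0)$, the weight $|\xi_v|^{-4}\lesssim 1$ and Proposition \ref{prop:sobolevregdecomG} together with Corollary \ref{cor:sobolevregpartialdecomG} yield the desired $X_k$ bound. On $I_3(y_0)$, I use $\xi_v^{-4} = -\tfrac{1}{3v'(z)}\partial_z \xi_v^{-3}$ and integrate by parts, obtaining
\begin{align*}
g_{\ell,k,\ep}^\pm &= -\tfrac{1}{3}\,\tfrac{\G_{k,\ep}^\pm\, h f}{v' \xi_v^{3}}\Big|_{z\in \partial I_3(y_0)} + \tfrac{1}{3}\int_{I_3(y_0)} \partial_z\!\Big(\tfrac{h(z,y_0)}{v'(z)}\Big)\,\tfrac{\G_{k,\ep}^\pm(y,y_0,z)\, f(z,y_0)}{\xi_v^{3}}\,\d z \\
&\quad + \tfrac{1}{3}\int_{I_3(y_0)} \tfrac{h(z,y_0)}{v'(z)}\,\tfrac{\partial_z\!\big(\G_{k,\ep}^\pm(y,y_0,z)\, f(z,y_0)\big)}{\xi_v^{3}}\,\d z \;=:\; g_{1}^\pm + g_{2}^\pm + g_{3}^\pm .
\end{align*}
The boundary term $g_{1}^\pm$ is controlled in $X_k^1$ using $|h(z,y_0)|\lesssim |z-y_0|^2\approx k^{-2}|\xi|^2$, the standard decomposition of $\G_{k,\ep}^\pm$, and Corollary \ref{cor:sobolevregpartialdecomG}. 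For $g_{2}^\pm$, the weight $\partial_z(h/v')$ is $C^2_y$ uniformly in $y_0$ and vanishes at $z=y_0$ by the hypothesis $h(y_0,y_0)=\partial_y h(y_0,y_0)=0$; hence Lemma \ref{lemma:R3mapsC1XktoXk} applies directly and yields $\|g_{2}^\pm\|_{X_k}\lesssim \|f\|_{X_k}$.

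The core of the argument is the analysis of $g_{3}^\pm$, where I will use the identity \eqref{eq:partialzGf} to expand $\partial_z(\G_{k,\ep}^\pm f)$. The mild summands there (those carrying the factors $\xi^{1\pm 2\gamma_0}$ and $\xi$) can be bounded in $X_k^1$ in the same way as $g_{2}^\pm$, by recognising them as $R_3$ acting on $f$ against $C^2$ weights that inherit a vanishing at $z=y_0$ from $h/v'$. The genuinely dangerous terms are those carrying $\xi^{-2\gamma_0}$ and $\xi^{2\gamma_0}$ with prefactor $2k(1\pm 2\gamma_0)$, yielding integrands of order $k\xi^{-3\mp 2\gamma_0}$ times $h/v'\sim k^{-2}|\xi|^2$, i.e.\ effectively of order $k^{-1}|\xi|^{-1\mp 2\gamma_0}$. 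These are handled with a second integration by parts, writing $\xi^{-1\mp 2\gamma_0} = \mp\tfrac{1}{4k\gamma_0}\partial_z \xi^{\mp 2\gamma_0}$ and transferring the derivative onto the product $(\G_{k,\ep}^\pm)_\sigma f_\sigma h/v'$. The resulting boundary contributions are tamed by the quadratic vanishing of $h$ (now providing an extra $|\xi|^2/k^2$ factor) together with Proposition \ref{prop:sobolevregdecomG} and Corollary \ref{cor:sobolevregpartialdecomG}, while the remaining integrals reduce to $\int_{I_3(y_0)}|\xi|^{-2\mu_0}\d z$ and $\int_{I_3(y_0)}(1-2\mu_0)|\xi|^{-2\mu_0}\d z$, both uniformly bounded for $y_0\in I_S\cup I_W$. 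The $H_k^1(I_3^c(y_0))$ bound on $g_{k,\ep}^\pm$ then follows from the entanglement inequality (Lemma \ref{lemma:entanglementRTG}), noting that $hf/\xi_v^4\in L^2(I_2^c(y_0)\cap I_3(y_0))$ by the quadratic vanishing of $h$.

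The main obstacle is the double integration by parts inside $g_{3}^\pm$: one must check that the apparent prefactor $1/\gamma_0$ produced by integrating $\xi^{-1\mp 2\gamma_0}$ is exactly cancelled by the explicit $(1\pm 2\gamma_0)$ coefficients in \eqref{eq:partialzGf}, so that the estimate remains uniform as $\gamma_0\to \tfrac12$ (the weakly stratified limit). This is the same cancellation mechanism that makes Lemma \ref{lemma:R3mapsC1XktoXk} work; here it has to be implemented one level deeper. Otherwise the scheme is mechanical and mirrors the now familiar pattern $R_{m}\to R_{m-1}$ via integration by parts against $1/v'(z)$.
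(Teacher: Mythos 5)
Your overall plan is close to the paper's, but the critical step — the handling of the \emph{dangerous} terms in $g_3^\pm$ — is underspecified in a way that a student could not recover from as written. You observe, correctly as a size estimate, that after pairing the prefactor $2k(1\pm 2\gamma_0)\,\xi^{\mp 2\gamma_0}$ from \eqref{eq:partialzGf} with $h/v' \cdot \xi_v^{-3}$ the integrand is ``effectively of order $k^{-1}|\xi|^{-1\mp 2\gamma_0}$''. But you then propose to integrate by parts \emph{as if} the factor $\xi^{-1\mp 2\gamma_0}$ were literally present, writing $\xi^{-1\mp 2\gamma_0}=\mp\frac{1}{4k\gamma_0}\partial_z\xi^{\mp 2\gamma_0}$. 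The actual integrand carries $\xi^{-3\mp 2\gamma_0}\,h(z,y_0)$, not $\xi^{-1\mp 2\gamma_0}$ times a smooth function. You cannot absorb the quadratic vanishing of $h$ into the $\xi$--power: Taylor-factoring $h(z,y_0)=H(z,y_0)(z-y_0)^2$ and rewriting $(z-y_0)^2\xi^{-3}$ as $(2k)^{-2}\xi^{-1}\,b_\ep^2$ with $b_\ep=\tfrac{z-y_0}{z-y_0\pm i\ep_0}$ introduces a factor $b_\ep^2$ whose $z$--derivative is of size $\sim k/|\xi|$ near the critical layer, and this wrecks the subsequent estimate. So the single IBP step, as stated, cannot actually be performed.

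What is needed instead — and what the paper does — is to integrate by parts \emph{three times} on the honest power $\xi^{-3\mp 2\gamma_0}$, each time choosing whether to transfer $\partial_z$ to $\mathrm{f}_\s=(\G)_\s f_\s$ (at which point one stops, since $\partial_z\mathrm{f}_\s$ vanishes at $z=y_0$) or to $h/v'$ (in which case one continues). After two derivatives have fallen on $h/v'$, its smooth quadratic vanishing has been used up and what remains is $\partial_z^2(h/v')\in L^\infty$ against a factor $\xi^{-2\gamma_0}$ with a prefactor containing $(1-2\gamma_0)$. The closing step is the bound
\begin{align*}
(1-2\mu_0)\int_0^{3\beta}x^{-2\mu_0}\,\d x \lesssim 1,
\end{align*}
so that the relevant cancellation is between the $(1-2\gamma_0)$ inherited from \eqref{eq:partialzGf} and the $\tfrac{1}{1-2\mu_0}$ blow-up of the antiderivative of $\xi^{-2\mu_0}$ as $\mu_0\to\tfrac12$. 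You instead attribute the cancellation to $(1\pm 2\gamma_0)$ killing the $\tfrac{1}{\gamma_0}$ from the last integration by parts; that factor is actually controlled for a different reason, namely that $|\gamma_0|$ is bounded away from $0$ on $I_S\cup I_W$, where the mild regime $\gamma_0\approx 0$ is excluded by construction. Apart from this, the rest of your scheme (the local/nonlocal split, the first IBP via $1/v'$, and delegating the $\partial_z(h/v')$ term to Lemma \ref{lemma:R3mapsC1XktoXk}) matches the paper.
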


\begin{proof}
For $g_{k,\ep}^\pm(y,y_0):= R_{4,k,\ep}^\pm hf(y,y_0)$ we write
\begin{align*}
g_{k,\ep}^\pm(y,y_0) &= \int_{I_3(y_0)}\G_{k,\ep}^\pm(y,y_0,z)\frac{h(z,y_0)f(z,y_0)}{(v(z) - v(y_0) \pm i\ep)^4} \d z + \int_{I_3^c(y_0)}\G_{k,\ep}^\pm(y,y_0,z)\frac{h(z,y_0)f(z,y_0)}{(v(z) - v(y_0) \pm i\ep)^4} \d z \\
&= g_{1,k,\ep}^\pm(y,y_0) + g_{2,k,\ep}^\pm(y,y_0). 
\end{align*}
As usual, we have $\Vert g_{2,k,\ep}^\pm \Vert_{X_k^1} \lesssim \Vert f \Vert_{X_k}$. For the local contribution, we integrate by parts once,
\begin{align*}
g_{1,k,\ep}^\pm(y,y_0) &= -\frac13 \frac{h(z,y_0)}{v'(z)}\frac{\G_{k,\ep}^\pm(y,y_0,z)f(z,y_0)}{(v(z) - v(y_0) \pm i\ep)^3} \Big|_{z\in \partial I_3(y_0)} \\
& \quad + \frac13 \int_{I_3(y_0)} \partial_z \left( \frac{h(z,y_0)}{v'(z)} \right) \frac{\G_{k,\ep}^\pm(y,y_0,z)f(z,y_0)}{(v(z) - v(y_0) \pm i\ep)^3} \d z \\
&\quad +\frac13 \int_{I_3(y_0)} \frac{h(z,y_0)}{v'(z)}  \frac{\partial_z \left(\G_{k,\ep}^\pm(y,y_0,z)f(z,y_0)\right)}{(v(z) - v(y_0) \pm i\ep)^3} \d z.
\end{align*}
Proposition \ref{prop:sobolevregdecomG}, Corollayr \ref{cor:sobolevregpartialdecomG} and the smooth quadratic vanishing of $h(\cdot,y_0)$ yield
\begin{align*}
\left \Vert \frac{h(z,y_0)}{v'(z)}\frac{\G_{k,\ep}^\pm(y,y_0,z)f(z,y_0)}{(v(z) - v(y_0) \pm i\ep)^3} \Big|_{z\in \partial I_3(y_0)} \right\Vert_{X_k^1} \lesssim \Vert f \Vert_{X_k^1},
\end{align*}
while the arguments of Lemma \ref{lemma:R3mapsC1XktoXk} show that
\begin{align*}
\left\Vert \int_{I_3(y_0)} \partial_z \left( \frac{h(z,y_0)}{v'(z)} \right) \frac{\G_{k,\ep}^\pm(y,y_0,z)f(z,y_0)}{(v(z) - v(y_0) \pm i\ep)^3} \d z \right\Vert_{X_k^1}\lesssim \Vert f \Vert_{X_k^1}.
\end{align*}
Next, since $f\in X_k$, from \eqref{eq:partialzGf} we consider the most singular contributions of $\partial_z \left( \G_{k,\ep}^\pm(y,y_0,z) f(z,y_0)\right)$, namely
\begin{align*}
g_{3,k,\ep}^\pm(y,y_0) &:= \frac13\int_{I_3(y_0)} \frac{h(z,y_0)}{v'(z)}  \frac{\partial_z \left( \left(\G_{k,\ep}^\pm\right)_\s(y,y_0,z)f_\s(z,y_0)\right)}{(v(z) - v(y_0) \pm i\ep)^3}\xi^{1-2\gamma_0} \d z, \\
g_{4,k,\ep}^\pm(y,y_0) &:= \frac{2k(1-2\gamma_0)}{3}\int_{I_3(y_0)} \frac{h(z,y_0)}{v'(z)}  \frac{\left(\G_{k,\ep}^\pm\right)_\s(y,y_0,z)f_\s(z,y_0)}{(v(z) - v(y_0) \pm i\ep)^3}\xi^{-2\gamma_0} \d z.
\end{align*}
For $g_{3,k,\ep}^\pm$, thanks to the vanishing of $\partial_z \left( \left(\G_{k,\ep}^\pm\right)_\s(y,y_0,z)f_\s(z,y_0)\right)$ coming from Proposition \ref{prop:sobolevregpartialdecomG}, Corollary \ref{cor:sobolevregpartialdecomG} and $f\in X_k^1$, and the smooth quadratic vanishing of $h(\cdot,y_0)$ and  we argue as in $g_{6,k,\ep}^\pm$ in the proof of Lemma \ref{lemma:R3mapsC1XktoXk} to obtain $\Vert g_{3,k,\ep}^\pm \Vert_{X_k^1}\lesssim \Vert f \Vert_{X_k^1}$. We now address $g_{4,k,\ep}^\pm$. Since 
\begin{align*}
\frac{1}{v(y) - v(y_0) \pm i\ep} = \frac{(v'(y_0))^{-1}}{y-y_0\pm i\ep_0} -\mathrm{V}_{1,\ep}^\pm(y,y_0)
\end{align*}
with $\mathrm{V}_{1,\ep}^\pm(y,y_0)\in L^\infty(0,2)$ uniformly for all $y_0\in [0,2]$ and all $0<\ep$, we have
\begin{align*}
\frac{1}{(v(y) - v(y_0) \pm i\ep)^3} = \frac{(v'(y_0))^{-3}}{(y-y_0\pm i\ep_0)^3} + \frac{1}{(y-y_0\pm i\ep)^2}\mathrm{V}_{3,\ep}^\pm(y,y_0),
\end{align*}
with $\mathrm{V}_{3,\ep}^\pm(y,y_0)\in L^\infty(0,2)$ uniformly for all $y_0\in [0,2]$ and all $0<\ep$. Therefore,
\begin{align*}
g_{4,k,\ep}^\pm(y,y_0) &= (2k)^4\frac{1-2\gamma_0}{3(v'(y_0))^3}\int_{I_3(y_0)}\frac{h(z,y_0)}{v'(z)} \left(\G_{k,\ep}^\pm\right)_\s(y,y_0,z)f_\s(z,y_0) \xi^{-3-2\gamma_0} \d z \\
&\quad + (2k)^3\frac{1-2\gamma_0}{3(v'(y_0))^2} \int_{I_3(y_0)} \frac{h(z,y_0)}{v'(z)} \left(\G_{k,\ep}^\pm\right)_\s(y,y_0,z)f_\s(z,y_0)\mathrm{V}_{3,\ep}^\pm(z,y_0) \xi^{-2-2\gamma_0} \d z \\
&= g_{5,k,\ep}^\pm(y,y_0) + g_{6,k,\ep}^\pm(y,y_0).
\end{align*}
The quadratic vanishing of $h(\cdot, y_0)$,  $(1-2\mu_0)\int_0^{3\beta}x^{-2\mu_0}\d x \lesssim 1$ and Proposition \ref{prop:sobolevregdecomG} provide the estimate $\left \Vert g_{6,k,\ep}^\pm \right\Vert_{X_k^1}\lesssim \Vert f \Vert_{X_k^1}$. Integrating by parts whenever $\partial_z$ does not land on 
$$\mathrm{f}_{\s,k,\ep}^\pm(y,y_0,z):= \left(\G_{k,\ep}^\pm\right)_\s(y,y_0,z)f_\s(z,y_0),$$ 
we have
\begin{align*}
g_{5,k,\ep}^\pm(y,y_0) &= - \frac{(2k)^3}{3(v'(y_0))^3}\frac{1-2\gamma_0}{2+2\gamma_0} \frac{h(z,y_0)}{v'(z)} \mathrm{f}_{\s,k,\ep}^\pm(y,y_0,z) \xi^{-2-2\gamma_0} \Big|_{z\in \partial I_3(y_0)} \\
&\quad +\frac{(2k)^3}{3(v'(y_0))^3}\frac{1-2\gamma_0}{2+2\gamma_0}\int_{I_3(y_0)} \frac{h(z,y_0)}{v'(z)} \partial_z \mathrm{f}_{\s,k,\ep}^\pm(y,y_0,z) \xi^{-2-2\gamma_0} \d z \\
&\quad  - \frac{(2k)^2}{3(v'(y_0))^3}\frac{1-2\gamma_0}{(2+2\gamma_0)(1+2\gamma_0)} \partial_z \left(\frac{h(z,y_0)}{v'(z)}\right) \mathrm{f}_{\s,k,\ep}^\pm(y,y_0,z) \xi^{-1-2\gamma_0} \Big|_{z\in \partial I_3(y_0)} \\
&\quad + \frac{(2k)^2}{3(v'(y_0))^3}\frac{1-2\gamma_0}{(2+2\gamma_0)(1+2\gamma_0)} \int_{I_3(y_0)} \partial_z \left(\frac{h(z,y_0)}{v'(z)}\right)\mathrm{f}_{\s,k,\ep}^\pm(y,y_0,z) \xi^{-1-2\gamma_0} \d z \\
&\quad - \frac{2k}{3(v'(y_0))^3}\frac{1-2\gamma_0}{(2+2\gamma_0)(1+2\gamma_0)2\gamma_0} \partial_z^2 \left(\frac{h(z,y_0)}{v'(z)}\right) \mathrm{f}_{\s,k,\ep}^\pm(y,y_0,z) \xi^{-2\gamma_0} \Big|_{z\in \partial I_3(y_0)} \\
&\quad +  \frac{2k}{3(v'(y_0))^3}\frac{1-2\gamma_0}{(2+2\gamma_0)(1+2\gamma_0)2\gamma_0} \int_{I_3(y_0)}\partial_z \left( \partial_z^2 \left(\frac{h(z,y_0)}{v'(z)}\right)\mathrm{f}_{\s,k,\ep}^\pm(y,y_0,z) \right) \xi^{-2\gamma_0} \d z.
\end{align*}
The application of Proposition \ref{prop:sobolevregdecomG}, Propistion \ref{prop:sobolevregpartialdecomG} and Corollary \ref{cor:sobolevregpartialdecomG}, together with the smooth quadratic vanishing of $h(\cdot, y_0)$ and the observation that $(1-2\mu_0)\int_0^{3\beta}x^{-2\mu_0}\d x \lesssim 1$ give the desired bounds, we omit the routine details.
\end{proof}

\section{High-order operator estimates for mild stratifications}\label{sec:highordermild}
In this subsection we address the analogue results of Section \ref{sec:highorderweakstrong} for the mildly stratified regime $y_0\in I_M$. 

\subsection{Second order operator estimates}
We begin the section showing the mapping properties of $R_{2,k,\ep}^\pm$.
\begin{lemma}\label{lemma:R2mapsLXktoLXk}
Let $k\geq 1$, $y_0\in I_M$ and $f(y,y_0)\in LX_k$. There holds
\begin{align*}
\Vert (R_{2,k,\ep}^\pm f)(\cdot,y_0) \Vert_{LX_k} \lesssim  \Vert f \Vert_{LX_k} 
\end{align*}
uniformly in $0<\ep<\ep_*$, and in $y_0\in I_M$.
\end{lemma}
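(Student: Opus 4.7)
The plan is to mirror the strategy of Lemma \ref{lemma:R2mapsXktoXk}, but carrying the logarithmic corrections that are built into the $LX_k$ decomposition throughout the argument. Write $g_{k,\ep}^\pm(y,y_0):=(R_{2,k,\ep}^\pm f)(y,y_0)$ and split the $z$-integral over $I_3(y_0)$ and $I_3^c(y_0)$. The non-local piece is controlled directly via Proposition \ref{prop:logsobolevregdecomG} and Corollary \ref{cor:logsobolevregpartialdecomG}, exactly as before, yielding an $LX_k^1$ bound of order $k^{\frac12}\Vert f \Vert_{L^2(I_3^c(y_0))}\lesssim \Vert f \Vert_{LX_k}$. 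The $H^1_k(I_3^c(y_0))$ part of the $LX_k$ norm comes from the entanglement inequality applied to the equation $\textsc{RTG}_{k,\ep}^\pm g_{k,\ep}^\pm = f/(v(\cdot)-v(y_0)\pm i\ep)^2$ together with the local estimates, again as in Lemma \ref{lemma:R2mapsXktoXk}.

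For the local piece on $I_3(y_0)$, first integrate by parts once against $\partial_z(v(z)-v(y_0)\pm i\ep)^{-1}$ to convert one of the quadratic factors into a simple pole, producing a boundary term (handled by Proposition \ref{prop:logsobolevregdecomG}), a term of the form $R_{1,k,\ep}^\pm((\cdot)f)$ controlled by Lemma \ref{lemma:R1mapsLinfL2toLXk}, and a main term $R_{1,k,\ep}^\pm[\partial_z(\G f)/v'(z)]$. In the main term, use the standard decomposition $\frac{1}{v(z)-v(y_0)\pm i\ep}=\frac{(v'(y_0))^{-1}}{z-y_0\pm i\ep_0}-\mathrm{V}_{1,\ep}^\pm(z,y_0)$, where the $\mathrm{V}_{1,\ep}^\pm$ contribution is harmless (it reduces to an $R_{0,k,\ep}^\pm$-type object estimated by Lemma \ref{lemma:R0mapsL2toLXk} after pairing with Proposition \ref{prop:logsobolevregpartialdecomG}).

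The substantive step is the remaining local integral $\int_{I_3(y_0)}\frac{\partial_z(\G_{k,\ep}^\pm f)}{z-y_0\pm i\ep_0}\,dz$. Here one expands both $\G_{k,\ep}^\pm$ via Proposition \ref{prop:logsobolevregdecomG} and $f$ via the $LX_k$ decomposition $f=f_\sr\,\eta^{\frac12+\gamma_0}+f_\s\,\eta^{\frac12-\gamma_0}\log(\eta)\Q_{\gamma_0}(\eta)$. The worst interactions produce integrands with prefactors of the form $\xi^{-2\gamma_0}\log^2(\xi)\Q_{\gamma_0}(\xi)^2$, which must be integrated by parts against $\partial_z\xi = 2k$. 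Because $|\gamma_0|\leq \gamma_*<\tfrac14$ on $I_M$, the bound \eqref{eq:boundQgamma} on $\Q_{\gamma_0}$ together with $\int_0^{c}x^{-2\mu_0}(1+|\log x|)^2\,dx\lesssim 1$ and the higher-derivative control of $(\G_{k,\ep}^\pm)_\sigma$ from Proposition \ref{prop:logsobolevregpartialdecomG} and Corollary \ref{cor:logsobolevregpartialdecomG} yield the desired $LX_k^1$ estimate. An analogous integration by parts handles the $\partial_y$ derivative required by the $LX_k$ norm.

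The main obstacle, and the one requiring the most careful bookkeeping, is precisely the doubly-logarithmic interaction between the singular part of $\G_{k,\ep}^\pm$ and the singular part of $f$ in the $LX_k$ decomposition: each integration by parts converts a power into a logarithm and never reduces past $\xi^{-2\gamma_0}$, so one must check at every step that $\log^n(\xi)\Q_{\gamma_0}^m(\xi)$ remains uniformly in $L^1$ (respectively $L^2$) on $I_3(y_0)$ uniformly in $|\gamma_0|\leq \gamma_*$. This is exactly the role of the explicit definition \eqref{eq:defQgamma} of $\Q_{\gamma_0}$ and Lemma \ref{lemma:firstgammalog}; the bound deteriorates only in the coefficient $(1-2\gamma_0)^{-1}$, which is uniformly bounded on $I_M$ since $|\gamma_0|\leq\gamma_*<\tfrac14$. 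Once these logarithmic integrals are controlled, the rest of the argument is a verbatim adaptation of the proof of Lemma \ref{lemma:R2mapsXktoXk}.
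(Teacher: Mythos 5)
Your high-level plan is aligned with the paper's strategy — decompose $\G_{k,\ep}^\pm$ and $f$ into their $LX_k$ (logarithmic) structures, integrate by parts to soften the quadratic pole, and track the $\log$-power interactions. (The paper actually begins by directly splitting $\frac{1}{(v(z)-v(y_0)\pm i\ep)^2}$ via a $\mathrm V_{2,\ep}^\pm$-type remainder rather than integrating by parts against $\partial_z(v(z)-v(y_0))^{-1}$ first, but this is a cosmetic difference.) The gap is in where you locate the difficulty. You claim the estimate ``deteriorates only in the coefficient $(1-2\gamma_0)^{-1}$,'' but on $I_M$ one has $|\gamma_0|\le\gamma_*<\tfrac14$, so $(1-2\gamma_0)^{-1}$ is harmless. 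The real degeneracy is as $\gamma_0\to 0$, and this is not addressed.

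Concretely, the worst interaction on $I_3(y_0)$ is the $(\G_{k,\ep}^\pm)_\s$-against-$f_\s$ term, where the combined $LX_k$ coefficients produce a factor $\xi^{1-2\gamma_0}\bigl(\log(\xi)\Q_{\gamma_0}(\xi)\bigr)^2$. Dividing by the quadratic pole as the double $\partial_z$ acts yields $\xi^{-1-2\gamma_0}\bigl(\log(\xi)\Q_{\gamma_0}(\xi)\bigr)^2$, which is \emph{not} pointwise integrable ($|\xi|^{-1-2\mu_0}$ up to logs). Integrating by parts ``against $\partial_z\xi=2k$'' as you propose — factoring out $\partial_z(\xi^{-2\gamma_0})$ or writing $\bigl(\log\xi\,\Q_{\gamma_0}(\xi)\bigr)^2=\tfrac{(\xi^{2\gamma_0}-1)^2}{4\gamma_0^2}$ and integrating each power separately — produces free factors of $\gamma_0^{-1}$ and $\gamma_0^{-2}$ that are not compensated by anything in Lemma \ref{lemma:firstgammalog} or the bound \eqref{eq:boundQgamma}. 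The paper instead needs the exact identity
\begin{equation*}
2k\,\xi^{-1-2\gamma_0}\bigl(\log(\xi)\Q_{\gamma_0}(\xi)\bigr)^2 \;=\; \frac{1}{4\gamma_0^2}\,\partial_z\mathrm{Q}_{\gamma_0}(\xi),
\qquad
\mathrm{Q}_{\gamma_0}(\xi)=\frac{\xi^{2\gamma_0}-\xi^{-2\gamma_0}}{2\gamma_0}-2\log(\xi),
\end{equation*}
and the crucial content of Lemma \ref{lemma:secondgammalog}, namely that $\mathrm{Q}_{\gamma_0}$ vanishes like $\gamma_0^2$ (precisely, $\sup_{|\gamma|\le \gamma_*}\|\gamma^{-2}\zeta^{1/2}\mathrm{Q}_\gamma\|_{L^\infty(B_R)}\lesssim 1$), so that $\gamma_0^{-2}\mathrm{Q}_{\gamma_0}(\xi)$ is uniformly controlled. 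The intermediate cases $\xi^{-1-2\gamma_0}\log(\xi)\Q_{\gamma_0}(\xi)$ and $\xi^{-1+2\gamma_0}$ similarly require the explicit antiderivatives $\Q_{2,\gamma_0}$ and $\Q_{3,\gamma_0}$ introduced in the paper's proof, with the implicit cancellations they encode. Without Lemma \ref{lemma:secondgammalog} (or an equivalent identification of this cancellation near $\gamma_0=0$), the argument does not close: a naive ``log-powers are locally integrable'' estimate misses the non-integrable $|\xi|^{-1-2\mu_0}$ singularity entirely, and a naive integration by parts leaves an unbounded $\gamma_0^{-1}$ or $\gamma_0^{-2}$ factor.
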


\begin{proof}
Let $g_{k,\ep}^\pm(y,y_0) = R_{2,k,\ep}^\pm f(y,y_0)$. We have
\begin{align*}
g_{k,\ep}^\pm(y,y_0) &= \frac{(2k)^2}{(v'(y_0))^2}\int_{I_3(y_0)} \G_{k,\ep}^\pm(y,y_0,z) \frac{f(z,y_0)}{\xi^2} \d z + \int_0^2 \G_{k,\ep}^\pm (y,y_0,z) \frac{\mathrm{V}_{2,\ep}^\pm(z,y_0) f(z,y_0)}{v(z) - v(y_0) \pm i\ep} \d z \\
&\quad + \frac{(2k)^2}{(v'(y_0))^2}\int_{I_3^c(y_0)} \G_{k,\ep}^\pm(y,y_0,z) \frac{f(z,y_0)}{\xi^2} \d z \\
&= g_{1,k,\ep}^\pm(y,y_0) + g_{2,k,\ep}^\pm(y,y_0) + g_{3,k,\ep}^\pm(y,y_0),
\end{align*}
where we have used that
\begin{align*}
\frac{1}{(v(z) - v(y_0) \pm i\ep)^2} = \frac{(v'(y_0))^{-2}}{(y-y_0 \pm i\ep_0)^2} + \frac{\mathrm{V}_{2,\ep}^\pm(y,y_0)}{v(y) - v(y_0) \pm i\ep},
\end{align*}
with $\mathrm{V}_{2,\ep}^\pm (\cdot,y_0)\in L^\infty$ uniformly in $y_0\in (0,2)$ and in $0<\ep<\ep_*$. Hence, we can use Lemma \ref{lemma:R1mapsLinfL2toLXk} to estimate $g_{2,k,\ep}$. The bounds for $g_{3,k,\ep}^\pm$ are deduced from Proposition \ref{prop:logsobolevregdecomG} and Corollary \ref{cor:logsobolevregpartialdecomG}. Using the regularity structures of $\G_{k,\ep}^\pm$ and of $f\in LX_k$, we have
\begin{align*}
(v'(y_0))^2 g_{1,k,\ep}^\pm(y,y_0) = g_{4,k,\ep}^\pm(y,y_0) + g_{5,k,\ep}^\pm(y,y_0) + g_{6,k,\ep}^\pm(y,y_0),
\end{align*}
with
\begin{align*}
g_{4,k,\ep}^\pm(y,y_0) &= (2k)^2\int_{I_3(y_0)} \left( \G_{k,\ep}^\pm \right)_\sr f_\sr(z,y_0) \xi^{-1+2\gamma_0}\d z, \\
g_{5,k,\ep}^\pm(y,y_0) &= (2k)^2\int_{I_3(y_0)} \left(  \left( \G_{k,\ep}^\pm \right)_\sr f_\s(z,y_0) + \left( \G_{k,\ep}^\pm \right)_\s f_\sr(z,y_0) \right) \xi^{-1}\log(\xi)\Q_{\gamma_0}(\xi)\d z, \\
g_{6,k,\ep}^\pm(y,y_0) &= (2k)^2\int_{I_3(y_0)} \left( \G_{k,\ep}^\pm \right)_\s f_\s(z,y_0) \xi^{-1-2\gamma_0}\left( \log(\xi) \Q_{\gamma_0}(\xi) \right)^2 \d z.
\end{align*}
To bound the three contributions, we recall Lemma \ref{lemma:firstgammalog}, namely 
\begin{align*}
\frac{\xi^{2\gamma_0}-1}{2\gamma_0} = \log(\xi) \Q_{\gamma_0}(\xi),
\end{align*}
with $|\Q_{\gamma_0}(\xi)|\lesssim 1$ for $|\xi|$ bounded. To estimate $g_{4,k,\ep}^\pm$ in $LX_k^1$, we note that
\begin{align*}
g_{4,k,\ep}^\pm(y,y_0) &= 2k \left( \G_{k,\ep}^\pm \right)_\sr f_\sr(z,y_0) \log(\xi) \Q_{\gamma_0}(\xi) \Big|_{z\in \partial I_3(y_0)} \\
&\quad - 2k\int_{I_3(y_0)} \partial_z \left( \left( \G_{k,\ep}^\pm \right)_\sr f_\sr(z,y_0)\right) \log(\xi) \Q_{\gamma_0}(\xi) \d z
\end{align*}
and the bounds follow from Propositions \ref{prop:logsobolevregdecomG}, \ref{prop:logsobolevregpartialdecomG} and Corollary \ref{cor:logsobolevregpartialdecomG}. Next, for $g_{5,k,\ep}^\pm$ we note that for
\begin{equation}\label{eq:defQ2gamma}
\Q_{2,\gamma_0}(\xi) := \int_0^1 \frac{e^{2\gamma_0 u \log(\xi)} - 1}{2\gamma_0 u} \d u  
\end{equation}
we have $2k\zeta^{-1}\Q_{\gamma_0}(\zeta) = \partial_z\Q_{2,\gamma_0}(\xi)$. Likewise, for 
\begin{equation}\label{eq:defQ3gamma}
\Q_{3,\gamma_0}(\xi) := \int_0^1 \left(\frac{e^{2\gamma_0 u \log(\xi)} - 1}{2\gamma_0 u}\right)^2 \d u  
\end{equation}
we also have $2k\zeta^{-1}\Q_{2,\gamma_0}(\xi)   = \frac12\partial_z \Q_{3,\gamma_0}(\xi)$. Furthermore,  
\begin{align*}
\left| \Q_{j,\gamma_0}(\xi)  \right| \lesssim |\log(\xi)|^{j-1},
\end{align*}
for $j=2,3$. Therefore, for $\mathrm{f}_{k,\ep}^\pm(y,y_0,z) := \left( \G_{k,\ep}^\pm \right)_\sr f_\s(z,y_0) + \left( \G_{k,\ep}^\pm \right)_\s f_\sr(z,y_0)$, we have
\begin{align*}
g_{5,k,\ep}^\pm &= 2k \,\mathrm{f}_{k,\ep}^\pm(y,y_0,z) \log(\xi) \Q_{2,\gamma_0}(\xi) \Big|_{z\in \partial I_3(y_0)} - 2k\int_{I_3(y_0)} \partial_z \mathrm{f}_{k,\ep}^\pm(y,y_0,z) \log(\xi) \Q_{2,\gamma_0}(\xi)\d z \\
&\quad - \mathrm{f}_{k,\ep}^\pm(y,y_0,z) \Q_{3,\gamma_0}(\xi) \Big|_{z\in \partial I_3(y_0)}  + \int_{I_3(y_0)} \partial_z \mathrm{f}_{k,\ep}^\pm(y,y_0,z) \Q_{3,\gamma_0}(\xi)\d z.
\end{align*}
Then, the $LX_k^1$ estimates are obtained as usual from Propositions \ref{prop:logsobolevregdecomG}, \ref{prop:logsobolevregpartialdecomG} and Corollary \ref{cor:logsobolevregpartialdecomG}. Finally, for $g_{6,k,\ep}^\pm$ we first note from Lemma \ref{lemma:secondgammalog} that
\begin{align*}
2k\xi^{-1-2\gamma_0}\left( \log(\xi) \Q_{\gamma_0}(\xi) \right)^2 &= \frac{2k}{4\gamma_0^2} \left( \xi^{-1+2\gamma_0} -2\xi^{-1} + \xi^{-1-2\gamma_0} \right) \\
&=  \frac{1}{4\gamma_0^2}\partial_z \left( \frac{\xi^{2\gamma_0}-1}{2\gamma_0} - 2\log(\xi) + \frac{1 - \xi^{-2\gamma_0}}{2\gamma_0} \right)\\
&=\frac{1}{4\gamma_0^2}\partial_z \mathrm{Q}_{\gamma_0}(\xi)
\end{align*}
Thus, integrating by parts we reach
\begin{align*}
g_{6,k,\ep}^\pm(y,y_0) &= \frac{2k}{4\gamma_0^2} \left( \G_{k,\ep}^\pm \right)_\s f_\s(z,y_0) \mathrm{Q}_{\gamma_0}(\xi) \Big|_{z\in \partial I_3(y_0)} \\
&\quad - \frac{2k}{4\gamma_0^2}\int_{I_3(y_0)}\partial_z \left( \left( \G_{k,\ep}^\pm \right)_\s f_\s(z,y_0) \right) \mathrm{Q}_{\gamma_0}(\xi) \d z.
\end{align*}
Together with Lemma~\ref{lemma:secondgammalog}, Propositions~\ref{prop:logsobolevregdecomG}, \ref{prop:logsobolevregpartialdecomG} and Corollary \ref{cor:logsobolevregpartialdecomG}, we obtain the estimates for $g_{6,k,\ep}^\pm$ and the proof is finished.
\end{proof}

We can combine Lemma \ref{lemma:R2mapsLXktoLXk} and Lemma \ref{lemma:R1mapsLinfL2toLXk} to obtain the following

\begin{corollary}\label{cor:R2mapsCLXktoLXk}
Let $k\geq 1$ and $y_0\in I_SM$. Let $f\in X_k$ and $h(z,y_0)\in C^2$ uniformly in $y_0$. Then,
\begin{align*}
\Vert R_{2,k,\ep}^\pm hf \Vert_{X_k}\lesssim \Vert f \Vert_{X_k},
\end{align*}
uniformly for all $0<\ep<\ep_*$ and all $y_0\in I_M$. 
\end{corollary}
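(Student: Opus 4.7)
The plan is to mimic the proof of Corollary~\ref{cor:R2mapsCXktoXk}, splitting the weight $h$ into its value at the critical point and a piece that vanishes there. Set $h_1(z,y_0) := h(z,y_0) - h(y_0,y_0)$ and write
\begin{align*}
R_{2,k,\ep}^\pm hf(y,y_0) = h(y_0,y_0)\, R_{2,k,\ep}^\pm f(y,y_0) + R_{2,k,\ep}^\pm h_1 f(y,y_0).
\end{align*}
Since $h\in C^2$ uniformly in $y_0$, the factor $h(y_0,y_0)$ is bounded, and the first term is estimated directly by Lemma~\ref{lemma:R2mapsLXktoLXk}, yielding a bound of $\Vert f\Vert_{LX_k}$.

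For the second term, I will use the vanishing of $h_1$ at $z=y_0$ to absorb one factor of $(v(z)-v(y_0)\pm i\ep)$ from the denominator. Namely, since $h_1(y_0,y_0)=0$ and $h_1(\cdot,y_0)\in C^2$ uniformly in $y_0$, the function
\begin{align*}
\widetilde h(z,y_0) := \frac{h_1(z,y_0)}{v(z)-v(y_0)\pm i\ep} = \frac{1}{v'(z_*)}\int_0^1 \partial_z h_1(y_0+s(z-y_0),y_0)\, \d s
\end{align*}
(with an appropriate averaging interpretation accounting for the $\pm i\ep$ regularization) is uniformly bounded in $L^\infty$ and enjoys a uniform $L^\infty$ bound on its $z$-derivative. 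This lets me recast
\begin{align*}
R_{2,k,\ep}^\pm h_1 f(y,y_0) = R_{1,k,\ep}^\pm \bigl(\widetilde h(\cdot,y_0) f\bigr)(y,y_0),
\end{align*}
so that the problem reduces to applying Lemma~\ref{lemma:R1mapsLinfL2toLXk}. Since $LX_k \hookrightarrow LZ_k$ with $\Vert \widetilde h f\Vert_{LZ_k}\lesssim \Vert f\Vert_{LX_k}$ (the smooth, bounded factor $\widetilde h$ does not disturb the $\eta^{\frac12\pm\gamma_0}$ and $\eta^{\frac12-\gamma_0}\log(\eta)\Q_{\gamma_0}(\eta)$ decomposition that defines $LZ_k$), Lemma~\ref{lemma:R1mapsLinfL2toLXk} gives
\begin{align*}
\Vert R_{2,k,\ep}^\pm h_1 f\Vert_{LX_k} \lesssim k^{-1}\Vert \widetilde h f\Vert_{LZ_k}\lesssim k^{-1}\Vert f\Vert_{LX_k}.
\end{align*}

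The only mildly delicate point is verifying that multiplication by the smooth bounded factor $\widetilde h(\cdot,y_0)$ sends $LX_k$ into $LZ_k$ with the correct uniform bound; this is straightforward from the defining decomposition of $LX_k$ because $\widetilde h$ can be absorbed into the regular and logarithmic coefficients $\varphi_\sr$, $\varphi_\s$ without disturbing the required $L^\infty$ control on them or the $L^2$ control on their derivatives (the derivative landing on $\widetilde h$ contributes a bounded term, while the derivative landing on $\varphi_\sigma$ is controlled by $k^{-\frac12}\Vert \partial_y\varphi_\sigma\Vert_{L^2}$ already contained in $\Vert f\Vert_{LX_k}$). Combining the two estimates yields the claim. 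As in Corollary~\ref{cor:R2mapsCXktoXk}, no new ideas beyond Lemma~\ref{lemma:R2mapsLXktoLXk} and Lemma~\ref{lemma:R1mapsLinfL2toLXk} are required, so the proof is a short decomposition argument.
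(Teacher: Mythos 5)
Your decomposition $R_{2,k,\ep}^\pm hf = h(y_0,y_0)\,R_{2,k,\ep}^\pm f + R_{1,k,\ep}^\pm(\widetilde h f)$ with $\widetilde h = h_1/(v(\cdot)-v(y_0)\pm i\ep)$ is exactly the one the paper uses, and the treatment of the first term via Lemma~\ref{lemma:R2mapsLXktoLXk} is correct. The problem is in the second term: the claim that $\widetilde h$ ``enjoys a uniform $L^\infty$ bound on its $z$-derivative'' is false. Writing $h_1(z,y_0)=(z-y_0)k(z,y_0)$ with $k\in C^1$ and $v(z)-v(y_0)=(z-y_0)\tilde v(z,y_0)$ with $\tilde v\geq c_0$, one finds
\begin{align*}
\partial_z \widetilde h(z,y_0) = \frac{\pm i\ep\,[k+(z-y_0)\partial_z k] + (z-y_0)^2[\partial_z k\,\tilde v - k\,\partial_z\tilde v]}{[(z-y_0)\tilde v\pm i\ep]^2},
\end{align*}
and at $z=y_0$ the first piece gives $|\partial_z\widetilde h(y_0,y_0)|\sim |k(y_0,y_0)|/\ep$, which blows up as $\ep\to 0$. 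Consequently $\Vert\partial_z\widetilde h\Vert_{L^2(I_3(y_0))}\sim \ep^{-1/2}$, so the inequality $\Vert\widetilde h f\Vert_{LZ_k}\lesssim\Vert f\Vert_{LX_k}$ that your argument relies on is not uniform in $\ep$, and the appeal to the $LZ_k$ version of Lemma~\ref{lemma:R1mapsLinfL2toLXk} does not close.

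The fix is simple and is in fact what the paper intends: invoke the first (elementary) conclusion of Lemma~\ref{lemma:R1mapsLinfL2toLXk}, namely
\begin{align*}
\Vert R_{1,k,\ep}^\pm g\Vert_{LX_k} \lesssim k^{-1}\Vert g\Vert_{L^\infty(I_3(y_0))} + k^{-\frac12}\Vert g\Vert_{L^2(I_3^c(y_0))},
\end{align*}
which requires no derivative information on $g$. Since only the $L^\infty$ bound $\Vert\widetilde h\Vert_{L^\infty}\lesssim 1$ (which does hold uniformly in $\ep$) is needed, and since for $f\in LX_k$ one has $\Vert f\Vert_{L^\infty(I_3)}\lesssim\Vert f\Vert_{LX_k}$ and $\Vert f\Vert_{L^2(I_3^c)}\lesssim k^{-\frac12}\Vert f\Vert_{LX_k}$, this immediately yields $\Vert R_{1,k,\ep}^\pm(\widetilde h f)\Vert_{LX_k}\lesssim k^{-1}\Vert f\Vert_{LX_k}$ and closes the argument without any claim about $\partial_z\widetilde h$.
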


\begin{proof}
It follows from Lemma \ref{lemma:R2mapsLXktoLXk} and Lemma \ref{lemma:R1mapsLinfL2toLXk} once we write
\begin{align*}
g_{k,\ep}^\pm(y,y_0) = R_{2,k,\ep}^\pm h f(y,y_0) = h(y_0) R_{2,k,\ep}^\pm f(y,y_0) + \left(R_{1,k,\ep}^\pm \frac{h_1(\cdot,y_0)}{v(\cdot) - v(y_0) \pm i\ep}f(\cdot,y_0) \right)(y),
\end{align*}
where we have defined $h_1(y,y_0) = h(y,y_0) - h(y_0,y_0)$, with $h_1(y_0,y_0)=0$ and $h_1(\cdot,y_0)\in C^2$ uniformly in $y_0\in [0,2]$.
\end{proof}

\subsection{Third order operator estimates}

We next address the mapping properties of $R_{3,k,\ep}^\pm$ in the regime $y_0\in I_M$.

\begin{lemma}\label{lemma:R3mapsC1LXktoLXk}
Let $k\geq 1$, $y_0\in I_M$ and $f(y,y_0)\in LX_k$. Let $h(y,y_0)$ be such that $h(y_0,y_0)=0$ and $h(\cdot,y_0)\in C^3$ uniformly in $y_0$. There holds
\begin{align*}
\Vert (R_{3,k,\ep}^\pm hf)(\cdot,y_0) \Vert_{LX_k} \lesssim  \Vert f \Vert_{LX_k} 
\end{align*}
uniformly in $0<\ep<\ep_*$, and in $y_0\in I_M$.
\end{lemma}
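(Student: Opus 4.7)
The plan is to follow the scheme of Lemma \ref{lemma:R3mapsC1XktoXk}, but adapted to the $LX_k$ framework, which forces us to carry logarithmic corrections through every integration by parts. As usual I would split
\begin{align*}
g_{k,\ep}^\pm(y,y_0) = \int_{I_3(y_0)} \G_{k,\ep}^\pm(y,y_0,z)\frac{h(z,y_0)f(z,y_0)}{(v(z)-v(y_0)\pm i\ep)^3}\d z + g_{n\ell,k,\ep}^\pm(y,y_0),
\end{align*}
where the non-local part $g_{n\ell,k,\ep}^\pm$ is controlled directly by Proposition \ref{prop:logsobolevregdecomG} and Corollary \ref{cor:logsobolevregpartialdecomG} since $|v(z)-v(y_0)\pm i\ep|$ is bounded below on $I_3^c(y_0)$. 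For the local integral I would use the expansion of $1/(v(z)-v(y_0)\pm i\ep)^3$ in powers of $1/(z-y_0\pm i\ep_0)$ together with bounded remainders $\mathrm{V}_{j,\ep}^\pm$, so that after absorbing $h(z,y_0)/v'(z)$ (which vanishes at $z=y_0$) the terms with remainders fall within the scope of Corollary \ref{cor:R2mapsCLXktoLXk} or Lemma \ref{lemma:R1mapsLinfL2toLXk}.

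The core of the argument is then the pure piece
\begin{align*}
g_{\ell,k,\ep}^\pm(y,y_0) = \frac{(2k)^3}{(v'(y_0))^3}\int_{I_3(y_0)}\G_{k,\ep}^\pm(y,y_0,z) \frac{h(z,y_0)}{v'(z)} f(z,y_0)\, \xi^{-3}\d z.
\end{align*}
One integration by parts, using $h(y_0,y_0)=0$ to kill the boundary terms coming from the differentiated $1/\xi^2$ factor, reduces the pure singularity to order $2$ and produces (i) a term with $\partial_z(h/v')$ that is handled by Lemma \ref{lemma:R2mapsLXktoLXk} and Corollary \ref{cor:R2mapsCLXktoLXk}, and (ii) a term with $\partial_z\bigl(\G_{k,\ep}^\pm f\bigr)$ still weighted by $h/v'$. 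In this last term I would substitute the $LX_k$ decompositions of $\G_{k,\ep}^\pm$ and $f$, yielding four scalings,
\begin{align*}
\xi^{-1+2\gamma_0},\qquad \xi^{-1}\log(\xi)\Q_{\gamma_0}(\xi),\qquad \xi^{-1-2\gamma_0}\bigl(\log(\xi)\Q_{\gamma_0}(\xi)\bigr)^2,
\end{align*}
where the double log appears from the product of the two singular components.

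For each scaling I would repeat the antiderivative trick used in Lemma \ref{lemma:R2mapsLXktoLXk}: the first scaling is primitive of $\log(\xi)\Q_{\gamma_0}(\xi)$, the cross terms integrate to $\Q_{2,\gamma_0}(\xi)$ and $\Q_{3,\gamma_0}(\xi)$ defined in \eqref{eq:defQ2gamma}--\eqref{eq:defQ3gamma}, and the most singular $\xi^{-1-2\gamma_0}(\log\xi)^2\Q_{\gamma_0}^2$ piece is absorbed via $\mathrm{Q}_{\gamma_0}$ of Lemma \ref{lemma:secondgammalog}, which is precisely tailored to this product. Each of these antiderivatives is uniformly bounded for $\gamma_0$ in a small neighbourhood of $0$, and the vanishing $h(y_0,y_0)=0$ plus the $C^3$ regularity of $h$ absorb the extra factor of $z-y_0$ needed to handle the boundary terms and the $\partial_z(h/v')$ terms.

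The main obstacle I anticipate is the double logarithm term $\xi^{-1-2\gamma_0}(\log\xi)^2\Q_{\gamma_0}(\xi)^2$: a naive integration by parts produces further factors of $\log\xi$ and can blow up as $\gamma_0\to 0$. The Lemma \ref{lemma:secondgammalog} identity, which represents the whole product as a derivative of a uniformly bounded function $\mathrm{Q}_{\gamma_0}$, is what makes the scheme close; after integrating by parts I will be left with $\partial_z(\G_{k,\ep}^\pm f)$ against a bounded factor, which is controllable by Propositions \ref{prop:logsobolevregdecomG}, \ref{prop:logsobolevregpartialdecomG} and Corollary \ref{cor:logsobolevregpartialdecomG}. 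The global $H^1_k(I_3^c(y_0))$ bound then follows from the entanglement inequality exactly as in the previous lemmas, so I do not treat it separately.
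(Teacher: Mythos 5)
Your plan is built on the weak/strong template of Lemma~\ref{lemma:R3mapsC1XktoXk} (one preliminary integration by parts to reduce $\xi^{-3}$ to $\xi^{-2}$, then handle $\partial_z(\G_{k,\ep}^\pm f)$), whereas the paper's mild-regime proof does something structurally different: it first expands the product $\G_{k,\ep}^\pm f$ into its three $LX_k$ scaling types, so that the local integral becomes
\begin{align*}
(2k)^3\int_{I_3(y_0)}(\G)_\sr\, h\, f_\sr\,\xi^{-2+2\gamma_0}\,\d z
+(2k)^3\int_{I_3(y_0)}\mathrm f_{k,\ep}^\pm\, h\,\xi^{-2}\tfrac{\xi^{2\gamma_0}-1}{2\gamma_0}\,\d z
+(2k)^3\int_{I_3(y_0)}(\G)_\s\, h\, f_\s\,\xi^{-2-2\gamma_0}\bigl(\tfrac{\xi^{2\gamma_0}-1}{2\gamma_0}\bigr)^2\d z,
\end{align*}
and only then integrates by parts piece by piece with explicit layered primitives. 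The paper's route buys two things: it never needs an $LX_k$ analogue of the $X_k$ identity \eqref{eq:partialzGf} for $\partial_z(\G f)$, and the coefficients multiplying each scaling remain simple products $(\G)_\sigma f_\tau$ controlled directly by Propositions~\ref{prop:logsobolevregdecomG}, \ref{prop:logsobolevregpartialdecomG} and Corollary~\ref{cor:logsobolevregpartialdecomG}. Your route can be made to work but is heavier: differentiating $\xi^{1/2-\gamma_0}\log(\xi)\Q_{\gamma_0}(\xi)$ produces a mixture of $\xi^{-1/2-\gamma_0}\log(\xi)\Q_{\gamma_0}(\xi)$ and $\xi^{-1/2+\gamma_0}$, so $\partial_z(\G f)$ has to be collected and re-sorted into scaling types before any antiderivative trick applies, and your write-up leaves this step entirely implicit.

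Two points need correcting. First, $h(y_0,y_0)=0$ does \emph{not} kill the boundary terms at $\partial I_3(y_0)$: there $|\xi|\approx 3\beta$ is bounded away from $0$, so the boundary contributions are already bounded and the vanishing of $h$ at the critical layer is irrelevant for them. What the vanishing actually buys is integrability of the bulk integrand near $z=y_0$, absorbing one power of $\xi^{-1}$. Second, the primitives $\Q_{2,\gamma_0}$, $\Q_{3,\gamma_0}$ of \eqref{eq:defQ2gamma}--\eqref{eq:defQ3gamma} are calibrated for $\xi^{-1}$--order singularities as they arise in $R_2$; after a single integration by parts in $R_3$ your worst piece still carries $\xi^{-1-2\gamma_0}\bigl(\log(\xi)\Q_{\gamma_0}(\xi)\bigr)^2$, and closing that requires a further integration by parts against the more elaborate antiderivatives that the paper derives in \eqref{eq:R3Qpartialz} and \eqref{eq:R3Qpartialpartialz} (together with the estimate $|\mathrm Q_{\gamma_0}(\xi)+\xi^{-2\gamma_0}(\xi^{2\gamma_0}-1)^2+8\gamma_0^2\log\xi|\lesssim|\xi|^{-1/2}\gamma_0^2(1+|\log\xi|)$), not just Lemma~\ref{lemma:secondgammalog} on its own. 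So your sketch points in the right direction, but it underestimates the number of integrations by parts and does not exhibit the primitives that actually make the $g_4$-type term close.
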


\begin{proof}
We argue as in the proof of Lemma \ref{lemma:R2mapsLXktoLXk}, we shall focus on
\begin{align*}
g_{1,k,\ep}^\pm &= (2k)^3\int_{I_3(y_0)} \G_{k,\ep}^\pm(y,y_0,z) h(z,y_0) f(z,y_0) \xi^{-3}\d z \\
&= (2k)^3 \int_{I_3(y_0)} \left( \G_{k,\ep}^\pm \right)_\sr(y,y_0,z) h(z,y_0) f_\sr(z, y_0)\xi^{-2+2\gamma_0} \d z \\
&\quad + (2k)^3\int_{I_3(y_0)} \mathrm{f}_{k,\ep}^\pm(y,y_0,z) h(z,y_0)\xi^{-2}\frac{\xi^{2\gamma_0}-1}{2\gamma_0} \d z \\
&\quad + (2k)^3 \int_{I_3(y_0)} \left( \G_{k,\ep}^\pm \right)_\s (y,y_0,z) h(z,y_0) f_\s(z, y_0)\xi^{-2-2\gamma_0} \left( \frac{\xi^{2\gamma_0}-1}{2\gamma_0} \right)^2 \d z \\
&= g_{2,k,\ep}^\pm(y,y_0) + g_{3,k,\ep}^\pm(y,y_0) + g_{4,k,\ep}^\pm(y,y_0).
\end{align*}
Integrating by parts, we have
\begin{align*}
g_{2,k,\ep}^\pm(y,y_0) &= -\frac{(2k)^2}{1-2\gamma_0}  \left( \G_{k,\ep}^\pm \right)_\sr(y,y_0,z) h(z,y_0) f_\sr(z, y_0)\xi^{-1+2\gamma_0} \Big|_{z\in \partial I_3(y_0} \\
&\quad + \frac{(2k)^2}{1-2\gamma_0}\int_{I_3(y_0)}   h(z,y_0) \partial_z \left(  \left( \G_{k,\ep}^\pm \right)_\sr(y,y_0,z)  f_\sr(z, y_0) \right) \xi^{-1+2\gamma_0} \d z \\
&\quad + \frac{2k}{1-2\gamma_0}\partial_z h(z,y_0) \left( \G_{k,\ep}^\pm \right)_\sr(y,y_0,z)  f_\sr(z, y_0) \log(\xi) \Q_{\gamma_0}(\xi) \Big|_{z\in \partial I_3(y_0)} \\
&\quad - \frac{2k}{1-2\gamma_0} \int_{I_3(y_0)} \partial_z \left( \partial_z h(z,y_0) \left( \G_{k,\ep}^\pm \right)_\sr(y,y_0,z)  f_\sr(z, y_0) \right) \log(\xi) \Q_{\gamma_0}(\xi)  \d z
\end{align*}
and the bounds for $g_{2,k,\ep}^\pm$ follow from the usual applications of Propositions \ref{prop:logsobolevregdecomG}, \ref{prop:logsobolevregpartialdecomG} and Corollary \ref{cor:logsobolevregpartialdecomG}. For $g_{3,k,\ep}^\pm$, we have
\begin{align*}
g_{3,k,\ep}^\pm(y,y_0) &= (2k)^2 \mathrm{f}_{k,\ep}^\pm(y,y_0,z) h(z,y_0) \frac{\xi^{-1}}{2\gamma_0-1}\left( 1 + \log(\xi)\Q_{\gamma_0}(\xi) \right) \Big|_{z\in \partial I_3(y_0)} \\
&\quad + \frac{(2k)^2}{1-2\gamma_0}\int_{I_3(y_0)} \partial_z \left( \mathrm{f}_{k,\ep}^\pm(y,y_0,z) h(z,y_0) \right) \xi^{-1}(1+\log(\xi) \Q_{\gamma_0}(\xi) ) \d z
\end{align*}
and thus we see that the estimates follow from the arguments in the proof of Lemma \ref{lemma:R2mapsLXktoLXk} for the term involving $\mathrm{f}_{k,\ep}^\pm(y,y_0,z) \partial_z h(z,y_0)$ and from the vanishing of $h(z,y_0)$ at $z=y_0$ for the contribution containing $\partial_z \mathrm{f}_{k,\ep}^\pm(y,y_0,z) h(z,y_0)$, we omit the details. Finally, for $g_{4,k,\ep}^\pm$ we observe that
\begin{align*}
2k\xi^{-2-2\gamma_0}\left( \frac{\xi^{2\gamma_0}-1}{2\gamma_0} \right)^2 &= \frac{1}{4\gamma_0^2} \partial_z \left( \frac{\xi^{-1+2\gamma_0}}{-1+2\gamma_0} +2\xi^{-1} -\frac{\xi^{-1-2\gamma_0}}{1+2\gamma_0} \right),
\end{align*}
with 
\begin{align}
2k& \left( \frac{\xi^{-1+2\gamma_0}}{-1+2\gamma_0} +2\xi^{-1} -\frac{\xi^{-1-2\gamma_0}}{1+2\gamma_0} \right) \\
&= \xi^{-1}\frac{2k}{4\gamma_0^2 -1}\left( 2\gamma_0 \left( \xi^{2\gamma_0} - \xi^{-2\gamma_0} \right) + \xi^{-2\gamma_0}\left( \xi^{2\gamma_0} -1 \right)^2 + 8\gamma_0^2 \right) \label{eq:R3Qpartialz} \\
&= \frac{1}{4\gamma_0^2-1} \partial_z \left( (1+2\gamma_0) \frac{\xi^{2\gamma_0}-1}{2\gamma_0} + 2(4\gamma_0^2 -1)\log(\xi) - (2\gamma_0-1)\frac{1-\xi^{-2\gamma_0}}{2\gamma_0} \right). \\
&= \frac{1}{4\gamma_0^2-1} \partial_z \left( \mathrm{Q}_{\gamma_0}(\xi)  + \xi^{-2\gamma_0}\left( \xi^{2\gamma_0}-1 \right)^2  + 8\gamma_0^2\log(\xi)  \right)\label{eq:R3Qpartialpartialz}
\end{align}
Hence, for $\mathrm{f}_{\s,k,\ep}^\pm(y,y_0,z):=\left( \G_{k,\ep}^\pm \right)_\s(y,y_0,z) f_\s(z,y_0)$, we have
\begin{align*}
g_{4,k,\ep}^\pm(y,y_0) &= \frac{(2k)^2}{4\gamma_0^2}\mathrm{f}_{\s,k,\ep}^\pm(y,y_0,z) h(z,y_0) \left( \frac{\xi^{-1+2\gamma_0}}{-1+2\gamma_0} +2\xi^{-1} -\frac{\xi^{-1-2\gamma_0}}{1+2\gamma_0} \right) \Big|_{z\in \partial I_3(y_0)} \\
&\quad - \frac{(2k)^2}{4\gamma_0^2}\int_{I_3(y_0)} \frac{h(z,y_0)}{4\gamma_0^2 -1} \partial_z \mathrm{f}_{\s,k,\ep}^\pm(y,y_0,z)  \left( \frac{\xi^{-1+2\gamma_0}}{-1+2\gamma_0} +2\xi^{-1} -\frac{\xi^{-1-2\gamma_0}}{1+2\gamma_0} \right) \d z \\
&\quad -\frac{2k}{4\gamma_0^2}\frac{\partial_z h(z,y_0)}{4\gamma_0^2-1} \mathrm{f}_{\s,k,\ep}^\pm(y,y_0,z)\left( \mathrm{Q}_{\gamma_0}(\xi)  + \xi^{-2\gamma_0}\left( \xi^{2\gamma_0}-1 \right)^2  + 8\gamma_0^2\log(\xi)  \right) \Big|_{z\in \partial I_3(y_0)} \\
&\quad + \frac{2k}{4\gamma_0^2}\int_{I_3(y_0)}\partial_z \left( \frac{\partial_z h(z,y_0)}{4\gamma_0^2-1}\mathrm{f}_{\s,k,\ep}^\pm(y,y_0,z)\right) \left( \mathrm{Q}_{\gamma_0}(\xi)  + \xi^{-2\gamma_0}\left( \xi^{2\gamma_0}-1 \right)^2  + 8\gamma_0^2\log(\xi)  \right) \d z
\end{align*}
and the $LX_k^1$ estimates for $g_{4,k,\ep}^\pm$ follow from \eqref{eq:R3Qpartialz}, \eqref{eq:R3Qpartialpartialz}, the observations that
\begin{align*}
\left| 2\gamma_0 \left( \xi^{2\gamma_0} - \xi^{-2\gamma_0} \right) + \xi^{-2\gamma_0}\left( \xi^{2\gamma_0} -1 \right)^2 + 8\gamma_0^2 \right| \lesssim \gamma_0^2\left( 1 + |\log(\xi)|^3 \right)
\end{align*}
and
\begin{align*}
\left| \mathrm{Q}_{\gamma_0}(\xi)  + \xi^{-2\gamma_0}\left( \xi^{2\gamma_0}-1 \right)^2  + 8\gamma_0^2\log(\xi)  \right| \lesssim |\xi|^{-\frac12}\gamma_0^2 \left( 1 + |\log(\xi)| \right) 
\end{align*}
and the usual usage of Propositions \ref{prop:logsobolevregdecomG}, \ref{prop:logsobolevregpartialdecomG} and Corollary \ref{cor:logsobolevregpartialdecomG}.
\end{proof}

\subsection{Fourth order operator estimates}
We finish the section addressing the bounds for the most singular operator $R_{4,k,\ep}^\pm$ in the mildly stratified regime.

\begin{lemma}\label{lemma:R4mapsC2LXktoC2LXk}
Let $k\geq 1$ and $y_0\in I_M$. Let $f(y,y_0)\in X_k$ and $h(y,y_0)\in C^3_y$ uniformly in $y_0$ with $h(y_0,y_0) = \partial_y h(y_0,y_0) = 0$. Then,
\begin{align*}
\Vert R_{4,k,\ep}^\pm hf \Vert_{LX_k}\lesssim  \Vert f \Vert_{LX_k},
\end{align*}
uniformly for all $0<\ep<\ep_*$ and all $y_0\in I_M$. 
\end{lemma}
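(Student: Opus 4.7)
\medskip

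\noindent\textbf{Proof proposal.} The plan is to mirror the strategy of Lemma \ref{lemma:R4mapsC1XktoXk}, with the modifications already developed in Lemmas \ref{lemma:R2mapsLXktoLXk} and \ref{lemma:R3mapsC1LXktoLXk} to accommodate the logarithmic corrections intrinsic to the mild regime $y_0\in I_M$. As usual, set $g_{k,\ep}^\pm(y,y_0):=R_{4,k,\ep}^\pm(hf)(y,y_0)$ and split
\begin{equation*}
g_{k,\ep}^\pm = \int_{I_3(y_0)} \G_{k,\ep}^\pm(y,y_0,z)\frac{h(z,y_0)f(z,y_0)}{(v(z)-v(y_0)\pm i\ep)^4}\,\d z + \int_{I_3^c(y_0)}\G_{k,\ep}^\pm(y,y_0,z)\frac{h(z,y_0)f(z,y_0)}{(v(z)-v(y_0)\pm i\ep)^4}\,\d z.
\end{equation*}
The non-local piece is controlled in $LX_k$ by $\Vert f\Vert_{LX_k}$ directly from Proposition~\ref{prop:logsobolevregdecomG} and Corollary~\ref{cor:logsobolevregpartialdecomG}, using that $|v(z)-v(y_0)\pm i\ep|\gtrsim k^{-1}$ on $I_3^c(y_0)$.

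For the local contribution, I would first use the identity
\begin{equation*}
\frac{1}{(v(z)-v(y_0)\pm i\ep)^4}=\frac{(v'(y_0))^{-4}}{(z-y_0\pm i\ep_0)^4} + \frac{\mathrm{V}_{4,\ep}^\pm(z,y_0)}{(v(z)-v(y_0)\pm i\ep)^3},
\end{equation*}
with $\mathrm{V}_{4,\ep}^\pm(\cdot,y_0)\in L^\infty$ uniformly in $y_0$ and $\ep$, to peel off a term that is controlled by Lemma~\ref{lemma:R3mapsC1LXktoLXk} applied to the modified coefficient $h\,\mathrm{V}_{4,\ep}^\pm$, which still vanishes quadratically at $z=y_0$. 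The problem then reduces to bounding
\begin{equation*}
\frac{(2k)^4}{(v'(y_0))^4}\int_{I_3(y_0)}\G_{k,\ep}^\pm(y,y_0,z)\, h(z,y_0)\, f(z,y_0)\,\xi^{-4}\,\d z,\qquad \xi=2k(z-y_0\pm i\ep_0).
\end{equation*}
Inserting the decompositions $\G_{k,\ep}^\pm=(\G_{k,\ep}^\pm)_\sr\eta^{\frac12+\gamma_0}+(\G_{k,\ep}^\pm)_\s\eta^{\frac12-\gamma_0}\log(\eta)\Q_{\gamma_0}(\eta)$ and $f=f_\sr\xi^{\frac12+\gamma_0}+f_\s\xi^{\frac12-\gamma_0}\log(\xi)\Q_{\gamma_0}(\xi)$ yields three types of kernels in $\xi$:
\begin{equation*}
\xi^{-3+2\gamma_0},\qquad \xi^{-3}\log(\xi)\Q_{\gamma_0}(\xi),\qquad \xi^{-3-2\gamma_0}\bigl(\log(\xi)\Q_{\gamma_0}(\xi)\bigr)^2.
\end{equation*}

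The next step is to integrate by parts twice in each kernel, using in each integration the vanishing of $h$ and $\partial_z h$ at $z=y_0$ to absorb one order of singularity. For the first two kernels the relevant antiderivative identities are
\begin{equation*}
2k\,\xi^{-2+2\gamma_0}=\partial_z\!\left(\tfrac{\xi^{-1+2\gamma_0}}{-1+2\gamma_0}\right),\qquad 2k\,\xi^{-2}\log(\xi)\Q_{\gamma_0}(\xi)=\tfrac12\partial_z\bigl(\Q_{3,\gamma_0}(\xi)\bigr),
\end{equation*}
together with the iterated relations producing $\Q_{2,\gamma_0}$ and $\log(\xi)$ already used in Lemma~\ref{lemma:R3mapsC1LXktoLXk}; these reduce the $\xi$-singularity to $\xi^{-1}$ (with logarithmic corrections) after one integration and to an integrable quantity after the second, at which point Proposition~\ref{prop:logsobolevregdecomG}, Proposition~\ref{prop:logsobolevregpartialdecomG}, and Corollary~\ref{cor:logsobolevregpartialdecomG} close the estimate. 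The hardest piece is the most singular kernel $\xi^{-3-2\gamma_0}(\log(\xi)\Q_{\gamma_0}(\xi))^2$, which is the analogue of the $g_{5,k,\ep}^\pm$ term in the proof of Lemma~\ref{lemma:R4mapsC1XktoXk}. Here the goal is to build a fourth-generation $\mathrm{Q}$-function $\mathrm{Q}_{2,\gamma_0}(\xi)$ such that $(2k)^2\xi^{-3-2\gamma_0}(\log(\xi)\Q_{\gamma_0}(\xi))^2=\partial_z^2\mathrm{Q}_{2,\gamma_0}(\xi)/(4\gamma_0^4)$, obtained by telescoping $\xi^{-3-2\gamma_0}(\xi^{2\gamma_0}-1)^2=\xi^{-3+2\gamma_0}-2\xi^{-3}+\xi^{-3-2\gamma_0}$ as was done in Lemma~\ref{lemma:R3mapsC1LXktoLXk} one order lower, checking that the $\gamma_0^{-4}$ singularity is cancelled by the fact that $\mathrm{Q}_{2,\gamma_0}(\xi)=O(\gamma_0^4|\xi|^{-1/2}|\log\xi|^3)$.

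Once the local $LX_k^1$ bound is in place, the global $H_k^1(I_3^c(y_0))$ estimate follows from the entanglement inequality (Lemma~\ref{lemma:entanglementRTG}) applied to $\textsc{RTG}_{k,\ep}^\pm g_{k,\ep}^\pm=hf(v(z)-v(y_0)\pm i\ep)^{-4}$, using that the $L^2$ norm of $hf\xi^{-4}$ on $I_2^c(y_0)\cap I_3(y_0)$ is dominated by $k^{5/2}\Vert f\Vert_{LX_k}$ times the quadratic vanishing of $h$. I expect the main obstacle to be constructing and bookkeeping the fourth-generation $\mathrm{Q}$-function in a way that preserves uniformity as $\gamma_0\to 0$; otherwise, the architecture of the proof is essentially parallel to Lemmas~\ref{lemma:R3mapsC1LXktoLXk} and~\ref{lemma:R4mapsC1XktoXk}.
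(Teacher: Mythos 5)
Your proposal follows essentially the same architecture as the paper: split local/non-local, insert the $LX_k$ decompositions of $\G_{k,\ep}^\pm$ and $f$ to produce the three characteristic kernels $\xi^{-3+2\gamma_0}$, $\xi^{-3}\log(\xi)\Q_{\gamma_0}(\xi)$, $\xi^{-3-2\gamma_0}\bigl(\log(\xi)\Q_{\gamma_0}(\xi)\bigr)^2$, use the telescoping identity on the most singular one, and iterate integration by parts using the quadratic vanishing of $h$, closing with the Green's function propositions and the entanglement inequality. This matches the paper's proof in substance.

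Two points of bookkeeping deserve a second look. First, the reduction via $\mathrm{V}_{4,\ep}^\pm$ to Lemma~\ref{lemma:R3mapsC1LXktoLXk} is not directly licensed by that lemma's hypothesis: with $v\in C^4$ (which is all hypothesis~H$v$ gives), the remainder coefficient $\mathrm{V}_{4,\ep}^\pm$ is at best $C^2$ in $z$, whereas Lemma~\ref{lemma:R3mapsC1LXktoLXk} nominally assumes its coefficient is $C^3$. The reduction still works because, if you open the proof of Lemma~\ref{lemma:R3mapsC1LXktoLXk}, only two $z$-derivatives of the coefficient ever appear; but you should say so explicitly rather than cite R3 as a black box. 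The paper itself sidesteps this by handling the remainder term in situ within the integration-by-parts scheme rather than outsourcing it (compare the $g_{6,k,\ep}^\pm$ treatment in Lemma~\ref{lemma:R4mapsC1XktoXk}). Second, your $\gamma_0^{-4}$-versus-$\mathrm{Q}_{2,\gamma_0}=O(\gamma_0^4)$ packaging is consistent but heavier than necessary: you are combining the $\gamma_0^{-2}$ from $\bigl(\log\xi\,\Q_{\gamma_0}(\xi)\bigr)^2=(\xi^{2\gamma_0}-1)^2/(4\gamma_0^2)$ with an additional $\gamma_0^{2}$ coming from the even-parity of the telescoped kernel $\xi^{-3+2\gamma_0}-2\xi^{-3}+\xi^{-3-2\gamma_0}$ in $\gamma_0$; the paper instead writes down the chain of antiderivative identities and observes that all the denominators there (of the form $2\gamma_0\pm 1$, $2\gamma_0\pm 2$, $(4\gamma_0^2-1)(4\gamma_0^2-4)$) are \emph{uniformly bounded away from zero}, so no $\gamma_0^{-4}$ singularity ever needs to be introduced or cancelled. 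Both routes work; the paper's is shorter, and less likely to invite off-by-a-power mistakes in the $\xi$-dependence of the claimed bound for $\mathrm{Q}_{2,\gamma_0}$.
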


\begin{proof}
We shall argue as in the proofs of Lemma \ref{lemma:R2mapsLXktoLXk} and Lemma \ref{lemma:R3mapsC1LXktoLXk}, writing
\begin{align*}
g_{1,k,\ep}^\pm(y,y_0) = g_{2,k,\ep}^\pm(y,y_0) + g_{3,k,\ep}^\pm(y,y_0) + g_{4,k,\ep}^\pm(y,y_0),
\end{align*}
where now
\begin{align*}
g_{2,k,\ep}^\pm(y,y_0) &= (2k)^4\int_{I_3(y_0)}h(z,y_0) \mathrm{f}_{\sr,k,\ep}^\pm(z,y_0,y) \xi^{-3+2\gamma_0} \d z, \\
g_{3,k,\ep}^\pm(y,y_0) &= (2k)^4\int_{I_3(y_0)}h(z,y_0) \mathrm{f}_{k,\ep}^\pm(z,y_0,y) \xi^{-3}\frac{\xi^{2\gamma_0}-1}{2\gamma_0} \d z, \\
g_{3,k,\ep}^\pm(y,y_0) &= (2k)^4\int_{I_3(y_0)}h(z,y_0) \mathrm{f}_{\s,k,\ep}^\pm(z,y_0,y) \xi^{-3-2\gamma_0}\left(\frac{\xi^{2\gamma_0}-1}{2\gamma_0}\right)^2 \d z.
\end{align*}
For $g_{2,k,\ep}^\pm$ we note that
\begin{align*}
(2k)^3 \xi^{-3+2\gamma_0} = \frac{(2k)^2}{2\gamma_0-1}\partial_z \xi^{-2+2\gamma_0} = \frac{2k}{(2\gamma_0-2)(2\gamma_0-1)}\partial_z^2 \xi^{-1+2\gamma_0}.
\end{align*}
Hence,
\begin{align*}
g_{2,k,\ep}^\pm(y,y_0) &= \frac{(2k)^3}{2\gamma_0-2} h(z,y_0) \mathrm{f}_{\sr,k,\ep}^\pm(y,y_0,z) \xi^{-2+2\gamma_0} \Big|_{z\in \partial I_3(y_0)} \\
&\quad - \frac{(2k)^3}{2\gamma_0-2}\int_{I_3(y_0)}h(z,y_0) \partial_z \mathrm{f}_{\sr,k,\ep}^\pm(y,y_0,z) \xi^{-2+2\gamma_0} \d z \\
&\quad - \frac{(2k)^2}{(2\gamma_0-2)(2\gamma_0-1)}\partial_z h(z,y_0) \mathrm{f}_{\sr,k,\ep}^\pm(y,y_0,z) \xi^{-1+2\gamma_0} \Big|_{z\in \partial I_3(y_0)} \\
&\quad + \frac{(2k)^2}{(2\gamma_0-2)(2\gamma_0-1)}\int_{I_3(y_0)} \partial_z \left( \partial_z h(z,y_0) \mathrm{f}_{\sr,k,\ep}^\pm(y,y_0,z) \right) \xi^{-1+2\gamma_0} \d z,
\end{align*}
from which the $LX_k^1$ estimates follow, after using the smooth quadratic vanishing properties of $h(z,y_0)$ at $z=y_0$. For $g_{3,k,\ep}^\pm$, we now argue 
\begin{align*}
2k\xi^{-3}\left( \xi^{2\gamma_0}-1\right) = \partial_z \left( \frac{\xi^{2\gamma_0-2}}{-2+2\gamma_0} +\frac{\xi^{-2}}{2} \right) = \partial_z \left( \frac{\xi^{-2}}{2\gamma_0-2} \left(  \xi^{2\gamma_0} -1 + \gamma_0 \right) \right)
\end{align*}
with further
\begin{align*}
2k \left( \frac{\xi^{2\gamma_0-2}}{-2+2\gamma_0} +\frac{\xi^{-2}}{2} \right) &= \partial_z \left( \frac{\xi^{2\gamma_0-1}}{(2\gamma_0-2)(2\gamma_0-1)} - \frac{\xi^{-1}}{2} \right) \\
&= \frac{\xi^{-1}}{(2\gamma_0-2)(2\gamma_0-1)}\left( \xi^{2\gamma_0} - 1 +2\gamma_0^2-3\gamma_0 \right)
\end{align*}
and finally
\begin{align*}
2k \left( \frac{\xi^{2\gamma_0-1}}{(2\gamma_0-2)(2\gamma_0-1)} - \frac{\xi^{-1}}{2} \right) &= \frac{1}{(2\gamma_0-2)(2\gamma_0-1)} \partial_z \left( \frac{\xi^{2\gamma_0}-1}{2\gamma_0} - (\gamma_0-1)(2\gamma_0-1)\log(\xi) \right) \\
&\quad= \frac{1}{4\gamma_0^2-6\gamma_0+2}\left( \frac{\xi^{2\gamma_0}-1}{2\gamma_0} - \log(\xi) - (2\gamma_0^2-3\gamma_0) \log(\xi) \right)
\end{align*}
so that a repeated integration by parts on the integral terms of the form $\mathrm{f}_{k,\ep}^\pm(y,y_0,z) \partial_z^n h(z,y_0)$ for $n\geq 0$, together with the quadratic vanishing of $h(z,y_0)$ at $z=y_0$ and the estimates of $\partial_z^n\mathrm{f}_{k,\ep}^\pm$ for $n=0,1$ yields the desired result, we omit the details. Finally, for $g_{4,k,\ep}^\pm$ we observe that
\begin{align*}
(2k)\left( \xi^{-3+2\gamma_0} - 2\xi^{-3} + \xi^{-3+2\gamma_0} \right) &= \partial_z \left( \frac{\xi^{-2+2\gamma_0}}{2\gamma_0-2} + \xi^{-2} - \frac{\xi^{-2-2\gamma_0}}{2\gamma_0+2} \right) \\
&=\partial_z \left(\frac{\xi^{-2}}{4\gamma_0^2-4} \left( 2\gamma_0 \left( \xi^{2\gamma_0}- \xi^{-2\gamma_0} \right) + 2\xi^{-2\gamma_0}\left( \xi^{2\gamma_0}-1\right)^2 +4\gamma_0^2 \right) \right)
\end{align*}
with further
\begin{align*}
2k\left( \frac{\xi^{-2+2\gamma_0}}{2\gamma_0-2} + \xi^{-2} - \frac{\xi^{-2-2\gamma_0}}{2\gamma_0+2} \right) &= \partial_z \left( \frac{\xi^{-1+2\gamma_0}}{(2\gamma_0-2)(2\gamma_0-1)}  - \xi^{-1}  + \frac{\xi^{-1-2\gamma_0}}{(2\gamma_0+2)(2\gamma_0+1)} \right) \\
&= \partial_z \left(\frac{2\gamma_0\xi^{-1}}{(4\gamma_0^2-4)(4\gamma_0^2-1)}\left( 2\gamma_0 \left(  \xi^{2\gamma_0} + \xi^{-2\gamma_0} \right)  + 3\left( \xi^{2\gamma_0}-\xi^{-2\gamma_0} \right) \right) \right) \\
&\quad +\partial_z \left( \frac{\xi^{-1}}{(2\gamma_0^2-2)(4\gamma_0^2-1)}\left( \xi^{-2\gamma_0} \left( \xi^{2\gamma_0} -1 \right)^2 - 2\gamma_0^2\left( 4\gamma_0^2 -4 \right) \right) \right)
\end{align*}
and also
\begin{align*}
2k& \left( \frac{\xi^{-1+2\gamma_0}}{(2\gamma_0-2)(2\gamma_0-1)}  - \xi^{-1}  + \frac{\xi^{-1-2\gamma_0}}{(2\gamma_0+2)(2\gamma_0+1)} \right) \\
&= \frac{\partial_z \left( (4\gamma_0^2 + 6\gamma_0 +2)\frac{\xi^{2\gamma_0}-1}{2\gamma_0} - (4\gamma_0^2 -4)(4\gamma_0^2-1)\log(\xi) + (4\gamma_0^2 -6\gamma_0 + 2)\frac{1-\xi^{-2\gamma_0}}{2\gamma_0} \right)}{(4\gamma_0^2-4)(4\gamma_0^2-1)} \\
&= \frac{ \partial_z \left( 2\gamma_0\left( \xi^{2\gamma_0}-\xi^{-2\gamma_0} \right) +3 \left( \xi^{2\gamma_0} +\xi^{-2\gamma_0} -2 \right) +2 \mathrm{Q}_{\gamma_0}(\xi) \right) }{(4\gamma_0^2-4)(4\gamma_0^2 - 1)}
\end{align*}
With the above formulae, the estimate follows by repeatedly integrating by parts the integral terms of the form $\mathrm{f}_{\s,k,\ep}^\pm(y,y_0,z) \partial_z^n h(z,y_0)$ for $n\geq 0$, we omit the routine details.
\end{proof}

\section{Refined regularity of the spectral density function}
Let $\varphi_{k,\ep}^\pm$ be the unique solution to \eqref{eq:TGeqvarphiSobolevReg}. In this section we obtain improved bounds on
\begin{align*}
\cV_{k,\ep}^\pm(y,y_0) := (v'(y) - v'(y_0))\varphi_{k,\ep}^\pm(y,y_0).
\end{align*}
Firstly, we note that
\begin{equation}\label{eq:TGcV}
\textsc{TG}_{k,\ep}^\pm \cV_{k,\ep}^\pm(y,y_0) = v'''(z)\varphi_{k,\ep}^\pm + 2 v''(z) \partial_y\varphi_{k,\ep}^\pm(z,y_0) + \frac{(v'(y) - v'(y_0)) w_k^0(y)}{v(y) - v(y_0) \pm i\ep} + (v'(y) - v'(y_0)) q_k^0(y)
\end{equation}
together with $\cV_{k,\ep}^\pm(0,y_0)=\cV_{k,\ep}^\pm(2,y_0)=0$. 

\begin{proposition}
We have
\begin{align*}
\Vert \partial_y^n \cV_{k,\ep}^\pm \Vert_{H_k^1(I_3^c(y_0))} \lesssim k^{n-2}\cS_{k,0}, \quad \Vert \partial_y^n \cV_{k,\ep}^\pm \Vert_{L^\infty(I_3^c(y_0))} \lesssim k^{n-\frac32}\cS_{k,0},
\end{align*}
for $n=0,1$, uniformly for all $y_0\in (\vartheta_1,\vartheta_2)$.
\end{proposition}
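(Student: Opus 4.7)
Following the template used in Section~\ref{sec:SobolevReg} for $\varphi_{1,k,\ep}^\pm$ and $\varphi_{2,k,\ep}^\pm$, my starting point is equation~\eqref{eq:TGcV} together with the homogeneous boundary conditions $\cV_{k,\ep}^\pm(0,y_0)=\cV_{k,\ep}^\pm(2,y_0)=0$ (inherited from $\varphi_{k,\ep}^\pm$). Inverting the reduced Taylor--Goldstein operator through $\G_{k,\ep}^\pm$ yields
\begin{equation*}
\cV_{k,\ep}^\pm + T_{k,\ep}^\pm\cV_{k,\ep}^\pm = R_{0,k,\ep}^\pm F,
\end{equation*}
where $F$ denotes the right-hand side of~\eqref{eq:TGcV} (no boundary contribution arises because $\cV_{k,\ep}^\pm$ itself vanishes at $y=0,2$). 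The limiting absorption principles of Propositions~\ref{prop:LAPZk} and~\ref{prop:LAPLZk} then reduce the whole proof to an $X_k$ (or $LX_k$) bound on $R_{0,k,\ep}^\pm F$. Once $\Vert\cV_{k,\ep}^\pm\Vert_{Z_k}\lesssim k^{-3/2}\cS_{k,0}$ is established, the $H_k^1(I_3^c(y_0))$ bound follows from the $k^{1/2}\Vert\cdot\Vert_{H_k^1(I_3^c)}$ weighting in the definition of the $X_k$ norm, and the corresponding $L^\infty(I_3^c(y_0))$ bound from the Sobolev-type embedding of Lemma~\ref{lemma:LinfH1bound} applied on shells of size $k^{-1}$.

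\textbf{Estimating the easy pieces of $F$.} The first three contributions to $F$ pose little difficulty. For $R_{0,k,\ep}^\pm[v'''\varphi_{k,\ep}^\pm]$ and $R_{0,k,\ep}^\pm[v''\partial_y\varphi_{k,\ep}^\pm]$, Lemma~\ref{lemma:R0mapsL2toXk} combined with the $L^2(0,2)$ bounds on $\varphi_{k,\ep}^\pm$ and $\partial_y\varphi_{k,\ep}^\pm$ extracted from Proposition~\ref{prop:Xkvarphi} suffices. The third term is best rewritten as $R_{1,k,\ep}^\pm[(v'-v'(y_0))w_k^0]$, whose pre-factor is a smooth function vanishing at $y=y_0$; Corollary~\ref{cor:R1mapsCXktoXk} together with the $H_k^1$ regularity of $w_k^0$ yields the desired bound of order $k^{-3/2}\cS_{k,0}$. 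The essential difficulty lies in the fourth piece $R_{0,k,\ep}^\pm[(v'-v'(y_0))q_k^0]$, since $\Vert q_k^0\Vert_{L^2}\lesssim k\cS_{k,0}$ would cause a loss of one power of $k$ if passed naively through Lemma~\ref{lemma:R0mapsL2toXk}.

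\textbf{Integration by parts for the $q_k^0$-term.} To recover the missing factor of $k^{-1}$ I would exploit that $\varrho_k^0$ is compactly supported inside $(\vartheta_1,\vartheta_2)$ and integrate by parts twice in $z$ inside the Green's function integral. Writing $q_k^0=\Delta_k\varrho_k^0$ and invoking the distributional identity $\partial_z^2\G_{k,\ep}^\pm = \delta(z-y)+k^2\G_{k,\ep}^\pm-\cJ(y_0)\G_{k,\ep}^\pm/(z-y_0\pm i\ep_0)^2$, the $k^2$ contributions cancel exactly against the $-k^2\varrho_k^0$ part of $\Delta_k\varrho_k^0$ and one obtains
\begin{equation*}
\begin{aligned}
R_{0,k,\ep}^\pm[(v'-v'(y_0))q_k^0](y,y_0) &= (v'(y)-v'(y_0))\varrho_k^0(y) - R_{0,k,\ep}^\pm\bigl[v'''\varrho_k^0+2v''\partial_y\varrho_k^0\bigr](y,y_0) \\
&\quad - \cJ(y_0)\int_0^2\frac{(v'(z)-v'(y_0))\,\G_{k,\ep}^\pm(y,y_0,z)\,\varrho_k^0(z)}{(z-y_0\pm i\ep_0)^2}\,\d z.
\end{aligned}
\end{equation*}
The last integral is a reduced-$R_2$-type object with the $C^2$ pre-factor $v'(z)-v'(y_0)$ vanishing at $z=y_0$, so Lemma~\ref{lemma:R2mapsC1XktoXk} applies; the first two summands only involve $\varrho_k^0$ and $\partial_y\varrho_k^0$, whose $L^2$ norms are controlled respectively by $k^{-1}\cS_{k,0}$ and $\cS_{k,0}$ after inverting $\Delta_k\varrho_k^0=q_k^0$ against the compact support of $\varrho_k^0$. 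Summing the four contributions gives the desired $\Vert R_{0,k,\ep}^\pm F\Vert_{X_k}\lesssim k^{-3/2}\cS_{k,0}$.

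\textbf{The case $n=1$ and main obstacle.} For $n=1$, I would repeat the entire argument for $\partial_y\cV_{k,\ep}^\pm$, which satisfies a TG equation obtained by commuting $\partial_y$ through~\eqref{eq:TGcV}: the commutator generates source terms of the same generic shape (involving $v^{(4)}$, $v'''\partial_y\varphi_{k,\ep}^\pm$, $v''\partial_y^2\varphi_{k,\ep}^\pm$, and the $y$-derivatives of the two original source pieces), together with one involving $\partial_y^2\varphi_{k,\ep}^\pm$ that is re-expressed using the original TG equation for $\varphi_{k,\ep}^\pm$ and controlled via Proposition~\ref{prop:Xkvarphi}. The main obstacle throughout is the careful handling of the $(v'-v'(y_0))q_k^0$ term: the needed $k^{-1}$ gain is invisible from a direct $L^2$ pairing through Lemma~\ref{lemma:R0mapsL2toXk} and emerges only after the integration-by-parts identity above, which simultaneously exploits the fundamental-solution property of $\G_{k,\ep}^\pm$, the compact support of $\varrho_k^0$ inside $(\vartheta_1,\vartheta_2)$, and the vanishing of $v'-v'(y_0)$ at the critical layer.
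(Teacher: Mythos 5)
The paper proves this proposition by a much more elementary route than you do: it rewrites~\eqref{eq:TGcV} as an RTG equation with the extra source term $-\textsc{E}_{k,\ep}^\pm\cV_{k,\ep}^\pm$ and feeds it directly into the entanglement inequality of Lemma~\ref{lemma:entanglementRTG}, with \emph{no} invocation of the limiting absorption principle. The mechanism that makes this cheap is the cancellation
$\bigl|\textsc{E}_{k,\ep}^\pm(y,y_0)\,(v'(y)-v'(y_0))\bigr|\lesssim 1$: the first-order vanishing of $v'-v'(y_0)$ at the critical layer exactly annihilates the worst singularity in the error operator, so all five pieces of the source are controlled in $L^2(I_2^c(y_0))$ by the already available $X_k$ bounds on $\varphi_{k,\ep}^\pm$ and $\partial_y\varphi_{k,\ep}^\pm$ from Proposition~\ref{prop:Xkvarphi}, after which the entanglement inequality closes the $H^1_k(I_3^c(y_0))$ estimate; the $L^\infty(I_3^c(y_0))$ bound then follows exactly as at the end of Proposition~\ref{prop:Xkvarphi}. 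Crucially, this argument is uniform in $y_0\in(\vartheta_1,\vartheta_2)$ without any case-split, whereas your LAP route forces you to stitch Propositions~\ref{prop:LAPZk} and~\ref{prop:LAPLZk} together across $I_S\cup I_W$, $I_M$, and the fragile edge regime. Your approach also establishes a full $X_k/Z_k$ bound on $\cV_{k,\ep}^\pm$, which is stronger than what the proposition asks for (only its $H^1_k(I_3^c)$ and $L^\infty(I_3^c)$ components), so you are carrying a heavier payload than necessary.

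On the specific estimates: your integration-by-parts identity for $R_{0,k,\ep}^\pm[(v'-v'(y_0))q_k^0]$ is a correct use of the symmetry $\G_{k,\ep}^\pm(y,y_0,z)=\G_{k,\ep}^\pm(z,y_0,y)$ and the fundamental-solution property, and it nicely trades the $q_k^0$-level data for $\varrho_k^0$-level data. However, you assert without verification that each resulting piece is $\lesssim k^{-3/2}\cS_{k,0}$ in $X_k$. For the first summand $(v'(y)-v'(y_0))\varrho_k^0(y)$, the dominant contribution is the global $k^{1/2}\Vert\cdot\Vert_{H^1_k(I_3^c(y_0))}$ part, and since $\Vert\partial_y\varrho_k^0\Vert_{L^2}\lesssim k^{-1}\Vert q_k^0\Vert_{L^2}\lesssim\cS_{k,0}$ this gives a weight $k^{1/2}\cdot k^{-1}\cS_{k,0}=k^{-1/2}\cS_{k,0}$ rather than $k^{-3/2}\cS_{k,0}$; the same $k^{-1/2}$ scale already appears when you pass $(v'-v'(y_0))w_k^0$ through Corollary~\ref{cor:R1mapsCXktoXk} with $\Vert w_k^0\Vert_{Z_k}\lesssim k^{1/2}\cS_{k,0}$. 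You should verify explicitly that the global $H^1_k(I_3^c)$ pieces of these terms concentrate near the critical layer in the way needed for the extra $k^{-1}$ gain, or re-examine whether the claimed exponent is in fact attainable; the paper's own estimate for the $q_k^0$ term (the final $\lesssim\cS_{k,0}$ in the entanglement-inequality display) carries the same subtlety and cannot simply be quoted.
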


\begin{proof}
We first show the $H_k^1(I_3^c(y_0))$ estimate for $n=0$. From \eqref{eq:TGcV} we can actually write
\begin{align*}
\textsc{RTG}_{k,\ep}^\pm \cV_{k,\ep}^\pm(y,y_0) &= - \textsc{E}_{k,\ep}^\pm \cV_{k,\ep}^\pm(y,y_0) + v'''(y)\varphi_{k,\ep}^\pm(y,y_0) + 2 v''(y) \partial_y\varphi_{k,\ep}^\pm(y,y_0) \\
&\quad+ \frac{(v'(y) - v'(y_0)) w_k^0(y)}{v(y) - v(y_0) \pm i\ep} + (v'(y) - v'(y_0)) q_k^0(y).
\end{align*}
Appealing to the entanglement inequality Lemma \ref{lemma:entanglementRTG}, we obtain
\begin{align*}
 \Vert \cV_{k,\ep}^\pm \Vert_{H_k^1(I_3^c(y_0))} &\lesssim \Vert \cV_{k,\ep}^\pm \Vert_{L^2(I_2^c(y_0)\cap I_3(y_0))} + \frac{1}{k^2}\Vert \textsc{E}_{k,\ep}^\pm \cV_{k,\ep}^\pm \Vert_{L^2(I_2^c(y_0))}\\
 &\quad + \frac{1}{k^2}\Vert v'''\varphi_{k,\ep}^\pm + 2 v'' \partial_y\varphi_{k,\ep}^\pm \Vert_{L^2(I_2^c(y_0))} \\
 &\quad + \frac{1}{k^2}\left \Vert \frac{(v'(y) - v'(y_0)) w_k^0}{v(y) - v(y_0) \pm i\ep} + (v'(y) - v'(y_0)) q_k^0\right\Vert_{L^2(I_2^c(y_0))}.
\end{align*}
Using Proposition \ref{prop:Xkvarphi} we easily get $\Vert \cV_{k,\ep}^\pm \Vert_{L^2(I_2^c(y_0)\cap I_3(y_0))} \lesssim k^{-2}\cS_{k,0}$. On the other hand, we now have
\begin{align*}
\left| \textsc{E}_{k,\ep}^\pm(y,y_0)(v'(y) - v'(y_0) ) \right| \lesssim 1
\end{align*}
uniformly for all $y, y_0\in[0,2]$, so that Proposition \ref{prop:Xkvarphi} yields $\Vert \textsc{E}_{k,\ep}^\pm \cV_{k,\ep}^\pm \Vert_{L^2(I_2^c(y_0))} \lesssim k^{-1}\cS_{k,0}$. Similarly, we obtain $\Vert v'''\varphi_{k,\ep}^\pm + 2 v'' \partial_y\varphi_{k,\ep}^\pm \Vert_{L^2(I_2^c(y_0))} \lesssim \cS_{k,0}$. Lastly, 
$$\left \Vert \frac{(v'(y) - v'(y_0)) w_k^0}{v(y) - v(y_0) \pm i\ep} + (v'(y) - v'(y_0)) q_k^0\right\Vert_{L^2(I_2^c(y_0))} \lesssim \cS_{k,0}.$$ 
Combining the estimates we obtain the desired $H_{k}^1(I_3^c(y_0))$ bound for $\cV_{k,\ep}^\pm$. Those for $\partial_y\cV_{k,\ep}^\pm$ follow then from \eqref{eq:TGcV}. For the $L^\infty(I_3^c(y_0))$ estimate on $\partial_y^n\cV_{k,\ep}^\pm$ we proceed as in the proof of Proposition \ref{prop:Xkvarphi}, we omit the details.
\end{proof}

\begin{proposition}\label{prop:LinfpynotcV}
We have
\begin{align*}
\left \Vert \partial_{y_0}^n\left( (v'(z) - v'(y_0)) \varphi_{k,\ep}^\pm(z,y_0) \right) \right\Vert_{L^\infty(y_0\in I_3^c(z))}  \lesssim k^{-\frac32}\cS_{k,n}
\end{align*}
for $n=0,1$.
\end{proposition}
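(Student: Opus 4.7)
The plan is to treat the two cases separately: the $n=0$ bound is essentially free from the preceding proposition, while the $n=1$ bound requires deriving a new Taylor--Goldstein equation for $\partial_{y_0}\cV_{k,\ep}^\pm$ and exploiting that the singular commutator terms inherit the compensating factor $v'(z)-v'(y_0)$ already present in $\cV_{k,\ep}^\pm$.

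For $n=0$, we simply note that the condition $y_0\in I_3^c(z)$ is equivalent to $z\in I_3^c(y_0)$, so the sup bound is exactly the $L^\infty(I_3^c(y_0))$ statement of the previous proposition with $n=0$, i.e.\ $\|\cV_{k,\ep}^\pm\|_{L^\infty}\lesssim k^{-3/2}\cS_{k,0}$.

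For $n=1$, set $\mathcal{W}_{k,\ep}^\pm:=\partial_{y_0}\cV_{k,\ep}^\pm$. Differentiating \eqref{eq:TGcV} in $y_0$ and using that $\partial_{y_0}$ only hits the coefficients of $\textsc{TG}_{k,\ep}^\pm$ through $v(y_0),v'(y_0)$, I get
\begin{equation*}
\textsc{TG}_{k,\ep}^\pm \mathcal{W}_{k,\ep}^\pm \;=\; \partial_{y_0}G_{k,\ep}^\pm \;+\; \frac{v''(z)v'(y_0)}{(v(z)-v(y_0)\pm i\ep)^2}\,\cV_{k,\ep}^\pm \;-\; \frac{2\mathrm{P}(z)v'(y_0)}{(v(z)-v(y_0)\pm i\ep)^3}\,\cV_{k,\ep}^\pm,
\end{equation*}
where $G_{k,\ep}^\pm$ is the RHS of \eqref{eq:TGcV}. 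The crucial observation is that both commutator terms multiply $\cV_{k,\ep}^\pm=(v'(z)-v'(y_0))\varphi_{k,\ep}^\pm$, so the factor $v'(z)-v'(y_0)$ absorbs one power of the singularity (since $|v'(z)-v'(y_0)|\lesssim|v(z)-v(y_0)|$), reducing the effective orders to $\mathrm{P}/(v-v_0)^2$ and $v''/(v-v_0)$, which are the same singularities that appear in $\textsc{E}_{k,\ep}^\pm$.

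I would then recast the equation in terms of $\textsc{RTG}_{k,\ep}^\pm$, writing $\textsc{RTG}_{k,\ep}^\pm\mathcal{W}_{k,\ep}^\pm = -\textsc{E}_{k,\ep}^\pm\mathcal{W}_{k,\ep}^\pm + (\text{source})$, and apply the entanglement inequality (Lemma~\ref{lemma:entanglementRTG}) exactly as in the previous proposition. To control the source in $L^2(I_2^c(y_0))$, I would use: Proposition~\ref{prop:Xkvarphi} for the $\varphi_{k,\ep}^\pm$ and $\partial_y\varphi_{k,\ep}^\pm$ contributions, Proposition~\ref{prop:Xkpynotvarphi} for the $\partial_{y_0}\varphi_{k,\ep}^\pm$ and mixed-derivative contributions appearing in $\partial_{y_0}G$, the bound $|\textsc{E}_{k,\ep}^\pm (v'(z)-v'(y_0))|\lesssim 1$ already exploited in the previous proposition, and the pointwise control on $\cV_{k,\ep}^\pm$ just proved in the $n=0$ case together with the fact that the remaining singular factors are integrable in $L^2(I_2^c(y_0))$ once one power is absorbed. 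Collecting these, entanglement yields $\|\mathcal{W}_{k,\ep}^\pm\|_{H_k^1(I_3^c(y_0))}\lesssim k^{-2}\cS_{k,1}$. Finally, passing from $H_k^1$ to $L^\infty$ proceeds exactly as in the proof of the previous proposition: on $I_6(y_0)\cap I_3^c(y_0)$ integrate $\partial_y\mathcal{W}$ from the boundary $y_3$ of $I_3(y_0)$ (whose value is controlled by the local decomposition of $\partial_{y_0}\varphi_{k,\ep}^\pm$ from Proposition~\ref{prop:Xkpynotvarphi}); on $I_6^c(y_0)$ apply Lemma~\ref{lemma:LinfH1bound} on a ball of size $k^{-1}$ contained in $I_3^c(y_0)$, picking up the expected factor $k^{1/2}$.

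The main obstacle will be the careful bookkeeping of the source $\partial_{y_0}G_{k,\ep}^\pm - (\partial_{y_0}\textsc{TG})\cV_{k,\ep}^\pm$: the term $\mathrm{P}(z)v'(y_0)(v'(z)-v'(y_0))(v(z)-v(y_0)\pm i\ep)^{-3}\varphi_{k,\ep}^\pm$ is formally borderline, and one must use that after the $(v'(z)-v'(y_0))$ cancellation it reduces to $\mathrm{P}(z)/(v-v_0)^2$ times $\varphi_{k,\ep}^\pm$ modulo harmless bounded factors, whose $L^2(I_2^c(y_0))$ norm is $\lesssim k^{-1/2}\cS_{k,0}$ as in the standard entanglement estimates. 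Once this is cleared, all other terms in $\partial_{y_0}G$ only involve at most one extra derivative on the initial data and pair cleanly with the $\cS_{k,1}$ on the right-hand side.
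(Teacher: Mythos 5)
Your $n=0$ case is fine and coincides with the paper's (it is just the $L^\infty(I_3^c(y_0))$ bound from the previous proposition, and the condition $y_0\in I_3^c(z)$ is symmetric in the two arguments).

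For $n=1$, however, you take a genuinely different and heavier route than the paper. You re-derive a Taylor--Goldstein equation for $\mathcal{W}_{k,\ep}^\pm = \partial_{y_0}\mathcal{V}_{k,\ep}^\pm$, control the source in $L^2(I_2^c(y_0))$, apply the entanglement inequality to get an $H_k^1(I_3^c(y_0))$ bound, and only then pass to $L^\infty$. The paper short-circuits this entirely: it never introduces a new equation for $\partial_{y_0}\mathcal{V}_{k,\ep}^\pm$. Instead it simply expands, by the product rule,
\begin{align*}
\partial_{y_0}\mathcal{V}_{k,\ep}^\pm(y,y_0) = -v''(y_0)\,\varphi_{k,\ep}^\pm(y,y_0) + (v'(y)-v'(y_0))\,\partial_{y_0}\varphi_{k,\ep}^\pm(y,y_0),
\end{align*}
and analogously for $\partial_y\partial_{y_0}\mathcal{V}_{k,\ep}^\pm$, and then directly plugs in the global $L^\infty(I_3^c(y_0))$ and $H_k^1(I_3^c(y_0))$ bounds for $\varphi_{k,\ep}^\pm$, $\partial_y\varphi_{k,\ep}^\pm$, $\partial_{y_0}\varphi_{k,\ep}^\pm$ and $\partial_{y,y_0}^2\varphi_{k,\ep}^\pm$ already established in Propositions~\ref{prop:Xkvarphi}, \ref{prop:Xkpynotvarphi} and \ref{prop:Xkpypynotvarphi}. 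The passage from these $L^2/L^\infty$ inputs to the $L^\infty(y_0\in I_3^c(z))$ conclusion then follows the same two-case split you also invoke (FTC from the boundary $y_3$ of $I_3(y_0)$ on $I_6(y_0)$, Sobolev embedding via Lemma~\ref{lemma:LinfH1bound} on $I_3(y)\subset I_3^c(y_0)$ for $y\in I_6^c(y_0)$). In other words, the global resolvent control you propose to re-obtain by entanglement is already available and there is no need to commute $\partial_{y_0}$ through the Taylor--Goldstein operator; doing so also forces you to track the source term $\partial_{y_0}G_{k,\ep}^\pm$ (which contains $\partial_{y_0}\varphi_{k,\ep}^\pm$ and $\partial_y\partial_{y_0}\varphi_{k,\ep}^\pm$ near the critical layer) and to verify the homogeneous boundary conditions for $\mathcal{W}_{k,\ep}^\pm$, all of which is extra work compared to the paper's direct expansion. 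Your argument is not wrong, but the path is considerably longer and ultimately calls on the same three propositions via a detour.
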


\begin{proof}
We argue as in the proof of Proposition \ref{prop:Xkvarphi}. 

\diampar{Case $y\in I_6(y_0)$} Assume further that $y>y_0$. Then, for $y_3=y_0+\frac{3\beta}{k}$ we have
\begin{align*}
\partial_{y_0}\cV_{k,\ep}^\pm(y,y_0) = \partial_{y_0}\cV_{k,\ep}^\pm(y_3,y_0) + \int_{y_3}^y \partial_y\partial_{y_0} \cV_{k,\ep}^\pm.
\end{align*}
Since $y_3\in I_3(y_0)$, we have from Propositions \ref{prop:Xkvarphi} and \ref{prop:Xkpynotvarphi} that
\begin{align*}
\left| \partial_{y_0}\cV_{k,\ep}^\pm(y_3,y_0) \right| \lesssim k^{-\frac32}\cS_{k,1}.
\end{align*}
On the other hand, since $y\in I_6(y_0)$, there holds
\begin{align*}
\int_{y_3}^y \partial_{y}\partial_{y_0}\cV_{k,\ep}^\pm(z,y_0) \d z &\lesssim \int_{y_3}^y \left( k^{-1}|\partial_{y,y_0}^2\varphi_{k,\ep}^\pm(z,y_0)| + |\partial_y\varphi_{k,\ep}^\pm(z,y_0)| + |\partial_y\varphi_{k,\ep}^\pm(z,y_0)| \right) \d z \\
&\lesssim k^{-\frac32}\cS_{k,1}.
\end{align*}

\diampar{Case $y\in I_6^c(y_0)$} Then $I_3(y)\cap I_3(y_0) = \emptyset$, namely $I_3(y)\subset I_3^c(y_0)$. Moreover, from Lemma \ref{lemma:LinfH1bound} and Propositions \ref{prop:Xkvarphi} and \ref{prop:Xkpynotvarphi} there holds
\begin{align*}
\Vert \partial_{y_0}\cV_{k,\ep}^\pm \Vert_{L^\infty(I_3(y))} &\lesssim k^\frac12 \Vert \partial_{y_0}\cV_{k,\ep}^\pm \Vert_{H_k^1(I_3(y))} \\
&\lesssim k^\frac12 \Vert \partial_{y_0} (v'(\cdot) - v'(y_0))\varphi_{k,\ep}^\pm \Vert_{L^2(I_3(y))} \\
&\quad+ k^{-\frac12}\Vert \partial_y\partial_{y_0} (v'(\cdot) - v'(y_0))\varphi_{k,\ep}^\pm \Vert_{L^2(I_3(y))}  \\
&\lesssim k^{-\frac32}\cS_{k,1}
\end{align*}
since we used that $|v'(z)-v'(y_0)|\lesssim k^{-1}$ for $z\in I_3(y)$.

\end{proof}

\bibliographystyle{abbrv}
\bibliography{Refs}
\end{document}